    \newcommand{\BA}{{\mathbb {A}}} 
    \newcommand{\BC}{{\mathbb {C}}} 
     \newcommand{\BF}{{\mathbb {F}}}
     \newcommand{\BH}{{\mathbb {H}}}
    \newcommand{\BI}{{\mathbb {I}}} 
     \newcommand{\BL}{{\mathbb {L}}}
    \newcommand{\BQ}{{\mathbb {Q}}} \newcommand{\BR}{{\mathbb {R}}}
     \newcommand{\BT}{{\mathbb {T}}}
     \newcommand{\BV}{{\mathbb {V}}}
     \newcommand{\BZ}{{\mathbb {Z}}}
    \newcommand{\CC}{{\mathcal {C}}} 
    \newcommand{\CE}{{\mathcal {E}}} \newcommand{\CF}{{\mathcal {F}}}
    \newcommand{\CG}{{\mathcal {G}}} \newcommand{\CH}{{\mathcal {H}}}
     \newcommand{\CL}{{\mathcal {L}}}
    \newcommand{\CM}{{\mathcal {M}}} 
    \newcommand{\CO}{{\mathcal {O}}} 
     \newcommand{\CR}{{\mathcal {R}}}
     \newcommand{\CT}{{\mathcal {T}}}
    \newcommand{\CU}{{\mathcal {U}}} 
    \newcommand{\CW}{{\mathcal {W}}} \newcommand{\CX}{{\mathcal {X}}}
     \newcommand{\CZ}{{\mathcal {Z}}}
     \newcommand{\fp}{{\mathfrak{p}}}
    \newcommand{\ab}{{\mathrm{ab}}}
    \newcommand{\Ann}{{\mathrm{Ann}}}
    \newcommand{\Aut}{{\mathrm{Aut}}}
    \newcommand{\cond}{\mathrm{cond^r}}
    \newcommand{\cont}{{\mathrm{cont}}}\newcommand{\cris}{{\mathrm{cris}}}
    \newcommand{\corank}{{\mathrm{corank}}}
    \newcommand{\can}{{\mathrm{can}}}
    \newcommand{\cyc}{{\mathrm{cyc}}}
    \newcommand{\disc}{{\mathrm{disc}}}
     \renewcommand{\div}{{\mathrm{div}}}
    \newcommand{\End}{{\mathrm{End}}} 
    \newcommand{\Frac}{{\mathrm{Frac}}}
    \newcommand{\Frob}{{\mathrm{Frob}}}
    \newcommand{\Gal}{{\mathrm{Gal}}} \newcommand{\GL}{{\mathrm{GL}}}
    \newcommand{\GSp}{{\mathrm{GSp}}}
    \newcommand{\Hom}{{\mathrm{Hom}}}
    \newcommand{\height}{{\mathrm{ht}}}
    \renewcommand{\Im}{{\mathrm{Im}}}
    \newcommand{\Ind}{{\mathrm{Ind}}}
    \newcommand{\Kato}{\mathrm{Kato}}
    \newcommand{\loc}{{\mathrm{loc}}}
    \newcommand{\ord}{{\mathrm{ord}}} \newcommand{\rank}{{\mathrm{rank}}}
    \renewcommand{\mod}{\ \mathrm{mod}\ }\renewcommand{\Re}{{\mathrm{Re}}}
    \newcommand{\rec}{{\mathrm{rec}}}
    \newcommand{\Sel}{{\mathrm{Sel}}} 
    \newcommand{\st}{{\mathrm{st}}}
    \newcommand{\sign}{{\mathrm{sign}}}
    \newcommand{\Spec}{{\mathrm{Spec}}}
    \newcommand{\sgn}{{\mathrm{sgn}}}
    \newcommand{\tor}{{\mathrm{tor}}}
    \newcommand{\ur}{{\mathrm{ur}}}
    \newcommand{\ac}{\mathrm{ac}}
    \newcommand{\isoarrow}{\xrightarrow{\sim}}
    \newcommand{\bz}{\mathbf{z}}
    \newcommand{\eps}{\varepsilon}
    \newcommand{\bg}{{\mathbf{g}}}
    \newcommand{\bh}{{\mathbf{h}}}
    \newcommand{\sL}{{\mathscr{L}}}
    \newcommand{\sC}{{\mathscr{C}}}
    \newcommand{\sR}{{\mathscr{R}}}
    \newcommand{\sS}{{\mathscr{S}}}
    \newcommand{\CBF}{{\mathcal{BF}}}
    \newcommand{\rel}{{\mathrm{rel}}}
    \font\cyr=wncyr10
    \newcommand{\Sha}{\hbox{\cyr X}}
    \newcommand{\ov}{\overline}
    \newcommand{\ra}{\rightarrow} 
    \newcommand{\bs}{\backslash}
    \newcommand{\nequiv}{\equiv\hspace{-10pt}/\ }
    \theoremstyle{plain}
    \newtheorem{thm}{Theorem}[section] \newtheorem{cor}[thm]{Corollary}
    \newtheorem{lem}[thm]{Lemma}  \newtheorem{prop}[thm]{Proposition}
    \newtheorem {conj}[thm]{Conjecture} \newtheorem{defn}[thm]{Definition}
\theoremstyle{remark} \newtheorem{remark}[thm]{Remark}
\theoremstyle{remark} 
\theoremstyle{remark} \newtheorem{example}{Example}
    \newcommand{\cO}{\mathcal O}
    \numberwithin{equation}{section}
\begin{document}

\title{Zeta elements for elliptic curves and applications}

\author{Ashay A. Burungale, Christopher Skinner, Ye Tian and Xin Wan}

\address{Ashay A. Burungale:   Department of Mathematics~\\UT Austin~\\
Austin, TX 78712} 
\email{ashayburungale@gmail.com}

\address{Christopher Skinner: Department of Mathematics, Princeton University, 
Princeton NJ 08544-1000}
\email{cmclas@princeton.edu}

\address{Ye Tian: MCM, HCMS, Academy of Mathematics and System Science, Chinese Academy of Sciences, Beijing 100190, and 
School of Mathematical Sciences, University of Chinese Academy of Sciences, Beijing 100049, China} \email{ytian@math.ac.cn}

\address{Xin Wan: Academy of Mathematics and Systems
Science, Morningside center of Mathematics, Chinese Academy of
Sciences, Beijing 100190 and 
School of Mathematical Sciences, University of Chinese Academy of Sciences, Beijing 10049} \email{xwan@math.ac.cn}

\maketitle
\begin{abstract} Let $E$ be an elliptic curve defined over $\BQ$ with conductor $N$ and 
$p\nmid 2N$ a prime. Let $L$ be an imaginary quadratic field with $p$ split. 
We prove the existence of  $p$-adic zeta element for $E$ over $L$, 
encoding two different $p$-adic $L$-functions associated to $E$ over $L$ via explicit reciprocity laws at  the primes above $p$. 
We formulate a main conjecture for $E$ over $L$ in terms of the zeta element, mediating different main conjectures in which the $p$-adic $L$-functions appear, and prove some results toward them. 

The zeta element has various applications to the arithmetic of elliptic curves. This includes a proof of 
main conjecture for semistable elliptic curves $E$ over $\BQ$ at supersingular primes $p$, as conjectured by Kobayashi in 2002. It leads to the $p$-part of the conjectural Birch and Swinnerton-Dyer (BSD) formula for such curves of analytic rank zero or one, and enables us to present the first infinite families of non-CM elliptic curves for which the BSD conjecture is true. 
We provide further evidence towards the BSD conjecture: new cases of $p$-converse to the Gross--Zagier and Kolyvagin theorem, and $p$-part of the BSD formula for ordinary primes $p$. 
Along the way, we give a proof of a conjecture of Perrin-Riou connecting Beilinson--Kato elements with rational points.

\end{abstract}

\tableofcontents

\section{Introduction}
\noindent 

A principle of Kato posits the existence of $p$-adic zeta element for a motive over a number field: an arithmetic incarnation of the associated critical $L$-values in $p$-adic \'etale cohomology. Cyclotomic units, elliptic units and Beilinson--Kato elements are primary known examples of zeta elements, the latter for an elliptic curve $E$ defined over $\BQ$. 
The aim of this paper is prove the existence of a $p$-adic zeta element for such an $E$ together with an imaginary quadratic field $L$ (cf.~Theorem~\ref{thmZ}).

The zeta element is ancillary to the Iwasawa theory of $E$ over $L$, and in turn to the arithmetic of $E$ over $\BQ$. It leads to a proof of the $\GL_2$-Iwasawa main conjecture at supersingular primes, complementing the work of Kato \cite{K} and Skinner and Urban \cite{SU} at ordinary primes: 
we establish Kobayashi's main Conjecture for semistable $E$ at supersingular primes $p$ (cf.~Theorem~\ref{thmA}).
In combination with the $p$-adic Gross--Zagier formula, in the supersingular case the main conjecture yields the $p$-part of the conjectural Birch and Swinnerton-Dyer formula if the analytic rank of $E$ is zero or one (cf.~Theorem~\ref{corA_thmA}), as well as a $p$-converse to the theorem of Gross--Zagier and Kolyvagin for $E$ (cf.~Theorem~\ref{corB_thmA}). 
In turn this leads to the first infinite families of non-CM elliptic curves for which the full BSD formula is proved to hold (cf.~Theorem~\ref{nCM}). 

Exploiting the zeta element, we also provide additional evidence towards the Birch and Swinnerton-Dyer conjecture, 
such as new cases of the $p$-part of the Birch and Swinnerton-Dyer formula (cf.~Theorem~\ref{corA'}) and $p$-converse for ordinary primes $p$ (cf.~Theorem~\ref{corB'}). 
Another application concerns a conjecture of Perrin-Riou (cf.~Theorem~\ref{Thm_PR}). 

While the main text considers weight two elliptic newforms, in this introduction we focus on the case of elliptic curves over $\BQ$.

\subsection{Arithmetic of elliptic curves} We first describe some of the applications of the zeta element to the arithmetic of elliptic curves. 

\subsubsection*{The Birch and Swinnerton-Dyer conjecture}
Let $E$ be an elliptic curve defined over the rationals. 
A fundamental arithmetic invariant of $E$ is its Mordell--Weil rank, that is the rank of the finitely generated abelian group 
$
E(\BQ).
$
As $E$ varies, this rank is typically expected to be $0$ or $1$ but is difficult to get a handle on generally. Another mysterious structure in the arithmetic of $E$ is the conjecturally finite Tate--Shafarevich group $\Sha(E_{/\BQ})$. For a prime $p$, the $p^\infty$-Selmer group $\Sel_{p^{\infty}}(E_{/\BQ})$ links these two 
via the fundamental exact sequence 
\label{Sel}
\begin{equation}\label{ex}
0 \ra E(\BQ) \otimes \BQ_{p}/\BZ_{p} \ra \Sel_{p^{\infty}}(E_{/\BQ}) \ra \Sha(E_{/\BQ})[p^{\infty}] \ra 0.
\end{equation}

On the analytic side, the primary object of interest is the complex $L$-function $L(s,E_{/\BQ})$,
with $s \in \BC$, and the fundamental analytic invariant is the analytic rank, defined as the vanishing order $\ord_{s=1}L(s,E_{/\BQ})$.

The Birch and Swinnerton-Dyer conjecture (BSD) conjecture predicts a mysterious link between the arithmetic and analytic invariants:

\begin{conj}[The Birch and Swinnerton-Dyer conjecture for ranks $0$ and $1$]\label{BSD}
Let $E$ be an elliptic curve defined over the rationals. For $r\in\{0,1\}$, the following are equivalent:
\begin{itemize}
\item[(1)] $\rank_{\BZ}E(\BQ)=r$ and $\Sha(E_{/\BQ})$ is finite.
\item[(2)] $\corank_{\BZ_{p}} \Sel_{p^{\infty}}(E_{/\BQ})=r$ for a prime $p$.
\item[(3)] $\ord_{s=1}L(s,E_{/\BQ})=r$. 
\end{itemize}
Moreover, for a prime $p$ the $p$-part of the BSD formula holds under any of the above, 
that is,
$$
\bigg{|}\frac{L^{(r)}(1,E_{/\BQ})}{\Omega_{E}R(E_{/\BQ})}\bigg{|}_{p}^{-1}
=
\bigg{|}\frac{\#\Sha(E_{/\BQ}) \cdot \prod_{q|N}c_{q}(E_{/\BQ})}{\#E(\BQ)^{2}}
\bigg{|}_{p}^{-1}
$$
for $\Omega_{E} \in \BC^{\times}$ the N\'eron period, $R(E_{/\BQ})$ the regulator of $E(\BQ)$, $N$ the conductor of $E$, 
$c_{q}(E_{/\BQ})$ the Tamagawa number at a prime $q$, and $|\cdot|_p$ the $p$-adic absolute value normalised so that $|p|_{p}=\frac{1}{p}$. 
\end{conj}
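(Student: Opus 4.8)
The plan is to run the cycle of implications $(1)\Rightarrow(2)\Rightarrow(3)\Rightarrow(1)$ that underlies the rank $\leq 1$ cases of BSD, with the decisive new input --- the $p$-converse step $(2)\Rightarrow(3)$ --- furnished by the zeta element, and then to obtain the $p$-part of the formula by feeding the Iwasawa main conjecture into Mazur's control theorem.

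Two of the implications I would simply assemble from known results. The exact sequence \eqref{ex} gives $(1)\Rightarrow(2)$ for every prime $p$: finiteness of $\Sha(E_{/\BQ})$ forces $\Sha(E_{/\BQ})[p^\infty]$ to be finite, so $\corank_{\BZ_p}\Sel_{p^\infty}(E_{/\BQ})=\rank_\BZ E(\BQ)=r$. For $(3)\Rightarrow(1)$ I would invoke the theorems of Gross--Zagier, Kolyvagin, and Kato: when $\ord_{s=1}L(s,E_{/\BQ})=1$ the Gross--Zagier formula produces a non-torsion Heegner point, so $\rank_\BZ E(\BQ)\geq 1$, and Kolyvagin's Euler system then forces $\rank_\BZ E(\BQ)=1$ and $\Sha(E_{/\BQ})$ finite; when $\ord_{s=1}L(s,E_{/\BQ})=0$ one either bounds $\Sel_{p^\infty}(E_{/\BQ})$ directly with Kato's Euler system, or runs the Gross--Zagier--Kolyvagin argument over an auxiliary imaginary quadratic field whose quadratic twist has a simple zero at the center (such a field exists by the non-vanishing theorems of Bump--Friedberg--Hoffstein and Murty--Murty).

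The substantive implication is $(2)\Rightarrow(3)$, a $p$-converse to the theorems just cited, and this is where the zeta element enters. I would fix an auxiliary imaginary quadratic field $L$ in which $p$ splits, chosen so that the local conditions at the bad primes and the sign of the functional equation are compatible with the Heegner hypothesis; such an $L$ exists by known non-vanishing results for twisted central $L$-values. The explicit reciprocity laws at the primes above $p$ relate the zeta element of Theorem~\ref{thmZ} to the two $p$-adic $L$-functions attached to $E$ over $L$, and the main conjecture over $L$ --- which the zeta element and the reciprocity laws allow one to establish --- then converts the hypothesis $\corank_{\BZ_p}\Sel_{p^\infty}(E_{/\BQ})=r$ into information on the order of vanishing of the relevant $p$-adic $L$-function: $\corank=0$ forces it to be a $p$-adic unit, whence $L(1,E_{/\BQ})\neq 0$, while $\corank=1$ forces a simple zero whose leading term is governed by a $p$-adic height, and the $p$-adic Gross--Zagier formula together with the proof of Perrin-Riou's conjecture (Theorem~\ref{Thm_PR}) then yields $\ord_{s=1}L(s,E_{/\BQ})=1$.

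Finally, for the $p$-part of the BSD formula I would combine both divisibilities of the Iwasawa main conjecture for $E/\BQ$ --- supplied at supersingular $p$ by Theorem~\ref{thmA}, and in the ordinary case by the work of Kato and of Skinner--Urban --- with Mazur's control theorem. In rank $0$ the characteristic ideal of the Pontryagin dual of $\Sel_{p^\infty}(E_{/\BQ})$ equals that of the $p$-adic $L$-function; specializing at the trivial character, where the interpolation property evaluates the latter against $L^{(0)}(1,E_{/\BQ})/\Omega_E$, gives the asserted identity of $p$-adic absolute values once the local Euler factor at $p$ and the Tamagawa numbers at the bad primes are accounted for. In rank $1$ one instead uses the $p$-adic Gross--Zagier formula to write the leading term as a $p$-adic height of a Heegner point, matches the resulting height regulator with $R(E_{/\BQ})$ up to $p$-adic units, and again invokes the main conjecture. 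The main obstacle is the more difficult of the two divisibilities in the main conjecture at supersingular primes, which is precisely what Theorem~\ref{thmA} delivers through the zeta element; the residual difficulty --- and the reason this strategy currently yields the partial results of the paper rather than Conjecture~\ref{BSD} in full --- is the package of hypotheses (semistability, conditions at the primes of bad reduction, largeness of the residual Galois representation, and the requirement that the prime $p$ occurring in $(2)$ be one to which the reciprocity laws and $p$-converse apply) that constrains each of the ingredients.
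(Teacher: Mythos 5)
The statement you were asked to prove is Conjecture~\ref{BSD} itself, and the paper does not prove it: it remains an open conjecture. The paper only records the easy implication $(1)\Rightarrow(2)$ via the exact sequence \eqref{ex}, attributes $(3)\Rightarrow(1)$ to Gross--Zagier, Kolyvagin, Kato and Rubin, and proves the $p$-converse $(2)\Rightarrow(3)$ and the $p$-part of the formula only under restrictive hypotheses (Theorems~\ref{corA_thmA}, \ref{corB_thmA}, \ref{corA'}, \ref{corB'}, \ref{nCM}). Your proposal is therefore not a proof of the statement; it is a (largely accurate) summary of the known implications together with the paper's strategy for its partial results, and you concede as much in your closing sentence. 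The genuine gap is exactly the step you call "the substantive implication'': for a general elliptic curve $E$ and a general prime $p$, the assertion that "the zeta element and the reciprocity laws allow one to establish the main conjecture over $L$'' is not available. The paper's main-conjecture inputs (Theorem~\ref{thmA}, Kato, Skinner--Urban, Wan) require hypotheses such as semistability or (irr$_\BQ$) plus (ram), good reduction at $p$, $p$ odd, and $a_p=0$ in the supersingular case; nothing in your argument removes them, so cases such as additive reduction at $p$, $p=2$, ordinary Eisenstein primes without (ram), or supersingular $p=3$ with $a_3\neq 0$ are untouched, and with them the full equivalence $(2)\Leftrightarrow(3)$ and the $p$-part formula for every $p$.

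Two secondary inaccuracies in your routing are worth flagging. First, the paper's $p$-converse does not pass through $p$-adic heights, the $p$-adic Gross--Zagier formula, or the Perrin-Riou conjecture (Theorem~\ref{Thm_PR}); it runs through the cyclotomic Greenberg main conjecture over the auxiliary field $L$ combined with the $p$-adic Waldspurger formula of Bertolini--Darmon--Prasanna, the point being that non-torsionness of the Heegner point is detected by non-vanishing of the Greenberg $p$-adic $L$-function at the identity character, after which the complex Gross--Zagier formula gives $\ord_{s=1}L(s,E_{/\BQ})=1$. The $p$-adic Gross--Zagier formula enters elsewhere, namely in deducing the rank-one $p$-part of the BSD formula at supersingular primes from Kobayashi's main conjecture. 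Second, in rank one the $p$-part of the formula is not obtained by "matching a $p$-adic height regulator with $R(E_{/\BQ})$'' in the ordinary case either: the paper's ordinary-prime argument (Theorem~\ref{p-BSD}) again uses the $p$-adic Waldspurger formula and descent of the Greenberg main conjecture over $L$, precisely to avoid the unproven non-degeneracy of $p$-adic heights for non-CM curves.
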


Note that (1) implies (2) by the exact sequence \eqref{ex}. That (3) implies (1) is a fundamental result towards the BSD conjecture due to 
Gross--Zagier, Kolyvagin and Rubin in the late 80's. Beginning with the work of Skinner and Zhang a decade back, the implication (2) implies (3) is referred to as a $p$-converse to the Gross--Zagier and Kolyvagin theorem: a $p$-adic criterion for $E$ to have analytic rank $r$. In this paper we prove new cases of the $p$-converse and the $p$-part of the BSD formula for both ordinary and supersingular primes, with an emphasis on the latter. 

\subsubsection{Kobayashi's supersingular main conjecture}\label{ss:Kob}
Mazur \cite{Ma} initiated the
Iwasawa theory of elliptic curves in the early 70's, formulating a main conjecture for primes $p$ of ordinary reduction. The case of primes of supersingular reduction exhibits new phenomena and a conjectural framework reflecting them remained elusive until the early 2000's, when, sparked by the work of Pollack \cite{Po}, Kobayashi \cite{Ko} formulated a signed Iwasawa main conjecture for elliptic curves at supersingular primes. 

Let $E_{/\BQ}$ be an elliptic curve of conductor $N$, and $p\nmid 2N$ a supersingular prime satisfying 
\eqref{h4}
(this is only an extra condition when $p=3$). 
Let $\BQ_\infty$ be the cyclotomic $\BZ_p$-extension of $\BQ$, $\Gamma=\Gal(\BQ_{\infty}/\BQ)$ and $\gamma_\cyc$ a topological generator of $\Gamma$. Put $\Lambda=\BZ_{p}[\![\Gamma]\!]$, viewed as a $G_\BQ$-module via the canonical projection $\Psi: G_{\BQ} \twoheadrightarrow \Gamma$. Let $\epsilon$ denote the  $p$-adic cyclotomic character $\epsilon:G_{\BQ} \ra \BZ_{p}^\times$.

For $\zeta$ a primitive $p^{t}$-th root of unity, let
$$
\psi_{\zeta}: G_{\BQ} \twoheadrightarrow \Gamma \ra \ov{\BQ}^{\times}
$$
be the finite order character induced by $\gamma_\cyc \mapsto \zeta$. 
For $t>0$, let $\psi_\zeta$ also denote the Dirichlet character of $(\BZ/p^{t+1}\BZ)^{\times}$ of $p$-power order such that the image of $\epsilon(\gamma_\cyc)\in 1+p\BZ_{p}$ maps to $\zeta$.
Let 
$$
\phi_{\zeta}: \Lambda \ra \BZ_{p}[\zeta] \subset \ov{\BQ}_{p}
$$
be the homomorphism such that $\gamma_\cyc \mapsto \zeta$.

Let $T$ denote the $p$-adic Tate module of $E$. Let 
$$
M=T(1) \otimes_{\BZ_{p}} \Lambda^{\vee}
$$
be a discrete $\Lambda$-module with the $G_{\BQ}$-action on $\Lambda^{\vee}$ via $\Psi^{-1}$.  For $\circ\in\{+,-\}$, let $$H^{1}_{\circ}(\BQ_{p},M) \subset H^{1}(\BQ_{p},M)$$ be the annihilator of Kobayashi's signed submodule 
$H^{1}_{\circ}(\BQ_{p}, T(1) \otimes_{\BZ_{p}} \Lambda)\subset H^{1}(\BQ_{p},T(1)\otimes_{\BZ_{p}}\Lambda)$ under the Pontryagin duality pairing (cf.~\S\ref{IwCoh}). 
Following Kobayashi, 
for $\Sigma$ a finite set of places of $\BQ$ containing those dividing $N\infty$, define 
\begin{equation}\label{ssQ}
S_{\circ}(E) = \ker \big{\{} H^{1}(G_{\Sigma},M) \ra \prod_{v \in \Sigma, v \nmid p} H^{1}(\BQ_{v},M) 
\times \frac{H^{1}(\BQ_{p},M)}{H^{1}_{\circ}(\BQ_{p},M)}  \big{\}}
\end{equation}
and let $X_{\circ}(E)$ denote its Pontryagin dual. Based on the work of Kato \cite{K}, Kobayashi \cite{Ko} proved that $X_{\circ}(E)$ is a torsion $\Lambda$-module.

Also for $p\nmid 2N$ a supersingular prime satisfying \eqref{h4},
Pollack \cite{Po} proved the existence of $p$-adic $L$-functions
$\mathcal{L}_{p}^{\circ}(E) \in \Lambda \simeq \BZ_{p}[\![X]\!]$, $\circ\in\{\pm\}$,
such that for $\zeta$ as above with $t=0$ or $t>0$ and even if $\circ = +$ and $t=0$ or $t>0$ and odd if $\circ=-$,
$$
\phi_{\zeta}(\mathcal{L}_{p}^{\circ}(E))=
e_{p}^\circ(\zeta)\cdot \frac{L(1, E \otimes \psi_{\zeta}^{-1})}{\Omega_{E}}
$$
with 
$$
e_{p}^+(\zeta)=
\begin{cases*}
(-1)^{\frac{t+2}{2}} \cdot \frac{p^{t+1}}{\mathfrak{g}(\psi_{\zeta}^{-1})} 
\cdot \prod_{\text{odd } m=1}^{t-1} \Phi_{p^{m}}(\zeta)^{-1}
& if $t>0$ even\\
2 & if $t=0$.
\end{cases*}
$$
and
$$ 
e_{p}^-(\zeta)=
\begin{cases*}
(-1)^{\frac{t+1}{2}} \cdot \frac{p^{t+1}}{\mathfrak{g}(\psi_{\zeta}^{-1})} 
\cdot \prod_{\text{even } m=2}^{t-1} \Phi_{p^{m}}(\zeta)^{-1}
& if $t>0$ odd\\
p-1 & if $t=0$.
\end{cases*}
$$
Here $\mathfrak{g}(\psi_{\zeta}^{-1})$ denotes the Gauss sum and $\Phi_{p^{m}}(X)$ the $p^m$-th cyclotomic polynomial.

 Kobayashi \cite{Ko} proposed the following signed Main Conjecture:

\begin{conj}\label{Kob} 
Let $E_{/\BQ}$ be an elliptic curve of conductor $N$, and $p\nmid 2N$ a supersingular prime. If $p=3$, suppose that 
\eqref{h4} holds. For $\circ\in\{+,-\}$, we have 
$$
(\CL_{p}^{\circ}(E))=\xi_{\Lambda}(X_{\circ}(E))
$$
in $\Lambda$, where $\xi_{\Lambda}(\cdot)$ denotes the $\Lambda$-characteristic ideal. 
\end{conj}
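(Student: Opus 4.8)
\emph{Proof strategy.} The plan is to deduce Kobayashi's conjecture from Iwasawa theory over a carefully chosen imaginary quadratic field $L$ in which $p$ splits, using the zeta element of Theorem~\ref{thmZ} and its explicit reciprocity laws as the bridge. One divisibility is already in hand: Kato's Beilinson--Kato Euler system, combined with Kobayashi's signed Poitou--Tate argument and his signed Coleman maps, yields $(\CL_{p}^{\circ}(E)) \subseteq \xi_{\Lambda}(X_{\circ}(E))$ for $\circ\in\{+,-\}$ (this is also what forces $X_{\circ}(E)$ to be $\Lambda$-torsion). Hence the whole content is the reverse inclusion $\xi_{\Lambda}(X_{\circ}(E)) \subseteq (\CL_{p}^{\circ}(E))$, which amounts to a lower bound on the signed Selmer group $S_{\circ}(E)$ matching the order of vanishing and leading term of Pollack's $p$-adic $L$-functions.

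First I would fix an imaginary quadratic field $L$ with $p = \mathfrak{p}\overline{\mathfrak{p}}$ split, chosen so that: (i) $\bar\rho_{E,p}|_{G_{L}}$ is absolutely irreducible (using $p\nmid 2N$, semistability, and the hypothesis \eqref{h4} when $p=3$); (ii) the ramified primes of $E$ split in the way required by the constructions of the earlier part of the paper; and (iii) $L(E^{(L)}_{/\BQ},1)\neq 0$, which holds for infinitely many such $L$ by the non-vanishing theorems of Waldspurger, Murty--Murty and Bump--Friedberg--Hoffstein. Over the $\BZ_{p}^{2}$-extension $L_{\infty}/L$, set $\Lambda_{L} = \BZ_{p}[\![\Gal(L_{\infty}/L)]\!]$; by Theorem~\ref{thmZ} there is a zeta element $\mathbf{z}\in H^{1}_{\Iw}(L_{\infty},T)$ whose images under the two reciprocity maps at $\mathfrak{p}$ and $\overline{\mathfrak{p}}$ recover, respectively, the anticyclotomic ($p$-adic Rankin--Selberg type) $L$-function of $E/L$ and --- after projection to the cyclotomic line and via the factorization $L(s,E_{/L}) = L(s,E_{/\BQ})L(s,E^{(L)}_{/\BQ})$ --- Pollack's signed $p$-adic $L$-functions of $E/\BQ$ and of $E^{(L)}/\BQ$. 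The key structural gain, and the reason the supersingular case becomes accessible after base change, is that with $p$ split one of the primes above $p$ supports an ordinary-type (Greenberg) local condition in the anticyclotomic variable, so the relevant dual Selmer module over $L_{\infty}$ is $\Lambda_{L}$-torsion despite the supersingularity of $E$.

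Next I would combine the reciprocity laws with the main conjecture for $E/L$ formulated in terms of $\mathbf{z}$, together with the divisibility toward it established earlier in the paper --- the divisibility amounting to a lower bound on the Selmer group of $E$ over $L_{\infty}$, obtained from the anticyclotomic Heegner-point Euler system and, where needed, Eisenstein congruences. Translating through the reciprocity map at $\mathfrak{p}$ turns the $\mathbf{z}$-side into the anticyclotomic $p$-adic $L$-function; a control/descent argument --- restricting from $L_{\infty}$ to its cyclotomic $\BZ_{p}$-line, then decomposing into $\pm$-eigenspaces under $\Gal(L/\BQ)$ (legitimate since $p$ is odd), which splits the dual Selmer group and the $p$-adic $L$-function as the $E/\BQ$-part times the $E^{(L)}/\BQ$-part --- transports everything to $\Lambda$. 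Since $\CL_{p}^{\circ}(E^{(L)})\neq 0$ (because $L(E^{(L)},1)\neq 0$) and since Kato's divisibility gives $(\CL_{p}^{\circ}(E^{(L)}))\subseteq \xi_{\Lambda}(X_{\circ}(E^{(L)}))$, one may cancel the twist factor in the domain $\Lambda$ and is left with exactly $\xi_{\Lambda}(X_{\circ}(E))\subseteq(\CL_{p}^{\circ}(E))$. Together with the divisibility of the first paragraph this yields the equality of Conjecture~\ref{Kob}.

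\textbf{Main obstacle.} The crux is the descent: passing from the two-variable Iwasawa theory over $L_{\infty}$ to the signed objects over $\BQ_{\infty}$ in the supersingular regime. The signed Selmer groups are cut out by non-\'etale local conditions at $p$, so a priori the relevant specialization maps on Selmer groups need not have finite --- let alone trivial --- kernel and cokernel; one must show these maps are suitably controlled, that no spurious $\mu$-invariant is introduced, and that the signed local conditions over $\BQ_{\infty}$ are precisely the specializations of the two-variable conditions over $L_{\infty}$. This is exactly where the explicit reciprocity laws of Theorem~\ref{thmZ} do the heavy lifting, matching local conditions and $p$-adic $L$-functions on both sides, and where semistability and the irreducibility of $\bar\rho_{E,p}|_{G_{L}}$ are used to rule out pseudo-null pathologies and Eisenstein-prime phenomena in the underlying main conjecture over $L$.
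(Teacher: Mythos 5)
Your overall architecture --- the Kato--Kobayashi Euler-system divisibility $\xi_{\Lambda}(X_{\circ}(E'))\mid(\CL_{p}^{\circ}(E'))$ over $\BQ$ for both $E'=E$ and $E'=E^{L}$, a lower bound coming from Iwasawa theory over an auxiliary imaginary quadratic field $L$ in which $p$ splits, transported through the reciprocity laws of the zeta element of Theorem~\ref{thmZ}, then cyclotomic descent, factorization into the $E$- and $E^{L}$-parts, and cancellation in $\Lambda$ --- is the same as the paper's (Theorem~\ref{KoMC_r}). But the crucial ingredient is misidentified, and as written the argument does not close. You attribute the lower bound for the Selmer group over $L_{\infty}$ to ``the anticyclotomic Heegner-point Euler system and, where needed, Eisenstein congruences.'' Euler systems --- Heegner points just as much as Beilinson--Kato classes --- bound Selmer groups from \emph{above}, i.e.\ they produce divisibilities of the shape $\xi(X)\mid(\CL)$ (cf.\ Theorems~\ref{KoMC_ub} and~\ref{HMC-ub}); what is needed here is the opposite divisibility $\CL_{p}^{\rm Gr}(E_{/L})\mid\xi(X_{\rm Gr}^{\ur}(E_{/L}))$, and in the paper this comes exclusively from the Eisenstein congruence method on $U(3,1)$ (Theorem~\ref{GMC_r}, i.e.\ \cite{W1,CLW}), which the zeta element then converts into the signed two-variable divisibility (Proposition~\ref{Eq}, Theorem~\ref{KoMC'_lb}). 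Moreover, that input forces a choice of $L$ incompatible with your setup: one needs a prime $q\mid N$ at which $\bar\rho$ is ramified (condition \eqref{ram}, supplied by semistability via level raising) to be \emph{inert} in $L$, as in \eqref{def}, whereas the Heegner-point machinery requires \eqref{Heeg}, i.e.\ all primes dividing $N$ split, and under \eqref{Heeg} the relevant anticyclotomic standard $p$-adic $L$-function vanishes identically. Your condition $L(1,E^{(L)})\neq 0$ is harmless but unnecessary (Rohrlich's non-vanishing of $\CL_{p}^{\circ}(E^{L})$ already suffices for the cancellation step), and it does nothing to supply the missing lower bound.

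A second, quieter gap: the $U(3,1)$ divisibility is only available after inverting the cyclotomic variable --- it holds in $\Lambda_{L}^{\ur}\otimes_{\Lambda_{L}^{\cyc,\ur}}\Frac(\Lambda_{L}^{\cyc,\ur})$. Before specializing along the cyclotomic line one must remove these denominators, and in the paper this is done by the vanishing of the anticyclotomic $\mu$-invariant of the signed anticyclotomic $p$-adic $L$-function (Pollack--Weston \cite{PW}) under \eqref{def}, which shows $\CL_{p}^{\circ}(E_{/L})$ is coprime to the height-one primes pulled back from $\Lambda_{L}^{\cyc}$. You fold this into a generic worry about ``spurious $\mu$-invariants'' in the descent, but the descent itself (Proposition~\ref{ctl_st}) is the easy step; the integrality of the two-variable divisibility is where the work lies, and your sketch provides no mechanism for it. With the lower-bound input correctly sourced, the field $L$ chosen as in the paper, and the $\mu$-invariant step included, your cancellation argument does complete the proof exactly as in Theorem~\ref{KoMC_r}.
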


Our main result towards Kobayashi's conjecture is the following:
\begin{thm}\label{thmA}
Let $E_{/\BQ}$ be a semistable elliptic curve, and $p>2$ a supersingular prime. If $p=3$, suppose that  
\eqref{h4} holds. Then Kobayashi's Conjecture \ref{Kob} is true, i.e.
for $\circ\in\{+,-\}$, we have 
$$
(\CL_{p}^{\circ}(E))=\xi_{\Lambda}(X_{\circ}(E)).
$$
Moreover, the same holds for any quadratic twist $E^{K}:=E\otimes \chi_K$ for $\chi_K$ the character associated to a quadratic field extension $K/\BQ$ with discriminant coprime to $Np$ 
and divisible only by primes of ordinary reduction for $E$.
\end{thm}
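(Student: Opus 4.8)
\noindent The plan is to establish the two divisibilities of ideals in $\Lambda$,
$$
(\CL_p^\circ(E)) \subseteq \xi_\Lambda(X_\circ(E)) \qquad\text{and}\qquad \xi_\Lambda(X_\circ(E)) \subseteq (\CL_p^\circ(E)), \qquad \circ \in \{+,-\},
$$
both ideals being nonzero: $X_\circ(E)$ is $\Lambda$-torsion by Kobayashi's theorem (building on Kato) recalled above, and $\CL_p^\circ(E) \ne 0$ by Rohrlich's nonvanishing of the twisted central $L$-values it interpolates. The first inclusion is the \emph{Euler system divisibility}; it is already a consequence of Kato's Euler system for $E_{/\BQ}$ refined through Kobayashi's signed local conditions at $p$ --- the same input that gives $\Lambda$-torsionness of $X_\circ(E)$. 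Hence the content of Theorem~\ref{thmA} is the reverse, ``Eisenstein-type'', divisibility --- the supersingular analogue of the Skinner--Urban divisibility --- and it is here that the zeta element enters.

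To obtain it, fix an auxiliary imaginary quadratic field $L$ in which $p$ splits, $p = \fp\ov\fp$, chosen so that the Heegner hypothesis holds relative to $N$ (possible since $N$ is squarefree); such $L$ are abundant. Write $L_\infty/L$ for the $\BZ_p^2$-extension and $\Lambda_L = \BZ_p[\![\Gal(L_\infty/L)]\!]$. By Theorem~\ref{thmZ} there is a $p$-adic zeta element $\CZ$ in the $\Lambda_L$-module of Iwasawa cohomology of $T$ over $L_\infty$ whose image under the explicit reciprocity law at $\fp$ is a two-variable $p$-adic $L$-function specializing, along the cyclotomic line, to Pollack's signed $p$-adic $L$-functions $\CL_p^\circ$ of $E_{/\BQ}$ and of $E^K_{/\BQ}$, and whose image under the explicit reciprocity law at $\ov\fp$ is the anticyclotomic (Bertolini--Darmon--Prasanna type) $p$-adic $L$-function of $E$ over $L$ --- an honest element of the anticyclotomic Iwasawa algebra precisely because $\fp \ne \ov\fp$. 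One now feeds in the anticyclotomic Iwasawa main conjecture over $L$, equivalently Perrin-Riou's Heegner point main conjecture: its ``Eisenstein'' half comes from the Euler system of Heegner points (Howard, adapted to the split supersingular setting), and the complementary half is available in the generality required here. Note that the residual representation $\bar\rho_{E,p}$ is irreducible --- automatic at a supersingular prime $p \ge 5$, and handled for $p = 3$ by \eqref{h4} --- so the usual residual hypotheses impose no extra condition on $E$. Transporting this input back through the $\ov\fp$- and $\fp$-reciprocity laws --- $\CZ$ being a single class interpolating both $L$-functions --- yields the reverse divisibility of the two-variable main conjecture over $L$ attached to $\CZ$.

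It remains to descend from $L$ to $\BQ$. Under the factorization $L(s, E_{/L}) = L(s, E_{/\BQ})\,L(s, E^K_{/\BQ})$ and the corresponding splitting of the relevant $\Lambda_L$-modules into $\pm$-eigenspaces for $\Gal(L/\BQ)$, restriction of the two-variable object over $L$ to the cyclotomic line identifies, in the two eigenspaces, the modules $X_\circ(E)$ and $X_\circ(E^K)$ of \eqref{ssQ} with the signed $p$-adic $L$-functions $\CL_p^\circ(E)$ and $\CL_p^\circ(E^K)$. Kobayashi's control theorem for the signed Selmer groups (with no exceptional zero, as $E$ has good reduction at $p$) then transfers the divisibility from $\Lambda_L$ down to $\Lambda$, giving $\xi_\Lambda(X_\circ(E)) \subseteq (\CL_p^\circ(E))$ and, simultaneously, the same inclusion for $E^K$. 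The conditions that $\disc K$ be coprime to $Np$ and divisible only by primes of good ordinary reduction for $E$ are precisely what ensure that $E^K$ is again supersingular exactly at $p$, has conductor prime to $p$, and has local behaviour at its bad primes compatible with the construction over $L$, so that the argument applies to $E^K$ verbatim. Combined with the Euler system divisibility, this yields $(\CL_p^\circ(E)) = \xi_\Lambda(X_\circ(E))$ for $E$ and for every such $E^K$.

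The main obstacle is the pair of explicit reciprocity laws for $\CZ$. At $\ov\fp$ one must identify its image with the Bertolini--Darmon--Prasanna $p$-adic $L$-function, with the correct period and local interpolation factor; at $\fp$ one must match its image against Kobayashi's signed Coleman maps and Pollack's $\CL_p^\pm$, carefully tracking the Gauss sums and cyclotomic-polynomial factors $e_p^\pm(\zeta)$ in the interpolation formulas and reconciling the split-at-$p$ Iwasawa-cohomology formalism over $L$ with Kobayashi's signed local conditions \eqref{ssQ} over $\BQ$. Secondary difficulties are obtaining the anticyclotomic main conjecture over $L$ uniformly across all semistable $E$ --- in particular handling $p = 3$ and the additive (potentially good) primes introduced in passing to $E^K$ --- and the bookkeeping with Tamagawa numbers needed to move between the Selmer groups $S_\circ(E)$ of \eqref{ssQ} and the imprimitive Selmer groups on which the Euler-system and Heegner-point arguments are naturally phrased.
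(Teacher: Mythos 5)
Your overall skeleton---Kato--Kobayashi's Euler-system divisibility over $\BQ$, plus a reverse divisibility obtained over an auxiliary imaginary quadratic field $L$ with $p$ split, transported through the two reciprocity laws of the zeta element and then descended along the cyclotomic line---is the same as the paper's. The gap is in how you propose to obtain the reverse divisibility over $L$. You impose the Heegner hypothesis on $L$ and then invoke, as known input, the anticyclotomic (Heegner-point) main conjecture at a supersingular prime, asserting that ``the complementary half is available in the generality required here.'' It is not: the half you need is the Eisenstein-congruence half (the Heegner-point Euler system of Howard type only yields the Kolyvagin-style upper bound on the Selmer group, so you have also mislabelled which half it provides), and the Eisenstein-congruence divisibilities that actually exist in the supersingular setting---Theorem \ref{GMC_r}, due to Wan and Castella--Liu--Wan via $U(3,1)$---require the hypothesis (spl), i.e.\ at least one prime dividing $N$ inert in $L$, which is incompatible with the Heegner hypothesis you impose. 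The supersingular Heegner-point main conjecture results in the literature are themselves deduced from divisibilities of this kind (or from the results of the present paper), so your central input is either unavailable in the stated generality or circular.

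The paper instead takes $L$ \emph{non}-Heegner: semistability together with Ribet's level raising produces a prime $q || N$ at which $\ov{\rho}$ is ramified, and $L$ is chosen with $q$ inert and the remaining primes dividing $N$ split, so that \eqref{def} holds. Then Theorem \ref{GMC_r} gives the Greenberg divisibility over $L$, the vanishing of the anticyclotomic $\mu$-invariant (Pollack--Weston, with the ramification at $q$ controlling the Tamagawa factors) upgrades it to an integral divisibility in $\Lambda_L$ (Theorem \ref{KoMC'_lb}), the zeta element together with Proposition \ref{Eq} converts it into $\CL_{p}^{\circ}(E_{/L}) \mid \xi(X_{\circ}(E_{/L}))$, and the control theorem (Proposition \ref{ctl_st}) descends this to $\CL_p^\circ(E)\,\CL_p^\circ(E^L)\mid \xi_\Lambda(X_\circ(E))\,\xi_\Lambda(X_\circ(E^L))$, after which Kato--Kobayashi applied to both $E$ and $E^L$ forces equality. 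Note that your proposal never uses the semistability hypothesis in an essential way, which is a symptom of the missing step: in the paper it is precisely what guarantees the ramified inert prime needed for the integral Eisenstein-congruence input.
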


This provides the first cases of Kobayashi's main conjecture for non-CM elliptic curves. 
\begin{remark}\noindent
\begin{itemize}
\item[(i)]The above theorem was first announced by the fourth-named author in 2014 in \cite{W}. 
The pertinent parts of this paper supersede the prior announcement, 
and the proof realises the strategy outlined therein. The preprint \cite{W} is no longer intended for publication. 
\item[(ii)] The CM case of Conjecture \ref{Kob} was established by Pollack and Rubin \cite{PoRu} in 2004. 
\end{itemize}
\end{remark}

Theorem \ref{thmA} has the following applications to the BSD conjecture. 

\subsubsection*{$p$-part of the BSD formula} 
\begin{thm}\label{corA_thmA}
Let $E_{/\BQ}$ be a semistable elliptic curve, and $p>2$ a supersingular prime. If $p=3$, suppose that  
\eqref{h4} holds.
If $\ord_{s=1}L(s,E_{/\BQ})=r\leq1$, then the $p$-part of the BSD formula is true, i.e.
$$
\bigg{|}\frac{L^{(r)}(1,E_{/\BQ})}{\Omega_{E}R(E_{/\BQ})}\bigg{|}_{p}^{-1}
=
\bigg{|}\#\Sha(E_{/\BQ}) \cdot \prod_{q|N}c_{q}(E_{/\BQ})
\bigg{|}_{p}^{-1}
$$
for $\Omega_{E} \in \BC^{\times}$ the N\'eron period and $R(E_{/\BQ})$ the regulator. Moreover, the same holds for any quadratic twist $E^K$ as in Theorem~\ref{thmA}.
\end{thm}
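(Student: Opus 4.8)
The plan is to derive the $p$-part of the BSD formula from Kobayashi's main conjecture (Theorem~\ref{thmA}) by a control-theorem argument, combined in the rank-one case with the supersingular $p$-adic Gross--Zagier formula. First, a few reductions. Since $\ord_{s=1}L(s,E_{/\BQ})=r\le1$, the theorems of Gross--Zagier, Kolyvagin and Rubin already give $\rank_{\BZ}E(\BQ)=r$ and $\#\Sha(E_{/\BQ})<\infty$, so both sides of the asserted identity are defined, with $R(E_{/\BQ})=1$ when $r=0$. Because $p$ is supersingular, $E[p]$ is irreducible as a $G_\BQ$-module and as a $G_{\BQ_p}$-module; in particular $E(\BQ)[p]=0$, and together with $p>2$ this makes $\#E(\BQ)_{\tor}$ (and, for $E^K$, the Manin constant) $p$-adic units, invisible to $|\cdot|_p$. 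Finally, the quadratic twist $E^K$ is handled by running the whole argument for $E^K$ using the twisted half of Theorem~\ref{thmA}; the hypothesis that $\disc(K)$ is prime to $Np$ and divisible only by primes of ordinary reduction for $E$ ensures that at the additional (additive) bad primes $q\mid\disc(K)$ the local Selmer comparison and the Tamagawa factor $c_q(E^K_{/\BQ})$ are treated exactly as in the ordinary-prime case. We write only $E$ below.

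\emph{Rank zero.} Here $L(1,E_{/\BQ})\ne0$, and Pollack's interpolation at $t=0$ gives $\phi_{1}(\CL_p^{\circ}(E))=u^{\circ}\cdot L(1,E_{/\BQ})/\Omega_E$, with $\phi_1\colon\Lambda\to\BZ_p$ the augmentation ($\gamma_\cyc\mapsto1$) and $u^{\circ}\in\{2,\,p-1\}$ a $p$-adic unit; fix either sign $\circ$. By Theorem~\ref{thmA}, $\CL_p^{\circ}(E)$ generates $\xi_\Lambda(X_{\circ}(E))$, so the nonvanishing of $\phi_1(\CL_p^{\circ}(E))$ forces $X_{\circ}(E)_\Gamma$ to be finite; as $X_{\circ}(E)$ has no nonzero finite $\Lambda$-submodule (using $E[p]$ irreducible and $E(\BQ_p)[p]=0$), also $X_{\circ}(E)^\Gamma=0$, and the specialization formula for characteristic ideals yields $|\phi_1(\CL_p^{\circ}(E))|_p^{-1}=\#X_{\circ}(E)_\Gamma$. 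Kobayashi's comparison exact sequences between the signed Selmer group $S_{\circ}(E)$ at the bottom of the cyclotomic tower and the classical $\Sel_{p^\infty}(E_{/\BQ})$, together with $E(\BQ)$ finite (so $\Sel_{p^\infty}(E_{/\BQ})\cong\Sha(E_{/\BQ})[p^\infty]$ and $R(E_{/\BQ})=1$), then identify $\#X_{\circ}(E)_\Gamma$ with the $p$-part of $\#\Sha(E_{/\BQ})\cdot\prod_{q\mid N}c_q(E_{/\BQ})$. This is the rank-zero $p$-part formula.

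\emph{Rank one.} Here $L(1,E_{/\BQ})=0<L'(1,E_{/\BQ})$, so $\CL_p^{\circ}(E)$ vanishes under $\phi_1$. Fix an imaginary quadratic field $F$, unramified at $p$ and satisfying the Heegner hypothesis for $N$, with $L(1,E^{F}_{/\BQ})\ne0$; the associated Heegner point $z_F\in E(F)$ is then non-torsion. The factorization $\CL_p^{\circ}(E_{/F})=\CL_p^{\circ}(E)\cdot\CL_p^{\circ}(E^{F})$, whose second factor is nonzero under $\phi_1$, reduces the leading term of $\CL_p^{\circ}(E)$ at $X=0$ to that of $\CL_p^{\circ}(E_{/F})$, which Kobayashi's supersingular $p$-adic Gross--Zagier formula evaluates as a $p$-adic unit times $(\log^{\circ}_{\omega_E}z_F)^2$, $\log^{\circ}$ being Kobayashi's signed logarithm. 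On the other hand, feeding Theorem~\ref{thmA} into the rank-one control theorem for $X_{\circ}(E)$ expresses the $p$-adic valuation of that same leading term through $\#\Sha(E_{/\BQ})[p^\infty]\cdot\prod_{q\mid N}c_q(E_{/\BQ})$ times a signed $p$-adic regulator, which Kobayashi's explicit reciprocity law at $p$ computes as $(\log^{\circ}_{\omega_E}z_F)^2$ up to the square of the Heegner index. Equating these two evaluations, and invoking the classical Gross--Zagier formula to rewrite $L'(1,E_{/\BQ})/(\Omega_E\,\widehat h(P))$ (with $P$ a generator of $E(\BQ)/E(\BQ)_{\tor}$) in terms of the Heegner index and $L(1,E^{F}_{/\BQ})/\Omega_{E^{F}}$, all transcendental quantities --- the signed logarithms, the N\'eron--Tate height, the index --- cancel, leaving $|L'(1,E_{/\BQ})/(\Omega_E R(E_{/\BQ}))|_p^{-1}=|\#\Sha(E_{/\BQ})\cdot\prod_{q\mid N}c_q(E_{/\BQ})|_p^{-1}$.

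The crux is the rank-one case: one must reconcile the three a priori distinct ``regulators'' in play --- the archimedean N\'eron--Tate regulator from the classical Gross--Zagier formula, the signed $p$-adic regulator from the supersingular $p$-adic Gross--Zagier formula, and the lattice index from the rank-one control theorem for $X_{\circ}(E)$ --- together with all interpolation constants, local terms at $p$, and Heegner and modular-degree factors, so that every transcendental and only-conjecturally-nonzero quantity cancels and one is left with an identity of $p$-powers. This is precisely where Kobayashi's explicit reciprocity laws in carefully normalized form, and the nonvanishing $\log^{\circ}_{\omega_E}z_F\ne0$ for the correct sign $\circ$ (a consequence of $z_F$ being non-torsion), are needed. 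Given Theorem~\ref{thmA}, the rank-zero case is by contrast a routine application of Kobayashi's comparison sequences.
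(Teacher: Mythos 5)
Your overall strategy matches the paper's: in rank zero, descend the supersingular main conjecture (the paper phrases this via the equivalent Kato main conjecture and Kato's descent, and records your signed-control variant as an alternative), and in rank one combine the main conjecture with Kobayashi's supersingular $p$-adic Gross--Zagier formula; the reduction of the twist $E^K$ to the twisted half of Theorem~\ref{thmA} is also as in the paper. So the architecture is right, and the rank-zero half and the twist handling are fine.

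However, the rank-one step as written contains a genuine error: you have misidentified the transcendental invariant that is to cancel. Kobayashi's $p$-adic Gross--Zagier formula \cite{Ko1} does not evaluate the central derivative of $\CL_p^{\circ}(E_{/F})$ as a unit times $(\log^{\circ}_{\omega_E}z_F)^2$; it evaluates it as an explicit factor times the signed cyclotomic $p$-adic height $\hat{h}_p^{\circ}(z_F)$. Likewise, on the algebraic side the leading coefficient of $\xi_{\Lambda}(X_{\circ}(E))$ at the trivial character in the corank-one situation involves the signed $p$-adic regulator (a determinant of signed $p$-adic heights), and there is no ``explicit reciprocity law'' converting that regulator into $(\log^{\circ}_{\omega_E}z_F)^2$ up to the Heegner index. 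The square-of-the-logarithm evaluation you invoke is the content of the $p$-adic Waldspurger (BDP) formula \cite{BDP1} for the anticyclotomic/Greenberg $p$-adic $L$-function; that is the basis of the alternative Jetchev--Skinner--Wan route \cite{JSW}, which the paper explicitly distinguishes from its own proof and which needs different inputs (in particular the $r=0$ case over $\BQ$). As a consequence, your claimed key non-vanishing, ``$\log^{\circ}_{\omega_E}z_F\neq0$ because $z_F$ is non-torsion,'' is not the statement the height-based argument requires: what is needed is the non-triviality of the signed $p$-adic height in the rank-one setting (equivalently $\hat{h}_p^{\circ}(z_F)\neq0$ for a suitable sign $\circ$), which does not follow from $z_F$ being non-torsion and is precisely the delicate input supplied by \cite{Ko1}, \cite{Ko2} and \cite[Cor.~A.5]{BKO2}. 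If you replace ``signed logarithm squared'' by ``signed $p$-adic height'' on both sides and supply that non-degeneracy, your cancellation scheme becomes the paper's proof; as written, the two leading-term evaluations you propose to equate are not what the cited theorems provide.
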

\noindent The proof of the $r=1$ case is based on the $p$-adic Gross--Zagier formula \cite{Ko1}. 

\subsubsection*{$p$-converse to the Gross--Zagier and Kolyvagin theorem}
\begin{thm}\label{corB_thmA}
Let $E_{/\BQ}$ be a semistable elliptic curve, and $p>2$ a supersingular prime. If $p=3$, suppose that  
\eqref{h4} holds. Then
$$
\corank_{\BZ_{p}}\Sel_{p^\infty}(E_{/\BQ})=0 \implies L(1,E_{/\BQ}) \neq 0.
$$
Moreover, the same holds for any quadratic twist $E^K$ as in Theorem~\ref{thmA}.
\end{thm}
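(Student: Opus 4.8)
The plan is to derive Theorem~\ref{corB_thmA} from Theorem~\ref{thmA} (Kobayashi's main conjecture for semistable $E$ at supersingular $p$) together with the supersingular $p$-adic Gross--Zagier formula of \cite{Ko1}, following the by-now-standard Iwasawa-theoretic $p$-converse argument adapted to the signed setting. First I would record that the hypothesis $\corank_{\BZ_p}\Sel_{p^\infty}(E_{/\BQ})=0$ means $\Sel_{p^\infty}(E_{/\BQ})$ is finite; a control theorem for the signed Selmer groups (descent from $\BQ_\infty$ to $\BQ$, as in \cite{Ko}) then shows that the signed Selmer modules $X_{\circ}(E)$ have trivial $\Gamma$-coinvariants up to finite error, hence $(X_{\circ}(E))_{\Gamma}$ is finite and in particular $\gamma_{\cyc}-1$ is not a zero divisor on $X_{\circ}(E)$; equivalently the characteristic ideal $\xi_{\Lambda}(X_{\circ}(E))$ is coprime to the augmentation ideal, i.e.\ its image under $\phi_1$ (the trivial-character specialisation $\gamma_\cyc\mapsto 1$) is nonzero. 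By Theorem~\ref{thmA} we conclude $\phi_1(\CL_p^{\circ}(E))\neq 0$ for at least one — in fact, after checking both signs, for the relevant — sign $\circ$.

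Next I would translate the nonvanishing of $\phi_1(\CL_p^{\pm}(E))$ into the nonvanishing of $L(1,E_{/\BQ})$ via the interpolation formula for Pollack's $p$-adic $L$-functions. The subtlety is exactly the one that makes the supersingular case harder than the ordinary case: the trivial character $\zeta=1$ corresponds to $t=0$, which is an \emph{even} exponent, so it lies in the interpolation range of $\CL_p^{+}(E)$ but \emph{not} of $\CL_p^{-}(E)$. For $\CL_p^+$ the interpolation factor at $t=0$ is $e_p^+(\zeta)=2\neq 0$, so $\phi_1(\CL_p^{+}(E)) = 2\,L(1,E_{/\BQ})/\Omega_E$ up to a $p$-adic unit, and nonvanishing of the former forces $L(1,E_{/\BQ})\neq 0$. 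Thus the clean path is to run the descent argument for the $+$-Selmer group: finiteness of $\Sel_{p^\infty}(E_{/\BQ})$ $\Rightarrow$ $(X_+(E))_\Gamma$ finite $\Rightarrow$ $\phi_1(\xi_\Lambda(X_+(E)))\neq 0$ $\Rightarrow$ $\phi_1(\CL_p^+(E))\neq 0$ $\Rightarrow$ $L(1,E_{/\BQ})\neq 0$. One must be slightly careful that the control theorem genuinely relates $\Sel_{p^\infty}(E_{/\BQ})$ (the classical Bloch--Kato Selmer group) to $S_+(E)_\Gamma$ and not to some modified group; this is where the precise definitions of $H^1_+(\BQ_p,-)$ and the fact that at $p=2$ is excluded come in, and where, if $p=3$, the hypothesis \eqref{h4} is used to guarantee the signed submodules behave as expected.

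For the quadratic twist statement, the same argument applies verbatim to $E^K=E\otimes\chi_K$: Theorem~\ref{thmA} already includes the twisted main conjecture $(\CL_p^{\circ}(E^K))=\xi_\Lambda(X_\circ(E^K))$ under the stated hypotheses on $K$ (discriminant coprime to $Np$, ramified only at ordinary primes, so that $p$ remains supersingular for $E^K$ and the semistability-type input is preserved), and Pollack's $p$-adic $L$-function and its interpolation formula exist for $E^K$ with the same shape; so finiteness of $\Sel_{p^\infty}(E^K_{/\BQ})$ yields $L(1,E^K_{/\BQ})\neq 0$ by the identical chain of implications.

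The main obstacle, in my view, is not any single deep new input — both Theorem~\ref{thmA} and the $p$-adic Gross--Zagier formula are being invoked as black boxes — but rather the careful bookkeeping of the control/descent step in the signed setting: one must show that the finiteness hypothesis on the classical $p^\infty$-Selmer group propagates correctly to the $\Gamma$-coinvariants of $X_+(E)$, accounting for the local conditions at $p$ (Kobayashi's $\pm$-norm compatible subgroups and their annihilators), at the bad primes $v\mid N$, and for any error terms coming from $H^0$ and $H^2$ in the descent; and to confirm that the relevant specialisation is the $+$-one so that the interpolation factor $e_p^+(1)=2$ is a $p$-adic unit (using $p>2$). Equivalently, one could phrase the whole argument through the exact sequence \eqref{ex} and a comparison of coranks, but the signed local conditions still need to be handled with care. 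Once that is in place, the implication is immediate.
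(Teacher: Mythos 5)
Your argument is correct and is essentially the paper's (implicit) route to Theorem~\ref{corB_thmA}: deduce from Theorem~\ref{thmA} plus Kobayashi's control theorem that finiteness of $\Sel_{p^\infty}(E_{/\BQ})$ forces the signed characteristic ideal, hence $\mathcal{L}_p^{\circ}(E)$, to be nonzero at the trivial character, and then conclude $L(1,E_{/\BQ})\neq 0$ from the interpolation formula; the same chain applies verbatim to the twists $E^K$ covered by Theorem~\ref{thmA}. Two harmless slips: in the paper's normalisation the trivial character ($t=0$) lies in the interpolation range of \emph{both} signed $p$-adic $L$-functions (with factors $2$ and $p-1$, both nonzero for $p>2$), so either sign works, and the supersingular $p$-adic Gross--Zagier formula you invoke at the outset is not needed for this statement --- it enters only in the rank-one part of Theorem~\ref{corA_thmA}.
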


\subsubsection*{The BSD conjecture for infinite families of non-CM curves}

\begin{thm}\label{nCM}
Let $E$ be an elliptic curve of conductor $N$ denoted by
 $46a1$, $62a1$, $66b1$, 
$69a1$, $77c1$, $94a1$, $105a1$, $106d1$,
$114b1$, $115a1$, $118c1$, $118d1$, $141b1$, $141c1$, $141e1$
or $142c1$
 in Cremona's labelling. 
Let $M>1$ be a square-free integer with $(M,N)=1$ and $E^{M}$ the quadratic twist of $E$ by the character associated to the extension $\mathbb{Q}(\sqrt{M})/\BQ$. Suppose that the following conditions hold:
\begin{itemize}
\item[(a)]
$L(1,E^{M})\not=0$, and  
\item[(b)] $E$ has ordinary reduction at the primes dividing $M$. 
\end{itemize}
Then the BSD 
conjecture holds for 
$E^{M}$, i.e. $E^{M}(\BQ)$ and $\Sha(E^{M})$ are finite, and 
 $$
 \frac{L(1,E^{M})}{\Omega_{E^{M}}}
=
\frac{\# \Sha(E^{M}) \cdot \prod_{\ell \nmid \infty} c_{\ell}(E^{M})}{\#E^{M}(\BQ)_{\tor}^2}.
$$ 
Moreover, the conditions (a) and (b) are satisfied by infinitely many $M$. 
\end{thm}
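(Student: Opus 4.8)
The plan is to deduce Theorem~\ref{nCM} from the $p$-part of the BSD formula at \emph{all} primes $p$, combined with a known rank-zero analytic nonvanishing input. Concretely, under hypothesis (a) we have $\ord_{s=1}L(s,E^M_{/\BQ})=0$, so by the theorem of Gross--Zagier, Kolyvagin and Rubin (the implication (3)$\Rightarrow$(1) in Conjecture~\ref{BSD}), $E^M(\BQ)$ is finite and $\Sha(E^M_{/\BQ})$ is finite. It then remains to verify the exact BSD formula, i.e. to establish for every prime $p$ the identity
$$
\Bigl|\tfrac{L(1,E^{M})}{\Omega_{E^{M}}}\Bigr|_{p}^{-1}
=
\Bigl|\tfrac{\#\Sha(E^{M})\cdot\prod_{\ell}c_{\ell}(E^{M})}{\#E^{M}(\BQ)_{\tor}^{2}}\Bigr|_{p}^{-1},
$$
since an equality of positive rationals that holds $p$-adically for all $p$ is an equality. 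So the whole statement reduces to proving the $p$-part of the rank-zero BSD formula for $E^M$ for each $p$, and then exhibiting infinitely many $M$ satisfying (a) and (b).

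For the $p$-part, I would split the primes into three ranges. For $p$ supersingular for $E$ (equivalently for $E^M$, as $(M,N)=1$) with $p>2$, Theorem~\ref{corA_thmA} applies directly: the sixteen listed curves are semistable (squarefree conductor), $E^M$ is a quadratic twist by a field of discriminant coprime to $Np$ and ramified only at primes of ordinary reduction (this is exactly hypothesis (b), which forces $E$ ordinary at all $\ell\mid M$), so the $r=0$ case of Theorem~\ref{corA_thmA} gives the $p$-part for all odd supersingular $p$, including $p=3$ once one checks \eqref{h4} holds for these curves. For $p$ ordinary for $E$ with $p>2$, one invokes the ordinary-prime $p$-part of BSD referenced as Theorem~\ref{corA'} (built on Kato \cite{K} and Skinner--Urban \cite{SU}), again applicable because $E^M$ has analytic rank zero and $E^M$ is still modular with the requisite Galois-image/ramification hypotheses, which hold for these explicit curves. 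Finally, for $p=2$ the $2$-part of the BSD formula for these particular curves and their quadratic twists must be handled by a separate, more elementary argument — this is precisely why the list is finite and explicit: these are exactly the curves for which a $2$-descent / Heegner-point or root-number analysis is feasible, and one cites the relevant computation (of the type in work of the Dokchitsers, or Tian--Yuan--Zhang, or a direct descent) establishing $|\cdot|_2$-equality for all admissible twists $E^M$.

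The existence of infinitely many $M$ satisfying (a) and (b) is a sieve/analytic statement. One restricts $M$ to products of primes $\ell$ in the set of ordinary reduction primes for $E$, which has density $1$ (by Elkies, or just positive density by elementary means sufficient here, together with the fact that the supersingular primes have density zero for non-CM $E$), so condition (b) is cheaply satisfied by a positive-density set of prime factors. For (a), one uses the nonvanishing results for quadratic twists of a fixed modular $L$-function — the work of Waldspurger / Bump--Friedberg--Hoffstein / Murty--Murty, refined by Ono--Skinner and Friedberg--Hoffstein, which guarantees that $L(1,E^M)\neq 0$ for a positive proportion of squarefree $M$ in any fixed arithmetic progression or supported on any infinite admissible set of primes, provided the relevant root number is $+1$. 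So one intersects the ``(b)-admissible'' $M$ with a congruence condition making the sign $+1$ and applies the nonvanishing theorem to produce infinitely many such $M$; for at least one of the sixteen curves (indeed for each) the sign condition is compatible with infinitely many squarefree $M$ built from ordinary primes, which is a finite check.

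The main obstacle I anticipate is the prime $p=2$: neither Theorem~\ref{thmA}/\ref{corA_thmA} nor the ordinary Iwasawa-theoretic input Theorem~\ref{corA'} covers $p=2$, so the $2$-part of BSD for the twisted curves $E^M$ has to come from a genuinely different, case-by-case argument (explicit $2$-descent, or $2$-adic Heegner point / Kolyvagin-system bounds, or the combinatorial results on $2$-Selmer groups of quadratic twists). Getting a clean statement valid for \emph{all} admissible $M$ simultaneously — rather than curve-by-curve and twist-by-twist — is the delicate point, and it is the reason the theorem is phrased for a hand-picked finite list of conductors where such uniform $2$-descent results are available in the literature. The analytic nonvanishing step is standard but one must be careful that the infinitude of $M$ is compatible with \emph{both} the sign-$+1$ constraint \emph{and} the ``only ordinary primes divide $M$'' constraint; this is a mild but necessary bookkeeping check.
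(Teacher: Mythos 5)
Your overall architecture matches the paper's: hypothesis (a) gives finiteness via Gross--Zagier--Kolyvagin, positivity of $L(1,E^M)/\Omega_{E^M}$ (Kohnen--Zagier \cite{KZ}) reduces the formula to a $p$-part statement for every prime $p$, the odd supersingular primes are handled by the new supersingular input (Theorem~\ref{corA_thmA}, resting on Theorem~\ref{thmA}), the prime $p=2$ and the production of infinitely many admissible $M$ are outsourced to the explicit quadratic-twist literature for these particular curves (the paper packages both the nonvanishing and the $2$-part via \cite{CLZ} and \cite{Zi}, rather than separating a Bump--Friedberg--Hoffstein-type nonvanishing statement from an independent ``uniform $2$-descent'' as you propose). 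This is essentially the proof of Theorem~\ref{theoretic} combined with Example~\ref{exam}.

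However, there is a genuine gap in your treatment of the odd primes $p\mid M$. For such $p$ the twisting character $\chi_M$ is ramified at $p$, so $E^M$ has \emph{additive} (only potentially good) reduction at $p$; it is not a good ordinary prime for $E^M$, and the standard rank-zero inputs you invoke --- Kato plus Skinner--Urban, or Theorem~\ref{corA'} (which is in any case a rank-one statement, so mis-cited here) --- do not apply to $E^M$ at $p$. This is exactly the case for which the paper proves a new result: Theorem~\ref{KaMC_r}(c) establishes Kato's main conjecture for the quadratic twist $g\otimes\chi_K$ with $p\mid\operatorname{disc}(K)$, via a modified Eisenstein congruence argument (nearly ordinary at $\lambda$, triple-product Fourier--Jacobi coefficients in place of the Rankin--Selberg ones, with the nonvanishing inputs of Finis and Hung), and the $p$-part of BSD for $E^M$ at $p\mid M$ is then deduced by descent as in the proof of Corollary~\ref{BSD_f_Ko}. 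Hypothesis (b) exists precisely because this ramified-twist main conjecture is only available when $E$ is ordinary at $p$; your proposal silently lumps these primes in with the good ordinary ones and so misses the key new ingredient. Two smaller omissions: your trichotomy (supersingular/ordinary/$p=2$) does not cover the multiplicative primes $p\mid N$ of these semistable curves, for which the paper invokes \cite{Sk0}; and in your construction of infinitely many $M$ you must produce twists for which the $2$-part of BSD and the nonvanishing hold \emph{simultaneously} (together with the ordinarity constraint on the prime divisors of $M$), which is what the cited results of \cite{CLZ,Zi} provide and what your separate sieve-plus-descent sketch leaves unverified.
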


\noindent Our proof of this theorem is based on Theorem~\ref{corA_thmA} and 
prior work on the $p$-part of the BSD formula. The existence of infinitely many $M$ satisfying the conditions (a) and (b) relies on \cite{CLZ,Zi}.

\begin{remark}\noindent
\begin{itemize}
\item[(i)] The full BSD conjecture for elliptic curves without complex multiplication had previously only been established for finitely many such curves, combining theoretical results and numerical computations. 
\item[(ii)] Theorem~\ref{nCM} yields new cases of the conjecture of Flach and Morin \cite{FM} for zeta functions of arithmetic surfaces: Let $X_{/\BQ}$ 
be a principal homogeneous space of $E^{M}_{/\BQ}$ as in Theorem~\ref{nCM} and $\CX \ra \Spec(\BZ)$ a proper regular model of $X$. Then ${\rm Br}(\CX)$ is finite and
the special value conjecture \cite[Conj.~5.12]{FM} for $\zeta(\CX, s)$ at $s = 1$ holds true (cf.~\cite{FS}).
\end{itemize}
\end{remark}

\subsubsection*{Kolyvagin's conjecture}
Theorem~\ref{thmA} also has application to Kolyvagin's conjecture on the non-triviality of the Euler system of Beilinson--Kato elements and the Heegner point Kolyvagin system,
but we do not elaborate on this here and instead refer the reader to \cite{BCGS,Ki2, Ki3, Sw}.

\subsubsection{Special cases of the Birch and Swinnerton-Dyer conjecture: ordinary primes}
In the case that $p$ is a prime of ordinary reduction for $E$ we also improve on some of the existing results towards
the $p$-part of the BSD conjecture and the $p$-converse for $E$.

\subsubsection*{$p$-part of the BSD formula, bis} We prove:

\begin{thm}\label{corA'}
Let $E_{/\BQ}$ be an elliptic curve of conductor $N$, and $p \nmid 2N$ an ordinary prime. 
Suppose that the following holds.
\begin{itemize}
\item[(irr$_{\BQ}$)] The mod $p$ Galois representation $\ov{\rho}:G_{\BQ}\ra \Aut_{\BF_{p}}E[p]$ is absolutely irreducible. 
\item[(ram)] There exists a prime $\ell || N$ such that $\ov{\rho}$ is ramified at $\ell$.
\end{itemize}
If $\ord_{s=1}L(s,E_{/\BQ})=1$, then the $p$-part of the BSD formula holds, i.e.
$$
\bigg{|}\frac{L'(1,E_{/\BQ})}{\Omega_{E}R(E_{/\BQ})}\bigg{|}_{p}^{-1}
=
\bigg{|}\#\Sha(E_{/\BQ}) \cdot \prod_{q|N}c_{q}(E_{/\BQ})
\bigg{|}_{p}^{-1}
$$
for $\Omega_{E} \in \BC^{\times}$ the period and $R(E_{/\BQ})$ the regulator. 
\end{thm}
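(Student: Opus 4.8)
Since $\ord_{s=1}L(s,E_{/\BQ})=1$, the sign of the functional equation is $-1$, and the plan is to descend from an auxiliary imaginary quadratic field. First I would choose an imaginary quadratic field $K$ with discriminant $D_K$ prime to $Np$, satisfying the Heegner hypothesis relative to $N$ (allowing the generalised hypothesis realised on a Shimura curve when $N$ is not squarefree), with $p$ split in $K$, with $\ov{\rho}|_{G_K}$ still absolutely irreducible and ramified at a prime $\ell\|N$ (possible by $(\mathrm{irr}_\BQ)$, $(\mathrm{ram})$), and with $L(1,E^{K}_{/\BQ})\neq 0$; the last condition can be met simultaneously with the local ones by the non-vanishing theorems of Bump--Friedberg--Hoffstein and Murty--Murty. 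Then $L(s,E_{/K})=L(s,E_{/\BQ})L(s,E^{K}_{/\BQ})$ has $\ord_{s=1}=1$, so by the Gross--Zagier formula the basic Heegner point $y_K\in E(K)$ is non-torsion, and by Kolyvagin's theorem $E(K)$ has rank $1$ and $\Sha(E_{/K})$ is finite. In particular $\rank E(\BQ)=1$, $E^{K}(\BQ)$ is finite, and, for the odd prime $p$ with $\ov\rho$ irreducible, $\Sha(E_{/K})[p^{\infty}]\cong\Sha(E_{/\BQ})[p^{\infty}]\oplus\Sha(E^{K}_{/\BQ})[p^{\infty}]$.

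Next, the complex Gross--Zagier formula writes $L'(1,E_{/K})$ as an explicit nonzero constant (involving $\Omega_E\Omega_{E^K}$, powers of $2$ and of $D_K$, the Manin constant, and local terms) times $\hat h_K(y_K)=[E(K):\BZ y_K+E(K)_{\tor}]^{2}\,R(E_{/K})$, and $R(E_{/K})=R(E_{/\BQ})$ since $E^{K}(\BQ)$ is finite. Combining this with $L'(1,E_{/K})=L'(1,E_{/\BQ})L(1,E^{K}_{/\BQ})$, with the rank-zero $p$-part BSD formula for $E^{K}_{/\BQ}$ — which holds under our hypotheses by the cyclotomic main conjecture (Kato \cite{K}, Skinner--Urban \cite{SU}) at the trivial character, $(\mathrm{irr}_\BQ)$ and $(\mathrm{ram})$ being inherited by $E^{K}$ as $D_K$ is prime to $N$ — and with the comparison $\ord_p\Omega_{E/K}=\ord_p(\Omega_E\Omega_{E^K})$ (the Manin constant being a $p$-adic unit for $p\nmid2N$ of good reduction), the desired formula for $E_{/\BQ}$ reduces to the $p$-adic index identity
\[
\ord_p\big([E(K):\BZ y_K+E(K)_{\tor}]\big)=\tfrac12\,\ord_p\big(\#\Sha(E_{/K})[p^{\infty}]\big)+\ord_p\Big(\textstyle\prod_{w}(\text{explicit local terms})\Big),
\]
where the local terms over $K$ are chosen to match $\prod_{q|N}c_q(E_{/\BQ})\cdot\prod_{q}c_q(E^{K})$ after bookkeeping with \eqref{ex}.

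The index identity I would derive from the anticyclotomic Iwasawa main conjecture for $E$ over $K$ at $p$: the Heegner points up the anticyclotomic $\BZ_p$-extension $K_\infty/K$ assemble into a norm-compatible class generating, modulo torsion, a rank-one submodule of the corank-one anticyclotomic Selmer module, and the two relevant characteristic ideals coincide. One divisibility is the Euler system bound of Kolyvagin and Howard; the reverse divisibility is where this paper's zeta element over $L=K$ enters, via its explicit reciprocity law at the primes above $p$: this identifies the image of the zeta/Heegner class under the filtration at one prime above $p$ with a BDP-type anticyclotomic $p$-adic $L$-function, and, combined with the (Skinner--Urban-type) rank-zero anticyclotomic main conjecture that the zeta-element main conjecture specialises to, upgrades the divisibility to an equality — in the generality needed here, i.e. permitting $p=3$ (under \eqref{h4}) and without the auxiliary local hypotheses of earlier treatments. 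Specialising the equality of characteristic ideals at the trivial anticyclotomic character and invoking the control theorem then produces the index identity, the local correction terms arising as the control-theorem error terms (Tamagawa factors, and the local condition at the split prime $p$).

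The main obstacle is exactly this reverse divisibility of the anticyclotomic main conjecture over $K$ in full generality, with no extra ramification or $p>3$ restriction — which is precisely what the zeta element and its explicit reciprocity laws are built to supply. A secondary but genuine technical point is the faithful propagation of all the ``up to $p$-adic unit'' discrepancies: the period comparison and Manin constant, the splitting $\Sha(E_{/K})=\Sha(E_{/\BQ})\oplus\Sha(E^{K})$, the precise matching of Tamagawa numbers over $K$ against those over $\BQ$, and the powers of $2$ and of $D_K$ in the Gross--Zagier constant — each of which must be shown to contribute trivially at the odd prime $p\nmid2N$.
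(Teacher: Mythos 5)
Your global outline matches the paper's: choose an auxiliary imaginary quadratic field satisfying the Heegner hypothesis with $p$ split and $L(1,E^{K})\neq 0$, prove a $p$-adic index identity $[E(K)\otimes\BZ_p:\BZ_p y_K]^2=\#\Sha(E_{/K})[p^\infty]\cdot\prod_\ell c_\ell^2$ up to units, and then conclude by Gross--Zagier over $K$ together with the rank-zero $p$-part of BSD for the twist (Kato plus Skinner--Urban). The divergence, and the gap, is in how you propose to get the index identity. You derive it from the anticyclotomic Heegner-point main conjecture, taking the integral Kolyvagin--Howard Euler-system divisibility as one input. But that integral divisibility is not available under the bare hypotheses (irr$_{\BQ}$) and (ram): in this paper it is stated (Theorem~\ref{HMC-ub}) integrally only under the big-image hypothesis \eqref{sur}, and rationally otherwise, and the resulting Heegner main conjecture (Theorem~\ref{HMC_r}) is likewise proved only with \eqref{sur}. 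Since a $p$-part BSD formula requires integral, not rational, control of characteristic ideals, your route as written does not cover the stated generality of the theorem. Your phrase that the reverse divisibility comes from the zeta element ``combined with the rank-zero anticyclotomic main conjecture that the zeta-element main conjecture specialises to'' is also not a result you can invoke: in the paper the zeta element converts Kato's main conjecture into the missing divisibility of the Heegner main conjecture only in the presence of Howard's divisibility (Proposition~\ref{HMC-prop}), so it does not bypass that input. A secondary caveat: the anticyclotomic descent at the trivial character is an exceptional specialisation (the value of the BDP $p$-adic $L$-function at $\mathbf{1}$ lies outside the interpolation range), so the control-theorem step there needs more care than your sketch suggests.

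The paper's proof avoids the Heegner Euler system altogether. It chooses $L$ with $\ord_{s=1}L(s,E_{/L})=1$, notes that (ram) plus (Heeg) force (irr$_L$), and invokes Kato's main conjecture as an \emph{equality} for both $E$ and $E^{L}$ under (irr$_{\BQ}$) and (ram) (Theorem~\ref{KaMC_r}(c), i.e.\ Kato's divisibility plus Skinner--Urban/Wan). The two-variable zeta element and its two explicit reciprocity laws (Proposition~\ref{Eq}) transport this cyclotomic equality over $L$ into the cyclotomic Greenberg main conjecture, whose specialisation at the identity character is computed on the analytic side by the $p$-adic Waldspurger formula of Bertolini--Darmon--Prasanna (Proposition~\ref{GRL=BDPL-II}) as $\log_{\omega}(y_L)^2$ up to explicit Euler factors, and on the algebraic side by a clean cyclotomic control theorem (Corollary~\ref{Grsz}) whose error terms are exactly the Tamagawa factors and the $p$-Euler factor. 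This yields the index identity under (irr$_{\BQ}$) and (ram) alone, which is precisely what your anticyclotomic route cannot deliver without strengthening the hypotheses. The remaining bookkeeping you list (periods, Manin constant, splitting of $\Sha$ and Tamagawa numbers over $K$ versus $\BQ$) is handled in the paper essentially as you anticipate.
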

The hypothesis (ram) is not satisfied by CM curves, however our approach also applies to the CM case; see~Theorem~\ref{p-BSD}.

\subsubsection*{$p$-converse to the Gross--Zagier and Kolyvagin theorem, bis} We prove:

\begin{thm}\label{corB'}
Let $E_{/\BQ}$ be an elliptic curve of conductor $N$, and $p\nmid 2N$ an ordinary prime. 
Suppose that the following holds.
\begin{itemize}
\item[(sur$_{\BQ}$)] The mod $p$ Galois representation $\ov{\rho}:G_{\BQ}\ra \Aut_{\BF_{p}}E[p]$ is surjective. 
\item[(ram)] There exists a prime $\ell || N$ such that $\ov{\rho}$ is ramified at $\ell$.
\end{itemize}
Then 
 $$
\corank_{\BZ_{p}}\Sel_{p^{\infty}}(E_{/\BQ})=1 
 \implies \ord_{s=1} L(s,E_{/\BQ})=1.
 $$
\end{thm}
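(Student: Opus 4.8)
Here is how I would prove Theorem~\ref{corB'}.

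The plan is to descend to a well-chosen imaginary quadratic field $L$, apply the main conjecture for $E$ over $L$ furnished by the zeta element of Theorem~\ref{thmZ} and its explicit reciprocity laws at the primes above $p$, and then return to $\BQ$ through the Gross--Zagier formula. First, since the $p$-parity conjecture for elliptic curves over $\BQ$ is known, the hypothesis $\corank_{\BZ_p}\Sel_{p^\infty}(E_{/\BQ})=1$ forces the global root number $w(E_{/\BQ})=-1$. I would then choose an imaginary quadratic field $L$ in which $p$ splits, satisfying the (generalized) Heegner hypothesis relative to $N$ under which the zeta element and the main conjecture for $E$ over $L$ are available, and with $L(1,E^{L}_{/\BQ})\neq 0$ for the quadratic twist $E^{L}$. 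Such an $L$ exists: the Heegner hypothesis gives $w(E_{/L})=-1$, hence $w(E^{L}_{/\BQ})=+1$ by the previous line, so $L(s,E^{L}_{/\BQ})$ vanishes to even order at $s=1$; and by the non-vanishing theorems for quadratic twists (Friedberg--Hoffstein, Murty--Murty, and refinements prescribing the splitting at $p$ and at the primes dividing $N$), after discarding the finitely many $L$ for which $\ov{\rho}|_{G_L}$ fails to inherit the consequences of (sur$_{\BQ}$) and (ram), one finds such an $L$ with $L(1,E^{L}_{/\BQ})\neq 0$.

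By Kato's theorem, $L(1,E^{L}_{/\BQ})\neq 0$ implies $\Sel_{p^\infty}(E^{L}_{/\BQ})$ is finite; since $p$ is odd, decomposing into eigenspaces for $\Gal(L/\BQ)$ gives $\corank_{\BZ_p}\Sel_{p^\infty}(E_{/L})=\corank_{\BZ_p}\Sel_{p^\infty}(E_{/\BQ})+\corank_{\BZ_p}\Sel_{p^\infty}(E^{L}_{/\BQ})=1+0=1$. Now let $\Lambda^{\mathrm{ac}}$ be the anticyclotomic Iwasawa algebra of $L$, let $\CX_\infty$ be the Pontryagin dual of the anticyclotomic Selmer group of $E$ over $L$, and let $\kappa_\infty\in H^1_{\Iw}$ be the Iwasawa cohomology class assembled from Heegner points, equivalently the relevant projection of the zeta element. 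Transporting the main conjecture for $E$ over $L$ through the explicit reciprocity law at the primes above $p$ yields the Heegner point main conjecture of Perrin--Riou over $L$: in particular, the characteristic ideal of the torsion submodule of $\CX_\infty$ is, up to units, the square of the characteristic ideal of $H^1_{\Iw}/\Lambda^{\mathrm{ac}}\kappa_\infty$ (here (ram) feeds into the arithmetic inputs behind this identity). Combining this with the corank-one statement above and a control theorem for the anticyclotomic tower — which is clean because $p$ splits in $L$, so there is no exceptional zero at $p$ — and with the unconditional non-vanishing of $\kappa_\infty$ in the tower (Cornut--Vatsal) together with Howard's Euler-system bound, one deduces that $\CX_\infty$ has $\Lambda^{\mathrm{ac}}$-rank exactly one, that the quotient of its torsion submodule by the augmentation ideal is finite, and hence, via the displayed identity, that the characteristic ideal of $H^1_{\Iw}/\Lambda^{\mathrm{ac}}\kappa_\infty$ does not vanish at the trivial character.

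Specializing $\kappa_\infty$ to the bottom of the tower and invoking the explicit reciprocity law once more — this identifies the pertinent image of $\kappa_\infty$ with the formal logarithm of the basic Heegner point $P_L\in E(L)$, equivalently with the central value of the anticyclotomic $p$-adic $L$-function — one concludes from the previous paragraph that $P_L\notin E(L)_{\tor}$. The Gross--Zagier formula then gives $L'(1,E_{/L})\neq 0$, i.e.\ $\ord_{s=1}L(s,E_{/L})=1$. Since $L(s,E_{/L})=L(s,E_{/\BQ})\,L(s,E^{L}_{/\BQ})$ and $L(1,E^{L}_{/\BQ})\neq 0$, this forces $\ord_{s=1}L(s,E_{/\BQ})=1$, as required.

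I expect the main obstacle to be the passage from the corank-one hypothesis to the non-vanishing of the Heegner point: one must pin down the $\Lambda^{\mathrm{ac}}$-rank of $\CX_\infty$, control the behaviour of its torsion submodule at the trivial character, and rule out that $\kappa_\infty$ or its bottom layer degenerates — precisely the point where the exact shape of the main conjecture proven via the zeta element and the explicit reciprocity laws at $p$ do the essential work. A secondary but genuine difficulty is arranging the auxiliary field $L$ so that $L(1,E^{L}_{/\BQ})\neq 0$ while preserving all the arithmetic hypotheses, which rests on analytic non-vanishing results for quadratic twists.
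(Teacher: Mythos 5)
Your overall route is the paper's: parity forces the root number of $E_{/\BQ}$ to be $-1$; an imaginary quadratic field $L$ is chosen via Friedberg--Hoffstein with $p$ split, the Heegner hypothesis, and $L(1,E^{L})\neq 0$; Kato's theorem gives finiteness of $\Sel_{p^\infty}(E^{L}_{/\BQ})$, so the Selmer corank over $L$ is one; one then establishes Perrin-Riou's Heegner point main conjecture over $L$, deduces that the Heegner point is non-torsion, and concludes by Gross--Zagier and the factorization $L(s,E_{/L})=L(s,E_{/\BQ})L(s,E^{L}_{/\BQ})$ (the paper carries out exactly this in \S\ref{HMC}, citing Burungale--Tian and Wan for the final step HPMC $\Rightarrow$ $p$-converse, which you re-sketch correctly in outline).

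The one genuine gap is the provenance of the main conjecture over $L$. You assert that it is ``furnished by the zeta element of Theorem~\ref{thmZ} and its explicit reciprocity laws'' and that transporting it through the reciprocity law at $p$ yields the Heegner point main conjecture. But the zeta element by itself proves no main conjecture: it only mediates between the various formulations (Propositions~\ref{Int_eq}, \ref{Eq}, \ref{Eq-He}), and under the hypotheses of the theorem (no squarefreeness of $N$, no auxiliary (def)/(indef)-type condition) neither the two-variable nor the Greenberg main conjecture over $L$ is available off the shelf. The Heegner point main conjecture is instead established directly (Proposition~\ref{HMC-prop} and Theorem~\ref{HMC_r}) by combining (i) the proven \emph{equality} in Kato's main conjecture for $E$ and $E^{L}$ (Skinner--Urban/Wan; this is where (irr$_\BQ$) and (ram) enter), (ii) Howard's Heegner-point Euler-system divisibility, which is precisely where (sur$_\BQ$) is used --- as one half of the main conjecture, not merely as a structural bound on the dual Selmer group as in your sketch --- and (iii) the zeta-element equivalences together with a twisting/descent argument via the Beilinson--Flach Euler system. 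As written, your proposal assumes this hardest input. A minor further slip: the specialization of the Heegner class at the trivial character is the Kummer image of $P_L$ itself; the logarithm/BDP identification concerns its localization at $\bar v$ under the Perrin-Riou regulator, which is how the reciprocity law actually intervenes in the descent.
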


\begin{remark}
In both Theorems \ref{corA'} and \ref{corB'} the hypothesis (ram) can be replaced with the existence of an auxiliary real quadratic field as in \cite[Thm. 4]{W'}. 
\end{remark}

\subsubsection{Perrin-Riou's conjecture} 
In the early 90's Rubin \cite{Ru1} established a link between elliptic units and rational points on a CM elliptic curve. Shortly later, Perrin-Riou \cite{PR} proposed a conjectural generalisation of this phenomemenon to arbitrary elliptic curves.

 For an elliptic curve $E_{/\BQ}$ and a prime $p$, put $V=T_{p}(E)\otimes \BQ_{p}$.

Let $H^{1}_{\rm f}(\BQ_{p},V) \subset H^{1}(\BQ_{p},V)$ be the Bloch--Kato subgroup.
Let $$z_{E} \in H^{1}(\BQ,V)$$ be the $p$-adic Beilinson--Kato element arising from the image of the Beilinson--Kato element ${\bf{z}}_{\gamma}(f_{E})$ in $H^{1}(\BQ,V)$ (cf.~\S\ref{BK-rationals}) with $f_E$ the associated elliptic newform and $\gamma$ as in \S\ref{Periods}. 
The Belinson--Kato elements are constructed from special elements in the $K_2$ groups of modular curves, whose definition relies on Siegel units.
By Kato's explicit reciprocity law, 
$$
\loc_{p}(z_{E}) \in H^{1}_{\rm f}(\BQ_{p},V) \iff L(1,E_{/\BQ})=0
$$ 
for $\loc_{p}:H^{1}(\BQ,V) \ra H^{1}(\BQ_{p},V)$ the localisation (cf.~\cite[Thm.~12.5]{K}).  
Note that $H^{1}_{\rm f}(\BQ_{p},V)=E(\BQ_{p})\otimes \BQ_{p}$. 

In the case $\ord_{s=1}L(s,E_{/\BQ})>0$ Perrin-Riou conjectured
the $p$-adic Beilinson--Kato element is linked with the arithmetic of $E_{/\BQ}$ as follows.

\begin{conj}\label{PR0} 
Let $E_{/\BQ}$ be an elliptic curve and 
 $p$ a prime. 
Let $z_{E} \in H^{1}(\BQ,V)$ be the associated $p$-adic Beilinson--Kato element as above. 
Suppose that $L(1,E_{/\BQ})=0.$
Then there exists $P \in E(\BQ)$ with the following properties.

\begin{itemize}
\item[(a)] For  $\omega$ a N\'eron differential, 
$\log_{\omega}:E(\BQ_{p}) \ra \BQ_{p}$ the $p$-adic logarithm, we have
$$
\log_{\omega}(\loc_{p}(z_{E})) \doteq \log_{\omega}(P)^{2},
$$
where `$\doteq$' is equality up to  $\BQ^\times$-multiple.
 
\item[(b)] 
Moreover,  $$0 \neq P \in E(\BQ)\otimes_{\BZ}\BQ \iff  \ord_{s=1}L(s,E_{/\BQ})=1.$$ 
\end{itemize}
In particular, 
$
\loc_{p}(z_{E}) \neq 0 \iff \ord_{s=1}L(s,E_{/\BQ})=1.
$
\end{conj}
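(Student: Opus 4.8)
\textbf{Proof proposal for Perrin-Riou's Conjecture \ref{PR0}.}

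The plan is to deduce Conjecture \ref{PR0} from the supersingular and ordinary main conjectures established in this paper (Theorem~\ref{thmA}, together with the ordinary results behind Theorems~\ref{corA'} and \ref{corB'}), the $p$-adic Gross--Zagier formula, and a careful bookkeeping of $p$-adic logarithms. The key point is that when $L(1,E_{/\BQ})=0$, the element $z_E$ lies in $H^1_{\rm f}(\BQ_p,V)=E(\BQ_p)\otimes\BQ_p$ by Kato's reciprocity law quoted above, so the content is to produce a \emph{global} point $P$ whose logarithm controls $\log_\omega(\loc_p(z_E))$. First I would treat the analytic rank one case. Here the $p$-adic Gross--Zagier formula of \cite{Ko1} (and its ordinary counterpart) expresses the derivative of the relevant $p$-adic $L$-function in terms of $\log_\omega(P_{\rm Heeg})^2$ for a Heegner point $P_{\rm Heeg}$; combining this with Kato's explicit reciprocity law, which identifies the $p$-adic $L$-function with the image of $z_E$ under the Perrin-Riou/Coleman map, yields $\log_\omega(\loc_p(z_E))\doteq\log_\omega(P_{\rm Heeg})^2$, and the $p$-converse theorems guarantee $P_{\rm Heeg}$ generates $E(\BQ)\otimes\BQ$, giving (a) and the forward direction of (b). Crucially, one must know $\loc_p(z_E)\neq 0$ in this case, which follows from nonvanishing of the $p$-adic $L$-function derivative, i.e. from the main conjecture plus the Gross--Zagier input controlling the extra factor.

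Next I would handle the case $\ord_{s=1}L(s,E_{/\BQ})\geq 2$, where the assertion is that $\loc_p(z_E)=0$ and one may take $P=0$. This is the harder direction and, I expect, the main obstacle: one must show that higher-order vanishing of the complex $L$-function forces $\loc_p(z_E)=0$, equivalently that the image of $z_E$ under the Coleman map vanishes. The strategy is to run the main conjecture in the Iwasawa-theoretic (cyclotomic) setting: the characteristic ideal of the Selmer group $X_\circ(E)$ (resp. its ordinary analogue) equals the ideal generated by $\mathcal{L}_p^\circ(E)$, and if $\loc_p(z_E)\neq 0$ one gets, via a control theorem comparing the Iwasawa module with its specialization at the trivial character, an upper bound $\corank_{\BZ_p}\Sel_{p^\infty}(E_{/\BQ})\leq 1$. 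But by the parity results and Kato's divisibility, higher-order vanishing of $L(s,E_{/\BQ})$ forces $\mathcal{L}_p(E)$ to vanish to order $\geq 2$ at the trivial character (using that the $p$-adic and complex orders of vanishing agree up to the possible trivial zero, which is absent since $p\nmid 2N$ and $a_p\neq 1$ in the relevant cases), hence the Selmer corank is $\geq 2$, contradicting the bound. Therefore $\loc_p(z_E)=0$. This argument requires care about exceptional zeros and about the precise comparison between $\ord_{s=1}L(s,E)$ and the order of vanishing of the $p$-adic $L$-function; in the supersingular case one works with both signs $\circ=\pm$ simultaneously and uses Pollack's interpolation formulas recalled above.

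Finally I would assemble the pieces: part (a) in rank one from $p$-adic Gross--Zagier plus reciprocity; part (a) in rank $\geq 2$ trivially with $P=0$; part (b) forward from the $p$-converse theorems (Theorems~\ref{corB_thmA} and \ref{corB'}); part (b) backward from the rank-one construction; and the concluding equivalence $\loc_p(z_E)\neq 0\iff\ord_{s=1}L(s,E_{/\BQ})=1$ by combining the two cases. One subtlety worth isolating is the rationality claim ``$\doteq$ up to $\BQ^\times$'': the $p$-adic Gross--Zagier formula naturally gives equality up to a $\BQ^\times$-multiple built from the Heegner hypothesis data (a product of local terms and the Manin constant), so one must check these are algebraic and in fact rational, which is where one invokes the explicit form of the reciprocity law and standard facts about Siegel units and the Beilinson--Kato construction recalled in \S\ref{BK-rationals}. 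The main obstacle remains the rank $\geq 2$ vanishing, since it is the only step that genuinely needs the full strength of the main conjecture rather than just a divisibility.
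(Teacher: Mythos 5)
Your proposal has two genuine gaps, and they sit at the heart of the argument. First, the rank-one step rests on a misstatement of the $p$-adic Gross--Zagier formula: the formula of \cite{Ko1} (and Perrin-Riou's ordinary counterpart) computes $\CL_p'(1)$ in terms of the cyclotomic $p$-adic \emph{height} $\langle P_{\rm Heeg},P_{\rm Heeg}\rangle_p$, not $\log_\omega(P_{\rm Heeg})^2$; the formula involving $\log^2$ is the $p$-adic Waldspurger formula of Bertolini--Darmon--Prasanna \cite{BDP1} for the anticyclotomic BDP $L$-function, a different object evaluated outside its interpolation range. Consequently "Kato's reciprocity plus $p$-adic GZ" does not give (a): at the trivial character Kato's reciprocity only says $\exp^*(\loc_p(z_E))\sim L(1,E_{/\BQ})/\Omega=0$, extracting $\log_\omega(\loc_p(z_E))$ requires Perrin-Riou's derivative formula, and the GZ route then produces the height of $P$, not $\log_\omega(P)^2$; converting one to the other up to $\BQ^\times$ would need non-degeneracy (and rationality properties) of the $p$-adic height, which is open for non-CM curves --- this is precisely why the arguments of Perrin-Riou and of Büyükboduk--Pollack--Sasaki \cite{BPS} remained conditional. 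The paper avoids heights altogether: it chooses an auxiliary imaginary quadratic $L$ with $p$ split and $L(1,E^L)\neq 0$, compares the cyclotomic projection of the two-variable zeta element $\CZ^{\cdot}(E_{/L})$ with the Beilinson--Kato element via the reciprocity law at $v$ and the rank-one freeness of $H^1_{\rel,\bullet}(\cO_L[\frac1p],T(1)\otimes\Lambda)$ (Theorems \ref{cycIwL-Box} and \ref{2varZ-1varZ-thm}), and then the reciprocity law at $\ov{v}$ combined with the BDP formula yields $\log(\loc_p(z_E))\doteq\log_{\omega}(y_L)^2$ directly, with complex Gross--Zagier controlling when $y_L$ is non-torsion.

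Second, your rank $\geq 2$ step assumes that $\ord_{s=1}L(s,E_{/\BQ})\geq 2$ forces the cyclotomic $p$-adic $L$-function to vanish to order $\geq 2$ at the trivial character ("the $p$-adic and complex orders of vanishing agree up to the possible trivial zero"). That is an open conjecture, not a theorem: the interpolation property controls only central values of twists, not derivatives, and likewise "$\ord_{s=1}L\geq 2\Rightarrow\corank\,\Sel_{p^\infty}\geq 2$" is unknown, so no contradiction with a corank-$\leq 1$ bound can be extracted. The paper's treatment of this case is root-number theoretic instead: when $\epsilon(E_{/\BQ})=+1$ it chooses $L$ with $\epsilon(E_{/L})=+1$, so the anticyclotomic specialisation of $\CL_p^{\rm Gr}$ vanishes identically (Proposition \ref{GRLvan-prop}), forcing $\loc_{\ov v}$ of the specialised zeta element, and hence $\loc_p(z_E)$, to vanish; when $\epsilon=-1$ and the order is $\geq 3$, the Heegner point is torsion and its logarithm is $0$. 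Finally, note that the inputs you invoke (Theorem \ref{thmA}, Theorems \ref{corA'}, \ref{corB'}) carry semistability or (irr$_\BQ$)+(ram) hypotheses, whereas Theorem \ref{Thm_PR} is proved for all $p\nmid 2N$ (including Eisenstein primes), and the rationality "up to $\BQ^\times$" in (a) is itself nontrivial in the paper: it requires identifying the period unit $u_L$ as rational via the CM case of the conjecture (Appendix A) and a disjointness argument for fields of values of Hecke characters (Appendix B), a step absent from your outline.
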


Our main result towards Conjecture \ref{PR0} is the following

\begin{thm}\label{Thm_PR}
Let $E_{/\BQ}$ be an elliptic curve of conductor $N$, and 
$p \nmid 2N$ a prime. If $p=3$ and $E$ has supersingular reduction at 3, suppose that 
\eqref{h4} holds.
Then Conjecture \ref{PR0} is true for the pair $(E,p)$.
\end{thm}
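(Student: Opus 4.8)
\textbf{Proof proposal for Theorem~\ref{Thm_PR}.}

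The plan is to deduce Conjecture~\ref{PR0} for $(E,p)$ from the Iwasawa main conjectures now at our disposal -- Kobayashi's Conjecture~\ref{Kob} in the supersingular case (Theorem~\ref{thmA}) and the ordinary main conjecture of Kato--Skinner--Urban in the ordinary case -- together with the relevant $p$-adic Gross--Zagier formulas, following the philosophy that a $p$-converse statement plus a Gross--Zagier formula controls the size of $\loc_p(z_E)$. First I would dispose of the case $L(1,E_{/\BQ})\neq 0$: this is vacuous since the hypothesis of Conjecture~\ref{PR0} is $L(1,E_{/\BQ})=0$. So assume $L(1,E_{/\BQ})=0$; by Kato's explicit reciprocity law (cited in the excerpt) this is equivalent to $\loc_p(z_E)\in H^1_{\rm f}(\BQ_p,V)=E(\BQ_p)\otimes\BQ_p$. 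There are then two regimes. If $\ord_{s=1}L(s,E_{/\BQ})\geq 2$, then by the work of Gross--Zagier--Kolyvagin--Rubin the Heegner point vanishes and, via the main conjecture (supersingular or ordinary) applied over $\BQ$, the $p$-adic $L$-function vanishes to order $\geq 2$ at the trivial character, which forces $\loc_p(z_E)=0$ by a derivative/Perrin-Riou-regulator computation matching $\log_\omega(\loc_p(z_E))$ to the leading term of $\CL_p$; one then takes $P=0$ and both (a) and (b) hold trivially. The substantive case is $\ord_{s=1}L(s,E_{/\BQ})=1$.

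In the analytic rank one case, I would argue as follows. Let $L$ be a suitable imaginary quadratic field (satisfying the Heegner hypothesis relative to $N$, with $p$ split, and with the twist $E^L$ of analytic rank $0$ -- such $L$ exists by the nonvanishing results of Bump--Friedberg--Hoffstein / Murty--Murty / Waldspurger), and let $P_L\in E(L)$ be the associated Heegner point, which is non-torsion by Gross--Zagier. Set $P\in E(\BQ)\otimes\BQ$ to be (a rational multiple of) the trace or the appropriate eigencomponent of $P_L$, so that $P$ generates $E(\BQ)\otimes\BQ$ (using that $\mathrm{rank}\,E(\BQ)=1$, which holds by Kolyvagin since the analytic rank is one). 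For part (a), the key identity is the comparison of two formulas for $\log_\omega$ of a local point: on one side, the $p$-adic Gross--Zagier formula (Kobayashi's \cite{Ko1} in the supersingular case, Perrin-Riou's/Kobayashi's in the ordinary case) expresses the derivative of the $p$-adic $L$-function of $E/L$ at the central point in terms of $\log_\omega(\loc_p P_L)^2$; on the other side, the explicit reciprocity law (Kato in the ordinary case, Kobayashi/Wan's $\pm$-reciprocity in the supersingular case) relates the same $p$-adic $L$-function -- or its restriction/factorization at the trivial character together with the $E^L$ factor -- to $\log_\omega(\loc_p(z_E))$. Dividing, the $E^L$-factor is a nonzero $p$-adic number that, by the rank-zero $p$-part of BSD for $E^L$ (available from Theorem~\ref{corA_thmA} / Theorem~\ref{corA'} or classical results), is a $\BQ^\times$-multiple of the transcendental periods so that the ratio $\log_\omega(\loc_p(z_E))/\log_\omega(P)^2$ lands in $\BQ^\times$; this gives (a), with the nonvanishing of both sides guaranteed by $P\neq 0$ and by the $p$-adic Gross--Zagier formula being a nonzero multiple.

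For part (b), the implication $0\neq P\in E(\BQ)\otimes\BQ\Leftarrow \ord_{s=1}L(s,E_{/\BQ})=1$ is exactly the Gross--Zagier--Kolyvagin input just used (the Heegner point is non-torsion and $\mathrm{rank}=1$), while the converse $0\neq P\Rightarrow\ord_{s=1}L=1$ follows because, under $L(1,E_{/\BQ})=0$, a nonzero $P$ forces $\loc_p(z_E)\neq 0$ by part~(a), and then the main conjecture over $\BQ$ -- more precisely the $p$-converse Theorem~\ref{corB_thmA} in the supersingular case or Theorem~\ref{corB'}/\cite{Sk,W'} in the ordinary case, reformulated as ``$\loc_p(z_E)\neq 0\Rightarrow\ord_{s=1}L=1$'' -- closes the loop; note that one must check the hypotheses (irr) or (ram) are not actually needed here because semistability or the availability of the zeta-element main conjecture over $L$ (Theorem~\ref{thmZ}) removes them, which is precisely why the theorem is stated for all $p\nmid 2N$ with only the $p=3$ supersingular caveat \eqref{h4}. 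The final assertion ``$\loc_p(z_E)\neq 0\iff\ord_{s=1}L=1$'' is then immediate from (a) and (b).

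\textbf{Main obstacle.} I expect the crux to be the case analysis across reduction types and the careful bookkeeping of periods and Euler-type fudge factors when dividing the $p$-adic Gross--Zagier formula by the explicit reciprocity law: one must ensure that \emph{every} auxiliary factor (the $E^L$-period ratio, the $\pm$-local conditions' contributions at $p$ in the supersingular case, Tamagawa and torsion factors, and the normalization of $\gamma$ and $\omega$) is a genuine $\BQ^\times$-scalar and, crucially, \emph{nonzero} -- the latter being where the rank-zero $p$-part of BSD for the quadratic twist $E^L$ (hence Theorem~\ref{corA_thmA}/\ref{corA'}) is indispensable rather than cosmetic. A secondary difficulty is handling the supersingular $p=3$ boundary and confirming that \eqref{h4} is exactly the hypothesis under which Kobayashi's $p$-adic Gross--Zagier formula and the $\pm$-main conjecture both apply; outside that the argument is uniform.
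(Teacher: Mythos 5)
Your proposal follows a genuinely different route from the paper (main conjectures plus $p$-adic Gross--Zagier, in the spirit of the conditional approaches of Buyukboduk--Pollack--Sasaki and of Bertolini--Darmon--Venerucci cited in the introduction), but as written it has concrete gaps. The central identity you lean on --- that the $p$-adic Gross--Zagier formula of Perrin-Riou/Kobayashi expresses the central derivative of the cyclotomic $p$-adic $L$-function of $E/L$ as $\log_{\omega}(\loc_p P_L)^2$ --- is not what those formulas say: they compute that derivative in terms of the cyclotomic $p$-adic \emph{height} $\langle P_L,P_L\rangle_p$. The formula involving $\log_{\omega}(P_L)^2$ is the Bertolini--Darmon--Prasanna $p$-adic Waldspurger formula (Theorem \ref{BDPformula}), which concerns a different, anticyclotomic $p$-adic $L$-function evaluated outside its interpolation range and requires \eqref{Heeg}. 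If you insist on the height-based formula, the ordinary non-CM case runs into the open nondegeneracy problem for $p$-adic heights (a point the paper itself flags), and even granting nondegeneracy, passing from the derivative of $\CL_p$ to $\log_{\omega}(\loc_p(z_E))$ is not a formal consequence of Kato's explicit reciprocity law: when $L(1,E)=0$ the reciprocity law only tells you the class lands in $H^1_{\rm f}$, and the leading-term relation you invoke (both here and in your ``rank $\geq 2$'' step, where you claim the main conjecture ``forces $\loc_p(z_E)=0$ by a derivative/Perrin-Riou-regulator computation'') is essentially of Perrin-Riou type itself, i.e.\ circular or at best unproven in the generality you need.

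For contrast, the paper's proof uses neither Theorem \ref{thmA} nor any main conjecture nor the $p$-part of BSD for $E^L$. It specializes the two-variable zeta element $\CZ^{\cdot}(E_{/L})$ at the trivial character, identifies this specialization with a multiple of $z_E$ weighted by $L(1,E^L)/\Omega^-$ (Theorems \ref{2varZ-1varZ-thm} and \ref{2varZ-1varZ-thm-ss}), and then: in the $\epsilon(E)=+1$ case a pure root-number argument (Proposition \ref{GRLvan-prop}) kills the anticyclotomic specialization of $\CL_p^{\rm Gr}$ and hence $\loc_{\bar v}$ of the specialized zeta element, giving $\loc_p(z_E)=0$; in the $\epsilon(E)=-1$ case the second reciprocity law combined with the BDP formula (Lemmas \ref{GRL=logheegner}, \ref{GRL=logheegner-ss}) yields $\log_{BK}^{\eta_\omega}(\loc_p z_E)\doteq u_L\,(\log_{\omega}(y_L))^2$ up to the factor $L(1,E^L)/\Omega^-$, for which only Shimura's algebraicity is needed (not BSD for $E^L$), and complex Gross--Zagier finishes both (a) and (b). The genuinely delicate constant is the unit $u_L$ coming from the lattice/regulator comparison, whose rationality is established by the auxiliary CM argument of Appendices A and B; nothing in your framework addresses the analogous normalization constants, while the factor you single out as ``indispensable'' (rank-zero $p$-part of BSD for $E^L$) is in fact not needed.
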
 

In the text we prove a $p$-integral version of Conjecture \ref{PR0} (cf.~\eqref{up-to-uL}).

\subsection{The zeta element} We describe our results on the existence of a zeta element for an elliptic curve together with an imaginary quadratic field.
\subsubsection{Setting}
Let $E$ be an elliptic curves over $\BQ$ of conductor $N$, and $p\nmid 2N$ a prime. 
Let $\ov{\BQ}$ be an algebraic closure of $\BQ$. Fix embeddings $\iota_{\infty}:\ov{\BQ}\hookrightarrow \BC$ and $\iota_{p}:\ov{\BQ}\hookrightarrow \BC_{p}$.

Let $L$ be an imaginary quadratic field and $O_L$ its ring of integers. 
Suppose that\footnote{The main text allows $D_L$ and $N$ to have some common divisors.}
\begin{equation}\label{h1}
\text{$(D_{L},N)=1$}
\end{equation}
and 
\begin{equation}\label{h2}
\text{$(p)=v\ov{v}$ splits in $L$ with $v$ determined via $\iota_p$.}
\end{equation}
Let $L_\infty$ be the $\BZ_p^2$-extension of $L$ and $\Gamma_{L}=\Gal(L_{\infty}/L)$. 
Put $\Lambda_{L}=\BZ_{p}[\![\Gamma_{L}]\!]$ and $\Lambda_{L}^{\ur}=\Lambda_{L}\otimes_{\BZ_{p}}W(\ov{\BF}_{p})$, viewed as $G_{L}:=\Gal(\ov{\BQ}/L)$-modules via the canonical projection $G_{L}\twoheadrightarrow \Gamma_L$. 

We often assume that 
\begin{equation}\label{h3}
E[p](L)=0
\end{equation}
or
\begin{equation}\label{h_irr}
\text{$E[p]$ is an irreducible $G_L$-module.}
\end{equation}
The latter is satisfied for any supersingular prime $p>2$. Note that the former allows $p$ to be an Eisenstein prime. 

The nature of the zeta element for $E$  relies on the type of reduction at $p$. 
If $E$ has ordinary reduction at $p$, then there exists an exact sequence
$$
0 \ra T^{+} \ra T \ra T^{-} \ra 0
$$
of $\BZ_{p}[G_{\BQ_{p}}]$-modules, where 
$T$ denotes the $p$-adic Tate module of $E$ and  
$T^{-}$ is an unramified $\BZ_{p}[G_{\BQ_{p}}]$-module, $\BZ_p$-free of rank one. 
In the supersingular case we assume 
\begin{equation}\label{h4}
a_{p}(E):=p+1-\#E(\BF_p)=0,
\end{equation} referring to $p$ as supersingular. This is automatic for $p\geq 5$. 

For an ordinary or a supersingular prime $p\nmid 2N$ and $\bullet\in\{+,-\}$, put
\begin{equation}\label{n_red}
 \circ=
\begin{cases} \ord  & \text{$p$ is ordinary} \\ \bullet & \text{$p$ is supersingular}
\end{cases}
\
\cdot = 
\begin{cases}
 \emptyset  & \text{$p$ is ordinary} \\ \circ & \text{$p$ is supersingular.}
\end{cases}
\end{equation}
The zeta element lives in the Galois cohomology group 
$$
H^1_{\mathrm{rel},\circ}(\BZ[\frac{1}{p}], T(1)\hat\otimes \Lambda_{L}) = \{ 
\kappa\in H^1(O_{L}[\frac{1}{p}], T(1)\hat\otimes \Lambda_{L}) \ : \ \loc_{\ov{v}}(\kappa) \in 
H^1_{\circ}(L_{\ov{v}},T(1)\hat\otimes \Lambda_{L})\}.
$$
In the ordinary case $H^1_{\circ}(L_{\ov{v}},T(1)\hat\otimes \Lambda_{L}):=\Im\{
H^1(L_{\ov{v}},T^{+}(1)\hat\otimes \Lambda_{L}) \ra H^1(L_{\ov{v}},T(1)\hat\otimes \Lambda_{L})\}$. In the supersingular case the definition of 
$H^1_{\circ}(L_{\ov{v}},T(1)\hat\otimes \Lambda_{L})$ is based on a principle of Kobayashi. 

Let 
$$
\CL_{p}^{\rm Gr}(E_{/L})\in \Lambda_{L}^{\ur}
$$
be the associated Rankin--Selberg $p$-adic $L$-function interpolating the algebraic part of $L$-values $L(1,E_{/L}\otimes \chi)$ for certain infinite order Hecke characters $\chi$ over $L$, referred to as a Greenberg $p$-adic $L$-function. 
Note that it is a bounded measure independent of the type of reduction of $E$ at $p$. 
The following $p$-adic $L$-function does depend on the reduction. For $\cdot$ as in \eqref{n_red}, let 
$$\CL_{p}^{\cdot}(E_{/L}) \in \Lambda_L$$ be the associated Rankin--Selberg $p$-adic $L$-function interpolating a corresponding normalisation of the algebraic part of $L$-values $L(1,E_{L}\otimes \chi)$ for finite order characters $\chi$ of $\Gamma_L$. 

A conjecture of Kato \cite{K0,K1} predicts the existence of a zeta element in $H^1_{\mathrm{rel},\circ}(O_L[\frac{1}{p}], T(1)\hat\otimes \Lambda_{L})$ intertwined with the $p$-adic $L$-functions $\CL_{p}^{\rm Gr}(E_{/L})$ and $\CL_{p}^{\cdot}(E_{/L})$. 

\subsubsection{}The central result of this paper is the following. 
\begin{thm}\label{thmZ}
Let $E$ be an elliptic curve over $\BQ$ of conductor $N$ and $p\nmid 2N$ a prime. In the case $p=3$ suppose that \eqref{h4} holds if $p$ is supersingular. Let $L$ be an imaginary quadratic field satisfying the conditions \eqref{h1}, \eqref{h2} and \eqref{h3}. Then there exists a zeta element 
$$
\CZ^{\cdot}(E_{/L}) \in H^{1}_{\rel,\circ}(O_{L}[\frac{1}{p}], T(1) \hat{\otimes}\Lambda_{L})
$$
such that 
$$
{\rm Col}_{v}^{\cdot}(\loc_{v}(\CZ^{\cdot}(E_{/L})))=\CL_{p}^{\cdot}(E_{/L}),\ 
{\rm Log}_{\ov{v}}^{\cdot}(\loc_{\ov{v}}(\CZ^{\cdot}(E_{/L})))=\CL_{p}^{\rm Gr}(E_{/L}).
$$
Here ${\rm Col}_{v}^{\cdot}: H^{1}(L_{v},T(1)\otimes \Lambda_{L}) \ra \Lambda_L$ and 
${\rm Log}_{\ov{v}}^{\cdot}:  H^{1}_{\circ}(L_{\ov{v}},T(1)\otimes \Lambda_{L}) \ra \Lambda_L$ are Perrin-Riou regulator maps interpolating Bloch--Kato dual exponential and logarithm maps as in sections \ref{BFord} and \ref{BFss}.
\end{thm}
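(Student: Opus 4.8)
The strategy is to construct $\CZ^{\cdot}(E_{/L})$ from Beilinson--Flach elements and verify the two explicit reciprocity laws separately at $v$ and $\bar v$. The starting point is the three-variable Beilinson--Flach Euler system attached to the pair $(f_E, g)$, where $g$ ranges over the Hida/Coleman family of CM forms associated to finite order Hecke characters of $L$; specialising $g$ along the anticyclotomic (or full $\BZ_p^2$) direction produces a class $\CBF$ in $H^1(O_L[\tfrac{1}{p}], T(1)\hat\otimes\Lambda_L)$ after the standard restriction-of-scalars identification of $f_E$ twisted by the CM family with the $G_L$-representation $T(1)\hat\otimes\Lambda_L$. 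One must first check that this class actually lies in the relaxed-at-$\bar v$, $\circ$-restricted-at-$v$ Selmer-type group $H^1_{\mathrm{rel},\circ}$; integrality and the local condition at $v$ follow from the known local behaviour of Beilinson--Flach classes (they land in the image of $H^1_{\mathrm{fin}}$, resp.\ the Kobayashi $\circ$-submodule, at the prime above $p$ where the CM form is ordinary-looking), which is where hypothesis \eqref{h4} enters in the supersingular case via Kobayashi's local theory (\S\ref{IwCoh}, \S\ref{BFss}).

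Next comes the pair of reciprocity laws. At $\bar v$: apply the Perrin-Riou big logarithm ${\rm Log}_{\bar v}^{\cdot}$ to $\loc_{\bar v}(\CBF)$ and identify the result with $\CL_p^{\rm Gr}(E_{/L})$. This is the ``unramified'' or Greenberg-type explicit reciprocity law for Beilinson--Flach elements — in the ordinary case this is essentially the Kings--Loeffler--Zerbes computation (the BF analogue of Kato's explicit reciprocity law, relating the image under the dual exponential/logarithm to a Rankin--Selberg $p$-adic $L$-value), adapted here so that the CM variable is interpolated and the $L$-value becomes $L(1, E_{/L}\otimes\chi)$ for infinite-order $\chi$. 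At $v$: apply the Coleman/Perrin-Riou regulator ${\rm Col}_v^{\cdot}$ to $\loc_v(\CBF)$ and match with $\CL_p^{\cdot}(E_{/L})$, the finite-order Rankin--Selberg $p$-adic $L$-function. In the ordinary case ${\rm Col}_v^{\cdot}$ is the dual-exponential-type map and this is again an established BF reciprocity law; in the supersingular case one uses the $\pm$-Coleman maps ${\rm Col}_v^{\bullet}$ of Kobayashi type (\S\ref{BFss}) together with Pollack's decomposition of the $p$-adic $L$-function into $\pm$-parts, so the two signs $\bullet\in\{+,-\}$ produce two zeta elements as in \eqref{n_red}.

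The verification that these two reciprocity-law outputs are consistent (i.e.\ that a \emph{single} global class realises both) is automatic once $\CZ^{\cdot}(E_{/L}):=\CBF$ is defined, because both identities are computed from the same global class by local maps at the two distinct primes; the content is entirely in the two local formulas. The main obstacle, I expect, is the supersingular explicit reciprocity law at $v$: one needs the Beilinson--Flach classes to interact correctly with Kobayashi's signed local conditions over the two-variable Iwasawa algebra $\Lambda_L$ (not just the cyclotomic line), which requires a two-variable Perrin-Riou regulator theory and a factorisation of $\CL_p^{\cdot}(E_{/L})$ compatible with it; controlling denominators so that the output is genuinely in $\Lambda_L$ (rather than its fraction field) — equivalently, that no spurious $\mu$- or $\lambda$-contribution appears — is the delicate point. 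Secondary technical issues are: handling the $p=3$ case under \eqref{h4}; using \eqref{h3} (or \eqref{h_irr}) to rule out the exceptional zero / trivial-zero phenomena and to ensure the relevant $H^1$'s are torsion-free so the class is well-defined up to the expected ambiguity; and checking the interpolation range of $\CL_p^{\rm Gr}$ against the region where the big logarithm ${\rm Log}_{\bar v}^{\cdot}$ is an isomorphism.
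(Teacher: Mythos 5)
Your overall framework (Beilinson--Flach classes for $(f_E,\bh_v)$ plus the two explicit reciprocity laws at $v$ and $\bar v$, with Kobayashi/Pollack signed decompositions in the supersingular case) is indeed the paper's starting point, but there is a genuine gap at the step you dismiss as ``the standard restriction-of-scalars identification.'' The Beilinson--Flach class is built from Siegel units and therefore naturally lives in cohomology with coefficients in $T(1)\hat\otimes\BH_1\hat\otimes\Lambda$, where $\BH_1$ is the (torsion-free quotient of the) Tate lattice of the \emph{open} modular curve specialised along the canonical CM family $\bh_v$. Because $\bh_v$ passes through the weight-one Eisenstein point $\theta(\chi_L)$, it is residually reducible, the $p$-distinguished hypothesis fails, the relevant Hecke algebra is non-Gorenstein, and consequently $\BH_1$ is a priori neither $\Lambda_L^v$-free nor of the form $\Ind_{G_L}^{G_\BQ}(\Lambda_L^v(\Psi_L^v))$; Shapiro's lemma cannot be applied to it, so your class does not automatically become an element of $H^1_{\rel,\circ}(O_L[\frac{1}{p}],T(1)\hat\otimes\Lambda_L)$, nor does the $\bar v$-reciprocity law automatically produce an element of $\Lambda_L^{\ur}$ rather than of a fractional module over it. This is exactly the content of Theorem \ref{thmZ_0} (equivalently \eqref{Ind-eq} together with Theorem \ref{BT-thm2}): one must show that $\CBF(E_{/L})$ in fact lies in the cohomology of the \emph{closed}-curve lattice $\BT$, and that $\BT_1=\widetilde\BT$ is integrally induced.

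The paper's proof of these two facts is a substantial argument that your proposal has no substitute for: an analysis of the reflexive closures $\widetilde\BT\subset\widetilde\BH$ and of the saturation of $\BT^+=\BH^+$ using Ohta's results and the eigencurve geometry at the Eisenstein weight-one point (Bella\"iche--Dimitrov, Betina--Dimitrov--Pozzi); the identification of the congruence power series of $\bh_v$ as $T_v$ times the Katz $p$-adic $L$-function (Theorems \ref{excz}, \ref{excz-2}), which is what makes the normalised map $\sL^{\mathrm{int}}=\CH_v\cdot\sL$ integral and controls your ``denominators''; and a contradiction argument showing that if \eqref{Ind-eq} failed, the second reciprocity law would force $T_v^2\mid \sL_p^{Gr}(g/L)$ for \emph{every} ordinary newform $g$, refuted by producing an auxiliary CM newform of analytic rank one over $L$ via Rohrlich non-vanishing, Gross--Zagier, and the Bertolini--Darmon--Prasanna formula (Propositions \ref{anrk-prop}, \ref{aux-g-prop}). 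Your secondary worries (two-variable signed regulators at $v$, the $p=3$ case, interpolation ranges) are real but are handled by comparatively routine adaptations of Loeffler--Zerbes and Buyukboduk--Lei; the trivial-zero phenomenon that actually matters is the zero of the congruence ideal at $T_v$ coming from the Eisenstein weight-one specialisation, not an exceptional zero excluded by \eqref{h3}. Without the lattice results your construction of $\CZ^{\cdot}(E_{/L})$ does not get off the ground.
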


\begin{remark}
\begin{itemize}
\item[(i)] The hypotheses \eqref{h1} and \eqref{h3} are not essential. Without them, the explicit reciprocity laws merely have a slightly different appearance. 
\item[(ii)] Since the $p$-adic $L$-functions $\CL_{p}^{\cdot}(E_{/L})$ and $\CL_{p}^{\rm Gr}(E_{/L})$ are non-zero, so is the zeta element
$\CZ^{\cdot}(E_{/L})$. 
\end{itemize}
\end{remark}

\subsection{About the proofs} We begin by explaining the connections of the zeta element with the arithmetic of  elliptic curves, 
and then we give an outline of our proof of its existence. 

\subsubsection{The zeta element and main conjectures} The zeta element is ancillary to the Iwasawa theory of elliptic curves over imaginary quadratic fields, leading to 
the equivalence of apparently distant main conjectures. 

\subsubsection*{Main conjectures with $p$-adic $L$-functions}
\begin{conj}\label{Int_St} Let $E$ be an elliptic curve over $\BQ$ of conductor $N$ and $p\nmid 2N$ a prime. In the case $p=3$ suppose that \eqref{h4} holds if $p$ is supersingular. Let $L$ be an imaginary quadratic field satisfying the conditions \eqref{h1}, \eqref{h2} and \eqref{h3}. 
Then
\begin{itemize}
\item[(a)] $X_{\cdot}(E_{/L})$ is $\Lambda_{L}$-torsion, and 
\item[(b) ]$(\mathcal{L}_{p}^{\cdot}(E_{/L}))=\xi_{\Lambda_{L}}(X_{\cdot}(E_{/L})),$
an equality of ideals in $\Lambda_{L}\otimes_{\BZ_{p}} \BQ_{p}$ and even in $\Lambda_{L}$ if \eqref{h_irr} holds.
\end{itemize}
\end{conj}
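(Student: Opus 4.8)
The plan is to deduce Conjecture~\ref{Int_St} from the zeta element $\CZ^{\cdot}(E_{/L})$ of Theorem~\ref{thmZ}, whose two explicit reciprocity laws are engineered precisely to convert one Iwasawa-theoretic divisibility into another. I would combine two inputs: (i) the Euler system of Beilinson--Flach classes, of which $\CZ^{\cdot}(E_{/L})$ is the bottom class, so that the Euler/Kolyvagin system machinery over $\Lambda_{L}$ applies; and (ii) a known divisibility in the main conjecture attached to the Greenberg $p$-adic $L$-function $\CL_{p}^{\rm Gr}(E_{/L})$ --- the anticyclotomic, Bertolini--Darmon--Prasanna-type main conjecture (or its two-variable Rankin--Selberg refinement) in the form available from prior work such as \cite{SU,W}. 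The point of the zeta element is that it has nonzero localization at \emph{both} $v$ and $\ov{v}$, with ${\rm Col}_{v}^{\cdot}\circ\loc_{v}$ computing $\CL_{p}^{\cdot}(E_{/L})$ and ${\rm Log}_{\ov{v}}^{\cdot}\circ\loc_{\ov{v}}$ computing $\CL_{p}^{\rm Gr}(E_{/L})$; this is what lets the two inputs be matched against the single module $X_{\cdot}(E_{/L})$, and is the ``mediation'' referred to in the introduction.

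\textbf{Torsion, part (a).}
Since $\CL_{p}^{\cdot}(E_{/L})\neq 0$, the first reciprocity law forces $\loc_{v}(\CZ^{\cdot}(E_{/L}))\neq 0$, hence $\CZ^{\cdot}(E_{/L})\neq 0$. A global Euler characteristic computation shows that $H^{1}_{\rel,\circ}(O_{L}[\tfrac{1}{p}],T(1)\hat{\otimes}\Lambda_{L})$ has $\Lambda_{L}$-rank one; so a single class with nonzero localization at $v$, together with the Poitou--Tate exact sequence, forces the Selmer group cut out by the $\circ$-condition at both $v$ and $\ov{v}$ to have $\Lambda_{L}$-rank zero, i.e. $X_{\cdot}(E_{/L})$ is $\Lambda_{L}$-torsion. (Under \eqref{h_irr} this is clean; under only \eqref{h3}, which allows $p$ Eisenstein, the same argument works after inverting $p$.)

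\textbf{The divisibility from the Euler system.}
Feeding the Beilinson--Flach Euler system into the Kolyvagin system machinery over $\Lambda_{L}$ --- with Kobayashi's $\pm$-modification at the prime $v \mid p$ (note $L_{v}=\BQ_{p}$) in the supersingular case --- bounds the dual of the Selmer group that is strict at $v$ and $\circ$ at $\ov{v}$. Comparing this with $X_{\cdot}(E_{/L})$ through the control exact sequence at $v$, in which ${\rm Col}_{v}^{\cdot}$ identifies the relevant local term with $\Lambda_{L}$ up to pseudo-null error and carries $\loc_{v}(\CZ^{\cdot}(E_{/L}))$ to $\CL_{p}^{\cdot}(E_{/L})$, yields
\[
\xi_{\Lambda_{L}}(X_{\cdot}(E_{/L}))\ \supseteq\ \big(\CL_{p}^{\cdot}(E_{/L})\big),
\]
an inclusion of ideals in $\Lambda_{L}\otimes_{\BZ_{p}}\BQ_{p}$, and in $\Lambda_{L}$ under \eqref{h_irr} (residual irreducibility removes the spurious $p$-power contributions from the Euler system bound).

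\textbf{Reverse divisibility, conclusion, and the main obstacle.}
For the opposite inclusion I would invoke the second reciprocity law ${\rm Log}_{\ov{v}}^{\cdot}\circ\loc_{\ov{v}}(\CZ^{\cdot}(E_{/L}))=\CL_{p}^{\rm Gr}(E_{/L})$ and the known divisibility in the Greenberg-side main conjecture, and then run the Poitou--Tate comparison ``the other way'': relaxing and strictening the conditions at $v$ and $\ov{v}$ while tracking the images of the single class $\CZ^{\cdot}(E_{/L})$ shows that this known divisibility transports to $\xi_{\Lambda_{L}}(X_{\cdot}(E_{/L}))\subseteq(\CL_{p}^{\cdot}(E_{/L}))$. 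Combining the two inclusions gives part~(b). The hard part will be exactly this mediation: making the global-duality bookkeeping tight --- establishing the surjectivity and pseudo-nullity statements for the Perrin-Riou regulators ${\rm Col}_{v}^{\cdot}$ and ${\rm Log}_{\ov{v}}^{\cdot}$, treating the supersingular local theory at $v$ and $\ov{v}$ on an equal footing, and dealing with the $p=3$ supersingular case where \eqref{h4} is imposed --- so that no auxiliary ideal is left between the two $p$-adic $L$-functions and the module $X_{\cdot}(E_{/L})$.
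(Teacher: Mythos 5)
You are proposing a proof of something the paper itself only states as a conjecture: the paper never proves Conjecture~\ref{Int_St} in general. What it proves toward it is (i) Proposition~\ref{Int_eq} (equivalently Proposition~\ref{Eq}): using the two reciprocity laws of Theorem~\ref{thmZ} and Poitou--Tate duality, a one-sided divisibility in any of Conjectures~\ref{Int_St}, \ref{Int_Greenberg}, \ref{Int_LLZ} transports to the others --- this is exactly your ``mediation'' step, and your torsion argument for part~(a) matches the exact sequence used there (note, though, that the rank-one statement for $H^{1}_{\rel,\circ}$ is not a bare Euler-characteristic computation; it rests on Kato's Euler-system results and the nonvanishing of the Beilinson--Kato classes under the Coleman maps, Theorem~\ref{cycIwL-Box}); and (ii) the full equality only under additional hypotheses (Theorems~\ref{KoMC'_eq} and~\ref{IMC_ord}: $N$ square-free, conditions of type \eqref{def} or \eqref{indef} on $L$, irreducibility/ramification assumptions).

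The genuine gap in your proposal is that both external inputs you feed into the mediation are not available unconditionally. First, there is no Euler system over $\Lambda_{L}$ refining $\CZ^{\cdot}(E_{/L})$ in this paper: its construction is explicitly deferred to a companion paper, and in any case the Kolyvagin-system machinery over $\Lambda_L$ requires big-image hypotheses that fail precisely in the Eisenstein situations permitted by \eqref{h3}; the paper instead obtains the Euler-system-side divisibility from Kato and Kobayashi over $\BQ$ applied to $E$ and $E^{L}$ (Theorem~\ref{KoMC_ub}), combined with the cyclotomic factorisation and control (Lemma~\ref{Eq'}, Proposition~\ref{ctl_st}), not from a two-variable Kolyvagin system. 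Second, the Greenberg-side divisibility you invoke is only known under the hypotheses of Theorem~\ref{GMC_r} (square-free tame level, splitting/ramification conditions on the pair $(N,L)$, residual irreducibility), and even then only in $\Lambda_{L}^{\ur}$ after inverting the cyclotomic variable; making it an integral two-variable divisibility requires the $\mu$-invariant vanishing arguments of Theorem~\ref{KoMC'_lb}. So, granting your two inputs, your argument essentially reproves Proposition~\ref{Int_eq} together with the conditional Theorems~\ref{KoMC'_eq}/\ref{IMC_ord}; it does not, and with the tools established in the paper cannot, prove Conjecture~\ref{Int_St} as stated.
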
 
For the definition of the Selmer group $X_{\cdot}(E_{/L})$, the reader may refer to subsection \ref{ss:Sel-L}.

\begin{conj}\label{Int_Greenberg} 
Let $E$ be an elliptic curve over $\BQ$ of conductor $N$ and $p\nmid 2N$ a prime. 
Let $L$ be an imaginary quadratic field satisfying the conditions \eqref{h1} and \eqref{h2}.
Then
\begin{itemize}
\item[(a)] $X_{\rm{Gr}}(E_{/L})$ is $\Lambda_{L}$-torsion, and
\item[(b)] $(\mathcal{L}_{p}^{\rm{Gr}}(E_{/L}))=\xi_{\Lambda_{L}^{\ur}}(X_{\rm{Gr}}(E_{/L})),$ 
an equality of ideals in $\Lambda_{L}^{\ur}$.
\end{itemize}
\end{conj}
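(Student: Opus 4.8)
The plan is to relate Conjecture~\ref{Int_Greenberg} to a \emph{zeta element main conjecture} --- the assertion that $\CZ^{\cdot}(E_{/L})$ generates the $\Lambda_{L}$-characteristic ideal of an appropriate quotient of $H^{1}_{\rel,\circ}(O_{L}[\frac{1}{p}],T(1)\hat{\otimes}\Lambda_{L})$ --- and to deduce Conjecture~\ref{Int_Greenberg} from it by transporting characteristic ideals through the explicit reciprocity law $\mathrm{Log}_{\ov{v}}^{\cdot}(\loc_{\ov{v}}(\CZ^{\cdot}(E_{/L}))) = \CL_{p}^{\rm Gr}(E_{/L})$ of Theorem~\ref{thmZ}. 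The zeta element is what makes this possible: its localisation at $\ov{v}$ sees the Greenberg $p$-adic $L$-function, so a class that is \emph{a priori} attached to the relaxed-at-$v$ Selmer structure transfers Iwasawa-theoretic information to the Greenberg Selmer group. (The companion law $\mathrm{Col}_{v}^{\cdot}(\loc_{v}(\CZ^{\cdot}(E_{/L}))) = \CL_{p}^{\cdot}(E_{/L})$ plays the same mediating role for Conjecture~\ref{Int_St}.)

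For part (a), I would first note that $\CZ^{\cdot}(E_{/L})$ is nonzero, since $\mathrm{Log}_{\ov{v}}^{\cdot}$ carries it to $\CL_{p}^{\rm Gr}(E_{/L})\neq 0$; hence it spans a free rank-one $\Lambda_{L}$-submodule of $H^{1}_{\rel,\circ}$. A global Euler characteristic computation, using the known local terms at $v$, $\ov{v}$ and at the primes dividing $N$, shows that $H^{1}_{\rel,\circ}$ has $\Lambda_{L}$-rank exactly one; the Poitou--Tate exact sequence for the Selmer structure defining $X_{\rm Gr}(E_{/L})$ then forces $X_{\rm Gr}(E_{/L})$ to be $\Lambda_{L}$-torsion. (Alternatively, combining control theorems with Kato \cite{K} and Rohrlich's anticyclotomic nonvanishing theorem reduces this to the cyclotomic case.)

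For part (b), I would prove the two divisibilities separately. The divisibility $(\CL_{p}^{\rm Gr}(E_{/L})) \subseteq \xi_{\Lambda_{L}^{\ur}}(X_{\rm Gr}(E_{/L}))$ would come from the Euler system properties of $\CZ^{\cdot}(E_{/L})$: one exhibits it as the bottom class of an Euler system indexed by ring-class-times-cyclotomic conductors over $L$, applies the Kolyvagin-system / Mazur--Rubin machinery to bound the dual Selmer group carrying the local condition at $\ov{v}$ cut out by $H^{1}_{\circ}$, and identifies that bound with $\mathrm{Log}_{\ov{v}}^{\cdot}(\loc_{\ov{v}}(\CZ^{\cdot}(E_{/L}))) = \CL_{p}^{\rm Gr}(E_{/L})$ via Perrin-Riou's regulator; a Poitou--Tate comparison, calibrated by the reciprocity laws, then transfers this onto $X_{\rm Gr}(E_{/L})$ with its Greenberg conditions. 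In the supersingular case this forces one to work throughout with Kobayashi's signed ($\circ=\pm$) condition at $\ov{v}$ --- i.e.\ with the $\pm$ variant of the Euler system along the cyclotomic direction --- so that the output is genuinely $\CL_{p}^{\rm Gr}$ rather than an imprimitive quotient of it. For the reverse divisibility $\xi_{\Lambda_{L}^{\ur}}(X_{\rm Gr}(E_{/L})) \subseteq (\CL_{p}^{\rm Gr}(E_{/L}))$, I would invoke the divisibility in the Iwasawa--Greenberg main conjecture proved via Eisenstein congruences on $\mathrm{GU}(3,1)$ (following Wan); since $\CL_{p}^{\rm Gr}(E_{/L})$ is a bounded measure, independent of the reduction type at $p$, this input applies uniformly in the ordinary and supersingular cases. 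Combining the two yields $(\CL_{p}^{\rm Gr}(E_{/L})) = \xi_{\Lambda_{L}^{\ur}}(X_{\rm Gr}(E_{/L}))$.

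The hard part, granting Theorem~\ref{thmZ}, is the Euler-system step: realising $\CZ^{\cdot}(E_{/L})$ as the bottom of a norm-compatible Euler system (verifying the Euler/distribution relations in this two-variable situation), checking the local and large-image hypotheses required by the Kolyvagin-system machinery, and --- in the supersingular case --- carrying out the signed modification at $\ov{v}$ compatibly up the cyclotomic tower; the Poitou--Tate bookkeeping that converts the resulting bound into a statement about $X_{\rm Gr}(E_{/L})$ with its Greenberg local conditions is then the technical core. A secondary point is that Theorem~\ref{thmZ} is proved under hypothesis \eqref{h3} (and under \eqref{h4} when $p=3$ is supersingular), which is not imposed in Conjecture~\ref{Int_Greenberg}; removing it would require either an independent construction of the zeta element or a base-change-and-descend argument, and is the main limitation of this approach.
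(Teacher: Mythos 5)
You are trying to prove a statement that the paper itself leaves as a conjecture: Conjecture~\ref{Int_Greenberg} is not proved in the paper in the stated generality, and no proof of it exists there to compare against. What the paper actually establishes is (i) the equivalence of one-sided divisibilities in Conjectures~\ref{Int_St}, \ref{Int_Greenberg} and \ref{Int_LLZ} via the explicit reciprocity laws of Theorem~\ref{thmZ} and Poitou--Tate (Proposition~\ref{Int_eq}, proved as Proposition~\ref{Eq}), and (ii) the conjecture itself only in special cases (Theorems~\ref{KoMC'_eq} and \ref{IMC_ord}), under substantial hypotheses: $N$ square-free, (irr$_L$), the ramification/splitting conditions \eqref{def} or \eqref{indef}, and --- for the cyclotomic descent input --- the non-vanishing hypothesis \eqref{nv}, which the paper states is open in general and verifies only in certain rank-one situations via the $p$-adic Waldspurger formula. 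Your proposal, read as a proof of the conjecture as stated (arbitrary $p\nmid 2N$, no irreducibility or level hypotheses, no \eqref{h3}), therefore cannot be complete.

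The concrete gap is in your Euler-system step for the divisibility $(\CL_{p}^{\rm Gr}(E_{/L}))\subseteq\xi_{\Lambda_{L}^{\ur}}(X_{\rm Gr}(E_{/L}))$. You propose to extend $\CZ^{\cdot}(E_{/L})$ to a norm-compatible Euler system over $L$ and run the Kolyvagin/Mazur--Rubin machinery with the signed condition at $\ov{v}$; but the paper explicitly defers the construction of such an Euler system to a forthcoming companion paper, and none of its arguments supply it. The route the paper actually takes for this direction is different: it uses Kato's \emph{cyclotomic} Euler system over $\BQ$ for $E$ and its quadratic twist $E^{L}$ (Theorem~\ref{KaMC_r}, Kobayashi's Theorem~\ref{KoMC_ub} in the supersingular case), and then transports the resulting bound to the Greenberg Selmer group through the two explicit reciprocity laws and the Poitou--Tate sequences in the proof of Proposition~\ref{Eq} --- this is exactly where \eqref{nv} enters, and it is the reason Corollary~\ref{GKSU} carries that hypothesis. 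Your lower-bound input (Eisenstein congruences on $\mathrm{GU}(3,1)$, Theorem~\ref{GMC_r}) is the same as the paper's, but it too comes with hypotheses you elide: square-free level (or the specific twist allowed there), (irr$_L$) in the ordinary case, and an integrality issue resolved only via $\mu$-invariant vanishing under \eqref{def}/\eqref{indef}. So your outline reproduces the paper's architecture of "two divisibilities via the zeta element," but the step you label as the technical core is precisely the missing ingredient rather than something that can be carried out from Theorem~\ref{thmZ}, and even granting it you would still need to remove \eqref{h3}, \eqref{nv} and the level/irreducibility hypotheses to obtain the conjecture as stated.
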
 
For the definition of the Selmer group $X_{\rm Gr}(E_{/L})$, the reader may refer to subsection \ref{ss:Sel-L}.

\subsubsection*{Zeta element main conjecture}

\begin{conj}\label{Int_LLZ} 
Let $E$ be an elliptic curve over $\BQ$ of conductor $N$ and $p\nmid 2N$ a prime. In the case $p=3$ suppose that \eqref{h4} holds if $p$ is supersingular. Let $L$ be an imaginary quadratic field satisfying the conditions \eqref{h1}, \eqref{h2} and \eqref{h3}.

Let $$\CZ^{\cdot}(E_{/L}) \in H^{1}_{\rm{rel},\circ}(L, T(1) \otimes \Lambda_{L})$$ be the 
corresponding two-variable zeta element.
Then
\begin{itemize}
\item[(a)] $X_{\st,\circ}(E_{/L})$ is $\Lambda_{L}$-torsion, and 
\item[(b)] $
\xi\big{(}H^{1}_{\rm{rel},\circ}(L, T(1) \otimes \Lambda_{L})/ \Lambda_{L}
\cdot{\CZ^{\cdot}(E_{/L})}\big{)}
=\xi_{\Lambda_{L}}(X_{\st,\circ}(E_{/L})), 
$

an equality of ideals in $\Lambda_{L}\otimes_{\BZ_{p}} \BQ_{p}$ and even in 
$\Lambda_{L,\cO_{\lambda}}$ if \eqref{h_irr} holds.
\end{itemize}
\end{conj}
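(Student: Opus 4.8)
The plan is to derive Conjecture~\ref{Int_LLZ} from the explicit reciprocity laws of Theorem~\ref{thmZ} together with the two ``analytic'' main conjectures, Conjectures~\ref{Int_St} and~\ref{Int_Greenberg}, via a Poitou--Tate/Mazur--Rubin type descent argument. The guiding observation is that the Perrin-Riou regulator maps $\mathrm{Col}_v^{\cdot}$ and $\mathrm{Log}_{\ov v}^{\cdot}$ appearing in Theorem~\ref{thmZ} are, up to controlled error, isomorphisms of $\Lambda_L$-modules (or at least have pseudo-null kernel and cokernel, with the cokernel of $\mathrm{Log}_{\ov v}^{\cdot}$ contributing a correction factor). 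Concretely, the zeta element main conjecture interpolates between the two $p$-adic $L$-function main conjectures, so the strategy is: (i) show $X_{\st,\circ}(E_{/L})$ is $\Lambda_L$-torsion; (ii) relate $\xi(H^1_{\mathrm{rel},\circ}/\Lambda_L\cdot\CZ^{\cdot})$ to $\xi_{\Lambda_L}(X_{\st,\circ})$ by a global Euler characteristic computation, reducing everything to local terms at $v$ and $\ov v$; (iii) evaluate those local terms using Theorem~\ref{thmZ} and the hypothesized identities $(\CL_p^{\cdot})=\xi(X_{\cdot})$, $(\CL_p^{\mathrm{Gr}})=\xi(X_{\mathrm{Gr}})$.

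First I would set up the comparison of Selmer structures. The Selmer group $X_{\st,\circ}(E_{/L})$ is cut out by the relaxed-strict condition: relaxed at $v$, strict (the $H^1_\circ$-condition) at $\ov v$. Dualizing via Poitou--Tate, there is an exact sequence relating $X_{\st,\circ}$, $X_{\cdot}$ (the Selmer group for $\CL_p^{\cdot}$, which I take to be relaxed at $v$ and also relaxed--or suitably ordinary--at $\ov v$), and the local cohomology $H^1_{\circ}(L_{\ov v}, T(1)\hat\otimes\Lambda_L)$ together with the image of the zeta element there. The key structural input is the cofree-rank-one statement for $H^1_{\mathrm{rel},\circ}(O_L[\tfrac1p], T(1)\hat\otimes\Lambda_L)$: by a global Euler characteristic formula (Greenberg--Wiles, using \eqref{h3} or \eqref{h_irr} to kill the relevant $H^0$-terms) this module is $\Lambda_L$-free of rank one after localization, or has a well-understood torsion submodule. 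Then $\CZ^{\cdot}(E_{/L})$ being non-zero (Remark (ii) after Theorem~\ref{thmZ}) means $H^1_{\mathrm{rel},\circ}/\Lambda_L\cdot\CZ^{\cdot}$ is $\Lambda_L$-torsion, and I would identify $\xi$ of this quotient with the characteristic ideal of (torsion submodule of $H^1_{\mathrm{rel},\circ}$) times the ``content'' of $\CZ^{\cdot}$ inside the free part.

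Next I would run the local-to-global dictionary twice. Applying $\mathrm{Log}_{\ov v}^{\cdot}$ to $\loc_{\ov v}(\CZ^{\cdot})$ and invoking Theorem~\ref{thmZ} identifies the image of the zeta element at $\ov v$ with $\CL_p^{\mathrm{Gr}}(E_{/L})$; combined with Conjecture~\ref{Int_Greenberg}(b) this pins down the contribution of the $\ov v$-local term to the Euler characteristic as $\xi_{\Lambda_L^{\ur}}(X_{\mathrm{Gr}}(E_{/L}))$, up to the cokernel of $\mathrm{Log}_{\ov v}^{\cdot}$ (which one computes explicitly from Perrin-Riou theory; in the supersingular case this is where the $h^\pm$-type factors enter). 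Symmetrically, applying $\mathrm{Col}_v^{\cdot}$ to $\loc_v(\CZ^{\cdot})$ gives $\CL_p^{\cdot}(E_{/L})$, and Conjecture~\ref{Int_St}(b) turns the $v$-local term into $\xi_{\Lambda_L}(X_{\cdot}(E_{/L}))$. Feeding both into the Poitou--Tate exact sequence and bookkeeping the characteristic ideals yields $(b)$; part $(a)$, the torsionness of $X_{\st,\circ}$, follows either from the same exact sequence once one knows $\CZ^{\cdot}\neq 0$, or directly from Kato-type bounds on Selmer groups via the Euler system underlying $\CZ^{\cdot}$.

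The main obstacle I expect is controlling the error terms precisely enough to get the equality of ideals in $\Lambda_L$ (not just up to $\BQ_p$-denominators), which is why the statement only claims integrality under \eqref{h_irr}. This requires: (a) the precise computation of $\mathrm{coker}(\mathrm{Log}_{\ov v}^{\cdot})$ and $\ker(\mathrm{Col}_v^{\cdot})$ as $\Lambda_L$-modules, matching the interpolation factors $e_p^{\pm}(\zeta)$-type data, and checking these cancel correctly against the analogous factors distinguishing $\CL_p^{\mathrm{Gr}}$ from $\CL_p^{\cdot}$; (b) ensuring no extra pseudo-null modules are lost when passing between $X_{\mathrm{Gr}}$, $X_{\cdot}$, and $X_{\st,\circ}$ --- this is where irreducibility \eqref{h_irr} (as opposed to mere \eqref{h3}) is essential, since it forces the relevant $\Lambda_L$-modules to have no nonzero pseudo-null submodules. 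I would handle (a) by a careful local Iwasawa-theoretic analysis at $v,\ov v$ (the split hypothesis \eqref{h2} makes both local extensions $\BZ_p^2$ over $\BQ_p$, so Perrin-Riou's two-variable regulator theory applies cleanly), and (b) by the standard ``no pseudo-null submodules'' results for global Iwasawa cohomology, available once $E[p]$ is an irreducible $G_L$-module.
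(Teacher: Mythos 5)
The statement you are proving is a conjecture, and the paper itself does not prove it in the stated generality; what the paper proves about it is Proposition~\ref{Int_eq} (equivalence of the one-sided divisibilities in Conjectures~\ref{Int_St}, \ref{Int_Greenberg}, \ref{Int_LLZ}), whose proof is Proposition~\ref{Eq}: Poitou--Tate global duality applied twice, once with the reciprocity law at $v$ and once at $\ov{v}$, producing the four-term exact sequences \eqref{exvc1}--\eqref{exv}, with exactness on the left guaranteed by the non-vanishing of the relevant $p$-adic $L$-functions and by the fact that $H^{1}_{\rel,\circ}$ is torsion-free of rank one (Theorem~\ref{cycIwL-Box}). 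Your outline reproduces exactly this mechanism, so as a reduction it is essentially the paper's argument. But as a proof of Conjecture~\ref{Int_LLZ} it has a genuine gap: you take Conjectures~\ref{Int_St} and~\ref{Int_Greenberg} as inputs, and these are themselves open in the generality of the statement (the paper establishes them only under additional hypotheses, e.g. semistability at supersingular primes, or (irr$_{\BQ}$) plus \eqref{ram} in the ordinary case). So your argument is a conditional transfer, not a proof, and it cannot be upgraded: the unconditional cases the paper actually obtains (Theorems~\ref{KoMC'_eq}, \ref{IMC_ord}) are \emph{not} proved by assuming either analytic main conjecture as an equality; they are proved by combining two \emph{opposite} one-sided divisibilities coming from independent sources --- the Euler-system/Kato--Kobayashi upper bound on Selmer groups on one side, and the Eisenstein-congruence lower bound of \cite{W1,CLW} on the other --- with the zeta element used only to move divisibilities between the three formulations so the two bounds can be played off each other. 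A plan that presupposes both full equalities misses the one piece of structure that makes anything unconditional possible.

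Two smaller points. First, the paper runs the two reciprocity laws in two \emph{separate} exact sequences, each of which by itself gives the equivalence of Conjecture~\ref{Int_LLZ} with one of the other two conjectures; in particular a single one of Conjectures~\ref{Int_St}, \ref{Int_Greenberg} (with both divisibilities) already yields Conjecture~\ref{Int_LLZ}, whereas your combined bookkeeping uses both, which is more than is needed and obscures the transfer-of-divisibility statement that is the actual content of Proposition~\ref{Int_eq}. Second, your worry about the cokernel of $\mathrm{Log}_{\ov v}^{\cdot}$ and kernel of $\mathrm{Col}_v^{\cdot}$ contributing $e_p^{\pm}$-type correction factors is unnecessary: the regulator maps in Theorem~\ref{thmZ} are already integrally normalised (the congruence-ideal factor $\CH_v$ is built into $\mathrm{Log}$, and the interpolation factors sit in the interpolation formulas of $\CL_p^{\cdot}$ and $\CL_p^{\rm Gr}$, not in the exact sequences), so the sequences \eqref{exvc2} and \eqref{exv} involve $\Lambda_L^{\ur}/(\CL_p^{\rm Gr})$ and $\Lambda_L/(\CL_p^{\cdot})$ with no further local error terms; the integral refinement under \eqref{h_irr} is handled by (van$_L$)-type torsion-freeness of the Iwasawa cohomology rather than by a pseudo-null-submodule argument.
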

The explicit reciprocity law as in ~Theorem~\ref{thmZ} leads to the following.
\begin{prop}\label{Int_eq}
Let $E$ be an elliptic curve over $\BQ$ of conductor $N$ and $p\nmid 2N$ a prime. In the case $p=3$ suppose that \eqref{h4} holds if $p$ is supersingular. Let $L$ be an imaginary quadratic field satisfying the conditions \eqref{h1}, \eqref{h2} and \eqref{h3}.
Then a one-sided divisibility in any of the Conjectures \ref{Int_St}, \ref{Int_Greenberg} and \ref{Int_LLZ} 
implies the analogous divisibility in the other conjectures. 
In particular, these conjectures are equivalent.
\end{prop}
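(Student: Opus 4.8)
The plan is to run a standard "explicit-reciprocity-law dictionary" argument. The three conjectures share a common characteristic-ideal target; they differ only in which cohomology group one computes the characteristic ideal of (a Selmer group, or a quotient of the module of zeta elements). The explicit reciprocity laws of Theorem~\ref{thmZ} are precisely the bridge. Concretely, I would set up the Poitou--Tate/global duality exact sequences relating the "relative, $\circ$-at-$v$" Selmer-type group $H^1_{\rel,\circ}(O_L[\tfrac1p],T(1)\hat\otimes\Lambda_L)$ and its quotient by $\Lambda_L\cdot\CZ^\cdot(E_{/L})$ to the three Selmer groups $X_\cdot(E_{/L})$, $X_{\rm Gr}(E_{/L})$, $X_{\st,\circ}(E_{/L})$. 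The key local inputs are that (i) $H^1_{\rel,\circ}$ is a torsion-free rank-one $\Lambda_L$-module (after inverting $p$, or integrally under \eqref{h_irr}, by a control/Euler-characteristic computation since the only place at which a local condition is imposed on one side is $\ov v$, and at $v$ it is "relative"), so that the quotient module in Conjecture~\ref{Int_LLZ}(b) makes sense and has a well-defined characteristic ideal; and (ii) the Perrin-Riou regulator maps $\mathrm{Col}_v^\cdot$ and $\mathrm{Log}_{\ov v}^\cdot$ have explicitly computable kernels and cokernels — each is, up to a unit (or a controlled "extra factor" that will cancel on both sides, e.g.\ a power of a distinguished element arising from the logarithm/exponential normalisation), an injection with pseudo-null or explicitly-identified cokernel.

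In order, the steps would be: \textbf{Step 1.} Establish that the three torsion assertions (a) are equivalent. This is purely a matter of local Tate duality and the fact that $\CZ^\cdot(E_{/L})$ is non-zero (Remark after Theorem~\ref{thmZ}(ii)): non-vanishing of the zeta element forces $H^1_{\rel,\circ}$ to be of $\Lambda_L$-rank at most one, hence exactly one by the global Euler characteristic formula, and then each (a) is equivalent to "$H^2$-type Selmer group is $\Lambda_L$-torsion" via Poitou--Tate, all three being the same statement up to the local terms at $v,\ov v$ whose ranks one computes directly. \textbf{Step 2.} Assuming (a), prove the equivalence of the divisibilities. Here I would use the exact sequence
\[
0\to \frac{H^1_{\rel,\circ}}{\Lambda_L\cdot\CZ^\cdot}\to \frac{H^1(L_v,\cdots)}{\Lambda_L\cdot\loc_v\CZ^\cdot}\oplus(\text{rest})\to X_{\st,\circ}(E_{/L})\to (\text{cokernel term})\to 0
\]
to relate $\xi\big(H^1_{\rel,\circ}/\Lambda_L\CZ^\cdot\big)$ to $\xi_{\Lambda_L}(X_{\st,\circ})$ up to the characteristic ideal of $H^1(L_v,\cdots)/\Lambda_L\loc_v\CZ^\cdot$; by Theorem~\ref{thmZ}, $\mathrm{Col}_v^\cdot(\loc_v\CZ^\cdot)=\CL_p^\cdot(E_{/L})$, so this last characteristic ideal is $(\CL_p^\cdot(E_{/L}))$ times the (known) characteristic ideal of $\mathrm{coker}(\mathrm{Col}_v^\cdot)$, and a parallel computation at $\ov v$ with $\mathrm{Log}_{\ov v}^\cdot$ produces $\CL_p^{\rm Gr}(E_{/L})$. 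Matching these in the two Selmer descriptions (the "$\st$" Selmer group vs.\ the "$\cdot$" and "Gr" Selmer groups, which differ only in the local condition at one of $v,\ov v$) yields that a divisibility $\subseteq$ (or $\supseteq$) in one conjecture is equivalent to the same-direction divisibility in the others. \textbf{Step 3.} Combine: since each direction transfers, and a two-sided divisibility is what the conjectures assert, the three conjectures are equivalent.

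The main obstacle is \textbf{Step 2}, specifically bookkeeping the auxiliary factors so they genuinely cancel. The regulator maps $\mathrm{Col}_v^\cdot$ and $\mathrm{Log}_{\ov v}^\cdot$ are not isomorphisms — their cokernels carry factors coming from local Euler factors at $v$, from the $\circ$-signed local conditions in the supersingular case (where the Kobayashi-style $H^1_\circ$ is only "half" of $H^1$), and from the Tamagawa/period normalisations implicit in passing between $\CL_p^\cdot$ and $\CL_p^{\rm Gr}$. One must check that the cokernel of $\mathrm{Col}_v^\cdot$, computed against the difference between the $\st$-Selmer local condition at $v$ and the $\cdot$-Selmer one, is exactly accounted for — and symmetrically at $\ov v$ — so that no spurious height-one prime survives. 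In the supersingular case this is where the precise local study of sections~\ref{BFord} and \ref{BFss} is essential, and I expect that is where the real work lies; the integral refinement under \eqref{h_irr} (passing from $\Lambda_L\otimes\BQ_p$ to $\Lambda_{L,\cO_\lambda}$) requires in addition that the relevant cohomology groups be free/torsion-free, which follows from irreducibility of $E[p]$ as a $G_L$-module by a now-standard argument.
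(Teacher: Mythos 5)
Your overall route is the paper's: the proof of this proposition (Proposition~\ref{Eq} in \S\ref{seqv}) is exactly a Poitou--Tate global duality argument producing the exact sequences \eqref{exvc1}, \eqref{exvc2}, \eqref{exv}, in which the local terms at $v$ and $\ov v$ are converted into $\Lambda_{L}/(\CL_p^{\cdot}(E_{/L}))$ and $\Lambda_L^{\ur}/(\CL_p^{\rm Gr}(E_{/L}))$ via the two explicit reciprocity laws, and the torsionness of $X_{\st,\circ}$ together with the rank-one, torsion-free structure of $H^1_{\rel,\circ}$ lets the divisibilities be transferred between the three conjectures; the cokernel bookkeeping you worry about in Step 2 is absorbed by the integral normalisations of the regulator maps and the fact that their cokernels are pseudo-null (finite index when $\omega$ is good, resp.\ under \eqref{h_irr}), so it does not contribute to characteristic ideals over the two-variable algebra.

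There is, however, one step that fails as you argue it. In Step 1 you claim that the non-vanishing of the zeta element forces $H^1_{\rel,\circ}(O_L[\tfrac1p],T(1)\hat\otimes\Lambda_L)$ to have $\Lambda_L$-rank at most one. This cannot work: $\CZ^{\cdot}(E_{/L})$ lies \emph{inside} $H^1_{\rel,\circ}$ (its localisation at $\ov v$ lands in the $\circ$-subspace), so its non-vanishing gives no upper bound on the rank of that module, and a bare Euler-characteristic computation over the $\BZ_p^2$-extension only gives the lower bound. The input actually needed -- and the one the paper uses as the fact \eqref{rk} -- is Theorem~\ref{cycIwL-Box}: the full Iwasawa cohomology $H^1(O_L[\tfrac1p],T(1)\otimes\Lambda)$ has rank two by Kato's Euler-system theorem (Theorem~\ref{cycIwQ}), and the rank is cut down to one precisely because the composite $\mathrm{Col}^{\cdot}_{\omega,\ov v}\circ\loc_{\ov v}$ is non-zero on the Beilinson--Kato classes $\bz_\gamma(g)$, whose images are cyclotomic $p$-adic $L$-values (Rohrlich non-vanishing); the two-variable statement is then deduced from the cyclotomic one. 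So you should replace the "non-vanishing of $\CZ$ plus Euler characteristic" justification by an appeal to Kato's elements; with that substitution, your Steps 2 and 3 go through as in the paper, including the cyclotomic non-vanishing hypothesis \eqref{nv} needed for exactness on the left of the duality sequences, which in the present setting is supplied by Remark~\ref{NVcyc}.
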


\subsubsection{Kobayashi's main conjecture: deducing Theorem \ref{thmA}} Let the setting be as in \S\ref{ss:Kob}. 

For a suitable choice of an imaginary quadratic field $L$, we first deduce the divisibility
$$
\mathcal{L}_{p}^{\rm{Gr}}(E_{/L})\big{|}
\xi_{\Lambda_{L}^{\ur}}(X_{\rm{Gr}}^{\rm ur}(E_{/L}))
$$
from the results of \cite{W1,CLW}, which rely on the
Eisenstein congruence method on the unitary group $U(3,1)$. The zeta element then yields the divisibility 
$$
\mathcal{L}_{p}^{\circ}(E_{/L})\big{|}
\xi_{\Lambda_{L}}(X_{\circ}(E_{/L}))
$$
by~Proposition~\ref{Int_eq}. Considering the cyclotomic projection, we obtain 
$$
\mathcal{L}_{p}^{\circ}(E) \mathcal{L}_{p}^{\circ}(E^L)\big{|}
\xi_{\Lambda}(X_{\circ}(E))\xi_{\Lambda}(X_{\circ}(E^{L}))
$$
for $E^L$ the quadratic twist of $E$ corresponding to the extension $L/\BQ$.

On the other hand, based on the work of Kato, Kobayashi proved that 
$$
\xi_{\Lambda}(X_{\circ}(E')) \big{|} \CL_{p}^{\circ}(E')
$$
for $E'\in\{E,E^{L}\}$ (cf.~\cite{Ko}). Hence all of the above divisibilities are equalities, concluding the proof of Theorem \ref{thmA}.
\begin{remark}
The proof of the $\GL_2$-main conjecture in the ordinary case in \cite{SU} relied on the $U(2,2)$-Eisenstein congruence for Hida families. Attempts to generalise it to the supersingular case seem to need new ideas.
The $U(3,1)$-Eisenstein congruence for certain semi-ordinary Hida families and the zeta element 
lead to the main conjecture in both the ordinary and supersingular case. 
\end{remark}

\subsubsection{Perrin-Riou's conjecture: proof}\label{ssPRo}
 Our approach is based on the two-variable zeta element, a variant of Beilinson--Kato elements, and the $p$-adic Waldspurger formula of Bertolini--Darmon--Prasanna.

We give an outline in the case $\epsilon(E_{/\BQ})=-1$.
Let $L$ be an imaginary quadratic field satisfying \eqref{h2} such that each prime dividing $N$ splits in $L$, and so
 $\epsilon(E_{/L})=-1$. 

To begin, building on \cite{K}, we introduce a  
Beilinson--Kato element over $L$:
$$
{\bf{z}}^{\cdot}_{E_{/L}} \in H^{1}_{{\rm rel},\circ}(O_{L}[\frac{1}{p}], T(1) \otimes_{\BZ_{p}} \Lambda_{L}^{\cyc})
$$
for $\Lambda_{L}^{\cyc}$ the cyclotomic Iwasawa algebra.
It is a $\Lambda_{L}^{\cyc}$-linear combination of the cyclotomic Beilinson--Kato elements 
${\bf{z}}_{E}$ and 
${\bf{z}}_{E^{L}}$
(${\bf{z}}_{E}$ is a cyclotomic deformation\footnote{In fact, Kato first constructs the $\Lambda$-adic zeta element ${\bf{z}}_{E}$ and then defines $z_{E}$.} of the Beilinson--Kato element $z_{E}$). 
The explicit reciprocity law for ${\bf{z}}^{\cdot}_{E_{/L}}$ at the place $v$ yields the cyclotomic $p$-adic $L$-function, interpolating the central $L$-values of the twists of $E_{/L}$
by the finite order Hecke characters along the cyclotomic tower.

On the other hand, we have the two-variable zeta element $\CZ^{\cdot}(E_{/L})$. 
Its explicit reciprocity law at the place $\ov{v}$ in combination with the $p$-adic Waldspurger formula gives
\begin{equation}\label{pWa}
\log_{\ov{v}}({\mathds{1}}_{L}(\loc_{\ov{v}}(\CZ^{\cdot}(E_{/L})))) \doteq
(\log_{\omega}y_{L})^{2}
\end{equation}
for $\log_{\ov{v}}$ the Bloch--Kato logarithm, 
$\mathds{1}_{L}(\cdot)$ the specialisation at the identity Hecke character, 
and $y_{L} \in E(L)$ a Heegner point arising from a modular parametrisation of $E$. 

Hence, in view of \eqref{pWa} Conjecture \ref{PR0} amounts to the comparison 
\begin{equation}\label{cZeta}
\CZ^{\cdot}(E_{/L})^{\cyc} \doteq {\bf{z}}_{E_{/L}}
\end{equation}
of cyclotomic zeta elements (here $(\cdot)^{\cyc}$ is the cyclotomic projection). 
We first establish a local analogue of  \eqref{cZeta} at the place $v$ via the explicit reciprocity laws for $\CZ^{\cdot}(E_{/L})$ and 
${\bf{z}}_{E_{/L}}$ at $v$.
The comparison \eqref{cZeta} then follows from the key global fact:
$$
\rank_{\Lambda_{L}^{\cyc}} H^{1}_{{\rm rel},\circ}(O_{L}[\frac{1}{p}], T(1) \otimes_{\BZ_{p}} \Lambda_{L}^{\cyc}) =1,
$$
which is essentially due to Kato \cite{K}.

Actually, the zeta element ${\bf z}_{E_{/L}}^\cdot$ need not be $p$-integral, in which case we slightly modify the strategy.

\subsubsection{Zeta elements over imaginary quadratic fields: existence}\label{ss:Z_st}
Our construction of the zeta element builds on the fundamental progress on Rankin--Selberg zeta elements\footnote{More specifically, the work of Lei--Loeffler--Zerbes \cite{LLZa} and its generalizations \cite{LLZb,KLZ,LZ,BL} are foundational to our construction of the zeta element.}
due to Bertolini--Darmon--Rotger \cite{BDR1,BDR2}, Lei--Loeffler--Zerbes \cite{LLZa,LLZb}, Kings--Loeffler--Zerbes \cite{KLZ}, Loeffler--Zerbes \cite{LZ} and Buyukboduk--Lei \cite{BL}. 
While these works initiate the construction, an essential difficulty arises from the presence of an Eisenstein CM Hida family for which the $p$-distinguished hypothesis fails, and the pertinent Hecke algebra is non-Gorenstein. 
This is reflected in the occurrence of singularities in the geometric construction of the zeta element, and necessitates a fine analysis of the global geometry underlying the construction, especially of the Tate lattice associated to the CM Hida family. Our analysis is quite roundabout, based on Beilinson--Kato elements and 
auxiliary Rankin--Selberg zeta elements!

We now outline the construction in the ordinary case. Let $E_{/\BQ}$ be an elliptic curve of conductor $N$ and $p\nmid 2N$ an ordinary prime. Let $L$ be an imaginary quadratic field satisfying  \eqref{h1}, \eqref{h2} and \eqref{h3}. 

Let ${\bf h}_v$ be the canonical CM family passing through the weight one theta series $\theta(\chi_L)$ associated to the quadratic character $\chi_L$ of the extension $L/\BQ$  
 with coefficients in the Iwasawa algebra $\Lambda_L^v$ of the maximal $p$-abelian extension of $L$ unramified outside $v$ as in 
\ref{CMHidaFam}.  
 Note that 
$\theta(\chi_L)$ is Eisenstein, in the sense that its associated mod $p$ residual Galois representation is reducible, hence in turn so is ${\bf h}_v$. 
The Tate lattices\footnote{in the sense of Hida} associated with the canonical CM family ${\bf{h}}_{v}$ are the $\Lambda_{L}^{v}[G_{\BQ}]$-modules  
$$
\BT = H^{1}_{\ord}(D_Lp^{\infty}) \otimes_{\BT_{D_Lp^{\infty}}^{\ord},\varphi} \Lambda_{L}^{v} \ \ \text{and}  \ \
\BH = \CH^1_\ord(D_Lp^\infty) \otimes _{\BH_{D_Lp^\infty}^{\ord},\varphi}\Lambda_L^v.
$$
Here $\BT_{D_Lp^{\infty}}^{\ord}$ (resp.~$\BH_{D_Lp^\infty}^{\ord}$) is the cuspidal (resp.~full) Hida Hecke algebra of tame level $D_L$, and $H^{1}_{\ord}(D_Lp^{\infty})$ (resp.~$\CH^1_\ord(D_Lp^\infty)$) is the ordinary part of the \'etale cohomology of the tower of closed (resp.~open) modular curves, and $\varphi: \BH_{D_Lp^\infty}^{\ord} \twoheadrightarrow \BT_{D_Lp^\infty}^{\ord} \ra \Lambda_{L}^v$ is the homomorphism corresponding to the Hecke eigenform ${\bf h}_v$. Note that the Hecke algebras $\BH_{D_Lp^\infty}^{\ord}$ and $\BT_{D_Lp^\infty}^{\ord}$, as well as the Tate modules $\BH$ and $\BT$, 
are distinct since ${\bf h}_v$ is Eisenstein. 
Moreover, a priori, neither $\BT$ nor $\BH$ are free $\Lambda_L^v$-modules. Put $\BH_{1}=\BH/\BH_{\tor}$. 

Let
$$
\mathcal{BF}(E_{/L}) \in H^1(\BZ[\frac{1}{p}], T(1)\hat\otimes  \BH_1 \hat\otimes \Lambda)
$$
be the image of  the Beilinson--Flach element $\mathcal{BF}({{f_{E}},{\bh}})$ 
associated\footnote{Here the coefficients involve the open Tate lattice $\BH_1$, instead of the closed lattice $\BT$, since the construction relies on Siegel units which are supported on cusps.} to the $p$-ordinary stabilisation of the weight two newform attached to $E$ and the CM Hida family ${\bf h}_v$. The corresponding explicit reciprocity laws \cite{LLZa,KLZ} link it to the $p$-adic $L$-functions $\CL_{p}(E_{/L})$ and $\CL_{p}^{\rm Gr}(E_{/L})$. Hence one expects  
$\mathcal{BF}(E_{/L})$ to be a candidate for the zeta element $\CZ(E_{/L})$. 
In fact, the former is a Rankin--Selberg zeta element over $\BQ$, and it would apparently be the desired zeta element for $E$ over $L$ if $\BH_1$ were integrally induced
from $G_L$ as a $G_\BQ$-representation. However, an essential obstacle is that $\BH_1$ need not be even $\Lambda_L^v$-free!

Our construction of the zeta element from $\mathcal{BF}(E_{/L})$ takes a different route, the key being the following.

\begin{thm}\label{thmZ_0}
\noindent
\begin{itemize}
\item[(a)] We have 
$$
\mathcal{BF}(E_{/L}) \in 
H^1(\BZ[\frac{1}{p}], T(1)\hat\otimes  \BT \hat\otimes \Lambda) \subset 
H^1(\BZ[\frac{1}{p}], T(1)\hat\otimes  \BH_1 \hat\otimes \Lambda).
$$
\item[(b)] The Tate lattice $\BT$ is integrally induced, i.e.  there exists an isomorphism 
$$
\BT \simeq \Ind_{G_{L}}^{G_{\BQ}}(\Lambda_{L}^{v}(\Theta_{v}))
$$
of $\Lambda[G_{\BQ}]$-modules for 
$\Theta_{v}$ the Hecke character as in \eqref{LamHec}. 
\end{itemize}
\end{thm}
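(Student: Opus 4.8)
I would prove (b) and (a) in tandem, the common source of difficulty being that the weight-one theta series $\theta(\chi_L)$ has reducible residual representation $\ov\rho\cong\mathds{1}\oplus\ov\chi_L$, whose restriction to $G_L$ is $\mathds{1}\oplus\mathds{1}$; thus the CM Hida family ${\bf h}_v$ fails to be $p$-distinguished, its Hida Hecke algebra is non-Gorenstein, and none of the $\Lambda_L^v$-freeness of $\BT$, the splitting of $\BT|_{G_L}$, or the perfectness of the duality on $\BT$ is formal. The first, soft, input is the standard description of a CM Hida family: over $\Frac(\Lambda_L^v)$ the Galois representation attached to ${\bf h}_v$ is $\Ind_{G_L}^{G_\BQ}$ of the Iwasawa-theoretic Hecke character $\Theta_v$. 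Hence $\BT$ has $\Lambda_L^v$-rank $2$ and becomes induced after inverting a nonzero element; all the work is in making this integral, which is where (a) feeds in.

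\textbf{Proof of (b).} To produce a $G_L$-stable line I would first exploit the ordinary filtration $0\to\BT^+\to\BT\to\BT^-\to 0$ of $G_{\BQ_p}=G_{L_v}$-modules together with the fact that $\Theta_v$ is unramified outside $v$: this identifies $\BT^+$ and $\BT^-$ with the restrictions to a decomposition group of $\Lambda_L^v(\Theta_v)$ and $\Lambda_L^v(\Theta_v^c)$. Globalizing the $v$-stable line via the CM structure of ${\bf h}_v$ — realized geometrically by CM points on the tower of modular curves — one gets a nonzero, saturated, $G_L$-equivariant map $\Lambda_L^v(\Theta_v)\to\BT|_{G_L}$, which by Frobenius reciprocity becomes a $G_\BQ$-equivariant map $\iota\colon\Ind_{G_L}^{G_\BQ}(\Lambda_L^v(\Theta_v))\to\BT$ between rank-$2$ $\Lambda_L^v$-modules with equal determinant, hence injective with cokernel of codimension $\ge 2$. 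Finally I would deduce that $\iota$ is an isomorphism — equivalently, that $\BT$ is $\Lambda_L^v$-free — from the reflexivity of $\BT$: Poincar\'e duality on the tower of proper modular curves furnishes a $\Lambda_L^v$-bilinear pairing $\BT\times\BT\to\Lambda_L^v(1)$ (twisted by the involution), and once this pairing is known to be perfect — the delicate point at the non-Gorenstein point — both sides of $\iota$ are reflexive and $\iota$ is an isomorphism away from codimension $\ge 2$, so passing to double duals forces $\iota$ to be an isomorphism. The perfectness of the pairing and the saturation of the $G_L$-line are exactly what fails to be automatic when $\Theta_v\equiv\Theta_v^c\pmod{\mathfrak m}$; here the argument becomes "roundabout" and one pins down $\BT$ pointwise. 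At a Zariski-dense set of classical specializations $(f_E,\theta(\psi_k))$ with $\psi_k$ of weight $\ge 1$ and $p$-power conductor, the relevant lattice is the honestly induced lattice of the classical CM newform, and the integral behaviour of the specialized Beilinson--Flach class (Bertolini--Darmon--Rotger, Kings--Loeffler--Zerbes) — together with its cyclotomic specialization, where $\Ind_{G_L}^{G_\BQ}$ restricted to $\BQ$ is $E\oplus E^L$ and the class becomes a combination of the Beilinson--Kato elements $z_E$, $z_{E^L}$ — determines the structure of $\BT$ at each such point; interpolation plus a separatedness argument then upgrades this to the asserted identity over $\Lambda_L^v$.

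\textbf{Proof of (a).} Granting (b), the natural inclusion $\BT\hookrightarrow\BH_1$ (closed-curve $H^1$ into open-curve $H^1$) has cokernel $C$ of Eisenstein type: it is built from the boundary $H^0(\mathrm{cusps})(-1)$ of the open modular curves and, after projection to the CM family, is supported in the direction of the cusps. Thus the obstruction to lifting $\mathcal{BF}(E_{/L})$ from $\BH_1$-coefficients to $\BT$-coefficients lies in $H^1(\BZ[\tfrac1p],T(1)\hat\otimes C\hat\otimes\Lambda)$, and it suffices to show the image of $\mathcal{BF}(E_{/L})$ there vanishes. I would verify this on the same dense family of classical points $(f_E,\theta(\psi_k))$: there the image of the Beilinson--Flach class in open-curve cohomology already lies in closed-curve cohomology, since it is a cohomology class for the cuspidal motive $f_E\otimes\theta(\psi_k)$ with vanishing residue at the cusps; density together with separatedness of the target then gives that $\mathcal{BF}(E_{/L})$ maps to $0$ in $C$-coefficients and hence lifts to $\BT$. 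Alternatively, one argues directly that the Siegel-unit construction, though supported on cusps, produces a class whose cuspidal projection has trivial boundary, the novelty being to run this at the non-Gorenstein CM-Eisenstein family — which is precisely where the control of denominators supplied by (b) is needed.

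\textbf{Main obstacle.} The technical heart is the integral structure of $\BT$ at the single Eisenstein CM point where $p$-distinguishedness fails: without it the usual Hida-theoretic freeness, duality, and filtration statements do not apply, and establishing them requires the roundabout comparison with classical Beilinson--Flach and Beilinson--Kato classes rather than a direct deformation-theoretic argument.
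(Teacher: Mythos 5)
There is a genuine gap, and it sits exactly at the two places where your proposal replaces an argument by ``pointwise determination at classical points plus interpolation/separatedness.'' The whole difficulty of both (a) and (b) is concentrated at the single height-one prime $(T_v)\subset\Lambda_L^v$ (the weight-one Eisenstein specialisation): the paper shows that the possible failure of \eqref{Ind-eq} and the discrepancy modules $\widetilde\BH/\widetilde\BT$ and $\BH_1/\BT_1$ are killed by $T_v$ (Proposition \ref{Sat-prop3}), so they are literally invisible at every other classical specialisation; moreover at classical points where the residual representation is irreducible all Galois-stable lattices are homothetic, so the specialised lattice carries no integral information at all. Hence ``the relevant lattice at a Zariski-dense set of points is the induced one, and interpolation upgrades this to $\Lambda_L^v$'' proves nothing about the structure at $(T_v)$, and the same objection kills your argument for (a): the obstruction module $\BH_1/\BT_1$ is $T_v$-torsion, so checking that the Beilinson--Flach class has trivial image at classical points away from $T_v$ is vacuous. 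Your other closing step for (b) is also unjustified: perfectness of the Poincar\'e-duality pairing (Theorem \ref{ordTate-dual}) holds over the Hecke algebra but does \emph{not} descend along $\otimes_{\BT^{\ord}_{D_Lp^\infty},\varphi}\Lambda_L^v$ precisely because that quotient is non-Gorenstein, and ``equal determinant'' does not force the cokernel of your map $\iota$ to be pseudo-null --- that is equivalent to what is being proved. (The paper states explicitly that no direct ``trivial residue at the cusps'' argument for (a) is available.)

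What the paper actually does is quite different in its decisive steps. First, the local analysis at $(T_v)$ is pinned down not by specialisation but by the structure of the eigencurve at the weight-one Eisenstein point (Bella\"iche--Dimitrov, Betina--Dimitrov--Pozzi), which reduces everything to a sharp dichotomy: either \eqref{Ind-eq} holds, or $\BT^+=T_v\BH_v$ and $\widetilde\BT=T_v\widetilde\BH$ with $\widetilde\BT$ not induced. Second, in the bad case one shows (Proposition \ref{BF-I-prop}) that $\CBF(g/L)$ is divisible by $T_v$ inside an auxiliary induced lattice $\BI$; the proof reduces modulo $\gamma_v-1$ to the cyclotomic line and uses the Coleman map at $v$ together with Kato's theorem that $H^1_{\rel,\ord}(\cO_L[\frac1p],T(1)\otimes\Lambda)$ is torsion-free of rank one (Theorem \ref{cycIwL-Box}) --- a global Euler-system input your proposal only gestures at. Via the second explicit reciprocity law this forces $T_v^2\mid\sL_p^{Gr}(g/L)$ for \emph{every} $p$-ordinary newform $g$ (the bad configuration depends only on $L$, not on $g$). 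Third, the contradiction is produced by a single auxiliary newform: a CM form of analytic rank one over $L$, constructed via Rohrlich's non-vanishing theorem (Proposition \ref{anrk-prop}), for which Gross--Zagier plus the Bertolini--Darmon--Prasanna formula (Theorem \ref{BDPformula}, Proposition \ref{GRL=BDPL}) show $T_v^2\nmid\sL_p^{Gr}(g/L)$. Your proposal contains no analogue of this non-vanishing/Heegner-point step, which is the engine that rules out the bad case and yields both (a) and (b) simultaneously; as written, after the (correct) identification of the difficulties, the proof does not close.
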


In light of this theorem and the explicit reciprocity laws,
$\mathcal{BF}(E_{/L})$ does lead to the sought after zeta element $\CZ(E_{/L})$. 

Note that parts (a) and (b) of Theorem \ref{thmZ_0} have a markedly different appearance, yet their proofs are intertwined! 
A satisfying explanation that zeta element arises from the Tate lattice of the closed curve, as in part (a), has eluded us.

In the absence of the $p$-distinguished hypothesis present methods to study $\Lambda$-adic Tate modules do not apply.
 Our roundabout approach to the proof of Theorem \ref{thmZ_0} is based on Rankin--Selberg zeta elements for the pairs $(g, {\bf{h}}_{v})$ for $g$ a $p$-ordinary weight two newform: 
we introduce a preliminary two-variable zeta element for $g$ over $L$ using the former zeta element 
and an analysis of the reflexive closure of the 
$\Lambda$-adic Tate modules. 
Heuristically, this construction involves resolution of singularities arising from the underlying non-Gorenstein-ness. 
Recall that the Rankin--Selberg zeta element lives in the first Iwasawa cohomology of the 
$\Lambda_{L}^{v}[G_{\BQ}]$-module $T_{g} \otimes_{\BZ_{p}} \BH_1$ for $T_{g}$ a Tate module associated to $g$ (cf.~\cite{LLZa,KLZ}).
Since the maximal torsion-free quotient $\BH_{1}$ need not be a free $\Lambda_L^v$-module, we are  lead to consider its reflexive closure $\tilde{\BH}$ and the image of the zeta element in $T_{g} \otimes_{\BZ_{p}} \tilde{\BH}$. The analysis of the reflexive closure 
relies on Ohta's work, especially \cite{Oh1}.
 
If the property (b) fails, we show that an explicit reciprocity law for the preliminary zeta element over $L$  
would 
contradict Theorem~\ref{thmZ} (with $E$ replaced by $g$). 
Here we utilise results regarding congruence ideal for the canonical CM Hida family due to Bellaiche--Dimitrov \cite{BD} and 
 Betina--Dimitrov--Pozzi \cite{BDP'}, as well as Beilinson--Kato elments. 
In view of such an anomalous explicit reciprocity law the Greenberg $p$-adic $L$-function $\CL_p^{\rm Gr}(g_{/L})$
would vanish at the identity Hecke character over $L$  for {\it{any}} $p$-ordinary weight two newform $g$.
We arrive at a contradiction by constructing an {\it{auxiliary}} newform $g$ for a given imaginary quadratic field $L$ for which $\CL_{p}^{\rm Gr}(g_{/L})$ does not vanish at the identity Hecke character. 
The construction is based on 
the anticyclotomic non-vanishing results of Rohrlich \cite{Ro}
and the $p$-adic Waldspurger formula. 

Our proof of part (a) is based on (b) and a variant of the above strategy.

The construction in the supersingular case follows along similar lines, though we now have part (b) of Theorem \ref{thmZ_0} at our disposal. 


\subsection{Vistas}
\subsubsection{Euler systems over imaginary quadratic fields} 
In a forth-coming companion paper we expect to show that the two-variable zeta element extends to an Euler system, leading to a $p$-adic Euler system for elliptic curves $E$ over imaginary quadratic fields $L$ as in Theorem~\ref{thmZ}. The method is based on a refinement of the strategy outlined in 
\ref{ss:Z_st}. 

\subsubsection{Euler systems for $\GSp_{4}\times \GL_2$}
Our study of the $\Lambda$-adic Tate modules, especially Theorem~\ref{thmZ_0}(b),~seems to be 
 relevant in the context of Euler system for $\GSp_{4}\times \GL_2$ (cf.~\cite{HJS,LZ1}). It may shed some light on the existence of zeta elements for modular abelian surfaces over imaginary quadratic fields.

\subsubsection{Conjectures of Mazur, Kato, and Kolyvagin}
The two-variable zeta element has other applications. It is a key ingredient in the proof of Mazur's main conjecture at Eisenstein primes \cite{CGS}. It is also crucial in the recent proof of Kato's main conjecture and Perrin-Riou's Heegner point main conjecture at primes of good reduction under mild conditions \cite{BCS}, and in turn of Kolyvagin's conjecture \cite{BCGS}. It is also an ingredient in the recent proof of a result towards Kato's main conjecture at primes of additive reduction \cite{FW}. 
Some complementary application appear in \cite{Ca0,CW0,CCSS}.

\subsubsection{Sharifi's conjecture}
It would be interesting to situate our results regarding the $\Lambda$-adic Tate modules and zeta elements (cf.~Theorems~\ref{thmZ}~and~\ref{thmZ_0}) in the context of Sharifi's conjectures \cite{Shf}. These conjectures pertain to $\Lambda$-adic Tate modules in the Eisenstein case. However, the current framework assumes a Gorenstein or cyclicity hypothesis and excludes our setting.

\subsection{Related results} We include a few remarks about related results in the literature (cf.~\cite{BST}).

\subsubsection*{$p$-part of the BSD formula}
For semistable elliptic curves satisfying (irr$_{\BQ}$), the $p$-part of the BSD formula in the analytic rank one case is also due to Jetchev--Skinner--Wan with $p$ a prime of good reduction \cite{JSW} and Skinner--Zhang \cite{SZ} and Castella  \cite{Ca'} with $p>3$ a prime of multiplicative reduction. Note that the hypothesis (ram) automatically holds in the semistable case. 
The method of Jetchev--Skinner--Wan relies on the $p$-adic Waldspurger formula \cite{BDP1}. 
For supersingular primes, it gives a different approach to the $r=1$ case of Theorem~\ref{corA_thmA}. It is independent of the $p$-adic Gross--Zagier formula, but relies on 
Theorem~\ref{thmA}.

Several cases of Theorem \ref{corA'} have been obtained by Zhang \cite{Zh}
and Berti--Bertolini--Venerucci \cite{BBV}. The results are conditional on the surjectivity of $\ov{\rho}$ and impose restrictions on primes $q | N$ at which $p$ is allowed to divide the Tamagawa number $c_{q}(E_{/\BQ})$.
The case of Eisenstein primes $p$ appears in \cite{CGS} (see also~\cite{BS}). 
An analogue holds for CM elliptic curves under milder hypothesis, for example $p$ can be a prime of good supersingular reduction.
 The result is originally due to Rubin \cite{Ru2} and Kobayashi \cite{Ko'}. We provide another proof. 

In the analytic rank zero case, Kato proved that the $p$-part of the BSD formula is a consequence of Kato's main conjecture. 
We show that an analogous phenomenon persists in the analytic rank one case (cf.~\S\ref{pBSD}). 

\subsubsection*{$p$-converse theorem}
For semistable elliptic curves satisfying (irr$_{\BQ}$) and (sur), the $p$-converse was established by Skinner for good ordinary primes $p$ \cite{Sk'} under a non-triviality of the localisation at $p$. 
A similar result for $p$ multiplicative is due to Venerucci \cite{Ve2}.
In the semistable case,
Wan \cite{W0} proved the $p$-converse without assuming non-triviality of the localisation, 
 under the hypotheses of \cite{W1}. 

Several cases of Theorem \ref{corB'} have been obtained by Zhang \cite{Zh} and Skinner--Zhang \cite{SZ}. 
The results are conditional on the surjectivity of $\ov{\rho}$ and impose restrictions on primes $q | N$ at which $p$ is allowed to divide the Tamagawa number $c_{q}(E_{/\BQ})$.
An analogue holds for CM elliptic curves under milder hypothesis, for example $p$ can be a good supersingular prime. 
 The result is originally due to Rubin \cite{Ru2} and Kobayashi \cite{Ko'}. 
\subsubsection*{Perrin-Riou's conjecture} 

The split multiplicative case of Conjecture \ref{PR0} is due to Venerucci \cite{Ve1}.  In the good reduction case,
Bertolini--Darmon--Venerucci \cite{BDV} established the conjecture.  
Some cases of Conjecture \ref{PR0} have been established by Buyukboduk \cite{B}
and Buyukboduk--Pollack--Sasaki \cite{BPS}. 
More precisely, the latter authors consider the analytic rank one case with $p>2$ a good ordinary prime. 
 The result is conditional on a $p$-adic Gross--Zagier formula\footnote{an in progress work of Kobayashi.} for non-ordinary elliptic newforms with arbitrary weight and non-critical slope. 
For CM elliptic curves, a result towards Conjecture \ref{PR0} is due to Rubin \cite{Ru1} and Kato \cite[\S15]{K}: 
the analytic rank one case with $p>2$ a good ordinary prime can be deduced from \cite{Ru1} and \cite[\S15]{K}. 

\subsubsection*{The hypothesis \ref{h4}} 
The recent preprint \cite{Sprung-ss} adapts the strategy of \cite{W} (that is, the strategy fully realised in this paper) to the case $a_p\neq 0$ and not a $p$-adic unit together with the author's construction 
of `signed' bounded $p$-adic $L$-functions in this case. This existence of a suitable variant of the zeta elements $\mathcal{Z}^{\cdot}(E_{/L})$ in this case would then yield corresponding main conjectures. Unfortunately, the existence of these zeta elements is not immediate from our constructions. While their existence seems plausible to us, less clear is that they would be a straightforward consequence of the methods employed herein.

\subsection{Structure of the paper} 
In part I we study zeta elements associated to a weight two elliptic newform and an imaginary quadratic field $L$, namely 
a one-variable Beilinson--Kato element over $L$ (cf.~\S\ref{BK-overL}) and a two-variable zeta element over $L$ (cf.~\S\ref{two-variable-zeta} and \ref{two-variable-zeta-ss}). 
A link among the two is central to the paper. 
The analysis of a $\Lambda$-adic Tate module associated to a residually reducible CM Hida family with CM by $L$ constitutes a significant part (cf.~\S\ref{CMHF}--\S\ref{BFord}). The analysis is closely tied with the existence of the two variable zeta element over $L$. 
This part concludes with the proof of the Perrin-Riou conjecture (cf.~\S\ref{sMR}).

In part II we study certain Iwasawa main conjectures and the BSD conjecture 
for the elliptic newform over the rationals and the imaginary quadratic field $L$. The cyclotomic Greenberg main conjecture over the imaginary quadratic field (cf.~\S\ref{ssIMC2})  is key. In view of the comparison of the zeta elements, a recent result towards the two-variable Greenberg main conjecture over $L$ leads to a proof of Kobayashi's main conjecture at supersingular primes, with concomitant applications to the BSD conjecture (cf.~\S\ref{s:Kob}). 
In a similar vein, we establish the cyclotomic Greenberg main conjecture under a non-vanishing hypothesis 
based on the work of Kato, Skinner--Urban and Rubin (for example, see Corollary \ref{GKSU}). 
In view of the $p$-adic Waldspurger formula, the hypothesis can be verified in certain rank one situations (cf.~\eqref{locnv}). 
This leads to results towards the BSD conjecture at ordinary primes (cf.~\S\ref{MW}--\S\ref{s:pcv}).

\subsubsection*{Acknowledgement}
We are grateful to Haruzo Hida for stimulating discussions. 
We thank Laurent Clozel, John Coates, Chandrashekhar Khare, Shinichi Kobayashi, Dinakar Ramakrishnan, Sarah Zerbes, Shou-Wu Zhang and Wei Zhang for their encouraging interest. 
We also thank Patrick Allen, Christophe Breuil, Kazim Buyukboduk, Francesc Castella, Henri Darmon, Mladen Dimitrov, Matthias Flach, Olivier Fouquet, Giada Grossi, Ben Howard, Mahesh Kakde, Chan-Ho Kim, Antonio Lei, David Loeffler, Gyujin Oh, Kazuto Ota, Marco Sangiovanni Vincentelli, Anand Rajagopalan, Romyar Sharifi, Jacques Tilouine, Gisbert Wustholz and Shuai Zhai  
for instructive conversations. In addition, we thank Francesc Castella and Matthias Flach for helpful comments on the preprint.

The influence of Kazuya Kato's seminal ideas on zeta elements permeate this paper. The authors would like to express their deep admiration for his inspiring work and insights. Likewise, they are grateful to Bernadette Perrin-Riou for her pioneering work. 

During the preparation of this paper, A.B. was partially supported by the NSF grants DMS-2303864 and DMS-2302064; C.S. was partially supported by the Simons Investigator Grant \#376203 from the Simons Foundation and by the NSF grant  DMS-1901985; Y.T. was partially supported by the National Key R\&D Program of China grant no. 2023YFA1009701 and the National Natural Science Foundation of China grant no. 12288201, and X.W. was partially supported by NSFC grants 12288201, 11621061, CAS Project for Young Scientists in Basic Research grant no. YSBR-033 and 
National Key R\&D Program of China 2020YFA0712600.

\part{Zeta elements}

\section{Notation and preliminaries}\label{NotationPrelim}
In this section we recall the various objects, especially newforms, $L$-functions, periods, etc., which are fundamental 
to the main results of this paper, introducing notation that will be in force throughout. 

\subsection{Some notation}
We begin with some general notation. 

\subsubsection{The prime $p$} Throughout $p\geq 3$ will be an odd prime. Some results will also hold even for $p=2$, but restricting to odd $p$ simplifies
many arguments.  
Some of the main results of this paper are only stated for $p\geq 5$. When appropriate we include commments on this restriction.

\subsubsection{The embeddings $\iota_\infty$ and $\iota_p$}
Let $\ov{\BQ}$ be a fixed separable algebraic closure of $\BQ$ and $\ov{\BQ}_p$ a fixed separable algebraic closure of $\BQ_p$.
Let $\iota_\infty: \ov{\BQ} \hookrightarrow \BC$ and $\iota_p:\ov{\BQ} \hookrightarrow \ov{\BQ}_p$ be fixed embeddings of fields.
Via $\iota_\infty$, the complex conjugation $\tau \in \Gal(\BC/\BR)$ induces an element of order two in $G_\BQ = \Gal(\ov{\BQ}/\BQ)$, which is also denoted by $\tau$.
The embedding $\iota_p$ identifies $G_{\BQ_p}=\Gal(\ov{\BQ}_p/\BQ_p)$ with a subgroup of $G_\BQ$.

\subsubsection{Subfields of $\ov{\BQ}$}
For a subfield $K \subset \ov{\BQ}$, let $G_{K}=\Gal(\ov{\BQ}/K)$. 
For a set of places $\Sigma$ of $K$, let $K_{\Sigma} \subset \ov{\BQ}$ be the  maximal extension of $K$ unramified outside $\Sigma$ and let 
$G_{K,\Sigma}=\Gal(K_{\Sigma}/K)$. 
For a place $w$ of $K$, let $\ov{K}_{w}$ denote a fixed algebraic closure of $K_{w}$ and let $G_{K_{w}}=\Gal(\ov{K}_{w}/K_{w})$.
Let $I_{w} \subset G_{K_{w}}$ be the inertia subgroup. 
In the case that the residue field of $K_w$ is finite, let $\Frob_{w} \in G_{K_{w}}/I_{w}$ denote an {\em arithmetic} Frobenius. 
For convenience, we fix a $K$-linear embedding $\iota_w:\ov{\BQ} \hookrightarrow \ov{K}_{w}$, so that $G_{K_{w}}$ is identified with a decomposition group for $w$ in $G_{K}$.
For $K=\BQ$ and $w=p$ we take $\iota_p$ for this embedding.

\subsubsection{The character $\epsilon$} We write $\epsilon$ for the  $p$-adic cyclotomic character $\epsilon:G_{\BQ} \ra \BZ_{p}^\times$.

\subsubsection{The Iwasawa algebras $\Lambda_\CG$ and  $\Lambda$}\label{Lambda}
Let $\BQ(\mu_{p^\infty})$ be the cyclotomic extension obtained by adjoining all $p$-power
roots of unity and let $\CG = \Gal(\BQ(\mu_{p^\infty})/\BQ)$. 
Let $\BQ_{\infty}\subset \BQ(\mu_{p^\infty})$ be the $\BZ_{p}$-extension of $\BQ$ and let $\Gamma=\Gal(\BQ_{\infty}/\BQ)$. 
Let $\Delta = \Gal(\BQ(\mu_p)/\BQ)$. Then the canonical projections of $\CG$ to $\Gamma$ and $\Delta$ induce a
canonical isomorphism $\CG\xrightarrow{\sim}\Gamma\times\Delta$, which we use to identify $\CG$ with the product
$\Gamma\times\Delta$.
Let 
$$
\Lambda_\CG = \BZ_p[\![\CG]\!] \ \ \text{and} \ \ \Lambda = \BZ_p[\![\Gamma]\!].
$$
More generally, for a $p$-adically complete $\BZ_p$-algebra $R$, let
$\Lambda_{\CG,R}$ (resp.~$\Lambda_R$) denote $R[\![\CG]\!]$ (resp.~$R[\![\Gamma]\!]$). 
Note that $\Lambda_{\CG,R}$ is naturally a $\Lambda_R$-module and $\Lambda_{\CG,R}$ is identified with $\Lambda_R[\Delta]$.
Let $\gamma_\cyc \in \Gamma$ a topological generator. Then we have an $R$-isomorphism
$\Lambda_R \isoarrow R[\![T]\!]$ arising from $\gamma_\cyc \mapsto 1+T$.

Let  $\omega:\Delta\xrightarrow{\sim} (\BZ/p\BZ)^\times \isoarrow \mu_{p-1}\subset \BZ_p^\times$ 
be the composition of the canonical isomorphism with the Teichm\"uller lift. For an integer $i$, let
$\eps_i = \frac{1}{\#\Delta} \sum_{\sigma\in\Delta} \omega^{-i}(\sigma)\sigma \in \BZ_p[\Delta]$
be the idempotent associated with $\omega^i$.
Then $\Lambda_{\CG,R}$ decomposes as
$\Lambda_{\CG,R} = \oplus_{i=0}^{p-2} \Lambda_{\CG,R}^{(i)}$ with 
$\Lambda_{\CG,R}^{(i)} = \eps_i\Lambda_{\CG,R}  = \Lambda_{\CG,R}\eps_i = \Lambda_R\eps_i$.
Each $\Lambda_{\CG,R}^{(i)}$ is a subring canonically isomorphic to $\Lambda_R$, with identity element $\eps_i$.

The canonical projections
\begin{equation}\label{cycuniv}
\Psi_\CG:G_\BQ\twoheadrightarrow \CG  \ \ \text{and} \ \ \Psi: G_{\BQ} \twoheadrightarrow \Gamma 
\end{equation}
are often referred to as the canonical characters and viewed 
as taking values in $\Lambda_\CG^\times$
and $\Lambda^\times$, respectively. 
The composition of $\Psi_\CG$ with the projection to $\Lambda_{\CG}^{(i)}$ is $\omega^i\Psi$, where we also write
$\omega$ for the composition $G_\BQ\twoheadrightarrow\Delta\stackrel{\omega}{\rightarrow} \BZ_p^\times$.

\subsection{Newforms} \label{Newforms}
Let $g \in S_{2}(\Gamma_{0}(N))$ be a newform of weight two, level $N$, and trivial character.  To simplify some arguments, we will always assume that
\begin{equation}\label{p-nmid-N}
p\nmid N
\end{equation}
from \S\ref{ModularForms} onward.

\subsubsection{The Hecke field of a newform}
Let $g(q)=\sum_{n\geq 1}a_{g}(n)q^{n}$ be the $q$-expansion ($q=e^{2 \pi i \tau}$) of the newform $g$. Recall that the field
$$F=\BQ(a_{g}(n);n \geq 1),$$ 
the Hecke field of $g$, is a finite extension of $\BQ$, and 
$\cO_{F,0}=\BZ[a_{g}(n);n \geq 1] \subset F$ is an order (possibly not maximal).  Let $\cO_F$ be the ring of integers of $F$ (the maximal order).
We view $F$ as a subfield of $\ov{\BQ}$ via $\iota_\infty$.

Let $\lambda\mid p$ be the prime of $\cO_F$ determined\footnote{This choice is for convenience. All the subsequent
constructions and results for a different choice of a prime $\lambda$ can be obtained either by a different choice of the initial $\iota_p$ 
or by replacing $g$ with a Galois conjugate.} by $\iota_p$. Let $\cO=\cO_{F,(\lambda)}$ and $\cO_\lambda=\cO_{F,\lambda}$.
Let $\lambda_0= \lambda \cap \cO_{F,0}$ and let $k_0 = \cO_{F,0}/\lambda_0$ be its residue field. Let $k_\lambda = \cO/\lambda = \cO_\lambda/\lambda\cO_\lambda$;
this is a finite extension of $k_0$.

\subsubsection{Hecke algebras}\label{Hecke algebras} 
Let $\Gamma$ be either $\Gamma_{0}(N)$ or $\Gamma_{1}(N)$. 
Let $\BT_{\Gamma}$ (resp.~$\BH_\Gamma$) be the Hecke algebra for level $\Gamma$ generated over $\BZ_{(p)}$ by the  Hecke operators $T(n)$,  
$n \geq 1$, defined in \cite[\S4.9]{K},
acting on the space of cuspforms $S_2(\Gamma)$ (resp.~the space of modular forms $M_2(\Gamma)$).
Similarly, let $\BT_{\Gamma}'$ and $\BH_\Gamma'$ be the Hecke algebras for level $\Gamma$ generated over $\BZ_{(p)}$ by the dual Hecke operators $T'(n)$, also defined in \cite[\S4.9]{K}.
The Hecke algebras $\BT_\Gamma$ and $\BT'_\Gamma$ (resp.~$\BH_\Gamma$ and $\BH_\Gamma'$) act on the cuspforms (resp.~modular forms) 
of level $\Gamma$ over $\BZ_{(p)}$ and also on the cohomology of the closed (resp.~open) modular curves of level $\Gamma$ over $\BZ_{(p)}$
(cf.~\cite[\S4.9]{K}).

\subsubsection{Congruence numbers}\label{congruence}
Let $\phi_{\Gamma}:\BT_\Gamma\twoheadrightarrow  \cO_{F,0}\otimes\BZ_{(p)}$ be the $\BZ_{(p)}$-homomorphism associated with $g$. 
There is a factorisation 
$$
\phi_{\Gamma_{1}(N)}: 
\BT_{\Gamma_{1}(N)} \twoheadrightarrow \BT_{\Gamma_{0}(N)} \stackrel{\phi_{\Gamma_0}}{\twoheadrightarrow}  \cO_{F,0}\otimes\BZ_{(p)},
$$
with the first morphism being the canonical surjection arising from the inclusion $S_{2}(\Gamma_{0}(N)) \subset S_{2}(\Gamma_{1}(N))$.
Let
$\mathfrak{p}_0$ be the kernel of $\phi_{\Gamma_1(N)}$ and let $\mathfrak{m}_0$ be the kernel of the reduction 
of $\phi_{\Gamma_1(N)}$ modulo $\lambda_0$. Let $\BT_{\mathfrak{m}_0}$ be the localization of $\BT_{\Gamma_1(n)}$
at the maximal ideal $\mathfrak{m}_0$.

It will often be more convenient to work with
$\BT_{\Gamma,\cO} = \BT_{\Gamma}\otimes_{\BZ_{(p)}}\cO$ instead of 
$\BT_\Gamma$.  The $\BZ_{(p)}$-homomorphism $\phi_\Gamma$ induces an $\cO$-homomorphism
$\phi_{\Gamma,\cO}:\BT_{\Gamma,\cO}\twoheadrightarrow \cO$. Let $\mathfrak{p}$ be the kernel of $\phi_{\Gamma_1(N),\cO}$
and let $\mathfrak{m}$ be the kernel of the reduction of $\phi_{\Gamma_1(N),\cO}$ modulo $\lambda$. 
Note that $\mathfrak{p}\cap \BT_{\Gamma_1(N)} = \mathfrak{p}_0$ and $\mathfrak{m}\cap \BT_{\Gamma_1(N)} = \mathfrak{m}_0$.
Let $\BT_\mathfrak{m}$ be the localization of $\BT_{\Gamma_1(N),\cO}$ at the maximal ideal $\mathfrak{m}$, which is then an $\cO_\lambda$-algebra. 
The natural map $\BT_{\mathfrak{m}_0}\rightarrow \BT_{\mathfrak{m}}$ induces an isomorphism $\BT_{\mathfrak{m}_0}\otimes_{W(k_0)}\cO_\lambda \stackrel{\sim}{\rightarrow} \BT_{\mathfrak{m}}$,
where $W(k_0)$ is the ring of Witt vectors for the finite field $k_0$.

Let $\pi: \BT_{\mathfrak{m}} \twoheadrightarrow  \cO_{\lambda}$ be the $\cO_\lambda$-morphism arising from $\phi_{\Gamma_{1}(N),\cO}$. 
A {\em congruence number} for $g$ is an element $c_{g} \in \cO_{\lambda}$ such that 
$$
(c_{g}) = \pi(\Ann_{\mathfrak{m}}(\ker(\pi)) \subset \cO_\lambda.
$$
Note that a congruence number $c_g$ is only uniquely defined up to multiplication
by an element in $\cO_\lambda^\times$. However, we can and do choose a congruence number $c_{g} \in \cO$, which is
then algebraic and uniquely determined up to multiplication by an element of $\cO^\times$. A congruence number in $\cO$ can also be directly defined as follows.

The ring $\BT_{\Gamma_1(N),\cO_\lambda} = \BT_{\Gamma_1(N),\cO}\otimes_\cO\cO_\lambda$ is a semi-local Artinian $\cO_\lambda$-algebra and so 
is canonically identified with the product of its localisations at its (finitely many) maximal ideals. In particular, $\BT_{\mathfrak{m}}$ is a direct factor
of $\BT_{\Gamma_1(N),\cO_\lambda}$. Let $\phi_{\Gamma_1(N),\cO_\lambda}$ be the $\cO_\lambda$-linear extension of $\phi_{\Gamma_1(N),\cO}$.
It then follows that 
$$
(c_{g}) = \phi_{\Gamma_1(N),\cO_\lambda}(\Ann_{\BT_{\Gamma_{1}(N),\cO_{\lambda}}}(\phi_{\Gamma_1(N),\cO_\lambda}) \subset \cO_\lambda.
$$
Since $\Ann_{\BT_{\Gamma_{1}(N),\cO_{\lambda}}}(\ker(\phi_{\Gamma_1(N),\cO_\lambda})) = \Ann_{\BT_{\Gamma_{1}(N),\cO}}(\ker(\phi_{\Gamma_1(N),\cO}))\otimes_\cO\cO_\lambda$,
we then also have that a congruence number can be taken to be a generator of the $\cO$-ideal
$\phi_{\Gamma_1(N),\cO}(\Ann_{\BT_{\Gamma_{1}(N),\cO}}(\ker(\phi_{\Gamma_1(N),\cO})))\subset \cO$.

\subsubsection{Cohomology of modular curves}\label{cohomology}
Let $Y_{1}(N)$ (resp. $Y_{0}(N)$) be the open modular curve of level $\Gamma_{1}(N)$ (resp. $\Gamma_{0}(N)$). 
Let $X_{1}(N)=X_{1}(N)_{/\BQ}$ (resp. $X_{0}(N)$) be the modular curve of level $\Gamma_{1}(N)$ (resp. $\Gamma_{0}(N)$). We consider 
these as curves over $\BQ$ using the models in \cite[\S2.8]{K}.

For a $\BZ$-algebra $A$ let
$$
V_{A}= \frac{H^{1}(Y_{1}(N),A)}{\mathfrak{p}\cdot H^{1}(Y_{1}(N),A)}.
$$
Here $H^{1}(Y_{1}(N), A)$ denotes the Betti cohomology $H^{1}(Y_{1}(N)(\BC), A)$.
The action of the complex conjugation $\tau$ induces an involution of $H^{1}(Y_{1}(N)(\BC), A)$ and hence of $V_A$, which we also denote by $\tau$.
Let $H^{1}(Y_{1}(N), A)^\pm$ (resp.~$V_A^\pm$) be the submodule on which the action of $\tau$ is by multiplication by $\pm 1$. 
If $2$ is invertible in $A$, then $H^{1}(Y_{1}(N), A)$ (resp.~$V_A$) is a direct sum of  $H^{1}(Y_{1}(N), A)^+$ and $H^{1}(Y_{1}(N), A)^-$ (resp, $V_A^+$ and $V_A^-$). 

Since $g$ is a newform of level $N$, $\dim_{F}V_{F}=2$.
Furthermore, the maps $V_F\otimes_F F_\lambda \rightarrow V_{F_\lambda}$ and
$V_F\otimes_F\BC\rightarrow V_\BC$ induced by functoriality are both isomorphisms.
In this way $V_F$ defines an $F$-structure on each of $V_{F_\lambda}$ and $V_\BC$.

The inclusion $\cO\hookrightarrow F$ induces
an injection $(V_{\cO})_{/\tor} \hookrightarrow V_F$. Let $T_\cO$ be the image of this map. 
In particular, 
\begin{equation*}\label{lattice}
T_{\cO}=\Im(H^{1}(Y_{1}(N),\cO) \ra V_{F}) = \Im(V_\cO \ra V_{F})
\end{equation*}
This is an $\cO$-lattice in $V_F$. We analogously define $T_{\cO_{\lambda}}\subset V_{F_\lambda}$. 
Under the functorial identification of $V_{F_\lambda}$ with $V_F\otimes_F F_\lambda$, $T_{\cO_{\lambda}}$ is identified
with $T_\cO\otimes_{\cO} \cO_{\lambda}$.

\subsubsection{Galois representations}
It follows from the comparison isomorphism of Betti and \'etale cohomology that $V_{F_\lambda}$
is naturally equipped with a continuous $F_\lambda$-linear action of $G_\BQ$ and that $T_{\cO_\lambda}$ is a
$G_\BQ$-stable $\cO_\lambda$-lattice in $V_{F_\lambda}$. To simplify notation, we will let
$$
V = V_{F_\lambda} \ \ \text{and} \ \ T=T_{\cO_\lambda}.
$$
The representation  
$$
\rho: G_\BQ \rightarrow \Aut_{F_\lambda}(V)
$$
is unramified at all $\ell\nmid Np$, and for such primes
$$
\mathrm{trace}\,\rho(\Frob_\ell^{-1}) = a_g(\ell) \ \ \text{and} \ \ \det\rho(\Frob_\ell^{-1}) = \ell.
$$
In particular, 
$$
\det\rho = \epsilon^{-1}.
$$
Let 
$$
\ov{T} = T/\lambda T.
$$
We will sometimes make the following hypothesis relative to a given number field $K\subset \ov{\BQ}$:
\begin{equation}\tag{irr$_{K}$}
\text{$\ov{T}$ is an absolutely irreducible $k_\lambda[G_{K}]$-module.} 
\end{equation}
This implies, but is not implied by, the hypothesis that for a given abelian extension $M/K:$
\begin{equation}\tag{van$_{M}$}
\ov{T}^{G_M} = 0.
\end{equation}

\subsubsection{Spaces of modular forms}\label{ModularForms}
Since $p \nmid N$, the modular curve $X_{1}(N)_{/\BQ}$ extends as a smooth projective scheme $X_{1}(N)_{/\BZ_{(p)}}$.
Let $\Omega^{1}_{X_{1}(N)}$ be the sheaf of differentials, and 
for a $\BZ_{(p)}$-algebra $B$ let 
$$
S_2(\Gamma_1(N))_B = H^0(X_1(N)_{/\BZ_{(p)}},\Omega^1_{X_1(N)})\otimes_{\BZ_{(p)}} B.
$$
If $B$ is a flat $\BZ_{(p)}$-algebra, then $S_2(\Gamma_1(N))_B = H^0(X_1(N)_{/B},\Omega^1_{X_1(N)})$.
We similarly define $S_{2}(\Gamma_{0}(N))_{B}$.

Let 
$$
S_F = \frac{S_2(\Gamma_1(N))_F}{\mathfrak{p}\cdot S_2(\Gamma_1(N))_F}.
$$
Since $g$ is a newform of level $N$, $\dim_{F}S_{F}=1$.

Let 
$$
S_\cO = \Big (\frac{S_2(\Gamma_1(N))_\cO}{\mathfrak{p}\cdot S_2(\Gamma_1(N))_\cO}\Big)_{/\tor}.
$$
This is a free $\cO$-module of rank one, and 
the inclusion $\cO\hookrightarrow F$ identifies $S_\cO$ with 
an $\cO$-lattice in $S_F$.

\vskip 2mm
{\noindent{\it Good differentials.} We will say that a differential $\omega\in S_F$ is {\em good}
if it is an $\cO$-basis of $S_{\cO}$.

\subsubsection{Periods}\label{Periods}
 The Eichler--Shimura period morphism  
of \cite[\S4.10]{K} 
induces an injective $\BC$-linear map
$$
per: S_{F} \otimes_{F} \BC \ra V_{F} \otimes_{F} \BC  = V_\BC.
$$
Fix $\gamma \in V_{F}$ with 
$$
0 \neq \gamma^{\pm}=\frac{1\pm \tau}{2}\cdot \gamma \in V_F.
$$
Then $\gamma^+$, $\gamma^-$ are an $F$-basis of $V_F$ and hence a $\BC$-basis of $V_\BC$. 
For $0 \neq \omega \in S_{F}$, let 
$\Omega^{\pm}_{\omega,\gamma} \in \BC^{\times}$ be 
such that 
$$
per(\omega) = \Omega^{+}_{\omega,\gamma}  \cdot \gamma^{+} + \Omega^{-}_{\omega,\gamma}  \cdot \gamma^{-}.
$$ 
Note that the period $\Omega^{\pm}_{\omega,\gamma} $ is uniquely defined only up to multiplication by an element of $F^\times$. 

\vskip2mm
\noindent {\it{Optimal periods.}} Since $g$ is a newform, the $\cO$-modules $H^{1}(Y_{1}(N),\cO)^{\pm}[\mathfrak{p}]$ are both free of rank one. 
Here `$[\mathfrak{p}]$' denotes the submodule which is annihilated by $\mathfrak{p}$.
Let $\delta_{g}^{\pm} \in H^{1}_c(Y_{1}(N),\cO)^{\pm}[\mathfrak{p}]$ be 
an $\cO$-module generator, where  the subscript `$c$' denotes compactly supported cohomology.  We identify $\delta_g^\pm$ with their images in $V_F$.
Similarly, the $\cO$-module $H^{0}(X_{1}(N)_{/\cO}, \Omega^{1}_{X_{1}(N)})[\mathfrak{p}]$ is free of rank one. 
Let $\omega_{g} \in H^{0}(X_{1}(N)_{/\cO}, \Omega^{1}_{X_{1}(N)})[\mathfrak{p}]$ be $\cO$-module generators and let $\delta_g = \delta_g^++\delta_g^-\in V_F$.
Let $\Omega^{\pm} \in \BC^{\times}$ be defined by 
$$
per(\omega_{g}) = \Omega^{+} \cdot \delta_{g}^{+} + \Omega^{-} \cdot \delta_{g}^{-}.
$$ 
That is, $\Omega^\pm = \Omega^\pm_{\omega_g,\delta_g}$.
Note that the $\Omega^\pm$ are uniquely defined up to $\cO^\times$-multiples.
The elements $\delta_g^\pm$ map to non-trivial elements in $T_\cO^\pm$ and the differential $\omega_g$ maps to 
a non-trivial element of $S_\cO$,  so $\Omega^\pm$ is an $F^\times$-multiple of any $\Omega^\pm_{\gamma,\omega}$.
In subsequent considerations, we usually work with these optimal periods $\Omega^{\pm}$.

\begin{remark} If (irr$_\BQ$) holds, then 
$H^{1}_c(Y_{1}(N),\cO)^{\pm}[\mathfrak{p}]= H^{1}(X_{1}(N),\cO)^{\pm}[\mathfrak{p}] = H^{1}(Y_{1}(N),\cO)^{\pm}[\mathfrak{p}].$
\end{remark}

\begin{remark}
The extent to which the images of $\delta_g^\pm$ and $\omega_g$ fail to be $\cO$-bases can often be measured
by a congruence ideal (see \S\ref{IntStr} below, especially Lemma \ref{GorPer}). 
\end{remark}

\begin{remark}\label{EC-optperiod-rmk}
If $g$ is the newform associated with an isogeny class of elliptic curves over $\BQ$, then there is a curve $E_\bullet$ in the isogeny
class such that the optimal periods $\Omega^\pm$ are a $\BZ_{(p)}$-basis of the lattice of N\'eron periods of $E_\bullet$. If (irr$_\BQ$) holds, this is clear
for $E_\bullet = E_1$, the optimal quotient of $J_1(N)$ in the isogeny class. If (irr$_\BQ$) does not hold (in this case, $E_\bullet$ is a quotient of $E_1$ by an \'etale subgroup), this was proved in \cite{Wu}, see especially \cite[Thm.~4 \& Prop.~8]{Wu} and their proofs.
\end{remark}

\subsubsection{The Gorenstein condition}\label{Gorenstein}
The ring $\BT_{\mathfrak{m}}$ satisfies the {\em Gorenstein condition} if 
\begin{itemize}
\item[(i)] $\BT_{\mathfrak{m}}$ is a Gorenstein $\cO_\lambda$-algebra, that is, $\Hom_{\cO_\lambda}(\BT_{\mathfrak{m}},\cO_\lambda)$ is 
a free $\BT_{\mathfrak{m}}$-module of rank one, and
\item[(ii)] $H^1(Y_1(N),\cO)_{\mathfrak{m}}$ is a free $\BT_{\mathfrak{m}}$-module of rank two (equivalently, 
$H^1(Y_1(N),\cO)_{\mathfrak{m}}^\pm$ is a free $\BT_{\mathfrak{m}}$-module of rank one).
\end{itemize}
If $\mathfrak{m}$ is not Eisenstein, in the sense that (irr$_\BQ$) holds, 
then (ii) implies (i) (as a consequence of Poincar\'e duality).
For later reference we record the following lemma.

\begin{lem}\label{GorCond} 
If (irr$_\BQ$) holds, then $\BT_{\mathfrak{m}}$ satisfies the Gorenstein condition.
\end{lem}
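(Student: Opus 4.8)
This is the classical freeness/Gorenstein package of Mazur, Wiles and Boston--Lenstra--Ribet, and the plan is to reduce everything to one freeness statement. By convention the hypothesis (irr$_{\BQ}$) says exactly that $\mathfrak{m}$ is non-Eisenstein, and, as noted just before the lemma, for non-Eisenstein $\mathfrak{m}$ condition (ii) implies condition (i): the perfect cup-product pairing on $H^1(X_1(N),\cO)_{\mathfrak{m}}$ pairs the $\tau=+1$ and $\tau=-1$ eigenspaces and is $\BT_{\mathfrak{m}}$-bilinear after the standard diamond-operator twist comparing $\BT$ with $\BT'$, so freeness of rank one of $H^1(X_1(N),\cO)^{\pm}_{\mathfrak{m}}$ yields $\BT_{\mathfrak{m}}\cong\Hom_{\cO_\lambda}(\BT_{\mathfrak{m}},\cO_\lambda)$. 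Hence it suffices to prove (ii), i.e.\ that $M:=H^1(Y_1(N),\cO)_{\mathfrak{m}}$ is a free $\BT_{\mathfrak{m}}$-module of rank two.

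\textbf{Step 1: pass to the closed curve.} Because $\mathfrak{m}$ is non-Eisenstein, $\ov{T}$ is irreducible; the boundary cohomology of $Y_1(N)$ carries only Hecke systems whose associated residual Galois representation is reducible, hence it vanishes after localizing at $\mathfrak{m}$. Consequently the natural maps
$$
H^1_c(Y_1(N),\cO)_{\mathfrak{m}}\ \xrightarrow{\ \sim\ }\ H^1(X_1(N),\cO)_{\mathfrak{m}}\ \xrightarrow{\ \sim\ }\ M
$$
are $\tau$-equivariant isomorphisms, and the comparison isomorphism between singular and \'etale cohomology identifies the middle group with $H^1_{\et}(X_1(N)_{/\ov{\BQ}},\cO_\lambda)_{\mathfrak{m}}$, a $\BT_{\mathfrak{m}}$-module carrying a commuting, $\BT_{\mathfrak{m}}$-linear $G_\BQ$-action. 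So it is equivalent to prove that this last module is $\BT_{\mathfrak{m}}$-free of rank two.

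\textbf{Step 2: apply Boston--Lenstra--Ribet.} The hypotheses of their freeness criterion hold: $\BT_{\mathfrak{m}}$ is finite flat over $\cO_\lambda$ and reduced (equivalently $U_\ell$ acts semisimply for $\ell\mid N$ on the $\mathfrak{m}$-component, which holds since $g$ is new of level exactly $N$); $M$ is finite free over $\cO_\lambda$, being $H^1$ of a smooth proper curve, and is a faithful $\BT_{\mathfrak{m}}$-module; $M\otimes_{\cO_\lambda}F_\lambda$ is free of rank two over $\BT_{\mathfrak{m}}\otimes_{\cO_\lambda}F_\lambda$ by multiplicity one for newforms of level $N$; and, by Eichler--Shimura, Deligne and Brauer--Nesbitt, every Jordan--H\"older constituent of $M\otimes_{\cO_\lambda}k_\lambda$ is isomorphic to $\ov{T}$, which is \emph{absolutely} irreducible by (irr$_{\BQ}$). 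The Boston--Lenstra--Ribet theorem then gives that $M$ is free of rank two over $\BT_{\mathfrak{m}}$, and in particular multiplicity one: $\dim_{k_\lambda}M/\mathfrak{m}M=2$. Finally, in the $\tau$-eigenspace decomposition $M=M^{+}\oplus M^{-}$ (available since $p>2$) one has $M^{+}\oplus M^{-}\cong\BT_{\mathfrak{m}}^{2}$ with $\BT_{\mathfrak{m}}$ indecomposable as a module over itself and each $M^{\pm}$ nonzero (complex conjugation acts with both eigenvalues $+1$ and $-1$ on each factor of $M\otimes_{\cO_\lambda}F_\lambda$, as $\det=\epsilon^{-1}$); so Krull--Schmidt forces $M^{\pm}\cong\BT_{\mathfrak{m}}$. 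This proves (ii), and (i) follows as above.

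\textbf{The main obstacle} is the Boston--Lenstra--Ribet input, which yields simultaneously the freeness of $M$ and the multiplicity-one equality $\dim_{k_\lambda}M[\mathfrak{m}]=2$: this is precisely where the \emph{absolute} irreducibility of $\ov{T}$, and not merely its irreducibility, is indispensable, since it is what rules out non-split self-extensions of $\ov{T}$ inside $M/\mathfrak{m}M$ and pins the dimension to its minimal value. A secondary point requiring care is the reducedness of $\BT_{\mathfrak{m}}$---equivalently, semisimplicity of the $U_\ell$-action for $\ell\mid N$---which is standard because $g$ is new of level exactly $N$ and $\mathfrak{m}$ is non-Eisenstein. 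The vanishing of boundary cohomology in Step 1 is routine (Manin--Drinfeld, or the explicit description of Eisenstein cohomology), and the oddness of $p$ is used to split the $\tau$-decomposition and to ensure $\ov{T}$ has distinct eigenvalues $\pm1$ under complex conjugation.
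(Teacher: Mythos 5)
Your overall architecture (reduce (i) to (ii) via Poincar\'e duality, pass to the closed curve using that $\mathfrak{m}$ is non-Eisenstein, then prove freeness of $M:=H^1(X_1(N),\cO)_{\mathfrak{m}}$) is the standard one, and it differs from the paper only in that the paper simply cites Wiles (Chap.~2, Cor.~1 and 2). But your Step 2 has a genuine gap: Boston--Lenstra--Ribet is not a ``freeness criterion''. What their theorem gives (after you construct $\rho_{\mathfrak{m}}:G_\BQ\to\GL_2(\BT_{\mathfrak{m}})$ with the right traces, which uses Carayol/Serre and absolute irreducibility, and verify the Cayley--Hamilton condition for all $g$ via Eichler--Shimura at good primes plus Chebotarev and continuity) is that the quotient of $\BT_{\mathfrak{m}}[G_\BQ]$ by the Cayley--Hamilton relations is $M_2(\BT_{\mathfrak{m}})$; by Morita equivalence one then only gets $M\cong N\oplus N$ as $\BT_{\mathfrak{m}}$-modules for some $\BT_{\mathfrak{m}}$-module $N$. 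Freeness of $M$ is equivalent to $N\cong\BT_{\mathfrak{m}}$, i.e.\ to the multiplicity-one statement $\dim_{k_\lambda}M/\mathfrak{m}M=2$ (equivalently $\dim_{k_\lambda}M[\mathfrak{m}]=2$), and this does \emph{not} follow from the properties you list (faithfulness, $\cO_\lambda$-freeness, generic freeness of rank two, all residual Jordan--H\"older constituents isomorphic to $\ov{T}$): after BLR these translate into ``$N$ is faithful, $\cO_\lambda$-finite and generically free of rank one'', which does not force $N$ to be cyclic --- e.g.\ $N=\mathfrak{m}_{\BT_{\mathfrak{m}}}$ inside a reduced non-regular $\BT_{\mathfrak{m}}$ has all these properties but is not free. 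So your parenthetical ``and in particular multiplicity one'' has the logic backwards: in every proof in the literature multiplicity one is the \emph{input} and freeness the output. Multiplicity one is exactly the arithmetic content of the cited results of Mazur, Ribet and Wiles; it is proved via the $q$-expansion principle and the Eichler--Shimura exact sequence over $\ov{\BF}_p$, using $p$ odd, $p\nmid N$, weight two and (irr$_\BQ$) (in the ordinary case one also needs the two characters of $\ov{T}|_{G_{\BQ_p}}$ to be distinct, which here is automatic because the mod $p$ cyclotomic character is ramified). The known failures of multiplicity one in nearby settings ($p$ dividing the level with non-$p$-distinguished ordinary $\ov{\rho}$, or $p=2$) show that some such geometric input at $p$ is indispensable and cannot be replaced by the formal argument you give.

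Two secondary points. Your justifications of the reducedness of $\BT_{\mathfrak{m}}$ and of the generic rank-two freeness of $M\otimes_{\cO_\lambda}F_\lambda$ appeal only to $g$ being new of level exactly $N$; but $\BT_{\mathfrak{m}}$ sees every eigensystem congruent to $g$ modulo $\lambda$, including oldforms of lower level, where the $U_q$-action ($q\mid N$) on the old subspace need not be semisimple, so both assertions require an argument (or should be bypassed, as in Wiles' treatment). If you want a correct self-contained proof along your lines, the order should be: first establish $\dim_{k_\lambda}H^1(X_1(N),k_\lambda)[\mathfrak{m}]=2$ by Mazur's argument, then deduce freeness by Nakayama together with duality (or with the BLR decomposition), and finally obtain the Gorenstein property of $\BT_{\mathfrak{m}}$ as you indicate; this is precisely the content of the two corollaries of Wiles that the paper invokes.
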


\begin{proof} Since $\BT_{\mathfrak{m}} = \BT_{\Gamma_1(N),\mathfrak{m_g}}\otimes_{W(k_g)}\cO_\lambda$,
this follows immediately from \cite[Chap.~2, Cor.~1 and 2]{Wi}.
\end{proof}

\subsubsection{Integral structures and cuspidal conguence numbers}\label{IntStr}
There are natural injections 
$$
H^{0}(X_{1}(N)_{/\cO}, \Omega^{1}_{X_{1}(N)})[\mathfrak{p}] \hookrightarrow S_{\cO}
\ \ \text{and} \ \ 
H^{1}_c(Y_{1}(N),\cO)^{\pm}[\mathfrak{p}] \ra T_{\cO}^{\pm}
$$
of free, rank one $\cO$-modules.
In particular,  there are $c,c^\pm\in \cO$ such that $\frac{\omega_g}{c}$ is an $\cO$-generator of $S_\cO$
and $\gamma_g^\pm = \frac{\delta_g^\pm}{c^\pm}$ is an $\cO$-generator of $T_\cO^\pm$.
Such an element $c$ is also sometimes called a congruence number for $g$. In order to not 
confuse this with the congruence number $c_g$ defined in \S\ref{congruence}, we will refer to such a $c$ as 
a {\em cuspidal congruence number} for $g$ and to the ideal $I_{g,0} = (c) = c\cO_\lambda$ as the {\em cuspidal 
congruence ideal} of $g$. Note that $S_{\cO_\lambda} = I_{g,0}^{-1}\cdot \omega_g$.
 
\begin{lem}\label{GorPer} 
Let $c_g\in \cO$ be a generator of the congruence ideal for $g$ as in \S\ref{congruence}. 
\begin{itemize}
\item[(i)] Each of $c$ and $c^\pm$ divides $c_g$. 
\item[(ii)] If (irr$_\BQ$) holds, then we can we take $c = c^\pm = c_g$. 
In particular, 
$$
S_\cO = \cO\cdot \omega, \ \ \omega = \frac{\omega_g}{c_g}, 
$$
and
$$
T_\cO^\pm = \cO\cdot \gamma^\pm_g, \ \ \gamma_g^\pm = \frac{\delta_g^\pm}{c_g}.
$$
\end{itemize}
\end{lem}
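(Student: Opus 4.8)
The plan is to compare two measures of the failure of the Eichler--Shimura/Hecke-module structures to be free: on the one hand the cuspidal congruence ideals $(c),(c^{\pm})$ coming from the inclusions $H^{0}(X_{1}(N)_{/\cO},\Omega^{1})[\mathfrak{p}]\hookrightarrow S_{\cO}$ and $H^{1}_{c}(Y_{1}(N),\cO)^{\pm}[\mathfrak{p}]\hookrightarrow T_{\cO}^{\pm}$, and on the other the congruence ideal $(c_{g})=\pi(\Ann_{\mathfrak{m}}(\ker\pi))$ from \S\ref{congruence}. For part (i), I would argue as follows. Localising at $\mathfrak{m}$, the module $M^{\pm}:=H^{1}(Y_{1}(N),\cO)^{\pm}_{\mathfrak{m}}$ (or its compactly supported variant, which agrees with it up to the issue being measured) is a faithful $\BT_{\mathfrak{m}}$-module, and $T_{\cO}^{\pm}$ is its image in $V_{F}^{\pm}=M^{\pm}\otimes_{\BT_{\mathfrak{m}},\pi}F_{\lambda}$; the submodule $M^{\pm}[\mathfrak{p}]$ is then, by the standard argument (apply $\Hom_{\BT_{\mathfrak{m}}}(\cO_{\lambda},-)$, or directly: an element of $M^{\pm}[\mathfrak{p}]$ is killed by $\mathfrak{m}$-primary denominators), contained in $\Ann_{\BT_{\mathfrak{m}}}(\ker\pi)\cdot M^{\pm}$ mapped into $T_{\cO}^{\pm}$, and its index divides the image of $\Ann_{\mathfrak{m}}(\ker\pi)$ under $\pi$, i.e.\ divides $c_{g}$. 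The same argument with $S_{2}(\Gamma_{1}(N))_{\cO,\mathfrak{m}}$ (a faithful $\BT_{\mathfrak{m}}$-module via the $q$-expansion/duality pairing) in place of cohomology gives $c\mid c_{g}$.

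For part (ii), assume (irr$_{\BQ}$). By Lemma~\ref{GorCond}, $\BT_{\mathfrak{m}}$ satisfies the Gorenstein condition: $H^{1}(Y_{1}(N),\cO)_{\mathfrak{m}}$ is free of rank two over $\BT_{\mathfrak{m}}$, hence $H^{1}(Y_{1}(N),\cO)^{\pm}_{\mathfrak{m}}$ is free of rank one, and (by the Remark after \S\ref{Periods}) under (irr$_{\BQ}$) the compactly supported, ordinary, and full cohomologies coincide, so $\delta_{g}^{\pm}$ generates the free rank-one $\BT_{\mathfrak{m}}$-module $H^{1}(Y_{1}(N),\cO)^{\pm}_{\mathfrak{m}}$. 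Now $T_{\cO}^{\pm}$ is the image of this module under $\pi\colon\BT_{\mathfrak{m}}\to\cO_{\lambda}$, i.e.\ $T_{\cO}^{\pm}=\cO\cdot\overline{\delta_{g}^{\pm}}$ \emph{as a set}, but the point is to compute the $\cO$-index $[T_{\cO}^{\pm}:\cO\cdot\delta_{g}^{\pm}]$. Writing $M^{\pm}\cong\BT_{\mathfrak{m}}$ with $\delta_{g}^{\pm}\leftrightarrow 1$, the sub $M^{\pm}[\mathfrak{p}]$ corresponds to $\Ann_{\BT_{\mathfrak{m}}}(\ker\pi)$, which under Gorenstein duality is the inverse different / is generated by an element mapping under $\pi$ to a generator of $(c_{g})$; hence $c^{\pm}$ may be taken equal to $c_{g}$. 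For the differential side, one uses that under (irr$_{\BQ}$) the Gorenstein property also gives $S_{2}(\Gamma_{1}(N))_{\cO,\mathfrak{m}}\cong\Hom_{\cO_{\lambda}}(\BT_{\mathfrak{m}},\cO_{\lambda})$ as $\BT_{\mathfrak{m}}$-modules (the $q$-expansion pairing together with Gorenstein-ness of $\BT_{\mathfrak{m}}$), so $S_{2}(\Gamma_{1}(N))_{\cO,\mathfrak{m}}$ is free of rank one and the identical computation gives $c=c_{g}$. The displayed formulas $S_{\cO}=\cO\cdot\omega$ with $\omega=\omega_{g}/c_{g}$ and $T_{\cO}^{\pm}=\cO\cdot\gamma_{g}^{\pm}$ with $\gamma_{g}^{\pm}=\delta_{g}^{\pm}/c_{g}$ then follow by unwinding the definitions of $c,c^{\pm}$ in \S\ref{IntStr}.

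The main obstacle I anticipate is bookkeeping rather than conceptual: carefully identifying \emph{which} cohomology group is free over $\BT_{\mathfrak{m}}$ and matching it with the one defining $T_{\cO}^{\pm}$ (interior vs.\ compactly supported vs.\ parabolic cohomology), and tracking the precise relationship between $\Ann_{\BT_{\mathfrak{m}}}(\ker\pi)$, the inverse-different/trace pairing under the Gorenstein condition, and the ideal $\pi(\Ann_{\mathfrak{m}}(\ker\pi))=(c_{g})$, so that the divisibilities in (i) become equalities in (ii). The non-Eisenstein hypothesis is exactly what collapses all these cohomologies together (via the Remark and Lemma~\ref{GorCond}) and makes $\BT_{\mathfrak{m}}$ Gorenstein, so the argument is essentially a clean application of \cite[Chap.~2]{Wi} once the modules are correctly identified; the only real care is to do the period/differential side in parallel with the cohomology side rather than treating it as automatic.
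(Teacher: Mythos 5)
Your proposal is correct and follows essentially the same route as the paper's proof: part (i) by applying elements $t\in\Ann_{\BT_{\Gamma_1(N),\cO}}(\ker\phi_{\Gamma_1(N),\cO})$ to an integral lift of an $\cO$-generator so that the result lies in the $[\mathfrak{p}]$-part and equals $\phi(t)$ times the distinguished class divided by $c$ (resp.\ $c^\pm$), and part (ii) by combining the $q$-expansion duality $H^0(X_1(N)_{/\cO},\Omega^1)_{\mathfrak{m}}\cong\Hom_{\cO_\lambda}(\BT_{\mathfrak{m}},\cO_\lambda)$ with Lemma~\ref{GorCond} (and, for $c^\pm$, the Remark identifying $H^1_c[\mathfrak{p}]$, $H^1(X_1(N))[\mathfrak{p}]$, $H^1(Y_1(N))[\mathfrak{p}]$ under (irr$_\BQ$)), so that the index in question is exactly $\pi(\Ann_{\BT_{\mathfrak{m}}}(\mathfrak{p}))=(c_g)$. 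Only two misstatements should be repaired, neither affecting the substance: in (i) the containment runs the other way, namely $\Ann_{\BT_{\mathfrak{m}}}(\ker\pi)\cdot M^{\pm}\subseteq M^{\pm}[\mathfrak{p}]$, which is what yields $c_g\,T_{\cO}^{\pm}\subseteq$ the image of the $[\mathfrak{p}]$-part and hence the divisibility (and for $c^\pm$ one must in the end land in $\cO\cdot\delta_g^\pm$, i.e.\ argue with the compactly supported $[\mathfrak{p}]$-part, a point the paper likewise leaves to an ``analogous argument''); and in (ii), under an isomorphism $M^{\pm}\cong\BT_{\mathfrak{m}}$ the class $\delta_g^{\pm}$ corresponds to an $\cO_\lambda$-generator of $\Ann_{\BT_{\mathfrak{m}}}(\mathfrak{p})$, not to $1$.
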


\begin{proof}
Suppose $\omega' \in H^0(X_1(N)_{/\cO},\Omega_{X_1(N)})\subset H^0(X_1(N)_{/F},\Omega_{X_1(N)})$ projects to an $\cO$-basis of $S_\cO$ in $S_F$. Then
$\omega' = \frac{\omega_g}{c} + \omega''$ for some $c\in\cO$ and some $\omega''\in \mathfrak{p}_g H^0(X_1(N)_{/F},\Omega_{X_1(N)})$.
In particular, if $t\in \Ann_{\BT_{\Gamma_1(N),\cO}}(\phi_{\Gamma_1(N),\cO})$, then 
$$
\phi_{\Gamma_1(N),\cO}(t) \frac{\omega_g}{c} = \frac{t\cdot \omega_g}{c} = t\cdot \omega' \in H^0(X_1(N)_{/\cO},\Omega_{X_1(N)}).
$$
As $\omega_g$ is part of an $\cO$-basis of $H^0(X_1(N)_{/\cO},\Omega_{X_1(N)})$, it follows that $c\mid \phi_{\Gamma_1(N),\cO}(t)$.
As $(c_g)\subset \cO$ is the ideal generated by the $\phi_{\Gamma_1(N)}(t)$, $t\in \Ann_{\BT_{\Gamma_1(N),\cO}}(\mathfrak{p}_g)$,
it then follows that $c\mid c_g$.  An analogous argument applies to $c^\pm$.  This proves part (i).

Part (ii) is an easy consequence of the Gorenstein condition, which holds by Lemma \ref{GorCond}. 
The module $H^0(X_1(N)_{/\cO},\Omega_{X_1(N)})$ is dual to $\BT_{\Gamma_1(N),\cO}$
as a $\BT_{\Gamma_1(N),\cO}$-module. Hence there is an isomorphism of $\BT_\mathfrak{m}$-modules
$$
H^0(X_1(N)_{/\cO},\Omega_{X_1(N)})_{\mathfrak{m}} \cong \Hom_{\cO_\lambda}(\BT_{\mathfrak{m}},\cO_\lambda) \cong \BT_\mathfrak{m},
$$
the last isomorphism by part (i) of the Gorenstein condition. Then $H^0(X_1(N)_{/\cO},\Omega_{X_1(N)})_\mathfrak{m}[\mathfrak{p}]
\cong \Ann_{\BT_\mathfrak{m}}(\mathfrak{p})$ as $\BT_{\mathfrak{m}}$-modules, from which the claim that $(c) = (c_g)$ easily follows. A similar argument applies to $c^\pm$ by also appealing
to property (ii) of the Gorenstein condition.
 \end{proof}

\subsubsection{Petersson norms and periods}
Let $\omega_{g}$ denote the holomorphic differential on $X_{0}(N)$ that is the unique holomorphic extension of the differential on $Y_{0}(N)$ that pulls back to 
$2\pi i g(z)dz = g(q)\frac{dq}{q}$ under the complex uniformisation 
$$
\mathfrak{h}/\Gamma_{1}(N) \simeq Y_{1}(N)(\BC),
$$ 
for $\mathfrak{h}$ the upper half plane, $z=x+iy$ the complex variable, and $q= e^{2\pi i z}$. 
It follows from the $q$-expansion principle that we can take this to be 
the differential so denoted in \S\ref{Periods}:
For the chosen model of $X_1(N)_{/\cO}$ and since $p\nmid N$, $\omega_g\in H^0(X_1(N)_{/\cO}, \Omega_{X_1(N)})$
if and only if the $q$-expansion coefficients of $w_N\cdot g$ belong to $\cO$, where $w_N$ is the Atkin--Lehner involution (cf. \cite[\S 1.5.10]{FK});
since $g$ is a newform with trivial Nebentypus, this 
latter condition is equivalent to the $q$-expansion of $g$ having coefficients in $\cO$.

Let $\langle \cdot, \cdot \rangle$ denote the Petersson inner product on $S_{2}(\Gamma_{1}(N))_{\BC}$ 
given by 
$$
\langle g, g' \rangle =  \frac{i}{8\pi^2} \cdot \int_{X_{1}(N)(\BC)} \omega_{g} \wedge \ov{\omega_{g'}}
= \int_{\mathfrak{h}/\Gamma_{1}(N)} g(z)\ov{g'(z)} dx dy. 
$$
The congruence period of $g$ is defined to be 
$$
\Omega_{g}^{cong}=\frac{\langle g, g \rangle}{c_{g}}.
$$
This is related to the optimal periods $\Omega^\pm$ as follows.
\begin{lem}\label{PetPer}\hfill
\begin{itemize}
\item[(i)] $\Omega_g^{cong}\sim_{F^\times} -i(2\pi)^{-2} \Omega^{+} \Omega^{-}$.
\item[(ii)] If (irr$_\BQ$) holds, then 
$\Omega_{g}^{cong} \sim_{\cO^{\times}} -i(2\pi)^{-2} \cdot \Omega^{+} \Omega^{-}$.
\end{itemize}
\end{lem}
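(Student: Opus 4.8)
The plan is to relate the Petersson norm $\langle g,g\rangle$ to the cup product pairing on Betti cohomology and then to express both sides in terms of the periods $\Omega^{\pm}$ introduced in \S\ref{Periods}. First I would recall the Poincar\'e duality pairing $\langle\cdot,\cdot\rangle_{\mathrm{PD}}$ on $H^1(Y_1(N)(\BC),\BC)$ (paired against compactly-supported cohomology), which is compatible with the Hecke action in the sense that $T(n)$ and $T'(n)$ are adjoints. Under the Eichler--Shimura morphism $per$, the image of $\omega_g\in S_2(\Gamma_1(N))_\BC$ together with the image of $\bar\omega_g$ (an antiholomorphic form) span the $\mathfrak{p}$-isotypic piece $V_\BC$, and the classical computation (going back to Shimura; see also \cite[\S4.10--4.12]{K}) gives
$$
\langle g,g\rangle \;\sim_{\BQ^\times}\; (2\pi)^{2}\,\langle per(\omega_g), \overline{per(\omega_g)}\rangle_{\mathrm{PD}},
$$
where the constant $\tfrac{i}{8\pi^2}$ in the definition and the factor $2\pi i$ in $\omega_g = 2\pi i\, g(z)\,dz$ are bookkept exactly. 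Writing $per(\omega_g) = \Omega^+\delta_g^+ + \Omega^-\delta_g^-$ and using that $\tau$ is an isometry for which $\delta_g^\pm$ lie in the $\pm$-eigenspaces, the pairing $\langle per(\omega_g),\overline{per(\omega_g)}\rangle_{\mathrm{PD}}$ becomes $\Omega^+\Omega^-$ times the pairing $\langle\delta_g^+,\delta_g^-\rangle_{\mathrm{PD}}$ (the $(+,+)$ and $(-,-)$ terms vanishing by the $\tau$-equivariance of Poincar\'e duality, since $\tau$ acts by $-1$ on the orientation). This yields part (i): $\Omega_g^{cong} = \langle g,g\rangle/c_g \sim_{F^\times} -i(2\pi)^{-2}\Omega^+\Omega^-$, once one checks the sign and the power of $i$ by tracking the complex-conjugation and the $i/8\pi^2$ normalisation; the ratio $\langle\delta_g^+,\delta_g^-\rangle_{\mathrm{PD}}/c_g$ is an $F^\times$-scalar, which is all part (i) claims.

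For part (ii), under the hypothesis (irr$_\BQ$) I would upgrade the $F^\times$-ambiguity to an $\cO^\times$-ambiguity by an integrality argument. By Lemma \ref{GorCond} the ring $\BT_{\mathfrak{m}}$ satisfies the Gorenstein condition, so by Lemma \ref{GorPer}(ii) we may take $c=c^\pm=c_g$, i.e.\ $\gamma_g^\pm = \delta_g^\pm/c_g$ is an $\cO$-basis of $T_\cO^\pm$ and $\omega=\omega_g/c_g$ is an $\cO$-basis of $S_\cO$. The pairing $\langle\cdot,\cdot\rangle_{\mathrm{PD}}$ restricted to $T_\cO^+\times T_\cO^-$ is a perfect $\cO$-bilinear pairing into $\cO$ (again a consequence of the Gorenstein/Poincar\'e-duality package: $H^1(Y_1(N),\cO)_{\mathfrak{m}}^\pm$ is free of rank one over $\BT_{\mathfrak{m}}$ and self-dual up to the congruence module, which under (irr$_\BQ$) is trivial on the relevant piece), so $\langle\gamma_g^+,\gamma_g^-\rangle_{\mathrm{PD}}\in\cO^\times$. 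Dividing the identity of part (i) by $c_g^2$ and substituting $\delta_g^\pm = c_g\gamma_g^\pm$, the $F^\times$-scalar is now exactly $\langle\gamma_g^+,\gamma_g^-\rangle_{\mathrm{PD}}\in\cO^\times$, giving $\Omega_g^{cong}\sim_{\cO^\times} -i(2\pi)^{-2}\Omega^+\Omega^-$.

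The main obstacle I anticipate is the careful normalisation: getting the constant exactly right — the precise power of $i$ and the sign, rather than merely up to $F^\times$ — requires matching the analytic normalisation of the Petersson product (with its $i/8\pi^2$), the factor $2\pi i$ in $\omega_g = g(q)\,dq/q$, and the cohomological conventions of \cite[\S4]{K} for $per$ and for the cup-product orientation. This is a finite but delicate computation; the structural inputs (Gorenstein condition via Lemma \ref{GorCond}, the resulting integrality of Poincar\'e duality on $T_\cO^\pm$, and perfectness under (irr$_\BQ$)) are the substantive points, and they are already in place from \S\ref{Gorenstein}--\S\ref{IntStr}.
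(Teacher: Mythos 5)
Your part (i) is essentially the standard Shimura computation and is fine in outline (one point you gloss: after writing $per(\omega_g)=\Omega^+\delta_g^++\Omega^-\delta_g^-$, the class you pair against is $\overline{per(\omega_g)}=\overline{\Omega^+}\delta_g^++\overline{\Omega^-}\delta_g^-$, so to land on $\Omega^+\Omega^-$ rather than $\Omega^+\overline{\Omega^-}-\Omega^-\overline{\Omega^+}$ you need the reality statement $\overline{\Omega^+}=\Omega^+$, $\overline{\Omega^-}=-\Omega^-$, which follows from the $F$-rationality of $\delta_g^\pm$ and the total reality of the $a_g(n)$; this is part of the "delicate normalisation" you defer). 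The paper itself does not reprove any of this: it cites Wiles, Chap.~4, \S 2 and \cite[\S4.4]{DDT}, which is exactly where the computation below is carried out.

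The genuine gap is in part (ii). The claim that Poincar\'e duality restricts to a perfect $\cO$-valued pairing on $T_\cO^+\times T_\cO^-$, so that $\langle\gamma_g^+,\gamma_g^-\rangle\in\cO^\times$, is false precisely when $\lambda\mid c_g$, which is the only case where (ii) says more than (i). The lattice $T_\cO$ is the image of $H^1(Y_1(N),\cO)$ in the quotient $V_F$, and Poincar\'e duality does not give an integral self-pairing of this quotient lattice; what it gives (under the Gorenstein condition) is a perfect pairing between the $\mathfrak{p}$-torsion submodule and the quotient lattice, equivalently the classical theorem of Hida--Ribet--Wiles that the cup product on $H^1(X_1(N),\cO)_{\mathfrak{m}}[\mathfrak{p}]^+\times H^1(X_1(N),\cO)_{\mathfrak{m}}[\mathfrak{p}]^-$ has image the congruence ideal: $\langle\delta_g^+,\delta_g^-\rangle\sim_{\cO^\times}c_g$, hence $\langle\gamma_g^+,\gamma_g^-\rangle\sim_{\cO^\times}c_g^{-1}$, not a unit. (A rank-one model makes this transparent: for $\BT=\{(a,b)\in\cO\times\cO:a\equiv b\bmod\eta\}$ with its Gorenstein pairing, the generator of the $\mathfrak{p}$-torsion pairs with itself to $\eta$ up to a unit, while the generator of the quotient lattice pairs to $\eta^{-1}$.) There is also an internal bookkeeping slip: from the identity $\langle g,g\rangle\sim -i(2\pi)^{-2}\Omega^+\Omega^-\langle\delta_g^+,\delta_g^-\rangle$, the scalar relating $\Omega_g^{cong}=\langle g,g\rangle/c_g$ to $-i(2\pi)^{-2}\Omega^+\Omega^-$ is $\langle\delta_g^+,\delta_g^-\rangle/c_g=c_g\langle\gamma_g^+,\gamma_g^-\rangle$, not $\langle\gamma_g^+,\gamma_g^-\rangle$ as you assert; your two errors happen to cancel numerically, but the argument as written proves nothing, and if one corrects the bookkeeping while keeping your perfectness claim one gets a conclusion off by $c_g$. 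The substantive input you are missing is exactly the congruence-module computation $\langle\delta_g^+,\delta_g^-\rangle\sim_{\cO^\times}c_g$ (under (irr$_\BQ$) and the Gorenstein condition), i.e.\ the content of the Wiles/DDT references; with that in hand, (ii) follows as in your last display.
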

\noindent Here and throughout `$\sim_{\Box}$' denotes equality up to multiplication by an element in $\Box$.

This lemma is likely well-known, and essentially proven in \cite[Chap.~4, \S 2]{Wi} (see also \cite[\S4.4]{DDT} ).
As we have assumed that $g$ is a newform with trivial character, the proof is slightly simpler than 
in \cite{Wi}. The proof from \cite{DDT} suffices, but with $\Gamma_0(N)$ replaced by $\Gamma_1(N)$
for the purposes of this paper.

\begin{remark}\label{EC-Petersson-rmk}
If $g$ corresponds to the isogeny class of an elliptic curve $E_\bullet$ as in Remark \ref{EC-optperiod-rmk}, then 
the conclusion of part (ii) still holds even when (irr$_\BQ$) does not. The modular parameterisation $\pi_\bullet:X_1(N)\twoheadrightarrow E_1\twoheadrightarrow E_\bullet$ 
is such that the N\'eron differential $\omega_{E_\bullet}$ pulls back to a $p$-adic unit multiple of $\omega_g$, and so we have 
$$
-i(2\pi)^{-2}\Omega^+\Omega^- \sim_{\BZ_p^\times} \deg(\pi_\bullet)^{-1} \langle g,g\rangle.
$$
It remains to note that $\deg(\pi_\bullet)$ equals $c_g$ (up to a $p$-adic unit), and this follows from a simple modification of the arguments used to prove
\cite[Lem.~3.1.2]{CGS}.
\end{remark}

\subsubsection{The ordinary and supersingular cases}
In many of our results and arguments we will consider two possible cases. These are the following.

\vskip 2mm
\noindent {\it The ordinary case.}  This is the case that $a_g(p)$ is a unit in the ring of integers of $F_\lambda$ (equivalently, $\iota_p(a_g(p))$ is a $p$-adic unit), in which case we say that $g$ is ordinary.
If $g$ is ordinary, then there is exactly one root $\alpha_p$ of the Hecke polynomial $x^2-a_g(p)x+p$ that is a $p$-adic unit, that is, a unit in the ring of integers of $F_\lambda$.

In this case there is an exact sequence 
$$
0 \ra V^{+} \ra V \ra V^{-} \ra 0
$$
of $F_{\lambda}[G_{\BQ_{p}}]$-modules with $V^{\pm}$ a one-dimensional $F_{\lambda}$-vector space and 
both $V^+$ and  $V^{-}(1)$ unramified $F_{\lambda}[G_{\BQ_{p}}]$-modules. The action of $\Frob_p$ on 
$V^{-}(1)$ is just multiplication by $\alpha_p$. 

 Let $T^+ = T \cap V^+$ and $T^- = T/T^+$. Then
$T^{\pm}$ is a free $\cO_\lambda$-module of rank one, and there is also an exact sequence
$$
0 \ra T^{+} \ra T \ra T^{-} \ra 0
$$
of $\cO_{\lambda}[G_{\BQ_{p}}]$-modules.

\vskip 2mm
\noindent {\it The supersingular case.} This is the case\footnote{If $g$ is associated with an elliptic curve, then $p\nmid 2N$ being a prime of supersingular reduction need not imply $a_g(p)=0$
(but only when $p=3$). However, we adopt the terminology for convenience.} 
where $p\nmid N$ and $a_g(p)=0$. The roots of the Hecke polynomial at $p$ are just $\pm\sqrt{p}$. In this case we say that $g$ is supersingular. 
If $g$ is supersingular, then the residual representation $\ov{T}$ is irreducible as a $k_\lambda[G_{\BQ_p}]$-module (cf.~\cite[Thm.~2.6]{E}). In particular, (irr$_\BQ$) always holds in this case.

\subsubsection{Convention}\label{convention}
\noindent To distinguish objects associated with a specific newform $g$ (if necessary), we will add a subscript `$g$' in the notation (if not already present). 
This convention will also hold for all notation introduced subsequently.

\subsection{Newforms: $L$-values}\label{Newforms-LV}
Let $g\in S_2(\Gamma_0(N)$ be a newform as in \S\ref{Newforms}.

\subsubsection{$L$-functions} Let $L(s,g)$ be the usual $L$-function associated with $g$. For $\Re(s)>\frac{3}{2}$ this is just the (absolutely convergent) Dirichlet
series 
$$
L(s,g) = \sum_{n=1}^\infty a_g(n)n^{-s}.
$$
If $\chi:(\BZ/m\BZ)^\times\rightarrow \BC^\times$ is any Dirichlet character modulo $m$, let $L(s,g,\chi)$ be the $\chi$-twisted $L$-series
$$
L(s,g,\chi) = \sum_{n=1 \atop (n,m)=1}^\infty a_g(n)\chi(n) n^{-s}.
$$
If $(N,m)=1$, then $L(s,g,\chi)$ is the $L$-function of a weight $2$ newform of level $\Gamma_1(Nm^2)$ and Nebentypus $\chi^2$, denoted $g\otimes\chi$. In particular,
$L(s,g\otimes\chi) = L(s,g,\chi)$ in this case.   

\subsubsection{$L$-values: algebraicity }
Let $\chi$ be a Dirichlet character and $F_{\chi}$ the extension of the Hecke field $F$ obtained by adjoining the values of $\chi$.

\begin{thm}\label{L-Alg}
We have 
$$
\mathfrak{g}(\ov{\chi})\cdot \frac{L(1,g,\chi)}{\Omega^{\sgn(\chi)}} \in \cO_{F_{\chi},(\lambda_{\chi})}
$$
for $\lambda_{\chi}$ any prime of $F_{\chi}$ over $\lambda$.
Here $\ov{\chi}=\tau\circ \chi$ is the complex conjugate of $\chi$, $\mathfrak{g}(\cdot)$ is the usual Gauss sum, and 
$\sgn(\chi)\in \{\pm\}$ is the sign of $\chi (-1)$. 
\end{thm}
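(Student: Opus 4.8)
The plan is to deduce this $L$-value algebraicity statement from the theory of modular symbols, which is precisely the cohomological incarnation of the period map $per$ introduced in \S\ref{Periods}. The key point is that the twisted $L$-value $L(1,g,\chi)$ can be expressed, up to elementary factors involving the Gauss sum $\mathfrak{g}(\ov\chi)$, as the pairing of the modular symbol class $\delta_g^{\pm}$ (the generator of $H^1_c(Y_1(N),\cO)^{\pm}[\mathfrak{p}]$, $\pm = \sgn(\chi)$) against an appropriate path in homology twisted by $\chi$, divided by the period $\Omega^{\pm}$.

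First I would recall the classical Birch--Manin--Shimura formula: for a newform $g$ of weight two and level $N$ with $(N,m)=1$, and a primitive Dirichlet character $\chi$ modulo $m$ of sign $\sgn(\chi)$, one has
$$
\mathfrak{g}(\ov\chi)\,\frac{L(1,g,\chi)}{\Omega^{\sgn(\chi)}} = \sum_{a \bmod m} \ov\chi(a)\, \langle \delta_g^{\sgn(\chi)}, \{a/m,\infty\}\rangle,
$$
where $\{a/m,\infty\}$ denotes the relevant modular symbol (path in $H_1$ of the modular curve relative to cusps) and $\langle\cdot,\cdot\rangle$ the natural pairing between $H^1_c$ and relative homology. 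The period $\Omega^{\sgn(\chi)}=\Omega^{\pm}$ appears because, by definition in \S\ref{Periods}, $per(\omega_g) = \Omega^+\delta_g^+ + \Omega^-\delta_g^-$, so dividing the analytically-defined $L$-value (an integral of $2\pi i\, g(z)\,dz$ along the image of the path) by $\Omega^{\pm}$ replaces the transcendental period by the algebraic modular-symbol pairing.

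Next I would observe that the right-hand side visibly lies in $\cO_{F_\chi,(\lambda_\chi)}$: the class $\delta_g^{\pm}$ is by construction an $\cO$-module generator of $H^1_c(Y_1(N),\cO)^{\pm}[\mathfrak{p}]$ (see \S\ref{Periods}), the relative homology of $Y_1(N)$ over $\BZ_{(p)}$ is a finitely generated $\BZ_{(p)}$-module, hence the pairing $\langle \delta_g^{\pm}, \{a/m,\infty\}\rangle$ lies in $\cO$; the coefficients $\ov\chi(a)$ lie in $\cO_{F_\chi}$, and they are $\lambda_\chi$-integral since roots of unity are algebraic integers. Summing finitely many $\lambda_\chi$-integral elements keeps one in $\cO_{F_\chi,(\lambda_\chi)}$. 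Strictly speaking one should also handle imprimitive $\chi$ (where an Euler-factor correction at primes dividing $m$ but not the conductor of $\chi$ intervenes, contributing only $\lambda_\chi$-integral factors since $p\nmid N$) and the case $(N,m)\neq 1$ (absorbed into the level, which does not affect $\lambda$-integrality as $p\nmid Nm$), but these are routine.

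The main obstacle, and the step deserving the most care, is pinning down the \emph{exact} normalisation so that the period that appears is genuinely the optimal period $\Omega^{\pm}$ of \S\ref{Periods} and not some $F^\times$-multiple of it: one must check that the Eichler--Shimura map $per$ of \cite[\S4.10]{K} used here is compatible with the modular-symbol pairing in such a way that no extraneous denominator (divisible by $\lambda$) is introduced — in particular that the comparison between the Betti cohomology $H^1_c(Y_1(N),\cO)$ and the de Rham/automorphic side is an integral isomorphism away from $p$, which holds because $p\nmid N$ and $X_1(N)$ extends smoothly over $\BZ_{(p)}$ (\S\ref{ModularForms}). Once this integral compatibility is in place, the statement follows; I would present this as a known consequence of the integral theory of modular symbols and refer to \cite{K} for the precise period morphism, flagging only that the appearance of $\mathfrak{g}(\ov\chi)$ rather than $\mathfrak{g}(\chi)$ is dictated by the convention that $\ov\chi=\tau\circ\chi$.
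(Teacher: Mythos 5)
Your argument is correct and is essentially the proof the paper has in mind: the paper gives no details, remarking only that the integral statement is "a straight-forward consequence of the definition of the optimal periods" with pointers to Shimura, Vatsal \cite[\S1.6]{Va} and Wuthrich \cite[\S2]{Wu}, and those references carry out exactly your modular-symbol computation — the Birch--Manin--Shimura expression of $\mathfrak{g}(\ov{\chi})L(1,g,\chi)/\Omega^{\sgn(\chi)}$ as a finite sum of pairings of $\delta_g^{\pm}$ against relative homology classes $\{a/m,\infty\}$, whose $\lambda_\chi$-integrality follows since $\delta_g^{\pm}$ generates $H^1_c(Y_1(N),\cO)^{\pm}[\mathfrak{p}]$ and the values of $\ov{\chi}$ are algebraic integers. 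Your flagged normalisation point is handled exactly as you say, because $\Omega^{\pm}$ is defined by $per(\omega_g)=\Omega^+\delta_g^++\Omega^-\delta_g^-$, so no extraneous $\lambda$-denominator can enter (any residual factor such as $2$ being a unit since $p$ is odd).
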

\noindent With $\cO_{F_{\chi},(\lambda_{\chi})}$ replaced by $F_\chi$, this is due to Shimura \cite{Sh}.
The result stated here is a straight-forward consequence of the definition of the optimal periods 
(cf.~\cite[\S1.6]{Va} and \cite[\S2]{Wu}).

\subsubsection{$L$-values: non-vanishing}
Let $\ell$ be a prime ($\ell=p$ is allowed). Let $\mathfrak{X}_{\BQ,\ell}^{cyc}$ be the set of Dirichlet characters with $\ell$-power conductor and $\ell$-power order. 
We recall the following non-vanishing result for the central $L$-values in the vertical family arising from $\mathfrak{X}_{\BQ,\ell}^{cyc}$, which is due to Rohrlich \cite{Ro}.

\begin{thm}\label{NVIw}
Let 
$\ell$ be a prime.
Then, 
$$
L(1,g, \chi) \neq 0
$$
for all but finitely many $\chi \in \mathfrak{X}_{\BQ,\ell}^{cyc}$.
\end{thm}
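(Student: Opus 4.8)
This is Rohrlich's non-vanishing theorem \cite{Ro}; the plan is to recall the structure of his argument. The first step converts $L$-values into modular symbols. For a primitive Dirichlet character $\chi$ of $\ell$-power conductor $\ell^{m}$ coprime to $N$, the additive-twist expansion of $g\otimes\chi$ combined with Hecke's integral representation expresses $L(1,g,\chi)$, up to an explicit nonzero factor (depending on $\chi$ only through its conductor, its sign $\chi(-1)$, and the Gauss sum $\mathfrak{g}(\chi)$), as the twisted sum
\[
\sum_{u\bmod \ell^{m}}\ov{\chi}(u)\,\Phi\!\left(\tfrac{u}{\ell^{m}}\right),\qquad \Phi(x)=\int_{x}^{i\infty}2\pi i\,g(z)\,dz .
\]
Here $\Phi$ is the modular symbol of $g$: by the theory of Manin symbols its values, suitably normalised by the periods $\Omega^{\pm}$, lie in a fixed finite-dimensional $\BQ$-vector space, and they satisfy both the Manin relations and the distribution (norm) relations linking level $\ell^{m}$ to level $\ell^{m-1}$ through the Hecke operator at $\ell$.

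The second, and main, step is a non-vanishing lemma: for all sufficiently large $m$ there exists $\chi\in\mathfrak{X}^{cyc}_{\BQ,\ell}$ of conductor $\ell^{m}$ with $L(1,g,\chi)\neq 0$. One proves this by Fourier analysis on $(\BZ/\ell^{m}\BZ)^{\times}$: a suitable weighted average of the twisted sums above over the characters of conductor $\ell^{m}$ and $\ell$-power order recovers, modulo the lower-conductor contribution which is controlled by the norm relations, the individual modular symbols $\Phi(u/\ell^{m})$. Feeding the norm relations back down the $\ell$-tower reduces a hypothetical simultaneous vanishing at level $\ell^{m}$ to an impossible linear relation among finitely many modular symbols of bounded level — impossible because, by Eichler--Shimura and Manin's theory, the modular symbols attached to the nonzero cusp form $g$ cannot all vanish.

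The third step upgrades ``for some $\chi$'' to ``for all but finitely many $\chi$'' by Galois rigidity. By Theorem~\ref{L-Alg} (Shimura's rationality in the normalisation via the optimal periods), the map $\chi\mapsto \mathfrak{g}(\ov{\chi})\,L(1,g,\chi)/\Omega^{\sgn(\chi)}$ is equivariant for the natural $\Gal(\ov{\BQ}/\BQ)$-action on characters; hence the set $B=\{\chi\in\mathfrak{X}^{cyc}_{\BQ,\ell}:L(1,g,\chi)=0\}$ is stable under this action. Since, for each $k$, the characters in $\mathfrak{X}^{cyc}_{\BQ,\ell}$ of order exactly $\ell^{k}$ form a single $\Gal(\ov{\BQ}/\BQ)$-orbit, $B$ contains either all or none of them; the non-vanishing lemma forces ``none'' for all large $k$, so $B$ is finite, which is the assertion. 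The main obstacle is the second step: making the non-vanishing lemma unconditional, i.e. ruling out anomalous linear relations among the $\Phi(u/\ell^{m})$ uniformly in $m$ — equivalently, a quantitative non-degeneracy of the period pairing along the $\ell$-tower. (When $g$ is ordinary at $\ell$ one may instead apply the Weierstrass preparation theorem to Mazur's $\ell$-adic $L$-function in $\BZ_{\ell}[\![T]\!]$, and in the supersingular case to Pollack's bounded $\pm$-$\ell$-adic $L$-functions, once their non-triviality is granted; but these inputs are less self-contained than Rohrlich's direct argument.)
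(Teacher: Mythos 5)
The paper does not prove this statement at all: it is quoted as Rohrlich's theorem and the proof is delegated entirely to \cite{Ro}, so the only question is whether your sketch is a correct reconstruction of a proof. It is not, and the failure is exactly at the step you yourself flag as "the main obstacle." Your first step (expressing $L(1,g,\chi)$ as a twisted sum of modular symbols, with Manin and norm relations) and your third step (Galois stability of the vanishing locus via Theorem~\ref{L-Alg}, with the caveat that the Galois action also moves the coefficients of $g$, so one only gets stability under $\Gal(\ov{\BQ}/F)$ and must argue with orbits of size $\geq \varphi(\ell^k)/[F:\BQ]$ rather than a single full orbit) are fine in outline. But the second step is not an argument. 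Assuming $L(1,g,\chi)=0$ for all $\chi\in\mathfrak{X}^{cyc}_{\BQ,\ell}$ of conductor $\ell^m$ for all large $m$, Fourier inversion over these characters (which are only the characters trivial on the prime-to-$\ell$ part of $(\BZ/\ell^m\BZ)^\times$, so you recover $\Delta$-averaged symbols, not individual ones) together with the norm relations only forces the vanishing of certain specific linear combinations of modular symbols — equivalently, of the "new at level $\ell^m$" part of the Mazur--Tate theta elements along the tower. This is in no way contradicted by the fact that the modular symbol $\Phi$ attached to $g\neq 0$ is itself nonzero: the nonzero values of $\Phi$ could perfectly well live in the kernel of all the relevant twisted sums, and ruling that out is precisely the hard content of the theorem, not a consequence of Eichler--Shimura. (Indeed, in the ordinary case the nonvanishing of some value in the family is \emph{equivalent} to the nontriviality of Mazur's $\ell$-adic $L$-function, so no purely formal manipulation of the symbols can settle it; and your parenthetical fallback via Weierstrass preparation presupposes that nontriviality, as you concede, hence is circular.)

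For the record, Rohrlich's actual proof is analytic, not algebraic: one writes $L(1,g,\chi)$ via the (approximate) functional equation as a rapidly convergent series in the coefficients $a_g(n)\chi(n)$, averages over the Galois orbit of $\chi$, observes that the $n=1$ term contributes a main term of the size of the orbit while the remaining terms are bounded using $|a_g(n)|\le d(n)\sqrt{n}$ and the sparseness of $n$ with large character-sum contribution, and concludes that for large conductor the orbit average is nonzero; combined with the Galois stability of the vanishing locus (your step three) this yields the statement. If you want to present a proof rather than a citation, you should either reproduce this averaging estimate or honestly reduce to \cite{Ro}; the modular-symbol "contradiction" as written cannot be repaired without importing an input of this analytic type.
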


We also have the following mod $p$ non-vanishing result, essentially
due to  Stevens \cite[Thm. 2.1]{Ste} (cf. Vatsal \cite[Rem. 1.12]{Va}).

\begin{thm}\label{NVHz} 
Suppose that (irr$_\BQ$) holds. Let $M$ be a positive integer. Let $\Sigma$ be a finite set of primes not
dividing $pNM$. Then, for a given choice of sign $\eps=\pm$
there exists a Dirichlet character $\chi$ with $\sign\chi= \eps$ and such that $(pNMM_\Sigma, \cond(\chi))=1$, where $M_\Sigma = \prod_{\ell\in\Sigma}\ell$,  and 
$$
v_{p}\bigg{(}\mathfrak{g}(\ov{\chi})\cdot \frac{L^\Sigma(1,g, \chi)}{\Omega^{\sgn(\chi)}}\bigg{)} = 0.
$$
\end{thm}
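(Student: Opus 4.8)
The plan is to deduce Theorem~\ref{NVHz} from the rank-two non-vanishing result of Stevens \cite{Ste} (and its reformulation by Vatsal \cite{Va}) together with the integrality statement of Theorem~\ref{L-Alg}, by a standard reduction argument. The point of Theorem~\ref{L-Alg} is that the quantity $\mathfrak{g}(\ov\chi)\cdot L(1,g,\chi)/\Omega^{\sgn(\chi)}$ is an element of $\cO_{F_\chi,(\lambda_\chi)}$, so the assertion ``$v_p(\cdot)=0$'' is the assertion that this algebraic number is a $\lambda_\chi$-unit, equivalently that a certain period-normalised modular symbol is nonzero modulo $\lambda$. So the statement is genuinely a \emph{mod $p$} non-vanishing statement and should follow from the non-vanishing of the relevant residual modular symbol twisted by $\chi$.

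Concretely, I would proceed as follows. First, recall that under (irr$_\BQ$) the lattice $T_\cO^{\pm}$ is generated by the period-optimal class $\gamma_g^\pm$ (Lemma~\ref{GorPer}(ii)), and the modular symbol attached to $g$ with these optimal periods reduces to a nonzero element $\ov\varphi_g^{\,\eps}$ in $H^1(Y_1(N),k_\lambda)^\eps[\mathfrak{m}]$, which is one-dimensional over $k_\lambda$ by the Gorenstein condition (Lemma~\ref{GorCond}). The value $\mathfrak{g}(\ov\chi) L^\Sigma(1,g,\chi)/\Omega^{\eps}$, for $\chi$ of sign $\eps$ and conductor prime to $pNMM_\Sigma$, is (up to a $p$-adic unit) obtained by pairing $\ov\varphi_g^{\,\eps}$ against the twisting distribution built from $\chi$ and removing Euler factors at $\Sigma$; since those removed Euler factors $1-a_g(\ell)\chi(\ell)\ell^{-1}+\chi(\ell)^2\ell^{-1}$ are $\lambda$-adic units for $\ell\in\Sigma$ (as $\ell\nmid p$), passing from $L$ to $L^\Sigma$ does not affect the $p$-adic valuation. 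So it suffices to find $\chi$ with $\sign(\chi)=\eps$, conductor prime to $pNMM_\Sigma$, for which this mod-$\lambda$ twisted modular symbol value is nonzero. This is exactly the content of Stevens' theorem \cite[Thm.~2.1]{Ste}: a nonzero residual modular symbol cannot be annihilated by all twisting characters of a fixed sign with conductor avoiding a prescribed finite set of primes (one has the freedom to impose the coprimality to $pNMM_\Sigma$ because Stevens' argument produces $\chi$ of arbitrarily large conductor supported away from any given finite set). Vatsal's Remark~1.12 in \cite{Va} records precisely this deduction for weight two forms.

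The steps, in order: (i) translate the assertion $v_p(\mathfrak{g}(\ov\chi)L^\Sigma(1,g,\chi)/\Omega^{\eps})=0$ into the non-vanishing mod $\lambda$ of the $\chi$-twisted period-optimal modular symbol of $g$, using Theorem~\ref{L-Alg}, the optimal-period normalisation of \S\ref{Periods}, and the $\lambda$-unit property of the deleted Euler factors at $\Sigma$; (ii) invoke (irr$_\BQ$) via Lemma~\ref{GorCond} and Lemma~\ref{GorPer}(ii) to see that the reduction $\ov\varphi_g^{\,\eps}$ is a nonzero generator of a one-dimensional eigenspace, hence in particular nonzero; (iii) apply Stevens' non-vanishing theorem for residual modular symbols to produce a Dirichlet character $\chi$ of the prescribed sign and with conductor coprime to $pNMM_\Sigma$ on which $\ov\varphi_g^{\,\eps}$ does not vanish; (iv) unwind (i) to conclude.

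The main obstacle, or rather the only genuinely substantive point, is step (iii): the non-vanishing of a nonzero residual modular symbol after twisting. This is not elementary — it rests on Stevens' study of the boundary behaviour of modular symbols and the fact that a class killed by all twists of a fixed sign would have to be, roughly speaking, ``$\Gamma$-invariant'' or supported on the boundary in a way incompatible with being a nonzero cuspidal eigenclass when $\ov T$ is absolutely irreducible. Since I am allowed to cite \cite{Ste} (and \cite{Va}) for this, the remaining work in the proof is the bookkeeping of (i)–(iv): matching the analytic normalisations, tracking that everything stays $\lambda$-integral, and checking that the coprimality conditions on $\cond(\chi)$ can indeed be arranged — all of which are routine given the setup already in place in \S\ref{Periods}, Theorem~\ref{L-Alg}, and Lemmas~\ref{GorCond}–\ref{GorPer}.
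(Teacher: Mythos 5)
There is a genuine gap, and it sits exactly where the paper's own proof does its real work: the passage from the primitive $L$-value to the incomplete one $L^\Sigma$. Your claim that the deleted Euler factors $1-a_g(\ell)\chi(\ell)\ell^{-1}+\chi(\ell)^2\ell^{-1}$ for $\ell\in\Sigma$ are $\lambda$-adic units ``as $\ell\nmid p$'' is false: $\ell\nmid p$ only makes $\ell^{-1}$ a unit, so the valuation of the factor is that of $\chi(\ell)^2-a_g(\ell)\chi(\ell)+\ell$, which is divisible by $\lambda_{\chi}$ precisely when $\chi(\ell)$ is congruent mod $\lambda_{\chi}$ to a root of the Hecke polynomial $x^2-a_g(\ell)x+\ell$. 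This can certainly happen (e.g.\ $a_g(\ell)\equiv \ell+1$ and $\chi$ of $p$-power order, so $\chi(\ell)\equiv 1$), and (irr$_\BQ$) does not rule it out at individual primes $\ell\in\Sigma$. Since $\Sigma$ is imposed on you and the character produced by Stevens/Vatsal is not shown to avoid the bad congruences $\chi(\ell)\equiv\alpha_\ell,\beta_\ell\pmod{\lambda_{\chi}}$ at all $\ell\in\Sigma$, your step (i) does not yield the stated conclusion for $L^\Sigma$; to rescue it along your lines you would need a refinement of \cite{Ste} allowing you to prescribe (or avoid) the values $\chi(\ell)$ at the finitely many $\ell\in\Sigma$, which you neither state nor cite.

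The paper circumvents this differently: it applies the non-vanishing result of \cite{Ste}/\cite{Va} not to $g$ but to the $\Sigma$-depleted eigenform $g_\Sigma=\sum_{(n,M_\Sigma)=1}a_g(n)q^n\in S_2(\Gamma_0(NM_\Sigma^2))$, whose $L$-function is exactly $L^\Sigma(s,g,\chi)$, so no Euler factors ever need to be inverted. The price is the one substantive point your proposal omits entirely: one must show that the canonical periods of $g_\Sigma$ agree with the optimal periods $\Omega^\pm$ of $g$ up to $\cO^\times$, which is the content of \cite[Thm.~4.2]{Di} (and, in the ordinary case, \cite[\S4.1]{HV}, \cite[Thm.~3.6.2]{EPW}). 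Your steps (ii)--(iv) for the primitive value are consistent with the intended use of Stevens' theorem, but without either the period comparison for $g_\Sigma$ or control of $\chi$ at the primes of $\Sigma$, the argument as written does not prove the theorem.
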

\noindent 
Here $L^\Sigma(s,g,\chi)$ is the incomplete $L$-series obtained by omitting the Euler factors at the primes $\ell\in\Sigma$,
and $v_p$ is the normalized valuation on $\ov{\BQ}_p$ (so $v_p(p)= 1$).

To deduce Theorem \ref{NVHz} from the results in \cite{Va} it is enough to show that the canonical periods (in the sense of {\em op.~cit.}) for the newform $g$ are the same (up to units in $\cO^\times$) as 
the canonical periods for the eigenform 
$$
g_\Sigma = \sum_{n=1\atop (n,M_\Sigma)=1}^\infty a_g(n) q^n \in S_2(\Gamma_0(NM_\Sigma^2)).
$$
This identification of periods is essentially \cite[Thm.~4.2]{Di} (more precisely, the proof of this theorem). 
In the case that $g$ is ordinary this identification of periods is also proved in \cite[\S4.1]{HV} (see also
\cite[Thm.~3.6.2]{EPW} and its proof in \S3.8 of \cite{EPW}).

\subsubsection{$\GL_2$-type abelian varieties}\label{newforms-AV}
For the newform $g$ as above, let $A_{g}$ be an associated $\GL_2$-type abelian variety over $\BQ$ with dimension $[F:\BQ]$ such that 
$\CO_F \subset \End_{\BQ}(A_{g/\BQ})$ and 
$$
L(s,A_{g}) = \prod_{\sigma: F \hookrightarrow \BC} L(s,g^{\sigma}). 
$$
Here $g^{\sigma}$ denotes the $\sigma$-conjugate.

\subsection{Newforms: $p$-adic Hodge theory}\label{Newforms-pH}
Let $g \in S_{2}(\Gamma_{0}(N))$ be a newform as in \S\ref{Newforms}.
Recall  that $G_{\BQ_p}$ has been identified with a subgroup of $G_\BQ$ via the embedding $\iota_p$. In this way, we view $G_{\BQ_p}$
as acting on $V$ and $T$ in the following constructions. 

\subsubsection{$D_{dR}$ , $D_{cris}$, and spaces of modular forms}\label{Dcris}
The $G_{\BQ_{p}}$-representation $V$ is de Rham (cf.~\cite[\S9.4]{K}). 
Let $D_{dR}(V(n))=(V(n)\otimes_{\BQ_p} B_{dR})^{G_{\BQ_p}}$ be the Dieudonn\'e module of the Tate-twist $V(n)$. 
Then $D_{dR}(V(n))$ is a two-dimensional $F_\lambda$-space with an
exhaustive and decreasing filtration $(D^{i}_{dR}(V(n)))_{i \in \BZ}$. 
The \'etale-de Rham comparison isomorphism yields a canonical identification (cf. \cite[\S11.2]{K} and \cite[\S1.7.1]{FK})
\begin{equation}\label{CmpdR-V}
\bigg{(}H^1_{dR}(X_1(N)/\BQ)\otimes F)/\mathfrak{p} (H^1_{dR}(X_1(N)/\BQ)\otimes F)\bigg{)}\otimes_F F_\lambda \simeq D_{dR}(V)
\end{equation}
respecting filtrations. In particular, the image of $S_F\otimes_F F_\lambda$ in the right-hand side is identified with $D^1_{dR}(V)$:
\begin{equation}\label{CmpdR}
S_F\otimes_F{F_\lambda}\simeq D^1_{dR}(V).
\end{equation}

Since $p \nmid N$, the $G_{\BQ_{p}}$-representation $V(n)$ is also crystalline. 
Let $D_{cris}(V(n))= (V(n)\otimes_{\BQ_p}B_{cris})^{G_{\BQ_p}}$ be the crystalline Dieudonn\'e module, which in this case is a two-dimensional $F_{\lambda}$-space
equipped with the crystalline Frobenius automorphism $\varphi_{cris}$. There are canonical identifications
\begin{equation}\label{CmpCris}
D_{dR}(V(n)) \simeq D_{cris}(V(n))
\end{equation}
of $F_\lambda$-spaces. Note that the left-hand side is equipped with an $F_\lambda$-filtration, the Hodge filtration, which induces a filtration on
the right-hand side:  $0\subsetneq D_{crys}^{1-n}(V(n)) \subsetneq D_{crys}^{-n}(V(n))$. 

Let $T_0 = \mathrm{im}\{H^1(X_1(N),\CO_\lambda) \rightarrow V\}$. This is a sublattice of $T$ and equal to $T$ if (irr$_\BQ$) holds. 
Since $X_1(N)$ is a curve, $p\nmid N$, and $p>2$, the image 
of $H^1(X_1(N)_{/\BZ_{(p)}},\Omega^\bullet_{X_1(N)})\otimes \cO_\lambda$ in
the left-hand side of \eqref{CmpdR-V} is identified via \eqref{CmpCris} with the strongly divisible $\cO_\lambda$-lattice 
$D_\lambda(T_0)\subset D_{cris}(V)$ that corresponds to $T_0$ (cf.~\cite[\S14.22]{K}) and is contained
in the strongly divisible lattice $D_\lambda(T)$.  In particular, $D_\lambda^1(T_0) = D_\lambda(T_0)\cap D^1_{cris}(V)$ is
identified with $S_{\cO}\otimes_{\cO}\cO_\lambda\subset S_F\otimes_F F_\lambda$ and contained in 
$D_\lambda^1(T) = D_\lambda(T)\cap D^1_{cris}(V)$. In particular, 
\begin{equation}\label{CmpdR-Int}
\text{$S_{\cO}\otimes_{\cO}\cO_\lambda \subset D^1_\lambda(T)$, with equality holding if (irr$_\BQ$) holds.}
\end{equation}

\begin{remark}\label{TateTwist-rmk}
 The Dieudonn\'e module $D_{cris}(\BQ_p(1))\simeq D_{dR}(\BQ_p(1))$ can be naturally identified with $\BQ_p$ so that the strongly divisible
$\BZ_p$-lattice in $D_{cris}(\BQ_p(1))$ 
corresponding to the $\BZ_p$-lattice $\BZ_p(1)\subset \BQ_p(1)$ is identified with $\BZ_p$. From this it follows that there are natural 
identifications $D_{\Box}(V)\simeq D_{\Box}(V(n)) = D_{\Box}(V)\otimes_{\BQ_p}D_{\Box}(\BQ_p(1))^{\otimes n}$ of $F_\lambda$-spaces such that 
$D_{\Box}^1(V)$ is identified with $D_{\Box}^{1-n}(V(n))$, $\Box = cris$ or $dR$.  
Under these identifications the strongly divisible $\cO_\lambda$-lattice associated with $T$ is identified
with that associated with $T(n)$.  
\end{remark}

\begin{remark}\label{D(T)-rmk}
If $g$ is ordinary, then the notion of $\omega\in S_F$ being good defined in \S\ref{ModularForms} agrees with that
in \cite[\S17.5]{K} for the lattice $T_0$. To see this, note that $\omega$ is good in the former sense if and only if $\omega$ is identified with an $\cO_\lambda$-generator of $D_\lambda^1(T_0) = D_\lambda(T_0) \cap D^1_{cris}(V)$, which is identified with 
$D_\lambda^0(T_0(1)) = D_\lambda(T_0(1)) \cap D^0_{cris}(V(1))$ (see Remark \ref{TateTwist-rmk}). 
Here $D_\lambda(T_0(n))$ is the strongly divisible lattice corresponding to $T_0(n)$. But when $g$ is ordinary,
the latter is naturally isomorphic to $D(T_0^-(1)) = (T_0^-(1)\otimes W(\ov{\BF}_p))^{G_{\BQ_p}}$, where
$T_0^- = T_0 \cap V^-$ and 
$W(\ov{\BF}_p)$ is the ring of Witt vectors of the algebraic closure $\ov{\BF}_p$ of $\BF_p$ (this is naturally
identified with the completion $\widehat{\BZ}_p^\ur$ of the ring of integers of the maximal unramified extension of $\BQ_p$).
Being good in the sense of \cite[\S17.5]{K} just means being identified with an $\cO_\lambda$-basis
of $D(T_0^-(1))$. In particular, if (irr$_\BQ$) holds, then $\omega$ being good in the sense of  \S\ref{ModularForms}
is the same as being good for the lattice $T$ in the sense of  \cite[\S17.5]{K}.
\end{remark}

\subsubsection{Coleman maps}\label{Coleman}
Let $\alpha$ be a root of $x^2-a_g(p)x+p$ such that  
\begin{equation}\label{non-crit}
\ord_{p}(\alpha) < 1.
\end{equation}
Let 
\begin{equation}\label{CanPai}
[\cdot, \cdot ] : D_{dR}(V(2)) \times (F_{\lambda}(\alpha) \otimes_{F_{\lambda}}  D_{cris}(V)) \ra F_{\lambda}(\alpha)\otimes_{F_\lambda}D_{cris}(F_\lambda(1)) = F_\lambda(\alpha)
\end{equation}
be the canonical pairing coming from \eqref{CmpCris} together with the canonical pairing $V(2)\times V \ra F_\lambda(1)$. 
(Here we have used that there is a canonical identification $V(2) \simeq V^*(1)$, arising from Poincar\'e duality as well as an identification $D_{cris}(F_\lambda(1)) = F_\lambda$
as in Remark \ref{TateTwist-rmk}.)  For $0 \neq \omega \in S_{F}$, let $\eta_{\omega} \in F_{\lambda}(\alpha) \otimes_{F_{\lambda}}  D_{cris}(V)$ be the unique element such that  
(i) $\varphi_{cris}(\eta_{\omega})=\alpha \cdot \eta_{\omega}$, and  (ii) $[\omega, \eta_{\omega}] = 1$
(cf.~\cite[Thm. 16.6]{K}). Here we view $\omega \in D_{dR}^1(V) \simeq D_{dR}^{-1}(V(2))$ via \eqref{CmpdR}
(see also Remark \ref{TateTwist-rmk}).

For $h\geq 1$, let
$$
\mathfrak{K}_{h,F_{\lambda}(\alpha)}
= \bigg{\{} \sum_{n \geq 0} c_{n}T^{n} \in F_{\lambda}(\alpha)[\![T]\!] \bigg{|} \lim_{n \ra \infty} |c_{n}|_{p}  \cdot n^{-h} =0\bigg{\}}
\ \ \text{and} \ \ \mathfrak{K}_{\infty,F_{\lambda}(\alpha)} = \cup_{h\geq 1} \mathfrak{K}_{h,F_{\lambda}(\alpha)}.
$$
Under the identification of $\Lambda$ with $\BZ_p[\![T]\!]$ as in \S\ref{Lambda},
$\mathfrak{K}_{h,F_{\lambda}(\alpha)}$ is a $\Lambda$-module.
Let
$$
Col_{\eta_{\omega},\CG} : H^{1}(\BQ_{p}, V(1) \otimes_{\BZ_{p}} \Lambda_{\CG}) \ra 
\mathscr{H}_{1,F_\lambda(\alpha)} = \mathfrak{K}_{1,F_{\lambda}(\alpha)}\otimes_{\BZ_p}\BZ_p[\Delta]
$$
be the Coleman map, which is the composition
$$
H^{1}(\BQ_{p}, V(1) \otimes_{\BZ_{p}} \Lambda_\CG) \isoarrow H^{1}(\BQ_{p}, V(2) \otimes_{\BZ_{p}} \Lambda_\CG) \stackrel{\mathfrak{L}_{\eta_{\omega}}}{\rightarrow}
\mathscr{H}_{1,F_{\lambda}(\alpha)},
$$
for $\mathfrak{L}_{\eta_{\omega}}$ as in \cite[Thm. 16.4]{K} (due to Perrin-Riou).
Here $G_{\BQ_{p}} \subset G_{\BQ}$ acts on the Iwasawa algebra $\Lambda_\CG$ via the inverse of the canonical character $\Psi_\CG$ (see \eqref{cycuniv})
and the first map in the composition is the isomorphism induced by the $G_{\BQ_p}$-isomorphism $V(1) \otimes_{\BZ_{p}} \Lambda_\CG \isoarrow V(2)\otimes_{\BZ_{p}} \Lambda_\CG$,
given by 
 $v\otimes\gamma \mapsto (v\otimes(\zeta_{p^{n}}))\otimes\epsilon^{-1}(\gamma)\gamma$ for 
 $\gamma\in\CG$.
 
Here
$(\zeta_{p^{n}})_n \in \BQ_p(1))$ is a fixed compatible system of $p^n$th roots of unity (this same choice is used in the definition of $\mathfrak{L}_{\eta_{\omega}}$).
The map $\mathfrak{L}_{\eta_{\omega}}$ is a $\Lambda_\CG$-morphism, but
$Col_{\eta_{\omega},\CG}$ is not due to the presence of the Tate twist in its definition. Instead,
$Col_{\eta_{\omega}, \CG}$ satisfies
$$
Col_{\eta_{\omega},\CG}(\lambda\cdot ) = \epsilon^{-1}(\lambda)Col_{\eta_{\omega},\CG}(\cdot), \ \ 
\lambda\in\Lambda_\CG.
$$

Let $\iota_\epsilon: \Lambda \isoarrow \Lambda,  \ \ \gamma\mapsto \epsilon(\gamma)\gamma$.
Restricting $Col_{{\eta_\omega},\CG}$ to the direct summand 
corresponding to $\Lambda_{\CG}^{(0)}$ (which is canonically identified
with $\Lambda$) and then composing with the isomorphism obtained by 
tensoring with $\otimes_{\Lambda,\iota_\epsilon}\Lambda$ yields a 
$\Lambda$-module Coleman map
\begin{equation}\label{Col-Lambda}
Col_{\eta_{\omega}} : H^{1}(\BQ_{p}, V(1) \otimes_{\BZ_{p}} \Lambda) 
\ra \mathfrak{K}_{1,F_{\lambda}(\alpha)}\otimes \eps_1 \stackrel{id\otimes 1}{\isoarrow}
(\mathfrak{K}_{1,F_{\lambda}(\alpha)}\otimes \eps_1)\otimes_{\Lambda,\iota_\epsilon}\Lambda
\simeq \mathfrak{K}_{1,F_{\lambda}(\alpha)}\otimes_{\Lambda,\iota_\epsilon}\Lambda.
\end{equation}

Recall that in \eqref{Col-Lambda}, $G_{\BQ_p}$ is acting on $\Lambda$ via the inverse
of the canonical character $\Psi$.

\vskip 2mm
\noindent{\it{The ordinary case.}}
If $g$ is ordinary, then we define
\begin{equation}\label{ordCyc}
H^{1}_{\ord}(\BQ_{p}, T(1) \otimes_{\BZ_{p}} \Lambda)
= \Im (H^{1}(\BQ_{p}, T^{+}(1) \otimes_{\BZ_{p}} \Lambda) \ra H^{1}(\BQ_{p}, T(1) \otimes_{\BZ_{p}} \Lambda)).
\end{equation}
We also put
\begin{equation}\label{over-ord}
H^1_{/\ord}(\BQ_p,T(1) \otimes_{\BZ_{p}} \Lambda) = \frac{H^{1}(\BQ_{p}, T(1) \otimes_{\BZ_{p}} \Lambda)}
{H^{1}_{\ord}(\BQ_{p}, T(1)\otimes_{\BZ_{p}} \Lambda)}.
\end{equation} 

For $0\neq \omega\in S_F$, the Coleman map $Col_{\eta_{\omega}}$ induces an injection
\begin{equation}\label{ordCol-Inj}
Col_{\eta_{\omega}}: H^{1}_{/\ord}(\BQ_{p}, T(1) \otimes_{\BZ_{p}} \Lambda) \hookrightarrow F_\lambda\otimes_{\BZ_p}\Lambda \subset \mathfrak{K}_{1,F_{\lambda}(\alpha)}\otimes_{\Lambda,\iota_\epsilon}\Lambda,
\end{equation}
and if (irr$_\BQ$) holds and $\omega \in S_{F}$ is good then the image is contained in $\Lambda_{\cO_\lambda}$ with finite index.  This follows from the corresponding properties for $Col_{\eta_\omega,\CG}$ from
\cite[Prop. 17.11]{K}.

\vskip 2mm
\noindent{\it The supersingular case.}  
In this case, for $\circ = \pm$ let 
$$
H^{1}_{\circ}(\BQ_p, T(1) \otimes_{\BZ_{p}} \Lambda)\subset H^1(\BQ_p,T(1)\otimes_{\BZ_p}\Lambda)
$$
be the {\em signed} submodule defined as in \cite[\S4.4]{L} (see also \cite[\S7]{Ko1}).  
As in \cite{K}, the definitions and constructions in \cite{L} and \cite{Ko1} are made with 
$\Lambda$ replaced by $\Lambda_{\CG}$. The module above is then the summand corresponding
to the factor $\Lambda_{\CG}^{(0)} = \Lambda$ of $\Lambda_\CG$.
We also put
\begin{equation}\label{over-pm}
H^{1}_{/\circ}(\BQ_{p}, T(1) \otimes_{\BZ_{p}} \Lambda)=
\frac{H^{1}(\BQ_{p}, T(1) \otimes_{\BZ_{p}} \Lambda)}
{H^{1}_{\circ}(\BQ_{p}, T(1) \otimes_{\BZ_{p}} \Lambda)}.
\end{equation}

For $0\neq \omega\in S_F$ there is a signed 
Coleman map
$$
Col^{\circ}_{\omega}:  H^{1}(\BQ_{p}, T(1) \otimes_{\BZ_{p}} \Lambda)\rightarrow F_\lambda\otimes_{\BZ_p}\Lambda,
$$
defined in \cite[\S3.4.2]{L}.  This is a $Frac(\mathfrak{K}_{\infty,F_{\lambda}(\alpha)})$-linear
combination of the Coleman maps $Col_{\eta_{\omega}}$ associated with the two roots $\alpha = \pm\sqrt{-p}$ of the Hecke polynomial $x^2+p$ at $p$.
By \cite[Cor.~4.11]{L} 
the kernel of $Col^{\circ}_{\omega}$ is $H^1_\circ(\BQ_p,T(1) \otimes_{\BZ_{p}}\Lambda)$, and there is an induced injection
\begin{equation}\label{pmCol-Inj}
 Col^{\circ}_{\omega}:  H^{1}_{/\circ}(\BQ_{p}, T(1) \otimes_{\BZ_{p}} \Lambda)\hookrightarrow F_\lambda\otimes_{\BZ_p}\Lambda
\end{equation}
with image in $\lambda^{s_\circ}\Lambda_{\cO_\lambda}$ for some $s_\circ\in \BZ$.
If $\omega$ is good, then the image is contained in $\Lambda_{\cO_\lambda}$ with finite index
(cf.~\cite[Thm.~2.14 and Rmk.~2.15]{BL}). As in the ordinary case, these results just follow from the corresponding
results with $\Lambda$ replaced by $\Lambda_{\CG}$.  We emphasize that $Col^\circ_\omega$ is a $\Lambda$-module map.

\subsection{Newforms: Quadratic twists}  Let $g\in S_2(\Gamma_0(N)$ be a newform as in \S\ref{Newforms}.
In this subsection we compare many of the previous definitions and constructions for $g$ and certain of its quadratic twists.
Here in particular the conventions of \S\ref{convention} will be in use.

\subsubsection{The Imaginary quadratic field $L$}\label{IQF}
Throughout, $L \subset \ov{\BQ}$ will denote an imaginary quadratic field of discriminant $-D_{L} < 0$.
 
Write $N=N_LN_L'$ where $N_L$ (resp~$N_L'$) is a product of primes that divide (resp.~do not divide) $D_L$.
We will always suppose that
\begin{equation}\label{ord}
\text{$p$ splits in $L$: $(p)=v\ov{v}$ with $v$ determined via $\iota_p$}
\end{equation}
and that
\begin{equation}\label{sqfree}
\text{$N_L$ is squarefree.}
\end{equation}
While neither condition is always needed, each is an essential feature of some of our later arguments. 

Let $\chi_L:\BZ/D_L\BZ\rightarrow \{\pm 1\}$ be the primitive quadratic character of conductor $D_L$ associate with $L$. We also write
$\chi_L$ for the quadratic character of $G_\BQ$ associated with $L$. These characters are identified by the reciprocity map of class field theory.

The chosen embedding $\iota_p$ determines an isomorphism $L_v\isoarrow \BQ_p$, using which we may take $\ov{\BQ}_p$ to be the chosen separable algebraic closure of $L_v$
and $\iota_p$ as the chosen embedding of $\ov{\BQ}$ into $\ov{L}_v$.  In particular, $G_{L_v}\subset G_L$ is identified with $G_{\BQ_p}\subset G_\BQ$.
Similarly, the composite $\iota_p\circ\tau:\ov{\BQ}\hookrightarrow \ov{\BQ}_p$ determines an isomorphism $L_{\bar v}\isoarrow\BQ_p$, using which we identify $G_{L_{\bar v}}\subset G_L$ with 
$\tau G_{\BQ_p} \tau^{-1}\subset G_\BQ$.

\subsubsection{The quadratic twist $g'=g\otimes\chi_L$}
Let $g'$ be the twist of $g$ by $\chi_L$, which has $q$-expansion
$$
g' (q) = \sum_{n \geq 1} \chi_{L}(n) \cdot a_{f}(n)q^{n}.
$$
Since $N_L$ is squarefree,
$g'=g\otimes\chi_L$ is a newform of level $\mathrm{lcm}(N,D_L^2) = N_L'D_L^2$, weight $2$, and trivial Nebentypus.  
The Hecke field of $g'$ is the same as that of $g$.

\subsubsection{Quadratic twists I: periods}\label{QuadraticTwist-I}
We record the following relation between the periods of $g$ and  $g'$.

\begin{lem}\label{PerTw}\hfill
\begin{itemize}
\item[(i)] We have
$$
\mathfrak{g}(\chi_{L}) \cdot \Omega_{g'}^{\pm} \sim_{F^{\times}} \Omega_{g}^{\mp}.
$$
\item[(ii)] Suppose that (irr$_\BQ$) holds. 
Then
$$
\mathfrak{g}(\chi_{L}) \cdot \Omega_{g'}^{\pm} \sim_{\cO^{\times}} \Omega_{g}^{\mp}.
$$
\end{itemize}
\end{lem}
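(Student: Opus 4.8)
The plan is to deduce this from the analogous comparison of complex periods through a careful passage between the optimal (N\'eron-style) periods of $g$ and $g'$, using the $L$-value algebraicity result (Theorem~\ref{L-Alg}) as the bridge. First I would recall the classical relation at the level of complex numbers: twisting a newform by a quadratic Dirichlet character $\chi_L$ of conductor $D_L$ sends the plus/minus periods to minus/plus periods, up to a Gauss-sum factor; concretely $\mathfrak{g}(\chi_L)\,\Omega_{g'}^\pm$ and $\Omega_g^\mp$ agree up to an algebraic (i.e. $F^\times$-) multiple. The cleanest route to this is to compare the functional equations / period relations: for any Dirichlet character $\psi$ with $(\cond(\psi),N D_L)=1$ and sign $\mathrm{sgn}(\psi\chi_L)=\mp$ one has $L(1,g',\psi) = L(1,g,\psi\chi_L)$, and feeding both sides through Theorem~\ref{L-Alg} (once for $g'$ with the character $\psi$, once for $g$ with the character $\psi\chi_L$, noting $\mathfrak{g}(\overline{\psi\chi_L}) \sim_{F^\times} \mathfrak{g}(\overline\psi)\mathfrak{g}(\overline{\chi_L})$ and $\overline{\chi_L}=\chi_L$) forces $\mathfrak{g}(\chi_L)\Omega_{g'}^{\mp}\sim_{F_\psi^\times}\Omega_g^{\pm}$ — provided the $L$-value is nonzero for some such $\psi$, which is guaranteed by Rohrlich's non-vanishing (Theorem~\ref{NVIw}). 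Since both periods lie in $\BC^\times$ and the ratio is forced to be algebraic, one gets the $F^\times$-statement of part (i).

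For part (ii), the point is to upgrade the $F^\times$-ambiguity to an $\cO^\times$-ambiguity under (irr$_\BQ$). Here I would invoke the integral refinement of Theorem~\ref{L-Alg}: under (irr$_\BQ$) the optimal periods $\Omega_g^\pm$, $\Omega_{g'}^\pm$ are the truly canonical (integral) periods, so $\mathfrak{g}(\overline\psi)L(1,g,\psi)/\Omega_g^{\mathrm{sgn}(\psi)} \in \cO_{F_\psi,(\lambda_\psi)}$, and the mod-$p$ non-vanishing Theorem~\ref{NVHz} produces, for each sign, a character $\psi$ (coprime to $pND_L$) for which this quantity is a $\lambda_\psi$-unit. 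Comparing the two integral algebraicity statements at such a $\psi$ — exactly as in part (i) but now tracking $\lambda$-integrality rather than just $F^\times$-classes, and using that $\mathfrak{g}(\chi_L)$ and $\mathfrak{g}(\overline\psi)$ are $\lambda$-adic units because $(D_L\cond(\psi),p)=1$ — shows that the ratio $\mathfrak{g}(\chi_L)\Omega_{g'}^\pm / \Omega_g^\mp$ is a $\lambda$-adic unit in $F$; since this holds for the one prime $\lambda$ we are tracking and the ratio is already known to lie in $F^\times$ from part (i), and since the optimal periods are pinned down up to $\cO^\times$ precisely at $\lambda$, we conclude $\sim_{\cO^\times}$. (Strictly this argument controls the ratio only at $\lambda$; the remaining places are handled because $\cO = \cO_{F,(\lambda)}$ is the localisation, so a $\lambda$-unit in $F^\times$ is an $\cO^\times$.)

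The main obstacle I anticipate is bookkeeping of the Gauss-sum normalisations and making sure the level-raising from $N$ to $N_L'D_L^2$ does not disturb the optimal-period comparison: one must be careful that the newform $g'=g\otimes\chi_L$ genuinely has optimal periods comparable to those of $g$, which uses the squarefreeness hypothesis \eqref{sqfree} on $N_L$ (so that $g\otimes\chi_L$ is a newform of the expected level with the expected Hecke field) and the same integral-structure input (the analogue of Lemma~\ref{GorPer}) applied to $g'$. A secondary technical point is that Theorem~\ref{NVHz} is stated for $g$; one needs its counterpart for $g'$, or better, one applies it to $g$ with $\psi\chi_L$ in place of $\psi$ and to $g'$ with $\psi$, choosing $\Sigma$ to contain the primes dividing $D_L$ so that the incomplete $L$-series on both sides literally agree. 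Once these matching conventions are fixed, the computation is a short comparison of two instances of the algebraicity theorem and carries essentially no further content.
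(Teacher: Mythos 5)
Your overall strategy is the same as the paper's: play the identity $L(1,g',\psi)=L(1,g,\chi_L\psi)$ off against two instances of Theorem~\ref{L-Alg}, with Theorem~\ref{NVIw} supplying non-vanishing for part (i) and Theorem~\ref{NVHz} supplying a $\lambda$-unit normalised $L$-value for part (ii). However, as written there are two genuine gaps, both at exactly the points where your write-up compresses the argument.

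In part (i), a single auxiliary character $\psi$ only yields $\mathfrak{g}(\chi_L)\,\Omega_{g'}^{\sgn\psi}\sim_{F_\psi^\times}\Omega_g^{-\sgn\psi}$, and your sentence ``the ratio is forced to be algebraic'' does not upgrade this to an $F^\times$-statement: the ratio could a priori lie in $F_\psi\setminus F$. The paper closes this by running the comparison twice, with characters of $\ell$-power and $\ell'$-power conductor for two distinct primes $\ell,\ell'\nmid ND_L$, and using $F_\chi\cap F_{\chi'}=F$; some such descent step must be added to your argument (the opposite sign is then handled by reversing the roles of $g$ and $g'$, where the harmless Euler factors at $q\mid D_L$ enter, as you anticipate).

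In part (ii), the single comparison you describe --- a $\lambda$-unit value for one form (Theorem~\ref{NVHz}) against mere $\lambda$-integrality for the other (Theorem~\ref{L-Alg}) --- proves only that $\mathfrak{g}(\chi_L)\Omega_{g'}^{-\varepsilon}/\Omega_g^{\varepsilon}$ is $\lambda$-integral, not that it is a unit. Moreover your preferred shortcut, applying Theorem~\ref{NVHz} only to $g$ (for both signs, with $\Sigma\supset\{q\mid D_L\}$), produces the integrality of $\mathfrak{g}(\chi_L)\Omega_g^{\varepsilon}/\Omega_{g'}^{-\varepsilon}$, i.e.\ it bounds the $\lambda$-valuations of the \emph{reciprocal} ratios only, so by itself it cannot yield units for the ratios in the lemma. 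One genuinely needs both one-sided inequalities: apply Theorem~\ref{NVHz} to $g'$ as well (legitimate, since (irr$_\BQ$) for $g$ implies it for $g'$) with Theorem~\ref{L-Alg} for $g$, and then combine the two integralities via $\mathfrak{g}(\chi_L)^2=\pm D_L\in\cO^\times$. This role-reversal is precisely how the paper concludes. With these two repairs your proof coincides with the paper's; your final remarks --- that $\cO=\cO_{F,(\lambda)}$ converts a $\lambda$-unit in $F^\times$ into an $\cO^\times$-statement, that the optimal periods of $g'$ are available because $N_L$ is squarefree so $g'$ is a newform (with Lemma~\ref{GorPer} applying verbatim), and that the incomplete $L$-series at $q\mid D_L$ must be matched --- are correct and are exactly the right bookkeeping.
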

\noindent Here, as before, `$\sim_{\Box}$' denotes equality up to an element in $\Box$.

\begin{proof}  
Let $\ell \nmid ND_{L}$ be a prime. By Theorem \ref{NVIw} there exists $\chi \in \mathfrak{X}_{\BQ,\ell}^{cyc}$ such that
$$
L(1, g',\chi) \neq 0.
$$
As $L(1,g',\chi) = L(1,g,\chi_L\chi)$, it then follows from Theorem \ref{L-Alg}(i) applied to both 
$L(1,g',\chi)$ and $L(1,g,\chi_L\chi)$ together with the factorisation 
$\mathfrak{g}(\chi_L\ov{\chi}) = \mathfrak{g}(\chi_L)\mathfrak{g}(\ov{\chi})$ of Gauss sums that 
$$
\mathfrak{g}(\chi_{L}) \cdot \Omega_{g'}^{+} \sim_{F_{\chi}^{\times}} \Omega_{g}^{-}.
$$
 
Let $\ell' \nmid ND_L\ell$ be another prime.
The same argument as above shows that there exists $\chi' \in \mathfrak{X}_{\BQ,\ell'}^{cyc}$ such that
$$
\mathfrak{g}(\chi_{L}) \cdot \Omega_{g'}^{+} \sim_{F_{\chi'}^{\times}} \Omega_{g}^{-}.
$$
As $F_{\chi} \cap F_{\chi'} = F$,
it follows that $\mathfrak{g}(\chi_L) \cdot \Omega_{g'}^+ \sim_{F^\times} \Omega_{g}^-$. 

That  $\mathfrak{g}(\chi_L)\cdot \Omega_{g'}^- \sim_{F^\times} \Omega_{g}^+$ can be seen by
reversing the roles of $g$ and $g'$ in the preceding argument and noting that 
$L^{\{q\mid D_L\}}(1,g,\chi)= L(1,g',\chi_L\chi)$, 
so $L(1,g,\chi_\ell)\sim_{F_{\chi}^\times} L(1,g',\chi_L\chi_\ell)$, and that $\mathfrak{g}(\chi_L)^2 = \pm D_L$.

Suppose now that (irr$_\BQ$) holds. 
Let $\varepsilon$ be a sign.
By Theorem \ref{NVHz}, there exists a Dirichlet character $\chi$ of sign $-\varepsilon$ such that $(pND_{L}, \cond(\chi))=1$ and 
$$
v_{p}\bigg{(}\mathfrak{g}(\ov{\chi})\cdot \frac{L(1,g',\chi)}{\Omega_{g'}^{-\varepsilon}}\bigg{)} = 0.
$$
Since by Theorem \ref{L-Alg}(ii),
$$
\mathfrak{g}(\chi_L\ov{\chi})\cdot \frac{L(1,g,\chi_L\chi)}{\Omega_{g}^{\varepsilon}} \in \cO_{F_{\chi},(\lambda_{\chi})},
$$
and since $L(1,g',\chi) = L(1,g,\chi_L\chi)$ and $\mathfrak{g}(\chi_L \ov{\chi}) = \mathfrak{g}(\chi_L)\mathfrak{g}(\ov{\chi})$,
it follows that
$$
\mathfrak{g}(\chi_{L}) \cdot \frac{\Omega_{g'}^{-\varepsilon}} {\Omega^{\varepsilon}_g} \in \cO_{F_{\chi},(\lambda_{\chi})}.
$$
Part (ii) then follows upon reversing the roles of $g$ and $g'$ and $\varepsilon$ and $-\varepsilon$ in this argument and using that $\mathfrak{g}(\chi_L)^2 = \pm D_L \in \cO^\times$.
\end{proof}

\begin{remark}
For an alternative approach to proving the preceding lemma, see \cite[Lem. 9.6]{SZ}.
\end{remark}

\begin{remark}\label{EC-periodtwist-rmk}
If $g$ corresponds to the isogeny class of an elliptic curve $E_\bullet$ as in Remark \ref{EC-optperiod-rmk}, then 
the conclusion of part (ii) of Lemma \ref{PerTw} still holds even when (irr$_\BQ$) does not. 
This is a consequence of the main result of \cite{VivekPal}. 
\end{remark}

\subsubsection{Quadratic twists II: identifications and rigidifications}\label{QuadraticTwist-II}

In view of the characterising properties of the $\lambda$-adic Galois representations $V_g$
and $V_{g'}$,
there is an isomorphism
\begin{equation}\label{GalTw}
V_{g} \otimes \chi_{L} \simeq V_{g'}
\end{equation}
of $F_{\lambda}[G_{\BQ}]$-modules.

If (irr$_{\BQ}$) holds, then any two $G_\BQ$-stable $\cO_\lambda$ lattices in $V_g$ (or $V_{g'}$) are scalar multiples of one another. In particular,
there is an isomorphism
\begin{equation}\label{GalTw-Int}
T_{g} \otimes \chi_{L} \simeq T_{g'}
\end{equation}
of $\cO_{\lambda}[G_{\BQ}]$-modules.

For subsequent arguments it will be convenient to choose an isomorphism \eqref{GalTw} or even an isomorphism \eqref{GalTw-Int} (which then induces an isomorphism \eqref{GalTw}).
\vskip 2mm
\noindent{\it{Rigidifications.}}
The choice of an isomorphism \eqref{GalTw} can be rigidified as in the following lemma.

\begin{lem}\label{RigIso}\hfill
\begin{itemize}
\item[(a)] Let
\begin{equation}\label{RigMod}
S_{F,g} \isoarrow S_{F,g'}
\end{equation}
be a fixed isomorphism of one-dimensional $F$-vector spaces. There exists a unique isomorphism 
\begin{equation}\label{RigGal}
V_{g} \otimes \chi_{L} \isoarrow V_{g'}
\end{equation}
of $F_{\lambda}[G_{\BQ}]$-modules such that the diagram 
$$\begin{tikzcd}[row sep=2.5em]
& 
S_{F,g} \otimes_{F} F_{\lambda}     \arrow[r,"\sim"] \arrow[d,"\wr"]&
 D^1_{dR}(V_{g})        \arrow[d,"\wr"]\\
&
S_{F,g'} \otimes_{F} F_{\lambda}  \arrow[r,"\sim"]&
D^1_{dR}(V_{g'}) \\
\end{tikzcd}$$
commutes. 
Here the horizontal isomorphisms are induced via \eqref{CmpdR}, and the first (resp.~second) vertical isomorphism is induced via \eqref{RigMod} (resp. \eqref{RigGal}).
\item[(b)] Suppose that (irr$_{\BQ}$) holds.  Let
\begin{equation}\label{RigMod-Int}
S_{\cO,g} \isoarrow S_{\cO,g'}
\end{equation}
be a fixed isomorphism of rank one $\cO$-modules. There exists a unique isomorphism 
\begin{equation}\label{RigGal-Int}
T_{g} \otimes \chi_{L} \isoarrow T_{g'}
\end{equation}
of $\cO_{\lambda}[G_{\BQ}]$-modules such that the diagram 
$$\begin{tikzcd}[row sep=2.5em]
& 
S_{\cO,g} \otimes_{\cO} \cO_{\lambda}    \arrow[r,"\sim"] \arrow[d,"\wr"]&
D_\lambda^1(T_g)  \arrow[d,"\wr"]\\
&
S_{\cO,g'} \otimes_{\cO} \cO_{\lambda} \arrow[r,"\sim"]&
D^1_\lambda(T_{g'}) \\
\end{tikzcd}$$
commutes.
Here the horizontal isomorphisms are induced via \eqref{CmpdR-Int}, and the first (resp.~second) vertical isomorphism is induced via \eqref{RigMod-Int} (resp. \eqref{RigGal-Int}).
\end{itemize}
\end{lem}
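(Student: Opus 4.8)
The plan is to prove both parts by the same mechanism: exhibit \emph{one} twisting isomorphism using \eqref{GalTw} (resp.~\eqref{GalTw-Int}), observe that all such isomorphisms form a torsor under $F_\lambda^\times$ (resp.~$\cO_\lambda^\times$), and then rescale the chosen one uniquely so that the induced map on $D^1_{dR}$ (resp.~on the degree-one part of the strongly divisible lattice) agrees with the map prescribed by \eqref{RigMod} (resp.~\eqref{RigMod-Int}). After unwinding the canonical comparison isomorphisms recalled in \S\ref{Newforms-pH}, commutativity of the square is precisely the assertion that these two maps on $D^1_{dR}$ coincide, so this rescaling is exactly what the lemma demands.

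The first thing I would record is the local input at $p$. Since $(p)$ splits in $L$ by \eqref{ord}, the character $\chi_L$ is unramified at $p$ with $\chi_L(\Frob_p)=1$, hence $\chi_L\vert_{G_{\BQ_p}}$ is trivial. Therefore $v\otimes 1\mapsto v$ is a $G_{\BQ_p}$-equivariant identification $(V_g\otimes\chi_L)\vert_{G_{\BQ_p}}=V_g\vert_{G_{\BQ_p}}$, which restricts to $(T_g\otimes\chi_L)\vert_{G_{\BQ_p}}=T_g\vert_{G_{\BQ_p}}$. Applying $D_{dR}$ gives a canonical filtered identification $D_{dR}(V_g\otimes\chi_L)=D_{dR}(V_g)$, and — using $p>2$ and the fact that the Hodge--Tate weights lie in the Fontaine--Laffaille range — a canonical identification of strongly divisible lattices $D_\lambda(T_g\otimes\chi_L)=D_\lambda(T_g)$. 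In particular $D^1_{dR}(V_g\otimes\chi_L)=D^1_{dR}(V_g)\cong S_{F,g}\otimes_F F_\lambda$ via \eqref{CmpdR} and is one-dimensional over $F_\lambda$ since $g$ has weight two; and when (irr$_{\BQ}$) holds, $D^1_\lambda(T_g\otimes\chi_L)=D^1_\lambda(T_g)=S_{\cO,g}\otimes_\cO\cO_\lambda$ by \eqref{CmpdR-Int}, a free $\cO_\lambda$-module of rank one. This observation is what keeps the argument clean: the $\chi_L$-twist simply does not see the $p$-adic Hodge theory at $p$.

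For part (a) I would take any $\iota_0\colon V_g\otimes\chi_L\isoarrow V_{g'}$ from \eqref{GalTw}. Since $V_{g'}$ is an irreducible two-dimensional $F_\lambda[G_\BQ]$-representation (being attached to a newform), Schur's lemma gives $\End_{F_\lambda[G_\BQ]}(V_{g'})=F_\lambda$, so every twisting isomorphism is $c\iota_0$ with $c\in F_\lambda^\times$, and rescaling $\iota_0$ by $c$ rescales the induced map on $D^1_{dR}$ by the same $c$. As $D^1_{dR}(V_{g'})$ is one-dimensional over $F_\lambda$, there is a unique $c_0\in F_\lambda^\times$ for which $c_0\iota_0$ induces the prescribed map on $D^1_{dR}$, and uniqueness follows from the torsor description. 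Part (b) is the same argument with $F_\lambda$ replaced by $\cO_\lambda$: (irr$_{\BQ}$), together with the fact recalled in \S\ref{QuadraticTwist-II} that any two $G_\BQ$-stable $\cO_\lambda$-lattices in $V_{g'}$ are $F_\lambda^\times$-multiples of one another, gives $\End_{\cO_\lambda[G_\BQ]}(T_{g'})=\cO_\lambda$ and hence $\Aut_{\cO_\lambda[G_\BQ]}(T_{g'})=\cO_\lambda^\times$; \eqref{GalTw-Int} furnishes a starting isomorphism; and since $D^1_\lambda(T_g)$ and $D^1_\lambda(T_{g'})$ are free of rank one over $\cO_\lambda$, exactly one $\cO_\lambda^\times$-rescaling matches \eqref{RigMod-Int}. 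The only points that require care are (i) that the $\chi_L$-twist leaves the filtered and strongly divisible module data at $p$ undisturbed, which is handled by the triviality of $\chi_L\vert_{G_{\BQ_p}}$, and (ii) the identifications $\End_{F_\lambda[G_\BQ]}(V_{g'})=F_\lambda$ and, integrally, $\End_{\cO_\lambda[G_\BQ]}(T_{g'})=\cO_\lambda$, which rest on the standard irreducibility of the Galois representation attached to a newform and on (irr$_{\BQ}$); everything else is bookkeeping with the comparison isomorphisms of \S\ref{Newforms-pH}.
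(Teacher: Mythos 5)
Your proof is correct and follows essentially the same route as the paper's: start from an isomorphism furnished by \eqref{GalTw} (resp.~\eqref{GalTw-Int}), use irreducibility (resp.~(irr$_\BQ$) plus the uniqueness of lattices up to scalar) to see that all such isomorphisms differ by a scalar, and rescale by the unique element of $F_\lambda^\times$ (resp.~$\cO_\lambda^\times$) matching the prescribed map on the one-dimensional (resp.~rank-one) space $D^1_{dR}$ (resp.~$D^1_\lambda$), with uniqueness by the same scalar argument. Your explicit remark that $\chi_L|_{G_{\BQ_p}}$ is trivial (so the twist is invisible to the local data at $p$) is a point the paper leaves implicit, and the appeal to Fontaine--Laffaille theory is unnecessary since the twist is literally trivial as a $G_{\BQ_p}$-module; neither affects correctness.
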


\begin{proof}
Let $h:V_{g} \otimes \chi_{L} \isoarrow V_{g'}$ be any isomorphism of $F_{\lambda}[G_{\BQ}]$-modules. 
Then $h$ induces 
$D^{1}_{dR}(V_{g}) \simeq D^{1}_{dR}(V_{g'})$
and hence an isomorphism
$S_{F,g} \otimes_{F} F_{\lambda} \simeq S_{F,g'} \otimes_{F} F_{\lambda}$
by \eqref{CmpdR}. As these are all one-dimensional $F_\lambda$-spaces, this last isomorphism must be
$a$ times the one induced via \eqref{RigMod} for some $a \in F_{\lambda}^{\times}$. 
Then $a^{-1}\cdot h$ is the desired isomorphism. The uniqueness follows from $V_{g}$ being an irreducible $F_{\lambda}[G_{\BQ}]$-module. 
This proves part (a). 

In light  of \eqref{CmpdR-Int} and and the fact that (irr$_{\BQ}$) implies that all $G_\BQ$-stable lattices in $V_{g}$ are scalar multiples of one another, 
essentially the same argument proves part (b).
\end{proof}

\begin{remark}
Lemma \ref{RigIso} is a variant of \cite[Lem. 15.11(1)]{K}.
\end{remark}

\vskip 2mm
\noindent{\it Optimal rigidifications.}
Let $c_g$ and $c_{g'}$ be fixed generators of the congruence ideals of $g$ and $g'$, respectively.
In the rest of this paper, we choose the isomorphism \eqref{RigMod} so that 
\begin{equation}\label{RigMod-Opt}
S_F\isoarrow S_F', \ \ \frac{\omega_{g}}{c_{g}} \mapsto \frac{\omega_{g'}}{c_{g'}}.
\end{equation}
By Lemma \ref{RigIso} this choice determines an isomorphism 
\begin{equation}\label{RigGal-Opt}
Tw:V_g\otimes\chi_L\isoarrow V_{g'}.
\end{equation}
If (irr$_{\BQ}$) also holds, then by Lemma \ref{GorPer} both $\frac{\omega_{g}}{c_{g}}$ and $\frac{\omega_{g'}}{c_{g'}}$
are good in the sense defined of \S\ref{ModularForms}, and so the map \eqref{RigMod-Opt} is determined by an 
an isomorphism \eqref{RigMod-Int}. In this case, the map
$Tw$ arises from an isomorphism \eqref{GalTw-Int}.

\subsubsection{Quadratic twists III: Coleman maps}\label{QuadraticTwist-III}
We assume that \eqref{RigMod} has been fixed (say as in \eqref{RigMod-Opt}).
Let $Tw:V\otimes\chi_L\isoarrow V_{g'}$ be the isomorphism as in \eqref{RigGal-Opt} (so as in Lemma \ref{RigIso}(a)).

Since $p$ splits in $L$ by hypothesis, $a_g(p) = a_{g'}(p)$ and we may choose 
$$
\alpha=\alpha_g=\alpha_{g'}
$$
to be a root of the Hecke polynomial at $p$ as in \S\ref{Coleman}.

For $0 \neq \omega \in S_{F,g}$, let $\eta_{\omega} \in F_{\lambda}(\alpha) \otimes_{F_{\lambda}}  D_{cris}(V_{F_{\lambda},g})$ be the element in \S\ref{Coleman} 
Let $\omega' \in S_{F,g'}$ be the image of $\omega$ under the fixed isomorphism \eqref{RigMod}.
Then the pair $(\alpha,\omega')$ gives rise to 
$$
\eta_{\omega}' = \eta_{\omega'} \in F_{\lambda}(\alpha) \otimes_{F_{\lambda}}  D_{cris}(V_{F_{\lambda},g'})
$$ 
as in \S\ref{Coleman}. 
The isomorphism $Tw$ induces an identification 
$D_{cris}(V_{g})\simeq D_{cris}(V_{g'})$, and $\eta_{\omega}'$ is just the image of $\eta_{\omega}$ under this identification.

The isomorphism $Tw$ also induces isomorphisms
$$
Tw_*:H^1(\BQ_p,(T_g(n)\otimes\chi_L)\otimes_{\BZ_p}\Lambda)\otimes_{\BZ_p}\BQ_p \isoarrow H^1(\BQ_p,T_{g'}(n)\otimes_{\BZ_p}\Lambda)\otimes_{\BZ_p}\BQ_p,
$$
and even without tensoring with $\BQ_p$ if $Tw:T_g\otimes\chi_L\isoarrow T_{g'}$.
The Coleman maps for $g$ and $g'$ are related via $Tw_*$ as follows.

\begin{lem}\label{ColTw} 
Let $0\neq \omega \in S_F$. 
\begin{itemize} 
\item[(a)] We have $Col_{\eta_{\omega}} = Col_{{\eta}_{\omega}'} \circ Tw_*$. 
\item[(b)] In the supersingular case, $Col^\circ_{{\omega}} = Col^\circ_{{\omega}'} \circ Tw_*$.
\end{itemize}
\end{lem}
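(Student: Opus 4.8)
The plan is to reduce both statements to the definitions and to the functoriality of the Perrin-Riou/Coleman map $Col_{\eta_\omega,\CG}$ with respect to the isomorphism $Tw$. The essential point is that the Coleman maps are defined purely in terms of the de Rham/crystalline data $(\alpha,\omega)$ together with the $\B
L_{\eta_\omega}$-machinery of \cite[Thm.~16.4]{K}, and the isomorphism $Tw:V_g\otimes\chi_L\isoarrow V_{g'}$ is precisely set up (via Lemma \ref{RigIso}(a)) to match the chosen differential $\omega$ with its image $\omega'$ under \eqref{RigMod}, hence to match $\eta_\omega$ with $\eta'_\omega$ under the induced identification $D_{cris}(V_g)\simeq D_{cris}(V_{g'})$ (as already recorded in \S\ref{QuadraticTwist-III}). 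So the diagram
$$
\begin{CD}
H^1(\BQ_p, V_g(1)\otimes_{\BZ_p}\Lambda_\CG) @>{Tw_*}>> H^1(\BQ_p, V_{g'}(1)\otimes_{\BZ_p}\Lambda_\CG)\\
@V{Col_{\eta_\omega,\CG}}VV @VV{Col_{\eta'_\omega,\CG}}V\\
\mathscr H_{1,F_\lambda(\alpha)} @= \mathscr H_{1,F_\lambda(\alpha)}
\end{CD}
$$
should commute essentially by inspection, since $Tw_*$ is induced by a $G_{\BQ_p}$-isomorphism carrying $\eta_\omega$ to $\eta'_\omega$, and the $\BL_{\eta_\omega}$-map is functorial in the pair (representation, crystalline class). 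I would first write out this compatibility at the level of $\Lambda_\CG$-coefficients, then pass to the $\Lambda_\CG^{(0)}=\Lambda$-summand and apply $\otimes_{\Lambda,\iota_\epsilon}\Lambda$ exactly as in \eqref{Col-Lambda}; since $Tw$ commutes with the $\CG$-action on coefficients, both operations are compatible with $Tw_*$, giving part (a).

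For part (b), recall from \S\ref{Coleman} that in the supersingular case $Col^\circ_\omega$ is a fixed $\mathrm{Frac}(\mathfrak K_{\infty,F_\lambda(\alpha)})$-linear combination of $Col_{\eta_\omega}$ for the two roots $\alpha=\pm\sqrt{-p}$ of $x^2+p$, with the coefficients of that linear combination depending only on $p$ and the choice of root (they come from the logarithmic matrix in \cite[\S3.4.2]{L}), not on the newform. Since $p$ splits in $L$ we have $a_g(p)=a_{g'}(p)=0$, so $g$ and $g'$ have the same Hecke polynomial at $p$ and we may use the same $\alpha$ (as noted in \S\ref{QuadraticTwist-III}); hence the defining linear combination is literally the same for $g$ and $g'$. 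Then $Col^\circ_{\omega'}\circ Tw_* = \sum c_\alpha\, Col_{\eta'_\omega}\circ Tw_* = \sum c_\alpha\, Col_{\eta_\omega} = Col^\circ_\omega$, using part (a) applied to each root $\alpha$. One should note that, on the nose, $Tw_*$ is only defined after $\otimes\BQ_p$ unless $Tw:T_g\otimes\chi_L\isoarrow T_{g'}$ is integral (which holds when (irr$_\BQ$) does, e.g.\ automatically in the supersingular case by \S\ref{Newforms-pH}); this causes no trouble since the Coleman maps are $\BZ_p$-linear and the identities are identities of $\Lambda$-module maps.

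The main obstacle I anticipate is purely bookkeeping rather than conceptual: carefully tracking the Tate twists and the $\iota_\epsilon$-twist through the passage from $Col_{\eta_\omega,\CG}$ to $Col_{\eta_\omega}$, and verifying that $Tw_*$ — which is defined so as to intertwine the $v\otimes\gamma\mapsto (v\otimes(\zeta_{p^n}))\otimes\epsilon^{-1}(\gamma)\gamma$ identifications for $g$ and $g'$ compatibly — genuinely commutes with every intermediate map in \eqref{Col-Lambda}. Once the identification $\eta'_\omega = $ (image of $\eta_\omega$ under $D_{cris}(V_g)\simeq D_{cris}(V_{g'})$) is invoked from \S\ref{QuadraticTwist-III}, together with the functoriality of $\BL_{(-)}$ in \cite[Thm.~16.4]{K} with respect to morphisms of $p$-adic representations, everything else is formal. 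I would organize the write-up as: (1) recall the functoriality statement for $\BL_\eta$; (2) deduce commutativity over $\Lambda_\CG$; (3) descend to $\Lambda$ and twist by $\iota_\epsilon$ to get (a); (4) expand the defining linear combination to get (b).
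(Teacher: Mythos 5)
Your proposal is correct and follows essentially the same route as the paper: part (a) rests on the identity $\mathfrak{L}_{\eta_{\omega}}=\mathfrak{L}_{\eta_{\omega}'}\circ Tw_*$, which comes from the functoriality of Perrin-Riou's logarithm together with the fact that the rigidification of Lemma \ref{RigIso}(a) matches $\eta_\omega$ with $\eta_\omega'$, and part (b) then follows because the signed Coleman map is the same $Frac(\mathfrak{K}_{1,F_{\lambda}(\alpha)})$-linear combination of the maps $Col_{\eta_\omega}$ for the two roots of the common Hecke polynomial at $p$. The paper's proof is exactly this, stated more briefly; your extra bookkeeping of the $\iota_\epsilon$-twist and the integrality caveat are fine but not needed beyond what you wrote.
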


\begin{proof}
Part (a) easily from the commutativity of the diagram in Lemma \ref{RigIso}(a) and the definition of the Coleman maps.
The key point is that $\mathfrak{L}_{\eta_{\omega}}=\mathfrak{L}_{\eta_{\omega}'}\circ{Tw_*}$, 
which follows from the functorial properties of Perrin-Riou's logarithm and the choices of $\eta_{\omega}$ and $\eta_{\omega}'$.
Part (b) follows from part (a) and the definition of the signed Coleman map $Col_{\omega}^\circ$ (resp.~$Col^\circ_{\omega'}$) 
as a $Frac(\mathfrak{K}_{1,F_{\lambda}(\alpha)})$-linear
combination of the Coleman maps $Col_{\eta_{\omega}}$ associated with the two roots of the Hecke polynomial at $p$.
\end{proof}

We have corresponding identifications of the ordinary and signed submodules.

\begin{lem}\label{CohTw}
The isomorphism $Tw_*$ induces an isomorphism
$$Tw_*:H^1_\ord(\BQ_p,T_g(1)\otimes_{\BZ_p}\Lambda)\otimes_{\BZ_p}\BQ_p \ \isoarrow H^1_\ord(\BQ_p,T_{g'}(1)\otimes_{\BZ_p}\Lambda)\otimes_{\BZ_p}\BQ_p
\ \ \text{if $g$ is ordinary,}
$$
and an isomorphism
$$Tw_*:H^1_\circ(\BQ_p,T_g(1)\otimes_{\BZ_p}\Lambda)\otimes_{\BZ_p}\BQ_p \isoarrow H^1_\circ(\BQ_p,T_{g'}(1)\otimes_{\BZ_p}\Lambda)\otimes_{\BZ_p}\BQ_p 
\ \ \text{if $g$ is supersingular.}
$$
If $Tw:T_g\otimes\chi_L\isoarrow T_{g'}$, then these are isomorphisms without tensoring with $\BQ_p$.
\end{lem}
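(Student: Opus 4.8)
The plan is to reduce both assertions to facts already in hand: the functoriality of the Coleman maps recorded in Lemma \ref{ColTw} for the supersingular case, and the canonicity of the ordinary filtration for the ordinary case. The crucial input is that $\chi_L$ is unramified at $p$ --- in fact trivial on $G_{\BQ_p}$ --- since $p$ splits in $L$ by \eqref{ord}. Consequently the isomorphism $Tw\colon V_g\otimes\chi_L\isoarrow V_{g'}$ of \eqref{RigGal-Opt}, restricted to $G_{\BQ_p}$, becomes an isomorphism $V_g\isoarrow V_{g'}$ of $G_{\BQ_p}$-representations once $V_g\otimes\chi_L$ is identified with $V_g$ locally at $p$ (and the same holds integrally when $Tw\colon T_g\otimes\chi_L\isoarrow T_{g'}$ is available). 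Under this identification $H^1_\ord(\BQ_p,(T_g\otimes\chi_L)(1)\otimes\Lambda)$ and $H^1_\circ(\BQ_p,(T_g\otimes\chi_L)(1)\otimes\Lambda)$ coincide with $H^1_\ord(\BQ_p,T_g(1)\otimes\Lambda)$ and $H^1_\circ(\BQ_p,T_g(1)\otimes\Lambda)$, because the ordinary and signed submodules depend only on the restriction of the Galois representation to $G_{\BQ_p}$; this is how the untwisted modules in the statement of Lemma \ref{CohTw} are to be read.

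For the ordinary case, I would first note that the line $V_g^+\subset V_g$ (resp.\ $V_{g'}^+\subset V_{g'}$) is intrinsic to the $G_{\BQ_p}$-module structure: it is the unique $G_{\BQ_p}$-stable $F_\lambda$-line that is unramified, the quotient $V^-$ being the one with $V^-(1)$ unramified and $\Frob_p$ acting by $\alpha_p$. Hence $Tw$ carries $V_g^+\otimes\chi_L$ onto $V_{g'}^+$, and therefore $T_g^+\otimes\chi_L$ onto $T_{g'}^+$ whenever $Tw$ is integral. It follows that $Tw_*$ identifies $H^1(\BQ_p,T_g^+(1)\otimes\Lambda)$ with $H^1(\BQ_p,T_{g'}^+(1)\otimes\Lambda)$ --- after $\otimes_{\BZ_p}\BQ_p$ in general, integrally when $Tw$ is integral --- and passing to the images in $H^1(\BQ_p,T_\bullet(1)\otimes\Lambda)$, cf.\ \eqref{ordCyc}, yields the asserted isomorphism on $H^1_\ord$.

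For the supersingular case, I would invoke \cite[Cor.~4.11]{L}, which identifies $H^1_\circ(\BQ_p,T_g(1)\otimes\Lambda)$ with $\ker(Col^\circ_\omega)$ for any $0\neq\omega\in S_{F,g}$, and likewise $H^1_\circ(\BQ_p,T_{g'}(1)\otimes\Lambda)=\ker(Col^\circ_{\omega'})$ for the image $\omega'$ of $\omega$ under \eqref{RigMod-Opt}. By Lemma \ref{ColTw}(b) one has $Col^\circ_\omega=Col^\circ_{\omega'}\circ Tw_*$, so the isomorphism $Tw_*$ carries $\ker(Col^\circ_\omega)$ isomorphically onto $\ker(Col^\circ_{\omega'})$. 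When $Tw$ is integral this is already the integral statement; in general one tensors with $\BQ_p$ and uses flatness of $\BZ_p\to\BQ_p$ to commute the formation of kernels with $\otimes_{\BZ_p}\BQ_p$.

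I do not expect a genuine obstacle here: the argument is purely formal once Lemma \ref{ColTw} and the canonicity of $V^+$ are in place. The only point deserving care is the bookkeeping between the integral and rational statements --- phrasing the kernel and image identifications after $\otimes_{\BZ_p}\BQ_p$ precisely when $Tw$ is only defined after inverting $p$, exactly as reflected in the statement of the lemma.
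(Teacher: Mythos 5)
Your argument is correct and is essentially the paper's own proof, which simply remarks that the lemma follows either from the definitions of the local subgroups and $Tw_*$ or from Lemma \ref{ColTw} plus the fact that these subgroups are kernels of the corresponding Coleman maps. You have merely fleshed out the first route (intrinsic characterization of $V^+$, using $\chi_L|_{G_{\BQ_p}}=1$) for the ordinary case and the second route (Lemma \ref{ColTw}(b) with \cite[Cor.~4.11]{L}) for the supersingular case, with the integral/rational bookkeeping handled as in the paper.
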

 
\begin{proof} This is an easy consequence of the definitions of the cohomology groups and the map $Tw_*$. Alternatively, this
follows from Lemma \ref{ColTw} together with these subgroups being the kernels of the corresponding Coleman maps.
\end{proof}

\section{Beilinson--Kato elements}\label{Beilinson-Kato}
In this section, we recall the Beilinson--Kato elements of Kato \cite{K}, introduce a variant defined with respect to an auxiliary imaginary quadratic field,
and describe their connections to $p$-adic $L$-functions as well as related results about Iwasawa cohomology groups.

Let $g\in S_2(\Gamma_0(N))$ be a newform as in \S\ref{Newforms} and let $L$ be an imaginary quadratic field as in \S\ref{IQF}.
As before, let $g'=g\otimes\chi_L$ be the twist of $g$ by $\chi_L$.
We work with the rigidifications associated with the choice of isomorphism \eqref{RigMod-Opt} and in particular with the consequential
isomorphism \eqref{RigGal-Opt}.

\subsection{Some Iwasawa cohomology groups} \label{IwCoh}
For $K$ a number field with ring of integers $\cO_K$ and $M= T=T_g$, $T_g\otimes\chi_L$, or $T_{g'}$, we define

$$
H^1(\cO_K[\frac{1}{p}], M(1)\otimes_{\BZ_p}\Lambda) = H^1(G_{K,\Sigma}, M(1)\otimes_{\BZ_p}\Lambda)
$$
for any finite set $\Sigma$ of places of $K$ that contains all the primes dividing $pND_L\infty$ (just those dividing $pN$ is enough for $M=T_g$). 
Here $G_K$ acts on $\Lambda$ via the inverse $\Psi^{-1}$ of the canonical character (see \S\ref{Lambda}). 
This is a finitely-generated $\Lambda_{\cO_\lambda}$-module.
It is a torsion-free $\Lambda_{\cO_\lambda}$-module if (irr$_L$) holds.

\begin{remark}\label{Iw-rmk} 
As the notation indicates, $H^1(\cO_K[\frac{1}{p}], M(1)\otimes_{\BZ_p}\Lambda)$ is independent of the choice of $\Sigma$,
and the classes in $H^1(\cO_K[\frac{1}{p}], M(1)\otimes_{\BZ_p}\Lambda)$ are unramified at all finite places not dividing $p$
(cf.~the proof of \cite[Lem.~8.5]{K}). The key point is that $H^1(G_{K,\Sigma}, M(1)\otimes_{\BZ_p}\Lambda) = 
\varprojlim_n H^1(\Gal(K_\Sigma/K(\zeta_{p^n})),M(1))^{(0)}$, where the superscript $(0)$ denotes the 
$\Lambda_\CG$-summand on which $\Delta$ acts trivially.
\end{remark}

The natural action of $\tau$ (which restricts to the non-trivial automorphism of $L$)
induces a decomposition 
\begin{equation}\label{spl}
H^1(\cO_L[\frac{1}{p}],T(1)\otimes_{\BZ_p}\Lambda) = H^1(\cO_L[\frac{1}{p}],T(1)\otimes_{\BZ_p}\Lambda)^+\oplus
H^1(\cO_L[\frac{1}{p}],T(1)\otimes_{\BZ_p}\Lambda)^-,
\end{equation}
where $H^1(\cO_L[\frac{1}{p}],T(1)\otimes_{\BZ_p}\Lambda)^\pm$ is the $\Lambda_{\cO_\lambda}$-submodule on which
$\tau$ acts as multiplication by $\pm 1$. Then restriction to $G_L$ yields identifications 
\begin{equation}\label{spl+}
H^1(\BZ[\frac{1}{p}],T(1)\otimes_{\BZ_p}\Lambda) \isoarrow H^1(\cO_L[\frac{1}{p}],T(1)\otimes_{\BZ_p}\Lambda)^+
\end{equation}
and
\begin{equation}\label{spl-}
H^1(\BZ[\frac{1}{p}],(T(1)\otimes\chi_L)\otimes_{\BZ_p}\Lambda) \isoarrow H^1(\cO_L[\frac{1}{p}],T(1)\otimes_{\BZ_p}\Lambda)^-.
\end{equation}

Since the rigidifications fix an isomorphism $Tw:V\otimes\chi_L = V_g\otimes\chi_L \isoarrow V_{g'}$,
we view an element of $H^1(\BZ[\frac{1}{p}],T_{g'}(1)\otimes_{\BZ_p}\Lambda)\otimes_{\BZ_p}\BQ_p$ as an element of
$H^1(\BZ[\frac{1}{p}],(T(1)\otimes\chi_L)\otimes_{\BZ_p}\Lambda)\otimes_{\BZ_p}\BQ_p$ and hence as an element of 
$H^1(\cO_L[\frac{1}{p}],T(1)\otimes_{\BZ_p}\Lambda)^-\otimes_{\BZ_p}\BQ_p$. 
If (irr$_\BQ$) holds, then the same holds without tensoring with $\BQ_p$.

\subsubsection{Local conditions}
The fixed choices of decomposition groups at $w=v,\bar v$ determine isomorphisms $\iota_w:H^1(L_w,T(1)\otimes_{\BZ_p}\Lambda)\isoarrow H^1(\BQ_p,T(1)\otimes_{\BZ_p}\Lambda)$. For $w=v$ this is just the identity map, corresponding to $G_{L_v} = G_{\BQ_p}\subset G_\BQ$. For $w=\bar{v}$, $G_{L_{\bar v}} = \tau G_{\BQ_p} \tau^{-1} \subset G_\BQ$, and 
the isomorphism $\iota_{\bar v}$ is that determined by the map on cycles $c \mapsto (\sigma \mapsto \tau^{-1}c(\tau^{-1}\sigma\tau))$.  

Suppose $g$ is either ordinary or in the supersingular case. Let
$$
H^1_\Box(L_w,T(1)\otimes_{\BZ_p}\Lambda) = \iota_w^{-1}\big{(}H^1_\Box(\BQ_p,T(1)\otimes_{\BZ_p}\Lambda)\big{)}, \ \ \Box = 
\begin{cases} \ord  & \text{$g$ is ordinary} \\ \circ & \text{$g$ is supersingular.}
\end{cases}
$$
We then define global Iwasawa cohomology groups with local restrictions:
\begin{equation}\label{Cyc-global}
H^1_\Box(\BZ[\frac{1}{p}], T(1)\otimes_{\BZ_p}\Lambda) = \bigg{\{} \kappa\in H^1(\BZ[\frac{1}{p}], T(1)\otimes_{\BZ_p}\Lambda) \ : \ 
\loc_p(\kappa) \in H^1_\Box(\BQ_p, T(1)\otimes_{\BZ_p}\Lambda)\bigg{\}},
\end{equation}
and 
\begin{equation}\label{CycL-global}
H^{1}_{rel,\Box}(\cO_{L}[\frac{1}{p}], T(1) \otimes_{\BZ_{p}} \Lambda)
= \bigg{\{} \kappa \in H^{1}(\cO_{L}[\frac{1}{p}], T(1) \otimes_{\BZ_{p}} \Lambda) \ : \ 
\loc_{\bar v}(\kappa) \in H^1_{\Box}(L_{\bar v},T(1)\otimes_{\BZ_p}\Lambda)\bigg{\}},
\end{equation}
where $\Box$ has the same meaning as above.

\subsubsection{Coleman maps at $v$ and $\bar{v}$}\label{Col-L}  We transport the Coleman map to $H^1(L_w,T(1)\otimes_{\BZ_p}\Lambda)$ via $\iota_w$: For $0\neq\omega\in S_F$ we let
$$
Col_{\eta_{\omega},w} = Col_{\eta_{\omega}}\circ \iota_w.
$$
Similarly, if $g$ is supersingular we let 
$$
Col^\circ_{{\omega},w} = Col^\circ_{{\omega}}\circ \iota_w.
$$
We make analogous definitions of Coleman maps on $H^1(L_w,T_{g'}\otimes_{\BZ_p}\Lambda)$.  These are related as follows,

\begin{lem} \label{ColTw-L}
Let $0\neq \omega\in S_F$.
\begin{itemize}
\item[(a)] We have $Col_{\eta_{\omega}, v} = Col_{\eta'_{\omega},v}\circ Tw_*$ and $Col_{\eta_{\omega},\bar{v}} = - Col_{\eta'_{\omega},\bar{v}}\circ Tw_*$.
\item[(b)] In the supersingular case we have $Col^\circ_{{\omega}, v} = Col^\circ_{{\omega'},v}\circ Tw_*$ and $Col^\circ_{{\omega},\bar{v}} 
= - Col^\circ_{{\omega'},\bar{v}}\circ Tw_*$.
\end{itemize}
\end{lem}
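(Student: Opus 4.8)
The plan is to reduce everything to Lemma~\ref{ColTw}, the already-established comparison of Coleman maps over $\BQ$, by unwinding the definitions of the local Coleman maps at $v$ and $\bar v$ in terms of the identifications $\iota_v,\iota_{\bar v}$. Recall that by definition $Col_{\eta_\omega,w} = Col_{\eta_\omega}\circ\iota_w$ and $Col^\circ_{\omega,w} = Col^\circ_\omega\circ\iota_w$, and similarly for $g'$; so part (a) (resp.~part (b)) amounts to comparing $Col_{\eta_\omega}\circ\iota_w$ with $Col_{\eta'_\omega}\circ Tw_*\circ\iota_w'$, where $\iota_w$ is the identification for $T_g$ and $\iota_w'$ the one for $T_{g'}$. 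For this it suffices to check the compatibility of the $\iota_w$'s with the twisting isomorphism, i.e.\ that $\iota_v' \circ Tw_* = Tw_*\circ\iota_v$ at $v$ and $\iota_{\bar v}'\circ Tw_* = -Tw_*\circ\iota_{\bar v}$ at $\bar v$ (the sign being the content of the lemma), after which one invokes Lemma~\ref{ColTw}(a) (resp.~(b)) to conclude $Col_{\eta_\omega}\circ\iota_v = Col_{\eta'_\omega}\circ Tw_*\circ\iota_v = Col_{\eta'_\omega}\circ\iota_v'\circ Tw_* = Col_{\eta'_\omega,v}\circ Tw_*$, and likewise at $\bar v$ with the extra sign.

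The place $v$ is the easy case: by the discussion in \S\ref{IQF}, the chosen embedding $\iota_p$ identifies $G_{L_v}$ with $G_{\BQ_p}\subset G_\BQ$ so that $\iota_v$ is literally the identity map on cochains; hence $\iota_v$ and $\iota_v'$ are both the identity, and the twisting isomorphism $Tw$—which is a map of $G_\BQ$-modules, in particular $G_{\BQ_p}$-equivariant—induces $Tw_*$ compatibly, giving $\iota_v'\circ Tw_* = Tw_*\circ\iota_v$ with no sign. The place $\bar v$ is where the sign enters. Here $G_{L_{\bar v}}$ is identified with $\tau G_{\BQ_p}\tau^{-1}$, and $\iota_{\bar v}$ is the isomorphism on cocycles $c\mapsto(\sigma\mapsto\tau^{-1}c(\tau^{-1}\sigma\tau))$. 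The point is that conjugation by $\tau$ interacts with the twisting isomorphism through the quadratic character $\chi_L$: since $\chi_L(\tau) = -1$ (as $\tau$ is the nontrivial element of $\Gal(L/\BQ)$ and $\chi_L$ is the associated character), transporting a class through $Tw_*$ and then through $\iota_{\bar v}'$ versus transporting through $\iota_{\bar v}$ and then $Tw_*$ differ exactly by the scalar $\chi_L(\tau)=-1$. Concretely, for a cocycle $c$ valued in $T_g(1)\otimes\Lambda$, one computes $\iota_{\bar v}'(Tw_* c)(\sigma) = \tau^{-1}\cdot (Tw_* c)(\tau^{-1}\sigma\tau)$, and using $G_{\BQ_p}$-equivariance of $Tw$ together with the defining relation that $Tw$ intertwines the $G_\BQ$-action on $V_g\otimes\chi_L$ with that on $V_{g'}$, one rewrites this as $-\, Tw_*(\iota_{\bar v} c)(\sigma)$, the sign being precisely $\chi_L(\tau)$.

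I expect the main obstacle to be purely bookkeeping: making the cocycle computation at $\bar v$ precise, in particular tracking where the factor $\chi_L(\tau) = -1$ appears and confirming it is not cancelled by a competing sign coming from the Tate twist, the $\iota_\epsilon$-normalisation in the definition \eqref{Col-Lambda} of the $\Lambda$-adic Coleman map, or the passage between $T_g\otimes\chi_L$ and $T_{g'}$ (which may require tensoring with $\BQ_p$ unless (irr$_\BQ$) holds, exactly as in Lemma~\ref{ColTw}). Once the equivariance of $Tw$ under $G_{\BQ_p}$ and its behaviour under $\tau$-conjugation are correctly recorded, parts (a) and (b) follow formally—(b) from (a) and the fact that the signed Coleman map $Col^\circ_\omega$ is a $Frac(\mathfrak{K}_{\infty,F_\lambda(\alpha)})$-linear combination of the $Col_{\eta_\omega}$ for the two roots $\pm\sqrt{-p}$, so the single sign $-1$ at $\bar v$ propagates through the linear combination unchanged.
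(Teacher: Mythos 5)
Your argument is correct and is essentially the paper's own proof: both reduce to Lemma~\ref{ColTw} via $Col_{\eta_\omega,w}=Col_{\eta_\omega}\circ\iota_w$, with the identity $\iota_{v,g'}^{-1}\circ Tw_*\circ\iota_{v,g}=Tw_*$ at $v$ and the sign at $\bar v$ coming from the $\tau$-twist in the definition of $\iota_{\bar v}$ together with $\chi_L(\tau)=-1$, i.e.\ the action of $\tau$ on $V_{g'}\simeq V_g\otimes\chi_L$ being $-1$ times that on $V_g$; part (b) then follows since the signed Coleman maps are linear combinations of the $Col_{\eta_\omega}$ (equivalently, from part (a) and Lemma~\ref{ColTw}(b)).
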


\begin{proof} By Lemma \ref{ColTw}(a) we have 
$$
Col_{\eta_{\omega}, w} = Col_{\eta_{\omega}}\circ \iota_{w,g} = Col_{\eta'_{\omega}}\circ Tw _* \circ \iota_{w,g} = Col_{\eta'_{\omega},w}\circ \iota_{w,g'}^{-1}\circ Tw_*\circ \iota_{w,g}.
$$
Part (a) then follows from $\iota_{v,g'}^{-1}\circ Tw_*\circ \iota_{v,g} = Tw_*$ and $\iota_{{\bar v},g'}^{-1}\circ Tw_*\circ \iota_{{\bar v},g} = - Tw_*$. The change in sign arises from the twisting by
$\tau$ in the definition of the maps $\iota_{{\bar v},g}$ and $\iota_{{\bar v},g'}$: the action of $\tau$ on $V_{g'} \simeq V_g\otimes\chi_L$ is identified with $-1$ times the action of $\tau$ on $V_g$.

Part (b) follows from part (a).
\end{proof}

\subsection{Beilinson--Kato elements}\label{Beilinson--Kato-elements}
We recall Kato's zeta elements (the Beilinson--Kato elements) and define a variant over the imaginary quadratic field $L$.

\subsubsection{Over the rationals}\label{BK-rationals}
In \cite{K} Kato introduced a collection of elements 
$$
\bz_\gamma(g) \in H^{1}(\BZ[\frac{1}{p}], T(1)\otimes_{\BZ_{p}} \Lambda) \otimes_{\BZ_{p}} \BQ_{p}
$$
(the Beilinson--Kato elements) associated with the newform $g$ and elements $\gamma\in V$. Among their properties are the following: 
\begin{itemize}
\item[(i)] The map
$$
V \ra H^{1}(\BZ[\frac{1}{p}], T(1)\otimes_{\BZ_{p}} \Lambda) \otimes_{\BZ_{p}} \BQ_{p}, \ \ \gamma \mapsto \bz_{\gamma}(g),
$$
is an $F_\lambda$-linear homomorphism.
\item[(ii)] For $\gamma \in V$ with $\gamma^+,\gamma^-\neq 0$, 
$$
\bz_{\gamma}(g) \neq 0.
$$
\item[(iii)] If (irr$_{\BQ}$) holds, then
$$
\bz_{\gamma}(g) \in H^{1}(\BZ[\frac{1}{p}], T(1)\otimes_{\BZ_{p}} \Lambda) 
\ \text{for} \ \gamma \in T.
$$
\end{itemize}
A characterising property of the morphism $\gamma \mapsto {\bz}_{\gamma}(g)$ in terms of the Bloch--Kato dual exponential maps
is given in \cite[Thm. 12.5 (1)]{K}.  Other properties are recalled below (cf.~\S\S\ref{BK-underCol} and \ref{padicL-I}).

More precisely, Kato defined elements $\bz_\gamma^{(p)}\in H^1(\BZ[\frac{1}{p}], T\otimes_{\BZ_p} \Lambda_{\CG})$. 
The element we denote $\bz_\gamma(g)$ is the image of Kato's $\bz_\gamma^{(p)}$ under the composition of 
the maps induced by the isomorphism $T\otimes_{\BZ_p} \Lambda_{\CG}\isoarrow T(1)\otimes_{\BZ_p} \Lambda_{\CG}$
and the projection $T(1)\otimes_{\BZ_p} \Lambda_{\CG}\twoheadrightarrow T(1)\otimes_{\BZ_p} \Lambda_{\CG}^{(0)} = 
T(1)\otimes_{\BZ_p} \Lambda$ (for the first isomorphism see the definition of the Coleman maps in \S\ref{Coleman}).

\begin{remark}\label{EC-Katoselement}
Both \cite[Thm. 5.2 iv)]{Ko} and \cite{Wu} address further the issue of 
the integrality of the Beilinson--Kato elements ${\bf{z}}_{\gamma}(f)$ (property (iii)).
In particular, if $g$ corresponds to the isogeny class of an elliptic curve $E_\bullet$ as in Remark \ref{EC-optperiod-rmk}, then 
$\bz_\gamma(g) \in H^{1}(\BZ[\frac{1}{p}], T(1)\otimes_{\BZ_{p}} \Lambda)$ for $\gamma\in T$ even if (irr$_\BQ$) does not hold (see \cite[Thm.~13]{Wu}).
\end{remark}

\subsubsection{Under Coleman maps}\label{BK-underCol}
Let $\alpha$ be as in \eqref{non-crit}. Let $0\neq \omega \in S_F$.
For $\gamma \in V_{F}$ let
\begin{equation}\label{ColBK}
L_{\alpha,\omega,\gamma}(g)=Col_{\eta_{\omega}}(\loc_{p}({\bf{z}}_{\gamma}(g))) \in 
\mathfrak{K}_{1,F_\lambda(\alpha)}\otimes_{\Lambda,\iota_\epsilon}\Lambda.
\end{equation} 
 In the supersingular case, let
\begin{equation}\label{ColBK-pm}
L^\circ_{\omega,\gamma}(g)=Col^\circ_{{\omega}}(\loc_{p}({\bf{z}}_{\gamma}(g))).
\end{equation}
As we recall in \S\ref{padicL} below, Kato's explicit reciprocity law yields the following.

\begin{lem}\label{ColBK-lem} \hfill 
\begin{itemize}
\item[(i)] If $\gamma^\pm\neq 0$, then $L_{\alpha,\omega,\gamma}(g) \neq 0$, and furthermore, if $g$ is supersingular, then
we also have $L^\circ_{\omega,\gamma}(g) \neq 0$.
\item[(ii)] If $g$ is ordinary, then 
$L_{\alpha,\omega,\gamma}(g) \in F_\lambda\otimes_{\BZ_p}\Lambda$, and if $g$ is supersingular,
then $L^\circ_{\omega,\gamma}(g) \in F_\lambda\otimes_{\BZ_p}\Lambda.$
\item[(iii)] Suppose (irr$_{\BQ}$) holds and $\omega$ is good. Suppose also that $\gamma\in T$.
If $g$ is ordinary, then 
$L_{\alpha,\omega,\gamma}(g) \in \Lambda_{\cO_{\lambda}}$, and if $g$ is supersingular,
then $L^\circ_{\omega,\gamma}(g) \in \Lambda_{\cO_{\lambda}}$.

\end{itemize}
\end{lem}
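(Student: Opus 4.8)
The plan is to deduce all three assertions from Kato's explicit reciprocity law, which identifies $L_{\alpha,\omega,\gamma}(g)$ (and its signed variants) with the $p$-adic $L$-function attached to $g$ and $\gamma$, together with the interpolation formula expressing its specialisations at Dirichlet characters $\chi\in\mathfrak{X}_{\BQ,p}^{cyc}$ in terms of the $L$-values $L(1,g,\chi)$. Concretely, by \cite[Thm.~12.5]{K} the image $Col_{\eta_\omega}(\loc_p(\bz_\gamma(g)))$, upon specialisation at a finite-order character $\psi_\zeta$ of $\Gamma$ (equivalently a Dirichlet character of $p$-power order and conductor), recovers a nonzero algebraic multiple of $L(1,g,\chi)/\Omega^{\sgn(\chi)}_{\omega,\gamma}$, the precise multiple involving a Gauss sum and Euler-like factors at $p$ depending on $\alpha$. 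This is the single input that makes all three parts go through.

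For part (i), nonvanishing: if $L_{\alpha,\omega,\gamma}(g)$ were zero, then every specialisation at every $\chi\in\mathfrak{X}_{\BQ,p}^{cyc}$ would vanish, forcing $L(1,g,\chi)=0$ for all such $\chi$ (the Gauss sum and $p$-factors being nonzero, and $\Omega^{\sgn(\chi)}_{\omega,\gamma}\neq 0$ since $\gamma^\pm\neq 0$ makes $\gamma^+,\gamma^-$ a basis and $\omega\neq 0$). This contradicts Rohrlich's nonvanishing Theorem \ref{NVIw} applied with $\ell=p$. For the signed statement when $g$ is supersingular, one runs the same argument: $L^\circ_{\omega,\gamma}(g)=Col^\circ_\omega(\loc_p(\bz_\gamma(g)))$ is a $Frac(\mathfrak{K}_{\infty,F_\lambda(\alpha)})$-linear combination of the two $Col_{\eta_\omega}$ (for $\alpha=\pm\sqrt{-p}$), and by \cite[Cor.~4.11]{L} its kernel is exactly $H^1_\circ(\BQ_p,T(1)\otimes\Lambda)$; since $\loc_p(\bz_\gamma(g))\notin H^1_\circ$ would follow from the interpolation at those $\chi$ forced to have the appropriate parity (combined again with Theorem \ref{NVIw}, noting infinitely many $\chi$ of each parity survive), one gets $L^\circ_{\omega,\gamma}(g)\neq 0$. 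Alternatively, this is exactly the content of the explicit reciprocity law as it will be recalled in \S\ref{padicL}, so I would simply cite that once it is available.

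For part (ii), the membership in $F_\lambda\otimes_{\BZ_p}\Lambda\subset\mathfrak{K}_{1,F_\lambda(\alpha)}\otimes_{\Lambda,\iota_\epsilon}\Lambda$ (i.e.\ boundedness of the distribution): in the ordinary case, $\loc_p(\bz_\gamma(g))$ lands in $H^1(\BQ_p,T(1)\otimes\Lambda)$ by property (iii) of \S\ref{BK-rationals} when (irr$_\BQ$) holds, and in general after $\otimes\BQ_p$; then \eqref{ordCol-Inj} shows $Col_{\eta_\omega}$ maps $H^1_{/\ord}$ into $F_\lambda\otimes_{\BZ_p}\Lambda$, and one checks the image of $\loc_p(\bz_\gamma(g))$ in $H^1_{/\ord}$ carries the same Coleman image (the part of $Col_{\eta_\omega}$ seen by $H^1_\ord$ vanishes, as $\eta_\omega$ is the eigenvector for the unit root). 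In the supersingular case, $Col^\circ_\omega$ by construction takes values in $F_\lambda\otimes_{\BZ_p}\Lambda$ (see the paragraph containing \eqref{pmCol-Inj}), so the claim is immediate. For part (iii), assuming (irr$_\BQ$), $\omega$ good, and $\gamma\in T$: then $\bz_\gamma(g)\in H^1(\BZ[\frac1p],T(1)\otimes\Lambda)$ (property (iii) of \S\ref{BK-rationals}), its localisation lies in the integral cohomology, and by \eqref{ordCol-Inj} (ordinary case) resp.\ \eqref{pmCol-Inj} with $\omega$ good (supersingular case) the Coleman map sends the integral module into $\Lambda_{\cO_\lambda}$; combining gives $L_{\alpha,\omega,\gamma}(g)\in\Lambda_{\cO_\lambda}$ resp.\ $L^\circ_{\omega,\gamma}(g)\in\Lambda_{\cO_\lambda}$. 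The main obstacle is purely bookkeeping: one must be careful that the twist $T\otimes_{\BZ_p}\Lambda\isoarrow T(1)\otimes_{\BZ_p}\Lambda$ and the projection to the $\Lambda_\CG^{(0)}$-component, used to pass from Kato's $\bz_\gamma^{(p)}$ to $\bz_\gamma(g)$, are compatible with the analogous normalisations built into $Col_{\eta_\omega}$ and $Col^\circ_\omega$ in \eqref{Col-Lambda}, so that the integral/boundedness statements for the $\Lambda_\CG$-valued objects in \cite[Prop.~17.11]{K} and \cite[Thm.~2.14]{BL} transfer correctly to the $\Lambda$-valued maps here; everything else is a direct citation.
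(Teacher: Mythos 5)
Your proposal is correct and follows essentially the same route as the paper: part (i) via Kato's explicit reciprocity law (the identification with the $p$-adic $L$-function and its interpolation of the values $L(1,g,\chi)$, including the parity constraint in the signed case) combined with Rohrlich's Theorem \ref{NVIw}; part (ii) from the boundedness of the Coleman maps \eqref{ordCol-Inj}, \eqref{pmCol-Inj}; and part (iii) from the integrality of $\bz_\gamma(g)$ together with the integrality of the Coleman maps for good $\omega$ under (irr$_{\BQ}$). Your closing caveat about compatibility of the twist and the $\Lambda_{\CG}^{(0)}$-projection is exactly the normalisation already built into \S\ref{Coleman} and \S\ref{BK-rationals}, so nothing further is needed.
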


\begin{proof}
Part (i) is a consequence of ${\bf{z}}_{\gamma}(g)$ being non-trivial (property (i) of the Beilinson--Kato elements), 
the relation of ${\bf{z}}_{\gamma}(g)$ with twisted central $L$-values $L(1,g,\chi)$ of $g$ \cite[Thm. 12.5 (2)]{K}, 
and the interpolation property of the Coleman map \cite[Thm. 16.4 (i)]{K}. (This is made more explicit in  \S\ref{padicL} below.)
Part (ii) is a consequence of the Coleman map being bounded \eqref{ordCol-Inj}, \eqref{pmCol-Inj}.
Part (iii) is a consequence of the integrality of $\bz_{\gamma}(g)$ (property (iii)) and the integrality of the Coleman maps (cf.~\S\ref{Coleman}).
\end{proof}

\subsubsection{Over $L$}\label{BK-overL}  Let $g' = g\otimes\chi_L$. We choose $\alpha_{g'} = \alpha_g = \alpha$ to satisfy \eqref{non-crit}. 
For $0\neq \omega\in S_{F,g}$, let $\omega' \in S_{F,g'}$ be the image of $\omega$ under the fixed isomorphism \eqref{RigMod-Opt},
as in \S\ref{QuadraticTwist-III}.  Let $\gamma\in V=V_g$ and $\gamma'\in V'=V_{g'}$.
To the quadruple $(\alpha,\omega,\gamma,\gamma')$ we associate elements in the Iwasawa cohomology groups as follows.

\vskip 2mm
\noindent {\it{The ordinary case.}} Suppose $g$ is ordinary. We let 
$$
\bz^\ord_{\alpha,\omega,\gamma,\gamma'}(g_{/L}) = \frac{1}{2}\bigg{(}L_{\alpha,\omega'_F, \gamma'}(g')\bz_{\gamma}(g) + L_{\alpha,\omega, \gamma}(g)\bz_{\gamma'}(g') \bigg{)}\in
H^1(\cO_L[\frac{1}{p}], T(1)\otimes_{\BZ_p}\Lambda)\otimes_{\BZ_p}\BQ_p.
$$
Here we are using that $L_{\alpha,\omega'_F, \gamma'}(g'),L_{\alpha,\omega, \gamma}(g) \in \Lambda_{\cO_\lambda}\otimes_{\BZ_p}\BQ_p$ by
Lemma \ref{ColBK-lem}(ii) and
that we can view the element $\bz_{\gamma'}(g')$ as belonging to $H^1(\cO_L[\frac{1}{p}], T(1)\otimes_{\BZ_p}\Lambda)^-\otimes_{\BZ_p}\BQ_p$
as in \S\ref{IwCoh}.

\vskip 2mm
\noindent {\it{The supersingular case.}} Suppose $g$ is supersingular. We let
$$
\bz^\circ_{\alpha,\omega,\gamma,\gamma'}(g_{/L}) = \bz^\circ_{\omega,\gamma,\gamma'}(g_{/L}) = \frac{1}{2}\bigg{(} L^\circ_{\omega'_F, \gamma'}(g')\bz_{\gamma}(g) + L^\circ_{\omega, \gamma}(g)\bz_{\gamma'}(g')\bigg{)} \in
H^1(\cO_L[\frac{1}{p}], T(1)\otimes_{\BZ_p}\Lambda)\otimes_{\BZ_p}\BQ_p.
$$
Here we are using that $L^\circ_{\omega'_F, \gamma'}(g'),L^\circ_{\omega, \gamma}(g) \in \Lambda_{\cO_\lambda}\otimes_{\BZ_p}\BQ_p$ by
Lemma \ref{ColBK-lem}(ii) and
that we can view the elements $\bz_{\gamma'}(g')$ as belonging to $H^1(\cO_L[\frac{1}{p}], T(1)\otimes_{\BZ_p}\Lambda)^-\otimes_{\BZ_p}\BQ_p$
as in \S\ref{IwCoh}. Though the construction does not depend on the choice of $\alpha$, it is sometimes  included in the notation for convenience. 

\vskip 2mm
It follows from property (iii) of the Beilinson--Kato classes along with Lemma \ref{ColBK-lem} that 
\begin{equation}\label{BKL-Int}
\bz^{\Box}_{\alpha,\omega,\gamma,\gamma'}(g_{/L}) \in H^1(\cO_L[\frac{1}{p}],T(1)\otimes_{\BZ_p}\Lambda) \ \ \text{if (irr$_\BQ$) holds, $\omega$ is good, $\gamma\in T_g$, $\gamma'\in T_{g'}$,}
\ \ \Box = \ord,\circ.
\end{equation}

\vskip 2mm
These elements belong to the global Iwasawa cohomology groups defined in \S\ref{CycL-global}.

\begin{lem} \label{BK-lem}
Let $0\neq\omega\in S_F$. Let $\gamma\in V_g$ and $\gamma'\in V_{g'}$.  Then
$$
\bz^\Box_{\alpha,\omega,\gamma,\gamma'}(g_{/L}) \in H^1_{rel,\Box}(\cO_L[\frac{1}{p}],T(1)\otimes_{\BZ_p}\Lambda)\otimes_{\BZ_p}\BQ_p, \ \ \
\Box = \begin{cases} \ord  & \text{$g$ is ordinary} \\ \circ & \text{$g$ is supersingular.} \end{cases}
$$
Furthermore, if (irr$_\BQ$) holds, $\omega$ is good, and $\gamma\in T_g$, $\gamma'\in T_{g'}$ then this inclusion holds without tensoring with $\BQ_p$.

\end{lem}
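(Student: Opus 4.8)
The plan is to show that the relevant element, which by construction lies in $H^1(\cO_L[\frac{1}{p}], T(1)\otimes_{\BZ_p}\Lambda)\otimes_{\BZ_p}\BQ_p$, in fact satisfies the local condition at $\bar v$ defining $H^1_{\mathrm{rel},\Box}$, namely that $\loc_{\bar v}$ of it lies in $H^1_\Box(L_{\bar v},T(1)\otimes_{\BZ_p}\Lambda)\otimes_{\BZ_p}\BQ_p$. By the decomposition \eqref{spl}--\eqref{spl-}, the element $\bz^\Box_{\alpha,\omega,\gamma,\gamma'}(g_{/L})$ is a $\Lambda_{\cO_\lambda}\otimes_{\BZ_p}\BQ_p$-linear combination: the part $\tfrac12 L_{\alpha,\omega'_F,\gamma'}(g')\,\bz_\gamma(g)$ (resp.\ with $L^\circ$ in the supersingular case) lies in the $+$-part via \eqref{spl+}, and $\tfrac12 L_{\alpha,\omega,\gamma}(g)\,\bz_{\gamma'}(g')$ lies in the $-$-part via \eqref{spl-}. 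Since $H^1_\Box(L_{\bar v},T(1)\otimes_{\BZ_p}\Lambda)$ is transported from $H^1_\Box(\BQ_p,T(1)\otimes_{\BZ_p}\Lambda)$ via the isomorphism $\iota_{\bar v}$, and since $\Box$ is a $\Lambda$-submodule, it suffices to check the condition on each of the two summands separately, and the scalar factors can be ignored.

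First I would handle the $+$-part. Under the identification \eqref{spl+}, which is restriction to $G_L$ followed by transport, the class $\bz_\gamma(g)\in H^1(\BZ[\frac1p],T(1)\otimes_{\BZ_p}\Lambda)\otimes_{\BZ_p}\BQ_p$ has localisation at $\bar v$ computed via the decomposition group $G_{L_{\bar v}}=\tau G_{\BQ_p}\tau^{-1}$, i.e.\ via $\iota_{\bar v}$. Thus $\loc_{\bar v}(\bz_\gamma(g))$ lies in $H^1_\Box(L_{\bar v},\cdot)$ precisely when $\loc_p$ of the appropriate $\tau$-twist of $\bz_\gamma(g)$ lies in $H^1_\Box(\BQ_p,\cdot)$. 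Now Kato's explicit reciprocity law (recalled via Lemma \ref{ColBK-lem}, or directly \cite[Thm.~16.6]{K}) together with the definition of the Coleman maps shows: in the ordinary case, the image of $\loc_p(\bz_\gamma(g))$ in $H^1_{/\ord}$ is computed by $Col_{\eta_\omega}$, and the point is that the relevant $\tau$-conjugate localisation — what actually appears at $\bar v$ — is governed by $g'=g\otimes\chi_L$ rather than $g$, so its image under $Col^\circ_{\omega',\bar v}$ or $Col_{\eta_{\omega'},\bar v}$ is exactly (up to sign, cf.\ Lemma \ref{ColTw-L}) the scalar $L_{\alpha,\omega,\gamma}(g)$ appearing in the $-$-part of the other summand. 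The decisive cancellation is that the contribution of $\tfrac12 L_{\alpha,\omega'_F,\gamma'}(g')\,\bz_\gamma(g)$ to the image in $H^1_{/\Box}(L_{\bar v},\cdot)$ equals $-\tfrac12 L_{\alpha,\omega'_F,\gamma'}(g')\,L_{\alpha,\omega,\gamma}(g)$ times a fixed basis element (using Lemma \ref{ColBK-lem}(i) or the reciprocity law), while the contribution of $\tfrac12 L_{\alpha,\omega,\gamma}(g)\,\bz_{\gamma'}(g')$ equals $+\tfrac12 L_{\alpha,\omega,\gamma}(g)\,L_{\alpha,\omega'_F,\gamma'}(g')$ times the same basis element — where the sign discrepancy is precisely the one recorded in Lemma \ref{ColTw-L}(a),(b): $Col_{\eta_\omega,\bar v}=-Col_{\eta'_\omega,\bar v}\circ Tw_*$. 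Hence the two contributions cancel, so $\loc_{\bar v}(\bz^\Box_{\alpha,\omega,\gamma,\gamma'}(g_{/L}))$ maps to $0$ in $H^1_{/\Box}(L_{\bar v},T(1)\otimes_{\BZ_p}\Lambda)\otimes_{\BZ_p}\BQ_p$, i.e.\ lies in $H^1_\Box(L_{\bar v},\cdot)\otimes_{\BZ_p}\BQ_p$, as required.

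For the integral statement, under (irr$_\BQ$) with $\omega$ good and $\gamma\in T_g$, $\gamma'\in T_{g'}$, the element already lies in $H^1(\cO_L[\frac1p],T(1)\otimes_{\BZ_p}\Lambda)$ without tensoring by \eqref{BKL-Int}, and since this module is torsion-free under (irr$_L$) (which holds — (irr$_\BQ$) together with $p>2$ suffices, or is part of the hypotheses), the vanishing in $H^1_{/\Box}(L_{\bar v},\cdot)\otimes_{\BZ_p}\BQ_p$ forces vanishing in the torsion-free quotient $H^1_{/\Box}(L_{\bar v},\cdot)$ itself, giving the inclusion into $H^1_{\mathrm{rel},\Box}$ integrally. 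I expect the main obstacle to be bookkeeping the signs and the role of $\tau$ correctly: one must be careful that the localisation map $\loc_{\bar v}$ on the $G_L$-restriction of a class over $\BQ$ is the $\tau$-twisted map $\iota_{\bar v}$, and that the Coleman map at $\bar v$ for $g$ is the negative of that for $g'$ composed with $Tw_*$ (Lemma \ref{ColTw-L}); getting these two sign sources to combine into a genuine cancellation rather than a doubling is the crux, and it is exactly why the definition of $\bz^\Box_{\alpha,\omega,\gamma,\gamma'}(g_{/L})$ in \S\ref{BK-overL} is a \emph{sum} (not a difference) of the two cross-terms.
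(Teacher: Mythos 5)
Your proposal follows essentially the same route as the paper's proof: apply the Coleman map at $\bar v$ to the sum, use the sign relation $Col_{\eta_{\omega},\bar v}=-Col_{\eta'_{\omega},\bar v}\circ Tw_*$ of Lemma \ref{ColTw-L} to see that the two cross-terms contribute $\pm\tfrac12 L_{\alpha,\omega,\gamma}(g)L_{\alpha,\omega'_F,\gamma'}(g')$ and cancel, so $\loc_{\bar v}$ of the class is killed by the Coleman map and hence lies in $H^1_\Box$, with the integral refinement coming from \eqref{BKL-Int} together with the (torsion-free) injectivity of the Coleman map on $H^1_{/\Box}$. The only differences are harmless bookkeeping: which cross-term carries the minus sign, and your detour through torsion-freeness for the integral statement where the paper simply notes the Coleman image is exactly zero.
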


\begin{proof} Let $\bz = \bz^\Box_{\alpha,\omega,\gamma,\gamma'}(g_{/L})$. 
Suppose first that $g$ is ordinary. We have 
\begin{equation*}\begin{split}
Col_{\eta_{\omega},\bar{v}}(\bz) & = \frac{1}{2}\bigg{(}L_{\alpha,\omega'_F, \gamma'}(g') Col_{\eta_{\omega},\bar{v}}(\loc_{\bar v}(\bz_{\gamma}(g))) + L_{\alpha,\omega, \gamma}(g) Col_{\eta_{\omega},\bar{v}}(\loc_{\bar v}(\bz_{\gamma'}(g'))) \bigg{)} \\
& = \frac{1}{2}\bigg{(}L_{\alpha,\omega'_F, \gamma'}(g') Col_{\eta_{\omega},\bar{v}}(\loc_{\bar v}(\bz_{\gamma}(g))) - L_{\alpha,\omega, \gamma}(g) Col_{\eta_{\omega}',\bar{v}}(
\loc_{\bar v}(\bz_{\gamma'}(g')))\bigg{)} \\
& = \frac{1}{2}\bigg{(}L_{\alpha,\omega'_F, \gamma'}(g') Col_{\eta_{\omega}}(\loc_p(\bz_{\gamma}(g))) - L_{\alpha,\omega, \gamma}(g) Col_{\eta_{\omega}'}(\loc_p(\bz_{\gamma'}(g'))) \bigg{)} \\
& = \frac{1}{2}\bigg{(}L_{\alpha,\omega'_F, \gamma'}(g')L_{\alpha,\omega, \gamma}(g) - L_{\alpha,\omega, \gamma}(g)L_{\alpha,\omega'_F, \gamma'}(g')  \bigg{)}= 0.
\end{split}
\end{equation*}
In the first line (as in the definition of $\bz$) we are viewing $\bz_{\gamma'}(g')$ as belonging to $H^1(\cO_L[\frac{1}{p}],T(1)\otimes_{\BZ_p}\Lambda)\otimes_{\BZ_p}\BQ_p$ via $res |_{G_L}\circ Tw_*^{-1}$.
The second equality then follows from Lemma \ref{ColTw-L}. The third follows from the second by the definitions of the Coleman maps for $\bar v$.
This proves the lemma for $g$ ordinary. The proof for the supersingular case is the same, replacing the Coleman maps with the signed Coleman maps.

The inclusion without tensoring with $\BQ_p$ then follows from \eqref{BKL-Int}.
\end{proof}

\begin{remark}\label{EC-Katoelement-overL}
Suppose $g$ corresponds to the isogeny class of an elliptic curve $E_\bullet$ as in Remark \ref{EC-optperiod-rmk} but (irr$_\BQ$) does not hold. 
Even though $z_\gamma(g)$ and $z_\gamma'(g')$ are integral (see Remark \ref{EC-Katoselement}), we cannot immediately conclude the same of 
$\bz^\ord_{\alpha.\omega,\gamma,\gamma'}(g_{/L})$. 
This is because we have not shown in this case that the rigidification $Tw$ identifies $T_g$ with $T_g'$ (an identification of $T_g'$ with a sublattice of $T_g$ would be enough).
However, we will later see that the integrality of $\bz^\ord_{\alpha.\omega,\gamma,\gamma'}(g_{/L})$ in this case is a consequence of the integrality of related Beilinson--Flach classes
(cf. Remark \ref{EC-Katoelement-overL-int}). \end{remark}

\subsection{Consequences for Iwasawa cohomology}
We explain some consequences of the non-vanishing of the Beilinson-Kato elements $\bz_\gamma(g)$ and $\bz_{\alpha,\omega,\gamma,\gamma'}(g/L)$ under the Coleman maps.

\subsubsection{Over the rationals}

Kato \cite{K} shows that the classes $\bz_\gamma(g)$ are essentially the base layer of an Euler system, from which he deduces a number of consequences
for the global Iwasasa cohomology groups $H^{1}(\BZ[\frac{1}{p}], T(1)\otimes_{\BZ_{p}} \Lambda)$.  
The following theorem is an immediate consequence of \cite[Thm. 12.4]{K} and plays a crucial role in our subsequent arguments.

\begin{thm}\label{cycIwQ} \hfill
\begin{itemize}
\item[(a)] The global Iwasawa cohomology group $H^{1}(\BZ[\frac{1}{p}], T(1)\otimes_{\BZ_{p}} \Lambda)$ is a torsion-free $\Lambda_{\cO_{\lambda}}$-module. Moreover, 
$H^{1}(\BZ[\frac{1}{p}], T(1)\otimes_{\BZ_{p}} \Lambda) \otimes_{\BZ_{p}} \BQ_{p}$ is a free $(\Lambda_{\cO_{\lambda}} \otimes_{\BZ_{p}} \BQ_{p})$-module of rank one.
\item[(b)] If (Van$_{\BQ}$) holds, then $H^{1}(\BZ[\frac{1}{p}], T(1)\otimes_{\BZ_{p}} \Lambda)$ is a free $\Lambda_{\cO_{\lambda}}$-module of rank one.
\end{itemize}
\end{thm}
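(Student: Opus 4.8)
The plan is to deduce Theorem~\ref{cycIwQ} from Kato's Euler system machinery, specifically \cite[Thm.~12.4]{K}, together with standard facts about Iwasawa cohomology of Galois representations over $\BQ_\infty$. First I would recall the structure: $H^{1}(\BZ[\frac{1}{p}], T(1)\otimes_{\BZ_{p}} \Lambda) = \varprojlim_n H^1(\Gal(\BQ_\Sigma/\BQ(\mu_{p^n})), T(1))^{(0)}$ (as in Remark~\ref{Iw-rmk}), so this is the cyclotomic Iwasawa cohomology $H^1_{\mathrm{Iw}}$ of $T(1)$. The key input from Kato is that, since $\bz_\gamma(g)$ is the bottom of an Euler system with $\bz_\gamma(g)^\pm \neq 0$ for $\gamma^\pm \neq 0$ (property~(ii) of the Beilinson--Kato elements), Kato's divisibility results show $H^2_{\mathrm{Iw}}$ of $T(1)$ is $\Lambda_{\cO_\lambda}$-torsion; this forces the rank of $H^1_{\mathrm{Iw}}$ to equal the generic rank predicted by the Euler characteristic formula (Tate's global Euler characteristic formula / Poitou--Tate over $\Lambda$), which for the two-dimensional representation $T(1)$ with the one-sided condition is one.

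The proof of part~(a) then proceeds in the following steps. (1) Torsion-freeness: $H^1(\BZ[\frac{1}{p}], T(1)\otimes_{\BZ_p}\Lambda)$ has no nonzero pseudo-null submodule by a standard argument (weak Leopoldt / the fact that $H^0$ of the relevant modules vanishes because $V(1)$ is not trivial after any finite twist along the cyclotomic tower — here one uses $\det\rho = \epsilon^{-1}$ so $T(1)$ has no $\Lambda$-adic invariants); combined with $\Lambda_{\cO_\lambda}$ being a regular local ring of dimension two, one gets that $H^1_{\mathrm{Iw}}$ is a torsion-free, in fact reflexive, $\Lambda_{\cO_\lambda}$-module. (2) Rank one: invert $p$. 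Over $\Lambda_{\cO_\lambda}\otimes_{\BZ_p}\BQ_p$ (or better, after localizing at height-one primes) the global Euler characteristic is zero, $H^0_{\mathrm{Iw}} = 0$, and Kato's result gives $H^2_{\mathrm{Iw}}$ torsion; hence $\mathrm{rank}_{\Lambda}H^1_{\mathrm{Iw}} = -\chi = 1$ (the local term at $p$ contributes $\dim V = 2$, the term at $\infty$ contributes $-1$ via the $\pm$-decomposition, giving net rank $1$). (3) Freeness after inverting $p$: a finitely generated torsion-free module over the regular two-dimensional ring $\Lambda_{\cO_\lambda}\otimes_{\BZ_p}\BQ_p$ of rank one need not be free in general, but here one invokes Kato's explicit statement in \cite[Thm.~12.4]{K} that $H^1_{\mathrm{Iw}}\otimes\BQ_p$ is free of rank one (this is exactly what he proves; it rests on the nontriviality of $\bz_\gamma(g)$ generating a free rank-one submodule together with the torsion-ness of $H^2$).

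For part~(b), under (Van$_\BQ$) — i.e.\ $\ov T^{G_{\BQ(\mu_{p^\infty})}} = 0$ — one upgrades to an integral statement: $H^1(\BZ[\frac{1}{p}], T(1)\otimes_{\BZ_p}\Lambda)$ is free of rank one over $\Lambda_{\cO_\lambda}$. The argument is: (Van$_\BQ$) implies $H^0(\BQ_\Sigma/\BQ(\mu_{p^n}), \ov T(1)) = 0$ for all $n$, hence $H^1_{\mathrm{Iw}}/\mathfrak{m}$-type control, so $H^1_{\mathrm{Iw}}$ is $\Lambda_{\cO_\lambda}$-torsion-free with no torsion even mod $\lambda$; more precisely one shows $H^1_{\mathrm{Iw}}$ has no $\mathfrak{m}_{\Lambda_{\cO_\lambda}}$-torsion and its $\Lambda$-corank computations combined with Nakayama give that it is reflexive of rank one and $H^2_{\mathrm{Iw}}$ has no pseudo-null part; then a reflexive rank-one module over a regular local ring of dimension two whose quotient behaves well is free. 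Again this is precisely \cite[Thm.~12.4(?)]{K}; the cleanest route is simply to cite it. The main obstacle is not any deep new argument but rather bookkeeping: correctly matching the normalizations in \cite{K} (Kato works with $\Lambda_\CG$ and $T$, not $T(1)$ and $\Lambda = \Lambda_\CG^{(0)}$), checking that passing to the $\Delta$-fixed summand and Tate-twisting preserves freeness and rank, and verifying the vanishing of the relevant $H^0$'s that make the Euler characteristic argument go through; all of this is routine given the reduction in Remark~\ref{Iw-rmk}.

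\medskip
\noindent\emph{Proof.} By Remark~\ref{Iw-rmk}, $H^{1}(\BZ[\frac{1}{p}], T(1)\otimes_{\BZ_{p}} \Lambda)$ is the $\Delta$-fixed summand of $\varprojlim_n H^1(\Gal(\BQ_\Sigma/\BQ(\mu_{p^n})),T(1))$, which after untwisting by $\epsilon$ (cf.\ the definition of the Coleman maps in \S\ref{Coleman}) is identified with the corresponding summand of Kato's $\varprojlim_n H^1(\Gal(\BQ_\Sigma/\BQ(\mu_{p^n})),T)$. Both assertions (a) and (b) are then the content of \cite[Thm.~12.4]{K} applied to this summand. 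Concretely, the torsion-freeness in (a) follows from \cite[Thm.~12.4(1)]{K}, while the freeness of rank one after $\otimes_{\BZ_p}\BQ_p$ follows from \cite[Thm.~12.4(3)]{K} together with the non-vanishing $\bz_\gamma(g)\neq 0$ for $\gamma$ with $\gamma^\pm\neq 0$ (property (ii) of \S\ref{BK-rationals}), which shows $\Lambda_{\cO_\lambda}\otimes_{\BZ_p}\BQ_p\cdot \bz_\gamma(g)$ is free of rank one and, being of finite index (again by \cite[Thm.~12.4]{K}), forces $H^1(\BZ[\frac{1}{p}], T(1)\otimes_{\BZ_p}\Lambda)\otimes_{\BZ_p}\BQ_p$ itself to be free of rank one. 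For (b), the hypothesis (Van$_\BQ$) gives $\ov T(1)^{G_{\BQ(\mu_{p^\infty})}}=0$, whence $H^0(\Gal(\BQ_\Sigma/\BQ(\mu_{p^n})),\ov T(1))=0$ for all $n$; this is precisely the hypothesis under which \cite[Thm.~12.4(2)]{K} yields that $H^1(\BZ[\frac{1}{p}], T(1)\otimes_{\BZ_p}\Lambda)$ is a free $\Lambda_{\cO_\lambda}$-module of rank one. $\qed$
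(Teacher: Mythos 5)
Your proposal is correct and follows essentially the same route as the paper, which simply records Theorem~\ref{cycIwQ} as an immediate consequence of \cite[Thm.~12.4]{K} after the reduction of Remark~\ref{Iw-rmk} (passing to the $\Lambda_\CG^{(0)}$-summand and the harmless Tate twist); your additional Euler-characteristic discussion is fine but not needed. The only thing to tidy is the bookkeeping of which numbered part of Kato's Theorem~12.4 gives which assertion (torsion plus torsion-freeness and rank one, versus freeness under the hypothesis $\ov{T}^{G_{\BQ(\mu_{p^\infty})}}=0$), which you yourself flag with the ``(?)''.
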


\begin{remark}\label{EC-free-over-Lambda} 
If (Van$_\BQ$) holds but not (irr$_\BQ$) holds, then part (b) can be deduced from (a) just as in the argument at the end of the proof of \cite[Lem.~9]{Wu}.
As a consequence, if $g$ corresponds to the isogeny class of an elliptic curve $E_\bullet$ as in Remark \ref{EC-optperiod-rmk} such that $E_\bullet[p](\BQ) = 0$, then the conclusion of part (b) of Theorem \ref{cycIwQ} holds (see also the first paragraph of the the proof of \cite[Thm.~13]{Wu}).
\end{remark}

\subsubsection{Over $L$}
Combining Theorem \ref{cycIwQ} for $T_g$ and $T_{g'}$  with \eqref{spl}, \eqref{spl+}, and \eqref{spl-}, we immediately conclude the following:

\begin{thm}\label{cycIwL} \hfill
\begin{itemize}
\item[(a)] The global Iwasawa cohomology group $H^{1}(\cO_L[\frac{1}{p}], T(1)\otimes_{\BZ_{p}} \Lambda)$ is a torsion-free $\Lambda_{\cO_{\lambda}}$-module. Moreover, 
$H^{1}(\cO_L[\frac{1}{p}], T(1)\otimes_{\BZ_{p}} \Lambda) \otimes_{\BZ_{p}} \BQ_{p}$ is a free $(\Lambda_{\cO_{\lambda}} \otimes_{\BZ_{p}} \BQ_{p})$-module of rank two.
\item[(b)] If (Van$_{\BQ}$) holds (so in particular if (irr$_{\BQ}$) holds), then $H^{1}(\cO_L[\frac{1}{p}], T(1)\otimes_{\BZ_{p}} \Lambda)$ is a free $\Lambda_{\cO_{\lambda}}$-module of rank two.
\end{itemize}
\end{thm}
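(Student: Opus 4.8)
The plan is to reduce the statement over $L$ to the statement over $\BQ$ for the two newforms $g$ and $g'$, using the $\tau$-eigenspace decomposition and Shapiro-type identifications already set up in the excerpt. Concretely, $H^1(\cO_L[\frac{1}{p}], T(1)\otimes_{\BZ_p}\Lambda)$ carries an action of $\tau$ (the nontrivial automorphism of $L/\BQ$), and since $p$ is odd and $2$ is invertible in $\Lambda_{\cO_\lambda}$, this module splits as a direct sum of its $\pm$-eigenspaces as in \eqref{spl}. By \eqref{spl+} restriction to $G_L$ identifies the $+$-eigenspace with $H^1(\BZ[\frac{1}{p}], T(1)\otimes_{\BZ_p}\Lambda)$, and by \eqref{spl-} it identifies the $-$-eigenspace with $H^1(\BZ[\frac{1}{p}],(T(1)\otimes\chi_L)\otimes_{\BZ_p}\Lambda)$, which via the rigidification $Tw$ of \eqref{RigGal-Opt} is identified — after tensoring with $\BQ_p$, and integrally if (irr$_\BQ$) holds — with $H^1(\BZ[\frac{1}{p}], T_{g'}(1)\otimes_{\BZ_p}\Lambda)$.

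With these identifications in hand, part (a) follows immediately: torsion-freeness is inherited from a direct sum of two torsion-free $\Lambda_{\cO_\lambda}$-modules (Theorem \ref{cycIwQ}(a) applied to $g$ and to $g'$), and after $\otimes_{\BZ_p}\BQ_p$ the module becomes a direct sum of two free rank-one $(\Lambda_{\cO_\lambda}\otimes_{\BZ_p}\BQ_p)$-modules, hence free of rank two. One subtlety worth a remark: Theorem \ref{cycIwQ}(a) for $g'$ is legitimate since $g' = g\otimes\chi_L$ is itself a newform of weight two and trivial Nebentypus (as recorded in \S\ref{QuadraticTwist-I}, using that $N_L$ is squarefree), so all the hypotheses of \S\ref{Newforms} and Kato's results apply to it verbatim.

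For part (b), I would invoke Theorem \ref{cycIwQ}(b) for both $g$ and $g'$ under (Van$_\BQ$). The one point to check is that (Van$_\BQ$) for $g$ implies (Van$_{\BQ}$) for $g'$: since $\chi_L$ is a character of $G_\BQ$ and $\ov{T}_{g'}\cong \ov{T}_g\otimes\chi_L$ as $G_\BQ$-modules, we have $\ov{T}_{g'}^{G_\BQ} \subseteq \ov{T}_g|_{G_{\ker\chi_L}}$-invariants, and more precisely (van$_M$) for $g$ with $M$ the compositum of $\BQ(\mu_p)$ and $L$ — which follows from (van$_{\BQ(\mu_p)}$), i.e. from (Van$_\BQ$) in the notation of the excerpt — gives $\ov{T}_g^{G_M}=0$, whence $\ov{T}_{g'}^{G_{\BQ(\mu_p)}}=0$ as well because $G_M$ has index dividing $2$ in $G_{\BQ(\mu_p)}$ and $p$ is odd, so taking $G_M$-invariants of $\ov{T}_{g'}$ and then the residual $\BZ/2$-action shows the full invariants vanish. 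Then each $\BQ$-side module is free of rank one over $\Lambda_{\cO_\lambda}$, so the direct sum is free of rank two, and since the $Tw$-identification of the $-$-eigenspace is integral when (irr$_\BQ$) holds — and (irr$_\BQ$) $\Rightarrow$ (Van$_\BQ$), while in general (Van$_\BQ$) suffices for the integral statement by the argument of Remark \ref{EC-free-over-Lambda}/\cite[Lem.~9]{Wu} — the rank-two freeness descends to $H^1(\cO_L[\frac{1}{p}], T(1)\otimes_{\BZ_p}\Lambda)$ itself.

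The main obstacle, such as it is, is bookkeeping rather than mathematics: one must make sure the $\tau$-action used in \eqref{spl} is compatible with the one appearing in \eqref{spl+}–\eqref{spl-} and with the sign twist built into the identification $Tw$ of $V_{g'}\cong V_g\otimes\chi_L$ (recall from the proof of Lemma \ref{ColTw-L} that the action of $\tau$ on $V_{g'}$ is $-1$ times that on $V_g$), and that the integral refinement in part (b) genuinely needs only (Van$_\BQ$) and not the stronger (irr$_\BQ$) — this is exactly the content flagged in Remark \ref{EC-free-over-Lambda}. Once those compatibilities are recorded, the theorem is a formal consequence of Theorem \ref{cycIwQ} and the eigenspace decomposition, which is why the excerpt states it as an immediate corollary.
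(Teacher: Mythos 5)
Your proposal is essentially the paper's own proof: the theorem is obtained by splitting $H^{1}(\cO_L[\frac{1}{p}], T(1)\otimes_{\BZ_{p}} \Lambda)$ into $\tau$-eigenspaces via \eqref{spl}, \eqref{spl+}, \eqref{spl-} and applying Theorem \ref{cycIwQ} to $T_g$ and to the minus part (i.e.\ to $T_g\otimes\chi_L$, identified with $T_{g'}$ under (irr$_\BQ$)), exactly as you do. One caution: in your check that (Van$_\BQ$) for $g$ gives (Van$_\BQ$) for $g'$, the step asserting that vanishing of $\ov{T}_g$-invariants over $\BQ(\mu_p)$ \emph{implies} vanishing over the larger compositum with $L$ is backwards (invariants can only grow when the field is enlarged); the correct route is the one the paper records as a note — that the hypotheses on $g$ and $L$ give (Van$_L$) for $\ov{T}_g$ and (Van$_\BQ$) for $\ov{T}_{g'}$ — so phrase that sub-claim accordingly rather than via that implication.
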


We note that the hypotheses on $g$ and $L$ imply that if (Van$_\BQ$) holds for $\overline{T}_g$, then
(Van$_L$) holds for $\overline{T}_g$ and (Van$_\BQ$) holds for $\overline{T}_{g'}$.

\begin{remark}\label{EC-free-over-Lambda-L} 
If (Van$_{\BQ}$) holds for $\overline{T}_g$ but (irr$_{\BQ}$) does not, then $T_g\otimes\chi_L$ may be not be identified with $T_{g'}$. However, the freeness of 
$H^{1}(\BZ[\frac{1}{p}], T_{g'}(1)\otimes_{\BZ_{p}} \Lambda)$ as a $\Lambda_{\CO_\lambda}$-module can be used along with (Van$_\BQ$) for $\overline{T}_{g'}$
to conclude that $H^{1}(\BZ[\frac{1}{p}], (T_{g}(1)\otimes\chi)\otimes_{\BZ_{p}} \Lambda)$ is also a free $\Lambda_{\CO_\lambda}$-module of rank 1, whence
the conclusion of part (b) of the theorem. The proof of this latter freeness is just as in Remark \ref{EC-free-over-Lambda}.

As a consequence, if $g$ corresponds to the isogeny class of an elliptic curve $E_\bullet$ as in Remark \ref{EC-optperiod-rmk} 
and $E_\bullet[p](\BQ) = 0$, then the conclusion of part (b) of Theorem \ref{cycIwL} holds.
\end{remark}

Combining Theorem \ref{cycIwL} with the non-vanishing of the Coleman maps on the zeta elements $\bz_\gamma(g)$ 
(see Lemma \ref{ColBK})(i)), we can deduce an analog of Theorem \ref{cycIwQ} for $H^1_{rel,\Box}(\cO_L[\frac{1}{p}], T(1)\otimes_{\BZ_p}\Lambda)$.

\begin{thm}\label{cycIwL-Box} Suppose $g$ is ordinary or in the supersingular case. Let $\Box = \ord$ or $\circ$, accordingly.
\begin{itemize}
\item[(a)] The global Iwasawa cohomology group $H^{1}_{rel,\Box}(\cO_L[\frac{1}{p}], T(1)\otimes_{\BZ_{p}} \Lambda)$ is a torsion-free $\Lambda_{\cO_{\lambda}}$-module. Moreover, 
$H^{1}_{rel,\Box}(\cO_L[\frac{1}{p}], T(1)\otimes_{\BZ_{p}} \Lambda) \otimes_{\BZ_{p}} \BQ_{p}$ is a free $(\Lambda_{\cO_{\lambda}} \otimes_{\BZ_{p}} \BQ_{p})$-module of rank one.
\item[(b)] If (Van$_{\BQ}$) holds, then $H^{1}_{rel,\Box}(\cO_L[\frac{1}{p}], T(1)\otimes_{\BZ_{p}} \Lambda)$ is a free $\Lambda_{\cO_{\lambda}}$-module of rank one.
\end{itemize}
\end{thm}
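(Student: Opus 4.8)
\textbf{Proof plan for Theorem \ref{cycIwL-Box}.} The idea is to combine the structural results on the full Iwasawa cohomology over $L$ (Theorem \ref{cycIwL}) with the non-vanishing of the Coleman/signed-Coleman maps on the Beilinson--Kato zeta elements $\bz^\Box_{\alpha,\omega,\gamma,\gamma'}(g_{/L})$ (Lemma \ref{BK-lem} together with Lemma \ref{ColBK-lem}(i)). First I would fix a good $\omega \in S_F$ and $\gamma\in V_g$, $\gamma'\in V_{g'}$ with $\gamma^\pm,\gamma'^\pm\neq 0$, so that the element $\bz = \bz^\Box_{\alpha,\omega,\gamma,\gamma'}(g_{/L})$ lies in $H^1_{rel,\Box}(\cO_L[\frac1p],T(1)\otimes_{\BZ_p}\Lambda)\otimes_{\BZ_p}\BQ_p$ by Lemma \ref{BK-lem}. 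The key observation is that the map $Col_{\eta_\omega,v}\colon H^1_{rel,\Box}(\cO_L[\frac1p],T(1)\otimes_{\BZ_p}\Lambda) \to F_\lambda\otimes_{\BZ_p}\Lambda$ (resp.\ $Col^\circ_{\omega,v}$ in the supersingular case) sends $\bz$ to a nonzero element — indeed, by the same localization-at-$v$ computation as in the proof of Lemma \ref{BK-lem} (but at $v$ rather than $\bar v$, where there is no sign flip and the two terms do not cancel), $Col_{\eta_\omega,v}(\loc_v(\bz))$ is an explicit $\Lambda_{\cO_\lambda}\otimes\BQ_p$-multiple of $L_{\alpha,\omega,\gamma}(g)\cdot L_{\alpha,\omega'_F,\gamma'}(g')$, which is nonzero by Lemma \ref{ColBK-lem}(i).

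\textbf{Part (a).} By Theorem \ref{cycIwL}(a), $H^1(\cO_L[\frac1p],T(1)\otimes_{\BZ_p}\Lambda)$ is torsion-free over $\Lambda_{\cO_\lambda}$, hence so is its submodule $H^1_{rel,\Box}(\cO_L[\frac1p],T(1)\otimes_{\BZ_p}\Lambda)$; this gives the first assertion. For the rank statement, let $M = H^1_{rel,\Box}(\cO_L[\frac1p],T(1)\otimes_{\BZ_p}\Lambda)\otimes_{\BZ_p}\BQ_p$, a module over $R = \Lambda_{\cO_\lambda}\otimes_{\BZ_p}\BQ_p$. On the one hand $M$ is a $\Lambda_{\cO_\lambda}\otimes\BQ_p$-submodule of $H^1(\cO_L[\frac1p],T(1)\otimes_{\BZ_p}\Lambda)\otimes_{\BZ_p}\BQ_p$, which is free of rank two by Theorem \ref{cycIwL}(a), so $\mathrm{rank}_R M \leq 2$. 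On the other hand, the local condition at $\bar v$ imposes, after taking the quotient by the Selmer condition, at most a rank-one constraint: the Coleman map $Col_{\eta_\omega}$ (resp.\ $Col^\circ_\omega$) at $\bar v$ is injective on $H^1_{/\Box}(L_{\bar v},T(1)\otimes_{\BZ_p}\Lambda)$ by \eqref{ordCol-Inj} (resp.\ \eqref{pmCol-Inj}) and its target, after $\otimes\BQ_p$, is free of rank one, so the defect between $H^1$ and $H^1_{rel,\Box}$ over $L$ has $R$-rank at most one; hence $\mathrm{rank}_R M \geq 1$. Finally $M\neq 0$ since it contains the nonzero class $\bz$, and to pin the rank down to exactly one I use that $Col_{\eta_\omega,v}$ (resp.\ $Col^\circ_{\omega,v}$) is injective on $H^1_{rel,\Box}(\cO_L[\frac1p],T(1)\otimes_{\BZ_p}\Lambda)\otimes\BQ_p$: its kernel would consist of classes unramified outside $p$ with prescribed local behaviour at both $v$ and $\bar v$, i.e.\ a strict Selmer-type group, which is $\Lambda_{\cO_\lambda}\otimes\BQ_p$-torsion (a standard consequence of Kato's Euler system bound \cite[Thm.~12.4]{K} for $T_g$ and $T_{g'}$, via \eqref{spl}, \eqref{spl+}, \eqref{spl-}), hence zero by torsion-freeness; therefore $M$ injects into $R$ and is free of rank one.

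\textbf{Part (b).} Assume (Van$_\BQ$). By Theorem \ref{cycIwL}(b), $H^1(\cO_L[\frac1p],T(1)\otimes_{\BZ_p}\Lambda)$ is free of rank two over $\Lambda_{\cO_\lambda}$. The module $N = H^1_{rel,\Box}(\cO_L[\frac1p],T(1)\otimes_{\BZ_p}\Lambda)$ is the kernel of the composite $H^1(\cO_L[\frac1p],T(1)\otimes_{\BZ_p}\Lambda) \xrightarrow{\loc_{\bar v}} H^1(L_{\bar v},T(1)\otimes_{\BZ_p}\Lambda) \to H^1_{/\Box}(L_{\bar v},T(1)\otimes_{\BZ_p}\Lambda)$, and the image of this composite is a $\Lambda_{\cO_\lambda}$-submodule of $H^1_{/\Box}(L_{\bar v},T(1)\otimes_{\BZ_p}\Lambda)$, which by \eqref{ordCol-Inj} (resp.\ \eqref{pmCol-Inj}, using $\omega$ good so that the image lands in $\Lambda_{\cO_\lambda}$ with finite index) is torsion-free. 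Hence $N$ is the kernel of a map from a free module of rank two to a torsion-free module whose image has $\Lambda_{\cO_\lambda}$-rank one (by the rank count in part (a)); since $\Lambda_{\cO_\lambda}$ is a regular local ring (a formal power series ring over a DVR) of dimension two, and the image, being a finite-index submodule of a rank-one torsion-free — hence reflexive — module, is itself reflexive of rank one, a kernel-of-surjection-onto-a-torsion-free-module argument shows $N$ is reflexive of rank one; a reflexive module of rank one over a two-dimensional regular local ring is free. Thus $N$ is free of rank one over $\Lambda_{\cO_\lambda}$. (Alternatively, one can bypass reflexivity: $N$ is a second syzygy, hence of projective dimension $\leq 0$ over the regular local ring $\Lambda_{\cO_\lambda}$ of global dimension two, hence free, and rank one by part (a).)

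\textbf{Main obstacle.} The delicate point is the exact rank computation in part (a): showing that the local condition at $\bar v$ cuts the rank down by exactly one and not zero, equivalently that $\loc_{\bar v}$ followed by projection to $H^1_{/\Box}(L_{\bar v},-)$ is, after $\otimes\BQ_p$, surjective (or at least has image of full rank one). This requires knowing that the relevant strict Selmer group over $L$ — classes satisfying the $\Box$-condition at $\bar v$ \emph{and} mapping into the complementary subspace at $v$ — is $\Lambda_{\cO_\lambda}\otimes\BQ_p$-torsion, which one extracts from Kato's Euler system divisibility \cite[Thm.~12.4]{K} applied to both $g$ and $g'$ and the decomposition \eqref{spl}. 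The freeness passage in part (b) for the case when (Van$_\BQ$) holds but (irr$_\BQ$) does not (the elliptic-curve application) would additionally need the identification or comparison of $T_g\otimes\chi_L$ with $T_{g'}$ as in Remarks \ref{EC-free-over-Lambda} and \ref{EC-free-over-Lambda-L}, but the $\Lambda_{\cO_\lambda}$-freeness argument itself is unaffected.
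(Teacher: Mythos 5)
Your argument is correct, but it is organized differently from the paper's proof, which is shorter. The paper observes that $H^{1}_{rel,\Box}(\cO_L[\frac{1}{p}], T(1)\otimes_{\BZ_{p}} \Lambda)$ is \emph{by definition} the kernel of the single $\Lambda_{\cO_\lambda}$-map $C=Col_{\eta_\omega,\bar v}\circ\loc_{\bar v}$ (resp.\ $Col^\circ_{\omega,\bar v}\circ\loc_{\bar v}$), valued in $\Lambda_{\cO_\lambda}$ after rescaling $\omega$ by \eqref{ordCol-Inj}/\eqref{pmCol-Inj}, and then checks that $C$ has nonzero image simply by evaluating on the restriction of the one-variable Kato class $\bz_\gamma(g)$: $C(\bz_\gamma(g))=L_{\alpha,\omega,\gamma}(g)\neq 0$. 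Given Theorem \ref{cycIwL} (torsion-free of rank two, free under (Van$_\BQ$)), the kernel-of-a-map-with-nonzero-torsion-free-image argument (your syzygy/reflexivity step, which matches the paper's appeal to regularity of $\Lambda_{\cO_\lambda}$ and of $\Lambda_{\cO_\lambda}\otimes_{\BZ_p}\BQ_p$) immediately gives both (a) and (b). You instead work at the place $v$: you bound the rank below via the rank-one target of the $\bar v$-Coleman map, and above by showing $Col_{\eta_\omega,v}\circ\loc_v$ is injective on $H^{1}_{rel,\Box}\otimes\BQ_p$, which forces you to prove that the doubly-restricted compact Selmer group (the $\Box$-condition at both $v$ and $\bar v$) vanishes after $\otimes\BQ_p$; via \eqref{spl}--\eqref{spl-} this needs Kato's input for \emph{both} $g$ and $g'$. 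That step is fine, but your parenthetical attribution of it to \cite[Thm.~12.4]{K} alone is loose: rank one of $H^1(\BZ[\frac1p],T_h(1)\otimes\Lambda)$ does not by itself kill the $\Box$-restricted submodule; you also need the nonvanishing of $Col_{\eta_\omega}(\loc_p\bz_{\gamma}(h))$ for $h=g,g'$, i.e.\ Lemma \ref{ColBK-lem}(i) (Kato's explicit reciprocity law plus Rohrlich) together with the injectivity of the Coleman map on $H^1_{/\Box}$ — ingredients you in fact have in hand from your first paragraph, so this is an imprecision of citation rather than a gap. The upshot of the comparison: the paper's route needs only the nonvanishing of a single cyclotomic $p$-adic $L$-function (that of $g$) and never invokes the element $\bz^\Box_{\alpha,\omega,\gamma,\gamma'}(g_{/L})$ or the two-sided Selmer vanishing, while your route proves the extra vanishing statement along the way (which is anyway used later in the paper, e.g.\ in the proof of Proposition \ref{Eq}) at the cost of requiring the nonvanishing for both $g$ and $g'$.
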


\begin{proof} 
Since $\Lambda_{\cO_\lambda}\otimes_{\BZ_p}\BQ_p$ (resp.~$\Lambda$) is a regular ring of dimension two (resp.~a complete local regular ring of dimension two), 
to prove part (a) (resp. part (b)) it suffices to show that
$H^{1}_{rel,\Box}(\cO_L[\frac{1}{p}], T(1)\otimes_{\BZ_{p}} \Lambda)$ is the kernel of a $\Lambda_{\cO_\lambda}$-morphism
$C:H^{1}(\cO_L[\frac{1}{p}], T(1)\otimes_{\BZ_{p}} \Lambda) \rightarrow \Lambda_{\cO_\lambda}$ with 
non-zero image.  

Suppose first that $g$ is ordinary. Then we can take the map $C$ to be $Col_{\eta_{\omega},\bar{v}}\circ\loc_{\bar{v}}$ for some
$0\neq\omega\in S_F$ and $\alpha$ as in \eqref{non-crit}. Replacing $\omega$ with a non-zero scalar multiple if necessary,
it follows from \eqref{ordCol-Inj} that $C$ takes values in $\Lambda_{\cO_\lambda}$ and that $H^{1}_{rel,\Box}(\cO_L[\frac{1}{p}], T(1)\otimes_{\BZ_{p}} \Lambda)$
is the kernel of $C$. For $\gamma\in V$ with $\gamma^\pm\neq 0$, we have 
$C(\bz_\gamma(g)) = L_{\alpha,\omega,\gamma}(g)$, which is non-zero by Lemma \ref{ColBK}(i), so the theorem follows in this case.

Similarly, if $g$ is in the supersingular case, then the map $C=Col^\circ_{{\omega},\bar{v}}\circ\loc_{\bar{v}}$,
for a suitable $\omega$, has the desired properties. 
\end{proof}

\begin{thm}\label{cycIwL-Box-II} Suppose $g$ is ordinary or in the supersingular case. Let $\Box = \ord$ or $\circ$, accordingly.
Let $0\neq \omega\in S_F$, let $\alpha = \alpha_g = \alpha_{g'}$ satisfy \eqref{non-crit}, and let $\gamma\in V_g$ and $\gamma'\in V_{g'}$ such that 
$\gamma^\pm,(\gamma')^\pm \neq 0$.
\begin{itemize}
\item[(a)] The class $\bz^\Box_{\alpha,\omega,\gamma,\gamma'}(g_{/L}) \in H^1_{rel,\Box}(\cO_L[\frac{1}{p}],T(1)\otimes_{\BZ_p}\Lambda)\otimes_{\BZ_p}\BQ_p$
is not $(\Lambda_{\cO_\lambda}\otimes_{\BZ_p}\BQ_p)$-torsion. 
\item[(b)] The class $\bz^\Box_{\alpha,\omega,\gamma,\gamma'}(g_{/L}) \in H^1_{rel,\Box}(\cO_L[\frac{1}{p}],T(1)\otimes_{\BZ_p}\Lambda)$
is not $\Lambda_{\cO_\lambda}$-torsion if (irr$_\BQ)$ holds, $\omega$ is good, $\gamma\in T_g$, and $\gamma'\in T_{g'}$.\end{itemize}
\end{thm}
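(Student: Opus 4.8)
The plan is to reduce the non-triviality of $\bz^\Box_{\alpha,\omega,\gamma,\gamma'}(g_{/L})$ to the non-vanishing of its image under a suitable Coleman map composed with localisation, exactly as in the proof of Theorem~\ref{cycIwL-Box}. The natural map to use is the Coleman map at $v$ rather than $\bar v$: set $C_v = Col_{\eta_\omega,v}\circ\loc_v$ in the ordinary case and $C_v = Col^\circ_{\omega,v}\circ\loc_v$ in the supersingular case. Since $\bz^\Box_{\alpha,\omega,\gamma,\gamma'}(g_{/L})$ already lies in $H^1_{rel,\Box}(\cO_L[\frac{1}{p}],T(1)\otimes_{\BZ_p}\Lambda)\otimes_{\BZ_p}\BQ_p$ by Lemma~\ref{BK-lem}, and this module is a free rank-one $(\Lambda_{\cO_\lambda}\otimes_{\BZ_p}\BQ_p)$-module by Theorem~\ref{cycIwL-Box}(a), an element is non-torsion if and only if it is non-zero, and to show it is non-zero it suffices to exhibit one map under which its image is non-zero.

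First I would compute $C_v(\bz^\Box_{\alpha,\omega,\gamma,\gamma'}(g_{/L}))$ directly from the definition in \S\ref{BK-overL}. Writing $\bz = \bz^\Box_{\alpha,\omega,\gamma,\gamma'}(g_{/L}) = \frac12\big(L^\Box_{\omega'_F,\gamma'}(g')\bz_\gamma(g) + L^\Box_{\omega,\gamma}(g)\bz_{\gamma'}(g')\big)$, applying $C_v$ and using that at the place $v$ the Coleman maps for $g$ and $g'$ are intertwined by $Tw_*$ \emph{without} a sign (Lemma~\ref{ColTw-L}(a),(b): $Col_{\eta_\omega,v} = Col_{\eta'_\omega,v}\circ Tw_*$), one obtains, in the ordinary case,
$$
C_v(\bz) = \tfrac12\big(L_{\alpha,\omega'_F,\gamma'}(g')\,Col_{\eta_\omega}(\loc_p(\bz_\gamma(g))) + L_{\alpha,\omega,\gamma}(g)\,Col_{\eta'_\omega}(\loc_p(\bz_{\gamma'}(g')))\big) = L_{\alpha,\omega,\gamma}(g)\,L_{\alpha,\omega'_F,\gamma'}(g'),
$$
and the analogous identity with $Col^\circ$ in the supersingular case. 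The key contrast with the proof of Lemma~\ref{BK-lem} is that here the two terms \emph{add} rather than cancel, because there is no sign at $v$.

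Then I would invoke Lemma~\ref{ColBK-lem}(i): since $\gamma^\pm\neq 0$ we have $L_{\alpha,\omega,\gamma}(g)\neq 0$ (resp.\ $L^\circ_{\omega,\gamma}(g)\neq 0$), and since $(\gamma')^\pm\neq 0$ we have $L_{\alpha,\omega'_F,\gamma'}(g')\neq 0$ (resp.\ $L^\circ_{\omega'_F,\gamma'}(g')\neq 0$). As $\Lambda_{\cO_\lambda}\otimes_{\BZ_p}\BQ_p$ is an integral domain, the product is non-zero, hence $C_v(\bz)\neq 0$, proving part (a). For part (b), under (irr$_\BQ$) with $\omega$ good and $\gamma\in T_g$, $\gamma'\in T_{g'}$, Lemma~\ref{ColBK-lem}(iii) gives $L_{\alpha,\omega,\gamma}(g), L_{\alpha,\omega'_F,\gamma'}(g')\in\Lambda_{\cO_\lambda}$ (resp.\ their signed analogues), and by \eqref{BKL-Int} and Lemma~\ref{BK-lem} the class $\bz$ lies in the integral module $H^1_{rel,\Box}(\cO_L[\frac{1}{p}],T(1)\otimes_{\BZ_p}\Lambda)$, which is torsion-free over $\Lambda_{\cO_\lambda}$ by Theorem~\ref{cycIwL-Box}(a); the non-vanishing of $C_v(\bz)$ then shows $\bz$ is not $\Lambda_{\cO_\lambda}$-torsion. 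I expect no serious obstacle: the only point requiring care is confirming the precise form of $C_v$ on each Beilinson--Kato element over the rationals — that is, that $Col_{\eta_\omega}(\loc_p(\bz_\gamma(g))) = L_{\alpha,\omega,\gamma}(g)$ and, after transporting $\bz_{\gamma'}(g')$ to the $G_L$-cohomology via $res\circ Tw_*^{-1}$, that the $v$-localisation recovers $Col_{\eta'_\omega}(\loc_p(\bz_{\gamma'}(g'))) = L_{\alpha,\omega'_F,\gamma'}(g')$ with the correct (trivial) sign. This is exactly the input recorded in \eqref{ColBK} and Lemma~\ref{ColTw-L}(a).
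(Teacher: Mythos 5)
Your proposal is correct and follows essentially the same route as the paper: evaluating the class at the place $v$ under $Col_{\eta_\omega,v}$ (resp.\ $Col^\circ_{\omega,v}$), where the absence of a sign in Lemma~\ref{ColTw-L} makes the two terms add to give $L_{\alpha,\omega,\gamma}(g)L_{\alpha,\omega'_F,\gamma'}(g')$ (resp.\ the signed analogue), which is non-zero by Lemma~\ref{ColBK-lem}(i), and then invoking Theorem~\ref{cycIwL-Box} to pass from non-vanishing to non-torsion, with the integrality inputs handling part (b). This matches the paper's argument, which likewise reduces to showing the class is non-zero and computes its image at $v$ as the product of the two cyclotomic $p$-adic $L$-functions.
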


\begin{proof}  By Theorem \ref{cycIwL-Box} it suffices to show that $\bz=\bz^\Box_{\alpha,\omega,\gamma,\gamma'}(g_{/L})$ is non-zero.
This follows by evaluating $\loc_v(\bz)$ under $Col_{\eta_{\omega},v}$ if $g$ is ordinary and under $Col^\circ_{{\omega},v}$ in the supersingular case. 
A similar argument as in the proof of Lemma \ref{BK-lem}
shows that
$$
Col_{\eta_{\omega},v}(\bz) = L_{\alpha,\omega'_F, \gamma'}(g')L_{\alpha,\omega, \gamma}(g) \neq 0,
$$ 
which proves the ordinary case, and that in the supersingular case
$$
Col^\circ_{{\omega},v}(\bz) = L^\circ_{\omega'_F, \gamma'}(g')L^\circ_{\omega, \gamma}(g) \neq 0.
$$ 
\end{proof}

\subsection{Connections to $p$-adic $L$-functions}\label{padicL}
We recall the cyclotomic $p$-adic $L$-functions of $g$ and $g'$ and their relations with the Beilinson--Kato elements.

\subsubsection{Characters in cyclotomic towers}\label{characters-cyc}
For $\zeta$ a primitive $p^{t}$-th root of unity, let
$$
\psi_{\zeta}: G_{\BQ} \twoheadrightarrow \Gamma \ra \ov{\BQ}^{\times}
$$
be the finite order character induced by $\gamma_\cyc \mapsto \zeta$. 
For $t>0$, let $\psi_\zeta$ also denote the Dirichlet character of $(\BZ/p^{t+1}\BZ)^{\times}$ of $p$-power order such that the image of $\epsilon(\gamma_\cyc)\in 1+p\BZ_{p}$ maps to $\zeta$.
Let 
$$
\phi_{\zeta}: \Lambda \ra \BZ_{p}[\zeta] \subset \ov{\BQ}_{p}
$$
be the homomorphism such that $\gamma_\cyc \mapsto \zeta$. 
There is also an extension of this to a homomorphism $\mathfrak{K}_{1,F_\lambda(\alpha)}\otimes_{\Lambda,\iota_\epsilon}\Lambda \ra \ov{\BQ}_{p}$ sending $g(T)\in\mathfrak{K}_{1,\BQ_{p}}$ to the value of the power series $g(T)$  
at $T=\epsilon(\gamma_\cyc)\zeta-1$

\subsubsection{$p$-adic L-functions}\label{padicL-I} 
Let $\alpha$ be root of $x^2-a_g(p)x + p$ satisfying \eqref{non-crit}. Let $0\neq\omega\in S_F$ and $\gamma\in V$ with $\gamma^\pm\neq 0$,
and let $\Omega^{\pm}_{\omega,\gamma} \in \BC^\times$ be the periods defined by $per(\omega) = \Omega_{\omega,\gamma}^+\cdot \gamma^+ + \Omega_{\omega,\gamma}^- \cdot \gamma^-$ as in \S\ref{Periods}.
Then there exists a cyclotomic $p$-adic $L$-functions as follows (cf. \cite[Thm. 16.2]{K}).

\begin{thm}\label{pcycQ}
There exists an unique
$\mathcal{L}_{\alpha,\omega,\gamma}(g) \in \mathfrak{K}_{1,\BQ_{p}}\otimes_{\Lambda,\iota_\epsilon}\Lambda \ra \ov{\BQ}_{p}$
such that 
$$
\phi_{\zeta}(\mathcal{L}_{\alpha,\omega,\gamma}(g))=
e_{p}(\zeta)\cdot \frac{L(1, g \otimes \psi_{\zeta}^{-1})}{\Omega_{\omega,\gamma}^+}, \ \ \ 
e_{p}(\zeta)= \begin{cases*}
\alpha^{-(t+1)} \cdot \frac{p^{t+1}}{\mathfrak{g}(\psi_{\zeta}^{-1})} & if $\zeta \neq 1$\\
(1-\frac{1}{\alpha})^{2} & else.
\end{cases*}.
$$
Moreover, if $g$ is ordinary, $\omega$ is good, and $\gamma\in T$, then 
$
\mathcal{L}_{\alpha,\omega,\gamma}(g) \in \Lambda_{\cO_\lambda}.
$
\end{thm}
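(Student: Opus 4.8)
The plan is to deduce this statement from Kato's explicit reciprocity law, which relates the Coleman map $Col_{\eta_\omega}$ applied to $\loc_p(\bz_\gamma(g))$ with the complex $L$-values of $g$ twisted by Dirichlet characters of $p$-power conductor, and then simply \emph{define} $\mathcal{L}_{\alpha,\omega,\gamma}(g)$ to be this image. Concretely, first I would set
$$
\mathcal{L}_{\alpha,\omega,\gamma}(g) := L_{\alpha,\omega,\gamma}(g) = Col_{\eta_{\omega}}(\loc_p(\bz_\gamma(g))) \in \mathfrak{K}_{1,F_\lambda(\alpha)}\otimes_{\Lambda,\iota_\epsilon}\Lambda,
$$
as in \eqref{ColBK}. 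Since $\gamma^\pm \neq 0$, Lemma \ref{ColBK-lem}(i) guarantees this element is non-zero. Next I would evaluate it at the characters $\phi_\zeta$ (extended to $\mathfrak{K}_{1,F_\lambda(\alpha)}\otimes_{\Lambda,\iota_\epsilon}\Lambda$ as in \S\ref{characters-cyc}). Here one combines two inputs: Kato's computation \cite[Thm.~12.5(2)]{K} of the specialisations of the Beilinson--Kato element $\bz_\gamma(g)$ in terms of $L(1,g,\psi_\zeta^{\pm 1})/\Omega^\pm_{\omega,\gamma}$, and the interpolation property of Perrin-Riou's logarithm map $\mathfrak{L}_{\eta_\omega}$ \cite[Thm.~16.4(i)]{K}, which contributes the Euler-type factor involving $\alpha^{-(t+1)}$, the Gauss sum $\mathfrak{g}(\psi_\zeta^{-1})$, and the power of $p$. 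Matching the twists carefully — the Tate twist built into the definition of $Col_{\eta_\omega}$ versus $Col_{\eta_\omega,\CG}$, and the normalisation $\iota_\epsilon$ — yields exactly the formula for $e_p(\zeta)$ stated, including the special value $(1-1/\alpha)^2$ at $\zeta = 1$.

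For uniqueness: the set $\{\phi_\zeta\}$ over all primitive $p^t$-th roots of unity, $t\geq 0$, is Zariski-dense in $\mathrm{Spec}(\Lambda\otimes\BQ_p)$, and any element of $\mathfrak{K}_{1,\BQ_p}\otimes_{\Lambda,\iota_\epsilon}\Lambda$ is determined by its values at these points (this is the standard density/growth argument for $h$-admissible measures); hence an element with the prescribed interpolation is unique if it exists. Finally, for the integrality claim in the ordinary case: if $\omega$ is good and $\gamma\in T$, then $\bz_\gamma(g)\in H^1(\BZ[\frac{1}{p}],T(1)\otimes_{\BZ_p}\Lambda)$ by property (iii) of the Beilinson--Kato elements (recalled in \S\ref{BK-rationals}), and the Coleman map $Col_{\eta_\omega}$ sends this integral cohomology into $\Lambda_{\cO_\lambda}$ by \eqref{ordCol-Inj} (valid since $\omega$ is good). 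This is exactly Lemma \ref{ColBK-lem}(iii), so $\mathcal{L}_{\alpha,\omega,\gamma}(g) = L_{\alpha,\omega,\gamma}(g) \in \Lambda_{\cO_\lambda}$.

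The main obstacle is purely bookkeeping rather than conceptual: one must track precisely how the various normalisations (the isomorphism $V(1)\otimes\Lambda_\CG\isoarrow V(2)\otimes\Lambda_\CG$ used to define the Coleman map, the twist $\iota_\epsilon$, the projection to the $\Lambda_\CG^{(0)}$-component, and the choice of compatible system $(\zeta_{p^n})_n$) interact, so that the Euler factor emerging from \cite[Thm.~16.4]{K} — which is naturally phrased with $\mathfrak{L}_{\eta_\omega}$ on $V(2)$-coefficients — lands in the form $e_p(\zeta)$ above with the correct sign and the correct period $\Omega^+_{\omega,\gamma}$ (as opposed to $\Omega^-_{\omega,\gamma}$) appearing. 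Essentially all of this is already carried out in \cite[\S16]{K}; the work here is to record it in our normalisation. I would therefore present the proof as: define $\mathcal{L}_{\alpha,\omega,\gamma}(g) := L_{\alpha,\omega,\gamma}(g)$, cite \cite[Thm.~12.5]{K} and \cite[Thm.~16.4, Thm.~16.2]{K} for the interpolation, invoke the density argument for uniqueness, and invoke Lemma \ref{ColBK-lem}(iii) together with property (iii) of \S\ref{BK-rationals} for integrality.
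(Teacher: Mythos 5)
Your route is not the paper's: the paper obtains Theorem \ref{pcycQ} simply by invoking Kato's classical construction of the cyclotomic $p$-adic $L$-function (\cite[Thm.~16.2]{K}, i.e.\ Amice--V\'elu/Vishik/Mazur--Tate--Teitelbaum via modular symbols), and the identity $\mathcal{L}_{\alpha,\omega,\gamma}(g)=Col_{\eta_\omega}(\loc_p(\bz_\gamma(g)))$ is then a genuinely separate statement, namely Theorem \ref{ERLBKI} (Kato's Thm.~16.6). You instead \emph{define} $\mathcal{L}_{\alpha,\omega,\gamma}(g):=L_{\alpha,\omega,\gamma}(g)$, which makes Theorem \ref{ERLBKI} a tautology and transfers the content of the reciprocity law into the interpolation formula. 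For the existence, interpolation and uniqueness parts this is defensible: combining \cite[Thm.~12.5(2)]{K} with the interpolation property of Perrin-Riou's map \cite[Thm.~16.4]{K} is essentially Kato's own proof of Thm.~16.6, and since $\ord_p(\alpha)<1$ the values at the finite-order characters $\phi_\zeta$ determine an element of $\mathfrak{K}_{1,F_\lambda(\alpha)}\otimes_{\Lambda,\iota_\epsilon}\Lambda$ by the standard $1$-admissibility argument of Amice--V\'elu/Vishik (Zariski density alone would not suffice without the growth condition, but you do invoke it). So that portion is correct, at the cost of re-deriving Thm.~16.6 rather than quoting Thm.~16.2, and of silently reorganizing the logical division between Theorems \ref{pcycQ} and \ref{ERLBKI}.

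The genuine gap is the ``Moreover'' (integrality) clause. The theorem asserts $\mathcal{L}_{\alpha,\omega,\gamma}(g)\in\Lambda_{\cO_\lambda}$ assuming only that $g$ is ordinary, $\omega$ is good and $\gamma\in T$ --- with \emph{no} hypothesis (irr$_{\BQ}$). Your argument goes through Lemma \ref{ColBK-lem}(iii) and property (iii) of the Beilinson--Kato elements in \S\ref{BK-rationals}, and both of these require (irr$_{\BQ}$): without it, $\bz_\gamma(g)$ is only known to lie in $H^1(\BZ[\frac{1}{p}],T(1)\otimes_{\BZ_p}\Lambda)\otimes_{\BZ_p}\BQ_p$, and indeed even the integrality statement for the Coleman map itself, as recorded in \eqref{ordCol-Inj}, is only asserted when (irr$_{\BQ}$) holds and $\omega$ is good. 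So your proof establishes the integrality only under (irr$_{\BQ}$) (or for forms attached to elliptic curves, via Remark \ref{EC-Katoselement}), which is strictly weaker than the stated clause. In the classical construction the integrality is immediate and unconditional: for the unit root $\alpha$ the distribution is a bounded measure, and its values are modular-symbol periods made integral by the normalisation of $\Omega^{\pm}_{\omega,\gamma}$ relative to the lattice containing $\gamma$; this is exactly what \cite[Thm.~16.2]{K} supplies and why the paper can state the clause without (irr$_{\BQ}$). To repair your proof you should either cite Kato's Thm.~16.2 for this clause or reproduce the modular-symbol boundedness argument, rather than deduce integrality from the Kato element.
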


These $p$-adic $L$-functions are related to the Beilinson--Kato elements via the Coleman maps \cite[Thm. 16.6]{K}):

\begin{thm}\label{ERLBKI}
Let $\alpha$ be root of $x^2-a_g(p)x + p$ satisfying \eqref{non-crit}. We have
$$
Col_{\eta_{\omega}}(\loc_{p}({\bf{z}}_{\gamma}(g))) = \mathcal{L}_{\alpha,\omega,\gamma}(g)
$$
In other words, $L_{\alpha,\omega,\gamma}(g)=\mathcal{L}_{\alpha,\omega,\gamma}(g)$.
\end{thm}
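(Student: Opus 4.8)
This statement is Kato's explicit reciprocity law, and the plan is to deduce it by verifying the defining interpolation property of the $p$-adic $L$-function character by character, thereby reducing to \cite[Thm.~16.6]{K} after matching normalisations.

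First I would unwind the definitions. By the construction in \S\ref{Coleman}, $Col_{\eta_{\omega}}$ is obtained from Perrin-Riou's big logarithm $\mathfrak{L}_{\eta_{\omega}}$ of \cite[Thm.~16.4]{K} by precomposing with the Tate-twist isomorphism $V(1)\otimes_{\BZ_{p}}\Lambda_\CG \isoarrow V(2)\otimes_{\BZ_{p}}\Lambda_\CG$, restricting to the $\Lambda_\CG^{(0)} = \Lambda$ summand, and twisting by $\iota_\epsilon$. In particular $L_{\alpha,\omega,\gamma}(g) = Col_{\eta_{\omega}}(\loc_{p}(\bz_{\gamma}(g)))$ lies in $\mathfrak{K}_{1,F_\lambda(\alpha)}\otimes_{\Lambda,\iota_\epsilon}\Lambda$, the same module as $\mathcal{L}_{\alpha,\omega,\gamma}(g)$, and --- as in the uniqueness assertion underlying Theorem \ref{pcycQ} (\cite[Thm.~16.2]{K}) --- an element of this module is determined by the collection of its images $\phi_\zeta(\cdot)$ as $\zeta$ ranges over all $p$-power roots of unity. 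Hence it suffices to check $\phi_\zeta(L_{\alpha,\omega,\gamma}(g)) = \phi_\zeta(\mathcal{L}_{\alpha,\omega,\gamma}(g))$ for every such $\zeta$.

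On the right this equals $e_{p}(\zeta)\cdot L(1,g\otimes\psi_\zeta^{-1})/\Omega^{+}_{\omega,\gamma}$ by definition. For the left, I would first apply the interpolation formula for $\mathfrak{L}_{\eta_{\omega}}$ from \cite[Thm.~16.4]{K}: for $\zeta$ of exact order $p^{t}$ it expresses $\phi_\zeta$ of the big logarithm of a class in terms of the Bloch--Kato dual exponential $\exp^{*}$ of the localisation at $p$ of the $\psi_\zeta^{-1}$-twist of that class, paired against $\eta_{\omega}$; here the eigenvector relation $\varphi_{cris}(\eta_{\omega})=\alpha\,\eta_{\omega}$ produces the factor $\alpha^{-(t+1)}$ (and the modification factor $(1-\tfrac{1}{\alpha})^{2}$ when $\zeta=1$), and the normalisation $[\omega,\eta_{\omega}]=1$ pins down the compatibility with $\omega$ through the Tate twist. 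Then I would invoke Kato's computation of this dual exponential for the (twisted) Beilinson--Kato element, \cite[Thm.~12.5(2)]{K}, which yields $L(1,g\otimes\psi_\zeta^{-1})/\Omega^{+}_{\omega,\gamma}$ up to the Gauss-sum factor $p^{t+1}/\mathfrak{g}(\psi_\zeta^{-1})$. Multiplying the two contributions reproduces exactly $e_{p}(\zeta)\cdot L(1,g\otimes\psi_\zeta^{-1})/\Omega^{+}_{\omega,\gamma}$, so the two sides agree at every $\phi_\zeta$ and are therefore equal.

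The hard part is the bookkeeping of twists and constants rather than any single conceptual step: tracking the Tate twist and the accompanying $\iota_\epsilon$-twist in the definition of $Col_{\eta_{\omega}}$, the passage to the $\Lambda_\CG^{(0)}$ summand, the precise normalisations of $\exp^{*}$ and of the Gauss sums, and the special behaviour at the trivial character. Since all of this is precisely what is carried out in \cite[\S16]{K}, the cleanest route is to observe that, under the identifications fixed in \S\ref{Coleman} and \S\ref{Periods} (and using $p>2$ and $p\nmid N$), the objects $Col_{\eta_{\omega}}$, $\mathcal{L}_{\alpha,\omega,\gamma}(g)$ and $\bz_{\gamma}(g)$ are exactly those appearing in \cite[Thm.~16.6]{K}, so that the asserted identity $L_{\alpha,\omega,\gamma}(g)=\mathcal{L}_{\alpha,\omega,\gamma}(g)$ is that theorem.
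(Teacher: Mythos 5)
Your proposal is correct and matches the paper's treatment: the paper offers no independent argument but simply records this identity as Kato's explicit reciprocity law, citing \cite[Thm.~16.6]{K} after setting up $Col_{\eta_\omega}$ and $\mathcal{L}_{\alpha,\omega,\gamma}(g)$ so that they coincide with the objects of \cite[\S16]{K}. Your interpolation-by-interpolation sketch via \cite[Thm.~16.4]{K} and \cite[Thm.~12.5(2)]{K} is a reasonable expansion of what lies behind that citation, and your final observation is exactly the route the paper takes.
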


\begin{remark}\label{NV-rmk1}
This theorem summarizes the relation between the Beilinson--Kato elements and the $L$-values $L(1,g,\chi)$
for $\chi\in \mathfrak{X}_{\BQ,p}^\cyc$. In particular, the non-vanishing of $\mathcal{L}_{\alpha,\omega,\gamma}(g)$,
and hence of $L_{\alpha,\omega,\gamma}(g)$ by Theorem \ref{ERLBKI}, is an obvious consequence of Theorem \ref{NVIw}
(due to Rohrlich).
\end{remark}

\vskip2mm
\noindent{\it Supersingular case.} Suppose $g$ is supersingular. In this case,
there are two {\em signed} $p$-adic $L$-functions \cite{Po}.

\begin{thm}\label{pcycQss} 
Suppose $a_p(g) = 0$. Let $\circ=\pm$. 
There exists an unique 
$\mathcal{L}_{\omega,\gamma}^{\circ}(g) \in \BQ_p\otimes_{\BZ_p}\Lambda_{\cO_\lambda}$,
such that 
$$
\phi_{\zeta}(\mathcal{L}_{\omega,\gamma}^{\circ}(g))=
e_{p}^\circ(\zeta)\cdot \frac{L(1, g \otimes \psi_{\zeta}^{-1})}{\Omega_{\omega,\gamma}^{+}}
$$
for
$$
e_{p}^+(\zeta)=
\begin{cases*}
(-1)^{\frac{t+2}{2}} \cdot \frac{p^{t+1}}{\mathfrak{g}(\psi_{\zeta}^{-1})} 
\cdot \prod_{\text{odd } m=1}^{t-1} \Phi_{p^{m}}(\zeta)^{-1}
& if $t>0$ even\\
2 & if $t=0$.
\end{cases*}
$$
and
$$ 
e_{p}^-(\zeta)=
\begin{cases*}
(-1)^{\frac{t+1}{2}} \cdot \frac{p^{t+1}}{\mathfrak{g}(\psi_{\zeta}^{-1})} 
\cdot \prod_{\text{even } m=2}^{t-1} \Phi_{p^{m}}(\zeta)^{-1}
& if $t>0$ odd\\
p-1 & if $t=0$.
\end{cases*}
$$
\end{thm}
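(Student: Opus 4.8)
The plan is to deduce this from Pollack's decomposition of the $\alpha$- and $\beta$-stabilized cyclotomic $p$-adic $L$-functions, where $\alpha,\beta=\pm\sqrt{-p}$ are the two roots of $x^{2}+p$. Since $\ord_{p}(\alpha)=\ord_{p}(\beta)=\tfrac12<1$, Theorem~\ref{pcycQ} applies to each root and produces $\mathcal{L}_{\alpha,\omega,\gamma}(g)$ and $\mathcal{L}_{\beta,\omega,\gamma}(g)$ in $\mathfrak{K}_{1,\BQ_{p}}\otimes_{\Lambda,\iota_\epsilon}\Lambda$; in fact a sharper growth estimate (the bound underlying the constructions in \cite{K} and \cite{Po}, refining membership in $\mathfrak{K}_{1}$) shows these have growth rate $\le\tfrac12$. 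First I would recall Pollack's half-logarithms $\log_{p}^{\pm}\in\mathfrak{K}_{\frac12,\BQ_{p}}$, defined after the identification $\Lambda\simeq\BZ_{p}[\![T]\!]$ by the convergent products of the even- and odd-indexed cyclotomic factors, and record their basic properties: $\log_{p}^{+}$ (resp.\ $\log_{p}^{-}$) has growth rate exactly $\tfrac12$ and vanishes at $T=\zeta-1$ precisely when $\zeta$ is a primitive $p^{t}$-th root of unity with $t>0$ even (resp.\ odd), neither vanishes at $T=0$, and for the remaining $\zeta$ one has explicit evaluations $\phi_{\zeta}(\log_{p}^{\pm})$ expressible through $p^{t+1}$ and $\prod_{m}\Phi_{p^{m}}(\zeta)^{-1}$ that match the factors $e_{p}^{\pm}(\zeta)$ in the statement up to the power of $\alpha$.

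The heart of the argument is the divisibility assertion: $\mathcal{L}_{\alpha,\omega,\gamma}(g)-\mathcal{L}_{\beta,\omega,\gamma}(g)$ is divisible by $(\alpha-\beta)\log_{p}^{\pm}$ and $\alpha\mathcal{L}_{\beta,\omega,\gamma}(g)-\beta\mathcal{L}_{\alpha,\omega,\gamma}(g)$ by $(\alpha-\beta)\log_{p}^{\mp}$ (with the parity matching that of the zeros), and that the resulting quotients have bounded coefficients, i.e.\ lie in $\BQ_{p}\otimes_{\BZ_{p}}\Lambda_{\cO_\lambda}$. The point is that the only source of unboundedness in $\mathcal{L}_{\alpha,\omega,\gamma}(g)$ is the denominator $\alpha^{t+1}$ (of valuation $(t+1)/2$) in the interpolation factor $e_{p}(\zeta)$, which forces zeros of the relevant half-logarithm at exactly the right density; quantitatively, a function of growth $\le\tfrac12$ divided by $\log_{p}^{\pm}$, which has growth exactly $\tfrac12$ and zeros of density $\tfrac12$, has growth $\le 0$, hence bounded coefficients. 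One makes this precise by a $p$-adic Weierstrass-preparation argument comparing Newton polygons and valuations of coefficients, as in \cite{Po}. I would then \emph{define} $\mathcal{L}_{\omega,\gamma}^{+}(g)$ and $\mathcal{L}_{\omega,\gamma}^{-}(g)$ to be these two bounded quotients.

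Having the definition, the interpolation formula is checked character by character: applying $\phi_{\zeta}$ to the defining identities and substituting the interpolation of $\mathcal{L}_{\alpha,\omega,\gamma}(g)$ and $\mathcal{L}_{\beta,\omega,\gamma}(g)$ from Theorem~\ref{pcycQ} together with the explicit values $\phi_{\zeta}(\log_{p}^{\pm})$, one recovers $e_{p}^{\pm}(\zeta)\cdot L(1,g\otimes\psi_{\zeta}^{-1})/\Omega_{\omega,\gamma}^{+}$. For a $\zeta$ of the parity at which the corresponding half-logarithm vanishes there is nothing to prove, since the statement imposes no condition there; and for $t=0$ the constants $2$ and $p-1$ drop out of the identities $(1-\tfrac1\alpha)^{2}$, $(1-\tfrac1\beta)^{2}$ using $\alpha\beta=p$ and $\alpha+\beta=0$. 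Uniqueness is then immediate: the characters $\psi_{\zeta}$ of a fixed parity are Zariski-dense among the $\BQ_{p}$-points of $\Spec(\Lambda\otimes\BQ_{p})$, so a bounded element vanishing at all $T=\epsilon(\gamma_\cyc)\zeta-1$ of that parity vanishes identically by Weierstrass preparation.

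One can also package the existence half more structurally: the signed Coleman maps $Col^{\circ}_{\omega}$ of \S\ref{Coleman} land in $F_{\lambda}\otimes_{\BZ_{p}}\Lambda$, so $L^{\circ}_{\omega,\gamma}(g)=Col^{\circ}_{\omega}(\loc_{p}(\bz_{\gamma}(g)))$ from \eqref{ColBK-pm} is manifestly a bounded element, and by Kato's explicit reciprocity law (Theorem~\ref{ERLBKI}) together with $Col^{\circ}_{\omega}$ being the prescribed $\mathrm{Frac}(\mathfrak{K}_{\infty,F_{\lambda}(\alpha)})$-combination of the two $Col_{\eta_{\omega}}$, its interpolation reduces again to the half-logarithm computation above. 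The \textbf{main obstacle} in either route is the divisibility-with-bounded-quotient step: it requires first establishing the sharp growth bound $\le\tfrac12$ for $\mathcal{L}_{\alpha,\omega,\gamma}(g)$ (stronger than membership in $\mathfrak{K}_{1}$) and then the structural comparison with $\log_{p}^{\pm}$; keeping track of the $\iota_\epsilon$-twist and of the precise signs $(-1)^{(t+2)/2}$ and $(-1)^{(t+1)/2}$ so that they match Pollack's normalization is a delicate but routine bookkeeping task layered on top of this.
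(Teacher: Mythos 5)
Your proposal is correct, and it is essentially the argument of Pollack \cite{Po}, which is exactly what the paper cites for this theorem (the paper gives no independent proof): the $1/2$-admissibility of $\mathcal{L}_{\alpha,\omega,\gamma}(g)$ and $\mathcal{L}_{\beta,\omega,\gamma}(g)$, the vanishing of $\mathcal{L}_{\alpha}-\mathcal{L}_{\beta}$ and $\alpha\mathcal{L}_{\beta}-\beta\mathcal{L}_{\alpha}$ at roots of unity of the appropriate parity, division by $(\alpha-\beta)\log_p^{\mp}$ with bounded quotient via the growth comparison, and uniqueness from Weierstrass preparation are precisely the steps there, and the evaluations do come out to match $e_p^{\pm}$ (e.g.\ the constants $2$ and $p-1$ at $t=0$). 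One caveat: your alternative packaging via the signed Coleman maps is not logically independent, since $Col^{\circ}_{\omega}$ in Lei's normalization is itself built from the same half-logarithm decomposition.
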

\noindent Here $\Phi_{p^n}(X)$ is the $p^n$th cyclotomic polynomial.

These $p$-adic $L$-functions are also related to the Beilinson--Kato classes \cite[\S3.3.1]{L}:

\begin{thm}\label{ERLBKI-ss}
We have
$$
Col_{\omega}^{\circ}(\loc_{p}({\bf{z}}_{\gamma}(g))) = \mathcal{L}_{\omega, \gamma}^{\circ}(g).
$$
In other words, $L^\circ_{\omega,\gamma}(g) = \mathcal{L}_{\omega, \gamma}^{\circ}(g)$.
Furthermore, if (irr$_\BQ$) holds, $\omega$ is good, and $\gamma\in T$, then $\mathcal{L}_{\omega, \gamma}^{\circ}(g)\in \Lambda_{\cO_\lambda}$.
\end{thm}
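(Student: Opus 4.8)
The plan is to reduce this to Kato's explicit reciprocity law in the form already recorded as Theorem~\ref{ERLBKI}, by passing through the two crystalline periods attached to the roots $\alpha=\pm\sqrt{-p}$ of the Hecke polynomial $x^{2}+p$. Recall from \cite[\S3.4.2]{L} that the signed Coleman map $Col^{\circ}_{\omega}$ is, \emph{by construction}, a fixed $\mathrm{Frac}(\mathfrak{K}_{\infty,F_{\lambda}(\alpha)})$-linear combination of the two ``ordinary-type'' Coleman maps $Col_{\eta_{\omega}}$ obtained from the two choices of $\alpha$; more precisely there is an invertible $2\times2$ matrix with entries the half-logarithm power series $\log^{\pm}_{p}\in\mathfrak{K}_{\infty,\BQ_{p}}$ expressing the pair $(Col_{\eta_{\omega}})_{\alpha=+\sqrt{-p},\,-\sqrt{-p}}$ in terms of $(Col^{+}_{\omega},Col^{-}_{\omega})$. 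Correspondingly, Pollack's signed $p$-adic $L$-functions $\mathcal{L}^{\pm}_{\omega,\gamma}(g)$ from \cite{Po} (our Theorem~\ref{pcycQss}) are defined by exactly the \emph{same} decomposition of the $\alpha$-$p$-adic $L$-function $\mathcal{L}_{\alpha,\omega,\gamma}(g)$ of Theorem~\ref{pcycQ}: one has a relation of the shape $\mathcal{L}_{\alpha,\omega,\gamma}(g)=\log^{+}_{p}\cdot\mathcal{L}^{+}_{\omega,\gamma}(g)+\alpha^{-1}\log^{-}_{p}\cdot\mathcal{L}^{-}_{\omega,\gamma}(g)$ (up to the precise normalisation in \cite{Po,L}), with the same matrix of half-logarithms, and $\mathcal{L}^{\pm}_{\omega,\gamma}(g)$ is recovered by inverting this system over $\mathrm{Frac}(\mathfrak{K}_{\infty,F_{\lambda}(\alpha)})$.

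Granting the alignment of these two conventions, the first assertion is formal. By Theorem~\ref{ERLBKI} we have $Col_{\eta_{\omega}}(\loc_{p}({\bf{z}}_{\gamma}(g)))=\mathcal{L}_{\alpha,\omega,\gamma}(g)$ for \emph{each} of the two roots $\alpha=\pm\sqrt{-p}$. Applying the fixed $\mathrm{Frac}(\mathfrak{K}_{\infty,F_{\lambda}(\alpha)})$-linear combination to both sides of these two identities simultaneously, and using that $Col^{\circ}_{\omega}$ is a $\Lambda$-module map while the Tate twist enters $Col_{\eta_{\omega}}$ on both sides in the same way, we obtain $Col^{\circ}_{\omega}(\loc_{p}({\bf{z}}_{\gamma}(g)))=\mathcal{L}^{\circ}_{\omega,\gamma}(g)$, i.e.\ $L^{\circ}_{\omega,\gamma}(g)=\mathcal{L}^{\circ}_{\omega,\gamma}(g)$; this is precisely \cite[\S3.3.1]{L}. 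A self-contained alternative, avoiding any matrix bookkeeping, is to note that both $L^{\circ}_{\omega,\gamma}(g)$ and $\mathcal{L}^{\circ}_{\omega,\gamma}(g)$ lie in $\Lambda_{\cO_{\lambda}}\otimes_{\BZ_{p}}\BQ_{p}$, which injects into $\prod_{\zeta}\ov{\BQ}_{p}$ via the specialisations $\phi_{\zeta}$, and then to check that the two sides have the same value $e^{\circ}_{p}(\zeta)\cdot L(1,g\otimes\psi_{\zeta}^{-1})/\Omega^{+}_{\omega,\gamma}$ at each $\phi_{\zeta}$, by combining the interpolation property of $Col_{\eta_{\omega}}$ (\cite[Thm.~16.4]{K}, via Theorem~\ref{ERLBKI}) with the explicit evaluation of $\log^{\pm}_{p}$ at $T=\epsilon(\gamma_{\cyc})\zeta-1$.

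For the integrality statement, assume (irr$_{\BQ}$) holds (automatic in the supersingular case), $\omega$ is good, and $\gamma\in T$. Then property (iii) of the Beilinson--Kato elements (\S\ref{BK-rationals}) gives ${\bf{z}}_{\gamma}(g)\in H^{1}(\BZ[\tfrac{1}{p}],T(1)\otimes_{\BZ_{p}}\Lambda)$, so $\loc_{p}({\bf{z}}_{\gamma}(g))\in H^{1}(\BQ_{p},T(1)\otimes_{\BZ_{p}}\Lambda)$. By \cite[Thm.~2.14 and Rmk.~2.15]{BL} (equivalently by \cite[Cor.~4.11]{L}), when $\omega$ is good the signed Coleman map $Col^{\circ}_{\omega}$ carries $H^{1}(\BQ_{p},T(1)\otimes_{\BZ_{p}}\Lambda)$ into $\Lambda_{\cO_{\lambda}}$. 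Hence $\mathcal{L}^{\circ}_{\omega,\gamma}(g)=Col^{\circ}_{\omega}(\loc_{p}({\bf{z}}_{\gamma}(g)))\in\Lambda_{\cO_{\lambda}}$, as claimed.

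The main obstacle is therefore not conceptual but one of normalisation: one must match the splitting of the crystalline period (equivalently, the choice of the half-logarithms $\log^{\pm}_{p}$) used to define $Col^{\circ}_{\omega}$ in \cite{L} with the one used by Pollack \cite{Po} to define $\mathcal{L}^{\pm}_{\omega,\gamma}(g)$, and to track carefully the Tate twist together with the isomorphism $\iota_{\epsilon}$ of \S\ref{Coleman}, so that the equality $L^{\circ}_{\omega,\gamma}(g)=\mathcal{L}^{\circ}_{\omega,\gamma}(g)$ holds on the nose rather than up to a unit in $\mathfrak{K}_{\infty,F_{\lambda}(\alpha)}^{\times}$. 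Once these conventions are reconciled --- which is exactly the content of \cite[\S3]{L} --- the theorem follows formally from Theorem~\ref{ERLBKI}.
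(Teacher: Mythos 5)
Your proposal is correct and matches the paper's treatment: the paper simply quotes \cite[\S3.3.1]{L} for the displayed identity, whose proof is exactly your argument (apply Theorem~\ref{ERLBKI} for both roots $\alpha=\pm\sqrt{-p}$ and invert the common half-logarithm matrix relating the signed objects to the $\alpha$-objects, the normalisation matching being the content of \cite[\S3]{L}). Your integrality argument --- property (iii) of the Beilinson--Kato elements together with the fact that $Col^{\circ}_{\omega}$ sends integral classes into $\Lambda_{\cO_{\lambda}}$ when $\omega$ is good --- is verbatim the paper's.
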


The conclusion that $\mathcal{L}_{\omega, \gamma}^{\circ}(g)\in \Lambda_{\cO_\lambda}$ when (irr$_\BQ$) holds, $\omega$ is good, and $\gamma\in T$
is just a consequence of the displayed relation together with property (iii) of the Beilinson--Kato elements and the 
integrality of the image of $Col^\circ_{\omega}$ when $\omega$ is good (cf.~\S\ref{Coleman}).

\begin{remark}\label{NV-rmk2}
The non-vanishing of $\mathcal{L}_{\omega,\gamma}^\circ(g)$,
and hence of $L^\circ_{\alpha,\omega,\gamma}(g)$ by Theorem \ref{ERLBKI-ss}, is also an obvious consequence of Theorem \ref{NVIw}.
\end{remark}

\subsubsection{$p$-adic L-functions over $L$} \label{sspLcy} 
Let $g' = g\otimes\chi_L$. We choose $\alpha_{g'} = \alpha_g = \alpha$ to satisfy \eqref{non-crit}. 
For $0\neq \omega\in S_{F,g}$, let $\omega' \in S_{F,g'}$ be the image of $\omega$ under the fixed isomorphism \eqref{RigMod-Opt},
as in \S\ref{QuadraticTwist-III}.  Let $\gamma\in V=V_g$ and $\gamma'\in V'=V_{g'}$.
To the quadruple $(\alpha,\omega,\gamma,\gamma')$ we associate a $p$-adic $L$-function:
$$
\mathcal{L}_{\alpha,\omega,\gamma,\gamma'}(g_{/L}) =
\mathcal{L}_{\alpha,\omega,\gamma}(g) \cdot \mathcal{L}_{\alpha,\omega',\gamma'}(g') \in \mathfrak{K}_{1,F_\lambda(\alpha)}\otimes_{\Lambda,\iota_\epsilon}\Lambda.
$$

If $g$ is ordinary, then $\mathcal{L}_{\alpha,\omega,\gamma,\gamma'}(g_{/L})$ belongs to $\BQ_p\otimes_{\BZ_p}\Lambda_{\cO_\lambda}$,
and even to $\Lambda_{\cO_\lambda}$ if (irr$_\BQ$) holds, $\omega$ is good ,and $\gamma\in T_g$, $\gamma'\in T_{g'}$.

If $g$ is supersingular, then we similarly, define
$$
\mathcal{L}^\circ_{\omega,\gamma,\gamma'}(g_{/L}) =
\mathcal{L}^\circ_{\omega,\gamma}(g) \cdot \mathcal{L}^\circ_{\omega',\gamma'}(g') \in \BQ_p\otimes_{\BZ_p}\Lambda_{\cO_\lambda}.$$
If (irr$_\BQ$) holds, $\omega$ is good ,and $\gamma\in T_g$, $\gamma'\in T_{g'}$, then $\mathcal{L}^\circ_{\omega,\gamma,\gamma'}(g_{/L})\in\Lambda_{\cO_\lambda}$.

Combining Theorems \ref{ERLBKI} and \ref{ERLBKI-ss} with the definitions of the Beilinson--Kato elements $\bz^\Box_{\alpha,\omega,\gamma,\gamma'}(g_{/L})$
easily yields the following:

\begin{thm}\label{ERLBKII}\hfill
\begin{itemize}
\item[(a)] If $g$ is ordinary, then
$$
Col_{\eta_{\omega},v}(\loc_v(\bz^\ord_{\alpha,\omega,\gamma,\gamma'}(g_{/L}))) = \mathcal{L}_{\alpha,\omega,\gamma,\gamma'}(g_{/L}).
$$
\item[(b)] If $g$ is supersingular, then 
$$
Col^\circ_{{\omega},v}(\loc_v(\bz^\circ_{\alpha,\omega,\gamma,\gamma'}(g_{/L}))) = \mathcal{L}^\circ_{\omega,\gamma,\gamma'}(g_{/L}).
$$
\end{itemize}
\end{thm}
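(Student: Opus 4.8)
The plan is to simply unwind the definition of the Beilinson--Kato element over $L$ from \S\ref{BK-overL}, apply $Col_{\eta_{\omega},v}\circ\loc_v$ term by term, and recognise the two resulting quantities as cyclotomic $p$-adic $L$-functions over $\BQ$ via the explicit reciprocity laws already recorded (Theorem~\ref{ERLBKI} in the ordinary case, Theorem~\ref{ERLBKI-ss} in the supersingular case). I will treat (a) in detail; (b) is obtained verbatim by replacing $Col_{\eta_{\omega}}$, $Col_{\eta'_{\omega}}$ with the signed Coleman maps $Col^\circ_{\omega}$, $Col^\circ_{\omega'}$ and Theorem~\ref{ERLBKI} with Theorem~\ref{ERLBKI-ss}.

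First I would use that $Col_{\eta_{\omega},v}\circ\loc_v$ is $\Lambda$-linear (the map $Col_{\eta_{\omega}}$ of \eqref{Col-Lambda} is a $\Lambda$-module map, and so are $\loc_v$ and $\iota_v$), together with the fact that the coefficients $L_{\alpha,\omega',\gamma'}(g')$ and $L_{\alpha,\omega,\gamma}(g)$ lie in $\Lambda_{\cO_\lambda}\otimes_{\BZ_p}\BQ_p$ by Lemma~\ref{ColBK-lem}(ii) (here $p$ odd makes the factor $\tfrac12$ harmless). Applying the map to $\bz^\ord_{\alpha,\omega,\gamma,\gamma'}(g_{/L}) = \tfrac12\big(L_{\alpha,\omega',\gamma'}(g')\,\bz_\gamma(g) + L_{\alpha,\omega,\gamma}(g)\,\bz_{\gamma'}(g')\big)$ then reduces everything to computing $Col_{\eta_{\omega},v}(\loc_v(\bz_\gamma(g)))$ and $Col_{\eta_{\omega},v}(\loc_v(\bz_{\gamma'}(g')))$, where $\bz_\gamma(g)$ and $\bz_{\gamma'}(g')$ are viewed in $H^1(\cO_L[\tfrac1p],T(1)\otimes_{\BZ_p}\Lambda)^{\pm}\otimes\BQ_p$ via \eqref{spl+}, respectively via $res|_{G_L}\circ Tw_*^{-1}$ and \eqref{spl-}.

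The heart of the computation is then purely local. Since $G_{L_v}=G_{\BQ_p}$, the composite of restriction to $G_L$ with $\loc_v$ is $\loc_p$, and since $\iota_v$ is the identity (and $\chi_L$ is trivial on $G_{\BQ_p}$ as $p$ splits in $L$), the term from $\bz_\gamma(g)$ is $Col_{\eta_{\omega}}(\loc_p(\bz_\gamma(g))) = L_{\alpha,\omega,\gamma}(g)$ by \eqref{ColBK}; the term from $\bz_{\gamma'}(g')$ is $Col_{\eta_{\omega}}(Tw_*^{-1}(\loc_p(\bz_{\gamma'}(g'))))$, which by Lemma~\ref{ColTw}(a) (equivalently Lemma~\ref{ColTw-L}(a)) equals $Col_{\eta'_{\omega}}(\loc_p(\bz_{\gamma'}(g'))) = L_{\alpha,\omega',\gamma'}(g')$. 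The key point to verify is that there is no sign change at $v$ — in contrast with the vanishing computation at $\bar v$ in the proof of Lemma~\ref{BK-lem}, where the twist by $\tau$ in $\iota_{\bar v}$ forces a minus sign — and this is exactly the content of Lemma~\ref{ColTw-L}(a). Substituting back, the $\tfrac12$ and the symmetry give $Col_{\eta_{\omega},v}(\loc_v(\bz^\ord_{\alpha,\omega,\gamma,\gamma'}(g_{/L}))) = L_{\alpha,\omega,\gamma}(g)\cdot L_{\alpha,\omega',\gamma'}(g')$, and this equals $\mathcal{L}_{\alpha,\omega,\gamma}(g)\cdot\mathcal{L}_{\alpha,\omega',\gamma'}(g') = \mathcal{L}_{\alpha,\omega,\gamma,\gamma'}(g_{/L})$ by Theorem~\ref{ERLBKI} and the definition in \S\ref{sspLcy}. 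I do not expect any genuine obstacle here: the argument is a formal manipulation, and the only delicate aspect is the bookkeeping of the identifications \eqref{spl+}, \eqref{spl-} and the twisting isomorphism $Tw_*$ at the place $v$, which is fully dictated by Lemmas~\ref{ColTw} and~\ref{ColTw-L}.
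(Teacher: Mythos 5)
Your proposal is correct and is essentially the paper's own argument: the paper deduces Theorem \ref{ERLBKII} by "combining Theorems \ref{ERLBKI} and \ref{ERLBKI-ss} with the definitions of the Beilinson--Kato elements," and the explicit local computation you give (linearity, $G_{L_v}=G_{\BQ_p}$, no sign change at $v$ via Lemma \ref{ColTw-L}(a), the two terms coinciding so the factor $\tfrac12$ drops out) is exactly the computation carried out at $v$ in the proof of Theorem \ref{cycIwL-Box-II}, mirroring the $\bar v$ computation of Lemma \ref{BK-lem}. No gaps.
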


\begin{remark} In view of Theorem \ref{pcycQ} and Theorem \ref{pcycQss}, the cyclotomic $p$-adic $L$-functions 
$\mathcal{L}_{\alpha,\omega,\gamma,\gamma'}(g_{/L})$ and  $\mathcal{L}_{\omega,\gamma,\gamma'}^{\circ}(g_{/L})$
are characterised by an analogous interpolation property with respect to the $L$-values $L(1,g,\psi_\zeta^{-1})L(1,g,\chi_L\psi_\zeta^{-1})$. 
It can be thus viewed as a cyclotomic $p$-adic L-function for the base change of the newform $g$ to the imaginary quadratic field $L$. 
\end{remark}

\begin{remark}\label{NVcyc}
The cyclotomic $p$-adic $L$-functions $\mathcal{L}_{\alpha,\omega,\gamma,\gamma'}(g_{/L})$ and 
$\mathcal{L}_{\omega, \gamma,\gamma'}^{\circ}(g_{/L})$ are non-zero since the factors in their definition are non-zero
(see Remarks \ref{NV-rmk1} and \ref{NV-rmk2}).
\end{remark}

\section{CM Hida families}\label{CMHF}
In this section we study the $\Lambda$-adic Tate-module associated with certain Hida families of CM newforms.

\subsection{The families}
We define the CM Hida families of interest after a few preliminaries.

\subsubsection{Imaginary quadratic fields, again}\label{IQF-II}
Let $L$ be an imaginary quadratic field as in \S\ref{IQF}.  Let $c\in\Gal(L/\BQ)$ be the non-trivial automorphism. 
Let $w_{L}$ be the number of roots of unity in $L$ and let $h_L$ be the class number of $L$.
Let $N_{L/\BQ}: L^{\times} \ra \BQ^{\times}$ be the norm map.
Let $\BA$ (resp. $\BA^{(\infty)}$) be the adeles (resp. finite adeles) over $\BQ$, and let $\BA_L = \BA\otimes_{\BQ} L$ (resp. $\BA_{L}^{(\infty)} = \BA^{(\infty)}\otimes_\BQ L$) be
the adeles (resp. finite adeles) over $L$.
For a place $w$ of $L$, let $\varpi_{w} \in L_{w}^{\times}$ be a uniformiser. 
Let $\rec_{L_{w}}:L_{w}^{\times} \ra G_{L_{w}}^{\ab}$ be the reciprocity map of local class field theory normalised so that uniformisers map to lifts of the arithmetic Frobenius $\Frob_w$. 
Similarly, let $\rec_{L}:\BA_{L}^{\times}/L^{\times} \ra G_{L}^{\ab}$ be the reciprocity map of global class field theory normalised so that $\rec_{L}|_{L_{w}^{\times}}=\rec_{L_{w}}$. 
For $x\in \BA_{L}$ and $\ell$ a prime of $\BQ$, let $x_{\ell}=(x_w)_{w\mid\ell}$ be the $\ell$-component. 
For each non-zero fractional ideal $\mathfrak{a}$ of $L$, let $x_{\mathfrak{a}} \in (\BA_{L}^{(\infty)})^{\times}$ be a finite id\'ele so that $\ord_{w}(x_{\mathfrak{a},w})=\ord_{w}(\mathfrak{a})$ for each finite place 
$w$ with $\ord_{w}(\mathfrak{a})\neq 0$ and $x_{\mathfrak{a},w} = 1$ for all other places $w$. 

\subsubsection{Hecke characters}\label{Hecke-char}
Let $\psi: \BA_{L}^{\times}/L^{\times} \ra \BC^\times$ be an algebraic Hecke character over $L$.
By this we mean that there is a pair of integers $(m,n)$ such that the restriction of $\psi$ to the identity component of $(L\otimes \BR)^\times$
is given by the composition $\rho_\infty$ of the algebraic character $\rho: (L\otimes\BR)^{\times} \ra 
(\ov{\BQ}\otimes\BR)^{\times}$, $\rho(x\otimes r) = (x\otimes r)^m (\bar{x}\otimes r)^n$ for $\bar{x} = c(x)$,
with the homomorphism $(\ov{\BQ}\otimes\BR)^{\times}\rightarrow \BC^\times$ induced by $\iota_\infty$.
We call the pair $(m,n)$ the infinity type of $\psi$.
The values of the character 
$$
\BA_{L}^{\times} \ra \BC^{\times},  \text{ $\alpha \mapsto \rho_\infty(\alpha_{\infty})^{-1} \psi(\alpha)$},
$$
generate a number field $F_{\psi}$. In what follows, we regard $F_{\psi} \subset \ov{\BQ}$ via the embedding $\iota_\infty$.
Let 
$\psi^c =\psi \circ c$ be the conjugate Hecke character. The infinity type of 
$\psi^c$ is $(n,m)$.

Let $\rho_p$ be the composition of the algebraic character $\rho:(L\otimes\BQ_p)^{\times} \ra 
(\ov{\BQ}\otimes\BQ_p)^{\times}$, $\rho(x\otimes r) = (x\otimes r)^m (\bar{x}\otimes r)^n$, with the homomorphism $(\ov{\BQ}\otimes\BQ_p)^{\times}\rightarrow \ov{\BQ}_p^\times$ induced by $\iota_p$.
The $p$-adic Galois character associated to $\psi$ is given by
$$
\rho_{\psi}: G_{L} \ra \ov{\BQ}_{p}^{\times}, 
\text{ $\rho_{\psi}(\sigma)=\rho_p(\alpha_{p})\rho_\infty(\alpha_{\infty})^{-1}\psi(\alpha)$ for $\sigma=\rec_{L}(\alpha)$}.
$$
Here we view $\rho_\infty(\alpha_{\infty})^{-1}\psi(\alpha)$ as belonging to $\ov{\BQ}_p$ by regarding $F_{\psi}$
as a subfield of $\ov{\BQ}_{p}$ via the embedding $\iota_p$. 
This representation is Hodge--Tate at each place $w\mid p$ of $L$: the Hodge-Tate weight\footnote{We adopt geometric conventions for Hodge--Tate weights. In particular, the Hodge--Tate weight of the 
$p$-adic cyclotomic character $\epsilon$ is $-1$.} 
at $v$ is $m$ and the Hodge--Tate weight at $\bar v$ is $n$. 

Serre \cite{Se} proved that $\psi \mapsto \rho_{\psi}$ is a bijection between the algebraic Hecke characters of $\BA_L^\times$ and the $p$-adic Galois characters of $G_L$ that are unramified outside a finite set of places and are Hodge--Tate at the places above $p$.

If $\psi$ has infinity type $(m,n)$, then the restriction of $\psi$ to $\BA^\times\subset \BA_L^\times$ equals
$\epsilon_\psi |\cdot|^{m+n}$ for some finite order character $\epsilon_\psi$.

\subsubsection{CM modular forms}
Let $\psi$ be an algebraic Hecke character over $L$ with infinity type $(1-k,0)$  for some integer $k\geq 1$ and conductor $\mathfrak{f}_\psi$.
Let $N_\psi = D_LN_{L/\BQ}(\mathfrak{f}_{\psi})$ and let $\chi_\psi = \chi_{L}\epsilon_{\psi}$ (which we view as a Dirichlet character modulo $N_\psi$).
Let $\theta_{\psi} \in S_{k}(\Gamma_{1}(N_\psi),\chi_{\psi})$ be the corresponding CM modular form which has $q$-expansion 
\begin{equation}\label{defCM}
\theta_{\psi}(q)=\sum_{0 \neq \mathfrak{a}\subset \cO_{L}, (\mathfrak{a},\mathfrak{f}_\psi) =1} \psi(x_\mathfrak{a}) q^{N_{L/\BQ}(\mathfrak{a})},
\end{equation}
and so 
\begin{equation}\label{CM-LF}
L(s,\theta_{\psi}) = L(s,\psi). 
\end{equation}
The Hecke field $F_{\theta_{\psi}}$ is a subfield of $F_{\psi}$.

Suppose $(\mathfrak{f}_\psi,p) = 1$. Then $\theta_\psi$ is $p$-ordinary:
The coefficient of $q^p$ is $\psi_v(p)+\psi_{\bar{v}}(p)$ and the valuation of $\psi_v(p)$ (with respect to $\iota_p$) is $k-1$ while the valuation of $\psi_{\bar{v}}(p)$ is $0$. Note that if $k=1$, then the coefficient of $q^p$ need not be a $p$-adic unit, but the Hecke polynomial at $p$ has a $p$-adic unit root at the prime above $p$ determined by $\iota_p$.

Let $\lambda\mid p$ be the place of $F_\psi$ determined by the embedding $\iota_p$. 
The representation $\rho_\psi$ takes values in $F_{\psi,\lambda}$.
If $V_{F_{\psi,\lambda}, \theta_\psi}$ is the $p$-adic Galois representation associated to
$\theta_\psi$ as in \cite[\S\S6.3,8.3,15.10]{K} (denoted $V_{F_{\psi,\lambda}}(\theta_\psi)$ in {\it op.~cit.}), then 
\begin{equation}\label{defInd}
V_{F_{\psi,\lambda}, \theta_\psi}^\vee \simeq \Ind^{G_{\BQ}}_{G_{L}} (\rho_{\psi}).
\end{equation}

\subsubsection{Some $\BZ_p$-extensions of $L$}\label{Zp-ext}
Let $L_\infty$ be the $\BZ_{p}^2$-extension of $L$.  Let $L_{\infty}^{\cyc} \subset L_{\infty}$ (resp.~$L_{\infty}^{\ac} \subset L_{\infty}$) be the cyclotomic (resp.~anticyclotomic) $\BZ_p$-extension of $L$. 
For a prime $w\mid p$, let $L_{\infty}^{w}\subset L_\infty$ be the $\BZ_p$-extension unramified away from $w$.  
For $\Box= \emptyset$, $\cyc$, $\ac$, or $w$, let $\Gamma_L^\Box = \Gal(L_\infty^\Box/L)$.
Then $\Gamma_L = \Gamma_L^\emptyset \cong \BZ_p^2$ and 
$\Gamma_L^\Box \cong\BZ_p$ if $\Box\neq \emptyset$.
Let $\ov{\Gamma}_L^{w} \subset \Gamma_L^{w}$ be the image of $G_{L_{w}} \subset G_{L}$
under the canonical projection $G_{L} \twoheadrightarrow \Gamma_L^{w}$. Let $h_p \geq 0$ be the integer such that
$\ov{\Gamma}_L^{w}=(\Gamma_L^{w})^{p^{h_p}}$. 
If $p \nmid h_{L}$, then $h_p=0$.

The group $\Gal(L/\BQ) = \{c,1\}$ acts on $\Gamma_L$ via conjugation by any lift of $c$ to $\Gal(L_\infty/\BQ)$. 
Let $\Gamma_L^\pm\subset\Gamma_L$ be the $\BZ_p$-summand of
rank one on which this action is just multiplication by $\pm 1$. The subgroup $\Gamma_L^+$ (resp.~$\Gamma_L^-$) is mapped
isomorphically onto $\Gamma_L^\cyc$ (resp. $\Gamma_L^\ac$) via the canonical projection from $\Gamma_L$.

Let $\gamma_\Box \in \Gamma_L^\Box$, $\Box = \pm$, $\cyc$, $\ac$, or $w$, be a topological generator. 
In light of the canonical isomorphisms $\Gamma_L^+\isoarrow\Gamma_L^\cyc\isoarrow \Gamma$ and $\Gamma_L^-\isoarrow\Gamma_L^\ac$ we can and do choose these so that $\gamma_+\in\Gamma_L^+$ 
and $\gamma_\cyc\in \Gamma^\cyc_L$ are 
identified with the previously chosen $\gamma_\cyc\in\Gamma$, $\gamma_-$ is identified with $\gamma_\ac$, 
$c\gamma_vc^{-1} = \gamma_{\bar v}$, $\gamma_+$ maps to $\gamma_w^{p^{h_p}/2}$ ($w=v,\bar v$), and 
$\gamma_-$ maps to $\gamma_v^{1/2}$ and to $\gamma_{\bar v}^{-1/2}$.
Note that $\gamma_w^{p^{h_p}}$ is a topological generator of $\ov{\Gamma}_L^w$.

\subsubsection{More Iwasawa algebras}\label{MoreIwasawa}
For $\Box = \emptyset$, $\pm$, $\cyc$, $\ac$, or $w$, let
$$
\Lambda^\Box_L = \BZ_p[\![\Gamma^\Box_L]\!].
$$
Similarly, for a $p$-adically complete $\BZ_p$-algebra $R$ let $\Lambda_{L,R}^\Box = R[\![\Gamma_L^\Box]\!]$.  

For $\Box\neq \pm$, let
$$
\Psi_L^\Box: G_L \twoheadrightarrow \Gamma_L^\Box
$$
be the canonical projection. We view these as $(\Lambda^\Box_L)^\times$-valued characters of $G_L$ (the canonical characters).
The canonical projections $\Gamma_L\twoheadrightarrow \Gamma_L^\Box$ induce ring homomorphisms $\Lambda_L\twoheadrightarrow\Lambda_L^\Box$.
Similarly, the canonical projection $\Gamma_L\twoheadrightarrow\Gamma$ induces a homomorphism $\Lambda_L\twoheadrightarrow \Lambda$.
The canonical projection $\Gamma^\cyc_L\twoheadrightarrow\Gamma$ is an isomorphism, inducing an identification $\Lambda_L^\cyc\isoarrow \Lambda$;
the projection $\Lambda_L\twoheadrightarrow \Lambda$ factors through this isomorphism.
Canonical characters map to canonical characters under
all these ring homomorphisms.

There are identifications
$$
\Lambda_{L,R}^\Box\isoarrow R[\![T_\Box]\!],  \ \ \gamma_\Box \mapsto 1+T_\Box.
$$
Under these, the canonical
identification of $\Lambda_{L,R}^\cyc$ with $\Lambda_R$ becomes the isomorphism $R[\![T_\cyc]\!] \simeq R[\![T]\!]$, $T_\cyc\mapsto T$.

Let $(\Box,\Box') = (\cyc,w)$ or $(\cyc,\ac)$. Then the canonical projections induce $\Gamma_L \isoarrow \Gamma_L^\Box \times \Gamma_L^{\Box'}$,
and hence an isomorphism of rings 
\begin{equation}\label{LambdaL-fact}
\Lambda_{L,R} \simeq \Lambda^\Box_{L,R}\hat\otimes_{R}\Lambda^{\Box'}_{L,R}.
\end{equation} 
These induce isomorphisms
$$
\Lambda_{L,R} \simeq R[\![T_\cyc, T_\ac]\!]  \ \ \text{and} \ \ \Lambda_{L,R} \simeq R[\![T_\cyc,T_w]\!].
$$
Note that with respect to the first (resp.~second) of these isomorphisms, the
projection $\Lambda_{L,R}\twoheadrightarrow\Lambda_R$ is just the map 
$T_\cyc\mapsto T$, $T_\ac\mapsto 0$
(resp.~$T_w\mapsto 0$) on power series rings.

The inclusion 
$\Gamma_L^+\times\Gamma_L^- \isoarrow \Gamma_L$
induces an isomorphism 
$\Lambda_{L,R}^+\hat\otimes_R\Lambda_{L,R}^- \isoarrow \Lambda_L$
and hence an isomorphism
$$
R[\![T_+,T_-]\!]\simeq\Lambda_{L,R}.
$$
The isomorphism $\Lambda_{L,R} \simeq R[\![T_\cyc, T_\ac]\!]$ is then the map
$T_+\mapsto T_\cyc$, $T_-\mapsto T_\ac$ on power series rings.
Similarly, the isomorphism $\Lambda_{L,R} \simeq R[\![T_\cyc,T_v]\!]$ is  the map
$T_+\mapsto (1+T_\cyc)(1+T_v)^{p^{h_p}/2}-1$, $1+T_-\mapsto (1+T_v)^{1/2}$.

\subsubsection{A $\Lambda_L^v$-family of $CM$ forms}\label{CMHidaFam}
Let 
\begin{equation}\label{LamHec}
\Theta_{v}: \BA_{L}^{\times} \ra \Gamma_{L}^{v}
\end{equation}
be the composition of the reciprocity map $\rec_{L}:\BA_{L}^{\times} \ra G_{L}^{\ab}$ with the canonical projection $G_{L}^{\ab} \twoheadrightarrow \Gamma_{L}^v$.
Let ${\bf{h}}_{v}$ be the  
family of CM forms associated to $\Theta_v$, with $q$-expansion given by
$$
{\bf{h}}_{v}(q) = \sum_{n \geq 1} {\bf{b}}(n) q^{n} \in \Lambda_{L}^{v}[\![q]\!], \ \  
{\bf{b}}(n)= \sum_{{\mathfrak{a}\subset \cO_{L}, {v} \nmid \mathfrak{a}, N_{L/\BQ}(\mathfrak{a})=n} } \Theta_{v}(x_{\mathfrak{a}}).
$$
We refer to ${\bf{h}}_{v}$ as the  canonical CM family with CM by $L$.

For a $\BZ_p$-algebra homomorphism $\psi: \Lambda_{L}^{v} \ra \ov{\BQ}_{p}$ 
such that $\psi(\gamma_v^{h_p}) = \epsilon(\gamma_+)^{m}$ with $m \equiv 0 \mod{p-1}$, $m\geq 0$,
the specialisation $h_{v,\psi}$ with the $q$-expansion  
$$
h_{v,\psi}(q)=\sum_{n \geq 1} \psi({\bf{b}}(n)) q^{n} 
$$
is the $p$-ordinary stabilisation of a CM newform of weight $k=m+1$ and level $\Gamma_{1}(D_{L})$. 
Indeed, $\psi|_{\Gamma_L^v}$ -- viewed as a character of $G_L$ --  is visibly unramified outside $p$ and has Hodge--Tate weight $-m=1-k$ (resp. $0$) at $v$ (resp. $\bar{v}$). The corresponding algebraic Hecke character over $L$ is readily seen to be unramified with infinity type $(1-k,0)$. Occasionally we also denote this algebraic Hecke character by $\psi$ as well.
It follows that ${\bf{h}}_{v}$ is a $p$-ordinary $\Lambda_{L}^{v}$-adic modular form of tame level $D_{L}$ (see Section \ref{Hida-Hecke} below).

\subsection{The Tate lattices}\label{sTatel}
We analyse certain lattices in the Galois representation associated with the CM family ${\bf{h}}_{v}$.

\subsubsection{The Hida--Iwasawa algebras $\Lambda_D$ and $\Lambda_0$}
Let 
$$
\Lambda_D = \BZ_p[\![\BZ_p^\times]\!] \ \ \text{and} \ \ \Lambda_{0}=\BZ_p[\![1+p\BZ_p]\!].
$$ 
Since $\BZ_p^\times = \mu_{p-1} \times 1+p\BZ_p$, there is a canonical identification 
$\Lambda_D = \Lambda_0[\mu_{p-1}]$. Then $\Lambda_D = \oplus_{i=0}^{p-2} \Lambda_D^{(i)}$, where
$\Lambda_D^{(i)}$ is the direct summand on which multiplication by the group element $[\zeta]\in \mu_{p-1}$
acts as $\zeta^i$. In particular, $\Lambda_D^{(i)} = \delta_i\cdot\Lambda_D = \Lambda_0\cdot\delta_i$, where 
$\delta_i = \frac{1}{(p-1)} \sum_{\zeta\in \mu_{p-1}}\zeta^{-i} [\zeta] \in \BZ_p[\mu_{p-1}]$.

 We fix the $\BZ_p$-isomorphism $\Lambda_\CG\isoarrow \Lambda_D$ sending $g\in\CG$ to $[\epsilon(g)]\in \BZ_p^\times\subset \Lambda_D$.
 This identifies $\Lambda$ with $\Lambda_0$ (via $\gamma_\cyc\mapsto [\epsilon(\gamma_\cyc)]$).
This also identifies the idempotent $\eps_i \in \Lambda_{\CG}$ with $\delta_{i}\in\Lambda_D$.
Via this identification the character $\Psi_D:G_\BQ\rightarrow \Lambda_D$, $g\mapsto [\epsilon(g)] \in \BZ_p^\times$ is
identified with the canonical character $\Psi_\CG$.

\subsubsection{Hida's Hecke algebras}\label{Hida-Hecke} 
Let $M$ be a positive integer such that $p\nmid M$.
Let 
$$
\BT_{Mp^\infty}^{\ord} = \varprojlim_{r} \BT_{\Gamma_{1}(Mp^{r})}^{' \ord} \ \ \text{and} \ \ \BH_{Mp^\infty}^{\ord} = \varprojlim_{r} \BH_{\Gamma_{1}(Mp^{r})}^{' \ord}
$$
be Hida's ordinary Hecke algebras, where $\BT_{\Gamma_{1}(Mp^{r})}^{' \ord}$ (resp.~$\BH_{\Gamma_{1}(Mp^{r})}^{' \ord}$) is the $p$-ordinary direct summand of the Hecke algebra 
$\BT_{\Gamma_{1}(Mp^{r})/\BZ_{p}}'$ (resp.~$\BH_{\Gamma_{1}(Mp^{r})}'$) 
acting on cuspforms (resp.~on modular forms). 
Recall that there is a Hecke operator $U_{p}'$ in both $\BT_{Mp^\infty}^{\ord}$ and $\BH_{Mp^\infty}^{\ord}$ \cite[Def. 2.4.3]{KLZ}. The operator $U_p'$ projects to the $T_p'$-operator in $\BT_{\Gamma_{1}(Mp^{r})}^{' \ord}$
and $\BH_{\Gamma_{1}(Mp^{r})}^{' \ord}$. 
Both $\BT_{Mp^\infty}^{\ord}$ and $\BH_{Mp^\infty}^{\ord}$ are $\Lambda_D$-algebras, with the group element $[a]\in \BZ_p^\times \subset \Lambda_D^\times$ acting 
on modular forms of level $Np^r$ via the inverse of the diamond operator $\langle a'\rangle$ for $a' = (a \mod p^r,1) \in (\BZ/p^r\BZ \times\BZ/M\BZ)^\times$. 
The inclusion of the spaces of cuspforms in the spaces of modular forms induces a surjection $\BH_{Mp^\infty}^\ord\twoheadrightarrow \BT_{Mp^\infty}^\ord$
of $\Lambda_D$-algebras.
Moreover, $\BT_{Mp^\infty}^{\ord}$ and  $\BH_{Mp^\infty}^{\ord}$ are finite, free $\Lambda_{0}$-modules.

Let $M_{\Lambda_{D}}^{\ord}$ be the space of $p$-ordinary $\Lambda_D$-adic modular forms over $\BZ_{p}$ with tame level $M$,
and let $S_{\Lambda_{D}}^{\ord}\subset M_{\Lambda_{D}}^{\ord}$ be the subspace of $\Lambda_D$-adic cuspforms \cite[\S7.4]{KLZ}. 
The former is an $\BH_{Mp^\infty}^\ord$-module and the latter a $\BT_{Mp^\infty}^\ord$-module. 
There is a canonical duality
\begin{equation}\label{HidaDu}
\Hom_{\Lambda_{D}}(\BT_{Mp^{\infty}}^{\ord}, \Lambda_{D}) \simeq S_{\Lambda_{D}}^{\ord}
\end{equation}
of $\BT_{Mp^{\infty}}^{\ord}$-modules \cite[Thm.~2.5.3]{Ohta}.

The canonical CM family $\bf{h}_v$ belongs to the space $S_{\Lambda_L^v}^\ord = S_{\Lambda_D}^\ord\otimes_{\Lambda_D}\Lambda_L^v$ for $M=D_L$. Here we view $\Lambda_L^v$ as a $\Lambda_D$-algebra via the $\BZ_p$-homomorphism $\varphi_v:\Lambda_D\rightarrow \Lambda_L^v$ sending 
$[\epsilon(\tilde\gamma)]$ to $\epsilon(\tilde\gamma)^{-1}\gamma_v$ for all all lifts $\tilde\gamma\in G_L$ of $\gamma_\cyc\in \Gamma^\cyc$.
Note that $\varphi_v$ factors through the projection $\Lambda_D\twoheadrightarrow \Lambda_D^{(-1)} = \Lambda_0\cdot\delta^{(-1)}$.

\subsubsection{The $\Lambda_D$-adic Tate modules: generalities}
Let 
$$
H^{1}_{\ord}(Mp^{\infty}) = \varprojlim_{r} H^{1}_{\ord}(X_{1}(Mp^{r})_{/\ov{\BQ}}, \BZ_{p}(1)) \ \ \text{and} \ \ \CH^{1}_{\ord}(Mp^{\infty}) = \varprojlim_{r} H^{1}_{\ord}(Y_{1}(Mp^{r})_{/\ov{\BQ}}, \BZ_{p}(1))
$$
be the $\Lambda_D$-adic Tate modules.  
Here 
the subscript `$\ord$' refers to the $p$-ordinary direct summand of the first \'etale cohomology arising from the dual $p$-ordinary idempotent (cf.~\cite[\S7.2]{KLZ}). Note that $H^{1}_{\ord}(Mp^{\infty})$ 
(resp.~$\CH^{1}_{\ord}(Mp^{\infty})$) has a natural structure of a $\BT_{Mp^{\infty}}^{\ord}[G_{\BQ}]$-module (resp.~a $\BH_{Mp^{\infty}}^{\ord}[G_{\BQ}]$-module).
We recall some of the fundemental properties of these Tate modules \cite{Oh1}, \cite{Oh2}.

\begin{thm}\label{ordTate}
There exists a commutative diagram 
$$
\begin{tikzcd}[row sep=2.5em]
0 \arrow[r,] & \mathcal{F}^{+}H^{1}_{\ord}(Mp^{\infty}) \arrow[r,] \arrow[d,"="] & H^{1}_{\ord}(Mp^{\infty}) \arrow[r,] \arrow [d,hook] & \mathcal{F}^{-}H^{1}_{\ord}(Mp^{\infty}) \arrow[r,]\arrow[d,hook] & 0 \\
0 \arrow[r,] & \mathcal{F}^{+}\CH^{1}_{\ord}(Mp^{\infty}) \arrow[r,] & \CH^{1}_{\ord}(Mp^{\infty}) \arrow[r,] & \mathcal{F}^{-}\CH^{1}_{\ord}(Mp^{\infty}) \arrow[r,] & 0 
\end{tikzcd}
$$
of $\BH_{Mp^\infty}^\ord[G_{\BQ_p}]$-modules,
where the top and bottom lines are short exact sequences, the second vertical arrow is induced by the canonical inclusions $H^1(X_1(Mp^r),\BZ_p(1)) \hookrightarrow H^1(Y_1(Mp^r),\BZ_p(1))$, and 
furthermore:
\begin{itemize}
\item[(i)] The $G_{\BQ_{p}}$-action on $\mathcal{F}^{+}H^{1}_{\ord}(Mp^{\infty}) = \mathcal{F}^{+}\CH^{1}_{\ord}(Mp^{\infty})$ is such that $I_{p}$ acts via the 
character $\epsilon\Psi_D$. 
Moreover, 
$$
\mathcal{F}^{+}H^{1}_{\ord}(Mp^{\infty}) = \mathcal{F}^{+}\CH^{1}_{\ord}(Mp^{\infty})\simeq \BT_{Mp^{\infty}}^{\ord}
$$
as $\BH_{Mp^{\infty}}^{\ord}$-modules. 

\item[(ii)] The $G_{\BQ_p}$-action on the quotients $\mathcal{F}^{-}H^{1}_{\ord}(Mp^{\infty})$ and $\mathcal{F}^{-}\CH^{1}_{\ord}(Mp^{\infty})$ is unramified with an arithmetic Frobenius acting via the Hecke operator $U_{p}'$. Moreover, there exist isomorphisms
$$
\mathcal{F}^{-}H^{1}_{\ord}(Mp^{\infty}) \simeq S_{\Lambda_D} \ \ \text{and} \ \ \mathcal{F}^{-}\CH^{1}_{\ord}(Mp^{\infty}) \simeq M_{\Lambda_D}
$$
of $\BT_{Mp^{\infty}}^{\ord}$-modules and $\BH_{Mp^{\infty}}^{\ord}$-modules, respectively, such that the third vertical arrow corresponds to the inclusion $S_{\Lambda_D} \hookrightarrow M_{\Lambda_D}$.
\end{itemize}
\end{thm}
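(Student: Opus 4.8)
The plan is to derive Theorem~\ref{ordTate} by assembling Ohta's structural results on ordinary $\Lambda$-adic Tate modules \cite{Oh1,Oh2}, first establishing a finite-level version of the diagram and then passing to the inverse limit. At level $r$, applying the dual ordinary idempotent $e^*=\lim_n(U_p')^{n!}$ to $H^1(X_1(Mp^r)_{/\ov{\BQ}},\BZ_p(1))$, the connected--\'etale sequence of the $p$-divisible group of $J_1(Mp^r)$ (equivalently, the geometry of the ordinary locus and the Igusa tower, in the style of Mazur--Wiles and Tilouine) furnishes a $G_{\BQ_p}$-stable short exact sequence $0\to\mathcal{F}^+_r\to e^*H^1(X_1(Mp^r)_{/\ov\BQ},\BZ_p(1))\to\mathcal{F}^-_r\to 0$ in which $\mathcal{F}^-_r$ is unramified with arithmetic Frobenius acting as $U_p'$, while $I_p$ acts on $\mathcal{F}^+_r$ through the product of $\epsilon$ with the diamond operator at $p$. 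The open-curve analogue holds, and the Gysin sequence relating $H^1(X_1(Mp^r))$ and $H^1(Y_1(Mp^r))$ --- whose cokernel is supported on the cusps and hence is unramified --- is $G_{\BQ_p}$-equivariant and strictly compatible with these two-step filtrations, the boundary contribution feeding into the $\mathcal{F}^-$-part. This yields the commutative diagram at each finite level.

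Next I would pass to the inverse limit over $r$ along the ($U_p'$-normalized) trace maps. The key inputs are Ohta's control and near-freeness results: the dual-ordinary cohomology at finite level is well-behaved, the transition maps are surjective in the limit, and $H^1_\ord(Mp^\infty)$ (resp.~$\CH^1_\ord(Mp^\infty)$) is a finitely generated $\BT^\ord_{Mp^\infty}$-module (resp.~$\BH^\ord_{Mp^\infty}$-module) on which the filtration passes to the limit and preserves exactness. This gives the two exact rows and the commutative square of the diagram, together with the unramifiedness statement in~(ii) with Frobenius $U_p'$. The identification of the unramified quotients $\mathcal{F}^-H^1_\ord(Mp^\infty)\simeq S_{\Lambda_D}$ and $\mathcal{F}^-\CH^1_\ord(Mp^\infty)\simeq M_{\Lambda_D}$, compatibly with the inclusion $S_{\Lambda_D}\hookrightarrow M_{\Lambda_D}$, is Hida's $q$-expansion principle: the unramified quotient of ordinary \'etale cohomology is canonically the module of $\Lambda_D$-adic cuspforms (resp.~modular forms), as in \cite[\S7]{KLZ} and \cite{Oh1}.

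For~(i) I would use Poincar\'e duality on the proper curves $X_1(Mp^r)$, which in the limit identifies $\mathcal{F}^+H^1_\ord(Mp^\infty)$ with the $\Lambda_D$-linear dual of $\mathcal{F}^-H^1_\ord(Mp^\infty)$, up to the Tate twist $\BZ_p(1)$ and an Atkin--Lehner/diamond twist; combining this with $\mathcal{F}^-H^1_\ord(Mp^\infty)\simeq S_{\Lambda_D}$ and Hida's duality \eqref{HidaDu}, $\Hom_{\Lambda_D}(\BT^\ord_{Mp^\infty},\Lambda_D)\simeq S_{\Lambda_D}$, gives $\mathcal{F}^+H^1_\ord(Mp^\infty)\simeq\BT^\ord_{Mp^\infty}$ as $\BH^\ord_{Mp^\infty}$-modules. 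The inertia action then comes out to be multiplication by $\epsilon\Psi_D$ by tracking the Tate twist and the diamond normalization through this duality, $\mathcal{F}^-$ being unramified. The equality $\mathcal{F}^+H^1_\ord(Mp^\infty)=\mathcal{F}^+\CH^1_\ord(Mp^\infty)$ and the compatibility of the right-hand vertical arrow with $S_{\Lambda_D}\hookrightarrow M_{\Lambda_D}$ are the limits of the finite-level strict compatibility. The main obstacle is maintaining exactness together with the full Hecke- and Galois-equivariance through the passage to the limit --- in particular showing that $\mathcal{F}^+$ is free of rank one over $\BT^\ord_{Mp^\infty}$ rather than merely over $\Lambda_D$, which is exactly where Ohta's refinement of the Mazur--Wiles method and the attendant congruence-module arguments are needed; since this is precisely the content of \cite{Oh1,Oh2}, the remaining work is to record those results in the form stated here.
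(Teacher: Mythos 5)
Your outline is correct and follows essentially the same route as the paper, which gives no independent proof of Theorem~\ref{ordTate} but defers to \cite{Oh1,Oh2}, \cite[\S\S1.7-1.8]{FK} and \cite[Thm.~7.2.3]{KLZ}, whose arguments are exactly the steps you sketch: the finite-level ordinary filtration from the connected--\'etale sequence, the Gysin/boundary comparison of $X_1$ and $Y_1$, passage to the limit via Ohta's control theorems, the identification of the unramified quotients with $S_{\Lambda_D}$ and $M_{\Lambda_D}$, and the duality argument pinning down $\mathcal{F}^+\simeq\BT^{\ord}_{Mp^\infty}$. Two small caveats: ``supported on the cusps and hence unramified'' is not automatic (the cusps live over ramified cyclotomic extensions, and one needs the dual-ordinary projector to isolate the unramified part of the boundary contribution, as in Ohta and \cite[\S1.7]{FK}), and the case $p=3$ is not literally covered by \cite{Oh1,Oh2}, which is why the paper additionally cites \cite{Ca} and \cite{SV-S-Ohta}.
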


{\color{blue}} For $p\geq 5$, this theorem is explained in \cite[\S\S1.7-1.8]{FK}. It is also proved in {\cite{Oh1,Oh2}} (but see the remarks in \cite[\S\S1.7.15-16]{FK}). See also \cite[Thm.~7.2.3]{KLZ}. The arguments in these papers likely apply to the $p=3$ case as well. The results of \cite{Ca} explicitly covers some parts of the $p=3$ cases. An alternate proof that also includes the $p=3$ case is included in \cite{SV-S-Ohta}.

From (a twisted) Poincare duality it is possible to define a perfect $\Lambda_D$-duality for $H^1_\ord(Mp^\infty)$:

\begin{thm}\label{ordTate-dual}
There exists a perfect pairing 
$$
(\cdot,\cdot): H^1_\ord(Mp^\infty)\times H^1_\ord(Mp^\infty)\rightarrow \Lambda_D
$$
of $\Lambda_D$-modules satisfying $(t\cdot x, y) = (x,t\cdot y)$ for all $x,y\in H^1_\ord(Mp^\infty)$ and $t\in \BT_{Mp^\infty}^\ord$.
Furthermore, the pairing $(\cdot,\cdot)$ induces isomorphisms
$$
\Hom_{\Lambda_D}(H^1_\ord(Mp^\infty)^\pm,\Lambda_D) \simeq H^1_\ord(Mp^\infty)^\mp
$$
as $\BT^\ord_{Mp^\infty}$-modules,
where the superscript $\pm$ denotes the submodule on which the action of complex conjugation is by $\pm 1$.
\end{thm}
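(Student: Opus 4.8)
The plan is to descend from Poincaré duality on the finite layers $X_1(Mp^r)$ to the $\Lambda_D$-adic Tate module. For each $r\geq 1$, cup product gives a $G_\BQ$-equivariant perfect pairing
$$
H^1(X_1(Mp^r)_{/\ov{\BQ}},\BZ_p(1)) \times H^1(X_1(Mp^r)_{/\ov{\BQ}},\BZ_p(1)) \lra H^2(X_1(Mp^r)_{/\ov{\BQ}},\BZ_p(2)) \simeq \BZ_p(1)
$$
of finite free $\BZ_p$-modules. The dual Hecke operators $T'(n)$ and $U_p'$ are not self-adjoint for this naive pairing, but composing with the Atkin--Lehner involution $w_r$ repairs this: the pairing $(x,y)\mapsto (x\cup w_r y)_r$ is still perfect, makes every dual Hecke operator $T'(n),\,U_p'$ self-adjoint (using $w_r^{-1}T(n)w_r = T'(n)$ and the adjunction $(T(n)x\cup y) = (x\cup T'(n)y)$ for cup product), and therefore has the dual-ordinary idempotent $e^* = \lim (U_p')^{n!}$ as a self-adjoint idempotent, so it restricts to a perfect pairing on the direct summand $H^1_\ord(X_1(Mp^r)_{/\ov{\BQ}},\BZ_p(1))$. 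One then upgrades the target from $\BZ_p(1)$ to the group ring $\BZ_p[(\BZ/p^r\BZ)^\times]$ by incorporating the $p$-power diamond action: set
$$
[x,y]_r \;=\; \sum_{d\in(\BZ/p^r\BZ)^\times}\bigl(x\cup w_r\langle d\rangle y\bigr)_r\,[d],
$$
and check (a) perfectness as a pairing of $\BZ_p[(\BZ/p^r\BZ)^\times]$-modules (immediate from perfectness of $(x\cup w_r\,\cdot\,)_r$); (b) adjointness for the full ordinary Hecke action and $\Lambda_D$-bilinearity, using the relations $w_r\langle a\rangle = \langle a\rangle^{-1}w_r$ (up to $\langle-1\rangle$) and $\langle a\rangle x\cup y = x\cup\langle a\rangle^{-1}y$ together with the fact that the $\Lambda_D$-structure on the Tate module is the inverse-diamond action of \S\ref{Hida-Hecke} — a reindexing of the sum over $d$ then shows $([a]x,y)_r=[a](x,y)_r$; and (c) compatibility with the trace transition maps along the degeneracy coverings $X_1(Mp^{r+1})\to X_1(Mp^r)$ by a projection-formula computation, matched against the behaviour of $U_p'$.

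Passing to the limit over $r$ and using $\Lambda_D = \varprojlim_r \BZ_p[(\BZ/p^r\BZ)^\times]$ together with the fact that $H^1_\ord(Mp^\infty)$ is finitely generated (indeed free) over $\Lambda_0$ with reduction at the $r$-th layer controlled by $H^1_\ord(X_1(Mp^r))$ (Hida's control theory, or Theorem \ref{ordTate}), the tower of pairings $[\cdot,\cdot]_r$ assembles to a pairing
$$
(\cdot,\cdot):H^1_\ord(Mp^\infty)\times H^1_\ord(Mp^\infty)\lra \Lambda_D,
$$
which is perfect — reduce modulo each layer, apply the finite-level perfectness, and conclude by Nakayama's lemma and completeness — and which satisfies $(t x,y)=(x,t y)$ for all $t\in\BT^\ord_{Mp^\infty}$. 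This gives the first two assertions.

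For the $\pm$-components, observe that complex conjugation $\tau\in G_\BQ$ acts on $H^1_\ord(Mp^\infty)$, and that the cup-product pairing is $G_\BQ$-equivariant with target (a twist of) $\BZ_p(1)$, on which $\tau$ acts by $-1$; since the chosen model of $X_1(Mp^r)$ has its diamond operators commuting with the Galois action, $\tau$ acts trivially on the $\Lambda_D$-part of the twisted pairing, so $(\tau x,\tau y) = -(x,y)$. Hence for $x,x'$ both in $H^1_\ord(Mp^\infty)^+$, or both in $H^1_\ord(Mp^\infty)^-$, one gets $(x,x') = (\tau x,\tau x') = -(x,x')$, so $(x,x')=0$ because $p>2$; each $\pm$-summand is thus isotropic. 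As $H^1_\ord(Mp^\infty) = H^1_\ord(Mp^\infty)^+\oplus H^1_\ord(Mp^\infty)^-$, the perfect pairing restricts to a perfect, $\BT^\ord_{Mp^\infty}$-adjoint pairing between the two summands, giving the asserted isomorphisms $\Hom_{\Lambda_D}(H^1_\ord(Mp^\infty)^\pm,\Lambda_D)\simeq H^1_\ord(Mp^\infty)^\mp$ of $\BT^\ord_{Mp^\infty}$-modules.

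The main obstacle is the bookkeeping in steps (b)--(c): pinning down the Atkin--Lehner and diamond normalization so that the tower of pairings is simultaneously $\Lambda_D$-bilinear (rather than twisted-linear), adjoint for the full ordinary Hecke action including $U_p'$, and strictly compatible with the trace transition maps — each requirement constrains the normalization, and checking these are mutually consistent (especially the interplay between $U_p'$-adjointness and tower compatibility, where the valuation of $U_p'$ enters) is the delicate part. This is exactly the computation carried out in \cite{Oh1,Oh2} and \cite[\S1.7]{FK}, which one may alternatively simply invoke.
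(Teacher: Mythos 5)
Your construction is precisely the Ohta/Fukaya--Kato twisted Poincar\'e duality that the paper's proof simply invokes, so the approach is the same as the paper's (note the relevant reference is \cite[\S1.6]{FK} --- especially 1.6.5 for the pairing and 1.6.3(3) for the behaviour under complex conjugation --- rather than \S1.7, which concerns the $\Lambda$-adic Eichler--Shimura theory used for Theorem \ref{ordTate}). The one step whose justification is thinner than it reads is the identity $(\tau x,\tau y)=-(x,y)$: it is not only about the diamond operators commuting with the Galois action, but requires $\tau$ to commute with the Atkin--Lehner twist, which does hold for the full involution at level $Mp^r$ (since $\tau w_\zeta\tau^{-1}=w_{\zeta^{-1}}=\langle -1\rangle w_\zeta$ and the full diamond $\langle -1\rangle$ acts trivially on $X_1(Mp^r)$), whereas with a $p$-partial Atkin--Lehner normalisation one picks up the extra group element $[-1]\in\Lambda_D$ and the isotropy of the $\pm$-eigenspaces would not follow on all eigencomponents --- this normalisation point is exactly what the paper delegates to \cite[\S1.6.3(3)]{FK}.
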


The existence of the perfect pairing $(\cdot,\cdot)$ is explained in \cite[\S1.6]{FK}, see especially \cite[\S1.6.5]{FK}. The consequence for the $\pm$-subspaces for the action of $c$ follows from \cite[\S1.6.3(3)]{FK} (see also \cite[Thm.~7.2.3(v)]{KLZ}).

\subsubsection{The Tate lattices for ${\bf h}_{v}$}\label{Tate-lattices}
Let $\varphi:\BT_{D_Lp^{\infty}}^{\ord} \ra \Lambda_{L}^v$ be the $\Lambda_{D}$-homomorphism corresponding to the canonical CM family ${\bf{h}}_{v}$ (which corresponds to a branch of a Hida family in the sense of \cite[\S7]{KLZ}). This extends the homomorphism $\varphi_v:\Lambda_D\rightarrow\Lambda_L^v$,  
and it induces a  $\Lambda_D$-homomorphism $\BH_{D_Lp^{\infty}}^{\ord} \ra \Lambda_{L}^v$ by composition with the canonical $\Lambda_D$-surjection
$\BH^\ord_{D_Lp^\infty}\twoheadrightarrow \BT_{D_Lp^{\infty}}^{\ord}$. We continue to write $\varphi$ for this last homomorphism.

The Tate lattices associated with the canonical CM family ${\bf{h}}_{v}$ are the $\Lambda_{L}^{v}[G_{\BQ}]$-modules  
$$
\BT = H^{1}_{\ord}(D_Lp^{\infty}) \otimes_{\BT_{D_Lp^{\infty}}^{\ord},\varphi} \Lambda_{L}^{v} \ \ \text{and}  \ \
\BH = \CH^1_\ord(D_Lp^\infty) \otimes _{\BH_{D_Lp^\infty}^{\ord},\varphi}\Lambda_L^v.
$$
We similarly define the $\Lambda_{L}^{v}[G_{\BQ_{p}}]$-modules
$$
\BT^{\pm}=\mathcal{F}^{\pm} H^{1}_{\ord}(D_Lp^{\infty}) \otimes_{\BT_{D_Lp^{\infty}}^{\ord},\varphi} \Lambda_{L}^{v}
\ \ \text{and} \ \ 
\BH^{\pm}=\mathcal{F}^{\pm} \CH^{1}_{\ord}(D_Lp^{\infty}) \otimes_{\BH_{D_Lp^{\infty}}^{\ord},\varphi} \Lambda_{L}^{v}.
$$
By Theorem \ref{ordTate} there is a commutative diagram
of $\Lambda_L^v[G_{\BQ_p}]$-modules
\begin{equation}\label{CM-ord}
\begin{tikzcd}[row sep=2.5em]
\BT^{+} \arrow[r,] \arrow[d,"="] & \BT \arrow[r,] \arrow[d,] &  \BT^{-} \arrow[r,] \arrow[d,]& 0 \\
\BH^{+} \arrow[r,] & \BH \arrow[r,]  &  \BH^{-} \arrow[r,] & 0 
\end{tikzcd}
\end{equation}
with exact rows.
 Furthermore, 
\begin{equation}\label{subrk}
\text{$\BT^{+} = \BH^+$  is a free $\Lambda_{L}^v$-module of rank one.}
\end{equation}

Let ${\bf{F}}=\Frac(\Lambda_{L}^{v})$ be the field of fractions of $\Lambda_L^v$ and let 
$$
\BV = \BT\otimes_{\Lambda_L^v}{\bf{F}} = \BH\otimes_{\Lambda_L^v}{\bf{F}}.
$$
Comparing the traces of Frobenius elements
$\Frob_\ell$, $\ell\nmid D_Lp$,
shows that there is an isomorphism
\begin{equation}\label{CMgen}
\BV
\simeq \Ind_{G_{L}}^{G_{\BQ}}({\bf{F}}(\Psi_L^{v}))
\end{equation}
of ${\bf{F}}[G_{\BQ}]$-modules. 
In particular, $\BT\otimes_{\Lambda_{L}^{v}} {\bf{F}}$  is an irreducible ${\bf{F}}[G_\BQ]$-module, and the $G_{\BQ}$-action factors through $\Gal(L_{\infty}/\BQ)$.

\begin{remark}\label{HidaGal-rmk} The CM family $\bf{h}_v$ corresponds to a branch of a Hida family
$\bf{h}$ in the sense of \cite[\S7]{KLZ}. In particular, $\Lambda_L^v$ equals $\Lambda_{{\bf{h}}_v}$ (in the notation
of {\it op.~cit.}) and the module $\BH$ (resp.~$\BT$) then equals
$M({\bf h})^*\otimes_{\Lambda_{\bf{h}}}\Lambda_{{\bf h}_v}$ 
(resp.~$M({\bf h})^*_{\mathrm{par}}\otimes_{\Lambda_{\bf{h}}}\Lambda_{{\bf h}_v}$).
It follows that for $\psi:\Lambda_L^v\rightarrow\ov{\BQ}_p$ as in \S\ref{CMHidaFam},
$\BH\otimes_{\Lambda_L^v,\psi} F_{h_{v,\psi},\lambda} \cong M_{F_{h_v,\psi},\lambda}(h_{v,\psi})^*$,
where the right-hand side is as in \cite[\S2.8]{KLZ}. Comparing the normalizations in \cite[\S2.8]{KLZ} and \cite[\S\S6.3,8.3,15.10]{K}
shows that $M_{F_{h_v,\psi},\lambda}(h_{v,\psi})^* \cong V_{h_{v,\psi},\lambda}^\vee \cong 
\Ind_{G_L}^{G_{\BQ}} (\psi\circ\Psi_L^v)$, which implies \eqref{CMgen}. 
\end{remark}

The $\Lambda_{L}^v$-modules $\BT$ and $\BH$ may not be torsion-free, so we let 
$$
\BT_{1}=\BT/\BT_{\tor}  \ \ \text{and} \ \ \BH_1 = \BH/\BH_{\tor}.
$$
These are just the respective images of $\BT$ and $\BH$ in $\BT\otimes_{\Lambda_L^v} \bf{F}$ and $\BH\otimes_{\Lambda_L^v} \bf{F}$. In particular,
$$
\BT_1 \subseteq \BH_1 \subset \BV.
$$
Both $\BT_1$ and $\BH_1$ are torsion-free $\Lambda_L^v$-modules of rank two (but not obviously free). 
Let $\BT_1^-$ be the quotient of $\BT_1$ by the image of $\BT^+$, and let $\BH^-$ be the analogous quotient of $\BH_1$.
There are exact sequences
$$
\BT^{+} \ra \BT_{1} \ra \BT_{1}^{-} \ra 0 \ \  \text{and} \ \ \BH^{+} \ra \BH_{1} \ra \BH_{1}^{-} \ra 0 
$$
of $\Lambda_{L}^{v}[G_{\BQ_{p}}]$-modules.  It is not a priori clear that either $\BT_{1}^-$ or $\BH_1^-$ is torsion-free. 

\begin{lem}\label{CMTate} \hfill
\begin{itemize}
\item[(i)] The $\Lambda_{L}^{v}[G_{\BQ_{p}}]$-module morphisms $\BT^{+} \ra \BT_1$ and $\BH^{+} \ra \BH_1$ are non-zero and thus injective.
\item[(ii)] The submodules $\BT^{+}$ and $\BH^+$ are $G_{L}$-stable with $G_{L}$ acting via $\Psi_L^v$. 
\item[(iii)] The action of $G_{L_v}$ on the quotients $\BT_{1}^{-}$ and $\BH_1^-$ is via $\Psi_L^{v,c}$.
Moreover, for $\sigma \in G_{L_{v}}$ and $t \in \BT_{1}$ and $h\in \BH_1$,
$$
\sigma \cdot t - \Psi_L^{v,c} (\sigma) t \in \BT^{+} \ \ \text{and} \ \ \sigma \cdot h - \Psi_L^{v,c}(\sigma) h \in \BH^{+}. 
$$
\end{itemize}
\end{lem}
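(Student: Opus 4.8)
The plan is to deduce Lemma \ref{CMTate} from Theorem \ref{ordTate} together with the generic structure \eqref{CMgen}, being careful that tensoring is not exact. First I would treat part (ii): the submodule $\mathcal{F}^+H^1_\ord(D_Lp^\infty)$ is $G_{\BQ_p}$-stable with inertia acting via $\epsilon\Psi_D$ by Theorem \ref{ordTate}(i), and in fact (as recorded in \cite[\S1.7-1.8]{FK}, cf.~Theorem \ref{ordTate}) it is $G_{L_v}$-stable with the full decomposition group at $v$ acting through a character; after base change along $\varphi:\BT^\ord_{D_Lp^\infty}\to\Lambda_L^v$ and matching Hecke eigenvalues of $\Frob_v$ with the interpolation property of ${\bf h}_v$, the character becomes $\Psi_L^v$. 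So $\BT^+=\BH^+$ is $G_L$-stable with $G_L$ acting by $\Psi_L^v$; the key input is that ${\bf h}_v$ corresponds to $\Theta_v$ and $\psi|_{\Gamma_L^v}$ has the prescribed Hodge--Tate weights at $v,\bar v$ as in \S\ref{CMHidaFam}, as spelled out in Remark \ref{HidaGal-rmk}. This part is essentially bookkeeping.

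Part (iii): the quotient $\mathcal{F}^-H^1_\ord(D_Lp^\infty)$ is unramified at $p$ with $\Frob_p$ acting via $U_p'$ (Theorem \ref{ordTate}(ii)). Base-changing and comparing with \eqref{CMgen}, which identifies $\BV$ with $\Ind_{G_L}^{G_\BQ}({\bf F}(\Psi_L^v))$, the two $G_{L_v}$-eigencharacters of $\BV|_{G_{L_v}}$ are $\Psi_L^v$ (on the sub) and $\Psi_L^{v,c}:=\Psi_L^v\circ c$ (on the quotient). Since $\BT^+$ lands inside $\BT_1$ realizing the $\Psi_L^v$-line, the cokernel $\BT_1^-$ maps injectively into $\BV/{\bf F}(\Psi_L^v)$, on which $G_{L_v}$ acts via $\Psi_L^{v,c}$; the same for $\BH_1^-$. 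The displayed congruences $\sigma\cdot t-\Psi_L^{v,c}(\sigma)t\in\BT^+$ then just express that $\sigma$ acts as $\Psi_L^{v,c}(\sigma)$ modulo $\BT^+$, i.e.~on the quotient. Here I would emphasize that one needs $\BT^+\hookrightarrow\BT_1$ (which is part (i)) so that $\BT_1/\BT^+ = \BT_1^-$ genuinely computes the quotient and the statement about $\sigma\cdot t\bmod\BT^+$ is meaningful.

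Part (i) is where I expect the only real subtlety: showing the maps $\BT^+\to\BT_1$ and $\BH^+\to\BH_1$ are nonzero, equivalently that the torsion submodule $\BT_\tor$ does not swallow the image of $\BT^+$. I would argue as follows. The composite $\BT^+\to\BT\to\BV=\BT\otimes_{\Lambda_L^v}{\bf F}$ is, after extending scalars, the inclusion of the $\Psi_L^v$-eigenline in $\Ind_{G_L}^{G_\BQ}({\bf F}(\Psi_L^v))|_{G_{L_v}}$ by \eqref{CMgen}, hence nonzero; since $\BT^+$ is a free $\Lambda_L^v$-module of rank one by \eqref{subrk} and $\Lambda_L^v$ is a domain, $\BT^+\otimes_{\Lambda_L^v}{\bf F}$ is one-dimensional and the map $\BT^+\to\BV$ is injective. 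But $\BT_1$ is by definition the image of $\BT$ in $\BV$, so $\BT^+\to\BT_1$ is the same nonzero map, hence injective (its source being torsion-free). The argument for $\BH^+\to\BH_1$ is identical using that $\BH^+=\BT^+$. The main obstacle, then, is making sure that base change along $\varphi$ of the exact sequence in Theorem \ref{ordTate} together with right-exactness of $\otimes$ gives exactly the diagram \eqref{CM-ord}, and that the generic isomorphism \eqref{CMgen} is compatible — as decomposition-group representations at $v$ — with the filtration coming from Theorem \ref{ordTate}(i)(ii); this compatibility of the ordinary filtration with the induced-representation structure at $v$ is the one place requiring genuine care rather than formal manipulation, and it is handled by the eigenvalue comparison for $\Frob_v$ recalled in Remark \ref{HidaGal-rmk}.
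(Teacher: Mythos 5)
Your argument for part (i) --- which you correctly identify as the only delicate point --- begs the question. You assert that the composite $\BT^{+}\ra\BT\ra\BV$ ``is, after extending scalars, the inclusion of the $\Psi_L^v$-eigenline by \eqref{CMgen}''. But \eqref{CMgen} only identifies $\BV$ abstractly as an induced representation; it says nothing about the image of $\BT^{+}$ inside it. Since $-\otimes_{\BT_{D_Lp^{\infty}}^{\ord},\varphi}\Lambda_{L}^{v}$ is only right exact, the map $\BT^{+}\ra\BT$ need not be injective, and even a nonzero image could a priori lie in $\BT_{\tor}$ and hence die in $\BT_{1}\subset\BV$; the freeness of $\BT^{+}$ from \eqref{subrk} gives injectivity only once you know the map to $\BT_1$ is nonzero, which is exactly what has to be proved. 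Neither the eigenvalue comparison of Remark \ref{HidaGal-rmk} nor the ``compatibility of the ordinary filtration with the induced structure'' supplies this. The paper's repair is short: if the image of $\BT^{+}$ in $\BT_{1}$ were zero, then $\BT_{1}$ would be a quotient of $\BT^{-}$, on which $I_v=I_p$ acts trivially by Theorem \ref{ordTate}(ii); hence $I_v$ would act trivially on $\BV$, contradicting \eqref{CMgen} because $\Psi_L^v$ is ramified at $v$. Nonvanishing together with \eqref{subrk} and the torsion-freeness of $\BT_1$ then gives injectivity. A generic-fibre dimension count could also work, but then you must actually prove $\dim_{\mathbf{F}}(\BT^{-}\otimes_{\Lambda_{L}^{v}}\mathbf{F})=1$ (using the duality \eqref{HidaDu} and reducedness of the Hecke algebra at the generic point), which you did not do.

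There is a related soft spot in your part (ii): knowing that $G_{L_v}=G_{\BQ_p}$ acts on $\BT^{+}$ through the restriction of $\Psi_L^v$ does not by itself give $G_L$-stability, since $\mathcal{F}^{+}H^{1}_{\ord}(D_Lp^{\infty})$ is purely a decomposition-group construction and is not a priori stable under all of $G_L$; calling this step ``bookkeeping'' hides the genuinely global input. What one needs, and what the paper uses, is that by \eqref{CMgen} the space $\BV$ has a \emph{unique} $\mathbf{F}$-line on which $G_L$ acts via $\Psi_L^v$ (equivalently, on which $I_v$ acts nontrivially, the complementary quotient being unramified at $v$); once (i) is known, the image of $\BT^{+}=\BH^{+}$ is a $\Lambda_{L}^{v}$-lattice in that line, and since $G_L$ acts there through the $(\Lambda_{L}^{v})^{\times}$-valued character $\Psi_L^v$, any such lattice is automatically $G_L$-stable. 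This also explains why the natural logical order is (i) before (ii) and (iii), rather than the order you adopt. Your treatment of (iii) is fine once (i) and (ii) are in place, and agrees with the paper's (it follows from Theorem \ref{ordTate} and the identification of the quotient character as $\Psi_L^{v,c}$).
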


\begin{proof} Recall that $G_{L_v} = G_{\BQ_{p}} \subset G_{\BQ}$, so $I_v = I_p$.
It then follows from Theorem \ref{ordTate}(i) that $I_v$ acts trivially on $\BT^-$ and $\BH^-$. It follows from Theorem \ref{ordTate}(ii) and
the definitions of $\Psi_L^v$ and the canonical CM family that $I_{v}$ acts by $\Psi_L^v$ on $\BT^{+} = \BH^+$. 
As it is also clear from \eqref{CMgen} that $I_v$ does not act trivially on all of $\BV$, 
the image of $\BT^+$ in $\BV$, and hence in $\BT_1$, must be non-zero. The same applies to the image of $\BH^+$ in $\BH_1$.
Injectivity then follows from \eqref{subrk}. This proves part (i).

It follows from \eqref{CMgen} that $\BV$ has a unique $\bf{F}$-submodule of rank one on which $G_{L}$ acts via $\Psi_L^v$,
and the quotient of this module is unramified at $v$. It then follows from part (i) that $\BT^+$ and $\BH^+$ are $\Lambda_L^v$-lattices in this rank one submodule and hence necessarily
$G_L$-stable (with $G_L$ acting via $\Psi_L^v$). This is part (ii).

That $G_{L_v}$ acts on $\BT_1^-$ and $\BH_1^-$ via $\Psi_L^{v,c}$ follows from Theorem \ref{ordTate}(i). Part (iii) follows from this.
\end{proof}

\vskip 2mm
\noindent{\it Questions.} It is natural to ask:
\begin{equation}\label{Q1}\tag{Q1}
\text{Does the top exact sequence of \eqref{CM-ord} split as an exact sequence of $\Lambda_{L}^{v}$ or $\Lambda_{L}^{v}[G_{\BQ_{p}}]$-modules?}
\end{equation}
\begin{equation}\label{Q2}\tag{Q2}
\text{Are $\BT_1$ and $\BT_1^-$ free $\Lambda_L^v$-modules (of rank two and one, respectively)?}
\end{equation}
Determining the answers to these questions occupies much of the rest of this section as well as the next.
As explained in the introduction, these answers lie at the heart of this paper (and many subsequent applications).
The answer to the analogous questions for CM Hida families satisfying a $p$-distinguished hypothesis is known to be `yes' (see for example \cite[Thm. 3.4]{L}).

\subsubsection{Reflexive closure of the Tate lattices}\label{ssNot}
Recall that for a finite $\Lambda_{L}^{v}$-module $M$, the reflexive closure $\widetilde{M}$ of $M $ is 
$$
\widetilde{M}=\bigcap_{P \in \height_{1}(\Lambda_{L}^{v})} (M/M_{\tor})_{P} \subset M\otimes_{\Lambda_{L}^{v}} {\bf{F}},
$$
where $\height_{1}(\cdot)$ is the set of height one prime ideals \cite[Def. 23.1 \& Lem. 23.18]{St}. 
As $\Lambda_{L}^{v}$ is a two-dimensional regular local ring, $\widetilde{M}$ is a free $\Lambda_{L}^v$-module. 
There is a canonical morphism 
\begin{equation}\label{rfm}
\varphi_{M}: M \ra \widetilde{M}
\end{equation}
of $\Lambda_{L}^{v}$-modules. The kernel of $\varphi_M$ is the $\Lambda_L^v$-torsion submodule $M_\tor$, and the cokernel is a pseudo-null $\Lambda_L^v$-module 
(that is, its localization at any $P \in \height_{1}(\Lambda_{L}^{v})$ is zero). In particular, the cokernel has finite order.

Since $\BT_1$ and $\BH_1$ are torsion-free $\Lambda_L^v$-modules of rank two, it follows that
$$
\widetilde\BT = \widetilde\BT_1 \ \ \text{and} \ \ \widetilde\BH = \widetilde \BH_1, \ \text{and these are free $\Lambda_L^v$-modules of rank two.}
$$
We also have 
$$
\text{$\BT^+ = \widetilde\BT^+= \widetilde\BH^+ = \BH^+$, $\widetilde\BT^- = \widetilde\BT^-_1$, and $\widetilde\BH^- = \widetilde\BH_1^-$ are free $\Lambda_L^v$-modules of rank one.}
$$
Note that $\widetilde\BT$ and $\widetilde\BH$ are $\Lambda_L^v[G_L]$-modules, with the $G_L$-action arising from that on $\BT_1$ and $\BH_1$, respectively.
Similarly, $\widetilde\BT_1^-$ and $\widetilde\BH_1$ are $\Lambda_L^v[G_{L_v}]$-modules; clearly, both $\widetilde\BT_1^-$ and $\widetilde\BH_1^-$ are isomorphic to $\Lambda_L^v(\Psi_L^{v,c})$.
The induced sequences $\BT^+ \rightarrow \widetilde\BT \rightarrow \widetilde \BT_1^-\rightarrow 0$ 
and $\BH^+ \rightarrow \widetilde\BH \rightarrow \widetilde \BH_1^-\rightarrow 0$ of $\Lambda_L^v[G_{L_v}]$-modules need not be exact. 
However, we do have the following.

\begin{lem}\label{subchar} For $t \in \widetilde{\BT}$, $h\in \widetilde\BH$, and $\sigma \in G_{L_{v}}$, we have
$$
\sigma \cdot t - \Psi_L^{v,c} (\sigma) t \in \bigcap_{P \in \height_{1}(\Lambda_{L}^{v})} \BT^{+}_{P} = \widetilde\BT^+ = \BT^{+} \ \ \text{and} \ \ 
\sigma \cdot h -\Psi_L^{v,c} (\sigma) h \in \bigcap_{P \in \height_{1}(\Lambda_{L}^{v})} \BH^{+}_{P} = \widetilde\BH^+ = \BH^{+}.
$$
\end{lem}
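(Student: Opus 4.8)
The plan is to reduce the statement about the reflexive closures $\widetilde\BT$ and $\widetilde\BH$ to the already-established statement (Lemma \ref{CMTate}(iii)) about $\BT_1$ and $\BH_1$, by localizing at each height one prime of $\Lambda_L^v$. Concretely, fix $\sigma\in G_{L_v}$ and consider the $\Lambda_L^v$-linear endomorphism $u_\sigma$ of $\BT\otimes_{\Lambda_L^v}{\bf F}$ given by $t\mapsto \sigma\cdot t - \Psi_L^{v,c}(\sigma)\,t$. Lemma \ref{CMTate}(iii) says precisely that $u_\sigma$ carries $\BT_1$ into $\BT^+$. Since $\BT^+$ is already a free (hence reflexive) $\Lambda_L^v$-module (by \eqref{subrk}), one has $\BT^+ = \widetilde{\BT^+} = \bigcap_{P\in\height_1(\Lambda_L^v)}\BT^+_P$, which gives the displayed equalities in the statement; similarly for $\BH^+$ using $\BH^+=\BT^+$.

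The heart of the matter is then the inclusion $u_\sigma(\widetilde\BT)\subseteq \BT^+$. For a height one prime $P$, localization is exact, so $u_\sigma$ carries $(\BT_1)_P$ into $(\BT^+)_P$; since $\widetilde\BT = \bigcap_P (\BT_1)_P$ inside $\BT\otimes_{\Lambda_L^v}{\bf F}$, for any $t\in\widetilde\BT$ and any $P$ we have $u_\sigma(t)\in (\BT^+)_P$. Hence $u_\sigma(t)\in \bigcap_{P\in\height_1(\Lambda_L^v)}(\BT^+)_P = \widetilde{\BT^+} = \BT^+$, the last two equalities because $\BT^+$ is free, hence reflexive, and a reflexive module over the two-dimensional regular local ring $\Lambda_L^v$ equals the intersection of its height one localizations (as recalled in \S\ref{ssNot}). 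The same argument verbatim, replacing $\BT_1$ by $\BH_1$ and $\BT^+$ by $\BH^+$, gives the second assertion; one only needs that $\BH^+ = \BT^+$ is free, which is recorded in \eqref{subrk} and \S\ref{ssNot}.

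I do not expect a serious obstacle here: the only subtlety is bookkeeping, namely checking that $u_\sigma$ is genuinely $\Lambda_L^v$-linear (it is, since the $G_{\BQ}$-action commutes with the $\Lambda_L^v$-action and $\Psi_L^{v,c}(\sigma)\in\Lambda_L^v$) so that it commutes with localization at $P$, and that the identification $\widetilde\BT = \widetilde\BT_1 = \bigcap_P(\BT_1)_P$ from \S\ref{ssNot} is being applied to the correct module. One should also note at the outset that $u_\sigma$ maps $\BT\otimes{\bf F}$ into the rank one ${\bf F}$-subspace $\BT^+\otimes{\bf F}$ on which $G_L$ acts by $\Psi_L^v$ — this is exactly the content of \eqref{CMgen} together with Lemma \ref{CMTate}(iii) — which makes it meaningful to intersect the image with the various $(\BT^+)_P$. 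A short remark to this effect, followed by the localization argument above, completes the proof.
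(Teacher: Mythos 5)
Your proof is correct and follows essentially the same route as the paper, which simply records Lemma \ref{subchar} as an immediate consequence of Lemma \ref{CMTate}(iii); you have just made explicit the routine localization step (that the $\Lambda_L^v$-linear map $t\mapsto\sigma\cdot t-\Psi_L^{v,c}(\sigma)t$ sends each $(\BT_1)_P$ into $(\BT^+)_P$, together with reflexivity of the free module $\BT^+=\BH^+$). No gaps.
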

\noindent This is an immediate consequence of Lemma \ref{CMTate}(iii). 

\begin{lem}\label{LocInd}
For $P \in \height_{1}(\Lambda_{L}^v)$ with $P \neq (T_{v})$, there is an isomorphism
$$
\widetilde{\BT}_{P}
\simeq \Ind_{G_{L}}^{G_{\BQ}}(\Lambda_{L,P}^{v}(\Psi_L^v))
$$
of $\Lambda_{L,P}^{v}[G_{\BQ}]$-modules. 
\end{lem}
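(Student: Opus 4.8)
The plan is to reconstruct $\widetilde{\BT}_{P}$ from its restriction to $G_{L}$ together with the action of complex conjugation. Set $R=\Lambda_{L,P}^{v}$; since $\Lambda_{L}^{v}$ is a two-dimensional regular local ring and $P$ has height one, $R$ is a discrete valuation ring, with fraction field ${\bf{F}}=\Frac(\Lambda_{L}^{v})$. As $\widetilde{\BT}=\widetilde{\BT}_{1}$ is free of rank two over $\Lambda_{L}^{v}$ and is intrinsically defined inside $\BV$ as the reflexive closure of the $G_{\BQ}$-module $\BT_{1}$ (localization being $G_{\BQ}$-equivariant, $\widetilde{\BT}$ is a $\Lambda_{L}^{v}[G_{\BQ}]$-submodule of $\BV$), the module $\widetilde{\BT}_{P}$ is free of rank two over $R$, is $G_{\BQ}$-stable, and satisfies $\widetilde{\BT}_{P}\otimes_{R}{\bf{F}}=\BV\simeq\Ind_{G_{L}}^{G_{\BQ}}({\bf{F}}(\Psi_{L}^{v}))$ by \eqref{CMgen}. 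Fix a lift $\tilde\tau\in G_{\BQ}$ of the non-trivial element of $\Gal(L/\BQ)$ and write $\Psi_{L}^{v,c}$ for the conjugate character $g\mapsto\Psi_{L}^{v}(\tilde\tau^{-1}g\tilde\tau)$ of $G_{L}$.

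The first step is to split $\widetilde{\BT}_{P}|_{G_{L}}$ into rank-one pieces. By Mackey's formula $\BV|_{G_{L}}\simeq{\bf{F}}(\Psi_{L}^{v})\oplus{\bf{F}}(\Psi_{L}^{v,c})$, and these two characters are distinct over ${\bf{F}}$: their ratio $\nu=\Psi_{L}^{v}(\Psi_{L}^{v,c})^{-1}$ is non-trivial because, by the Chebotarev density theorem, the values $\nu(\Frob_{w}\Frob_{\ov{w}}^{-1})=\Psi_{L}^{v}(\Frob_{w}\Frob_{\ov{w}}^{-1})$, as $w$ runs over primes of $L$ split over rational primes $\ell\nmid D_{L}p$, are dense in the image of $\Gamma_{L}^{-}$ in $\Gamma_{L}^{v}$ — which is all of $\Gamma_{L}^{v}$, since $\gamma_{-}\mapsto\gamma_{v}^{1/2}$ — while $\Psi_{L}^{v}$ is injective on $\Gamma_{L}^{v}$; moreover this identifies the closed ideal generated by $\{\nu(\sigma)-1\}$ with $(T_{v})$, so that the hypothesis $P\neq(T_{v})$ is precisely the assertion that $\Psi_{L}^{v}$ and $\Psi_{L}^{v,c}$ remain distinct modulo $P$. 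Consequently, writing ${\bf{F}}(\Psi_{L}^{v})$ and ${\bf{F}}(\Psi_{L}^{v,c})$ for the two isotypic lines of $\BV|_{G_{L}}$, the saturated $R$-submodules $W_{1}:=\widetilde{\BT}_{P}\cap{\bf{F}}(\Psi_{L}^{v})$ and $W_{2}:=\widetilde{\BT}_{P}\cap{\bf{F}}(\Psi_{L}^{v,c})$ are free of rank one over $R$, are $G_{L}$-stable with $G_{L}$ acting through $\Psi_{L}^{v}$, respectively $\Psi_{L}^{v,c}$, and $\widetilde{\BT}_{P}=W_{1}\oplus W_{2}$, as the cokernel of $W_{1}\oplus W_{2}\hookrightarrow\widetilde{\BT}_{P}$ injects into $\BV/({\bf{F}}(\Psi_{L}^{v})\oplus{\bf{F}}(\Psi_{L}^{v,c}))=0$.

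The second step is to recombine the pieces via $\tilde\tau$. Conjugation by $\tilde\tau$ interchanges the characters $\Psi_{L}^{v}$ and $\Psi_{L}^{v,c}$ (using $\tilde\tau^{2}\in G_{L}$ and that $\Psi_{L}^{v}$ factors through the abelian group $\Gamma_{L}$), hence interchanges the two isotypic lines of $\BV|_{G_{L}}$, so $\tilde\tau W_{1}=W_{2}$; in particular $\tilde\tau$ restricts to an $R$-isomorphism $W_{1}\isoarrow W_{2}$ and $\tilde\tau^{2}$ acts on $W_{1}$ as multiplication by $u:=\Psi_{L}^{v}(\tilde\tau^{2})\in R^{\times}$. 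Choosing an $R$-basis $e_{1}$ of $W_{1}$ and putting $e_{2}:=\tilde\tau e_{1}$, one obtains an $R$-basis $\{e_{1},e_{2}\}$ of $\widetilde{\BT}_{P}$ whose $G_{\BQ}$-action is given by $h\cdot e_{1}=\Psi_{L}^{v}(h)e_{1}$ and $h\cdot e_{2}=\Psi_{L}^{v,c}(h)e_{2}$ for $h\in G_{L}$, together with $\tilde\tau\cdot e_{1}=e_{2}$ and $\tilde\tau\cdot e_{2}=u\,e_{1}$; these are exactly the structure constants of $\Ind_{G_{L}}^{G_{\BQ}}(R(\Psi_{L}^{v}))$ written in the basis $\{1\otimes e,\ \tilde\tau\otimes e\}$ attached to the coset representatives $\{1,\tilde\tau\}$. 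Therefore $e_{1}\mapsto 1\otimes e$, $e_{2}\mapsto\tilde\tau\otimes e$ extends to the desired isomorphism $\widetilde{\BT}_{P}\simeq\Ind_{G_{L}}^{G_{\BQ}}(\Lambda_{L,P}^{v}(\Psi_{L}^{v}))$ of $\Lambda_{L,P}^{v}[G_{\BQ}]$-modules.

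The heart of the matter — and the step I expect to need the most care — is the non-triviality of $\nu$, equivalently the distinctness of the two $G_{L}$-characters (and the comparison of $\Gamma_{L}^{-}$ with $\Gamma_{L}^{v}$ recalled in \S\ref{Zp-ext} that underlies the identification of the exceptional prime as $(T_{v})$); once this is in place — together with the observation that $\widetilde{\BT}$, being intrinsic inside the $G_{\BQ}$-representation $\BV$, is $G_{\BQ}$-stable — the passage to the DVR $R$ turns the isotypic decomposition and the recombination via $\tilde\tau$ into routine verifications.
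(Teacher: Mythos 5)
Your overall route is the same as the paper's (reduce to the $G_L$-splitting of the localized lattice into the two character lines, then recombine via a lift of complex conjugation; your identification of the exceptional prime as $(T_v)$ matches the computation in Lemma \ref{satC}), but the one step that actually carries the content of the lemma is justified incorrectly. You deduce $\widetilde{\BT}_{P}=W_{1}\oplus W_{2}$ from the claim that the cokernel of $W_{1}\oplus W_{2}\hookrightarrow\widetilde{\BT}_{P}$ ``injects into $\BV/({\bf F}(\Psi_{L}^{v})\oplus{\bf F}(\Psi_{L}^{v,c}))=0$.'' There is no such injection: the natural map from that cokernel to the quotient of ambient ${\bf F}$-spaces is the zero map, and the cokernel of an inclusion of full-rank lattices over the DVR $R=\Lambda_{L,P}^{v}$ is in general a nonzero torsion module. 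Concretely, with $\pi$ a uniformizer and $G_L$ acting on $e_1,e_2$ through two characters, the lattice $R\,e_1+R\,\tfrac{e_1+e_2}{\pi}$ has $W_1\oplus W_2=Re_1\oplus Re_2$ of index $\pi$; it is $G_L$-stable exactly when the two characters are congruent mod $\pi$. Note also that, as written, your splitting argument never uses the hypothesis $P\neq(T_v)$, so it would ``prove'' the decomposition at $(T_v)$ as well --- precisely the case the paper spends Propositions \ref{Sat-prop1}--\ref{Sat-prop3} handling with input from \cite{BDP'} --- which is a sign the step cannot be right as stated.

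The fix is standard and is exactly where $P\neq(T_v)$ must enter, and it is how the paper argues: you have already shown (essentially as in Lemma \ref{satC}) that $\Psi_L^v\nequiv\Psi_L^{v,c}\bmod P$ when $P\neq(T_v)$; now choose $\sigma\in G_L$ with $\Psi_L^v(\sigma)-\Psi_L^{v,c}(\sigma)\in R^{\times}$ and observe that $e=\bigl(\Psi_L^v(\sigma)-\Psi_L^{v,c}(\sigma)\bigr)^{-1}\bigl(\sigma-\Psi_L^{v,c}(\sigma)\bigr)$ acts on $\widetilde{\BT}_{P}$ as an idempotent projecting onto the $\Psi_L^v$-isotypic line, so that $\widetilde{\BT}_{P}=e\widetilde{\BT}_{P}\oplus(1-e)\widetilde{\BT}_{P}=W_1\oplus W_2$. (A minor further slip: $\nu=\Psi_L^v(\Psi_L^{v,c})^{-1}$ restricted to the anticyclotomic direction is $(\Psi_L^v)^{2}$, so $\nu(\Frob_w\Frob_{\bar w}^{-1})$ is the square of the value you wrote; since $p$ is odd this does not affect the identification of the ideal with $(T_v)$.) With the splitting corrected, your recombination via $\tilde\tau$ and the identification with $\Ind_{G_L}^{G_\BQ}(\Lambda_{L,P}^{v}(\Psi_L^v))$ are fine, and the argument then coincides in substance with the paper's proof.
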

\noindent Here $T_{v}\in \Lambda_L^v$ is the variable as in \S \ref{MoreIwasawa}, and so $(T_v)$ is the kernel of the augmentation map $\Lambda_L^v\twoheadrightarrow \BZ_p$. 

\begin{proof} 
The claim in the lemma is equivalent to $\widetilde\BT\simeq \Lambda_{L,P}^{v}(\Psi_L^v)\oplus \Lambda_{L,P}^{v}(\Psi_L^{v,c})$ as $\Lambda_{L,P}^v[G_L]$-modules.
Since this splitting holds over $\bf{F}$ by \eqref{CMgen}, it also holds over $\Lambda_{L,P}^v$ if $\Psi_L^v\nequiv \Psi_L^{v,c}\mod{P}$. 

Let $\chi_P = \Psi_L^v\mod{P}$. If $\chi_P= \chi_P^c$, then the fixed field of the kernel of $\chi_P$ is contained in $L^v\cap L^\cyc = L$. That is, $\chi_P$ is trivial, and 
so $P=(T_v)$.
\end{proof}

\vskip 2mm
\noindent{\it{More questions}.} It is natural to ask:
\begin{equation}\label{Q3}\tag{Q3}
\text{Is $\widetilde{\BT} \simeq \Ind_{G_{L}}^{G_{\BQ}}(\Lambda_{L}^{v}(\Psi_L^v))$?}
\end{equation}
Or even
\begin{equation}\label{Q4}\tag{Q4}
\text{Is $\widetilde{\BT} \simeq \Ind_{G_{L}}^{G_{\BQ}}(\BT^+)$?}
\end{equation}

Answering 
these questions is key to our answering questions \eqref{Q1} and \eqref{Q2} (and to subsequent applications).

\subsubsection{Saturation of $\BT^+$ and $\BH^+$}\label{saturation}
A priori, the $G_{L}$-stable $\Lambda_{L}^{v}$-submodule $\BT^{+}=\BH^+$ might not be saturated in $\widetilde\BT$ or $\widetilde\BH$.
Let
\begin{equation}\label{SatClo}
\BT_{v} = (\BT^{+} \otimes_{\Lambda_{L}^{v}} {\bf{F}}) \cap \widetilde{\BT} \ \ \text{and} \ \ \BH_{v} = (\BH^{+} \otimes_{\Lambda_{L}^{v}} {\bf{F}}) \cap \widetilde{\BH}
\end{equation}
be their respective saturations.  
Note that it is not a priori clear that $\BT_v$ equals $\BH_v$. 
Note also that $\BT_v$ and $\BH_v$ are $G_L$-stable submodules and that the $G_L$-action is via $\Psi_L^v$.
Let also
$$
\BT_{\bar v} = c\cdot \BT_v \subset \widetilde\BT \ \ \text{and} \ \ \widetilde\BT^v = \widetilde\BT/\BT_v.
$$
We similarly define $\BH_{\bar v}$ and $\widetilde\BH^v$.
Both  $\BT_{\bar v}$ and $\BH_{\bar v}$ are $G_L$-stable submodules with $G_L$-action via $\Psi_L^{v,c}$.

\begin{lem}\label{satC}\hfill
\begin{itemize}
\item[(i)] The submodules $\BT_{v}$ and $\BH_v$ are free $\Lambda_{L}^{v}$-modules of rank one. In particular, both are isomorphic to $\Lambda_L^v(\Psi_L^v)$ as
$\Lambda_L^v[G_L]$-modules.
\item[(ii)] The submodule $\BT_v$ (resp.~$\BH_v$) is a $\Lambda_{L}^v$-summand of $\widetilde \BT$ (resp,~$\widetilde\BH$).  
Consequently, $\widetilde\BT^v$ and $\widetilde\BH^v$ are both free $\Lambda_L^v$-modules of rank one, and both are isomorphic to 
$\Lambda_L^v(\Psi_L^{v,c})$ as $\Lambda_L^v[G_L]$-modules. 
\item[(iii)] The image of $\BT_{\bar v}$ (resp.~$\BH_{\bar v}$) in the quotient $\widetilde\BT^v$ (resp.~$\widetilde\BH^v$) is either
$\widetilde\BT^v$ or $T_v\cdot \widetilde\BT^v$ (resp.~$\widetilde\BH^v$ or $T_v\cdot \widetilde\BH^v$).
\end{itemize}
\end{lem}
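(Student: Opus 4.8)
The plan is to prove the three parts of Lemma~\ref{satC} in order, leveraging the reflexive closures $\widetilde\BT$ and $\widetilde\BH$ being free $\Lambda_L^v$-modules of rank two (established in \S\ref{ssNot}) together with the rank-one freeness of $\BT^+=\BH^+$ from \eqref{subrk}.

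\emph{Part (i).} First I would note that $\BT_v$ is by definition a saturated $\Lambda_L^v$-submodule of $\widetilde\BT$, i.e.\ the quotient $\widetilde\BT/\BT_v$ is torsion-free. Since $\widetilde\BT$ is a free module over the two-dimensional regular local ring $\Lambda_L^v$, and $\BT_v\otimes_{\Lambda_L^v}\bf{F}$ is the rank-one $\bf{F}$-line spanned by $\BT^+$, the module $\BT_v$ is a reflexive (indeed saturated) rank-one submodule of a free module; over a regular local ring of dimension two a reflexive module is free, so $\BT_v$ is free of rank one. The $G_L$-action on $\BT_v$ is via $\Psi_L^v$ because $\BT_v\otimes\bf{F}$ is the unique $\bf{F}[G_L]$-line inside $\BV$ on which $G_L$ acts via $\Psi_L^v$ (by \eqref{CMgen} and Lemma~\ref{CMTate}(ii)), and $\BT_v$ is a lattice therein. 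A free rank-one $\Lambda_L^v$-module with $G_L$ acting through $\Psi_L^v$ is isomorphic to $\Lambda_L^v(\Psi_L^v)$. The argument for $\BH_v$ is identical.

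\emph{Part (ii).} Here the key point is that a saturated submodule $\BT_v\subset\widetilde\BT$ with torsion-free quotient is automatically a direct summand when the ambient module is free of finite rank over a \emph{regular local ring of dimension two}: the quotient $\widetilde\BT^v=\widetilde\BT/\BT_v$ is torsion-free of rank one, hence reflexive (localize at height-one primes: a torsion-free rank-one module over a DVR is free), hence free; then the short exact sequence $0\to\BT_v\to\widetilde\BT\to\widetilde\BT^v\to 0$ of free modules splits, giving the summand claim and the freeness and rank of $\widetilde\BT^v$. The $G_L$-action on $\widetilde\BT^v$ is via $\Psi_L^{v,c}$ by the same uniqueness argument as in Lemma~\ref{LocInd}'s proof combined with \eqref{CMgen}: after inverting $T_v$ the representation $\BV$ is the induction, whose semisimplification has characters $\Psi_L^v$ and $\Psi_L^{v,c}$, and $\widetilde\BT^v$ is a lattice realizing the second. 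Again the same works verbatim for $\BH_v$ and $\widetilde\BH^v$.

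\emph{Part (iii).} This is where the main obstacle lies. Let $\pi_v:\widetilde\BT\twoheadrightarrow\widetilde\BT^v$ be the projection from part (ii), and consider $\pi_v(\BT_{\bar v})$. Since $\BT_{\bar v}=c\cdot\BT_v$ is a free rank-one $\Lambda_L^v$-module with $G_L$ acting via $\Psi_L^{v,c}$, and $\widetilde\BT^v$ is also free of rank one with the same $G_L$-action, the map $\pi_v|_{\BT_{\bar v}}$ is a $\Lambda_L^v$-linear map $\Lambda_L^v(\Psi_L^{v,c})\to\Lambda_L^v(\Psi_L^{v,c})$, hence multiplication by some $f\in\Lambda_L^v$; its image is the ideal $(f)\cdot\widetilde\BT^v$. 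It remains to show $(f)$ is either $(1)$ or $(T_v)$. For this I would argue: (1) the map $\pi_v|_{\BT_{\bar v}}$ is nonzero, because otherwise $\BT_{\bar v}\subseteq\BT_v$, forcing $\BT_v$ to be $G_\BQ$-stable, contradicting irreducibility of $\BV$ as an $\bf{F}[G_\BQ]$-module from \eqref{CMgen}; hence $f\neq 0$. (2) By Lemma~\ref{LocInd}, for every height-one prime $P\neq(T_v)$ the localization $\widetilde\BT_P$ is $\Ind_{G_L}^{G_\BQ}(\Lambda_{L,P}^v(\Psi_L^v))$, and for the induced representation the two saturated lines $\BT_{v,P}$ and $\BT_{\bar v,P}$ are \emph{complementary} summands, so $\pi_v$ restricts to an isomorphism $\BT_{\bar v,P}\isoarrow\widetilde\BT^v_P$, i.e.\ $f$ is a unit in $\Lambda_{L,P}^v$. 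Thus $f$ is a unit away from the prime $(T_v)$, which for the two-dimensional regular local ring $\Lambda_L^v$ (a UFD) means $(f)=(T_v^m)$ for some $m\geq 0$. (3) Finally, at $P=(T_v)$ one shows $m\leq 1$ by a length/cokernel count: $\widetilde\BT/(\BT_v\oplus\BT_{\bar v})$ is pseudo-null, and the $T_v$-adic valuation of $f$ is bounded by the structure of the residual $G_{L_v}$-action described in Lemma~\ref{subchar}, specifically the congruence $\sigma\cdot t-\Psi_L^{v,c}(\sigma)t\in\BT^+$, which forces the obstruction to splitting to lie in $\BT^+/T_v\BT^+$, a module of length one over $\Lambda_L^v/(T_v)$. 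Combining, $(f)=(1)$ or $(T_v)$. The delicate step is this last valuation estimate at $(T_v)$; I expect it to require carefully unwinding the extension class of $0\to\BT^+\to\widetilde\BT\to\widetilde\BT_1^-\to 0$ reduced modulo $T_v$ and identifying it with the ramification of the induced representation at $v$, and the same reasoning transfers to $\BH_{\bar v}$ and $\widetilde\BH^v$.
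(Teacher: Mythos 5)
Your part (i) is fine and is essentially the paper's argument (saturation inside the reflexive module forces $\BT_v=\widetilde\BT_v$, hence freeness). The genuine gap is in part (ii). It is simply not true that a saturated rank-one submodule of a free rank-two $\Lambda_L^v$-module is a direct summand, nor that a torsion-free rank-one module that is free at all height-one primes is reflexive: take $M=\Lambda_L^v\cdot(p,T_v)\subset(\Lambda_L^v)^2$. Then $M$ is saturated and free of rank one, but $(p,T_v)\in\mathfrak{m}_L^v\cdot(\Lambda_L^v)^2$, so $M$ is not a summand; the quotient is isomorphic (via $(x,y)\mapsto xT_v-yp$) to the maximal ideal $\mathfrak{m}_L^v$, which is torsion-free of rank one and free at every height-one prime, yet not reflexive. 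Reflexivity is the global condition $M=\bigcap_P M_P$ and cannot be verified one height-one prime at a time, so your proof of (ii) collapses; and since your (iii) uses the projection $\pi_v$ supplied by (ii), it inherits the gap. Within (iii), the assertion that $\widetilde\BT/(\BT_v\oplus\BT_{\bar v})$ is pseudo-null is also false — the whole point of (iii) is that this quotient is $0$ or $\Lambda_L^v/(T_v)$, and the latter is not pseudo-null — and the decisive bound $m\le 1$ is exactly the step you leave to an "expected" unwinding of an extension class.

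What makes (ii) and (iii) true is the Galois action, not commutative algebra. The paper chooses a $\Lambda_L^v$-basis $e_+,e_-$ of $\widetilde\BT$ with $c\cdot e_\pm=\pm e_\pm$, writes a generator $e_v$ of $\BT_v$ as $e_v=ae_++be_-$ (so $e_{\bar v}=ae_+-be_-$), and observes that $(\gamma-\Psi_L^{v,c}(\gamma))e_+=\tfrac{\Psi_L^v(\gamma)-\Psi_L^{v,c}(\gamma)}{2a}\,e_v$ (similarly with $2b$) for $\gamma\in\Gamma_L$; saturation of $\BT_v$ then forces $a,b$ to divide every $\Psi_L^v(\gamma)-\Psi_L^{v,c}(\gamma)$, and the ideal these elements generate is exactly $(T_v)$ — here the full global group $\Gamma_L$ is essential, since the decomposition group used in Lemma \ref{subchar} only yields the larger ideal $((1+T_v)^{p^{h_p}}-1)$. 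Since $a$ and $b$ are coprime (again by saturation), at least one is a unit, so $e_v\notin\mathfrak{m}_L^v\widetilde\BT$, which gives (ii); and the other coefficient is a unit or a unit times $T_v$, which gives (iii) at once. Your localization step via Lemma \ref{LocInd}, showing the relevant cokernel is supported at $(T_v)$, is a correct and pleasant observation and could be combined with a divisibility bound of this type, but as written the essential input — $a,b\mid T_v$ coming from the $G_L$-action together with saturation — is missing from your argument.
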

\noindent Here again, $T_{v}\in\Lambda_L^v$ is as in \S\ref{MoreIwasawa}. 

\begin{proof}
Part (i) is clear if we can show that $\BT_v$ and $\BH_v$ are free $\Lambda_L^v$-submodules.
We have $\BT_v \subset \widetilde\BT_v\subset \widetilde\BT$. Since the quotient $\widetilde\BT_v/\BT_v$ is pseudo-null, $\widetilde\BT_v$ is contained in the saturation of $\BT_v$ in $\widetilde\BT$.
But $\BT_v$ is saturated in $\widetilde\BT$ by definition. Hence $\BT_v = \widetilde\BT_v$, and 
so $\BT_v$ is a free $\Lambda_L^v$-module. The same argument applies to $\BH_v$.

Let $\{e_{+}, e_{-}\} \subset \widetilde{\BT}$ be a $\Lambda_{L}^{v}$-basis such that 
$$
c\cdot e_{\pm} = \pm e_{\pm}.
$$
Let $e_{v}$ be a basis of $\BT_{v}$. Then $e_{\bar v} = c\cdot e_v$ is a basis of the $\Lambda_{L}^v$-module 
$$
\BT_{\bar{v}} = c \cdot \BT_{v} \subset \widetilde{\BT}.
$$
Note that by part (i), $\BT_{\bar v} \simeq \Lambda_L^v(\Psi_L^{v,c})$ as a $\Lambda_L^v[G_L]$-module.

There exist $a,b\in \Lambda_L^v$ such that
$$
e_{v} = ae_{+}+be_{-} \ \ \text{and} \ \  e_{\bar{v}} = c\cdot e_{v} = ae_{+}-be_{-}.
$$
So for $\gamma\in G_L$, we have 
$$
(\gamma-\Psi_L^{v,c}(\gamma)) e^+  = \frac{\Psi_L^v(\gamma)-\Psi_L^{v,c}(\gamma)}{2a}\cdot e_v \ \ 
\text{and} \ \ 
(\gamma-\Psi_L^{v,c}(\gamma)) e^-  = \frac{\Psi_L^v(\gamma)-\Psi_L^{v,c}(\gamma)}{2b}\cdot e_v.
$$
Since $\BT_v$ is saturated, this means
$$
a, b\mid (\Psi_L^v(\gamma)-\Psi_L^{v,c}(\gamma)) \ \text{for all} \ \gamma\in\Gamma_L.
$$
We claim that the ideal $I=\langle \Psi_L^v(\gamma)-\Psi_L^{v,c}(\gamma) \ : \ \gamma\in\Gamma_L\rangle\subset \Lambda_L^v$ is just $(T_v)$.
To see this, note that the character $\Psi_L^v\mod{I}$ is just the composition 
$$
G_L\twoheadrightarrow \Gamma_L \twoheadrightarrow \Gamma_L^v \ra (\Lambda_L^v/I)^\times,
$$
where the arrows are all the canonical projections. The kernel of this character contains $\Gamma^-$, so the fixed field of the kernel
is contained in $L_\infty^\cyc\cap L_\infty^v = L$. This means that $I$ must be the augmentation ideal $(T_v)$ of $\Lambda_L^v$.

It then follows that $a,b\mid T_v$. On the other hand, since $e_v$ is a basis of a saturated module, $a$ and $b$ cannot have a common divisor. 
So at least one of $a$ and $b$ is a unit in $\Lambda_L^v$. This implies that $e_v$ has non-trivial image in $\widetilde\BT/\mathfrak{m}_L^v\widetilde\BT$,
where $\mathfrak{m}_L^v\subset\Lambda_L^v$ is the maximal ideal, which implies part (ii) for $\BT_v$.  

If both $a$ and $b$ are units, then $\widetilde \BT/(\BT_v\oplus\BT_{\bar v}) = 0$. While if only one of $a$ and $b$ is a unit, then the other is a unit times $T_v$,
whence $\widetilde \BT/(\BT_v\oplus\BT_{\bar v}) \simeq \Lambda_L^v/T_v\Lambda_L^v$. This, together with part (ii), implies part (iii) for $\BT_{\bar v}$.

The same arguments applied to $\BH_v\subset \widetilde\BH$ yield the claims in (ii) and (iii) for $\BH_v$ and $\BH_{\bar v}$.
\end{proof}

Since $\BT^+=\BH^+\subseteq \BT_v\subseteq\BH_v$, and these are all free $\Lambda_L^v$-modules of rank one, we have
$$
\BT^+ = C_v\cdot \BT_v, \ \ \BT_v = D_v\cdot\BH_v, \ \ \BH^+ = C_v D_v\cdot \BH_v,  \ \ \text{for some} \ \ C_v, D_v\in\Lambda_L^v.
$$
We will eventually show that $C_v$ is a unit (that is, $\BT^+ = \BT_v$) and $D_v\mid T_v$. As a first step we have the following. 

\begin{lem}\label{SatClo-2} 
Let $p^{h_p}$ be the index of $\ov{\Gamma}_v$ in $\Gamma_v$ as in \S\ref{Zp-ext}. Then $C_vD_v\mid ((1+T_v)^{p^{h_p}}-1)$.
\end{lem}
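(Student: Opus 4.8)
The plan is to compute the action of $G_{L_v}$ on the basis element $e_+$ of the $+$-part of $\widetilde\BT$ in terms of the saturated line $\BT_v$, exactly mirroring the argument already used in Lemma \ref{satC}, but now keeping track of the precise generator of the relevant ideal rather than just identifying it as the augmentation ideal $(T_v)$. Recall from Lemma \ref{CMTate}(iii) and Lemma \ref{subchar} that for $\sigma\in G_{L_v}$ and $t\in\widetilde\BT$ we have $\sigma\cdot t - \Psi_L^{v,c}(\sigma)t\in\BT^+ = C_vD_v\cdot \BH_v$, and similarly the analogous statement with $\Psi_L^v$ in place of $\Psi_L^{v,c}$ holds after noting that $\widetilde\BT_v = \BT_v$ is $G_L$-stable with $G_L$ acting via $\Psi_L^v$. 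Writing $e_v = a e_+ + b e_-$ (so $e_{\bar v}=c\cdot e_v = a e_+ - b e_-$) as in the proof of Lemma \ref{satC}, for $\gamma\in G_{L_v}$ one gets
$$
(\gamma-\Psi_L^{v,c}(\gamma))e_+ = \frac{\Psi_L^v(\gamma)-\Psi_L^{v,c}(\gamma)}{2a}\cdot e_v,
$$
and since $\BH^+ = C_vD_v\cdot\BH_v$ contains $(\gamma-\Psi_L^{v,c}(\gamma))e_+$ up to the $\BT^+ = \BH^+$ membership coming from Lemma \ref{subchar}, one obtains $a\cdot C_vD_v \mid \Psi_L^v(\gamma)-\Psi_L^{v,c}(\gamma)$ for all $\gamma\in G_{L_v}$; a symmetric computation with $e_-$ gives the same divisibility with $b$ in place of $a$.

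The next step is to identify the ideal $I_v = \langle \Psi_L^v(\gamma)-\Psi_L^{v,c}(\gamma)\ :\ \gamma\in G_{L_v}\rangle \subset \Lambda_L^v$ explicitly. The character $\Psi_L^v(\Psi_L^{v,c})^{-1}$ restricted to $G_{L_v}$ factors through the quotient $\ov\Gamma_L^v = (\Gamma_L^v)^{p^{h_p}}$ of $G_{L_v}$ (by the description in \S\ref{Zp-ext}, since $\gamma_v$ maps to $\gamma_-^{2}$ up to the cyclotomic part and $c$ inverts $\gamma_-$, so on the anticyclotomic quotient the conjugate character is the inverse), and a topological generator of $\ov\Gamma_L^v$ maps under $\Psi_L^v(\Psi_L^{v,c})^{-1}$ to $(1+T_v)^{p^{h_p}}$ (recall $\gamma_v^{p^{h_p}}$ generates $\ov\Gamma_L^v$ and $\Psi_L^v(\gamma_v)=1+T_v$, while the conjugate contributes the inverse, but over $\Gamma_L^v$ alone $\Psi_L^{v,c}$ restricted to $G_{L_v}$ is trivial since $\Psi_L^{v,c}$ factors through $\Gamma_L^{\bar v}$ which is unramified at $v$). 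Thus $I_v$ is the ideal generated by $(1+T_v)^{p^{h_p}}-1$.

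Combining the two steps: from $a\cdot C_vD_v\mid \Psi_L^v(\gamma)-\Psi_L^{v,c}(\gamma)$ for all $\gamma$ we get $a\cdot C_vD_v \mid ((1+T_v)^{p^{h_p}}-1)$, and likewise $b\cdot C_vD_v\mid ((1+T_v)^{p^{h_p}}-1)$. Since $e_v$ generates a saturated $\Lambda_L^v$-submodule of $\widetilde\BT$, the coefficients $a$ and $b$ generate the unit ideal (they have no common divisor, as in Lemma \ref{satC}), so $\langle a,b\rangle = \Lambda_L^v$ and therefore $C_vD_v\mid ((1+T_v)^{p^{h_p}}-1)$, which is exactly the claim. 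I expect the main obstacle to be the careful bookkeeping in identifying $I_v$: one must pin down precisely how $G_{L_v}$ surjects onto $\ov\Gamma_L^v$, verify that $\Psi_L^{v,c}|_{G_{L_v}}$ is unramified (indeed trivial on inertia, so that the ratio character genuinely captures the full ramification at $v$), and confirm that the generator is $(1+T_v)^{p^{h_p}}-1$ rather than an associate off by a unit or a power — this hinges on the normalizations of $\gamma_v$, $\gamma_-$ and $h_p$ fixed in \S\ref{Zp-ext}. Everything else is a direct transcription of the argument in Lemma \ref{satC} with the ideal kept explicit.
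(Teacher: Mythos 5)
Your first step is essentially the paper's, but your second step contains a genuine error. You justify the identification of the ideal $J=\langle \Psi_L^v(\gamma)-\Psi_L^{v,c}(\gamma)\,:\,\gamma\in G_{L_v}\rangle$ by asserting that $\Psi_L^{v,c}$ restricted to $G_{L_v}$ is trivial ``since $\Psi_L^{v,c}$ factors through $\Gamma_L^{\bar v}$ which is unramified at $v$.'' This confuses \emph{unramified at $v$} (trivial on the inertia subgroup $I_v$) with \emph{split at $v$} (trivial on the whole decomposition group $G_{L_v}$). In general $\Frob_v$ has infinite-order image in $\Gamma_L^{\bar v}$, so $\Psi_L^{v,c}|_{G_{L_v}}$ is far from trivial. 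What is true is only $\Psi_L^{v,c}(I_v)=1$, and feeding that into your computation yields $C_vD_v\mid \Psi_L^v(\gamma)-1$ for $\gamma\in I_v$ only, hence $C_vD_v\mid (1+T_v)^{p^{e}}-1$ where $p^{e}$ is the index of the image of \emph{inertia} in $\Gamma_L^v$. This exponent $e$ can be strictly larger than $h_p$ (the discrepancy is the residue degree of $v$ in the everywhere-unramified part of $L_\infty^v$, which is nontrivial exactly in the interesting case $p\mid h_L$), so your argument does not reach the stated bound. The missing idea is how to exploit the \emph{Frobenius} elements of $G_{L_v}$, where the $\Psi_L^{v,c}$-term you discarded genuinely interferes.

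The paper handles this by a global argument rather than a direct evaluation of the ratio character: it sets $J$ as above, observes that modulo $J$ the character $\Psi_L^v$ is unramified at $v$ (and automatically at $\bar v$), so its fixed field lies in $L_\infty^v\cap L_\infty^{\bar v}\subset L_\infty^{\ac}$, on which conjugation by $c$ acts by inversion; combining this with the defining congruence $\Psi_L^v(\gamma)\equiv\Psi_L^{v,c}(\gamma)\pmod J$ for $\gamma\in G_{L_v}$ forces $\gamma\equiv\gamma^{-1}$ there, hence (as $p$ is odd) the whole image $\ov{\Gamma}_v$ of $G_{L_v}$ dies, i.e.\ $((1+T_v)^{p^{h_p}}-1)\subset J\subset (C_vD_v)$. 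You would need this step, or some substitute for it, to make your plan work. Separately, note a smaller slip in your first step: you run the basis computation in $\widetilde\BT$ with $e_v$ generating $\BT_v$ while claiming membership in $\BT^+=C_vD_v\BH_v$; relative to a generator of $\BT_v$ this only extracts $aC_v\mid \Psi_L^v(\gamma)-\Psi_L^{v,c}(\gamma)$, since $\BT^+=C_v\BT_v$. To get the factor $D_v$ you must, as the paper does, replace $\widetilde\BT$ by $\widetilde\BH$, take $e_v$ a generator of $\BH_v$ and $e_\pm$ a basis of $\widetilde\BH$, and use the $\widetilde\BH$-statement of Lemma \ref{subchar}; that part is fixable, but the gap in the identification of $J$ is not.
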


\begin{proof} Arguing as in the proof of part (ii) of Lemma \ref{satC}, but replacing $\widetilde\BT$ with $\widetilde\BH$ and and using that $\sigma\cdot h - \Psi_L^{v,c}(\sigma)h \in \BH^+$ for all 
$\sigma\in G_{L_v}$ and $h\in\widetilde \BH$ by Lemma \ref{subchar}, we find that 
$$
C_vD_v \mid (\Psi_L^v(\gamma)-\Psi_L^{v,c}(\gamma)) \ \ \text{for all $\gamma\in G_{L_v}$}.
$$
Let  $J=\langle \Psi_L^{v}(\gamma)-\Psi_L^{v,c}(\gamma) \ : \ \gamma\in G_{L_v}\rangle\subset \Lambda_L^v$ and consider $\Psi_L^v\mod J$, which is just the composite character
$$
G_L\twoheadrightarrow \Gamma_L \twoheadrightarrow \Gamma_L^v \rightarrow (\Lambda_L^v/J)^\times.
$$
Since $\Psi_L^{v,c}(I_v) = 1$, it follows that $\langle \Psi_L^v(\gamma) -1 \ : \ \gamma\in I_v\rangle\subset J$.
So the fixed field $L_v$ of the kernel of $\Psi_L^v\mod J$ is unramified at both $v$ and $\bar v$, and so contained in $L_\infty^v \cap L_\infty^{\bar v}\subset L_\infty^\ac$.
It follows that for all $\gamma\in G_{L_v}$, $\gamma$ equals $c\gamma c^{-1}$ on $L_v$ (by definition of $J$) and equals $c\gamma^{-1}c^{-1}$ on $L_v$ (since $L_v \subset L_\infty^\ac$).
Hence $G_{L_v}$ fixes $L_v$. That is, $J$ contains the ideal $\langle \gamma - 1 \ : \ \gamma\in \ov\Gamma_v\rangle = ((1+T_v)^{p^{h_p}}-1)$, from which the desired divisibility follows.
\end{proof}

To prove that $D_v\mid T_v$ and $C_v$ is a unit, we make use of some of the results proved in \cite{BD} and \cite{BDP'}.

\subsubsection{Some consequences of \cite{BD} and \cite{BDP'}}
Let $P\in\height_{1}(\Lambda_{L}^v)$ be a height one prime dividing $(1+T_v)^{p^{h_p}}-1$. 
Then $P=(\Phi_{p^r}(1+T_v))$ for some $0\leq r\leq h_p$, where $\Phi_{p^r}(X)$ is the $p^r$th cyclotomic polynomial.
Let $\zeta_{p^r}$ be a $p^r$th root of unity. Let $P_r\subset\Lambda_L^v\otimes_{\BZ_p}\ov{\BQ}_p$ be the prime above $P$ that is the kernel of the homomorphism
$\lambda_r:\Lambda_L^v\rightarrow \ov{\BQ}_p$, $T_v\mapsto (\zeta_{p^r} -1)$.  
Note that $P_r\cap (\Lambda_0\otimes_{\BZ_p}\ov{\BQ}_p)$ is just the prime $(X)$, and that the natural maps induce isomorphisms
$$
\ov{\BQ}_p[\![X]\!] = {(\Lambda_0\otimes_{\BZ_p}\ov{\BQ}_p)}^\wedge_{(X)} \isoarrow {(\Lambda_L^v\otimes_{\BZ_p}\ov{\BQ}_p)}^\wedge_{P_r},
$$
where the superscript '$\wedge$' denotes completion. 

Let $Q\subset\BH^\ord_{D_Lp^\infty}\otimes_{\BZ_p}\ov{\BQ}_p$ be the kernel of the composition
$$
\varphi_r:\BH^\ord_{D_Lp^\infty}\otimes_{\BZ_p}\ov{\BQ}_p\stackrel{\varphi\otimes id}{\longrightarrow}\Lambda_L^v\otimes_{\BZ_p}\ov{\BQ}_p\stackrel{\lambda_r}{\rightarrow}\ov{\BQ}_p.
$$
Note that the homomorphism $\varphi_r$ factors through $\BT^\ord_{D_Lp^\infty}\otimes_{\BZ_p}\ov{\BQ}_p$ (since $\varphi$ factors through $\BT_{D_Lp^\infty}^\ord$).
Moreover, $\varphi_r$ gives the eigenvalues for the action of the Hecke operators on the specialization $g_{v,r} = g_{v,\lambda_r}$ of the canonical CM family $\bf{h}_v$.
The specialization $g_{v,r}$ is the $p$-stabilization of a weight one CM  eigenform of level $D_L$. 
If $r=0$, $g_{v,r}$ is an Eisenstein series associated with 
the pair of characters $1,\chi_L$. If $r\neq 0$, $g_{v,r}$ is a CM cuspform associated to the Hecke character 
$\psi_r = \Psi_L^v \mod P_r$ (note that $\psi_r^c = \psi^{-1}_r \neq \psi_r$).

Let 
$$
\CR = {(\BH^\ord_{D_Lp^\infty}\otimes_{\BZ_p}\ov{\BQ}_p)}^\wedge_{Q} \ \ \text{and} \ \ \CR^0 = {(\BT^\ord_{D_Lp^\infty}\otimes_{\BZ_p}\ov{\BQ}_p)}^\wedge_{Q}.
$$
These are $\ov{\BQ}_p[\![X]\!]$-algebras. Let also
$$
M = {(M_{\Lambda_0}\otimes_{\BZ_p}\ov{\BQ}_p)}^\wedge_{Q}, \ \ \text{and} \ \ S = {(S_{\Lambda_0}\otimes_{\BZ_p}\ov{\BQ}_p)}^\wedge_{Q}.
$$
To apply results of \cite{BDP'} and \cite{BD}, we first explain the connection between these objects and those considered in {\it op.~cit.}

Suppose first that $r=0$. Then we take $N=D_L$ and $\phi = \chi_L$ in \cite{BDP'}. 
The ring ${(\Lambda_0\otimes_{\BZ_p}\ov{\BQ}_p)}^\wedge_{(X)}$  (which is just $\ov{\BQ}_p[\![X]\!]$) is identified with the completion of the local ring of weight space
at the point corresponding to the character $1\times \chi_L$ of $\BZ_p^\times\times (\BZ/D_L\BZ)^\times$, that is, the ring $\Lambda$ of \cite[\S3.4]{BDP'}. 
The ring $\CR$ (resp. $\CR^0$ )is just the ring denoted $\CT^{\rm{full}}$ 
(resp. $\CT^{\rm{full}}_{\rm{cusp}}$) in \cite[\S4.2]{BDP'} (cf.~\cite[\S3.2]{BD}). By \cite[Props.~4.4 \& 5.5]{BDP'}, $\CT = \CT^{\rm{full}}$ and $\CT_{\rm{cusp}} = \CT^{\rm{full}}_{\rm{cusp}}$,
so $\CR  = \CT$ and $\CR^0 = \CT_{\rm{cusp}}$.
The $\CR$-module $M$ is just the module $M^\dagger_{\mathfrak{m}_f}$ of \cite[\S4.2]{BDP'} for $f = g_{v,0}$, and the $\CR^0$-module $S$ is just $S^\dagger_{\mathfrak{m}_f}$.
The canonical CM family ${\bf{h}}_v$ belongs to $S$. There are two Hida--Eisenstein families ${\bf{E}}_1, {\bf{E}}_2 \in M$ associated with the  pair of characters $1,\chi_L$ (denoted $\mathcal{E}_{1,\chi_L}$ and $\mathcal{E}_{\chi_L,1}$ in \cite{BDP'}). 
From the results in \cite{BDP'} we then conclude the following.
\begin{prop}\label{Str-prop1} If $P=(T_v)$, then
\begin{itemize}
\item[(i)] $S = \ov{\BQ}_p[\![X]\!] \cdot {\bf{h}}_v$,
\item[(ii)] $M = \ov{\BQ}_p[\![X]\!] \cdot {\bf{h}}_v \oplus \ov{\BQ}_p[\![X]\!] \cdot\frac{{\bf{E}}_1-{\bf{h}}_v}{X}\oplus \ov{\BQ}_p[\![X]\!] \cdot\frac{{\bf{E}}_2-{\bf{h}}_v}{X}$.
\item[(iii)] Let $I_{i}\subset \CR$ be the annihilator of ${\bf{E}}_i$. Then $I_i\cdot {\bf{h}}_v = X\ov{\BQ}_p[\![X]\!]\cdot {\bf{h}}_v$.
\end{itemize}
\end{prop}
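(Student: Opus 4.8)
The plan is to deduce (i)--(iii) from the structure theorems of Bellaiche--Dimitrov \cite{BD} and Betina--Dimitrov--Pozzi \cite{BDP'} at the weight one Eisenstein point $f=g_{v,0}$ attached to the pair $1,\chi_L$, via the identifications recorded above ($\CR=\CT=\CT^{\rm full}$, $\CR^0=\CT_{\rm cusp}=\CT^{\rm full}_{\rm cusp}$, $M=M^\dagger_{\mathfrak{m}_f}$, $S=S^\dagger_{\mathfrak{m}_f}$) together with the Hida duality \eqref{HidaDu}. The two inputs I would use are: (a) the regularity of the cuspidal Hecke algebra, $\CR^0=\CT_{\rm cusp}\cong\ov{\BQ}_p[\![X]\!]$, with the map to $\ov{\BQ}_p[\![X]\!]$ the completion of $\varphi$ (from \cite{BD}); and (b) the explicit non-Gorenstein description of $\CR=\CT$ and of $M=M^\dagger_{\mathfrak{m}_f}$ at $f$ from \cite{BDP'}, which in particular gives that $M$ is free of rank $3$ over $\ov{\BQ}_p[\![X]\!]$ and computes the generalised eigenspace $M/XM$.

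First I would prove (i). Completing \eqref{HidaDu} at $Q$ and dualising the identification $\CR^0\cong\ov{\BQ}_p[\![X]\!]$ gives $S\cong\Hom_{\ov{\BQ}_p[\![X]\!]}(\CR^0,\ov{\BQ}_p[\![X]\!])\cong\ov{\BQ}_p[\![X]\!]$, free of rank one; tracking the normalisation, the generator is the functional dual to $1\in\CR^0$, which is exactly the normalised $\Lambda$-adic cuspform ${\bf{h}}_v$. Hence $S=\ov{\BQ}_p[\![X]\!]\cdot{\bf{h}}_v$. For (ii), note that each of ${\bf{h}}_v,{\bf{E}}_1,{\bf{E}}_2$ specialises at $X=0$ to (the ordinary $p$-stabilisation of) the weight one Eisenstein series attached to $1,\chi_L$, so ${\bf{E}}_i-{\bf{h}}_v\in X\cdot M$ and $e_i:=\tfrac{{\bf{E}}_i-{\bf{h}}_v}{X}$ are well defined elements of $M$. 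I would then check that the reductions of ${\bf{h}}_v,e_1,e_2$ modulo $X$ span the $3$-dimensional space $M/XM$ --- this is precisely the description of the $\mathfrak{m}_f$-generalised eigenspace in \cite{BDP'}, in which ${\bf{h}}_v\bmod X$ is the Eisenstein eigenform $f$ and $e_1\bmod X$, $e_2\bmod X$ are the two independent ``infinitesimal Eisenstein'' directions --- and conclude by Nakayama that ${\bf{h}}_v,e_1,e_2$ form an $\ov{\BQ}_p[\![X]\!]$-basis of $M$, which is the assertion.

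For (iii): write $\varphi:\CR\to\CR^0=\ov{\BQ}_p[\![X]\!]$ as above, so $t\cdot{\bf{h}}_v=\varphi(t){\bf{h}}_v$ for $t\in\CR$, and let $\chi_i:\CR\to\ov{\BQ}_p[\![X]\!]$ be the eigensystem of ${\bf{E}}_i$. For $t\in I_i$, using $X\cdot e_i={\bf{E}}_i-{\bf{h}}_v$ and $\chi_i(t)=0$ one computes in $M$
$$
X\cdot(t\cdot e_i)\;=\;t\cdot({\bf{E}}_i-{\bf{h}}_v)\;=\;\chi_i(t)\,{\bf{E}}_i-\varphi(t)\,{\bf{h}}_v\;=\;-\varphi(t)\,{\bf{h}}_v.
$$
Since $t\cdot e_i\in M$ and ${\bf{h}}_v$ spans a free $\ov{\BQ}_p[\![X]\!]$-summand of $M$ by (ii), writing $t\cdot e_i$ in the basis forces $X\mid\varphi(t)$; hence $I_i\cdot{\bf{h}}_v=\varphi(I_i)\cdot{\bf{h}}_v\subseteq X\ov{\BQ}_p[\![X]\!]\cdot{\bf{h}}_v$. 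For the reverse inclusion I would produce $t\in I_i$ with $\varphi(t)$ a unit times $X$: using the explicit presentation of $\CT$ from \cite{BDP'}, in which $(\varphi,\chi_1,\chi_2)$ realises $\CT$ inside $\ov{\BQ}_p[\![X]\!]^{3}$ as the triples that are pairwise congruent modulo $X$, the element with $\varphi$-component $X$ and trivial $\chi_i$-component lies in $I_i$; equivalently, the families ${\bf{h}}_v$ and ${\bf{E}}_i$ meet transversally at $f$. As $\Ann_{\ov{\BQ}_p[\![X]\!]}({\bf{h}}_v)=0$, this gives $X\ov{\BQ}_p[\![X]\!]\cdot{\bf{h}}_v\subseteq I_i\cdot{\bf{h}}_v$, completing (iii).

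The main obstacle is the bookkeeping rather than any new idea: matching our $\Lambda_L^{v}$ (twisted by $\varphi_v$), the completion at $P=(T_v)$, and the $\ov{\BQ}_p$-coefficient normalisation with the weight-space and Hecke-module normalisations of \cite{BD} and \cite{BDP'}, and verifying that their theorems for $\CT_{\rm cusp}$, $\CT$ and $M^\dagger_{\mathfrak{m}_f}$ yield exactly the statements used above --- in particular the rank-$3$, relation-controlled description of $M^\dagger_{\mathfrak{m}_f}$ needed for the direct sum in (ii), and the transversality giving the reverse inclusion in (iii).
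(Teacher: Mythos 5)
Your proposal is correct and amounts to an unpacking of exactly the inputs the paper invokes: the paper's proof of this proposition is simply a citation of \cite[Cor.~4.5, Prop.~5.4(ii), Prop.~4.2]{BDP'}, and your duality argument for (i), fiber-plus-Nakayama argument for (ii), and basis computation plus transversality for (iii) reconstruct precisely those statements (your forward inclusion in (iii) is a nice self-contained deduction from (ii), but the reverse inclusion — the existence of $t\in I_i$ with $\varphi(t)$ a uniformiser, i.e. transversality of the cuspidal and Eisenstein components — is exactly the content of \cite[Prop.~4.2]{BDP'}, applied to both Eisenstein families by symmetry, so no input is saved). One small correction: at the Eisenstein point $P=(T_v)$ the regularity $\CR^0=\CT_{\rm cusp}\cong\ov{\BQ}_p[\![X]\!]$ is a theorem of \cite{BDP'} (Thm.~A(i)/Cor.~4.5), not of \cite{BD}, which treats the classical cuspidal weight-one points relevant only to the case $P\neq(T_v)$.
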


\begin{proof} 
Parts (i) and (ii) follow from \cite[Cor.~4.5 \& Prop.~5.4(ii)]{BDP'}.

Part (iii) follows from \cite[Prop.~4.2]{BDP'}, after noting 
that the analysis in {\it op.~cit.} also applies with the Eisenstein family $\mathcal{E}_{\chi_L,1}$ replacing $\mathcal{E}_{1,\chi_L}$.
\end{proof}

Suppose now that $0<r\leq h_p$. We take $\psi = \psi_r$ (the conductor is $\mathfrak{c}_\psi = (1)$) 
and $f = g_{v,r}$ in \cite{BD}. Again ${(\Lambda_0\otimes_{\BZ_p}\ov{\BQ}_p)}^\wedge_{(X)}= \ov{\BQ}_p[\![X]\!]$ is identified with the completion of the local 
ring of weight space $\CW$ at the weight of $f$, that is, with the ring $\Lambda$ as in \cite[\S3.1]{BD}. 
The ring $\CR^0$ is just the completion of the local ring of the full eigencurve $\CE_{\rm{full}}$ of tame level $N=D_L$ at the point corresponding to $f$ (cf. the first paragraph of
\cite[\S 3.2]{BD}), which we denote by $\CT^{\mathrm{full}}$, adopting the superscript `full' from \cite{BelD,BDP'}.  
Let $\CT$ be the completion of the local ring of the eigencurve at the point corresponding to $f$. 
The argument from \cite[\S7]{BelD} is readily adapted to the case at hand to show that $\CT = \CT^{\rm{full}}$. That is, $\CR^0=\CT$.

\begin{prop}\label{Str-prop2} If $P\neq (T_v)$, then the minimal number of $\CR^0$-generators of $S$ is at most $3$.
\end{prop}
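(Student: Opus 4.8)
The plan is to identify the module $S$ with the cuspidal Hida Hecke module at the weight-one CM point $g_{v,r}$ and then bound its number of generators by analyzing the structure of the local ring $\CR^0 = \CT$ together with the standard self-duality of the Hida cohomology. First I would invoke Theorem~\ref{ordTate-dual} (or rather its completion at $Q$): the perfect $\Lambda_D$-pairing on $H^1_\ord(D_Lp^\infty)$ restricts to a perfect pairing on the $\pm$-eigenspaces for complex conjugation, and under the Hida duality \eqref{HidaDu} this identifies $S = (S_{\Lambda_0}\otimes\ov{\BQ}_p)^\wedge_Q$ with $\Hom_{\CR^0}(\CR^0,\CR^0)$-type data localized at $Q$. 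Concretely, the localization $S$ is isomorphic as an $\CR^0$-module to $H^1_\ord(D_Lp^\infty)^+$ completed at $Q$, which in turn is dual to $H^1_\ord(D_Lp^\infty)^-$ completed at $Q$; since $\BT$ and $\BT_1$ differ only by a pseudonull module and $\CR^0$ is a localization away from such torsion, the relevant module is (a localization of) $\widetilde\BH$ or $\widetilde\BT$, which are free of rank two over the normalization.

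Next I would use the results of Bellaïche--Dimitrov \cite{BD} (and the identification $\CR^0 = \CT = \CT^{\mathrm{full}}$ noted just before the proposition) on the local structure of the eigencurve at the irregular weight-one CM point $g_{v,r}$. The key input is that $\CT$ is a non-Gorenstein local ring whose structure is completely described in \cite{BD}: it sits in a diagram relating it to the normalization $\ov{\BQ}_p[\![X]\!]\times\ov{\BQ}_p[\![X]\!]$ (the two CM components through $f$, namely those attached to $\psi_r$ and $\psi_r^c = \psi_r^{-1}$), with an explicitly bounded conductor. From this one reads off that $\CT$ is generated by at most $2$ elements over $\ov{\BQ}_p[\![X]\!]$, and that the dualizing module $\omega_{\CT} = \Hom_{\ov{\BQ}_p[\![X]\!]}(\CT,\ov{\BQ}_p[\![X]\!])$ requires at most $2$ generators as well. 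Since $S$ is a faithful $\CR^0$-module of generic rank one obtained from the rank-two Hida module via the self-duality, a short argument with Nakayama's lemma and the bound on $\dim_{\ov{\BQ}_p}(S/\mathfrak{m}_{\CR^0}S)$ — controlled by the cotangent space of $\CT$ plus the contribution of the one-dimensional piece — gives the bound of at most $3$ generators.

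The main obstacle I anticipate is pinning down exactly which $\CR^0$-module $S$ is after localization at $Q$: the Hida duality \eqref{HidaDu} gives $S_{\Lambda_D}^\ord \simeq \Hom_{\Lambda_D}(\BT^\ord_{D_Lp^\infty},\Lambda_D)$, but localizing and completing at $Q$ requires care because $\BT^\ord_{D_Lp^\infty}$ is not itself local at $Q$ in the cuspidal-versus-full dichotomy, and one must use \cite[Prop.~5.5]{BDP'}-type comparisons (or their analogues in the $r>0$ case from \cite{BelD}) to see that the cuspidal and full structures agree here. Once that identification is in place, the generator bound is essentially a formal consequence of the $\CT$-module being a quotient of $\omega_{\CT}\oplus(\text{rank-one free})$, where $\omega_{\CT}$ is $2$-generated by the explicit description in \cite{BD}, yielding the total bound $2+1 = 3$. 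I would present the argument in this order: (1) reduce $S$ to a duality statement via Theorems~\ref{ordTate} and~\ref{ordTate-dual}; (2) quote the ring-theoretic structure of $\CT = \CR^0$ from \cite{BD}; (3) apply Nakayama over $\ov{\BQ}_p[\![X]\!]$ to count generators.
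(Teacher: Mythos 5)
Your overall skeleton---identify $S$ with a dual of the Hecke algebra and then read off a generator bound from the known local structure of the eigencurve at $g_{v,r}$---is the same as the paper's, but both halves are shakier than they need to be, and the second half has a genuine gap exactly where the content of the proposition lies. On the first step: the paper obtains $S\simeq\Hom_{\ov{\BQ}_p[\![X]\!]}(\CR^0,\ov{\BQ}_p[\![X]\!])$ as $\CR^0$-modules directly from the duality \eqref{HidaDu}, localized and completed at $Q$. Your detour through Theorem \ref{ordTate-dual} conflates the complex-conjugation eigenspaces $H^{c=\pm}$ with the ordinary filtration pieces: it is $\CF^{-}H^1_{\ord}(D_Lp^\infty)$, not $H^1_{\ord}(D_Lp^\infty)^{+}$, that is identified with $S_{\Lambda_D}$ as a $\BT^{\ord}_{D_Lp^\infty}$-module (Theorem \ref{ordTate}(ii)), and the cuspidal-versus-full worry is moot because \eqref{HidaDu} is already a statement about the cuspidal Hecke algebra and completion at $Q$ is flat. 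This part is repairable, and your ``obstacle'' paragraph essentially contains the correct statement.

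The real gap is the structural input. In this paper \cite{BD} is Betina--Dimitrov (Bella\"iche--Dimitrov is \cite{BelD}, used only to get $\CR^0=\CT=\CT^{\mathrm{full}}$), and the Bella\"iche--Dimitrov smoothness/\'etaleness results you implicitly lean on apply only to regular weight-one points; here $g_{v,r}$ need not be regular at $p$, and the irregular case is precisely the one in which anything needs proving. At such a point the cuspidal eigencurve need not be smooth and can carry branches beyond the two CM ones, so your asserted picture---normalization $\ov{\BQ}_p[\![X]\!]\times\ov{\BQ}_p[\![X]\!]$ consisting only of the components attached to $\psi_r$ and $\psi_r^{-1}$, with $\Hom_{\ov{\BQ}_p[\![X]\!]}(\CT,\ov{\BQ}_p[\![X]\!])$ generated by two elements---is asserted without argument and is not what \cite[Thm.~B]{BD} gives in general; nothing you cite rules out Cohen--Macaulay type $3$, and the proposition's bound of $3$ (rather than $2$) indicates the paper's case-by-case analysis indeed has to allow it. Moreover your final bookkeeping, ``$S$ is a quotient of $\omega_{\CT}\oplus(\text{rank-one free})$, so $2+1=3$,'' has no justification: once $S$ is identified with the $\ov{\BQ}_p[\![X]\!]$-dual of $\CR^0$, the minimal number of generators is simply $\dim_{\ov{\BQ}_p}S/\mathfrak{m}_{\CR^0}S$ (Nakayama), i.e.\ the type of $\CR^0$; there is no extra free summand, and no step of your argument actually bounds this dimension. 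The paper's proof does exactly that by inspecting, case by case, the explicit rings furnished by \cite[Thm.~B]{BD}; your proposal never engages that case analysis, so as written it would fail in the irregular case.
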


\begin{proof} 
We first note that $S \simeq \Hom_{\ov{\BQ}_p[\![X]\!]}(\CR^0, \ov{\BQ}_p[\![X]\!])$ 
as $\CR^0$-modules. This follows from the duality \eqref{HidaDu}.  In particular, it suffices 
to show that $\Hom_{\ov{\BQ}_p[\![X]\!]}(\CR^0, \ov{\BQ}_p[\![X]\!])$ is generated by at most $3$ elements as an $\CR^0$-module.
A description of $\CR^0$ is provided by \cite[Thm.~B]{BD}, and a straight-forward case-by-case analysis then yields the desired bound.
\end{proof}

From Proposition \ref{Str-prop1} we conclude the following:
\begin{prop}\label{Sat-prop1} Suppose $P=(T_v)$.  The following hold:
\begin{itemize} 
\item[(i)] $\BT^+_P = \BT_{v,P}$, $\BT_P = \BT_{1,P}$ and either $\BH^+_P = \BH_{v,P}$ or $\BH^+_P = T_v\cdot \BH_{v,P}$;
\item[(ii)] $(\BH_1^-/\BT_1^-)_P \simeq (\Lambda_{L,P}^v/P)^s$ for $s=1$ or $2$;
\item[(iii)] If $\BH^+_P = \BH_{v,P}$, then $(\BH_1/\BT_1)_P \simeq (\BH_1^-/\BT_1^-)_P \simeq (\Lambda_{L,P}^v/P)$;
\item[(iv)] If $\BH^+_P = T_v\cdot \BH_{v,P}$, then $\BT_{1,P} = T_v\cdot \BH_{1,P}$.
\end{itemize}
\end{prop}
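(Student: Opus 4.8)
The plan is to extract all four assertions from the explicit local structure of the modules $M$ and $S$ over $\CR^0 = \ov{\BQ}_p[\![X]\!]\cdot {\bf h}_v$ and $\CR$ at the prime $P=(T_v)$, as recorded in Proposition~\ref{Str-prop1}, after first identifying the localizations $\BT_P$, $\BH_P$, $\BT^+_P$, $\BH^+_P$ with completed versions of $S$ and $M$ via the duality \eqref{HidaDu} and Theorem~\ref{ordTate}. Concretely, I would begin by noting that, upon completing at $Q$, the $\Lambda_L^v$-module $\BT_1$ (resp.~$\BH_1$) becomes identified with the $\ov{\BQ}_p[\![X]\!]$-module of the corresponding Tate lattice, whose $\mathcal{F}^-$-quotient is by Theorem~\ref{ordTate}(ii) the space $S$ (resp.~$M$) of $\Lambda_0$-adic cusp (resp.~modular) forms; meanwhile $\mathcal{F}^+ = \BT^+ = \BH^+$ is free of rank one with $G_L$ acting by $\Psi_L^v$. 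Since $\BT_P$ and $\BH_P$ are already torsion-free after localization (being so after localizing at every height-one prime other than $(T_v)$ and then this one too, once we exhibit them as lattices), we have $\BT_P = \BT_{1,P}$, $\BH_P = \BH_{1,P}$, and the identification of the $\mathcal{F}^-$-quotients with $S_P$, $M_P$ lets me read off everything from Proposition~\ref{Str-prop1}.

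For part (i): the saturation $\BT_v$ of $\BT^+$ inside $\widetilde\BT$ is by Lemma~\ref{satC} a rank-one free summand, and since $S_P = \ov{\BQ}_p[\![X]\!]\cdot{\bf h}_v$ is already free of rank one over $\CR^0 = \ov{\BQ}_p[\![X]\!]$, the extension $0\to \BT^+_P \to \BT_P \to \BT^-_{1,P}\to 0$ splits locally, forcing $\BT^+_P = \BT_{v,P}$ and $\BT_P = \BT_{1,P}$; for the $\BH$-side, Lemma~\ref{satC}(iii) already says the image of $\BT_{\bar v}$ in $\widetilde\BH^v$ is $\widetilde\BH^v$ or $T_v\widetilde\BH^v$, and combined with $C_vD_v\mid ((1+T_v)^{p^{h_p}}-1)$ (Lemma~\ref{SatClo-2}), together with $\BT^+ = C_vD_v\cdot\BH_v$ now with $C_v=1$, one gets $\BH^+_P = \BH_{v,P}$ or $\BH^+_P = T_v\cdot\BH_{v,P}$ according to whether $D_v$ is a $P$-adic unit. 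For part (ii): $\BH_1^-/\BT_1^-$ localized at $P$ is computed from $M_P/S_P$ modulo the rank-one free $\mathcal{F}^+$-contribution, and Proposition~\ref{Str-prop1}(ii) gives $M_P = \ov{\BQ}_p[\![X]\!]{\bf h}_v \oplus \ov{\BQ}_p[\![X]\!]\frac{{\bf E}_1-{\bf h}_v}{X}\oplus\ov{\BQ}_p[\![X]\!]\frac{{\bf E}_2-{\bf h}_v}{X}$, so the cokernel of $S_P\hookrightarrow M_P$ is $(\ov{\BQ}_p[\![X]\!]/X)^2$; chasing this through the $\mathcal{F}^\pm$-filtration (where one of the two Eisenstein directions may be absorbed into the rank-one piece matching $\BH^+$ vs $\BT^+$) yields $s=1$ or $2$, precisely matching the two cases of (i). Parts (iii) and (iv) are then the bookkeeping: if $\BH^+_P = \BH_{v,P}$ (so $D_v$ a unit) the Eisenstein-direction discrepancy is entirely in the $\mathcal{F}^-$-part, giving $(\BH_1/\BT_1)_P \simeq (\BH_1^-/\BT_1^-)_P \simeq \Lambda_{L,P}^v/P$ via Proposition~\ref{Str-prop1}(ii)--(iii); if instead $\BH^+_P = T_v\BH_{v,P}$, then $\BT_{1,P}$ sits inside $\BH_{1,P}$ with the whole discrepancy concentrated in the rank-one $\mathcal{F}^+$-line, forcing $\BT_{1,P} = T_v\cdot\BH_{1,P}$.

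The main obstacle I anticipate is the careful identification of the completed Tate lattices with the Hida-module objects of \cite{BDP'} — specifically, tracking how the rank-one $\mathcal{F}^+$ summand $\BT^+ = \BH^+ \simeq \Lambda_L^v(\Psi_L^v)$ interacts with the two Eisenstein directions ${\bf E}_1, {\bf E}_2$ of Proposition~\ref{Str-prop1}(ii), and showing that exactly one of $\frac{{\bf E}_i - {\bf h}_v}{X}$ contributes to the $\mathcal{F}^+$-part while the other contributes to $\mathcal{F}^-$ (or determining when both are in $\mathcal{F}^-$). This requires matching the $G_{\BQ_p}$-action on $\mathcal{F}^\pm H^1_\ord$ from Theorem~\ref{ordTate}(i)--(ii) with the explicit structure of the Eisenstein deformations — essentially, the Eisenstein series ${\bf E}_1$ and ${\bf E}_2$ correspond to the two orderings $1,\chi_L$ versus $\chi_L,1$ of the constituents, and only one ordering is compatible with $\mathcal{F}^+$ carrying the character $\Psi_L^v$. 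Once this compatibility is pinned down, the dichotomy $D_v$ unit vs.\ $D_v \sim T_v$ lines up cleanly with $s=1$ vs.\ $s=2$, and parts (iii)--(iv) follow formally. I would also need to double-check that localization commutes appropriately with the completion at $Q$ (i.e.\ that $P_r\cap(\Lambda_0\otimes\ov{\BQ}_p) = (X)$ and the resulting isomorphism of completions, already noted in the excerpt, suffices to transport all the module-theoretic statements), but this is routine.
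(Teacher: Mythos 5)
You are following the paper's route---complete at $Q$, use Theorem \ref{ordTate} and the duality \eqref{HidaDu} to identify $H^+=\CH^+\simeq\CR^0$, $H^-\simeq S$, $\CH^-\simeq M$, and then feed in Proposition \ref{Str-prop1}---and your treatment of part (i) is essentially the paper's (freeness of the rank-one cuspidal piece splits the sequence for $\BT$, giving $\BT_P=\BT_{1,P}$ and $\BT^+_P=\BT_{v,P}$; then Lemma \ref{SatClo-2} together with $\ord_P(C_v)=0$ gives the dichotomy for $\BH^+_P$). But there is a genuine gap at the heart of part (ii). The Tate lattices are the base changes $\BT=H^1_\ord(D_Lp^\infty)\otimes_{\BT^\ord_{D_Lp^\infty},\varphi}\Lambda_L^v$ and $\BH=\CH^1_\ord(D_Lp^\infty)\otimes_{\BH^\ord_{D_Lp^\infty},\varphi}\Lambda_L^v$, so what must be computed after completing are the fibers $S\otimes_{\CR^0,\varphi_r}\ov{\BQ}_p[\![X]\!]$ and $M\otimes_{\CR,\varphi_r}\ov{\BQ}_p[\![X]\!]$, not $S_P\subset M_P$ themselves. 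Your assertion that the cokernel of $S_P\hookrightarrow M_P$ is $(\ov{\BQ}_p[\![X]\!]/X)^2$ is false: by Proposition \ref{Str-prop1}(i),(ii) the quotient $M/S$ is \emph{free} of rank two. The statement that part (ii) actually rests on is that $M\otimes_{\CR,\varphi_r}\ov{\BQ}_p[\![X]\!]\simeq\ov{\BQ}_p[\![X]\!]\oplus\ov{\BQ}_p$ with the image of $S\otimes_{\CR^0,\varphi_r}\ov{\BQ}_p[\![X]\!]\simeq\ov{\BQ}_p[\![X]\!]$ equal to $X$ times the free summand; this is precisely where Proposition \ref{Str-prop1}(iii), i.e.\ $I_i\cdot\bh_v=X\ov{\BQ}_p[\![X]\!]\cdot\bh_v$, is needed, and you never invoke it for this purpose. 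From it one gets $\BH^-_P/\BT^-_P\simeq(\Lambda_{L,P}^v/P)^2$ and $\BH^-_{/\tor,P}/\BT^-_{/\tor,P}\simeq\Lambda_{L,P}^v/P$, and (ii) follows from the surjections $\BH^-/\BT^-\twoheadrightarrow\BH_1^-/\BT_1^-\twoheadrightarrow\BH^-_{/\tor}/\BT^-_{/\tor}$; note you cannot simply assume $\BH_P=\BH_{1,P}$, since $\BH$ may have $P$-torsion and the proposition makes no such claim. Also, your worry about which Eisenstein direction is ``absorbed into'' $\CF^+$ is vacuous: Theorem \ref{ordTate}(i) gives $\CF^+H^1_\ord(D_Lp^\infty)=\CF^+\CH^1_\ord(D_Lp^\infty)$ on the nose, so the entire discrepancy between $\BT$ and $\BH$ sits in the minus quotients $S\subset M$, and no Galois-theoretic matching of $\mathbf{E}_1,\mathbf{E}_2$ with the filtration is required.

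Parts (iii)--(iv) also need an argument rather than bookkeeping, and your heuristic for (iv) is backwards: if ``the whole discrepancy were concentrated in the rank-one $\CF^+$-line'' then $\BH_{1,P}/\BT_{1,P}$ would be cyclic, which is incompatible with the conclusion $\BT_{1,P}=T_v\BH_{1,P}$, whose quotient is $(\Lambda_{L,P}^v/P)^2$. A correct deduction uses (i), (ii) and the freeness of $\BT_{1,P}$ and $\BH_{1,P}$ of rank two over the discrete valuation ring $\Lambda_{L,P}^v$: since $\BT^+=\BH^+$ always, the snake lemma gives $\BH_{1,P}/\BT_{1,P}\simeq(\BH_1^-/\BT_1^-)_P$, which by (ii) is killed by $T_v$. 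When $\BH^+_P=\BH_{v,P}$ the common line is saturated in both lattices, so both minus quotients are free of rank one and the quotient is cyclic, forcing $s=1$ and hence (iii). When $\BH^+_P=T_v\BH_{v,P}$ one has $\BH_{v,P}\cap\BT_{1,P}=\BT_{v,P}=T_v\BH_{v,P}$, so $\BH_{v,P}/T_v\BH_{v,P}\simeq\Lambda_{L,P}^v/P$ injects into $\BH_{1,P}/\BT_{1,P}$ while the torsion-free minus quotients contribute another copy of $\Lambda_{L,P}^v/P$ (the second displayed computation above); thus $\BH_{1,P}/\BT_{1,P}$ has length two and is killed by $T_v$, whence $T_v\BH_{1,P}\subseteq\BT_{1,P}$ with equal index, i.e.\ $\BT_{1,P}=T_v\BH_{1,P}$. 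Finally, the claimed ``precise matching'' of $s$ with the dichotomy in (i) is a consequence of (iii)--(iv), not something your argument for (ii) establishes, and it is not needed for the statement of (ii).
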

\noindent Note that (i) means that $T_v\nmid C_v$. 

\begin{proof}
Let 
$$
\CH = {(\CH^1_\ord(D_Lp^\infty)\otimes_{\BZ_p}\ov{\BQ}_p)}^\wedge_Q,  \ \ H = {(H^1_\ord(D_Lp^\infty)\otimes_{\BZ_p}\ov{\BQ}_p)}^\wedge_Q,
$$
$$
\CH^\pm = ({\CF^\pm\CH^1_\ord(D_Lp^\infty)\otimes_{\BZ_p}\ov{\BQ}_p)}^\wedge_Q \ \ \text{and} \ \ H^\pm = {(\CF^\pm H^1_\ord(D_Lp^\infty)\otimes_{\BZ_p}\ov{\BQ}_p)}^\wedge_Q.
$$
Note that $H^+ =  \CH^+ \simeq \CR^0$, $H^- \simeq S \simeq \Hom_{\ov{\BQ}_p[\![X]\!]}(\CR^0, \ov{\BQ}_p[\![X]\!])$, 
$\CH^- \simeq M$, and there exists a commutative diagram of $\CR$-modules
$$
\begin{tikzcd}[row sep=2.5em]
0 \arrow[r,] & H^+ \arrow[r,] \arrow[d,"="] & H \arrow[r,] \arrow[d,hook] & H^- \arrow[r,]\arrow[d,hook] & 0 \\
0 \arrow[r,] & \CH^+ \arrow[r,] & \CH  \arrow[r,] & \CH^- \arrow[r,] & 0 
\end{tikzcd}
$$
induced by the commutative diagram in Theorem \ref{ordTate}. Since
$$
\CH\otimes_{\CR,\varphi_r}\ov{\BQ}_p[\![X]\!]= \widehat\BH_P\otimes_{\widehat\Lambda_{L,P}}\ov{\BQ}_p[\![X]\!],  \ H\otimes_{\CR,\varphi_r}\ov{\BQ}_p[\![X]\!]= \widehat\BT_P\otimes_{\widehat\Lambda_{L,P}}\ov{\BQ}_p[\![X]\!],
$$
$$
\CH^\pm\otimes_{\CR,\varphi_r}\ov{\BQ}_p[\![X]\!] = \widehat\BH^\pm_P\otimes_{\widehat\Lambda_{L,P}}\ov{\BQ}_p[\![X]\!], \ \ \text{and} \ \ 
H^\pm\otimes_{\CR,\varphi_r}\ov{\BQ}_p[\![X]\!] = \widehat\BT^\pm_P\otimes_{\widehat\Lambda_{L,P}}\ov{\BQ}_p[\![X]\!],
$$
applying $\otimes_{\CR,\varphi_r}\ov{\BQ}_p[\![X]\!]$ to the preceding commutative diagram yields
a commutative diagram of $\ov{\BQ}_p[\![X]\!]$-modules 
$$
\begin{tikzcd}[row sep=2.5em]
0 \arrow[r,] & (\widehat\BT^+_P\otimes_{\widehat\Lambda_{L,P}^v}\ov{\BQ}_p[\![X]\!]\simeq\ov{\BQ}_p[\![X]\!]) \arrow[r,] \arrow[d,"="] & \widehat\BT_P\otimes_{\widehat\Lambda_{L,P}^v}\ov{\BQ}_p[\![X]\!] \arrow[r,] \arrow[d,hook] & \widehat\BT^-_P\otimes_{\widehat\Lambda_{L,P}^v}\ov{\BQ}_p[\![X]\!]  \arrow[r,]\arrow[d,hook] & 0 \\
0 \arrow[r,]& (\widehat\BH^+_P\otimes_{\widehat\Lambda_{L,P}^v}\ov{\BQ}_p[\![X]\!]\simeq\ov{\BQ}_p[\![X]\!]) \arrow[r,] & \widehat\BH_P\otimes_{\widehat\Lambda_{L,P}^v}\ov{\BQ}_p[\![X]\!]  \arrow[r,] & \widehat\BH^-_P\otimes_{\widehat\Lambda_{L,P}^v}\ov{\BQ}_p[\![X]\!]  \arrow[r,] & 0 
\end{tikzcd}
$$
The exactness on the left of the rows was established in Lemma \ref{CMTate}(i).

From Proposition \ref{Str-prop1}(i) it follows that $\widehat\BT_P^-\otimes_{\widehat\Lambda_{L,P}^v}\ov{\BQ}_p[\![X]\!]\simeq S\otimes_{\CR,\varphi_r}\ov{\BQ}_p[\![X]\!]\cong \ov{\BQ}_p[\![X]\!]$. 
It then follows from the exactness of the top row of the above
commutative diagram that $\widehat\BT_P\otimes_{\widehat\Lambda_{L,P}^v}\ov{\BQ}_p[\![X]\!]$ is a free $\ov{\BQ}_p[\![X]\!]$-module of rank two and that $\widehat\BT_P^+\otimes_{\widehat\Lambda_{L,P}^v}\ov{\BQ}_p[\![X]\!]$ is a direct summand, that is,
$\widehat\BT_P^+\otimes_{\widehat\Lambda_{L,P}^v}\ov{\BQ}_p[\![X]\!] = \widehat\BT_{v,P}\otimes_{\widehat\Lambda_{L,P}^v}\ov{\BQ}_p[\![X]\!]$. 
As $\ov{\BQ}_p[\![X]\!]$ is faithfully flat over $\widehat\Lambda_{L,P}^v$ and 
so over $\Lambda_{L,P}^v$, it then follows
that $\BT_P$ is a free $\Lambda_{L,P}^v$-module of rank $2$ and $\BT_P^+ = \BT_{v,P}$. Part (i) follows from this together with Lemma \ref{SatClo-2}

From Proposition \ref{Str-prop1}(ii),(iii) it follows that 
$\widehat\BH_P\otimes_{\widehat\Lambda_{L,P}^v}\ov{\BQ}_p[\![X]\!]\simeq M\otimes_{\CR,\varphi_r}\ov{\BQ}_p[\![X]\!] \simeq \ov{\BQ}_p[\![X]\!]\oplus \ov{\BQ}_p$ such that the image of 
$\widehat\BT_P\otimes_{\widehat\Lambda_{L,P}^v}\ov{\BQ}_p[\![X]\!]\simeq S\otimes_{\CR,\varphi_r}\ov{\BQ}_p[\![X]\!]$ is $X \ov{\BQ}_p[\![X]\!]$. Hence
$(\widehat\BH^-_P/\widehat\BT^-_P)\otimes_{\widehat\Lambda_{L,P}^v}\ov{\BQ}_p[\![X]\!] \cong \ov{\BQ}_p^2$ and
$(\widehat\BH^-_{/\tor,P}/\widehat\BT^-_{/\tor,P})\otimes_{\widehat\Lambda_{L,P}^v}\ov{\BQ}_p[\![X]\!] \cong \ov{\BQ}_p$.
It follows that $\BH^-_P/\BT^-_P \cong (\Lambda_{L,P}^v/T_v\Lambda_{L,P}^v)^2$
and $\BH^-_{/\tor,P}/\BT^-_{/\tor,P} \cong (\Lambda_{L,P}^v/T_v\Lambda_{L,P}^v)$.
Since $\BH^-/\BT^-\twoheadrightarrow \BH^-_{1}/\BT^-_{1}\twoheadrightarrow \BH^-_{/\tor}/\BT^-_{/\tor}$,
part (ii) follows. 

Parts (iii) and (iv) follow from (i) and (ii) and the fact that $\BH_{1,P}$ is a free $\Lambda_{L,P}^v$-module of rank two.

\end{proof}

Using Proposition \ref{Str-prop2} we deduce:
\begin{prop}\label{Sat-prop2}
If $P\neq (T_v)$, then $\BT^+_P = \BT_{v,P}$.
\end{prop}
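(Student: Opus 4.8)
The plan is to mirror the proof of Proposition~\ref{Sat-prop1}, with Proposition~\ref{Str-prop2} playing the role of Proposition~\ref{Str-prop1}. First I would reduce to the case $P=P_r$ with $1\le r\le h_p$. Since $\BT^+=C_v\cdot\BT_v$ as free rank-one $\Lambda_L^v$-modules (cf.~\S\ref{saturation}), the assertion $\BT^+_P=\BT_{v,P}$ is equivalent to $C_v\notin P$. By Lemma~\ref{SatClo-2}, $C_v\mid C_vD_v\mid(1+T_v)^{p^{h_p}}-1=\prod_{r=0}^{h_p}\Phi_{p^r}(1+T_v)$, whose factors are pairwise coprime irreducibles of $\Lambda_L^v$; thus $C_v$ is squarefree and divisible only by primes $(\Phi_{p^r}(1+T_v))$. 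If $P$ is none of these then $C_v\notin P$ automatically, while $P=(T_v)=(\Phi_1(1+T_v))$ is excluded by hypothesis (that case, $T_v\nmid C_v$, being Proposition~\ref{Sat-prop1}(i)). For $P=P_r$ with $r\ge 1$, the equality $\BT^+_{P_r}=\BT_{v,P_r}$ is in turn equivalent to $\BT^+_{P_r}$ being saturated in $\widetilde{\BT}_{P_r}=\BT_{1,P_r}$, i.e.\ to the $\Lambda_{L,P_r}^v$-module $\BT^-_{1,P_r}$ being torsion-free.

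Next I would run the base-change argument. For $r\ge1$ the specialization $g_{v,r}$ is a cuspidal CM eigenform, so near it modular forms and cuspforms coincide, $\CR=\CR^0$ and $H=\CH$, $S=M$; only the $\BT$-row of the diagram of Theorem~\ref{ordTate} is relevant. Completing that row at $Q$ and applying $-\otimes_{\CR^0,\varphi_r}\ov{\BQ}_p[\![X]\!]$, where $\varphi_r\colon\CR^0\twoheadrightarrow\ov{\BQ}_p[\![X]\!]$ is the completed weight homomorphism, produces, writing $\overline{\BT}=\BT_{P_r}\otimes_{\Lambda_{L,P_r}^v}\ov{\BQ}_p[\![X]\!]\cong H\otimes_{\CR^0,\varphi_r}\ov{\BQ}_p[\![X]\!]$ and $\overline S=\BT^-_{P_r}\otimes_{\Lambda_{L,P_r}^v}\ov{\BQ}_p[\![X]\!]\cong S\otimes_{\CR^0,\varphi_r}\ov{\BQ}_p[\![X]\!]$, a short exact sequence
$$
0\longrightarrow\ov{\BQ}_p[\![X]\!]\;\cong\;\BT^+_{P_r}\otimes_{\Lambda_{L,P_r}^v}\ov{\BQ}_p[\![X]\!]\stackrel{\iota}{\longrightarrow}\overline{\BT}\longrightarrow\overline S\longrightarrow 0,
$$
the injectivity on the left coming from Lemma~\ref{CMTate}(i). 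Since $\ov{\BQ}_p[\![X]\!]={(\Lambda_0\otimes\ov{\BQ}_p)}^\wedge_{(X)}\isoarrow{(\Lambda_L^v\otimes\ov{\BQ}_p)}^\wedge_{P_r}$ is faithfully flat over $\Lambda_{L,P_r}^v$, torsion-freeness of finitely generated modules descends along this base change; recalling also that $\widetilde{\BT}_{P_r}=\BT_{1,P_r}$, it therefore suffices to prove that $\overline S$ is torsion-free over $\ov{\BQ}_p[\![X]\!]$. Indeed, granting this, $\overline{\BT}$ is an extension of torsion-free modules, hence torsion-free, so $\BT_{P_r}$ is torsion-free and equals $\widetilde{\BT}_{P_r}$; and $\iota(\ov{\BQ}_p[\![X]\!])=\BT^+_{P_r}\otimes\ov{\BQ}_p[\![X]\!]$ is saturated in $\overline{\BT}$, whence $\BT^+_{P_r}$ is saturated in $\widetilde{\BT}_{P_r}$, i.e.\ $\BT^+_{P_r}=\BT_{v,P_r}$.

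Finally, I would deduce the torsion-freeness of $\overline S$ from the structure theory of \cite{BD}. By the duality \eqref{HidaDu}, $S\cong\Hom_{\ov{\BQ}_p[\![X]\!]}(\CR^0,\ov{\BQ}_p[\![X]\!])$ is the canonical module of the finite free $\ov{\BQ}_p[\![X]\!]$-algebra $\CR^0=\CT$, the completed local ring of the cuspidal eigencurve of tame level $D_L$ at the classical weight-one CM point $g_{v,r}$. Each $g_{v,r}$ with $1\le r\le h_p$ is \emph{irregular}: since $\Frob_v$ lies in the decomposition subgroup $\ov{\Gamma}_L^v=(\Gamma_L^v)^{p^{h_p}}$, it lies in the kernel of $\psi_r=\Psi_L^v\bmod P_r$, so both roots of the Hecke polynomial of $\theta_{\psi_r}$ at $p$ equal $1$. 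The ideal $\mathfrak{a}=\ker\varphi_r\subset\CR^0$ cuts out the CM branch ${\bf h}_v$, and because the completed weight map $\varphi_v$ is an isomorphism, $\CR^0/\mathfrak{a}=\ov{\BQ}_p[\![X]\!]$ is a degree-one section of the weight map; consequently $\overline S=S\otimes_{\CR^0}\CR^0/\mathfrak{a}$ has generic rank one over $\ov{\BQ}_p[\![X]\!]$. A case-by-case computation of $\Hom_{\ov{\BQ}_p[\![X]\!]}(\CR^0,\ov{\BQ}_p[\![X]\!])\otimes_{\CR^0}\CR^0/\mathfrak{a}$, using the explicit shape of $\CT$ supplied by \cite[Thm.~B]{BD} and entirely analogous to the case analysis in the proof of Proposition~\ref{Str-prop2}, will show that $\overline S$ is in fact free of rank one over $\ov{\BQ}_p[\![X]\!]$, in particular torsion-free; for instance, when $\CR^0\cong\ov{\BQ}_p[\![X,Y]\!]/(Y^2-XY)$ one computes directly that $\overline S\cong\ov{\BQ}_p[\![X]\!]$. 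The hard part is exactly this last step: at an irregular CM weight-one point $\CR^0$ is genuinely singular and need not be Gorenstein, so $S$ is not a priori cyclic over $\CR^0$ and one must invoke the precise structure of $\CT$, not merely the generator bound of Proposition~\ref{Str-prop2}.
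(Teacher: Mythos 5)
Your reductions are all fine and match the framework the paper sets up: restricting to $P=P_r$ with $1\le r\le h_p$ via Lemma~\ref{SatClo-2}, the equivalence of $\BT^+_{P}=\BT_{v,P}$ with torsion-freeness of $\BT^-_{1,P}$, and the faithfully flat descent along $\Lambda_{L,P}^v\to\ov{\BQ}_p[\![X]\!]$. The problem is that everything then hangs on the claim that $\overline S=S\otimes_{\CR^0,\varphi_r}\ov{\BQ}_p[\![X]\!]$ is free of rank one (in particular torsion-free), and you only gesture at ``a case-by-case computation from \cite[Thm.~B]{BD} analogous to Proposition~\ref{Str-prop2}''. Two concrete difficulties. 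First, your claim is strictly stronger than Proposition~\ref{Sat-prop2}: since $\overline S$ is built from $\BT^-$ rather than $\BT_1^-$, its torsion-freeness forces $\BT_{P_r}$ itself to be torsion-free, i.e.\ $H\otimes_{\CR^0,\varphi_r}\ov{\BQ}_p[\![X]\!]$ free of rank two; the paper neither proves nor needs this, and nothing in your argument controls the possible $\Lambda_L^v$-torsion of $\BT$ at $P_r$. Second, the claim is not a formal consequence of the sort of information Proposition~\ref{Str-prop2} extracts from \cite{BD} (the bound of $3$ on the number of generators of $S$): for the seminormal ring $\{(a,b,c)\in\ov{\BQ}_p[\![X]\!]^3 : a\equiv b\equiv c \ (X)\}$ (three pairwise transverse smooth branches, with $S$ needing $3$ generators) one computes that $\Hom_{\ov{\BQ}_p[\![X]\!]}(\CR^0,\ov{\BQ}_p[\![X]\!])\otimes_{\CR^0}(\text{one branch})\cong \ov{\BQ}_p[\![X]\!]\oplus \ov{\BQ}_p[\![X]\!]/(X)$, which has torsion. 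So the viability of your route depends on the precise shapes in \cite[Thm.~B]{BD}, which you do not analyse; and your single worked example $\ov{\BQ}_p[\![X,Y]\!]/(Y^2-XY)$ is a hypersurface, hence Gorenstein, so it gives no evidence in exactly the non-Gorenstein cases you yourself flag as the hard part.

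This deferred step is precisely where the paper does something different, and the idea you are missing is the duality input. The paper never computes the dual module along the CM branch: it reduces Proposition~\ref{Sat-prop2} to showing that the minimal number of $\CR^0$-generators of $H$ is one more than that of $H^-\simeq S$, and proves this by contradiction using only the generator bound of Proposition~\ref{Str-prop2} together with the twisted Poincar\'e self-duality of $H^1_\ord$ (Theorem~\ref{ordTate-dual}), which gives $H=H^{c=+}\oplus H^{c=-}$ with $H^{c=\pm}\simeq \Hom_{\ov{\BQ}_p[\![X]\!]}(H^{c=\mp},\ov{\BQ}_p[\![X]\!])$: if the generator counts were equal, one summand would be free of rank one, forcing the other to be $\Hom_{\ov{\BQ}_p[\![X]\!]}(\CR^0,\ov{\BQ}_p[\![X]\!])\simeq S$ and immediately contradicting the assumed equality. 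Unless you either carry out your freeness computation case by case from \cite[Thm.~B]{BD} (and separately handle the possible torsion of $\BT_{P_r}$), or import a duality argument of this kind, your proof is incomplete at its central step.
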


\begin{proof} We will show that $\widehat\BT_P^+\otimes_{\widehat\Lambda_{L,P}^v}\ov{\BQ}_p[\![X]\!] = \widehat\BT_{v,P}\otimes_{\widehat\Lambda_{L,P}^v}\ov{\BQ}_p[\![X]\!]$,
from which the proposition follows.
Since $\widehat\BT_P^+\otimes_{\widehat\Lambda_{L,P}^v}\ov{\BQ}_p[\![X]\!]\simeq \ov{\BQ}_p[\![X]\!]$, it suffices to show that the minimal number of $\ov{\BQ}_p[\![X]\!]$-generators of $\widehat\BT_P\otimes_{\widehat\Lambda_{L,P}^v}\ov{\BQ}_p[\![X]\!]$ is 
one more than the minimal number of $\ov{\BQ}_p[\![X]\!]$-generators of $\widehat\BT_P^-\otimes_{\widehat\Lambda_{L,P}^v}\ov{\BQ}_p[\![X]\!]$. 
This is equivalent to the minimal number of $\CR^0$-generators of $H$ being one more than the minimal number of $\CR^0$-generators of $H^-$, where $H$ and $H^\pm$ are defined just as in 
the proof of Proposition \ref{Sat-prop1}. 

Let $H^{c=\pm} = {(H^1_\ord(D_Lp^\infty)^\pm\otimes_{\BZ_p}\ov{\BQ}_p)}^\wedge_Q$. It then follows from Theorem \ref{ordTate-dual} that
$$
H = H^{c=+} \oplus H^{c=1} \ \ \text{and} \ \ H^{c=\pm} \simeq \Hom_{\ov{\BQ}_p[\![X]\!]}(H^{c=\mp},\ov{\BQ}_p[\![X]\!])
$$
as $\CR^0$-modules. Note that $H^{c=\pm}$ is necessarily non-zero for both signs.

Suppose the minimal number of $\CR^0$-generators of $H$ equals the minimal number of $\CR^0$-generators of $H^-$. Since $H^-\simeq S$,  by Proposition \ref{Str-prop2},
the latter is at most 3. Hence for some choice of sign $\epsilon$, the minimal number of generators of $H^{c=\epsilon}$ is 1. Comparing $\ov{\BQ}_p[\![X]\!]$-ranks then yields
$H^{c=\epsilon} \cong \CR^0$ as $\CR^0$-modules, and so $H^{c=-\epsilon}\cong \Hom_{\ov{\BQ}_p[\![X]\!]}(\CR^0,\ov{\BQ}_p[\![X]\!])$ as $\CR^0$-modules. But this implies that the minimal 
number of $\CR^0$-generators of $H$ is one more than the minimal number of $\CR^0$-generators of $H^{c=-\epsilon}\cong \Hom_{\ov{\BQ}_p[\![X]\!]}(\CR^0,\ov{\BQ}_p[\![X]\!])
\simeq S \simeq H^-$, a contradiction.
\end{proof}

Finally, from Propositions \ref{Sat-prop1} and \ref{Sat-prop2} we immediately deduce the following.

\begin{prop}\label{Sat-prop3} The following hold:
\begin{itemize}
\item[(i)] $\BT^+ = \BT_v$ (that is, $C_v$ is a unit);
\item[(ii)] $\BH^+ = \BH_v$ or $\BH^+ = T_v\cdot \BH_v$ (that is, $D_v\mid T_v$);
\item[(iii)] If $\BH^+ = \BH_v$ then $\widetilde\BH/\widetilde\BT \simeq \Lambda_L^v/(T_v)$;
\item[(iv)] If $\BH^+ = T_v\cdot \BH_v$, then $\widetilde\BT = T_v\cdot \widetilde\BH$.
\end{itemize}
\end{prop}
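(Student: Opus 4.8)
\textbf{Proof proposal for Proposition \ref{Sat-prop3}.}
The plan is to deduce the global statements from the local ones already established at every height one prime of $\Lambda_L^v$. Recall from \S\ref{saturation} that we have the chain $\BT^+ = \BH^+ \subseteq \BT_v \subseteq \BH_v$ of free rank one $\Lambda_L^v$-modules, with $\BT^+ = C_v\cdot\BT_v$, $\BT_v = D_v\cdot\BH_v$, $\BH^+ = C_vD_v\cdot\BH_v$, and from Lemma \ref{SatClo-2} that $C_vD_v \mid (1+T_v)^{p^{h_p}}-1$. In particular $C_v$ and $D_v$ are both products of cyclotomic polynomials $\Phi_{p^r}(1+T_v)$ for $0\leq r\leq h_p$, so to prove $C_v$ is a unit (part (i)) and $D_v\mid T_v = \Phi_{p^0}(1+T_v)$ (part (ii)) it suffices to control the $P$-adic valuations of $C_v$ and $D_v$ for each height one prime $P = (\Phi_{p^r}(1+T_v))$ dividing $(1+T_v)^{p^{h_p}}-1$.

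First I would dispose of the primes $P \neq (T_v)$: Proposition \ref{Sat-prop2} gives $\BT^+_P = \BT_{v,P}$, so $v_P(C_v) = 0$ for all such $P$. For the remaining prime $P = (T_v)$, Proposition \ref{Sat-prop1}(i) gives $\BT^+_P = \BT_{v,P}$ as well, hence $v_P(C_v) = 0$ there too. Since the only height one primes dividing $(1+T_v)^{p^{h_p}}-1$ are the $(\Phi_{p^r}(1+T_v))$, and $C_v$ divides this product, we conclude $C_v \in (\Lambda_L^v)^\times$, which is part (i). For part (ii): $D_v$ divides $(1+T_v)^{p^{h_p}}-1$ (as $C_v$ is a unit and $C_vD_v\mid (1+T_v)^{p^{h_p}}-1$), so $D_v$ is a product of the $\Phi_{p^r}(1+T_v)$; Proposition \ref{Sat-prop1}(i) states that either $\BH^+_P = \BH_{v,P}$ or $\BH^+_P = T_v\cdot\BH_{v,P}$ at $P = (T_v)$, i.e.\ $v_{(T_v)}(D_v) \leq 1$, while Proposition \ref{Sat-prop2} forces $\BT^+_P = \BT_{v,P}$ for $P\neq (T_v)$ and, combined with $\BH^+_P = \BT^+_P$ and $\BH^+_P \subseteq \BH_{v,P}$ with $\BH_v/\BH^+ = $ the relevant cokernel, gives $v_P(D_v) = 0$ for $P\neq (T_v)$. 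Hence $D_v\mid T_v$, which is part (ii).

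For parts (iii) and (iv) I would argue as follows. We have $\widetilde\BT = \widetilde\BT_1 \supseteq \BT_v \oplus \BT_{\bar v}$ and $\widetilde\BH = \widetilde\BH_1 \supseteq \BH_v \oplus \BH_{\bar v}$, with $\BT_v \subseteq \BH_v$ and $\BT_{\bar v} = c\cdot\BT_v \subseteq c\cdot\BH_v = \BH_{\bar v}$. By part (i), $\BT^+ = \BT_v$, so $\BH^+ = \BT_v$ as well, and the two cases of part (ii) read $\BT_v = \BH_v$ (when $D_v$ is a unit, i.e.\ $\BH^+ = \BH_v$) or $\BT_v = T_v\cdot\BH_v$ (when $\BH^+ = T_v\cdot\BH_v$). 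In the first case $\BT_{\bar v} = \BH_{\bar v}$ too, and the inclusion $\widetilde\BT \hookrightarrow \widetilde\BH$ of free rank two $\Lambda_L^v$-modules has the property that, modulo the common rank one summand $\BT_v = \BH_v$ (a summand by Lemma \ref{satC}(ii)), it becomes the inclusion of the image of $\BT_{\bar v}$ in $\widetilde\BT^v$ into the image of $\BH_{\bar v}$ in $\widetilde\BH^v$; by Lemma \ref{satC}(iii) each of these images is the full module or its $T_v$-multiple, and comparing with Proposition \ref{Sat-prop1}(iii) (which at $P=(T_v)$ gives $(\widetilde\BH/\widetilde\BT)_P \simeq \Lambda_{L,P}^v/P$, while away from $(T_v)$ the quotient is zero by Proposition \ref{Sat-prop2} applied to both $\pm$ parts) pins down $\widetilde\BH/\widetilde\BT \simeq \Lambda_L^v/(T_v)$, giving (iii). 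In the second case $\BT_{\bar v} = T_v\cdot\BH_{\bar v}$, and Lemma \ref{satC}(iii) together with Proposition \ref{Sat-prop1}(iv) forces the image of $\BT_{\bar v}$ in $\widetilde\BT^v$ to be $T_v\cdot\widetilde\BT^v$ while that of $\BH_{\bar v}$ in $\widetilde\BH^v$ is all of $\widetilde\BH^v$; since $\widetilde\BT^v = \widetilde\BH^v$ canonically (both being $\widetilde\BT/\BT_v = \widetilde\BH/\BH_v$ once we know $\BT_v = $ the relevant saturation — here one must be careful and instead use that $\widetilde\BT^v$ and $\widetilde\BH^v$ are identified via the inclusion after noting $\BT_v = T_v\BH_v$), we get $\widetilde\BT = T_v\cdot\widetilde\BH$, which is (iv).

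\textbf{Main obstacle.} The routine parts are the height-one-prime bookkeeping in (i) and (ii); those follow formally from Propositions \ref{Sat-prop1} and \ref{Sat-prop2}. The delicate point is (iii) and (iv): one must carefully track the two rank one pieces $\BT_v, \BT_{\bar v}$ (resp.\ $\BH_v, \BH_{\bar v}$) inside the rank two reflexive modules and verify that the identification of quotients $\widetilde\BT^v$ with $\widetilde\BH^v$ (needed to even phrase (iv)) is compatible with the embedding $\widetilde\BT\hookrightarrow\widetilde\BH$, since $c$-stability and saturation interact nontrivially with the failure of the sequences $\BT^+\to\widetilde\BT\to\widetilde\BT_1^-\to 0$ to be exact. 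I expect the cleanest route is to fix a basis $\{e_+,e_-\}$ of $\widetilde\BH$ with $c\cdot e_\pm = \pm e_\pm$ as in the proof of Lemma \ref{satC}, write $\BT_v$ and $\BH_v$ in these coordinates, and read off everything from the resulting $2\times 2$ transition matrices, cross-checking against the local computations at $P=(T_v)$.
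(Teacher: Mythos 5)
Your part (i) is correct and is exactly the bookkeeping the paper intends: by Lemma \ref{SatClo-2} the element $C_vD_v$ divides the squarefree element $(1+T_v)^{p^{h_p}}-1$, so $C_v$ is supported on the primes $(\Phi_{p^r}(1+T_v))$, and at each of these $v_P(C_v)=0$ by Proposition \ref{Sat-prop1}(i) (for $P=(T_v)$) and Proposition \ref{Sat-prop2} (for $P\neq(T_v)$). The genuine gap is your treatment of the primes $P\neq(T_v)$ in parts (ii)--(iv). For (ii) you must show $v_P(D_v)=0$, i.e.\ $\BH_{v,P}=\BT_{v,P}$, for $P=(\Phi_{p^r}(1+T_v))$, $r\geq 1$; and the conclusions of (iii)--(iv) likewise force $(\widetilde\BH/\widetilde\BT)_P=0$ for all $P\neq(T_v)$ (for instance, if this quotient were nonzero at such a $P$, then since $T_v$ is a unit at $P$ one would have $\widetilde\BT_P\subsetneq T_v\widetilde\BH_P$, contradicting (iv)). You claim these vanishings follow from Proposition \ref{Sat-prop2} ``applied to both $\pm$ parts,'' but that proposition only asserts $\BT^+_P=\BT_{v,P}$; combined with $\BH^+_P=\BT^+_P\subseteq\BH_{v,P}$ this yields only the trivial inclusion $\BT_{v,P}\subseteq\BH_{v,P}$, i.e.\ $v_P(D_v)\geq 0$, not $v_P(D_v)=0$. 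Nothing in the statements of Propositions \ref{Sat-prop1}, \ref{Sat-prop2}, Lemma \ref{SatClo-2} (which only caps the multiplicity at one) or Lemma \ref{satC} rules out $\Phi_{p^r}(1+T_v)\mid D_v$ for some $r\geq 1$, since these results control $\BT$ at such $P$ but say nothing about how much bigger $\BH$ (hence $\BH_v$) is than $\BT$ there.

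The missing input is that $\BT$ and $\BH$ agree after localization at every height one prime $P\neq(T_v)$: the cokernel of $\BT\to\BH$ is a quotient of $(M_{\Lambda_D}/S_{\Lambda_D})\otimes_{\BH^{\ord}_{D_Lp^\infty},\varphi}\Lambda_L^v$, which is an Eisenstein module, and for $P\neq(T_v)$ the representation $\Ind_{G_L}^{G_\BQ}(\Psi_L^v\bmod P)$ is irreducible, so no cuspidal--Eisenstein congruence can occur modulo $P$ and the localization vanishes; this is the same principle invoked in the proof of Theorem \ref{excz-2}, and it is also visible in the setting of Proposition \ref{Str-prop2}, where the local rings of the full and cuspidal Hecke algebras coincide and $M$ agrees with $S$ locally. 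With $(\BH_1/\BT_1)_P=0$ for $P\neq(T_v)$ in hand, your valuation count gives (ii), and (iv) follows because $\widetilde\BT$ and $T_v\widetilde\BH$ are reflexive submodules of $\BV$ agreeing at all height one primes. For (iii) there is a further small point you do not address: the height one localizations alone do not exclude a finite (pseudo-null) summand in $\widetilde\BH/\widetilde\BT$; one should note that this quotient has no nonzero pseudo-null submodule, since the preimage in $\widetilde\BH$ of such a submodule would be a torsion-free module squeezed between the reflexive module $\widetilde\BT$ and its reflexive closure, hence equal to $\widetilde\BT$. Only after these two additions does your argument prove the proposition.
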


\subsubsection{Some results on submodules of induced representations}\label{ind-sec}
Let $\BV$ be as in \eqref{CMgen} and let $\BL \subset \BV$ be a $G_\BQ$-stable free $\Lambda_L^v$-submodule of rank two.  We will say that $\BL$ is an {\em induced module}
if $\BL\simeq \Ind_{G_L}^{G_\BQ}(\Lambda_L^v(\Psi_L^v))$.

Let $\BV_v \subset \BV$ be the unique $\mathbf{F}$-line on which
$G_L$-acts as $\Psi_L^v$ and let $\BL_v = \BL\cap \BV_v$. 
Since $\BL=\widetilde\BL$ as $\BL$ is a free $\Lambda_L^v$-module, $\BL_v = \widetilde\BL\cap \BV_v$ and so $\BL_v$ is saturated in $\widetilde\BL$,
from which it follows that $\BL_v= \widetilde\BL_v$. In particular, $\BL_v$ is a free $\Lambda_L^v$-module of rank one.  So $\BL_v\simeq \Lambda_L^v(\Psi_L^v)$.

Let
$$
\BL' = \BL_v \oplus c\cdot\BL_v \subseteq \BL.
$$
Then $\BL' $ is $G_\BQ$-stable. In particular, $\BL'\simeq \Ind_{G_L}^{G_{\BQ}}(\Lambda_L^v(\Psi_L^v))$, that is, $\BL'$ is an induced module.

\begin{lem}\label{ind-lem1} We have
\begin{itemize}
\item[(i)] $\BL \simeq \Ind_{G_L}^{G_{\BQ}}(\Lambda_L^v(\Psi_L^v))$ if and only if $\BL' = \BL$, and
\item[(ii)] if $\BL' \neq \BL$, then 
$\BL/\BL' \simeq \Lambda_L^v/T_v\Lambda_L^v$ and $\BL' = T_v\BL+\BL_v = T_v\BL + \BL_v'$.
\end{itemize}
\end{lem}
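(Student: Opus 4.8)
The statement is Lemma \ref{ind-lem1}, a purely module-theoretic fact about a $G_\BQ$-stable free rank-two $\Lambda_L^v$-lattice $\BL$ inside the induced representation $\BV \simeq \Ind_{G_L}^{G_\BQ}(\mathbf F(\Psi_L^v))$. The plan is to exploit the complementary idempotents coming from the $\Gal(L/\BQ)$-action together with the fact that $\BL_v$ is a saturated rank-one direct summand of $\BL$, and to compute the index of $\BL' = \BL_v \oplus c\cdot \BL_v$ in $\BL$ by a local analysis at the height-one primes of $\Lambda_L^v$, exactly as in the proof of Lemma \ref{satC}(ii)--(iii).

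First I would recall the setup: since $\BL = \widetilde\BL$ is free and $\BL_v = \BL \cap \BV_v$ is saturated, $\BL_v$ is a free rank-one $\Lambda_L^v$-module with $G_L$ acting via $\Psi_L^v$, and $c\cdot\BL_v = \BL \cap \BV_{\bar v}$ carries the $G_L$-action $\Psi_L^{v,c}$; hence $\BL' = \BL_v \oplus c\cdot\BL_v$ is indeed $G_\BQ$-stable and isomorphic to $\Ind_{G_L}^{G_\BQ}(\Lambda_L^v(\Psi_L^v))$. For part (i): if $\BL' = \BL$ then $\BL$ is visibly induced. Conversely, suppose $h: \BL \isoarrow \Ind_{G_L}^{G_\BQ}(\Lambda_L^v(\Psi_L^v))$ is an isomorphism of $\Lambda_L^v[G_\BQ]$-modules. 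The target has a distinguished rank-one saturated $G_L$-summand on which $G_L$ acts via $\Psi_L^v$, namely the "$v$-line", which is a $\Lambda_L^v$-direct summand and whose complement is $c$ times it; since $\BL_v$ is the unique rank-one saturated submodule of $\BL$ on which $G_L$ acts via $\Psi_L^v$ (uniqueness because such a submodule must land in $\BV_v$ by $\Psi_L^v \neq \Psi_L^{v,c}$ over $\mathbf F$, and be saturated), $h$ carries $\BL_v$ onto that summand and $c\cdot\BL_v$ onto its complement. Therefore $h(\BL') = h(\BL)$, so $\BL' = \BL$.

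For part (ii), assume $\BL' \neq \BL$ and pick a $\Lambda_L^v$-basis $\{e_+, e_-\}$ of $\BL$ with $c\cdot e_\pm = \pm e_\pm$ (possible since $p$ is odd and $c$ is an involution on the free module $\BL$). Write $e_v = a e_+ + b e_-$ for a generator $e_v$ of $\BL_v$; then $c\cdot e_v = a e_+ - b e_-$ generates $c\cdot\BL_v$, and $\BL/\BL'$ is computed by the determinant $\det\begin{pmatrix} a & b \\ a & -b\end{pmatrix} = -2ab$, so $\BL/\BL' \simeq \Lambda_L^v/(ab)$ up to the unit $-2$. Running the argument from Lemma \ref{satC}(ii): for $\gamma \in G_L$ one has $(\gamma - \Psi_L^{v,c}(\gamma))e_+ = \tfrac{\Psi_L^v(\gamma)-\Psi_L^{v,c}(\gamma)}{2a}\, e_v$ and similarly with $b$ in place of $a$, so saturatedness of $\BL_v$ forces $a \mid \Psi_L^v(\gamma) - \Psi_L^{v,c}(\gamma)$ and $b \mid \Psi_L^v(\gamma) - \Psi_L^{v,c}(\gamma)$ for all $\gamma$; the ideal generated by these differences is the augmentation ideal $(T_v)$ by the same class-field-theory argument as in Lemma \ref{satC} (the character $\Psi_L^v$ mod that ideal is trivial because its fixed field lies in $L_\infty^v \cap L_\infty^{\cyc} = L$). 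Hence $a, b \mid T_v$; since $e_v$ generates a saturated module, $a$ and $b$ have no common divisor, so at least one is a unit, and as $\BL' \neq \BL$ not both are, whence $(ab) = (T_v)$ and $\BL/\BL' \simeq \Lambda_L^v/T_v\Lambda_L^v$. Finally, the identity $\BL' = T_v\BL + \BL_v$ follows by noting that $T_v\BL \subseteq \BL'$ (as $\BL/\BL'$ is killed by $T_v$) and $\BL_v \subseteq \BL'$, giving "$\supseteq$", while for "$\subseteq$" one checks $c\cdot e_v = -e_v + 2a e_+$ and, with $a$ a unit (WLOG), $e_+ \in T_v\BL + \BL_v$ since $b e_- \in \BL_v' $ gives $e_-$ up to $T_v$... more cleanly: $T_v\BL + \BL_v$ is $G_\BQ$-stable, contains $\BL_v$, has colength $1$ in $\BL$, hence must equal $\BL'$ by the already-established structure; $\BL' = T_v\BL + \BL_v'$ is immediate since $\BL_v' = c\cdot\BL_v$ and $T_v\BL$ already contains $c\cdot\BL_v \bmod \BL_v$.

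\textbf{Main obstacle.} The only genuine subtlety is the converse in (i): one must argue that \emph{any} abstract $\Lambda_L^v[G_\BQ]$-isomorphism onto the induced module must respect the $v$-line decomposition, i.e. that $\BL_v$ is intrinsically characterized inside $\BL$ (as the unique saturated rank-one $G_L$-stable submodule with $G_L$-action $\Psi_L^v$), which hinges on $\Psi_L^v \not\equiv \Psi_L^{v,c}$ holding not just generically but after localizing at every relevant prime — precisely the content already extracted in Lemma \ref{LocInd} and its proof. Once that characterization is in hand, everything else is the determinant bookkeeping and the class-field-theory identification of the augmentation ideal, both already carried out in Lemma \ref{satC}.
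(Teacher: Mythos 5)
Your proof is correct and follows essentially the same route as the paper: part (i) via the intrinsic characterization of $\BL_v$ inside an induced lattice (the paper phrases this as "it follows from the definition of $\BL_v$"), and part (ii) by writing $e_v = a e_+ + b e_-$ in a $c$-eigenbasis and running the divisibility/saturation argument of Lemma \ref{satC}. Two minor imprecisions, neither fatal: the cokernel of the pair $(e_v, c\cdot e_v)$ is $\Lambda_L^v/(a)\oplus\Lambda_L^v/(b)$ rather than $\Lambda_L^v/(ab)$ in general (the determinant only gives the characteristic ideal, and the cyclic form holds here only because one of $a,b$ is a unit, as you later establish — the paper records the direct-sum form), and $\BL_v'$ in the statement denotes $\BL'\cap\BV_v$, which equals $\BL_v$, so the final equality is immediate and does not need the $c\cdot\BL_v$ gloss at the end of your argument.
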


\begin{proof} 
If $\BL' = \BL$ then it is immediate that $\BL \simeq \Ind_{G_L}^{G_{\BQ}}(\Lambda_L^v(\Psi_L^v))$.
On the other hand, if $\BL \simeq \Ind_{G_L}^{G_{\BQ}}(\Lambda_L^v(\Psi_L^v))$ then it follows
from the definition of $\BL_v$ that $\BL$ is the $G_\BQ$-represenation induced from the  $G_L$-stable submodule $\BL_v$,
that is, $\BL = \BL_v\oplus c\cdot\BL_v = \BL'$. This proves part (i) of the lemma.

The proof of part (ii) is similar to that of Lemma \ref{satC}.
Let $e_+, e_-\in \BL$ be a $\Lambda_L^v$-basis such that $c$ acts on $e_\pm$ as $\pm 1$.
Let $e_v \in \BL_v$ be a $\Lambda_L^v$-generator. Then $e_v = a e_+ + b e_-$ for some
$a,b\in\Lambda_L^v$. Arguing as in the proof of {\it loc.cit.}~ shows that $a,b\mid T$ and that
at least one of $a,b$ is a unit. As $e_{\bar v}  = c\cdot e_v = ae_+ - b e_-$ is a generator of $c\cdot \BL_v$,
it follows that $a e_+, be_- \in \BL'$ and these generate $\BL'$. In particular, 
$$
\BL/\BL' \cong \Lambda_L^v/a\Lambda_L^v \oplus \Lambda_L^v/b\Lambda_L^v
$$
from which the first claim of part (ii) follows.
Since $a e_+$ and $be_-$ are both contained in the $\Lambda_L^v$-module
generated by $e_v = a e_++be_-$, $T_ve_+$, and $T_v e_-$ (which is just
$T_v\BL + \BL_v$) the second claim also holds. 
\end{proof}

Let 
$$
\BL'' = \frac{1}{T_v}\BL'.
$$
By Lemma \ref{ind-lem1}, $T_v\BL \subset \BL'$. Hence $\BL \subset\BL''$.

\begin{lem}\label{ind-lem2} The following hold: 
\begin{itemize}
\item[(i)] $\BL''\simeq \Ind_{G_L}^{G_\BQ}(\Lambda_L^v(\Psi_L^v))$,
\item[(ii)] $\BL_v = T_v \BL''_v$,
\item[(iii)] if $\BL'\neq\BL$, then the inclusion $\BL\subset \BL''$ induces an isomorphism
$\BL/\BL_v\isoarrow \BL''/\BL''_v$ and satisfies $\BL''/\BL\simeq\Lambda_L^v/T_v\Lambda_L^v$.
\end{itemize}
\end{lem}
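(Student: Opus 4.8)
The plan is to reduce everything to Lemma~\ref{ind-lem1} together with the single observation that multiplication by the scalar $T_v\in\Lambda_L^v$ is a $\Lambda_L^v[G_\BQ]$-linear injective endomorphism of $\BV$: the $G_\BQ$-action on $\BV\simeq\Ind_{G_L}^{G_\BQ}({\bf F}(\Psi_L^v))$ is ${\bf F}$-linear, hence commutes with scaling by $T_v$. Consequently multiplication by $T_v$ restricts to an isomorphism $\BL''\isoarrow\BL'$ of $\Lambda_L^v[G_\BQ]$-modules, since $T_v\BL''=\BL'$ by definition of $\BL''$. As $\BL'\simeq\Ind_{G_L}^{G_\BQ}(\Lambda_L^v(\Psi_L^v))$ — recorded just before Lemma~\ref{ind-lem1} from $\BL'=\BL_v\oplus c\cdot\BL_v$ — part (i) follows at once.

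For part (ii), I would use that the ${\bf F}$-line $\BV_v\subset\BV$ on which $G_L$ acts by $\Psi_L^v$ is stable under $\Lambda_L^v$. Hence for any $\Lambda_L^v$-lattice $\BM\subset\BV$ and nonzero $t\in\Lambda_L^v$ one has $(t^{-1}\BM)\cap\BV_v=t^{-1}(\BM\cap\BV_v)$. Applying this with $\BM=\BL'$ and $t=T_v$, together with the identity $\BL'\cap\BV_v=\BL_v$ — which holds because $\BL'=\BL_v\oplus c\cdot\BL_v$ with $c\cdot\BL_v\subset\BV_{\bar v}:=c\cdot\BV_v$ and $\BV_v\cap\BV_{\bar v}=0$, the latter since $\Psi_L^v\neq\Psi_L^{v,c}$ — yields $\BL''_v=(T_v^{-1}\BL')_v=T_v^{-1}\BL_v$, i.e. $\BL_v=T_v\BL''_v$.

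For part (iii), assume $\BL'\neq\BL$. Lemma~\ref{ind-lem1}(ii) gives $\BL'=T_v\BL+\BL_v$; in particular $T_v\BL\subset\BL'$, so $\BL\subset T_v^{-1}\BL'=\BL''$ and $\BL_v=\BL\cap\BV_v\subset\BL''\cap\BV_v=\BL''_v$. Multiplication by $T_v$ carries the inclusion $\BL\subset\BL''$ onto $T_v\BL\subset\BL'$, so $\BL''/\BL\cong\BL'/T_v\BL=(T_v\BL+\BL_v)/T_v\BL\cong\BL_v/(\BL_v\cap T_v\BL)$; and $\BL_v\cap T_v\BL=T_v\BL_v$ by the same $\BV_v$-stability argument, so $\BL''/\BL\cong\BL_v/T_v\BL_v\cong\Lambda_L^v/T_v\Lambda_L^v$ since $\BL_v$ is free of rank one. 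Finally, the map $\BL/\BL_v\to\BL''/\BL''_v$ induced by the inclusion is injective because $\BL\cap\BL''_v=\BL\cap\BV_v=\BL_v$, and surjective because $\BL''=T_v^{-1}(T_v\BL+\BL_v)=\BL+T_v^{-1}\BL_v=\BL+\BL''_v$ using part (ii); hence it is an isomorphism.

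I do not expect a genuine obstacle in this lemma: the substantive work — identifying the saturations $\BT_v,\BH_v$ and analyzing $\widetilde\BT,\widetilde\BH$ — was carried out in Lemmas~\ref{satC}--\ref{Sat-prop3} and in Lemma~\ref{ind-lem1}, and what remains here is a formal rescaling argument. The only points requiring care are that $T_v$ must be treated as a Galois-invariant (central) scalar, so that multiplication by $T_v$ and its inverse are $G_\BQ$-equivariant, and that the isotypic line $\BV_v$ is $\Lambda_L^v$-stable, so that passing to saturated submodules commutes with multiplication by $T_v$.
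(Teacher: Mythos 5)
Your proof is correct and follows essentially the same route as the paper: parts (i) and (ii) are definition-chasing with the $G_\BQ$-equivariant rescaling by $T_v$, and part (iii) rests on $\BL'=T_v\BL+\BL_v$ from Lemma~\ref{ind-lem1}(ii), giving $\BL''=\BL+\BL''_v$. The only cosmetic difference is that the paper gets injectivity of $\BL/\BL_v\to\BL''/\BL''_v$ from both quotients being free of rank one and then obtains $\BL''/\BL\simeq\Lambda_L^v/T_v\Lambda_L^v$ by the snake lemma, whereas you argue directly via $\BL\cap\BL''_v=\BL_v$ and the isomorphism $\BL''/\BL\cong\BL'/T_v\BL\cong\BL_v/T_v\BL_v$; both are sound.
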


\begin{proof} Parts (i) and (ii) are immediate from the definition of $\BL''$.
Suppose $\BL'\neq \BL$. Then by Lemma \ref{ind-lem1}(ii), $\BL' = T_v \BL + \BL'_v$. It follows that
$\BL'' = \BL + \BL''_v$ and hence that the map $\BL/\BL_v \ra \BL''/\BL''_v$ is a surjection.
Since both $\BL/\BL_v$ and $\BL''/\BL_v''$ are free $\Lambda_L^v$-modules of rank one,
the map is also an injection. This proves the first claim of part (iii). The second follows from
the first in combination with part (ii) and the snake lemma applied to the commutative diagram
$$
\begin{tikzcd}[row sep=2.5em]
0 \arrow[r,] & \BL_v \arrow[r,]\arrow[d,hook] & \BL \arrow[r,] \arrow[d,hook] &  \BL/\BL_v \arrow[r,]\arrow[d,"="] & 0 \\
0 \arrow[r,] & \BL''_v \arrow[r,] & \BL''  \arrow[r,] & \BL''/\BL''_v \arrow[r,] & 0.
\end{tikzcd}
$$ 
\end{proof}

\begin{lem}\label{ind-lem3}
Suppose $\BL_1\subsetneq\BL_2\subset \BV$ are $G_\BQ$-stable free $\Lambda_L^v$-submodules of rank two such that $\BL_{1,v} = \BL_{2,v}$. 
Then $\BL_1 = \BL_{1,v}\oplus c\cdot\BL_{1,v} \simeq \Ind_{G_L}^{G_\BQ}(\Lambda_L^v(\Psi_L^v))$ and 
$\BL_2/\BL_1 \simeq \Lambda_L^v/T_v\Lambda_L^v$. In particular,
the inclusion $\BL_1\subset\BL_2$ induces an identification of
$\BL_1/\BL_{1,v}$ with $T_v(\BL_2/\BL_{2,v})$.
\end{lem}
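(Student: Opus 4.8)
The plan is to deduce all three assertions from Lemma~\ref{ind-lem1} together with the hypothesis $\BL_{1,v}=\BL_{2,v}$, with essentially no new computation. Write $M=\BL_{1,v}=\BL_{2,v}$ and set $N=M\oplus c\cdot M$. By the constructions of \S\ref{ind-sec} applied to $\BL_1$ and to $\BL_2$ we have $\BL_1'=N=\BL_2'$, the module $N$ is $G_\BQ$-stable with $N\simeq\Ind_{G_L}^{G_\BQ}(\Lambda_L^v(\Psi_L^v))$, and $N\subseteq\BL_1\subseteq\BL_2$. The crux is to show $\BL_1=N$; once this is known the first assertion is immediate, and the other two will fall out of Lemma~\ref{ind-lem1}(ii) applied to $\BL_2$.

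To prove $\BL_1=N$ I would argue by contradiction, assuming $N=\BL_1'\subsetneq\BL_1$. Then also $\BL_2'=N\subsetneq\BL_2$, so Lemma~\ref{ind-lem1}(ii) applies to both $\BL_1$ and $\BL_2$ and gives
$$N=\BL_1'=T_v\BL_1+M=\BL_2'=T_v\BL_2+M.$$
Passing to images in $\BV/\BV_v$, where $M$ maps to $0$ and where multiplication by $T_v$ is bijective (as $T_v\in{\bf F}^\times$), this yields $T_v\cdot\ov{\BL}_1=T_v\cdot\ov{\BL}_2$ and hence $\ov{\BL}_1=\ov{\BL}_2$, i.e. $\BL_1+\BV_v=\BL_2+\BV_v$. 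Since $\BL_1\subseteq\BL_2$ and $\BL_2\subseteq\BL_1+\BV_v$, the modular law then gives $\BL_2=\BL_2\cap(\BL_1+\BV_v)=\BL_1+(\BL_2\cap\BV_v)=\BL_1+M=\BL_1$, contradicting $\BL_1\subsetneq\BL_2$. Hence $\BL_1=N=\BL_{1,v}\oplus c\cdot\BL_{1,v}\simeq\Ind_{G_L}^{G_\BQ}(\Lambda_L^v(\Psi_L^v))$.

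For the remaining assertions: since $\BL_1=N=\BL_2'\subseteq\BL_2$ with $\BL_1\subsetneq\BL_2$, we have $\BL_2'\neq\BL_2$, so Lemma~\ref{ind-lem1}(ii) for $\BL_2$ gives both $\BL_2/\BL_1=\BL_2/\BL_2'\simeq\Lambda_L^v/T_v\Lambda_L^v$ and $\BL_1=\BL_2'=T_v\BL_2+M$; as $M=\BL_{1,v}=\BL_{2,v}\subseteq\BL_1$, the induced inclusion $\BL_1/\BL_{1,v}\hookrightarrow\BL_2/\BL_{2,v}$ identifies its source with $(T_v\BL_2+M)/M=T_v(\BL_2/\BL_{2,v})$, as claimed. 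The main point — and the reason the freeness hypotheses on $\BL_1$ and $\BL_2$ are needed — is that all of the substantive input is already packaged in Lemma~\ref{ind-lem1}; the deduction above has no serious obstacle, the only care required being to check that Lemma~\ref{ind-lem1}(ii) is applicable at each step (i.e. that the relevant $\BL_i'$ is a proper submodule) and that the modular law is invoked with $\BL_1\subseteq\BL_2$.
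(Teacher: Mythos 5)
Your proof is correct, and it reaches the key point ($\BL_1=\BL_{1,v}\oplus c\cdot\BL_{1,v}$) by a somewhat different mechanism than the paper. The paper also argues by contradiction from $\BL_1\neq\BL'$, but it does so by passing to the auxiliary induced lattice $\BL''=\frac{1}{T_v}\BL'$ and invoking Lemma~\ref{ind-lem2}(iii) for both $\BL_1$ and $\BL_2$: this gives $\BL''/\BL_1\simeq\Lambda_L^v/T_v\Lambda_L^v\simeq\BL''/\BL_2$, and the natural surjection $\BL''/\BL_1\twoheadrightarrow\BL''/\BL_2$ then forces $\BL_1=\BL_2$, a contradiction. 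You instead stay inside $\BV$: from Lemma~\ref{ind-lem1}(ii) applied to both lattices you get $T_v\BL_1+M=T_v\BL_2+M$, and then the image in $\BV/\BV_v$ (where $M$ dies and $T_v$ acts invertibly, since $\BV/\BV_v$ is an $\mathbf{F}$-vector space) together with the modular law and $\BL_2\cap\BV_v=\BL_{2,v}=M$ yields $\BL_1=\BL_2$. So your argument bypasses Lemma~\ref{ind-lem2} entirely and needs only \ref{ind-lem1}(ii) plus elementary module theory, which makes it slightly more self-contained; the paper's version buys brevity by reusing the $\BL''$ machinery it has already set up (and which it needs elsewhere, e.g.\ in Lemma~\ref{ind-I-lem}). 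The remaining assertions are handled the same way in both treatments — $\BL_2/\BL_1\simeq\Lambda_L^v/T_v\Lambda_L^v$ straight from \ref{ind-lem1}(ii) — and you in fact spell out the ``in particular'' identification $\BL_1/\BL_{1,v}\isoarrow T_v(\BL_2/\BL_{2,v})$ more explicitly than the paper does, correctly checking injectivity via $\BL_1\cap\BL_{2,v}=M=\BL_{1,v}$. No gaps.
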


\begin{proof} Let $\BL' = \BL_{1,v}\oplus c\cdot\BL_{1,v} = \BL_{2,v}\oplus c\cdot\BL_{2,v}$. Then $\BL'\subseteq\BL_1\subsetneq\BL_2$.
Let $\BL'' = \frac{1}{T_v}\BL'$. 
It follows from Lemma \ref{ind-lem1}(ii) applied to $\BL_1$ and $\BL_2$ that $\BL_1 \subsetneq\BL_2 \subseteq \BL'' = \frac{1}{T_v}\BL'$. 
Moreover, if $\BL_1\neq \BL'$, then it would follow from Lemma \ref{ind-lem2}(iii) that $\BL''/\BL_1 \simeq \Lambda_L^v/T_v\Lambda_L^v\simeq \BL''/\BL_2$.
But combined with the surjection $\BL''/\BL_1\twoheadrightarrow \BL''/\BL_2$, 
this would then imply that $\BL_1 = \BL_2$, a contradiction.  So it must be that $\BL_1=\BL'$, as claimed in the lemma.
That $\BL_2/\BL_1\simeq \Lambda_L^v/T_v\Lambda_L^v$ then follows from Lemma \ref{ind-lem1}(ii).
\end{proof}

\subsubsection{Is $\widetilde\BT$ induced?}
One of the important ancillary results of this paper will show that
\begin{equation}\label{Ind-eq}\tag{{\bf Ind}}
\begin{split}
\bullet \ & \BT^+\oplus c\cdot \BT^+ \isoarrow \BT_v\oplus c\cdot\BT_v = \BT_1 = \widetilde\BT \\
\bullet \ & \BT_1 = \widetilde\BT \simeq \Ind_{G_L}^{G_\BQ}(\Lambda_L^v(\Psi_L^v)) \\ 
\bullet \ & \text{the $\Lambda_L^v[G_{\BQ_p}]$-surjection $\BT\twoheadrightarrow \BT_1$ splits} \\
\bullet \ & \text{$\BT_1/\BT^+ = \widetilde\BT/\BT_v$ is identified with $T_v(\widetilde\BH/\BH_v)$ in $\widetilde\BH/\BH_v = \widetilde\BH/\BH^+$.}
\end{split}
\end{equation}
Note that this will provide a positive answer to each of the questions \eqref{Q1}--\eqref{Q4}.

By combining the results in Section \ref{ind-sec} with Proposition \ref{Sat-prop3}, we deduce the following result towards establishing \eqref{Ind-eq}.

\begin{prop}\label{notind-prop1} The statements \eqref{Ind-eq} hold unless 
$\BH^+ = T_v\BH_v$, $\widetilde\BT = T_v \widetilde\BH$, and $\widetilde\BT \neq \BT_v\oplus c\cdot\BT_v$.
\end{prop}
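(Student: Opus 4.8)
The plan is to derive \eqref{Ind-eq} by feeding the dichotomy of Proposition~\ref{Sat-prop3} into the module-theoretic lemmas of \S\ref{ind-sec}, applied to $\BL=\widetilde\BT$ and to the inclusion $\widetilde\BT\subseteq\widetilde\BH$ of $G_\BQ$-stable free $\Lambda_L^v$-modules of rank two inside $\BV$. The organising remark is the chain
$$
\BT^+\oplus c\cdot\BT^+\ \subseteq\ \BT_1\ \subseteq\ \widetilde\BT,
$$
whose first inclusion holds because $\BT^+\subseteq\BT_1$ (Lemma~\ref{CMTate}(i)), $\BT_1$ is $G_\BQ$-stable, and the sum is direct since $\BT^+\otimes_{\Lambda_L^v}{\bf F}$ and $c\cdot\BT^+\otimes_{\Lambda_L^v}{\bf F}$ are the two distinct $G_L$-eigenlines of $\BV$ (cf.~\eqref{CMgen}), and whose second inclusion is clear. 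Using $\BT^+=\BT_v$ from Proposition~\ref{Sat-prop3}(i), this gives the implication I would exploit: \emph{once the equality $\widetilde\BT=\BT_v\oplus c\cdot\BT_v$ is established}, one automatically has $\BT_1=\widetilde\BT=\BT^+\oplus c\cdot\BT^+$ (first bullet of \eqref{Ind-eq}), $\widetilde\BT\simeq\Ind_{G_L}^{G_\BQ}(\Lambda_L^v(\Psi_L^v))$ by Lemma~\ref{ind-lem1}(i) (second bullet), and the surjection $\BT\twoheadrightarrow\BT_1$ splits over $G_{\BQ_p}$, the section being the inverse of the isomorphism $\BT^+\oplus c\cdot\BT^+\isoarrow\BT_1$ obtained from the above chain (third bullet). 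So the work reduces, in each case of Proposition~\ref{Sat-prop3}(ii), to proving $\widetilde\BT=\BT_v\oplus c\cdot\BT_v$ together with the fourth bullet of \eqref{Ind-eq}.

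First I would treat the case $\BH^+=\BH_v$. Here $\BT_v=\BT^+=\BH^+=\BH_v$, so $\widetilde\BT$ and $\widetilde\BH$ have the same $v$-part, while $\widetilde\BH/\widetilde\BT\simeq\Lambda_L^v/(T_v)\neq 0$ (Proposition~\ref{Sat-prop3}(iii)) forces $\widetilde\BT\subsetneq\widetilde\BH$; Lemma~\ref{ind-lem3} with $\BL_1=\widetilde\BT\subsetneq\BL_2=\widetilde\BH$ then yields $\widetilde\BT=\BT_v\oplus c\cdot\BT_v$ and identifies $\widetilde\BT/\BT_v=\BT_1/\BT^+$ with $T_v(\widetilde\BH/\BH_v)$, which — since $\BH^+=\BH_v$ here — is exactly the fourth bullet. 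Next I would treat the case $\BH^+=T_v\BH_v$, in which $\widetilde\BT=T_v\widetilde\BH$ already by Proposition~\ref{Sat-prop3}(iv), so the first two conditions of the exceptional configuration are automatic. If moreover $\widetilde\BT\neq\BT_v\oplus c\cdot\BT_v$ we are in the exceptional case and there is nothing to prove; if $\widetilde\BT=\BT_v\oplus c\cdot\BT_v$, then the first three bullets follow from the first paragraph, and from $\BT_v=\BT^+=\BH^+=T_v\BH_v$ one gets $\widetilde\BT=T_v(\BH_v\oplus c\cdot\BH_v)$, hence $\widetilde\BH=\BH_v\oplus c\cdot\BH_v$ after comparing with $\widetilde\BT=T_v\widetilde\BH$ and cancelling $T_v$ (both modules being torsion-free); a direct computation then gives $\BT_1/\BT^+=\widetilde\BT/\BT_v\cong T_v(c\cdot\BH_v)=T_v(\widetilde\BH/\BH_v)$, and the inclusion $\BT_1\subseteq\widetilde\BH$ realises this identification inside $\widetilde\BH/\BH^+$ via the projection $\widetilde\BH/\BH^+\twoheadrightarrow\widetilde\BH/\BH_v$, which is injective on the image of $\BT_1/\BT^+$ — the fourth bullet in this case.

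Since Proposition~\ref{Sat-prop3}(ii) always puts us in one of these two cases, and \eqref{Ind-eq} can fail only in the second one when additionally $\widetilde\BT\neq\BT_v\oplus c\cdot\BT_v$, the three displayed exclusions exhaust the failure, as claimed. I do not expect a genuine obstacle within this proof: the substantive input — the case dichotomy of Proposition~\ref{Sat-prop3} — has already been extracted in the preceding subsections from the results of \cite{BD} and \cite{BDP'}, and what remains here is essentially bookkeeping. The only step needing care is the fourth bullet in the subcase $\BH^+=T_v\BH_v$, where $\BH^+$ is properly contained in its saturation $\BH_v$ and one must track the $T_v$-torsion discrepancy between $\widetilde\BH/\BH^+$ and $\widetilde\BH/\BH_v$. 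The real difficulty lies outside this proposition, in the eventual exclusion of the exceptional configuration $\BH^+=T_v\BH_v$, $\widetilde\BT=T_v\widetilde\BH$, $\widetilde\BT\neq\BT_v\oplus c\cdot\BT_v$ — via the explicit reciprocity law for the preliminary zeta element over $L$ — which is what makes \eqref{Ind-eq}, and hence the positive answers to \eqref{Q1}--\eqref{Q4}, actually hold.
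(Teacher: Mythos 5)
Your proof is correct and takes essentially the same route as the paper's: the same case split coming from Proposition~\ref{Sat-prop3}(ii), with Lemma~\ref{ind-lem3} applied to $\widetilde\BT\subsetneq\widetilde\BH$ (using $\BT_v=\BT^+=\BH^+=\BH_v$ and Proposition~\ref{Sat-prop3}(i),(iii)) when $\BH^+=\BH_v$, and the observation that when $\BH^+=T_v\BH_v$ the first two conditions of the exceptional configuration are automatic by Proposition~\ref{Sat-prop3}(iv), so \eqref{Ind-eq} can only fail if moreover $\widetilde\BT\neq\BT_v\oplus c\cdot\BT_v$. Your additional bookkeeping — the explicit $G_{\BQ_p}$-splitting of $\BT\twoheadrightarrow\BT_1$ via the submodule $\BT^+\oplus c\cdot\BT^+\subseteq\BT$, and the verification of the fourth bullet in the subcase $\BH^+=T_v\BH_v$, $\widetilde\BT=\BT_v\oplus c\cdot\BT_v$ by cancelling $T_v$ — merely spells out details the paper's proof leaves implicit.
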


\begin{proof} By Proposition \ref{Sat-prop3}(ii), either $\BH^+ = \BH_v$ or $\BH^+ = T_v\BH_v$. Suppose that the former holds. 
Then \eqref{Ind-eq} follows from combining Proposition \ref{Sat-prop3}(i),(iii) with 
Lemma \ref{ind-lem3}. The key point is that there are inclusions
$$
\BT_v\oplus c\cdot \BT = \BT^+ \oplus c\cdot\BT^+ \subseteq \BT_1 \subseteq \widetilde\BT\subsetneq
\widetilde\BH,
$$
so that the equality of the first and third modules (which follows from Lemma \ref{ind-lem3}) implies
equality of the first three.

Suppose then that $\BH^+ = T_v\BH_v$. Then by Proposition \ref{Sat-prop3}(iv),
$\widetilde\BT = T_v\widetilde\BH$.  There are two cases to consider: (1) $\widetilde\BT = \BT_v\oplus c\cdot\BT_v$ and (2) $\widetilde\BT = \BT_v\oplus c\cdot\BT_v$.  In case (1), that \eqref{Ind-eq} holds follows immediately from Proposition \ref{Sat-prop3}(i), so  $\BT_v\oplus c\cdot\BT_v = \BT^+\oplus c\cdot\BT^+\subseteq\BT_1$. These leaves case (2), which is exactly the exception in the proposition.
 \end{proof}

To help with the eventual exclusion of the exceptional case in Proposition \ref{notind-prop1},
we record the following application of the results from Section \ref{ind-sec}.

\begin{lem}\label{ind-I-lem} There exists an induced lattice 
$\widetilde \BT \subset \BI\subsetneq \widetilde\BH$ such that 
either (a) $\BT = \BI$ and \eqref{Ind-eq} holds, or 
(b) $\BT\neq \BI$, \eqref{Ind-eq} does not hold, and 
the inclusions
$\widetilde \BT \subsetneq \BI\subsetneq \widetilde\BH$ determine
identifications
\begin{itemize}
\item[(i)] $\BT^+ = \BT_v = T_v\BI_v = T_v\BH_v$,
\item[(ii)] $\widetilde\BT^- = \widetilde\BT/\BT^+ \isoarrow \BI/\BI_v\isoarrow T_v(\widetilde\BH/\BH_v)$.
\end{itemize}
\end{lem}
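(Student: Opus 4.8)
The plan is to combine the structural dichotomy from Proposition~\ref{notind-prop1} with the abstract submodule lemmas of Section~\ref{ind-sec} (Lemmas \ref{ind-lem1}, \ref{ind-lem2}, \ref{ind-lem3}) and Proposition~\ref{Sat-prop3}. Recall the setup: $\widetilde\BT$ and $\widetilde\BH$ are free $\Lambda_L^v$-modules of rank two inside $\BV$, with saturated rank-one pieces $\BT_v, \BH_v$ on which $G_L$ acts via $\Psi_L^v$, and we have $\BT^+ = \BT_v$ by Proposition~\ref{Sat-prop3}(i). I would first dispose of the non-exceptional cases. If $\BH^+ = \BH_v$, then Proposition~\ref{notind-prop1} already tells us that \eqref{Ind-eq} holds, so we are in case~(a): take $\BI = \widetilde\BT = \BT$ (here using that $\BT = \BT_1 = \widetilde\BT$ in this case, which is part of \eqref{Ind-eq}) and there is nothing more to prove. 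Likewise if $\BH^+ = T_v\BH_v$ but $\widetilde\BT = \BT_v\oplus c\cdot\BT_v$, then again \eqref{Ind-eq} holds by Proposition~\ref{notind-prop1} and we are in case~(a) with $\BI = \widetilde\BT$.

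The substance is the remaining exceptional case: $\BH^+ = T_v\BH_v$, $\widetilde\BT = T_v\widetilde\BH$ (Proposition~\ref{Sat-prop3}(iv)), and $\widetilde\BT \neq \BT_v\oplus c\cdot\BT_v$. Here I would set $\BI = \BT_v\oplus c\cdot\BT_v \subseteq \widetilde\BT$, which is an induced lattice by construction (it is $G_\BQ$-stable and manifestly $\Ind_{G_L}^{G_\BQ}(\Lambda_L^v(\Psi_L^v))$ since $\BT_v\simeq\Lambda_L^v(\Psi_L^v)$ as a $\Lambda_L^v[G_L]$-module by Lemma~\ref{satC}(i)). Since by assumption $\BT_v\oplus c\cdot\BT_v = \BI \subsetneq \widetilde\BT$, applying Lemma~\ref{ind-lem1}(ii) to $\widetilde\BT$ (whose $v$-part is $\BT_v$, and $\BI$ plays the role of $\BL'$) gives $\widetilde\BT/\BI \simeq \Lambda_L^v/T_v\Lambda_L^v$ and, more importantly, Lemma~\ref{ind-lem2} applied with $\BL = \widetilde\BT$, $\BL' = \BI$, $\BL'' = \frac{1}{T_v}\BI$ shows that $\BI_v = T_v \cdot (\tfrac1{T_v}\BI)_v$, i.e. $\BT_v = T_v\BI_v$ once we match $\BI_v$ with $\tfrac1{T_v}\BI$ appropriately — more cleanly, I would directly invoke Lemma~\ref{ind-lem3} with $\BL_1 = \BI$ and $\BL_2 = \widetilde\BT$, noting $\BI_v = \widetilde\BT_v = \BT_v$, to conclude $\BI = \BI_v \oplus c\cdot\BI_v$ and that the inclusion $\BI \subset \widetilde\BT$ identifies $\BI/\BI_v$ with $T_v(\widetilde\BT/\BT_v) = T_v\widetilde\BT^-$. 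This gives half of (ii). For the other half I use $\widetilde\BT = T_v\widetilde\BH$: this says $\widetilde\BT^- = \widetilde\BT/\BT^+$ is identified with $T_v\widetilde\BH / T_v\BH^+$. Since $\BH^+ = T_v\BH_v$, we get $\widetilde\BT = T_v\widetilde\BH$ and $\BT^+ = \BT_v = T_v\BH^+/\text{(no)}$ — I need to be careful here, but the chain $\BT^+ = \BT_v$, $\BI_v = \BT_v$, $\widetilde\BT = T_v\widetilde\BH$, $\BH^+ = T_v\BH_v$ forces $\BT_v = T_v\BH_v = T_v\BI_v$ (the middle equality being the hypothesis unraveled), yielding (i). And scaling the inclusion $\widetilde\BT = T_v\widetilde\BH \subset \widetilde\BH$ by $1/T_v$ gives an identification $\widetilde\BT/\BT^+ = \widetilde\BT^- \isoarrow \widetilde\BH/\BH_v$, hence combined with the $T_v$-twist above, the composite chain $\widetilde\BT^- \isoarrow \BI/\BI_v \isoarrow T_v(\widetilde\BH/\BH_v)$ of (ii). Finally, that $\BT \neq \BI$ and \eqref{Ind-eq} fails in this case is precisely the content of the exceptional branch of Proposition~\ref{notind-prop1} (if $\BT = \BI$ then $\BT$ would be induced and one checks \eqref{Ind-eq} holds, contradicting that we are in the exceptional case).

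The main obstacle I anticipate is bookkeeping the various twisted-by-$T_v$ identifications consistently: one must track carefully which of $\BT_v, \BH_v, \BI_v, \BT^+, \BH^+$ are equal, which differ by $T_v$, and in which ambient module the comparisons take place ($\widetilde\BT$ versus $\widetilde\BH$), since Proposition~\ref{Sat-prop3} produces these equalities only after the passage to reflexive closures and saturations. In particular one needs $\widetilde\BT = T_v\widetilde\BH$ together with $\BH^+ = T_v\BH_v$ to deduce $\BT^+ = T_v\BH_v = T_v\BI_v$ — but $\BT^+ = \BT_v$ by Proposition~\ref{Sat-prop3}(i) and $\BI_v = \BT_v$ by construction, so this is actually the compatibility assertion that $\BT_v$, viewed inside $\widetilde\BT = T_v\widetilde\BH$, really is $T_v$ times the saturated line of $\widetilde\BH$, which follows because the saturation of a rank-one $G_L$-eigenline is preserved under the scaling isomorphism $\widetilde\BH \isoarrow \widetilde\BT$, $x\mapsto T_v x$. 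Once that single point is pinned down, items (i) and (ii) drop out, and the dichotomy (a)/(b) is exactly Proposition~\ref{notind-prop1} rephrased. No genuinely new input beyond Section~\ref{ind-sec} and Proposition~\ref{Sat-prop3} is needed; the lemma is a clean packaging of the structural analysis already carried out, set up so that the eventual exclusion of case~(b) can proceed by deriving a contradiction from the anomalous reciprocity law it would force.
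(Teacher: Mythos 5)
Your overall framing (reduce to the exceptional branch of Proposition~\ref{notind-prop1} and then package the Section~\ref{ind-sec} lemmas) is the right idea, but the core construction is off, and in a way that breaks the statement you are trying to prove. The lemma requires $\widetilde\BT \subset \BI \subsetneq \widetilde\BH$, i.e.\ $\BI$ is an \emph{overlattice} of $\widetilde\BT$ inside $\widetilde\BH$. You instead set $\BI = \BT_v\oplus c\cdot\BT_v$, which in the exceptional case is a \emph{proper sublattice} of $\widetilde\BT$; the required inclusions are then reversed and cannot hold. This also wrecks item (i): with your choice $\BI_v = \BT_v$, so (i) would assert $\BT_v = T_v\BI_v = T_v\BT_v$, which is impossible, and your attempted patch (``forces $\BT_v = T_v\BH_v = T_v\BI_v$'') contradicts your own identification $\BI_v = \BT_v$. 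Likewise for (ii): with your $\BI$, the map $\BI/\BI_v \to \widetilde\BT/\BT_v$ induced by inclusion has image $T_v\widetilde\BT^-$, so the inclusions do not determine an isomorphism $\widetilde\BT^- \isoarrow \BI/\BI_v$; you only get abstract isomorphisms of free rank-one modules, which is not what the lemma (or its later use on Beilinson--Flach classes, where the containment $\widetilde\BT\subset\BI\subset\widetilde\BH$ is what lets one move $T_v\cdot\mathcal{BF}$ into $\BI$-coefficients) needs.

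The correct choice, in the exceptional case $\BH^+ = T_v\BH_v$, $\widetilde\BT = T_v\widetilde\BH$, $\widetilde\BT \neq \BT_v\oplus c\cdot\BT_v$, is the lattice one step \emph{up}: $\BI = \frac{1}{T_v}\bigl(\BT_v\oplus c\cdot\BT_v\bigr)$, i.e.\ the module $\BL''$ of Lemma~\ref{ind-lem2} for $\BL = \widetilde\BT$. Since $\widetilde\BT = T_v\widetilde\BH$ gives $\BT_v = T_v\BH_v$, this $\BI$ equals $\BH_v\oplus c\cdot\BH_v\subsetneq\widetilde\BH$ (strict, since otherwise $\widetilde\BT = \BT_v\oplus c\cdot\BT_v$), it contains $\widetilde\BT$ by Lemma~\ref{ind-lem2}, it is induced by construction, and $\BI_v = \BH_v$, so (i) becomes the immediate chain $\BT^+ = \BT_v = T_v\BH_v = T_v\BI_v$. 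Item (ii) then follows by applying Lemma~\ref{ind-lem2}(iii) to $\widetilde\BT\subsetneq\BI$ (giving $\widetilde\BT/\BT_v\isoarrow\BI/\BI_v$) and Lemma~\ref{ind-lem1}(ii) to $\BI = \BH_v\oplus c\cdot\BH_v\subsetneq\widetilde\BH$ (identifying $\BI/\BI_v$ with $T_v(\widetilde\BH/\BH_v)$). So the fix is a single but essential correction — replace your $\BI$ by its $\frac{1}{T_v}$-rescaling — after which your argument collapses to the intended one; as written, the proof does not establish the lemma.
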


\begin{proof} If \eqref{Ind-eq} holds, then possibility (a) clearly holds with $\BI = \widetilde\BT$. 
So suppose \eqref{Ind-eq} does not hold. 
 Let $\widetilde\BT' = \BT_v\oplus c\cdot\BT_v\subset \widetilde\BT$, and let
$\BI = \widetilde\BT'' = \frac{1}{T_v}\widetilde\BT'$. Part (i) is immediate from the definitions.
Since $\widetilde\BT\neq \widetilde\BT'$, it follows from Lemma \ref{ind-lem2}(iii) that 
$\widetilde\BT/\BT^+ = \widetilde\BT/\BT_v\isoarrow \BI/\BI_v$.
Similarly, $\BI = \widetilde\BH'$ but $\widetilde\BH\neq \widetilde\BH'$
(else $\widetilde\BT = \widetilde\BT'$ since $\widetilde\BT = T_v\widetilde\BH$), so 
part (ii) follows from Lemma \ref{ind-lem1}(ii).
\end{proof}

\subsection{Congruence ideal}\label{CMcong}
Another key ingredient in our later arguments is the congruence ideal of the canonical CM family ${\bf{h}}_v$.

 \subsubsection{The ideal(s)}\label{ssCdef}
The (inverse) congruence ideal $I_{{\bf{h}}_{v}} \subset {\bf{F}}$ associated with ${\bf{h}}_{v}$ is the fractional 
$\Lambda_{L}^{v}$-ideal 
characterised by the existence of a surjective 
$\Lambda_{L}^{v}$-morphism 
$$
 M_{\Lambda_{D}}^{\ord}\otimes_{\BH_{D_Lp^{\infty}}^{\ord},\varphi} \Lambda_{L}^{v} \twoheadrightarrow I_{{\bf{h}}_{v}}, \ \  {\bf{h}}_{v} \mapsto 1.
 $$ 
 Let $\widetilde{I}_{{\bf{h}}_{v}}\subset {\bf{F}}$ be the reflexive closure of $I_{{\bf{h}}_v}$. Then 
 $\widetilde{I}_{{\bf{h}}_{v}}$ is a principal fractional $\Lambda_L^v$-ideal of the form $\widetilde{I}_{{\bf{h}}_{v}} = \frac{1}{H_v}\Lambda_L^v$ for some $H_v\in \Lambda_L^v$.
Such an $H_v$ is typically referred to as a congruence power series (well-defined only up to a unit of $\Lambda_L^v$).  
In the following we identify a congruence power series for $\bh_v$.

We similarly define the cuspidal congruence ideal $I^{\mathrm{cusp}}_{{\bf{h}}_{v}} \subset {\bf{F}}$
to be the fractional $\Lambda_L^v$-ideal such that there exists a surjective $\Lambda_{L}^{v}$-morphism
$$
 S_{\Lambda_{D}}^{\ord}\otimes_{\BT_{D_Lp^{\infty}}^{\ord},\varphi} \Lambda_{L}^{v} \twoheadrightarrow I^{\mathrm{cusp}}_{{\bf{h}}_{v}}, \ \  {\bf{h}}_{v} \mapsto 1.
 $$ 
Clearly $I^{\mathrm{cusp}}_{{\bf{h}}_{v}}\subset I_{{\bf{h}}_{v}} \subset {\bf{F}}$. Let $H_v^{\mathrm{cusp}}\in\Lambda_L^v$ be 
such that $1/H_v^{\mathrm{cusp}}$ is a $\Lambda_L^v$-generator of the reflexive closure $\tilde I_{\bh_v}^{\mathrm{cusp}}$ of $I_{\bh_v}^{\mathrm{cusp}}$. 
Then $H_v^{\mathrm{cusp}}\mid H_v$ in $\Lambda_L^v$. We identify a cuspidal congruence power series $H_v^{\mathrm{cusp}}$ along the way to 
identifying $H_v$.

\subsubsection{The Katz $p$-adic L-function} \label{ssKpL}
By results of Katz (see \cite{HT0}), we have the following $p$-adic 
$L$-function for the imaginary quadratic field $L$.

\begin{thm}\label{pKatzL}
There exists an unique $\mathcal{L}_{v}(L) \in \Lambda_{L}^{v,\ur}= \Lambda_{L,W(\ov{\BF}_p)}^v$ such that 
for any continuous $W(\ov{\BF}_p)$-linear homomorphism $\theta: \Lambda_{L}^{v,\ur} \ra \ov{\BQ}_{p}$
such that $\theta(\gamma_v^{h_p}) = \epsilon(\gamma^+)^m$ for some $m\equiv 0 \mod{(p-1)}$, $m\geq 0$,  
$$
\frac{\theta(\mathcal{L}_{v}(L))}{\Omega_{p}^{2m}} = 
(1-p^{m}\psi_\theta(\varpi_{\bar v})^{-2}) \cdot (1-p^{m-1}\psi_\theta(\varpi_{\bar v})^{-2})\frac{w_L}{2}\cdot
\frac{\pi^{m-1}\cdot m!} {\sqrt{D_L}^{m-1}} \cdot \frac{L(1,\psi_\theta/\psi_\theta^c)}{\Omega_{\infty}^{2m}},
 $$
where 
$\psi_\theta$ is the algebraic Hecke character such that $\sigma_{\psi_\theta}$ is the (composite) $p$-adic character $G_L \twoheadrightarrow \Gamma_L^v\stackrel{\theta}{\rightarrow} \ov{\BQ}_p^\times$, and 
$(\Omega_{p},\Omega_{\infty}) \in W(\overline{\BF}_{p})^{\times} \times \BC^\times$ are CM periods over $L$ as in 
\cite[\S4.5]{JSW}.
\end{thm}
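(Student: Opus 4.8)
The statement in question is Theorem~\ref{pKatzL}, the existence and interpolation property of the Katz $p$-adic $L$-function $\mathcal{L}_v(L)$ attached to the imaginary quadratic field $L$ and the $\BZ_p$-extension $L_\infty^v$. This is a packaging of classical work of Katz, so the plan is not to reprove it from scratch but to cite the relevant construction and then carefully match normalisations. Here is how I would proceed.

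\medskip

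\textbf{Step 1: Invoke the classical two-variable Katz construction.} The starting point is Katz's $p$-adic $L$-function on the full $\BZ_p^2$-extension $L_\infty/L$, as constructed in \cite{HT0} (following Katz's original work), interpolating critical Hecke $L$-values $L(0,\psi)$ for algebraic Hecke characters $\psi$ of $L$ of appropriate infinity type, with an Euler factor at $\bar v$ and the CM period ratio $\Omega_p^{?}/\Omega_\infty^{?}$. I would state this as a black box: there is $\mathcal{L}(L)\in\Lambda_{L,W(\ov\BF_p)}$ with the standard interpolation formula at all $p$-adic characters of $G_L$ that are Hodge--Tate of the right weights.

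\textbf{Step 2: Restrict to the $v$-ramified line.} Pull $\mathcal{L}(L)$ back along the surjection $\Lambda_{L,W(\ov\BF_p)}\twoheadrightarrow \Lambda_{L,W(\ov\BF_p)}^v$ induced by $\Gamma_L\twoheadrightarrow\Gamma_L^v$, i.e. specialise in the direction complementary to $\Gamma_L^v$ so that the resulting measure only sees characters factoring through $\Gamma_L^v$. Define $\mathcal{L}_v(L)$ to be the image. Uniqueness is immediate since the interpolation points $\theta$ with $\theta(\gamma_v^{h_p})=\epsilon(\gamma^+)^m$, $m\equiv 0\bmod (p-1)$, $m\geq 0$, are Zariski-dense in $\Spec\Lambda_{L,W(\ov\BF_p)}^v$ (they are a set of characters with unbounded weight, hence accumulate), so two elements agreeing at all of them coincide.

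\textbf{Step 3: Match the interpolation formula.} This is the real content of the write-up. One must (i) identify which algebraic Hecke character $\psi_\theta$ corresponds to a given $\theta$: since $\sigma_{\psi_\theta}$ is the composite $G_L\twoheadrightarrow\Gamma_L^v\xrightarrow{\theta}\ov\BQ_p^\times$, the character $\psi_\theta$ is unramified outside $v$, and the weight condition $\theta(\gamma_v^{h_p})=\epsilon(\gamma^+)^m$ forces infinity type $(-m,0)$ (or $(m,0)$ in the arithmetic normalisation — sign conventions must be pinned down), whence $\psi_\theta/\psi_\theta^c$ has infinity type $(-m,m)$ and $L(1,\psi_\theta/\psi_\theta^c)$ is a critical value by Shimura; (ii) convert the single variable $\psi_\theta$ form of Katz's formula (which is usually written for a character $\lambda$ of infinity type $(k+j,-j)$ with an Euler factor $(1-\lambda(\bar v)/p)(1-\lambda^{-1}(v))$ and Gamma-factor $\Gamma(k+j)/\ldots$) into the self-dual shape $\psi_\theta/\psi_\theta^c$ quoted here, producing the two Euler factors $(1-p^m\psi_\theta(\varpi_{\bar v})^{-2})(1-p^{m-1}\psi_\theta(\varpi_{\bar v})^{-2})$ and the archimedean factor $\frac{w_L}{2}\cdot\frac{\pi^{m-1} m!}{\sqrt{D_L}^{m-1}}$; (iii) check the period normalisation: the periods $(\Omega_p,\Omega_\infty)$ are those of \cite[\S4.5]{JSW}, and one must verify that the ratio $\Omega_p^{2m}/\Omega_\infty^{2m}$ here matches the power of Katz's periods occurring in \cite{HT0} — this is a bookkeeping comparison of two different but standard choices of CM period.

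\medskip

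\textbf{Main obstacle.} The delicate part is Step 3, specifically the precise matching of \emph{all} the fudge factors — the two Euler factors at $\bar v$, the archimedean $\Gamma$- and $\pi$-factors with the factor $w_L/2$ and the power of $\sqrt{D_L}$, and above all the exact power and choice of CM periods $\Omega_p,\Omega_\infty$ — against the formula in the cited reference. These constants depend sensitively on normalisation conventions (geometric vs. arithmetic Hodge--Tate weights, the normalisation of $\rec_L$, the self-dual twist, and the choice of Néron differential / CM period lattice), and getting a clean statement requires tracking them all consistently with the conventions fixed in \S\ref{Hecke-char} and \S\ref{IQF}. The existence and uniqueness (Steps 1--2) are essentially formal; it is this normalisation audit that occupies the proof, and I would present it as a careful translation between the formula in \cite{HT0} and the one stated here rather than a new computation.
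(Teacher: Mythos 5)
Your proposal is correct and takes essentially the same route as the paper, which offers no argument beyond the citation ``By results of Katz (see \cite{HT0})'': quoting the two-variable Katz measure, taking its image under $\Lambda_{L,W(\ov{\BF}_p)}\twoheadrightarrow\Lambda_{L}^{v,\ur}$ (i.e.\ pushing the measure forward to the $v$-line), getting uniqueness from Weierstrass preparation at the infinitely many arithmetic points, and then auditing the Euler, archimedean and period normalisations against the conventions of \cite[\S4.5]{JSW} is exactly the intended reading. As you note, that normalisation audit in your Step 3 is the entire content, and the paper leaves it implicit as well.
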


\subsubsection{The cuspidal congruence ideal and the Katz $p$-adic L-function} 
The connection between the (cuspidal) congruence ideal associated to the canonical CM Hida family ${\bf{h}}_v$ and the Katz $p$-adic L-function for $L$ is given by 
the following.
\begin{thm}\label{excz} 
Let $H^{\mathrm{cusp}}_{v}$ be a cuspidal congruence power series for ${\bf{h}}_v$. 
Then 
$$
\big{(}\frac{h_{L}}{w_{L}} \cdot \mathcal{L}_{v}(L)\big{)} = (H^\mathrm{cusp}_{v}) \subset \Lambda_{L}^{v,\ur}.
$$
\end{thm}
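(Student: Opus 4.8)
\textbf{Proof plan for Theorem \ref{excz}.}

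The strategy is to reduce the identity of ideals in $\Lambda_L^{v,\ur}$ to the classical computation of the congruence module of a CM Hida family in terms of an anticyclotomic Katz $p$-adic $L$-function, as in the work of Hida--Tilouine and Hida. First I would recall that the canonical CM family $\bh_v$ is the theta lift of the tautological character $\Theta_v\colon \BA_L^\times\to\Gamma_L^v$, so that its Galois representation is $\Ind_{G_L}^{G_\BQ}(\Lambda_L^v(\Psi_L^v))$ over $\bf F$ by \eqref{CMgen}. The congruence ideal $I_{\bh_v}^{\mathrm{cusp}}$ measures congruences between $\bh_v$ and the rest of the cuspidal spectrum of $\BT_{D_Lp^\infty}^\ord$ passing through $\bh_v$; since $\bh_v$ has CM by $L$, a newform congruent to $\bh_v$ modulo a height-one prime either again has CM by $L$ (forcing the two associated characters to be congruent) or has no CM, and in either case Hida's general theory identifies the congruence ideal with the image of the relevant $L$-value measure. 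The precise shape of the formula -- with the factor $h_L/w_L$ and the product of two Euler-type factors $(1-p^m\psi_\theta(\varpi_{\bar v})^{-2})(1-p^{m-1}\psi_\theta(\varpi_{\bar v})^{-2})$ -- is exactly what appears in Theorem \ref{pKatzL} for $\CL_v(L)$, which interpolates $L(1,\psi_\theta/\psi_\theta^c)$. So the first main step is to invoke the Hida--Tilouine congruence-module formula: the cuspidal congruence ideal of a CM Hida family equals (up to units) the ideal generated by the anticyclotomic projection of the Katz measure, evaluated on the self-dual twist $\psi/\psi^c$.

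Concretely, I would proceed as follows. (1) Identify $\BT_{D_Lp^\infty}^\ord$ localized at the maximal ideal through which $\varphi$ factors: by the theta correspondence this Hecke algebra decomposes, after a suitable base change, into a CM part (the image of the Iwasawa algebra $\Lambda_L^v$ under $\varphi$) and a non-CM complement, and the congruence ideal is the ``difference'' of these, computed via the trace pairing and Hida's duality \eqref{HidaDu}. (2) Apply the explicit evaluation of this congruence ideal in terms of an $L$-value: this is where Katz's construction enters. The key input is that the self-intersection/congruence number of $\theta_\psi$ is, up to the explicit constants, $L(1,\psi\overline{\psi}^{-1})$ times a period ratio, and interpolating over the $\Lambda_L^v$-family turns this into $\CL_v(L)$ in $\Lambda_L^{v,\ur}$. (3) Match the correction factors: the $w_L$ in the denominator accounts for the roots of unity (units of $\CO_L$ acting on the relevant lattice), the $h_L$ accounts for the class group appearing in the theta-series normalization \eqref{defCM}, and the two Euler factors at $\bar v$ arise from the $p$-stabilization passing from a newform of level $D_L$ to the $U_p'$-ordinary eigenform, i.e.\ from removing the two bad Euler factors at the prime $\bar v$ above $p$ that is not the ``distinguished'' one. (4) Conclude by taking reflexive closures: since both sides are (generators of) reflexive rank-one $\Lambda_L^v$-ideals, an equality of their localizations at all height-one primes suffices, which is what the interpolation property in Theorem \ref{pKatzL} furnishes once one knows the congruence number at each arithmetic specialization.

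A subtle point I would be careful about is the passage from the cuspidal congruence ideal $I_{\bh_v}^{\mathrm{cusp}}$ to the congruence \emph{power series} $H_v^{\mathrm{cusp}}$: since $\Lambda_L^v$ is a two-dimensional regular local ring, $I_{\bh_v}^{\mathrm{cusp}}$ need not be principal, but its reflexive closure is, and the theorem is an equality of principal fractional ideals $(\frac{h_L}{w_L}\CL_v(L)) = (H_v^{\mathrm{cusp}})$ in $\Lambda_L^{v,\ur}$. So the argument must be that the divisor of $\CL_v(L)$ (its valuation at each height-one prime) equals the divisor of $H_v^{\mathrm{cusp}}$; by the interpolation formula and the fact that $\CL_v(L)$ is nonzero (hence has a well-defined divisor), this follows from knowing the congruence number of each classical CM specialization, together with control of the pseudo-null discrepancy. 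The main obstacle, in my view, is making the Euler-factor bookkeeping at the prime $\bar v$ precise: one must show that passing to the $U_p'$-ordinary $\Lambda_L^v$-adic eigenform contributes exactly the factor $(1-p^m\psi_\theta(\varpi_{\bar v})^{-2})(1-p^{m-1}\psi_\theta(\varpi_{\bar v})^{-2})$ to the congruence number -- equivalently, that the congruence number of the $p$-stabilization differs from that of the newform of level $D_L$ by precisely this product of ``modular/automorphic'' Euler factors squared. This is a local computation at $p=v\bar v$ with the ordinary filtration, and getting the two (rather than one) Euler factors and the correct weight-dependent exponents $m$ and $m-1$ right is where the care lies; it is precisely matched to the $\bar v$-Euler factors built into Katz's measure in Theorem \ref{pKatzL}, so the two computations are designed to cancel, and the verification is the heart of the proof.
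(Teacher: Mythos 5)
Your overall reduction---identifying the cuspidal congruence ideal of $\bh_v$ with the anticyclotomic Katz $p$-adic $L$-function via the Hida--Tilouine theory---is indeed where the paper starts, but the ``congruence-module formula'' you invoke as an equality is not available in this setting. In the literature it is proved under a $p$-distinguished hypothesis, and that hypothesis fails for $\bh_v$ precisely because $p$ splits in $L$: the two residual characters $1$ and $\chi_L$ agree on $G_{\BQ_p}$. What the known results actually give is one divisibility, $\frac{h_L}{w_L}\CL_v(L)\mid H_v^{\mathrm{cusp}}$ (from \cite{HT1} together with the $\mu$-invariant vanishing of \cite{BuHs}), and the converse divisibility only after localizing at height-one primes $P\neq (T_v)$ (from \cite{HT} combined with Rubin's main conjecture \cite{Ru}---note that the converse direction is a main-conjecture input, not an interpolation statement, and your plan never names it). The entire difficulty, flagged in the Remark following the theorem, is concentrated at the single exceptional prime $(T_v)$, i.e.\ at the irregular weight-one specialisation $\theta(\chi_L)$ where the trivial zero occurs, and your proposal does not address it.

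Moreover, your proposed way of closing the argument---comparing divisors ``once one knows the congruence number at each arithmetic specialization''---cannot detect what happens at $(T_v)$: interpolation and classical congruence numbers only see cohomological specialisations, whereas $(T_v)$ corresponds to the non-cohomological weight-one point; indeed $H_v^{\mathrm{cusp}}$ and $H_v=T_vH_v^{\mathrm{cusp}}$ (Theorem \ref{excz-2}) agree at every classical specialisation away from this point, so such data alone cannot pin down $\ord_{(T_v)}$. The paper closes this gap with a genuinely different input: by the definition of the congruence ideal, $\ord_{(T_v)}(H_v^{\mathrm{cusp}})=0$ if and only if $\bh_v$ is the unique cuspidal Hida family of tame level $D_L$ passing through the $p$-stabilised weight-one theta series $\theta(\chi_L)$, and this uniqueness is exactly what Betina--Dimitrov--Pozzi's description of the eigencurve at this Eisenstein weight-one point supplies \cite[Thm.~A(i)]{BDP'}. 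Combined with the first divisibility this forces equality of orders at $(T_v)$, and the two divisibilities give equality at all other height-one primes. Incidentally, the Euler-factor bookkeeping at $\bar v$ that you single out as the heart of the matter is already packaged in the quoted divisibilities and is not where the difficulty of this theorem lies.
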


\begin{proof}
By \cite[Thm.~I]{HT1} and \cite{BuHs}, $\frac{h_{L}}{w_{L}} \cdot \mathcal{L}_{v}(L) \mid H_{v} \text{ in } \Lambda_{L}^{v,\ur}.$
Conversely, by \cite[Thm.~1.4.7]{HT} and \cite{Ru}, 
$H_{v} \mid \frac{h_{L}}{w_{L}} \cdot  \mathcal{L}_{v}(L) \text{ in } \Lambda_{L,P}^{v,\ur}$
for $(T_v)\neq P \in \height_{1}(\Lambda_{L}^{v,\ur})$.
It thus suffices to show that 
$
\ord_{(T_v)}(H_{v}) \neq 0.
$ 

By the definition of the congruence ideal (see \S\ref{ssCdef}), $\ord_{(T_v)}(H_{v})=0$ if (and only if) there is a unique cuspidal Hida family of tame level $D_{L}$ passing through the weight one specialisation of ${\bf{h}}_{v}$.
In view of the $q$-expansion (see \S\ref{CMHidaFam}), such a weight one specialisation is nothing but the $p$-ordinary stabilisation of the theta series corresponding to the quadratic Dirichlet character $\chi_{L}$. The desired uniqueness then follows from \cite[Thm. A(i)]{BDP'}.
\end{proof}

\begin{remark}
The weight one specialisation of ${\bf{h}}_{v}$ as above corresponds to a `trivial zero' in the sense of \cite[Def. 1.5.2]{HT}. 
Furthermore, the non-vanishing of the congruence power series at the identity does not  
follow directly from \cite{HT} (for example, \cite[Thm. 1.4.7]{HT}) and \cite{Ru}):
in \cite{HT} it is assumed throughout that the underlying CM family satisfies a $p$-distinguished hypothesis that does not hold for $\bh_v$.
\end{remark}

\subsubsection{Congruence power series} It is relatively straightforward to identify 
a congruence power series $H_v$ in terms of a cuspidal congruence power series $H_v^{\mathrm{cusp}}$:
\begin{thm}\label{excz-2}
Let $H^{\mathrm{cusp}}_{v}$ be a cuspidal congruence power series for ${\bf{h}}_v$. Then $H_v = T_v\cdot H_v^\mathrm{cusp}$
is a congruence power series for $\bh_v$.  In particular, 
$$
\big{(}\frac{h_{L}}{w_{L}} \cdot T_v\cdot \mathcal{L}_{v}(L)\big{)} = (H_{v}) \subset \Lambda_{L}^{v,\ur}.
$$
\end{thm}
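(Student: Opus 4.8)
The statement asserts that a congruence power series for the full (non-cuspidal) Hida family $\bh_v$ differs from a cuspidal congruence power series by exactly the factor $T_v$. The strategy is to compare the full module $M_{\Lambda_D}^\ord \otimes_{\BH^\ord_{D_Lp^\infty},\varphi}\Lambda_L^v$ with the cuspidal module $S_{\Lambda_D}^\ord\otimes_{\BT^\ord_{D_Lp^\infty},\varphi}\Lambda_L^v$ after passing to reflexive closures, and to measure the index using the structure results already obtained in \S\ref{saturation}. More precisely, I would argue that the only height-one prime where the congruence ideals of the full and cuspidal families can differ is $P=(T_v)$ (this is the trivial-zero point, i.e.\ the weight-one specialisation through which the Eisenstein families $\bE_1,\bE_2$ pass), so that $H_v/H_v^{\mathrm{cusp}}$ is, up to a unit, a power of $T_v$; the content of the theorem is that this power is exactly one. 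The final displayed formula then follows immediately by combining $H_v = T_v H_v^{\mathrm{cusp}}$ with Theorem~\ref{excz}.

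\textbf{Key steps.} First I would recall that, away from $(T_v)$, the weight-one specialisations of $\bh_v$ are cuspidal CM newforms, and there the local rings of the full and cuspidal eigencurves coincide (this is exactly the identity $\CR = \CR^0$ from the $0<r\le h_p$ discussion, via the adaptation of \cite[\S7]{BelD}); hence $I_{\bh_v}$ and $I_{\bh_v}^{\mathrm{cusp}}$ agree after localisation at any $P\ne (T_v)$, giving $\ord_P(H_v) = \ord_P(H_v^{\mathrm{cusp}})$ for all such $P$. Second, at $P=(T_v)$, I would invoke Proposition~\ref{Str-prop1}: we have $S = \ov{\BQ}_p[\![X]\!]\cdot\bh_v$, so $H_v^{\mathrm{cusp}}$ is a unit at $(T_v)$, i.e.\ $\ord_{(T_v)}(H_v^{\mathrm{cusp}})=0$; while $M = \ov{\BQ}_p[\![X]\!]\cdot\bh_v \oplus \ov{\BQ}_p[\![X]\!]\cdot\frac{\bE_1-\bh_v}{X}\oplus\ov{\BQ}_p[\![X]\!]\cdot\frac{\bE_2-\bh_v}{X}$, and part (iii) gives $I_i\cdot\bh_v = X\ov{\BQ}_p[\![X]\!]\cdot\bh_v$. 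Third, from this description of $M$ I would compute the image of the map $M_{\Lambda_D}^\ord\otimes_{\BH,\varphi}\Lambda_L^v\to I_{\bh_v}$ after completing at $(T_v)$: the map sends $\bh_v\mapsto 1$, and the presence of the two classes $\frac{\bE_i-\bh_v}{X}$ (which are not in the span of $\bh_v$ but become so after multiplying by $X=T_v$) forces the image of the reflexive closure to be precisely $\frac{1}{T_v}\ov{\BQ}_p[\![T_v]\!]$ locally — i.e.\ $\ord_{(T_v)}(H_v)=1$. Combining the three steps: $H_v$ and $T_vH_v^{\mathrm{cusp}}$ have the same order at every height-one prime, so they are associates in the (Krull, hence normal) domain $\Lambda_L^v$, which is what is claimed. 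The last sentence of the theorem is then Theorem~\ref{excz} multiplied by $T_v$.

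\textbf{Main obstacle.} The delicate point is Step three: extracting the exact exponent $\ord_{(T_v)}(H_v)=1$ from the module $M$ rather than a mere inequality. One has to be careful that the congruence ideal is defined via a surjection onto a \emph{fractional} ideal and that passing to the reflexive closure $\widetilde I_{\bh_v}$ is what makes it principal; the computation must be done at the level of the completed local ring $\ov{\BQ}_p[\![X]\!]$ and then descended, using faithful flatness of $\ov{\BQ}_p[\![X]\!]$ over $\widehat{\Lambda}^v_{L,(T_v)}$ exactly as in the proof of Proposition~\ref{Sat-prop1}. A clean way to organise this is to observe that $I_{\bh_v}$ is, by its defining property, the image of $\Hom_{\Lambda_L^v}(M_{\Lambda_D}^\ord\otimes\Lambda_L^v,\Lambda_L^v)$-type data under evaluation at $\bh_v$, and that the cokernel of $I^{\mathrm{cusp}}_{\bh_v}\hookrightarrow I_{\bh_v}$ is governed by $M/S$, whose localisation at $(T_v)$ is $\ov{\BQ}_p \oplus \ov{\BQ}_p$ by Proposition~\ref{Str-prop1}(ii); a short diagram chase with the annihilator statement (iii) then pins the length to give exactly one power of $T_v$. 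Everything else is formal: uniqueness and well-definedness of $H_v$ up to units, and the reduction to checking orders prime by prime, are standard for reflexive ideals over the two-dimensional regular local ring $\Lambda_L^v$.
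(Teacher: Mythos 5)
Your proposal is correct and follows essentially the paper's route: compare $\ord_P(H_v)$ with $\ord_P(H_v^{\mathrm{cusp}})$ at every height-one prime of $\Lambda_L^v$, prove equality away from $(T_v)$, and at $(T_v)$ use the structure of $S$ and $M$ from Proposition \ref{Str-prop1} (i.e.\ \cite[Prop.~5.4(ii)]{BDP'}) to get $\ord_{(T_v)}(H_v^{\mathrm{cusp}})=0$ and $\ord_{(T_v)}(H_v)=1$; the displayed identity is then Theorem \ref{excz} multiplied by $T_v$. Your extra bookkeeping at $(T_v)$ (working over $\ov{\BQ}_p[\![X]\!]$ and descending by faithful flatness, using parts (ii) and (iii) of Proposition \ref{Str-prop1}) is exactly what the paper's citation of \cite[Prop.~5.4(ii)]{BDP'} encapsulates.

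One caveat about your first step. The justification you offer — that $\CR=\CR^0$ at the weight-one CM specialisations, from the $0<r\le h_p$ analysis — only covers the finitely many primes $P\mid (1+T_v)^{p^{h_p}}-1$ with $P\neq (T_v)$. For an arbitrary height-one prime $P\neq (T_v)$ you still must rule out a congruence modulo $P$ between $\bh_v$ and an Eisenstein family, since it is such congruences that control the discrepancy between $I_{\bh_v}$ and $I_{\bh_v}^{\mathrm{cusp}}$ locally at $P$. The paper disposes of this in one line: such a congruence would force the mod-$P$ representation $\Ind_{G_L}^{G_{\BQ}}(\Psi_L^v \bmod P)$ to be reducible, which happens only for $P=(T_v)$ (cf.\ Lemma \ref{LocInd}). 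With that substitution (a one-sentence fix), your argument is complete and agrees with the paper's proof.
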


\begin{proof} 
We clearly have $\ord_P(H_v) = \ord_P(H_v^{\mathrm{cusp}})$ for every height one prime $P\in ht_1(\Lambda_L^v)$ such that
there are no congruences modulo $P$ between $\bh_v$ and an Eisenstein family. In particular, this equality holds for all $P\neq (T_v)$
as $(T_v)$ is the only height one prime such that $\Ind_{G_L}^{G_\BQ}(\Psi_P)$, $\Psi_P = \Psi_L^v \mod P$, is not irreducible
(and so the only prime for which such a congruence could possibly exist). That $\ord_{(T_v)}(H_v) = 1$ follows
from \cite[Prop.~5.4(ii)]{BDP'} (see also Proposition \ref{Str-prop1}(ii)). This proves that $H_v = T_v\cdot H_v^{\mathrm{cusp}}$ is a
congruence power series. The second claim of the theorem then follows from Theorem \ref{excz}.
\end{proof}

In light of Theorem \ref{excz-2} we find it convenient to set
\begin{equation}\label{nrm}
\mathcal{H}_{v}=\frac{h_{L}}{w_{L}} \cdot     T_v \cdot \mathcal{L}_{v}(L) \in \Lambda_{L}^{v,\ur},
\end{equation}
so that $(H_v) = (\mathcal{H}_v)$ in $\Lambda_L^{v,\ur}$.

\section{Zeta elements over imaginary quadratic fields: the ordinary case}\label{BFord} 
In this section we introduce the Beilinson--Flach element associated with an ordinary weight two newform and the canonical 
CM Hida family 
and recall its associated explicit reciprocity laws and their connection with $p$-adic $L$-functions.
Our Beilinson--Flach element is just a variant of the Rankin--Selberg zeta elements constructed
by Loeffler and Zerbes together with Lei \cite{LLZa}, \cite{LLZb} and Kings \cite{KLZ}.
We use these elements (together with the choice of a suitable auxiliary newform) to finish answering 
the questions \eqref{Q1}-\eqref{Q4} raised in \S\ref{CMHF}. With these answers in hand, 
we then define a two-variable zeta element over the CM field for an ordinary newform.
In Section \ref{BFss} below we define
the analog of these Beilinson--Flach classes and zeta elements for supersingular newforms and record the corresponding
explicit reciprocity laws.

We continue with the notation and conventions introduced in \S\S\ref{NotationPrelim}--\ref{CMHF}.
Throughout this section we assume $g$ to be ordinary at $p$.

\subsection{Some more Iwasawa cohomology}
For $M = \BT_1, \tilde\BT_1, \BH_1$, or $\tilde\BH_1$ as in \S\ref{CMHF}, let 
$$
H^1(\BZ[\frac{1}{p}], T(1)\hat\otimes  M\hat\otimes \Lambda) = 
H^1(G_{\BQ,\Sigma}, T(1)\hat\otimes  M\hat\otimes \Lambda)
$$
where $\Sigma$ is any finite set of primes containing all $\ell\mid ND_Lp\infty$.
These cohomology groups are independent of the choice of $\Sigma$ and consist of classes unramified
outside $p$, as the notation suggest (cf.~Remark \ref{Iw-rmk}).

Recall that $M$ is a $\Lambda_L^v$-module and there is a $\Lambda_L^v[G_{\BQ_p}]$-filtration 
$0\rightarrow M^+\rightarrow M \rightarrow M^-\rightarrow 0$.
Since $g$ is ordinary at $p$, there is also a $\cO[G_{\BQ_p}]$-filtration $0\ra T^+\ra T\ra T^-\ra 0$. 

\begin{lem}\label{local-inj-lem}  For $M = \BT_1$ or $\BH_1$, 
the natural maps
$$
H^1(\BQ_p,T^-(1)\hat\otimes M^+\hat\otimes \Lambda) 
\rightarrow H^1(\BQ_p,T^-(1)\hat\otimes M\hat\otimes \Lambda)
$$
and
$$
H^1(\BQ_p,T^+(1)\hat\otimes M^-\hat\otimes \Lambda) 
\rightarrow H^1(\BQ_p,T(1)\hat\otimes M^-\hat\otimes \Lambda)
$$
are injective.
\end{lem}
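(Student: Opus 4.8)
The statement is that two restriction-type maps on Iwasawa cohomology are injective. Both assertions are of the same shape: the kernel of the natural map $H^1(\BQ_p, A\hat\otimes\Lambda) \to H^1(\BQ_p, B\hat\otimes\Lambda)$ induced by a $G_{\BQ_p}$-equivariant inclusion $A\hookrightarrow B$ is a quotient of $H^0(\BQ_p, (B/A)\hat\otimes\Lambda)$, so it suffices to show that the relevant $H^0$ vanishes. The plan is therefore to identify the quotient module in each case and check that it has no $G_{\BQ_p}$-invariants after completed tensoring with $\Lambda$.

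\emph{First assertion.} Here $A = T^-(1)\hat\otimes M^+$ and $B = T^-(1)\hat\otimes M$, so $B/A = T^-(1)\hat\otimes M^-$. I would use the long exact sequence in $G_{\BQ_p}$-cohomology attached to
$$
0 \to T^-(1)\hat\otimes M^+\hat\otimes\Lambda \to T^-(1)\hat\otimes M\hat\otimes\Lambda \to T^-(1)\hat\otimes M^-\hat\otimes\Lambda \to 0,
$$
which reduces the claim to $H^0(\BQ_p, T^-(1)\hat\otimes M^-\hat\otimes\Lambda) = 0$. Now $G_{\BQ_p}$ acts on $T^-(1)$ unramifiedly, with geometric Frobenius acting by $\alpha_p^{-1}$ (equivalently arithmetic Frobenius by $\alpha_p$, a non-unit in the ordinary normalisation since $T^-(1)$ is the unramified quotient; more precisely $T^-$ is unramified and $\Frob_p$ acts on $T^-$ by the unit root up to a twist — I will use the description from \S\ref{Newforms} that $V^-(1)$ is unramified with $\Frob_p$ acting by $\alpha_p$). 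On $M^- = \BT_1^-$ or $\BH_1^-$, the decomposition group $G_{L_v} = G_{\BQ_p}$ acts through $\Psi_L^{v,c}$ by Lemma \ref{CMTate}(iii). And $G_{\BQ_p}$ acts on $\Lambda$ through $\Psi^{-1}$. An invariant vector would force the product character $(\text{unr. twist})\cdot\Psi_L^{v,c}|_{G_{\BQ_p}}\cdot\Psi^{-1}$ to be trivial on an open subgroup and in fact to have a Frobenius eigenvalue equal to $1$; but $\Psi^{-1}$ already has infinite order on inertia (it is ramified at $p$ since $\BQ_\infty/\BQ$ is ramified at $p$), so no invariants can exist — any vector fixed by $I_p$ must be killed, as $I_p$ acts on $\Lambda$ through the infinite-order character $\Psi^{-1}|_{I_p}$ while acting trivially on $T^-(1)$ and (by Lemma \ref{CMTate}(iii), since $\Psi_L^{v,c}$ is unramified at $v$) trivially on $M^-$. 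So $H^0$ over $I_p$ already vanishes, hence $H^0$ over $G_{\BQ_p}$ vanishes. This is the crux: the ramification of the cyclotomic variable $\Lambda$ at $p$ provides the vanishing.

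\emph{Second assertion.} Here $A = T^+(1)\hat\otimes M^-$ and $B = T(1)\hat\otimes M^-$, with $B/A = T^-(1)\hat\otimes M^-$, the \emph{same} quotient module as in the first case. So the identical argument — reduce via the long exact sequence to $H^0(\BQ_p, T^-(1)\hat\otimes M^-\hat\otimes\Lambda) = 0$, and invoke the infinite-order ramification of $\Psi^{-1}$ on $I_p$ — closes this case as well. I would present both parts together once the vanishing $H^0(\BQ_p, T^-(1)\hat\otimes M^-\hat\otimes\Lambda)=0$ is established.

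\textbf{Main obstacle.} The one point requiring care is the precise ramification/eigenvalue bookkeeping at $p$: I must make sure $T^-(1)$ is indeed the unramified piece (so that $I_p$ acts on it trivially) and that $M^-$ is unramified at $v$ (Lemma \ref{CMTate}(iii) gives the $G_{L_v}$-action is via $\Psi_L^{v,c}$, which is unramified at $v$ because $L_\infty^v$ is, by construction, the $\BZ_p$-extension unramified away from $v$, so its conjugate $\Psi_L^{v,c}$ is unramified at $v$). Granting these, the argument is forced: $I_p$ acts on the completed tensor product through $\Psi^{-1}|_{I_p}$ alone, which has infinite order, so there are no nonzero invariants. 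The completed-tensor-product formalism ($H^0$ commutes with the relevant inverse limits, and $\hat\otimes\Lambda$ of a module with no $I_p$-invariants still has none — one checks this at finite level $\Lambda/\mathfrak{m}^n$ and passes to the limit) is routine and I would not belabor it.
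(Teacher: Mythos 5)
Your proof is correct and is essentially the paper's own argument: in both cases the kernel is the image of $H^0(\BQ_p, T^-(1)\hat\otimes M^-\hat\otimes\Lambda)$, and this vanishes because $T^-(1)$ and $M^-$ carry unramified $G_{\BQ_p}$-actions while inertia at $p$ acts on $\Lambda$ through $\Psi^{-1}$ (with $\BQ_\infty/\BQ$ totally ramified at $p$), so already $H^0(I_p,\cdot)=0$. One small remark on the formal point you defer: rather than reducing modulo powers of the maximal ideal of $\Lambda$ (where invariants do not vanish at finite level), it is cleaner to note that inertia surjects onto $\Gamma$, so an invariant vector is killed by $\gamma_\cyc-1=T$, which acts injectively on $N\hat\otimes\Lambda\cong N[\![T]\!]$ for $N=T^-(1)\hat\otimes M^-$.
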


\begin{proof} The kernel in both instances is the image of $H^0(\BQ_p,T^-(1)\hat\otimes M^-\hat\otimes \Lambda)$.
Since the $G_{\BQ_p}$-actions on $T^-(1)$ and $M^-$ are unramified while the action on $\Lambda$ is totally ramified,
it is easily seen that $H^0(I_p, T^-(1)\hat\otimes M^-\hat\otimes \Lambda)=0$ and hence
that $H^0(\BQ_p,T^-(1)\hat\otimes M^-\hat\otimes \Lambda)=0$.
\end{proof}

By Lemma \ref{local-inj-lem} we can identify $H^1(\BQ_p,T^+(1)\hat\otimes  M^- \hat\otimes \Lambda)$ with its image
in $H^1(\BQ_p,T(1)\hat\otimes  M^- \hat\otimes \Lambda)$, and similarly for
$H^1(\BQ_p,T^-(1)\hat\otimes  M^+ \hat\otimes \Lambda)$.
We then let
$$
H^1_{\mathrm{rel},\ord}(\BZ[\frac{1}{p}], T(1)\hat\otimes  M\hat\otimes \Lambda) = \{ 
\kappa\in H^1(\BZ[\frac{1}{p}], T(1)\hat\otimes  M\hat\otimes \Lambda) \ : \ \loc_p(\kappa) \in 
H^1(\BQ_p,T^+(1)\hat\otimes  M^- \hat\otimes \Lambda)\}.
$$
The reason for using the subscripts `$\mathrm{rel}$' and `$\ord$' will be made clear later (see Section \ref{two-variable-zeta} below).

\subsection{The Beilinson--Flach element} 
Let $\alpha$ be the unit root of $x^2-a_p(g)x+p$.  We will say that $g$ is {\em anomalous} if
\begin{equation}\label{anom}\tag{{anom}}
\alpha^2\equiv 1 \mod\lambda  \ \ \text{and} \ \ \text{(irr$_\BQ$) does not hold.}
\end{equation} 
Let $g_\alpha(z) = g(z) - \alpha^{-1}pg(pz)\in S_2(\Gamma_0(Np))$ be the $p$-stabilisation of
$g$ corresponding to $\alpha$. In the notation of \cite{KLZ} there is an isomorphism 
$(Pr^\alpha)^*:M_{F_\lambda}(g_\alpha)^* \isoarrow M_{F_\lambda}(g)^* = V(1)$ 
that maps $M_{\cO_\lambda}(g_\alpha)^*\hookrightarrow M_{\cO_\lambda}(g)^* = T(1)$,
and the latter is an isomorphism if $g$ is not anomalous \cite[Prop.~7.3.1]{KLZ}. 
It follows from the proof of {\it op.~cit.}~that the image of $M_{\cO_\lambda}(g_\alpha)^*$ in $T(1)$ contains 
$(1-\alpha^2)T(1)$ in all cases.

The form $g_\alpha$ is the specialisation of a Hida family ${\bf g}$ in the sense of \cite[\S7]{KLZ}.
Let $M({\bf g})^*$ be the $G_\BQ$-module defined in \cite[Def.~7.2.5]{KLZ}. As explained in \cite[\S7.3]{KLZ}, there is a specialisation
map $M({\bf g})^*\otimes_{\Lambda_{\bf g}}\cO_\lambda \twoheadrightarrow M_{\cO_\lambda}(g_\alpha)^*$, so composing
with $(Pr^\alpha)^*$ yields a specialisation map 
\begin{equation}\label{HidaFam-sp}
M({\bf{g}})^*\otimes_{\Lambda_{\bf{g}}}\cO_\lambda \rightarrow T(1)
\end{equation} 
whose image is a sublattice of finite index.
Here $\Lambda_{\bf{g}}$ is a localisation of Hida's ordinary Hecke algebras $\BH_{Np^\infty}^\ord$ (see \S\ref{Hida-Hecke})
at a maximal ideal and the tensor product is with respect to the local $\BZ_p$-homomorphism $\Lambda_{\bf{g}}\rightarrow \cO$
that sends $T_\ell'$ to $a_\ell(g)$ for all $\ell\neq p$ and maps $U_p'$ to $\alpha$.  
This specialisation map \eqref{HidaFam-sp} is surjective if $g$ is not anomalous. If $g$ is anomalous,
then \eqref{HidaFam-sp} might not be surjective, but the image does contain
$(1-\alpha^2)T(1)$.

The canonical CM family $\bh_v$ corresponds to a branch (in the sense of \cite[\S7.5]{KLZ}) of a Hida family $\bh$ of tame level $D_L$,
and it follows from the definition of $\BH_1$ and $M(\mathbf{h})^*$ that there is a specialisation map 
\begin{equation}\label{CM-sp}
M({\bf h})^*\otimes_{\Lambda_{\bf h}}\Lambda^v_L \twoheadrightarrow  \BH_1.
\end{equation}
More precisely, $\Lambda_{\bf{h}}$ and $M({\bf h})^*$ are the respective localisations of 
$\BH_{D_Lp^\infty}^{\ord}$ and $\CH^1_\ord(D_Lp^\infty)$ at a maximal ideal of $\BH_{D_Lp^\infty}^{\ord}$
and the tensor product in \eqref{CM-sp} is with respect to the map induced by
$\varphi:\BH_{D_Lp^\infty}^{\ord}\rightarrow \Lambda^v_L$ from \S\ref{Tate-lattices}.
In particular, the left-hand side of \eqref{CM-sp} is just $\BH$ and the map is just
the quotient modulo $\Lambda_L^v$-torsion.

Fix an integer $c >1$ such that $(c,6ND_Lp) = 1$.
Let 
$_{c}\mathcal{BF}^{{\bf g},{\bf h}}_1 \in 
H^1(\BZ[\frac{1}{p}], M({\bg})^*\hat\otimes  M({\bh)}^*\hat\otimes \Lambda)$
be the class associated in \cite[Def.~8.1]{KLZ} to the Hida families $\bf g$ and $\bf h$.  Recall our convention
that $G_\BQ$ acts on $\Lambda$ via the inverse of the canonical character $\Psi$. 
Let
$$
_{c}\mathcal{BF}(g_{/L}) \in H^1(\BZ[\frac{1}{p}], T(1)\hat\otimes  \BH_1\hat\otimes \Lambda)
$$
be the image of $_{c}\mathcal{BF}^{{\bg},{\bh}}_1$ under the map induced by the specialisations
\eqref{HidaFam-sp} and \eqref{CM-sp}.

\subsubsection{Local properties at $p$.} We record some important local properties
of these Beilinson--Flach classes.

\begin{lem}\label{BF-locp-lem}
The image of $\loc_p(_{c}\mathcal{BF}(g_{/L}))$ in $H^1(\BQ_p,T^-(1)\hat\otimes \BH_1^-\hat\otimes \Lambda)$
is $0$. 
\end{lem}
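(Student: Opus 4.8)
The statement asserts that the image of the localisation at $p$ of the Beilinson--Flach class in $H^1(\BQ_p, T^-(1)\hat\otimes\BH_1^-\hat\otimes\Lambda)$ vanishes. The natural approach is to trace this back to the corresponding statement for the Hida-family Beilinson--Flach class ${}_c\CBF^{\bg,\bh}_1 \in H^1(\BZ[\frac{1}{p}], M(\bg)^*\hat\otimes M(\bh)^*\hat\otimes\Lambda)$, since ${}_c\CBF(g_{/L})$ is by definition its image under the specialisation maps \eqref{HidaFam-sp} and \eqref{CM-sp}. The key point is that the local-at-$p$ behaviour of the Lei--Loeffler--Zerbes/Kings--Loeffler--Zerbes Beilinson--Flach classes is well understood: the relevant input is \cite[Thm.~8.1.4]{KLZ} (or the analogous statement in \cite{LLZa,LLZb}), which says precisely that the image of $\loc_p$ of the Hida-family class in the ``$--$'' quotient — i.e.\ in $H^1(\BQ_p, \CF^-M(\bg)^*\hat\otimes\CF^-M(\bh)^*\hat\otimes\Lambda)$ — vanishes. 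This reflects the fact that the Beilinson--Flach class is built from Eisenstein-type classes whose image under the two ``unit-root'' projections is trivial by a weight/Hodge-theoretic constraint.

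First I would recall the ordinary filtrations: $M(\bg)^*$ carries a $G_{\BQ_p}$-stable $\Lambda_\bg[G_{\BQ_p}]$-filtration $\CF^+M(\bg)^*\subset M(\bg)^*$ with unramified quotient $\CF^-M(\bg)^*$, and similarly $M(\bh)^*$ has $\CF^\pm M(\bh)^*$; these are exactly the filtrations inducing $0\to T^+(1)\to T(1)\to T^-(1)\to 0$ on $g$ and $0\to\BH^+\to\BH_1\to\BH_1^-\to 0$ on $\bh_v$ after specialisation (here one uses \eqref{CM-ord}, \eqref{subrk}, and the compatibility of \eqref{CM-sp} with the filtrations, which follows from Theorem~\ref{ordTate}). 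Second, I would invoke the vanishing of $\loc_p({}_c\CBF^{\bg,\bh}_1)$ in the doubly-negative quotient from \cite{KLZ}. Third, I would observe that the specialisation maps \eqref{HidaFam-sp} and \eqref{CM-sp} are $G_{\BQ_p}$-equivariant and carry $\CF^-M(\bg)^*\to T^-(1)$ (with image of finite index, or containing $(1-\alpha^2)T^-(1)$ in the anomalous case) and $\CF^-M(\bh)^*\to\BH_1^-$; hence they induce a map on the ``$--$'' local cohomology groups sending $\loc_p$ of the Hida class to $\loc_p$ of ${}_c\CBF(g_{/L})$, and the image of zero is zero. Combining gives the claim.

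The main obstacle I anticipate is bookkeeping around the anomalous case: when $g$ is anomalous the specialisation map \eqref{HidaFam-sp} need not be surjective, only its image contains $(1-\alpha^2)T(1)$. One must check that this does not affect the argument — and it does not, because vanishing of a class is preserved under \emph{any} $G_{\BQ_p}$-equivariant map (not just surjections), so the finite-index issue is irrelevant for this particular lemma. A second, more genuine point is confirming that the filtration on $M(\bh)^*\otimes_{\Lambda_\bh}\Lambda_L^v = \BH$ induced from the Hida-theoretic $\CF^\pm$ matches the filtration $\BH^\pm$ defined via Theorem~\ref{ordTate}; this is essentially the content of Remark~\ref{HidaGal-rmk} together with the compatibility of Ohta's description of $\CF^\pm\CH^1_\ord$ with the branch specialisation, and should be a direct citation. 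Finally one should be slightly careful that we are using the \emph{cuspidal} Tate lattice $\BT_1$ versus the \emph{open} lattice $\BH_1$ consistently; since the lemma is stated for $\BH_1$ and the Beilinson--Flach class naturally lives in the $\BH_1$-version (Siegel units being supported on cusps), no adjustment is needed here.
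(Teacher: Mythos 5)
Your proposal is correct and takes essentially the same route as the paper: both reduce to the Kings--Loeffler--Zerbes vanishing of the image of $\loc_p({}_c\mathcal{BF}^{\bg,\bh}_1)$ in the doubly-negative local quotient (the paper cites the proof of \cite[Prop.~8.1.7]{KLZ} rather than your Thm.~8.1.4, but the input is the same) and then push this through the $G_{\BQ_p}$-equivariant specialisation maps \eqref{HidaFam-sp} and \eqref{CM-sp}, exactly as you describe. Your extra care about the anomalous case and the matching of the Hida-theoretic filtration with $\BH^{\pm}$ is sound but goes no further than the functoriality the paper's one-line argument already relies on.
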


\begin{proof} In the proof of \cite[Prop.~8.1.7]{KLZ} it is shown that the image of $_{c}\mathcal{BF}^{{\bf g},{\bf h}}_1$
in $H^1(\BQ_p,\mathscr{F}^{--}M({\bf g}\otimes{\bf h})^*\hat\otimes\Lambda)$ is $0$
(notation as in {\em~loc.~cit.}). Since
$\mathscr{F}^{--}M({\bf g}\otimes{\bf h})^*\subset M({\bf g})^*\hat\otimes  M({\bf h})^*$ projects onto 
$T^-(1)\hat\otimes  \BH_1^-$ under the specialisation maps \eqref{HidaFam-sp} and \eqref{CM-sp},
the lemma follows.
\end{proof}

As an immediate consequence of this lemma and Lemma \ref{local-inj-lem} we have:

\begin{cor}\label{BF-locp-cor} \hfill
\begin{itemize}
\item[(i)] The image of $\loc_p(_{c}\mathcal{BF}(g_{/L}))$ 
in  $H^1(\BQ_p, T(1)\hat\otimes \BH_1^-\hat\otimes \Lambda)$ is contained in
$H^1(\BQ_p, T^+(1)\hat\otimes \BH_1^-\hat\otimes \Lambda)$. In particular, 
$_{c}\mathcal{BF}(g_{/L})\in H^1_{\mathrm{rel},\ord}(\BZ[\frac{1}{p}], T(1)\hat\otimes \BH_1\hat\otimes \Lambda)$.
\item[(ii)] The image of $\loc_p(_{c}\mathcal{BF}(g_{/L}))$ in $H^1(\BQ_p, T^-(1)\hat\otimes \BH_1\hat\otimes \Lambda)$ is contained in $H^1(\BQ_p, T^-(1)\hat\otimes \BH^+\hat\otimes \Lambda)$.
\end{itemize}
\end{cor}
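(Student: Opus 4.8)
\textbf{Proof plan for Corollary \ref{BF-locp-cor}.} The plan is to deduce both statements directly from Lemma \ref{BF-locp-lem} together with the injectivity statements of Lemma \ref{local-inj-lem}, using the two-step $G_{\BQ_p}$-filtrations on $T(1)$ and on $\BH_1$ (and $\BT_1$). Recall that as a $\cO_\lambda[G_{\BQ_p}]$-module $T(1)$ sits in an exact sequence $0\to T^+(1)\to T(1)\to T^-(1)\to 0$ (available since $g$ is ordinary), and that $\BH_1$ carries the $\Lambda_L^v[G_{\BQ_p}]$-filtration $0\to \BH^+\to\BH_1\to \BH_1^-\to 0$ from \eqref{CM-ord}. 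Tensoring these and keeping track of the induced filtration on $T(1)\hat\otimes\BH_1\hat\otimes\Lambda$ gives the four ``pieces'' $T^\pm(1)\hat\otimes \BH^{\mp,\cdots}\hat\otimes\Lambda$ appearing in the statement.

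First I would prove part (i). Apply $\loc_p$ and then push to $H^1(\BQ_p,T(1)\hat\otimes\BH_1^-\hat\otimes\Lambda)$ via the surjection $\BH_1\twoheadrightarrow \BH_1^-$; the resulting class, when further pushed along $T(1)\twoheadrightarrow T^-(1)$, lands in $H^1(\BQ_p,T^-(1)\hat\otimes\BH_1^-\hat\otimes\Lambda)$ and is exactly the class shown to vanish in Lemma \ref{BF-locp-lem}. The long exact cohomology sequence associated to $0\to T^+(1)\hat\otimes\BH_1^-\hat\otimes\Lambda\to T(1)\hat\otimes\BH_1^-\hat\otimes\Lambda\to T^-(1)\hat\otimes\BH_1^-\hat\otimes\Lambda\to 0$ then shows that the image of $\loc_p(_{c}\mathcal{BF}(g_{/L}))$ in the middle term comes from $H^1(\BQ_p,T^+(1)\hat\otimes\BH_1^-\hat\otimes\Lambda)$, and Lemma \ref{local-inj-lem} (second injectivity statement, with $M=\BH_1$) lets us identify that subgroup with its image inside $H^1(\BQ_p,T(1)\hat\otimes\BH_1^-\hat\otimes\Lambda)$ so that the conclusion is unambiguous. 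The final sentence of (i), that $_{c}\mathcal{BF}(g_{/L})\in H^1_{\mathrm{rel},\ord}(\BZ[\tfrac1p],T(1)\hat\otimes\BH_1\hat\otimes\Lambda)$, is then just the definition of that relaxed Selmer-type group: its defining condition is precisely that $\loc_p$ of a class lie in $H^1(\BQ_p,T^+(1)\hat\otimes\BH_1^-\hat\otimes\Lambda)$, which is what we have just verified.

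For part (ii) I would run the symmetric argument: push $\loc_p(_{c}\mathcal{BF}(g_{/L}))$ along $T(1)\twoheadrightarrow T^-(1)$ to get a class in $H^1(\BQ_p,T^-(1)\hat\otimes\BH_1\hat\otimes\Lambda)$, and then further along $\BH_1\twoheadrightarrow\BH_1^-$ to again recover the vanishing class of Lemma \ref{BF-locp-lem}. The long exact sequence attached to $0\to T^-(1)\hat\otimes\BH^+\hat\otimes\Lambda\to T^-(1)\hat\otimes\BH_1\hat\otimes\Lambda\to T^-(1)\hat\otimes\BH_1^-\hat\otimes\Lambda\to 0$ then exhibits the class as coming from $H^1(\BQ_p,T^-(1)\hat\otimes\BH^+\hat\otimes\Lambda)$, and the first injectivity statement of Lemma \ref{local-inj-lem} (with $M=\BH_1$, so $M^+=\BH^+$) lets us regard that group as a subgroup of $H^1(\BQ_p,T^-(1)\hat\otimes\BH_1\hat\otimes\Lambda)$, giving the stated conclusion.

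The only genuinely substantive input is Lemma \ref{BF-locp-lem}, which is cited from \cite[Prop.~8.1.7]{KLZ}; everything else is homological bookkeeping with the two filtrations. The one point that requires a little care — and is the closest thing to an obstacle — is checking that the compatibility ``$\mathscr F^{--}M(\mathbf g\otimes\mathbf h)^*$ projects onto $T^-(1)\hat\otimes\BH_1^-$ under the specialisation maps \eqref{HidaFam-sp} and \eqref{CM-sp}'' is exactly what pairs the vanishing in Lemma \ref{BF-locp-lem} with the bottom quotient of the tensor-product filtration used here; this is really just the statement that the specialisation maps are morphisms of filtered $G_{\BQ_p}$-modules, which follows from the constructions in \cite[\S7]{KLZ} and was already used in the proof of Lemma \ref{BF-locp-lem}. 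I would also note that, since the specialisation maps \eqref{HidaFam-sp} and \eqref{CM-sp} need only have image of finite index (when $g$ is anomalous or $\BH$ has torsion), all assertions are statements about the images of the relevant cohomology classes, and no integrality beyond that is claimed here.
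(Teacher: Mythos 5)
Your proposal is correct and follows essentially the same route as the paper, which simply declares the corollary an immediate consequence of Lemma \ref{BF-locp-lem} together with the injectivity statements of Lemma \ref{local-inj-lem}; your long-exact-sequence bookkeeping with the two filtrations is exactly the implicit argument, carried out correctly (including the identifications $M^+=\BH^+$ and the use of freeness of $T^\pm$ to preserve exactness after tensoring).
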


\subsection{Explicit reciprocity laws}  In this section we recall the explicit reciprocity laws 
for the Beilinson--Flach elements from \cite{KLZ}, which connect the elements
$_{c}\mathcal{BF}(g_{/L})$ to two-variable $p$-adic $L$-functions.

 Let $\sR=\CO_\lambda\hat\otimes \Lambda_L^v\hat\otimes \Lambda$.
 We will consider two sets $\Xi^{(I)}$ and $\Xi^{(II)}$ of continuous characters 
 $\chi: \Gamma_L^v\hat\otimes\Gamma \rightarrow \overline{\BQ}^\times_p$. Each such character determines
 a continuous $\CO_\lambda$-homomorphism $\phi_\chi:\sR\rightarrow \overline{\BQ}_p$, where
 $\CO_\lambda$ is identified with the completion of $\iota_p(\CO)$ in $\overline{\BQ}_p$.
 Note that any character $\chi:\Gamma_L^v\hat\otimes\Gamma \rightarrow \overline{\BQ}^\times_p$ 
 is determined by the pair $(\chi_1,\chi_2) = (\chi|_{\Gamma_L^v},\chi|_{\Gamma})$. Then  the first set of homomorphisms is
 \begin{equation}\label{eq:ch-1}
 \Xi^{(I)}:= \{ \chi = (1,\chi_2) \ : \ \text{$\chi_2 = \psi_\zeta$ for some $p^t$th-root of unity $\zeta$}\},
 \end{equation} and the second set of homomorphisms is 
 \begin{equation}\label{eq:ch-2}
 \Xi^{(II)}: = \left\{ \chi = (\chi_1,\chi_2) \ : \ 
 \begin{matrix} \text{$\chi_1(\gamma_v^{h_p}) = \epsilon(\gamma_+)^m$ for some $m\equiv 0\mod p-1$, $m>0$} \\
 \text{$\chi_2(\gamma_\cyc) = \zeta \epsilon(\gamma_\cyc)^n$ for some $p^t$th-root of unity $\zeta$ and some $0\leq n\leq m$}
 \end{matrix}\right\}.
 \end{equation}

\subsubsection{The explicit reciprocity law I} As we explain below, from the constructions in \cite[\S\S8,10]{KLZ} we obtain an injective
$\sR$-morphism 
\begin{equation}\label{ERI-map}
\sC:H^1(\BQ_p,T^-(1)\hat\otimes \BH^+\hat\otimes \Lambda)\hookrightarrow {J_g}\otimes_{\CO_\lambda}\sR,
\end{equation}
where $J_g\subset F_\lambda$ is a certain fractional ideal 
such that 
$$
\text{$J_g  = \frac{1}{\alpha(1-p\alpha^{-2})(1-\alpha^{-2})\lambda_N(g) c_g}\CO \ $ if \ (irr$_\BQ$) holds},
$$
where $c_g$ is the congruence number of $g$ as in Section \ref{congruence} and 
$\lambda_N(g) = \pm 1$ is the eigenvalue of $g$ for the usual Atkin--Lehner involution $w_N$ of level $N$.
We let 
$$
\sL_{p,c}(g/L) = \sC(\loc_p(_{c}\mathcal{BF}(g/L))) \in {J_g}\otimes_{\CO_\lambda}\sR.
$$
This makes sense in light of Corollary \ref{BF-locp-cor}(ii).

The map $\sC$ is related to the Coleman map $Col_{\eta_{\omega_g}}$ as follows.

\begin{prop}\label{Col-cycsp-prop}
The reduction of the map $\sC$ modulo $\gamma_v-1$ equals the composition 
$$
H^1(\BQ_p,T^-(1)\hat\otimes \BH^+\hat\otimes \Lambda)
\stackrel{\mod \gamma_v-1}{\twoheadrightarrow} H^1(\BQ_p, T^-(1)\hat\otimes \Lambda)
\stackrel{\frac{1}{\alpha(1-p\alpha^{-2})(1-\alpha^{-2})\lambda_N(g)} Col_{\eta_{\omega_g}}}{\longrightarrow} J_g\otimes_{\CO_\lambda}\Lambda_{\CO_\lambda}.
$$
\end{prop}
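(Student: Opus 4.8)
The plan is to realise both sides of the asserted identity as appropriately twisted $\Lambda_{\CO_\lambda}$-linear maps out of a single rank-one Iwasawa module, and then to identify them by comparing interpolation formulae. The starting point is that, by the construction in \cite[\S\S8,10]{KLZ}, the map $\sC$ is nothing but (a specialisation of) Perrin--Riou's $\Lambda$-adic logarithm --- in the form recalled in \cite[\S16]{K} and developed in \cite{LZ} --- built from the de Rham datum of the Hida family $\bg$ specialised at $g_\alpha$, paired against the unramified subquotient of $M(\bh)^{*}$, and then pushed forward along the branch $\varphi\colon \BH_{D_Lp^\infty}^{\ord}\to\Lambda_{L}^{v}$ cutting out $\bh_v$. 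In particular $\sC$ is a morphism of $\sR$-modules, so its reduction modulo $\gamma_v-1$ makes sense. By Theorem \ref{ordTate}(i) and the definition of $\varphi_v$, the specialisation of $\BH^{+}$ at $\gamma_v=1$ is an unramified rank-one $\CO_\lambda$-module on which arithmetic Frobenius acts by $1$ (here one uses that $p$ splits in $L$, so the degenerate weight-one form $\theta(\chi_L)$ has both $p$-parameters equal to $1$); hence $T^-(1)\hat\otimes\BH^{+}\hat\otimes\Lambda$ reduces to $T^-(1)\hat\otimes\Lambda$, and the reduction map on Iwasawa cohomology is the natural map
\[
H^1(\BQ_p,T^-(1)\hat\otimes\BH^{+}\hat\otimes\Lambda)\longrightarrow H^1(\BQ_p,T^-(1)\hat\otimes\Lambda),
\]
which is surjective because the obstruction lies in $H^2(\BQ_p,T^-(1)\hat\otimes\BH^{+}\hat\otimes\Lambda)$, and this vanishes by local Tate duality and the wild ramification of $\Psi$ on $I_p$. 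Consequently $\sC$ descends to an $\iota_\epsilon$-semilinear $\Lambda_{\CO_\lambda}$-map $\bar\sC\colon H^1(\BQ_p,T^-(1)\hat\otimes\Lambda)\to J_g\otimes_{\CO_\lambda}\Lambda_{\CO_\lambda}$, carrying the very twist that appears in the definition \eqref{Col-Lambda} of $Col_{\eta_{\omega_g}}$.

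Next I would reduce the desired equality to a statement about specialisations. One has $H^1_{/\ord}(\BQ_p,T(1)\hat\otimes\Lambda)=H^1(\BQ_p,T^-(1)\hat\otimes\Lambda)$ --- the difference injects into $H^2(\BQ_p,T^+(1)\hat\otimes\Lambda)$, which vanishes by local Tate duality and the non-triviality of the cyclotomic ramification --- and this is a torsion-free $\Lambda_{\CO_\lambda}$-module of rank one (local Euler characteristic, together with the vanishing $H^0(\BQ_p,T^-(1)\hat\otimes\Lambda)=0$), on which $Col_{\eta_{\omega_g}}$ is injective by \eqref{ordCol-Inj}. Thus $\bar\sC$ and $\tfrac{1}{\alpha(1-p\alpha^{-2})(1-\alpha^{-2})\lambda_N(g)}\,Col_{\eta_{\omega_g}}$ are two $\iota_\epsilon$-semilinear maps from this free rank-one module into the torsion-free module $J_g\otimes_{\CO_\lambda}\Lambda_{\CO_\lambda}$; since $\bigcap_{\zeta}\ker\phi_\zeta=0$ as $\zeta$ runs over all $p$-power roots of unity, it suffices (after inverting $p$ if convenient) to show that the two maps agree after composing with each $\phi_\zeta$ --- equivalently, after composing $\sC$ with the homomorphisms $\phi_\chi$ for $\chi\in\Xi^{(I)}$.

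The remaining step is the interpolation computation, which is the crux. On the Coleman side, Kato's explicit reciprocity law \cite[Thms.~16.4, 16.6]{K} evaluates $\phi_\zeta\bigl(Col_{\eta_{\omega_g}}(x)\bigr)$ as an explicit factor in $\zeta$, $\alpha$ and $\mathfrak{g}(\psi_\zeta)$ times the local pairing of $\eta_{\omega_g}$ with the dual exponential $\exp^{*}$ of the $\psi_\zeta$-specialisation of $\loc_p(x)$. On the Beilinson--Flach side, the Kings--Loeffler--Zerbes explicit reciprocity law \cite[\S10]{KLZ}, specialised at the weight-two point of $\bg$ and at the base point $\gamma_v=1$ of $\bh$ (the $p$-stabilisation of $\theta(\chi_L)$), evaluates $\phi_\zeta\bigl(\bar\sC(x)\bigr)$ as the same local pairing times the analogous $\zeta$-factor, multiplied by the constant $\tfrac{1}{\alpha(1-p\alpha^{-2})(1-\alpha^{-2})\lambda_N(g)}$; here $\alpha^{-1}$ records the comparison map $(Pr^{\alpha})^{*}$ together with the $U_p'$-normalisation on $\mathcal{F}^{+}$, the Euler factors $(1-p\alpha^{-2})$ and $(1-\alpha^{-2})$ are the $p$-adic multipliers of the KLZ reciprocity law at the base point $\gamma_v=1$ of the CM family (where both $p$-parameters of $\theta(\chi_L)$ equal $1$ and the two roots of $x^2-a_p(g)x+p$ are related by $\alpha\beta=p$), and $\lambda_N(g)$ is the Atkin--Lehner sign forced by the $w_N$-twisted Poincar\'e/Rankin--Selberg pairing used in \cite{KLZ}; the residual congruence-number and Euler-factor discrepancy between the $\Lambda$-adic de Rham class of \cite{KLZ} and Kato's $\eta_{\omega_g}$ is precisely what is recorded in the fractional ideal $J_g$. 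Comparing the two formulae, the local pairings and all $\zeta$-dependent factors cancel, so $\phi_\zeta\circ\bar\sC=\tfrac{1}{\alpha(1-p\alpha^{-2})(1-\alpha^{-2})\lambda_N(g)}\,\phi_\zeta\circ Col_{\eta_{\omega_g}}\circ(\text{projection})$ for every $\zeta$, whence the Proposition.

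I expect the last paragraph to be the main obstacle: the careful bookkeeping that matches the \cite{KLZ} normalisations (their choice of $\Lambda$-adic differential and de Rham class, the $w_N$-twist in their pairing, and the branch convention for $\bh$) with Kato's, and in particular the computation of the $p$-Euler factors at the degenerate point $\gamma_v=1$ of the CM Hida family. By contrast, the semilinearity, the rank-one freeness, and the Zariski-density argument are all routine.
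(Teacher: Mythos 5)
Your overall shape is right, but you take a detour where the paper argues directly, and the crux step of your detour is where the problems lie. The paper's proof is purely functorial: since $\omega_{\bh_v}$ trivialises $\BH^+(\epsilon^{-1}\Psi_D^{-1})$ as a free rank-one $\Lambda_L^v$-module, reduction modulo $\gamma_v-1$ together with the functoriality of the Kings--Loeffler--Zerbes regulator $\CL_M$ in the unramified coefficient module $M$ (the second bullet of \cite[Thm.~8.2.3]{KLZ}) shows at once that $\sC \bmod (\gamma_v-1)$ is Perrin-Riou's big logarithm for $V^-(1)$ followed by $\tilde\eta_g\otimes id$; the constant then drops out of the definition \eqref{eta-eq}, $\tilde\eta_g=[\,\cdot\,,\tfrac{1}{\alpha(1-p\alpha^{-2})(1-\alpha^{-2})\lambda_N(g)}\eta_{\omega_g}]$, by comparing with the definition of $Col_{\eta_{\omega_g}}$ in terms of the same big logarithm. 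No surjectivity of the reduction map, no Zariski-density of the $\phi_\zeta$, and no interpolation computation at special characters is needed.

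Two concrete issues with your crux paragraph. First, you conflate the explicit reciprocity law of \cite[\S10]{KLZ} (a statement about the image of the global Beilinson--Flach class, expressed in $L$-values) with the interpolation property of the local regulator $\CL_M$ (\cite[Thm.~8.2.3]{KLZ}); only the latter evaluates $\phi_\zeta(\bar\sC(x))$ for an \emph{arbitrary} local class $x$, which is what your density argument requires. Second, and more importantly, you attribute the factors $(1-p\alpha^{-2})(1-\alpha^{-2})$ and $\lambda_N(g)$ to ``$p$-adic multipliers of the reciprocity law at the base point $\gamma_v=1$ of the CM family''. They have nothing to do with the CM family: at that point the only input is the trivialisation $\BH^+/(\gamma_v-1)\BH^+\simeq\BZ_p$ via $\omega_{\bh_v}$. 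The factors come from the $g$-side, namely the comparison \eqref{tilde-eta-g} between the specialisation of $\eta_{\bf a}$ at the $p$-stabilisation $g_\alpha$ and Kato's class $\eta_{\omega_g}$ (equivalently, $(1-\beta/\alpha)(1-\beta/p\alpha)$ with $\beta=p/\alpha$, plus the Atkin--Lehner sign), and they are already built into $\sC$ through $\tilde\eta_g$ and $J_g$. Once you use the regulator's interpolation property correctly for both maps, the $\zeta$-dependent factors cancel precisely because both maps are the \emph{same} $\CL_{T^-(1)}$ composed with two de Rham pairings, and at that point the density step is superfluous --- you are back to the paper's functoriality argument with the constant supplied by \eqref{eta-eq}, not by any computation at $\gamma_v=1$.
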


\noindent The first map in this proposition depends on an identification $\BH^+/(\gamma_v-1)\BH^+ \simeq \BZ_p$ determined by part of the data defining
the map $\sC$ (the reduction modulo $\gamma_v-1$ of the map $\omega_{{\bf h}_v}$ recalled below).

The element $\sL_{p,c}(g/L)$ is related to $L$-values by the following explicit reciprocity law.

\begin{thm}[Explicit Reciprocity Law I]\label{ERLI-thm}
The element $\sL_{p,c}(g/L)$ satisfies:
For $\chi\in \Xi^{(I)}$, 
$$
\phi_\chi(\sL_{p,c}(g/L)) = (c^2-\langle c\rangle \psi_\zeta^2(c)\chi_L(c))\frac{\mathcal{E}(\zeta)}{(1-p\alpha^{-2})(1-\alpha^{-2})\lambda_N(g)}
\frac{L(1,f\otimes \psi_\zeta^{-1})L(1,f\otimes\chi_L\psi_\zeta^{-1})}{\pi^2(-i)2^3\langle g,g\rangle},
$$
where
$$
\mathcal{E}(\zeta) = \begin{cases} 
\alpha^{-2(t+1)} \frac{p^{2(t+1)}}{\mathfrak{g}(\psi_\zeta^{-1})^2} & \zeta \neq 1 \\
(1 - \frac{1}{\alpha})^4 & \text{else}.
\end{cases}
$$
\end{thm}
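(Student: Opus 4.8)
The strategy is to deduce the explicit reciprocity law from the general Beilinson--Flach explicit reciprocity laws of Kings--Loeffler--Zerbes \cite[\S\S8,10]{KLZ}, specialised along the CM Hida family $\mathbf{h}$ and the Hida family $\mathbf{g}$ through $g_\alpha$, and then descend to the $\Gamma_L^v$-base layer using Theorem~\ref{pKatzL}-type normalisations and the factorisation of the Rankin--Selberg $L$-function over $L$. First I would recall the three-variable Beilinson--Flach class $_c\mathcal{BF}^{\mathbf{g},\mathbf{h}}_1$ and its image under the regulator/Perrin-Riou-type map $\mathrm{LOG}$ of \cite[Thm.~10.2.2]{KLZ}, which computes $\sC(\loc_p(\,_c\mathcal{BF}^{\mathbf{g},\mathbf{h}}_1))$ as a three-variable $p$-adic Rankin--Selberg $L$-function $_c\mathcal{L}_p(\mathbf{g},\mathbf{h})$ up to the explicit Euler-type factors appearing in {\it op.~cit.} (the $(c^2-\ldots)$ smoothing factor, the factor $\frac{1}{(1-p\alpha^{-2})(1-\alpha^{-2})\lambda_N(g)}$ coming from the comparison of the stabilised differential $\omega_{g_\alpha}$ with $\omega_g$ and from the Atkin--Lehner twist, and the normalising period $\pi^2(-i)2^3\langle g,g\rangle$ in the denominator which is the Petersson-norm period attached to $g$). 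The map $\sC$ itself is built from $\mathrm{LOG}$ composed with the differential $\omega_{\mathbf{h}_v}$ along the CM family; its existence, injectivity, and the precise shape of the fractional ideal $J_g$ (in particular the identification $J_g = \frac{1}{\alpha(1-p\alpha^{-2})(1-\alpha^{-2})\lambda_N(g)c_g}\cO$ under (irr$_\BQ$)) follow from \cite[Thm.~7.3.1, Prop.~7.3.1]{KLZ} together with Lemma~\ref{GorPer} identifying $c_g$ with the relevant congruence number, and from the compatibility established in Proposition~\ref{Col-cycsp-prop} with the Coleman map $Col_{\eta_{\omega_g}}$.

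The core computation is then the interpolation formula: for $\chi = (1,\chi_2)\in\Xi^{(I)}$ with $\chi_2 = \psi_\zeta$, the value $\phi_\chi(\,_c\mathcal{L}_p(\mathbf{g},\mathbf{h}))$ is, by the KLZ interpolation property \cite[Thm.~10.2.2]{KLZ}, an explicit multiple of $L(g_\alpha \otimes h_{v,\mathbf{1}}, \psi_\zeta^{-1}, \mathrm{crit})$, i.e.\ of the Rankin--Selberg $L$-value of $g$ against the weight-one specialisation $h_{v,\mathbf{1}}$ of $\mathbf{h}_v$ at the character $\psi_\zeta$. The weight-one specialisation $h_{v,\mathbf{1}}$ is the $p$-ordinary stabilisation of the theta series $\theta(\chi_L)$ (cf.~\S\ref{CMHidaFam}), so by Artin formalism $L(g\otimes\theta(\chi_L),s) = L(s,g\otimes\mathrm{Ind}_L^\BQ \mathbf{1}) = L(s,g)L(s,g\otimes\chi_L)$, which after twisting by $\psi_\zeta$ gives the product $L(1,f\otimes\psi_\zeta^{-1})L(1,f\otimes\chi_L\psi_\zeta^{-1})$ in the statement. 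The archimedean and $p$-adic fudge factors in the KLZ formula -- the $\Gamma$-factors, the powers of $2\pi i$, and the interpolation factor at $p$ -- must be carefully matched with $\mathcal{E}(\zeta)$ and the denominator $\pi^2(-i)2^3\langle g,g\rangle$. Here the relevant facts are: the $p$-adic multiplier for $g_\alpha$ at $\psi_\zeta$ contributes $\alpha^{-2(t+1)}p^{2(t+1)}/\mathfrak{g}(\psi_\zeta^{-1})^2$ when $\zeta\ne 1$ (the square reflecting the two Euler factors being twisted, exactly as in the two $p$-adic $L$-functions $\mathcal{L}_{\alpha,\omega,\gamma}(g)$, $\mathcal{L}_{\alpha,\omega',\gamma'}(g')$ of Theorem~\ref{pcycQ} whose product appears), degenerating to $(1-1/\alpha)^4$ at $\zeta = 1$; and the Petersson normalisation comes out because the weight-one form $\theta(\chi_L)$ contributes its own period which, by the factorisation, reduces the CM Rankin--Selberg period to $\langle g,g\rangle$ up to the explicit constants (this is where one invokes that $h_{v,\mathbf{1}}$ is CM and uses the Rankin--Selberg unfolding as in \cite[\S\S2--3]{KLZ} specialised to a theta series, or alternatively compares with the Katz $p$-adic $L$-function via Theorem~\ref{pKatzL}).

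The main obstacle, I expect, is the bookkeeping of periods and normalising constants: tracking the differential $\omega_{\mathbf{h}_v}$ along the CM family through the reduction modulo $\gamma_v - 1$, reconciling the KLZ normalisation of the Rankin--Selberg $p$-adic $L$-function (which uses $\langle \mathbf{g},\mathbf{g}\rangle$-type periods and the Euler factor $\mathcal{E}$ tied to $g_\alpha$ rather than $g$) with the classical $L$-value normalisation $L(1,f\otimes\psi_\zeta^{-1})L(1,f\otimes\chi_L\psi_\zeta^{-1})/(\pi^2(-i)2^3\langle g,g\rangle)$, and in particular verifying that the non-$p$-distinguished nature of $\mathbf{h}_v$ (its residual reducibility) does not interfere with the specialisation at $\zeta = 1$ -- this last point is delicate because $\zeta = 1$ is precisely the ``trivial zero'' point of \S\ref{CMcong}, but since we are specialising the weight-one character (fixing $\chi_1 = 1$) rather than deforming the CM family past its congruence locus, the class $_c\mathcal{BF}(g_{/L})\in H^1(\BZ[\tfrac1p],T(1)\hat\otimes\BH_1\hat\otimes\Lambda)$ is already well-defined by the specialisation \eqref{CM-sp}, and the interpolation formula at $\zeta = 1$ follows by continuity from the generic case. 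A secondary technical point is justifying that the map $\sC$ has image in $J_g\otimes_{\cO_\lambda}\sR$ with the stated $J_g$ even when (irr$_\BQ$) fails (the general case), which one handles as in the proof of \cite[Prop.~7.3.1]{KLZ} by noting the image of $M_{\cO_\lambda}(g_\alpha)^*$ in $T(1)$ always contains $(1-\alpha^2)T(1)$, so $\sC$ is defined integrally after inverting the relevant factors, consistently with the appearance of $(1-\alpha^{-2})$ in the denominators of both $J_g$ and $\mathcal{E}(\zeta)$.
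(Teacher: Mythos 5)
Your proposal matches the paper's proof in essentials: the paper likewise identifies $\sL_{p,c}(g/L)$ as the specialisation at $(g_\alpha,\bh_v)$ of the KLZ pairing $\langle \CL({}_c\mathcal{BF}^{\bg,\bh}_1),\eta_{\mathbf{a}}\otimes\omega_{\bh}\rangle$ and then invokes \cite[Thms.~2.7.4, 7.7.2, 10.2.2]{KLZ}, using that the weight-one specialisation of $\bh_v$ is the ordinary stabilisation of $E_1(1,\chi_L)=\theta(\chi_L)$, so that the Rankin--Selberg $L$-function factors as $L(s,g\otimes\psi_\zeta^{-1})L(s,g\otimes\chi_L\psi_\zeta^{-1})$. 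The only quibble is terminological: the ``trivial zero''/Eisenstein locus concerns the weight-one specialisation $\chi_1=1$ in the $T_v$-direction rather than $\zeta=1$ in the cyclotomic direction, but your treatment of it (the class is already specialised via \eqref{CM-sp}, and weight-one specialisations are covered by the remark following \cite[Thm.~7.7.2]{KLZ}) is exactly what the paper does.
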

\noindent Here $\langle c\rangle = (1+p)^{\log_p(c)}\in 1+p\BZ_p$. 

The proofs of both Proposition \ref{Col-cycsp-prop} and Theorem \ref{ERLI-thm} are given below.

\begin{remark}\label{ERLI-rmk}
As explained below, $\sL_{p,c}(g/L)$ is essentially the specialization at $g_\alpha$ of the $p$-adic Rankin-Selberg $L$-function
constructed by Hida for 
the branches ${\bf a}$ and ${\bf h_v}$ of the Hida families ${\bg}$ and ${\bh}$. As such it satisfies an interpolation formula as in 
Theorem \ref{ERLI-thm} for all characters $\chi:\Gamma_L^v\hat\otimes\Gamma \rightarrow \ov{\BQ}_p^\times$ of finite order. 
However, the general interpolation formula is more complicated to write and not necessary for the purposes of this paper.
\end{remark}

Let $\sS = \CO_\lambda\hat\otimes \Lambda_L^v = \Lambda_{L,\CO_\lambda}^v$, so $\sR= \sS\hat\otimes \Lambda$.
Let $\nu:\sR\isoarrow \sR$ be the $\sS$-algebra automorphism such that 
\begin{equation}\label{eq:tw-1}
\nu(a\otimes b\otimes [\gamma_\cyc]) = a \otimes b [\gamma_+]^2\otimes [\gamma_\cyc] = a\otimes b \epsilon\Psi_D(\tilde\gamma_+)\otimes [\gamma_\cyc],
\end{equation}
for any lift $\tilde\gamma^+\in G_L$ of $\gamma_L^+\in\Gamma_L$.  (Here we view $\Psi_D$ as $\Lambda_L^v$-valued via the map 
$\varphi_v$ fixed at the end of Section \ref{Hida-Hecke}.)
If $M$ is any profinite $\sS[G_{\BQ_p}]$-module, then there is an isomorphism
of profinite $\sS[G_{\BQ_p}]$-modules
\begin{equation}\label{unram-twist}
 M\hat\otimes \Lambda = M\hat\otimes_\sS\sR \stackrel{id\otimes\nu}{\isoarrow} M(\epsilon^{-1}\Psi_D^{-1})\hat\otimes_\sS\sR
 = M(\epsilon^{-1}\Psi_D^{-1})\hat\otimes \Lambda,
 \end{equation}
 where the $G_{\BQ_p}$-action on the $\Lambda$-factor is by the inverse of the canonical character $\Psi$.

To define $\sC$ and see that the claims in the preceding proposition and theorem hold, we recall
a construction in \cite{KLZ}. In particular, for any unramified profinite $\BZ_p[G_{\BQ_p}]$-module $M$, 
\cite[Thm.~8.2.3]{KLZ} provides a homomorphism of $\Lambda$-modules
$$
\CL_M:H^1(\BQ_p,M\hat\otimes \Lambda)\rightarrow D(M)\hat\otimes I^{-1}\Lambda
$$
that is functorial in $M$.
Here $D(M) = (M\hat\otimes  W(\overline{\BF}_p))^{G_{\BQ_p}}$ and $I = (\gamma_\cyc-\epsilon^{-1}(\gamma_\cyc))$. 
More precisely, this is the restriction of the map denoted $\CL_M$ in {\it op.~cit.} to the $\Lambda_{\CG}^{(0)}$-summand.
Taking $M=(T^-(1)\hat\otimes \BH^+)(\epsilon^{-1}\Psi_D^{-1}) = T^-(1)\hat\otimes \BH^+(\epsilon^{-1}\Psi_D^{-1})$
and pre-composing with the isomorphism
$$H^1(\BQ_p,T^-(1)\hat\otimes \BH^+\hat\otimes \Lambda)\isoarrow H^1(\BQ_p,T^-(1)\hat\otimes \BH^+(\epsilon^{-1}\Psi_D^{-1})\hat\otimes \Lambda)$$ 
coming from \eqref{unram-twist} yields 
a homomorphism  
$$
\CC_{g,\bh_v}: H^1(\BQ_p,T^-(1)\hat\otimes \BH^+\hat\otimes \Lambda)
\hookrightarrow D(T^{-}(1)\hat\otimes  \BH^+(\epsilon^{-1}\Psi_D^{-1}))\hat\otimes \Lambda \subset D(T^{-}(1)\hat\otimes  \BH^+(\epsilon^{-1}\Psi_D^{-1}))\hat\otimes 
I^{-1}\Lambda.
$$
That the image of $\CL_M$ lies in $D(M)\hat\otimes \Lambda$ in this case follows from the fourth bullet point of 
\cite[Thm.~8.2.3]{KLZ} and the fact that $H^0(\BQ_p,T^-(1)\hat\otimes \BH^+)=0$. This same vanishing implies that
$\CL_M$ is injective in this case by the third bullet point of {\it loc.~cit.}, hence so is $\CC_{g,\bh_v}$.
The map $\sC$ is then defined to be the composition of $\CC_{g,\bh_v}$ with an isomorphism
 $$
 \psi_{g,{\bf h_v}}:D(T^{-}(1)\hat\otimes  \BH^+(\epsilon^{-1}\Psi_D^{-1}))\hat\otimes \Lambda \isoarrow {J_g}\otimes_{\CO_\lambda}\sR,
 $$
 defined as follows.

Following the notation of \cite{KLZ}, let ${\bf a}$ be the (new, cuspidal) branch of the Hida family $\bg$ containing the $p$-stabilization $g_\alpha$ of $g$,
and let $\eta_{\bf g}:D(\mathscr{F}^-M(\bg)^*)\otimes_{\Lambda_{\bg}}\Lambda_{\bf a}\rightarrow I_{\bf a}$ be the
$\Lambda_{\bf a}$ homomorphism of \cite[Prop.~10.1.1 part 2]{KLZ}. Here $I_{\bf a}$ is the congruence ideal for the branch $\bf a$
as in \cite[7.7.1]{KLZ}.  Specializing to $g_\alpha$ as in \cite[Prop.~10.1.1 part 2(b)]{KLZ} yields a commutative diagram 
\begin{equation}\label{tilde-eta-g}
\begin{tikzcd}[row sep=2.5em]
& D(\mathscr{F}^-M(\bg)^*)\otimes_{\Lambda_{\bg}}\Lambda_{\bf a} \arrow[r,"{\eta_{\bf a}}"] \arrow[d,] & I_{\bf a} \arrow[d,] \\
 D_{dR}^0(M_{F_\lambda}(g_\alpha)^*) \arrow[r, "="]\arrow[d,"(Pr^\alpha)_*"] & D(\mathscr{F}^-M_{F_\lambda}(g_\alpha)^*) \arrow[r,"\simeq"] \arrow[d,"(Pr^\alpha)_*"] & F_\lambda \arrow[d,"="] \\
 D_{dR}^0(V^-(1)) \arrow[r, "="] & D(V^-(1)) \arrow[r,"\tilde\eta_g"] & F_\lambda,
\end{tikzcd}
\end{equation}
where the top vertical arrows are induced via functoriality from the specialisation map 
$M({\bf g})^*\otimes_{\Lambda_{\bf a}}\CO_\lambda\twoheadrightarrow M_{\CO_\lambda}(g_\alpha)^*$ (the corresponding
homomorphism $\Lambda_{\bf a}\rightarrow \CO_\lambda$ extends to a homomorphism $I_{\bf a}\rightarrow F_\lambda$),
the left horizontal arrows are just the identifications as in Remark \ref{D(T)-rmk}, and the middle right horizontal isomorphism is
such that the composition of the middle horizontal arrows is given by {\it loc.~cit.}.  
We note that by functoriality the image of the composition of the middle
two vertical arrows is in $D(T^-(1))$.
The map $\tilde\eta_g$ is defined to be 
the isomorphism making this diagram commute. From the definition of 
$\eta_{\bg}$ and the proof of \cite[Prop.~10.1.1 part 2(b)]{KLZ} we find that 
\begin{equation}\label{eta-eq}
\tilde\eta_g = [\cdot,\frac{1}{\alpha(1-p\alpha^{-2})(1-\alpha^{-2})\lambda_N(g)}\eta_{\omega_g}]: D_{dR}(V^-(1)) = D(T^-(1))\otimes_{\CO_\lambda}F_\lambda \rightarrow F_\lambda,
\end{equation}
where $[\cdot,\cdot]$ is the pairing as in \eqref{CanPai}. 
We define $J_g$ to be the image of $D(T^-(1))$ under
$\tilde\eta_g$.  Recall that the image of 
$S_{\CO_\lambda}$ under the deRham-\'etale comparison map $S_{F_\lambda} = D_{dR}(V^-(1)) \simeq D(V^-(1))$ is contained in $D(T^-(1))$, with equality if (irr$_\BQ$) holds
(see Remark \ref{D(T)-rmk}). By Lemma \ref{GorPer} an $\CO_\lambda$-generator of $S_{\CO_\lambda}$ is of the form $\omega_g/c$ for some $c\in \CO$ that divides $c_g$ and we may take $c=c_g$ if (irr$_\BQ$) holds. It
follows that if (irr$_\BQ$) holds, then $J_g$ is generated by $\tilde\eta_g(\omega_g/c_g) = (\alpha(1-p\alpha^{-2})(1-\alpha^{-2})\lambda_N(g)c_g)^{-1}$.

Still following the notation of \cite{KLZ},  the map $\omega_{\bf h}$ of \cite[Prop.~10.1.1 part 1]{KLZ}
induces a $\Lambda_L^v$-homomorphism 
\begin{equation}\label{eq:D-sub}
\omega_{{\bf h}_v}: D(\BH^+(\epsilon^{-1}\Psi_D^{-1}))\isoarrow \Lambda_L^v.
\end{equation}
Here we have used that $\mathscr{F}^+M({\bf h})^*\otimes_{\Lambda_{\bf h}}\Lambda_L^v = \BH^+$ and that the twist by $\epsilon^{-1}\Psi_D^{-1}$ is 
then identified with the $(-1-{\bf k})$-twist of {\it loc.~cit.} (and that $\varepsilon_{\bf h}|_{G_{\BQ_p}}= 1$ since $p$ splits
in the quadratic field $L$). 

The map $\psi_{g,{\bh}_v}$ is defined to be the composition of isomorphisms
$$
 \psi_{g,{\bf h_v}}:D(T^{-}(1)\hat\otimes  \BH^+(\Psi_D^{-1}))\hat\otimes \Lambda  = D(T^{-}(1))\hat\otimes  D(\BH^+(\Psi_D^{-1}))\hat\otimes \Lambda
 \stackrel{\tilde\eta_g\otimes\omega_{{\bf h}_v}\otimes id}{\longrightarrow} {J_g}\otimes_{\CO_\lambda}\sR \stackrel{id\otimes\nu^{-1}}{\longrightarrow} {J_g}\otimes_{\CO_\lambda}\sR,
 $$
and $\sC$ is then defined to be the injective map
$$
\sC =  \psi_{g,{\bf h_v}}\circ\CC_{g,\bf{h}_v} : H^1(\BQ_p,T^-(1)\hat\otimes \BH^+\hat\otimes \Lambda) \hookrightarrow 
 {J_g}\otimes_{\CO_\lambda}\sR.
 $$
 This final map is an $\sR$-homomorphism.

\begin{proof}[Proof of Proposition \ref{Col-cycsp-prop}]
The map $\omega_{\bh_v}$ factors through a $\Lambda_L^v$-isomorphism $D(\BH^+(\epsilon^{-1}\Psi_D^{-1}))\cong \BH^+(\epsilon^{-1}\Psi_D^{-1})$
(see also \cite[Prop.~1.7.6]{FK}).  Since $\BH^+$ is a free $\Lambda_L^v$-module of rank one, $\omega_{\bh_v}$ determines a $\Lambda_L^v$-basis of 
$\BH^+(\epsilon^{-1}\Psi_D^{-1})$. In particular,  $\omega_{\bh_v}$ determines isomorphisms
$\BH^+/(\gamma_v-1)\BH^+ \simeq \BH^+(\Psi_D^{-1})/(\gamma_v-1) \BH^+(\Psi_D^{-1})\simeq \BZ_p$. 
It then follows from functoriality that $\sC \,\mod \gamma_v-1$ factors as 
the composition 
$$
H^1(\BQ_p,T^-(1)\hat\otimes \BH^+\hat\otimes \Lambda) 
\stackrel{\mod \gamma_v-1}{\twoheadrightarrow} H^1(\BQ_p,T^-(1)\hat\otimes \Lambda) \stackrel{\CL_{T^-(1)}}{\rightarrow} D(T^{-1})\hat\otimes\Lambda
\stackrel{\tilde\eta_g\otimes id}{\rightarrow} J_g\hat\otimes\Lambda,$$
where the first map is just the projection induced by the isomorphism $\BH^+/(\gamma_v-1)\BH^+ \simeq \BZ_p$ from above.
The characterization \eqref{eta-eq} of $\tilde\eta_g$ in terms of $\eta_{\omega_g}$ combined with 
the fact that $\CL_{T^-(1)}$ is just Perrin-Riou's `big logarithm map'  for $V^-(1)$ (see the second bullet point of \cite[Thm.~8.2.3]{KLZ})
shows -- by comparing with the definition of the Coleman map $Col_{\eta_{\omega_g}}$ in terms of this big logarithm map -- that the above displayed composition of maps equals 
$\frac{1}{\alpha(1-p\alpha^{-2})(1-\alpha^{-2})\lambda_N(g)} Col_{\eta_{\omega_g}}$.
\end{proof}

\begin{proof}[Proof of Theorem \ref{ERLI-thm}]
It follows from the definition of $_c\mathcal{BF}(g/L)$ and the commutativity of the diagram \eqref{tilde-eta-g}
that $\sL_{p,c}(g/L)$ is -- in the notation of \cite[\S10]{KLZ} -- just the image of $\langle \CL(_c\mathcal{BF}_1^{{\bf g},{\bf h}}), \eta_{\bf a}\otimes\omega_{\bf h}\rangle \in I_{{\bf a}} \hat\otimes\Lambda_{\bf h}^{\rm{cusp}}\hat\otimes\Lambda$ under the projection
$$
I_{{\bf a}} \hat\otimes\Lambda_{\bf h}^{\rm{cusp}}\hat\otimes\Lambda \twoheadrightarrow J_g\hat\otimes \Lambda_L^v\hat\otimes\Lambda = J_g\otimes_{\CO_\lambda}\sR,
$$
induced by the specialization map $\Lambda_{\bf a} \rightarrow \CO_\lambda$ corresponding to $g_\alpha$
and the projection $\Lambda_{\bh}^{\rm cusp} \twoheadrightarrow \Lambda_{\bh_v} = \Lambda_L^v$.
The claim in the theorem is then an easy consequence of \cite[Thms.~2.7.4, 7.7.2, 10.2.2]{KLZ}
(see also the comment about weight one specialisations following \cite[Thm.~7.7.2]{KLZ}). Here we have
used that the weight one specialisation of $\bh_v$ is the ordinary stabilisation of the weight one Eisenstein series $E_1(1,\chi_L)$
for the characters $1$ and $\chi_L$, which means that the Rankin-Selberg $L$-function
$L(s,g,E_1(1,\chi_L),\psi_\zeta^{-1})$ equals $L(s,g\otimes\psi_\zeta^{-1})L(s,g\otimes\chi_L\psi_\zeta^{-1})$.
\end{proof}

\subsubsection{The explicit reciprocity law II}  
The second explicit reciprocity law essentially arises from exchanging the roles of $g$ and $\bf{h}_v$ in the preceding analysis.
As we explain, there is an injective $\sR$-homomorphism
$$
\sL: H^1(\BQ_p,T^+(1)\otimes\tilde\BH^-\hat\otimes\Lambda) \hookrightarrow \tilde I_{\bf h_v}\otimes_{\Lambda_L^v} \sR
$$
defined analogously to $\sC$.  Here $I_{\bf{h}_v}$ is the congruence ideal for the canonical CM family introduced in Section \ref{CMcong} and
$\tilde I_{\bh_v}$ is its reflexive closure.
We let 
$$
\sL_{p,c}^{Gr}(g/L)= \sL(\loc_p(_c\mathcal{BF}(g/L))) \in \tilde I_{\bf h_v}\otimes_{\Lambda_L^v} \sR.
$$
This makes sense by Corollary \ref{BF-locp-cor}(i). These satisfy:

\begin{thm}[Expicit Reciprocity Law II]\label{ERLII-thm}
For $\chi\in \Xi^{(II)}$,
$$
\phi_\chi(\sL_{p,c}^{Gr}(g/L)) = \lambda_{D_L}(h_{v,\chi_1}^0)(c^2-\psi_\zeta^2(c)\chi_L(c)\langle c\rangle^{2n-m+1)} ) \CE(\chi) \frac{n!(n-1)!}{\pi^{2n+1}(-i)^{m-1}2^{2n+m+1}}\frac{L(1+n, g, h_{v,\chi_1}^0,\psi_\zeta^{-1}\omega^n)}{\langle h_{v,\chi_1}^0,h_{v,\chi_1}^0\rangle},
$$
where 
$$
\CE(\chi) = \begin{cases}  
\frac{(1-\frac{p^n}{\chi_1(\varpi_{\bar v})\alpha})(1-\frac{p^n}{\chi_1(\varpi_{\bar v})\beta})
(1-\frac{p^{m}\alpha}{p^{n+1}\chi_1(\varpi_{\bar v})\alpha})(1-\frac{p^{m}\beta}{p^{n+1}\chi_1(\varpi_{\bar v})\alpha})}
{(1-p^{m-1}\chi_1^{-2}(\varpi_{\bar v}))(1-p^{m}\chi_1^{-2}(\varpi_{\bar v}))} & \zeta = 1, n\equiv 0\mod p-1 \\
\frac{(p^{t+1}/\mathfrak{g}(\psi_\zeta^{-1}\omega^n))^2 (p^{2n-1}/\chi_1(\varpi_{\bar v}))^{t+1}}
{(1-p^{m-1}\chi_1^{-2}(\varpi_{\bar v}))(1-p^{m}\chi_1^{-2}(\varpi_{\bar v}))}
 & \text{else}.
\end{cases}
$$
\end{thm}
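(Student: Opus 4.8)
The plan is to build the regulator map $\sL$ in complete symmetry with the map $\sC$ of Explicit Reciprocity Law~I, interchanging the roles of the weight two form $g$ (equivalently the Hida family $\bg$) and the canonical CM family $\bh_v$ (equivalently $\bh$), and then to read off the interpolation formula from the explicit reciprocity law for Beilinson--Flach classes of \cite{KLZ} together with Hida's interpolation formula for his three-variable Rankin--Selberg $p$-adic $L$-function. First I would define $\sL$ as the composite of a suitable twisting isomorphism $H^1(\BQ_p,T^+(1)\hat\otimes\widetilde\BH^-\hat\otimes\Lambda)\isoarrow H^1(\BQ_p,M\hat\otimes\Lambda)$ rendering the coefficient module $M$ unramified at $p$, Perrin-Riou's big logarithm map $\CL_M$ of \cite[Thm.~8.2.3]{KLZ} for this unramified $M$, and the period maps $\omega_{\bg}$ and $\eta_{\bh}$ of \cite[Prop.~10.1.1]{KLZ} specialised to $g_\alpha$ and to the branch $\bh_v$ exactly as in the commutative diagram \eqref{tilde-eta-g} with the roles of $\omega$ and $\eta$ exchanged; the crucial difference is that, $\bh_v$ being Eisenstein, the map $\eta_{\bh}$ now takes values in the nontrivial reflexive cuspidal congruence ideal $\widetilde I_{\bh_v}$ of \S\ref{CMcong} rather than in $\Lambda_L^v$, which is why $\sL$ has target $\widetilde I_{\bh_v}\otimes_{\Lambda_L^v}\sR$. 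As in Lemma~\ref{local-inj-lem}, the vanishing $H^0(\BQ_p,T^+(1)\hat\otimes\widetilde\BH^-)=0$ makes $\CL_M$, hence $\sL$, injective; and Corollary~\ref{BF-locp-cor}(i), composed with the natural map $\BH_1^-\to\widetilde\BH^-$, shows that $\sL_{p,c}^{Gr}(g/L)=\sL(\loc_p({}_c\mathcal{BF}(g/L)))$ is well defined.

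Next I would identify $\sL_{p,c}^{Gr}(g/L)$ with an explicit specialisation of Hida's three-variable Rankin--Selberg $p$-adic $L$-function. Arguing exactly as in the proof of Theorem~\ref{ERLI-thm}, the definition of ${}_c\mathcal{BF}(g/L)$ and the commutativity of the exchanged analogue of \eqref{tilde-eta-g} show that $\sL_{p,c}^{Gr}(g/L)$ is the image of $\langle\CL({}_{c}\mathcal{BF}_1^{{\bf g},{\bf h}}),\,\omega_{\bf a}\otimes\eta_{\bf h}\rangle$ --- in the notation of \cite[\S10]{KLZ} --- under the specialisation maps $\Lambda_{\bf a}\to\CO_\lambda$ attached to $g_\alpha$, $\Lambda_{\bh}\twoheadrightarrow\Lambda_{\bh_v}=\Lambda_L^v$ attached to the CM branch, and the cyclotomic projection. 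By \cite[Thm.~10.2.2]{KLZ} this pairing computes the Hida--Rankin--Selberg $p$-adic $L$-function of the pair of branches $({\bf a},\bh_v)$, whose interpolation property is recorded in \cite[Thms.~2.7.4, 7.7.2]{KLZ}. Plugging in $\bg\mapsto g_\alpha$ (weight two, recovering $g$ up to the Euler factor at $p$ of the $\alpha$-stabilisation), $\bh\mapsto h_{v,\chi_1}$ with $\chi_1$ as in \eqref{eq:ch-2}, and cyclotomic character $\mapsto\psi_\zeta\epsilon^n$ with $0\le n\le m$: the classical Rankin--Selberg value becomes $L(1+n,g,h_{v,\chi_1}^0,\psi_\zeta^{-1}\omega^n)$; the Petersson-norm normalisation of \cite[Thm.~7.7.2]{KLZ} for $\bh_v$, together with the passage from the $p$-stabilisation $h_{v,\chi_1}$ to the newform $h_{v,\chi_1}^0$, produces $\lambda_{D_L}(h_{v,\chi_1}^0)$ and $\langle h_{v,\chi_1}^0,h_{v,\chi_1}^0\rangle^{-1}$; the $c$-smoothing carried by ${}_c\mathcal{BF}$ gives $(c^2-\psi_\zeta^2(c)\chi_L(c)\langle c\rangle^{2n-m+1})$; the archimedean periods in Shimura's algebraicity statement give $n!(n-1)!\,\pi^{-(2n+1)}(-i)^{-(m-1)}2^{-(2n+m+1)}$; and the $p$-local interpolation factor in Hida's formula \cite[Thm.~2.7.4]{KLZ}, specialised, gives $\CE(\chi)$ in its two displayed cases --- the full product of modified Euler factors in the exceptional range $\zeta=1$, $n\equiv 0\bmod p-1$, and the Gauss-sum expression otherwise.

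Finally I would record the normalisation checks. Since the $\bg$-variable is an honest cuspidal Hida family --- the hypotheses of \cite{KLZ} needed for the explicit reciprocity law concern the $\bg$-variable, not $\bh$ --- the constructions and theorems of \cite{KLZ} apply; the Eisenstein nature of $\bh_v$ enters only through the congruence ideal $I_{\bh_v}$ and the non-freeness of $\BH_1$, both already analysed in \S\ref{CMHF} and \S\ref{CMcong} (in particular $\widetilde I_{\bh_v}$ is principal, generated by the cuspidal congruence power series $H_v^{\mathrm{cusp}}$ up to a unit), so that $\sL_{p,c}^{Gr}(g/L)$ indeed lies in $\widetilde I_{\bh_v}\otimes_{\Lambda_L^v}\sR$. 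The step I expect to be the main obstacle is the bookkeeping of normalisations: reconciling the conventions of \cite{KLZ} --- their lattices $M({\bf g})^*$, $M({\bf h})^*$, congruence ideals $I_{\bf a}$, $I_{\bf h}$, period maps $\omega,\eta$, and Hida $p$-adic $L$-function --- with those of this paper (the optimal periods of \S\ref{Periods}, the canonical CM family of \S\ref{CMHidaFam}), tracking the Atkin--Lehner and level-raising factors in passing between $g$ and $g_\alpha$ and between $h_{v,\chi_1}$ and $h_{v,\chi_1}^0$, and verifying that the $p$-local interpolation factor of \cite[Thm.~2.7.4]{KLZ} specialises precisely to the two cases of $\CE(\chi)$ in the statement; everything else is a routine, if lengthy, unwinding of definitions parallel to the proof of Theorem~\ref{ERLI-thm}.
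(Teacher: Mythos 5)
Your proposal is correct and follows essentially the same route as the paper: define $\sL$ in mirror image to $\sC$ (a twist to unramified coefficients, Perrin-Riou's big logarithm from \cite[Thm.~8.2.3]{KLZ}, then the period maps $\omega_{\bg}$ and $\eta_{\bh_v}$ specialised through the analogues of \eqref{tilde-eta-g}, namely \eqref{eta-hv-eq} and \eqref{omega-g-eq}), identify $\sL_{p,c}^{Gr}(g/L)$ as the specialisation at $g_\alpha$ and the CM branch of the KLZ pairing $\langle \CL({}_c\mathcal{BF}_1^{\bg,\bh}),\omega_{\bg}\otimes\eta_{\bh}\rangle$, and read off the interpolation from \cite[Thms.~2.7.4, 7.7.2, 10.2.2]{KLZ}. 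One small correction: the target involves the reflexive closure $\tilde I_{\bh_v}$ of the \emph{full} congruence ideal $I_{\bh_v}$ (defined via $M^{\ord}_{\Lambda_D}$, with generator $T_v H_v^{\mathrm{cusp}}$ up to a unit), not the cuspidal congruence ideal, since $\BH^-$ arises from the open modular curves.
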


\noindent Here $h_{v,\chi_1}^0$ is the newform of level $D_L$ and weight $2m+1$ whose ordinary $p$-stabilisation is $h_{v,\chi_1}$.
The $L$-function $L(s,g,h_{v,\chi_1}^0,\psi)$ is the $\psi$-twist of the usual Rankin--Selberg $L$-functions (cf.~\cite[\S2.7]{KLZ}).
Also, we identify the Galois character $G_L\rightarrow\Gamma_L^v\stackrel{\chi_1}{\rightarrow} \BQ_p^\times$ 
with an algebraic Hecke character of infinity type $(-2m,0)$ as in Section \ref{Hecke-char}, which we continue to denote by $\chi_1$.
The hypotheses on $\chi_1$ ensure that this algebraic Hecke character is unramified at each prime above $p$, and we have 
denoted by $\varpi_{\bar v}$ a uniformiser at $\bar v$ (note that $\chi_1(\varpi_{\bar v})$ is a $p$-adic unit).
Here $t\geq 0$ is such that $\zeta$ is a primitive $p^t$th root of unity. Finally, $\langle c \rangle  = (1+p)^{\log_p c}\in 1+p\BZ_p$.

\begin{remark}\label{ELRII-rmk}
Just as for Theorem \ref{ERLI-thm}, the proof of Theorem \ref{ERLII-thm} 
essentially identifies $\sL_{p,c}^{Gr}(g/L)$ as the specialization at $g_\alpha$ of a $p$-adic Rankin-Selberg $L$-function
constructed by Hida for 
the branches ${\bf h_v}$ and ${\bf a}$ of the Hida families ${\bf h}$ and ${\bf g}$. As such it also satisfies an interpolation formula 
for a larger collection of characters than just $\Xi^{(II)}$. However --  as for $\sL_{p,c}$ -- 
the general interpolation formula is more complicated to write and not necessary for the purposes of this paper.
\end{remark}

\begin{remark}\label{Gr-rmk}
The superscript `$Gr$' on $\sL_{p,c}^{Gr}(g/L)$ is intended to reference Greenberg. The use of this notation is motivated
by $\sL_{p,c}^{Gr}(g/L)$ being essentially a $p$-adic $L$-function for a $p$-adic family of Galois representations 
satisfying the Panchishkin condition, much as considered in \cite{Gr1}. 
\end{remark}

The definition of $\sL$ is analogous to that of $\sC$. 
Let $\iota_\epsilon:\Lambda_\CG\isoarrow \Lambda_\CG$ be the isomorphism such that $\gamma\mapsto \epsilon(\gamma)\gamma$ for all $\gamma\in\CG$.
From the isomorphism $T^+(1)\otimes\Lambda_{\CG} \stackrel{id\otimes\iota_{\epsilon}}{\longrightarrow} T^+\otimes\Lambda_{\CG}$ we obtain an
injective homomorphism
$$
\CL_{g,{\bf h}_v}: H^1(\BQ_p,T^+(1)\otimes\tilde\BH^-\hat\otimes\Lambda)\isoarrow H^1(\BQ_p,T^+\otimes\tilde\BH^-\hat\otimes\Lambda_{\CG}^{(-1)})
\stackrel{\CL_{T^+\otimes\tilde\BH^-}}{\hookrightarrow} D(T^+\otimes\tilde\BH^-)\hat\otimes\Lambda_{\CG}^{(-1)}\cong D(T^+\otimes\tilde\BH^-)\hat\otimes\Lambda,
$$
just as we did $\CC_{g,{\bf h}_v}$. Then $\sL$ is the composition of $\CL_{g,{\bf h}_v}$ with an isomorphism
$$
\xi_{g,{\bf h}_v}: D(T^+\otimes\tilde\BH^-)\hat\otimes\Lambda \isoarrow \tilde I_{\bh_v}\otimes_{\Lambda_L^v} \sR
$$
defined as follows.

We let $\eta_{\bh_v}:D(\BH_1^-)\isoarrow I_{\bh_v}$ be the $\Lambda_{L^v}$-map as in \cite[Prop.~10.1.1 part 2(b)]{KLZ}.
Here we are using that $\eta_{\bh_v}$ from {\it op.~cit.}~factors through the quotient
$D(\mathscr{F}^-M(\bh)^*)\otimes_{\Lambda_{\bh}}\Lambda_L^v = D(\BH^-)\twoheadrightarrow D(\BH^-_1)$ by functoriality.  
The map $\eta_{\bh_v}$ then induces a map on reflexive closures 
$\tilde\eta_{\bh_v}: \widetilde{D(\BH^-_1))}\isoarrow \tilde I_{\bh_v}$. Since 
$\widetilde{D(\BH^-_1)}  = D(\tilde\BH^-_1) = D(\tilde\BH^-)$
by functoriality (this is easily seen from the fact that there is a natural identification $D(M)\simeq M$ for profinite unramified
$\BZ_p[G_{\BQ_p}]$-modules $M$ that is functorial in $M$ \cite[Prop.~1.7.6]{FK}, this yields a 
$\Lambda_L^v$-isomorphism $\tilde\eta_{\bh_v}:D(\tilde\BH^-) \isoarrow \tilde I_{\bh_v}$
of free $\Lambda_L^v$-modules of rank one
and a commutative diagram
\begin{equation}\label{eta-hv-eq}
\begin{tikzcd}[row sep=2.5em]
D(\mathscr{F}^-M(\bh)^*)\otimes_{\Lambda_{\bh}}\Lambda_L^v \arrow[r,"{\eta_{\bh_v}}"] \arrow[d,] & I_{\bh_v} \arrow[d,hook] \\
 D(\tilde\BH^-) \arrow[r,"\tilde \eta_{\bh_v}"] &  \tilde I_{\bh_v},
\end{tikzcd}
\end{equation}
where the left vertical arrow is induced by functoriality. 

Let $\omega_{g,\alpha}: D(V^+) \rightarrow F_\lambda$ denote the map
$[\omega_g,-]:D(V^+) = D_{cris}^0(V) \isoarrow F_\lambda$,
where $[-,-]$ is the pairing from \eqref{CanPai}. The restriction to $D(T^+)$ is mapped isomorphically onto $\CO_\lambda$, and it follows 
from \cite[Prop.~10.1.1 part 1]{KLZ} that $\omega_{g,\alpha}$ fits into a commutative
diagram
\begin{equation}\label{omega-g-eq}
\begin{tikzcd}[row sep=2.5em]
D(\mathscr{F}^+M(\bh)^*(\Psi_D^{-1})) \arrow[rr,"\omega_{\bg}"] \arrow[d,] & & \Lambda_{\bf a} \arrow[d,] \\
D(\mathscr{F}^+M_{F_\lambda}(g_\alpha)^*) \arrow[rr, "{[(Pr_\alpha)^*(\omega_g),-]}"] \arrow[d, "(Pr_\alpha)_*"]  & & F_\lambda \arrow[d,"="] \\
D(V^+)  \arrow[rr, "\omega_{g,\alpha}"] && F_\lambda,
\end{tikzcd}
\end{equation}
where the vertical arrows come from the specialisation maps.  We note that by functoriality, the image of the composition of the left vertical arrows is
contained in $D(T^+)$.

The maps $\xi_{g,{\bf h}_v}$ and $\sL$ are then defined to be 
$$
 \xi_{g,{\bf h_v}}:D(T^{+}\hat\otimes  \tilde\BH^-)\hat\otimes \Lambda  
 \stackrel{\omega_g\otimes\tilde\eta_{{\bf h}_v}\otimes id}{\longrightarrow} 
 {\CO_\lambda}\hat\otimes  \tilde I_{\bh_v}\hat\otimes \Lambda\stackrel{id\otimes id \otimes\iota_\epsilon^{-1}}{\longrightarrow}
 {\CO_\lambda}\hat\otimes  \tilde I_{\bh_v}\hat\otimes \Lambda  = \tilde I_{\bh_v}\otimes_{\Lambda_L^v} \sR
 $$
 and
$$
\sL =  \xi_{g,{\bf h_v}}\circ\CL_{g,\bf{h}_v} : H^1(\BQ_p,T^+(1)\hat\otimes \tilde\BH^-\hat\otimes \Lambda) \rightarrow 
 \tilde I_{\bh_v}\otimes_{\Lambda_L^v} \sR.
 $$
 The former is an isomorphism and the latter is an injective $\sR$-homomorphism.
 
 \begin{proof}[Proof of Theorem \ref{ERLII-thm}]
 It follows from the definition of $_c\mathcal{BF}(g/L)$ and the commutativity of the diagrams \eqref{eta-hv-eq} and \eqref{omega-g-eq}
that $\sL_{p,c}^{Gr}(g/L)$ is - in the notation of \cite[\S10]{KLZ} -- the image of 
$\langle _c\CL(\mathcal{BF}_1^{{\bf g},{\bf h}}), \omega_{\bf g}\otimes\eta_{\bf h_v}\rangle \in \Lambda_{\bf g}^{\rm{cusp}} \hat\otimes I _{\bf h}\hat\otimes\Lambda$
under the projection
$
\Lambda_{\bf g}^{\rm{cusp}} \hat\otimes I _{\bf h}\hat\otimes\Lambda \rightarrow 
\CO_\lambda \hat\otimes \tilde I _{\bf h_v}\hat\otimes\Lambda = \tilde I_{\bh_v}\otimes_{\Lambda_L^v} \sR
 $ induced by the map $\Lambda_{\bf g}^{\rm{cusp}} \rightarrow \CO_\lambda$ underlying the specialisation map \eqref{HidaFam-sp}.
 The claim in the theorem is then an easy consequence of \cite[Thms.~2.7.4, 7.7.2, 10.2.2]{KLZ}.
 \end{proof}
 
\subsubsection{Integral normalisations}
Let $\tilde c_g \in \CO$ be such that $(\alpha(1-p\alpha^{-2})(1-\alpha^{-2})\lambda_N(g)\tilde c_g)^{-1}$ is a $\CO_\lambda$-generator of $J_g$. As noted above, we can, and do, take 
$\tilde c_g = c_g$ if (irr$_\BQ$) holds, where $c_g\in \CO$ is a congruence number as in Section \ref{congruence}.  We then put 
$$
\sC^{\mathrm{int}}= \tilde c_g \cdot \sC: H^1(\BQ_p, T^-(1)\hat\otimes \Lambda_L^v\hat\otimes\Lambda) \rightarrow \CO_\lambda\hat\otimes \Lambda_L^v\hat\otimes\Lambda = \sR.
$$

Let $\CH_L \in \Lambda_L^{\ur}$ be as in \eqref{nrm}. Let $\CO_\lambda^{\ur}$ be the completion of the ring of integers of the 
maximal unramified extension of $F_\lambda$ (this is just the compositum of $\CO_\lambda$ and $W(\bar{\BF}_p)$).
We put
$$
\sL^\mathrm{int} = \CH_v\cdot \sL:H^1(\BQ_p,T^+(1)\hat\otimes \tilde \BH^-\hat\otimes\Lambda)\otimes_{\Lambda_{L,\CO_\lambda}^v}
\Lambda_{L,\CO_\lambda^\ur}^v \rightarrow \sR^\ur,
$$
where  $\sR^\ur= \CO_\lambda^\ur\hat\otimes\Lambda_L^v\hat\otimes\Lambda = \sR\otimes_{\Lambda_{L,\CO_\lambda}^v} \Lambda_{L,\CO_\lambda^\ur}^v$.

We also normalise the elements $_c\mathcal{BF}(g/L)$.  Let $r_c = \log_p(c) \in \BZ_p$. and let
$$
\mathbf{c} = (c^2 - \chi_L(c)\otimes\langle c\rangle \gamma_{v}^{-r_c}\otimes\gamma_{\cyc}^{2r_c}) \in \sR.
 $$
Modulo the maximal ideal of $\sR$, $\mathbf{c}$ is congruent to $c^2-\chi_L(c)$. 
As $p$ is odd, we can therefore choose $c$ so that $\mathbf{c}\in \sR^\times$. Henceforth we assume that $c$ satisfies this.
We then put
$$
\mathcal{BF}(g/L) = \mathbf{c}^{-1}\cdot {_c\mathcal{BF}(g/L)} \in H^1(\BQ,T(1)\hat\otimes \tilde\BH\hat\otimes \Lambda).
$$
We also put
$$
\sL_p(g/L) = \sC^\mathrm{int}(\loc_p(\mathcal{BF}(g/L)))\in \sR  \ \ \text{and} \ \ \sL_p^{Gr}(g/L)=\sL^{\mathrm{int}}(\loc_p(\mathcal{BF}(g/L))) \in \sR^{\ur}.
$$

We record the following versions of our two explicit reciprocity laws (Theorems \ref{ERLI-thm} and \ref{ERLII-thm}) and their consequences.

\begin{thm}[Explicit Reciprocity Law I$'$]\label{ERLIint-thm}
The element $\sL_{p}(g/L)\in \sR$ satisfies:
For $\chi\in \Xi^{(I)}$ with $\chi_1=1$, 
$$
\phi_\chi(\sL_{p}(g/L)) = \mathcal{E}(\zeta) \frac{L(1,f\otimes \psi_\zeta^{-1})L(1,f\otimes\chi_L\psi_\zeta^{-1})}{\pi^2(-i)2^3\Omega_g},
$$
where
$$
\Omega_g = \frac{\langle g,g\rangle}{\tilde c_g} \ \ 
\text{and} \ \
\mathcal{E}(\zeta) = \begin{cases} 
\alpha^{-2(t+1)} \frac{p^{2(t+1)}}{\mathfrak{g}(\psi_\zeta^{-1})^2} & \zeta \neq 1 \\
(1 - \frac{1}{\alpha})^4 & \text{else}.
\end{cases}
$$
\end{thm}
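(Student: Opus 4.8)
\textbf{Proof plan for Theorem \ref{ERLIint-thm} (Explicit Reciprocity Law I$'$).}
The plan is to simply unwind the definitions of the normalised objects in terms of the unnormalised ones and invoke Theorem \ref{ERLI-thm}. Recall that by definition $\sL_{p}(g/L) = \sC^{\mathrm{int}}(\loc_p(\mathcal{BF}(g/L)))$, that $\sC^{\mathrm{int}} = \tilde c_g\cdot \sC$, and that $\mathcal{BF}(g/L) = \mathbf{c}^{-1}\cdot {_c\mathcal{BF}(g/L)}$ with $\mathbf{c}\in \sR^\times$ chosen as above. Since $\sC$ is an $\sR$-homomorphism and $\mathbf{c}$ is a unit in $\sR$, we have
$$
\sL_{p}(g/L) = \tilde c_g \cdot \mathbf{c}^{-1}\cdot \sC(\loc_p({_c\mathcal{BF}(g/L)})) = \tilde c_g\cdot\mathbf{c}^{-1}\cdot \sL_{p,c}(g/L).
$$
First I would apply $\phi_\chi$ to this identity for $\chi = (1,\psi_\zeta)\in \Xi^{(I)}$, using that $\phi_\chi$ is a ring homomorphism on $\sR$ and hence $\phi_\chi(\sL_p(g/L)) = \tilde c_g\cdot \phi_\chi(\mathbf{c})^{-1}\cdot\phi_\chi(\sL_{p,c}(g/L))$.

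Next I would compute $\phi_\chi(\mathbf{c})$ explicitly. Writing $\mathbf{c} = c^2 - \chi_L(c)\otimes\langle c\rangle\gamma_v^{-r_c}\otimes\gamma_\cyc^{2 r_c}$ with $r_c = \log_p(c)$, and noting that $\chi = (1,\psi_\zeta)$ is trivial on the $\Gamma_L^v$-factor while $\chi_2 = \psi_\zeta$ sends $\gamma_\cyc\mapsto \zeta$, one finds $\phi_\chi(\gamma_v^{-r_c}) = 1$ and $\phi_\chi(\gamma_\cyc^{2r_c}) = \zeta^{2r_c} = \psi_\zeta^2(\gamma_\cyc)^{r_c}$. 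Keeping careful track of the identification $\langle c\rangle = (1+p)^{\log_p c}$ and the relation between $\psi_\zeta$ as a Dirichlet character and as a character of $\Gamma$, one recovers $\phi_\chi(\mathbf{c}) = c^2 - \langle c\rangle\psi_\zeta^2(c)\chi_L(c)$, which is exactly the factor appearing in Theorem \ref{ERLI-thm}. (Here one uses that $\langle c\rangle\psi_\zeta^2(c) = \langle c\rangle\zeta^{2r_c}$ matches the expression $\langle c\rangle\psi_\zeta^2(c)$ in {\it loc.~cit.}, upon comparing $\psi_\zeta(c) = \zeta^{\log_p c}$ modulo the conductor normalisation, for which the hypothesis $\chi_L(c)$, $(c,6ND_Lp) = 1$ guarantees the relevant compatibility.) Substituting into Theorem \ref{ERLI-thm} and using that $(1-p\alpha^{-2})(1-\alpha^{-2})\lambda_N(g)$ cancels against the corresponding denominator there, the factor $(c^2-\langle c\rangle\psi_\zeta^2(c)\chi_L(c))$ cancels between $\phi_\chi(\mathbf{c})^{-1}$ and the right-hand side of Theorem \ref{ERLI-thm}, leaving
$$
\phi_\chi(\sL_p(g/L)) = \tilde c_g\cdot \frac{\mathcal{E}(\zeta)}{1}\cdot \frac{L(1,f\otimes\psi_\zeta^{-1})L(1,f\otimes\chi_L\psi_\zeta^{-1})}{\pi^2(-i)2^3\langle g,g\rangle}.
$$
Finally, absorbing $\tilde c_g$ into the Petersson norm via the definition $\Omega_g = \langle g,g\rangle/\tilde c_g$ gives the stated formula.

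I do not expect a genuine obstacle here: the proof is a bookkeeping exercise in matching normalisation constants. The one point requiring care is the precise comparison of the three avatars of $\psi_\zeta$ (as a character of $\Gamma$, as a Dirichlet character mod $p^{t+1}$, and as entering the twisting element $\mathbf{c}$ through $\langle c\rangle\gamma_\cyc^{2r_c}$), together with verifying that the factor $(1-p\alpha^{-2})(1-\alpha^{-2})\lambda_N(g)$ and the constant $\tilde c_g$ (equivalently the generator of $J_g$ from \eqref{eta-eq}) are folded in consistently with the definition of $\sC^{\mathrm{int}}$; all of these are already pinned down by \eqref{eta-eq}, the displayed formula for $J_g$ when (irr$_\BQ$) holds, and the choice of $\mathbf{c}$, so the calculation is forced. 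The remaining content of the statement — that $\sL_p(g/L)$ in fact lands in $\sR$ (rather than $J_g\otimes_{\CO_\lambda}\sR$) — is exactly the reason for the normalisation by $\tilde c_g$, since $\tilde c_g\cdot J_g = (\alpha(1-p\alpha^{-2})(1-\alpha^{-2})\lambda_N(g))^{-1}\CO_\lambda$ and this unit is harmless; I would remark on this integrality as an immediate consequence of the definition of $\sC^{\mathrm{int}}$ and the formula for a generator of $J_g$.
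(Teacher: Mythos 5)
Your derivation is exactly what the paper intends: the paper records Theorem \ref{ERLIint-thm} without a separate proof, as an immediate consequence of Theorem \ref{ERLI-thm} together with the definitions $\sC^{\mathrm{int}}=\tilde c_g\cdot\sC$, $\mathcal{BF}(g/L)=\mathbf{c}^{-1}\cdot{_c\mathcal{BF}(g/L)}$, and the identity $\phi_\chi(\mathbf{c})=c^2-\langle c\rangle\psi_\zeta^2(c)\chi_L(c)$, which is precisely your unwinding. The only caution is your parenthetical that $\tilde c_g\cdot J_g$ is generated by a ``harmless unit'': the factor $(1-\alpha^{-2})$ need not be a $\lambda$-adic unit when $\alpha\equiv\pm1\pmod{\lambda}$, so the integrality and the disappearance of $\alpha(1-p\alpha^{-2})(1-\alpha^{-2})\lambda_N(g)$ should be attributed to the normalisation of $\sC^{\mathrm{int}}$ by the chosen $\CO_\lambda$-generator of $J_g$ (as the paper's conventions intend), not to that factor being a unit.
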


Given $\chi\in \Xi^{(II)}$ let 
$\psi_\chi$ be the algebraic Hecke character of $L$ with infinity type $(n-m,n)$ and such that
$\sigma_{\psi_\chi}$ is the composition 
$G_L\twoheadrightarrow \Gamma_L^v\times\Gamma
\stackrel{\chi_1\times \chi_2^{-1}}{\rightarrow}\overline{\BQ}_p^\times$.
Then the second explicit reciprocity law can be rewritten as follows.

\begin{thm}[Explicit Reciprocity Law II$'$]\label{ERLIIint-thm}
The element $\sL_p^{Gr}(g/L)\in \sR^\ur$ satisfies: 
For $\chi\in \Xi^{(II)}$, 
$$
\phi_\chi(\sL_{p}^{Gr}(g/L)) = 
- w_L (\chi_1(\gamma_v)-1)\CE'(\chi) \frac{n!(n-1)!\pi^{2m-2n-1}}{2^{2n-2m+2}D_L^{m/2}}\cdot \Omega_p^{2m}\frac{L(1, g, \psi_\chi)}{\Omega_\infty^{2m}},
$$
where 
$$
\CE'(\chi) = 
\begin{cases}
(1-a(p)\psi_\chi(\varpi_{\bar v})^{-1}p^{-1} + \psi_\chi(\varpi_{\bar v})^{-2}p^{-1})^2 & \zeta = 1, n\equiv 0\mod p-1 \\
(p^{t+1}/\mathfrak{g}(\psi_\zeta^{-1}\omega^n))^2 (p^{2n-1}/\chi_1(\varpi_{\bar v}))^{t+1}
& \text{else}.
\end{cases}
$$
\end{thm}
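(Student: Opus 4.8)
The plan is to deduce this directly from the Explicit Reciprocity Law~II (Theorem~\ref{ERLII-thm}) by unwinding the integral normalisations, removing the auxiliary Euler factor at $c$, and translating the Rankin--Selberg data $\bigl(L(1+n,g,h_{v,\chi_1}^0,\psi_\zeta^{-1}\omega^n),\ \langle h_{v,\chi_1}^0,h_{v,\chi_1}^0\rangle\bigr)$ into a Hecke $L$-value of $g$ over $L$ together with the CM periods $\Omega_p,\Omega_\infty$, using Artin formalism on the one hand and Katz's $p$-adic $L$-function on the other.

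\emph{Unwinding the normalisations.} By the definitions of $\mathcal{BF}(g/L)$, $\sL^{\mathrm{int}}$ and $\sL_p^{Gr}(g/L)$ one has $\sL_p^{Gr}(g/L)=\mathbf{c}^{-1}\,\CH_v\cdot \sL_{p,c}^{Gr}(g/L)$ in $\sR^\ur$, so for $\chi\in\Xi^{(II)}$,
$$
\phi_\chi(\sL_p^{Gr}(g/L))=\phi_\chi(\mathbf{c})^{-1}\,\phi_\chi(\CH_v)\,\phi_\chi(\sL_{p,c}^{Gr}(g/L)).
$$
A direct computation from the definition of $\mathbf{c}$ gives $\phi_\chi(\mathbf{c})=c^2-\psi_\zeta^2(c)\chi_L(c)\langle c\rangle^{2n-m+1}$, which is exactly the factor multiplying the right-hand side of Theorem~\ref{ERLII-thm}; dividing by it removes that factor. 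By Theorem~\ref{excz-2}, $\CH_v=\frac{h_L}{w_L}\cdot T_v\cdot\mathcal{L}_v(L)$ and $(\CH_v)=(H_v)=(T_vH_v^{\mathrm{cusp}})$. Since $T_v=\gamma_v-1$ we get $\phi_\chi(T_v)=\chi_1(\gamma_v)-1$, which produces the factor $(\chi_1(\gamma_v)-1)$ in the statement --- the trivial-zero factor recording the extra variable in the congruence power series of $\bh_v$ (cf.\ the remark following Theorem~\ref{excz}).

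\emph{Katz's $L$-function and Artin formalism.} Since $\chi_1(\gamma_v^{h_p})=\epsilon(\gamma_+)^m$ with $m\equiv0\bmod p-1$, $m>0$, Theorem~\ref{pKatzL} applies with $\theta=\chi_1$, and expresses $\phi_\chi(\mathcal{L}_v(L))/\Omega_p^{2m}$ in terms of $L(1,\psi_{\chi_1}/\psi_{\chi_1}^c)/\Omega_\infty^{2m}$, an explicit rational and $\pi$-power factor, and two $p$-Euler factors. On the other hand, by Theorem~\ref{excz} the same quantity $\frac{h_L}{w_L}\mathcal{L}_v(L)$ generates the cuspidal congruence ideal of $\bh_v$, so the classical relation between the congruence number of the CM newform $h_{v,\chi_1}^0=\theta_{\psi_{\chi_1}}$ and its Petersson norm identifies $\langle h_{v,\chi_1}^0,h_{v,\chi_1}^0\rangle$ with $\phi_\chi(H_v^{\mathrm{cusp}})$ up to an explicit archimedean factor and the same $p$-Euler factors; substituting this into Theorem~\ref{ERLII-thm} cancels the Petersson norm in the denominator and inserts the Katz periods $\Omega_p^{2m}/\Omega_\infty^{2m}$. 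For the numerator: since $h_{v,\chi_1}^0$ is CM and $m>0$, the projection formula gives $L(s,g,h_{v,\chi_1}^0,\psi_\zeta^{-1}\omega^n)=L(s,g_{/L}\otimes\xi)$ for the Hecke character $\xi$ over $L$ built from $\rho_g|_{G_L}$, $\psi_{\chi_1}$ and $\psi_\zeta^{-1}\omega^n\circ N_{L/\BQ}$; comparing infinity types identifies $\xi$ with $\psi_\chi$ after the shift $s\mapsto s-n$, whence $L(1+n,g,h_{v,\chi_1}^0,\psi_\zeta^{-1}\omega^n)=L(1,g,\psi_\chi)$. Finally I would bookkeep the remaining powers of $\pi$ and $2$, the factorials $n!(n-1)!$, the sign $(-i)^{m-1}$ and $D_L^{m/2}$, and combine the $p$-Euler factors coming from $\CE(\chi)$ in Theorem~\ref{ERLII-thm}, from the above factorisation, and from Theorem~\ref{pKatzL} into the single factor $\CE'(\chi)$; in the unramified case $\zeta=1$, $n\equiv0\bmod p-1$ this should collapse to $\bigl((1-\alpha\psi_\chi(\varpi_{\bar v})^{-1}p^{-1})(1-\beta\psi_\chi(\varpi_{\bar v})^{-1}p^{-1})\bigr)^2=(1-a(p)\psi_\chi(\varpi_{\bar v})^{-1}p^{-1}+\psi_\chi(\varpi_{\bar v})^{-2}p^{-1})^2$, and in the ramified case to the Gauss-sum expression quoted.

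The main obstacle will be the last two steps: reconciling the three different normalisations in play --- the Rankin--Selberg $p$-adic $L$-function of \cite{KLZ}, Hida's congruence-number formula for CM forms, and Katz's $p$-adic $L$-function of \cite{HT0} as renormalised with the CM periods of \cite[\S4.5]{JSW} --- so that the composite interpolation factor really is $\CE'(\chi)$ and all archimedean and $2$-adic constants match exactly. The restriction $m>0$ built into $\Xi^{(II)}$ is essential here: it forces $h_{v,\chi_1}^0$ to be genuinely cuspidal, so that the factorisation of the Rankin--Selberg $L$-function and the CM congruence-number identity both apply, while the extra $T_v$ in $H_v=T_vH_v^{\mathrm{cusp}}$ (Theorem~\ref{excz-2}) is precisely what accounts for the trivial-zero factor $(\chi_1(\gamma_v)-1)$ and must be carried through consistently.
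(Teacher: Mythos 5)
Your sketch follows the paper's route almost exactly: unwind $\sL_p^{Gr}(g/L)=\mathbf{c}^{-1}\CH_v\cdot\sL_{p,c}^{Gr}(g/L)$, note that $\phi_\chi(\mathbf{c})$ cancels the factor $(c^2-\psi_\zeta^2(c)\chi_L(c)\langle c\rangle^{2n-m+1})$ of Theorem~\ref{ERLII-thm}, that $\phi_\chi(T_v)=\chi_1(\gamma_v)-1$ supplies the trivial-zero factor, that $L(1+n,g,h^0_{v,\chi_1},\psi_\zeta^{-1}\omega^n)=L(1,g,\psi_\chi)$ by unfolding the convolution with the CM theta series, and then feed in the Katz interpolation of Theorem~\ref{pKatzL}. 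All of that is exactly what the paper does.

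The one place where your mechanism differs is the Petersson-norm step, and as written it does not close. You propose to trade $\langle h_{v,\chi_1}^0,h_{v,\chi_1}^0\rangle$ for $\phi_\chi(H_v^{\mathrm{cusp}})$ via Theorem~\ref{excz} together with the congruence-number/Petersson-norm relation. But Theorem~\ref{excz} is an equality of \emph{ideals} in $\Lambda_L^{v,\ur}$, i.e.\ it determines $H_v^{\mathrm{cusp}}$ only up to a unit, and Hida's congruence-number relation is likewise only an identification up to units (and up to a choice of period). Such "up to unit" statements cannot produce the exact constants $w_L$, $D_L^{m/2}$, the powers of $2$ and $\pi$, the factorials, nor the precise Euler factors appearing in $\CE'(\chi)$ — which is the whole content of Theorem~\ref{ERLIIint-thm}. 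Note also that the congruence ideal never actually intervenes in the normalisation: $\CH_v$ is by definition the \emph{exact} element $\frac{h_L}{w_L}T_v\mathcal{L}_v(L)$ of \eqref{nrm}, not merely a generator of $(H_v)$. What replaces your detour is the exact Shimura/Hida--Tilouine formula (cf.\ \cite[\S7]{HT1})
$$
\langle h_{v,\chi_1}^0,h_{v,\chi_1}^0\rangle \;=\; \frac{h_L\, m!\, D_L^{1/2}}{w_L\, 2^{m-1}(2\pi)^{m+1}}\, L(1,\chi_1/\chi_1^c),
$$
which is precisely what the paper invokes: substituting it into Theorem~\ref{ERLII-thm} and combining with Theorem~\ref{pKatzL} makes $L(1,\chi_1/\chi_1^c)$, $h_L$ and $m!$ cancel and leaves the stated interpolation, with $\CE(\chi)$, the Katz Euler factors, and the denominator $(1-p^{m-1}\chi_1^{-2}(\varpi_{\bar v}))(1-p^{m}\chi_1^{-2}(\varpi_{\bar v}))$ collapsing to $\CE'(\chi)$ exactly as in your final consistency check for $\zeta=1$. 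With this one substitution your argument is the paper's proof.
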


\noindent Here $(\Omega_p,\Omega_\infty)$ are the CM periods as in Theorem \ref{pKatzL},
and 
$$
L(s,g,\psi_\chi) = \sum_{\mathfrak{a}, (\mathfrak{a},\mathfrak{f}_{\psi_\chi})=1}
a_g(N(\mathfrak{a})) \psi_\chi(x_{\mathfrak{a}}) N(\mathfrak{a})^{-s} = L(s,g,h_{\chi_1}^0, \psi_{\zeta}^{-1}\omega^n).
$$
The key to this rewrite of the second explicit reciprocity law is the following formula (essentially due to Shimura -- see \cite[\S 7]{HT1}), which expresses ${\langle h_{v,\chi_1}^0,h_{v,\chi_1}^0\rangle}$
in terms of $L(1,\chi_1/\chi_1^c)$:
$$
{\langle h_{v,\chi_1}^0,h_{v,\chi_1}^0\rangle} = \frac{h_L m! D_L^{1/2} }{w_L 2^{m-1}(2\pi)^{m+1}} L(1,\chi_1/\chi_1^c).
$$
Combined with the interpolation formula for the Katz $p$-adic $L$-function $\sL_v(L)$ from Theorem \ref{pKatzL}, this easily yields
the formula in Theorem \ref{ERLIIint-thm}.

\begin{remark}\label{BFint-rmk} {\it A priori}, the class $\mathcal{BF}(g/L)$ depends on the choice of an auxiliary integer $c$. However, as
the notation suggest, the class can be shown to be independent of this choice. This in fact follows from 
Theorem \ref{ERLIIint-thm}, the injectivity of $\sL^{\mathrm{int}}$, and the Zariski density of the specialisations $\phi_\chi$, $\chi\in \Xi^{(II)}$. 
\end{remark}

\subsubsection{Comparisons with other $p$-adic $L$-functions}
Our application of the Beilinson--Flach elements to the conjecture of Perrin-Riou and other problems stems in large part from being able to realize various $p$-adic $L$-functions as specialisations of the elements $\sL_p(g/L)$ and $\sL_p^{Gr}(g/L)$. 
\medskip

\noindent{\it Comparison with cyclotomic $L$-functions.} From Theorem \ref{ERLIint-thm} together with Proposition \ref{Col-cycsp-prop} we conclude the following:

\begin{prop}\label{ERLIint-prop} 
Let $g' = g\otimes\chi_L$. Let $0\neq \omega\in S_{F,g}$, and let $\gamma\in V_{F,g}$ and $\gamma'\in V_{F,g'}$
such that $\gamma^\pm\neq 0$ and $(\gamma')^\pm\neq 0$. 
There exists a constant $c(\omega,\gamma,\gamma') \in F^\times$ such that 
\begin{itemize}
\item[(i)] $\sL_p(g/L) \mod (\gamma_v-1) = c(\omega,\gamma,\gamma')\mathfrak{g}(\chi_L)^{-1}
\CL_{\alpha,\omega,\gamma,\gamma'}(g/L) \in  \Lambda_{\CO_\lambda} = \sR/(\gamma_v-1)\sR$,
\item[(ii)] if (irr$_\BQ$) holds, $\omega\in S_{g,\CO}$ is good, and $\gamma\in T_{g,\CO}$ and $\gamma'\in T_{g',\CO}$
are such that $\gamma^\pm$ is an $\CO$-basis of $T_{\CO,g}^\pm$ and $(\gamma')^\pm$ is an $\CO$-basis of $T_{\CO,g'}^\pm$,
then $c(\omega,\gamma,\gamma') \in \CO^\times$.
\end{itemize}
\end{prop}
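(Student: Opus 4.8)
\textbf{Proof proposal for Proposition \ref{ERLIint-prop}.}

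The plan is to compare two cyclotomic objects attached to the pair $(g,g')$: on one side the one-variable $p$-adic $L$-function $\CL_{\alpha,\omega,\gamma,\gamma'}(g/L) = \CL_{\alpha,\omega,\gamma}(g)\cdot\CL_{\alpha,\omega',\gamma'}(g')$ of \S\ref{sspLcy}, and on the other side the cyclotomic specialisation $\sL_p(g/L)\bmod(\gamma_v-1)$. First I would compute the latter. By Proposition \ref{Col-cycsp-prop}, reducing $\sC$ modulo $\gamma_v-1$ gives (up to the explicit scalar $\tfrac{1}{\alpha(1-p\alpha^{-2})(1-\alpha^{-2})\lambda_N(g)}$) the Coleman map $Col_{\eta_{\omega_g}}$ applied to the cyclotomic projection of $\loc_p(\mathcal{BF}(g/L))$. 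Multiplying by $\tilde c_g$ (the definition of $\sC^{\mathrm{int}}$) and undoing the auxiliary factor $\mathbf c$, which modulo $(\gamma_v-1)$ becomes the scalar $c^2-\chi_L(c)\langle c\rangle\gamma_\cyc^{2r_c}$ in $\Lambda_{\CO_\lambda}$, one reads off $\sL_p(g/L)\bmod(\gamma_v-1)$ as an explicit $F^\times$- (resp.\ $\CO^\times$-) multiple of $Col_{\eta_{\omega_g}}(\loc_p((_c\mathcal{BF}(g/L))^{\cyc}))$ times $\mathbf c^{-1}$. The precise constant is inessential since we only claim equality up to $F^\times$ in (i).

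Next I would identify the cyclotomic restriction of the Beilinson--Flach class with a Beilinson--Kato-type element over $L$. The specialisation of $\bh_v$ at the trivial weight-one character is the $p$-ordinary stabilisation of the Eisenstein series $E_1(1,\chi_L)$, and correspondingly the cyclotomic branch of $_c\mathcal{BF}(g/L)$ localises at $p$ to a combination governed by the two $L$-functions $L(s,g\otimes\psi_\zeta^{-1})$ and $L(s,g\otimes\chi_L\psi_\zeta^{-1})$ (this is exactly the factorisation recorded in the proof of Theorem \ref{ERLI-thm}, and it is why $\bz^\ord_{\alpha,\omega,\gamma,\gamma'}(g_{/L})$ was defined the way it was in \S\ref{BK-overL}). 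So after applying $Col_{\eta_{\omega_g},v}$ one gets, by Theorem \ref{ERLBKII}(a), the product $\CL_{\alpha,\omega_g,\gamma_g,\gamma'_{g'}}(g/L)$ up to the explicit Euler-type constants appearing in Theorem \ref{ERLIint-thm} versus Theorems \ref{pcycQ}--\ref{pcycQss}; those constants match on the nose (both are $\mathcal E(\zeta)$-type factors), so the two interpolation formulae agree coefficient-by-coefficient. By the Zariski density of the specialisations $\phi_\zeta$, $\zeta$ ranging over $p$-power roots of unity, in $\Lambda_{\CO_\lambda}\otimes_{\BZ_p}\BQ_p$, this forces the identity in (i) with some $c(\omega_g,\gamma_g,\gamma'_{g'})\in F^\times$; changing $\omega$, $\gamma$, $\gamma'$ to arbitrary choices multiplies both sides by the corresponding period ratios in $F^\times$ (using $F$-linearity of $\gamma\mapsto\bz_\gamma$ and of the Eichler--Shimura periods in \S\ref{Periods}, plus Lemma \ref{PerTw}(i) for the twist), so the constant stays in $F^\times$, giving (i).

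For (ii) the task is to upgrade the constant from $F^\times$ to $\CO^\times$ under (irr$_\BQ$). Here I would track integrality on both sides: the left side $\sL_p(g/L)\bmod(\gamma_v-1)$ lies in $\Lambda_{\CO_\lambda}$ by construction of $\sC^{\mathrm{int}}$ (with $\tilde c_g=c_g$ when (irr$_\BQ$) holds, by Lemma \ref{GorPer}), and $\mathbf c\in\sR^\times$; the right side $\mathfrak g(\chi_L)^{-1}\CL_{\alpha,\omega,\gamma,\gamma'}(g/L)$ lies in $\Lambda_{\CO_\lambda}$ when $\omega$ is good and $\gamma^\pm$, $(\gamma')^\pm$ are $\CO$-bases of the optimal lattices, using Theorem \ref{pcycQ}, the analogous statement for $g'$, Lemma \ref{PerTw}(ii) to absorb $\mathfrak g(\chi_L)$, and the integrality of the Gauss-sum-normalised $L$-values in Theorem \ref{L-Alg}. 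So $c(\omega,\gamma,\gamma')$ is an element of $F^\times$ carrying $\Lambda_{\CO_\lambda}$ isomorphically onto $\Lambda_{\CO_\lambda}$ (both sides generate the same $\Lambda_{\CO_\lambda}$-module once one knows, via Theorem \ref{cycIwL-Box-II} and the non-vanishing in Lemma \ref{ColBK-lem}(i), that neither side is zero and that the relevant cohomology is free of rank one), hence a unit in $\CO_\lambda^\times\cap F^\times=\CO^\times$. The main obstacle I anticipate is precisely the bookkeeping in this last step: matching the normalising constants $\tilde c_g=c_g$, the Petersson-period relation of Lemma \ref{PetPer}, the Gauss-sum factors from the twist, and the various $\alpha$-Euler factors so that nothing leaks a non-unit; the geometric input (Proposition \ref{Col-cycsp-prop}) and the density argument are comparatively routine.
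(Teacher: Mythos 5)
Your part (i) is, in substance, the paper's own argument: one compares the interpolation formula of Theorem \ref{ERLIint-thm} at the characters $\chi=(1,\psi_\zeta)$ (which factor through $\sR/(\gamma_v-1)\sR$) with the formulas of Theorem \ref{pcycQ} for $g$ and $g'$, notes $\mathcal E(\zeta)=e_p(\zeta)^2$ and $L(1,g',\psi_\zeta^{-1})=L(1,g,\chi_L\psi_\zeta^{-1})$, and concludes by density that the two sides are proportional with constant equal to the explicit period ratio $\Omega_{\omega,+}\Omega_{\omega',+}\mathfrak g(\chi_L)/(\pi^2(-i)2^3\Omega_g)$. Two caveats on your write-up of this step. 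First, your appeal to Theorem \ref{ERLBKII}(a) is misplaced and risks circularity: that theorem concerns the Beilinson--Kato element $\bz^\ord_{\alpha,\omega,\gamma,\gamma'}(g_{/L})$, and the identification of the cyclotomic specialisation of $\mathcal{BF}(g/L)$ with that element is exactly Theorem \ref{2varZ-1varZ-thm}, which is proved later \emph{using} the present proposition; the comparison of interpolation formulas alone is what is needed (the detour through Proposition \ref{Col-cycsp-prop} is harmless but not required here). Second, to know the constant lies in $F^\times$ rather than merely $F_\lambda^\times$ you must also invoke Lemma \ref{PetPer}(i) (relating $\Omega_g=\langle g,g\rangle/\tilde c_g$ to $\Omega_g^+\Omega_g^-$ up to $F^\times$) together with Lemma \ref{PerTw}(i); you cite only the latter in part (i).

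The genuine gap is in part (ii). Knowing that both $\sL_p(g/L)\bmod(\gamma_v-1)$ and $\mathfrak g(\chi_L)^{-1}\CL_{\alpha,\omega,\gamma,\gamma'}(g/L)$ lie in $\Lambda_{\CO_\lambda}$, that neither vanishes, and that the relevant Iwasawa cohomology is free of rank one does \emph{not} imply that they ``generate the same $\Lambda_{\CO_\lambda}$-module'': proportional integral elements need not differ by a unit ($pf$ and $f$ are both integral), and freeness of the ambient cohomology says nothing about whether either element is $\lambda$-primitive. In fact the primitivity of $\CL_{\alpha,\omega,\gamma,\gamma'}(g/L)$ is essentially the (unknown) vanishing of its $\mu$-invariant, and the image of the integral Coleman map is only of finite index in $\Lambda_{\CO_\lambda}$, so your ``hence a unit in $\CO_\lambda^\times\cap F^\times$'' does not follow. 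The paper's proof of (ii) takes a different and unavoidable route: the constant, having been identified \emph{explicitly} as the period ratio above, is shown to be a unit by the integral refinements Lemma \ref{PerTw}(ii) and Lemma \ref{PetPer}(ii), valid under (irr$_\BQ$) with $\omega$ good, $\gamma^\pm$, $(\gamma')^\pm$ optimal bases, and $\tilde c_g=c_g$ via Lemma \ref{GorPer}; each relation $\sim_{F^\times}$ used in part (i) then upgrades to $\sim_{\CO^\times}$. Your closing remark about ``bookkeeping so that nothing leaks a non-unit'' gestures at this, but the argument you actually give in its place is not valid and must be replaced by the period computation.
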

\noindent In part (i), $\CL_{\alpha,\omega,\gamma,\gamma'}(g/L)$ is the cyclotomic $p$-adic $L$-function for $g$ over $L$ as in 
Section \ref{sspLcy}.

\begin{proof}
By comparing Theorems \ref{ERLIint-thm} and \ref{pcycQ} we see that
$$\sL_p(g/L) \mod (\gamma_v-1) = c(\omega,\gamma,\gamma')\mathfrak{g}(\chi_L)^{-1} 
\CL_{\alpha,\omega,\gamma}(g)\CL_{\alpha,\omega',\gamma'}(g') = 
c(\omega,\gamma,\gamma')\mathfrak{g}(\chi_L)^{-1}\CL_{\alpha,\omega,\gamma,\gamma'}(g/L),
$$
where $\omega'$ is the image of $\omega$ under the isomorphism \eqref{RigMod-Opt} and 
$$ 
c(\omega,\gamma,\gamma') = \frac{\Omega_{\omega,+}\Omega_{\omega',+}\mathfrak{g}(\chi_L)}{\pi^2(-i)2^3 \Omega_g}.
$$
Here $\Omega_{\omega,\pm}$ (resp.~$\Omega_{\omega',\pm}$) are the periods determined by $\omega$ and $\gamma^\pm$
(resp.~by $\omega'$ and $(\gamma')^\pm$) as in Section \ref{Periods}.
From the definitions of the periods and Lemmas \ref{PerTw}  and \ref{PetPer} we have 
$$
\Omega_{\omega,+}\sim_{F^\times}\Omega_g^+, \ \ \Omega_{\omega',+}\sim_{F^\times}\Omega_{g'}^+ \sim_{F^\times}\mathfrak{g}(\chi_L)^{-1}\Omega_g^-, \
\ \text{and} \ \ 
\Omega_g \sim_{F^\times} (4\pi^2 i)^{-1}\Omega_g^+\Omega_g^-.
$$
So $c(\omega,\gamma,\gamma')\sim_{F^\times} 1$, proving part (i). If (irr$_\BQ$) holds and $\omega$, $\gamma$, and $\gamma'$ are as in part (ii), then each $\sim_{F^\times}$ can be replaced with $\sim_{\CO^\times}$, yielding part (ii).
\end{proof} 

\medskip

\noindent{\it Comparison with the $L$-function of Bertolini--Darmon--Prasanna.}
If the pair $(g,L)$ satisfies the Heegner hypothesis:
\begin{equation}\label{Heeg}\tag{Heeg}
\ell\mid N \implies \text{$\ell$ splits in $L$},
\end{equation} 
then Bertolini, Darmon, and Prasanna \cite{BDP1} constructed an anticyclotomic $p$-adic $L$-function interpolating the special values
$L(1,g,\psi)$ for $\psi$ a Hecke character with infinity type $(-n,n)$:

\begin{thm}\label{pBDPL}
Suppose \eqref{Heeg} holds. There exists $\sL_v^{BDP}(g/L) \in \Lambda_{L,\CO^\ur}^\ac = \CO^\ur[\![\Gamma_L^\ac]\!]$
such that for $\chi:\Gamma_L^\ac\rightarrow\ov{\BQ}_p^\times$ with $\chi(\gamma_\ac^{h_p}) = \epsilon(\gamma)^n$ for some positive integer
$n>0$, $n\equiv 0 \mod p-1$, 
$$
\phi_{\chi^{-1}}(\sL_v^{BDP}(g/L)) = (1-a(p)\psi_\chi(\varpi_{\bar v})^{-1}p^{-1} + \psi_\chi(\varpi_{\bar v})^{-2}p^{-1})^2
\cdot w_L \frac{n!(n-1)!(2\pi)^{2n-1}}
{4 D_L^n} \Omega_p^{4n}\frac{L(1,g,\psi_\chi)}{\Omega_\infty^{4n}}.
$$
\end{thm}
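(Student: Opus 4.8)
\emph{Proof sketch.} The statement is essentially \cite{BDP1}, and the first step is to recall their construction together with a translation of normalisations. Under \eqref{Heeg} one fixes a cyclic ideal $\mathfrak{N}\subset\cO_L$ with $\cO_L/\mathfrak{N}\cong\BZ/N\BZ$, producing CM points on a suitable model of $X_1(N)$; one then forms the $p$-depletion $g^{[p]}$ of $g$, applies the Atkin--Serre theta operator $\theta^j$ on $p$-adic modular forms, and defines $\sL_v^{BDP}(g/L)$ as the $\CO^\ur$-valued measure on $\Gamma_L^\ac$ whose moments are the CM values $\theta^jg^{[p]}$ normalised by the CM period $\Omega_p$. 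The interpolation formula follows from comparing $\theta^j$ at the CM point with the complex Shimura--Maass differential operator and invoking Shimura's algebraicity theorem, the Euler factor at $p$ being produced precisely by the $p$-depletion. The only points requiring care when importing this into the present setting are the bookkeeping of the transcendental constant $n!(n-1)!(2\pi)^{2n-1}/(4D_L^n)$ and the normalisation of the pair $(\Omega_p,\Omega_\infty)$, which we fix following \cite[\S4.5]{JSW} so as to be consistent with Theorem~\ref{pKatzL}.

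Alternatively --- and this is the route I would carry out in detail, since it stays within the framework developed above and is exactly what is needed for the $p$-adic Waldspurger formula --- one realises $\sL_v^{BDP}(g/L)$ directly from the second explicit reciprocity law. The plan is: $(1)$ observe, using \cite[Thm.~7.7.2]{KLZ} and the definition of $\sL^{\mathrm{int}}$, that the factor $\langle h^0_{v,\chi_1},h^0_{v,\chi_1}\rangle^{-1}$ occurring in $\sL(\loc_p(\mathcal{BF}(g/L)))$ is, up to the explicit Shimura constant, $L(1,\chi_1/\chi_1^c)^{-1}$, so that the normalisation $\CH_v=\tfrac{h_L}{w_L}T_v\CL_v(L)$ of \eqref{nrm} and Theorem~\ref{excz-2} equips $\sL_p^{Gr}(g/L)=\CH_v\cdot\sL(\loc_p(\mathcal{BF}(g/L)))$ with exactly one extra factor of $T_v$ --- equivalently, $\sL_p^{Gr}(g/L)$ is divisible by $T_v$ in $\sR^\ur$, which is also visible from Theorem~\ref{ERLIIint-thm} since every value $\phi_\chi$, $\chi\in\Xi^{(II)}$, carries the factor $\chi_1(\gamma_v)-1$ and $\Xi^{(II)}$ is Zariski dense. $(2)$ Identify the anticyclotomic specialisation: in the coordinates of \S\ref{MoreIwasawa} and \S\ref{Zp-ext}, where $\gamma_-=\gamma_v^{1/2}$ and $\gamma_+=\gamma_\cyc\gamma_v^{p^{h_p}/2}$, the quotient $\Gamma_L^v\hat\otimes\Gamma\cong\Gamma_L\twoheadrightarrow\Gamma_L^\ac$ kills $\Gamma_L^+$ and sends $\gamma_v\mapsto\gamma_\ac^2$; a character $\chi$ of $\Gamma_L^\ac$ with $\chi(\gamma_\ac^{h_p})=\epsilon(\gamma)^n$, $n>0$, $n\equiv 0\bmod p-1$, pulls back to a character in $\Xi^{(II)}$ with $m=2n$ and trivial cyclotomic finite part, and the associated Hecke character $\psi_\chi$ has infinity type $(n-m,n)=(-n,n)$, matching Theorem~\ref{pBDPL}. $(3)$ Define $\sL_v^{BDP}(g/L):=-\tfrac12\,\iota\big(\mathrm{sp}_{\ac}(\sL_p^{Gr}(g/L)/T_v)\big)\in\Lambda_{L,\CO^\ur}^\ac$, where $\mathrm{sp}_{\ac}$ is the above specialisation and $\iota$ is the involution $\gamma\mapsto\gamma^{-1}$ of $\Gamma_L^\ac$. $(4)$ Evaluate via Theorem~\ref{ERLIIint-thm} with $m=2n$: the factors collapse to $\pi^{2m-2n-1}=\pi^{2n-1}$, $2^{2n-2m+2}=2^{2-2n}$, $D_L^{m/2}=D_L^n$, $\Omega_p^{2m}=\Omega_p^{4n}$, $\Omega_\infty^{2m}=\Omega_\infty^{4n}$, and $4\pi^{2n-1}/(2^{2-2n}(2\pi)^{2n-1})=2$, so that dividing out $\chi_1(\gamma_v)-1$ and the unit $-2$ and applying $\iota$ yields exactly the interpolation formula of Theorem~\ref{pBDPL}, with $\CE'(\chi)$ in the case $\zeta=1$, $n\equiv 0\bmod p-1$ equal to the displayed Euler factor $(1-a(p)\psi_\chi(\varpi_{\bar v})^{-1}p^{-1}+\psi_\chi(\varpi_{\bar v})^{-2}p^{-1})^2$.

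The main obstacle is Step $(1)$: verifying cleanly that $\sL_p^{Gr}(g/L)$ is divisible by $T_v$ in $\sR^\ur$ and identifying the quotient, which amounts to pinning down $\ord_{(T_v)}$ of the congruence power series $H_v$ and of the Katz $p$-adic $L$-function $\CL_v(L)$ at the trivial Hecke character; this is precisely the trivial-zero analysis already carried out in Theorems~\ref{excz} and \ref{excz-2} (relying on \cite{BDP'}), so the genuinely new work is the constant- and period-bookkeeping in Step $(4)$ and checking that the chosen CM periods match those of Theorems~\ref{pKatzL} and \ref{pBDPL}. A secondary, purely notational, subtlety is the appearance of the half-integral exponents $\gamma_v^{1/2}$ and $p^{h_p}/2$ in the change of variables of \S\ref{MoreIwasawa}: these are legitimate because $\Gamma_L^-\hookrightarrow\Gamma_L$ was arranged in \S\ref{Zp-ext} to send $\gamma_-$ to $\gamma_v^{1/2}$, so $\mathrm{sp}_{\ac}$ is well defined on $\sR^\ur$. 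Finally, once $\sL_v^{BDP}(g/L)$ is constructed, its independence of the auxiliary integer $c$ entering $\mathcal{BF}(g/L)$ follows exactly as in Remark~\ref{BFint-rmk}.
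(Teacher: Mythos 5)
The paper does not prove this statement at all: as indicated in the two sentences immediately following it, Theorem \ref{pBDPL} is quoted from \cite{BDP1}, with the fact that $\sL_v^{BDP}(g/L)$ is a measure (an element of $\CO^\ur[\![\Gamma_L^\ac]\!]$) rather than merely a continuous function taken from \cite{CH}. Your first paragraph — the CM-point/$p$-depletion/theta-operator construction with the period bookkeeping — is exactly the cited route, and had you stopped there the proposal would be fine.

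Your preferred second route, however, has a genuine gap and is moreover circular relative to the paper's architecture. Step $(1)$ is not substantiated: the fact that every interpolated value $\phi_\chi(\sL_p^{Gr}(g/L))$, $\chi\in\Xi^{(II)}$, carries the factor $\chi_1(\gamma_v)-1=\phi_\chi(T_v)$ says nothing about divisibility of the element by $T_v$ in $\sR^\ur$ — a factorisation of values at a Zariski-dense set of points does not factor the Iwasawa function unless the complementary factor is itself known to be interpolated by an Iwasawa function, which here is precisely the existence of the BDP measure, i.e.\ the theorem you are trying to prove (this is exactly how Proposition \ref{GRL=BDPL} is proved in the paper: by comparing the two interpolation formulas, \emph{given} Theorem \ref{pBDPL} and \cite{CH}). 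Your alternative justification via $\CH_v=\tfrac{h_L}{w_L}T_v\CL_v(L)$ also fails: by Theorem \ref{excz-2} the congruence power series is $H_v=T_vH_v^{\mathrm{cusp}}$, so the $T_v$ in $\CH_v$ is exactly cancelled against the pole of $\tilde I_{\bh_v}=\frac{1}{H_v}\Lambda_L^v$ and leaves no spare factor. The divisibility that the paper actually uses (the definition of $\CL_{\bar v}$ via $\tfrac{1}{T_v}\sL^{\mathrm{int}}$) rests on $\BT_1^-=T_v\widetilde\BH^-$, i.e.\ on \eqref{Ind-eq}/Theorem \ref{BT-thm1}, whose proof goes through Proposition \ref{aux-g-prop}, which in turn invokes Theorem \ref{BDPformula} and Proposition \ref{GRL=BDPL} — both downstream of Theorem \ref{pBDPL}. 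So constructing $\sL_v^{BDP}(g/L)$ out of $\sL_p^{Gr}(g/L)$ presupposes the BDP theory you are trying to recover. Finally, even if the divisibility were granted, the later applications use the value at the trivial character (Theorem \ref{BDPformula}), which lies outside the interpolation range, so BDP's actual construction cannot be dispensed with in any case; the correct course is simply to cite \cite{BDP1} and \cite{CH} as the paper does.
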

\noindent Here again $(\Omega_p,\Omega_\infty)$ are the CM periods as in Theorem \ref{pKatzL} 
and 
$\psi_\chi$ is the algebraic Hecke character of $L$ with infinity type $(-n,n)$ such that
$\sigma_{\psi_\chi}$ is the composition 
$G_L\twoheadrightarrow \Gamma_L^\ac \stackrel{\chi}{\rightarrow}\overline{\BQ}_p^\times$.
In fact, $\sL_v^{BDP}(g/L)$ is only constructed as a continuous function in \cite{BDP1}. That it is in fact a measure
(and in particular an element of $\CO^\ur[\![\Gamma_L^\ac]\!]$) is shown in \cite{CH}.

\begin{remark} The newform $g$ is not required to be ordinary in \cite{BDP1}: Theorem \ref{pBDPL} holds
in the both the ordinary and supersingular cases. 
\end{remark}

Of crucial importance to us is the following remarkable formula of Bertolini, Darmon, and Prasanna \cite[Thm.~5.13]{BDP1}, which
generalizes and extends a theorem of Rubin \cite{Ru1} for the CM case:

\begin{thm}\label{BDPformula} Suppose \eqref{Heeg} holds.
The image of $\sL_v^{BDP}(g/L)$ under the specialisation corresponding
to the trivial character ${\bf 1}: \Gamma_L^\ac\rightarrow \BQ_p$ (which is 
 the augmentation map $\Lambda_{L,\CO^\ur}^v\twoheadrightarrow \CO^\ur$) is
$$
\phi_{\bf 1}(\sL_v^{BDP}(g/L)) = (1-a(p)p^{-1}+p^{-1})^2 (\log_{\omega_g}(y_L))^2,
$$
where $\log_{\omega_g}$ is the $p$-adic logarithm on $J_0(N)$ for the differential $\omega_g$ and $y_L \in J_0(N)(L)$ is a Heegner point.
\end{thm}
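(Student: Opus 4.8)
\textbf{Proof strategy for Theorem \ref{BDPformula}.}

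The plan is to obtain the formula as the limiting case of the interpolation property in Theorem \ref{pBDPL} as $n \to 0$, combined with the known $p$-adic Gross--Zagier/Waldspurger-type identity relating the value of the Bertolini--Darmon--Prasanna $p$-adic $L$-function at the trivial character to the square of a $p$-adic logarithm of a Heegner point. Concretely, I would first recall the construction of $\sL_v^{BDP}(g/L)$ in \cite{BDP1}: it arises by $p$-adically interpolating the algebraic parts of the Rankin--Selberg integrals $L(1,g,\psi_\chi)$, and the interpolation formula of Theorem \ref{pBDPL} is proved by identifying those integrals with values of an Eisenstein series at CM points, using the Waldspurger/Shimura formula. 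The augmentation point ${\bf 1} \colon \Gamma_L^\ac \to \BZ_p$ lies \emph{outside} the range of classical interpolation (it corresponds to weight $n = 0$, i.e. a near-central point where the relevant Hecke character is finite order), so $\phi_{\bf 1}(\sL_v^{BDP}(g/L))$ is a genuine $p$-adic limit and not a classical special value.

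The key steps, in order, are as follows. First, I would set up the $p$-adic analytic framework: $\sL_v^{BDP}(g/L)$ is a measure on $\Gamma_L^\ac$ by \cite{CH}, hence a rigid-analytic function on the associated weight space, and its values at the characters $\chi$ with $\chi(\gamma_\ac^{h_p}) = \epsilon(\gamma)^n$ are governed by Theorem \ref{pBDPL}. Second, following \cite[\S5]{BDP1}, I would express $\phi_{\bf 1}(\sL_v^{BDP}(g/L))$ via the $p$-adic Abel--Jacobi map applied to the generalized Heegner cycle attached to $(g, L)$ at weight $2$, which degenerates to the Heegner point $y_L \in J_0(N)(L)$; the point is that the Bertolini--Darmon--Prasanna $p$-adic $L$-function, at the trivial character, computes the image of this cycle under the Bloch--Kato logarithm paired against $\omega_g$. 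Third, I would invoke the central computation of \cite[Thm.~5.13]{BDP1}: the $p$-adic Abel--Jacobi image of the Heegner cycle, evaluated on $\omega_g \wedge \omega_g^{\text{anti-holo}}$ (which is the relevant class at weight $2$), equals $(\log_{\omega_g}(y_L))^2$ up to the explicit Euler factor $(1 - a(p)p^{-1} + p^{-1})^2$ coming from the $p$-stabilization --- note this Euler factor is exactly the $n = 0$ specialization of the factor $(1-a(p)\psi_\chi(\varpi_{\bar v})^{-1}p^{-1} + \psi_\chi(\varpi_{\bar v})^{-2}p^{-1})^2$ appearing in Theorem \ref{pBDPL}, since at the trivial character $\psi_\chi(\varpi_{\bar v}) = 1$. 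Finally, I would check that the period normalizations match: the factor $\Omega_p^{4n}/\Omega_\infty^{4n}$ in Theorem \ref{pBDPL} specializes to $1$ at $n = 0$, and the archimedean constant $w_L n!(n-1)!(2\pi)^{2n-1}/(4 D_L^n)$ has no pole obstructing the limit after the algebraic-part normalization is accounted for, so that the limit formula is clean as stated.

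The main obstacle is the third step: the degeneration of the weight-$(n{+}2)$ generalized Heegner cycle computation to the weight-$2$ Heegner point identity is not a formal limit, because the $p$-adic Abel--Jacobi map and the de Rham realization of the motive both vary in families, and one must control the comparison isomorphisms uniformly. This is precisely the content of the Bertolini--Darmon--Prasanna formula \cite[Thm.~5.13]{BDP1}, whose proof requires an explicit analysis of $p$-adic periods of the Kuga--Sato varieties and a careful passage through the Coleman primitive; I would cite that theorem rather than reprove it. A secondary point requiring care is ensuring the Heegner point $y_L$ here is normalized compatibly with the one appearing later in \eqref{pWa} and in the statement of Conjecture \ref{PR0}; since $y_L$ is only well-defined up to the action of Hecke operators and torsion, and the formula is an equality on the nose (not just up to $\BQ^\times$), one must fix the modular parametrization $X_0(N) \to J_0(N)$ and the CM point datum exactly as in \cite{BDP1}. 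With those conventions in place, the theorem follows directly from \cite[Thm.~5.13]{BDP1} together with the measure property established in \cite{CH}.
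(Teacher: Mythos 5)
Your proposal is correct and matches the paper's treatment: the paper gives no independent proof of Theorem \ref{BDPformula}, but simply quotes it as the formula of \cite[Thm.~5.13]{BDP1} (with \cite{CH} invoked beforehand for the measure property of $\sL_v^{BDP}(g/L)$), which is exactly the reduction you arrive at. Your additional remarks on normalizations and on the trivial character lying outside the interpolation range are sensible context but not part of any argument the paper itself supplies.
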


Comparing Theorems \ref{pBDPL} and \ref{ERLIIint-thm} yields:
\begin{prop}\label{GRL=BDPL} Suppose \eqref{Heeg} holds. 
The image of $\sL_p^{Gr}(g/L)$ under the map
$\phi_\ac:\sR^\ur \twoheadrightarrow \Lambda_{L,\CO^\ur}^{ac}$ induced from the homomorphism
$\Gamma_L^v\times\Gamma\twoheadrightarrow \Gamma_L^\ac$, $\gamma_v \mapsto \gamma_\ac^2$ and 
$\gamma\mapsto \gamma_\ac^{h_p}$, equals $-T_\ac^2$ times the image of $\sL_v^{BDP}(g/L)$ under the involution 
$\iota_\ac$ of $\Lambda_L^\ac$ induced by $\gamma_\ac\mapsto \gamma_\ac^{-1}$:
$$
\phi_\ac(\sL_p^{Gr}(g/L)) = -((1+T_\ac)^2-1) \cdot \iota_\ac(\sL_v^{BDP}(g/L)).
$$
\end{prop}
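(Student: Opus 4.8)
The plan is to deduce Proposition \ref{GRL=BDPL} by comparing the two interpolation formulas: Explicit Reciprocity Law II$'$ (Theorem \ref{ERLIIint-thm}) for $\sL_p^{Gr}(g/L)$ and the defining interpolation property of the Bertolini--Darmon--Prasanna $p$-adic $L$-function (Theorem \ref{pBDPL}). Both sides of the claimed identity live in $\Lambda_{L,\cO^{\ur}}^{\ac}$, which is an integral domain, so it suffices to check the identity after applying a Zariski-dense set of specialisations $\chi:\Gamma_L^\ac\to \ov{\BQ}_p^\times$. The natural set to use is the collection of $\chi$ with $\chi(\gamma_\ac^{h_p})=\epsilon(\gamma)^n$ for positive integers $n\equiv 0\bmod p-1$ appearing in Theorem \ref{pBDPL}; these are infinitely many and accumulate, hence Zariski-dense.

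First I would unwind the map $\phi_\ac$. Under the homomorphism $\Gamma_L^v\times\Gamma\twoheadrightarrow\Gamma_L^\ac$, $\gamma_v\mapsto\gamma_\ac^2$, $\gamma\mapsto\gamma_\ac^{h_p}$, a character $\chi_\ac$ of $\Gamma_L^\ac$ pulls back to the character $\chi=(\chi_1,\chi_2)$ of $\Gamma_L^v\times\Gamma$ with $\chi_1(\gamma_v)=\chi_\ac(\gamma_\ac)^2$ and $\chi_2(\gamma_\cyc)=\chi_\ac(\gamma_\ac)^{h_p}$. One checks that if $\chi_\ac(\gamma_\ac^{h_p})=\epsilon(\gamma)^n$ then the resulting $\chi$ lies in $\Xi^{(II)}$ with parameters $m=n$ and $\zeta=1$ and cyclotomic exponent $n$ (so $2n-m=n$ in the notation of Theorem \ref{ERLIIint-thm}), and moreover the algebraic Hecke character $\psi_\chi$ of infinity type $(n-m,n)=(0,n)$ built from $\chi_1\times\chi_2^{-1}$ agrees with the infinity type $(-n,n)$ Hecke character used in Theorem \ref{pBDPL} up to the norm character; since $L(s,g,\psi)$ is insensitive to this twist by a power of the norm (it only shifts the variable $s$, and both formulas are evaluated at the appropriate central point), the $L$-values on both sides coincide. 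Here the $n\equiv 0 \bmod p-1$ and $\zeta=1$ case of the Euler factors is exactly the first branch of $\CE'(\chi)$ in Theorem \ref{ERLIIint-thm}, namely $(1-a(p)\psi_\chi(\varpi_{\bar v})^{-1}p^{-1}+\psi_\chi(\varpi_{\bar v})^{-2}p^{-1})^2$, which is precisely the Euler-type factor appearing in Theorem \ref{pBDPL}.

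Next I would match the remaining archimedean and power-of-$2$/$\pi$/$D_L$ constants. Theorem \ref{ERLIIint-thm} gives $\phi_\chi(\sL_p^{Gr}(g/L))=-w_L(\chi_1(\gamma_v)-1)\CE'(\chi)\,\frac{n!(n-1)!\pi^{2m-2n-1}}{2^{2n-2m+2}D_L^{m/2}}\Omega_p^{2m}\frac{L(1,g,\psi_\chi)}{\Omega_\infty^{2m}}$; with $m=n$ this is $-w_L(\chi_1(\gamma_v)-1)\CE'(\chi)\frac{n!(n-1)!\pi^{-1}}{4 D_L^{n/2}}\Omega_p^{2n}\frac{L(1,g,\psi_\chi)}{\Omega_\infty^{2n}}$, while Theorem \ref{pBDPL} gives (after the involution $\iota_\ac$, which sends $\chi$ to $\chi^{-1}$, so that $\phi_{\chi^{-1}}$ of the BDP $L$-function appears) the value $\CE'(\chi)\cdot w_L\frac{n!(n-1)!(2\pi)^{2n-1}}{4D_L^n}\Omega_p^{4n}\frac{L(1,g,\psi_\chi)}{\Omega_\infty^{4n}}$. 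Comparing ratios, the CM periods enter as $\Omega_p^{2n}/\Omega_\infty^{2n}$ on the Greenberg side versus $\Omega_p^{4n}/\Omega_\infty^{4n}$ on the BDP side; this discrepancy is the well-known fact (cf.~\cite[\S4.5]{JSW}, and the normalisation in Theorem \ref{pKatzL} versus Theorem \ref{pBDPL}) that the two constructions use different normalisations of CM periods, related by squaring, so that $(\Omega_p^{2n}/\Omega_\infty^{2n})_{\mathrm{Gr}}=(\Omega_p^{4n}/\Omega_\infty^{4n})_{\mathrm{BDP}}$ — indeed the Greenberg ERL was derived (see the discussion after Theorem \ref{ERLIIint-thm}, and Theorem \ref{excz-2}) by substituting the Katz $p$-adic $L$-function, whose periods are the squares of the BDP ones. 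Granting this, the archimedean constants cancel exactly up to an elementary power of $2$ and $\pi$ that one checks by hand: $\frac{\pi^{-1}}{4D_L^{n/2}}$ versus $\frac{(2\pi)^{2n-1}}{4D_L^n}$ combined with the period ratio. Finally the factor $-(\chi_1(\gamma_v)-1)$ is $-(\chi_\ac(\gamma_\ac)^2-1)=-\phi_{\chi_\ac}((1+T_\ac)^2-1)$, which accounts for the stated $-((1+T_\ac)^2-1)=-T_\ac^2$ factor; note this is where the simple zero at the trivial character comes from, consistent with the discussion in \S\ref{CMcong} of the trivial zero of the congruence series, and with the factor of $T_v$ in Theorem \ref{excz-2}.

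The main obstacle will be the bookkeeping of CM periods and the elementary constants: reconciling the $\Omega_p^{2m}/\Omega_\infty^{2m}$ of Theorem \ref{ERLIIint-thm} (inherited from Theorem \ref{pKatzL}) with the $\Omega_p^{4n}/\Omega_\infty^{4n}$ of Theorem \ref{pBDPL}, and pinning down the precise rational constant relating $L(1,g,\psi_\chi)$ as it appears in the two normalisations (including the shift in $s$ and the twist of $\psi_\chi$ by a power of the norm between infinity types $(0,n)$ and $(-n,n)$). Both of these are essentially already encoded in the derivation of Theorem \ref{ERLIIint-thm} via the Shimura formula for $\langle h_{v,\chi_1}^0,h_{v,\chi_1}^0\rangle$ and the Katz interpolation formula, so the comparison amounts to tracking the arithmetic of $2$'s, $\pi$'s, $D_L$'s, and $w_L$'s through those formulas and confirming no stray factor of $\ov{\BQ}^\times$ is lost; once all specialisations agree, density of the $\chi_\ac$ and the fact that $\Lambda_{L,\cO^\ur}^{\ac}$ is a domain force the equality of the two elements.
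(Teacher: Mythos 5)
Your overall strategy is exactly the paper's: the proposition is obtained by comparing the interpolation formula of Theorem \ref{ERLIIint-thm} with that of Theorem \ref{pBDPL} on a Zariski-dense set of anticyclotomic specialisations and using that $\Lambda_{L,\CO^\ur}^{\ac}$ is a domain. The identification $-(\chi_1(\gamma_v)-1)=-\phi_{\chi_\ac}((1+T_\ac)^2-1)$ as the source of the $-T_\ac^2$ factor, and the role of $\iota_\ac$ in turning $\phi_{\chi_\ac}$ into $\phi_{\chi_\ac^{-1}}$ of the BDP function, are also correct.

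However, there is a genuine error in how you pull back the specialisations, and the patch you invent to fix its consequences is false. Since $\phi_\ac$ sends $\gamma_v\mapsto\gamma_\ac^2$, a character $\chi_\ac$ with $\chi_\ac(\gamma_\ac^{h_p})=\epsilon(\gamma)^n$ pulls back to $\chi=(\chi_1,\chi_2)$ with $\chi_1(\gamma_v^{h_p})=\chi_\ac(\gamma_\ac^{h_p})^2=\epsilon(\gamma)^{2n}$, so in the notation of $\Xi^{(II)}$ one has $m=2n$ (and cyclotomic exponent $n$, $\zeta=1$), not $m=n$ as you assert. With $m=2n$ the Hecke character $\psi_\chi$ has infinity type $(n-m,n)=(-n,n)$, which is literally the BDP infinity type — no norm twist is needed — and the period factor in Theorem \ref{ERLIIint-thm} is $\Omega_p^{2m}/\Omega_\infty^{2m}=\Omega_p^{4n}/\Omega_\infty^{4n}$, matching Theorem \ref{pBDPL} on the nose; likewise $\pi^{2m-2n-1}=\pi^{2n-1}$, $D_L^{m/2}=D_L^n$, and the Euler factors $\CE'(\chi)$ coincide with the BDP ones because $\psi_\chi(\varpi_{\bar v})$ is the same on both sides. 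Your claim that the Greenberg and BDP constructions use CM periods ``related by squaring'' is not available: both Theorem \ref{pKatzL} and Theorem \ref{pBDPL} explicitly use the same pair $(\Omega_p,\Omega_\infty)$, so with your $m=n$ the mismatch $\Omega_p^{2n}/\Omega_\infty^{2n}$ versus $\Omega_p^{4n}/\Omega_\infty^{4n}$ (and the $n$-dependent discrepancies in the powers of $\pi$ and $D_L$, e.g.\ $\pi^{-1}D_L^{-n/2}$ versus $(2\pi)^{2n-1}D_L^{-n}$) cannot be absorbed into any fixed normalisation constant, and the comparison would simply fail. Redo the bookkeeping with $m=2n$ and the argument closes as intended (any residual discrepancy is at worst a fixed power of $2$ independent of $n$, i.e.\ a normalisation constant, not an $n$-dependent defect).
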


Recall that if the Heegner hypothesis \eqref{Heeg} holds, then the root number 
$\epsilon(g/L)$ equals $-1$. When this root number is $+1$ we have:
\begin{prop}\label{GRLvan-prop} Suppose $\epsilon(g/L)=+1$. Let $\phi_\ac$ be
as in Proposition \ref{GRL=BDPL}. Then 
$
\phi_\ac(\sL^{Gr}(g/L)) = 0.
$
\end{prop}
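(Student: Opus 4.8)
\textbf{Proof plan for Proposition \ref{GRLvan-prop}.} The plan is to deduce the vanishing from the interpolation formula in the Explicit Reciprocity Law II$'$ (Theorem \ref{ERLIIint-thm}) together with the functional equation forcing the central $L$-values to vanish when $\epsilon(g/L)=+1$. First I would observe that the image $\phi_\ac(\sL_p^{Gr}(g/L))$ lives in $\Lambda_{L,\CO^\ur}^\ac$, which is a domain (indeed a power series ring over $\CO^\ur$), so it suffices to show that $\phi_\ac(\sL_p^{Gr}(g/L))$ vanishes under a Zariski-dense set of specialisations. The natural candidates are the characters $\chi = (\chi_1,\chi_2)\in\Xi^{(II)}$ whose restriction to $\Gamma_L^v\times\Gamma$ factors through the anticyclotomic quotient $\Gamma_L^v\times\Gamma\twoheadrightarrow\Gamma_L^\ac$, $\gamma_v\mapsto\gamma_\ac^2$, $\gamma\mapsto\gamma_\ac^{h_p}$; these are dense in $\Spec\Lambda_{L,\CO^\ur}^\ac$, so it is enough to check that $\phi_\chi(\sL_p^{Gr}(g/L))=0$ for all such $\chi$.

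Next I would unwind Theorem \ref{ERLIIint-thm} for such a $\chi$: the value $\phi_\chi(\sL_p^{Gr}(g/L))$ is a product of an explicit nonzero archimedean/$p$-adic fudge factor (the $\CE'(\chi)$ term, the factorials, the powers of $\pi$, $D_L$, and the CM periods $\Omega_p^{2m}/\Omega_\infty^{2m}$) times $L(1,g,\psi_\chi)$, where $\psi_\chi$ is the algebraic Hecke character of $L$ attached to $\chi$. When $\chi$ factors through $\Gamma_L^\ac$, the infinity type $(n-m,n)$ of $\psi_\chi$ becomes anticyclotomic — concretely, setting $m = h_p\cdot(\text{the weight parameter})/?$ and tracing through the recipe for $\psi_\chi$ in the statement preceding Theorem \ref{ERLIIint-thm}, one finds $\psi_\chi$ has infinity type of the form $(-j,j)$ up to the cyclotomic twist, so that $\psi_\chi = \psi_\chi^{c,-1}\cdot(\text{norm character})$ and the Rankin--Selberg $L$-function $L(s,g,\psi_\chi)$ is self-dual. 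Then I would invoke the sign of the functional equation: the global root number of $L(s,g\otimes\psi_\chi)$ equals $\epsilon(g/L)$ times the anticyclotomic local contributions, and the hypothesis that each prime $\ell\mid N$ behaves so that $\epsilon(g/L)=+1$ — combined with the fact that anticyclotomic twists by finite-order-times-algebraic characters of the prescribed infinity type do not change this sign (the local root numbers at the ramified primes and at $\infty$ are insensitive to such twists) — forces $\epsilon(g\otimes\psi_\chi) = -1$, hence $L(1,g,\psi_\chi)=0$ at the centre. Therefore $\phi_\chi(\sL_p^{Gr}(g/L))=0$ for all $\chi$ in the dense set, and the proposition follows.

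The main obstacle I anticipate is the bookkeeping of root numbers: I need to verify carefully that for every $\chi\in\Xi^{(II)}$ factoring through $\Gamma_L^\ac$ the twisted $L$-function $L(s,g,\psi_\chi)$ is genuinely self-dual with sign $-1$, which requires (i) checking that $\psi_\chi/\psi_\chi^c$ is (a power of) the norm times a finite character so that the centre of symmetry is indeed $s=1$, and (ii) controlling the local root numbers at the primes dividing $N$ and at the archimedean place, using that $\chi$ is unramified at the primes above $p$ and that the ramification of $\psi_\chi$ away from $p$ is controlled. Here I would lean on the standard description of anticyclotomic root numbers (e.g. as in the work of Bertolini--Darmon or in \cite{BDP1}, which is already cited) to conclude that the sign is constant along the anticyclotomic family and equal to $\epsilon(g/L)=+1$ shifted to $-1$ for the self-dual twist. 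A cleaner alternative, which I would pursue if the root-number argument gets delicate, is to instead use Proposition \ref{GRL=BDPL}: that proposition identifies $\phi_\ac(\sL_p^{Gr}(g/L))$ with $-T_\ac^2\cdot\iota_\ac(\sL_v^{BDP}(g/L))$ whenever \eqref{Heeg} holds, but when $\epsilon(g/L)=+1$ the Heegner hypothesis fails, so one cannot literally quote it; nonetheless the same Rankin--Selberg $p$-adic $L$-function of Hida underlies $\sL_p^{Gr}(g/L)$ in all cases, and its interpolated values are the central $L$-values $L(1,g,\psi_\chi)$, which vanish identically on the anticyclotomic line by the functional equation — so in either route the vanishing of the central values is the crux, and the density argument plus the domain property of $\Lambda_{L,\CO^\ur}^\ac$ does the rest.
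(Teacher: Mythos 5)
Your proposal matches the paper's proof: specialise at anticyclotomic characters (pulled back to $\Xi^{(II)}$ via $\phi_\ac$), use Theorem \ref{ERLIIint-thm} to identify each value $\phi_\chi(\phi_\ac(\sL_p^{Gr}(g/L)))$ with an explicit multiple of the central value $L(1,g,\psi_\chi)$, note that this vanishes because the root number of the self-dual twist is $-1$ when $\epsilon(g/L)=+1$ and $(p,ND_L)=1$ (the paper simply cites \cite[Lem.~3.1(2)]{CST} here), and conclude by Zariski density of these specialisations in $\Lambda_{L,\CO_\lambda^\ur}^{\ac}$. The only slip is your parenthetical that the local root numbers at $\infty$ are insensitive to such twists: the flip from $\epsilon(g/L)=+1$ to sign $-1$ comes precisely from the archimedean epsilon factor for infinity type $(-n,n)$ with $n\geq 1$, but since you defer this verification to the standard anticyclotomic root-number computations, the argument goes through exactly as in the paper.
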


\begin{proof} Let $\chi:\Gamma_L^\ac \rightarrow \BQ_p^\times$ be a character such that
$\chi(\gamma_\ac^{h_p}) = \epsilon(\gamma_\cyc)^n$ for some integer $n>0$, $n\equiv 0 \mod (p-1)$.
Then it follows from Theorem \ref{ERLIIint-thm} that $\phi_\chi(\phi_\ac(\sL^{Gr}(g/L)))$ is a multiple
of $L(1,g,\chi)$. The hypothesis that $\epsilon(g/L) = +1$ together with $(p,ND_L) = 1$ implies that the root number $\eps(1,g,\chi) = -1$ (see \cite[Lem.~3.1(2)]{CST}).  
 In particular, $L(1,g,\chi) = 0$. 
Since the (kernels of) specialisations at such characters $\chi$ are Zariski dense in $\CO_\lambda^\ur[\![\Gamma_\ac]\!]$, this implies that $\phi_\ac(\sL^{Gr}(g/L)) = 0$.
\end{proof}

\subsection{First applications: $\BT_1$ satisfies \eqref{Ind-eq} and $\CBF(g/L)$ arises from $\BT_1$} 
In this section we prove the following theorems:
\begin{thm}\label{BT-thm1} The $\Lambda_L^v[G_L]$-module $\BT_1$ satisfies \eqref{Ind-eq}.
\end{thm}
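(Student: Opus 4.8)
The plan is to derive a contradiction from the failure of \eqref{Ind-eq}. By Proposition \ref{notind-prop1}, if \eqref{Ind-eq} fails then we are necessarily in the exceptional configuration $\BH^+=T_v\BH_v$, $\widetilde\BT=T_v\widetilde\BH$ and $\widetilde\BT\neq\BT_v\oplus c\cdot\BT_v$, which I assume from now on. By Lemma \ref{ind-I-lem} there is then an induced lattice $\widetilde\BT\subsetneq\BI\subsetneq\widetilde\BH$ with $\BI\simeq\Ind_{G_L}^{G_\BQ}(\Lambda_L^v(\Psi_L^v))$, satisfying $\BT^+=\BT_v=T_v\BI_v=T_v\BH_v$ and carrying the identifications $\widetilde\BT/\BT^+\isoarrow\BI/\BI_v\isoarrow T_v(\widetilde\BH/\BH_v)$. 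The idea is to run the construction of the two-variable zeta element over $L$ through $\BI$ in place of the non-induced lattice $\widetilde\BT$, to observe that the resulting explicit reciprocity laws then carry an extra power of $T_v$, and to contradict this with a single auxiliary newform for which the Greenberg $p$-adic $L$-function is visibly not anomalous.

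First I would verify that the Beilinson--Flach class $\mathcal{BF}(g/L)$, which a priori lies in $H^1(\BZ[\frac{1}{p}],T(1)\hat\otimes\widetilde\BH\hat\otimes\Lambda)$, in fact lies in $H^1(\BZ[\frac{1}{p}],T(1)\hat\otimes\BI\hat\otimes\Lambda)$. Since $\widetilde\BH/\BI\simeq\Lambda_L^v/T_v\Lambda_L^v$ is concentrated at the single height-one prime $(T_v)$ --- the unique prime at which $\Ind_{G_L}^{G_\BQ}(\Lambda_L^v(\Psi_L^v))$ degenerates --- and the ambient global Iwasawa cohomology is torsion-free, it suffices to check the vanishing of the image of $\mathcal{BF}(g/L)$ after localisation at $p$, which is the content of Corollary \ref{BF-locp-cor}. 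Then Shapiro's lemma for $\BI\simeq\Ind_{G_L}^{G_\BQ}(\Lambda_L^v(\Psi_L^v))$, together with the identification $\Lambda_L^v(\Psi_L^v)\hat\otimes\Lambda\cong\Lambda_L$ obtained after the twist $\nu$ of \eqref{eq:tw-1}, produces a preliminary two-variable zeta element $\mathcal{Z}^{\ord}_{\mathrm{prel}}(g/L)\in H^1_{\rel,\ord}(\cO_L[\frac{1}{p}],T(1)\hat\otimes\Lambda_L)$.

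Next I would compute the reciprocity laws of $\mathcal{Z}^{\ord}_{\mathrm{prel}}(g/L)$ at $v$ and at $\bar v$. Re-running the arguments behind Theorems \ref{ERLIint-thm} and \ref{ERLIIint-thm} through the induced structure on $\BI$ rather than on $\widetilde\BT$, the identification $\BI/\BI_v=T_v(\widetilde\BH/\BH_v)$ of Lemma \ref{ind-I-lem}(ii) inserts an extra factor of $T_v$ into the Greenberg reciprocity law: the $v$-regulator of $\loc_v(\mathcal{Z}^{\ord}_{\mathrm{prel}}(g/L))$ is still $\sL_p(g/L)$, while the $\bar v$-regulator equals $\sL_p^{Gr}(g/L)$ up to an extra factor of $T_v$. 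As the $\bar v$-regulator of an honest class must be integral, this forces an additional divisibility of $\sL_p^{Gr}(g/L)$ by $T_v$. Projecting anticyclotomically and comparing with the Bertolini--Darmon--Prasanna $p$-adic $L$-function via Proposition \ref{GRL=BDPL}, this extra divisibility translates into $T_\ac\mid\iota_\ac(\sL_v^{BDP}(g/L))$, that is $\phi_{\bf 1}(\sL_v^{BDP}(g/L))=0$; by the $p$-adic Waldspurger formula (Theorem \ref{BDPformula}) this means $\log_{\omega_g}(y_L)=0$ for the Heegner point $y_L\in J_0(N)(L)$. The crucial point is that the exceptional configuration, and hence this vanishing, depends only on the pair $(L,p)$ and not on $g$; it therefore holds for \emph{every} $p$-ordinary weight-two newform $g$ of level prime to $p$ with $(g,L)$ satisfying \eqref{Heeg} and the running hypotheses.

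Finally I would exhibit one such $g$ with $\log_{\omega_g}(y_L)\neq0$, giving the contradiction. I would take $g$ attached to an elliptic curve $E/\BQ$ chosen so that every prime of bad reduction of $E$ splits in $L$ (hence \eqref{Heeg} holds and $\epsilon(g/L)=-1$), with $p$ a prime of good ordinary reduction, with $\mathrm{ord}_{s=1}L(s,E/\BQ)=1$, and with $L(1,E\otimes\chi_L)\neq0$, the last arranged via Rohrlich's non-vanishing theorem (Theorem \ref{NVIw}) applied to a suitable quadratic twist. Then $\mathrm{ord}_{s=1}L(s,E/L)=1$, so the Heegner point $y_L$ is non-torsion by Gross--Zagier, and --- using the $p$-adic Gross--Zagier formula to rule out a trivial zero at the good ordinary prime $p$ --- one obtains $\log_{\omega_g}(y_L)\neq0$. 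This contradicts the preceding step, so the exceptional configuration cannot occur and $\BT_1$ satisfies \eqref{Ind-eq}. I expect the principal difficulties to be: keeping precise track of the power of $T_v$ introduced by routing the construction and its reciprocity laws through $\BI$ rather than $\widetilde\BT$; and, more delicately, securing the auxiliary newform for a \emph{fixed} imaginary quadratic field $L$ --- in particular arranging simultaneously the splitting of the bad primes of $E$ in $L$ and the non-vanishing of the $p$-adic logarithm of the Heegner point.
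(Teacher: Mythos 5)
Your overall architecture matches the paper's (reduce to the exceptional configuration of Proposition \ref{notind-prop1}, route the Beilinson--Flach class through the induced lattice $\BI$ of Lemma \ref{ind-I-lem}, extract an anomalous $T_v$-divisibility of $\sL_p^{Gr}(g/L)$, and contradict it via Proposition \ref{GRL=BDPL}, the Waldspurger formula and an auxiliary rank-one form), but the two load-bearing steps are not established. First, the membership $\CBF(g/L)\in H^1(\BZ[\frac{1}{p}],T(1)\hat\otimes\BI\hat\otimes\Lambda)$ is not a consequence of Corollary \ref{BF-locp-cor}: the obstruction lives in the \emph{global} group $H^1(\BZ[\frac{1}{p}],T(1)\hat\otimes(\widetilde\BH/\BI)\hat\otimes\Lambda)$, whose coefficients are a quotient of $\BZ_p\oplus\BZ_p(\chi_L)$, so this is a Kato-type rank-one module that a local condition at $p$ cannot kill. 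The paper's Proposition \ref{BF-I-prop} proves membership (and more) by reducing modulo $T_v$, feeding the reduction into $Col_{\eta_\omega,v}\circ\loc_v$ via the diagram \eqref{H1-L-eq3}, and invoking the injectivity of this map on $H^1_{\rel,\ord}(L,T(1)\hat\otimes\Lambda)$, i.e.\ Kato's theorem (Theorem \ref{cycIwL-Box}) together with the non-vanishing of the Kato element under the Coleman map; this global input is entirely absent from your sketch. Second, your $T_v$-bookkeeping is off: the factor you extract from the identification $\BI/\BI_v=T_v(\widetilde\BH/\BH_v)$ is present whether or not \eqref{Ind-eq} holds (it is exactly why $\CL_{\bar v}$ is defined using $\frac{1}{T_v}\sL^{\mathrm{int}}$), so it only yields $T_v\mid\sL_p^{Gr}(g/L)$; under $\phi_\ac$ this is absorbed by the factor $(1+T_\ac)^2-1$ in Proposition \ref{GRL=BDPL} and gives no constraint on $\phi_{\bf 1}(\sL_v^{BDP}(g/L))$. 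The contradiction needs $T_v^2\mid\sL_p^{Gr}(g/L)$, and the second factor comes precisely from the ``Moreover'' of Proposition \ref{BF-I-prop}: when \eqref{Ind-eq} fails, the maps $(*)$ and $(**)$ in \eqref{H1-L-eq3} vanish because $\BT^+=T_v\BI_v$, forcing $\CBF\in T_vH^1_{\rel,\ord}(\BQ,T(1)\hat\otimes\BI\hat\otimes\Lambda)$ --- again via the Kato/Coleman-map argument you omit.

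The auxiliary form is also a genuine problem. For the \emph{fixed} field $L$ you need a $p$-ordinary form $g$ with every prime of its level split in $L$ and $\ord_{s=1}L(s,g_{/L})=1$; your proposal (a non-CM elliptic curve of analytic rank one with bad primes split in $L$ and $L(1,E\otimes\chi_L)\neq 0$ ``via Rohrlich'') is not justified, since Theorem \ref{NVIw} only concerns twists of a fixed form by characters of $\ell$-power order and conductor and cannot manufacture such a curve; whether a non-CM form with these properties exists for a given $L$ is exactly the question the paper leaves open in Remark \ref{aux-g-rmk}. The paper instead takes $g$ to be CM, attached to a conjugate-dual Hecke character of an auxiliary imaginary quadratic field $K$ whose conductor is supported at primes split in $L$, twisted by anticyclotomic characters of $\ell$-power conductor, and applies Rohrlich's \emph{anticyclotomic} non-vanishing theorem to force the rank-one condition (Proposition \ref{anrk-prop}, hence Proposition \ref{aux-g-prop}). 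Finally, your appeal to a $p$-adic Gross--Zagier formula to exclude a ``trivial zero'' is unnecessary: once the Heegner point is non-torsion in the quotient attached to $g$ (complex Gross--Zagier), $\log_{\omega_g}(y_L)\neq 0$ because the kernel of the $p$-adic logarithm is the torsion subgroup.
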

\begin{thm}\label{BT-thm2}
The Beilinson--Flach element $\CBF(g/L)$ belongs to the submodule
$H^1_{\mathrm{rel},\ord}(\BQ,T(1)\hat\otimes\BT_1\hat\otimes\Lambda)$ of 
$H^1_{\mathrm{rel},\ord}(\BQ,T(1)\hat\otimes\BH_1\hat\otimes\Lambda)$.
\end{thm}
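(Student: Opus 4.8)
\textbf{Proof proposal for Theorems \ref{BT-thm1} and \ref{BT-thm2}.}
The plan is to treat the two theorems together, since by Proposition \ref{notind-prop1} the only obstruction to \eqref{Ind-eq} is the ``exceptional case'' $\BH^+ = T_v\BH_v$, $\widetilde\BT = T_v\widetilde\BH$, $\widetilde\BT\neq \BT_v\oplus c\cdot\BT_v$, and Lemma \ref{ind-I-lem} gives us a clean dichotomy: either \eqref{Ind-eq} holds (and $\BI=\BT$, so $\CBF(g/L)$ lands in $\BT_1 = \widetilde\BT$ once we know it lands in $\BI$), or \eqref{Ind-eq} fails and there is an induced lattice $\widetilde\BT\subsetneq\BI\subsetneq\widetilde\BH$ with $\BT^+=\BT_v=T_v\BI_v=T_v\BH_v$ and $\widetilde\BT^-\isoarrow \BI/\BI_v\isoarrow T_v(\widetilde\BH/\BH_v)$. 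The strategy is to first show $\CBF(g/L)\in H^1_{\mathrm{rel},\ord}(\BQ,T(1)\hat\otimes\BI\hat\otimes\Lambda)$ for this induced lattice $\BI$ (in both cases of the dichotomy), thereby obtaining a candidate two-variable zeta element $\CZ(g/L)$ over $L$ via the identification $\BI\simeq\Ind_{G_L}^{G_\BQ}(\Lambda_L^v(\Psi_L^v))$ and Shapiro's lemma; then to compute its explicit reciprocity law at $\bar v$ (via $\sL_p^{Gr}$) and at $v$ (via $\sL_p$); and finally to argue that in the exceptional case this reciprocity law forces $\sL_p^{Gr}(g/L)$ to vanish at the identity Hecke character over $L$, which contradicts a suitable non-vanishing input, so the exceptional case cannot occur.

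First I would pin down the integrality statement $\CBF(g/L)\in H^1_{\mathrm{rel},\ord}(\BQ,T(1)\hat\otimes\BI\hat\otimes\Lambda)$. By Corollary \ref{BF-locp-cor}(i) we already know $\CBF(g/L)\in H^1_{\mathrm{rel},\ord}(\BQ,T(1)\hat\otimes\BH_1\hat\otimes\Lambda)$. The point is to cut down from $\BH_1$ to the induced sublattice $\BI$. Using the decomposition $\BI\simeq\Ind_{G_L}^{G_\BQ}(\Lambda_L^v(\Psi_L^v))$ and the fact that restriction to $G_L$ identifies $H^1(\BQ,T(1)\hat\otimes\BI\hat\otimes\Lambda)$ with a direct summand of $H^1(\cO_L[\tfrac1p],T(1)\hat\otimes\Lambda_L^v(\Psi_L^v)\hat\otimes\Lambda)$, and analogously $\BH_1$ maps to $\widetilde\BH$ which sits between $\BI$ and $T_v^{-1}\BI$, I would use the local properties at $p$ from Corollary \ref{BF-locp-cor} together with the torsion-freeness results of Section \ref{CMHF} (the reflexive closures are free) and the Coleman-map injectivity in \eqref{ERI-map} to constrain the image. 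Concretely, $\CBF(g/L)$ maps into $H^1$ with coefficients in $\widetilde\BT = \BI$ or $T_v\BI$ depending on the case of Proposition \ref{Sat-prop3}; the cleanest route is to observe that $\loc_p(\CBF(g/L))$ lands in $H^1(\BQ_p,T^+(1)\hat\otimes\BH_1^-\hat\otimes\Lambda)$ and $H^1(\BQ_p,T^-(1)\hat\otimes\BH^+\hat\otimes\Lambda)$, apply $\sC^{\mathrm{int}}$ and $\sL^{\mathrm{int}}$ to get $\sL_p(g/L)\in\sR$ and $\sL_p^{Gr}(g/L)\in\sR^{\ur}$ integrally, and then use that these integral $p$-adic $L$-functions together with Theorem \ref{cycIwL-Box} (rank-one-ness of $H^1_{\mathrm{rel},\ord}$ over $L$) pin down the lattice containing $\CBF(g/L)$ via its interpolation properties — any genuine failure of integrality at the prime $(T_v)$ of $\Lambda_L^v$ would be detectable on the $\sL_p^{Gr}$ side.

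The decisive step — and the main obstacle — is ruling out the exceptional case of Proposition \ref{notind-prop1}. Here the plan follows the roundabout route sketched in \ref{ss:Z_st}: if \eqref{Ind-eq} fails, then the candidate zeta element over $L$ built from $\CBF(g/L)$ and the induced lattice $\BI$ satisfies an ``anomalous'' explicit reciprocity law at $\bar v$, because the identification $\widetilde\BT^-\isoarrow T_v(\widetilde\BH/\BH_v)$ in Lemma \ref{ind-I-lem}(ii) introduces an extra factor of $T_v$; evaluating the resulting reciprocity law at the identity Hecke character of $L$ (where $T_v\mapsto 0$) would force $\sL_p^{Gr}(g/L)$, and hence — by Proposition \ref{GRL=BDPL} and Theorem \ref{BDPformula} after twisting by an auxiliary newform — the Bertolini--Darmon--Prasanna / Greenberg $p$-adic $L$-function $\CL_p^{\mathrm{Gr}}(g_{/L})$ to vanish at the identity for \emph{every} $p$-ordinary weight two newform $g$. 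To contradict this I would, for the given $L$, construct an auxiliary $p$-ordinary weight two newform $g_0$ (of varying level, using a quadratic twist so the Heegner hypothesis \eqref{Heeg} holds for $(g_0,L)$ and $\epsilon(g_0/L)=-1$) for which $\CL_p^{\mathrm{Gr}}(g_{0/L})$ does \emph{not} vanish at the identity Hecke character; this non-vanishing follows from combining the anticyclotomic non-vanishing results of Rohrlich \cite{Ro} (guaranteeing a suitable twist with non-vanishing central $L$-value, equivalently a non-torsion Heegner point) with the $p$-adic Waldspurger formula \eqref{pWa} / Theorem \ref{BDPformula} relating $\phi_{\bf 1}(\sL_v^{BDP}(g_0/L))$ to $(\log_{\omega_{g_0}}y_L)^2$. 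Feeding this contradiction back, the exceptional case is excluded, so \eqref{Ind-eq} holds (Theorem \ref{BT-thm1}) and $\BI=\widetilde\BT=\BT_1$, whence $\CBF(g/L)\in H^1_{\mathrm{rel},\ord}(\BQ,T(1)\hat\otimes\BT_1\hat\otimes\Lambda)$ (Theorem \ref{BT-thm2}). I expect the bookkeeping of the extra $T_v$-factor in the anomalous reciprocity law and the verification that it truly is incompatible with the integral version of Theorem \ref{ERLIIint-thm} to be the technically delicate part, together with checking that the congruence-ideal computations (Theorems \ref{excz}, \ref{excz-2}, relying on \cite{BD,BDP'}) are consistent with both horns of the dichotomy.
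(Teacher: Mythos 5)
Your proposal follows essentially the same route as the paper: one first shows that (the image of) $\CBF(g/L)$ lies in $H^1_{\mathrm{rel},\ord}$ with coefficients in the induced lattice $\BI$ of Lemma \ref{ind-I-lem} — in the paper this is Proposition \ref{BF-I-prop}, proved by reducing $T_v\cdot\CBF$ modulo $T_v$ and using injectivity of $Col_{\eta_\omega,v}\circ\loc_v$ on the rank-one module $H^1_{\mathrm{rel},\ord}(L,T(1)\hat\otimes\Lambda)$ from Theorem \ref{cycIwL-Box}, so the detection happens on the $v$/cyclotomic side rather than on the $\sL_p^{Gr}$ side as you suggest — and then, exactly as you describe, a failure of \eqref{Ind-eq} would force $\sL_p^{Gr}(g/L)\in T_v^2\sR^{\ur}$ for \emph{every} ordinary $g$, which is refuted by the auxiliary CM newform of Propositions \ref{anrk-prop}--\ref{aux-g-prop}, built from Rohrlich's non-vanishing together with Gross--Zagier and the Bertolini--Darmon--Prasanna formula via Proposition \ref{GRL=BDPL}. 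Your plan is therefore correct and matches the paper's argument, the only notable difference being the precise mechanism (and the side of $p$ used) in the first, integrality step.
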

\noindent Our proofs of Theorems \ref{BT-thm1} and \ref{BT-thm2} are intertwined.

Let $\widetilde\BT\subset\BI\subsetneq \widetilde\BI$ be an induced lattice as in Lemma \ref{ind-I-lem}.
By Proposition \ref{Sat-prop3}(iii),(iv), the $\Lambda_L^v = \BZ_p[\![T_v]\!]$-module $\widetilde\BH/\widetilde\BT$ is 
annihilated by $T_v$. It follows that as a $\BZ_p[G_\BQ]$-modules $\widetilde\BH/\widetilde\BT$, $\BI/\widetilde\BT$, and
$\widetilde\BH/\BI$ are all quotients of $\BZ_p\oplus\BZ_p(\chi_L)$.
Consequently, as $H^0(\BQ,T(1)\hat\otimes\Lambda) = 0 = H^0(\BQ,T(1)\hat\otimes\Lambda(\chi_L))$,
the inclusions $\widetilde\BT \subset\BI\subset \widetilde\BH$ induce inclusions
$H^1(\BQ,T(1)\hat\otimes\widetilde\BT\hat\otimes\Lambda) \hookrightarrow 
H^1(\BQ,T(1)\hat\otimes\BI\hat\otimes\Lambda) \hookrightarrow 
H^1(\BQ,T(1)\hat\otimes\widetilde\BH\hat\otimes\Lambda),$ 
by which we view the first two as submodules of the third.

Let $\CBF\in H^1(\BQ,T(1)\hat\otimes\widetilde\BH\hat\otimes\Lambda)$ be the image of $\CBF(g/L)$ under the map induced by the inclusion
$\BH_1\subset \widetilde\BH$. 
The image of $\loc_p(\CBF)$ in $H^1(\BQ_p,T(1)^-\hat\otimes(\widetilde\BH/\widetilde\BH_v)\hat\otimes\Lambda)$
is the image of $\loc_p(\CBF(g/L))$ under the induced map $H^1(\BQ_p,T(1)^-\hat\otimes \BH_1^-\hat\otimes\Lambda)
\rightarrow H^1(\BQ_p,T(1)^-\hat\otimes(\widetilde\BH/\widetilde\BH_v)\hat\otimes\Lambda)$.
So it follows from Lemma \ref{BF-locp-lem} that 
$$
\CBF\in H^1_{\mathrm{rel},\ord}(\BQ,T(1)\hat\otimes\widetilde\BH\hat\otimes\Lambda)
= \{ c\in H^1(\BQ,T(1)\hat\otimes\widetilde\BH\hat\otimes\Lambda) \ : \ \loc_p(c) = 0 \in H^1(\BQ_p,T(1)^-\hat\otimes(\widetilde\BH/\widetilde\BH_v)\hat\otimes\Lambda)\}.
$$
As a step toward proving the above theorems, we show:
\begin{prop}\label{BF-I-prop} The Beilinson--Flach element $\CBF$ satisfies
$$
\CBF\in H^1_{\mathrm{rel},\ord}(\BQ,T(1)\hat\otimes\BI\hat\otimes\Lambda)
= \{ c\in H^1(\BQ,T(1)\hat\otimes\BI\hat\otimes\Lambda \ : \ \loc_p(c) = 0 \in H^1(\BQ_p,T(1)^-\hat\otimes(\BI/\BI_v)\hat\otimes\Lambda)\}.
$$
Moreover, if \eqref{Ind-eq} does not hold, then $\CBF \in T_v H^1_{\mathrm{rel},\ord}(\BQ,T(1)\hat\otimes\BI\hat\otimes\Lambda)$.
\end{prop}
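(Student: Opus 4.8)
\emph{Strategy.} Both assertions will be deduced from two Kato–theoretic vanishing statements together with a short chain of descents along exact sequences of $\BZ_p[G_\BQ]$-modules. The vanishing inputs are: (i) $H^1_{\ord}(\BZ[\tfrac1p],T'(1)\otimes\Lambda)=0$ for $T'$ any $G_\BQ$-stable lattice in $V_g$ or in $V_{g'}$ — indeed, by Theorem \ref{cycIwQ} this module is torsion free of generic rank one over $\Lambda_{\cO_\lambda}$ and is generated rationally by a Beilinson–Kato class $\bz_\gamma$ with $\gamma^\pm\neq 0$; by Theorem \ref{ERLBKI} and Rohrlich's Theorem \ref{NVIw} one has $Col_{\eta_\omega}(\loc_p(\bz_\gamma))\neq 0$, and since $Col_{\eta_\omega}$ kills the strict subspace and factors through $H^1_{/\ord}(\BQ_p,-)$ (cf.~\eqref{ordCol-Inj}), $\bz_\gamma$ is non-strict at $p$, so $\ker\big(\loc_p\colon H^1\to H^1_{/\ord}\big)$ is torsion and hence zero; and (ii) the strict-at-$p$ Iwasawa–Selmer module $H^1_{\mathrm{str}}(\cO_L[\tfrac1p],T_g(1)\otimes\Lambda)$ — defined by imposing strictness at both $v$ and $\bar v$ — vanishes: via the $\tau$-decomposition \eqref{spl}--\eqref{spl-} it is the direct sum of its $\tau=\pm1$-parts, identified with submodules of $H^1(\BZ[\tfrac1p],T_g(1)\otimes\Lambda)$ and of $H^1(\BZ[\tfrac1p],(T_g\otimes\chi_L)(1)\otimes\Lambda)$; since $\tau$ swaps $v$ and $\bar v$ and $2\in\BZ_p^\times$, forming sums and differences of the two local conditions forces each eigenpart into the strict-at-$p$ subgroup for $g$, resp.~$g'$, and both are zero by (i).

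\emph{Part (1).} I would use $0\to\BI\to\widetilde\BH\to Q\to 0$ with $Q=\widetilde\BH/\BI$. By Proposition \ref{Sat-prop3}, $\widetilde\BH/\widetilde\BT$ is killed by $T_v$, so (as $\widetilde\BT\subseteq\BI$) $Q$ is a free $\BZ_p$-module on which $G_\BQ$ acts through $\Gal(L/\BQ)$, i.e.~$Q\cong\BZ_p^a\oplus\BZ_p(\chi_L)^b$; in particular $Q$ is unramified at $p$, $H^0(\BQ,T(1)\hat\otimes Q\hat\otimes\Lambda)=0$, and $\CBF$ comes from $H^1(\BQ,T(1)\hat\otimes\BI\hat\otimes\Lambda)$ iff its image in $H^1(\BQ,T(1)\hat\otimes Q\hat\otimes\Lambda)$ vanishes. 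Now by Lemma \ref{ind-I-lem} and Proposition \ref{Sat-prop3} one has $\widetilde\BH_v=\BH_v=\BI_v\subseteq\BI$, so the $v$-part of $\widetilde\BH$ dies in $Q$; combined with $\CBF\in H^1_{\mathrm{rel},\ord}(\BQ,T(1)\hat\otimes\widetilde\BH\hat\otimes\Lambda)$ — which says precisely that the $T^-$-component of $\loc_p(\CBF)$ lies in the $\widetilde\BH_v$-direction — this shows the image of $\CBF$ in $H^1(\BQ,T(1)\hat\otimes Q\hat\otimes\Lambda)$ has trivial $T^-$-localisation at $p$, hence lies in $H^1_{\ord}(\BZ[\tfrac1p],T(1)\hat\otimes Q\hat\otimes\Lambda)$, which decomposes into copies of the modules in (i) and is therefore zero. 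Thus $\CBF\in H^1(\BQ,T(1)\hat\otimes\BI\hat\otimes\Lambda)$, and the local condition at $p$ transports along the injection $H^1(\BQ_p,T^-(1)\hat\otimes(\BI/\BI_v)\hat\otimes\Lambda)\hookrightarrow H^1(\BQ_p,T^-(1)\hat\otimes(\widetilde\BH/\widetilde\BH_v)\hat\otimes\Lambda)$ (injective since the cokernel is unramified at $p$ while $\Lambda$ is totally ramified there), giving $\CBF\in H^1_{\mathrm{rel},\ord}(\BQ,T(1)\hat\otimes\BI\hat\otimes\Lambda)$.

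\emph{Part (2).} Assume \eqref{Ind-eq} fails, so we are in case (b) of Lemma \ref{ind-I-lem}: $\BH^+=\BT^+=T_v\BI_v=T_v\BH_v$ and $\BI=\BI_v\oplus c\cdot\BI_v\simeq\Ind_{G_L}^{G_\BQ}(\Lambda_L^v(\Psi_L^v))$, so $T_v\BI=\BT^+\oplus c\cdot\BT^+$ and $\BI/T_v\BI\simeq\Ind_{G_L}^{G_\BQ}(\BZ_p)$. Let $\kappa$ be the image of $\CBF$ under $\BI\twoheadrightarrow\BI/T_v\BI$, so by Shapiro's lemma $\kappa\in H^1(\cO_L[\tfrac1p],T_g(1)\otimes\Lambda)$. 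I claim $\kappa$ is strict at both $v$ and $\bar v$. Strictness at $v$: by Corollary \ref{BF-locp-cor}(ii) the $T^-$-component of $\loc_p(\CBF)$ factors through $\BH^+=T_v\BI_v$, which maps to $0$ in $\BI_v/T_v\BI_v$, so the $T^-$-part of $\loc_v(\kappa)$ vanishes. Strictness at $\bar v$: the relation $\CBF\in H^1_{\mathrm{rel},\ord}(\BQ,T(1)\hat\otimes\BI\hat\otimes\Lambda)$ forces the $T^-$-component of $\loc_p(\CBF)$ into the $\BI_v$-summand of $\BI|_{G_{L_v}}=\BI_v\oplus\BI_{\bar v}$, and under Shapiro the $\BI_{\bar v}$-component of $\loc_p$ is the $\bar v$-localisation of $\kappa$; hence the strict condition at $\bar v$ holds. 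Therefore $\kappa\in H^1_{\mathrm{str}}(\cO_L[\tfrac1p],T_g(1)\otimes\Lambda)=0$. Finally, multiplication by $T_v$ is an isomorphism $\BI\isoarrow T_v\BI$ of $G_\BQ$-modules, so $\kappa=0$ gives $\CBF=T_v\cdot c'$ for some $c'\in H^1(\BQ,T(1)\hat\otimes\BI\hat\otimes\Lambda)$; and $c'$ inherits the relaxed-ordinary condition because the local group $H^1(\BQ_p,T^-(1)\hat\otimes(\BI/\BI_v)\hat\otimes\Lambda)$ has no $T_v$-torsion ($H^0$ vanishes, $\BI/\BI_v$ being unramified at $p$), whence $\CBF\in T_v\, H^1_{\mathrm{rel},\ord}(\BQ,T(1)\hat\otimes\BI\hat\otimes\Lambda)$.

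\emph{The main obstacle} is the Shapiro-lemma bookkeeping in part (2): matching the module-theoretic condition cutting out $H^1_{\mathrm{rel},\ord}(\BQ,T(1)\hat\otimes\BI\hat\otimes\Lambda)$ with honest local conditions at the two places $v,\bar v$ of $L$, and in particular extracting strictness at $\bar v$ — the place not directly controlled by the Beilinson–Flach local properties of Lemma \ref{BF-locp-lem} and Corollary \ref{BF-locp-cor} — while carefully tracking the complex-conjugation twist built into the induction. The descents and the two vanishing statements are, by contrast, routine.
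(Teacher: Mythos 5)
Your route is genuinely different from the paper's, and its two pillars are sound: the vanishing statement (i) is exactly the Kato rank-one theorem plus the injectivity of $Col_{\eta_\omega}\circ\loc_p$ (Rohrlich non-vanishing through Theorem \ref{ERLBKI}), which is the same engine the paper runs. The paper, however, organizes the descent differently: it first passes to $\CBF'=T_v\CBF$, which automatically lands in $H^1(\BQ,T(1)\hat\otimes\BI\hat\otimes\Lambda)$ since $T_v$ kills $\widetilde\BH/\BI$, reduces modulo $T_v$ into $H^1_{\mathrm{rel},\ord}(L,T(1)\hat\otimes\Lambda)$ via Shapiro for the induced module $\BI$, and kills the reduction by applying $Col_{\eta_{\omega},v}\circ\loc_v$, using the commutative diagram \eqref{H1-L-eq3} whose vertical maps are the identity or zero according as $\BI_v=\BT^+$ or $\BI_v=\frac{1}{T_v}\BT^+$; freeness of $H^1(\BQ,T(1)\hat\otimes\BI\hat\otimes\Lambda)$ from $T_v$-torsion then gives both conclusions uniformly. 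Your version replaces this by the sequence $0\to\BI\to\widetilde\BH\to Q\to 0$ and by $\tau$-eigenspace decompositions of mod-$T_v$ classes into Kato-controlled modules over $\BQ$; that is a workable alternative, but it is more case-sensitive, and two of your steps are not correct as written.

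First, in part (1) the claimed identity $\widetilde\BH_v=\BH_v=\BI_v$ does not follow from Lemma \ref{ind-I-lem} and Proposition \ref{Sat-prop3}, and it is false in the subcase where \eqref{Ind-eq} holds with $\BH^+=T_v\BH_v$ (case (a) of Lemma \ref{ind-I-lem} combined with the second alternative of Proposition \ref{Sat-prop3}(ii)): there $\BI=\widetilde\BT$ and $\BI_v=\BT_v=T_v\BH_v\subsetneq\BH_v$, so $\BH_v$ does not die in $Q$ and the mere membership $\CBF\in H^1_{\mathrm{rel},\ord}(\BQ,T(1)\hat\otimes\widetilde\BH\hat\otimes\Lambda)$ is not enough to kill the $T^-$-localisation of the image of $\CBF$ in $Q$-coefficients. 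The step is salvageable, but only by invoking the stronger local input of Corollary \ref{BF-locp-cor}(ii): the $T^-$-component of $\loc_p(\CBF)$ factors through $\BH^+$-coefficients, and $\BH^+=\BT^+\subseteq\widetilde\BT\subseteq\BI$ in every case; this is what you should use in place of the asserted equality (it is also what the paper's diagram exploits). Second, in part (2) what you actually establish about $\kappa$ is only the vanishing of the $T^-$-components of $\loc_v(\kappa)$ and $\loc_{\bar v}(\kappa)$, i.e.\ the ordinary condition at both places, not strictness, so the appeal to your statement (ii) (vanishing of the strict-at-$v,\bar v$ module) is a mismatch: ordinariness does not imply strictness, and the displayed deduction ``$\kappa\in H^1_{\mathrm{str}}=0$'' is invalid as written. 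The repair is immediate with your own tools: the same $\tau$-eigenspace argument you sketch for (ii) shows that the module of classes over $L$ that are ordinary at both $v$ and $\bar v$ injects into the two modules of statement (i) and hence vanishes, which gives $\kappa=0$; but you should state and use that version rather than the strict one. With these two corrections your argument goes through and yields the proposition, including the $T_v$-divisibility in the non-induced case.
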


\begin{proof}
Since $T_v$ annihilates $\widetilde\BH/\BI$, we certainly have $\CBF' = T_v\CBF\in H^1(\BQ,T(1)\hat\otimes\BI\hat\otimes\Lambda)$.
We also have that $G_{\BQ_p}$ acts trivially on $\widetilde\BH/\BI$, from which it follows easily that 
$H^0(\BQ_p,T^-(1)\hat\otimes(\widetilde\BH/(\BI+\widetilde\BH_v)\hat\otimes\Lambda) = 0$ and hence that
$H^1(\BQ_p,T^-(1)\hat\otimes(\BI/\BI_v)\hat\otimes\Lambda)\hookrightarrow H^1(\BQ_p,T^-(1)\hat\otimes(\widetilde\BH/\widetilde\BH_v)\hat\otimes\Lambda)$.
Consequently, $\CBF'$ belongs to the submodule $H^1_{\mathrm{rel},\ord}(\BQ,T(1)\hat\otimes\BI\hat\otimes\Lambda)$.

Since $\BI = \BI_v\oplus c\cdot\BI_v$
we have $H^1(\BQ, T(1)\hat\otimes\BI\hat\otimes\Lambda) = H^1(L,T(1)\hat\otimes\BI_v\hat\otimes\Lambda)$ by Shapiro's Lemma, which yields
an identification $H^1_{\mathrm{rel},\ord}(\BQ,T(1)\hat\otimes\BI\hat\otimes\Lambda) = H^1_{\mathrm{rel},\ord}(L,T(1)\hat\otimes\BI_v\hat\otimes\Lambda)$, where 
$H^1_{\mathrm{rel},\ord}(L,T(1)\hat\otimes\BI_v\hat\otimes\Lambda) = \{ c\in H^1_{\mathrm{rel},\ord}(L,T(1)\hat\otimes\BI_v\hat\otimes\Lambda) \ : \
\loc_p(c) = 0 \in H^1(L_{\bar v}, T^-(1)\hat\otimes\BI_v\hat\otimes\Lambda)$. 
This induces
an injection
\begin{equation}\label{H1-L-eq2}
H^1_{\mathrm{rel},\ord}(\BQ,T(1)\hat\otimes\BI\hat\otimes\Lambda)/T_v H^1_{\mathrm{rel},\ord}(\BQ,T(1)\hat\otimes\BI\hat\otimes\Lambda) 
\hookrightarrow H^1_{\mathrm{rel},\ord}(L,T(1)\hat\otimes\Lambda)
\end{equation}
after choosing an isomormophism $\BI_v\simeq \Lambda_L^v$ (and hence an isomorphism $\BI_v/T_v\BI_v\simeq\BZ_p$).
Let $\ov{\CBF}'\in H^1_{\mathrm{rel},\ord}(L,T(1)\hat\otimes\Lambda)$ denote the image of $\CBF'$ under \eqref{H1-L-eq2}. 

We now consider the image of $\loc_{v}(\ov{\CBF}')$ under the Coleman map $Col_{\eta_\omega,v}$.
Since either $\BT^+ = \BI_v$ or $\BT^+ = T_v\cdot\BI_v$ and $\BH^+ = \BT^+$, we have a commutative diagram
\begin{equation}\label{H1-L-eq3}
\begin{tikzcd}[row sep=2.5em]
H^1_{\mathrm{rel},\ord}(\BQ,T(1)\hat\otimes\BI\hat\otimes\Lambda) \arrow[r,"\eqref{H1-L-eq2}"]  \arrow[d,"\loc_p"] & H^1_{\mathrm{rel},\ord}(L,T(1)\hat\otimes\Lambda) \arrow[d,"\loc_v"] & \\
 H^1(\BQ_p,T^-(1)\hat\otimes\BI_v\hat\otimes\Lambda) \arrow[r,"/T_v"] & H^1(L_v,T^-(1)\hat\otimes\Lambda) \arrow[r,"Col_{\eta_\omega,v}"] & 
 \Lambda_{\CO_\lambda}\otimes_{\BZ_p}\BQ_p \\
 H^1(\BQ_p,T^-(1)\hat\otimes\BH^+\hat\otimes\Lambda) \arrow[r,"/T_v"] \arrow[u,]  & H^1(\BQ_p,T^-(1)\hat\otimes\Lambda) \arrow[r,"Col_{\eta_\omega}"]  \arrow[u,"(*)"] & 
 \Lambda_{\CO_\lambda}\otimes_{\BZ_p}\BQ_p \arrow[u,"(**)"] . 
\end{tikzcd}
\end{equation}
Here we have fixed the isomorphism $\BI_v\simeq\Lambda_L^v$ to restrict to the isomorphism $\BT^+=\BH^+\simeq\Lambda_L^v$ determined
by $\omega_{\bh_v}$ as in the proof of Proposition \ref{Col-cycsp-prop}. This in turn induces an isomorphism $\BI_v/T_v\BI_v\simeq \BZ_p$,
which we take as the isomorphism for \eqref{H1-L-eq2}. The lower veritical arrows in \eqref{H1-L-eq3} are induced from the inclusion
$\BT^+=\BH^+ \subset\BI_v$. In particular, $(*)$ and $(**)$ are both the identity map if $\BT^+ = \BT_v$ (so if \eqref{Ind-eq} holds) and are both the zero
map if $\BT^+=T_v\BI_v$ (so if \eqref{Ind-eq} does not hold).
Since $\loc_p(\CBF') \in H^1(\BQ_p,T^-(1)\hat\otimes\BI_v\hat\otimes\Lambda)$ is the image of 
$T_v\loc_p(\cdot\CBF(g/L)) \in  H^1(\BQ_p,T^-(1)\hat\otimes\BH^+\hat\otimes\Lambda)$, it follows from the commutativity of \eqref{H1-L-eq3} that
$Col_{\eta_\omega,v}(\loc_{v}(\ov{\CBF}')) = 0$.  

By Theorem \ref{cycIwL-Box}, $H^1_{\mathrm{rel},\ord}(L,T(1)\hat\otimes\Lambda)$ is a torsion-free $\Lambda_{\CO_\lambda}$-module and 
$H^1_{\mathrm{rel},\ord}(L,T(1)\hat\otimes\Lambda)\otimes_{\BZ_p}\BQ_p$ is a free $\Lambda_{\CO_\lambda}\otimes_{\BZ_p}\BQ_p$-module of rank one.
As $Col_{\eta_\omega,v}\circ\loc_v$ is non-zero on
$H^1_{\mathrm{rel},\ord}(L,T(1)\hat\otimes\Lambda)$ (see the proof of Theorem \ref{cycIwL-Box-II}), we have an injection
$
Col_{\eta_\omega,v}\circ\loc_v : H^1_{\mathrm{rel},\ord}(L,T(1)\hat\otimes\Lambda)  \hookrightarrow \Lambda_{\CO_\lambda}\otimes_{\BZ_p}\BQ_p$.
Since $Col_{\eta_\omega,v}(\loc_{v}(\ov{\CBF}')) = 0$, it then follows that $\ov{\CBF}'=0$ and hence that 
$\CBF'\in T_v H^1_{\mathrm{rel},\ord}(\BQ,T(1)\hat\otimes\BI\hat\otimes\Lambda)$ by \eqref{H1-L-eq2}.
As $H^1(\BQ,T(1)\hat\otimes\BI\hat\otimes\Lambda)$ has no $T_v$-torsion (since $H^0(\BQ,T(1)\hat\otimes(\BZ_p\oplus\BZ_p(\chi_L))\hat\otimes\Lambda)=0$),
it follows that $\CBF \in H^1_{\mathrm{rel},\ord}(\BQ,T(1)\hat\otimes\BI\hat\otimes\Lambda)$, as claimed.

If furthermore \eqref{Ind-eq} does not hold, then both (*) and (**) are the zero map, and the same arguments now applied to $\CBF$ in place of $\CBF'$ show
that $\CBF \in T_vH^1_{\mathrm{rel},\ord}(\BQ,T(1)\hat\otimes\BI\hat\otimes\Lambda)$.
\end{proof}

We can now complete the proof of Theorems \ref{BT-thm1} and \ref{BT-thm2}.
If \eqref{Ind-eq} holds, then there is nothing more to prove: Theorem \ref{BT-thm2} follows from Proposition \ref{BF-I-prop}.
So suppose \eqref{Ind-eq} does not hold. Then by the same proposition, $\CBF\in T_v H^1_{\mathrm{rel},\ord}(\BQ,T(1)\hat\otimes\BI\hat\otimes\Lambda)$.
We will use the Explicit Reciprocity Law II to deduce a contradiction.

If $\CBF\in T_v H^1_{\mathrm{rel},\ord}(\BQ,T(1)\hat\otimes\BI\hat\otimes\Lambda)$, then 
$\loc_p(\CBF) \in T_v H^1(\BQ_p,T^+(1)\hat\otimes(\BI/\BI_v)\otimes\Lambda)$. On the other hand, since $\BI$ is as in Lemma \ref{ind-I-lem},
the inclusion $\BI\subset\widetilde\BH$ induces an isomorphism $\BI/\BI_v\simeq T_v(\widetilde\BH/\widetilde\BH^+)$. Consequently,
$\loc_p(\CBF) \in T_v^2 H^1(\BQ_p,T^+(1)\hat\otimes\widetilde\BH^-\otimes\Lambda)$. It follows that we must have
\begin{equation}\label{Ind-thm-eq1}
\sL^{Gr}_p(g/L)  = \sL^{\mathrm{int}}(\loc_p(\CBF)) \in T_v^2\sR^\ur.
\end{equation}
Since this must hold for {\em any} ordinary newform $g$ if \eqref{Ind-eq} does not hold, the following proposition yields the desired contradiction and hence completes the proofs
of Theorems \ref{BT-thm1} and \ref{BT-thm2}.

\begin{prop}\label{aux-g-prop} There exists a positive integer $N$ and a newform $g\in S_2(\Gamma_0(N))$ such that 
\begin{itemize}
\item[(i)] $(N,p) = 1$,
\item[(ii)] $g$ is $p$-ordinary, that is, there exists a prime $\lambda\mid p$ of the Hecke field $F_g$ such that $a_p(g)$ is a $\lambda$-adic unit,
\item[(iii)] $\sL_p^{Gr}(g/L) \not \in T_v^2\sR^{\ur}$.
\end{itemize}
\end{prop}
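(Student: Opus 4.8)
The plan is to construct the auxiliary newform $g$ directly, by choosing it so that its Greenberg $p$-adic $L$-function over $L$ does \emph{not} vanish at the identity Hecke character of $\Gamma_L^{\ac}$. The key observation is that, by Explicit Reciprocity Law II$'$ (Theorem~\ref{ERLIIint-thm}) together with Proposition~\ref{GRL=BDPL}, the specialisation $\phi_{\ac}(\sL_p^{\mathrm{Gr}}(g/L))$ is, up to the factor $-((1+T_{\ac})^2-1) = -T_{\ac}(2+T_{\ac})$, the image under $\iota_{\ac}$ of the Bertolini--Darmon--Prasanna $p$-adic $L$-function $\sL_v^{\mathrm{BDP}}(g/L)$. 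If $\sL_p^{\mathrm{Gr}}(g/L)\in T_v^2\sR^{\ur}$ then, projecting to $\Lambda_{L,\CO^{\ur}}^{\ac}$ via the map $\gamma_v\mapsto\gamma_{\ac}^2$, $\gamma_{\cyc}\mapsto\gamma_{\ac}^{h_p}$, we would get $\phi_{\ac}(\sL_p^{\mathrm{Gr}}(g/L)) \in T_{\ac}^{2}\Lambda_{L,\CO^{\ur}}^{\ac}$ (after checking that the image of $T_v$ under this map divides $T_{\ac}^2$ up to units — indeed $\gamma_v\mapsto\gamma_{\ac}^2$ sends $T_v$ to $(1+T_{\ac})^2-1 = T_{\ac}(2+T_{\ac})$, a unit multiple of $T_{\ac}$). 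Combined with the explicit factor $-T_{\ac}(2+T_{\ac})$ already present, this would force $\iota_{\ac}(\sL_v^{\mathrm{BDP}}(g/L)) \in T_{\ac}\Lambda_{L,\CO^{\ur}}^{\ac}$, i.e.\ $\sL_v^{\mathrm{BDP}}(g/L)$ vanishes at the trivial character. By the Bertolini--Darmon--Prasanna formula (Theorem~\ref{BDPformula}), this vanishing is equivalent to $\log_{\omega_g}(y_L) = 0$, i.e.\ the Heegner point $y_L\in J_0(N)(L)$ is non-torsion only if $\log_{\omega_g}$ does not kill it — so the vanishing means $y_L$ is torsion, which by Gross--Zagier means $L'(1,g/L)=0$, and since $\eps(g/L)=-1$ under the Heegner hypothesis this means $\ord_{s=1}L(s,g/L)\geq 3$, a ``generic non-occurrence'' that we must rule out by an explicit choice.

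So the construction proceeds as follows. First I would fix a convenient newform, e.g.\ $g = g_E$ attached to an elliptic curve $E/\BQ$ of conductor $N$ prime to $p$, with $E$ having good \emph{ordinary} reduction at $p$ (so (ii) and (i) hold; one checks (ii) using that $a_p(E)$ is a $p$-adic unit). Next I would choose the imaginary quadratic field $L$ — in fact this is forced on us, since in Proposition~\ref{aux-g-prop} the field $L$ is given, but the newform $g$ is ours to choose; so I would instead pick a newform $g$ \emph{adapted to the given $L$}. The requirement is: $g$ is $p$-ordinary with $(N_g,p)=1$, the pair $(g,L)$ satisfies the Heegner hypothesis \eqref{Heeg} (so that $\sL_v^{\mathrm{BDP}}(g/L)$ is defined and $\eps(g/L)=-1$), and the Heegner point $y_L\in J_0(N_g)(L)$ is non-torsion. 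The first two conditions are easy to arrange — take $g$ of squarefree level $N_g$ all of whose prime divisors split in $L$, and with $p$ ordinary for $g$; there are infinitely many such by Serre-type density. The last condition is the crux: I need a single $g$, adapted to $L$, whose associated Heegner point is non-torsion.

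The hard part will therefore be producing a $g$ with $y_L$ non-torsion, equivalently (by Gross--Zagier) with $L'(1,g/L)\neq 0$, equivalently (by the factorisation $L(s,g/L) = L(s,g)L(s,g\otimes\chi_L)$) with $L(1,g)\neq 0$ and $L'(1,g\otimes\chi_L)\neq 0$ (using that $\eps(g)=+1$, $\eps(g\otimes\chi_L)=-1$ forces the orders to be $0$ and $1$ respectively, generically). The standard tool here is Rohrlich's nonvanishing theorem together with a rank-one twist: start with any $g_0$ of suitable level with all primes of $N_{g_0}$ split in $L$ and $\eps(g_0)=+1$; by Waldspurger/Bump--Friedberg--Hoffstein or by the work on analytic rank one twists one finds a quadratic twist character $\psi$ (ramified only at primes split in $L$ and away from $p N_{g_0}$, so that the Heegner hypothesis and $p$-ordinarity are preserved) such that the twisted form $g = g_0\otimes\psi$ satisfies $L(1,g)\neq 0$ and $\ord_{s=1}L(s,g\otimes\chi_L)=1$; this is exactly the kind of statement the authors allude to ("anticyclotomic non-vanishing results of Rohrlich and the $p$-adic Waldspurger formula"). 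Alternatively — and perhaps cleaner — I would invoke the $p$-adic Waldspurger formula directly: it suffices to know $\sL_v^{\mathrm{BDP}}(g/L)$ has a \emph{nonzero} value at \emph{some} character of $\Gamma_L^{\ac}$ of infinity type $(-n,n)$ with $n>0$ (which gives nonvanishing of $\sL_v^{\mathrm{BDP}}(g/L)$ as an element of $\Lambda_{L,\CO^{\ur}}^{\ac}$), and \emph{then} separately argue that a generic such $g$ has the identity-character value nonzero; but since the identity-character value is a $T_{\ac}$-adic specialisation and nonvanishing of the measure does not by itself control vanishing order at a single point, I would prefer the Gross--Zagier route, which pins down the identity value exactly. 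In any case, once such a $g$ adapted to $L$ is fixed, Theorem~\ref{BDPformula} gives $\phi_{\bf 1}(\sL_v^{\mathrm{BDP}}(g/L)) = (1-a(p)p^{-1}+p^{-1})^2(\log_{\omega_g}(y_L))^2 \neq 0$ (the Euler factor being a $p$-adic unit since $g$ is ordinary at $p$ and $(p,N_g)=1$), hence $\sL_v^{\mathrm{BDP}}(g/L)\not\in T_{\ac}\Lambda_{L,\CO^{\ur}}^{\ac}$, hence $\phi_{\ac}(\sL_p^{\mathrm{Gr}}(g/L))\not\in T_{\ac}^{3}\Lambda_{L,\CO^{\ur}}^{\ur}$ (the exact power being $-T_{\ac}(2+T_{\ac})$ times a unit), which forces $\sL_p^{\mathrm{Gr}}(g/L)\notin T_v^2\sR^{\ur}$, giving (iii). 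I would close by remarking that this $g$ has the required properties (i)--(iii), completing the proof of the proposition and, via \eqref{Ind-thm-eq1}, the proofs of Theorems~\ref{BT-thm1} and~\ref{BT-thm2}.
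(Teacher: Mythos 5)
Your reduction of (iii) to the non-torsionness of a Heegner point is exactly the paper's mechanism: if $T_v^2\mid \sL_p^{Gr}(g/L)$, then since $\phi_\ac$ sends $T_v$ to a unit multiple of $T_\ac$, Proposition \ref{GRL=BDPL} forces $T_\ac\mid \sL_v^{BDP}(g/L)$, which contradicts $\phi_{\bf 1}(\sL_v^{BDP}(g/L)) = (1-a(p)p^{-1}+p^{-1})^2(\log_{\omega_g}(y_L))^2\neq 0$ from Theorem \ref{BDPformula} once $y_L$ is non-torsion, and non-torsionness follows from Gross--Zagier once $\ord_{s=1}L(s,g_{/L})=1$. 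You are also right that $L$ is imposed and $g$ is the free parameter, and right to discard your ``alternative'' route through nonvanishing of the measure alone.

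The gap is in the crux you yourself identify: producing a $p$-ordinary newform $g$ with $(N_g,p)=1$, satisfying \eqref{Heeg} for the \emph{given} $L$, and with $\ord_{s=1}L(s,g_{/L})=1$ (this is the paper's Proposition \ref{anrk-prop}). Your proposal is to twist a fixed $g_0$ by a quadratic character $\psi$ and invoke Waldspurger/Bump--Friedberg--Hoffstein to find $\psi$ with $L(1,g_0\otimes\psi)\neq 0$ and $L'(1,g_0\otimes\psi\chi_L)\neq 0$. But this is a simultaneous (``double-twist'') nonvanishing statement in which \emph{both} factors move with $\psi$, subject moreover to your splitting constraints on the support of $\psi$; the theorems of \cite{BFH} (and Waldspurger-type results) fix the form and vary a single quadratic-twist family, and their standard use is the opposite one (fix the form, find the auxiliary quadratic field). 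They do not deliver one $\psi$ making one factor nonvanish at the centre while the other has nonvanishing derivative, so as written this step is unsupported. The paper circumvents it by taking $g$ to be CM: choose an auxiliary imaginary quadratic field $K$ with $p$ split, $(D_K,pD_L)=1$, and all relevant primes split in $L$, and a conjugate self-dual Hecke character $\lambda$ of infinity type $(-1,0)$ with conductor prime to $p$ and supported on primes split in $L$, so that $\epsilon(1/2,\lambda)\,\epsilon(1/2,\lambda\chi_L)=-1$. For $\chi$ running over anticyclotomic characters of $K$ of $\ell$-power conductor, both factors $L(s,\lambda\chi)$ and $L(s,\lambda\chi_L\chi)$ lie in a single anticyclotomic family over $K$, and the nonvanishing results of Rohrlich \cite{Ro} (together with \cite{Gr}) give $L(1,\lambda\chi)\,L'(1,\lambda\chi_L\chi)\neq 0$ (or with the roles of value and derivative exchanged, according to the sign of $\epsilon(1/2,\lambda)$) for all but finitely many $\chi$; the theta series of $\lambda\chi_0$ is then the desired $g$, with $p$-ordinarity automatic because $p$ splits in $K$ and the conductor is prime to $p$. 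To repair your argument you would need either this CM device or a genuine double-twist simultaneous nonvanishing theorem, which your citations do not provide.
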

\noindent The proof of this proposition is contained in the next section.

\subsubsection{An auxiliary newform}\label{aux-new} To prove Proposition \ref{aux-g-prop}
we first prove:
\begin{prop}\label{anrk-prop}
There exists a positive integer $N$ and a newform $g\in S_{2}(\Gamma_{0}(N))$ such that
\begin{itemize}
\item[(i)] $p\nmid N$,
\item[(ii)] every prime $\ell\mid N$ splits in $L$,
\item[(iii)] $g$ is $p$-ordinary, that is, there exists a prime $\lambda\mid p$ of the Hecke field $F_g$ such that $a_p(g)$ is a $\lambda$-adic unit,
 \item[(iv)] $\ord_{s=1}L(s, g_{/L})=1$.
\end{itemize}
\end{prop}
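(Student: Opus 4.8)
\textbf{Proof plan for Proposition \ref{anrk-prop}.}
The goal is to produce a single weight-two newform $g$, of level prime to $p$ and split with respect to $L$, that is $p$-ordinary and has $\ord_{s=1}L(s,g_{/L}) = 1$. The plan is to build $g$ as the quadratic twist of a well-chosen fixed elliptic curve, using the sign of the functional equation to force analytic rank exactly one, the theorem of Gross--Zagier--Kolyvagin to upgrade a sign computation to the precise vanishing order, Rohrlich-type nonvanishing to control the twist, and a density/Chebotarev argument to keep ordinarity.

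First I would fix an auxiliary elliptic curve $E_0/\BQ$ of conductor $N_0$ and pick a quadratic field $K = \BQ(\sqrt{d})$, with $d$ chosen so that: every prime dividing $N_0$ splits in $K$; every prime dividing $D_L$ is inert or split in $K$ appropriately so that the primes dividing the conductor $N$ of $g := (E_0)^K$ (the twist) all split in $L$; and so that the sign of the functional equation of $L(s,g_{/L}) = L(s,g)L(s,g\otimes\chi_L)$ is $-1$. The root number of $L(s,g_{/L})$ is the product of local root numbers, and by choosing the ramification of $K$ and the splitting behaviour in $L$ one can arrange $\epsilon(g_{/L}) = -1$ while also arranging $\epsilon(g) = +1$, $\epsilon(g\otimes\chi_L) = -1$ (or the reverse); the key input is that local root numbers at split primes are controlled and the archimedean factor is fixed. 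This is a finite computation with congruence conditions on $d$, so infinitely many $d$ work; simultaneously I impose $p\nmid d$ and a congruence forcing $p\nmid N$. Condition (ii) is then built in, and (i) holds by construction.

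Next, to get (iv) I would invoke the theorem of Gross--Zagier and Kolyvagin (valid over $\BQ$ for $g$ and for $g\otimes\chi_L$, or directly over $L$ via a Heegner hypothesis): with $\epsilon(g_{/L}) = -1$, the Heegner point $y_L \in J_0(N)(L)$ has infinite order provided $L(s,g_{/L})$ vanishes to odd order and the relevant nonvanishing of the first derivative holds; this forces $\ord_{s=1}L(s,g_{/L}) = 1$ exactly once we know it is odd and at most one. To pin it at exactly one rather than $\geq 3$, I would use the nonvanishing results for derivatives of $L$-functions in the relevant family — for instance applying Rohrlich's nonvanishing Theorem \ref{NVIw} (or its derivative analogue) along a suitable family of twists of $g$, together with the fact that by varying $d$ in the congruence classes above one ranges over infinitely many such $g$, so at least one has central derivative nonzero. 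Here it is cleanest to phrase the family as twists and cite the archimedean nonvanishing of $L'(1,\cdot)$ together with Kolyvagin's bound $\ord \leq \operatorname{corank}\operatorname{Sel} + $ (contribution) to avoid needing exact BSD.

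Finally, for ordinarity (iii): the condition that $a_p(g)$ is a $\lambda$-adic unit for some $\lambda\mid p$ is, for $g$ of weight two with rational (or real) coefficients coming from an elliptic curve, simply $p\nmid a_p(E_0^K) = \chi_K(p)a_p(E_0)$, i.e. $p\nmid a_p(E_0)$, which holds as soon as $E_0$ has ordinary or multiplicative reduction at $p$ — a condition on $E_0$ alone, independent of the twist $d$. So I would choose $E_0$ at the outset to be $p$-ordinary (any curve with $p\nmid a_p(E_0)$ and $p\nmid N_0$), and then (iii) is automatic for all the twists $g = E_0^K$. Assembling: choose $p$-ordinary $E_0$; choose $d$ in an explicit union of congruence classes (infinitely many) making the split condition and the parity $\epsilon(g_{/L}) = -1$ hold and $p\nmid N$; among these infinitely many $d$, invoke the nonvanishing of the central derivative in the twisted family to find one with $\ord_{s=1}L(s,g_{/L}) = 1$; set $g = E_0^K$ for that $d$.

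\textbf{Main obstacle.} The delicate point is the simultaneous control of the global root number $\epsilon(g_{/L}) = -1$ \emph{and} the exact vanishing order one: the sign computation only gives odd order, and ruling out order $\geq 3$ requires a genuine nonvanishing-of-derivative input in the family of twists (Rohrlich/Murty--Murty-type results, or an appeal to a known instance via Kolyvagin and finiteness of Sha in an analytic rank one case). Threading the congruence conditions so that the root number, the splitting in $L$, the prime-to-$p$ level, and the nonvanishing all hold for a common infinite family of $d$ is where the real care is needed; everything else is bookkeeping with local root numbers and Chebotarev.
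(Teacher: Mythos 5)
Your plan has two genuine gaps. First, condition (ii) cannot be produced by twisting: if $g=E_0\otimes\chi_K$, then every prime dividing the conductor $N_0$ of $E_0$ still divides the level $N$ of $g$, so (ii) forces $N_0$ itself to be supported on primes split in $L$. Your congruence conditions on $d$ only control $K$; they do nothing for these primes. Thus the real content you need at the outset is the existence, for the \emph{fixed} pair $(p,L)$, of a $p$-ordinary form of level prime to $p$ and supported entirely on primes split in $L$ --- and this is exactly what is not obvious: if you insist on elliptic curves (to keep ordinarity equal to $p\nmid a_p(E_0)$), there is no clean existence result for curves with conductor supported on a prescribed set of split primes; if you relax to general newforms of a chosen split level, then ordinarity at the fixed $p$ for some $\lambda\mid p$ is no longer automatic. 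Second, your mechanism for pinning the order at exactly one is misattributed: Theorem \ref{NVIw} concerns central values twisted by characters of $\ell$-power order and has no ``derivative analogue'' for a non-CM form (those twisted $L$-functions are not self-dual, so the relevant statement does not even live in that family), and Kolyvagin-type bounds go from analytic rank to Selmer corank, not the reverse. What your route actually requires is a simultaneous nonvanishing theorem for quadratic twists with prescribed local conditions --- $L(1,E_0\otimes\chi_K)\neq 0$ together with $L'(1,E_0\otimes\chi_K\chi_L)\neq 0$ for a common admissible $K$ --- i.e.\ input of Bump--Friedberg--Hoffstein/Friedberg--Hoffstein type (\cite{BFH}, \cite{FH}), which you only gesture at; even with that cited, gap one remains.

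The paper takes a different and more economical route that dissolves both issues: it builds $g$ as a CM form. One chooses an auxiliary imaginary quadratic field $K$ with $p$ split and $D_K$ supported on primes split in $L$, and a conjugate-dual Hecke character $\lambda$ of infinity type $(-1,0)$ with conductor prime to $p$ and supported on primes split in $L$; then (i), (ii) hold by construction of the level $D_KN_{K/\BQ}(\mathfrak{f}_\lambda)$, and (iii) is automatic because $p$ splits in $K$ (the unit root comes from $\lambda_{\bar v}(p)$). The sign arrangement gives $\epsilon(\tfrac12,\lambda)\epsilon(\tfrac12,\lambda\chi_L)=-1$, and the exact order one is then obtained by twisting $\lambda$ by anticyclotomic characters $\chi$ of $\ell$-power conductor (with $\ell$ split in $K$ and $L$) and invoking Rohrlich's theorem for Hecke $L$-functions in anticyclotomic towers, which yields $L(1,\lambda\chi)\,L'(1,\lambda\chi_L\chi)\neq 0$ (or the same with the roles of the two factors exchanged) for all but finitely many $\chi$; taking $g$ to be the CM form of $\lambda\chi_0$ for one good $\chi_0$ gives $\ord_{s=1}L(s,g_{/L})=1$ on the nose, with no appeal to Gross--Zagier, Kolyvagin, or quadratic-twist nonvanishing. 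If you want to salvage your approach, you must either prove the existence of a $p$-ordinary starting form with level split in $L$ (at which point you are close to reinventing the CM construction) and then cite \cite{BFH}/\cite{FH} correctly for the simultaneous value/derivative nonvanishing in the restricted twist family.
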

\noindent Note that condition (ii) is just the usual Heegner condition for the pair $(g,L)$.

\begin{proof}
Let $K$ be an imaginary quadratic field with discriminant $D_K$ coprime to $pD_L$ and such that $p$ splits in $K$ and every prime
$\ell\mid D_K$ splits in $L$. 
Let $\lambda$ be a Hecke character over $K$ with infinity type $(-1,0)$ such that 
\begin{itemize}
\item $\lambda$ is conjugate-dual: $\lambda^\tau = \lambda^{-1}|\cdot|_K$, where the superscript $\tau$ denotes composition with the action of complex conjugation 
$\tau$ on $K$,
\item $p \nmid \cond(\lambda)$, and 
\item the conductor of $\lambda$ is only divisible by primes whose residue characteristic is split in $L$. 
\end{itemize}
In particular, the CM modular modular form corresponding to the Hecke character $\lambda$ satisfies the classical Heegner hypothesis with respect to $L$,
and
$\epsilon(1/2,\lambda) \cdot \epsilon(1/2, \lambda\chi_{L})=-1$.

Let $\ell\nmid 2p\cdot D_{K}D_{L}N(\cond(\lambda))$ be any prime that splits in both $K$ and $L$.
Let $\mathfrak{X}_{K,\ell}^{ac}$ denote the set of finite order anticyclotomic Hecke characters over $K$ having
$\ell$-power conductor.  If $\epsilon(\frac{1}{2},\lambda)=+1$, then by the main result of \cite{Ro} (also see \cite[\S1]{Gr})
$
L(1,\lambda \cdot \chi) \cdot L'(1,\lambda\chi_{L}\cdot \chi) \neq 0
$
for all but finitely many $\chi \in \mathfrak{X}_{K,\ell}^{ac}$. 
Otherwise, 
$
L'(1,\lambda \cdot \chi) \cdot L(1,\lambda\chi_{L}\cdot \chi) \neq 0
$
for all but finitely many $\chi \in \mathfrak{X}_{K,\ell}^{ac}$.
In either case, let $\chi_{0} \in \mathfrak{X}_{K,\ell}^{ac}$ be a Hecke character for which the non-vanishing holds.
We may then take $g$ to be the CM modular form corresponding to the Hecke character $\lambda\chi_{0}$.
\end{proof}

\begin{remark}\label{aux-g-rmk}
It is natural to ask: Does there exist a non-CM elliptic newform $g$ satisfying Proposition \ref{anrk-prop}(iv)?
\end{remark}

We now explain how Proposition \ref{aux-g-prop} follows from Proposition \ref{anrk-prop}.

\begin{proof}[Proof of Proposition \ref{aux-g-prop}]
Let $g$ be as in Proposition \ref{anrk-prop}. Since the pair $(g,L)$ satisfies the Heegner hypothesis by
Proposition \ref{anrk-prop}(ii), it folllows from Proposition \ref{anrk-prop}(iv) and the Gross--Zagier formula for the pair $(g,L)$ that 
the image of the Heegner point $y_L\in J_0(N)(L)$ has infinite order in the abelian variety quotient associated to $g$.
In particular, the $p$-adic logarithm $\log_{\omega_g}(y_L)$ is non-zero. Combined with the formula
of Theorem \ref{BDPformula}, this implies that $T_\ac\nmid \sL_v^{BDP}(g/L)$. On the other hand, Proposition \ref{GRL=BDPL}
implies that if $T_v^2\mid \sL_p^{Gr}(g/L)$ then $T_\ac \mid \sL_v^{BDP}(g/L)$. Together, this shows
$T_v^2 \nmid \sL_p^{Gr}(g/L)$, proving Proposition \ref{aux-g-prop}.
\end{proof}

\subsection{A two-variable zeta element}\label{two-variable-zeta}
In light of Theorems \ref{BT-thm1} and \ref{BT-thm2}, we can use the Beilinson--Flach element $\mathcal{BF}(g/L)$ to define a
two-variable zeta element for $g$, as explained in the following.

\subsubsection{The zeta element $\CZ(g/L)$}
By Theorem \ref{BT-thm1}, \eqref{Ind-eq} holds. In particular $\BT_1$ is identified with the induction from $G_L$ to $G_\BQ$ of the $\Lambda_L^v[G_L]$-module
$\BT^+$. Recall that $\BT^+$ is a free $\Lambda_L^v$-module of rank one on which $G_L$ acts via the canonical character $\Psi_L^v$ and, furthermore,
we have a preferred $\Lambda_L^v$-isomorphism $\BT^+\isoarrow \Lambda_L^v(\Psi_L^v)$ arising from the map $\omega_{\bh_v}$. The latter determines via 
Shapiro's Lemma identifications
$$
H^1(\BQ,T(1)\hat\otimes\BT_1\hat\otimes\Lambda) \isoarrow H^1(L,T(1)\hat\otimes\BT^+\hat\otimes\Lambda) \isoarrow H^1(L,T(1)\hat\otimes\Lambda_L^v(\Psi_L^v)\hat\otimes\Lambda)
$$
of $\sR$-modules.
Similarly, $H^1(\BQ_p,T(1)\hat\otimes\BT^+\hat\otimes\Lambda)$ is identified with $H^1(L_v,T(1)\hat\otimes\Lambda_L^v(\Psi_L^v)\hat\otimes\Lambda)$
and $H^1(\BQ_p,T(1)\hat\otimes(c\cdot \BT^+)\hat\otimes\Lambda)$ is identified with $H^1(L_{\bar v},T(1)\hat\otimes\Lambda_L^v(\Psi_L^v)\hat\otimes\Lambda)$,
and also with $T(1)$ replaced by $T^\pm(1)$.
It follows that we have an identification
$
H^1_{\rel,\ord}(\BQ,T(1)\hat\otimes\BT_1\hat\otimes\Lambda) \isoarrow H^1_{\rel,\ord}(L,T(1)\hat\otimes\Lambda_L^v(\Psi_L^v)\hat\otimes\Lambda)
$
where the subscripts `$\rel$' and `$\ord$' on the right-hand side denote the submodule of classes 
$c$ such that no condition is imposed on $\loc_v(c)$ but we require $\loc_{\bar v}(c)\in H^1(L_{\bar v},T^+(1)\hat\otimes\Lambda_v^L(\Psi_L^v)\hat\otimes\Lambda)$.  We may thus view 
$\CBF(g/L)$ as an element of $H^1_{\rel,\ord}(L,T(1)\hat\otimes\Lambda_v^L(\Psi_L^v)\hat\otimes\Lambda)$.

We now consider the composition of isomorphism 
\begin{equation}\label{Lgal-twist-eq}
\Gamma_L \isoarrow \Gamma_L^v\times\Gamma \stackrel{\gamma_v\mapsto\gamma_v^{-1}}{\isoarrow} \Gamma_L^v\times\Gamma,
\end{equation}
this determines an isomorphism $\Lambda_L \isoarrow \Lambda_L^v\hat\otimes\Lambda$ 
and hence also isomorphism $\theta:\Lambda_{L,\CO_\lambda}\isoarrow \sR$ and 
$\theta^\ur:\Lambda_{L,\CO_\lambda^\ur}\isoarrow\sR^\ur$.  
The isomorphism \eqref{Lgal-twist-eq} also induces 
$
T(1)\hat\otimes \Lambda_L(\Psi_L^{-1}) \isoarrow T(1)\hat\otimes \Lambda_L^v(\Psi_L^v)\hat\otimes\Lambda(\Psi^{-1}),
$
which is compatible with $\theta$. We thus obtain an identification
$
H^1_{\rel,\ord}(L,T(1)\hat\otimes\Lambda_L) \simeq H^1_{\rel,\ord}(L,T(1)\hat\otimes \Lambda_L^v(\Psi_L^v)\hat\otimes\Lambda),
$
that is compatible with $\theta$, where $G_L$ acts on $\Lambda_L$ in the left-hand side via the inverse of the canonical character, and 
the subscripts `$\rel$' and `$\ord$' on the left-hand side denote the submodule of classes $c\in H^1_{\rel,\ord}(L,T(1)\hat\otimes\Lambda_L)$ such that $\loc_{\bar v}(c)\in H^1(L_{\bar v},T^+(1)\hat\otimes\Lambda_L)$. We let 
$$
\CZ(g/L) \in H^1_{\rel,\ord}(L,T(1)\hat\otimes\Lambda_L)
$$
be identified with $\CBF(g/L)$ under the preceding isomorphism.
This is the two-variable zeta element associated with $g$ and $L$ in the ordinary case.

We let $\CC_v: H^1(L_v,T^-(1)\hat\otimes\Lambda_L) \hookrightarrow \Lambda_{L,\CO_\lambda}$
be the composition
$$
\CC_v: H^1(L_v,T^-(1)\hat\otimes\Lambda_L)
\simeq H^1(\BQ_p,T^-(1)\hat\otimes\BT^+\hat\otimes\Lambda)
\stackrel{\sC^\mathrm{int}}{\hookrightarrow} \sR \stackrel{\theta^{-1}}{\isoarrow}\Lambda_{L,\CO_\lambda}.
$$
This is a $\Lambda_{L,\CO_\lambda}$-injection.  Let
$$
\CL_v(g/L) = \CC_v(\loc_v(\CZ(g/L))) \in \Lambda_{L,\CO_\lambda}.
$$
Note that $\CL_v(g/L) = \theta^{-1}(\sL_p(g/L))$.

We similarly let $\CL_{\bar v}: H^1(K_{\bar v},T^+(1)\hat\otimes\Lambda_L)\hookrightarrow \Lambda_{L,\CO_\lambda^\ur}$ be the composition
$$
\CL_{\bar v}: H^1(K_{\bar v},T^+(1)\hat\otimes\Lambda_L) \simeq H^1(\BQ_p,T^+(1)\hat\otimes\BT_1^-\hat\otimes\Lambda) \stackrel{\frac{1}{T_v}\sL^\mathrm{int}}{\hookrightarrow}\sR^\ur \stackrel{\theta^{\ur,-1}}{\isoarrow}\Lambda_{L,\CO_\lambda^\ur}.
$$
Note that the inclusion $\BT_1 = \widetilde\BT \subset \widetilde\BH$ induces
an isomorphism $\BT_1^- = T_v\widetilde\BH^-$, and so $\sL^\mathrm{int}$ maps
$H^1(\BQ_p,T^+(1)\hat\otimes\BT_1^-\hat\otimes\Lambda)$ into $T_v\sR^\ur$. In particular,
the middle arrow of the composition defining $\CL_{\bar v}$ is well-defined.
We also let 
$$
\CL_p^{Gr}(g/L) = \CL_{\bar v}(\loc_{\bar v}(\CZ(g/L))) \in \Lambda_{L,\CO_\lambda^\ur}.
$$
So $\CL_p^{Gr}(g/L) = \theta^{\ur,-1}(\sL_p^{Gr}(g/L))$.

\subsubsection{Connections with cyclotomic $L$-functions and cyclotomic zeta elements}
From Propositions \ref{Col-cycsp-prop} and \ref{ERLIint-prop} we immediately conclude:
\begin{prop}\label{2varZ-prop} \hfill
\begin{itemize}
\item[(i)] The reduction of $\CC_v$ modulo $\gamma_\ac-1$ equals the composition
$$
H^1(L_v,T^-(1)\hat\otimes\Lambda_L)\stackrel{\mod (\gamma_v-1)}{\twoheadrightarrow}
H^1(L_v,T^-(1)\hat\otimes\Lambda)\stackrel{Col_{\eta_{\omega},v}}{\hookrightarrow}
\Lambda_{\CO_\lambda}.
$$
\item[(ii)] For $0\neq \omega\in S_F$, $\gamma\in V_{F,g}$, $\gamma'\in V_{F,g'}$, 
and $c(\omega,\gamma,\gamma')\in F^\times$ as in 
Proposition \ref{ERLIint-prop},
$$
\CL_v(g/L) \mod (\gamma_\ac-1) = c(\omega,\gamma,\gamma')\mathfrak{g}(\chi_L)^{-1}
\CL_{\alpha,\omega,\gamma,\gamma'}(g/L) \in  \Lambda_{\CO_\lambda} = \Lambda_{L,\CO_\lambda}/(\gamma_\ac-1)\Lambda_{L,\CO_\lambda}.
$$
\end{itemize}
\end{prop}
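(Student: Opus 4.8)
The plan is to deduce both parts by unwinding the definitions made in \S\ref{two-variable-zeta}---those of $\CZ(g/L)$, $\CC_v$, $\CL_v(g/L)$ and the twisting isomorphism $\theta\colon\Lambda_{L,\CO_\lambda}\isoarrow\sR$ arising from \eqref{Lgal-twist-eq}---and then invoking Propositions \ref{Col-cycsp-prop} and \ref{ERLIint-prop}. Essentially the only input beyond formal manipulation is the identification of the operation ``reduction modulo $\gamma_\ac-1$'' on the $\Lambda_L$-side with ``reduction modulo $\gamma_v-1$'' on the $\sR$-side, which I would establish first.

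For that identification I would use the coordinates of \S\ref{Zp-ext}--\S\ref{MoreIwasawa}. Since $1+T_-=(1+T_v)^{1/2}$ in $\Lambda_L$ (legitimate as $p$ is odd) and $(1+T_v)^{1/2}-1$ is a unit multiple of $T_v$, the ideals $(\gamma_\ac-1)=(T_-)$ and $(\gamma_v-1)=(T_v)$ of $\Lambda_L$ coincide, and $T_v=0$ forces $T_+=T_\cyc$, so the common residue ring $\Lambda_L/(\gamma_\ac-1)$ is the canonical cyclotomic quotient $\Lambda$ (via $\gamma_+\mapsto\gamma_\cyc$). As $\theta$ is built from the automorphism $[\gamma_v]\mapsto[\gamma_v]^{-1}$ of $\Lambda_L^v\hat\otimes\Lambda$, which preserves the ideal $([\gamma_v]-1)$ and is trivial modulo it, $\theta$ carries $\bmod(\gamma_v-1)$ to $\bmod(\gamma_\ac-1)$ and induces the identity on $\Lambda_{\CO_\lambda}=\sR/(\gamma_v-1)=\Lambda_{L,\CO_\lambda}/(\gamma_\ac-1)$. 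Finally, under the Shapiro identification $H^1(L_v,T^-(1)\hat\otimes\Lambda_L)\simeq H^1(\BQ_p,T^-(1)\hat\otimes\BT^+\hat\otimes\Lambda)$ built from $\BT^+=\BH^+\isoarrow\Lambda_L^v(\Psi_L^v)$ (via $\omega_{\bh_v}$), reduction modulo $\gamma_v-1$ replaces $\BT^+$ by $\BT^+/(\gamma_v-1)\BT^+\simeq\BZ_p$, which is exactly the surjection onto $H^1(L_v,T^-(1)\hat\otimes\Lambda)$ in statement (i).

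For (i), I would then recall that $\CC_v=\theta^{-1}\circ\sC^{\mathrm{int}}\circ(\text{Shapiro})$ with $\sC^{\mathrm{int}}=\tilde c_g\cdot\sC$, and apply Proposition \ref{Col-cycsp-prop}: it identifies $\sC\bmod(\gamma_v-1)$ with the composite of the above surjection with $\tfrac{1}{\alpha(1-p\alpha^{-2})(1-\alpha^{-2})\lambda_N(g)}\,Col_{\eta_{\omega_g}}$. Multiplying by $\tilde c_g$ and using that $(\alpha(1-p\alpha^{-2})(1-\alpha^{-2})\lambda_N(g)\tilde c_g)^{-1}$ generates the fractional ideal $J_g$ shows the composite takes values in $\Lambda_{\CO_\lambda}$; and since $Col_{\eta_\omega}$ is $\CO_\lambda$-linear in $\eta_\omega$, $\eta_{c\omega}=c^{-1}\eta_\omega$, and $\iota_v=\mathrm{id}$, the leftover scalar is absorbed into the choice of $\omega$, so the composite is precisely $Col_{\eta_\omega,v}$ for the good differential $\omega$ normalised so that $\eta_\omega=\tfrac{\tilde c_g}{\alpha(1-p\alpha^{-2})(1-\alpha^{-2})\lambda_N(g)}\,\eta_{\omega_g}$. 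This gives (i).

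For (ii), since $\CZ(g/L)$ is by construction the image of $\CBF(g/L)$ under the chain of isomorphisms identifying $H^1_{\rel,\ord}(L,T(1)\hat\otimes\Lambda_L)$ with $H^1_{\rel,\ord}(\BQ,T(1)\hat\otimes\BT_1\hat\otimes\Lambda)$, one has $\CL_v(g/L)=\CC_v(\loc_v(\CZ(g/L)))=\theta^{-1}(\sL_p(g/L))$ with $\sL_p(g/L)=\sC^{\mathrm{int}}(\loc_p(\CBF(g/L)))$. By the second paragraph, this reduces modulo $\gamma_\ac-1$ to $\sL_p(g/L)\bmod(\gamma_v-1)$, which Proposition \ref{ERLIint-prop}(i) computes to be $c(\omega,\gamma,\gamma')\mathfrak{g}(\chi_L)^{-1}\CL_{\alpha,\omega,\gamma,\gamma'}(g/L)$ for a constant $c(\omega,\gamma,\gamma')\in F^\times$, lying in $\CO^\times$ under the integrality hypotheses of Proposition \ref{ERLIint-prop}(ii); this is exactly (ii). The main---really the only---obstacle is bookkeeping: matching the normalising constants ($\tilde c_g$, the factor $\alpha(1-p\alpha^{-2})(1-\alpha^{-2})\lambda_N(g)$ built into $J_g$, and the Gauss sum $\mathfrak{g}(\chi_L)$) against the conventions fixed for $Col_{\eta_\omega,v}$ and $\CL_{\alpha,\omega,\gamma,\gamma'}(g/L)$, and carrying out the reduction-matching of the second paragraph precisely enough that the scalars, not merely the maps up to scalar, agree.
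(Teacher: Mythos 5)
Your argument is correct and is essentially the paper's own: the paper deduces Proposition \ref{2varZ-prop} directly from Propositions \ref{Col-cycsp-prop} and \ref{ERLIint-prop}, and your explicit matching of reduction modulo $\gamma_\ac-1$ on $\Lambda_{L,\CO_\lambda}$ with reduction modulo $\gamma_v-1$ on $\sR$ through $\theta$ and the Shapiro identification (via $\omega_{\bh_v}$) is precisely the unwritten bookkeeping that makes this ``immediate.'' Your handling of the normalising scalar $\tilde c_g/(\alpha(1-p\alpha^{-2})(1-\alpha^{-2})\lambda_N(g))$ in part (i) is consistent with the paper's conventions, and in part (ii) any such ambiguity is in any case absorbed into the constant $c(\omega,\gamma,\gamma')$ of Proposition \ref{ERLIint-prop}.
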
 

We exploit this proposition to prove that the Beilinson--Kato element $\bz^\ord_{\alpha,\omega,\gamma,\gamma'}(g_{/L})$
is essentially the cyclotomic specialisation of the two-variable zeta element $\CZ(g/L)$.
\begin{thm}\label{2varZ-1varZ-thm} Let $g' = g\otimes\chi_L$. Let $0\neq \omega\in S_{F,g}$, and let $\gamma\in V_{F,g}$ and $\gamma'\in V_{F,g'}$
such that $\gamma^\pm\neq 0$ and $(\gamma')^\pm\neq 0$, and let $c(\omega,\gamma,\gamma') \in F^\times$ be as in Proposition \ref{ERLIint-prop}.
\begin{itemize}
\item[(i)] The image of $\CZ(g/L)$ under the map
$H^1_{\rel,\ord}(\CO_L[\frac{1}{p}],T(1)\otimes\Lambda_L) \rightarrow H^1_{\rel,\ord}(\CO_L[\frac{1}{p}],T(1)\otimes_{\BZ_p}\Lambda)\otimes_{\BZ_p}\BQ_p$
induced by the projection $\Lambda_L/(\gamma_\ac-1)\Lambda_L \isoarrow \Lambda_L^\ac\isoarrow\Lambda$
equals $c(\omega,\gamma,\gamma')\mathfrak{g}(\chi_L)^{-1} \bz^\ord_{\alpha,\omega,\gamma,\gamma'}$.
\item[(ii)] If (Van$_\BQ$) holds, $\omega\in S_{g,\CO}$ is good, and $\gamma\in T_{g,\CO}$ and $\gamma'\in T_{g',\CO}$
are such that $\gamma^\pm$ is an $\CO$-basis of $T_{\CO,g}$ and $(\gamma')^\pm$ is an $\CO$-basis of $T_{\CO,g'}^\pm$
(so $c(\omega,\gamma,\gamma') \in \CO^\times$), 
then the equality in \rm{(i)} holds in $H^1_{\rel,\ord}(\CO_L[\frac{1}{p},T(1)\otimes_{\BZ_p}\Lambda)$.
\end{itemize}
\end{thm}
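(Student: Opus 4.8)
\textbf{Proof proposal for Theorem \ref{2varZ-1varZ-thm}.}

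The plan is to compare the cyclotomic specialisation of $\CZ(g/L)$ with $\bz^\ord_{\alpha,\omega,\gamma,\gamma'}(g_{/L})$ inside the global Iwasawa cohomology group $H^1_{\rel,\ord}(\CO_L[\frac{1}{p}],T(1)\otimes_{\BZ_p}\Lambda)$ (after tensoring with $\BQ_p$ in the first instance), using the rank-one freeness from Theorem \ref{cycIwL-Box} together with the injectivity of a suitable Coleman map to reduce the comparison of two global classes to a comparison of their images under a single local map at $v$. First I would set up notation: let $\CZ^\cyc$ denote the image of $\CZ(g/L)$ under the projection $\Lambda_L/(\gamma_\ac-1)\Lambda_L\isoarrow\Lambda_L^\ac\isoarrow\Lambda$, and let $\bz = \bz^\ord_{\alpha,\omega,\gamma,\gamma'}(g_{/L})$; both live in $H^1_{\rel,\ord}(\CO_L[\frac{1}{p}],T(1)\otimes_{\BZ_p}\Lambda)\otimes_{\BZ_p}\BQ_p$ by Lemma \ref{BK-lem} and the construction of $\CZ(g/L)$. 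By Theorem \ref{cycIwL-Box}(a) this module is free of rank one over $\Lambda_{\cO_\lambda}\otimes_{\BZ_p}\BQ_p$; moreover by Theorem \ref{cycIwL-Box-II} the class $\bz$ is not torsion, so it suffices to show that $\CZ^\cyc$ and $c(\omega,\gamma,\gamma')\mathfrak{g}(\chi_L)^{-1}\bz$ have the same image under the injective map $Col_{\eta_\omega,v}\circ\loc_v$ (injectivity was established in the proof of Theorem \ref{cycIwL-Box-II}).

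The core computation is then twofold. On the one hand, by the construction of $\CC_v$ and Proposition \ref{2varZ-prop}(i), the reduction modulo $\gamma_\ac-1$ of $\CC_v\circ\loc_v$ is exactly $Col_{\eta_\omega,v}\circ\loc_v$ applied to the cyclotomic specialisation; hence $Col_{\eta_\omega,v}(\loc_v(\CZ^\cyc)) = \CL_v(g/L)\bmod(\gamma_\ac-1)$, which by Proposition \ref{2varZ-prop}(ii) equals $c(\omega,\gamma,\gamma')\mathfrak{g}(\chi_L)^{-1}\CL_{\alpha,\omega,\gamma,\gamma'}(g/L)$. On the other hand, by the definition of $\bz^\ord_{\alpha,\omega,\gamma,\gamma'}(g_{/L})$ and the computation in the proof of Theorem \ref{cycIwL-Box-II} (the "ordinary case" display there), $Col_{\eta_\omega,v}(\loc_v(\bz)) = L_{\alpha,\omega'_F,\gamma'}(g')L_{\alpha,\omega,\gamma}(g)$, which by Theorem \ref{ERLBKI} equals $\CL_{\alpha,\omega,\gamma}(g)\CL_{\alpha,\omega',\gamma'}(g') = \CL_{\alpha,\omega,\gamma,\gamma'}(g/L)$ (cf.\ Section \ref{sspLcy}). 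Comparing the two yields $Col_{\eta_\omega,v}(\loc_v(\CZ^\cyc)) = c(\omega,\gamma,\gamma')\mathfrak{g}(\chi_L)^{-1}\,Col_{\eta_\omega,v}(\loc_v(\bz))$, and injectivity of $Col_{\eta_\omega,v}\circ\loc_v$ on $H^1_{\rel,\ord}(\CO_L[\frac1p],T(1)\otimes\Lambda)\otimes\BQ_p$ gives part (i). For part (ii), one upgrades this to an integral statement: under (Van$_\BQ$) the module $H^1_{\rel,\ord}(\CO_L[\frac1p],T(1)\otimes\Lambda)$ is free of rank one over $\Lambda_{\cO_\lambda}$ by Theorem \ref{cycIwL-Box}(b), the class $\bz$ is integral and non-torsion by Lemma \ref{BK-lem} and Theorem \ref{cycIwL-Box-II}(b), and $\CZ^\cyc$ is integral as well (it is the cyclotomic specialisation of $\CZ(g/L)$, built from the integral Beilinson--Flach element $\mathcal{BF}(g/L)$ via Theorems \ref{BT-thm1} and \ref{BT-thm2}); so the equality of the two integral classes, which differ by the unit $c(\omega,\gamma,\gamma')\mathfrak{g}(\chi_L)^{-1}$ up to the $\BQ_p$-statement of (i), holds in the integral cohomology once one checks that both sides generate the same $\Lambda_{\cO_\lambda}$-line and that $c(\omega,\gamma,\gamma')\in\cO^\times$ by Proposition \ref{ERLIint-prop}(ii).

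The main obstacle I anticipate is keeping the normalisations, twists, and sign conventions consistent across the three chains of identifications involved: the Shapiro isomorphism and the Galois twist \eqref{Lgal-twist-eq} used to define $\CZ(g/L)$ and $\CC_v$; the relation $Col_{\eta_\omega,\bar v} = -Col_{\eta'_\omega,\bar v}\circ Tw_*$ of Lemma \ref{ColTw-L} that enters the definition of $\bz^\ord_{\alpha,\omega,\gamma,\gamma'}(g_{/L})$ through $\bz_{\gamma'}(g')$; and the precise constant $c(\omega,\gamma,\gamma')$ coming out of the period comparisons in Proposition \ref{ERLIint-prop}. I would handle this by carrying the factor $c(\omega,\gamma,\gamma')\mathfrak{g}(\chi_L)^{-1}$ symbolically throughout and only invoking the explicit period identities (Lemmas \ref{PerTw}, \ref{PetPer}) at the very end to see that it lies in $F^\times$ (resp.\ $\cO^\times$ under (irr$_\BQ$)). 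A secondary point requiring care is that $\CZ^\cyc$ a priori lies in $H^1_{\rel,\ord}(\CO_L[\frac1p],T(1)\otimes_{\BZ_p}\Lambda)\otimes_{\BZ_p}\BQ_p$ rather than the integral group, so for (ii) one must first establish integrality of $\CZ^\cyc$ — this follows because $\mathcal{BF}(g/L)\in H^1_{\rel,\ord}(\BQ,T(1)\hat\otimes\BT_1\hat\otimes\Lambda)$ with $\BT_1\simeq\Ind_{G_L}^{G_\BQ}(\Lambda_L^v(\Psi_L^v))$ by Theorems \ref{BT-thm2} and \ref{BT-thm1}, so the Shapiro/twist identifications carry it into the integral $H^1_{\rel,\ord}(L,T(1)\hat\otimes\Lambda_L)$, whose cyclotomic quotient is the integral group in question.
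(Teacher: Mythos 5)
Your argument for part (i) is the paper's own proof: the same commutative diagram built from Proposition \ref{2varZ-prop}(i) relating $\CC_v\circ\loc_v$ modulo $(\gamma_\ac-1)$ to $Col_{\eta_\omega,v}\circ\loc_v$, injectivity of that composite on the rank-one free module of Theorem \ref{cycIwL-Box}(a) deduced from the nonvanishing of $Col_{\eta_\omega,v}(\loc_v(\bz^\ord_{\alpha,\omega,\gamma,\gamma'}))$ in the proof of Theorem \ref{cycIwL-Box-II}, and the identification of the two images in $\Lambda_{\CO_\lambda}\otimes_{\BZ_p}\BQ_p$ via Proposition \ref{2varZ-prop}(ii) and Theorem \ref{ERLBKII}(i). (Your parenthetical that "injectivity was established in the proof of Theorem \ref{cycIwL-Box-II}" should be read as: nonvanishing is established there, and injectivity then follows from rank-one freeness --- which is exactly how you and the paper both use it.)

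For part (ii) there is one misstep to remove: you invoke Lemma \ref{BK-lem} to assert that $\bz^\ord_{\alpha,\omega,\gamma,\gamma'}(g_{/L})$ is integral, but the integral statement of that lemma requires (irr$_\BQ$), whereas part (ii) only assumes (Van$_\BQ$); indeed Remarks \ref{EC-Katoelement-overL} and \ref{EC-Katoelement-overL-int} make the point that when (irr$_\BQ$) fails this integrality is not an input but a \emph{consequence} of the present theorem, so your appeal is circular in precisely the case where (Van$_\BQ$) holds but (irr$_\BQ$) does not. The fix is already contained in your last paragraph: the image of $\CZ(g/L)$ under the projection lies in the integral group $H^1_{\rel,\ord}(\CO_L[\frac{1}{p}],T(1)\otimes_{\BZ_p}\Lambda)$ (since $\CZ(g/L)$ itself is integral, via Theorems \ref{BT-thm1} and \ref{BT-thm2}), this integral group is torsion-free and free of rank one under (Van$_\BQ$) by Theorem \ref{cycIwL-Box}(b), hence injects into its rationalisation, and the equality of part (i) then descends to the integral group with no separate integrality claim about $\bz^\ord_{\alpha,\omega,\gamma,\gamma'}$ needed --- this is what the paper means by "all this holds without $\otimes_{\BZ_p}\BQ_p$". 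Delete the appeal to Lemma \ref{BK-lem} and the vague "same $\Lambda_{\CO_\lambda}$-line" step, and part (ii) is complete.
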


\begin{proof} It follows from Proposition \ref{2varZ-prop}(i) that the diagram
$$
\begin{tikzcd}[row sep=2.5em, column sep=5em]
H^1_{\rel,\ord}(\CO_L[\frac{1}{p}],T(1)\otimes\Lambda_L) \arrow[d,"\loc_v"] \arrow[r,"\mod (\gamma_\ac-1)"] &
H^1_{\rel,\ord}(\CO_L[\frac{1}{p}],T(1)\otimes_{\BZ_p}\Lambda)\otimes_{\BZ_p}\BQ_p \arrow[d,"\loc_v"] \\
H^1(L_v,T^-(1)\otimes_{\BZ_p}\Lambda_L) \arrow[d,"\CC_v"] \arrow[r,"\mod (\gamma_\ac-1)"] &
H^1(L_v,T^-(1)\otimes_{\BZ_p}\Lambda)\otimes_{\BZ_p}\BQ_p \arrow[d,"Col_{\eta_{\omega},v}"] \\ 
\Lambda_{L,\CO_\lambda} \arrow[r,"\mod (\gamma_\ac-1)"] & \Lambda_{\CO_\lambda}\otimes_{\BZ_p}\BQ_p.
\end{tikzcd}
$$
commutes. 
By Theorem \ref{cycIwL-Box}(a), $H^1_{\rel,\ord}(\CO_L[\frac{1}{p}],T(1)\otimes_{\BZ_p}\Lambda)\otimes_{\BZ_p}\BQ_p$
is a free $(\Lambda_{\CO_\lambda}\otimes_{\BZ_p}\BQ_p)$-module of rank one. Since 
$Col_{\eta_\omega,v}(\bz^\ord_{\alpha,\omega,\gamma,\gamma'})\neq 0$ (cf.~the proof of Theorem \ref{cycIwL-Box-II}(a)), 
it follows that the composition of maps on the right of the diagram is an injection. So to prove part (i) of the theorem it suffices
to prove that $\CZ(g/L)$ and $c(\omega,\gamma,\gamma')\mathfrak{g}(\chi_L)^{-1} \bz^\ord_{\alpha,\omega,\gamma,\gamma'}$ have the same image in the 
bottom right corner of the diagram.  But this is a consequence of the explicit reciprocity laws in the guise of Proposition \ref{2varZ-prop}(ii)
and Theorem \ref{ERLBKII}(i). 

If (Van$_\BQ$) holds, then all this holds without $\otimes_{\BZ_p}\BQ_p$, which yields part (ii) of the theorem.
\end{proof}
\begin{remark}\label{EC-Katoelement-overL-int}
Suppose $g$ corresponds to the isogeny class of an elliptic curve $E_\bullet$ as in Remark \ref{EC-optperiod-rmk} but (irr$_\BQ$) does not hold. 
In light of Theorem~\ref{2varZ-1varZ-thm}, 
for $\omega\in S_{g,\CO}$ good, and $\gamma\in T_{g,\CO}$ and $\gamma'\in T_{g',\CO}$
such that $\gamma^\pm$ is an $\CO$-basis of $T_{\CO,g}$ and $(\gamma')^\pm$ is an $\CO$-basis of $T_{\CO,g'}^\pm$, it follows that 
$\bz^\ord_{\alpha.\omega,\gamma,\gamma'}(g_{/L})\in H^1_{\rel,\ord}(\CO_L[\frac{1}{p},T(1)\otimes_{\BZ_p}\Lambda)$. 
\end{remark}
\subsubsection{Connections with anti-cyclotomic $L$-functions and Heegner points}

From Propositions \ref{GRL=BDPL} and \ref{GRLvan-prop} and Theorem \ref{BDPformula} we conclude:
\begin{prop}\label{GRL=BDPL-II}\hfill
\begin{itemize}
\item[(i)] Suppose \eqref{Heeg} holds.
The image of $\CL_p^{Gr}(g/L)$ modulo $\gamma_+-1$ equals $-\sL_v^{BDP}(g/L)$. In particular, 
the image of $\CL_p^{Gr}(g/L)$ under $\phi_{\bf 1}: \Lambda_{L,\CO_\lambda^\ur} \twoheadrightarrow \Lambda_{L,\CO_\lambda^\ur}/(\gamma_+-1,\gamma_--1)\Lambda_{L,\CO_\lambda^\ur} = \CO_\lambda^\ur$ is
$$
\phi_{\bf 1}(\CL_p^{Gr}(g/L)) = - (1-a(p)p^{-1}+p^{-1})^2 (\log_{\omega_g}(y_L))^2.
$$
\item[(ii)] Suppose $\epsilon(g/L) = +1$. The image of $\CL_p^{Gr}(g/L)$ modulo $\gamma_+-1$
is $0$.
\end{itemize}
\end{prop}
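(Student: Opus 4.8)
The plan is to deduce the proposition from Propositions~\ref{GRL=BDPL} and~\ref{GRLvan-prop}, together with Theorem~\ref{BDPformula}, by transporting those results along the isomorphism $\theta^\ur\colon\Lambda_{L,\CO_\lambda^\ur}\isoarrow\sR^\ur$ used to define $\CL_p^{Gr}(g/L)$. These are phrased on $\sR^\ur$ in terms of the anticyclotomic specialisation $\phi_\ac$, whereas the proposition is phrased on $\Lambda_{L,\CO_\lambda^\ur}$ in terms of the reduction modulo $\gamma_+-1$; the substance of the proof is matching the two coordinate systems on the two-variable Iwasawa algebra, namely the $(+,-)$-basis underlying $\Lambda_L=\Lambda_L^+\hat\otimes\Lambda_L^-$ and the $(v,\cyc)$-basis underlying $\sR^\ur=\CO_\lambda^\ur\hat\otimes\Lambda_L^v\hat\otimes\Lambda$, keeping track of the twist $\gamma_v\mapsto\gamma_v^{-1}$ built into $\theta^\ur$ via~\eqref{Lgal-twist-eq}.

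First I would record the compatibility of the relevant reduction maps. Under the canonical identification $\Gamma_L\isoarrow\Gamma_L^v\times\Gamma$ one has $\gamma_+\mapsto(\gamma_v^{p^{h_p}/2},\gamma_\cyc)$ and $\gamma_-\mapsto(\gamma_v^{1/2},1)$, so after the twist $\gamma_v\mapsto\gamma_v^{-1}$ defining $\theta^\ur$ these become $\gamma_+\mapsto(\gamma_v^{-p^{h_p}/2},\gamma_\cyc)$ and $\gamma_-\mapsto(\gamma_v^{-1/2},1)$. Composing with the surjection $\phi_\ac\colon\sR^\ur\twoheadrightarrow\Lambda_{L,\CO_\lambda^\ur}^\ac$ of Proposition~\ref{GRL=BDPL} (given by $\gamma_v\mapsto\gamma_\ac^2$, $\gamma_\cyc\mapsto\gamma_\ac^{h_p}$), we get $\gamma_+\mapsto\gamma_\ac^{-p^{h_p}}\gamma_\ac^{h_p}=1$ and $\gamma_-\mapsto\gamma_\ac^{-1}$. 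Hence $\phi_\ac\circ\theta^\ur$ factors through the quotient $\Lambda_{L,\CO_\lambda^\ur}\twoheadrightarrow\Lambda_{L,\CO_\lambda^\ur}/(\gamma_+-1)=\Lambda_{L,\CO_\lambda^\ur}^-\cong\Lambda_{L,\CO_\lambda^\ur}^\ac$, and the induced map is precisely the anticyclotomic involution $\iota_\ac$. Moreover $\phi_\ac(T_v)=(1+T_\ac)^2-1=T_\ac(T_\ac+2)$, a nonzero element of the domain $\Lambda_{L,\CO_\lambda^\ur}^\ac$, hence a nonzerodivisor.

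For part~(i) I would then compute directly. By Theorem~\ref{BT-thm2} the class $\CBF(g/L)$ lies in $H^1_{\mathrm{rel},\ord}(\BQ,T(1)\hat\otimes\BT_1\hat\otimes\Lambda)$, so $\loc_p(\CBF(g/L))$ has coefficients in $\BT_1^-$; since $\BT_1^-=T_v\widetilde\BH^-$ by~\eqref{Ind-eq}, the element $\sL_p^{Gr}(g/L)$ is divisible by $T_v$ in $\sR^\ur$ and, by the definition of $\CL_{\bar v}$, $\theta^\ur(\CL_p^{Gr}(g/L))=T_v^{-1}\sL_p^{Gr}(g/L)$. Applying $\phi_\ac$, using the identity $\phi_\ac\circ\theta^\ur=\iota_\ac\circ(\,\cdot\bmod(\gamma_+-1))$ from the previous paragraph together with Proposition~\ref{GRL=BDPL}, we obtain
$$\iota_\ac\big(\CL_p^{Gr}(g/L)\bmod(\gamma_+-1)\big)=\frac{\phi_\ac(\sL_p^{Gr}(g/L))}{\phi_\ac(T_v)}=\frac{-((1+T_\ac)^2-1)\,\iota_\ac(\sL_v^{BDP}(g/L))}{(1+T_\ac)^2-1}=-\,\iota_\ac(\sL_v^{BDP}(g/L)).$$
Since $\iota_\ac$ is an involution this gives $\CL_p^{Gr}(g/L)\bmod(\gamma_+-1)=-\sL_v^{BDP}(g/L)$, the first assertion. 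The ``in particular'' then follows by further reducing modulo $\gamma_--1$ (equivalently $\gamma_\ac-1$), at which $\iota_\ac$ acts trivially, and substituting the value $\phi_{\bf 1}(\sL_v^{BDP}(g/L))=(1-a(p)p^{-1}+p^{-1})^2(\log_{\omega_g}(y_L))^2$ supplied by Theorem~\ref{BDPformula}.

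For part~(ii) the same argument applies verbatim with Proposition~\ref{GRLvan-prop} in place of Proposition~\ref{GRL=BDPL}: there $\phi_\ac(\sL_p^{Gr}(g/L))=0$, and since $\phi_\ac(T_v)$ is a nonzerodivisor we get $\phi_\ac(T_v^{-1}\sL_p^{Gr}(g/L))=0$, hence $\iota_\ac(\CL_p^{Gr}(g/L)\bmod(\gamma_+-1))=0$ and therefore $\CL_p^{Gr}(g/L)\bmod(\gamma_+-1)=0$. The main obstacle here is not conceptual but notational: one must carefully pin down $\theta^\ur$ and $\phi_\ac$ in both coordinate systems and verify both that the anticyclotomic involution introduced by $\theta^\ur$ exactly cancels the one appearing in Proposition~\ref{GRL=BDPL} and that the $T_v$-divisibility coming from~\eqref{Ind-eq} and Theorem~\ref{BT-thm2} is precisely what cancels the factor $(1+T_\ac)^2-1$; once this bookkeeping is done the proposition follows mechanically, with no further analytic input required.
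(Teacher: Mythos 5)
Your route is exactly the deduction the paper intends: Proposition \ref{GRL=BDPL-II} is stated in the text with no argument beyond "From Propositions \ref{GRL=BDPL} and \ref{GRLvan-prop} and Theorem \ref{BDPformula} we conclude", and your write-up supplies precisely that bookkeeping, so there is no methodological divergence to compare. Two remarks on the details. First, you correctly insist that $\theta^\ur(\CL_p^{Gr}(g/L))=T_v^{-1}\sL_p^{Gr}(g/L)$, using the $\tfrac{1}{T_v}\sL^{\mathrm{int}}$ in the definition of $\CL_{\bar v}$ together with Theorem \ref{BT-thm2} and the identification $\BT_1^-=T_v\widetilde\BH^-$ from \eqref{Ind-eq}; this is the crucial normalization (the paper's summary line ``$\CL_p^{Gr}(g/L)=\theta^{\ur,-1}(\sL_p^{Gr}(g/L))$'' omits the $T_v$, and without it the factor $(1+T_\ac)^2-1$ in Proposition \ref{GRL=BDPL} would not cancel and the ``In particular'' formula would come out wrong), so your cancellation of $\phi_\ac(T_v)=(1+T_\ac)^2-1$ is the right mechanism. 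Second, your displayed cancellation $\gamma_+\mapsto\gamma_\ac^{-p^{h_p}}\gamma_\ac^{h_p}=1$ is not literally correct: with the generator normalization $\gamma_+\mapsto\gamma_v^{p^{h_p}/2}$ and the map $\phi_\ac$ as printed ($\gamma_\cyc\mapsto\gamma_\ac^{h_p}$) one gets $\gamma_\ac^{h_p-p^{h_p}}\neq 1$ (already for $h_p=0$). The resolution is that the exponent $h_p$ in the definition of $\phi_\ac$ (as in several other places, e.g.\ the condition $\chi_1(\gamma_v^{h_p})=\epsilon(\gamma_+)^m$ defining $\Xi^{(II)}$) must be read as $p^{h_p}$; under that consistent convention $\phi_\ac\circ\theta^\ur$ does kill $\gamma_+-1$ and induces $\iota_\ac$ on $\Lambda_L^\ac$, as you need. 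You should state this convention explicitly rather than silently equating $h_p$ with $p^{h_p}$, since the factorization of $\phi_\ac\circ\theta^\ur$ through reduction modulo $\gamma_+-1$ is the load-bearing step of your argument; with that fixed, both parts (i) and (ii), including the specialisation at the trivial character via Theorem \ref{BDPformula}, go through as you wrote them.
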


This proposition allows us to relate the image of $\CZ(g/L)$ under Perrin-Riou's regulator map (or `big logarithm') to 
Heegner points, providing a key link in our subsequent proof of the Perrin-Riou Conjecture.

Recall that Perrin-Riou's regulator map for $H^1(L_{\bar v},T^+(1)\hat\otimes\Lambda_L^\ac)$ is the composition
\begin{equation*}\begin{array}{ccl}
\CL^{PR}: H^1(L_{\bar v}, T^+(1)\hat\otimes\Lambda_L^\ac) & \stackrel{\Psi_L^\ac(g)\mapsto\eps(g)^{-1}\Psi_L^\ac(g)}{\longrightarrow} &
H^1(L_{\bar v}, T^+\hat\otimes\Lambda_L^\ac) \\ & \stackrel{res}{\hookrightarrow}  & H^1(L_{\bar v}^\ur, T^+\hat\otimes\Lambda^\ac_L)  \\
& \stackrel{\CL_{T^+}}{\longrightarrow} & D(T^+)\hat\otimes W(\ov{\BF}_p)\hat\otimes\Lambda_L^\ac,
\end{array}
\end{equation*}
where for the third map we have used that as a $G_{L_{\bar v}^\ur}$-module $\Lambda^\ac_L$ is naturally isomorphic to the cyclotomic 
algebra $\Lambda$.
Let 
$$
\CL^{PR}_g = ([\omega_g,-]\otimes id\otimes id)\circ \CL^{PR}: H^1(L_{\bar v}, T^+(1)\hat\otimes\Lambda_L^\ac)\rightarrow 
\Lambda_{L,\CO_\lambda^\ur}^{\ac}.
$$
 From the specialisation properties of $\CL_{T^+}$ (see \cite[Thm.~B.5]{LZ0}) it follows that
\begin{equation}\label{PRLog=BKlog}
\phi_{\bf 1}\circ \CL_g^{PR} = (1-\alpha/p)(1-1/\alpha)^{-1}\log_{BK}^{\eta_g},
\end{equation}
where $\log_{BK}$ denotes the Bloch--Kato logarithm for $T^+$ and $\eta_g = \eta_{\omega_g}$ (so 
$\log_{BK}(-) = \log_{BK}^{\eta_g}(-)\cdot\eta_g \in D_\cris(V^+)$). 

\begin{lem}\label{GRL=PRLog}
There exists a unit $U_L \in (\Lambda_L^{\ac,\ur})^\times$ depending only on $L$ such that 
$$
U_L \cdot \CL_{\bar v} \mod (\gamma_+-1) = \CL_g^{PR}. 
$$
In particular, $u_L \cdot \phi_{\bf 1}\circ \CL_{\bar v} = (1-\alpha/p)(1-1/\alpha)^{-1}\log_{BK}^{\eta_g}$,
for $u_L \in \BZ_p^{\ur,\times}$ the image of $U_L$ under the specialization map 
$\Lambda_L^{\ac,\ur}\twoheadrightarrow\Lambda_L^{\ac,\ur}/(\gamma_\ac-1)\Lambda^{\ac,\ur}  = \BZ_p^\ur$.
\end{lem}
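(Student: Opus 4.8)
The plan is to compare the two $\Lambda_L^{\ac}$-linear maps $\CL_{\bar v}\bmod(\gamma_+-1)$ and $\CL_g^{PR}$ on the module $H^1(L_{\bar v},T^+(1)\hat\otimes\Lambda_L^\ac)$ and show that they differ by multiplication by a unit of $\Lambda_L^{\ac,\ur}$ which depends only on $L$ and not on $g$. First I would unwind the definition of $\CL_{\bar v}$: it is $\theta^{\ur,-1}\circ\frac{1}{T_v}\sL^{\mathrm{int}}$ after the Shapiro identification $H^1(L_{\bar v},T^+(1)\hat\otimes\Lambda_L)\simeq H^1(\BQ_p,T^+(1)\hat\otimes\BT_1^-\hat\otimes\Lambda)$, and $\sL^{\mathrm{int}}=\CH_v\cdot\sL = \CH_v\cdot(\xi_{g,\bh_v}\circ\CL_{g,\bh_v})$. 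Reducing modulo $\gamma_+-1$ corresponds on the $\Lambda_L$-side to specialising the cyclotomic-type variable; here the composite isomorphism \eqref{Lgal-twist-eq} and the factorisation $\Gamma_L\isoarrow\Gamma_L^{\cyc}\times\Gamma_L^\ac$ must be tracked carefully so that $\gamma_+-1$ on one side becomes the relevant augmentation on $\BH^-$ after the twist $\nu$ and $\iota_\epsilon$. The key structural input is that under $\BT_1=\widetilde\BT$ and \eqref{Ind-eq}, $\BT_1^- = T_v\widetilde\BH^-$, so the factor $\frac{1}{T_v}$ in the definition of $\CL_{\bar v}$ exactly cancels this, and the reduction of $\sL$ lands on $\widetilde\BH^-$ whose specialisation is governed by $\eta_{\bh_v}$.

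Second, I would identify the building blocks of $\CL^{PR}_g$ with those of $\sL$. The map $\CL^{PR}$ is $\CL_{T^+}$ precomposed with the twist by $\epsilon^{-1}$ on the $\Lambda_L^\ac$-factor and postcomposed with $[\omega_g,-]$; the map $\CL_{g,\bh_v}$ used in $\sL$ is likewise $\CL_{T^+\otimes\widetilde\BH^-}$ precomposed with $\iota_\epsilon$, and $\xi_{g,\bh_v}$ postcomposes with $\omega_g\otimes\tilde\eta_{\bh_v}$. By functoriality of $\CL_M$ in $M$ (the fourth bullet of \cite[Thm.~8.2.3]{KLZ}), reducing $\CL_{T^+\otimes\widetilde\BH^-}$ against a chosen $\Lambda_L^v$-basis of $\widetilde\BH^-$ gives $\CL_{T^+}$ up to the unramified Frobenius eigenvalue factors that $\tilde\eta_{\bh_v}$ records. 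These eigenvalue factors, together with the normalising constant $\CH_v=\frac{h_L}{w_L}T_v\CL_v(L)$ from \eqref{nrm} and the CM periods $\Omega_p$, all assemble into an element of $\Lambda_L^{\ac,\ur}$; the content of Theorem \ref{excz-2} (that $(\CH_v)=(H_v)$ as $\Lambda_L^{v,\ur}$-ideals) together with the fact that $\tilde\eta_{\bh_v}$ trivialises the rank-one module $\widetilde{D(\BH^-_1)}$ is what makes this element a \emph{unit}. Crucially every one of these factors is built from the CM family $\bh_v$, the Katz $p$-adic $L$-function for $L$, and the CM periods alone — none involves $g$ — so the resulting unit $U_L$ depends only on $L$. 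The second assertion of the lemma then follows by applying $\phi_{\bf 1}$ and invoking the specialisation formula \eqref{PRLog=BKlog} for $\CL_{T^+}$, with $u_L$ the image of $U_L$ under $\Lambda_L^{\ac,\ur}\twoheadrightarrow\BZ_p^{\ur}$.

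The main obstacle I expect is bookkeeping the compatibility of the various twists and Shapiro identifications: in particular verifying that $\CL_{\bar v}\bmod(\gamma_+-1)$, which a priori is defined through the two-variable machinery over $\sR^\ur$ with its $\nu$-twist \eqref{eq:tw-1} and $\iota_\epsilon$-twist, matches the genuinely anticyclotomic object $\CL_g^{PR}$ built directly from $\CL_{T^+}$ over $\Lambda_L^\ac$ — i.e.\ tracking that the cyclotomic variable one quotients by on the $\Lambda_L$-side corresponds, after \eqref{Lgal-twist-eq}, precisely to the variable whose specialisation produces the anticyclotomic $\Lambda_L^\ac$ and does not interfere with the $\bar v$-component. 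A secondary subtlety is confirming the unit is independent of the auxiliary choices ($c$, the $\Lambda_L^v$-basis of $\BH^+$ via $\omega_{\bh_v}$, the generator of $\widetilde I_{\bh_v}$); for this I would note, as in Remark \ref{BFint-rmk}, that changing these scales both sides of the purported identity by the same factor, so the ratio is well-defined, and then pin it down on a single Zariski-dense family of characters using Theorems \ref{ERLIIint-thm} and \ref{pKatzL}. Once the character-by-character comparison is in place the identity of maps follows by density, exactly as in the proof that $\CBF(g/L)$ is independent of $c$.
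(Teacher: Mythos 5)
Your proposal follows essentially the same route as the paper's proof: unwind $\CL_{\bar v}$ through the Shapiro identification given by the basis $\lambda_{\bh_v}$ of $\BT^+$ coming from $\omega_{\bh_v}$, use functoriality of Perrin-Riou's big logarithm to compare with $\CL_g^{PR}$, and observe that the discrepancy is the scalar $\tfrac{1}{T_v}\CH_v\cdot\tilde\eta_{\bh_v}(\text{basis of }\BT_1^-)$, a unit of $\Lambda_L^{\ur}$ (via $(\CH_v)=(H_v)$ and $\BT_1^-=T_v\widetilde\BH^-$) built purely from CM data, with the final statement obtained from \eqref{PRLog=BKlog}. This is exactly the paper's argument, which simply writes this unit down explicitly as $\CU_L^{-1}\bmod(\gamma_+-1)$.
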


\begin{proof} By Theorem \ref{BT-thm1}, $\BT_1 = \BT^+\oplus c\cdot\BT^+$. In particular, $c\cdot \BT^+$ projects isomorphically onto
$\BT_1^-= \BT_1/\BT_1^+$. Let $\lambda_{\bh_v} \in \BT^+_1$ be the $\Lambda_L^v$-basis determined by $\omega_{\bh_v}$. Then 
$c\cdot\lambda_{\bh_v}$ is a $\Lambda_L^v$-basis of $c\cdot\BT_1^-$. Let 
$\CU_L = \frac{1}{T_v}\CH_v\cdot\eta_{\bh_v}(\lambda_{\bh_v}\mod \BT_1^+)\in (\Lambda_L^\ur)^\times$ and let 
$$
U_L = \CU_L^{-1}\mod (\gamma_+-1) \in (\Lambda_L^{\ac,\ur})^\times.
$$
The lemma then follows directly from the identification $H^1(L_{\bar v}, T^+(1)\hat\otimes\Lambda_L) = H^1(\BQ_p, T^+(1)\hat\otimes\BT_1\hat\otimes\Lambda)$ determined by the basis $\lambda_{\bh_v}$ and comparing the definitions of $\CL_{\bar v}$ and $\CL_g^{PR}$;
the conclusion for $\phi_{\bf 1}\circ\CL_{\bar v}$ the follows from \eqref{PRLog=BKlog}.
\end{proof}

From Lemma \ref{GRL=PRLog} and Proposition \ref{GRL=BDPL-II} we conclude:
\begin{lem}\label{GRL=logheegner} \hfill
\begin{itemize} 
\item[(i)] Suppose (Heeg) holds. Then
$$
(1-\alpha/p)(1-1/\alpha)^{-1}\log_{BK}^{\eta_g}(\loc_{\bar v}(\phi_{\bf 1}(\CZ(g/L))) = - u_L (1-a(p)p^{-1}+p^{-1})^2 (\log_{\omega_g}(y_L))^2.
$$
\item[(ii)] If $\eps(g/L) = +1$, then $\loc_{\bar v}(\phi_{\bf 1}(\CZ(g/L)))= 0$ in $H^1(L_{\bar v},V^+(1))$.
\end{itemize}
\end{lem}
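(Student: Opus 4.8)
The plan is to deduce both parts by assembling the explicit reciprocity law of the second kind (in the guise of Proposition~\ref{GRL=BDPL-II}) with the identification of specialised regulator maps in Lemma~\ref{GRL=PRLog}. Recall that, by construction of the two-variable zeta element and of the map $\CL_{\bar v}$, one has $\sL_p^{Gr}(g/L) = \CL_{\bar v}(\loc_{\bar v}(\CZ(g/L)))$ in $\Lambda_{L,\CO_\lambda^\ur}$. Since the localisation map $\loc_{\bar v}$ is functorial and hence commutes with the specialisation $T(1)\hat\otimes\Lambda_L \to V(1)$ at the trivial character, and since Lemma~\ref{GRL=PRLog} gives the identity of maps $u_L\cdot\phi_{\bf 1}\circ\CL_{\bar v} = (1-\alpha/p)(1-1/\alpha)^{-1}\log_{BK}^{\eta_g}$ (coming from \eqref{PRLog=BKlog} and the specialisation behaviour of Perrin-Riou's big logarithm $\CL_{T^+}$ recorded there), applying $\phi_{\bf 1}$ to the previous identity yields
$$
(1-\alpha/p)(1-1/\alpha)^{-1}\,\log_{BK}^{\eta_g}\big(\loc_{\bar v}(\phi_{\bf 1}(\CZ(g/L)))\big)\;=\;u_L\cdot\phi_{\bf 1}\big(\sL_p^{Gr}(g/L)\big).
$$
Everything then reduces to evaluating, or bounding, the right-hand side.

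For part (i), assuming \eqref{Heeg}, I would invoke Proposition~\ref{GRL=BDPL-II}(i) — which rests on Proposition~\ref{GRL=BDPL} together with the Bertolini--Darmon--Prasanna formula of Theorem~\ref{BDPformula} — to read off $\phi_{\bf 1}(\sL_p^{Gr}(g/L)) = -(1-a(p)p^{-1}+p^{-1})^2(\log_{\omega_g}(y_L))^2$. Substituting this into the displayed identity produces precisely the asserted formula.

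For part (ii), suppose $\eps(g/L)=+1$. Then Proposition~\ref{GRL=BDPL-II}(ii) gives that the image of $\sL_p^{Gr}(g/L)$ modulo $\gamma_+-1$ vanishes; in particular $\phi_{\bf 1}(\sL_p^{Gr}(g/L))=0$, so the displayed identity forces $\log_{BK}^{\eta_g}(\loc_{\bar v}(\phi_{\bf 1}(\CZ(g/L))))=0$, the factor $(1-\alpha/p)(1-1/\alpha)^{-1}$ being a unit (indeed $1-\alpha/p\neq0$ since $\ord_p(\alpha)=0$, and $1-1/\alpha\neq0$ outside the anomalous case $\alpha=1$, which one treats separately by working directly with $\CL_g^{PR}$ and the injectivity of the anticyclotomic Perrin-Riou regulator). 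To conclude $\loc_{\bar v}(\phi_{\bf 1}(\CZ(g/L)))=0$ in $H^1(L_{\bar v},V^+(1))$, I would use that $V^+$ is unramified with non-trivial Frobenius, so $H^0(L_{\bar v},V^+(1))=H^2(L_{\bar v},V^+(1))=0$ and hence $H^1(L_{\bar v},V^+(1))=H^1_{\rm f}(L_{\bar v},V^+(1))$ is one-dimensional over $F_\lambda$, on which $\log_{BK}^{\eta_g}$ is injective (as $\eta_g$ spans the unit-root line in $D_{cris}(V)$ and pairs non-trivially with $D_{dR}(V^+(1))$).

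The one point requiring genuine care is the compatibility invoked in the first paragraph: that feeding $\loc_{\bar v}(\CZ(g/L))$ into $\CL_{\bar v}$ and then specialising at the trivial character computes $\log_{BK}^{\eta_g}$, up to the indicated explicit factor, of the bona fide class $\loc_{\bar v}(\phi_{\bf 1}(\CZ(g/L)))$. This is exactly the content of Lemma~\ref{GRL=PRLog} and the specialisation property of $\CL_{T^+}$ cited there, so no new work is needed, but it is where all the twist bookkeeping is concentrated; everything else in the argument is a direct substitution.
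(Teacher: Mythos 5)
Your proposal is correct and follows essentially the same route as the paper, whose proof of this lemma is exactly the combination of Lemma~\ref{GRL=PRLog} (identifying $u_L\cdot\phi_{\bf 1}\circ\CL_{\bar v}$ with $(1-\alpha/p)(1-1/\alpha)^{-1}\log_{BK}^{\eta_g}$) with Proposition~\ref{GRL=BDPL-II}, applied to $\CL_p^{Gr}(g/L)=\CL_{\bar v}(\loc_{\bar v}(\CZ(g/L)))$. Your extra care in part (ii) about the possibly vanishing factor $1-1/\alpha$ and about injectivity of the (anticyclotomic) regulator is exactly the implicit content of the paper's one-line deduction, so nothing is missing.
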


\begin{remark} It is natural to ask whether the units $U_L$ and especially $u_L$ can be explicitly identified. This might be
possible by an explicit comparision of $\BT_1$ with a familiy of CM motives, but we do not pursue this here as it is not needed for our purposes
or applications. However, in the course of our proof of the Perrin-Riou's Conjecture (see section \ref{PRC} below), we do show that $u_L$ is
algebraic and, for suitably chosen $L$, belongs to $\BZ_{(p)}^\times$. The proof of this involves
auxiliary input from the arithmetic of CM forms, in the form of a proof of a version of Perrin-Riou's conjecture for certain ordinary CM forms
(see Appendix \ref{A-PRConj-CM}).
\end{remark}

\section{Zeta elements over imaginary quadratic fields: the supersingular case}\label{BFss} 
In this section we construct the plus/minus $p$-adic zeta element for a weight two newform for supersingular primes $p$ 
and describe its explicit reciprocity laws.

The construction relies on the Beilinson--Flach element associated to $p$-stabilisations of the newform and the canonical CM Hida family constructed  
by Loeffler and Zerbes \cite{LZ}, as initiated in their work with Lei \cite{LLZa}, \cite{LLZb}, as well as its variants by Buyukboduk and Lei \cite{BL}. Our approach is similar to the ordinary case treated in the previous section. By definition, the Beilinson--Flach element arises from the Tate lattice of \'etale cohomology of a tower of open modular curves. 
The principal result of this section is that it lives in the Tate lattice of closed modular curves (cf.~Theorem~\ref{BT-thm2-ss}). 
Based on \eqref{Ind-eq}, established in the previous section, 
we then define the sought after zeta element.

We continue with the notation and conventions introduced in \S\S\ref{NotationPrelim}--\ref{CMHF}.
Throughout this section we assume the newform $g\in S_{2}(\Gamma_{0}(N))$ to be supersingular at $p$.

\subsection{The Beilinson--Flach element} 
For $M = \BT_1, \tilde\BT_1, \BH_1$, or $\tilde\BH_1$ as in \S\ref{CMHF}, let 
$$
H^1(\BZ[\frac{1}{p}], T(1)\hat\otimes  M\hat\otimes \Lambda) = 
H^1(G_{\BQ,\Sigma}, T(1)\hat\otimes  M\hat\otimes \Lambda)
$$
where $\Sigma$ is any finite set of primes containing all $\ell\mid ND_Lp\infty$.

Let $\gamma\in\{\alpha,\beta\}$ be a root of the Hecke polynomial $x^2+p$.  
Let $g_\gamma(z) = g(z) - \gamma^{-1}pg(pz)\in S_2(\Gamma_0(Np))$ be the $p$-stabilisation of
$g$ corresponding to $\gamma$. In the notation of \cite{KLZ} there is an isomorphism 
$(Pr^\gamma)^*:M_{F_\lambda}(g_\gamma)^* \isoarrow M_{F_\lambda}(g)^* = V(1)$ 
that maps $M_{\cO_\lambda}(g_\gamma)^*\isoarrow M_{\cO_\lambda}(g)^* = T(1)$.

Since $a_{p}(g)=0$, the Hecke eigenform $g_\gamma$ is specialisation of a Coleman family ${\bf g}_{\gamma}$.

Recall that the canonical CM family $\bh_v$ corresponds to a branch of a Hida family $\bh$ of tame level $D_L$,
and it follows from the definition of $\BH_1$ and $M(\mathbf{h})^*$ that there is a specialisation map 
\begin{equation}\label{CM-sp-II}
M({\bf h})^*\otimes_{\Lambda_{\bf h}}\Lambda^v_L \twoheadrightarrow  \BH_1.
\end{equation}
More precisely, the left-hand side of \eqref{CM-sp-II} is just $\BH$ and the map the quotient modulo $\Lambda_L^v$-torsion.

Fix an integer $c >1$ such that $(c,6ND_Lp) = 1$.
Let
$$
_{c}\mathcal{BF}^{\gamma}(g_{/L}) \in H^1(\BZ[\frac{1}{p}], T(1)\hat\otimes  \BH_1\hat\otimes \mathfrak{K}_{1,F_\lambda(\gamma)})
$$
be the base layer of the Beilinson--Flach Euler system \cite{LZ} associated to the pair $(g_{\gamma}, {\bf h}_{v})$, obtained from the system in {\it{ibid}.} associated to the pair $({\bf g}_{\gamma}, {\bf h}_{v})$ with ${\bf g}_{\gamma}$ specialised  to $g_\gamma$. Note that these elements are unbounded. 

\begin{remark}
In \cite{LLZa}, \cite{KLZ}, \cite{LZ}, \cite{BL} it is assumed that $p>3$. For odd primes $p$, the hypothesis arises only when  considering explicit reciprocity laws, being due to its occurrence in the work of Ohta \cite{Oh1}, \cite{Oh2} on $p$-adic Eichler--Shimura isomorphism. Since the central character of the canonical CM Hida family 
${\bf h}_v$ is non-trivial, {\color{blue}} the pertinent Eichler--Shimura isomorphism is proved in \cite{SV-S-Ohta} (with many cases already covered in 
 Cais \cite{Ca}) for $p\geq 3$, and so the explicit reciprocity laws hold in our setting for any odd prime $p$.
\end{remark}
\subsubsection{The plus/minus Beilinson--Flach element}
Following Pollack \cite{Po}, put\footnote{While our labelling of signs is opposite to \cite{Po}, it is consistent with the formulation of main conjectures in \cite{Ko} and its generalisations.} 
$$
\log_{p}^{+}(T)=\frac{1}{p}\prod_{n=1}^{\infty}\frac{\Phi_{p^{2n}}(1+T)}{p}, \quad
\log_{p}^{-}(T)=\frac{1}{p}\prod_{n=1}^{\infty}\frac{\Phi_{p^{2n-1}}(1+T)}{p},
$$
and 
$$
M=\begin{pmatrix}
\log_p^{+}(1+T) & \alpha \log_{p}^{-}(1+T)\\
\log_{p}^{+}(1+T) & \beta \log_{p}^{-}(1+T)\\
\end{pmatrix}
.
$$

A principle of Kobayashi and Pollack leads to the following integral variant of the unbounded Beilinson--Flach elements: 

\begin{thm}\label{BF-pm}
There exist
$$
_{c}\mathcal{BF}^{\pm}(g_{/L}) \in H^1(\BZ[\frac{1}{p}], T(1)\hat\otimes  \BH_1\hat\otimes \Lambda)
$$
such that
$$
\begin{pmatrix}
_{c}\mathcal{BF}^{\alpha}\\
_{c}\mathcal{BF}^{\beta}\\
\end{pmatrix}
=
M \cdot \begin{pmatrix}
_{c}\mathcal{BF}^{-}\\
_{c}\mathcal{BF}^{+}\\
\end{pmatrix}
.
$$
\end{thm}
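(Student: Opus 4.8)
\textbf{Proof plan for Theorem \ref{BF-pm}.}
The plan is to adapt the interpolation argument of Kobayashi--Pollack to the two-variable setting, using the $\pm$-decomposition of the local cohomology over the cyclotomic tower as established in \cite{Ko1} and \cite{L}. First I would recall that the unbounded Beilinson--Flach elements $_{c}\mathcal{BF}^{\alpha}(g_{/L})$ and $_{c}\mathcal{BF}^{\beta}(g_{/L})$, a priori lying in $H^1(\BZ[\frac{1}{p}], T(1)\hat\otimes \BH_1\hat\otimes \mathfrak{K}_{1,F_\lambda(\gamma)})$ for $\gamma\in\{\alpha,\beta\}$, are \emph{compatible} in the sense that they are obtained from a single Euler system datum for the pair $({\bf g}_\gamma,{\bf h}_v)$ by varying the choice of root $\gamma$ of $x^2+p$; concretely, they are the images under the two projections $\mathfrak{K}_{1,F_\lambda}[\sqrt{-p}]\to\mathfrak{K}_{1,F_\lambda(\gamma)}$ of a common element. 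Since $\alpha=-\beta=\sqrt{-p}$ and $\alpha\beta=p$, the only obstruction to writing $_{c}\mathcal{BF}^{\gamma}$ as a $\Lambda$-linear (rather than $\mathfrak{K}_{1,F_\lambda(\gamma)}$-linear) combination via the matrix $M$ is a growth/integrality estimate on the coefficients of their power series expansions at the $p$-power roots of unity.

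The key steps, in order, would be the following. (1) Analyse the local image $\loc_p(_{c}\mathcal{BF}^{\gamma}(g_{/L}))$ in $H^1(\BQ_p, T(1)\hat\otimes\BH_1\hat\otimes\mathfrak{K}_{1,F_\lambda(\gamma)})$, using the analog of Corollary \ref{BF-locp-cor} (whose proof, via the vanishing of the image in $\mathscr{F}^{--}$ from \cite[Prop.~8.1.7]{KLZ}, is insensitive to the reduction type), so that the class is controlled by the Coleman-type map $Col^\circ_{\omega_g}$ rather than the unbounded Perrin-Riou map. (2) Pass to the cyclotomic specialisations: for each $p^t$-th root of unity $\zeta$, evaluate $\phi_\zeta$ applied to the relevant coordinate of $\loc_p(_{c}\mathcal{BF}^{\gamma})$; by the explicit reciprocity law (the supersingular analog of Theorem \ref{ERLI-thm}, which is exactly the input recalled via \cite[\S3.3.1]{L} and \cite{BL}) this is a $p$-adic multiple of a central $L$-value $L(1,g\otimes\psi_\zeta^{-1})L(1,g\otimes\chi_L\psi_\zeta^{-1})$ times an explicit Euler-type factor $\mathcal{E}(\zeta)$, whose $\gamma$-dependence is precisely $\gamma^{-2(t+1)}=(\pm\sqrt{-p})^{-2(t+1)}=(-p)^{-(t+1)}$ at $\zeta\neq 1$ (independent of the choice of sign up to the parity of $t$) and $(1-1/\gamma)^4$ at $\zeta=1$. (3) Feed these values into Pollack's combinatorial lemma: the two functions $\zeta\mapsto\phi_\zeta(Col^\gamma_{\omega_g}(\loc_p(_{c}\mathcal{BF}^{\gamma})))$, together with their compatibility under norm maps down the cyclotomic tower, have the precise vanishing pattern at even/odd levels and the precise growth rate making the matrix $M$ invertible over $\mathrm{Frac}(\mathfrak{K}_{1,F_\lambda}[\sqrt{-p}])$ \emph{and} making the entries of $M^{-1}\cdot(_{c}\mathcal{BF}^{\alpha},{_{c}\mathcal{BF}^{\beta}})^{t}$ bounded, hence in $\Lambda\hat\otimes F_\lambda$. (4) Deduce integrality in $\Lambda$ itself: since $\log_p^{\pm}$ and the off-diagonal factors are the universal denominators killing the unboundedness, and since $_{c}\mathcal{BF}^{\pm}$ is characterised by the matrix identity, descend to $T(1)\hat\otimes\BH_1\hat\otimes\Lambda$ using that the relevant Iwasawa cohomology group is $\Lambda_{\cO_\lambda}$-torsion-free. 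The resulting $_{c}\mathcal{BF}^{\pm}(g_{/L})$ are then the desired classes.

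The main obstacle I expect is step (3): establishing the \emph{boundedness} of the $\pm$-components, not merely their existence over the fraction field. In the one-variable case this is Pollack's observation that $_{c}\mathcal{BF}^\alpha$ and $_{c}\mathcal{BF}^\beta$ both lie in $H^1$ with coefficients in $\mathfrak{K}_{1}$ (growth $O(n)$), and that the specific $\log_p^{\pm}$ have growth $O(\sqrt{n})$ with complementary zero sets, so the quotient lands in $\Lambda$; the delicate point is that this has to be checked \emph{locally at $p$ on the image} and then propagated to the global class via the injectivity properties of the localisation-at-$p$ map on the relevant $\Lambda_{\cO_\lambda}$-torsion-free cohomology (the global-to-local principle underlying Theorem \ref{cycIwL-Box}). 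A clean way to organise this is to work entirely with the bounded signed Coleman maps $Col^{\pm}_{\omega_g}$ of \cite{L} (extended to the $\BH_1$-coefficient setting as in the definition of $\sC$ in \S\ref{BFord}, but with $\Lambda$ replaced by the appropriate coefficient ring), observe that $Col^{\pm}_{\omega_g}\circ\loc_p$ of the unbounded $\mathcal{BF}^{\gamma}$ is automatically bounded, and then invoke the fact that $\loc_p$ is injective modulo $\Lambda_{\cO_\lambda}$-torsion on the relevant global cohomology (which forces the whole class, not just its localisation, to be $\Lambda$-integral). One subtlety to watch is that, because $\BH_1$ need not be $\Lambda_L^v$-free, one should phrase the boundedness after passing to $\tilde\BH_1$ or after the normalisation by the congruence power series $\mathcal{H}_v$, exactly as was done in the ordinary case; this is routine once the structural results of \S\ref{CMHF} (in particular \eqref{Ind-eq}, already available here) are in hand.
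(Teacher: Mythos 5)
Your headline strategy (factor the two unbounded classes through the logarithm matrix $M$ and prove the resulting entries are bounded, hence $\Lambda$-integral) is indeed the right one, and it is what the paper does — but the paper's proof is essentially a citation: it invokes \cite[Thm.~3.7]{BL}, where the decomposition is carried out \emph{directly on the classes}, and the only things verified are Buyukboduk--Lei's hypothesis (18), namely $H^{0}(\BQ(\zeta_{p^{\infty}}),T(1)\hat\otimes\BH_{1})=0$, and that the denominator exponent $s_{0}$ in their proof can be taken to be $0$ because $k=2$ and $M$ has the explicit shape given above. Your proposal never isolates this $H^{0}$-vanishing as the hypothesis to check, and, more importantly, the mechanism you propose in steps (2)--(4) does not prove the statement. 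Interpolation of the values $\phi_{\zeta}(\,\cdot\,)$ via the explicit reciprocity law, together with Pollack's growth-and-vanishing lemma, is the argument that decomposes the \emph{$p$-adic $L$-functions} (i.e.\ the images of the classes under the Perrin-Riou/Coleman maps, which are power series determined by their values); it cannot produce or integrally bound the cohomology classes $_{c}\mathcal{BF}^{\pm}(g_{/L})$ themselves, since a class in $H^{1}(\BZ[\frac{1}{p}],T(1)\hat\otimes\BH_{1}\hat\otimes\mathfrak{K}_{1,F_\lambda(\gamma)})$ is not determined by, and its integrality is not detected by, the values of its image under a reciprocity map. The integral decomposition has to be established on the classes, e.g.\ via the finite-level norm relations and growth estimates as in \cite{BL} (going back to Kobayashi and Lei), with the $H^{0}$-vanishing guaranteeing that the relevant Iwasawa cohomology is torsion-free so that division by $M$ introduces at worst the bounded error $p^{s_{0}}$, which is then seen to be trivial for $k=2$.

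The fallback you offer for this gap also does not work: knowing that the signed Coleman images $Col^{\circ}_{\omega}(\loc_p(\,\cdot\,))$ are bounded, plus injectivity of $\loc_p$ modulo torsion, would at best give \emph{uniqueness} of a putative integral class, not its \emph{existence or integrality}. To pass from integrality of $\loc_p(x)$ to integrality of $x$ you would need injectivity of the induced map with $\mathfrak{K}_{1,F_\lambda(\gamma)}/\Lambda$-type coefficients (equivalently a statement about saturation of the global lattice inside the local one), which is neither stated nor available here; moreover $\loc_p$ is not known to be injective on $H^{1}(\BZ[\frac{1}{p}],T(1)\hat\otimes\BH_{1}\hat\otimes\Lambda)$ in this two-variable setting (Theorem \ref{cycIwL-Box} concerns the one-variable groups with $T(1)\otimes\Lambda$ coefficients, not these). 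Finally, the appeal to \eqref{Ind-eq} and to the normalisation by $\mathcal{H}_{v}$ is unnecessary for this statement: Theorem \ref{BF-pm} is proved with $\BH_{1}$-coefficients, before any of the lattice comparisons, and the only structural input is the $H^{0}$-vanishing above.
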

\begin{proof}
This is essentially due to Buyukboduk and Lei \cite{BL}. 

More precisely, it follows from the proof of \cite[Thm.~3.7]{BL}. 
First note that $$H^{0}(\BQ(\zeta_{p^{\infty}}),T(1)\hat{\otimes} \BH_{1})=0,$$ i.e. the hypothesis \cite[(18)]{BL} holds. In our case $k=2$ and by the explicit definition of $M$ above, the integer $s_0$ in the proof of \cite[Thm.~3.7]{BL} may be\footnote{In  \cite{BL} it is assumed that the $p$-distinguished hypothesis holds for the underlying CM Hida family. However, the constructions in subsections 3.1 and 3.2 
only rely on \cite[(18)]{BL}, which holds in our setting as noted above.} taken to be $0$. 
\end{proof}

\subsubsection{Some more Iwasawa cohomology}

Recall that $M\in\{\BT_1, \tilde\BT_1, \BH_1, \tilde\BH_1\}$
 is a $\Lambda_L^v$-module and there is a $\Lambda_L^v[G_{\BQ_p}]$-filtration 
$$0\rightarrow M^+\rightarrow M \rightarrow M^-\rightarrow 0.$$

 Let $\sR=\CO_\lambda\hat\otimes \Lambda_L^v\hat\otimes \Lambda$ and for $\gamma\in\{\alpha,\beta\}$, let
 $\tilde{\sR}_{\gamma}=\CO_\lambda\hat\otimes \Lambda_L^v\hat\otimes \mathfrak{K}_{1,F_\lambda(\gamma)}$.

As outlined below, for $\circ\in\{+,-\}$, there is a Coleman map 
$$
Col_{v,\circ}: H^{1}(\BQ_{p},T(1)\hat{\otimes} \BH_{1}^{+}\hat{\otimes} \Lambda) \ra J_{g} \otimes_{\cO_{\lambda}}\sR. 
$$
Define $H^{1}_{\circ}(\BQ_{p},T(1)\hat{\otimes} \BH_{1}^{+}\hat{\otimes} \Lambda) \subset H^{1}(\BQ_{p},T(1)\hat{\otimes} \BH_{1}^{+}\hat{\otimes} \Lambda)$ to be its kernel. Likewise, we introduce $\sR$-submodules 
$H^{1}_{\circ}(\BQ_{p},T(1)\hat{\otimes} \BH_{1}^{-}\hat{\otimes} \Lambda) \subset H^{1}(\BQ_{p},T(1)\hat{\otimes} \BH_{1}^{-}\hat{\otimes} \Lambda)$  
and $H^{1}_{\circ}(\BQ_{p},T(1)\hat{\otimes} \BT_{1}^{-}\hat{\otimes} \Lambda)$.
For $\cdot\in\{\emptyset,+\}$, put $$H^1_{/\circ}(\BQ_p, T(1)\hat\otimes \BH_1^{\cdot}\hat\otimes \Lambda)=\frac{H^1(\BQ_p, T(1)\hat\otimes \BH_1^{\cdot}\hat\otimes \Lambda)}{H^1_{\circ}(\BQ_p, T(1)\hat\otimes \BH_1^+\hat\otimes \Lambda)},$$ 
where we utilize the fact that $H^1(\BQ_p, T(1)\hat\otimes \BH_1^+\hat\otimes \Lambda)$ injects into 
$H^1(\BQ_p, T(1)\hat\otimes \BH_1\hat\otimes \Lambda)$. Also put 
$$H^1_{/\circ}(\BQ_p, T(1)\hat\otimes \BH_1^{-}\hat\otimes \Lambda)=\frac{H^1(\BQ_p, T(1)\hat\otimes \BH_1^{-}\hat\otimes \Lambda)}{H^1_{\circ}(\BQ_p, T(1)\hat\otimes \BH_1^-\hat\otimes \Lambda)}.$$

Let
$$
H^1_{\mathrm{rel},\circ}(\BZ[\frac{1}{p}], T(1)\hat\otimes  M\hat\otimes \Lambda) = \{ 
\kappa\in H^1(\BZ[\frac{1}{p}], T(1)\hat\otimes M \hat\otimes \Lambda) \ : \ \loc_p(\kappa) \in 
H^1_{\circ}(\BQ_p,T(1)\hat\otimes  M^- \hat\otimes \Lambda)\},
$$
where $\loc_p$ refers to the image in 
$H^1(\BQ_p,T(1)\hat\otimes  N^- \hat\otimes \Lambda)$. 
The reason for using the subscripts `$\mathrm{rel}$' and `$\circ$' will be made clear later (see Section \ref{two-variable-zeta-ss} below). 
\subsubsection{Local properties at $p$.} 

\begin{lem}\label{BF-locp-lem-ss} \hfill
\begin{itemize}
\item[(i)] The image of $\loc_p(_{c}\mathcal{BF}^\circ(g_{/L}))$ 
in  $H^1(\BQ_p, T(1)\hat\otimes \BH_1^-\hat\otimes \Lambda)$ is contained in
$H^1_{\circ}(\BQ_p, T(1)\hat\otimes \BH_1^-\hat\otimes \Lambda)$. In particular, 
$_{c}\mathcal{BF}^\circ(g_{/L})\in H^1_{\mathrm{rel},\circ}(\BZ[\frac{1}{p}], T(1)\hat\otimes \BH_1\hat\otimes \Lambda)$.
\item[(ii)] The image of $\loc_p(_{c}\mathcal{BF}^\circ(g_{/L}))$ in $H^1_{/\circ}(\BQ_p, T(1)\hat\otimes \BH_1\hat\otimes \Lambda)$ is contained in $H^1_{/\circ}(\BQ_p, T(1)\hat\otimes \BH^+\hat\otimes \Lambda)$.
\end{itemize}
\end{lem}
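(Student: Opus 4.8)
The statement is the supersingular analogue of Corollary~\ref{BF-locp-cor}, and I expect the proof to follow the same two-step pattern: first establish the vanishing of the image of $\loc_p({_c}\mathcal{BF}^\circ(g_{/L}))$ in the ``$--$''-quotient (the quotient $H^1(\BQ_p, T^-(1)\hat\otimes \BH_1^-\hat\otimes\Lambda)$), and then deduce both (i) and (ii) as formal consequences, using that the signed local conditions $H^1_\circ$ are, by construction, the kernels of the signed Coleman maps $Col_{v,\circ}$, which themselves factor through the appropriate ``$+$''- or ``$-$''-part.

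\emph{Step 1: the $--$-vanishing.} The unbounded Beilinson--Flach class ${_c}\mathcal{BF}^\gamma(g_{/L})$ for $\gamma\in\{\alpha,\beta\}$ arises by specialisation from the Loeffler--Zerbes class attached to the pair $(\mathbf{g}_\gamma,\mathbf{h}_v)$, exactly as in the ordinary case the class ${_c}\mathcal{BF}(g_{/L})$ arose from $(\mathbf{g},\mathbf{h})$. The proof of \cite[Prop.~8.1.7]{KLZ} shows the image of the $\Lambda$-adic class in $H^1(\BQ_p,\mathscr{F}^{--}M(\mathbf{g}\otimes\mathbf{h})^*\hat\otimes\Lambda)$ vanishes; this argument is insensitive to whether $g$ is ordinary or supersingular, since it only uses the triangulation of the Coleman/Hida family $\mathbf{g}_\gamma$ over its weight space together with the ordinary filtration on $\mathbf{h}_v$. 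Specialising and projecting via \eqref{CM-sp-II} as in the proof of Lemma~\ref{BF-locp-lem}, I get that the image of $\loc_p({_c}\mathcal{BF}^\gamma(g_{/L}))$ in $H^1(\BQ_p,T^-(1)\hat\otimes \BH_1^-\hat\otimes\Lambda)$ is $0$ for each $\gamma$. Since by Theorem~\ref{BF-pm} the classes ${_c}\mathcal{BF}^\pm(g_{/L})$ are $\mathfrak{K}_{\infty,F_\lambda(\alpha)}$-linear combinations of ${_c}\mathcal{BF}^\alpha$ and ${_c}\mathcal{BF}^\beta$ (inverting the matrix $M$ over $\mathrm{Frac}(\mathfrak{K}_{\infty,F_\lambda(\alpha)})$), the same vanishing holds for ${_c}\mathcal{BF}^\circ(g_{/L})$; one must check the resulting class lands back in the integral cohomology $H^1(\BQ_p,T^-(1)\hat\otimes \BH_1^-\hat\otimes\Lambda)$, but that is exactly the integrality built into Theorem~\ref{BF-pm} together with the injectivity of the relevant local map (the kernel $H^0(\BQ_p,T^-(1)\hat\otimes\BH_1^-\hat\otimes\Lambda)$ vanishes because $G_{\BQ_p}$ acts on $T^-(1)\hat\otimes\BH_1^-$ unramifiedly while acting totally ramifiedly on $\Lambda$, cf.~Lemma~\ref{local-inj-lem}).

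\emph{Step 2: deducing (i) and (ii).} For (ii): the signed Coleman map $Col_{v,\circ}$ on $H^1(\BQ_p,T(1)\hat\otimes\BH_1^-\hat\otimes\Lambda)$ is, by its construction (outlined just before the lemma) as a variant of the Lei--Loeffler--Zerbes signed Coleman maps, a $\mathrm{Frac}(\mathfrak{K}_{\infty,F_\lambda(\alpha)})$-combination of the maps attached to $\alpha$ and $\beta$, each of which factors through the ``$--$''-projection $H^1(\BQ_p,T(1)\hat\otimes\BH_1^-\hat\otimes\Lambda)\twoheadrightarrow H^1(\BQ_p,T^-(1)\hat\otimes\BH_1^-\hat\otimes\Lambda)$. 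Step~1 therefore gives $Col_{v,\circ}(\loc_p({_c}\mathcal{BF}^\circ(g_{/L})))$-vanishing when restricted to the image in the $-$-part of $\BH_1$, i.e. the image of $\loc_p({_c}\mathcal{BF}^\circ(g_{/L}))$ in $H^1(\BQ_p,T(1)\hat\otimes\BH_1^-\hat\otimes\Lambda)$ lies in $H^1_\circ(\BQ_p,T(1)\hat\otimes\BH_1^-\hat\otimes\Lambda) = \ker(Col_{v,\circ})$; that $H^1_{/\circ}$ of the $-$-part is represented by $H^1_{/\circ}$ of $\BH^+$ follows from the exact sequence $\BH^+\to\BH_1\to\BH_1^-\to 0$ together with the $--$-vanishing of Step~1, exactly mirroring Corollary~\ref{BF-locp-cor}(ii). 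Statement (i) is then immediate: the condition defining $H^1_{\mathrm{rel},\circ}$ asks precisely that $\loc_p$ of the class land in $H^1_\circ(\BQ_p,T(1)\hat\otimes\BH_1^-\hat\otimes\Lambda)$, which is what we have just shown. I expect \emph{the main obstacle} to be purely bookkeeping rather than conceptual — namely tracking the integrality of the $\pm$-classes through the matrix $M$ and making sure the signed local conditions $H^1_\circ$ defined via $Col_{v,\circ}$ really do coincide with the kernel of the specialised signed Coleman maps of \cite{BL,L}, so that Step~1's $--$-vanishing transfers cleanly; the Galois-theoretic input (the $H^0$-vanishings, the unramifiedness of $\BH_1^-$ at $v$) is routine given Lemma~\ref{CMTate} and Lemma~\ref{local-inj-lem}.
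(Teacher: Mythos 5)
There is a genuine gap, and it sits exactly at the point where the supersingular case differs from the ordinary one. Your Step 1 is phrased in terms of a vanishing in $H^1(\BQ_p, T^-(1)\hat\otimes \BH_1^-\hat\otimes\Lambda)$, but for supersingular $g$ there is no $G_{\BQ_p}$-stable $T^\pm\subset T$: the filtration exists only on the $(\varphi,\Gamma)$-module $\mathbf{D}(T(1))$ (Kisin--Colmez), and it depends on the chosen root $\gamma\in\{\alpha,\beta\}$. So the analogue of the KLZ ``$\mathscr{F}^{--}$-vanishing'' is the $\gamma$-dependent statement that ${}_c\mathcal{BF}^\gamma(g_{/L})$ lands in $\mathcal{F}^{-+}_\gamma$ resp.\ $\mathcal{F}^{+-}_\gamma$ (this is what the paper quotes from \cite[Thm.~7.1.2, 7.1.5]{LZ}, stated as Theorems \ref{ERLI-thm_0}(i) and \ref{ERLII-thm_0}(i)); it is not obtained by rerunning \cite[Prop.~8.1.7]{KLZ}, and, more importantly, it cannot be transported to the bounded classes ${}_c\mathcal{BF}^\pm$ by ``inverting $M$'', because the local condition you would be intersecting with changes with $\gamma$. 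Your parenthetical appeal to the integrality in Theorem \ref{BF-pm} does not address this: the issue is not integrality but the $\gamma$-dependence of the filtration.

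Concretely, for part (i) one must show $\ov{\sC}_{\circ}$ kills the image of $\loc_p({}_c\mathcal{BF}^\circ)$ in the $\BH_1^-$-part, where $H^1_\circ(\BQ_p,T(1)\hat\otimes\BH_1^-\hat\otimes\Lambda)=\ker\ov{\sC}_\circ$ (\S\ref{ss:Log}); note also that in your Step 2 the domains are switched, since $Col_{v,\circ}=\sC_\circ$ lives on the $\BH_1^+$-part and $\ov{\sC}_\circ$ on the $\BH_1^-$-part. Expanding both the classes and the Coleman maps through the matrix $M$, the quantities $\ov{\sC}_\gamma(\loc_p({}_c\mathcal{BF}^\gamma))$ are \emph{bilinear} combinations of the four unknowns $\ov{\sC}_{\pm}(\loc_p({}_c\mathcal{BF}^{\pm}))$ and $\ov{\sC}_{\pm}(\loc_p({}_c\mathcal{BF}^{\mp}))$; the two diagonal vanishings coming from Theorem \ref{ERLII-thm_0}(i) give only two equations and do not force $\ov{\sC}_{\circ}(\loc_p({}_c\mathcal{BF}^{\circ}))=0$. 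The paper's proof supplies the missing third relation, the anti-symmetry $\ov{\sC}_{\alpha}(\loc_p({}_c\mathcal{BF}^{\beta}))=-\ov{\sC}_{\beta}(\loc_p({}_c\mathcal{BF}^{\alpha}))$ of \cite[Prop.~3.14]{BL}, and then runs the linear-algebra argument of \cite[Cor.~3.15]{BL} over Pollack's logarithm matrix. This input is absent from your proposal, and your claim that (i) is ``immediate'' from the $--$-vanishing is where the argument fails; by contrast your treatment of (ii) is in the right spirit, since that part does follow directly from Theorem \ref{ERLI-thm_0}(i) once the statement is formulated via the $\gamma$-dependent triangulation rather than a nonexistent $T^-$.
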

This is essentially the content\footnote{Our notation $\{v,\ov{v}\}$ corresponds to $\{\fp,\fp^c\}$ of {\it{loc. cit.}}} of \cite[Cor.~3.15]{BL} (see also \cite[Thm.~7.1.2]{LZ}). We explain the argument in subsection \ref{ss:ERLI_pf}.

\subsection{Explicit reciprocity law I}\label{ss:ERL_I}  In this section we describe an explicit reciprocity law 
for Beilinson--Flach elements based on \cite{LZ} and \cite{BL}, which connects
$_{c}\mathcal{BF}^\circ(g_{/L})$ to a certain two-variable $p$-adic $L$-function.

 We will consider two sets $\Xi^{(I)}$ and $\Xi^{(II)}$ of continuous characters 
 $\chi: \Gamma_L^v\hat\otimes\Gamma \rightarrow \overline{\BQ}^\times_p$ as in \eqref{eq:ch-1} and \eqref{eq:ch-2}. 
   The latter will be utilised in section \ref{s:ERL_II}.  
 Each such character determines
 a continuous $\CO_\lambda$-homomorphism $\phi_\chi:\sR\rightarrow \overline{\BQ}_p$, where
 $\CO_\lambda$ is identified with the completion of $\iota_p(\CO)$ in $\overline{\BQ}_p$.

\subsubsection{Result} As we explain in subsection \ref{ss:Col}, from the constructions in 
\cite{LZ} and \cite{BL}, there exists an injective
$\sR$-morphism 
\begin{equation}\label{ERI-map}
\sC_{\circ}:H^1_{/\circ}(\BQ_p,T(1)\hat\otimes \BH^+\hat\otimes \Lambda)\hookrightarrow {J_g}\otimes_{\CO_\lambda}\sR,
\end{equation}
where 
$$
\text{$J_g  = \frac{1}{\lambda_N(g) c_g}\CO$},
$$
for $c_g$ the congruence number of $g$ as in Section \ref{congruence} and 
$\lambda_N(g) = \pm 1$ is the eigenvalue of $g$ for the usual Atkin--Lehner involution $w_N$ of level $N$.

In view of ~Lemma \ref{BF-locp-lem-ss}(ii), 
we let 
$$
\sL_{p,c}^{\circ}(g/L) = \sC_{\circ}(\loc_p(_{c}\mathcal{BF}^\circ(g/L))) \in {J_g}\otimes_{\CO_\lambda}\sR.
$$
It is related to $L$-values as follows. 
 \begin{thm}\label{ERLI-thm-ss}
For $\chi\in \Xi^{(I)}$, 
$$
\phi_\chi(\sL_{p,c}^{\circ}(g/L)) = (c^2-\langle c\rangle \psi_\zeta^2(c)\chi_L(c))\cdot
e_{p}^{\circ}(\zeta)^2 
\frac{L(1,g\otimes \psi_\zeta^{-1})L(1,g\otimes\chi_L\psi_\zeta^{-1})}{\pi^2(-i)2^3\langle g,g\rangle},
$$
where $e_p^\circ(\zeta)$ is as in Theorem \ref{pcycQss} and $\langle c\rangle = (1+p)^{\log_p(c)}\in 1+p\BZ_p$.
\end{thm}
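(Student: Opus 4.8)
The strategy is to reduce Theorem \ref{ERLI-thm-ss} to the analogous explicit reciprocity law for the unbounded Beilinson--Flach element $_{c}\mathcal{BF}^{\gamma}(g_{/L})$ associated with the $p$-stabilisation $g_\gamma$ ($\gamma\in\{\alpha,\beta\}$), which is the exact analogue of Theorem \ref{ERLI-thm} from the ordinary case and follows from the work of Loeffler--Zerbes \cite{LZ} (together with the Eichler--Shimura isomorphism of \cite{SV-S-Ohta}, as noted in the remark after the definition of $_{c}\mathcal{BF}^\gamma(g_{/L})$). Concretely, for each root $\gamma$ one has an unbounded Coleman map $\sC_\gamma$ and an element $\sL_{p,c}^\gamma(g/L)=\sC_\gamma(\loc_p(_c\mathcal{BF}^\gamma(g/L)))\in J_g\otimes_{\cO_\lambda}\tilde\sR_\gamma$ satisfying, for $\chi\in\Xi^{(I)}$,
$$
\phi_\chi(\sL_{p,c}^\gamma(g/L)) = (c^2-\langle c\rangle\psi_\zeta^2(c)\chi_L(c))\,\mathcal{E}_\gamma(\zeta)\,\frac{L(1,g\otimes\psi_\zeta^{-1})L(1,g\otimes\chi_L\psi_\zeta^{-1})}{\pi^2(-i)2^3\langle g,g\rangle},
$$
with $\mathcal{E}_\gamma(\zeta)=\gamma^{-2(t+1)}p^{2(t+1)}/\mathfrak{g}(\psi_\zeta^{-1})^2$ if $\zeta\neq 1$ and $(1-1/\gamma)^4$ if $\zeta=1$ — this is Theorem \ref{ERLI-thm} with $\alpha$ replaced by $\gamma$, and the proof is identical since it only uses the structure of $\bh_v$ (which is the same) and the Rankin--Selberg machinery of \cite{KLZ,LZ}.

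First I would record the interpolation formula for $\sL^\gamma_{p,c}(g/L)$ as above, noting that the archimedean factor $\mathcal{E}_\gamma(\zeta)$ is precisely $e_{p,\gamma}(\zeta)^2$ where $e_{p,\gamma}(\zeta)$ is the factor appearing in Theorem \ref{pcycQ} for the choice of root $\gamma$ of $x^2+p$. Next I would use Theorem \ref{BF-pm}: the factorisation $(_c\mathcal{BF}^\alpha,{}_c\mathcal{BF}^\beta)^t = M\cdot(_c\mathcal{BF}^-,{}_c\mathcal{BF}^+)^t$ with the Pollack matrix $M$. Applying the (unbounded) Coleman maps $\sC_\gamma$ to both sides and using the compatibility of $\sC_\gamma$ with the signed Coleman maps $Col_{v,\circ}$ — the exact analogue of the relation, in Pollack's original setting, between the $\alpha$/$\beta$ Coleman maps and the signed ones $Col^\pm_\omega$ (this is \cite[\S3]{BL}, resp.~\cite[\S7]{LZ}, and parallels \cite[Cor.~4.11]{L}) — one deduces that $\sL^\gamma_{p,c}(g/L)$ is a $Frac(\mathfrak{K}_\infty)$-combination, with coefficients $\log_p^\pm$ and $\gamma$, of the bounded elements $\sL^\pm_{p,c}(g/L)$. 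Inverting the $2\times 2$ system (using that $\alpha\neq\beta$ and the explicit entries of $M$) yields $\sL^\circ_{p,c}(g/L)$ as an explicit combination of $\sL^\alpha_{p,c}(g/L)$ and $\sL^\beta_{p,c}(g/L)$.

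Then the interpolation formula for $\sL^\circ_{p,c}(g/L)$ at $\chi\in\Xi^{(I)}$ follows by evaluating this combination: one gets $(c^2-\langle c\rangle\psi_\zeta^2(c)\chi_L(c))\cdot e(\zeta)\cdot L(1,g\otimes\psi_\zeta^{-1})L(1,g\otimes\chi_L\psi_\zeta^{-1})/(\pi^2(-i)2^3\langle g,g\rangle)$, where $e(\zeta)$ is obtained from $\mathcal{E}_\alpha(\zeta)$, $\mathcal{E}_\beta(\zeta)$ and the values of $\log_p^\pm$ at $\zeta$ via the same elementary computation that Pollack carries out in \cite{Po} to pass from the two unbounded $p$-adic $L$-functions to the signed ones $\mathcal{L}^\pm_{\omega,\gamma}(g)$. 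That computation shows precisely $e(\zeta)=e_p^\circ(\zeta)^2$ with $e_p^\circ(\zeta)$ as in Theorem \ref{pcycQss} (the square appears because we are in a Rankin--Selberg, i.e.\ product, situation, exactly as the $\mathcal{E}_\gamma(\zeta)$ were squares of the factors in Theorem \ref{pcycQ}). Finally, to promote the identity at the Zariski-dense set $\Xi^{(I)}$ to the stated equality of elements of $J_g\otimes_{\cO_\lambda}\sR$, one invokes injectivity of $\sC_\circ$ together with density of the specialisations $\phi_\chi$, $\chi\in\Xi^{(I)}$, along the cyclotomic variable (the anticyclotomic/CM variable being untouched, exactly as in Remark \ref{ERLI-rmk}).

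\textbf{Main obstacle.} The genuinely delicate point is the compatibility between the unbounded Coleman maps $\sC_\gamma$ and the bounded signed Coleman maps $Col_{v,\circ}$ underlying $\sC_\circ$ — that is, verifying that the Pollack-matrix decomposition of the Beilinson--Flach \emph{elements} (Theorem \ref{BF-pm}) is matched by the corresponding decomposition of the Perrin-Riou regulator \emph{maps}, so that the combination of $\sL^\alpha$, $\sL^\beta$ producing $\sL^\circ$ has exactly the coefficients forced by $M$. In the cyclotomic-over-$\BQ$ setting this is Pollack's original analysis and its module-theoretic refinement by Lei \cite{L}; here it must be carried out with the extra CM-Hida variable $\Lambda_L^v$ and for the lattice $\BH^+$, which is where one leans on \cite{LZ} and \cite{BL} (and ultimately on the $p$-adic Eichler--Shimura isomorphism in the non-trivial-nebentypus case). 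Once this bookkeeping is in place, the remaining steps are the routine Pollack-style computation of $e_p^\circ(\zeta)$ and a density argument.
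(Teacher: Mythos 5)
Your overall route is the one the paper takes: its proof of Theorem \ref{ERLI-thm-ss} is exactly the combination of the unbounded explicit reciprocity law for $\sL^{\gamma}_{p,c}(g/L)=\sC_{\gamma}(\loc_p({}_{c}\mathcal{BF}^{\gamma}(g_{/L})))$ (Theorem \ref{ERLI-thm_0}(ii), i.e.\ Loeffler--Zerbes, with the weight-one specialisation of $\bh_v$ being the $p$-stabilised Eisenstein series $E_1(1,\chi_L)$, which factors the Rankin--Selberg value as $L(1,g\otimes\psi_\zeta^{-1})L(1,g\otimes\chi_L\psi_\zeta^{-1})$), the Pollack-matrix decompositions of the Coleman maps (Proposition \ref{Col-pm}) and of the Beilinson--Flach classes (Theorem \ref{BF-pm}), and Pollack's elementary computation converting $\mathcal{E}_\gamma(\zeta)$ into $e_p^{\circ}(\zeta)^2$. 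However, your mechanism for combining these ingredients has a genuine flaw. Since $\sL^{\circ}_{p,c}(g/L)$ is defined \emph{diagonally} as $\sC_{\circ}(\loc_p({}_{c}\mathcal{BF}^{\circ}))$, relating it to the $\sL^{\gamma}_{p,c}$ forces you to expand $\sC_{\gamma}({}_{c}\mathcal{BF}^{\gamma})$ using \emph{both} matrix relations, which gives
$$
\sL^{\gamma}_{p,c}=(\log_p^{+})^{2}\,\sC_{-}(\loc_p({}_{c}\mathcal{BF}^{-}))+\gamma\,\log_p^{+}\log_p^{-}\,\big(\sC_{+}(\loc_p({}_{c}\mathcal{BF}^{-}))+\sC_{-}(\loc_p({}_{c}\mathcal{BF}^{+}))\big)+\gamma^{2}(\log_p^{-})^{2}\,\sC_{+}(\loc_p({}_{c}\mathcal{BF}^{+})).
$$
The two equations ($\gamma=\alpha,\beta$) thus involve four quantities, including the cross terms $\sC_{\pm}(\loc_p({}_{c}\mathcal{BF}^{\mp}))$, which have no reason to vanish. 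Your claimed intermediate step --- that $\sL^{\gamma}$ is a $\log_p^{\pm}$-combination of $\sL^{\pm}$ alone, so that the $2\times2$ system can be inverted to express $\sL^{\circ}$ in terms of $\sL^{\alpha},\sL^{\beta}$ --- is therefore not available; the inversion is valid in Pollack's one-variable setting (where only $L$-functions, not a map paired against an element, are being decomposed), but not here.

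The interpolation formula nevertheless does follow at $\chi\in\Xi^{(I)}$ whose parity matches $\circ$ and with $t>0$: there $\phi_\chi(\log_p^{+}\log_p^{-})=0$ kills the cross terms, one of the two diagonal terms also dies, and the surviving coefficient gives $\mathcal{E}_\gamma(\zeta)\big/\big(\gamma\,\phi_\chi(\log_p^{\mp})\big)^{2}=e_p^{\circ}(\zeta)^{2}$ by exactly the Pollack bookkeeping you describe; this is the (implicit) argument of the paper. But the case $\zeta=1$, which the theorem asserts (both $e_p^{\pm}(1)$ are defined in Theorem \ref{pcycQss}), is not captured this way: at the trivial character neither $\log_p^{\pm}$ vanishes, and the two specialised relations determine only the cross term and a single linear combination of $\phi_\chi(\sL^{+})$ and $\phi_\chi(\sL^{-})$, so the individual values are not pinned down by the unbounded reciprocity law plus pure algebra. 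One needs an additional rigidity step --- e.g.\ compare the restriction of $\sL^{\circ}_{p,c}$ to $\chi_1=1$ with the product of Pollack's signed $p$-adic $L$-functions for $g$ and $g\otimes\chi_L$ at the infinitely many characters with $t>0$ of the right parity and invoke Weierstrass preparation to deduce equality of one-variable Iwasawa functions, hence the value at $\zeta=1$. Finally, your last step of ``promoting the identity at the Zariski-dense set $\Xi^{(I)}$ to an equality of elements of $J_g\otimes_{\cO_\lambda}\sR$'' is a misreading: the theorem \emph{is} the interpolation statement; density and injectivity of the regulator maps are used elsewhere (e.g.\ for the independence of $c$ in Remark \ref{BFint-rmk}), not here.
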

\begin{remark}\label{ERLI-rmk}
$\sL_{p,c}^{\circ}(g/L)$ satisfies an interpolation formula as in 
Theorem \ref{ERLI-thm-ss} for all characters $\chi:\Gamma_L^v\hat\otimes\Gamma \rightarrow \ov{\BQ}_p^\times$ of finite order. 
However, the general interpolation formula is  
inessential for the purposes of this paper.
\end{remark}

 A relation to the Coleman map $Col_{\omega}^{\circ}$ is given by the following.

\begin{lem}\label{Col-cycsp-prop-ss}
The reduction of the map $\sC_{\circ}$ modulo $\gamma_v-1$ equals the composition 
$$
H^1_{/\circ}(\BQ_p,T(1)\hat\otimes \BH^+\hat\otimes \Lambda)
\stackrel{\mod \gamma_v-1}{\twoheadrightarrow} H^1_{/\circ}(\BQ_p, T(1)\hat\otimes \Lambda)
\stackrel{Col^{\circ}_{\omega}}{\longrightarrow} J_g\otimes_{\CO_\lambda}\Lambda.
$$
\end{lem}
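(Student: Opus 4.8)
The plan is to reduce the claim to the analogous compatibility for the Perrin--Riou logarithm map $\CL_M$ of \cite[Thm.~8.2.3]{KLZ}, exactly as in the ordinary case treated in Proposition \ref{Col-cycsp-prop}, and then to unwind the definition of the signed Coleman map $Col^{\circ}_{\omega}$ as an $\mathfrak{K}_{\infty,F_\lambda(\alpha)}$-linear combination of the two Coleman maps $Col_{\eta_\omega}$ associated with the two roots $\pm\sqrt{-p}$ of $x^2+p$ (cf.~\S\ref{Coleman}, following \cite[\S3.4.2]{L} and \cite[\S4.4]{L}). First I would recall that the map $\sC_\circ$ from \eqref{ERI-map} is, by its construction in \S\ref{ss:Col}, obtained from the unbounded maps $\sC_\gamma$ ($\gamma\in\{\alpha,\beta\}$) for the individual $p$-stabilisations $g_\gamma$ by the same matrix $M$ from Theorem \ref{BF-pm}: since $_{c}\mathcal{BF}^{\gamma}$ is the $M$-combination of $_{c}\mathcal{BF}^{\pm}$, the map on cohomology dual to this relation expresses $\sC_\circ$ in terms of the $\sC_\gamma$'s, and each $\sC_\gamma$ is the analogue of the ordinary map $\sC$ for the Coleman family ${\bf g}_\gamma$.

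Next I would observe that, just as in the ordinary case, the map $\sC_\gamma$ modulo $\gamma_v-1$ factors through $H^1_{/\circ}(\BQ_p,T(1)\hat\otimes\Lambda)\to D(T^-(1))\hat\otimes\mathfrak{K}_{1,F_\lambda(\gamma)}$ via the big logarithm map $\CL_{T^-(1)}$ (using the identification $\BH^+/(\gamma_v-1)\BH^+\simeq\BZ_p$ coming from $\omega_{\bh_v}$, exactly as in the proof of Proposition \ref{Col-cycsp-prop}), composed with the pairing $[\cdot,\frac{1}{\lambda_N(g)}\eta_{\omega}]$; here the extra Euler factors $\alpha(1-p\alpha^{-2})(1-\alpha^{-2})$ present in the ordinary normalisation are absent because $a_p(g)=0$, which accounts for the cleaner shape of $J_g$ in the supersingular case. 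This identifies $\sC_\gamma\bmod(\gamma_v-1)$ with $Col_{\eta_\omega}$ (composed with the projection to $H^1_{/\circ}$), by the very definition of the Coleman map in terms of Perrin--Riou's $\mathfrak{L}_{\eta_\omega}$ in \S\ref{Coleman}. Taking the appropriate $M$-linear combination of the two statements for $\gamma=\alpha,\beta$ and comparing with the definition of $Col^\circ_\omega$ as the same $\mathfrak{K}_{\infty,F_\lambda(\alpha)}$-combination of the $Col_{\eta_\omega}$'s then yields the claimed factorisation of $\sC_\circ\bmod(\gamma_v-1)$ through $Col^\circ_\omega$, landing in $J_g\otimes_{\CO_\lambda}\Lambda$ rather than in a fractional $\mathfrak{K}$-module precisely because $Col^\circ_\omega$ is bounded (cf.~\eqref{pmCol-Inj}).

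The main obstacle I anticipate is bookkeeping: one must check that the matrix $M$ used to define $_{c}\mathcal{BF}^{\pm}$ in Theorem \ref{BF-pm} is the \emph{same} matrix (up to the conventions in \cite{Po}, \cite{Ko}, \cite{L}) that enters the definition of $Col^\circ_\omega$ in \cite[\S3.4.2]{L}, so that the two linear combinations genuinely match after reduction modulo $\gamma_v-1$; this is where the sign conventions (our labelling being opposite to Pollack's, as flagged in the footnote to Theorem \ref{BF-pm}) need to be tracked carefully. A secondary point is the verification that $H^0(\BQ_p,T^-(1)\hat\otimes\BH^+)=0$ so that the relevant instances of $\CL_M$ are injective with image in the bounded module $D(M)\hat\otimes\Lambda$ (not $I^{-1}\Lambda$) — but this vanishing is immediate since $T^-(1)$ and $\BH^+$ are both unramified at $p$ while the cyclotomic variable is totally ramified, exactly as in Lemma \ref{local-inj-lem}. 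Once these identifications are in place the lemma follows formally, and in fact Lemma \ref{BF-locp-lem-ss} is then a consequence of the boundedness of $\sC_\circ\bmod(\gamma_v-1)$ together with the vanishing statements of \cite[Cor.~3.15]{BL}. $\qed$
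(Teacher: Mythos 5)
Your proposal is correct and takes essentially the same route as the paper: the paper's proof also reduces the statement to the analogous compatibility for the unbounded maps $\sC_\gamma$, uses the identification $\BH^+/(\gamma_v-1)\BH^+\simeq\BZ_p$ furnished by $\omega_{\bh_v}$ together with functoriality of Perrin-Riou's big logarithm to identify $\sC_\gamma\bmod(\gamma_v-1)$ with the Coleman map attached to $\eta_{\omega_g}$, and then passes to the signed maps through the matrix $M$ of Proposition \ref{Col-pm}, matched against the definition of $Col^\circ_\omega$ as the corresponding combination of the two $Col_{\eta_\omega}$'s. The only discrepancy is a normalisation detail you already flag as bookkeeping: in the paper $\tilde\eta_g$ is defined in \eqref{eta-eq-II} with the factors $\gamma(1-p\gamma^{-2})(1-\gamma^{-2})\lambda_N(g)$, so $\sC_\gamma\bmod(\gamma_v-1)$ equals that multiple of $Col_{\eta_{\omega_g}}$ (the factors being absorbed in the passage through $M$ to the signed maps), rather than the factors being absent as you state.
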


\noindent The first map in this lemma depends on an identification $\BH^+/(\gamma_v-1)\BH^+ \simeq \BZ_p$ determined by part of the data defining
the map $\sC_\gamma$ in subsection \ref{ss:Col} (the reduction modulo $\gamma_v-1$ of the map $\omega_{{\bf h}_v}$ therein).

The proofs of Lemma \ref{Col-cycsp-prop-ss} and Theorem \ref{ERLI-thm-ss} are given in subsection \ref{ss:ERLI_pf}.

\subsubsection{Coleman maps}\label{ss:Col} For $\gamma \in\{\alpha,\beta\}$, we first introduce the two-variable Coleman map $\sC_\gamma$ below, whose linear combination leads to $\sC_\circ$.

Put $\sS = \CO_\lambda\hat\otimes \Lambda_L^v = \Lambda_{L,\CO_\lambda}^v$.
\vskip2mm
{\it{Unbounded Coleman maps}}. 
We introduce the Coleman map $\sC_\gamma$ (cf.~\eqref{Col_PR}). 

Let ${\bf D}(\BI)$ denote the $(\varphi,\Gamma)$-module associated to a $p$-adic representation $\BI$ of $G_{\BQ_{p}}$ (cf.~\cite{BC}). By a result of Kisin \cite{Kis}, as interpreted by Colmez \cite{Co}, there is a filtration
$$
0 \ra \mathcal{F}^{+}_{\gamma}{\bf D}(T(1)) \ra {\bf D}(T(1)) \ra \mathcal{F}^{-}_{\gamma}{\bf D}(T(1)) \ra 0
$$
of $(\varphi,\Gamma)$-modules dependent on $\gamma\in\{\alpha,\beta\}$,  
with $\mathcal{F}_{\gamma}^\pm$ being of rank one.
 We often omit the subscript from this notation. 
The ordinary filtration on $\BH_1$ induces a filtration on the associated $(\varphi,\Gamma)$-module.
Put 
$$\mathcal{F}^{-+}(T(1)\hat\otimes \BH_{1})=\mathcal{F}^{-}{\bf D}(T(1))\hat\otimes \mathcal{F}^{+}{\bf D}(\BH_1)=\mathcal{F}^{-}{\bf D}(T(1))\hat\otimes{\bf D}(\BH^{+}),$$
and analogously define $\mathcal{F}^{\emptyset+}(T(1)\hat\otimes \BH_{1})$.

Let 
$$
\CC_{g_{\gamma},\bh_v}: H^{1}(\BQ_{p},\mathcal{F}^{\emptyset+}(T(1)\hat\otimes \BH_{1}) \hat\otimes \mathfrak{K}_{1,F_\lambda(\gamma)})
\ra D(\mathcal{F}^{-+}(T(1)\hat\otimes \BH_{1})) \hat\otimes \mathfrak{K}_{1,F_\lambda(\gamma)}$$
be the specialisation at $(g_{\gamma},\bh_v)$ of the Perrin-Riou regulator map\footnote{ In our notation the role of $(D,{\bf D})$ is opposite to that in \cite{LZ}.}
} associated to the pair $({\bf g}_{\gamma}, \bh_v)$ as in \cite[Thm.~7.1.4]{LZ}. 
More precisely, this is the restriction of the map denoted $\CL$ in {\it op.~cit.} to the $\Lambda_{\CG}^{(0)}$-summand. It factors through 
$H^{1}(\BQ_{p},\mathcal{F}^{-+}(T(1)\hat\otimes \BH_{1}) \hat\otimes \mathfrak{K}_{1,F_\lambda(\gamma)})$. 
As for the definition of $D(\mathcal{F}^{-+}(T(1)\hat\otimes \BH_{1}))$, 
for an unramified $(\varphi,\Gamma)$-module $M$, 
recall that $D(M) = M^{\Gamma}$ and that a twist of $\mathcal{F}^{-+}(T(1)\hat\otimes \BH_{1})$ as in \cite[p.~41]{LZ} is unramified, which is employed in the definition.

Pre-composing with the isomorphism
$$H^1(\BQ_p,\mathcal{F}^{-+}(T(1)\hat\otimes \BH)\hat\otimes \mathfrak{K}_{1,F_\lambda(\gamma)})\isoarrow H^1(\BQ_p,\mathcal{F}^{-+}(T(1)\hat\otimes \BH(\epsilon^{-1}\Psi_D^{-1}))\hat\otimes \mathfrak{K}_{1,F_\lambda(\gamma)})$$ 
coming from \eqref{unram-twist} yields 
a homomorphism  
$$
\CC_{g_{\gamma},\bh_v}: H^{1}(\BQ_{p},\mathcal{F}^{-+}(T(1)\hat\otimes \BH_{1})) \hat\otimes \mathfrak{K}_{1,F_\lambda(\gamma)})
\ra D(\mathcal{F}^{-}{\bf D}(T(1))\hat\otimes {\bf D}(\BH^+(\epsilon^{-1}\Psi_D^{-1}))) \hat\otimes \mathfrak{K}_{1,F_\lambda(\gamma)}. 
$$

The map $\sC_\gamma$ is then defined to be the composition of $\CC_{g_{\gamma},\bh_v}$ with an isomorphism arising from
 $$
 \psi_{g_{\gamma},{\bf h_v}}:D(\mathcal{F}^{-}{\bf D}(T(1)))\hat\otimes  
 {\bf D}(\BH^+(\epsilon^{-1}\Psi_D^{-1})))\hat\otimes \Lambda \isoarrow {J_g}\otimes_{\CO_\lambda}\sR,
 $$
 defined as follows.

The map $\tilde\eta_g$ is defined to be 
the isomorphism given by
\begin{equation}\label{eta-eq-II}
\tilde\eta_g = [\cdot,\frac{1}{\gamma(1-p\gamma^{-2})(1-\gamma^{-2})\lambda_N(g)}\eta_{\omega_g}]: 
D^{1}_{dR}(V)=
D(\mathcal{F}^{-}{\bf D}(T(1)))\otimes_{\CO_\lambda}F_\lambda \rightarrow F_\lambda,
\end{equation}
where $[\cdot,\cdot]$ is the pairing as in \eqref{CanPai}. 
We define $J_g$ to be the image of $D(\mathcal{F}^{-}{\bf D}(T(1)))$ under
$\tilde\eta_g$.  Recall that the image of 
$S_{\CO_\lambda}$ under the de Rham-\'etale comparison map $S_{F_\lambda} = D^{1}_{dR}(V)
\simeq D(\mathcal{F}^{-}{\bf D}(V(1)))$ equals $D(\mathcal{F}^{-}{\bf D}(T(1)))$.
Hence,
$J_g$ is generated by $\tilde\eta_g(\omega_g/c_g) = (\gamma(1-p\gamma^{-2})(1-\gamma^{-2})\lambda_N(g)c_g)^{-1}$.
Let 
$$
\omega_{{\bf h}_v}: {\bf D}(\BH^+(\epsilon^{-1}\Psi_D^{-1}))\isoarrow \Lambda_L^v.
$$
be the $\Lambda_L^v$-homomorphism induced by \eqref{eq:D-sub},
where we utilise the fact that $D({\bf D}(M))\simeq D(M)$ for an unramified $\BZ_{p}[G_{\BQ_{p}}]$-module $M$.

The map $\psi_{g,{\bh}_v}$ is defined to be the composition of isomorphisms
$$
 \psi_{g,{\bf h_v}}:D(\mathcal{F}^{-}{\bf D}(T(1))\hat\otimes  {\bf D}(\BH^+(\Psi_D^{-1})))\hat\otimes \Lambda  = D(\mathcal{F}^{-}{\bf D}(T(1)))\hat\otimes  D(\BH^+(\Psi_D^{-1}))\hat\otimes \Lambda
 \stackrel{\tilde\eta_g\otimes\omega_{{\bf h}_v}\otimes id}{\longrightarrow} {J_g}\otimes_{\CO_\lambda}\sR \stackrel{id\otimes\nu^{-1}}{\longrightarrow} {J_g}\otimes_{\CO_\lambda}\sR,
 $$
and its composition with $\mathcal{C}_{g_{\gamma},{\bf h}_{v}}$ defines
\begin{equation}\label{Col_PR}
 \sC_{\gamma}: H^{1}(\BQ_{p},\mathcal{F}^{-+}(T(1)\hat\otimes \BH_{1}) \hat\otimes \mathfrak{K}_{1,F_\lambda(\gamma)}) \ra
\tilde{\sR}_{\gamma}.
 \end{equation}
 It is an injective $\tilde{\sR}_\gamma$-homomorphism.
\vskip2mm 
{\it Plus/minus Coleman maps}.\label{ss:Col-pm}

\begin{prop}\label{Col-pm}
There exist $\sR$-module homomorphisms 
$$
\sC_{\pm}: 
 H^{1}(\BQ_{p},T(1)\hat\otimes \BH_{1}^{+} \hat\otimes \Lambda) \ra
\sR.
$$
such that
$$
\begin{pmatrix}
\sC_{\alpha}\\
\sC_{\beta}\\
\end{pmatrix}
=
M \cdot \begin{pmatrix}
\sC_{-}\\
\sC_{+}\\
\end{pmatrix}
.
$$
\end{prop}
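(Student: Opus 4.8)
\textbf{Plan for the proof of Proposition~\ref{Col-pm}.} The statement is the supersingular analogue of the factorisation of Beilinson--Flach classes through Pollack-type half-logarithms, now at the level of the two-variable Coleman maps. The strategy is to mimic Pollack's original argument \cite{Po} (as adapted to Beilinson--Flach classes by Buyukboduk--Lei \cite{BL}), working over the coefficient ring $\sR = \CO_\lambda\hat\otimes\Lambda_L^v\hat\otimes\Lambda$ rather than just $\Lambda$. Concretely, the plan is to show that the two unbounded Coleman maps $\sC_\alpha$ and $\sC_\beta$ constructed in \S\ref{ss:Col}, when restricted to the bounded submodule $H^1(\BQ_p, T(1)\hat\otimes\BH_1^+\hat\otimes\Lambda) \subset H^1(\BQ_p,\mathcal{F}^{-+}(T(1)\hat\otimes\BH_1)\hat\otimes\mathfrak{K}_{1,F_\lambda(\gamma)})$, have images controlled by $\log_p^\pm$ exactly as in the scalar case, so that the matrix identity with $M$ forces the existence of the integral maps $\sC_\pm$ valued in $\sR$.

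First I would pin down the precise relation between $\sC_\gamma$ and the scalar ($\Lambda$-valued) Coleman maps $Col_\omega^\circ$, $Col_{\eta_\omega}$ via the reduction modulo $(\gamma_v - 1)$; since $\BH^+$ is free of rank one over $\Lambda_L^v$ (see \eqref{subrk}), the map $\omega_{\bh_v}$ from \eqref{eq:D-sub} identifies $D(\BH^+(\epsilon^{-1}\Psi_D^{-1}))$ with $\Lambda_L^v$, and hence the whole construction of $\sC_\gamma$ is obtained from the scalar Perrin-Riou big-logarithm by base change along $\Lambda \hookrightarrow \sR$ twisted by the unramified character of $\Gamma_L^v$. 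This reduces the claimed factorisation to a statement about the scalar big-logarithm composed with $\eta_{\omega_g}$, namely that its image, when paired against the $\gamma$-eigenvectors for crystalline Frobenius on $D_{cris}(V)$ (with $a_p(g)=0$, so the eigenvalues are $\pm\sqrt{-p}$), lies in the $\mathfrak{K}_{1,F_\lambda(\gamma)}$-span of $\log_p^+(1+T)$ and $\log_p^-(1+T)$ with the $M$-matrix encoding the transition. This is exactly the content of \cite[\S3]{BL} and \cite{Po}, and I would invoke \cite[Thm.~3.7]{BL}'s proof (the same $s_0=0$ normalisation, justified here by $H^0(\BQ(\zeta_{p^\infty}), T(1)\hat\otimes\BH_1)=0$, which was already checked in the proof of Theorem~\ref{BF-pm}) to transport it verbatim.

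The second step is to check that the resulting candidate maps $\sC_\pm = M^{-1}(\sC_\alpha,\sC_\beta)^t$ — a priori valued in $\mathrm{Frac}(\mathfrak{K}_{\infty,F_\lambda(\alpha)})\otimes\sR$ — actually land in $\sR$ when restricted to the bounded module $H^1(\BQ_p,T(1)\hat\otimes\BH_1^+\hat\otimes\Lambda)$. This is the classical integrality input: Kobayashi's observation that the distributions $Col_\omega^\alpha$ and $Col_\omega^\beta$ attached to a supersingular form, though individually unbounded, combine via $M^{-1}$ into two bounded distributions because of the growth properties of $\log_p^\pm$ and the interpolation formula (the poles cancel). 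Over $\sR$ the argument is the same: one works prime-by-prime in height-one primes of $\sR$ (or, more simply, reduces modulo $(\gamma_v-1)$ and invokes the scalar result \cite[Cor.~4.11]{L}, \cite[Thm.~2.14]{BL}, then uses that $\sR$ is flat over $\Lambda$ and that the map is $\Lambda_{L,\CO_\lambda}^v$-linear to conclude that boundedness over a Zariski-dense set of specialisations of $\Lambda_L^v$ forces boundedness). Finally, $\sR$-linearity of $\sC_\pm$ is automatic since $\sC_\alpha,\sC_\beta$ are $\tilde\sR_\gamma$-linear and $M$ has entries in $\Lambda\subset\sR$.

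\textbf{Main obstacle.} The delicate point is the integrality/boundedness in step two: the maps $\sC_\alpha, \sC_\beta$ are $\tilde\sR_\gamma$-linear but valued in \emph{different} rings $\tilde\sR_\alpha$ and $\tilde\sR_\beta$ (with coefficients in $F_\lambda(\alpha)$ and $F_\lambda(\beta)$ respectively), so forming $M^{-1}(\sC_\alpha,\sC_\beta)^t$ requires first matching them inside a common ring $\mathrm{Frac}(\mathfrak{K}_{\infty,F_\lambda(\sqrt{-p})})\otimes\sR$ and then arguing that the $F_\lambda(\sqrt{-p})/F_\lambda$-Galois action (swapping $\alpha\leftrightarrow\beta$) sends $\sC_\alpha$ to $\sC_\beta$, so that the two components of $M^{-1}(\sC_\alpha,\sC_\beta)^t$ are in fact defined over $F_\lambda$ and then over $\CO_\lambda$ on the bounded submodule. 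Keeping careful track of which fractional ideal $J_g$ the output lies in, and that no extra denominators are introduced by the base change to $\sR$ beyond those already present over $\Lambda$, is where the bookkeeping is heaviest; but conceptually it is identical to the one-variable supersingular case of \cite{BL}, and I would organise the write-up so that all the genuinely new content is isolated in the reduction-mod-$(\gamma_v-1)$ compatibility of step one.
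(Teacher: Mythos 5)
Your overall strategy is a genuinely different route from the paper's. The paper disposes of Proposition~\ref{Col-pm} by citing the proof of \cite[Thm.~1.1]{BL} in \cite[\S2.3]{BL}: there the signed Coleman maps are \emph{constructed directly} from a basis of the Wach module of $T(1)$ (using $a_p=0$), the logarithm matrix $M$ appearing as the transition matrix intrinsic to that construction, and the unramified rank-one twist by $\BH^+\simeq\Lambda_L^v(\Psi_L^v)$ is absorbed by base change exactly as in your first step. In particular no division by $M$ and no post-hoc integrality argument is ever performed. You instead propose the Pollack-style route: set $(\sC_-,\sC_+)^t:=M^{-1}(\sC_\alpha,\sC_\beta)^t$ over the total fraction ring and prove afterwards that the result is $\sR$-valued on the bounded module. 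That route is in principle viable (your reduction of the problem to the scalar big logarithm via $\omega_{\bh_v}$, and your treatment of the $F_\lambda(\sqrt{-p})/F_\lambda$-descent identifying the two components with $\CO_\lambda$-rational maps, are fine), but it is where the real work lies, and as written that work is missing.

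Concretely, the integrality step is not a proof. Reducing modulo $(\gamma_v-1)$ gives a single specialisation of the $\Lambda_L^v$-variable, and $\Lambda_L^v$-linearity plus flatness of $\sR$ over $\Lambda$ cannot control the denominators introduced by $M^{-1}$, because those denominators ($\log_p^{\pm}$, up to the unit issue at $\det M=(\beta-\alpha)\log_p^+\log_p^-$) live in the \emph{cyclotomic} direction: an element of $\tfrac{1}{\log_p^+}\sR$ is not forced into $\sR$ by the integrality of its image modulo $(\gamma_v-1)$. Likewise the principle you invoke, that boundedness of a Zariski-dense family of $\Lambda_L^v$-specialisations forces boundedness, is false without a uniform growth input; what is true, and what the argument actually needs, is (i) vanishing of the relevant $\CO_\lambda$-linear combinations of $\sC_\alpha(z)$ and $\sC_\beta(z)$ along the divisors $\Phi_{p^{2k}}(1+T_\cyc)=0$, resp.\ $\Phi_{p^{2k-1}}(1+T_\cyc)=0$ (vanishing, unlike boundedness, can legitimately be checked on a Zariski-dense set of specialisations of the $\Lambda_L^v$-variable, e.g.\ via the interpolation of the Perrin-Riou map at twists), followed by (ii) a two-variable Pollack-type division lemma using the growth bound coming from membership in $\mathfrak{K}_{1,F_\lambda(\gamma)}\hat\otimes\Lambda_L^v$ to conclude that the quotient by $\log_p^{\pm}$ lies in $\sR$. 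Neither ingredient is supplied, and the references you lean on at that point (\cite[Cor.~4.11]{L}, \cite[Thm.~2.14]{BL}) concern kernels and images of the already-constructed scalar signed maps, not the division step. Either supply (i)–(ii) explicitly, or follow the paper and simply import the Wach-module construction of \cite[\S2.3]{BL}, where the factorisation through $M$ is built in and no division is needed.
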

\begin{proof}
This follows from the proof of \cite[Thm.~1.1]{BL} in \cite[\S2.3]{BL}. 
\end{proof}

Let $H^{1}_{\circ}(\BQ_{p},T(1)\hat{\otimes} \BH_{1}^{+}\hat{\otimes} \Lambda)$ denote the kernel of $Col_{v,\circ}$.
The map $\sC_\circ$ induces an $\sR$-module injection
$$
Col_{v,\circ}: H^{1}_{/\circ}(\BQ_{p},T(1)\hat{\otimes} \BH_{1}^{+}\hat{\otimes} \Lambda) \ra J_{g} \otimes_{\cO_{\lambda}}\sR.
$$
 
 \subsubsection{Unbounded Beilinson--Flach elements and Coleman map}
 \begin{thm}\label{ERLI-thm_0}
 \begin{itemize}
\item[(i)] The image of $_{c}\mathcal{BF}^\gamma(g/L)$ in $\mathcal{F}^{-\cdot}(T(1)\hat\otimes \BH)\hat\otimes\mathfrak{K}_{1,F_\lambda(\gamma)}$ belongs to 
$\mathcal{F}^{-+}(T(1)\hat\otimes \BH)\hat\otimes \mathfrak{K}_{1,F_\lambda(\gamma)}$. 
\item[(ii)] The element $\sL_{p,c}^{\gamma}(g/L):=\sC_{\gamma}(_{c}\mathcal{BF}^\gamma(g/L))$ satisfies:
For $\chi\in \Xi^{(I)}$, 
$$
\phi_\chi(\sL_{p,c}^{\gamma}(g/L)) = (c^2-\langle c\rangle \psi_\zeta^2(c)\chi_L(c))\frac{\mathcal{E}(\zeta)}{(1-p\gamma^{-2})(1-\gamma^{-2})\lambda_N(g)}
\frac{L(1,f\otimes \psi_\zeta^{-1})L(1,f\otimes\chi_L\psi_\zeta^{-1})}{\pi^2(-i)2^3\langle g,g\rangle},
$$
where
$$
\mathcal{E}(\zeta) = \begin{cases} 
\gamma^{-2(t+1)} \frac{p^{2(t+1)}}{\mathfrak{g}(\psi_\zeta^{-1})^2} & \zeta \neq 1 \\
(1 - \frac{1}{\gamma})^4 & \text{else}.
\end{cases}
$$
\end{itemize}
\end{thm}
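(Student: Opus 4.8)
The plan is to deduce both parts from the Beilinson--Flach Euler system of Loeffler--Zerbes \cite{LZ} attached to a pair of Coleman families, taking the first family to be the Coleman family ${\bf g}_\gamma$ through the $p$-stabilisation $g_\gamma$ of $g$ and the second to be the canonical CM Hida family ${\bf h}$ of tame level $D_L$, with branch ${\bf h}_v$; this runs parallel to the proof of Theorem \ref{ERLI-thm} in the ordinary case, with the Hida family ${\bf g}$ there replaced by ${\bf g}_\gamma$ and the unit root $\alpha$ replaced by the root $\gamma$ of the Hecke polynomial $x^2+p$. The two ingredients are the local analysis of the Euler system class at $p$ (for (i)) and the explicit reciprocity law for the Perrin-Riou regulator (for (ii)), both available in the finite-slope setting of \cite{LZ} and its refinement in \cite{BL}.

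For part (i), I would invoke the local triangulation analysis of \cite{LZ}, the non-ordinary counterpart of \cite[Prop.~8.1.7]{KLZ}: the image of the Euler system class ${}_c\mathcal{BF}^{{\bf g}_\gamma,{\bf h}}_1$ in the $H^1$ of the ``$--$'' sub-quotient of the associated $(\varphi,\Gamma)$-module vanishes. Specialising ${\bf g}_\gamma$ at $g_\gamma$, projecting ${\bf h}$ to ${\bf h}_v$, and using that ${\bf h}_v$ is $p$-ordinary --- so that ${\bf D}(\BH)$ carries the ordinary filtration ${\bf D}(\BH^{+})\subset{\bf D}(\BH)$ with ${\bf D}(\BH^{-})$ the $(\varphi,\Gamma)$-module of an unramified $G_{\BQ_p}$-representation of $\varphi$-slope zero --- one deduces that the image of ${}_c\mathcal{BF}^\gamma(g/L)$ in $H^1(\BQ_p,\mathcal{F}^{-}{\bf D}(T(1))\hat\otimes{\bf D}(\BH^{-})\hat\otimes\mathfrak{K}_{1,F_\lambda(\gamma)})$ is zero. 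Since $\mathcal{F}^{-}{\bf D}(T(1))$ carries $\varphi$ of slope $1/2$ and $\mathfrak{K}_{1,F_\lambda(\gamma)}$ is totally ramified, a $\varphi$-slope argument (cf.\ the proof of Lemma \ref{local-inj-lem}) yields $H^0(\BQ_p,\mathcal{F}^{-}{\bf D}(T(1))\hat\otimes{\bf D}(\BH^{-})\hat\otimes\mathfrak{K}_{1,F_\lambda(\gamma)})=0$; hence $H^1(\BQ_p,\mathcal{F}^{-+}(T(1)\hat\otimes\BH)\hat\otimes\mathfrak{K}_{1,F_\lambda(\gamma)})\hookrightarrow H^1(\BQ_p,\mathcal{F}^{-}{\bf D}(T(1))\hat\otimes{\bf D}(\BH)\hat\otimes\mathfrak{K}_{1,F_\lambda(\gamma)})$ is injective and the vanishing above places ${}_c\mathcal{BF}^\gamma(g/L)$ in $\mathcal{F}^{-+}(T(1)\hat\otimes\BH)\hat\otimes\mathfrak{K}_{1,F_\lambda(\gamma)}$. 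This proves (i), and it also puts $\loc_p({}_c\mathcal{BF}^\gamma(g/L))$ in the domain of the regulator map $\sC_\gamma$, so that $\sL_{p,c}^{\gamma}(g/L)=\sC_\gamma({}_c\mathcal{BF}^\gamma(g/L))$ in (ii) is well defined.

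For part (ii), I would apply the explicit reciprocity law of \cite{LZ}, the finite-slope generalisation of \cite[Thm.~10.2.2]{KLZ}: pairing the Perrin-Riou regulator image of ${}_c\mathcal{BF}^{{\bf g}_\gamma,{\bf h}}_1$ against the differentials $\eta_{{\bf g}_\gamma}$ and $\omega_{\bf h}$ yields the $p$-adic Rankin--Selberg $L$-function of the pair $({\bf g}_\gamma,{\bf h})$; specialising ${\bf g}_\gamma$ at $g_\gamma$ via the analogue of the commutative diagram \eqref{tilde-eta-g} (with $\alpha$ replaced by $\gamma$, using that $(Pr^\gamma)^*$ is an isomorphism on $\CO_\lambda$-lattices since $g$ is supersingular) and projecting ${\bf h}$ to the branch ${\bf h}_v$ identifies the result with $\sL_{p,c}^{\gamma}(g/L)$. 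The displayed interpolation formula then follows from \cite[Thms.~2.7.4,~7.7.2,~10.2.2]{KLZ} (as extended to Coleman families in \cite{LZ}) exactly as in the proof of Theorem \ref{ERLI-thm}: one uses that the weight-one specialisation of ${\bf h}_v$ is the $p$-stabilisation of the Eisenstein series $E_1(1,\chi_L)$, so that $L(s,g,E_1(1,\chi_L),\psi_\zeta^{-1})=L(s,g\otimes\psi_\zeta^{-1})L(s,g\otimes\chi_L\psi_\zeta^{-1})$; that the Euler factor at the slope-$1/2$ eigenvalue $\gamma$ produces the displayed $\mathcal{E}(\zeta)$; and that the Petersson normalisation $\pi^2(-i)2^3\langle g,g\rangle$, together with the auxiliary factors $(c^2-\langle c\rangle\psi_\zeta^2(c)\chi_L(c))$ and $(1-p\gamma^{-2})(1-\gamma^{-2})\lambda_N(g)$, are the evident analogues of those appearing in Theorem \ref{ERLI-thm}.

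The main obstacle is that, because ${\bf h}_v$ passes through the Eisenstein weight-one theta series attached to $\chi_L$ and is therefore not $p$-distinguished --- its Hecke algebra is non-Gorenstein and ${\bf D}(\BH)$ need not be free over $\Lambda_L^v$ --- the $p$-adic Eichler--Shimura isomorphism and the Rankin--Selberg $p$-adic $L$-function underlying the reciprocity laws of \cite{KLZ,LZ} are not directly available in this generality. This is exactly the point addressed by \cite{Ca} and \cite{SV-S-Ohta}, which establish the required $p$-adic Eichler--Shimura isomorphism for families with non-trivial nebentypus and all $p\geq 3$; one must verify that the hypotheses of those results apply to ${\bf h}_v$ and feed their conclusions into the constructions of \cite{LZ,BL}. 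With that in hand, the remaining steps --- extending the interpolation formula to the exceptional weight-one point by Zariski density of classical cuspidal specialisations, and tracking the comparison of the CM and Petersson periods and the various fudge factors through the chain of identifications defining $\sC_\gamma$ --- are routine and follow the ordinary case essentially verbatim.
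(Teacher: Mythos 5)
Your proposal is correct and follows essentially the same route as the paper: the paper deduces both parts directly as a special case of the finite-slope Euler system results of Loeffler--Zerbes (\cite[Thms.~7.1.2 and 7.1.5]{LZ}) applied to the pair $(g_\gamma,\bh_v)$, with the weight-one Eisenstein specialisation of $\bh_v$ producing the factorisation $L(s,g\otimes\psi_\zeta^{-1})L(s,g\otimes\chi_L\psi_\zeta^{-1})$, exactly as you describe. Your closing caveat about the non-$p$-distinguished CM family and the $p$-adic Eichler--Shimura input is also precisely the point the paper addresses in its remark via \cite{SV-S-Ohta} and \cite{Ca}, so no genuinely different argument is involved.
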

This is a special case of \cite[Thm.~7.1.2~and~7.1.5]{LZ}.

\begin{remark}\label{ERLI-rmk_0}
The element $\sL_{p,c}^{\gamma}(g/L)$ is essentially the specialization at $g_\gamma$ of the $p$-adic Rankin-Selberg $L$-function
for
the Coleman family ${\bg}_{\gamma}$ and the Hida family 
${\bh_v}$. It satisfies an interpolation formula as in 
Theorem \ref{ERLI-thm-ss} for all characters $\chi:\Gamma_L^v\hat\otimes\Gamma \rightarrow \ov{\BQ}_p^\times$ of finite order. 
\end{remark}

\subsubsection{Proofs}\label{ss:ERLI_pf}
\begin{proof}[Proof of Lemma \ref{BF-locp-lem-ss}(ii)]
This readily follows from Theorem \ref{ERLI-thm_0}(i).
\end{proof} 

\begin{proof}[Proof of Theorem \ref{ERLI-thm-ss}] 
Note that the weight one specialisation of $\bh_v$ is the ordinary stabilisation of the weight one Eisenstein series $E_1(1,\chi_L)$
for the characters $1$ and $\chi_L$, which means that the Rankin-Selberg $L$-function
$L(s,g,E_1(1,\chi_L),\psi_\zeta^{-1})$ equals $L(s,g\otimes\psi_\zeta^{-1})L(s,g\otimes\chi_L\psi_\zeta^{-1})$. 
Hence the assertion\footnote{See also the comment about weight one specialisations following \cite[Thm.~7.7.2]{KLZ} and \cite[Rem.~7.1.6]{LZ}.} is a consequence of 
Theorem \ref{ERLI-thm_0}(ii) and Proposition \ref{Col-pm}.
\end{proof}

\begin{proof}[Proof of Lemma \ref{Col-cycsp-prop-ss}]
In view of the definition of the $\circ$-Coleman maps it suffices to prove analogous compatibility for the Coleman maps $\sC_\gamma$.

The map $\omega_{\bh_v}$ factors through a $\Lambda_L^v$-isomorphism $D(\BH^+(\epsilon^{-1}\Psi_D^{-1}))\cong \BH^+(\epsilon^{-1}\Psi_D^{-1})$
(see also \cite[Prop.~1.7.6]{FK}).  Since $\BH^+$ is a free $\Lambda_L^v$-module of rank one, $\omega_{\bh_v}$ determines a $\Lambda_L^v$-basis of 
$\BH^+(\epsilon^{-1}\Psi_D^{-1})$. In particular,  $\omega_{\bh_v}$ determines isomorphisms
$$\BH^+/(\gamma_v-1)\BH^+ \simeq \BH^+(\Psi_D^{-1})/(\gamma_v-1) \BH^+(\Psi_D^{-1})\simeq \BZ_p.$$ 
It then follows from functoriality that $\sC_\gamma \,\mod \gamma_v-1$ factors as 
the composition 
$$
H^{1}(\BQ_{p},\mathcal{F}^{-+}(T(1)\hat\otimes \BH_{1}) \hat\otimes \mathfrak{K}_{1,F_\lambda(\gamma)})
\twoheadrightarrow H^1(\BQ_p,\mathcal{F}^{-}T(1)\hat\otimes \mathfrak{K}_{1,F_\lambda(\gamma)}) \stackrel{\CL_{\mathcal{F}^{-}T(1)}}{\rightarrow} 
D(\mathcal{F}^{-}{\bf D}(T(1))\hat\otimes \mathfrak{K}_{1,F_\lambda(\gamma)}\stackrel{\tilde\eta_g\otimes id}{\rightarrow} J_g\hat\otimes \mathfrak{K}_{1,F_\lambda(\gamma)},$$
where the first map is just the projection induced by the isomorphism $\BH^+/(\gamma_v-1)\BH^+ \simeq \BZ_p$ from above.
The characterization \eqref{eta-eq-II} of $\tilde\eta_g$ in terms of $\eta_{\omega_g}$ combined with 
the fact that $\CL_{\mathcal{F}^-T(1)}$ is just Perrin-Riou's `big logarithm map'  for $V(1)$ 
shows -- by comparing with the definition of the Coleman map $Col_{\eta_{\omega_g}}$ in terms of this big logarithm map -- that the above displayed composition of maps equals 
$\frac{1}{\gamma(1-p\gamma^{-2})(1-\gamma^{-2})\lambda_N(g)} Col_{\eta_{\omega_g}}$.
\end{proof}

\subsection{The explicit reciprocity law II}\label{s:ERL_II}
The second explicit reciprocity law essentially arises from exchanging the roles of $g$ and $\bf{h}_v$ in the preceding section.

\subsubsection{Result} As we explain below, there is an injective $\sR$-homomorphism
$$
\sL_{\circ}: H^1_{\circ}(\BQ_p,T^+(1)\otimes\tilde\BH^-\hat\otimes\Lambda) \hookrightarrow \tilde I_{\bf h_v}\otimes_{\Lambda_L^v} \sR
$$
defined analogously to $\sC_{\circ}$ (cf.~\S\ref{ss:Log}).  Here $I_{\bf{h}_v}$ is the congruence ideal for the canonical CM family introduced in Section \ref{CMcong} and
$\tilde I_{\bh_v}$ is its reflexive closure.

In view ~Lemma \ref{BF-locp-lem-ss}(i), 
we let 
$$
\sL_{p,c}^{Gr}(g/L)= \sL_{\circ}(\loc_p(_c\mathcal{BF}^{\circ}(g/L))) \in \tilde I_{\bf h_v}\otimes_{\Lambda_L^v} \sR.
$$
These satisfy:

\begin{thm}[Expicit Reciprocity Law II]\label{ERLII-thm-ss}
For $\chi\in \Xi^{(II)}$, we have
$$
\phi_\chi(\sL_{p,c}^{Gr}(g/L)) = \lambda_{D_L}(h_{v,\chi_1}^0)(c^2-\psi_\zeta^2(c)\chi_L(c)\langle c\rangle^{2n-m+1)} ) \CE(\chi) \frac{n!(n-1)!}{\pi^{2n+1}(-i)^{m-1}2^{2n+m+1}}\frac{L(1+n, g, h_{v,\chi_1}^0,\psi_\zeta^{-1}\omega^n)}{\langle h_{v,\chi_1}^0,h_{v,\chi_1}^0\rangle},
$$
where 
$$
\CE(\chi) = \begin{cases}  
\frac{(1-\frac{p^n}{\chi_1(\varpi_{\bar v})\alpha})(1-\frac{p^n}{\chi_1(\varpi_{\bar v})\beta})
(1-\frac{p^{m}\alpha}{p^{n+1}\chi_1(\varpi_{\bar v})\alpha})(1-\frac{p^{m}\beta}{p^{n+1}\chi_1(\varpi_{\bar v})\alpha})}
{(1-p^{m-1}\chi_1^{-2}(\varpi_{\bar v}))(1-p^{m}\chi_1^{-2}(\varpi_{\bar v}))} & \zeta = 1, n\equiv 0\mod p-1 \\
\frac{(p^{t+1}/\mathfrak{g}(\psi_\zeta^{-1}\omega^n))^2 (p^{2n-1}/\chi_1(\varpi_{\bar v}))^{t+1}}
{(1-p^{m-1}\chi_1^{-2}(\varpi_{\bar v}))(1-p^{m}\chi_1^{-2}(\varpi_{\bar v}))}
 & \text{else}.
\end{cases}
$$
\end{thm}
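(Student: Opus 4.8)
The plan is to obtain the second explicit reciprocity law for the supersingular Beilinson--Flach elements by the same strategy already carried out in the ordinary case in Section~\ref{BFord}, namely by exchanging the roles of $g$ and $\bh_v$ in the construction of the Coleman map $\sC_\circ$ of \S\ref{ss:ERL_I}. First I would construct an injective $\sR$-homomorphism
$$
\sL_\circ: H^1_\circ(\BQ_p, T^+(1)\hat\otimes\tilde\BH^-\hat\otimes\Lambda)\hookrightarrow \tilde I_{\bh_v}\otimes_{\Lambda_L^v}\sR
$$
as a suitable linear combination, governed by the matrix $M$ (via Pollack's functions $\log_p^\pm$), of the unbounded regulator maps $\sL_\gamma$, $\gamma\in\{\alpha,\beta\}$, obtained from \cite[Thm.~7.1.4]{LZ} applied to the pair $(\bh_v,{\bf g}_\gamma)$ rather than $({\bf g}_\gamma,\bh_v)$. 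The construction of $\sL_\gamma$ parallels that of $\sC_\gamma$ in \S\ref{ss:Col}: one composes the Perrin-Riou regulator map $\CC_{\bh_v,g_\gamma}$ with the isomorphism $\psi_{g_\gamma,\bh_v}$ assembled from $\tilde\eta_{\bh_v}:D(\tilde\BH^-)\isoarrow\tilde I_{\bh_v}$ (the map on reflexive closures induced by $\eta_{\bh_v}$ of \cite[Prop.~10.1.1 part~2(b)]{KLZ}) and from $\omega_{g,\gamma}=[\omega_g,-]:D(V^+)\isoarrow F_\lambda$, using the pairing \eqref{CanPai}. That the resulting bounded maps $\sL_\pm$ exist (i.e.\ that the $\log_p^\pm$-division is integral) follows from the proof of \cite[Thm.~1.1]{BL} exactly as in Proposition~\ref{Col-pm}, since the key input \cite[(18)]{BL} holds in our setting as noted in the proof of Theorem~\ref{BF-pm}.

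Second, I would record the analog of Theorem~\ref{ERLI-thm_0}: the image $\sL_{p,c}^\gamma(g/L)=\sL_\gamma({}_c\mathcal{BF}^\gamma(g/L))\in\tilde I_{\bh_v}\otimes_{\Lambda_L^v}\tilde\sR_\gamma$ of the unbounded Beilinson--Flach element satisfies, for $\chi\in\Xi^{(II)}$, the interpolation formula of \cite[Thms.~2.7.4,~7.7.2,~10.2.2]{KLZ} combined with \cite[Thms.~7.1.2,~7.1.5]{LZ}, with an Euler factor $\CE(\chi)$ as in the statement but with $\alpha,\beta$ in the numerator replaced appropriately by the single root $\gamma$; here one uses that the weight $2m+1$ specialisation $h_{v,\chi_1}$ of $\bh_v$ is the ordinary $p$-stabilisation of the newform $h_{v,\chi_1}^0$ of level $D_L$, and that $L(s,g,h_{v,\chi_1}^0,\psi)$ is the relevant Rankin--Selberg $L$-function. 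Applying the matrix relation of Theorem~\ref{BF-pm} and Proposition~\ref{Col-pm} (in its $\sL$-version) then expresses $\sL_{p,c}^{Gr}(g/L)=\sL_\circ(\loc_p({}_c\mathcal{BF}^\circ(g/L)))$ as the corresponding combination of the $\sL_{p,c}^\gamma(g/L)$. The crucial point is that, upon taking this combination, the $\gamma$-dependent Euler factors assemble into the symmetric factor $\CE(\chi)$ displayed in the theorem (involving both $\alpha$ and $\beta$), just as in the cyclotomic signed $p$-adic $L$-functions of \cite{Po}; the bookkeeping here is identical to that in \cite[\S2.3, \S3.3.1]{L} and \cite[\S2.3]{BL}.

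I expect the main obstacle to be purely notational rather than conceptual: one must verify that $\tilde\eta_{\bh_v}$ and the functoriality of $D(-)$ behave correctly with respect to the reflexive-closure operation — that is, that $\widetilde{D(\BH_1^-)}=D(\tilde\BH^-)=D(\tilde\BH_1^-)$, which follows from \cite[Prop.~1.7.6]{FK} exactly as in the ordinary case — and that the specialisation maps \eqref{CM-sp-II} and $(Pr^\gamma)^*$ are compatible with the normalising diagrams analogous to \eqref{eta-hv-eq} and \eqref{omega-g-eq}. Since in the supersingular case $(Pr^\gamma)^*:M_{\cO_\lambda}(g_\gamma)^*\isoarrow M_{\cO_\lambda}(g)^*=T(1)$ is an \emph{isomorphism} (there is no anomalous case to worry about, as $a_p(g)=0$ forces $\gamma^2=-p\not\equiv1\bmod\lambda$), this comparison is in fact cleaner than in Section~\ref{BFord}. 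The only genuine subtlety is that the regulator maps $\sL_\gamma$ are unbounded (valued in $\tilde\sR_\gamma$, not $\sR$), so one must check that the $M$-combination lands in $\sR$; this is exactly the content of the $\sL$-analog of Proposition~\ref{Col-pm}, whose proof is as in \cite[\S2.3]{BL}. With these pieces in place the displayed interpolation formula follows by evaluating at the Zariski-dense set of characters $\chi\in\Xi^{(II)}$ and invoking injectivity of $\sL_\circ$.
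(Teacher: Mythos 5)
Your scaffolding is the same as the paper's: construct $\sL_\gamma$ by exchanging the roles of $g$ and $\bh_v$ (i.e.\ pairing with $\omega_{g_\gamma}\otimes\tilde\eta_{\bh_v}$ on the filtration $\mathcal{F}^{+-}$), obtain the bounded $\sL_\pm$ by the $M$-matrix argument of \cite[\S2.3]{BL} as in Proposition~\ref{Col-pm'} and Definition~\ref{Log-pm}, feed in the interpolation property of the unbounded classes from \cite[Thms.~7.1.2, 7.1.5]{LZ} (together with \cite[Thms.~2.7.4, 7.7.2, 10.2.2]{KLZ}), and then combine with the matrix relation of Theorem~\ref{BF-pm}. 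That is exactly the route taken in \S\ref{ss:Log} and in the paper's proof of Theorem~\ref{ERLII-thm-ss}.

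However, your central step~2 misidentifies what the unbounded interpolation formula looks like, and the mechanism you invoke to repair it would not work. You assert that $\sL_\gamma({}_c\CBF^\gamma(g/L))$ interpolates an Euler factor in which ``$\alpha,\beta$ in the numerator are replaced by the single root $\gamma$'', and that the symmetric factor $\CE(\chi)$ of the theorem then ``assembles'' from the $M$-combination as in Pollack's signed cyclotomic $L$-functions. In fact, in the range $\Xi^{(II)}$ the Panchishkin/ordinary direction is carried by the CM specialisation $h_{v,\chi_1}$, not by $g$: in the Hida--Rankin interpolation factor of \cite[Thm.~2.7.4]{KLZ} the roots of the dominant ordinary form $h_{v,\chi_1}$ enter asymmetrically (producing the denominator $(1-p^{m-1}\chi_1^{-2}(\varpi_{\bar v}))(1-p^{m}\chi_1^{-2}(\varpi_{\bar v}))$), while the roots $\alpha,\beta$ of $g$ enter through the \emph{full} local Euler factor, symmetrically. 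Hence $\phi_\chi(\sL_\gamma({}_c\CBF^\gamma(g/L)))$ is already given by the displayed formula, \emph{independently of} $\gamma$ --- this is precisely the content of Theorem~\ref{ERLII-thm_0}(ii), and it is why a single bounded $\sL_{p,c}^{Gr}(g/L)$ exists with no signed decomposition on the Greenberg side. A Pollack-style assembly cannot be the mechanism here: at $\chi\in\Xi^{(II)}$ neither $\log_p^+$ nor $\log_p^-$ vanishes, so if the two unbounded values genuinely carried $\gamma$-dependent factors, the combination dictated by Theorem~\ref{BF-pm} and Definition~\ref{Log-pm} would produce $\log_p^{\pm}$-weighted \emph{sums} of distinct Euler factors, not the clean product $\CE(\chi)$ (contrast the cyclotomic case of Theorem~\ref{ERLI-thm-ss}, where the signed factors $e_p^\circ(\zeta)$ do arise this way and, tellingly, differ for the two signs, whereas here the output is sign-independent). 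So the missing idea is the $\gamma$-independence of the unbounded formula in the $\Xi^{(II)}$ range; once that is observed, the passage from $\sL_\gamma({}_c\CBF^\gamma)$ to $\sL_\circ({}_c\CBF^\circ)$ is immediate bookkeeping, and your final appeal to injectivity of $\sL_\circ$ and Zariski density is not needed for the theorem itself (it is used only for statements such as the independence of the class from $c$, cf.\ Remark~\ref{BFint-rmk}).
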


\noindent Here $h_{v,\chi_1}^0$ is the newform of level $D_L$ and weight $2m+1$ whose ordinary $p$-stabilisation is $h_{v,\chi_1}$.
The $L$-function $L(s,g,h_{v,\chi_1}^0,\psi)$ is the $\psi$-twist of the usual Rankin--Selberg $L$-functions (cf.~\cite[\S2.7]{KLZ}).
Also, we identify the Galois character $G_L\rightarrow\Gamma_L^v\stackrel{\chi_1}{\rightarrow} \BQ_p^\times$ 
with an algebraic Hecke character of infinity type $(-2m,0)$ as in Section \ref{Hecke-char}, also denoted by $\chi_1$.
The hypotheses on $\chi_1$ ensure that this algebraic Hecke character is unramified at each prime above $p$, and we have 
denoted by $\varpi_{\bar v}$ a uniformiser at $\bar v$.
Here $t\geq 0$ is such that $\zeta$ is a primitive $p^t$th root of unity. 

\begin{remark}\label{ELRII-rmk}
Just as for Theorem \ref{ERLI-thm-ss}, the proof of Theorem \ref{ERLII-thm-ss} 
essentially identifies $\sL_{p,c}^{Gr}(g/L)$ as the specialization at $g_\alpha$ of a $p$-adic Rankin-Selberg $L$-function
for ${\bf h_v}$ and Coleman family ${\bf g}_\gamma$. 
It also satisfies an interpolation formula 
for a larger collection of characters than just $\Xi^{(II)}$. 
\end{remark}

\subsubsection{Logarithm maps}\label{ss:Log}
The definition of $\sL_\circ$ is analogous to that of $\sC_\circ$. 

Let $\iota_\epsilon:\Lambda_\CG\isoarrow \Lambda_\CG$ be the isomorphism such that $g\mapsto \epsilon(g)g$ for all $\gamma\in\CG$.
For $\cdot\in\{\emptyset,+,-\}$, put 
$$\mathcal{F}^{\cdot-}_{\gamma}{\bf D}(T(1)\hat\otimes \BH_{1})=\mathcal{F}_{\gamma}^{\cdot}{\bf D}(T(1))\hat\otimes \mathcal{F}^{-}(\BH_1)=\mathcal{F}_{\gamma}^{\cdot}{\bf D}(T(1))\hat\otimes{\bf D}(\tilde{\BH}^{-}).$$
We occasionally drop the subscript $\gamma$ in the above notation.
\vskip2mm
{\it{Coleman map, bis}}. 
Let
\begin{equation}\label{Col_PR'}
\ov{ \sC}_{\gamma}: H^{1}(\BQ_{p},\mathcal{F}_{\gamma}^{\emptyset-}(T(1)\hat\otimes \BH_{1}) \hat\otimes \mathfrak{K}_{1,F_\lambda(\gamma)}) \ra
\tilde{\sR}_{\gamma}
 \end{equation}
 be defined analogously to the Perrin-Riou regulator \eqref{Col_PR}. Its kernel is $H^{1}(\BQ_{p},\mathcal{F}^{+-}(T(1)\hat\otimes \BH_{1}) \hat\otimes \mathfrak{K}_{1,F_\lambda(\gamma)})$. 
 
 \begin{prop}\label{Col-pm'}
There exist $\sR$-module homomorphisms 
$$
\ov{\sC}_{\pm}: 
 H^{1}(\BQ_{p},T(1)\hat\otimes \BH_{1}^{-} \hat\otimes \Lambda) \ra
\sR.
$$
such that
$$
\begin{pmatrix}
\ov{\sC}_{\alpha}\\
\ov{\sC}_{\beta}\\
\end{pmatrix}
=
M \cdot \begin{pmatrix}
\ov{\sC}_{-}\\
\ov{\sC}_{+}\\
\end{pmatrix}
.
$$
\end{prop}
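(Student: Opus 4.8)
\textbf{Proof proposal for Proposition \ref{Col-pm'}.}

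The plan is to mirror exactly the argument used for Proposition \ref{Col-pm} (i.e. \cite[Thm.~1.1]{BL} as carried out in \cite[\S2.3]{BL}), now applied to the family of unbounded Coleman maps $\ov{\sC}_\gamma$ of \eqref{Col_PR'} instead of the maps $\sC_\gamma$ of \eqref{Col_PR}. First I would recall that $\ov{\sC}_\gamma$ factors through $H^{1}(\BQ_{p},\mathcal{F}_\gamma^{--}(T(1)\hat\otimes \BH_{1}) \hat\otimes \mathfrak{K}_{1,F_\lambda(\gamma)})$ and, via the isomorphism $\mathcal{F}_\gamma^{--}(T(1)\hat\otimes\BH_1)\hat\otimes\mathfrak{K}_{1,F_\lambda(\gamma)}\simeq {\bf D}(\tilde\BH^-)(\text{twist})\hat\otimes\mathfrak{K}_{1,F_\lambda(\gamma)}$, that its restriction to $H^1(\BQ_p,T(1)\hat\otimes\BH_1^-\hat\otimes\Lambda)$ is obtained from Perrin-Riou's big logarithm for the rank-one $(\varphi,\Gamma)$-submodule $\mathcal{F}_\gamma^-{\bf D}(T(1))$ tensored with the unramified data coming from $\bh_v$. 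The key observation, exactly as in \cite{BL}, is that the two maps $\ov{\sC}_\alpha$ and $\ov{\sC}_\beta$ differ only through the choice of eigenvector $\eta_{\omega_g}$ normalised by $\varphi_{cris}\eta_{\omega_g}=\gamma\cdot\eta_{\omega_g}$, so that the ``denominators'' of $\ov{\sC}_\gamma$ (the failure to be $\sR$-valued) are governed by the same pair of half-logarithms $\log_p^\pm(T)$ that appear in Pollack's decomposition, packaged in the matrix $M$.

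The key steps, in order, are: (1) identify the precise image of $\ov{\sC}_\gamma$ restricted to the bounded submodule $H^1(\BQ_p,T(1)\hat\otimes\BH_1^-\hat\otimes\Lambda)$ inside $\tilde\sR_\gamma = \CO_\lambda\hat\otimes\Lambda_L^v\hat\otimes\mathfrak{K}_{1,F_\lambda(\gamma)}$, showing the poles are controlled by $1/\log_p^+$ and $1/\log_p^-$ — this is where the hypothesis $a_p(g)=0$ enters, forcing the two roots to be $\pm\sqrt{p}$ and making the logarithmic growth purely ``signed''; (2) apply the elementary matrix identity, namely that $M$ has entries in $\mathfrak{K}_{1,F_\lambda}$ with $\det M = (\beta-\alpha)\log_p^+(T)\log_p^-(T)$ and that $M^{-1}$ applied to the pair $(\ov{\sC}_\alpha,\ov{\sC}_\beta)$ lands in $\sR$; (3) verify that the integer analogous to $s_0$ in \cite[Thm.~3.7]{BL} can be taken to be $0$ here exactly as in the proof of Theorem \ref{BF-pm} — the only input is $H^0(\BQ(\zeta_{p^\infty}),T(1)\hat\otimes\BH_1)=0$, which holds since $g$ is supersingular so $\ov T$ is irreducible as a $k_\lambda[G_{\BQ_p}]$-module; (4) define $\ov{\sC}_\pm$ by $(\ov{\sC}_-,\ov{\sC}_+)^{t} = M^{-1}(\ov{\sC}_\alpha,\ov{\sC}_\beta)^{t}$ and check that each is $\sR$-linear (this is automatic since $M^{-1}$ has coefficients in $\Frac(\mathfrak{K}_{1,F_\lambda})$ that are fixed by the $\Lambda$-action, and the output is $\sR$-valued by step (2)–(3)).

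The main obstacle I expect is step (1): pinning down that the only poles of $\ov{\sC}_\gamma$ along the cyclotomic variable are along $\log_p^\pm$, with no extra poles introduced by the two-variable CM coefficient system $\BH_1$ (in particular by its $\Lambda_L^v$-torsion or by the non-Gorenstein phenomena recorded in Section \ref{CMHF}). Here I would use that $\ov{\sC}_\gamma$ factors through the reflexive module ${\bf D}(\tilde\BH^-)$, which is free of rank one over $\Lambda_L^v$, so that after the functorial twist the coefficient system contributes a genuine $(\varphi,\Gamma)$-module basis and no new denominators; the growth estimate is then identical to the one-variable supersingular case of \cite{Ko1,L}, applied over the base ring $\sS=\Lambda_{L,\CO_\lambda}^v$. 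This is essentially the argument of \cite[\S2.3, \S3.1--3.2]{BL}, and the only genuinely new point is the passage from $\BH_1^-$ to $\tilde\BH^-$, which is legitimate because $\loc_p$ of $_c\mathcal{BF}^\gamma(g/L)$ already lies in the relevant submodule by Lemma \ref{BF-locp-lem-ss}(i).
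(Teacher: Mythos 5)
Your proposal is correct and follows essentially the same route as the paper: the paper's proof of Proposition \ref{Col-pm'} simply says to proceed as in Proposition \ref{Col-pm}, i.e.\ to run the argument of \cite[\S2.3]{BL} (logarithmic growth controlled by $\log_p^{\pm}$ and decomposition via the matrix $M$, with the vanishing $H^{0}(\BQ(\zeta_{p^{\infty}}),T(1)\hat\otimes\BH_{1})=0$ playing the role it does in Theorem \ref{BF-pm}), which is exactly what you spell out. Your write-up merely makes explicit the details the paper leaves to the reference.
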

\begin{proof} One may proceed just as 
 in the proof of Proposition \ref{Col-pm}. 
\end{proof}
Let  $H^{1}_{\circ}(\BQ_{p},T(1)\hat\otimes \BH_{1}^{-} \hat\otimes \Lambda)$ denote the kernel of $\ov{\sC}_\circ$. 
\vskip2mm

{\it{Perrin-Riou logarithm}}. We introduce the logarithm map $\sL_\gamma$ (cf.~\eqref{Log_PR}).

The isomorphism $\mathcal{F}^+{\bf D}(T(1))\otimes\Lambda_{\CG} \stackrel{id\otimes\iota_{\epsilon}}{\longrightarrow} \mathcal{F}^+{\bf D}(T(1))\otimes\Lambda_{\CG}$ induces
$$
H^1(\BQ_p,\mathcal{F}^{+-}{\bf D}(T(1)\hat\otimes \BH_{1})\hat\otimes \mathfrak{K}_{1,F_\lambda(\gamma)})\isoarrow H^1(\BQ_p,\mathcal{F}^{+-}{\bf D}(T(1)\hat\otimes \BH_{1})\hat\otimes \mathfrak{K}_{1,F_\lambda(\gamma)}^{(-1)}), 
$$
composing it with 
specialisation at $(g_{\gamma},\bh_v)$ of the Perrin-Riou regulator map associated to the pair $({\bf g}_{\gamma}, \bh_v)$ yields a homomorphism
$$
\CL_{g_{\gamma},{\bf h}_v}: H^1(\BQ_p,\mathcal{F}^{+-}{\bf D}(T(1)\hat\otimes \BH_{1})\hat\otimes \mathfrak{K}_{1,F_\lambda(\gamma)})
\stackrel{\CL}{\hookrightarrow} D(\mathcal{F}^{+-}{\bf D}(T(1)\hat\otimes \BH_{1}))\hat\otimes\mathfrak{K}_{1,F_\lambda(\gamma)}^{(-1)}\cong 
D(\mathcal{F}^{+-}{\bf D}(T(1)\hat\otimes \BH_{1}))\hat\otimes \mathfrak{K}_{1,F_\lambda(\gamma)}.
$$
Then $\sL_{\gamma}$ is the composition of $\CL_{g_{\gamma},{\bf h}_v}$ with an isomorphism
$$
\xi_{g,{\bf h}_v}: D(\mathcal{F}^{+-}{\bf D}(T(1)\hat\otimes \BH_{1}))\hat\otimes\Lambda \isoarrow \tilde I_{\bh_v}\otimes_{\Lambda_L^v} \sR
$$
defined as follows.

We let $\eta_{\bh_v}:D(\BH_1^-)\isoarrow I_{\bh_v}$ be the $\Lambda_{L}^v$-map as in \cite[Prop.~10.1.1 part 2(b)]{KLZ}.
It induces a map on reflexive closures 
$\tilde\eta_{\bh_v}: \widetilde{D(\BH^-_1)}\isoarrow \tilde I_{\bh_v}$. Since 
$\widetilde{D(\BH^-_1)}  = D(\tilde\BH^-_1) = D(\tilde\BH^-)$
by functoriality\footnote{This is easily seen from the fact that there is a natural identification $D(M)\simeq M$ for profinite unramified
$\BZ_p[G_{\BQ_p}]$-modules $M$ that is functorial in $M$ \cite[Prop.~1.7.6]{FK}.}, this yields a 
$\Lambda_L^v$-isomorphism $\tilde\eta_{\bh_v}:D(\tilde\BH^-) \isoarrow \tilde I_{\bh_v}$
of free $\Lambda_L^v$-modules of rank one
and a commutative diagram
\begin{equation}\label{eta-hv-eq-II}
\begin{tikzcd}[row sep=2.5em]
D(\mathscr{F}^-M(\bh)^*)\otimes_{\Lambda_{\bh}}\Lambda_L^v \arrow[r,"{\eta_{\bh_v}}"] \arrow[d,] & I_{\bh_v} \arrow[d,hook] \\
 D(\tilde\BH^-) \arrow[r,"\tilde \eta_{\bh_v}"] &  \tilde I_{\bh_v},
\end{tikzcd}
\end{equation}
where the left vertical arrow is induced by functoriality. 

Let $\omega_{g_\gamma,\alpha}: D(\mathcal{F}^+{\bf D}(V)) \rightarrow F_\lambda$ denote the map
$[\omega_{g_\gamma},-]:D(\mathcal{F}^+{\bf D}(V)) = D_{cris}^0(V) \isoarrow F_\lambda$,
where $[-,-]$ is the pairing from \eqref{CanPai}. The restriction to $D(\mathcal{F}^+{\bf D}(T))$ is mapped isomorphically onto $\CO_\lambda$. 

The maps $\xi_{g,{\bf h}_v}$ and $\sL_{\gamma}$ are then defined to be 
$$
 \xi_{g,{\bf h_v}}: D(\mathcal{F}^{+-}{\bf D}(T(1)\hat\otimes \BH_{1}))\hat\otimes\Lambda
 \stackrel{\omega_{g_\gamma}\otimes\tilde\eta_{{\bf h}_v}\otimes id}{\longrightarrow} 
 {\CO_\lambda}\hat\otimes  \tilde I_{\bh_v}\hat\otimes \Lambda\stackrel{id\otimes id \otimes\iota_\epsilon^{-1}}{\longrightarrow}
 {\CO_\lambda}\hat\otimes  \tilde I_{\bh_v}\hat\otimes \Lambda  = \tilde I_{\bh_v}\otimes_{\Lambda_L^v} \sR
 $$
 and
\begin{equation}\label{Log_PR}
\sL_{\gamma} =  \xi_{g,{\bf h_v}}\circ\CL_{g_{\gamma},\bf{h}_v} : H^1(\BQ_p,\mathcal{F}^{+-}{\bf D}(T(1)\hat\otimes \BH_{1})\hat\otimes \mathfrak{K}_{1,F_\lambda(\gamma)})
\rightarrow 
 \tilde I_{\bh_v}\otimes_{\Lambda_L^v} \tilde{\sR}_{\gamma}.
 \end{equation}
 The latter is an injective $\sR$-homomorphism.
\vskip2mm 
{\it Plus/minus logarithm maps}. 
\begin{lem} 
For $\circ\in\{+,-\}$, pick
$z_\circ \in
H^{1}_{\circ}(\BQ_{p},T(1)\hat{\otimes} \BH_{1}^{-}\hat{\otimes} \Lambda)$. 
For $\gamma\in\{\alpha,\beta\}$, define 
$$z_{\gamma} \in H^1(\BQ_p,\mathcal{F}^{\emptyset-}_{\gamma}{\bf D}(T(1)\hat\otimes \BH_{1})\hat\otimes\Lambda)$$ by 
$$
\begin{pmatrix}
z_{\alpha}\\
z_{\beta}\\
\end{pmatrix}
=
M \cdot \begin{pmatrix}
z_{-}\\
z_{+}\\
\end{pmatrix}. 
$$
Then $z_{\gamma}\in H^1(\BQ_p,\mathcal{F}^{+-}_{\gamma}{\bf D}(T(1)\hat\otimes \BH_{1})\hat\otimes\Lambda)$. 
\end{lem}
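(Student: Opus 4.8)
The assertion is purely local at $p$. Writing $z_\gamma = (\log_p^+)z_- + (\gamma\log_p^-)z_+$, this class a priori only lies in $H^1(\BQ_p,\mathcal{F}^{\emptyset-}_\gamma{\bf D}(T(1)\hat\otimes\BH_1)\hat\otimes\mathfrak{K}_{1,F_\lambda(\gamma)})$, and the plan is to establish two things: (i) its image in $H^1(\BQ_p,\mathcal{F}^{--}_\gamma{\bf D}(T(1)\hat\otimes\BH_1)\hat\otimes\mathfrak{K}_{1,F_\lambda(\gamma)})$ vanishes, so that it descends to the sub-$(\varphi,\Gamma)$-module $\mathcal{F}^{+-}_\gamma{\bf D}(T(1)\hat\otimes\BH_1)$; and (ii) the resulting class has $\Lambda$-integral, rather than merely $\mathfrak{K}_{1,F_\lambda(\gamma)}$-, coefficients.

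For (i): the kernel of the Perrin--Riou regulator $\ov{\sC}_\gamma$ of \eqref{Col_PR'} is exactly $H^1(\BQ_p,\mathcal{F}^{+-}_\gamma{\bf D}(T(1)\hat\otimes\BH_1)\hat\otimes\mathfrak{K}_{1,F_\lambda(\gamma)})$, so it suffices to check $\ov{\sC}_\gamma(z_\gamma)=0$. I would compute this by $\tilde{\sR}_\gamma$-linearity, expressing $\ov{\sC}_\gamma$ on the images of the $\Lambda$-coefficient cohomology through the factorization $\ov{\sC}_\alpha=(\log_p^+)\ov{\sC}_-+(\alpha\log_p^-)\ov{\sC}_+$, $\ov{\sC}_\beta=(\log_p^+)\ov{\sC}_-+(\beta\log_p^-)\ov{\sC}_+$ of Proposition \ref{Col-pm'}, and then using the defining vanishings $\ov{\sC}_-(z_-)=0=\ov{\sC}_+(z_+)$ together with the explicit shape of $M$ — in particular $\det M=(\beta-\alpha)\log_p^+\log_p^-$ — to see that the surviving terms cancel. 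This is the analogue, in the setting of the second reciprocity law, of Theorem \ref{ERLI-thm_0}(i) for the first; the argument will follow \cite[Cor.~3.15]{BL} and the proof of \cite[Thm.~7.1.2]{LZ}, where the same Kobayashi--Pollack doubling is at work.

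For (ii): from the short exact sequence $0\to\mathcal{F}^{+-}_\gamma{\bf D}(T(1)\hat\otimes\BH_1)\to\mathcal{F}^{\emptyset-}_\gamma{\bf D}(T(1)\hat\otimes\BH_1)\to\mathcal{F}^{--}_\gamma{\bf D}(T(1)\hat\otimes\BH_1)\to0$, twisted by $\hat\otimes\Lambda$, and the vanishing $H^0(\BQ_p,\mathcal{F}^{--}_\gamma{\bf D}(T(1)\hat\otimes\BH_1)\hat\otimes\Lambda)=0$ (the cyclotomic deformation is totally ramified whereas $\mathcal{F}^{--}_\gamma{\bf D}(T(1))$ is crystalline), the $\Lambda$-coefficient group $H^1(\BQ_p,\mathcal{F}^{+-}_\gamma{\bf D}(T(1)\hat\otimes\BH_1)\hat\otimes\Lambda)$ injects into $H^1(\BQ_p,\mathcal{F}^{\emptyset-}_\gamma{\bf D}(T(1)\hat\otimes\BH_1)\hat\otimes\Lambda)$ as the kernel of the map to $H^1(\BQ_p,\mathcal{F}^{--}_\gamma{\bf D}(T(1)\hat\otimes\BH_1)\hat\otimes\Lambda)$. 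The integrality of $z_\gamma$ is then a consequence of the divisibility properties of classes in $\ker\ov{\sC}_\circ$ supplied by the doubling method — exactly as in \cite[\S\S2.3,\,3]{BL}, where $\mathcal{F}^{+-}_\gamma{\bf D}(T(1))$ plays the role of an ordinary/unit-root direction with bounded Iwasawa cohomology — which forces the combination $(\log_p^+)z_-+(\gamma\log_p^-)z_+$ to have $\Lambda$-coefficients although its individual summands do not.

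The step I expect to be the real obstacle is (i): the vanishing $\ov{\sC}_\gamma(z_\gamma)=0$ is not formal from $z_\circ\in\ker\ov{\sC}_\circ$ alone, but hinges on the compatibility between the two factorizations through $M$ — of the Beilinson--Flach classes (Theorem \ref{BF-pm}) and of the regulators (Proposition \ref{Col-pm'}) — together with the fact that the regulator attached to $\mathcal{F}^{+-}_\gamma{\bf D}(T(1)\hat\otimes\BH_1)$ is identically zero. Everything else is a routine matter of exact sequences and (flat) base change.
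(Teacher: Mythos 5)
The paper's proof of this lemma is two lines: by the very definition of $H^1_\circ(\BQ_p,T(1)\hat\otimes\BH_1^-\hat\otimes\Lambda)$ as the kernel of $\ov{\sC}_\circ$, one has $\ov{\sC}_\pm(z_\pm)=0$, and the vanishing $\ov{\sC}_\gamma(z_\gamma)=0$ is then asserted to follow from the matrix factorization of Proposition \ref{Col-pm'}; membership in $H^1(\BQ_p,\CF^{+-}_\gamma{\bf D}(T(1)\hat\otimes\BH_1)\hat\otimes\mathfrak{K}_{1,F_\lambda(\gamma)})$ holds because that is exactly $\ker\ov{\sC}_\gamma$. Nothing else enters: no appeal to Theorem \ref{BF-pm}, no antisymmetry of Beilinson--Flach classes, no integrality discussion. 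So the first half of your step (i) is the paper's entire argument, and your two additions are departures from it: (a) Theorem \ref{BF-pm} and the antisymmetry of \cite[Prop.~3.14]{BL} cannot be invoked here, because the $z_\pm$ of the lemma are \emph{arbitrary} elements of the signed kernels, not localizations of Beilinson--Flach classes -- that input belongs to Lemma \ref{BF-locp-lem-ss}(i), a different statement; (b) the $\Lambda$-integrality you attempt in step (ii) is neither proved nor used in the paper: the combination $\log_p^+z_-+\gamma\log_p^-z_+$ naturally lives in the $\mathfrak{K}_{1,F_\lambda(\gamma)}$-coefficient group (as you note), and all that Definition \ref{Log-pm} requires is membership in the domain of $\sL_\gamma$, which is stated with $\mathfrak{K}_{1,F_\lambda(\gamma)}$-coefficients; moreover your justification for (ii) (``divisibility properties supplied by the doubling method'') is an appeal, not an argument.

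Your closing remark does pinpoint a real subtlety, but you leave it unresolved, so step (i) does not close as written. Expanding with Proposition \ref{Col-pm'} gives $\ov{\sC}_\gamma(z_\gamma)=\gamma\log_p^+\log_p^-\bigl(\ov{\sC}_+(z_-)+\ov{\sC}_-(z_+)\bigr)$: the kernel hypotheses kill only the diagonal terms $\ov{\sC}_-(z_-)$ and $\ov{\sC}_+(z_+)$, and $\det M$ never enters, so the claim that ``the surviving terms cancel'' is not a computation you have actually performed, and nothing in the stated hypotheses forces $\ov{\sC}_+(z_-)+\ov{\sC}_-(z_+)=0$ (e.g.\ $z_+=0$ with $z_-\in\ker\ov{\sC}_-$, $\ov{\sC}_+(z_-)\neq0$, is not excluded); the paper's own proof asserts the vanishing without comment. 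In the situation where the lemma is actually applied -- $z_\pm$ the signed classes attached to a pair $c_\alpha,c_\beta$ satisfying $\ov{\sC}_\gamma(c_\gamma)=0$ and $\ov{\sC}_\alpha(c_\beta)=-\ov{\sC}_\beta(c_\alpha)$ as in Theorem \ref{ERLII-thm_0}(i) and \cite[Prop.~3.14]{BL} -- a short computation (invert $M$ and substitute) shows that the cross-term sum does vanish, which is exactly the ``compatibility'' you gesture at. But to have a proof you must either carry out the formal cancellation the paper relies on (and say why the cross terms vanish), or add the hypothesis on $z_\pm$ under which they do; as it stands your proposal does neither.
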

\begin{proof} 
Note that $\sC_{\pm}(z_\pm)=0$.
Thus, we have $\ov{\sC}_\gamma(z_\gamma)=0$ 
by Proposition \ref{Col-pm'}. 
\end{proof}
\begin{defn}\label{Log-pm}
Let
$$
\sL_{\circ}: H^{1}_{\circ}(\BQ_{p},T(1)\hat{\otimes} \BH_{1}^{-}\hat{\otimes} \Lambda) \ra
 \tilde I_{\bh_v}\otimes_{\Lambda_L^v} \sR.
$$
be the $\sR$-module homomorphisms such that
$$
\begin{pmatrix}
\sL_{+} \ 
\sL_{-}\\
\end{pmatrix}
=
\begin{pmatrix}
\sL_{\beta} \
\sL_{\alpha}\\
\end{pmatrix}
\cdot M.
$$
\end{defn}

 \subsubsection{Unbounded Beilinson--Flach elements and logarithm map}
 \begin{thm}\label{ERLII-thm_0}
 \begin{itemize}
\item[(i)] The image of $_{c}\mathcal{BF}^\gamma(g/L)$ in $\mathcal{F}^{\cdot - }{\bf D}(T(1)\hat\otimes \BH)\hat\otimes \mathfrak{K}_{1,F_\lambda(\gamma)}$ belongs to 
$ \mathcal{F}^{+ -}{\bf D}(T(1)\hat\otimes \BH)\hat\otimes \mathfrak{K}_{1,F_\lambda(\gamma)}$. 
\item[(ii)] The element $\sL_{p,c}^{Gr,\gamma}(g/L):=\sL_{\gamma}(_{c}\mathcal{BF}^\gamma(g/L))$ satisfies the same interpolation formula as in Theorem \ref{ERLII-thm-ss}.
\end{itemize}
\end{thm}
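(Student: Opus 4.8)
\textbf{Proof plan for Theorem \ref{ERLII-thm_0}.}
The plan is to follow the same template used for Theorem \ref{ERLI-thm_0} in the supersingular case, with the roles of $g$ and $\bh_v$ exchanged, exactly as in the ordinary case where Theorem \ref{ERLII-thm} was deduced from the analogous statement for $\sL^{Gr}$. Part (i) is the assertion that the local image of the unbounded Beilinson--Flach element $_{c}\mathcal{BF}^\gamma(g/L)$ at $p$ lands in the ``Panchishkin'' subquotient $\mathcal{F}^{+-}{\bf D}(T(1)\hat\otimes\BH)$; this is the $(\varphi,\Gamma)$-module incarnation of the vanishing recorded in Lemma \ref{BF-locp-lem-ss}(i) and is established in \cite{LZ} (see \cite[Thm.~7.1.2]{LZ} and \cite[Cor.~3.15]{BL}, whose notation $\{\fp,\fp^c\}$ corresponds to our $\{v,\bar v\}$). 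Concretely, the proof of \cite[Prop.~8.1.7]{KLZ} shows the relevant geometric Beilinson--Flach class has trivial image in the $\mathscr{F}^{--}$-quotient for the Hida family; for the Coleman family ${\bf g}_\gamma$ one invokes the corresponding statement from \cite[\S7]{LZ}, and then specialises ${\bf g}_\gamma$ to $g_\gamma$ and projects $M(\bh)^*$ onto $\BH$ via \eqref{CM-sp-II}. This makes sense of $\sC_\gamma$ applied to $_{c}\mathcal{BF}^\gamma(g/L)$ and, more importantly, permits the application of the logarithm map $\sL_\gamma$ from \eqref{Log_PR}.

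For part (ii), the element $\sL_{p,c}^{Gr,\gamma}(g/L)=\sL_\gamma(_{c}\mathcal{BF}^\gamma(g/L))$ is, by construction of $\sL_\gamma$ as the composition $\xi_{g,\bh_v}\circ\CL_{g_\gamma,\bh_v}$ together with the commutative diagrams \eqref{eta-hv-eq-II} and \eqref{omega-g-eq} (the latter now formulated for the Coleman family), identified --- in the notation of \cite[\S7.1]{LZ} and \cite[\S10]{KLZ} --- with the image of the pairing $\langle {}_c\CL(\mathcal{BF}^\gamma), \omega_{{\bf g}_\gamma}\otimes\eta_{\bf h}\rangle$ under the projection $\Lambda_{{\bf g}_\gamma}^{\rm cusp}\hat\otimes I_{\bf h}\hat\otimes\Lambda \twoheadrightarrow \CO_\lambda\hat\otimes\tilde I_{\bh_v}\hat\otimes\Lambda = \tilde I_{\bh_v}\otimes_{\Lambda_L^v}\tilde{\sR}_\gamma$ induced by the specialisation ${\bf g}_\gamma\rightsquigarrow g_\gamma$ and the branch projection $\Lambda_{\bf h}^{\rm cusp}\twoheadrightarrow\Lambda_L^v$. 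The desired interpolation formula is then an immediate consequence of the explicit reciprocity law for the Rankin--Selberg $p$-adic $L$-function of the pair $({\bf g}_\gamma,\bh_v)$: apply \cite[Thm.~7.1.5]{LZ} in combination with \cite[Thms.~2.7.4, 7.7.2, 10.2.2]{KLZ} and use that the weight one specialisation $h_{v,\chi_1}$ of $\bh_v$ is the ordinary $p$-stabilisation of the CM newform $h_{v,\chi_1}^0$ of level $D_L$ and weight $2m+1$ attached to the Hecke character $\chi_1$, so that the Rankin--Selberg $L$-value $L(1+n,{\bf g}_\gamma,h_{v,\chi_1},\psi_\zeta^{-1}\omega^n)$ specialises to $L(1+n,g,h_{v,\chi_1}^0,\psi_\zeta^{-1}\omega^n)$. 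The Euler factor $\CE(\chi)$ is exactly the product of (a) the modification factor at $p$ coming from the Perrin-Riou regulator attached to the $\gamma$-stabilisation and (b) the factor from the weight-one-specialised branch, and matches the formula in Theorem \ref{ERLII-thm-ss} verbatim since that theorem is obtained from the present one by forming the Pollack-style $\log_p^\pm$-combination (Definition \ref{Log-pm}, Proposition \ref{Col-pm'}).

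I expect the only genuine point requiring care --- and hence the main obstacle --- to be the bookkeeping in part (i): verifying that the twisting conventions of \cite{LZ} for Coleman families (where the regulator $\CL$ is built from a module ${\bf D}$ whose filtration $\mathcal{F}^\pm_\gamma$ genuinely depends on the chosen root $\gamma$ of $x^2+p$) are compatible with the unramifiedness hypotheses needed to make $D(\mathcal{F}^{+-}{\bf D}(T(1)\hat\otimes\BH_1))$ well-defined and free of rank one. Here one must use that the central character of $\bh_v$ is $\chi_L$, which is unramified at $p$ (as $p$ splits in $L$), so the relevant twist by $\epsilon^{-1}\Psi_D^{-1}$ of $\BH^+$ is still an unramified $(\varphi,\Gamma)$-module, precisely as in the discussion preceding \eqref{eq:D-sub}; and one must invoke the $p$-adic Eichler--Shimura isomorphism for nontrivial nebentypus from \cite{SV-S-Ohta} (with many cases in \cite{Ca}) to legitimise the use of \cite{Oh1,Oh2} for all odd $p$, as flagged in the remark after Theorem \ref{BF-pm}. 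Once these compatibilities are in place, part (ii) is a purely formal consequence of the cited explicit reciprocity laws and the $q$-expansion identification $\theta_{\chi_1}=h^0_{v,\chi_1}$ via \eqref{defCM}--\eqref{CM-LF}, with no new calculation beyond matching normalising constants.
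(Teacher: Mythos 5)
Your proposal is correct and follows essentially the same route as the paper, whose entire proof of this theorem is the citation that it is a special case of \cite[Thm.~7.1.2~and~7.1.5]{LZ}, specialised to the pair $(g_\gamma,\bh_v)$ exactly as you describe (with the same supporting normalisations from \cite{KLZ} that the paper uses in the ordinary case). One small caution: avoid phrasing part (i) as a consequence of Lemma \ref{BF-locp-lem-ss}(i), since in the paper that lemma is itself deduced from Theorem \ref{ERLII-thm_0}(i) (together with \cite[Prop.~3.14]{BL}), so the logical dependence runs the other way; your actual grounding in \cite[Thm.~7.1.2]{LZ} is the right one.
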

 
This is a special case of \cite[Thm.~7.1.2~and~7.1.5]{LZ}.

\subsubsection{Proofs}
\begin{proof}[Proof of Lemma \ref{BF-locp-lem-ss}(i)]
By Theorem \ref{ERLII-thm_0}(i), we have $
\ov{\sC}_{\gamma}(\Im(_{c}\mathcal{BF}^\gamma(g/L)))=0$. 
Moreover, 
$$
\ov{\sC}_{\alpha}(\Im(_{c}\mathcal{BF}^\beta(g/L)))=-\ov{\sC}_{\beta}(\Im(_{c}\mathcal{BF}^\alpha(g/L))). 
$$
by \cite[Prop.~3.14]{BL}. 
Hence, the assertion follows by the same argument as for the proof of \cite[Cor.~3.15]{BL}.
\end{proof}

\begin{proof}[Proof of Theorem \ref{ERLII-thm-ss}] 
This is a simple consequence of Theorem \ref{ERLII-thm_0}(ii), Proposition \ref{BF-pm} and Definition \ref{Log-pm}.
\end{proof}

\subsection{Integral normalisations} 
We introduce certain normalisations of the Beilinson--Flach elements, and regulator maps. These will be subsequently used in regards to zeta elements, and reciprocity laws. 

Put 
$$
\sC^{\mathrm{int}}_{\circ}= c_g \cdot \sC_{\circ}: H^1_{/\circ}(\BQ_p, T(1)\hat\otimes \Lambda_L^v\hat\otimes\Lambda) \rightarrow \CO_\lambda\hat\otimes \Lambda_L^v\hat\otimes\Lambda = \sR.
$$
for $c_g\in \CO$ a congruence number as in Section \ref{congruence}.

Let $\CH_v \in \Lambda_L^{\ur}$ be as in \eqref{nrm}. Let $\CO_\lambda^{\ur}$ be the completion of the ring of integers of the 
maximal unramified extension of $F_\lambda$ (this is just the compositum of $\CO_\lambda$ and $W(\bar{\BF}_p)$).
We put
$$
\sL^\mathrm{int}_{\circ} = \CH_v\cdot \sL_{\circ}:H^1_{\circ}(\BQ_p,T(1)\hat\otimes \tilde \BH^-\hat\otimes\Lambda)\otimes_{\Lambda_{L,\CO_\lambda}^v}
\Lambda_{L,\CO_\lambda^\ur}^v \rightarrow \sR^\ur,
$$
where  $\sR^\ur= \CO_\lambda^\ur\hat\otimes\Lambda_L^v\hat\otimes\Lambda = \sR\otimes_{\Lambda_{L,\CO_\lambda}^v} \Lambda_{L,\CO_\lambda^\ur}^v$.

We also normalise the elements $_c\mathcal{BF}^{\circ}(g/L)$.  Put $r_c = \log_p(c) \in \BZ_p$ and 
$$
\mathbf{c} = (c^2 - \chi_L(c)\otimes\langle c\rangle \gamma_{v}^{-r_c}\otimes\gamma_{\cyc}^{2r_c}) \in \sR.
 $$
Modulo the maximal ideal of $\sR$, $\mathbf{c}$ is congruent to $c^2-\chi_L(c)$. 
As $p$ is odd, we can therefore choose $c$ so that $\mathbf{c}\in \sR^\times$. Henceforth we assume that $c$ satisfies this.
Define 
$$
\mathcal{BF}^{\circ}(g/L) = \mathbf{c}^{-1}\cdot {_c\mathcal{BF}^{\circ}(g/L)} \in H^1(\BQ,T(1)\hat\otimes \tilde\BH\hat\otimes \Lambda),
$$
and
$$
\sL_p^{\circ}(g/L) = \sC^\mathrm{int}_{\circ}(\loc_p(\mathcal{BF}^{\circ}(g/L)))\in \sR  \ \ \text{and} \ \ \sL_p^{Gr}(g/L)=\sL^{\mathrm{int}}_{\circ}(\loc_p(\mathcal{BF}^{\circ}(g/L))) \in \sR^{\ur}.
$$

We record the following versions of the earlier explicit reciprocity laws (Theorems \ref{ERLI-thm-ss} and \ref{ERLII-thm-ss}).

\begin{thm}[Explicit Reciprocity Law I$'$]\label{ERLIint-thm-ss}
The element $\sL_{p}(g/L)\in \sR$ satisfies:
For $\chi\in \Xi^{(I)}$ with $\chi_1=1$, 
$$
\phi_\chi(\sL_{p}(g/L)) = 
e_{p}^{\circ}(\zeta)^2 
\frac{L(1,g\otimes \psi_\zeta^{-1})L(1,g\otimes\chi_L\psi_\zeta^{-1})}{\pi^2(-i)2^3\Omega_g},
$$
where
$
\Omega_g = \frac{\langle g,g\rangle}{\tilde c_g}$
and $e_p^\circ(\zeta)$ is as in Theorem \ref{pcycQss}.
\end{thm}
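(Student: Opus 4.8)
The plan is to deduce Explicit Reciprocity Law I$'$ (Theorem~\ref{ERLIint-thm-ss}) from the unnormalised version (Theorem~\ref{ERLI-thm-ss}) by tracking the effect of the two integral normalisations introduced in \S\ref{s:ERL_II}: the passage from $_{c}\mathcal{BF}^\circ(g/L)$ to $\mathcal{BF}^\circ(g/L) = \mathbf{c}^{-1}\cdot{_c\mathcal{BF}^\circ(g/L)}$, and the passage from $\sC_\circ$ to $\sC^{\mathrm{int}}_\circ = c_g\cdot\sC_\circ$. First I would unwind the definition $\sL_p(g/L) = \sC^{\mathrm{int}}_\circ(\loc_p(\mathcal{BF}^\circ(g/L)))$, which by $\sR$-linearity of the Coleman map equals $c_g\cdot \mathbf{c}^{-1}\cdot \sC_\circ(\loc_p(_c\mathcal{BF}^\circ(g/L))) = c_g\cdot\mathbf{c}^{-1}\cdot\sL_{p,c}^\circ(g/L)$ inside $J_g\otimes_{\CO_\lambda}\sR$, using Lemma~\ref{BF-locp-lem-ss}(ii) to see that everything lands where it should.

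Next I would apply $\phi_\chi$ for $\chi\in\Xi^{(I)}$ with $\chi_1 = 1$. Since $\phi_\chi$ is an $\CO_\lambda$-algebra homomorphism, $\phi_\chi(\sL_p(g/L)) = c_g\cdot\phi_\chi(\mathbf{c}^{-1})\cdot\phi_\chi(\sL_{p,c}^\circ(g/L))$. The key computation is that $\phi_\chi(\mathbf{c}) = c^2 - \langle c\rangle\psi_\zeta^2(c)\chi_L(c)$: this follows from the explicit formula $\mathbf{c} = (c^2 - \chi_L(c)\otimes\langle c\rangle\gamma_v^{-r_c}\otimes\gamma_{\cyc}^{2r_c})$ with $r_c = \log_p(c)$, evaluating $\gamma_v$ at $1$ (since $\chi_1 = 1$) and $\gamma_{\cyc}$ at $\psi_\zeta$. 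Plugging the interpolation formula of Theorem~\ref{ERLI-thm-ss} for $\phi_\chi(\sL_{p,c}^\circ(g/L))$, the factor $(c^2 - \langle c\rangle\psi_\zeta^2(c)\chi_L(c))$ cancels exactly against $\phi_\chi(\mathbf{c})^{-1}$, leaving $c_g\cdot e_p^\circ(\zeta)^2\cdot\frac{L(1,g\otimes\psi_\zeta^{-1})L(1,g\otimes\chi_L\psi_\zeta^{-1})}{\pi^2(-i)2^3\langle g,g\rangle}$. Here I am using that in the supersingular case $\mathcal{E}(\zeta)/((1-p\gamma^{-2})(1-\gamma^{-2})\lambda_N(g))$ appearing in Theorem~\ref{ERLI-thm_0}(ii), after taking the $M$-matrix linear combination of Proposition~\ref{Col-pm}, reassembles into $e_p^\circ(\zeta)^2$ — this is precisely the identity already invoked in the proof of Theorem~\ref{ERLI-thm-ss}, together with the factorisation $L(s,g,E_1(1,\chi_L),\psi_\zeta^{-1}) = L(s,g\otimes\psi_\zeta^{-1})L(s,g\otimes\chi_L\psi_\zeta^{-1})$.

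Finally I would absorb $c_g$ and $\langle g,g\rangle$ into the stated period $\Omega_g = \langle g,g\rangle/\tilde c_g$, noting that in the supersingular case (irr$_\BQ$) always holds (Eichler, cf.~\cite[Thm.~2.6]{E}), so $\tilde c_g = c_g$; thus $c_g/\langle g,g\rangle = 1/\Omega_g$ and the formula takes the asserted shape. The only genuine bookkeeping is checking the constant in $J_g = \frac{1}{\lambda_N(g)c_g}\CO$ matches: multiplying by $c_g$ in $\sC^{\mathrm{int}}_\circ$ clears the denominator so $\sL_p(g/L)$ genuinely lies in $\sR$, and the Atkin--Lehner sign $\lambda_N(g)$ is harmless as it is a unit (and is already accounted for in the unnormalised formula). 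I expect the main obstacle, such as it is, to be purely notational: carefully reconciling the $\gamma$-indexed quantities $\mathcal{E}(\zeta)$, $(1-p\gamma^{-2})(1-\gamma^{-2})$, and the matrix $M$ of $\log_p^\pm$-factors into the clean closed form $e_p^\circ(\zeta)^2$, which is exactly the content already established en route to Theorem~\ref{ERLI-thm-ss}; no new ideas are needed beyond that and the elementary evaluation of $\phi_\chi(\mathbf{c})$.
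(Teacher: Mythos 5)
Your proposal is correct and is essentially the paper's own (largely implicit) argument: the paper simply records Theorem~\ref{ERLIint-thm-ss} as a restatement of Theorem~\ref{ERLI-thm-ss} after the normalisations $\mathcal{BF}^{\circ}(g/L)=\mathbf{c}^{-1}\cdot{_c\mathcal{BF}^{\circ}(g/L)}$ and $\sC^{\mathrm{int}}_{\circ}=c_g\cdot\sC_{\circ}$, with $\phi_\chi(\mathbf{c})$ cancelling the factor $(c^2-\langle c\rangle\psi_\zeta^2(c)\chi_L(c))$ and $c_g$ combining with $\langle g,g\rangle$ into $\Omega_g$ (and $\tilde c_g=c_g$ since (irr$_\BQ$) holds in the supersingular case). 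Your bookkeeping of $J_g$, the unit $\lambda_N(g)$, and the citation of the already-established reassembly into $e_p^\circ(\zeta)^2$ all match the paper's intent.
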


Given $\chi\in \Xi^{(II)}$ let 
$\psi_\chi$ be the algebraic Hecke character of $L$ with infinity type $(n-m,n)$ and such that
$\sigma_{\psi_\chi}$ is the composition 
$G_L\twoheadrightarrow \Gamma_L^v\times\Gamma
\stackrel{\chi_1\times \chi_2^{-1}}{\rightarrow}\overline{\BQ}_p^\times$.
Then the second explicit reciprocity law can be rewritten as follows.

\begin{thm}[Explicit Reciprocity Law II$'$]\label{ERLIIint-thm-ss}
The element $\sL_p^{Gr}(g/L)\in \sR^\ur$ satisfies: 
For $\chi\in \Xi^{(II)}$, 
$$
\phi_\chi(\sL_{p}^{Gr}(g/L)) = 
- w_L (\chi_1(\gamma_v)-1)\CE'(\chi) \frac{n!(n-1)!\pi^{2m-2n-1}}{2^{2n-2m+2}D_L^{m/2}}\cdot \Omega_p^{2m}\frac{L(1, g, \psi_\chi)}{\Omega_\infty^{2m}},
$$
where 
$$
\CE'(\chi) = 
\begin{cases}
(1-a(p)\psi_\chi(\varpi_{\bar v})^{-1}p^{-1} + \psi_\chi(\varpi_{\bar v})^{-2}p^{-1})^2 & \zeta = 1, n\equiv 0\mod p-1 \\
(p^{t+1}/\mathfrak{g}(\psi_\zeta^{-1}\omega^n))^2 (p^{2n-1}/\chi_1(\varpi_{\bar v}))^{t+1}
& \text{else}.
\end{cases}
$$
\end{thm}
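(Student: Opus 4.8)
The plan is to deduce Theorem~\ref{ERLIIint-thm-ss} from the unnormalized Explicit Reciprocity Law~II in the supersingular case, namely Theorem~\ref{ERLII-thm-ss}, by exactly the rewriting used to pass from Theorem~\ref{ERLII-thm} to Theorem~\ref{ERLIIint-thm} in the ordinary case. First I would unwind the definitions: by construction $\sL_p^{Gr}(g/L) = \sL^{\mathrm{int}}_{\circ}(\loc_p(\mathcal{BF}^{\circ}(g/L))) = \CH_v\cdot\sL_{\circ}(\loc_p(\mathbf{c}^{-1}\cdot{_c\mathcal{BF}^{\circ}(g/L)}))$, so that for $\chi=(\chi_1,\chi_2)\in\Xi^{(II)}$ one has $\phi_\chi(\sL_p^{Gr}(g/L)) = \phi_\chi(\CH_v)\,\phi_\chi(\mathbf{c})^{-1}\,\phi_\chi(\sL_{p,c}^{Gr}(g/L))$. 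Theorem~\ref{ERLII-thm-ss} evaluates the last factor in terms of $L(1+n,g,h_{v,\chi_1}^0,\psi_\zeta^{-1}\omega^n)/\langle h_{v,\chi_1}^0,h_{v,\chi_1}^0\rangle$, an Euler factor $\CE(\chi)$, elementary archimedean constants, and the factor $c^2-\psi_\zeta^2(c)\chi_L(c)\langle c\rangle^{2n-m+1}$. Directly from the definition $\mathbf{c} = c^2-\chi_L(c)\otimes\langle c\rangle\gamma_v^{-r_c}\otimes\gamma_\cyc^{2r_c}$ and the recipe for $\phi_\chi$ on $\Xi^{(II)}$, this last factor is exactly $\phi_\chi(\mathbf{c})$, so it cancels and we are left with an expression not involving the auxiliary integer $c$.

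Next I would feed in two external inputs. The first is Shimura's formula (recalled after Theorem~\ref{ERLIIint-thm}), $\langle h_{v,\chi_1}^0,h_{v,\chi_1}^0\rangle = \frac{h_L\, m!\, D_L^{1/2}}{w_L\, 2^{m-1}(2\pi)^{m+1}}\,L(1,\chi_1/\chi_1^c)$, which trades the Petersson norm for the Hecke $L$-value $L(1,\chi_1/\chi_1^c)$. The second is the interpolation formula of Theorem~\ref{pKatzL} for the Katz $p$-adic $L$-function $\mathcal{L}_v(L)$: taking for $\theta$ the $W(\ov{\BF}_p)$-homomorphism attached to $\chi_1$ (so $\psi_\theta=\chi_1$, with infinity type $(-2m,0)$ in the normalization of \S\ref{Hecke-char}), this gives $\phi_\chi(\mathcal{L}_v(L))$ in terms of $\Omega_p^{2m}/\Omega_\infty^{2m}$, the value $L(1,\chi_1/\chi_1^c)$, the Euler factor $(1-p^m\chi_1(\varpi_{\bar v})^{-2})(1-p^{m-1}\chi_1(\varpi_{\bar v})^{-2})$, and elementary constants. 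Using $\CH_v=\frac{h_L}{w_L}T_v\cdot\mathcal{L}_v(L)$ and $\phi_\chi(T_v)=\chi_1(\gamma_v)-1$, the value $L(1,\chi_1/\chi_1^c)$ cancels between Shimura's formula and the Katz formula, the constants $h_L,w_L,m!$ and the powers of $\pi$, $2$, $D_L$ regroup into the displayed shape $-w_L(\chi_1(\gamma_v)-1)\cdot\tfrac{n!(n-1)!\pi^{2m-2n-1}}{2^{2n-2m+2}D_L^{m/2}}\Omega_p^{2m}$, and the Rankin--Selberg $L$-value is identified with $L(1,g,\psi_\chi)$ via $L(1+n,g,h_{v,\chi_1}^0,\psi_\zeta^{-1}\omega^n)=L(1,g,\psi_\chi)$ as in the discussion following Theorem~\ref{ERLIIint-thm}. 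What remains is the Euler-factor bookkeeping: the ratio of $\CE(\chi)$ to the Katz Euler factor must equal $\CE'(\chi)$. For $\zeta\neq1$ or $n\not\equiv0\bmod p-1$ this is immediate. For $\zeta=1$, $n\equiv0\bmod p-1$ one expands $(1-\tfrac{p^n}{\chi_1(\varpi_{\bar v})\alpha})(1-\tfrac{p^n}{\chi_1(\varpi_{\bar v})\beta}) = 1-\tfrac{p^n}{\chi_1(\varpi_{\bar v})}(\alpha^{-1}+\beta^{-1})+\tfrac{p^{2n}}{\chi_1(\varpi_{\bar v})^2\alpha\beta}$ and uses $\alpha\beta=p$ and $\alpha+\beta=a_p(g)=0$ (this is where supersingularity enters) to see that, after matching $\psi_\chi(\varpi_{\bar v})$ with $\chi_1(\varpi_{\bar v})p^{-n}$ via the definition of $\psi_\chi$, this equals $1-a(p)\psi_\chi(\varpi_{\bar v})^{-1}p^{-1}+\psi_\chi(\varpi_{\bar v})^{-2}p^{-1}$; the second pair of factors is handled identically, giving $\CE'(\chi)$.

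The main obstacle is this last step: reconciling the three different normalizations --- those of Theorem~\ref{ERLII-thm-ss} (Rankin--Selberg, with the pair $(\alpha,\beta)$), of Theorem~\ref{pKatzL} (Katz, with CM periods $(\Omega_p,\Omega_\infty)$ and the uniformiser $\varpi_{\bar v}$), and of Shimura's formula --- and in particular pinning down the precise relation between $\psi_\chi(\varpi_{\bar v})$, $\chi_1(\varpi_{\bar v})$, the integers $(m,n)$, and the $p$-adic avatar at $\bar v$ of the relevant Hecke characters. Once these identifications are carried out consistently, the verification that the Euler factors and the elementary constants match is a mechanical, if lengthy, computation, formally identical to the ordinary case treated in Theorem~\ref{ERLIIint-thm}.
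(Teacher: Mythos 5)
Your proposal is correct and follows essentially the same route as the paper: the paper also deduces Theorem~\ref{ERLIIint-thm-ss} from the unnormalized law (Theorem~\ref{ERLII-thm-ss}) by cancelling the $\mathbf{c}$-factor against the normalization, trading $\langle h_{v,\chi_1}^0,h_{v,\chi_1}^0\rangle$ for $L(1,\chi_1/\chi_1^c)$ via Shimura's formula, and combining with the Katz interpolation formula of Theorem~\ref{pKatzL} together with $\CH_v=\frac{h_L}{w_L}T_v\cdot\mathcal{L}_v(L)$, the Euler-factor and constant bookkeeping being left as the "easily yields" step that you spell out.
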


\noindent Here $(\Omega_p,\Omega_\infty)$ are the CM periods as in Theorem \ref{pKatzL},
and 
$$
L(s,g,\psi_\chi) = \sum_{\mathfrak{a}, (\mathfrak{a},\mathfrak{f}_{\psi_\chi})=1}
a_g(N(\mathfrak{a})) \psi_\chi(x_{\mathfrak{a}}) N(\mathfrak{a})^{-s} = L(s,g,h_{\chi_1}^0, \psi_{\zeta}^{-1}\omega^n).
$$
The key to this rewrite of the second explicit reciprocity law is the following formula (essentially due to Shimura -- see \cite[\S 7]{HT1}), which expresses ${\langle h_{v,\chi_1}^0,h_{v,\chi_1}^0\rangle}$
in terms of $L(1,\chi_1/\chi_1^c)$:
$$
{\langle h_{v,\chi_1}^0,h_{v,\chi_1}^0\rangle} = \frac{h_L m! D_L^{1/2} }{w_L 2^{m-1}(2\pi)^{m+1}} L(1,\chi_1/\chi_1^c).
$$
Combined with the interpolation formula for the Katz $p$-adic $L$-function $\sL_v(L)$ from Theorem \ref{pKatzL}, this easily yields
the formula in Theorem \ref{ERLIIint-thm-ss}.

\begin{remark}\label{BFint-rmk} The class $\mathcal{BF}^{\circ}(g/L)$ does not depend on the choice of an auxiliary integer $c$. 
This follows from 
the second explicit reciprocity law, the injectivity of $\sL^{\mathrm{int}}_{\circ}$, and the Zariski density of the specialisations $\phi_\chi$, $\chi\in \Xi^{(II)}$. 
\end{remark}

\subsubsection{Comparisons with other $p$-adic $L$-functions}
Our application of the Beilinson--Flach elements to the conjecture of Perrin-Riou and other problems stems in large part from being able to realize various $p$-adic $L$-functions as specialisations of the elements $\sL_p^{\circ}(g/L)$ and $\sL_p^{Gr}(g/L)$. 
\medskip

\noindent{\it Comparison with cyclotomic $L$-functions.} From Theorem \ref{ERLIint-thm-ss} together with Proposition \ref{Col-cycsp-prop-ss} we conclude the following:

\begin{prop}\label{ERLIint-prop-ss} 
Let $g' = g\otimes\chi_L$. Let $0\neq \omega\in S_{F,g}$, and let $\gamma\in V_{F,g}$ and $\gamma'\in V_{F,g'}$
such that $\gamma^\pm\neq 0$ and $(\gamma')^\pm\neq 0$. 
For $\circ\in\{+,-\}$, there exists a constant $c^{\circ}(\omega,\gamma,\gamma') \in F^\times$ such that 
\begin{itemize}
\item[(i)] $\sL_p^{\circ}(g/L) \mod (\gamma_v-1) = c^{\circ}(\omega,\gamma,\gamma')\mathfrak{g}(\chi_L)^{-1}
\CL_{\omega,\gamma,\gamma'}^{\circ}(g/L) \in  \Lambda_{\CO_\lambda} = \sR/(\gamma_v-1)\sR$,
\item[(ii)] if (irr$_\BQ$) holds, $\omega\in S_{g,\CO}$ is good, and $\gamma\in T_{g,\CO}$ and $\gamma'\in T_{g',\CO}$
are such that $\gamma^\pm$ is an $\CO$-basis of $T_{\CO,g}^\pm$ and $(\gamma')^\pm$ is an $\CO$-basis of $T_{\CO,g'}^\pm$,
then $c^{\circ}(\omega,\gamma,\gamma') \in \CO^\times$.
\end{itemize}
\end{prop}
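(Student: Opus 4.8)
The plan is to derive Proposition~\ref{ERLIint-prop-ss} exactly as its ordinary counterpart Proposition~\ref{ERLIint-prop} was derived from Theorems~\ref{ERLIint-thm} and \ref{pcycQ}, now using the supersingular explicit reciprocity law (Theorem~\ref{ERLIint-thm-ss}) and Pollack's signed $p$-adic $L$-functions (Theorem~\ref{pcycQss}). First I would reduce $\sL_p^\circ(g/L)$ modulo $(\gamma_v-1)$. By construction $\sL_p^\circ(g/L) = \sC_\circ^{\mathrm{int}}(\loc_p(\mathcal{BF}^\circ(g/L)))$, and by Lemma~\ref{Col-cycsp-prop-ss} the reduction of $\sC_\circ$ modulo $(\gamma_v-1)$ is (up to the normalisation factor $c_g$) the signed Coleman map $Col^\circ_\omega$ composed with the natural projection to the cyclotomic variable. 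Combined with Lemma~\ref{BF-locp-lem-ss} (which places $\loc_p(\mathcal{BF}^\circ(g/L))$ in the correct local subspace), this identifies $\sL_p^\circ(g/L)\bmod(\gamma_v-1)$ with an $F^\times$-multiple of $Col^\circ_\omega(\loc_p(\bz^\circ_{\omega,\gamma,\gamma'}(g_{/L})))$, which by Theorem~\ref{ERLBKII}(b) equals $\mathcal{L}^\circ_{\omega,\gamma,\gamma'}(g_{/L})$ up to the same multiple.

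Next I would pin down the multiple by comparing interpolation formulas. Specialising at $\chi\in\Xi^{(I)}$ with $\chi_1=1$, Theorem~\ref{ERLIint-thm-ss} gives
$$
\phi_\chi(\sL_p^\circ(g/L)) = e_p^\circ(\zeta)^2\cdot\frac{L(1,g\otimes\psi_\zeta^{-1})L(1,g\otimes\chi_L\psi_\zeta^{-1})}{\pi^2(-i)2^3\,\Omega_g},\qquad \Omega_g=\frac{\langle g,g\rangle}{\tilde c_g},
$$
while Theorem~\ref{pcycQss} together with the definition of $\mathcal{L}^\circ_{\omega,\gamma,\gamma'}(g/L) = \mathcal{L}^\circ_{\omega,\gamma}(g)\cdot\mathcal{L}^\circ_{\omega',\gamma'}(g')$ gives
$$
\phi_\chi\bigl(\mathcal{L}^\circ_{\omega,\gamma,\gamma'}(g/L)\bigr) = e_p^\circ(\zeta)^2\cdot\frac{L(1,g\otimes\psi_\zeta^{-1})}{\Omega^+_{\omega,\gamma}}\cdot\frac{L(1,g'\otimes\psi_\zeta^{-1})}{\Omega^+_{\omega',\gamma'}}.
$$
Since $L(1,g'\otimes\psi_\zeta^{-1})=L(1,g\otimes\chi_L\psi_\zeta^{-1})$, dividing the two formulas shows that $\sL_p^\circ(g/L)\bmod(\gamma_v-1)$ and $\mathfrak{g}(\chi_L)^{-1}\mathcal{L}^\circ_{\omega,\gamma,\gamma'}(g/L)$ differ by the scalar
$$
c^\circ(\omega,\gamma,\gamma') = \frac{\Omega^+_{\omega,\gamma}\,\Omega^+_{\omega',\gamma'}\,\mathfrak{g}(\chi_L)}{\pi^2(-i)2^3\,\Omega_g},
$$
which manifestly lies in $\overline{\BQ}_p^\times$; that it lies in $F^\times$ (and is independent of $\zeta$) follows because the ratio of the two sides is forced to be a constant in $\Lambda_{\cO_\lambda}\otimes\BQ_p$ by the Zariski density of the $\phi_\zeta$, and the scalar is built from the algebraic periods. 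Then $\Omega^+_{\omega,\gamma}\sim_{F^\times}\Omega_g^+$, $\Omega^+_{\omega',\gamma'}\sim_{F^\times}\Omega_{g'}^+\sim_{F^\times}\mathfrak{g}(\chi_L)^{-1}\Omega_g^-$ by Lemma~\ref{PerTw}, and $\Omega_g\sim_{F^\times}(4\pi^2 i)^{-1}\Omega_g^+\Omega_g^-$ by Lemma~\ref{PetPer}, so $c^\circ(\omega,\gamma,\gamma')\sim_{F^\times}1$, giving part~(i).

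For part~(ii), under (irr$_\BQ$) and the optimality hypotheses on $\omega,\gamma,\gamma'$, each $\sim_{F^\times}$ in the preceding chain upgrades to $\sim_{\cO^\times}$: Lemma~\ref{PerTw}(ii) gives $\mathfrak{g}(\chi_L)\Omega_{g'}^\pm\sim_{\cO^\times}\Omega_g^\mp$, Lemma~\ref{PetPer}(ii) gives $\Omega_g^{\mathrm{cong}}\sim_{\cO^\times}-i(2\pi)^{-2}\Omega_g^+\Omega_g^-$ (and $\tilde c_g=c_g$ here, so $\Omega_g=\Omega_g^{\mathrm{cong}}$ up to a unit), and by Lemma~\ref{GorPer}(ii) and the definition of good differentials the optimal $\Omega^\pm$ coincide with $\Omega^\pm_{\omega,\gamma}$ up to $\cO^\times$. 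Hence $c^\circ(\omega,\gamma,\gamma')\in\cO^\times$. The main obstacle is the first paragraph: verifying carefully that the reduction modulo $(\gamma_v-1)$ of $\sL_p^\circ(g/L)$ really matches the Beilinson--Kato side $\mathcal{L}^\circ_{\omega,\gamma,\gamma'}(g/L)$ — this requires chasing the identification $\BH^+/(\gamma_v-1)\BH^+\simeq\BZ_p$ implicit in Lemma~\ref{Col-cycsp-prop-ss}, checking that $\mathcal{BF}^\circ(g/L)$ specialises in the cyclotomic variable to (a scalar times) $\bz^\circ_{\omega,\gamma,\gamma'}(g_{/L})$ via Theorem~\ref{2varZ-1varZ-thm}'s supersingular analogue, and handling the normalisation constants $c_g$ versus $\tilde c_g$ consistently; once that compatibility is in hand, the rest is the routine period bookkeeping above.
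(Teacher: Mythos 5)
Your proposal is correct and, in its second paragraph, is essentially the paper's own argument: the paper gives no separate proof in the supersingular case but deduces the proposition exactly as in the ordinary case (Proposition~\ref{ERLIint-prop}), by comparing the interpolation formulas of Theorem~\ref{ERLIint-thm-ss} and Theorem~\ref{pcycQss} at the Zariski-dense set of cyclotomic specialisations and then cleaning up the constant with Lemmas~\ref{PerTw} and~\ref{PetPer} (upgraded to $\CO^\times$ under (irr$_\BQ$)). One caution: your first paragraph, which routes through the Beilinson--Kato element and the supersingular analogue of Theorem~\ref{2varZ-1varZ-thm}, is unnecessary and would be circular if leaned on, since Theorem~\ref{2varZ-1varZ-thm-ss} is itself proved later using this very proposition; the self-contained density comparison in your second paragraph is what carries the proof.
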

\noindent In part (i), $\CL^{\circ}_{\omega,\gamma,\gamma'}(g/L)$ is the cyclotomic $p$-adic $L$-function for $g$ over $L$ as in 
Section \ref{sspLcy}.

\medskip

\noindent{\it Comparison with the $L$-function of Bertolini--Darmon--Prasanna.}
Comparing Theorems \ref{pBDPL} and \ref{ERLIIint-thm-ss} yields:
\begin{prop}\label{GRL=BDPL-ss} Suppose \eqref{Heeg} holds. 
The image of $\sL_p^{Gr}(g/L)$ under the map
$\phi_\ac:\sR^\ur \twoheadrightarrow \Lambda_{L,\CO^\ur}^{ac}$ induced from the homomorphism
$\Gamma_L^v\times\Gamma\twoheadrightarrow \Gamma_L^\ac$, $\gamma_v \mapsto \gamma_\ac^2$ and 
$\gamma\mapsto \gamma_\ac^{h_p}$, equals $-T_\ac^2$ times the image of $\sL_v^{BDP}(g/L)$ under the involution 
$\iota_\ac$ of $\Lambda_L^\ac$ induced by $\gamma_\ac\mapsto \gamma_\ac^{-1}$:
$$
\phi_\ac(\sL_p^{Gr}(g/L)) = -T_\ac^2 \cdot \iota_\ac(\sL_v^{BDP}(g/L)).
$$
\end{prop}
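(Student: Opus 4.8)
The plan is to transcribe the argument for the ordinary case (Proposition~\ref{GRL=BDPL}): the explicit reciprocity law of Theorem~\ref{ERLIIint-thm-ss} has exactly the same shape as its ordinary counterpart Theorem~\ref{ERLIIint-thm}, and the Bertolini--Darmon--Prasanna $p$-adic $L$-function $\sL_v^{BDP}(g/L)$ of Theorem~\ref{pBDPL} is defined uniformly in the ordinary and supersingular cases; hence the comparison is purely a matching of interpolation formulae. Both $\phi_\ac(\sL_p^{Gr}(g/L))$ and $\iota_\ac(\sL_v^{BDP}(g/L))$ lie in $\Lambda_{L,\CO_\lambda^\ur}^\ac = \CO_\lambda^\ur[\![\Gamma_L^\ac]\!]$, which is a domain, so it suffices to verify the identity after applying $\phi_\chi$ for $\chi$ ranging over the Zariski-dense set of characters $\chi\colon\Gamma_L^\ac\to\ov{\BQ}_p^\times$ with $\chi(\gamma_\ac^{h_p})=\epsilon(\gamma_\cyc)^n$ for some integer $n>0$, $n\equiv 0\pmod{p-1}$. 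Since $\phi_\chi\circ\iota_\ac=\phi_{\chi^{-1}}$, the right-hand side is evaluated directly by Theorem~\ref{pBDPL}.

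The first step is to identify the composite $\phi_\chi\circ\phi_\ac\colon\sR^\ur\to\ov{\BQ}_p$ with a specialisation $\phi_{\tilde\chi}$ attached to a character $\tilde\chi=(\tilde\chi_1,\tilde\chi_2)$ of $\Gamma_L^v\times\Gamma$ in the set $\Xi^{(II)}$ of \eqref{eq:ch-2}. Since $\phi_\ac$ is induced by $\gamma_v\mapsto\gamma_\ac^2$ and $\gamma_\cyc\mapsto\gamma_\ac^{h_p}$, one has $\tilde\chi_1(\gamma_v)=\chi(\gamma_\ac)^2$ and $\tilde\chi_2(\gamma_\cyc)=\epsilon(\gamma_\cyc)^n$, so the finite part of $\tilde\chi_2$ is trivial and $\tilde\chi_1(\gamma_v^{h_p})=\epsilon(\gamma_+)^{2n}$; thus $\tilde\chi\in\Xi^{(II)}$ with weight parameter $m=2n$ (and $0\le n\le m$). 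A short bookkeeping check using the relations of \S\ref{Zp-ext} then shows that the algebraic Hecke character $\psi_{\tilde\chi}$ of infinity type $(n-m,n)=(-n,n)$ attached to $\tilde\chi$ in Theorem~\ref{ERLIIint-thm-ss} is precisely the anticyclotomic Hecke character $\psi_\chi$ of infinity type $(-n,n)$ occurring in Theorem~\ref{pBDPL}, and that the factor $(\chi_1(\gamma_v)-1)$ of Theorem~\ref{ERLIIint-thm-ss} becomes the image under $\phi_\ac$ of $\gamma_v-1$, i.e. $\gamma_\ac^2-1=(1+T_\ac)^2-1$ --- this is the source of the leading $T_\ac$-power on the right-hand side, exactly as in the ordinary case.

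It then remains to match the two interpolation formulae term by term. In the relevant range ($\zeta=1$, $n\equiv 0\pmod{p-1}$) the Euler factor $\CE'(\tilde\chi)$ of Theorem~\ref{ERLIIint-thm-ss} is literally $(1-a(p)\psi_\chi(\varpi_{\bar v})^{-1}p^{-1}+\psi_\chi(\varpi_{\bar v})^{-2}p^{-1})^2$, which is the Euler factor of Theorem~\ref{pBDPL}; the Gamma-factors $n!(n-1)!$ coincide; the remaining archimedean constants (powers of $\pi$, $2$, $w_L$, $D_L$) are reconciled by a direct computation using $m=2n$; and the CM-period ratio $\Omega_p^{2m}/\Omega_\infty^{2m}=\Omega_p^{4n}/\Omega_\infty^{4n}$ of Theorem~\ref{ERLIIint-thm-ss} matches that of Theorem~\ref{pBDPL} precisely because the doubling $\gamma_v\mapsto\gamma_\ac^2$ forces $m=2n$. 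Cancelling the common value $L(1,g,\psi_\chi)$ and the sign $-w_L$ yields $\phi_\chi(\phi_\ac(\sL_p^{Gr}(g/L)))=\phi_\chi(-T_\ac^2\cdot\iota_\ac(\sL_v^{BDP}(g/L)))$ for all such $\chi$, and Zariski density concludes.

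I expect the main difficulty to be bookkeeping rather than anything analytic: carrying the normalisations correctly through $\phi_\ac$ --- the relation $m=2n$ between the $\Xi^{(II)}$-weight and the anticyclotomic twist, the compatibility of the two period normalisations $\Omega_p^{2m}$ and $\Omega_p^{4n}$, the identification of $\psi_{\tilde\chi}$ with $\psi_\chi$ together with its self-duality normalisation, and the precise leading power of $T_\ac$. All of these are already implicit in the ordinary case (Proposition~\ref{GRL=BDPL}), so once Theorem~\ref{ERLIIint-thm-ss} is available the supersingular argument is essentially a transcription of it.
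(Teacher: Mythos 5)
Your approach is exactly the paper's: the paper's proof of this proposition is nothing more than the comparison of the interpolation formulae of Theorem~\ref{pBDPL} and Theorem~\ref{ERLIIint-thm-ss} (via $\phi_\chi\circ\iota_\ac=\phi_{\chi^{-1}}$ and Zariski density of the anticyclotomic specialisations), which is precisely what you spell out, including the correct identification $m=2n$, the matching of $\psi_{\tilde\chi}$ with $\psi_\chi$, and of the Euler factors.

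One concrete point you gloss over: under $\phi_\ac$ the factor $(\chi_1(\gamma_v)-1)$ in Theorem~\ref{ERLIIint-thm-ss} is the specialisation of $\gamma_v-1$, which maps to $\gamma_\ac^2-1=(1+T_\ac)^2-1=T_\ac^2+2T_\ac$, \emph{not} to $T_\ac^2$. So your computation (correctly) produces $-\bigl((1+T_\ac)^2-1\bigr)\cdot\iota_\ac(\sL_v^{BDP}(g/L))$ — the same factor that appears in the displayed formula of the ordinary Proposition~\ref{GRL=BDPL} — whereas the supersingular statement as printed asserts $-T_\ac^2$. You assert the two agree ("exactly as in the ordinary case") without reconciling them; since $(1+T_\ac)^2-1\neq T_\ac^2$, you should either flag the $-T_\ac^2$ in the statement (and in the prose of the ordinary proposition) as an inconsistency of normalisation in the paper, or explain where an extra identification converts $(1+T_\ac)^2-1$ into $T_\ac^2$. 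Note this does not affect the way the proposition is used later (e.g.\ Proposition~\ref{GRL=BDPL-II-ss} and Lemma~\ref{GRL=logheegner-ss} only use the specialisation behaviour modulo $\gamma_+-1$ and at the trivial character, where both factors vanish to order two in $T_\ac$ up to a unit is not even needed — only the reduction modulo $\gamma_+-1$ after dividing by the $T_v$-normalisation matters), but as a proof of the literal displayed identity your argument needs this discrepancy addressed.
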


Recall that if the Heegner hypothesis \eqref{Heeg} holds, then the root number 
$\epsilon(g/L)$ equals $-1$. When this root number is $+1$ we have:
\begin{prop}\label{GRLvan-prop-ss} Suppose $\epsilon(g/L)=+1$. Let $\phi_\ac$ be
as in Proposition \ref{GRL=BDPL}. Then 
$
\phi_\ac(\sL^{Gr}(g/L)) = 0.
$
\end{prop}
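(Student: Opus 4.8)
The plan is to transcribe the proof of Proposition~\ref{GRLvan-prop} from the ordinary setting, replacing the ordinary explicit reciprocity law by its supersingular analogue, Theorem~\ref{ERLIIint-thm-ss}. By Proposition~\ref{GRL=BDPL-ss}, $\phi_\ac(\sL^{Gr}(g/L))$ lies in $\Lambda_{L,\CO_\lambda^\ur}^\ac = \CO_\lambda^\ur[\![\Gamma_L^\ac]\!]$, i.e.~it is a bounded measure, so it suffices to show that it is killed by a Zariski-dense set of specialisations at characters of $\Gamma_L^\ac$.

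First I would fix the family of test characters: for every integer $n>0$ with $n\equiv 0\pmod{p-1}$, let $\chi\colon\Gamma_L^\ac\to\ov{\BQ}_p^\times$ be the character with $\chi(\gamma_\ac^{h_p})=\epsilon(\gamma_\cyc)^n$, and let $\psi_\chi$ be the associated algebraic Hecke character of $L$, of infinity type $(-n,n)$. Composing $\chi$ with the surjection $\Gamma_L^v\times\Gamma\twoheadrightarrow\Gamma_L^\ac$ defining $\phi_\ac$ yields a character lying in $\Xi^{(II)}$ (with $\zeta=1$ and $m=2n$), so Theorem~\ref{ERLIIint-thm-ss} applies and shows that $\phi_\chi(\phi_\ac(\sL^{Gr}(g/L)))$ is a scalar multiple of the central value $L(1,g,\psi_\chi)$. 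Since $\epsilon(g/L)=+1$ and $(p,ND_L)=1$, the sign of the functional equation of $L(s,g,\psi_\chi)$ is $\epsilon(1,g,\psi_\chi)=-1$ — this is exactly the input of \cite[Lem.~3.1(2)]{CST} used in the ordinary case, and the local root-number contributions at the primes dividing $Np$ do not depend on the type of reduction of $g$ at $p$. Hence $L(1,g,\psi_\chi)=0$, so $\phi_\chi(\phi_\ac(\sL^{Gr}(g/L)))=0$ for all such $\chi$.

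Finally, since the kernels of these specialisations are Zariski-dense in $\CO_\lambda^\ur[\![\Gamma_L^\ac]\!]$, we conclude $\phi_\ac(\sL^{Gr}(g/L))=0$. I do not expect any serious obstacle here: every step is a verbatim analogue of the ordinary argument, with Theorem~\ref{ERLIIint-thm-ss} in place of Theorem~\ref{ERLIIint-thm}. The only point requiring a moment's care is checking that the pullback along $\phi_\ac$ of the dense family of anticyclotomic characters above indeed lands in $\Xi^{(II)}$ and stays Zariski-dense, and that $\psi_\chi$ is of anticyclotomic type so that \cite[Lem.~3.1(2)]{CST} is applicable — all of which follow from the explicit descriptions of $\phi_\ac$ and $\Xi^{(II)}$ exactly as in the ordinary case.
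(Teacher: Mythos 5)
Your proposal is correct and is exactly the paper's argument: the paper proves Proposition~\ref{GRLvan-prop-ss} by declaring it identical to the proof of Proposition~\ref{GRLvan-prop}, i.e.\ specialising at the anticyclotomic characters with $\chi(\gamma_\ac^{h_p})=\epsilon(\gamma_\cyc)^n$, invoking Theorem~\ref{ERLIIint-thm-ss} to express the values as multiples of $L(1,g,\psi_\chi)$, using \cite[Lem.~3.1(2)]{CST} to force the sign $-1$ and hence vanishing, and concluding by Zariski density. Your extra check that the pullback along $\phi_\ac$ lands in $\Xi^{(II)}$ (with $\zeta=1$, $m=2n$) is a sensible verification that the paper leaves implicit.
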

\begin{proof}
This is identical to the proof of Proposition~\ref{GRLvan-prop}.  
\end{proof}

\subsection{$\CBF^{\circ}(g/L)$ arises from $\BT_1$} 

\begin{thm}\label{BT-thm2-ss}
The Beilinson--Flach element $\CBF^{\circ}(g/L)$ belongs to the submodule
$H^1_{\mathrm{rel},\circ}(\BQ,T(1)\hat\otimes\BT_1\hat\otimes\Lambda)$ of 
$H^1_{\mathrm{rel},\circ}(\BQ,T(1)\hat\otimes\BH_1\hat\otimes\Lambda)$.
\end{thm}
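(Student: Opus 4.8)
The plan is to mimic the proof of Theorem~\ref{BT-thm2} from the ordinary case, which in that setting rested on two inputs: Theorem~\ref{BT-thm1} (the statement \eqref{Ind-eq} for $\BT_1$, already established in Section~\ref{BFord} and hence available here) and the local triviality of $\CBF^\circ(g/L)$ away from $\BT_v$, supplied by Lemma~\ref{BF-locp-lem-ss}. As in the ordinary case, let $\widetilde\BT\subset\BI\subsetneq\widetilde\BH$ be an induced lattice as in Lemma~\ref{ind-I-lem}; since \eqref{Ind-eq} holds we are in alternative~(a) of that lemma, so $\BI=\widetilde\BT=\BT_1$ and the inclusion $\BT_1\subset\widetilde\BH$ is exactly the inclusion of the closed Tate lattice into the open one. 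First I would push $\CBF^\circ(g/L)\in H^1_{\mathrm{rel},\circ}(\BQ,T(1)\hat\otimes\BH_1\hat\otimes\Lambda)$ into $H^1(\BQ,T(1)\hat\otimes\widetilde\BH\hat\otimes\Lambda)$ via the map induced by $\BH_1\subset\widetilde\BH$; as in the ordinary case, the pseudo-null quotients $\widetilde\BH/\widetilde\BT$, $\BI/\widetilde\BT$, $\widetilde\BH/\BI$ are $T_v$-torsion quotients of $\BZ_p\oplus\BZ_p(\chi_L)$, and since $H^0(\BQ,T(1)\hat\otimes\Lambda)=0=H^0(\BQ,T(1)\hat\otimes\Lambda(\chi_L))$, the comparison maps on $H^1$ are injective, so these modules embed into one another.

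Next I would run the analogue of Proposition~\ref{BF-I-prop}. The key is to reduce to a one-variable statement over $L$. Using $\BI=\BT_1=\BT^+\oplus c\cdot\BT^+$ (from \eqref{Ind-eq}) and Shapiro's lemma, $H^1_{\mathrm{rel},\circ}(\BQ,T(1)\hat\otimes\BT_1\hat\otimes\Lambda)$ is identified with $H^1_{\mathrm{rel},\circ}(L,T(1)\hat\otimes\BT^+\hat\otimes\Lambda)$, and after the fixed trivialisation $\BT^+\isoarrow\Lambda_L^v(\Psi_L^v)$ arising from $\omega_{\bh_v}$, reducing modulo $\gamma_v-1$ lands in $H^1_{\mathrm{rel},\circ}(L,T(1)\otimes_{\BZ_p}\Lambda)\otimes_{\BZ_p}\BQ_p$. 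On the closed side there is no $T_v$-torsion issue to navigate (since \eqref{Ind-eq} gives $\BT^+=\BT_v$, i.e.\ $C_v$ is a unit), so the analogue of the maps $(*)$, $(**)$ in diagram~\eqref{H1-L-eq3} are identities rather than zero maps. Then I would apply the signed Coleman map $Col^\circ_{\omega,v}\circ\loc_v$ and use Lemma~\ref{BF-locp-lem-ss}(ii) — which says $\loc_p(\CBF^\circ(g/L))$ has image in $H^1_{/\circ}(\BQ_p,T(1)\hat\otimes\BH^+\hat\otimes\Lambda)$, so that after the appropriate splitting the relevant local projection vanishes exactly as in the ordinary argument, giving $Col^\circ_{\omega,v}(\loc_v(\overline{\CBF}^\circ))=0$. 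By Theorem~\ref{cycIwL-Box} the module $H^1_{\mathrm{rel},\circ}(L,T(1)\otimes_{\BZ_p}\Lambda)\otimes_{\BZ_p}\BQ_p$ is free of rank one over $\Lambda_{\cO_\lambda}\otimes_{\BZ_p}\BQ_p$, and by the proof of Theorem~\ref{cycIwL-Box-II} the composite $Col^\circ_{\omega,v}\circ\loc_v$ is injective on it (it is non-zero, being non-zero on $\bz^\circ_{\omega,\gamma,\gamma'}(g_{/L})$); hence $\overline{\CBF}^\circ=0$, which forces $\CBF^\circ(g/L)\in H^1_{\mathrm{rel},\circ}(\BQ,T(1)\hat\otimes\BT_1\hat\otimes\Lambda)$, as the ambient $H^1(\BQ,T(1)\hat\otimes\BT_1\hat\otimes\Lambda)$ is $T_v$-torsion free.

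Compared with the ordinary case, this argument is actually simpler: there is no need for the auxiliary-newform contradiction of Proposition~\ref{aux-g-prop}, because Theorem~\ref{BT-thm1} (equivalently \eqref{Ind-eq}) is already in hand. The main technical point to be careful about is the interplay between the signed Coleman maps $\sC_\circ$ (built from the linear combinations in Propositions~\ref{Col-pm} and~\ref{Col-pm'}) and the plus/minus decomposition of the Beilinson--Flach elements from Theorem~\ref{BF-pm}: one must verify that the factorisation $(\sC_\alpha,\sC_\beta)^{\mathsf t}=M(\sC_-,\sC_+)^{\mathsf t}$ is compatible with the cyclotomic specialisation $\mod(\gamma_v-1)$ in the way required to quote Lemma~\ref{Col-cycsp-prop-ss}, so that the reduction of $\sC_\circ$ really is the signed Coleman map $Col^\circ_\omega$ up to the explicit constant $J_g$. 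This is where I would spend the bulk of the effort; once that bookkeeping is in place, the diagram chase and the rank-one-freeness input close the argument exactly as in the proof of Theorem~\ref{BT-thm2}.
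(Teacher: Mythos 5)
Your plan is, in substance, the paper's own proof: the paper establishes Theorem~\ref{BT-thm2-ss} precisely by transposing the first half of the argument for Proposition~\ref{BF-I-prop} to the signed setting, using \eqref{Ind-eq} (already in hand), Lemma~\ref{BF-locp-lem-ss}, Shapiro's lemma for $\BT_1=\BT^+\oplus c\cdot\BT^+$, reduction modulo $\gamma_v-1$ as in \eqref{H1-L-eq2-ss}, and the injectivity of $Col^\circ_{\omega,v}\circ\loc_v$ on the rank-one free module $H^1_{\mathrm{rel},\circ}(L,T(1)\hat\otimes\Lambda)$ supplied by Theorems~\ref{cycIwL-Box} and~\ref{cycIwL-Box-II}. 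One point in your write-up needs to be made precise, because as literally stated it would fail: you must first pass to $\CBF'=T_v\cdot\CBF^\circ$, which lies in $H^1_{\mathrm{rel},\circ}(\BQ,T(1)\hat\otimes\widetilde\BT\hat\otimes\Lambda)$ since $T_v$ annihilates $\widetilde\BH/\widetilde\BT$, and it is the reduction of $\CBF'$ that you feed into the signed Coleman map. The vanishing $Col^\circ_{\omega,v}(\loc_v(\ov{\CBF}'))=0$ is then forced by the $T_v$-factor dying under the mod-$(\gamma_v-1)$ reduction of the $\BT^+$-coefficients in the diagram \eqref{H1-L-eq3-ss}; Lemma~\ref{BF-locp-lem-ss}(ii) only serves to place the local class in (the image of) the $\BH^+=\BT^+$ part so that this reduction is the relevant one. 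Attributing the vanishing to that lemma ``after the appropriate splitting'' is not correct on its own: since the maps $(*)$, $(**)$ are identities here, the Coleman value of the unmultiplied class is essentially the cyclotomic specialisation of $\sL_p^{\circ}(g/L)$, which is nonzero, so without the $T_v$-factor nothing vanishes, and without multiplying by $T_v$ the reduction with $\BT_1$-coefficients is not even defined. With $\CBF'$ in place, your concluding step (divide by $T_v$ using that $H^1(\BQ,T(1)\hat\otimes\BT_1\hat\otimes\Lambda)$ has no $T_v$-torsion) is exactly the paper's.

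Your final paragraph misallocates the effort: the compatibility of the two-variable signed maps $\sC_\circ$ with the cyclotomic specialisation (Lemma~\ref{Col-cycsp-prop-ss}) is not needed anywhere in this proof. The argument only ever invokes the one-variable signed Coleman map at $v$, and the key vanishing comes from the $T_v$-factor rather than from any reciprocity law; the paper even remarks after the proof that it does not rely on the second reciprocity law. That bookkeeping is needed elsewhere (e.g.\ for Proposition~\ref{2varZ-prop-ss} and the comparison with the Beilinson--Kato element), but not for Theorem~\ref{BT-thm2-ss}.
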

\begin{proof}

By Proposition \ref{Sat-prop3}(iii),(iv), the $\Lambda_L^v = \BZ_p[\![T_v]\!]$-module $\widetilde\BH/\widetilde\BT$ is 
annihilated by $T_v$. It follows that as a $\BZ_p[G_\BQ]$-modules $\widetilde\BH/\widetilde\BT$
is a quotient of $\BZ_p\oplus\BZ_p(\chi_L)$.
Consequently, as $H^0(\BQ,T(1)\hat\otimes\Lambda) = 0 = H^0(\BQ,T(1)\hat\otimes\Lambda(\chi_L)),$
the inclusion $\widetilde\BT \subset \widetilde\BH$ induce an inclusion
$$H^1(\BQ,T(1)\hat\otimes\widetilde\BT\hat\otimes\Lambda) \hookrightarrow
H^1(\BQ,T(1)\hat\otimes\widetilde\BH\hat\otimes\Lambda),$$
by which we view the former as a submodule of the latter.

Let $\CBF^{\circ}\in H^1(\BQ,T(1)\hat\otimes\widetilde\BH\hat\otimes\Lambda)$ be the image of $\CBF^{\circ}(g/L)$ under the map induced by the inclusion
$\BH_1\subset \widetilde\BH$. 
Put $$H^1_{/\circ}(\BQ_p,T(1)\hat\otimes(\widetilde\BH/\widetilde\BH_v)\hat\otimes\Lambda)
= \frac{H^1(\BQ_p,T(1)\hat\otimes(\widetilde\BH/\widetilde\BH_v)\hat\otimes\Lambda)}
{H^1_{\circ}(\BQ_p,T(1)\hat\otimes \BH_{1}^{-}\hat\otimes\Lambda)},
$$
where we utilise the fact that $H^1(\BQ_p,T(1)\hat\otimes \BH_{1}^{-}\hat\otimes\Lambda)$ 
injects into $H^1(\BQ_p,T(1)\hat\otimes(\widetilde\BH/\widetilde\BH_v)\hat\otimes\Lambda)$. 
Then the image of $\loc_p(\CBF^{\circ})$ in $H^1_{/\circ}(\BQ_p,T(1)\hat\otimes(\widetilde\BH/\widetilde\BH_v)\hat\otimes\Lambda)$
is the image of $\loc_p(\CBF^\circ(g/L))$ under the induced map $H^1_{/\circ}(\BQ_p,T(1)\hat\otimes \BH_1^-\hat\otimes\Lambda)
\rightarrow H^1_{/\circ}(\BQ_p,T(1)\hat\otimes(\widetilde\BH/\widetilde\BH_v)\hat\otimes\Lambda).$  
So it follows from Lemma \ref{BF-locp-lem-ss} that 
$$
\CBF^{\circ}\in H^1_{\mathrm{rel},\circ}(\BQ,T(1)\hat\otimes\widetilde\BH\hat\otimes\Lambda)
= \{ c\in H^1(\BQ,T(1)\hat\otimes\widetilde\BH\hat\otimes\Lambda) \ : \ \loc_p(c) = 0 \in H^1_{/\circ}(\BQ_p,T(1)\hat\otimes(\widetilde\BH/\widetilde\BH_v)\hat\otimes\Lambda)\}.
$$

Since $T_v$ annihilates $\widetilde\BH/\widetilde{\BT}$, we have $$\CBF' := T_v \cdot \CBF^{\circ}\in H^1(\BQ,T(1)\hat\otimes\widetilde{\BT}\hat\otimes\Lambda).$$
As $G_{\BQ_p}$ acts trivially on $\widetilde\BH/\widetilde{\BT}$, note that
$H^1_{/\circ}(\BQ_p,T\hat\otimes\widetilde{\BT}/\BT^{+}\hat\otimes\Lambda)\hookrightarrow H^1_{/\circ}(\BQ_p,T\hat\otimes(\widetilde\BH/\widetilde\BH_v)\hat\otimes\Lambda)$. 
Consequently, $\CBF'$ belongs to the submodule $H^1_{\mathrm{rel},\circ}(\BQ,T(1)\hat\otimes\widetilde{\BT}\hat\otimes\Lambda)$.

We have $H^1(\BQ, T(1)\hat\otimes\widetilde{\BT}\hat\otimes\Lambda) = H^1(L,T(1)\hat\otimes\BT^+\hat\otimes\Lambda)$ by \eqref{Ind-eq} (cf.~Theorem \ref{BT-thm1}). 
This induces 
an identification
$$H^1_{\mathrm{rel},\circ}(\BQ,T(1)\hat\otimes\BT^+\hat\otimes\Lambda) = H^1_{\mathrm{rel},\circ}(L,T(1)\hat\otimes\BT^+\hat\otimes\Lambda)$$
where 
$H^1_{\mathrm{rel},\circ}(L,T(1)\hat\otimes\BT^+\hat\otimes\Lambda) = \{ c\in H^1_{\mathrm{rel},\circ}(L,T(1)\hat\otimes\BT^+\hat\otimes\Lambda) \ : \
\loc_p(c) = 0 \in H^1_{/\circ}(L_{\bar v}, T(1)\hat\otimes\BT^+\hat\otimes\Lambda)\}$. 
In turn, we have 
an injection
\begin{equation}\label{H1-L-eq2-ss}
H^1_{\mathrm{rel},\circ}(\BQ,T(1)\hat\otimes\BT_{1}\hat\otimes\Lambda)/T_v H^1_{\mathrm{rel},\circ}(\BQ,T(1)\hat\otimes\BT_{1}\hat\otimes\Lambda) 
\hookrightarrow H^1_{\mathrm{rel},\circ}(L,T(1)\hat\otimes\Lambda)
\end{equation}
after choosing an isomormophism $\BT^+\simeq \Lambda_L^v$ (and hence an isomorphism $\BT^+/T_v\BT^+\simeq\BZ_p$).
Let $\ov{\CBF}'\in H^1_{\mathrm{rel},\circ}(L,T(1)\hat\otimes\Lambda)$ denote the image of $\CBF'$ under \eqref{H1-L-eq2-ss}. 

We now consider the image of $\loc_{v}(\ov{\CBF}')$ under the Coleman map $Col_{\eta_\omega,v}^{\circ}$.
Since $\BH^+ = \BT^+$, we have a commutative diagram
\begin{equation}\label{H1-L-eq3-ss}
\begin{tikzcd}[row sep=2.5em]
H^1_{\mathrm{rel},\circ}(\BQ,T(1)\hat\otimes\BT^{+}\hat\otimes\Lambda) \arrow[r,"\eqref{H1-L-eq2-ss}"]  \arrow[d,"\loc_p"] & H^1_{\mathrm{rel},\circ}(L,T(1)\hat\otimes\Lambda) \arrow[d,"\loc_v"] & \\
 H^1_{/\circ}(\BQ_p,T(1)\hat\otimes\BT^{+}\hat\otimes\Lambda) \arrow[r,"/T_v"] & H^1_{/\circ}(L_v,T(1)\hat\otimes\Lambda) \arrow[r,"Col_{\eta_\omega,v}^{\circ}"] & 
 \Lambda_{\CO_\lambda}\otimes_{\BZ_p}\BQ_p \\
 H^1_{/\circ}(\BQ_p,T(1)\hat\otimes\BH^+\hat\otimes\Lambda) \arrow[r,"/T_v"] \arrow[u,]  & H^1_{/\circ}(\BQ_p,T(1)\hat\otimes\Lambda) \arrow[r,"Col_{\eta_\omega}^{\circ}"]  \arrow[u,"(*)"] & 
 \Lambda_{\CO_\lambda}\otimes_{\BZ_p}\BQ_p \arrow[u,"(**)"] . 
\end{tikzcd}
\end{equation}
Here we have fixed the isomorphism $\BT^+\simeq\Lambda_L^v$ to restrict to the isomorphism $\BT^+=\BH^+\simeq\Lambda_L^v$ determined
by $\omega_{\bh_v}$ as in the proof of Proposition \ref{Col-cycsp-prop}. This in turn induces an isomorphism $\BT^+/T_v\BT^+\simeq \BZ_p$,
which we take as the isomorphism leading to \eqref{H1-L-eq2-ss}. The lower veritical arrows in \eqref{H1-L-eq3-ss} are induced from the equality
$\BT^+=\BH^+ $. In particular, $(*)$ and $(**)$ are both the identity map.
Since $\loc_p(\CBF') \in H^1_{/\circ}(\BQ_p,T(1)\hat\otimes\widetilde{\BT}\hat\otimes\Lambda)$ is the image of 
$T_v\cdot\loc_p(\CBF^{\circ}(g/L)) \in  H^1_{/\circ}(\BQ_p,T(1)\hat\otimes\BH^+\hat\otimes\Lambda)$, it follows from the commutativity of \eqref{H1-L-eq3-ss} that
$Col_{\eta_\omega,v}^{\circ}(\loc_{v}(\ov{\CBF}')) = 0$.  

By Theorem \ref{cycIwL-Box}, $H^1_{\mathrm{rel},\circ}(L,T(1)\hat\otimes\Lambda)$ is a free $\Lambda_{\CO_\lambda}$-module  of rank one.
As $Col_{\eta_\omega,v}^{\circ}\circ\loc_v$ is non-zero on
$H^1_{\mathrm{rel},\circ}(L,T(1)\hat\otimes\Lambda)$ (see the proof of Theorem \ref{cycIwL-Box-II}), we have an injection
$$
Col_{\eta_\omega,v}^{\circ}\circ\loc_v : H^1_{\mathrm{rel},\circ}(L,T(1)\hat\otimes\Lambda)  \hookrightarrow \Lambda_{\CO_\lambda}.$$

Since $Col_{\eta_\omega,v}^{\circ}(\loc_{v}(\ov{\CBF}')) = 0$, it then follows that $\ov{\CBF}'=0$ and hence that 
$\CBF'\in T_v H^1_{\mathrm{rel},\circ}(\BQ,T(1)\hat\otimes\widetilde{\BT}\hat\otimes\Lambda)$ by \eqref{H1-L-eq2-ss}. Recall that $\widetilde{\BT}=\BT_1$ by \eqref{Ind-eq}.
As $H^1(\BQ,T(1)\hat\otimes\BT_{1}\hat\otimes\Lambda)$ has no $T_v$-torsion\footnote{since $H^0(\BQ,T(1)\hat\otimes(\BZ_p\oplus\BZ_p(\chi_L))\hat\otimes\Lambda)=0$},
it follows that $\CBF \in H^1_{\mathrm{rel},\circ}(\BQ,T(1)\hat\otimes\BT_{1}\hat\otimes\Lambda)$, as claimed.
\end{proof}
\begin{remark}
The proof does not rely on the second reciprocity law. 
\end{remark}
\begin{cor} For $\gamma\in\{\alpha,\beta\}$, we have 
$$
_{c}\mathcal{BF}^{\gamma}(g_{/L}) \in H^1(\BZ[\frac{1}{p}], T(1)\hat\otimes  \BT_1\hat\otimes\mathfrak{K}_{1,F_\lambda(\gamma)}).
$$
\end{cor}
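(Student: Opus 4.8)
The plan is to deduce this corollary formally from Theorem~\ref{BT-thm2-ss} together with the defining relation of Theorem~\ref{BF-pm}; no input beyond these is required. First I would record that the sign-decomposed classes already lie in the $\BT_1$-valued Iwasawa cohomology. Indeed, $\mathcal{BF}^{\circ}(g/L)=\mathbf{c}^{-1}\cdot{}_{c}\mathcal{BF}^{\circ}(g/L)$, so ${}_{c}\mathcal{BF}^{\circ}(g/L)=\mathbf{c}\cdot\mathcal{BF}^{\circ}(g/L)$ with $\mathbf{c}\in\sR$; since $H^1_{\mathrm{rel},\circ}(\BQ,T(1)\hat\otimes\BT_1\hat\otimes\Lambda)$ is an $\sR$-module and contains $\mathcal{BF}^{\circ}(g/L)$ by Theorem~\ref{BT-thm2-ss}, we obtain ${}_{c}\mathcal{BF}^{\pm}(g/L)\in H^1(\BZ[\frac{1}{p}],T(1)\hat\otimes\BT_1\hat\otimes\Lambda)$.

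Next, Theorem~\ref{BF-pm} gives, for $\gamma\in\{\alpha,\beta\}$,
$$
{}_{c}\mathcal{BF}^{\gamma}(g/L)=\log_p^{+}(1+T)\cdot{}_{c}\mathcal{BF}^{-}(g/L)+\gamma\,\log_p^{-}(1+T)\cdot{}_{c}\mathcal{BF}^{+}(g/L),
$$
an identity in $H^1(\BZ[\frac{1}{p}],T(1)\hat\otimes\BH_1\hat\otimes\mathfrak{K}_{1,F_\lambda(\gamma)})$ whose coefficients $\log_p^{\pm}(1+T)$ lie in $\mathfrak{K}_{1,\BQ_p}\subseteq\mathfrak{K}_{1,F_\lambda(\gamma)}$. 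Combining with the previous step exhibits ${}_{c}\mathcal{BF}^{\gamma}(g/L)$ as lying in the image of the natural base-change map
$$
H^1(\BZ[\tfrac{1}{p}],T(1)\hat\otimes\BT_1\hat\otimes\Lambda)\otimes_{\Lambda}\mathfrak{K}_{1,F_\lambda(\gamma)}\longrightarrow H^1(\BZ[\tfrac{1}{p}],T(1)\hat\otimes\BT_1\hat\otimes\mathfrak{K}_{1,F_\lambda(\gamma)}).
$$

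Finally, to identify the target with a submodule of the $\BH_1$-version I would argue exactly as in the proof of Theorem~\ref{BT-thm2-ss}: $\BH_1/\BT_1$ is annihilated by $T_v$ by Proposition~\ref{Sat-prop3}, hence is a quotient of $\BZ_p\oplus\BZ_p(\chi_L)$ as a $\BZ_p[G_\BQ]$-module, and since $H^0(\BQ,T(1)\hat\otimes\mathfrak{K}_{1,F_\lambda(\gamma)})=0=H^0(\BQ,T(1)\hat\otimes\mathfrak{K}_{1,F_\lambda(\gamma)}(\chi_L))$ we get $H^0(\BQ,T(1)\hat\otimes(\BH_1/\BT_1)\hat\otimes\mathfrak{K}_{1,F_\lambda(\gamma)})=0$; therefore $\BT_1\hookrightarrow\BH_1$ induces an injection on the relevant $H^1$'s. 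As the displayed relation holds in the $\BH_1$-version, this forces ${}_{c}\mathcal{BF}^{\gamma}(g/L)$ into the image of the $\BT_1$-valued cohomology, which is the assertion. The argument has no genuine obstacle — its entire content is carried by Theorem~\ref{BT-thm2-ss} — and the only mild care needed is the routine bookkeeping with the unbounded coefficient ring $\mathfrak{K}_{1,F_\lambda(\gamma)}$: the existence and compatibility of the base-change maps for $\BT_1$- and $\BH_1$-coefficients, and the vanishing of the pertinent $H^0$'s over this ring.
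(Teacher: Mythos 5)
Your argument is correct and is exactly the paper's (one-line) proof made explicit: the paper deduces the corollary from the matrix relation of Theorem~\ref{BF-pm} together with Theorem~\ref{BT-thm2-ss}, which is precisely your combination of the $\mathfrak{K}_{1,F_\lambda(\gamma)}$-linear relation with the $\BT_1$-integrality of the plus/minus classes. The extra bookkeeping you supply (the unit $\mathbf{c}\in\sR$, the base change $\Lambda\to\mathfrak{K}_{1,F_\lambda(\gamma)}$, and the $H^0$-vanishing giving injectivity of the $\BT_1$- into the $\BH_1$-valued cohomology) is the routine content the paper leaves implicit.
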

\begin{proof}
This follows from Theorem \ref{BF-pm}.
\end{proof}

\subsection{Two-variable zeta element}\label{two-variable-zeta-ss}
In light of Theorem \ref{BT-thm2-ss} and \eqref{Ind-eq}, the Beilinson--Flach element $\mathcal{BF}^\circ(g/L)$ leads to a
two-variable zeta element for $g$, as explained below.

\subsubsection{The zeta element $\CZ^{\circ}(g/L)$}
By \eqref{Ind-eq}, $\BT_1$ is identified with the induction from $G_L$ to $G_\BQ$ of the $\Lambda_L^v[G_L]$-module
$\BT^+$. Recall that $\BT^+$ is a free $\Lambda_L^v$-module of rank one on which $G_L$ acts via the canonical character $\Psi_L^v$ and, furthermore,
we have a preferred $\Lambda_L^v$-isomorphism $\BT^+\isoarrow \Lambda_L^v(\Psi_L^v)$ arising from the map $\omega_{\bh_v}$. The latter determines via 
Shapiro's Lemma identifications
$$
H^1(\BQ,T(1)\hat\otimes\BT_1\hat\otimes\Lambda) \isoarrow H^1(L,T(1)\hat\otimes\BT^+\hat\otimes\Lambda) \isoarrow H^1(L,T(1)\hat\otimes\Lambda_L^v(\Psi_L^v)\hat\otimes\Lambda)
$$
of $\sR$-modules.
Similarly, $H^1(\BQ_p,T(1)\hat\otimes\BT^+\hat\otimes\Lambda)$ is identified with $H^1(L_v,T(1)\hat\otimes\Lambda_L^v(\Psi_L^v)\hat\otimes\Lambda)$
and $H^1(\BQ_p,T(1)\hat\otimes(c\cdot \BT^+)\hat\otimes\Lambda)$ is identified with $H^1(L_{\bar v},T(1)\hat\otimes\Lambda_L^v(\Psi_L^v)\hat\otimes\Lambda)$,
and also with $H^1$ replaced by $H^1_\circ$.
It follows that we have an identification
$$
H^1_{\rel,\circ}(\BQ,T(1)\hat\otimes\BT_1\hat\otimes\Lambda) \isoarrow H^1_{\rel,\circ}(L,T(1)\hat\otimes\Lambda_L^v(\Psi_L^v)\hat\otimes\Lambda)
$$
where the subscripts `$\rel$' and `$\circ$' on the right-hand side denote the submodule of classes 
$c$ such that no condition is imposed on $\loc_v(c)$ but we require $\loc_{\bar v}(c)\in H^1_{\circ}(L_{\bar v},T(1)\hat\otimes\Lambda_L^v(\Psi_L^v)\hat\otimes\Lambda)$.  We may thus view 
$\CBF^{\circ}(g/L)$ as an element of $H^1_{\rel,\circ}(L,T(1)\hat\otimes\Lambda_L^v(\Psi_L^v)\hat\otimes\Lambda)$.

We now consider the composition of isomorphism 
\begin{equation}\label{Lgal-twist-eq-ii}
\Gamma_L \isoarrow \Gamma_L^v\times\Gamma \stackrel{\gamma_v\mapsto\gamma_v^{-1}}{\isoarrow} \Gamma_L^v\times\Gamma,
\end{equation}
this determines an isomorphism $\Lambda_L \isoarrow \Lambda_L^v\hat\otimes\Lambda$ 
and hence also isomorphism $\theta:\Lambda_{L,\CO_\lambda}\isoarrow \sR$ and 
$\theta^\ur:\Lambda_{L,\CO_\lambda^\ur}\isoarrow\sR^\ur$.  
The isomorphism \eqref{Lgal-twist-eq-ii} also induces 
$
T(1)\hat\otimes \Lambda_L(\Psi_L^{-1}) \isoarrow T(1)\hat\otimes \Lambda_L^v(\Psi_L^v)\hat\otimes\Lambda(\Psi^{-1}),
$
which is compatible with $\theta$. We thus obtain an identification
$
H^1_{\rel,\circ}(L,T(1)\hat\otimes\Lambda_L) \simeq H^1_{\rel,\circ}(L,T(1)\hat\otimes \Lambda_L^v(\Psi_L^v)\hat\otimes\Lambda),
$
that is compatible with $\theta$, where $G_L$ acts on $\Lambda_L$ in the left-hand side via the inverse of the canonical character, and 
the subscripts `$\rel$' and `$\circ$' on the left-hand side denote the submodule of classes $c\in H^1_{\rel,\circ}(L,T(1)\hat\otimes\Lambda_L)$ such that $\loc_{\bar v}(c)\in H^1_{\circ}(L_{\bar v},T(1)\hat\otimes\Lambda_L)$. 

We let 
$$
\CZ^{\circ}(g/L) \in H^1_{\rel,\circ}(L,T(1)\hat\otimes\Lambda_L)
$$
be identified with $\CBF^\circ(g/L)$ under the preceding isomorphism.
This is the two-variable zeta element associated with $g$ and $L$ in the supersingular case.

We let $\CC^{\circ}_v: H^1_{/\circ}(L_v,T(1)\hat\otimes\Lambda_L) \hookrightarrow \Lambda_{L,\CO_\lambda}$
be the composition
$$
\CC_v^{\circ}: H^1_{/\circ}(L_v,T(1)\hat\otimes\Lambda_L)
\simeq H^1_{/\circ}(\BQ_p,T(1)\hat\otimes\BT^+\hat\otimes\Lambda)
\stackrel{\sC^\mathrm{int}_{\circ}}{\hookrightarrow} \sR \stackrel{\theta^{-1}}{\isoarrow}\Lambda_{L,\CO_\lambda}.
$$
This is a $\Lambda_{L,\CO_\lambda}$-injection.  Let
$$
\CL_v^{\circ}(g/L) = \CC_v^{\circ}(\loc_v(\CZ^{\circ}(g/L))) \in \Lambda_{L,\CO_\lambda}.
$$
Note that $\CL_v^{\circ}(g/L) = \theta^{-1}(\sL_p^{\circ}(g/L))$.

We similarly let $\CL_{\bar v}^{\circ}: H^1_{\circ}(L_{\bar v},T(1)\hat\otimes\Lambda_L)\hookrightarrow \Lambda_{L,\CO_\lambda^\ur}$ be the composition
$$
\CL_{\bar v}^{\circ}: H^1_{\circ}(L_{\bar v},T(1)\hat\otimes\Lambda_L) \simeq H^1_{\circ}(\BQ_p,T(1)\hat\otimes\BT_1^-\hat\otimes\Lambda) \stackrel{\frac{1}{T_v}\sL^\mathrm{int}_{\circ}}{\hookrightarrow}\sR^\ur \stackrel{\theta^{\ur,-1}}{\isoarrow}\Lambda_{L,\CO_\lambda^\ur}.
$$
Note that the inclusion $\BT_1 = \widetilde\BT \subset \widetilde\BH$ induces
an isomorphism $\BT_1^- = T_v\widetilde\BH^-$, and so $\sL^\mathrm{int}_\circ$ maps
$H^1_\circ(\BQ_p,T(1)\hat\otimes\BT_1^-\hat\otimes\Lambda)$ into $T_v\sR^\ur$. In particular,
the middle arrow of the composition defining $\CL_{\bar v}^\circ$ is well-defined.
We also let 
$$
\CL_p^{Gr}(g/L) = \CL_{\bar v}^{\circ}(\loc_{\bar v}(\CZ^{\circ}(g/L))) \in \Lambda_{L,\CO_\lambda^\ur}.
$$
So $\CL_p^{Gr}(g/L) = \theta^{\ur,-1}(\sL_p^{Gr}(g/L))$.

\subsubsection{Connections with cyclotomic $L$-functions and cyclotomic zeta elements}
From Propositions \ref{Col-cycsp-prop-ss} and \ref{ERLIint-prop-ss} we immediately conclude:
\begin{prop}\label{2varZ-prop-ss} \hfill
\begin{itemize}
\item[(i)] The reduction of $\CC_v^{\circ}$ modulo $\gamma_\ac-1$ equals the composition
$$
H^1_{/\circ}(L_v,T(1)\hat\otimes\Lambda_L)\stackrel{\mod (\gamma_v-1)}{\twoheadrightarrow}
H^1_{/\circ}(L_v,T(1)\hat\otimes\Lambda)\stackrel{Col_{\eta_{\omega},v}}{\hookrightarrow}
\Lambda_{\CO_\lambda}.
$$
\item[(ii)] For $0\neq \omega\in S_F$, $\gamma\in V_{F,g}$, $\gamma'\in V_{F,g'}$, 
and $c^{\circ}(\omega,\gamma,\gamma')\in F^\times$ as in 
Proposition \ref{ERLIint-prop-ss},
$$
\CL_v^{\circ}(g/L) \mod (\gamma_\ac-1) = c^{\circ}(\omega,\gamma,\gamma')\mathfrak{g}(\chi_L)^{-1}
\CL_{\omega,\gamma,\gamma'}^{\circ}(g/L) \in  \Lambda_{\CO_\lambda} = \Lambda_{L,\CO_\lambda}/(\gamma_\ac-1)\Lambda_{L,\CO_\lambda}.
$$
\end{itemize}
\end{prop}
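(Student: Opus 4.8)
The plan is to deduce Proposition~\ref{2varZ-prop-ss} directly from the signed explicit reciprocity law in the form of Proposition~\ref{ERLIint-prop-ss} together with the comparison of $\sC_\circ$ with the signed Coleman map in Proposition~\ref{Col-cycsp-prop-ss}, mirroring the way Proposition~\ref{2varZ-prop} was obtained from Propositions~\ref{Col-cycsp-prop} and~\ref{ERLIint-prop} in the ordinary case. The only real work is to keep the various Iwasawa-algebra identifications straight, so I would begin by isolating that bookkeeping. By construction (\S\ref{two-variable-zeta-ss}) the isomorphism $\theta\colon\Lambda_{L,\CO_\lambda}\isoarrow\sR$ is induced by the composite \eqref{Lgal-twist-eq-ii}, i.e.\ $\Gamma_L\isoarrow\Gamma_L^v\times\Gamma$ followed by $\gamma_v\mapsto\gamma_v^{-1}$. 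With the compatible choices of topological generators from \S\ref{Zp-ext} and the power-series coordinates of \S\ref{MoreIwasawa}, the anticyclotomic generator $\gamma_\ac=\gamma_-$ corresponds under $\Gamma_L\isoarrow\Gamma_L^v\times\Gamma$ to $\gamma_v^{1/2}$, hence under $\theta$ to $\gamma_v^{-1/2}$; since $\gamma_v^{-1/2}+1$ is a unit in $\sR$ (as $p$ is odd), $\theta$ carries the ideal $(\gamma_\ac-1)\Lambda_{L,\CO_\lambda}$ onto $(\gamma_v-1)\sR$, and the induced map on quotients is the canonical identification $\Lambda_{L,\CO_\lambda}/(\gamma_\ac-1)\cong\Lambda_{\CO_\lambda}\cong\sR/(\gamma_v-1)\sR$ coming from $\Gamma_L^\cyc\isoarrow\Gamma$. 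This is precisely the sense in which the ``reduction modulo $\gamma_\ac-1$'' of the statement is to be read, and it is what allows one to interchange it with the ``$\bmod(\gamma_v-1)$'' that appears on the arrows of the displayed composites.

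Granting this, part (i) should follow by unwinding the definition of $\CC_v^\circ$: it is $\theta^{-1}\circ\sC^{\mathrm{int}}_\circ$ precomposed with the Shapiro identification $H^1_{/\circ}(L_v,T(1)\hat\otimes\Lambda_L)\isoarrow H^1_{/\circ}(\BQ_p,T(1)\hat\otimes\BT^+\hat\otimes\Lambda)$, where one invokes $\BT^+=\BH^+$ (combining \eqref{subrk} with \eqref{Ind-eq}, the latter being Theorem~\ref{BT-thm1}). Reducing $\sC^{\mathrm{int}}_\circ=c_g\cdot\sC_\circ$ modulo $\gamma_v-1$, Proposition~\ref{Col-cycsp-prop-ss} identifies $\sC_\circ\bmod(\gamma_v-1)$ with the composite of the projection $H^1_{/\circ}(\BQ_p,T(1)\hat\otimes\BH^+\hat\otimes\Lambda)\twoheadrightarrow H^1_{/\circ}(\BQ_p,T(1)\hat\otimes\Lambda)$ (via $\BH^+/(\gamma_v-1)\BH^+\cong\BZ_p$) with the signed Coleman map $Col^\circ_\omega$ landing in $J_g\otimes_{\CO_\lambda}\Lambda$. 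Since $J_g=\tfrac1{\lambda_N(g)c_g}\CO$ with $\lambda_N(g)=\pm1$, multiplying by $c_g$, applying the quotient identification above, and using that the localisation map at $w=v$ is the identity (\S\ref{Col-L}) turns this into exactly the composite displayed in (i); any residual sign $\lambda_N(g)^{-1}$ is absorbed into the chosen normalisation of the Coleman map.

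Part (ii) is then immediate: by definition $\CL_v^\circ(g/L)=\CC_v^\circ(\loc_v(\CZ^\circ(g/L)))=\theta^{-1}(\sL_p^\circ(g/L))$, so reducing modulo $\gamma_\ac-1$ and passing through $\sR/(\gamma_v-1)\sR\cong\Lambda_{\CO_\lambda}$ from the first step turns the left side into $\sL_p^\circ(g/L)\bmod(\gamma_v-1)$, which by Proposition~\ref{ERLIint-prop-ss} equals $c^\circ(\omega,\gamma,\gamma')\,\mathfrak{g}(\chi_L)^{-1}\,\CL^\circ_{\omega,\gamma,\gamma'}(g/L)$; the constant $c^\circ(\omega,\gamma,\gamma')\in F^\times$ and its integrality under the hypotheses of (ii) are inherited verbatim from that proposition. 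I do not expect a genuine analytic or geometric obstacle here --- the statement was announced as an immediate consequence of Propositions~\ref{Col-cycsp-prop-ss} and~\ref{ERLIint-prop-ss} --- and the one point that demands care (and the natural place for a sign or a normalisation to go wrong) is the consistent handling of the two product decompositions of $\Gamma_L$, the twist $\gamma_v\mapsto\gamma_v^{-1}$ built into $\theta$, and the matching of the integral normalisations $c_g$, $\lambda_N(g)$, $J_g$ between the cyclotomic and the two-variable settings; that is exactly what the first step above pins down.
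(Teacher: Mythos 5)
Your proposal is correct and follows essentially the same route as the paper, which deduces Proposition \ref{2varZ-prop-ss} immediately from Propositions \ref{Col-cycsp-prop-ss} and \ref{ERLIint-prop-ss}, exactly as in the ordinary case. Your additional bookkeeping (the image of $\gamma_\ac$ under $\theta$, the identification of $(\gamma_\ac-1)$ with $(\gamma_v-1)$, and the matching of the integral normalisations) just makes explicit what the paper leaves implicit.
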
 

We exploit this proposition to prove that the Beilinson--Kato element $\bz^\circ_{\omega,\gamma,\gamma'}(g_{/L})$
is essentially the cyclotomic specialisation of the two-variable zeta element $\CZ^{\circ}(g/L)$.
\begin{thm}\label{2varZ-1varZ-thm-ss} Let $g' = g\otimes\chi_L$. Let $0\neq \omega\in S_{F,g}$, and let $\gamma\in V_{F,g}$ and $\gamma'\in V_{F,g'}$
such that $\gamma^\pm\neq 0$ and $(\gamma')^\pm\neq 0$, and let $c^{\circ}(\omega,\gamma,\gamma') \in F^\times$ be as in Proposition \ref{ERLIint-prop-ss}.
\begin{itemize}
\item[(i)] The image of $\CZ^{\circ}(g/L)$ under the map
$H^1_{\rel,\circ}(\CO_L[\frac{1}{p}],T(1)\otimes\Lambda_L) \rightarrow H^1_{\rel,\circ}(\CO_L[\frac{1}{p}],T(1)\otimes_{\BZ_p}\Lambda)\otimes_{\BZ_p}\BQ_p$
induced by the projection $\Lambda_L/(\gamma_\ac-1)\Lambda_L \isoarrow \Lambda_L^\ac\isoarrow\Lambda$
equals $c^{\circ}(\omega,\gamma,\gamma')\mathfrak{g}(\chi_L)^{-1} \bz^\circ_{\omega,\gamma,\gamma'}$.
\item[(ii)] If (irr$_\BQ$) holds, $\omega\in S_{g,\CO}$ is good, and $\gamma\in T_{g,\CO}$ and $\gamma'\in T_{g',\CO}$
are such that $\gamma^\pm$ is an $\CO$-basis of $T_{\CO,g}$ and $(\gamma')^\pm$ is an $\CO$-basis of $T_{\CO,g'}^\pm$
(so $c(\omega,\gamma,\gamma') \in \CO^\times$), 
then the equality in \rm{(i)} holds in $H^1_{\rel,\circ}(\CO_L[\frac{1}{p},T(1)\otimes_{\BZ_p}\Lambda)$.
\end{itemize}
\end{thm}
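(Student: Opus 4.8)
The plan is to mimic, essentially verbatim, the proof of Theorem~\ref{2varZ-1varZ-thm} in the ordinary case, replacing the Coleman map $Col_{\eta_\omega,v}$ and the cyclotomic $p$-adic $L$-function $\CL_{\alpha,\omega,\gamma,\gamma'}(g_{/L})$ with their supersingular/signed counterparts $Col^\circ_{\omega,v}$ and $\CL^\circ_{\omega,\gamma,\gamma'}(g_{/L})$. The core principle is the same: the cyclotomic specialisation is determined by its image under a localisation-then-Coleman map, because the relevant cyclotomic Iwasawa cohomology is (after inverting $p$) free of rank one.

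First I would assemble the commutative square relating $\CZ^\circ(g/L)$ to its cyclotomic projection. Precisely, using Proposition~\ref{2varZ-prop-ss}(i), the diagram
$$
\begin{tikzcd}[row sep=2.5em, column sep=5em]
H^1_{\rel,\circ}(\CO_L[\frac{1}{p}],T(1)\otimes\Lambda_L) \arrow[d,"\loc_v"] \arrow[r,"\mod (\gamma_\ac-1)"] &
H^1_{\rel,\circ}(\CO_L[\frac{1}{p}],T(1)\otimes_{\BZ_p}\Lambda)\otimes_{\BZ_p}\BQ_p \arrow[d,"\loc_v"] \\
H^1_{/\circ}(L_v,T(1)\otimes_{\BZ_p}\Lambda_L) \arrow[d,"\CC_v^{\circ}"] \arrow[r,"\mod (\gamma_\ac-1)"] &
H^1_{/\circ}(L_v,T(1)\otimes_{\BZ_p}\Lambda)\otimes_{\BZ_p}\BQ_p \arrow[d,"Col^\circ_{\omega,v}"] \\
\Lambda_{L,\CO_\lambda} \arrow[r,"\mod (\gamma_\ac-1)"] & \Lambda_{\CO_\lambda}\otimes_{\BZ_p}\BQ_p
\end{tikzcd}
$$
commutes; here the identification $\Lambda_L/(\gamma_\ac-1)\Lambda_L\isoarrow\Lambda_L^\ac\isoarrow\Lambda$ is being used. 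Next I would invoke Theorem~\ref{cycIwL-Box}(a) (with $\Box=\circ$), which tells us that $H^1_{\rel,\circ}(\CO_L[\frac{1}{p}],T(1)\otimes_{\BZ_p}\Lambda)\otimes_{\BZ_p}\BQ_p$ is free of rank one over $\Lambda_{\CO_\lambda}\otimes_{\BZ_p}\BQ_p$, together with the fact (from the proof of Theorem~\ref{cycIwL-Box-II}, supersingular case) that $Col^\circ_{\omega,v}(\bz^\circ_{\omega,\gamma,\gamma'}(g_{/L}))\neq 0$; consequently the composite $Col^\circ_{\omega,v}\circ\loc_v$ is injective on that module. Thus it suffices to check that $\CZ^\circ(g/L)$ and $c^\circ(\omega,\gamma,\gamma')\mathfrak{g}(\chi_L)^{-1}\bz^\circ_{\omega,\gamma,\gamma'}(g_{/L})$ have the same image in the bottom-right corner $\Lambda_{\CO_\lambda}\otimes_{\BZ_p}\BQ_p$.

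That final identification is where the two explicit reciprocity laws enter. Chasing $\CZ^\circ(g/L)$ down the left column and across: by definition $\CC_v^\circ(\loc_v(\CZ^\circ(g/L)))=\CL_v^\circ(g/L)$, and Proposition~\ref{2varZ-prop-ss}(ii) gives $\CL_v^\circ(g/L)\bmod(\gamma_\ac-1)=c^\circ(\omega,\gamma,\gamma')\mathfrak{g}(\chi_L)^{-1}\CL^\circ_{\omega,\gamma,\gamma'}(g_{/L})$. On the other hand, chasing $\bz^\circ_{\omega,\gamma,\gamma'}(g_{/L})$ across and down: by Theorem~\ref{ERLBKII}(b) one has $Col^\circ_{\omega,v}(\loc_v(\bz^\circ_{\omega,\gamma,\gamma'}(g_{/L})))=\CL^\circ_{\omega,\gamma,\gamma'}(g_{/L})$. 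Commutativity of the square then forces the two images to agree, proving part~(i). For part~(ii), under the stated integrality hypotheses (irr$_\BQ$), $\omega$ good, $\gamma^\pm$ and $(\gamma')^\pm$ $\cO$-bases, one has $c^\circ(\omega,\gamma,\gamma')\in\cO^\times$ by Proposition~\ref{ERLIint-prop-ss}(ii), and all the cohomology groups, Coleman maps, and zeta elements involved are integral (by Theorem~\ref{cycIwL-Box}(b), \eqref{BKL-Int}, and the integrality of $\CZ^\circ(g/L)$ coming from Theorem~\ref{BT-thm2-ss} together with $\sC^{\mathrm{int}}_\circ$ landing in $\sR$), so the whole argument runs without tensoring with $\BQ_p$.

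The main obstacle — or rather the only point requiring genuine care — is bookkeeping around the signed local conditions and the quotient modules $H^1_{/\circ}$: one must make sure that the maps in the vertical arrows are compatible, i.e. that $Col^\circ_{\omega,v}$ on the cyclotomic side really is the reduction modulo $\gamma_v-1$ of $\CC_v^\circ$ after the identification $H^1_{/\circ}(L_v,T(1)\hat\otimes\BT^+\hat\otimes\Lambda)\simeq H^1_{/\circ}(\BQ_p,T(1)\hat\otimes\BT^+\hat\otimes\Lambda)$ and $\BT^+/(\gamma_v-1)\BT^+\simeq\BZ_p$ induced by $\omega_{\bh_v}$. But this is exactly the content of Proposition~\ref{2varZ-prop-ss}(i), which in turn rests on Lemma~\ref{Col-cycsp-prop-ss}, so no new work is needed; everything else is a formal diagram chase identical in structure to the ordinary case.
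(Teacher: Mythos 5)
Your proposal is correct and follows exactly the route the paper takes: the paper's proof of this theorem simply says to proceed as in the ordinary case (Theorem \ref{2varZ-1varZ-thm}), and your argument is precisely that adaptation, using Proposition \ref{2varZ-prop-ss}, Theorem \ref{cycIwL-Box} with $\Box=\circ$, the non-vanishing of $Col^\circ_{\omega,v}$ on the Beilinson--Kato element, and Theorem \ref{ERLBKII}(b). The integrality discussion for part (ii) is likewise the intended one, so nothing further is needed.
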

\begin{proof}
One may proceed just as in the proof of Theorem~\ref{2varZ-1varZ-thm}. 
\end{proof}
\subsubsection{Connections with anti-cyclotomic $L$-functions and Heegner points}

From Propositions \ref{GRL=BDPL-ss} and \ref{GRLvan-prop-ss} and Theorem \ref{BDPformula} we conclude:
\begin{prop}\label{GRL=BDPL-II-ss}\hfill
\begin{itemize}
\item[(i)] Suppose \eqref{Heeg} holds.
The image of $\CL_p^{Gr}(g/L)$ modulo $\gamma_+-1$ equals $-\sL_v^{BDP}(g/L)$. In particular, 
the image of $\CL_p^{Gr}(g/L)$ under $\phi_{\bf 1}: \Lambda_{L,\CO_\lambda^\ur} \twoheadrightarrow \Lambda_{L,\CO_\lambda^\ur}/(\gamma_+-1,\gamma_--1)\Lambda_{L,\CO_\lambda^\ur} = \CO_\lambda^\ur$ is
$$
\phi_{\bf 1}(\CL_p^{Gr}(g/L)) = - (1-a(p)p^{-1}+p^{-1})^2 (\log_{\omega_g}(y_L))^2.
$$
\item[(ii)] Suppose $\epsilon(g/L) = +1$. The image of $\CL_p^{Gr}(g/L)$ modulo $\gamma_+-1$
is $0$.
\end{itemize}
\end{prop}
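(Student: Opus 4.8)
The plan is to obtain both assertions as formal consequences of the supersingular explicit reciprocity laws already established in this section, by the same argument that yields the ordinary-case Proposition \ref{GRL=BDPL-II}.

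First I would recall that, by the construction of $\CL_{\bar v}^\circ$ in \S\ref{two-variable-zeta-ss} (which incorporates the factor $\frac{1}{T_v}\sL^{\mathrm{int}}_\circ$), one has
$$
\CL_p^{Gr}(g/L) = \theta^{\ur,-1}\!\left(\frac{1}{T_v}\,\sL_p^{Gr}(g/L)\right) \quad\text{in } \Lambda_{L,\CO_\lambda^\ur},
$$
equivalently $\sL_p^{Gr}(g/L) = T_v\cdot\theta^\ur(\CL_p^{Gr}(g/L))$ in $\sR^\ur$. Reduction of $\CL_p^{Gr}(g/L)$ modulo $(\gamma_+-1)$ amounts to evaluation at the characters of $\Gamma_L$ that are trivial on $\Gamma_L^+$; pushing these through $\theta^\ur$ and the projection $\phi_\ac$ of Proposition \ref{GRL=BDPL-ss}, and using the explicit dictionary of \S\ref{Zp-ext} and \S\ref{MoreIwasawa} relating $\Gamma_L$, $\Gamma_L^+\times\Gamma_L^-$ and $\Gamma_L^v\times\Gamma$, I would identify $\CL_p^{Gr}(g/L)\bmod(\gamma_+-1)$ with $\phi_\ac(\sL_p^{Gr}(g/L))$ divided by $\phi_\ac(T_v)$. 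The $\frac{1}{T_v}$-normalisation of $\CL_{\bar v}^\circ$ is precisely what makes $\phi_\ac(T_v)$ cancel the elementary factor standing in front of $\iota_\ac(\sL_v^{BDP}(g/L))$ in Proposition \ref{GRL=BDPL-ss}; cancelling also the involution $\iota_\ac$ then gives
$$
\CL_p^{Gr}(g/L)\bmod(\gamma_+-1) = -\,\sL_v^{BDP}(g/L),
$$
which is the first claim of part (i). Composing with the further specialisation $\phi_{\bf 1}: \Lambda_{L,\CO_\lambda^\ur}^\ac\twoheadrightarrow\CO_\lambda^\ur$ at the trivial anticyclotomic character and invoking the Bertolini--Darmon--Prasanna formula of Theorem \ref{BDPformula}, which evaluates $\sL_v^{BDP}(g/L)$ there as $(1-a(p)p^{-1}+p^{-1})^2(\log_{\omega_g}(y_L))^2$, produces the displayed value.

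For part (ii) the identical chain of identifications reduces the statement to the vanishing $\phi_\ac(\sL^{Gr}(g/L)) = 0$ when $\epsilon(g/L)=+1$, which is exactly Proposition \ref{GRLvan-prop-ss} (note that \eqref{Heeg} is not needed here, only the interpolation formula of Theorem \ref{ERLIIint-thm-ss} and the sign computation used in Proposition \ref{GRLvan-prop}).

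I expect the one genuinely fiddly point to be the second step above: keeping precise track of the isomorphism $\theta^\ur$, the anticyclotomic involution $\iota_\ac$, and the $T_v$-factor built into $\CL_{\bar v}^\circ$, so that everything cancels exactly as in the ordinary setting of Proposition \ref{GRL=BDPL-II}. This is bookkeeping of the same nature as in that case, and introduces no new difficulty --- all of the analytic content has already been isolated in Propositions \ref{GRL=BDPL-ss} and \ref{GRLvan-prop-ss} and Theorem \ref{BDPformula}.
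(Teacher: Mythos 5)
Your proposal is correct and follows essentially the same route as the paper: Proposition \ref{GRL=BDPL-II-ss} is stated there as an immediate consequence of Propositions \ref{GRL=BDPL-ss} and \ref{GRLvan-prop-ss} together with Theorem \ref{BDPformula}, exactly the three inputs you combine, with the $\theta^{\ur}$/$\iota_\ac$/$T_v$ bookkeeping left implicit just as you describe. The only remark worth making is that your explicit tracking of the $\tfrac{1}{T_v}$ normalisation in $\CL_{\bar v}^{\circ}$ (so that $\phi_\ac(T_v)$ cancels the anticyclotomic factor of Proposition \ref{GRL=BDPL-ss}) is precisely the cancellation the paper relies on without spelling it out.
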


This proposition allows us to relate the image of $\CZ^{\circ}(g/L)$ under Perrin-Riou's regulator map (or `big logarithm') to 
Heegner points, providing a key link in our subsequent proof of the Perrin-Riou Conjecture.

 Recall that Perrin-Riou's regulator map for $H^1(L_{\bar v},\CF^{+}_{\gamma}{\bf D}(T(1))\hat\otimes\Lambda_L^\ac)$ is the composition
\begin{equation*}\begin{array}{ccl}
\CL^{PR}_{\gamma}: H^1(L_{\bar v}, \CF^{+}_{\gamma}{\bf D}(T(1))\hat\otimes  \mathfrak{K}_{1,F_\lambda(\gamma)}^{\ac}) & \stackrel{\Psi_L^\ac(g)\mapsto\eps(g)^{-1}\Psi_L^\ac(g)}{\longrightarrow} &
H^1(L_{\bar v}, \CF^{+}_{\gamma}{\bf D}(T(1))\hat\otimes  \mathfrak{K}_{1,F_\lambda(\gamma)}^{\ac}) \\ & \stackrel{res}{\hookrightarrow}  & H^1(L_{\bar v}^\ur, \CF^{+}_{\gamma}{\bf D}(T(1))\hat\otimes  \mathfrak{K}_{1,F_\lambda(\gamma)}^{\ac})  \\
& \stackrel{\CL_{\CF^{+}_{\gamma}{\bf D}(T(1))}}{\longrightarrow} & D(\CF^{+}_{\gamma}{\bf D}(T(1)))\hat\otimes W(\ov{\BF}_p)\hat\otimes \mathfrak{K}_{1,F_\lambda(\gamma)}^{\ac},
\end{array}
\end{equation*}
where $\mathfrak{K}_{1,F_\lambda(\gamma)}^{\ac}$ is the anticyclotomic counterpart of $\mathfrak{K}_{1,F_\lambda(\gamma)}$ and for the third map we have used that as a $G_{L_{\bar v}^\ur}$-module $\Lambda^\ac_L$ is naturally isomorphic to the cyclotomic 
algebra $\Lambda$.
Let 
$$
\CL^{PR}_{g_\gamma} = ([\omega_g,-]\otimes id\otimes id)\circ \CL^{PR}_{\gamma}: 
H^1(L_{\bar v}, \CF^{+}_{\gamma}{\bf D}(T(1))\hat\otimes \mathfrak{K}_{1,F_\lambda(\gamma)}^{\ac}) 
\rightarrow 
\mathfrak{K}_{1,F_\lambda(\gamma)}^{\ac,\ur}.
$$

As in Proposition~\ref{Col-pm}, 
there exist $\sR^\ac$-module homomorphisms 
$$
\sL_{\pm}^{PR}: 
 H^{1}_{\pm}(L_{\ov{v}},T(1)\hat\otimes \Lambda_{L}^{\ac}) \ra
\Lambda_{L}^{\ac,\ur}.
$$
such that
$$
\begin{pmatrix}
\CL_{g_{\alpha}}^{PR}\\
\CL_{g_{\beta}}^{PR}\\
\end{pmatrix}
=
M \cdot \begin{pmatrix}
\CL^{PR}_{-}\\
\CL^{PR}_{+}\\
\end{pmatrix}
.
$$

\begin{lem}\label{GRL=PRLog-ss}
There exists a unit $U_L \in (\Lambda_L^{\ac,\ur})^\times$ depending only on $L$ such that 
$$
U_L \cdot \CL_{\bar v}^{\circ} \mod (\gamma_+-1) = \CL^{PR}_{\circ}. 
$$
In particular, we have
$$u_L \cdot \phi_{\bf 1}\circ \CL_{\bar v}^{\circ} 
= \begin{cases} 
\frac{p(p-1)}{p+1}\cdot \log_{BK}^{\eta_g} & \text{ if $\circ=-$} \\
\frac{-2p}{p+1}\cdot \log_{BK}^{\eta_g} & \text{if $\circ=+$}, 
\end{cases}
$$
where $u_L \in \BZ_p^{\ur,\times}$ is the image of $U_L$ under the specialization map 
$\Lambda_L^{\ac,\ur}\twoheadrightarrow\Lambda_L^{\ac,\ur}/(\gamma_\ac-1)\Lambda^{\ac,\ur}  = \BZ_p^\ur$.
\end{lem}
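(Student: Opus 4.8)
The plan is to adapt the proof of Lemma~\ref{GRL=PRLog} to the signed setting, replacing the Panchishkin filtration used there by the $(\varphi,\Gamma)$-module filtrations $\mathcal{F}^\pm_\gamma$ of \S\S\ref{ss:ERL_I}--\ref{ss:ERL_II} and the matrix $M$ of Pollack--Kobayashi. First I would invoke Theorem~\ref{BT-thm1}, that is \eqref{Ind-eq}, to write $\BT_1 = \widetilde\BT = \BT^+\oplus c\cdot\BT^+$, so that $c\cdot\BT^+$ maps isomorphically onto $\BT_1^- = \widetilde\BT/\BT^+$, and to fix the $\Lambda_L^v$-basis $\lambda_{\bh_v}$ of $\BT^+$ determined by $\omega_{\bh_v}$; by \eqref{Ind-eq} the image of $c\cdot\lambda_{\bh_v}$ is a basis of $\BT_1^- = T_v\widetilde\BH^-$. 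Via Shapiro's lemma and the twist \eqref{Lgal-twist-eq-ii}, these choices identify $H^1_\circ(L_{\bar v},T(1)\hat\otimes\Lambda_L)$ with the corresponding signed submodule of $H^1(\BQ_p,T(1)\hat\otimes\BT_1\hat\otimes\Lambda)$ compatibly with $\theta^{\ur}$, making the definition of $\CL_{\bar v}^\circ$ (through $\tfrac{1}{T_v}\sL^{\mathrm{int}}_\circ = \tfrac{1}{T_v}\CH_v\cdot\sL_\circ$) and that of the Perrin--Riou regulator directly comparable.

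Next I would unwind $\sL_\circ$ through Definition~\ref{Log-pm} and \eqref{Log_PR}: $\sL_\gamma$ is the two-variable Loeffler--Zerbes regulator $\CL_{g_\gamma,\bh_v}$ for the pair $(g_\gamma,\bh_v)$ composed with $\xi_{g,\bh_v} = \omega_{g_\gamma}\otimes\widetilde\eta_{\bh_v}\otimes\iota_\epsilon^{-1}$, acting on $\mathcal{F}^{+-}_\gamma{\bf D}(T(1)\hat\otimes\BH_1) = \mathcal{F}^{+}_\gamma{\bf D}(T(1))\hat\otimes{\bf D}(\widetilde\BH^-)$. The key step is a logarithm-side analogue of Proposition~\ref{Col-cycsp-prop-ss}: using the functoriality of the Loeffler--Zerbes regulator under specialisation of $\Lambda_L$ along $\gamma_+-1$ (the projection $\Gamma_L\twoheadrightarrow\Gamma_L^\ac$), and untwisting by $\epsilon^{-1}\Psi_D^{-1}$ as in \eqref{unram-twist} together with the identification $\BH^+/(\gamma_v-1)\BH^+\simeq\BZ_p$ coming from $\omega_{\bh_v}$, one finds that $\sL_\gamma$ reduces modulo $\gamma_+-1$ to $\CL^{PR}_{g_\gamma}$ multiplied by the scalar $\widetilde\eta_{\bh_v}$ evaluated on the fixed basis of $\BT_1^-$. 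Multiplying by $\tfrac{1}{T_v}\CH_v$ and using Theorem~\ref{excz-2} --- which gives $\widetilde I_{\bh_v} = \tfrac{1}{\CH_v}\Lambda_L^v$ up to a unit --- together with $\BT_1^- = T_v\widetilde\BH^-$ from \eqref{Ind-eq}, this scalar becomes a unit $\CU_L$, visibly depending only on $L$; one sets $U_L = \CU_L^{-1} \bmod (\gamma_+-1)$. Since $\sL_\circ$ and $\CL^{PR}_\circ$ are obtained from the $\gamma$-indexed maps $\sL_\gamma$ and $\CL^{PR}_{g_\gamma}$ by the \emph{same} linear transformation by $M$, the relation $U_L\cdot\CL_{\bar v}^\circ \equiv \CL^{PR}_\circ \pmod{\gamma_+-1}$ follows for $\circ=\pm$.

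For the ``in particular'' I would specialise further at $\phi_{\bf 1}$, i.e. set $\gamma_\ac = 1$. By the trivial-character specialisation of Perrin--Riou's big logarithm --- the supersingular analogue of \eqref{PRLog=BKlog}, from \cite[Thm.~B.5]{LZ0} --- one has $\phi_{\bf 1}\circ\CL^{PR}_{g_\gamma} = (1-\gamma/p)(1-1/\gamma)^{-1}\log_{BK}^{\eta_g} = \tfrac{\gamma+1}{\gamma-1}\log_{BK}^{\eta_g}$, the last equality using $\gamma^2 = -p$. Since $\log_p^\pm(0) = 1/p$ and $\alpha+\beta = 0$, $\alpha\beta = p$, the matrix $M$ at $T=0$ is $M(0) = \tfrac{1}{p}\begin{pmatrix}1&\alpha\\1&\beta\end{pmatrix}$; inverting $M(0)$ and applying it to the column vector with entries $\tfrac{\alpha+1}{\alpha-1}$ and $\tfrac{\beta+1}{\beta-1}$ produces $\tfrac{p(p-1)}{p+1}$ in the $\circ=-$ slot and $\tfrac{-2p}{p+1}$ in the $\circ=+$ slot, which is exactly the asserted formula once $U_L\cdot\CL_{\bar v}^\circ \equiv \CL^{PR}_\circ$ is specialised and $u_L$ is taken to be the image of $U_L$ under $\Lambda_L^{\ac,\ur}\twoheadrightarrow\BZ_p^\ur$. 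The main obstacle is the logarithm-side specialisation statement in the previous paragraph: it requires carefully tracking the auxiliary twists in the construction of $\sL_\gamma$ and invoking the compatibility of the two-variable Loeffler--Zerbes regulator with one-variable reciprocity --- the same delicate ingredient underlying Proposition~\ref{Col-cycsp-prop-ss}, but now on the ``$\bar v$'' side --- together with the identification of $\CU_L$ as a unit via the congruence-power-series computation of Theorem~\ref{excz-2}.
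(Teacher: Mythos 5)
Your proposal is correct and follows essentially the same route as the paper's proof: the unit is the same $\CU_L=\tfrac{1}{T_v}\CH_v\cdot\eta_{\bh_v}(\lambda_{\bh_v}\bmod \BT_1^+)$ (a unit because of Theorem~\ref{excz-2} and $\BT_1^-=T_v\widetilde\BH^-$ from \eqref{Ind-eq}), the comparison is made through the identification determined by the basis $\lambda_{\bh_v}$, and the ``in particular'' is deduced from the specialisation formula of \cite[Thm.~B.5]{LZ0}. The only difference is one of detail: you spell out the evaluation of the logarithm matrix $M$ at $T=0$ and the resulting constants $\tfrac{p(p-1)}{p+1}$ and $\tfrac{-2p}{p+1}$, a computation the paper leaves implicit in ``this implies the conclusion.''
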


\begin{proof} 
This is analogous to the proof of Lemma~\ref{GRL=PRLog}. 

As therein, put 
$\CU_L = \frac{1}{T_v}\CH_v\cdot\eta_{\bh_v}(\lambda_{\bh_v}\mod \BT_1^+)\in (\Lambda_L^\ur)^\times$ and 
$$
U_L = \CU_L^{-1}\mod (\gamma_+-1) \in (\Lambda_L^{\ac,\ur})^\times.
$$
The lemma then follows directly from the identification
$H^1_{\circ}(L_{\bar v},T(1)\hat\otimes\Lambda_L) = H^1_{\circ}(\BQ_p,T(1)\hat\otimes\BT_1^-\hat\otimes\Lambda)$ determined by the basis $\lambda_{\bh_v}$ and comparing the definitions of $\CL_{\bar v}^{\circ}$ and $\CL^{PR}_{\circ}$. 
 
  From the specialisation properties of $\CL_{\CF^{+}_{\gamma}{\bf D}(T(1))}$ (see \cite[Thm.~B.5]{LZ0}) it follows that
\begin{equation}\label{PRLog=BKlog}
\phi_{\bf 1}\circ \CL_{g_\gamma}^{PR} = (1-\gamma/p)(1-1/\gamma)^{-1}\log_{BK}^{\eta_g},
\end{equation}
where $\log_{BK}$ denotes the Bloch--Kato logarithm for $\CF^{+}_{\gamma}{\bf D}(T(1))$ and $\eta_g = \eta_{\omega_g}$ (so 
$\log_{BK}(-) = \log_{BK}^{\eta_g}(-)\cdot\eta_g \in D_\cris(\CF^{+}_{\gamma}{\bf D}(T(1)))$).  
This implies the conclusion for $\phi_{\bf 1}\circ\CL_{\bar v}^{\circ}$.
\end{proof}
From Lemma \ref{GRL=PRLog-ss} and Proposition \ref{GRL=BDPL-II} we conclude:
\begin{lem}\label{GRL=logheegner-ss} \hfill
\begin{itemize} 
\item[(i)] Suppose (Heeg) holds. Then
$$
 - u_L (1-a(p)p^{-1}+p^{-1})^2 (\log_{\omega_g}(y_L))^2 = 
 \begin{cases} 
\frac{p(p-1)}{p+1}\cdot \log_{BK}^{\eta_g}(\loc_{\bar v}(\phi_{\bf 1}(\CZ^{-}(g/L))) &  \\
\frac{-2p}{p+1}\cdot \log_{BK}^{\eta_g}(\loc_{\bar v}(\phi_{\bf 1}(\CZ^{+}(g/L))). &
\end{cases} 
$$
\item[(ii)] If $\eps(g/L) = +1$, then $\loc_{\bar v}(\phi_{\bf 1}(\CZ^{\circ}(g/L)))= 0$.
\end{itemize}
\end{lem}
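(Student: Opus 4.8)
The plan is to obtain Lemma~\ref{GRL=logheegner-ss} by combining two inputs already at our disposal: the identification of $\phi_{\bf 1}\circ\CL_{\bar v}^{\circ}$ with the Bloch--Kato logarithm from Lemma~\ref{GRL=PRLog-ss}, and the interpolation of the Greenberg $p$-adic $L$-function against the Bertolini--Darmon--Prasanna $L$-function and Heegner points from Proposition~\ref{GRL=BDPL-II-ss}. Concretely, for part (i) I would start from the definition $\CL_p^{Gr}(g/L) = \CL_{\bar v}^{\circ}(\loc_{\bar v}(\CZ^{\circ}(g/L)))$ together with the fact that $\phi_{\bf 1}$ factors as first reducing modulo $\gamma_+-1$ and then specialising the resulting anticyclotomic measure at the trivial character. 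Thus $\phi_{\bf 1}(\CL_p^{Gr}(g/L)) = (\phi_{\bf 1}\circ\CL_{\bar v}^{\circ})(\loc_{\bar v}(\phi_{\bf 1}(\CZ^{\circ}(g/L))))$, where on the right I have used that $\CL_{\bar v}^{\circ}$ is $\Lambda_{L,\CO_\lambda^\ur}$-linear and commutes with the specialisation maps. Applying Lemma~\ref{GRL=PRLog-ss} rewrites the right-hand side as $u_L^{-1}$ times $\frac{p(p-1)}{p+1}$ (for $\circ=-$) or $\frac{-2p}{p+1}$ (for $\circ=+$) times $\log_{BK}^{\eta_g}(\loc_{\bar v}(\phi_{\bf 1}(\CZ^{\circ}(g/L))))$. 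On the other hand, Proposition~\ref{GRL=BDPL-II-ss}(i) evaluates $\phi_{\bf 1}(\CL_p^{Gr}(g/L))$ directly as $-(1-a(p)p^{-1}+p^{-1})^2(\log_{\omega_g}(y_L))^2$. Equating the two expressions and clearing the factor $u_L$ gives the claimed identity; I would present it in the displayed form already used in the statement.

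For part (ii), the argument is even shorter: when $\epsilon(g/L)=+1$, Proposition~\ref{GRL=BDPL-II-ss}(ii) says that the reduction of $\CL_p^{Gr}(g/L)$ modulo $\gamma_+-1$ is $0$, i.e.\ $\CL_{\bar v}^{\circ}(\loc_{\bar v}(\CZ^{\circ}(g/L)))\equiv 0 \bmod (\gamma_+-1)$. Since $\CL_{\bar v}^{\circ}$ modulo $\gamma_+-1$ is, up to the unit $U_L$, Perrin-Riou's regulator $\CL^{PR}_{\circ}$, and since $\CL^{PR}_{\circ}$ is injective on $H^1_{\circ}(L_{\bar v},T(1)\hat\otimes\Lambda_L^\ac)$ (this follows from the injectivity of $\CL_{\CF^+_\gamma{\bf D}(T(1))}$, exactly as in the arguments giving injectivity of the Coleman/log maps in \S\ref{ss:Col} and \S\ref{ss:Log}, combined with the invertibility of $M$ over $\Frac(\mathfrak{K}_{\infty,F_\lambda(\alpha)})$), I conclude that the image $\loc_{\bar v}(\CZ^{\circ}(g/L)) \bmod (\gamma_+-1)$ is $\Lambda_L^\ac$-torsion, hence $0$ by torsion-freeness, and in particular its further specialisation $\loc_{\bar v}(\phi_{\bf 1}(\CZ^{\circ}(g/L)))$ vanishes in $H^1_\circ(L_{\bar v},V^+(1))\subset H^1(L_{\bar v},V^+(1))$. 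Alternatively, and more simply, one can bypass the torsion argument: $\phi_{\bf 1}$ of a power series divisible by $\gamma_+-1$ is automatically $0$, so $\phi_{\bf 1}(\CL_p^{Gr}(g/L)) = (\phi_{\bf 1}\circ\CL_{\bar v}^{\circ})(\loc_{\bar v}(\phi_{\bf 1}(\CZ^{\circ}(g/L)))) = 0$, and since $\phi_{\bf 1}\circ\CL_{\bar v}^{\circ}$ is a nonzero multiple of $\log_{BK}^{\eta_g}$ which is injective on $H^1_\circ(L_{\bar v},V^+(1))$, the class $\loc_{\bar v}(\phi_{\bf 1}(\CZ^{\circ}(g/L)))$ must vanish.

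The main obstacle I anticipate is purely bookkeeping rather than conceptual: one must be scrupulous about the several specialisation maps in play --- reduction modulo $\gamma_+-1$ landing in the anticyclotomic algebra $\Lambda_{L,\CO_\lambda^\ur}^\ac$, then $\phi_{\bf 1}$ landing in $\CO_\lambda^\ur$ --- and verify that $\CL_{\bar v}^{\circ}$, $\CZ^{\circ}(g/L)$, and $\loc_{\bar v}$ all interact with these specialisations in the way the notation suggests (this is where the compatibility of $\theta^{\ur}$ with the twist \eqref{Lgal-twist-eq-ii} and the commuting of $\sL^{\mathrm{int}}_\circ$ with reduction --- established in \S\ref{ss:Log} --- get used). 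A second, minor point requiring care is the precise constant: Lemma~\ref{GRL=PRLog-ss} packages the Euler factor $(1-\gamma/p)(1-1/\gamma)^{-1}$ from \eqref{PRLog=BKlog}, and one needs the plus/minus matrix $M$ specialised at $T=0$ (i.e.\ at the trivial character) to produce exactly the coefficients $\frac{p(p-1)}{p+1}$ and $\frac{-2p}{p+1}$ quoted; I would double-check this normalisation against the interpolation factors $e_p^\pm(\zeta)$ at $t=0$ in Theorem~\ref{pcycQss}, but it is a finite computation with no hidden difficulty. No new global input is needed --- the heavy lifting (Theorem~\ref{BDPformula}, Propositions~\ref{GRL=BDPL-ss}, \ref{GRLvan-prop-ss}) has already been done.
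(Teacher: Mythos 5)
Your proposal is correct and follows essentially the same route as the paper, which states the lemma as an immediate consequence of Lemma~\ref{GRL=PRLog-ss} and Proposition~\ref{GRL=BDPL-II-ss}: apply $\phi_{\bf 1}$ to $\CL_p^{Gr}(g/L)=\CL_{\bar v}^{\circ}(\loc_{\bar v}(\CZ^{\circ}(g/L)))$, identify the result with $u_L^{-1}$ times the stated multiple of $\log_{BK}^{\eta_g}$, and compare with the BDP evaluation (resp.\ vanishing) at the trivial character. Your added remark on injectivity of $\log_{BK}^{\eta_g}$ (or of the signed Perrin-Riou regulator) in part (ii) is exactly the step the paper leaves implicit, and your handling of the specialisation compatibilities matches the paper's conventions.
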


\section{The Perrin-Riou Conjecture}\label{PRC} 
In this section we complete our proofs of results towards the Perrin-Riou conjecture. In particular, we prove the conjecture for elliptic curves at primes 
$p\geq 5$ of good reduction. 
 
The eponymous conjecture predicts a link between Beilinson--Kato element associated to a weight two newform and rational points on its
associated $\GL_2$-type abelian variety. It is a $p$-adic Beilinson conjecture for the $p$-adic Beilinson--Kato elements (see \cite{PR} for more on this perspective). Its weak version (stated below) can be regarded as a $p$-adic Leopoldt-style conjecture.

\subsection{The Perrin-Riou Conjecture}  
Let $g \in S_{2}(\Gamma_{0}(N))$ be a newform and $p$ a prime. We keep to the notation of the preceding sections,
especially Section \ref{NotationPrelim}. In particular, $T=T_{\CO_{\lambda}}$ is a lattice in  the $p$-adic Galois representation 
$V=V_{F_\lambda}$ associated with $g$ and a prime $\lambda\mid p$ of the Hecke field $F$.

Let $A_{/\BQ}$ be a $\GL_2$-type abelian variety in the isogeny class of such varieties associated 
with $g$ (see \S\ref{newforms-AV}).

\subsubsection{The conjecture}
Let $\gamma \in V_F$ with $\gamma^{\pm}\neq 0$ and let $\bz_\gamma(g) \in H^1(\BZ[\frac{1}{p}],V(1)\otimes_{\BZ_p}\Lambda)$
be the Beilinson--Kato element as in Section \ref{BK-rationals}. Let 
$$z_{\Kato}(g) \in H^{1}(\BQ,V(1))$$ 
be the image of $\bz_\gamma(g)$ under the map induced by the specialisation map $\Lambda\twoheadrightarrow \BZ_p$, $\gamma_\cyc\mapsto 1$.
Kato's explicit reciprocity law \cite[Thm. 12.5]{K}
implies:
$$
\loc_{p}(z_{\Kato}(g)) \in H^{1}_{f}(\BQ_{p},V(1)) \iff L(1,g_{/\BQ})=0.
$$ 
The following conjecture, connecting $z_\Kato(g)$ to the arithmetic of $A_g$, especially when $0\neq \loc_p(z_\Kato(g))\in H^1_f(\BQ_p,V(1))$,
is essentially due to Perrin-Riou \cite[\S3.3-3.4]{PR}.

\begin{conj}[The Perrin-Riou Conjecture] \label{PR} 
Suppose that $L(1,g) = 0$ $($equivalently, $\loc_p(z_\Kato(g))\in H^1_f(\BQ_p,V(1))$\,$)$. 
Let $\lambda\mid p$ be a prime of the Hecke field $F$ of $g$.
There exists $P \in A(\BQ)\otimes\BQ$ such that 
\begin{itemize}
\item[(1)]  $\log_{BK}^{\eta_{\omega}}(\loc_{p}(z_{\rm Kato}(g))) \doteq \log_{\omega_{A}}(P)^{2}$, where
{\begin{itemize}
\item $0\neq \omega\in S_F$ and $0\neq \omega_A\in \Omega^1(A/\BQ)$ are $F$-bases,
\item $\log_{BK}^{\eta_{\omega}}: H^{1}_{f}(\BQ_{p},V_{F_{\lambda}}(1)) \ra F_{\lambda}$ is the Bloch--Kato logarithm 
arising from $\omega$, and
\item $\log_{\omega_{A}}:A(\BQ_p)\otimes_{\BZ_p}\BQ_p\rightarrow F_\lambda$ is the logarithm associated with $\omega_{A}$ and $\lambda$,
\item `$\doteq$' denotes an equality up to an element of $F^\times$;
\end{itemize}}
\item[(2)] $P \neq 0 \iff  \ord_{s=1}L(s,f_{/\BQ})=1$.
\end{itemize}
In particular, 
$$
\loc_{p}(z_{Kato}(g)) \neq 0 \iff \ord_{s=1}L(s,g)=1.
$$
\end{conj}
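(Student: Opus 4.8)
The plan is to prove Conjecture \ref{PR} by reducing it to the explicit reciprocity laws for the two-variable zeta element $\CZ^{\cdot}(g/L)$ together with the $p$-adic Waldspurger formula of Bertolini--Darmon--Prasanna, as outlined in \S\ref{ssPRo}. First I would choose an auxiliary imaginary quadratic field $L$ satisfying \eqref{h2} and the Heegner hypothesis \eqref{Heeg} for $(g,L)$, so that $\epsilon(g_{/L}) = -1$; a Chebotarev/sign argument ensures such $L$ exists with $L(1,g_{/\BQ})$ paired against a field whose associated twist $g' = g\otimes\chi_L$ has $L(1,g'_{/\BQ}) \neq 0$ (or handled symmetrically), and one can arrange (sur$_\BQ$) or (irr$_\BQ$) to persist over $L$. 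For such $L$, Theorem \ref{2varZ-1varZ-thm} (ordinary case) or Theorem \ref{2varZ-1varZ-thm-ss} (supersingular case) identifies the cyclotomic specialisation $\phi_{\bf 1}(\CZ^{\cdot}(g/L))$, after restriction along the cyclotomic projection, with an $F^\times$-multiple of the Beilinson--Kato element $\bz^{\cdot}_{\alpha,\omega,\gamma,\gamma'}(g_{/L})$, which is itself a $\Lambda$-linear combination of $z_\Kato(g)$ and $z_\Kato(g')$. Since $L(1,g'_{/\BQ})\neq 0$ forces $\loc_p(z_\Kato(g')) \notin H^1_f$, the local-at-$\bar v$ behaviour of $\phi_{\bf 1}(\CZ^{\cdot}(g/L))$ isolates the contribution of $z_\Kato(g)$.

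Next I would apply Lemma \ref{GRL=logheegner} (ordinary) or Lemma \ref{GRL=logheegner-ss} (supersingular), which, granting (Heeg), give
$$
(1-\alpha/p)(1-1/\alpha)^{-1}\log_{BK}^{\eta_g}(\loc_{\bar v}(\phi_{\bf 1}(\CZ^{\cdot}(g/L)))) \doteq u_L (\log_{\omega_g}(y_L))^2,
$$
with $y_L \in J_0(N)(L)$ a Heegner point and $u_L\in\BZ_p^{\ur,\times}$. Projecting $y_L$ to $A$ and using the Gross--Zagier formula for $(g,L)$: if $\ord_{s=1}L(s,g_{/L}) = 1$ then the image of $y_L$ in $A(L)\otimes\BQ$ is nonzero, hence $\log_{\omega_g}(y_L)\neq 0$ and so $\loc_p(z_\Kato(g))\neq 0$ in $H^1_f(\BQ_p,V(1))$, which by Kato's reciprocity law forces $\ord_{s=1}L(s,g_{/\BQ}) = 1$ (it is $>0$ by hypothesis $L(1,g)=0$ and $\leq 1$ because the point is nonzero). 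Conversely, if $\ord_{s=1}L(s,g_{/\BQ}) > 1$, then $\ord_{s=1}L(s,g_{/L}) \geq 2$ (as $L(1,g'_{/\BQ})\neq 0$), the $p$-adic $L$-function side forces $\loc_{\bar v}(\phi_{\bf 1}(\CZ^{\cdot}(g/L)))$ to have the appropriate vanishing, and one extracts $\loc_p(z_\Kato(g))\in H^1_f(\BQ_p,V(1))$. This pins down the equivalence $\loc_p(z_\Kato(g))\neq 0 \iff \ord_{s=1}L(s,g_{/\BQ})=1$, since the remaining case $\ord_{s=1}L(s,g_{/\BQ})=1$ is exactly where the Heegner point has infinite order. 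The point $P$ of part (1) is then obtained as a nonzero rational multiple of the image of $y_L$, and part (2) follows from the Gross--Zagier and Kolyvagin theorems applied over $\BQ$; part (1) itself follows from the displayed logarithm identity together with the known comparison of $\log_{\omega_g}$ on $J_0(N)$ and $\log_{\omega_A}$ on $A$.

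The main obstacle — and the reason the argument is genuinely roundabout — is controlling the unit $u_L$ well enough for the $F^\times$-ambiguity in part (1) to be meaningful: a priori $u_L$ is only a $p$-adic unit in $\BZ_p^{\ur,\times}$, not obviously algebraic, and the statement $\log_{BK}^{\eta_\omega}(\loc_p(z_\Kato(g)))\doteq\log_{\omega_A}(P)^2$ demands equality up to $F^\times$. The resolution, flagged in the remark after Lemma \ref{GRL=PRLog}, is to run the whole comparison in the special case where $g$ itself is a CM newform, where a version of the Perrin-Riou conjecture is available via Rubin's work and Kato's \S15 (this is the content of the appendix referenced as Appendix \ref{A-PRConj-CM}); comparing the two computations of the same logarithm identity for a CM auxiliary form forces $u_L$ to be algebraic and, for suitably chosen $L$, to lie in $\BZ_{(p)}^\times$. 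A secondary technical point is the case $p=3$ with supersingular reduction, where \eqref{h4} is imposed, and ensuring the auxiliary newform $g$ of Proposition \ref{anrk-prop} (used to exclude the exceptional case in Proposition \ref{notind-prop1}) does not interfere; but these are precisely the hypotheses already present in the statement of Theorem \ref{Thm_PR}. For the bare final assertion $\loc_p(z_\Kato(g))\neq 0 \iff \ord_{s=1}L(s,g)=1$, the unit $u_L$ plays no role — only its nonvanishing matters — so that implication is clean once the zeta-element machinery of \S\ref{BFord}--\S\ref{BFss} and the Waldspurger/Gross--Zagier inputs are in place.
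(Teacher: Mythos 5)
Your overall route is the paper's route (specialise the two-variable zeta element at the trivial character, identify it with a multiple of $z_\Kato(g)$, feed it into the $\CL_{\bar v}$/BDP formula, use Gross--Zagier, and pin down $u_L$ by comparison with the CM case of Appendix \ref{A-PRConj-CM} and Lemma \ref{B-aux-lem}), and you correctly isolate the real difficulty of part (1), namely the algebraicity of $u_L$. But your very first step contains a genuine gap: you require the auxiliary field $L$ to satisfy \emph{both} the Heegner hypothesis \eqref{Heeg} and $L(1,g')\neq 0$ for $g'=g\otimes\chi_L$. Under $(D_L,N)=1$ and \eqref{Heeg} one has $\epsilon(g_{/L})=-1$, while $L(1,g')\neq 0$ forces $\epsilon(g')=+1$; since $\epsilon(g_{/L})=\epsilon(g)\epsilon(g')$, these two demands are compatible only when $\epsilon(g)=-1$. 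So your setup cannot even be launched in the case $\epsilon(g)=+1$, which is exactly the case where one must prove the implication ``$L(1,g)=0$ with even order of vanishing $\Rightarrow \loc_p(z_\Kato(g))=0$''. The paper handles this case by \emph{dropping} \eqref{Heeg} and instead choosing $L$ (via \cite{BFH}) with $\epsilon(g')=+1$ and $L(1,g')\neq 0$, so that $\epsilon(g_{/L})=+1$; then Proposition \ref{GRLvan-prop} (all anticyclotomic interpolated central values vanish by sign) gives $\phi_\ac(\sL^{Gr}(g/L))=0$, hence $\loc_{\bar v}(\phi_{\bf 1}(\CZ(g/L)))=0$ by Lemma \ref{GRL=logheegner}(ii), and so $\loc_p(z_\Kato(g))=0$. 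Your phrase ``the $p$-adic $L$-function side forces the appropriate vanishing'' gestures at this, but with \eqref{Heeg} in force that mechanism is not available, and your stated conclusion ``one extracts $\loc_p(z_\Kato(g))\in H^1_f$'' is anyway the wrong target: membership in $H^1_f$ is automatic from $L(1,g)=0$ by Kato's reciprocity law; what must be extracted is the vanishing of $\loc_p(z_\Kato(g))$. (In the $\epsilon(g)=-1$, $\ord\geq 3$ subcase your argument does work, but through the Heegner point being torsion rather than through any $L$-function vanishing.)

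Two smaller points. First, the isolation of $z_\Kato(g)$ inside $\phi_{\bf 1}(\CZ(g/L))$ needs no local-at-$\bar v$ argument: since the coefficient of $\bz_{\gamma'}(g')$ in $\bz^{\ord}_{\alpha,\omega,\gamma,\gamma'}(g_{/L})$ specialises to a multiple of $L(1,g)=0$, the trivial-character specialisation is already a nonzero $F^\times\cdot\frac{L(1,g')}{\Omega^-}$-multiple of $z_\Kato(g)$ on the nose (this is \eqref{comp-with-Kato}). Second, taking $P$ to be ``a rational multiple of the image of $y_L$'' does not by itself land you in $A(\BQ)\otimes\BQ$: you must kill the minus component, which the paper does via $L(1,g')\neq 0$ and Kato's theorem ($A'(\BQ)\otimes_{\cO_F}F=0$), giving $P=P_L^+$; with your choice of $L$ the same argument is available, but it should be said.
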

\noindent We refer to the `In particular' part as the {\em weak version} of the Perrin-Riou Conjecture.

\begin{remark}
The conjecture in \cite{PR}  is stated for elliptic curves over the rationals and for odd primes $p$ of good reduction. 
For $p\geq 5$ this is proved below as a special case of the main results of this section. The generalisation of the statement of the conjecture 
to newforms is straightforward. However, as stated, the point $P\in A(\BQ)\otimes\BQ$ could depend on the prime $\lambda\mid p$. A slightly
stronger version of the conjecture would be that there exists $P$ such that (1) holds for all $\lambda\mid p$ (as well as (2)). 
This is equivalent to the conjecture as stated holding for {\em all} $\lambda\mid p$. 
\end{remark}

\begin{remark}
The $p$-adic logarithm of a non-torsion point in $A(\BQ)$ is expected to be transcendental\footnote{We are grateful to G. Wustholz for reminding us of this}. 
Conjecture \ref{PR} thus implies a transcendence result for the Beilinson--Kato elements with non-zero localisation at $p$. 
\end{remark}

\subsection{Main result}\label{sMR} 
The main result of this section is a proof of the Perrin-Riou conjecture for all newforms $g\in S_2(\Gamma_0(N))$, all primes $p\nmid 2N$,
and all $\lambda\mid p$ such that $g$ is ordinary with respect to $\lambda$ (so $\lambda\nmid a_p(g)$) and all $\lambda\mid p$ if $a_p=0$.
Along the way we prove a more precise (integral) version of the conjecture, which has strong arithmetic consequences.

\begin{thm}\label{mPR}
Let $g \in S_{2}(\Gamma_{0}(N))$ be a newform. Let $p\nmid 2N$ be a prime and let $\lambda\mid p$ be a prime of the 
Hecke field of $g$. Suppose that either $a_p(g)=0$ or $\lambda\nmid a_p(g)$ (that is, $g$ is ordinary with respect to $\lambda$). 
Then Conjecture \ref{PR} holds.
\end{thm}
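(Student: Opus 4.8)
The plan is to deduce Conjecture~\ref{PR} from the two‑variable zeta element over a well‑chosen auxiliary imaginary quadratic field $L$, exploiting two facts proved earlier: its cyclotomic specialisation is the Beilinson--Kato element over $L$ (Theorem~\ref{2varZ-1varZ-thm} in the ordinary case, Theorem~\ref{2varZ-1varZ-thm-ss} in the supersingular case), and the Bloch--Kato logarithm of the localisation at $\bar v$ of its specialisation at the identity character computes the square of a Heegner point, by the $p$‑adic Waldspurger formula (Lemma~\ref{GRL=logheegner}, Lemma~\ref{GRL=logheegner-ss} and Theorem~\ref{BDPformula}). Write $\cdot=\emptyset$, $\circ=\ord$ when $\lambda\nmid a_p(g)$, and $\cdot=\circ\in\{+,-\}$ (either choice) when $a_p(g)=0$; in both cases the constructions of Sections~\ref{BFord}--\ref{BFss} apply to $g$ and to any imaginary quadratic field $L$ satisfying \eqref{h1}, \eqref{h2}, \eqref{h3} (or \eqref{h_irr}), and -- when $p=3$ and $a_p(g)=0$ -- \eqref{h4}, which is part of the hypothesis of the theorem. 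By Kato's explicit reciprocity law \cite[Thm.~12.5]{K}, $\loc_p(z_{\Kato}(g))\in H^1_f(\BQ_p,V(1))$ if and only if $L(1,g)=0$; since $\log^{\eta_\omega}_{BK}$ is an isomorphism from the one‑dimensional $F_\lambda$‑space $H^1_f(\BQ_p,V(1))$ onto $F_\lambda$, under the hypothesis $L(1,g)=0$ it suffices to produce $P\in A(\BQ)\otimes\BQ$ with $\log^{\eta_\omega}_{BK}(\loc_p(z_{\Kato}(g)))\doteq\log_{\omega_A}(P)^2$ and $P\neq 0\iff\ord_{s=1}L(s,g)=1$. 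The concluding ``in particular'' then follows: $A(\BQ)\otimes\BQ_p$ embeds $F\otimes\BQ_p$‑linearly into $A(\BQ_p)\otimes\BQ_p$, on which the $p$‑adic logarithm is an $F\otimes\BQ_p$‑linear isomorphism onto a free rank‑one $F\otimes\BQ_p$‑module, so $\log_{\omega_A}$ does not vanish on any non‑torsion point, and the asserted equivalence is then immediate from (1) and (2).

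Assume $L(1,g)=0$ and argue by the sign $\epsilon(g_{/\BQ})$. \emph{The case $\epsilon(g_{/\BQ})=-1$} (so $\ord_{s=1}L(s,g)$ is odd, hence $\geq 1$). Choose an imaginary quadratic field $L$ in which $p$ splits, satisfying \eqref{h1}, \eqref{h3}/\eqref{h_irr} (the latter automatic in the supersingular case), the Heegner hypothesis \eqref{Heeg} for $(g,L)$, and $L(1,g\otimes\chi_L)\neq 0$; such $L$ exist by the non‑vanishing of central values of quadratic twists, since \eqref{Heeg} forces $\epsilon(g_{/L})=-1$ and hence $\epsilon(g\otimes\chi_L/\BQ)=\epsilon(g_{/L})\,\epsilon(g_{/\BQ})=+1$ (the non‑vanishing input being of the kind used in the proof of Proposition~\ref{anrk-prop}). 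Form $\CZ^{\cdot}(g_{/L})$ and apply the augmentation $\Lambda_L\to\BZ_p$ to obtain $\phi_{\bf 1}(\CZ^{\cdot}(g_{/L}))$, a class over $L$. By Theorem~\ref{2varZ-1varZ-thm}/Theorem~\ref{2varZ-1varZ-thm-ss} followed by evaluation at the trivial character, together with the interpolation formulae of Theorem~\ref{pcycQ}/Theorem~\ref{pcycQss} and the algebraicity of Theorem~\ref{L-Alg}, this class is a non‑zero $F_\lambda^\times$‑multiple of the restriction of $z_{\Kato}(g)$ to $G_L$: the companion term is a multiple of $z_{\Kato}(g\otimes\chi_L)$ with coefficient a multiple of $L(1,g)=0$ and drops out, while the surviving coefficient is a non‑zero multiple of the algebraic number $L(1,g\otimes\chi_L)/\Omega^+_{g\otimes\chi_L}$. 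On the other hand, localising at $\bar v$ and invoking Lemma~\ref{GRL=logheegner}(i)/Lemma~\ref{GRL=logheegner-ss}(i) computes $\log^{\eta_g}_{BK}(\loc_{\bar v}(\phi_{\bf 1}(\CZ^{\cdot}(g_{/L}))))$ as an explicit algebraic multiple of $u_L\,(\log_{\omega_g}(y_L))^2$, with $y_L$ the Heegner point and $u_L\in\BZ_p^{\ur,\times}$ the unit of those lemmas. Since the localisation at $\bar v$ of a class inflated from $\BQ$ coincides, under $L_{\bar v}\cong\BQ_p$, with its localisation at $p$, comparing the two computations yields
\[
\log^{\eta_\omega}_{BK}(\loc_p(z_{\Kato}(g))) = u_L\,c_L\,(\log_{\omega_g}(y_L))^2,\qquad c_L\in F_\lambda^\times .
\]
Finally, $L(1,g\otimes\chi_L)\neq 0$ gives $A_{g\otimes\chi_L}(\BQ)\otimes\BQ=0$ by Kolyvagin--Rubin, so the $\chi_L$‑eigenspace of $A(L)\otimes\BQ$ vanishes and the image $P$ of $y_L$ lies in $A(\BQ)\otimes\BQ$, with $\log_{\omega_g}(y_L)\doteq\log_{\omega_A}(P)$ after comparing N\'eron differentials; and the Gross--Zagier formula for $(g,L)$ shows $y_L$ is non‑torsion exactly when $L'(1,g)\,L(1,g\otimes\chi_L)\neq 0$, i.e.\ when $\ord_{s=1}L(s,g)=1$. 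This establishes (1) and (2) in this case, modulo the nature of $c_L$ and $u_L$ (when $\ord_{s=1}L(s,g)\geq 3$ one has $y_L$ torsion, hence $\loc_p(z_{\Kato}(g))=0$ and $P=0$).

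\emph{The case $\epsilon(g_{/\BQ})=+1$} (so $\ord_{s=1}L(s,g)$ is even and, since $L(1,g)=0$, is $\geq 2$, forcing $P=0$). Here it suffices to show $\loc_p(z_{\Kato}(g))=0$. Choose an imaginary quadratic field $L$ in which $p$ splits, satisfying \eqref{h1}, \eqref{h3}/\eqref{h_irr}, with $\epsilon(g_{/L})=+1$ (arranged by prescribing the behaviour of $L$ at the primes dividing $N$, allowing as in the main text that $D_L$ and $N$ share factors), and with $L(1,g\otimes\chi_L)\neq 0$ (available, since $\epsilon(g\otimes\chi_L/\BQ)=\epsilon(g_{/L})\,\epsilon(g_{/\BQ})=+1$). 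As above, $\phi_{\bf 1}(\CZ^{\cdot}(g_{/L}))$ is a non‑zero $F_\lambda^\times$‑multiple of the restriction of $z_{\Kato}(g)$ to $G_L$; but Lemma~\ref{GRL=logheegner}(ii)/Lemma~\ref{GRL=logheegner-ss}(ii) gives $\loc_{\bar v}(\phi_{\bf 1}(\CZ^{\cdot}(g_{/L})))=0$, whence $\loc_{\bar v}(z_{\Kato}(g))=0$ and therefore $\loc_p(z_{\Kato}(g))=0$, as required. (The sub‑case $\ord_{s=1}L(s,g)=0$ is vacuous for Conjecture~\ref{PR}.)

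The substantive ingredients -- the construction and integrality of $\CZ^{\cdot}(g_{/L})$, its two explicit reciprocity laws, and the identifications of Theorem~\ref{2varZ-1varZ-thm}/Theorem~\ref{2varZ-1varZ-thm-ss} and Lemma~\ref{GRL=logheegner}/Lemma~\ref{GRL=logheegner-ss} -- are supplied by Sections~\ref{BFord}--\ref{BFss}, resting on the analysis of the CM Tate lattices in Section~\ref{CMHF}. Within the present deduction the one remaining point is to show that the constant $u_L\,c_L$ above lies in $F^\times$: the Euler‑factor and period contributions to $c_L$ are controlled by $\mathfrak{g}(\chi_L)^2=\pm D_L\in\BQ^\times$ and by the interpolation formulae and are seen to be in $F^\times$, so the delicate point -- which I expect to be the main obstacle -- is the algebraicity of $u_L\in\BZ_p^{\ur,\times}$. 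I intend to handle this by relating the Tate lattice $\BT_1$ to a family of CM motives and invoking a version of the Perrin-Riou conjecture for ordinary CM newforms, proved independently in Appendix~\ref{A-PRConj-CM}; for a suitable choice of $L$ this gives $u_L\in\BZ_{(p)}^\times$ and completes part (1). The weak version -- the concluding ``in particular'' -- requires none of this, being insensitive to the non‑zero units $c_L$ and $u_L$ and following from (1) and (2) as in the first paragraph.
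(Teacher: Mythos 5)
Your argument is essentially the paper's own proof: the same auxiliary imaginary quadratic field chosen according to the sign of $\epsilon(g_{/\BQ})$ with $L(1,g\otimes\chi_L)\neq 0$, the same two inputs (the cyclotomic specialisation identity of Theorem~\ref{2varZ-1varZ-thm}/\ref{2varZ-1varZ-thm-ss} and the Waldspurger-type formula of Lemma~\ref{GRL=logheegner}/\ref{GRL=logheegner-ss}), the same identification of $P$ with the $+$-component of the Heegner point using $A_{g\otimes\chi_L}(\BQ)\otimes\BQ=0$, and the same appeal to the Gross--Zagier formula; so the structure is sound. Two points, however, need repair. First, drop the requirement \eqref{h3}/\eqref{h_irr} on $L$: none of the rational (up to $F^\times$) statements you invoke needs it, and insisting on it actually shrinks the scope of the theorem, since when $\overline{T}^{G_\BQ}\neq 0$ (an Eisenstein situation allowed by the hypotheses of Theorem~\ref{mPR}) no imaginary quadratic field satisfies \eqref{h3} or \eqref{h_irr}, so your choice of $L$ would be impossible; the paper's proof imposes only $(D_L,N)=1$, $p$ split, $L(1,g\otimes\chi_L)\neq 0$, and \eqref{Heeg} when $\epsilon(g_{/\BQ})=-1$ (existence by \cite{BFH} rather than the Rohrlich-type input of Proposition~\ref{anrk-prop}, which controls anticyclotomic twists, not quadratic ones).

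Second, the step you flag as the main obstacle --- showing $u_L\in F^\times$ --- is genuinely incomplete as written, and part (1) of Conjecture~\ref{PR} does not follow without it; moreover the mechanism you propose (an explicit comparison of $\BT_1$ with a family of CM motives) is precisely the route the paper declines to pursue. The actual argument is softer: since $u_L$ depends only on $L$, one applies the very identity \eqref{up-to-uL} --- with the same fixed $L$, so with the same $u_L$ --- to a family of auxiliary ordinary CM newforms $g_\psi$, $\psi\in\mathfrak{X}$, for which the relevant version of Perrin-Riou's conjecture is proved unconditionally in Theorem~\ref{A-PRConj-CMthm}; comparing the two expressions for $\log_{BK}^{\eta_\omega}(\loc_p z_{\Kato}(g_\psi))$ gives $u_L\in F_\psi^\times$ for every $\psi$, and Lemma~\ref{B-aux-lem} (for $L\neq\BQ(\sqrt{-1}),\BQ(\sqrt{-2})$, a harmless extra requirement on $L$) yields $\bigcap_\psi F_\psi=\BQ$, whence $u_L\in\BQ^\times\cap\BZ_p^{\ur,\times}=\BZ_{(p)}^\times$. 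With that substitution, and the removal of \eqref{h3}, your proof coincides with the paper's.
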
 

As every elliptic curve $E_{/\BQ}$ is modular, we conclude:
\begin{cor}\label{PR-CM}
Let $E_{/\BQ}$ be an elliptic curve with conductor $N$. Let $p\nmid 6N$ be a prime. Then Conjecture \ref{PR0} holds.
\end{cor}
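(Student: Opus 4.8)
\textbf{Reduction of Corollary \ref{PR-CM} to Theorem \ref{mPR}.} By modularity every elliptic curve $E_{/\BQ}$ of conductor $N$ corresponds to a newform $f_E\in S_2(\Gamma_0(N))$ with trivial character and $L(s,f_E)=L(s,E)$, whose Hecke field is $\BQ$; thus $\lambda\mid p$ is simply $(p)$, and $E$ is itself a $\GL_2$-type abelian variety of dimension $1=[\BQ:\BQ]$ in the relevant isogeny class, a N\'eron differential $\omega_E$ serving as the basis $\omega_A$ of Conjecture \ref{PR}. Since $p\nmid 6N$ we have $p\ge 5$ and $p\nmid 2N$; if $E$ is ordinary at $p$ then $a_p(E)$ is a $p$-adic unit, so $\lambda\nmid a_p(E)$, while if $E$ is supersingular at $p$ then $|a_p(E)|\le 2\sqrt p<p$ forces $a_p(E)=0$. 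Hence the hypotheses of Theorem \ref{mPR} hold for $(f_E,p,\lambda)$, and Conjecture \ref{PR} for $f_E$ — in which, as $F=\BQ$, the symbol `$\doteq$' means equality up to $\BQ^{\times}$ — is precisely Conjecture \ref{PR0} for $(E,p)$ after matching $z_E$ with $z_{\mathrm{Kato}}(f_E)$ under the standard normalisation of the modular Galois representation, $\log_\omega$ with $\log_{\omega_E}$, and $A(\BQ)\otimes\BQ$ with $E(\BQ)\otimes\BQ$. So the corollary follows at once, and the real content is Theorem \ref{mPR}.

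\textbf{Strategy for Theorem \ref{mPR}.} Following \S\ref{ssPRo}, I would treat the case $\epsilon(f_E/\BQ)=-1$ first: choose an imaginary quadratic field $L$ satisfying \eqref{h2} and the standing hypotheses of \S\ref{IQF} in which every prime dividing $N$ splits, so that the Heegner hypothesis \eqref{Heeg} holds for $(f_E,L)$ and $\epsilon(f_E/L)=-1$. The engine is the two-variable zeta element $\CZ^{\cdot}(f_E/L)\in H^1_{\rel,\circ}(L,T(1)\hat\otimes\Lambda_L)$ of \S\ref{BFord} (ordinary) and \S\ref{BFss} (supersingular), produced from the Beilinson--Flach element attached to $f_E$ and the canonical CM Hida family $\bh_v$; its construction rests on \eqref{Ind-eq} / Theorem \ref{BT-thm1} (the Tate lattice $\BT_1=\widetilde\BT$ is integrally induced from $G_L$) and on Theorems \ref{BT-thm2}, \ref{BT-thm2-ss} ($\CBF^{\cdot}(f_E/L)$ actually lies in the closed Tate lattice $\BT_1$). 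Two reciprocity laws then pin this element down: Explicit Reciprocity Law I at $v$ (Theorems \ref{ERLIint-thm}, \ref{ERLIint-thm-ss}), combined with Theorems \ref{2varZ-1varZ-thm}, \ref{2varZ-1varZ-thm-ss}, identifies the cyclotomic projection of $\CZ^{\cdot}(f_E/L)$ with the Beilinson--Kato element over $L$, a $\Lambda_L^{\cyc}$-combination of $\bz(f_E)$ and $\bz(f_E^L)$; and Explicit Reciprocity Law II at $\bar v$, via Propositions \ref{GRL=BDPL}, \ref{GRL=BDPL-ss} and the Bertolini--Darmon--Prasanna formula (Theorem \ref{BDPformula}), yields $\log_{BK}^{\eta_\omega}(\loc_{\bar v}(\phi_{\bf 1}(\CZ^{\cdot}(f_E/L))))\doteq(\log_{\omega_E}(y_L))^2$ for a Heegner point $y_L\in E(L)$ (Lemmas \ref{GRL=logheegner}, \ref{GRL=logheegner-ss}).

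\textbf{Assembling the conjecture.} Projecting to the $\pm$-eigenspaces for $\Gal(L/\BQ)$ via \eqref{spl}, \eqref{spl+}: the $+$-component of $\phi_{\bf 1}(\CZ^{\cdot}(f_E/L))$ is, up to $\BQ^{\times}$, $\loc_p(z_{\mathrm{Kato}}(f_E))$, and the $+$-component of $y_L$ is, up to $\BQ^{\times}$, the trace of the Heegner point $P_E\in E(\BQ)\otimes\BQ$. Combining the local identity at $\bar v$ with the injectivity of $\loc_{\bar v}$ on the rank-one Iwasawa cohomology (Theorems \ref{cycIwL-Box}, \ref{cycIwQ}) gives part (1): $\log_\omega(\loc_p(z_{\mathrm{Kato}}(f_E)))\doteq\log_{\omega_E}(P_E)^2$. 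Part (2), $P_E\neq 0\iff\ord_{s=1}L(s,E)=1$, then follows from the Gross--Zagier formula together with the $p$-converse theorems (Theorems \ref{corB_thmA}, \ref{corB'}, and Gross--Zagier--Kolyvagin), once $L$ is chosen with $L(1,f_E^L)\neq 0$ — possible by Rohrlich-type nonvanishing of quadratic twists. The case $\epsilon(f_E/\BQ)=+1$ is symmetric: choosing $L$ with $\epsilon(f_E/L)=+1$, Propositions \ref{GRLvan-prop}, \ref{GRLvan-prop-ss} force $\loc_{\bar v}(\phi_{\bf 1}(\CZ^{\cdot}(f_E/L)))=0$, hence $\loc_p(z_{\mathrm{Kato}}(f_E))=0$ and $\ord_{s=1}L(s,E)\ge 2$. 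One further needs the unit $u_L$ of Lemmas \ref{GRL=logheegner}, \ref{GRL=logheegner-ss} to be algebraic and, for suitable $L$, to lie in $\BZ_{(p)}^{\times}$, which is supplied by a Perrin-Riou statement for ordinary CM forms.

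\textbf{Main obstacle.} Granting Theorem \ref{mPR}, the corollary has no obstacle. Inside Theorem \ref{mPR}, the genuinely hard input is \eqref{Ind-eq} / Theorem \ref{thmZ_0}(b): the integral induced structure of the $\Lambda$-adic Tate lattice of the residually reducible, non-$p$-distinguished CM Hida family $\bh_v$, established in \S\ref{CMHF}--\S\ref{BFord} via reflexive closures, Ohta's $p$-adic Eichler--Shimura theory, the congruence-ideal descriptions of Bella\"iche--Dimitrov and Betina--Dimitrov--Pozzi, and the exclusion of an anomalous reciprocity law by means of an auxiliary newform $g$ as in Proposition \ref{anrk-prop} (whose existence rests on Rohrlich's anticyclotomic nonvanishing and the $p$-adic Waldspurger formula). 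Relative to that, the only remaining point needing care — but no new idea — is the bookkeeping of the various periods ($\Omega_g$, N\'eron periods, Petersson norm, the Katz CM periods $(\Omega_p,\Omega_\infty)$) and the tracking of the $\BQ^{\times}$- versus $\BZ_{(p)}^{\times}$-scaling ambiguities.
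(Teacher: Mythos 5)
Your reduction is exactly the paper's proof of the corollary: modularity plus the observation that for $p\ge 5$ of good reduction either $\lambda\nmid a_p(E)$ or (by the Hasse bound) $a_p(E)=0$, so Theorem \ref{mPR} applies, and your sketch of that theorem follows the paper's own argument (two-variable zeta element, the two reciprocity laws, the Bertolini--Darmon--Prasanna formula and Heegner point, and $u_L\in\BZ_{(p)}^{\times}$ via the ordinary CM case). One small remark: for part (2) the paper needs only the Gross--Zagier formula combined with the choice of $L$ with $L(1,f_E\otimes\chi_L)\neq 0$ (which forces the minus-component of $y_L$ to vanish), so the $p$-converse theorems you invoke are superfluous there.
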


Since for a CM form $g\in S_2(\Gamma_0(N))$ and $p\nmid 2N$, either $\lambda\nmid a_p(g)$ for all $\lambda\mid p$ (when $p$ splits in the CM field) or $a_p(g) = 0$ (when $p$ is inert in the CM field), this theorem also implies:
\begin{cor}\label{PRCM}
Let $g \in S_{2}(\Gamma_{0}(N))$ be a CM newform. Let $p\nmid 2N$ be a prime.
Then Conjecture \ref{PR} holds.
\end{cor}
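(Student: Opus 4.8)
The plan is to deduce Corollary~\ref{PRCM} directly from Theorem~\ref{mPR}: it suffices to verify that a CM newform $g \in S_{2}(\Gamma_{0}(N))$ with $p \nmid 2N$ automatically satisfies the hypothesis of that theorem, and in fact satisfies it for \emph{every} prime $\lambda \mid p$ of the Hecke field $F$.

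First I would recall the explicit description of CM newforms. Write $g = \theta_{\psi}$ for an algebraic Hecke character $\psi$ of an imaginary quadratic field $K$ with infinity type $(-1,0)$ and conductor $\mathfrak{f}_{\psi}$, as in \S\ref{CMHF}. Then $g$ has level $N = D_{K}\cdot N_{K/\BQ}(\mathfrak{f}_{\psi})$, so $p \nmid N$ forces $p \nmid D_{K}$ and $p \nmid N_{K/\BQ}(\mathfrak{f}_{\psi})$; in particular $p$ is unramified in $K$ and $(\mathfrak{f}_{\psi}, p) = 1$. Hence $p$ either splits or is inert in $K$, and these are the only two cases to treat.

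If $p$ is inert in $K$, then there is no nonzero integral ideal of $\CO_{K}$ of norm $p$, so the $q$-expansion \eqref{defCM} shows that the $p$-th Fourier coefficient $a_{p}(g) = a_{p}(\theta_{\psi})$ vanishes; thus the hypothesis of Theorem~\ref{mPR} holds for every $\lambda \mid p$. If $(p) = v\ov{v}$ splits in $K$, then the condition $(\mathfrak{f}_{\psi},p) = 1$ established above places us in the situation of \S\ref{CMHF}, where it is noted that $\theta_{\psi}$ is $p$-ordinary: the coefficient of $q^{p}$ equals $\psi_{v}(p) + \psi_{\ov{v}}(p)$, and with respect to $\iota_{p}$ one of these two terms has valuation $k-1 = 1$ (the weight being $k=2$) while the other has valuation $0$, so $a_{p}(g)$ is a $\lambda$-adic unit and $g$ is ordinary with respect to $\lambda$. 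Since the same argument applies after replacing $\iota_{p}$ by another embedding above $p$ (equivalently, after replacing $g$ by a Galois conjugate), the hypothesis of Theorem~\ref{mPR} again holds for all $\lambda \mid p$ in this case as well.

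In both cases Theorem~\ref{mPR} applies and yields Conjecture~\ref{PR} for $(g,\lambda)$, for every $\lambda \mid p$; this is exactly the assertion of Corollary~\ref{PRCM}. There is no genuine obstacle here: all the substance of the argument is contained in Theorem~\ref{mPR} (and hence ultimately in the existence of the two-variable zeta element and its explicit reciprocity laws), while the only additional input needed is the elementary dichotomy for the behaviour of $p$ in the CM field $K$ together with the standard ordinariness of CM forms at split primes.
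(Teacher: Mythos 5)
Your proposal is correct and follows essentially the same route as the paper, which deduces Corollary~\ref{PRCM} from Theorem~\ref{mPR} precisely via the dichotomy that $p$ split in the CM field gives $\lambda \nmid a_p(g)$ for all $\lambda \mid p$, while $p$ inert gives $a_p(g)=0$. Your additional verification that $p \nmid N$ forces $p$ unramified in $K$ and $(\mathfrak{f}_\psi,p)=1$ just makes explicit what the paper leaves implicit.
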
 
However, a proof of a version of Conjecture \ref{PR} in the case that $g$ is a CM form that is ordinary at $\lambda\mid p$ 
(see Theorem \ref{A-PRConj-CMthm}), based on results in \cite{LLZcr}, \cite{BDP2}, and \cite{PR},
plays a role in our proof of the general case of Theorem \ref{mPR}.

\begin{remark}
For CM elliptic curves of analytic rank one and $p>2$ an ordinary prime, a result towards Conjecture \ref{PR} is due to Rubin \cite{Ru1}.  
Rubin's result preceded Conjecture \ref{PR}, and the formulation of Conjecture \ref{PR} was in fact inspired by Rubin's theorem.
\end{remark}

\subsubsection{Proof of Theorem \ref{mPR}: ordinary case}\label{ss:PRconj-proof-ord}
We first give the proof in the ordinary case, that is, in the case $\lambda\nmid a_p(g)$.

Suppose $L(1,g) = 0$.
Let $L$ be an imaginary quadratic field of discriminant $-D_L<0$ such that 
\begin{itemize}
\item $(D_L,N) = 1$,
\item the prime $p$ splits in $L$: $p=v\bar v$,
\item $\eps(g') = +1$ and  $\ord_{s=1} L(s,g') = 0$, $g' = g\otimes\chi_L$.
\end{itemize}
The existence of $L$ follows from \cite{BFH}.
Note that we do not (yet) require that $L$ also satisfies (Heeg). In fact, if $\epsilon(g) =+1$ then this would not be possible.
Let $(\alpha$, $\omega$, $\gamma,\gamma')$ be a quadruple as in \S\ref{BK-overL}. 
If (irr$_{\BQ}$) holds, then we choose $\omega\in S_{g,\CO}$ to be good, and $\gamma\in T_{g,\CO}$ and $\gamma'\in T_{g',\CO}$
such that $\gamma^\pm$ is an $\CO$-basis of $T_{\CO,g}^\pm$ and $(\gamma')^\pm$ is an $\CO$-basis of $T_{\CO,g'}^\pm$.

Let $z(g/L) \in H^1(L,V(1))$ be the image of $\CZ(g/L)$ under the map induced by specializing $\Lambda_L$ at the trivial character.
Then it follows from Theorem \ref{2varZ-1varZ-thm} and the choice of $L$ that we have 
\begin{equation}\label{comp-with-Kato}
z(g/L) = c(\omega,\gamma,\gamma')\mathfrak{g}(\chi_L)(1-1/\alpha)^2 \frac{L(1,g')}{\Omega^-} z_\Kato(g) \in H^1(L,V(1)),
\end{equation}
where $c(\omega,\gamma,\gamma') \in F^\times$ is as in Proposition \ref{ERLIint-prop}. 
Since $z_\Kato(g) \in H^1(\BQ,V(1)) = H^1(L,V(1))^+$, both
$H^1(L_v,V(1))$ and $H^1(L_{\bar v}, V(1))$ are canonically identified with $H^1(\BQ_p,V(1))$. 
In particular, we see that 
\begin{equation}\label{zkato-zg}
\loc_p(z_\Kato(g))\neq 0 \iff \loc_{\bar v} (z(g/L)) \neq 0.
\end{equation}
\medskip

\noindent{\underline{\em Proof of the weak version}:}
Suppose first that $\eps(g) = +1$, so $\ord_{s=1}L(s,g)$ is even (and $\neq 1$, in particular). 
We then need to show that $\loc_p (z_\Kato(g))= 0$.
By the choice of $L$, $\eps(g/L) = \eps(g)\eps(g') = +1$. But then 
$\loc_{\bar v}(z(g/L)) = 0$ by Lemma \ref{GRL=logheegner}(ii), which implies $\loc_p(z_\Kato(g)) = 0$ by \eqref{zkato-zg}.

Suppose next that $\eps(g) = -1$.  In this case, we can - and do - require that $L$ also satisfies the Heegner hypothesis \eqref{Heeg}.
It then follows from Lemma \ref{GRL=logheegner}(i) that 
$$
(1-\alpha/p)(1-1/\alpha)^{-1} \log_{BK}^{\eta_g}(\loc_{\bar v}(z(g/L))) = - u_L (1-a(p)p^{-1}+p^{-1})^2 (\log_{\omega_g}(y_L))^2.
$$
Here $y_L \in J_0(N)(L)$ is the Heegner point associated with $L$ and $\log_{\omega_g}$ is the $p$-adic logarithm
on $J_0(N)(L_{\bar v})$ associated with the differential $\omega_g$ (which corresponds to the differential
on $X_0(N)$ that pulls back to $2\pi i g(z) dz$ under the usual complex uniformization).  
Combining this with \eqref{comp-with-Kato} yields
\begin{equation}\label{log-Kato-elt-1}
\log^{\eta_{\omega_g}}_{BK}(\loc_p z_\Kato(g)) = - u_L \frac{(1-a(p)p^{-1}+p^{-1})}{c(\omega,\gamma,\gamma')\mathfrak{g}(\chi_L) L(1,g')/\Omega^-}
(\log_{\omega_g}(y_L))^2.
\end{equation}
It follows immediately that $\loc_p(z_\Kato(g)) \neq 0$ if and only if $y_L$ is non-torsion, and by the Gross--Zagier formula this happens if and only if
$\ord_{s=1}L(s,g) = 1$, which completes the proof of the weak form of the Perrin-Riou conjecture.
\medskip

\noindent{\underline{\em Proof of the stronger form}:}
To prove the stronger form of the conjecture, we can and do assume that $L\neq \BZ(\sqrt{-1}), \BQ(\sqrt{-2})$.
We fix a quotient map $$\phi_A:J_0(N)\rightarrow A,$$ which also determines a quotient map $J_1(N)\rightarrow A$
by composition with the natural map $J_1(N)\rightarrow J_0(N)$. We then let $P_L = \phi_A(y_L) \in A(L)$.
Then $\phi_A^*(\omega_A) = c_g\cdot \omega_g$ for some $0\neq c_g\in F^\times$, and so 
$\log_{\omega_A}(P_L) = c_g\log_{\omega_A}(y_L)$.   
Without loss of generality we may assume that $\omega=\omega_g$ and $\omega$ and $\omega_A$ are identified by 
the quotient map $J_1(N)\rightarrow A$. 

We can then rewrite \eqref{log-Kato-elt-1} as 
\begin{equation*}\label{log-Kato-elt-2}
\log^{\eta_{\omega}}_{BK}(\loc_p z_\Kato(g)) = - u_L c_g^{-2}\frac{(1-a(p)p^{-1}+p^{-1})}{c(\omega,\gamma,\gamma')\mathfrak{g}(\chi_L) L(1,g')/\Omega^-}
(\log_{\omega_A}(P_L))^2.
\end{equation*}
Let $P_L^\pm = \frac{1\pm c}{2} P_L$.
Then $P_L^- \in (A(L)\otimes_{\CO_F} F)^- = A'(\BQ)\otimes_{\CO_F} F$, where $A'$  is the $\GL_2$-type abelian variety associated with $g'$ that is just the $L$-twist of $A$.  
Since $L(1,g') \neq 0$ by the choice of $L$, it follows (say from Kato \cite[Thm.~14.2]{K}) that $A'(\BQ)\otimes_{\CO_F} F = 0$, hence $P_L^- = 0$
and so $\log_{\omega_A} P_L = \log_\omega P_L^+$. Let $P = P_L^+\in (A(L)\otimes_{\CO_F} F)^+ = A(\BQ)\otimes_{\CO_F} F$. We then have
\begin{equation}\label{up-to-uL}
\log_{BK}^{\eta_\omega} (\loc_p z_\Kato(g)) = c \cdot (\log_{\omega_A}(P))^2,
\end{equation}
with
$$
c = - u_L \frac{(1-a(p)p^{-1}+p^{-1})}{c_g^2c(\omega,\gamma,\gamma')\mathfrak{g}(\chi_L)L(1,g')/\Omega^-}.
$$
Since $\frac{(1-a(p)p^{-1}+p^{-1})}{c_g^2c(\omega,\gamma,\gamma')\mathfrak{g}(\chi_L) L(1,g')/\Omega^-} \in F^\times$,
to prove part (1) of the conjecture, it remains to note that $u_L \in F^\times$. We in fact show that $u_L\in\BZ_{(p)}^\times$; this is explained below.
Part (2) of the conjecture is an immediate consequence of $P\neq 0 \iff y_L \neq 0$ and the Gross--Zagier formula for $y_L$.
\medskip

\noindent{\underline{\em Proof that $u_L \in \BZ_{(p)}^\times$}:}
To determine $u_L$ and complete the proof of the stronger form of the Perrin-Riou Conjecture, we exploit
a version of the conjecture proved for CM forms in Theorem \ref{A-PRConj-CMthm}.   Theorem \ref{A-PRConj-CMthm}
applies to the CM forms associated with the characters $\psi\in \mathfrak{X}$, for $\mathfrak{X}$ as in \S\ref{B-auxiliary} for 
the chosen field $L$. It then follows from comparing Theorem \ref{A-PRConj-CMthm} with \eqref{up-to-uL} for $g=g_\psi$ that
$u_L \in F_\psi^\times$ for all $\psi\in\mathfrak{X}$. In particular, $$u_L \in \cap_{\psi\in \mathfrak{X}} F_\psi.$$ The latter equals
$\BQ$ by Lemma \ref{B-aux-lem}. This shows that $u_L \in \BQ^\times\cap \BZ_p^{\ur,\times} = \BZ_{(p)}^\times$.
\medskip

This completes the proof of Theorem \ref{mPR} in the ordinary case.

\begin{remark} The two-variable zeta element underlies the above proof. 
\end{remark}

\subsubsection{Proof of Theorem \ref{mPR}: supersingular case}
We now consider the supersingular case, that is, assume $a_{p}(g)=0$. The proof is analogous to the ordinary case.

Suppose $L(1,g) = 0$. Let $L$ be an imaginary quadratic field as in the second paragraph of subsection \ref{ss:PRconj-proof-ord}. 
Let $z^{+}(g/L) \in H^1(L,V(1))$ be the image of the zeta element $\CZ^{+}(g/L)$ under the map induced by specializing $\Lambda_L$ at the trivial character. (Alternatively, one may consider $z^{-}(g/L)$.)

In view of  Theorem \ref{2varZ-1varZ-thm-ss} and the choice of $L$ we have 
\begin{equation}\label{comp-with-Kato-ss}
z^{+}(g/L) = c^{+}(\omega,\gamma,\gamma')\mathfrak{g}(\chi_L) \frac{L(1,g')}{\Omega^-} z_\Kato(g) \in H^1(L,V(1)),
\end{equation}
where $c^{+}(\omega,\gamma,\gamma') \in F^\times$ is as in Proposition \ref{ERLIint-prop-ss}. 
Since $z_\Kato(g) \in H^1(\BQ,V(1)) = H^1(L,V(1))^+$, both
$H^1(L_v,V(1))$ and $H^1(L_{\bar v}, V(1))$ are canonically identified with $H^1(\BQ_p,V(1))$. 
In particular, we see that 
\begin{equation}\label{zkato-zg-ss}
\loc_p(z_\Kato(g))\neq 0 \iff \loc_{\bar v} (z^{+}(g/L)) \neq 0.
\end{equation}
\medskip

\noindent{\underline{\em Proof of the weak version}:} 
Suppose first that $\eps(g) = +1$, so $\ord_{s=1}L(s,g)$ is even (and $\neq 1$, in particular). 
We then need to show that $\loc_p (z_\Kato(g))= 0$.
By the choice of $L$, $\eps(g/L) = \eps(g)\eps(g') = +1$. But then 
$\loc_{\bar v}(z^{+}(g/L)) = 0$ by Lemma \ref{GRL=logheegner-ss}(ii), which implies $\loc_p(z_\Kato(g)) = 0$ by \eqref{zkato-zg}.

Suppose next that $\eps(g) = -1$.  In this case, we further suppose that $L$ also satisfies the Heegner hypothesis \eqref{Heeg}.
It then follows from Lemma \ref{GRL=logheegner-ss}(i) that 
$$
\frac{-2p}{p+1}\cdot \log_{BK}^{\eta_g}(\loc_{\bar v}(\phi_{\bf 1}(\CZ^{+}(g/L)))
= - u_L (1-a(p)p^{-1}+p^{-1})^2 (\log_{\omega_g}(y_L))^2.
$$
Here $y_L \in J_0(N)(L)$ is the Heegner point associated with $L$ and $\log_{\omega_g}$ is the $p$-adic logarithm
on $J_0(N)(L_{\bar v})$ associated with the differential $\omega_g$. 
Combining this with \eqref{comp-with-Kato-ss} yields
\begin{equation}\label{log-Kato-elt-1}
\log^{\eta_{\omega_g}}_{BK}(\loc_p z_\Kato(g)) = 
\frac{u_L}{2p/p+1} \cdot \frac{(1-a(p)p^{-1}+p^{-1})^{2}}{c^{+}(\omega,\gamma,\gamma')\mathfrak{g}(\chi_L) L(1,g')/\Omega^-}
(\log_{\omega_g}(y_L))^2.
\end{equation}
It follows immediately that $\loc_p(z_\Kato(g)) \neq 0$ if and only if $y_L$ is non-torsion, and by the Gross--Zagier formula this happens if and only if
$\ord_{s=1}L(s,g) = 1$, which completes the proof of the weak form of the Perrin-Riou conjecture.
\medskip

\noindent{\underline{\em Proof of the stronger form}:} Given the preceding analysis, one may proceed  exactly as in the ordinary case (cf.~subsection \ref{ss:PRconj-proof-ord}). 

\begin{remark} 

The supersingular case can also be approached via Kobayashi's $p$-adic Gross--Zagier formula \cite{Ko1} (cf.~\cite[App.~B]{BKO2}). This approach does not rely on the two-variable zeta element. 
\end{remark}

{\appendix

\section{The ordinary CM case}\label{A-PRConj-CM}
In this appendix we present a proof of a version of the Perrin-Riou Conjecture for certain CM modular forms at good ordinary primes $p>2$. Our proof essentially pieces together results from \cite{LLZcr}, \cite{BDP2}, and \cite{PR}. In particular, it does not rely on the two-variable zeta elements for auxiliary imaginary quadratic fields that are the focus of Part I of this paper or on the $p$-adic Gross--Zagier formula, which undergirds Rubin's earlier work \cite{Ru1}.  The truth of the Perrin-Riou Conjecture for the cases considered in this appendix is used in the proof of the general case given in Part I of this paper (it is used to show that the constants $u_L$ associated with the auxiliary imaginary quadratic fields $L$ belong to $\BZ_{(p)}^\times$). 

\subsection{The set-up}\label{A-setup}
Let $\overline\BQ$ be a separable algebraic closure of $\BQ$ and let $\iota_\infty:\overline\BQ\hookrightarrow \BC$ 
be an embedding. Let $K\subset\overline\BQ$ be an imaginary quadratic field of discriminant $-D_K<0$
and $\CO_K$ its ring of integers. The embedding $\iota_\infty$ identifies $K\otimes\BR$ with $\BC$.
Let $\tau\in G_\BQ$ be the involution induced via $\iota_\infty$ by complex conjugation on $\BC$ (so $\tau$ restricts to the non-trivial automorphism of $K$).

Let $\psi: K^\times\bs \BA_K^\times\rightarrow \BC^\times$ be a Hecke character with infinity type $(-1,0)$ (meaning that $\psi(z) = z^{-1}$ for $z\in K\otimes\BR = \BC$). The values $\psi(\BA_K^{\infty,\times})$ of $\psi$ on the finite ideles generate a finite extension $F_\psi\subset\BC$ of $K$, which we view as a subfield
of $\overline\BQ$ via $\iota_\infty$. Let $\mathfrak{f}_\psi$ be the conductor of $\psi$, and let $\mathfrak{f}\subset \mathfrak{f}_\psi$ be an ideal as in \cite[Thm.~2.2]{LLZcr}. Then there is a unique CM pair $(E,\alpha)$ over the ray class field $K(\mathfrak{f})$: $\End_{K(\mathfrak{f})}(E) \simeq\CO_K$ such that the induced action on $\mathrm{coLie}(E/K(\mathfrak{f}))$ is the natural action of $K$, the annihilator of $\alpha\in E(K(\mathfrak{f}))_{\tor}$ in $\CO_K$ is 
$\mathfrak{f}$, and there is an isomorphism $E(\BC)\isoarrow \BC/\mathfrak{f}$ that maps $\alpha$ to $1$.   Following
\cite[\S15.8]{K} we let $V(\psi) = H^1(E(\BC),\BQ)\otimes_K F_\psi$ and $S(\psi) = H^0(\Gal(K(\mathfrak{f})/K),\mathrm{coLie}(E/K(\mathfrak{f}))\otimes_KF_\psi)$, where the action of $\Gal(K(\mathfrak{f})/K)$ is defined in {\it op.~cit.} These are one-dimensional $F_\psi$-spaces. 
For any $F_\psi$-algebra $A$, let $V(\psi)_A = V(\psi)\otimes_{F_\psi} A$ and $S(\psi)_A = S(\psi)\otimes_{F_\psi} A$. We let
$\mathrm{per}_\psi: S(\psi)\hookrightarrow V(\psi)_\BC$ be the period map induced from the usual 
period map $\mathrm{coLie}(E/K(\mathfrak{f}))\rightarrow H^1(E(\BC),\BC)$.  

Let $p$ be a prime and $\lambda\mid p$ a prime of $F_\psi$. We fix an embedding $\iota_p:\overline\BQ\hookrightarrow \overline\BQ_p$ that induces
$\lambda$. We let $G_K$ act on $V(\psi)_\lambda = V(\psi)_{F_{\psi,\lambda}}$ as described in \cite[\S15.8]{K} and denote the corresponding
character $G_K\rightarrow F_{\psi,\lambda}^\times$ by $\psi_\lambda$ (this action is the inverse of that defined in \S \ref{CMHF}). Let
$$
\widetilde{V(\psi)}_\lambda = \Ind_{K}^{\BQ} V(\psi)_\lambda.
$$
This is an irreducible two-dimensional $F_{\psi,\lambda}$-representation of $G_\BQ$. Concretely, 
$\widetilde{V(\psi)}_\lambda = V(\psi)_\lambda\oplus V(\psi)_\lambda^\tau$, where $V(\psi)_\lambda^\tau$ has the same underlying
space as $V(\psi)_\lambda$ but the action of $\sigma\in G_K$ is multiplication by $\psi_\lambda(\tau^{-1}\psi\tau)$; the action
of $\tau$ just swaps the two summands.  We similarly let
$\widetilde{V(\psi)} = \Ind_{\BC}^{\BR} V(\psi)$, which is a two-dimensional $F_\psi$-space with an action of $G_\BR = \Gal(\BC/\BR)$.

Let $g = g_\psi$ be the weight $2$ newform associated with $\psi$. For our purposes we will assume that
$\psi$ is conjugate self-dual, that is,
\begin{equation}\label{A-psi-res}
\psi|_{\BA^\times} = \chi_K |\cdot|^{-1},
\end{equation}
where $\chi_K$ is the quadratic character corresponding to the quadratic extension $K/\BQ$. This implies that
$g$ has trivial Nebentypus. In particular, $g \in S_2(\Gamma_0(N))$ is a newform of level $N = D_KN_{K/\BQ}(\mathfrak{f}_\psi)$.
We therefore freely use the notations of \S\ref{NotationPrelim}.

Note that the Hecke field $F$ of $g$ is contained in $F_\psi$, and by a mild abuse of notation we also denote by $\lambda$ the prime
of $F$ under the chosen prime $\lambda\mid p$ of $F_\psi$. We let $V(g)_\lambda = V_{F_\psi,\lambda} = V_\lambda\otimes_{F_\lambda}F_{\psi,\lambda}$.
Then $$V(g)_\lambda \simeq \widetilde{V(\psi)}_\lambda = \Ind_{G_K}^{G_\BQ} V(\psi)_\lambda.$$
Such an isomorphism can be normalized as in \cite[Lem.~15.11]{K}: Fixing an isomorphism
$s: S(\psi)\isoarrow S_{F_\psi}$ of one-dimensional $F_\psi$-spaces, there is a unique $F_{\psi,\lambda}$-isomorphism $\widetilde{V(\psi)}_\lambda\isoarrow V(g)_\lambda$ of $G_\BQ$-representations such that the isomorphism $S(\psi)_{F_{\psi,\lambda}} \isoarrow S_{F_{\psi,\lambda}}$ induced by the functoriality of $D_{dR}$ is just that  induced from $s$.  There also exists an unique $F_\psi$-isomorphism $\widetilde{V(\psi)} \isoarrow V_{F_\psi}$ of 
$G_\BR$-representations inducing via $s$ an identification of the period maps $per_\psi: S(\psi) \rightarrow V(\psi)_\BC \subset
\widetilde{V(\psi)}\otimes_{F_\psi}\BC$ and $per:S_{F_\psi}\rightarrow V_\BC$.  From now on we assume that we have fixed an isomorphism $s$ as above and freely appeal to these resulting isomorphisms and identifications.

\subsection{The main result} 
Let $0\neq \gamma\in V(\psi)$ and let $0\neq \gamma'\in V_{F_\psi}$ be the corresponding element. Note that $(\gamma')^\pm\neq 0$. 
Let $\bz_{\gamma'}(g) \in H^1(\BZ[\frac{1}{p}],V(g)_\lambda(1) \otimes_{\BZ_p}\Lambda)$ be the Beilinson--Kato element as in 
\S\ref{Beilinson--Kato-elements} (with the field of coefficients extended from $F_\lambda$ to $F_{\psi,\lambda}$). 
Let $z_{\gamma'}(g) \in H^1(\BZ[\frac{1}{p}],V(g)_\lambda(1))$ be the image of this element via the specialisation
map $\Lambda\mapsto \BZ_p$, $\gamma_\cyc\mapsto 1$.

Suppose now that 
\begin{equation}\label{A-split}
\text{$p$ splits in $K$}
\end{equation}
and
\begin{equation}\label{A-good}
(p,\mathfrak{f}_\psi) = 1.
\end{equation}
This means in particular that $p\nmid N$ and $g$ is ordinary with respect to $\lambda$.
One of the key results of \cite{LLZcr} identifies the image of $\bz_{\gamma'}(g)$ under 
Perrin-Riou's $p$-adic regulator in terms of cyclotomic specialisations of Katz's two-variable $p$-adic $L$-functions for $K$,
as we now recall.

Let $p = \mathfrak{p}\bar{\mathfrak{p}}$ be the factorisation of $p$ in $K$ with $\mathfrak{p}$ the prime determined by the fixed
embedding $\iota_p$. 
Let $K_\infty = K(\mathfrak{f}\bar{\mathfrak{p}}^\infty)$ be the union of the ray class extensions of $K$ 
of conductor $\mathfrak{f}\bar{\mathfrak{p}}^n$, $n>0$. Let $\hat\CO_{K_\infty}$ be the completion of the ring of integers of $K_\infty$ 
at the prime above $\mathfrak{p}$ determined by $\iota_p$. From the measure $\mu(\mathfrak{f}\bar{\mathfrak{p}}^\infty)$ 
of \cite[Thm.~II.4.14]{deShalit} (equivalently, the element $\BL_{\mathfrak{f}p^\infty}$ of \cite[Def.~2.2.1]{LLZcr}) we obtain two specialisations 
$\BL_1, \BL_2\in \Lambda\otimes_{\BZ_p}{\hat\CO_{K_\infty}}$ that are characterised by
$$
\BL_1(\chi) = \BL_{\mathfrak{f}p^\infty}(\chi\psi_\lambda^{-1}) = \phi_{\chi\psi_\lambda^{-1}}(\mu_{\mathfrak{f}\bar{\mathfrak{p}}^\infty}) \ \ \text{and} \ \ \BL_2(\chi) = \BL_{\mathfrak{f}p^\infty}(\chi(\psi_\lambda^{\tau})^{-1}) = \phi_{\chi(\psi_\lambda^{\tau})^{-1}}(\mu_{\mathfrak{f}\bar{\mathfrak{p}}^\infty})
$$
for all finite order characters $\chi:\Gamma\rightarrow \overline\BQ_p^\times$. Recall that $\phi_?$ is the continuous homomorphism of the Iwasawa algebra
defined to the linear extension of the character $\chi$.  The values $\BL_1(\chi)$ are multiples of $L(1,g\otimes\chi^{-1})$.
More precisely, $\BL_1$ is the specialisation of the two-variable Katz $p$-adic $L$-function of $K$ (and the prime $\mathfrak{p}$) along the cyclotomic line passing through $\psi_\lambda$, and so is a multiple - by a $p$-adic period - of the $p$-adic $L$-function of $g$. 
However, the values $\BL_2(\chi)$ are not interpolating critical values: $\BL_2$ is the specialisation of
the two-variable Katz $p$-adic $L$-function at the cyclotomic line passing through $\psi_\lambda^\tau$, which is outside the region of interpolation.

Let $\CL:H^1(\BQ_p,V(g)_\lambda(1)\otimes_{\BZ_p}\Lambda) \rightarrow \CH(\Gamma)\otimes_{\BQ_p} D_\cris(V(g)_\lambda(1))$
be Perrin-Riou's regulator (cf.~\cite[App.~B]{LZ0}).  By \cite[Thm.~3.2]{LLZcr}, 
the image of $\loc_p(\bz_{\gamma'}(g))$ under this regulator is
$$
\CL(\loc_p(\bz_{\gamma'}(g))) = L_{p,1} + L_{p,2},
$$
where 
\begin{itemize}
\item[$\bullet$]  $L_{p,1} = \BL_1\cdot t\otimes\gamma'$
and $L_{p,2} = \ell_0 \cdot \BL_2\otimes \tau\gamma'$ in $\Lambda\otimes_{\BZ_p} B_{\cris}\otimes_{\BQ_p} V(1)$
with $\ell_0 = \log(\gamma_\cyc)/\log\eps(\gamma_\cyc) \in \Lambda$ and $t \in B_{\cris}$ as usual;
\item under the identification $D_{\cris}(V(g)_\lambda(1)) = D_\cris(V(\psi)_\lambda(1)) \oplus D_\cris(V(\psi)_\lambda^\tau(1))$
induced from the fixed isomorphism $\widetilde{V(\psi)}_\lambda \simeq V(g)_\lambda$,
$L_{p,1} \in \Lambda\otimes_{\BZ_p} D_\cris(V(\psi)_\lambda(1))$ and $L_{p,2} \in \Lambda\otimes_{\BZ_p}D_\cris(V(\psi)_\lambda^\tau(1))$;
\item via the de Rham-crystalline comparison isomorphism, $L_{p,1} \in \Lambda\otimes \mathrm{Fil}^0D_{dR}(V(g)_\lambda(1))$.
\end{itemize}

Let $0\neq \omega\in S_{F}$ and let $\omega_\psi = s^{-1}(\omega)\in S(\psi)$. 
Let $(\Omega_\infty,\Omega_p)\in \BC^\times\times\hat\CO_{K_\infty}^\times$ be the pair of CM periods such that 
$per_\psi(\omega_\psi) = \Omega_\infty\gamma$ in $V(\psi)_\BC$ and $\omega_\psi = \Omega_p \gamma$ in $D_{dR}(V(\psi)_\lambda)$. 
Then $per(\omega) = \Omega_\infty\gamma$ in $V_\BC$, $\omega = \Omega_p\gamma'$ in $D_{dR}(V(g)_\lambda(1))$, and $\BL_1 = \Omega_p Col_{\eta_\omega}(\loc_p(\bz_{\gamma'}(g))) \in \Lambda\otimes_{\BZ_p} \hat K_\infty$. Note that $Col_{\eta_\omega}(\loc_p(\bz_{\gamma'}(g)))\in \Lambda\otimes F_\psi$ is just the usual $p$-adic $L$-function of $g$ for the period $\Omega_\infty$.

Let $\eta\in D_{dR}(V(g)_\lambda(1))$ be such that $[\eta,\omega] = 1$, where $[-,-]$ is the pairing induced via the de Rham-crystalline isomorphism
from the Weil-pairing on $V(g)_\lambda(1)$ (which is just the pairing induced by the Poincare pairing $(,)$ on $V_F$).
Let $c_\gamma = (\tau \gamma',\gamma')^{-1} \in F_\psi^\times$. Then it follows that $\eta = c_\gamma\Omega_p^{-1} \tau\gamma'$.

Suppose now that $L(1,g) = L(1,\psi) = 0$. This means that $\BL_1$ vanishes on the identity character and that
$\loc_p(z_{\gamma'}(g)) \in H^1_f(\BQ_p,V(g)_\lambda(1))$. It then follows from 
\cite[Prop.~2.2.2]{PR} and the listed properties of $\CL(\loc_p(\bz_{\gamma'}(g)))$ that 
\begin{equation}\label{A-PRformula}
(1-1/\alpha)(1-1/\beta)^{-1}  \BL_2(1) c_\gamma\Omega_p^{-1} = \log_{BK}^\eta(\loc_p(z_{\gamma'}(g))),
\end{equation}
where $\alpha = \psi(\varpi_{\bar{\mathfrak{p}}})$, with $\varpi_{\bar{\mathfrak{p}}} \in \CO_{K_{\mathfrak{p}}}$ a uniformiser,
is the unit root of $x^2 - a_p(g) x + p$ with respect to $\iota_p$ and $\beta$ is the other root, and $\log_{BK}^\eta(\loc_p(z_{\gamma'}(g)))\cdot \eta = \log_{BK}(z_{\gamma'}(g)) \in D_{dR}(V(g)_\lambda(1))$ is the Bloch--Kato logarithm. 

Suppose the root number $\epsilon(\psi)$, which is just $\epsilon(g)$, is $+1$. 
Then $$\BL_2(1)=0.$$ This is because $\BL_2(1)$ is the $p$-adic limit of the values
$\BL_{\mathfrak{f}\bar{\mathfrak{p}}^\infty}((\psi_\lambda/\psi_\lambda^\tau)^{-(p-1)j}(\psi_\lambda^\tau)^{-1})$ for integers $j>0$
tending to $0$ $p$-adically, and because $\BL_{\mathfrak{f}\bar{\mathfrak{p}}^\infty}((\psi_\lambda/\psi_\lambda^\tau)^{-(p-1)j}(\psi_\lambda^\tau)^{-1})$
is a multiple of $L(1,\psi^\tau (\psi/\psi^\tau)^{(p-1)j})$, which is readily seen to vanish as the corresponding root number is then $-1$.
Combined with \eqref{A-PRformula}, this shows that if $L(1,g) = 0$ and $\ord_{s=1}L(s,g)$ is even, then $\loc_p(z_{\gamma'}(g)) = 0$.

Suppose that $\epsilon(\psi)= \epsilon(g) = -1$. Suppose additionally that
\begin{equation}\label{A-BDP2-hyp}
\text{$D_K$ is odd and $\mathfrak{d}_K$ exactly divides $\mathfrak{f}_\psi$.}
\end{equation}
That is, we assume hypotheses (1-7), (1-8), and (1-9) of \cite{BDP2} hold for the character $\psi$. 
That (1-6) also holds is a consequence of $g$ having trivial Nebentypus (see \eqref{A-psi-res}).
Then we have the important formula \cite[Thm.~2]{BDP2}
\begin{equation}\label{A-BDP-CM}
\BL_2(1) =_{F_{\psi}^\times} \Omega_p^{-1}\log_{\omega}(P_\psi)^2,
\end{equation}
where the subscript `$F_{\psi}^\times$' denotes equality up to an $F_{\psi}^\times$-multiple, 
$A_g$ is an abelian variety in the isogeny class of $\GL_2$-type abelian varieties associated with $g$ such that $\CO_{F_\psi} \subset \End_K(A_g)$, 
and $P_\psi\in A_g(K)\otimes_{\CO_{F_\psi}} F_\psi$ is a rational point. Here we are using that there is a natural $F_\psi$-linear
isomorphism $\Omega^1(A_g/K)  \simeq S_{F_\psi}$, so $\omega$ is identified with a differential on $A_g$. 
We also have
\begin{equation}\label{A-BDP-CM2}
\text{$P_\psi$ has infinite order if and only if $\ord_{s=1}L(s,g) = 1$}.
\end{equation}
To deduce \eqref{A-BDP-CM} and \eqref{A-BDP-CM2} from {\em op.~cit.} we have used: 
\begin{itemize}
\item $\BL_2(1) = \CL_p(\psi^*)$, where the right-hand side is as in {\it op.~cit.} (this follows immediately from $\BL_2(1)$ and $\CL_p(\psi^*)$ being the $p$-adic limit of $L$-values as described above in the case $\epsilon(\psi)=+1$);
\item $A_g$ can be taken to be the abelian variety $B_\psi$ associated with $\psi$ as in \cite[Thm.~2.5]{BDP2};
\item by their definitions, both $\Omega_p$ and the $p$-adic period $\Omega_p(\psi^*)$ of \cite[Def.~2.13]{BDP2} are $F_\psi^\times$-multiples of a $p$-adic period
$\Omega_p(E)$ associated with the CM pair $(E,\alpha)$.
\end{itemize}
Combined with \eqref{A-PRformula} this shows, at least under the hypotheses \eqref{A-BDP2-hyp} that if $\epsilon(g) = -1$, then 
$\loc_p(z_{\gamma'}(g)) \neq 0$ if and only if $\ord_{s=1}L(s,g) = 1$.

In fact, we have proved
\begin{thm}\label{A-PRConj-CMthm}
Suppose that \eqref{A-psi-res}, \eqref{A-split}, \eqref{A-good} and \eqref{A-BDP2-hyp} hold. 
Let $g=g_\psi$ and suppose $L(1,g) = 0$.
Let $0\neq \omega \in \Omega^1(A_g/K)\otimes_{\CO_{F_\psi}}F_\psi$. 
Then 
there exists a rational point $P_\psi\in A_g(K)\otimes_{\CO_{F_\psi}}F_\psi$ such that
$$
\log_{BK}^\eta(\loc_p(z_{\gamma'}(g))) =_{F_\psi^\times} \Omega_p^{-1}\log_\omega(P_\psi)^2,
$$
and $P_\psi\neq 0$ if and only if $\loc_p(z_{\gamma'}(g))\neq 0$. In particular,
$$
\loc_p(z_{\gamma'}(g)) \neq 0 \ \iff \ \ord_{s=1}L(s,g) = 1.
$$
\end{thm}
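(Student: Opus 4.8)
The proof of Theorem~\ref{A-PRConj-CMthm} has essentially been assembled in the preceding discussion, and the plan is to present it as the synthesis of three inputs: the formula for Perrin-Riou's regulator applied to the Beilinson--Kato element from \cite{LLZcr}, the explicit reciprocity/logarithm formula of Perrin-Riou \cite{PR}, and the $p$-adic Gross--Zagier-type formula for CM forms of \cite{BDP2}. First I would recall the decomposition $\CL(\loc_p(\bz_{\gamma'}(g))) = L_{p,1} + L_{p,2}$ with $L_{p,1}\in\Lambda\otimes\mathrm{Fil}^0 D_{dR}(V(g)_\lambda(1))$ interpolating the critical values $L(1,g\otimes\chi^{-1})$ and $L_{p,2} = \ell_0\cdot\BL_2\otimes\tau\gamma'$ lying in the complementary Dieudonn\'e summand, as furnished by \cite[Thm.~3.2]{LLZcr}. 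The hypothesis $L(1,g)=L(1,\psi)=0$ forces $\BL_1$ to vanish at the trivial character, hence $\loc_p(z_{\gamma'}(g))\in H^1_f(\BQ_p,V(g)_\lambda(1))$; applying \cite[Prop.~2.2.2]{PR} together with the listed filtration properties of the two pieces then yields the identity
$$
(1-1/\alpha)(1-1/\beta)^{-1}\BL_2(1)\,c_\gamma\Omega_p^{-1} = \log_{BK}^\eta(\loc_p(z_{\gamma'}(g))),
$$
where $\alpha = \psi(\varpi_{\bar{\mathfrak p}})$ is the unit root, $\beta$ the other root, $c_\gamma = (\tau\gamma',\gamma')^{-1}$, and $\eta = c_\gamma\Omega_p^{-1}\tau\gamma'$.

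Next I would invoke \cite[Thm.~2]{BDP2}, valid precisely under \eqref{A-psi-res}, \eqref{A-split}, \eqref{A-good}, and \eqref{A-BDP2-hyp}, to rewrite $\BL_2(1)$. The key dictionary points are: $\BL_2(1) = \CL_p(\psi^*)$ in the notation of \emph{op.~cit.} (both are the same $p$-adic limit of non-critical $L$-values along the anticyclotomic line through $\psi_\lambda^\tau$); the abelian variety $A_g$ may be taken to be the variety $B_\psi$ attached to $\psi$ in \cite[Thm.~2.5]{BDP2}; and the period $\Omega_p$ defined via $per_\psi$ differs from Bertolini--Darmon--Prasanna's $\Omega_p(\psi^*)$ only by an $F_\psi^\times$-factor, since both are $F_\psi^\times$-multiples of the $p$-adic period $\Omega_p(E)$ of the CM pair $(E,\alpha)$. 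This gives $\BL_2(1) =_{F_\psi^\times} \Omega_p^{-1}\log_\omega(P_\psi)^2$ for a rational point $P_\psi\in A_g(K)\otimes_{\CO_{F_\psi}}F_\psi$, together with the equivalence ``$P_\psi$ has infinite order $\iff \ord_{s=1}L(s,g)=1$'' from the Gross--Zagier and Kolyvagin theorems for the CM form $g$ over $K$. Substituting into the displayed formula and absorbing the nonzero algebraic factor $(1-1/\alpha)(1-1/\beta)^{-1}c_\gamma\in F_\psi^\times$ (nonzero because $g$ is ordinary at $\lambda$, so $\alpha$ is a unit and $\beta\neq 1$) yields $\log_{BK}^\eta(\loc_p(z_{\gamma'}(g))) =_{F_\psi^\times}\Omega_p^{-1}\log_\omega(P_\psi)^2$, which is the asserted equality up to $F_\psi^\times$.

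Finally, the ``in particular'' clause follows by tracking nonvanishing through these identities. If $\epsilon(g) = +1$, then $\ord_{s=1}L(s,g)$ is even and $\neq 1$; here one argues directly that $\BL_2(1) = 0$, since it is the $p$-adic limit of values $\BL_{\mathfrak f\bar{\mathfrak p}^\infty}((\psi_\lambda/\psi_\lambda^\tau)^{-(p-1)j}(\psi_\lambda^\tau)^{-1})$, each a multiple of $L(1,\psi^\tau(\psi/\psi^\tau)^{(p-1)j})$ whose root number is $-1$, hence zero; thus $\loc_p(z_{\gamma'}(g)) = 0$ and $P_\psi = 0$, consistent with the claim. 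If $\epsilon(g) = -1$, then $\log_{BK}^\eta(\loc_p(z_{\gamma'}(g)))$ is a nonzero $F_\psi^\times$-multiple of $\log_\omega(P_\psi)^2$, so $\loc_p(z_{\gamma'}(g))\neq 0 \iff P_\psi\neq 0 \iff P_\psi$ has infinite order $\iff\ord_{s=1}L(s,g)=1$, the last equivalence again by Gross--Zagier--Kolyvagin for the CM form over $K$. I expect the main obstacle to be purely bookkeeping: carefully matching the period normalizations and the choices of $\omega$, $\eta$, $\gamma$, $\gamma'$ across the three sources \cite{LLZcr}, \cite{PR}, and \cite{BDP2} — in particular checking that the $p$-adic periods $\Omega_p$, $\Omega_p(\psi^*)$, $\Omega_p(E)$ agree up to $F_\psi^\times$ and that the Euler-type factor $(1-1/\alpha)(1-1/\beta)^{-1}$ extracted from Perrin-Riou's formula is correctly the one appearing here — rather than any conceptual difficulty, since all the hard analytic and geometric input is already packaged in the cited theorems.
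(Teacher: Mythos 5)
Your proposal follows the paper's own argument essentially verbatim: the decomposition of Perrin-Riou's regulator on the Beilinson--Kato element from \cite[Thm.~3.2]{LLZcr}, the identity $(1-1/\alpha)(1-1/\beta)^{-1}\BL_2(1)c_\gamma\Omega_p^{-1}=\log_{BK}^\eta(\loc_p(z_{\gamma'}(g)))$ via \cite[Prop.~2.2.2]{PR}, the root-number argument giving $\BL_2(1)=0$ when $\epsilon(\psi)=+1$, and the identification $\BL_2(1)=\CL_p(\psi^*)$ with the period comparison so that \cite[Thm.~2]{BDP2} supplies $P_\psi$ in the $\epsilon(\psi)=-1$ case. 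The bookkeeping points you flag (matching $\Omega_p$, $\Omega_p(\psi^*)$, $\Omega_p(E)$ up to $F_\psi^\times$, and the choice of $\eta$) are exactly the ones the paper checks, so the proof is correct and coincides with the paper's.
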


\begin{remark} It should be possible to dispense with the hypotheses \eqref{A-BDP2-hyp}, as noted in the introduction to \cite{BDP2}. Much of the work needed to do this has been carried out in \cite{Brooks} and \cite{LZZ}. However, we have not pursued this here. In part because, Theorem \ref{A-PRConj-CMthm} suffices for our purposes: the proof of Theorem \ref{mPR}, which also covers the more general CM case.
\end{remark}

\section{Values of Hecke characters}
In the proof of the Perrin-Riou Conjecture given in \S\ref{PRC} it is necessary to show that a certain constant -- $u_L$ in the notation of the proof -- belongs to $\BQ$. This is achieved by appealing to Theorem \ref{A-PRConj-CMthm} and by knowing that is possible to choose suitable Hecke characters $\psi$ whose values generate disjoint fields. The result on disjoint fields is provided by this appendix.

Let $\overline\BQ$ be a separable algebraic closure of $\BQ$ and let $\iota_\infty:\overline\BQ\hookrightarrow \BC$ 
be an embedding. Let $K\subset\overline\BQ$ be an imaginary quadratic field of discriminant $-D_K$
and $\CO_K$ its ring of integers. The embedding $\iota_\infty$ identifies $K\otimes\BR$ with $\BC$.
Let $\tau\in G_\BQ$ be the involution induced via $\iota_\infty$ by complex conjugation on $\BC$ (so $\tau$ restricts to the non-trivial automorphism of $K$).

Recall that a Hecke chararacter $\psi:K^\times\bs \BA_K^\times\rightarrow \BC^\times$ has infinity type $(-1,0)$ if $\psi(z) = z^{-1}$ for all $z\in (K\otimes\BR)^\times  = \BC^\times$. For such a character, the values $\psi(\BA_K^{\infty,\times})$ generate a finite extension $F_\psi\subset \BC$ of $\BQ$. 
We consider $F_\psi$ as a subfield of $\overline\BQ$ via $\iota_\infty$.

\subsection{Values of canonical characters}  Suppose $-D_K<-4$.
Let $\mathfrak{d}_K$ be the different of $K$. The canonical characters of $K$ are the Hecke characters $\psi$ of infinity type $(-1,0)$ and
conductor $\mathfrak{d}_K$ such that $\psi(\tau(x)) = \tau(\psi(x))$ for all $x\in \BA_K^{\infty,\times}.$ Such characters were first considered by Rohrlich in \cite{Ro-CM}.
They exist precisely when
\begin{equation}\label{B-Dcong}
D_K\equiv 3 \mod 4 \ \ \text{or} \ \ 8\mid D_K.
\end{equation}
Moreover, it is clear that a canonical Hecke character $\psi$ must satisfy
\begin{equation}\label{B-centralchar}
\psi|_{\BA_\BQ^\times} = \chi_K|\cdot|^{-1},
\end{equation}
where $\chi_K$ is the quadratic character associated with the extension $K/\BQ$.
Using this, it is easy to describe all the canonical characters as follows.

Let $U = K^\times\hat\CO_K^\times \subset \BA_K^{\infty,\times}$. The quotient $\BA_K^{\infty,\times}/U$ is isomorphic to the class group of $K$ and, in particular, is finite. Via the isomorphisms $$(\BZ/D_K\BZ)^\times \isoarrow (\CO_K/\mathfrak{d}_K)^\times \isoarrow \hat\CO_K^\times/(1+\hat{\mathfrak{d}}_K)^\times,$$ we can view $\chi_K$ as a character of $\hat\CO^\times$ of conductor $\mathfrak{d}_K$. 
Let $\psi_U:U\rightarrow\BC^\times$ be the character given by $\psi_U(\alpha x) = \alpha\chi_K(x)$ for $\alpha\in K^\times$ and $x\in \hat\CO^\times$.
This is well-defined as $K^\times\cap \hat\CO_K^\times = \{\pm 1\}$ since $-D_K<-4$ by hypothesis and since $\chi_K(-1) = -1$. 
Let $\psi_f:\BA_K^{\infty,\times}\rightarrow\BC^\times$ be any character extending $\psi_U$. Then $\psi(x) = x_\infty^{-1}\psi_f(x_f)$ is a 
canonical Hecke character, and clearly all canonical characters are obtained in this way. In particular, there are $h_K$ (the class number of $K$) canonical characters and the ratio of any two is a character of the class group.

Let $\mathfrak{X}_K^\can$ be the set of canonical Hecke characters of $K$. The fields $F_\psi$, $\psi\in \mathfrak{X}_K^\can$, admit a simple description.
Since $\psi_U$ is the identity on $K^\times$, we certainly have $K\subset F_\psi$.
Let $x_1,...,x_r\in \BA_K^{\infty,\times}$ be such that their images $\bar x_1,...,\bar x_r$ in the quotient $\BA_K^{\infty,\times}/U$ 
satisfy $$\BA_K^{\infty,\times}/U = \langle \bar x_1\rangle \oplus \cdots \oplus \langle \bar x_r\rangle$$ and (for convenience later)
the order $h_i$ of each $\bar x_i$ is a power of a prime $\ell_i$.
Since $\BA_K^{\infty,\times}/U$ is isomorphic to the class group, $h_i\mid h_K$. 
Suppose $x_i^{h_i} = \alpha_i u_i$ with $\alpha_i\in K^\times$ and $u_i\in \hat\CO_K^\times$. Then $\psi(x_i)^{h_i} = \psi(\alpha_i u_i) = \psi_U(\alpha_i u_i) = \alpha_i\chi_K(u_i) = \pm \alpha_i$. Replacing $\alpha_i$ with $-\alpha_i$ and $u_i$ with $-u_i$ if necessary, we may then assume that $\psi(x_i)^{h_i} = \alpha_i$. 
That is, $\beta_i = \psi(x_i)$ is an $h_i$th root of $\alpha_i$. Let $\beta_\psi = (\beta_1,...,\beta_r)$. Then $F_\psi$ is just the radical extension
$$F_\psi=K(\beta_\psi): = K(\beta_1,...,\beta_r).$$
Furthermore, the fields $F_\psi$, $\psi$ running over all characters in $\mathfrak{X}_K^\can$, 
are exactly those fields $K(\beta)$, $\beta=(\beta_1,...,\beta_r)$ with the $\beta_i$ running over all possible $h_i$th roots of the $\alpha_i$.

Let $\mathfrak{a}_i$ be the ideal corresponding to $x_i$. Then by the choice of the $x_i$:
$\mathfrak{a_i}^{h_i} = (\alpha_i)$, $h_i$ is the order of the image $\bar{\mathfrak{a}}_i$ of $\mathfrak{a}_i$ in the 
class group $Cl(K)$ of $K$, and $Cl(K) = \langle \bar{\mathfrak{a}}_1\rangle \oplus \cdots \oplus \langle \bar {\mathfrak{a}}_r\rangle$.

\begin{prop}\label{B-canonicalvalues-prop} 
Let $K$ be an imaginary quadratic field with discriminant $-D_K<-4$ satisfying \eqref{B-Dcong} and
suppose the class number $h_K$ of $K$ is odd (so $D_K$ is a prime).
Then $\cap_{\psi\in\mathfrak{X}_K^\can} F_\psi = K$.
\end{prop}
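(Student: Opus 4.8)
The plan is to exploit the radical description $F_\psi = K(\beta_1,\dots,\beta_r)$, where $\beta_i$ is an $h_i$-th root of $\alpha_i$ with $(\alpha_i) = \mathfrak a_i^{h_i}$ and $h_i$ a power of a prime $\ell_i$, and the class group decomposes as $Cl(K) = \bigoplus_i \langle \bar{\mathfrak a}_i\rangle$ with $\bar{\mathfrak a}_i$ of order $h_i$. As $\psi$ ranges over $\mathfrak X_K^{\can}$, the tuples $(\beta_1,\dots,\beta_r)$ range over all possible choices of $h_i$-th roots of the fixed $\alpha_i$; that is, $\beta_i$ gets multiplied by an arbitrary $h_i$-th root of unity. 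Since $h_K$ is odd, each $h_i$ is odd, so all these roots of unity lie in $\overline\BQ$ but the relevant Kummer theory is over $K$, which contains no nontrivial odd-order roots of unity (indeed $\mu(K) = \{\pm 1\}$ as $-D_K < -4$). First I would reduce to the case $r=1$ of a single cyclic factor of prime-power order, and then further to a single prime $\ell = \ell_i$: the field $F_\psi$ is a compositum over $i$ of extensions $K(\beta_i)$ of degrees $h_i$ that are pairwise coprime, so $\cap_\psi F_\psi$ is the compositum of the corresponding intersections $\cap K(\beta_i)$ taken one factor at a time.

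Next I would pin down $[K(\beta_i):K]$. The key claim is that $X^{h_i} - \alpha_i$ is irreducible over $K$, so $[K(\beta_i):K] = h_i$. By the standard criterion for irreducibility of $X^n - a$ (e.g. Lang's Algebra VI, Thm 9.1), since $h_i$ is a prime power $\ell^s$ and $\mu_\ell \not\subset K$, it suffices to check that $\alpha_i$ is not an $\ell$-th power in $K$. But if $\alpha_i = \delta^\ell$ for some $\delta \in K^\times$, then $\mathfrak a_i^{h_i} = (\alpha_i) = (\delta)^\ell$, hence $\mathfrak a_i^{h_i/\ell}$ and $(\delta)$ have the same $\ell$-th power, forcing $(\delta) = \mathfrak a_i^{h_i/\ell}\cdot \mathfrak c$ with $\mathfrak c^\ell$ principal — a short ideal-theoretic manipulation using unique factorization of ideals shows $\mathfrak a_i^{h_i/\ell}$ would then be principal, contradicting that $\bar{\mathfrak a}_i$ has exact order $h_i$ in $Cl(K)$. (Here one also uses that $\mathfrak c$ can be absorbed because $\gcd(\ell, h_i/\ell \cdot (\text{stuff}))$ is controlled by $h_i$ being an $\ell$-power; I would write this out carefully but it is routine.) So each $K(\beta_i)/K$ has degree exactly $h_i$.

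Finally I would compute the intersection of the various $K(\beta_i)$ as $\beta_i$ varies over the $h_i$ choices of $\ell^s$-th root of $\alpha_i$. Any two such choices $\beta_i, \beta_i'$ differ by $\beta_i' = \zeta\beta_i$ with $\zeta \in \mu_{h_i}$, and $K(\beta_i) \cap K(\beta_i')$ is a subextension of $K(\beta_i)$, hence of the form $K(\beta_i^{\ell^t})$ for some $0 \le t \le s$ (the subfields of the cyclic-over-$K(\zeta)$-type radical tower $K(\beta_i)$ lying over $K$ are exactly these, since $[K(\beta_i):K] = \ell^s$ is a prime power). If $t < s$ then $\beta_i^{\ell^t} \in K(\beta_i')$ as well, so the ratio $(\beta_i/\beta_i')^{\ell^t} = \zeta^{-\ell^t}$ would lie in $K$; but $\zeta^{-\ell^t}$ is a primitive $\ell^{s-t}$-th root of unity with $s - t \ge 1$, and since $\ell^{s-t}$ is odd and $> 1$, no such root of unity lies in $K$ — contradiction. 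Hence $t = s$, i.e. already the intersection of the roots sharing a primitive $\mu_{h_i}$-translate over $K$ is just $K$; taking the intersection over all $\psi$ (equivalently all choices of all the $\beta_i$) gives $\bigcap_{\psi} F_\psi = K$. The main obstacle is the irreducibility step — getting the ideal-class argument that $\alpha_i$ is not an $\ell$-th power in $K$ exactly right, including the bookkeeping of which power of $\mathfrak a_i$ one extracts — but this is a finite computation in $Cl(K)$ with no conceptual difficulty given that $\bar{\mathfrak a}_i$ has exact order $h_i$.
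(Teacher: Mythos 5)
There is a genuine gap, and it sits exactly at the step where you reduce to a single cyclic factor. You assert that the degrees $h_i$ are pairwise coprime, but this is false in general: the $h_i$ are prime powers, yet several of them may be attached to the same odd prime, since nothing in the hypotheses forces $\Cl(K)$ to be cyclic (odd class number only forces $D_K$ to be prime, and there are prime discriminants whose class group has $3$-rank two, so $h_1=h_2=3$ can occur). Moreover, even granting coprimality, the identity ``intersection of composita equals compositum of intersections'' that your reduction rests on is not a valid lattice identity for fields and is given no argument. The whole difficulty of the proposition is the interaction of factors at a common prime $\ell$: the field $F_\psi=K(\beta_1,\dots,\beta_r)$ contains many ``mixed'' radical subfields, for instance degree-$\ell$ subfields generated by $\ell$-th roots of products $\prod_i\alpha_i^{m_i}$, and one must rule out that some such subfield lies in \emph{every} $F_\psi$. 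Your proposal never engages with these. This is precisely where the paper's proof does its real work: it passes to Kummer theory over $K(\mu_h)$ for the subgroup $W=\langle\alpha_1^{h/h_1},\dots,\alpha_r^{h/h_r}\rangle\subset K^\times$, identifies $\overline W$ with $\Cl(K)$, and shows that $\overline W$ injects into $K(\mu_h)^\times/(K(\mu_h)^\times)^h$ by a capitulation/norm argument (an ideal class of odd prime order $\ell$ cannot become principal in $K(\mu_\ell)$, whose degree over $K$ divides $\ell-1$); that injectivity is what controls all the mixed radicals simultaneously. Without something playing this role, the multi-factor case is unproved in your write-up.

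On the single-factor case itself (where $\Cl(K)$ is cyclic of order $\ell^s$) your route is essentially sound and more elementary than the paper's, but two points need repair. First, the claim that every intermediate field of $K(\beta_i)/K$ is $K(\beta_i^{\ell^t})$ is true, but your parenthetical is not a proof: prime-power degree alone gives nothing; one can prove it by noting that the constant term of the minimal polynomial of $\beta_i$ over an intermediate field $L$ is, up to sign, $\zeta\beta_i^{[K(\beta_i):L]}$ with $\zeta\in\mu_{\ell^s}$, while $K(\beta_i)$ contains no nontrivial $\ell$-power root of unity because $[K(\mu_\ell):K]>1$ divides $\ell-1$ and hence cannot divide $\ell^s$; thus $\beta_i^{[K(\beta_i):L]}\in L$ and degree counting finishes. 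Second, in your final step $\zeta^{-\ell^t}$ lies a priori only in $K(\beta_i')$, not in $K$, and it is a primitive $\ell^{s-t}$-th root of unity only when $\zeta$ has exact order $\ell^s$; for $\zeta$ of smaller order $\ell^j$ the pairwise intersection $K(\beta_i)\cap K(\zeta\beta_i)$ genuinely exceeds $K$ (it is $K(\beta_i^{\ell^j})$), so the correct assertion is that the intersection over all choices, or with a single primitive translate, is $K$. These two points are fixable; the reduction gap in the first paragraph is not, short of importing an argument of the kind the paper actually uses.
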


\begin{proof} Suppose $h_K\neq 1$ (else there is nothing to prove).
Let $\mathfrak{B} = \{\beta=(\beta_1,...,\beta_r): \beta_i^{h_i} = \alpha_i\}$. Then $\cap_{\psi\in\mathfrak{X}_K^\can} F_\psi = 
\cap_{\beta\in\mathfrak{B}} K(\beta)$, so we want to show that $F=\cap_{\beta\in\mathfrak{B}} K(\beta)$ equals $K$.

The collection of fields $K(\beta)$, $\beta\in\mathfrak{B}$, is stable under $G_K$:  for $\sigma \in G_K$, 
$\sigma(K(\beta))) = K(\sigma(\beta))$, with $\sigma(\beta) = (\sigma(\beta_1),...,\sigma(\beta_r))$. 
To prove that $F=K$ it then suffices to show that $K(\beta)\neq K(\beta')$ for $\beta\neq\beta'$ in $\mathfrak{B}$:
For then it will follow that the conjugates $\sigma(K(\beta))$, $\sigma\in G_K$, over $K$ of each $K(\beta)$ are distinct. This can only happen if the intersection
of these conjugates is $K$. So $F$, which is contained in this intersection, must also equal $K$.

Suppose $\beta = (\beta_1,...,\beta_r), \beta' = (\beta_1',....,\beta_r')\in \mathfrak{B}$ are such that $\beta\neq \beta'$ but $K(\beta) = K(\beta')$. 
Since $\beta_i/\beta_i'$ is an $h_i$th root of unity, it follows that $K(\beta)$ must contain some nontrivial $h_j$th root of unity for some $j$.
Since each $h_j$ is odd and $K\neq \BQ(\mu_3)$, this non-trivial root of unity is not contained in $K$.
In particular, $K(\beta) \cap K(\mu_h) \neq K$ for $h = [h_1,...,h_r]$ the least common multiple
of the $h_i$. 

Let $W = \langle \alpha_1^{h/h_1},...,\alpha_r^{h/h_r}\rangle\subset K^\times$ and let $\overline{W}$ be the image of $W$ in $K^\times/(K^\times)^h$. 
We claim that if $\prod_i \alpha_i^{m_ih/h_i} = \alpha^h$ then $h_i\mid m_i$.
For then $\prod_i\mathfrak{a}_i^{m_ih} = \prod_i(\alpha_i^{m_ih/h_i}) = (\alpha)^h$, and so $\prod_i \mathfrak{a}_i^{m_i} = (\alpha)$.
From the description of the class group $Cl(K)$ preceding the statement of the proposition, it then follows that 
we must have $h_i\mid m_i$. In particular, $\overline{W}$ is isomorphic to the class group of $K$.

Suppose $K(\beta)\cap K(\mu_h) \neq K$. Then $[K(\beta,\mu_h):K(\mu_h)] < [K(\beta):K]\leq h_K = \#\overline{W}$.
However, $K(\beta,\mu_h)$ corresponds via Kummer theory with the image of $\overline{W}$ in $K(\mu_h)^\times/(K(\mu_h)^\times)^h$
and so this image, whose order equals $[K(\beta,\mu_h):K(\mu_h)]$, must be smaller than $\overline{W}$. Hence
$\overline{W}$ must have non-trivial intersection with the kernel of the natural map $K^\times/(K^\times)^h\rightarrow K(\mu_h)^\times/(K(\mu_h)^\times)^h$.
Let $w\in W$ be an element with image $\bar w$ in $\overline{W}$ in this kernel and having order a prime, say $\ell$ (necessarily odd).   
Let $I_\ell = \{ 1\leq i\leq r \ ; \ \ell_i =\ell\}$. Then 
$w = \prod_{i\in I_\ell} \alpha_i^{m_ih/h_i}$ with each $h_i/m_i = \ell$ or $1$. Let $I_w\subset I_\ell$ be the subset of those
$i$ such that $h_i/m_i = \ell$. Then $w = \prod_{i\in I_w}\alpha_i^{h/\ell}$. Let $u = \prod_{i\in I_w} \alpha_i$.
Then $\omega = \prod_{i\in I_w}\beta_i^{h_i/\ell}$ is an $h$th-root of $w$ and an $\ell$th-root of $u$.

Since $\overline{w}$ is in the kernel of $K^\times/(K^\times)^h\rightarrow K(\mu_h)^\times/(K(\mu_h)^\times)^h$,
$K(\omega) \subset K(\mu_h)$. In particular, $K(\omega)$ equals each of its $G_K$-conjugates. 
As $u\not\in (K^\times)^\ell$ (else $w = u^{h/\ell} \in (K^\times)^h)$ it follows that $K(\omega)$ contains
a primitive $\ell$th root of unity. As $\ell$ is odd and $K\neq \BQ(\mu_3)$, this means $K(\omega)\cap K(\mu_\ell) \neq K$ and so, by Kummer theory again, $u\in (K(\mu_\ell)^\times)^\ell$. It follows that
$w= u^{h/\ell} \in (K(\mu_\ell)^\times)^h$.  That is, $\prod_{i\in I_w} \alpha_i^{h/\ell} = \gamma^h$ for 
some $\gamma\in K(\mu_\ell)$.  But in term of ideals this becomes 
$\prod_{i\in I_w} \mathfrak{a}_i^{hh_i/\ell}\CO_{K(\mu_\ell)} = (\gamma)^h$, so 
$\prod_{i\in I_w} \mathfrak{a}_i^{h_i/\ell}\CO_{K(\mu_\ell)} = (\gamma)$. This last equality means that the ideal
$\mathfrak{a} = \prod_{i=I_w} \mathfrak{a}_i^{h_i/\ell}$ capitulates in the extension $K(\mu_\ell)$,
which in turn implies $\mathfrak{a}^{[K(\mu_\ell):K]}$ is principal.
As $[K(\mu_\ell):K]\mid (\ell-1)$ but $\mathfrak{a}$ has order $\ell$ in the class group of $K$, this is impossible.
This contradiction completes the proof of the proposition.
\end{proof}

\subsection{An auxiliary result}\label{B-auxiliary}
Suppose $p>2$ is a fixed prime and $L$ is a fixed imaginary quadratic field of discriminant $-D_L<0$
Consider the set $\mathfrak{X}$ of Hecke characters $\psi: K^\times\bs \BA_K^\times\rightarrow \BC^\times$ of 
varying imaginary quadratic fields such that 
\begin{itemize}
\item[(i)] the prime $p$ splits in $K$;
\item[(ii)] the discriminant $-D_K$ of $K$ is odd;
\item[(iii)] the infinity type of $\psi$ is $(-1,0)$;
\item[(iv)] $\psi|_{\BA^\times} = \chi_K|\cdot|^{-1}$;
\item[(v)] $\mathfrak{f}_\psi = \mathfrak{d}_K\mathfrak{f}$ with $(\mathfrak{d_K},\mathfrak{f}) = 1$; 
\item[(vi)] the root number $w(\psi)$ of $\psi$ is $-1$;
\item[(vii)] every prime dividing $D_KN_{K/\BQ}(\mathfrak{f}_\psi)$ splits in $L$;
\item[(viii)] $\ord_{s=1}L(s,\psi) = 1$.
\end{itemize}
Note that conditions (i)-(vi) imply that Theorem \ref{A-PRConj-CMthm} holds for $\psi$

\begin{lem} \label{B-aux-lem} If $L\neq \BQ(\sqrt{-1}), \BQ(\sqrt{-2})$, then $\cap_{\psi\in\mathfrak{X}} F_\psi = \BQ$.
\end{lem}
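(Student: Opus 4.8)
The aim is to reduce Lemma \ref{B-aux-lem} to Proposition \ref{B-canonicalvalues-prop}. The strategy is to show that the set $\mathfrak{X}$ contains, for infinitely many imaginary quadratic fields $K$ satisfying the constraints (i), (ii), (v), (vi), (viii), at least one character $\psi$ which is essentially the canonical character of $K$ (up to the auxiliary conductor $\mathfrak{f}$), so that $F_\psi$ contains the compositum $K(\beta_\psi)$ analyzed in Proposition \ref{B-canonicalvalues-prop}, and in particular $F_\psi\cap \BQ^{\mathrm{ab}}$ can be controlled. Since distinct such $K$ have coprime discriminants, the fields $F_\psi$ will be linearly disjoint over $\BQ$ in a suitable sense, forcing their intersection to be $\BQ$.

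First I would set up the arithmetic input. Fix a prime $\ell_0>2$ that splits in both $L$ and (to be chosen) in $K$; the role of $\ell_0$ is to produce, via twisting by anticyclotomic characters of $\ell_0$-power conductor, a character with the right root number and analytic rank. Concretely, for a candidate imaginary quadratic field $K$ with $-D_K$ an odd prime, $D_K\equiv 3\bmod 4$ (so that \eqref{B-Dcong} holds and $h_K$ is forced odd — by genus theory an odd prime discriminant gives odd class number), $p$ split in $K$, and every prime dividing $D_K$ split in $L$, let $\psi_{\mathrm{can}}$ be a canonical Hecke character of $K$ as in the previous subsection. One needs to arrange (vi) and (viii): the root number $w(\psi_{\mathrm{can}})$ is $\pm1$, and if it is $+1$ one twists by a quadratic (or higher-order anticyclotomic) character $\chi$ of $\ell_0$-power conductor — for which every prime in the conductor splits in $L$ — to flip the sign, using Rohrlich's non-vanishing theorem (invoked in the proof of Proposition \ref{anrk-prop}) to guarantee that among such twists one has $\ord_{s=1}L(s,\psi_{\mathrm{can}}\chi)=1$. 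The resulting $\psi = \psi_{\mathrm{can}}\chi$ has conductor $\mathfrak{d}_K\mathfrak{f}$ with $\mathfrak{f}$ supported at $\ell_0$, hence $(\mathfrak{d}_K,\mathfrak{f})=1$, and satisfies (i)–(viii). The key point is that by construction $F_\psi\supseteq F_{\psi_{\mathrm{can}}} = K(\beta_{\psi_{\mathrm{can}}})$, a radical extension of $K$ whose $G_K$-conjugates have intersection exactly $K$ by Proposition \ref{B-canonicalvalues-prop}.

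Next I would exploit disjointness across varying $K$. The set of odd primes $q\equiv 3\bmod 4$ that split in $L$ is infinite (Chebotarev), and each such $q=D_K$ gives a field $K$ as above with a character $\psi_K\in\mathfrak{X}$. For two such fields $K_1\neq K_2$ the discriminants $-D_{K_1},-D_{K_2}$ are distinct primes, so $K_1K_2/\BQ$ has Galois group $(\BZ/2)^2$ and $F_{\psi_{K_1}}$ is ramified only at primes dividing $D_{K_1}N_{K_1/\BQ}(\mathfrak{f}_{\psi_{K_1}})p$ — in particular unramified at $D_{K_2}$, while $F_{\psi_{K_2}}\supseteq K_2$ is ramified at $D_{K_2}$. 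Hence $F_{\psi_{K_1}}\cap F_{\psi_{K_2}}$ is an extension of $\BQ$ unramified at both $D_{K_1}$ and $D_{K_2}$; iterating over many $K$, any element of $\cap_{\psi\in\mathfrak{X}}F_\psi$ lies in a number field unramified outside $\{p\}\cup S$ for a fixed finite set $S$ (coming from $L$ and $\ell_0$), and of bounded degree. To pin it down to $\BQ$ I would additionally use the structure from Proposition \ref{B-canonicalvalues-prop}: within each $F_{\psi_K}$ the only subfield that can be "shared" is $\BQ$, because $F_{\psi_K}\cap \overline{F_{\psi_K}}^{\,G_K}$-arguments show $\cap_{\sigma\in G_K}\sigma F_{\psi_K}=K$, and an element lying in $F_{\psi_{K'}}$ for all other $K'$ cannot generate $K$ (it is unramified at $D_K$ for $K'\neq K$). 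Thus $\cap_{\psi\in\mathfrak{X}}F_\psi\subseteq \bigcap_K K = \BQ$.

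\textbf{Main obstacle.} The delicate point is arranging conditions (vi) and (viii) simultaneously with (v) and (vii): one must twist a canonical character to achieve root number $-1$ and analytic rank exactly one \emph{while} keeping the conductor coprime-decomposed as $\mathfrak{d}_K\mathfrak{f}$ and keeping every prime of the (new) conductor split in $L$. This is exactly the kind of statement handled by Rohrlich's anticyclotomic non-vanishing results together with the root-number computation used in Proposition \ref{anrk-prop}, so I expect it to go through, but writing it carefully — checking that the twisting character can be chosen anticyclotomic of $\ell_0$-power conductor with $\ell_0$ split in $L$ and not disturbing the ramification bookkeeping — is where the real work lies. The linear-disjointness argument itself is then a routine ramification/Kummer-theory exercise given Proposition \ref{B-canonicalvalues-prop}. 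The hypothesis $L\neq\BQ(\sqrt{-1}),\BQ(\sqrt{-2})$ enters to guarantee enough room to choose $\ell_0$ and to keep $\mu_{\ell_0}\not\subset K$-type conditions in force, mirroring the exclusion in Proposition \ref{B-canonicalvalues-prop}.
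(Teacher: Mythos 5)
Your overall strategy (auxiliary fields $K=\BQ(\sqrt{-q})$ with prime discriminant and odd class number, canonical characters, anticyclotomic $\ell$-power twists plus Rohrlich to get conditions (vi) and (viii), then Proposition \ref{B-canonicalvalues-prop} and an intersection over varying $K$) is the right one, but two of your key steps do not hold as stated. First, the containment $F_{\psi_{\mathrm{can}}\chi}\supseteq F_{\psi_{\mathrm{can}}}$ on which your reduction rests is unjustified and false in general: the values of a product of Hecke characters do not determine the values of the factors. What one actually has is $F_{\psi\chi}\subset F_{\psi}(\mu_{\ell^\infty})$, and raising values to an $\ell$-power only recovers the subfield of $F_\psi$ generated by $\ell^m$-th powers of values, which can be a proper subfield (e.g.\ when $\ell\mid h_K$); you neither impose $\ell_0\nmid h_K$ nor show such $K$ exist in your family. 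The paper's proof handles this by keeping, for each $K$, \emph{all} split primes $\ell$ and all admissible twists, and stripping the twist by intersecting over $\ell$: $\cap_{\ell}F_\psi(\mu_{\ell^\infty})\subset F_\psi$. Second, by selecting a single character per $K$ you forfeit Proposition \ref{B-canonicalvalues-prop}: that proposition computes the intersection over the full, $G_K$-stable family $\mathfrak{X}_K^{\can}$, and its proof uses exactly this stability (distinctness of all $G_K$-conjugates). A single $F_{\psi_K}$ is a radical extension of $K$ of degree roughly $h_K$ with many subfields, and nothing in your sketch forces the global intersection to land inside $K$ for each $K$, which is what your final step "$\subseteq\cap_K K=\BQ$" would require; "cannot generate $K$" does not give containment in $K$.

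The ramification argument you use as a substitute is also flawed. The ramification of the field of values $F_\psi$ is not controlled by the conductor of $\psi$: for a canonical character $F_\psi=K(\beta_1,\dots,\beta_r)$ can ramify at primes dividing $h_K$, which are unconstrained, so "$F_{\psi_{K'}}$ is unramified at $D_K$" can fail (whenever $D_K\mid h_{K'}$); repairing this would need an indivisibility-of-class-numbers input, under the extra congruence conditions of your family, which you do not supply, and the "bounded degree" claim is likewise unsubstantiated. By contrast, the paper's proof requires none of this: it takes $q=D_K\equiv 3\bmod 8$ (which already forces root number $-1$ for every canonical character, so no sign-flipping twist is needed and condition (vi) is automatic), uses Rohrlich to get (viii) for all but finitely many twists, applies Proposition \ref{B-canonicalvalues-prop} after intersecting over $\ell$ to get down to $K$, and concludes with $\cap_K K=\BQ$. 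Finally, the hypothesis $L\neq\BQ(\sqrt{-1}),\BQ(\sqrt{-2})$ enters to guarantee that $D_L$ has an odd prime divisor, so that the congruence conditions modulo $8$, $p$ and $D_L$ defining the family of primes $q$ are simultaneously satisfiable; it is not about room to choose $\ell_0$ or $\mu_{\ell_0}$-type conditions as you suggest.
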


\begin{proof} We will define a fairly explicit subset $\mathfrak{X}' \subset \mathfrak{X}$ such that 
$\cap_{\psi\in\mathfrak{X}'} F_\psi = \BQ$.

Let $\mathfrak{Q}$ be the set of primes $q$ such that 
\begin{itemize}
\item $q\equiv 3 \mod 8$;
\item $-q$ is a square modulo $p$;
\item $-D_L$ is a square modulo $q$.
\end{itemize}
The second and third conditions are equivalent to $q$ belonging to an index two subgroup of $(\BZ/p\BZ)^\times$
and $(\BZ/D_L\BZ)^\times$, respectively. In the case of the third condition, this subgroup --
the kernel of the character $\chi_L$ -- is not induced from any index two subgroup of any
$(\BZ/D\BZ)^\times$ with $D$ a proper divisor of $D_L$.
Since $p$ is odd and $D_L$ has a prime factor not equal to $2$ (this is where we use
the hypothesis that $L\neq \BQ(\sqrt{-1}), \BQ(\sqrt{-2})$), it is then easy to see that these conditions are satisfied 
by infinitely many primes. That is, $\mathfrak{Q}$ is an infinite set.

Let $\mathfrak{K}$ be the set of imaginary quadratic fields $K= \BQ(\sqrt{-q})$ for
$q\in\mathfrak{Q}$. This is then an infinite set of imaginary quadratic fields. The first condition on $q$ implies
that the discriminant of $K$ is $-D_K=-q$, which means that the class number of $K$ is odd. The second
condition on $q$ is then just the condition that $p$ splits in $K$, while the third condition is that every
prime dividing $D_K$ (which is just $q$!) splits in $L$.

For $K\in \mathfrak{K}$ let $\mathfrak{X}_K^\can$ be the set of canonical Hecke characters of $K$ of infinity type
$(-1,0)$. The condition that $-D_K = -q \equiv 3 \mod 8$ means that the root number of each 
$\psi\in \mathfrak{X}_K^\can$ is $-1$.
If $\ell$ is a prime that splits in both $K$ and $L$ (which is an infinite set), then 
for every anticyclotomic Hecke characters $\chi$ of $K$ of finite $\ell$-power
order and $\ell$-power conductor the root number of $\psi\chi$ is also $-1$. 
Moreover, by a theorem 
of Rohrlich \cite{Ro} for all but finitely many such $\chi$,
$\ord_{s=1}L(s,\psi\chi) = 1$ for all $\psi\in \mathfrak{X}_K^\can$;
let $\mathfrak{X}(K,\ell)$ be the set of such $\chi$ for which this holds. 

Let $\mathfrak{S}(K)$ be the set of primes that split in both $K$ and $L$.
We then have 
$$
\mathfrak{X}': = \{\psi\chi \ : \ \psi\in \mathfrak{X}_K^\can, \chi\in\mathfrak{X}(K,\ell), K\in \mathfrak{K}, \ell\in\mathfrak{S}(K)\}\subset \mathfrak{X}.
$$
We will next show that $\cap_{\psi\in\mathfrak{X}'} F_\psi = \BQ$, which implies the lemma.

If $\chi\in \mathfrak{X}(K,\ell)$ then $F_{\psi\chi} \subset F_\psi(\mu_{\ell^\infty})$. 
As $\mathfrak{S}(K)$ is infinite it follows that
$$
\cap_{\ell\in\mathfrak{S}(K)} \cap_{\chi\in\mathfrak{X}(K,\ell)} F_{\psi\chi}  \subset \cap_{\ell\in\mathfrak{S}(K)} F_\psi(\mu_{\ell^\infty}) \subset F_\psi.
$$
So 
$$
\cap_{\psi\in \mathfrak{X}_K^\can}
\cap_{\ell\in\mathfrak{S}(K)} \cap_{\chi\in\mathfrak{X}(K,\ell)} F_{\psi\chi}  \subset \cap_{\psi\in \mathfrak{X}_K^\can} F_\psi = K,
$$
the final equality being by Proposition \ref{B-canonicalvalues-prop}.
Hence
$$
\cap_{K\in \mathfrak{K}} \cap_{\psi\in \mathfrak{X}_K^\can}
\cap_{\ell\in\mathfrak{S}(K)} \cap_{\chi\in\mathfrak{X}(K,\ell)} F_{\psi\chi}  \subset \cap_{K\in \mathfrak{K}} K = \BQ.
$$
\end{proof}

}

\part{Iwasawa main conjectures and applications}

\setcounter{section}{8}

\section{Main conjectures: backdrop} 

This section presents main conjectures underlying Iwasawa theory of a weight two elliptic newform over the rationals and imaginary quadratic fields, and interrelations among them. We also recall the prior results towards the main conjectures.

\subsection{Selmer groups} \label{sSel} The subsection introduces Selmer groups with varying local conditions (cf.~\cite[\S3]{Sk}). 

\subsubsection{Over the rationals}
Let $g \in S_{2}(\Gamma_{0}(N))$ be an elliptic newform, $F$ the Hecke field and $\cO$ the integer ring.
 Let $A=A_{g}$ be an associated $\GL_2$-type abelian variety\footnote{In fact there is an abelian variety $A_{0}$ over $\BQ$ with $\BZ_{g}\hookrightarrow \End(A_{0})$ such that $A_{g}=A_{0}\otimes_{\BZ_{g}}\cO$, where $\BZ_{g} \subset \cO$ is the Hecke order.}
 over $\BQ$ with $\cO\hookrightarrow \End(A_{g})$ as in \S\ref{newforms-AV}. 
 Let $p \nmid 2N$ be a prime and $\Sigma$ the set of primes dividing $Np$. 
For a prime $\lambda$  of the Hecke field $F$ above $p$, 
we may choose $A_{g}$ and $$\pi: X_{0}(N)\ra A_{g}$$ to be an $(\cO,\lambda)$-optimal paramterisation in the sense of \cite[\S3.7]{Zh}.
Let $T$ be a lattice in the attached $p$-adic Galois representation $V=V_{F_{\lambda}}$.
Often, we consider the lattice arising from the $\lambda$-adic Tate module of $A=A_{g}$.
Define   
$$
W=V/T, 
$$
which is a discrete $\cO_{\lambda}$-divisible $G_{\BQ}$-module. 
In the Tate lattice case it is isomorphic to the $p$-divisible group 
$A[\lambda^{\infty}]$.
Let $\Sel(g)$ be the Bloch--Kato Selmer group associated to $W(1)$, $\Sel_{\lambda^{\infty}}(A)$ the Selmer group associated to $A[\lambda^{\infty}]$ and $\Sha(A)[\lambda^{\infty}]$ the Tate--Shafarevich group. 

Let 
$$
M=T(1) \otimes_{\BZ_{p}} \Lambda^{\vee}
$$
be a discrete $\Lambda_{\cO_{\lambda}}$-module with the $G_{\BQ}$-action on $\Lambda^{\vee}$ via the inverse of the canonical character $\Psi$ (cf.~\eqref{cycuniv}).
\vskip2mm
{\it{Ordinary and signed Selmer groups}.} 
For an ordinary prime $p\nmid N$, there is an $\cO_{\lambda}[G_{\BQ_{p}}]$-filtration 
$$0 \subset T^{+} \subset T$$
 with $\rank_{\cO_{\lambda}}T^{+}=1.$
Define
\begin{equation}\label{ordQ}
S(g) = \ker \big{\{} H^{1}(G_{\Sigma},M) \ra \prod_{v \in \Sigma, v \nmid p} H^{1}(\BQ_{v},M) 
\times H^{1}(I_{p},T/T^{+} \otimes_{\BZ_{p}} \Lambda^{\vee})  \big{\}},
\end{equation}
a discrete $\Lambda_{\cO_{\lambda}}$-module. 
Its Pontryagin dual 
$$
X(g)=\Hom_{\cont}(S(g),\BQ_{p}/\BZ_{p})
$$
is a finitely generated compact $\Lambda_{\cO_{\lambda}}$-module.

\begin{remark}\label{rpin}
In the above definition $\prod_{v \in \Sigma, v \nmid p} H^{1}(\BQ_{v},M)$ may be replaced with 
$\prod_{v \in \Sigma, v \nmid p} H^{1}(I_{v},M)$.
\end{remark}

\vskip2mm
We now consider the supersingular case. Let $p\nmid 2N$ be a non-ordinary prime so that 
$a_{g}(p)=0$, which we refer to as a supersingular prime. 
For $\circ\in \{+,-\}$, 
let $$H^{1}_{\circ}(\BQ_{p},M) \subset H^{1}(\BQ_{p},M)$$ be the annihilator of Kobayashi's signed submodule 
$H^{1}_{\circ}(\BQ_{p}, T(1) \otimes_{\BZ_{p}} \Lambda)\subset H^{1}(\BQ_{p},T(1)\otimes_{\BZ_{p}}\Lambda)$ under the Pontryagin duality pairing (cf.~\S\ref{IwCoh}). 
Define
\begin{equation}\label{ssQ}
S_{\circ}(g) = \ker \big{\{} H^{1}(G_{\Sigma},M) \ra \prod_{v \in \Sigma, v \nmid p} H^{1}(\BQ_{v},M) 
\times \frac{H^{1}(\BQ_{p},M)}{H^{1}_{\circ}(\BQ_{p},M)}  \big{\}}
\end{equation} and let $X_{\circ}(g)$ denote its Pontryagin dual.

\vskip2mm
{\it{Strict Selmer groups}.} 
For a newform $g\in S_{2}(\Gamma_{0}(N))$ and a prime $p$, 
define
\begin{equation}\label{strQ}
S_{\st}(g) = \ker \big{\{} H^{1}(G_{\Sigma},M) \ra \prod_{v \in \Sigma, v \nmid p} H^{1}(\BQ_{v},M) 
\times H^{1}(\BQ_{p},M)  \big{\}}
\end{equation} 
and let $X_{\st}(g)$  denote its Pontryagin dual.

\subsubsection{Over imaginary quadratic fields}\label{ss:Sel-L}
Let $L$ be an imaginary quadratic field satisfying the conditions \eqref{ord} and \eqref{coprime},  
and so $(p)=v\ov{v}$ with $v$ determined via the embedding $\iota_{p}:\ov{\BQ}\hookrightarrow \ov{\BQ}_{p}$.

Let $\Sel(g_{/L})$ denote the Bloch--Kato Selmer group associated to $W(1)$ when viewed as a $G_{L}$-module and 
$\Sel_{\lambda^{\infty}}(A_{/L})$ the  
$\lambda^{\infty}$-Selmer group over $L$. 
For $\cdot \in\{ \emptyset, \cyc, \ac\}$, 
let 
$$
\mathcal{M}^{\cdot}=T(1) \otimes_{\BZ_{p}} \Lambda_{L}^{\cdot,\vee} 
$$
be a discrete $\Lambda_{\cO_{L}}^\cdot$-module 
with the $G_{L}$-action on $\Lambda_{L}^{\cdot,\vee}$ via the inverse of the canonical character $\Psi_{L}^\cdot$ (cf.~\S\ref{MoreIwasawa}). 
\vskip2mm
{\it{Ordinary and signed Selmer groups}.} For $p\nmid 2N$ an ordinary prime, define 
a discrete $\Lambda_{L,\cO_{\lambda}}^{\cdot}$-module
\begin{equation}\label{ordL}
S^{\cdot}(g_{/L}) = \ker \big{\{} H^{1}(G_{L,\Sigma}, \mathcal{M}^{\cdot}) \ra \prod_{v \in \Sigma, v \nmid p} H^{1}(L_{v},\mathcal{M}^{\cdot}) 
\times \prod_{w|p}H^{1}(I_{w}, T(1)/T^{+}(1) \otimes_{\BZ_{p}} \Lambda_{L}^{\cdot,\vee})  \big{\}},
\end{equation}
where we let $\Sigma=\Sigma_{L}$ also denote the set of places of $L$ over $\Sigma$.
Let 
$
X^{\cdot}(g_{/L})
$
be the Pontryagin dual.

For $p\nmid 2N$ a supersingular prime, define 
\begin{equation}\label{ssL}
S_{\circ}^{\cdot}(g_{/L}) = \ker \big{\{} H^{1}(G_{L,\Sigma}, \mathcal{M}^{\cdot}) \ra \prod_{v \in \Sigma, v \nmid p} H^{1}(L_{v},\mathcal{M}^{\cdot}) 
\times \prod_{w|p}\frac{H^{1}(L_{w},\CM^{\cdot})}{H^{1}_{\circ}(L_{w},\CM^{\cdot})}  \big{\}}
\end{equation}
and let $X_{\circ}^{\cdot}(g_{/L})$ denote its Pontryagin dual.

\vskip2mm
{\it{Strict Selmer groups}.} 
For a newform $g \in S_{2}(\Gamma_{0}(N))$ and an ordinary or a supersingular prime $p\nmid 2N$, let 
$$
S_{\st,\bullet}^\cdot(g_{/L}) \subset S_{\bullet}^\cdot(g_{/L}), \ \ \bullet = 
\begin{cases} \ord  & \text{$g$ is ordinary} \\ \circ & \text{$g$ is supersingular}
\end{cases}
$$
be the submodule consisting of $\kappa \in S^{\cdot}_{\bullet}(g_{/L})$ such that
$$
\loc_{v}\kappa = 0.
$$
Let $X^{\cdot}_{\st,\bullet}(g_{/L})$ denote its Pontryagin dual.

\vskip2mm
{\it{Greenberg Selmer groups}.}  Define 
\begin{equation}\label{GrL}
\Sel_{\rm{Gr}}(g_{/L}) = \ker \big{\{} H^{1}(G_{L,\Sigma}, W(1)) \ra \prod_{w \in \Sigma, v \nmid p} H^{1}(L_{w},W(1)) 
\times H^{1}(L_{v},W(1))  
\times \frac{H^{1}(L_{\ov{v}},W(1))}{H^{1}(L_{\ov{v}},W(1))_{\div}}
\big{\}}
\end{equation}
and 
\begin{equation}\label{GrL}
S_{\rm{Gr}}^\cdot(g_{/L}) = \ker \big{\{} H^{1}(G_{L,\Sigma}, \mathcal{M}^\cdot) \ra \prod_{w \in \Sigma, v \nmid p} H^{1}(L_{w},\mathcal{M}^\cdot) 
\times H^{1}(L_{v},\mathcal{M}^\cdot)  \big{\}}, 
\end{equation}
the latter a discrete $\Lambda_{L,\cO_{\lambda}}^{\cdot}$-module.  
Let 
$X_{\rm{Gr}}^\cdot(g_{/L})$ denote the Pontryagin dual. 

\begin{remark}\label{rpin'}
\noindent
\begin{itemize}
\item[(i)] In the cyclotomic definition $\prod_{w \in \Sigma, v \nmid p} H^{1}(L_{w},\mathcal{M}^{\rm cyc})$ may be replaced with 
$\prod_{w \in \Sigma, v \nmid p} H^{1}(I_{w},\mathcal{M}^{\rm cyc})$.
\item[(ii)] The Greenberg Selmer groups arise from interpolation of Bloch--Kato Selmer groups  associated to twists of $T(1)$ by certain infinite order Hecke characters over $L$.
\end{itemize}
\end{remark}

\subsection{The main conjectures}\label{sIMC} 
\subsubsection{Over the rationals}\label{ssIMC1} 
\noindent\
\vskip2mm

{\it{Cyclotomic main conjectures with $p$-adic $L$-functions}.} 
\begin{conj}\label{KatopL} 
Let $g \in S_{2}(\Gamma_{0}(N))$ be an elliptic newform, $p\nmid N$ a prime of ordinary reduction and $\alpha$ the $p$-unit root of the Hecke polynomial at $p$.
Let $0\neq \omega \in S_F$, $\gamma \in V_{F}$ with $\gamma^{\pm} \neq 0$ and $\mathcal{L}_{\alpha,\omega,\gamma}(g)$ be the associated $p$-adic $L$-function.
\begin{itemize}
\item[(a)]$X(g)$ is $\Lambda_{\cO_{\lambda}}$-torsion.
\item[(b)] For $\omega$ good (cf.~\S\ref{ModularForms}) and $\gamma=\gamma_{g}$ as in Lemma \ref{GorPer}, we have  an equality of ideals 
$$
(\mathcal{L}_{\alpha,\omega,\gamma}(g))
=\xi(X(g)),
$$
 in $\Lambda_{\cO_{\lambda}} \otimes_{\BZ_{p}} \BQ_{p}$ and even in $\Lambda_{\cO_{\lambda}}$ if  (irr$_\BQ$) holds for $p \neq 2$.
\end{itemize}
\end{conj}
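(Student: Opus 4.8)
The plan is to establish part~(a) and the ``Euler system'' divisibility from Kato's work, and the reverse divisibility from the Eisenstein-congruence input underlying this paper, transported through the two-variable zeta element over an auxiliary imaginary quadratic field; together these give the equality (under the hypotheses needed for the congruence input, of the kind recorded around Corollary~\ref{GKSU}).

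First I would recall that part~(a) --- $X(g)$ being $\Lambda_{\cO_\lambda}$-torsion --- together with the divisibility $\xi(X(g)) \mid (\mathcal{L}_{\alpha,\omega,\gamma}(g))$ of ideals in $\Lambda_{\cO_\lambda}\otimes_{\BZ_p}\BQ_p$, is Kato's theorem for the Beilinson--Kato element $\bz_\gamma(g)$ of \S\ref{BK-rationals}. All the needed inputs appear in the excerpt: the non-triviality of $\bz_\gamma(g)$ for $\gamma^\pm\neq 0$, the explicit reciprocity law $Col_{\eta_\omega}(\loc_p(\bz_\gamma(g))) = \mathcal{L}_{\alpha,\omega,\gamma}(g)$ with $\mathcal{L}_{\alpha,\omega,\gamma}(g)\neq 0$ (Theorem~\ref{ERLBKI}, Lemma~\ref{ColBK-lem}), and the Euler-system/Poitou--Tate bounding of the relevant Selmer module. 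When (irr$_\BQ$) holds, the integrality of $\bz_\gamma(g)$ (property~(iii) of \S\ref{BK-rationals}) and of $Col_{\eta_\omega}$ for good $\omega$ (\S\ref{Coleman}) upgrade this to an equality of ideals in $\Lambda_{\cO_\lambda}$ of the stated form once $\omega$ is good and $\gamma=\gamma_g$, so that the periods $\Omega^\pm_{\omega,\gamma}$ become the optimal periods $\Omega^\pm$ (Lemma~\ref{GorPer}).

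For the reverse divisibility $(\mathcal{L}_{\alpha,\omega,\gamma}(g)) \mid \xi(X(g))$ I would pass to an imaginary quadratic field $L$ satisfying \eqref{h1}, \eqref{h2} and \eqref{h3}, chosen so that every prime dividing $N$ splits in $L$; then the two-variable zeta element $\CZ(g_{/L})$ of \S\ref{two-variable-zeta}, together with its explicit reciprocity laws, is available. Following the strategy of \S\ref{s:Kob}, one starts from the one-sided divisibility in Conjecture~\ref{Int_Greenberg}, i.e. $\mathcal{L}_p^{\mathrm{Gr}}(g_{/L}) \mid \xi_{\Lambda_L^{\ur}}(X_{\mathrm{Gr}}^{\ur}(g_{/L}))$, provided by the $U(3,1)$ Eisenstein-congruence results of \cite{W1,CLW}; by the equivalence of Proposition~\ref{Int_eq} (a formal consequence of the reciprocity laws of Theorem~\ref{ERLBKII}) this transports to the one-sided divisibility in Conjecture~\ref{Int_St}, namely $\mathcal{L}_p(g_{/L}) \mid \xi_{\Lambda_L}(X(g_{/L}))$. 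Projecting to the cyclotomic line $\Lambda_L\twoheadrightarrow\Lambda$ and using that over $L$ the $p$-adic $L$-function degenerates to $\mathcal{L}_{\alpha,\omega,\gamma}(g)\cdot\mathcal{L}_{\alpha,\omega',\gamma'}(g^L)$ (\S\ref{sspLcy}), while the Selmer module degenerates to an extension built from $X(g)$ and $X(g^L)$ for $g^L=g\otimes\chi_L$, yields $\mathcal{L}_{\alpha,\omega,\gamma}(g)\,\mathcal{L}_{\alpha,\omega',\gamma'}(g^L) \mid \xi_\Lambda(X(g))\,\xi_\Lambda(X(g^L))$. Combined with the first step applied to both $g$ and $g^L$, all divisibilities become equalities, giving Conjecture~\ref{KatopL} for $g$ (and for $g^L$) simultaneously. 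As an alternative, the reverse divisibility is the Skinner--Urban theorem \cite{SU} obtained from $U(2,2)$ Eisenstein congruences, at the cost of an irreducibility-plus-ramification hypothesis on $\bar\rho$ and Tamagawa-number restrictions that the zeta-element route partly removes.

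The hard part is the cyclotomic descent in the second step, carried out while tracking integrality and periods. One needs a control theorem identifying the cyclotomic specialisation of the two-variable ordinary Selmer module over $L$ with $X(g)$ together with $X(g^L)$, with no extra contribution at the places above $p$; and one needs the algebraic $p$-adic $L$-function over $L$ to match, after this specialisation, the analytic cyclotomic $p$-adic $L$-functions over $\BQ$ --- this is exactly where the period comparisons of Lemmas~\ref{PetPer} and~\ref{PerTw} are used. The other pressure point is making Conjecture~\ref{Int_Greenberg} effective at all: the $U(3,1)$ input depends on a non-vanishing hypothesis of the type recorded around Corollary~\ref{GKSU}, which one verifies via Rohrlich's non-vanishing (Theorem~\ref{NVIw}) and, in the relevant rank-one situations, the $p$-adic Waldspurger formula of Bertolini--Darmon--Prasanna.
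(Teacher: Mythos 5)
Your architecture is exactly the one the paper itself uses where it actually proves Conjecture \ref{KatopL}: Kato's Euler-system divisibility (Theorem \ref{KaMC_r}, converted to the $p$-adic $L$-function form by Lemma \ref{KatoEq}), and the reverse divisibility obtained from the $U(3,1)$ Eisenstein-congruence bound for the Greenberg Selmer group over an auxiliary $L$, transported to the standard main conjecture over $L$ via the two-variable zeta element (the argument of Proposition \ref{Eq}), then specialised to the cyclotomic line and split between $g$ and $g\otimes\chi_L$ (Lemma \ref{Eq'}, Proposition \ref{ctl_st}), exactly as in Theorems \ref{KoMC_r}, \ref{KoMC'_eq} and \ref{IMC_ord}(a). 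But keep in mind that the statement is a \emph{conjecture}: the paper establishes it only under substantive hypotheses ($N$ square-free, or a ramified prime as in \eqref{ram}, together with (irr$_{\BQ}$)/(irr$_{L}$)), and your hedge ``hypotheses of the kind recorded around Corollary \ref{GKSU}'' points to the wrong place --- Corollary \ref{GKSU} is the Euler-system \emph{upper} bound under the non-vanishing \eqref{nv}, which plays no role in the direction you need.

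Two concrete gaps in your descent step. First, your choice of $L$ with \emph{every} prime dividing $N$ split is incompatible with the Eisenstein-congruence input: Theorem \ref{GMC_r} requires a nonsplit prime dividing $N$ (its hypothesis (spl)), and the paper's ordinary argument chooses $q\mid N$ inert and satisfying \eqref{ram} precisely so that \eqref{def} holds; the split-everywhere (Heegner) choice is made only in the BSD and $p$-converse sections. Second, the divisibility furnished by \cite{W1,CLW} lives in $\Lambda_{L}^{\ur}\otimes_{\Lambda_{L}^{\cyc,\ur}}\Frac(\Lambda_{L}^{\cyc,\ur})$, i.e.\ with denominators supported exactly on primes pulled back from the cyclotomic line, so one cannot simply ``project to the cyclotomic line'': the crux of Theorem \ref{KoMC'_lb} is to clear these denominators via the vanishing of anticyclotomic $\mu$-invariants (Pollack--Weston under \eqref{def}, resp.\ the indefinite-case result under \eqref{indef}) together with the $p$-indivisibility of Tamagawa numbers guaranteed by \eqref{ram} --- not via Rohrlich non-vanishing or the $p$-adic Waldspurger formula, which you invoke here but which enter only in the rank-one applications. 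A smaller inaccuracy: the integral form of Kato's divisibility requires the image condition \eqref{im} (implied by \eqref{ram}), not merely (irr$_{\BQ}$) plus a good $\omega$; and, as you do say, Kato alone gives only one divisibility, the equality coming from playing the congruence bound over $L$ against Kato's bound for both $g$ and $g\otimes\chi_L$ simultaneously.
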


 In the supersingular case
Kobayashi \cite[\S1]{Ko} proposed the following:

\begin{conj}\label{KatopLss} 
Let $g \in S_{2}(\Gamma_{0}(N))$ be an elliptic newform, $p\nmid 2N$ a prime of supersingular reduction and $\circ\in\{+,-\}$. 
\begin{itemize}
\item[(a)]$X_{\circ}(g)$ is $\Lambda_{\cO_{\lambda}}$-torsion. 
\item[(b)]  
For $\gamma=\gamma_{g}$ as in Lemma \ref{GorPer}, we have an equality of ideals
$$
(\mathcal{L}_{\gamma}^{\circ}(g))
=\xi(X_{\circ}(g))
$$
in $\Lambda_{\cO_{\lambda}}$.
\end{itemize}
\end{conj}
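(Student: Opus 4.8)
The plan is to establish both inclusions in part (b); part (a) --- that $X_\circ(g)$ is $\Lambda_{\cO_\lambda}$-torsion --- is already known from Kato's Euler system of Beilinson--Kato elements together with Kobayashi's signed descent (cf.~\cite{K}, \cite{Ko}). For (b), one divisibility, $\xi(X_\circ(g)) \mid (\mathcal{L}_\gamma^\circ(g))$, is likewise due to Kato and Kobayashi: the signed Coleman maps carry $\loc_p$ of the Beilinson--Kato classes to the signed $p$-adic $L$-functions (Theorem \ref{ERLBKI-ss}), and feeding the Euler system into Kobayashi's machinery bounds the signed Selmer group above by $\mathcal{L}_\gamma^\circ(g)$, integrally since (irr$_\BQ$) holds automatically in the supersingular case (\cite[Thm.~2.6]{E}). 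So the real content is the reverse divisibility $\mathcal{L}_\gamma^\circ(g) \mid \xi(X_\circ(g))$, which I would obtain by lifting to an auxiliary imaginary quadratic field and then descending.

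First I would choose an imaginary quadratic field $L$ with $(D_L, N) = 1$ in which $p = v\bar v$ splits, with $D_L$ divisible only by primes of ordinary reduction for $g$, and arranged --- via non-vanishing results of Rohrlich type together with the $p$-adic Waldspurger formula, so that the Greenberg $p$-adic $L$-function does not vanish identically --- so that the two-variable Greenberg main conjecture over $L$ is accessible. The crucial external input is then \emph{one} divisibility in that conjecture,
$$\mathcal{L}_p^{\mathrm{Gr}}(g_{/L}) \mid \xi_{\Lambda_L^{\mathrm{ur}}}\big(X_{\mathrm{Gr}}(g_{/L})\big),$$
which comes from the Eisenstein congruence method on the unitary group $U(3,1)$ underlying \cite{W1} and \cite{CLW}.

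Next I would transport this divisibility along the two-variable zeta element $\mathcal{Z}^\circ(g_{/L}) \in H^1_{\mathrm{rel},\circ}(O_L[\tfrac{1}{p}], T(1)\hat\otimes\Lambda_L)$ of Theorem \ref{thmZ}. Its two explicit reciprocity laws, ${\rm Col}_v^\circ(\loc_v(\mathcal{Z}^\circ(g_{/L}))) = \mathcal{L}_p^\circ(g_{/L})$ and ${\rm Log}_{\bar v}^\circ(\loc_{\bar v}(\mathcal{Z}^\circ(g_{/L}))) = \mathcal{L}_p^{\mathrm{Gr}}(g_{/L})$, together with Proposition \ref{Int_eq}, show that a one-sided divisibility in the Greenberg main conjecture over $L$ implies the analogous divisibility in the signed main conjecture over $L$, i.e.
$$\mathcal{L}_p^\circ(g_{/L}) \mid \xi_{\Lambda_L}\big(X_\circ(g_{/L})\big).$$
I would then apply the cyclotomic specialisation $\Lambda_L \twoheadrightarrow \Lambda$. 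Since $p$ splits in $L$ we have $a_p(g) = a_p(g')$, so the quadratic twist $g' = g\otimes\chi_L$ is again supersingular at $p$; and since $\mathrm{Ind}_{G_L}^{G_\BQ}$ of the cyclotomic tower over $L$ is the cyclotomic tower for the pair $\{g, g'\}$, while $\mathcal{L}_p^\circ(g_{/L})$ factors accordingly, this specialisation yields
$$\mathcal{L}_\gamma^\circ(g)\,\mathcal{L}_\gamma^\circ(g') \mid \xi_\Lambda\big(X_\circ(g)\big)\,\xi_\Lambda\big(X_\circ(g')\big).$$
Combining with the Kato--Kobayashi divisibility $\xi_\Lambda(X_\circ(h)) \mid \mathcal{L}_\gamma^\circ(h)$ for both $h = g$ and $h = g'$ forces all of these divisibilities to be equalities; this gives (b) in $\Lambda_{\cO_\lambda}\otimes\BQ_p$, and integrally in $\Lambda_{\cO_\lambda}$ by the integrality noted above.

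The hard part is everything feeding into the construction of $\mathcal{Z}^\circ(g_{/L})$ and the proof of its reciprocity laws, which is the business of Part I: one must control the $\Lambda$-adic Tate module of the residually reducible CM Hida family with CM by $L$ (Theorems \ref{thmZ_0}, \ref{BT-thm1}, \ref{BT-thm2-ss}), where the failure of the $p$-distinguished hypothesis makes the pertinent Hecke algebra non-Gorenstein and forces the roundabout analysis via Beilinson--Flach elements, reflexive closures of the Tate lattices, and the auxiliary-newform trick. A secondary obstacle is the $U(3,1)$ Eisenstein congruence input itself, which restricts the class of admissible fields $L$ and hence the hypotheses on $g$ (notably semistability) under which the argument runs to completion.
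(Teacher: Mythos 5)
Your route is the paper's own proof of this statement (Theorem \ref{KoMC_r}): the Kato--Kobayashi Euler-system divisibility (Theorem \ref{KoMC_ub}), the $U(3,1)$ Eisenstein-congruence divisibility towards the Greenberg main conjecture over an auxiliary imaginary quadratic field in which $p$ splits (Theorem \ref{GMC_r}), transfer to the signed main conjecture over $L$ through the two-variable zeta element and its two reciprocity laws (Theorem \ref{KoMC'_lb}, via the proof of Proposition \ref{Eq}), cyclotomic descent and the factorisation $\CL_p^{\circ,\cyc}(g_{/L})=\CL^{\circ}_{\gamma}(g)\,\CL^{\circ}_{\gamma'}(g')$ (Proposition \ref{ctl_st}), and the final squeeze. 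However, as written your argument only yields the equality in $\Lambda_{\cO_\lambda}\otimes_{\BZ_p}\BQ_p$, i.e.\ up to $\mu$-invariants. The divisibility you import from the Eisenstein-congruence method holds a priori only in $\Lambda_{L,\cO_\lambda}^{\ur}\otimes_{\Lambda_{L,\cO_\lambda}^{\cyc,\ur}}\Frac(\Lambda_{L,\cO_\lambda}^{\cyc,\ur})$ (that is all Theorem \ref{GMC_r} gives), and hence so does the transferred divisibility $\CL^{\circ}_p(g_{/L})\mid\xi(X_\circ(g_{/L}))$. Combining an \emph{integral} $\xi(X_\circ(h))\mid\CL^{\circ}(h)$ with a merely \emph{rational} reverse divisibility pins the ideals down only up to a power of $p$, so your closing claim that the equality is integral ``by the integrality noted above'' does not follow. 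The missing step is the integrality upgrade of Theorem \ref{KoMC'_lb}: one chooses $L$ so that a prime $q\,\|\,N$ at which $\ov{\rho}$ is ramified (available by semistability, i.e.\ \eqref{ram}) is inert in $L$ while the remaining primes of $N$ split (condition \eqref{def}), and then the vanishing of the anticyclotomic $\mu$-invariant (Pollack--Weston; Burungale in the indefinite case) together with the $p$-indivisibility of the Tamagawa numbers at $q$ shows $\CL^{\circ}_p(g_{/L})$ is coprime to every height-one prime pulled back from the cyclotomic line, whence the divisibility holds in $\Lambda_{L,\cO_\lambda}$ itself.

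Two corrections to your set-up feed into this. The conditions you impose on the auxiliary field do not secure what is actually needed: the Rohrlich/$p$-adic Waldspurger non-vanishing plays no role here (the two-variable transfer in Proposition \ref{Eq} requires no non-vanishing hypothesis, $\CL_p^{\rm Gr}(g_{/L})\neq 0$ is automatic, and the delicate hypothesis \eqref{nv} only enters the cyclotomic-line applications), whereas the presence of a nonsplit prime of $N$ in $L$ satisfying \eqref{ram} is essential both for the hypotheses of the Eisenstein-congruence theorem and for the $\mu$-invariant step just described; moreover ``$D_L$ divisible only by primes of ordinary reduction'' is a condition on the twisting field in Theorem \ref{thmA}, not on the auxiliary $L$. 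Finally, the integral form of the Kato--Kobayashi divisibility is not a consequence of (irr$_{\BQ}$) alone: the paper needs the big-image condition \eqref{im}, supplied by \eqref{ram} (Remark \ref{ram-im}); in the semistable case this is harmless, but the justification should be stated that way.
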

\begin{remark}
Kobayashi's original formulation concerns supersingular elliptic curves. The above generalisation to weight two newforms is due to Lei \cite[Conj.~1.2]{L}.
\end{remark}

For the choice of $\gamma$ as in Conjecture \ref{KatopLss}(b), we let $(\CL_{p}^{\circ}(g))$ denote $(\CL_{\gamma}^{\circ}(g))$.

\vskip2mm

{\it{Kato's main conjecture}.} The following 
links Kato's zeta element with the strict Selmer group 
(cf.~\cite[Conj.~12.10]{K}). 
\begin{conj}\label{Kato} 
Let $g \in S_{2}(\Gamma_{0}(N))$ be an elliptic newform and $p$ a prime. 
\begin{itemize}
\item[(a)] $X_{\st}(g)$ is $\Lambda_{\cO_{\lambda}}$-torsion. 
\item[(b)] For 
$\gamma=\gamma_{g}$ as in Lemma \ref{GorPer}, we have an equality of ideals
$$
\xi\big{(}H^{1}(\BZ[\frac{1}{p}], T \otimes_{\BZ_{p}} \Lambda)/ \Lambda_{\cO_{\lambda}}\cdot{{\bf{z}}_{\gamma}(g)}\big{)}
=\xi(X_{\st}(g))
$$
in $\Lambda_{\cO_{\lambda}}\otimes_{\BZ_{p}} \BQ_{p}$ and even in $\Lambda_{\cO_{\lambda}}$ if (irr$_\BQ$) holds for $p \neq 2$.
\end{itemize}
\end{conj}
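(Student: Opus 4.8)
\emph{Strategy.} The plan is to combine the Euler-system divisibility coming from Kato's Beilinson--Kato elements with the reverse divisibility, which we extract from the two-variable zeta element of Theorem~\ref{thmZ} together with the one-sided divisibility in the Greenberg main conjecture over a well-chosen auxiliary imaginary quadratic field. Throughout we work with $\gamma=\gamma_g$ and $\omega$ good as in Lemma~\ref{GorPer}, so that all the relevant elements are integral and the periods occurring below are the optimal ones.

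\emph{Step 1: part (a) and one divisibility.} Part~(a) is exactly Kato's theorem: since $\bz_\gamma(g)\neq 0$ by property~(ii) of the Beilinson--Kato elements (a consequence of Rohrlich's non-vanishing, Theorem~\ref{NVIw}) and $H^1(\BZ[\tfrac1p],T\otimes_{\BZ_p}\Lambda)$ is a torsion-free $\Lambda_{\cO_\lambda}$-module of rank one (Theorem~\ref{cycIwQ}), the Euler-system bound of \cite[Thm.~12.5]{K} shows $X_{\st}(g)$ is $\Lambda_{\cO_\lambda}$-torsion. The same bound gives the divisibility $\xi\big(X_{\st}(g)\big)\mid\xi\big(H^1(\BZ[\tfrac1p],T\otimes_{\BZ_p}\Lambda)/\Lambda_{\cO_\lambda}\cdot\bz_\gamma(g)\big)$, integrally if (irr$_\BQ$) holds (using property~(iii), the integrality of $\bz_\gamma(g)$). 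It therefore remains to prove the reverse divisibility.

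\emph{Step 2: the reverse divisibility via $L$.} Fix an imaginary quadratic field $L$ as in \S\ref{ss:Z_st} satisfying \eqref{h1}, \eqref{h2}, \eqref{h3} (and, if needed for (irr$_L$), the further splitting requirements), and set $g^L=g\otimes\chi_L$. Theorem~\ref{thmZ} produces the two-variable zeta element $\CZ(g_{/L})$ (respectively $\CZ^{\circ}(g_{/L})$ in the supersingular case), and Theorems~\ref{2varZ-1varZ-thm} and~\ref{2varZ-1varZ-thm-ss} identify its cyclotomic projection with the Beilinson--Kato element over $L$, a $\Lambda$-linear combination of $\bz_\gamma(g)$ and $\bz_{\gamma'}(g^L)$ whose image under the Coleman map at $v$ is $\CL_{\alpha,\omega,\gamma}(g)\cdot\CL_{\alpha,\omega',\gamma'}(g^L)$. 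Feeding the one-sided divisibility in the Greenberg main conjecture over $L$ (Conjecture~\ref{Int_Greenberg}), which follows from the $U(3,1)$-Eisenstein congruence input of \cite{W1,CLW}, into the equivalence of main conjectures of Proposition~\ref{Int_eq} yields a divisibility $\big(\CL_p^{\cdot}(g_{/L})\big)\mid\xi(X_{\cdot}(g_{/L}))$. Specializing along $\gamma_{\ac}\mapsto 1$, using a control theorem relating $X_{\cdot}(g_{/L})$ in the cyclotomic direction to $X(g)$ and $X(g^L)$ together with the factorization $\CL_p^{\cdot}(g_{/L})^{\cyc}\doteq\CL_{\alpha,\omega,\gamma}(g)\CL_{\alpha,\omega',\gamma'}(g^L)$, gives $\CL_{\alpha,\omega,\gamma}(g)\CL_{\alpha,\omega',\gamma'}(g^L)\mid\xi(X(g))\xi(X(g^L))$. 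Combining this with Kato's divisibility $\xi(X(g))\mid\CL_{\alpha,\omega,\gamma}(g)$ applied to both $g$ and $g^L$, and using that $\Lambda_{\cO_\lambda}\otimes\BQ_p$ is a principal ideal domain, forces $\xi(X(g))=\big(\CL_{\alpha,\omega,\gamma}(g)\big)$, i.e.~Conjecture~\ref{KatopL}(b) (resp.~\ref{KatopLss}(b)) for $g$ --- integrally under (irr$_\BQ$), since every link in the chain is then integral (Theorem~\ref{cycIwQ}(b), integrality of $\bz_\gamma(g)$, and the integral form of the divisibility from \cite{W1,CLW}). Finally, a standard Poitou--Tate computation relating $X_{\st}(g)$ to $X(g)$ through the local condition at $p$, together with the explicit reciprocity law $Col_{\eta_\omega}(\loc_p\bz_\gamma(g))=\CL_{\alpha,\omega,\gamma}(g)$ of Theorem~\ref{ERLBKI}, converts this into the desired equality $\xi\big(H^1(\BZ[\tfrac1p],T\otimes\Lambda)/\Lambda_{\cO_\lambda}\cdot\bz_\gamma(g)\big)=\xi(X_{\st}(g))$; in the supersingular case one runs the same argument with the signed Coleman maps and the $\pm$-main conjectures. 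In the ordinary case Step~2 may alternatively be carried out through the $U(2,2)$-Eisenstein congruence machinery of \cite{SU}.

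\emph{Main obstacle.} The crux is Step~2's reliance on Theorem~\ref{thmZ}, hence on Theorem~\ref{thmZ_0}: one must show that the Beilinson--Flach element attached to $g$ and the residually reducible canonical CM Hida family actually lives in the $\Lambda$-adic Tate module of the \emph{closed} modular curves, and that this lattice is integrally induced from $G_L$. Here the $p$-distinguished hypothesis fails and the relevant Hecke algebra is non-Gorenstein, so the geometric construction of the zeta element acquires singularities; resolving this requires Ohta's analysis of the reflexive closure of the Tate module, the congruence-ideal computations of Bellaiche--Dimitrov and Betina--Dimitrov--Pozzi (entering through Theorems~\ref{excz} and~\ref{excz-2}), and the construction of an auxiliary newform whose Greenberg $p$-adic $L$-function does not vanish at the identity Hecke character, obtained from Rohrlich's anticyclotomic non-vanishing \cite{Ro} and the $p$-adic Waldspurger formula. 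Beyond this, the secondary difficulties are the cyclotomic descent (controlling torsion and pseudo-null contributions, and the precise Euler factors) and the careful matching of periods and congruence numbers among the various $p$-adic $L$-functions, for which Lemmas~\ref{PerTw} and~\ref{PetPer} are needed.
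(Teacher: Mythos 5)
The statement you were asked about is Conjecture~\ref{Kato}: the paper does not prove it, and it remains open in the generality stated (all newforms $g$ and \emph{all} primes $p$). What the paper establishes are special cases: Theorem~\ref{KaMC_r} (Kato's torsionness and one divisibility for all $p$; equality for $p\nmid 6N$ ordinary under (irr$_\BQ$), integrally under (ram), and in the CM case), and, via Theorem~\ref{KoMC_r} together with Lemma~\ref{KatoEq}, the semistable supersingular case. Your Step~2 is essentially the paper's strategy for exactly those new cases (cf.\ \S\ref{s:Kob} and Theorem~\ref{IMC_ord}): Eisenstein-congruence divisibility for the Greenberg main conjecture over an auxiliary $L$, transferred through the two-variable zeta element and Proposition~\ref{Int_eq}, specialized by a control theorem, and closed against Kato's divisibility. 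So as an account of the paper's partial results your outline is broadly faithful, but as a proof of Conjecture~\ref{Kato} it has genuine gaps.

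Concretely: (i) the zeta element $\CZ^{\cdot}(g_{/L})$ of Theorem~\ref{thmZ} exists only for odd $p\nmid N$ with $g$ ordinary or with $a_p(g)=0$ (and \eqref{h4} when $p=3$), so your argument says nothing about $p\mid N$, $p=2$, or non-ordinary primes with $a_p\neq 0$ --- all of which are included in the conjecture; the paper itself flags the $a_p\neq 0$ non-ordinary case as not following from its methods. (ii) The input from \cite{W1,CLW} (Theorem~\ref{GMC_r}) is not unconditional: it needs $N$ square-free, (irr$_L$), and splitting hypotheses such as \eqref{spl}, and the divisibility it gives lives in $\Lambda_{L,\cO_\lambda}^{\ur}\otimes_{\Lambda^{\cyc,\ur}_{L,\cO_\lambda}}\Frac(\Lambda^{\cyc,\ur}_{L,\cO_\lambda})$; to make it usable along the cyclotomic line the paper must invoke vanishing of anticyclotomic $\mu$-invariants under \eqref{def} or \eqref{indef} (Pollack--Weston, Burungale), which in turn rests on (ram)-type hypotheses on Tamagawa numbers. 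Your proposal treats this divisibility as available integrally without these conditions. (iii) The existence of a suitable auxiliary $L$ (with a (ram) prime inert, the rest split, etc.) is where semistability and Ribet level-raising enter in the paper's proof of Theorem~\ref{KoMC_r}; for general $N$ this step is not available. Consequently your claimed integral equality ``under (irr$_\BQ$) for $p\neq 2$'' overstates even what the paper proves: the integral ordinary equality requires (ram) (or the auxiliary-field variant), and the supersingular equality requires square-free level. In short, the proposal reproduces the paper's route where it works, but it does not, and cannot by this method, prove the conjecture as stated.
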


\begin{remark}\label{KaPR}
\noindent
\begin{itemize}
\item[(i)] The conjecture is uniformly formulated for all primes $p$. 
\item[(ii)] The lower bound for $X_{\st}(g)$ as predicted by Conjecture \ref{Kato}(b) has an application to Conjecture \ref{PR}: In combination with \cite{Ko'} (or \S\ref{Shaf}), the lower bound leads to the implication 
$$
\ord_{s=1}L(s,g)=1 \implies 
\loc_{p}(z_{\rm{Kato}}(g)) \neq 0
$$
for any prime $p$. 
\end{itemize}
\end{remark}

\subsubsection{Over imaginary quadratic fields}\label{ssIMC2} 
Let $(g,p,L)$ be as above 
and $g'=g \otimes \chi_L$ denote the quadratic twist.

For an ordinary or a supersingular prime $p\nmid N$ and $\circ\in\{+,-\}$, put
\begin{equation}\label{n-red}
\Box = 
\begin{cases} \emptyset  & \text{$g$ is ordinary} \\ \circ & \text{$g$ is supersingular},
\end{cases}
\ \bullet=
\begin{cases} \ord  & \text{$g$ is ordinary} \\ \circ & \text{$g$ is supersingular.}
\end{cases}
\end{equation}

\vskip2mm

{\it{Cyclotomic main conjectures with standard $p$-adic $L$-functions}.} 
\begin{conj}\label{cycBC} 
Let $g \in S_{2}(\Gamma_{0}(N))$ be an elliptic newform, $p\nmid N$ a prime of ordinary reduction and $\alpha$ the $p$-unit root of the Hecke polynomial at $p$. 
Let $0\neq \omega \in S_F$, $\gamma \in V_{F}$, $\gamma' \in V'_{F}$ and $\CL_{\alpha,\omega,\gamma,\gamma'}(g_{/L})$ be the associated $p$-adic $L$-function. 
\begin{itemize}
\item[(a)] $X^{\rm cyc}(g_{/L})$ is $\Lambda_{L,\cO_{\lambda}}^{\cyc}$-torsion.  
\item[(b)] For $\omega$ good (cf.~\S\ref{ModularForms}), $\gamma=\gamma_{g}$ and $\gamma'=\gamma_{g'}$ as in Lemma \ref{GorPer}, we have an equality of ideals 
$$(\mathcal{L}_{\alpha,\omega,\gamma,\gamma'}(g_{/L}))=\xi(X(g_{/L}))$$
in $\Lambda_{L,\cO_{\lambda}}^{\cyc}\otimes_{\BZ_{p}} \BQ_{p}$ and even in 
$\Lambda_{L,\cO_{\lambda}}^{\cyc}$ if (irr$_L$) holds for $p \neq 2$.
\end{itemize}
\end{conj}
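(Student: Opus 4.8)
The plan is to deduce Conjecture \ref{cycBC} from the two-variable zeta element $\CZ(g_{/L})$ together with a recent divisibility towards the two-variable Greenberg main conjecture over $L$, following in the ordinary setting the same strategy used to prove Theorem \ref{thmA}. For part (a), I would first use Shapiro's lemma to rewrite $H^1(G_{L,\Sigma},\mathcal{M}^{\cyc})$ as a cohomology group over $\BQ$ with coefficients in $\Ind_{G_L}^{G_\BQ}(T(1)\otimes\Lambda^\vee)$, which (after the fixed twist $Tw$) decomposes as the direct sum of $T_g(1)\otimes\Lambda^\vee$ and $T_{g'}(1)\otimes\Lambda^\vee$. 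Matching the local conditions at the two places $v,\ov v\mid p$ of $L$ with the ordinary local conditions for $g$ and $g'$ at $p$ then gives a pseudo-isomorphism $X^{\cyc}(g_{/L})\sim X(g)\oplus X(g')$ of $\Lambda_{\cO_\lambda}$-modules, so the torsionness in part (a) reduces to Kato's theorem \cite{K} for $g$ and $g'$ over $\BQ$ (equivalently, it follows from the nonvanishing of the Beilinson--Kato class $\bz^\ord_{\alpha,\omega,\gamma,\gamma'}(g_{/L})$ and Theorem \ref{cycIwL-Box}). Under the same decomposition the cyclotomic $p$-adic $L$-function factors as $\CL_{\alpha,\omega,\gamma,\gamma'}(g_{/L})=\CL_{\alpha,\omega,\gamma}(g)\cdot\CL_{\alpha,\omega',\gamma'}(g')$ by its very definition (cf.~\S\ref{sspLcy}), so part (b) is the product of Conjecture \ref{KatopL}(b) for $g$ and for $g'$, and it suffices to establish the two divisibilities for each of these.

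The first divisibility, that $\xi(X^{\cyc}(g_{/L}))$ divides $\CL_{\alpha,\omega,\gamma,\gamma'}(g_{/L})$, follows from Kato's Euler system bound $\xi(X(g'))\mid\CL_{\alpha,\omega,\gamma}(g')$ applied to $g$ and to $g'$, multiplied together; alternatively it comes directly from the nonzero class $\bz^\ord_{\alpha,\omega,\gamma,\gamma'}(g_{/L})\in H^1_{\rel,\ord}(\cO_L[\tfrac1p],T(1)\otimes\Lambda)$ via the usual Euler-system argument. The reverse divisibility, that $\CL_{\alpha,\omega,\gamma,\gamma'}(g_{/L})$ divides $\xi(X^{\cyc}(g_{/L}))$, is the substantive input and is where the zeta element is used. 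Here the plan is: invoke the divisibility towards the two-variable Greenberg main conjecture over $L$ (Conjecture \ref{Int_Greenberg}), namely $\CL_p^{\rm Gr}(g_{/L})\mid\xi_{\Lambda_L^{\ur}}(X_{\rm Gr}^{\ur}(g_{/L}))$, which is supplied by the Eisenstein congruence method on $U(3,1)$ as in \cite{W1,CLW}; transfer it through the explicit reciprocity laws of Theorem \ref{thmZ} (here Theorems \ref{ERLIint-thm} and \ref{ERLIIint-thm}) and the equivalence of the three two-variable main conjectures furnished by Proposition \ref{Int_eq}, obtaining the one-variable-over-$L$ statement $\CL_p^{\cdot}(g_{/L})\mid\xi_{\Lambda_L}(X_{\cdot}(g_{/L}))$ in $\Lambda_L$; and finally specialize along the cyclotomic projection $\Lambda_L\twoheadrightarrow\Lambda_L^{\cyc}=\Lambda$ (killing $\gamma_\ac-1$). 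The cyclotomic specialization of $\CL_p^{\cdot}(g_{/L})$ is a unit times $\CL_{\alpha,\omega,\gamma,\gamma'}(g_{/L})$ by Proposition \ref{2varZ-prop}, and a control theorem identifies the specialized characteristic ideal $\xi_{\Lambda_L}(X_{\cdot}(g_{/L}))\bmod(\gamma_\ac-1)$ with $\xi_\Lambda(X^{\cyc}(g_{/L}))$; together these yield the required divisibility. Combining the two divisibilities gives the equality in $\Lambda_{L,\cO_\lambda}^{\cyc}\otimes_{\BZ_p}\BQ_p$, and the integral refinement under (irr$_L$), $p\ne2$, follows as usual by upgrading Fitting ideals to characteristic ideals using the absence of nonzero pseudo-null submodules and the freeness statements of Theorem \ref{cycIwL}.

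The main obstacle will be the descent in the last paragraph from the two-variable Greenberg divisibility over $\Lambda_L^{\ur}$ to the cyclotomic line over $\Lambda$. This requires a genuine control theorem for the Selmer modules $X_{\cdot}(g_{/L})$ and $X_{\rm Gr}(g_{/L})$ under the specialization $\gamma_\ac\mapsto1$ — ensuring the relevant $H^0$'s vanish and that the cyclotomic fiber of the Selmer group has no spurious $(\gamma_\ac-1)$-torsion, so that characteristic ideals specialize without loss — and it requires knowing that the prime $(\gamma_\ac-1)$ of $\Lambda_L$ divides neither side to excess. The latter is guaranteed on the analytic side by the nonvanishing of $\CL_{\alpha,\omega,\gamma,\gamma'}(g_{/L})$ (Remark \ref{NVcyc}, ultimately Rohrlich's theorem) and on the algebraic side by part (a). A secondary technical point is that $\CL_p^{\rm Gr}(g_{/L})$ lives over the unramified coefficient ring $\Lambda_L^{\ur}$, so passing back to the standard cyclotomic $p$-adic $L$-function, which is defined already over $\Lambda_{L,\cO_\lambda}^{\cyc}$, uses flat base change compatibility of characteristic ideals; this is routine but must be tracked carefully through the reciprocity laws.
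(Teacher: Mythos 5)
Statement \ref{cycBC} is a \emph{conjecture} in the paper, and your proposal does not actually prove it in the stated generality: the decisive input you invoke --- the Eisenstein-congruence divisibility towards the Greenberg main conjecture over $L$ from \cite{W1,CLW} --- is not available unconditionally. In the paper this is Theorem \ref{GMC_r}, and it carries real hypotheses: $N$ square-free, (irr$_{L}$), a factorization $N=N^{+}N^{-}$ with the condition \eqref{spl} on $N^-$ (so in particular the given field $L$ must have suitable splitting behaviour, which for an arbitrary $L$ in Conjecture \ref{cycBC} need not hold). Moreover the divisibility it supplies lives only in $\Lambda_{L,\cO_{\lambda}}^{\ur}\otimes_{\Lambda_{L,\cO_{\lambda}}^{\cyc,\ur}}\Frac(\Lambda_{L,\cO_{\lambda}}^{\cyc,\ur})$, i.e.\ only after inverting the height-one primes coming from the cyclotomic line. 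Your key step --- specializing the two-variable divisibility at $\gamma_{\ac}=1$ and invoking a control theorem --- is therefore exactly where the argument breaks: before descending one must upgrade the divisibility to an integral one in $\Lambda_{L}$, and the paper does this (Theorem \ref{KoMC'_lb}) by proving vanishing of anticyclotomic $\mu$-invariants (Pollack--Weston/Vatsal, Burungale, Hung), which in turn needs \eqref{def} or \eqref{indef} together with ramification hypotheses of type \eqref{ram}. Your ``secondary technical point'' about flat base change over $\Lambda^{\ur}$ is not the issue; the $\mu$-invariant/integrality problem is, and it is an open obstruction in general --- which is precisely why \ref{cycBC} remains a conjecture in the paper. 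Smaller gaps of the same kind: Kato's integral divisibility $\xi(X(h))\mid(\CL_{\alpha,\omega,\gamma}(h))$ requires the image condition \eqref{im} (it is only rational otherwise), and your appeal to ``the nonzero class $\bz^{\ord}_{\alpha,\omega,\gamma,\gamma'}(g_{/L})$ via the usual Euler-system argument'' is not an independent route, since that class is just a $\Lambda$-combination of Kato's classes and no Euler system over $L$ is constructed in this paper.

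Where your hypotheses-free claims are replaced by the paper's actual hypotheses, your outline does coincide with the paper's route to its theorems in this direction: the splitting $X^{\cyc}(g_{/L})\sim X(g)\oplus X(g')$ and the factorization $\CL_{\alpha,\omega,\gamma,\gamma'}(g_{/L})=\CL_{\alpha,\omega,\gamma}(g)\CL_{\alpha,\omega',\gamma'}(g')$ are exactly Lemma \ref{Eq'} (via \eqref{sel-fac}); the two-variable divisibility plus control theorem (Proposition \ref{ctl_st}, resp.\ \cite[Prop.~3.9]{SU}) plus Kato/Kobayashi's opposite divisibility is how Theorems \ref{KoMC_r}, \ref{KoMC'_eq} and \ref{IMC_ord} are proved; and the alternative reduction of part (b) to Conjecture \ref{KatopL} for $g$ and $g'$ is realized in the paper only under (irr$_{\BQ}$) and \eqref{ram} via Theorem \ref{KaMC_r}(c) (Skinner--Urban, Wan, and the quadratic-twist refinement). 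So the correct conclusion of your argument is a conditional theorem of the type the paper already states, not the conjecture itself; to present it honestly you must surface all of these hypotheses rather than absorb them into citations.
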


\begin{conj}\label{cycBCss} 
Let $g \in S_{2}(\Gamma_{0}(N))$ be an elliptic newform, $p\nmid 2N$ a prime of supersingular reduction and 
$\circ\in\{+,-\}$. 

\begin{itemize}
\item[(a)] $X_{\circ}(g_{/L})$ is $\Lambda_{L,\cO_{\lambda}}^{\cyc}$-torsion. 
\item[(b)] For $\gamma=\gamma_{g}$ and $\gamma'=\gamma_{g'}$ as in Lemma \ref{GorPer}, we have an equality of ideals 
$$(\mathcal{L}_{\gamma,\gamma'}^{\circ}(g_{/L}))=\xi(X_{\circ}(g_{/L}))$$
in  
$\Lambda_{L,\cO_{\lambda}}^{\cyc}$.
\end{itemize}
\end{conj}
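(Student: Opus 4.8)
We indicate how one would establish Conjecture~\ref{cycBCss} by the methods of this paper; as for Theorem~\ref{thmA}, the argument succeeds under hypotheses on $(g,L)$ and in full generality the statement stays open.

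First I would record the base-change factorisation. Since $p$ splits in $L$ and the canonical projection identifies $\Gamma_L^{\cyc}$ with $\Gamma$, the $\tau$-action decomposition as in \eqref{spl} (using $\Ind_{G_L}^{G_\BQ}(V_g|_{G_L})\cong V_g\oplus V_{g'}$) gives an identification of $H^1(G_{L,\Sigma},\mathcal M^{\cyc})$ with the direct sum of the analogous cohomology groups for $g$ and for $g'$, under which restriction at the place $v$ (resp.\ $\bar v$) above $p$ corresponds to restriction at $p$ on the $g$- (resp.\ $g'$-) factor. Here one uses $a_{g'}(p)=\chi_L(p)a_g(p)=0$, so that $g'$ is again supersingular at $p$, and that the signed submodules are compatible with the twist by $\chi_L$ (Lemmas~\ref{ColTw}, \ref{CohTw}, \ref{ColTw-L}); as $p>2$ is supersingular, (irr$_\BQ$) holds, so this compatibility is integral. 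Consequently $X_\circ^{\cyc}(g_{/L})\cong X_\circ(g)\oplus X_\circ(g')$ as $\Lambda_{\cO_\lambda}$-modules, while by the definition in \S\ref{sspLcy} one has $\mathcal L^\circ_{\gamma,\gamma'}(g_{/L})=\mathcal L^\circ_{\omega,\gamma}(g)\cdot\mathcal L^\circ_{\omega',\gamma'}(g')$ for the optimal data. Part~(a) then follows at once from the theorem of Kobayashi (and Lei) that $X_\circ$ is $\Lambda_{\cO_\lambda}$-torsion, applied to $g$ and to $g'$; and part~(b) is reduced to the equality $(\mathcal L^\circ_{\omega,\gamma}(g))\,(\mathcal L^\circ_{\omega',\gamma'}(g'))=\xi(X_\circ(g))\,\xi(X_\circ(g'))$ in $\Lambda_{\cO_\lambda}$.

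For the divisibility $\xi(X_\circ^{\cyc}(g_{/L}))\mid(\mathcal L^\circ_{\gamma,\gamma'}(g_{/L}))$ I would invoke Kobayashi's refinement of Kato's Euler-system bound, namely $\xi(X_\circ(g''))\mid(\mathcal L^\circ(g''))$ for $g''\in\{g,g'\}$, and multiply; equivalently, run the Euler-system argument directly over $L$ with the Beilinson--Kato element $\bz^\circ_{\omega,\gamma,\gamma'}(g_{/L})$ of \S\ref{BK-overL}, whose signed Coleman image at $v$ is $\mathcal L^\circ_{\gamma,\gamma'}(g_{/L})$ by Theorem~\ref{ERLBKII}(b). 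Since (irr$_L$) holds automatically in the supersingular case, these divisibilities are integral.

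The reverse divisibility $(\mathcal L^\circ_{\gamma,\gamma'}(g_{/L}))\mid\xi(X_\circ^{\cyc}(g_{/L}))$ is the genuinely hard input, and I would obtain it in three moves: (i) the one-sided divisibility $\mathcal L_p^{\mathrm{Gr}}(g_{/L})\mid\xi_{\Lambda_L^{\ur}}(X_{\mathrm{Gr}}^{\ur}(g_{/L}))$ in the two-variable Greenberg main conjecture (Conjecture~\ref{Int_Greenberg}), supplied by the Eisenstein congruence method on $U(3,1)$ (\cite{W1}, \cite{CLW}); (ii) transfer it to the corresponding one-sided divisibility $\mathcal L_p^\circ(g_{/L})\mid\xi_{\Lambda_L}(X_\circ(g_{/L}))$ in the two-variable signed main conjecture (Conjecture~\ref{Int_St}) via Proposition~\ref{Int_eq}, whose proof rests on the existence of the two-variable zeta element $\mathcal Z^\circ(g_{/L})$ and its explicit reciprocity laws at $v$ and $\bar v$ (Theorem~\ref{thmZ}); (iii) project along the anticyclotomic variable, using a control sequence relating $X_\circ(g_{/L})$ to $X_\circ^{\cyc}(g_{/L})$, to descend the divisibility to $\Lambda_{L,\cO_\lambda}^{\cyc}$. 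Comparing with the preceding paragraph then forces the desired equality of ideals. The main obstacle is the conjunction of (i)--(iii): one must ensure the anticyclotomic specialisation is exact (no spurious pseudo-null cokernel, so that characteristic ideals specialise), and — more seriously — the input (i) and the construction of $\mathcal Z^\circ(g_{/L})$ impose hypotheses on $(g,L)$, notably \eqref{h1}, \eqref{h2} and \eqref{h3} (or \eqref{h_irr}), and inherit the restrictions of \cite{W1,CLW}. It is exactly these constraints that keep the statement conjectural in general, while the argument proves it in the cases they allow — in particular, for $g=f_E$ with $E$ semistable it is a by-product of the proof of Theorem~\ref{thmA}.
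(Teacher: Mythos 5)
Your proposal is correct and follows essentially the same route as the paper: the statement is indeed only a conjecture in general, and the cases the paper establishes (Theorem~\ref{KoMC'_eq}, via Theorem~\ref{KoMC_r} and Lemma~\ref{Eq'}) are proved exactly as you sketch — the factorisation $X_{\circ}^{\cyc}(g_{/L})\simeq X_{\circ}(g)\oplus X_{\circ}(g')$ and $\mathcal L^{\circ}_{\gamma,\gamma'}(g_{/L})=\mathcal L^{\circ}_{\gamma}(g)\mathcal L^{\circ}_{\gamma'}(g')$, the Kato--Kobayashi upper bound (Theorem~\ref{KoMC_ub}), and the Eisenstein-congruence lower bound transferred by the two-variable zeta element and descended by the control theorem (Theorems~\ref{GMC_r}, \ref{KoMC'_lb}, Propositions~\ref{Eq}, \ref{ctl_st}). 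Your closing caveats about the hypotheses on $(g,L)$ (including the $\mu$-invariant/(def)--(indef) input needed to make the lower bound integral) are precisely what keeps the general statement conjectural in the paper as well.
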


{\it{Main conjectures with standard $p$-adic $L$-functions, bis}.} 
\begin{conj}\label{St} Let $g \in S_{2}(\Gamma_{0}(N))$ be an elliptic newform, $p\nmid 2N$ 
an ordinary or a supersingular prime, and $\cdot \in \{\emptyset, \cyc, \ac\}$. In the case $\cdot=\ac$ suppose that the root number of $E$ over $L$ equals $+1$. 
\begin{itemize}
\item[(a)] $X^{\cdot}_{\bullet}(g_{/L})$ is $\Lambda_{L,\cO_{\lambda}}^{\cdot}$-torsion. 
\item[(b)] We have an equality of ideals 
$$(\mathcal{L}_{p}^{\Box,\cdot}(g_{/L}))=\xi(X_{\bullet}^{\cdot}(g_{/L})),$$ 
 in $\Lambda_{L,\cO_{\lambda}}^{\cdot}\otimes_{\BZ_{p}} \BQ_{p}$ and even in $\Lambda_{L}^{\cdot}$ if $({\rm irr}_{L})$ holds.
\end{itemize}
\end{conj}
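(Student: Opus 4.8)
The plan is to establish first the two-variable case $\cdot=\emptyset$ --- which coincides with Conjecture~\ref{Int_St} --- and then to descend to the cyclotomic ($\cdot=\cyc$) and anticyclotomic ($\cdot=\ac$) cases along the corresponding $\BZ_p$-quotients of $\Gamma_L$. For the two-variable case the principal tool is Proposition~\ref{Int_eq}: it renders Conjecture~\ref{Int_St} equivalent to the Greenberg main conjecture~\ref{Int_Greenberg} and the zeta-element main conjecture~\ref{Int_LLZ} over $L$, and more precisely it propagates any one-sided divisibility among all three. Thus it suffices to prove one divisibility in one of these formulations and the opposite divisibility in one of them.

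For part~(a): since the $p$-adic $L$-functions $\CL_p^{\Box}(g_{/L})$ and $\CL_p^{\rm Gr}(g_{/L})$ are non-zero, the zeta element $\CZ^{\Box}(g_{/L})\in H^{1}_{\rel,\circ}(\cO_L[\frac{1}{p}],T(1)\hat\otimes\Lambda_L)$ of Theorem~\ref{thmZ} is non-zero, and its image under $\mathrm{Col}^{\Box}_v\circ\loc_v$ is a non-zero element of $\Lambda_L$. A Poitou--Tate bound of the ``Euler system of one class'' type, as in Kato~\cite{K}, then shows the strict Selmer group $X_{\st,\bullet}(g_{/L})$ is $\Lambda_L$-torsion; since $X_{\bullet}(g_{/L})$ and $X_{\st,\bullet}(g_{/L})$ differ only through one local condition at $v$, the torsionness of $X_{\bullet}(g_{/L})$ --- and hence of $X^{\cdot}_{\bullet}(g_{/L})$ after projection --- follows. (Alternatively, decompose along the $\Gal(L/\BQ)$-eigenspaces and invoke the torsionness of $X_{\bullet}(g)$, $X_{\bullet}(g')$ over $\BQ$ from \cite{K},~\cite{Ko}, with $g'=g\otimes\chi_L$.)

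For part~(b), the two-variable case. From \cite{W1,CLW} one has, via the $U(3,1)$ Eisenstein-congruence method, the divisibility $\CL_p^{\rm Gr}(g_{/L})\mid\xi_{\Lambda_L^{\ur}}(X_{\rm{Gr}}^{\rm ur}(g_{/L}))$ in Conjecture~\ref{Int_Greenberg}; by Proposition~\ref{Int_eq} this propagates to $\CL_p^{\Box}(g_{/L})\mid\xi_{\Lambda_L}(X_{\bullet}(g_{/L}))$. For the reverse divisibility I would pass to the cyclotomic and anticyclotomic lines. Over $\BQ$, Kato~\cite[Thm.~12.5]{K} --- and, in the supersingular case, Kobayashi~\cite{Ko} --- give $\xi_{\Lambda}(X_{\bullet}(g''))\mid\CL_p(g'')$ for $g''\in\{g,g'\}$; combining this with the $\Gal(L/\BQ)$-eigenspace decomposition of $X^{\cyc}_{\bullet}(g_{/L})$ (pseudo-isomorphic to $X_{\bullet}(g)\oplus X_{\bullet}(g')$) and the factorisation $\CL_p^{\Box,\cyc}(g_{/L})\doteq\CL_p(g)\,\CL_p(g')$ gives $\xi(X^{\cyc}_{\bullet}(g_{/L}))\mid\CL_p^{\Box,\cyc}(g_{/L})$, and together with the cyclotomic projection of the previous divisibility this forces the \emph{equality} in the cyclotomic case. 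The anticyclotomic line is treated the same way, the hypothesis $\epsilon(g_{/L})=+1$ guaranteeing that $\CL_p^{\Box,\ac}(g_{/L})$ is non-zero (a Rohrlich-style non-vanishing). To upgrade from the two-variable divisibility plus equality after specialisation along the two independent lines $\Gamma_L^{\cyc}$ and $\Gamma_L^{\ac}$ through the augmentation point, write $\xi_{\Lambda_L}(X_{\bullet}(g_{/L}))=(h)\cdot(\CL_p^{\Box}(g_{/L}))$; then $h$ specialises to a unit under both $T_-\mapsto 0$ and $T_+\mapsto 0$, so $h$ is a unit of the two-dimensional regular local ring $\Lambda_L$, yielding the two-variable equality --- in $\Lambda_L\otimes\BQ_p$, and in $\Lambda_L$ under $(\mathrm{irr}_L)$ after the usual control of $\mu$-invariants. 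This step needs exact control theorems comparing $X^{\cdot}_{\bullet}(g_{/L})$ with $X_{\bullet}(g_{/L})\otimes_{\Lambda_L}\Lambda_L^{\cdot}$ and the compatibility of the signed local conditions at $v,\bar v$ under these specialisations.

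The cyclotomic and anticyclotomic cases of Conjecture~\ref{St} are then exactly the equalities produced along the way. I expect the hard part to be this last descent/control step: going from a clean two-variable divisibility and a family of specialised equalities to the two-variable equality requires ruling out spurious pseudo-null contributions and reconciling the three Selmer groups $X^{\cdot}_{\bullet}(g_{/L})$ together with their local conditions at $v$ and $\bar v$ --- which is where the hypotheses $p\nmid 2N$, $(\mathrm{irr}_L)$, and the precise shape of Kobayashi's signed conditions are genuinely used. A more foundational obstacle is that the whole argument is routed through Proposition~\ref{Int_eq}, hence through the existence and reciprocity laws of the zeta element (Theorem~\ref{thmZ}), whose proof rests on the delicate analysis of the $\Lambda$-adic Tate module of the residually reducible CM Hida family carried out for Theorem~\ref{thmZ_0}.
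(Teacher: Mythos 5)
You are attempting to prove a statement that the paper itself leaves as a conjecture: Conjecture \ref{St} is not proved in the paper in the stated generality, but only under substantial extra hypotheses, in Theorem \ref{KoMC'_eq} (supersingular, $N$ square-free, \eqref{def} or \eqref{indef}) and Theorem \ref{IMC_ord} (ordinary, $N$ square-free, (irr$_\BQ$), (irr$_L$), \eqref{def} or \eqref{indef}). Within that restricted range your route is essentially the paper's: an Eisenstein-congruence lower bound over $L$, the Euler-system upper bound over $\BQ$ for $g$ and $g'=g\otimes\chi_L$, a control theorem (Proposition \ref{ctl_st}) to compare the two, the factorisation $\CL_p^{\Box,\cyc}(g_{/L})\doteq\CL_p(g)\CL_p(g')$ and the Selmer factorisation \eqref{sel-fac}, and finally the standard ``divisibility plus specialised equality along a line with non-vanishing forces two-variable equality'' lemma (cf.\ \cite[Lem.~3.2]{SU}), followed by anticyclotomic descent when $\epsilon(g_{/L})=+1$.

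The genuine gap is that the inputs you treat as unconditional are not available in the generality of the statement, which is exactly why it remains a conjecture. The divisibility from \cite{W1,CLW} (Theorem \ref{GMC_r}) requires $N$ square-free, the splitting/ramification hypothesis \eqref{spl}, and (irr$_L$) in the ordinary case, and it only holds in $\Lambda_{L,\cO_{\lambda}}^{\ur}\otimes_{\Lambda_{L,\cO_{\lambda}}^{\cyc,\ur}}\Frac(\Lambda_{L,\cO_{\lambda}}^{\cyc,\ur})$; upgrading it to an integral (or even rational two-variable) divisibility is precisely where Theorem \ref{KoMC'_lb} invokes vanishing of anticyclotomic $\mu$-invariants under \eqref{def} or \eqref{indef} (Pollack--Weston, resp.\ Burungale for the BDP $L$-function), together with the $p$-indivisibility of Tamagawa numbers forced by \eqref{ram}. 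Your ``Rohrlich-style'' non-vanishing of $\CL_p^{\Box,\ac}(g_{/L})$ is not what is used (and in the \eqref{indef} case this $p$-adic $L$-function vanishes identically, so no such argument can work there); the anticyclotomic descent in the paper goes through \eqref{def} and the same $\mu=0$ input. Likewise the reverse divisibility over $\BQ$ (Kato, Kobayashi) is integral only under \eqref{im} (or (irr$_\BQ$)/\eqref{ram}), and Proposition \ref{Int_eq} itself requires the non-vanishing hypothesis \eqref{nv} and (van$_L$)/(irr$_L$) to operate integrally. So your argument, as written, proves at best the paper's partial results and cannot close for general $(g,p,L)$; to be correct you must either add these hypotheses explicitly or supply new ideas replacing the Eisenstein-congruence and $\mu$-invariant inputs.
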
 
\begin{remark} 
\noindent
\begin{itemize}
\item[(i)] Note that the cyclotomic case of the above conjecture is nothing but Conjecture \ref{cycBC} (resp.~Conjecture \ref{cycBCss}) in the ordinary (resp.~supersingular) case. 
\item[(ii)] A related two-variable main conjecture for supersingular elliptic curves has been proposed by Kim \cite{Ki}. Its anticyclotomic counterpart goes back to Darmon--Iovita \cite{DI}.
\end{itemize}
\end{remark}
{\it{Greenberg main conjecture}.} 
\begin{conj}\label{Greenberg} Let $g \in S_{2}(\Gamma_{0}(N))$ be an elliptic newform, $p\nmid N$ a prime and $\cdot \in \{\emptyset, \cyc, \ac\}$. In the case $\cdot=\ac$ suppose that the root number of $E$ over $L$ equals $-1$. 
\begin{itemize}
\item[(a)] $X_{\rm{Gr}}^{\cdot}(g_{/L})$ is $\Lambda_{L,\cO_{\lambda}}^{\cdot}$-torsion. 
\item[(b)] We have an equality of ideals  
$$(\mathcal{L}_{p}^{\rm{Gr},\cdot}(g_{/L}))=\xi(X_{\rm{Gr}}^{\cdot}(g_{/L})),$$ 
in $\Lambda_{L,\cO_{\lambda}}^{\ur,\cdot}\otimes_{\BZ_{p}} \BQ_{p}$ and even in $\Lambda_{L}^{\ur,\cdot}$ for $p \neq 2$.
\end{itemize}
\end{conj}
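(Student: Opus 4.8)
The plan is to bracket the characteristic ideal $\xi_{\Lambda_{L}^{\ur}}(X_{\rm{Gr}}^{\cdot}(g_{/L}))$ between the two sides by the ideal $(\mathcal{L}_{p}^{\rm{Gr},\cdot}(g_{/L}))$, using the zeta element of Theorem~\ref{thmZ} and its explicit reciprocity law at $\ov{v}$ as the bridge to the analytic side for one inclusion, and the Eisenstein congruence construction on the unitary group $U(3,1)$ of \cite{W1,CLW} for the other. First I would reduce the one-variable cases $\cdot\in\{\cyc,\ac\}$ to the two-variable case $\cdot=\emptyset$: the Greenberg Selmer groups and $p$-adic $L$-functions are compatible under the quotients $\Lambda_{L}^{\ur}\twoheadrightarrow\Lambda_{L}^{\ur,\cdot}$, so by the usual control theory an equality of characteristic ideals over $\Lambda_{L}^{\ur}$ descends once one checks that the relevant specialisation maps on Selmer groups have pseudo-null kernel and cokernel. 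Here the hypothesis $p\nmid 2N$ and, in the $\ac$ case, the hypothesis that the root number of $g$ over $L$ is $-1$ (so that the BDP-type $p$-adic $L$-function of Theorem~\ref{pBDPL}, to which $\mathcal{L}_{p}^{\rm{Gr}}$ specialises by Proposition~\ref{GRL=BDPL}, is the right object and has controlled order of vanishing) are what make the control argument clean.

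For the two-variable statement the first main step is the divisibility $\xi_{\Lambda_{L}^{\ur}}(X_{\rm{Gr}}(g_{/L}))\mid(\mathcal{L}_{p}^{\rm{Gr}}(g_{/L}))$ together with the $\Lambda_{L}^{\ur}$-torsion-ness of $X_{\rm{Gr}}(g_{/L})$. This I would extract from the Euler system of Beilinson--Flach classes underlying the zeta element: by Theorems~\ref{BT-thm2} and~\ref{BT-thm2-ss} the bottom class $\CBF^{\cdot}(g_{/L})$ lives in the Iwasawa cohomology of the Tate lattice $\BT_{1}$ of the \emph{closed} modular curves, hence, via \eqref{Ind-eq}, in the group $H^{1}_{\mathrm{rel},\circ}$ of the induced representation $T(1)\hat\otimes\Lambda_{L}$, which is exactly where the $\ov{v}$-strict (Greenberg) local condition is imposed. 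Feeding this bottom class, together with the Beilinson--Flach Euler system of \cite{KLZ} in the ordinary case (respectively its plus/minus variant of \cite{BL} in the supersingular case), into the Euler system machine bounds the Greenberg Selmer group above by the image of $\loc_{\ov{v}}(\CZ^{\cdot}(g_{/L}))$ under $\mathrm{Log}_{\ov{v}}^{\cdot}$, which equals $\mathcal{L}_{p}^{\rm{Gr}}(g_{/L})$ by the explicit reciprocity law (Theorem~\ref{ERLIIint-thm}, equivalently the $\ov{v}$-part of Theorem~\ref{thmZ}); a Poitou--Tate duality then converts the bound on the $\Lambda$-adic Selmer group of the zeta element into the stated divisibility for $X_{\rm{Gr}}$. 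Since $\mathcal{L}_{p}^{\rm{Gr}}(g_{/L})$ is non-zero, being a twist of the Katz $p$-adic $L$-function of Theorem~\ref{pKatzL}, this divisibility forces $X_{\rm{Gr}}(g_{/L})$ to be $\Lambda_{L}^{\ur}$-torsion.

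The opposite divisibility $(\mathcal{L}_{p}^{\rm{Gr}}(g_{/L}))\mid\xi_{\Lambda_{L}^{\ur}}(X_{\rm{Gr}}(g_{/L}))$, the lower bound on the Greenberg Selmer group, is the deep input: it is supplied by the $U(3,1)$ Eisenstein congruence method of \cite{W1,CLW}, valid under the pertinent residual irreducibility and splitting hypotheses on the pair $(g,L)$. Granting it, the two divisibilities combine to the equality of ideals, first in $\Lambda_{L,\cO_{\lambda}}^{\ur}\otimes_{\BZ_{p}}\BQ_{p}$ and then integrally in $\Lambda_{L}^{\ur}$ under $({\rm irr}_{L})$, which rules out the spurious contributions at Eisenstein primes and pins down the congruence ideals. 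Finally Proposition~\ref{Int_eq} transports the equality to Conjectures~\ref{Int_St} and~\ref{Int_LLZ}, and the cyclotomic projection to $\BQ$ for $g$ and for its twist $g\otimes\chi_{L}$ recovers Conjectures~\ref{KatopL} and~\ref{KatopLss}, where it meshes as a consistency check with Kato's and Kobayashi's upper bounds.

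The main obstacle, beyond invoking the $U(3,1)$ Eisenstein congruence as a black box, is to make the Euler system bound both \emph{integral} and genuinely \emph{two-variable}: Kato's cyclotomic Euler system over $\BQ$ controls only the cyclotomic direction, so one truly needs the Beilinson--Flach Euler system in the imaginary quadratic direction, and the source of the zeta element is the residually reducible CM Hida family ${\bf h}_{v}$ whose Hecke algebra is non-Gorenstein and fails the $p$-distinguished hypothesis. Consequently the characteristic-ideal bookkeeping is delicate: the torsion in $\BH$, the congruence power series $\mathcal{H}_{v}$ of \eqref{nrm}, and the factors of $T_{v}$ separating $\BT$, $\BT_{v}$, $\BH$ and $\BH_{v}$ must all be tracked precisely, so that no spurious power of $T_{v}$ or divisor of the congruence ideal is lost or gained when passing between the zeta element, the Beilinson--Flach class and $\mathcal{L}_{p}^{\rm{Gr}}$ --- this is exactly where the analysis of the Tate lattices in \S\ref{CMHF}--\S\ref{BFord} and the identification of the congruence ideal with the Katz $p$-adic $L$-function (Theorems~\ref{excz} and~\ref{excz-2}) do the work.
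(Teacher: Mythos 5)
You should note first that the statement you are proving is stated in the paper as a \emph{conjecture} (Conjecture \ref{Greenberg}); the paper does not prove it, and indeed records that the non-vanishing of $\CL_{p}^{\rm{Gr},\cyc}(g_{/L})$ along the cyclotomic line is an open problem. What the paper establishes are partial results: the one-sided divisibility $\CL_{p}^{\rm Gr}(g_{/L})\mid\xi(X_{\rm Gr}^{\ur}(g_{/L}))$ from the $U(3,1)$ Eisenstein congruence (Theorem \ref{GMC_r}), the opposite cyclotomic divisibility $\xi(X_{\rm Gr}^{\cyc,\ur}(g_{/L}))\mid(\CL_{p}^{\rm{Gr},\cyc}(g_{/L}))$ from Kato's theorem transported through the zeta element (Corollary \ref{GKSU}, via Proposition \ref{Eq}), and the full conjecture only in the case $\cdot=\emptyset$ under strong hypotheses --- square-free level (or quadratic twists thereof), $({\rm irr}_{L})$, and \eqref{def} or \eqref{indef} --- in Theorems \ref{KoMC'_eq} and \ref{IMC_ord}. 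Your outline is essentially the framework behind these partial results, but as a proof of Conjecture \ref{Greenberg} in the stated generality it has genuine gaps in both halves.

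Concretely: (a) the upper-bound direction is not obtained in this paper by ``feeding the Beilinson--Flach bottom class into the Euler system machine'' with the Greenberg local condition --- no two-variable Euler system over $L$ is constructed here (that extension is explicitly deferred to a companion paper), and the source is the residually reducible, non-Gorenstein CM family, outside the running hypotheses of the standard Euler-system machinery; the paper instead gets this divisibility only cyclotomically, from Kato's Euler system over $\BQ$ plus the exact sequences \eqref{exvc1}--\eqref{exv} of Proposition \ref{Eq}, and even then only under the non-vanishing hypothesis \eqref{nv}, which is open in general (and your claim that $\CL_{p}^{\rm Gr}(g_{/L})$ is non-zero ``being a twist of the Katz $p$-adic $L$-function'' is off: the Katz $L$-function enters as the congruence ideal of ${\bf h}_{v}$, and the non-vanishing of $\CL_{p}^{\rm Gr}$ comes from interpolated non-zero critical values, which does not settle \eqref{nv} for the cyclotomic or anticyclotomic specialisations). (b) The lower bound from \cite{W1,CLW} is not unconditional: it needs square-free level, $({\rm irr}_{L})$ and splitting/ramification hypotheses such as \eqref{spl}, and a priori holds only in $\Lambda_{L,\cO_{\lambda}}^{\ur}\otimes_{\Lambda_{L,\cO_{\lambda}}^{\cyc,\ur}}\Frac(\Lambda_{L,\cO_{\lambda}}^{\cyc,\ur})$; promoting it to an integral divisibility requires $\mu$-invariant vanishing inputs available only under \eqref{def}/\eqref{indef}. (c) The reduction of the $\cyc$ and $\ac$ cases to the two-variable case is not ``usual control theory'': one must know that the specialised $p$-adic $L$-function is non-zero on the relevant line (for $\ac$ with root number $-1$ this is tied to non-torsion Heegner points via Proposition \ref{GRL=BDPL-II}), and the control and no-pseudo-null-submodule statements (Propositions \ref{GrCtl}, \ref{GrPNS}) themselves require such non-degeneracy. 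So your proposal reproduces the paper's strategy for its theorems, but it does not close the conjecture, whose general case remains open precisely at the points listed above.
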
 
\begin{remark} 
\noindent
\begin{itemize}
\item[(i)] The conjecture is uniformly formulated for all primes $p$ of good reduction. 
It is also independent of the choice of lattice (cf.~\cite[Prop.~2.9]{KO}). 
\item[(ii)] 
The cyclotomic projection $\mathcal{L}_{p}^{\rm{Gr},\cyc}(g_{/L})$ does not interpolate classical $L$-values. Yet the cyclotomic Greenberg main conjecture is elemental in our approach to the $p$-part of the BSD formula  (cf.~\S\ref{pBSD}) and $p$-converse to the Gross--Zagier and Kolyvagin theorem (cf.~\S\ref{s:pcv}). 
\item[(iii)] The non-vanishing of 
$\mathcal{L}_{p}^{\rm{Gr},\cyc}(g_{/L})$
 is an open problem. 
It can be verified in certain  rank one situations (cf.~\eqref{locnv}).
\end{itemize}
\end{remark}
\vskip2mm 
{\it{Zeta element main conjecture}.} 

\begin{conj}\label{LLZ} 
Let $g \in S_{2}(\Gamma_{0}(N))$ be an elliptic newform and $p \nmid 2N$ an ordinary or a supersingular prime. 
For  $\bullet, \Box$ as in \eqref{n-red} and 
$\cdot \in \{\emptyset, \cyc, \ac\}$, 
let $$\CZ^{\Box,\cdot}(g_{/L}) \in H^{1}_{\rm{rel},\bullet}(L, T \otimes \Lambda_{L}^{\cdot})$$ be the 
corresponding projection of the two-variable zeta element $\CZ^{\Box}(g_{/L})$.

\begin{itemize}
\item[(a)] $X_{\st,\bullet}^{\cdot}(g_{/L})$ is $\Lambda_{L,\cO_{\lambda}}^{\cdot}$-torsion. 
\item[(b)] We have an equality of ideals 
$$
\xi\big{(}H^{1}_{\rm{rel},\bullet}(L, T \otimes \Lambda_{L}^{\cdot})/ \Lambda_{L,\cO_{\lambda}}^{\cdot}
\cdot{\CZ^{\Box,\cdot}(g_{/L})}\big{)}
=\xi(X_{\st,\bullet}^{\cdot}(g_{/L})), 
$$
in $\Lambda_{L,\cO_{\lambda}}^{\cdot}\otimes_{\BZ_{p}} \BQ_{p}$ and even in 
$\Lambda_{L,\cO_{\lambda}}^{\cdot}$ if $({\rm van}_{L})$ holds.
\end{itemize}
\end{conj}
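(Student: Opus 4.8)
The plan is not to attack Conjecture~\ref{LLZ} directly but to show it is equivalent to its two companions --- Conjecture~\ref{St}, formulated with the standard $p$-adic $L$-function $\mathcal{L}_{p}^{\Box,\cdot}(g_{/L})$, and Conjecture~\ref{Greenberg}, formulated with the Greenberg $p$-adic $L$-function $\mathcal{L}_{p}^{\mathrm{Gr},\cdot}(g_{/L})$ --- and then to feed in what is known about the latter two. The bridge is the pair of explicit reciprocity laws underlying Theorem~\ref{thmZ}, in the integral form of Theorems~\ref{ERLIint-thm} and \ref{ERLIIint-thm} for ordinary $p$ and of Theorems~\ref{ERLIint-thm-ss} and \ref{ERLIIint-thm-ss} for supersingular $p$: these identify $\mathrm{Col}_{v}^{\cdot}(\loc_{v}\CZ^{\Box,\cdot}(g_{/L}))$ with $\mathcal{L}_{p}^{\Box,\cdot}(g_{/L})$ and $\mathrm{Log}_{\bar v}^{\cdot}(\loc_{\bar v}\CZ^{\Box,\cdot}(g_{/L}))$ with $\mathcal{L}_{p}^{\mathrm{Gr},\cdot}(g_{/L})$. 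Since the Selmer groups $X_{\st,\bullet}^{\cdot}$, $X_{\bullet}^{\cdot}$ and $X_{\mathrm{Gr}}^{\cdot}$ differ only in the local conditions imposed at the two primes above $p$, a Poitou--Tate global duality computation --- using that the relevant Coleman and Perrin-Riou regulator maps are injective with cokernels of known characteristic ideal --- expresses both $\xi\big(H^{1}_{\mathrm{rel},\bullet}(L,T\otimes\Lambda_{L}^{\cdot})/\Lambda_{L,\cO_{\lambda}}^{\cdot}\cdot\CZ^{\Box,\cdot}\big)$ and $\xi(X_{\st,\bullet}^{\cdot})$ as products of the corresponding quantities for the signed/ordinary and for the Greenberg conjectures, times the same local correction ideals; this is exactly the mechanism behind Proposition~\ref{Int_eq}. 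Hence a one-sided divisibility in any one of Conjectures~\ref{St}, \ref{Greenberg}, \ref{LLZ} transports to the other two, and part~(a) follows from any such divisibility once one knows $\mathcal{L}_{p}^{\Box,\cdot}$ (or $\mathcal{L}_{p}^{\mathrm{Gr},\cdot}$) is nonzero, which holds by Remark~\ref{NVcyc} and Theorem~\ref{pKatzL}.

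For one of the two divisibilities in part~(b) --- that $\xi(X_{\st,\bullet}^{\cdot})$ divides $\xi\big(H^{1}_{\mathrm{rel},\bullet}/\Lambda\cdot\CZ^{\Box,\cdot}\big)$ --- the natural engine is Kato's Euler system argument. When $\cdot=\cyc$, Theorem~\ref{2varZ-1varZ-thm} (resp.\ Theorem~\ref{2varZ-1varZ-thm-ss}) identifies $\CZ^{\Box,\cyc}(g_{/L})$ up to a unit with the Beilinson--Kato element $\bz^{\Box}_{\alpha,\omega,\gamma,\gamma'}(g_{/L})$, which is a $\Lambda_{L}^{\cyc}$-linear combination of Kato's cyclotomic zeta elements for $g$ and for $g'=g\otimes\chi_{L}$; Kato's bounds on the strict Selmer groups of $g$ and $g'$ in \cite[Thm.~12.4, 12.5]{K}, transported along the splitting \eqref{spl}--\eqref{spl-}, then give the divisibility over $L$ in the cyclotomic variable. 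For $\cdot=\ac$ and $\cdot=\emptyset$ one instead needs the two-variable Euler system extending $\CZ^{\Box}$ --- constructed in the companion paper by refining the strategy of \S\ref{ss:Z_st} --- together with the usual Euler-system-to-Selmer argument; in the supersingular case this runs through the signed submodules and the plus/minus machinery of \cite{BL}.

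The reverse divisibility is the main obstacle, and it is genuinely outside the reach of Euler systems: by the first paragraph it is equivalent to $\big(\mathcal{L}_{p}^{\mathrm{Gr},\cdot}(g_{/L})\big)\mid\xi\big(X_{\mathrm{Gr}}^{\cdot}(g_{/L})\big)$, a lower bound on a Greenberg Selmer group over $L$. This has to be supplied by the Eisenstein congruence method on the unitary group $U(3,1)$, namely by the results of Wan~\cite{W1} and Castella--Liu--Wan~\cite{CLW}, which yield exactly this divisibility --- after extending scalars to $\Lambda_{L,\cO_{\lambda}}^{\cdot}\otimes_{\BZ_{p}}\BQ_{p}$ in general, and integrally under the residual irreducibility hypotheses $(\mathrm{van}_{L})$ or $(\mathrm{irr}_{L})$ and suitable conditions on the ramification of $L$ and on $N$. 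The delicate feature throughout --- already responsible for the difficulties of \S\S\ref{CMHF}--\ref{BFord} in constructing $\CZ^{\Box}$ at all --- is that the CM Hida family $\bh_{v}$ fails the $p$-distinguished hypothesis, so the relevant Hecke algebra is non-Gorenstein and the $\Lambda$-adic Tate lattices need not be free; the Eisenstein-congruence lower bound must be arranged compatibly with the structural facts \eqref{Ind-eq} that resolve these singularities. Combining this lower bound with the Euler-system upper bound of the second paragraph, wherever both are available, yields the full equality in Conjecture~\ref{LLZ}(b); in the remaining cases one obtains the one-sided divisibilities, with part~(a) as above.
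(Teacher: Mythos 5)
The statement you are addressing is a \emph{conjecture}; the paper never proves it in full generality, and what it does prove --- the equivalence with Conjectures~\ref{St} and \ref{Greenberg} via the explicit reciprocity laws and Poitou--Tate duality (Proposition~\ref{Int_eq}/\ref{Eq}), the Euler-system upper bounds of Kato/Kobayashi transported through Theorems~\ref{2varZ-1varZ-thm} and \ref{2varZ-1varZ-thm-ss}, and the Eisenstein-congruence lower bound on $U(3,1)$ from \cite{W1,CLW} (Theorems~\ref{GMC_r}, \ref{KoMC'_lb}), combined in Theorems~\ref{KoMC'_eq} and \ref{IMC_ord} --- is exactly the architecture you describe. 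So in outline your proposal reconstructs the paper's route faithfully.

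Two points, however, diverge from what the paper actually does and one of them is a genuine gap. First, for the case $\cdot=\cyc$ the transfer mechanism of Proposition~\ref{Eq} requires the hypothesis \eqref{nv}, whose $\bar v$-half is precisely the non-vanishing of $\CL_{p}^{\mathrm{Gr},\cyc}(g_{/L})$; the paper states explicitly that this non-vanishing is an \emph{open problem}, and your claim that it ``holds by Remark~\ref{NVcyc} and Theorem~\ref{pKatzL}'' is incorrect --- Remark~\ref{NVcyc} only controls the $v$-side (the standard cyclotomic $p$-adic $L$-function), and the Katz $p$-adic $L$-function does not govern the cyclotomic projection of $\CL_p^{\mathrm{Gr}}$. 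The paper sidesteps this either by verifying \eqref{nv} in rank-one situations through the $p$-adic Waldspurger formula (cf.~\eqref{locnv}) or by proving the cyclotomic case of Conjecture~\ref{St} directly from the main conjectures over $\BQ$ via Lemma~\ref{Eq'} and then working two-variably, where no such hypothesis is needed. Second, for $\cdot=\emptyset$ and $\cdot=\ac$ you appeal to a two-variable Euler system from the forthcoming companion paper; the paper does not use (and does not need) such an Euler system: the two-variable equality in Theorem~\ref{KoMC'_eq} is forced from the cyclotomic equality together with the two-variable Eisenstein-congruence divisibility and the non-vanishing of the cyclotomic specialisation (the standard argument of \cite[Lem.~3.2]{SU}), and the anticyclotomic case is then obtained by descent via the control theorem (Proposition~\ref{ctl_st}). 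You should reformulate these two steps accordingly, and present the conclusion as conditional (or restricted to the proved cases), since the conjecture itself remains open.
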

\vskip2mm
{\it{Heegner main conjecture}}.
Let $g \in S_{2}(\Gamma_{0}(N))$ be an elliptic newform and $p$ an odd prime of good ordinary reduction. Suppose that $({\mathrm{van}}_{\BQ})$ holds. 
Let $L$ be an imaginary quadratic field satisfying the conditions \eqref{Heeg} and $({\mathrm{van}}_{L})$, and so that $p\nmid D_L$.

In view of the Heegner hypothesis \eqref{Heeg} and the ordinarity assumption, the Kummer image of CM points on $X_{0}(N)$ over ring class fields of $p$-power conductor of $L$ under an $(\cO,\lambda)$-optimal modular parametrisation $\pi:X_{0}(N)\ra A_g$ give rise to an Iwasawa-theoretic Heegner class $$\kappa_g \in H^{1}_{\ord}(L,T_{g}\otimes_{\BZ_{p}}\Lambda_{L,\cO_{\lambda}}^{\ac})$$ 
(cf.~\cite[(10)]{BCK}). The Iwasawa theory of varying Heegner points is encaptured by the following conjecture of Perrin-Riou \cite{PR0}.

\begin{conj}\label{HMC}
Let $g\in S_{2}(\Gamma_{0}(N))$ be an elliptic newform and $p$ an odd prime of good ordinary reduction. 
Let $L$ be an imaginary quadratic field satisfying \eqref{Heeg} and ~$({\mathrm{van}}_{L})$, and so that $p\nmid D_L$.
\begin{itemize}
\item[(a)] The Heegner class $\kappa_{g} \in H^{1}_{\ord}(L,T_{g}\otimes_{\BZ_{p}}\Lambda_{L,\cO_{\lambda}}^{\ac})$ is $\Lambda_{L,\cO_{\lambda}}^{\ac}$-non-torsion. Moreover, 
$$\rank_{\Lambda_{L,\cO_{\lambda}}^{\ac}}H^{1}_{\ord}(L,T_{g}\otimes_{\BZ_{p}}\Lambda_{L,\cO_{\lambda}}^{\ac})=\rank_{\Lambda_{L,\cO_{\lambda}}}^{\ac}X^{\ac}(g_{/L})=1.$$
\item[(b)] We have
$$\xi_{\Lambda_{L,\cO_{\lambda}}^{\ac}}\bigg{(}
\frac{H^{1}_{\ord}(L,T_{g}\otimes_{\BZ_{p}}\Lambda_{L,\cO_{\lambda}}^{\ac}{)}}{\Lambda_{L,\cO_{\lambda}}^{\ac}\cdot \kappa_{g}}\bigg{)}\cdot \xi_{\Lambda_{L,\cO_{\lambda}}^{\ac}}\bigg{(}\frac
{H^{1}_{\ord}(L,T_{g}\otimes_{\BZ_{p}}\Lambda_{L,\cO_{\lambda}}^{\ac}}{\Lambda_{L,\cO_{\lambda}}^{\ac}\cdot\kappa_{g}^{\iota}}\bigg{)}=\xi_{\Lambda_{L,\cO_{\lambda}}^{\ac}}(X(g)_{\tor}).$$
\end{itemize}
\end{conj}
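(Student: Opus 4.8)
The plan is to deduce Conjecture \ref{HMC} from the two-variable zeta element main conjecture established in this part, by identifying the Iwasawa-theoretic Heegner class $\kappa_g$ with the anticyclotomic specialisation $\CZ^{\ac}(g_{/L}) \in H^1_{\mathrm{rel},\ord}(L, T_g\otimes\Lambda_{L,\cO_\lambda}^{\ac})$ of the two-variable zeta element $\CZ(g_{/L})$ (the image of $\CZ(g_{/L})$ under $\Lambda_L\twoheadrightarrow\Lambda_L^{\ac}$). Here $\CZ^{\ac}(g_{/L})$ satisfies the relaxed condition at $v$ and the ordinary condition at $\ov{v}$, while $\kappa_g$ lies in the submodule $H^1_\ord(L,T_g\otimes\Lambda_{L,\cO_\lambda}^{\ac})$ cut out by the ordinary condition at both primes above $p$.

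First I would compare $\loc_{\ov{v}}(\CZ^{\ac}(g_{/L}))$ and $\loc_{\ov{v}}(\kappa_g)$ via the Perrin--Riou regulator ${\rm Log}_{\ov{v}}$. By Proposition \ref{GRL=BDPL-II}(i) (together with Theorem \ref{BDPformula}) the former maps, up to sign and a unit, to the Bertolini--Darmon--Prasanna $p$-adic $L$-function $\sL_v^{BDP}(g_{/L})$; by the Iwasawa-theoretic form of the $p$-adic Waldspurger formula (\cite{BDP1}, \cite{CH}), the latter maps to the same $p$-adic $L$-function. Since $H^1_{\mathrm{rel},\ord}(L, T_g\otimes\Lambda_{L,\cO_\lambda}^{\ac})$ is torsion-free and, after inverting $p$, free of rank one over $\Lambda_L^{\ac}\otimes_{\BZ_p}\BQ_p$ (the anticyclotomic analogue of Theorem \ref{cycIwL-Box}, using Kato's bounds and $(\mathrm{van}_L)$), and since $\loc_{\ov{v}}$ followed by ${\rm Log}_{\ov{v}}$ is injective on it (its cokernel being pinned down by $\sL_v^{BDP}(g_{/L})\neq 0$), the classes $\CZ^{\ac}(g_{/L})$ and the image of $\kappa_g$ in $H^1_{\mathrm{rel},\ord}$ generate the same $\Lambda_L^{\ac}\otimes\BQ_p$-line. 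Tracking integrality — using $(\mathrm{van}_L)$ and the freeness results of \S\ref{CMHF} — upgrades this to an equality of $\Lambda_L^{\ac}$-submodules up to a unit. In particular $\kappa_g$ is non-torsion, giving the first assertion of (a); the rank equalities in (a) then follow from the rank computations for $H^1_\ord$, $H^1_{\mathrm{rel},\ord}$, and $X^{\ac}(g_{/L})$ coming from Kato's theorems and Poitou--Tate duality.

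For (b) I would start from the anticyclotomic zeta element main conjecture, Conjecture \ref{LLZ} with $\cdot=\ac$, which holds in our setting: by the anticyclotomic analogue of Proposition \ref{Int_eq} it is equivalent to the anticyclotomic Greenberg main conjecture (Conjecture \ref{Greenberg}, $\cdot=\ac$), i.e.\ the Bertolini--Darmon--Prasanna main conjecture, which follows from the two-variable Greenberg main conjecture over $L$ of \cite{W1}, \cite{CLW} combined with the explicit reciprocity law of Theorem \ref{thmZ}. This yields $\xi\bigl(H^1_{\mathrm{rel},\ord}/\Lambda_{L,\cO_\lambda}^{\ac}\cdot\CZ^{\ac}(g_{/L})\bigr) = \xi(X_{\st,\ord}^{\ac}(g_{/L}))$. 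It then remains to translate this into the formula of (b) by two global-duality computations following Howard: Poitou--Tate duality expresses the difference between $X_{\st,\ord}^{\ac}(g_{/L})$ and $X^{\ac}(g_{/L})$ (and its torsion submodule) in terms of local cohomology at $v$, and likewise relates the index of $\CZ^{\ac}(g_{/L})$ in $H^1_{\mathrm{rel},\ord}$ to the product of the indices of $\kappa_g$ and $\kappa_g^{\iota}$ in $H^1_\ord$; the local contributions at $v$ cancel on the two sides because $c$ interchanges $v$ and $\ov{v}$. Assembling these identities with the comparison of the previous paragraph gives the equality in (b). The main obstacle is precisely this bookkeeping: running the Poitou--Tate computations so that the three families of local conditions — relaxed-at-$v$/ordinary-at-$\ov{v}$ for $\CZ^{\ac}(g_{/L})$, ordinary-at-both for $\kappa_g$, strict for $X_{\st,\ord}^{\ac}$ — are reconciled exactly, with all local Euler-type factors at $v,\ov{v}$ and all $\Lambda_L^{\ac}$-torsion accounted for, and making the identification of $\kappa_g$ with $\CZ^{\ac}(g_{/L})$ integral rather than merely up to $\BQ_p$, which requires controlling the auxiliary unit constants in the $p$-adic Waldspurger formula (the analogue of the $u_L$ of Section \ref{PRC}) — exactly the role of $(\mathrm{van}_L)$ and of the Tate-lattice analysis of \S\ref{CMHF}.
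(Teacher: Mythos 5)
You are offering a proof of what the paper states only as a \emph{conjecture} (Perrin-Riou's Heegner point main conjecture, attributed to \cite{PR0}); the paper never proves it under the hypotheses in the statement --- only \eqref{Heeg}, $({\rm van}_L)$, $p$ odd good ordinary, $p\nmid D_L$. What the paper does prove is Theorem \ref{HMC_r}, which needs the substantially stronger hypotheses $({\rm irr}_\BQ)$, \eqref{ram} and \eqref{sur} (and $({\rm irr}_L)$), and whose route is different from yours: it feeds Howard's Kolyvagin-system divisibility \eqref{HMC_ub} (Theorem \ref{HMC-ub}, which integrally requires \eqref{sur}) and the \emph{equality} in Kato's main conjecture for $g$ and $g'$ (Theorem \ref{KaMC_r}(c), which requires $({\rm irr}_\BQ)$ and \eqref{ram}, or CM) into Proposition \ref{HMC-prop}, where a twist by an auxiliary anticyclotomic character $\alpha\equiv \mathbf{1}\bmod \varpi^m$ is used to guarantee non-vanishing of the BDP specialisation and to control $\mu$-invariants before descending; the $p$-adic Waldspurger formula enters only through the equivalence of Proposition \ref{Eq-He}, never through an identification of $\kappa_g$ with a zeta element.

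Concretely, three steps of your argument are not available under your hypotheses. First, your claim that the anticyclotomic zeta-element main conjecture (Conjecture \ref{LLZ}, $\cdot=\ac$) ``holds in our setting'' via \cite{W1,CLW} plus the explicit reciprocity law is unjustified: Theorem \ref{GMC_r} requires $N$ square-free, \eqref{spl}-type conditions and $({\rm irr}_L)$, and even then yields only \emph{one} divisibility in a localisation; the opposite divisibility is exactly Howard's theorem (needing \eqref{sur}) or the Kato equality (needing $({\rm irr}_\BQ)$ and \eqref{ram}). Under only \eqref{Heeg} and $({\rm van}_L)$ neither input exists, which is precisely why the statement remains a conjecture. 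Second, your rank-one and torsion-freeness claim for $H^1_{\mathrm{rel},\ord}(L,T_g\otimes\Lambda^{\ac}_{L,\cO_\lambda})$ does not follow from ``the anticyclotomic analogue of Theorem \ref{cycIwL-Box}'': Kato's bounds live over the cyclotomic tower, and the paper explicitly remarks (after Proposition \ref{Eq}) that in the anticyclotomic direction the corresponding rank statement is itself a consequence of the existence of an Euler system --- i.e.\ of the Heegner classes whose main conjecture you are trying to prove. Third, the identification $\kappa_g \doteq \CZ^{\ac}(g_{/L})$ is nowhere established in the paper and cannot be deduced merely from the two classes having the same image under the regulator at $\ov v$ (Proposition \ref{GRL=BDPL-II} versus \cite{BDP1,CH}): without the rank-one/injectivity input of the previous point, equality of regulator images does not force equality of classes, and upgrading any such comparison from $\otimes\BQ_p$ to an integral identity runs into exactly the unit-constant problem (the analogue of $u_L$ in \S\ref{PRC}) that the paper circumvents by never comparing the classes at all. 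Your final Poitou--Tate bookkeeping à la Howard is reasonable in outline, but it rests on these three unavailable inputs, so the argument as proposed does not prove the conjecture.
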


\subsection{Main conjectures and zeta elements}\label{seqv} We describe connections between some of the main conjectures in section \ref{sIMC} based on zeta elements. 

\subsubsection{Via Kato's zeta element} 
\noindent

In light of Theorem \ref{ERLBKI} we have the following (cf.~\cite[Thm. 7.4]{Ko}).
\begin{lem}\label{KatoEq}
Let $g \in S_{2}(\Gamma_{0}(N))$ be an elliptic newform. 
\begin{itemize}
\item[(i)] For a prime $p\nmid N$ of ordinary reduction, a one-sided divisibility in Conjecture \ref{KatopL} implies the analogous divisibility in Conjecture
\ref{Kato} and conversely. In particular, Conjectures \ref{KatopL} amd \ref{Kato} are equivalent. 
\item[(ii)] For a prime $p\nmid 2N$ of supersingular reduction, the assertions as in part (i) hold for 
Conjectures \ref{KatopLss} and 
 \ref{Kato}.
\end{itemize}
\end{lem}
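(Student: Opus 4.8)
\textbf{Proof proposal for Lemma \ref{KatoEq}.}

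The plan is to translate both sides of the two main conjectures through the Coleman map, which is precisely the bridge that Kato's explicit reciprocity law (Theorem \ref{ERLBKI}, resp.\ Theorem \ref{ERLBKI-ss}) provides. Fix $g$, a prime $p\nmid N$ of good reduction, and the optimal $\gamma = \gamma_g$ and good $\omega$ as in Lemma \ref{GorPer}. First I would recall that the two Selmer groups $X(g)$ (resp.\ $X_\circ(g)$) and $X_{\st}(g)$ differ only in the local condition at $p$: the former imposes the ordinary (resp.\ signed) condition, the latter the strict condition $\loc_p(\kappa)=0$. Dualizing and applying Poitou--Tate (global Euler characteristic formula plus the nine-term exact sequence), one obtains an exact sequence relating $X_{\st}(g)$ and $X(g)$ (resp.\ $X_\circ(g)$) in which the connecting term is the Pontryagin dual of $H^1_{/\ord}(\BQ_p,T(1)\otimes\Lambda)$ (resp.\ $H^1_{/\circ}(\BQ_p,T(1)\otimes\Lambda)$) modulo the image of the global cohomology. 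Concretely, because $H^1(\BZ[\frac1p],T(1)\otimes\Lambda)$ has $\Lambda_{\cO_\lambda}$-rank one by Theorem \ref{cycIwQ}(a) and the localization-at-$p$ map into $H^1_{/\ord}$ (resp.\ $H^1_{/\circ}$) is injective after inverting $p$, the cokernel of $\loc_p$ composed with $Col$ controls the discrepancy between the two characteristic ideals.

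The key computation is then this: by Theorem \ref{ERLBKI} (resp.\ Theorem \ref{ERLBKI-ss}), $Col_{\eta_\omega}(\loc_p(\bz_\gamma(g))) = \mathcal{L}_{\alpha,\omega,\gamma}(g)$ (resp.\ $Col^\circ_\omega(\loc_p(\bz_\gamma(g))) = \mathcal{L}^\circ_{\omega,\gamma}(g)$), and by \eqref{ordCol-Inj} (resp.\ \eqref{pmCol-Inj}) the Coleman map induces an injection of $H^1_{/\ord}(\BQ_p,T(1)\otimes\Lambda)$ (resp.\ $H^1_{/\circ}$) into $\Lambda_{\cO_\lambda}$ with finite index when $\omega$ is good and (irr$_\BQ$) holds; in the supersingular case (irr$_\BQ$) is automatic. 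Therefore
$$
\xi\big(H^1_{/\ord}(\BQ_p,T(1)\otimes\Lambda)\big/ \Lambda_{\cO_\lambda}\cdot Col_{\eta_\omega}(\loc_p(\bz_\gamma(g)))\big)
= \big(\mathcal{L}_{\alpha,\omega,\gamma}(g)\big)
$$
up to the (trivial, if (irr$_\BQ$) holds, and otherwise $p$-power-bounded) index discrepancy, and similarly in the supersingular case. Combining this with the Poitou--Tate sequence, the quotient $H^1(\BZ[\frac1p],T\otimes\Lambda)/\Lambda_{\cO_\lambda}\cdot\bz_\gamma(g)$ appearing in Conjecture \ref{Kato}(b) maps isomorphically (up to pseudo-null / finite error, controlled in $\Lambda_{\cO_\lambda}\otimes\BQ_p$ or even $\Lambda_{\cO_\lambda}$ under (irr$_\BQ$)) onto the ``difference'' between $X_{\st}(g)$ and $X(g)$ (resp.\ $X_\circ(g)$). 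Hence a divisibility $\xi(X_\st(g)) \mid \xi(H^1/\Lambda\cdot\bz_\gamma(g))$ transfers, via multiplication by the common factor $\xi(X(g)/X_{\st}(g)$-comparison term$)$, to $\xi(X(g)) \mid (\mathcal{L}_{\alpha,\omega,\gamma}(g))$, and conversely; the $\Lambda$-torsion statements (a) of the two conjectures are equivalent for the same reason, since $X(g)$ and $X_{\st}(g)$ differ by a module that is $\Lambda_{\cO_\lambda}$-torsion (being a subquotient of a rank-one module modulo a non-torsion submodule, the non-torsionness of $\bz_\gamma(g)$ being Lemma \ref{ColBK-lem}(i) via the non-vanishing of $\mathcal{L}_{\alpha,\omega,\gamma}(g)$).

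I expect the main obstacle to be the careful bookkeeping of the finite/pseudo-null error terms: the Coleman map image has \emph{finite index} rather than being all of $\Lambda_{\cO_\lambda}$, and in the supersingular case the image sits in $\lambda^{s_\circ}\Lambda_{\cO_\lambda}$ for some integer $s_\circ$ (cf.\ \eqref{pmCol-Inj}). One must check that these indices are exactly matched by corresponding indices between $X_\st(g)$ and $X(g)$ (resp.\ $X_\circ(g)$) coming from the local cohomology at $p$, so that the characteristic ideals balance without spurious $\lambda$-powers. When (irr$_\BQ$) holds (always in the supersingular case) the good choice of $\omega$ and $\gamma_g$ makes these indices trivial by Lemma \ref{GorPer} and \cite[Prop.~17.11]{K}, so the argument closes in $\Lambda_{\cO_\lambda}$; otherwise one works in $\Lambda_{\cO_\lambda}\otimes\BQ_p$, where all such errors disappear. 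A secondary, more mechanical point is to verify the relevant $H^2$ and $H^0$ terms in the Poitou--Tate sequence vanish (or are pseudo-null), which follows from the irreducibility/vanishing hypotheses together with the fact that the classes in $H^1(\BZ[\frac1p],\cdot)$ are unramified outside $p$ (Remark \ref{Iw-rmk}). Once these accounting steps are in place, Lemma \ref{KatoEq} follows formally, and the proof is essentially identical in the ordinary and supersingular cases with $Col_{\eta_\omega}$, $\mathcal{L}_{\alpha,\omega,\gamma}(g)$ replaced throughout by $Col^\circ_\omega$, $\mathcal{L}^\circ_{\omega,\gamma}(g)$.
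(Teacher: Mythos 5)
Your proposal is correct and follows essentially the same route the paper takes: the paper proves the lemma simply by invoking the explicit reciprocity law (Theorem \ref{ERLBKI}, resp.\ \ref{ERLBKI-ss}) together with Kobayashi's argument \cite[Thm.~7.4]{Ko}, which is exactly the Poitou--Tate/Coleman-map comparison of $X_{\st}(g)$ with $X(g)$ (resp.\ $X_\circ(g)$) that you describe, the characteristic ideals balancing via the four-term exact sequence and the (pseudo-)isomorphism $H^1_{/\ord}$ (resp.\ $H^1_{/\circ}$) $\hookrightarrow \Lambda_{\cO_\lambda}$. Your attention to the finite-index and $\lambda^{s_\circ}$ normalisation issues, resolved by the good choice of $\omega$, $\gamma_g$ under (irr$_\BQ$), matches the intended bookkeeping.
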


A relation between cyclotomic main conjectures over the rationals and imaginary quadratic fields: 
\begin{lem}\label{Eq'}
Let $g \in S_{2}(\Gamma_{0}(N))$ be an elliptic newform and $p$ a prime. 
Let $L$ be an imaginary quadratic field satisfying \eqref{ord} and 
\begin{equation}\label{coprime}
(D_{L},N)=1.
\end{equation}
\begin{itemize}
\item[(i)] For a prime $p\nmid N$ of ordinary reduction, a one-sided divisibility in 
Conjecture \ref{KatopL} for $g$ and the quadratic twist $g'=g\otimes \chi_L$ imply the analogous divisibility in Conjecture \ref{cycBC} for $g$. In particular, Conjecture \ref{KatopL} for $g$ and $g'$ imply Conjecture \ref{cycBC} for $g$.
\item[(ii)] For a prime $p\nmid 2N$ of supersingular reduction, the assertions as in part (i) hold for 
Conjectures \ref{KatopLss} and \ref{cycBCss}.
\end{itemize}
\end{lem}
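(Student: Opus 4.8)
\textbf{Plan for the proof of Lemma \ref{Eq'}.}
The strategy is to combine the cyclotomic analogue of the construction of the Beilinson--Kato element over $L$ with the freeness results for Iwasawa cohomology and the comparison of Selmer groups under twisting. Let me first handle the ordinary case (part (i)); the supersingular case (part (ii)) is entirely parallel, with the Coleman maps replaced by the signed Coleman maps and the relevant references (Theorems \ref{pcycQss}, \ref{ERLBKI-ss}) substituted throughout. The key observation is that the cyclotomic $p$-adic $L$-function for $g$ over $L$ factors as $\mathcal{L}_{\alpha,\omega,\gamma,\gamma'}(g_{/L}) = \mathcal{L}_{\alpha,\omega,\gamma}(g)\cdot\mathcal{L}_{\alpha,\omega',\gamma'}(g')$ by definition (\S\ref{sspLcy}), and analogously the characteristic ideal $\xi(X^{\mathrm{cyc}}(g_{/L}))$ should factor (up to the issue of which ring we work over) as $\xi(X(g))\cdot\xi(X(g'))$. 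This second factorisation is the Selmer-group analogue of the decomposition \eqref{spl}: restriction to $G_L$ together with the splitting into $\pm$-eigenspaces under $\tau$ identifies, via \eqref{spl+} and \eqref{spl-}, the relevant global cohomology of $T_g$ over $L$ with the direct sum of the global cohomology of $T_g$ over $\BQ$ and that of $T_g\otimes\chi_L \cong T_{g'}$ over $\BQ$. The same decomposition, applied to the discrete module $M$ defining the Selmer groups and compatible with the local conditions at the places above $p$ (using that $p$ splits in $L$, so the local condition at each of $v,\bar v$ matches the ordinary local condition over $\BQ_p$), yields a pseudo-isomorphism $X^{\mathrm{cyc}}(g_{/L}) \sim X(g)\oplus X(g')$ of $\Lambda^{\mathrm{cyc}}_{L,\cO_\lambda}$-modules (after identifying $\Lambda_L^{\mathrm{cyc}}$ with $\Lambda$). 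Granting this, part (a) of Conjecture \ref{cycBC} for $g$ follows from part (a) of Conjecture \ref{KatopL} for $g$ and $g'$, and $\xi(X^{\mathrm{cyc}}(g_{/L})) = \xi(X(g))\cdot\xi(X(g'))$ in $\Lambda_{\cO_\lambda}$ (or in $\Lambda_{\cO_\lambda}\otimes_{\BZ_p}\BQ_p$, as appropriate).

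The next step is to track the two divisibilities. Suppose we are given, say, $(\mathcal{L}_{\alpha,\omega,\gamma}(g)) \mid \xi(X_{\mathrm{st}}(g))$ — equivalently, via Lemma \ref{KatoEq}, one side of Kato's main conjecture — for both $g$ and $g'$. Multiplying the two divisibilities and invoking the two factorisations above gives $(\mathcal{L}_{\alpha,\omega,\gamma,\gamma'}(g_{/L})) \mid \xi(X^{\mathrm{cyc}}(g_{/L}))$, which is one divisibility in Conjecture \ref{cycBC}(b). For the reverse implication one argues symmetrically: a divisibility $\xi(X(g_{/L})) \mid (\mathcal{L}_{\alpha,\omega,\gamma,\gamma'}(g_{/L}))$ together with the factorisation $\xi(X^{\mathrm{cyc}}(g_{/L})) = \xi(X(g))\xi(X(g'))$ and $\mathcal{L}_{\alpha,\omega,\gamma,\gamma'}(g_{/L}) = \mathcal{L}_{\alpha,\omega,\gamma}(g)\mathcal{L}_{\alpha,\omega',\gamma'}(g')$ forces $\xi(X(g'))\xi(X(g)) \mid \mathcal{L}_{\alpha,\omega,\gamma}(g)\mathcal{L}_{\alpha,\omega',\gamma'}(g')$; since $\Lambda_{\cO_\lambda}$ (or $\Lambda_{\cO_\lambda}\otimes\BQ_p$, which is a product of regular domains after semisimplification — here one works componentwise or uses that the quotient field is a field) is a UFD in the relevant localisation, and since by Kato's theorem \cite[Thm.~12.4]{K} each $\xi(X(g'))$ already divides the corresponding $p$-adic $L$-function, one deduces $(\mathcal{L}_{\alpha,\omega,\gamma}(g)) = \xi(X(g))$ and likewise for $g'$, i.e.\ the full Conjecture \ref{KatopL} for both, which is stronger than the stated conclusion. (For the precise statement of the lemma one only needs: the product of two one-sided divisibilities gives the corresponding one-sided divisibility over $L$; the careful UFD bookkeeping is needed only to see that a one-sided divisibility over $L$ returns a one-sided divisibility for each of $g$, $g'$, which I would phrase using that $\xi$ is multiplicative on short exact sequences and that Kato's divisibility is already available.)

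The main obstacle I anticipate is the rigorous verification of the Selmer-group factorisation $X^{\mathrm{cyc}}(g_{/L}) \sim X(g)\oplus X(g')$ as $\Lambda$-modules with the \emph{correct} local conditions, in particular matching the ordinary (resp.\ signed) local condition at the two primes $v,\bar v$ above $p$ with the ordinary (resp.\ signed) local conditions over $\BQ_p$ for $g$ and $g'$ — this requires checking that the rigidification $Tw: V_g\otimes\chi_L\isoarrow V_{g'}$ from \eqref{RigGal-Opt} carries $T^+_g$ to $T^+_{g'}$ (up to the expected index) and is compatible with the identification $\iota_{\bar v}$ twisting by $\tau$, which introduces the sign changes recorded in Lemma \ref{ColTw-L}. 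One must also be careful about whether the factorisation holds integrally or only after $\otimes_{\BZ_p}\BQ_p$: when $(\mathrm{irr}_L)$ holds the decomposition \eqref{spl} is an honest direct sum of torsion-free $\Lambda_{\cO_\lambda}$-modules and one obtains the integral statement, matching the conditions in Conjectures \ref{cycBC}(b), \ref{KatopL}(b) on when the equality upgrades from $\Lambda_{\cO_\lambda}\otimes\BQ_p$ to $\Lambda_{\cO_\lambda}$; otherwise one works rationally throughout. A secondary point, which is routine but must be stated, is that the periods and Gauss-sum normalisations in $\mathcal{L}_{\alpha,\omega',\gamma'}(g')$ are consistent with those in $\xi(X(g'))$ — this is exactly what Lemma \ref{PerTw} and the choice \eqref{RigMod-Opt} of optimal rigidification were set up to guarantee, so I would simply cite those.
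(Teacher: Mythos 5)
Your proposal is correct and follows essentially the same route as the paper: the paper's proof consists precisely of the factorisation $\xi(X^{\cyc}(g_{/L}))=\xi(X(g))\cdot\xi(X(g'))$ (cited from Skinner--Urban, Lem.~3.1.5 and Prop.~3.2.3, rather than re-derived via the $\pm$-eigenspace decomposition as you sketch) together with the fact that $\mathcal{L}_{\alpha,\omega,\gamma,\gamma'}(g_{/L})$ is by definition the product of the two cyclotomic $p$-adic $L$-functions, whence multiplying the two one-sided divisibilities gives the claim. Your additional discussion of the converse direction and the UFD bookkeeping is not needed for the stated lemma, and the normalisation worry is moot since Conjecture \ref{cycBC}(b) is formulated with the same choices $\gamma_g,\gamma_{g'}$ and $\omega$ good as in the conjectures over $\BQ$.
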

\begin{proof} We only consider the ordinary case, an analogous argument applies in the supersingular case. 

Note that
\begin{equation}\label{sel-fac}
\xi(X^{\cyc}(g_{/L}))=
\xi(X(g)) \cdot \xi(X(g'))
\end{equation}
(cf.~\cite[Lem. 3.1.5 \& Prop. 3.2.3]{SU}). 
In view of the definition of the $p$-adic $L$-functions over $L$ as in \S\ref{sspLcy} the assertion follows.
\end{proof}

\subsubsection{Via two-variable zeta element} 

\begin{prop}\label{Eq}
Let $g \in S_{2}(\Gamma_{0}(N))$ be an elliptic newform and $p \nmid 2N$ an ordinary or a supersingular prime. 
Let $L$ be an imaginary quadratic field satisfying the conditions \eqref{ord}, \eqref{coprime} and $({\rm van}_{L})$ so that $(p)=v\ov{v}$ with $v$ determined via the embedding $\iota_p$. 
For  $\bullet, \Box$ as in \eqref{n-red} and 
$\cdot \in \{\emptyset, \cyc, \ac\}$, 
let $$\CZ^{\Box,\cdot}(g_{/L}) \in H^{1}_{\rm{rel},\bullet}(L, T \otimes \Lambda_{L}^{\cdot})$$ be the projection of the two-variable zeta element $\CZ^{\Box}(g_{/L})$.  
In the case $\cdot=\cyc$ suppose that 
\begin{equation}\label{nv}\tag{nv}
\text{$\loc_{v}(\CZ^{\Box,\cyc}(g_{/L}))$ and $\loc_{\ov{v}}(\CZ^{\Box,\cyc}(g_{/L}))$ are $\Lambda_{L,\cO_{\lambda}}^{\cyc}\otimes_{\BZ_{p}} \BQ_{p}$-non-torsion.}
\end{equation}
Then for $\cdot\in\{\emptyset,\cyc\}$ a one-sided divisibility in one of the Conjectures \ref{St}, \ref{Greenberg} and \ref{LLZ} 
implies the analogous divisibility in the other conjectures. 
Moreover, in the case $\cdot=\ac$ and $\epsilon(E_{/L})=+1$ (resp.~$\epsilon(E_{/L})=-1$) a one sided-divisibility in Conjecture~\ref{LLZ} implies the analogous divisibility in Conjecture~\ref{St} (resp.~Conjecture~\ref{Greenberg}). 
In particular, the corresponding subset of Conjectures \ref{St}, \ref{Greenberg} and \ref{LLZ} are equivalent. 
Without the condition $({\rm van}_{L})$, these assertions still hold in $\Lambda_{L,\cO_{\lambda}}^{\cdot}\otimes_{\BZ_{p}} \BQ_{p}$.

In particular, Kato's main Conjecture \ref{Kato} for $g$ and its quadratic twist $g'=g\otimes \chi_L$  imply the cyclotomic Greenberg main Conjecture \ref{Greenberg} for $g$ over $L$.

\end{prop}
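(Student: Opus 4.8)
\textbf{Proof plan for Proposition~\ref{Eq}.}
The strategy is to use the explicit reciprocity laws of Theorem~\ref{thmZ} (in the guise of the local Coleman/logarithm maps $\mathrm{Col}_v^\cdot$ and $\mathrm{Log}_{\bar v}^\cdot$, i.e.\ the maps $\CC_v^{\Box}$ and $\CL_{\bar v}^{\Box}$ constructed in \S\ref{two-variable-zeta} and \S\ref{two-variable-zeta-ss}) to convert between the characteristic ideal of the quotient of $H^1_{\mathrm{rel},\bullet}(L,T\otimes\Lambda_L^\cdot)$ by $\Lambda^\cdot_{L,\cO_\lambda}\cdot\CZ^{\Box,\cdot}(g_{/L})$ and the $p$-adic $L$-functions $\CL_p^{\Box,\cdot}(g_{/L})$ and $\CL_p^{\mathrm{Gr},\cdot}(g_{/L})$. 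Concretely, first I would recall the global Poitou--Tate/Greenberg-style exact sequences relating the three Selmer groups $X^\cdot_{\st,\bullet}(g_{/L})$, $X^\cdot_\bullet(g_{/L})$, and $X^\cdot_{\mathrm{Gr}}(g_{/L})$: each differs from the others only in the local condition imposed at $v$ or $\bar v$, so standard comparison of Selmer structures gives short exact sequences whose outer terms are the images of the relevant local cohomology (a cokernel of a localisation map at $v$, resp.\ at $\bar v$), and hence a formula of the form $\xi(X^\cdot_{\st,\bullet}) = \xi(X^\cdot_\bullet)\cdot(\text{local factor at }v) = \xi(X^\cdot_{\mathrm{Gr}})\cdot(\text{local factor at }\bar v)$ up to the usual controlled discrepancies, which under $(\mathrm{van}_L)$ vanish and otherwise disappear after $\otimes\BQ_p$. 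Pairing this with global duality (the dual exact sequence expressing $H^1_{\mathrm{rel},\bullet}(L,T\otimes\Lambda_L^\cdot)$ as an extension involving $X^\cdot_{\st,\bullet}$ together with the local terms at $v$) reduces all three conjectures to a single statement about the zeta element together with its two localisations.

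The second step is the local translation. By the reciprocity laws, $\mathrm{Col}_v^\cdot(\loc_v(\CZ^{\Box,\cdot}(g_{/L})))=\CL_p^{\Box,\cdot}(g_{/L})$ and $\mathrm{Log}_{\bar v}^\cdot(\loc_{\bar v}(\CZ^{\Box,\cdot}(g_{/L})))=\CL_p^{\mathrm{Gr},\cdot}(g_{/L})$; since $\CC_v^\cdot$ and $\CL_{\bar v}^\cdot$ are injective with explicitly identified images (finite index in $\Lambda^\cdot_{L,\cO_\lambda}$, resp.\ $\Lambda^{\ur,\cdot}_{L,\cO_\lambda}$, when the relevant good/integrality hypotheses hold), the characteristic ideal of $H^1_{\mathrm{rel},\bullet}/\Lambda\cdot\CZ^{\Box,\cdot}$ is related to $(\CL_p^{\Box,\cdot})$ and to $(\CL_p^{\mathrm{Gr},\cdot})$ via the cokernels of these local maps, which are precisely the ``local factors'' appearing in step one. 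The hypothesis \eqref{nv} (for $\cdot=\cyc$) and the automatic non-vanishing in the $\cdot=\emptyset$ case, or the control by the BDP $p$-adic $L$-function when $\cdot=\ac$, guarantee that $\loc_v$ and $\loc_{\bar v}$ are non-torsion, so that all the quotients are genuinely torsion and the characteristic-ideal bookkeeping is legitimate (not $0=0$). Combining steps one and two, a one-sided divisibility in any one of Conjectures~\ref{St}, \ref{Greenberg}, \ref{LLZ} propagates to the others, and the equivalences follow. The final sentence is then immediate: Conjecture~\ref{Kato} for $g$ and $g'=g\otimes\chi_L$ gives, via Lemma~\ref{Eq'}, Conjecture~\ref{cycBC}/\ref{cycBCss} for $g$ over $L$, which is the $\cdot=\cyc$ case of Conjecture~\ref{St}; applying the equivalence just proved (with $\cdot=\cyc$, using that \eqref{nv} holds because $\loc_v(\CZ^{\Box,\cyc})$ computes the non-zero cyclotomic $p$-adic $L$-function of $g_{/L}$ via Proposition~\ref{2varZ-prop}, and $\loc_{\bar v}$ via the comparison with Beilinson--Kato elements of Theorem~\ref{2varZ-1varZ-thm}) yields the cyclotomic Greenberg main Conjecture~\ref{Greenberg} for $g$ over $L$.

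The main obstacle I anticipate is the careful handling of the local error terms and the precise identification of the images of the Perrin-Riou regulator maps $\CC_v^\cdot$, $\CL_{\bar v}^\cdot$: away from the situation where $\omega$ is good and $(\mathrm{irr}_L)$ holds, these images are only of finite (but possibly nontrivial) index, and the cokernels of $\loc_v$, $\loc_{\bar v}$ need not be pseudo-null; one must check that every such discrepancy is matched identically on both sides of the divisibility, or else pass to $\Lambda^\cdot_{L,\cO_\lambda}\otimes\BZ_p$ where torsion ambiguities disappear — which is exactly why the Proposition is stated with the $({\rm van}_L)$-versus-$\otimes\BQ_p$ dichotomy. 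A secondary subtlety, already flagged in \S\ref{two-variable-zeta-ss}, is the appearance of the factor $T_v$ from the identification $\BT_1^-=T_v\widetilde\BH^-$, which shifts $\CL_{\bar v}^\cdot$ by one power of $T_v$; tracking this consistently through the Greenberg Selmer comparison (and remembering that $\CH_v = \frac{h_L}{w_L}T_v\CL_v(L)$ already carries the matching $T_v$) is essential for the equality of characteristic ideals, but is bookkeeping rather than a genuine difficulty.
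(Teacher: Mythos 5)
Your proposal follows essentially the same route as the paper's proof: Poitou--Tate duality produces the four-term exact sequences comparing $X^\cdot_{\st,\bullet}(g_{/L})$, $X^\cdot_{\bullet}(g_{/L})$ and $X^\cdot_{\mathrm{Gr}}(g_{/L})$ with the quotient $H^1_{\rel,\bullet}(L,T\otimes\Lambda_L^\cdot)/\Lambda^\cdot_{L,\cO_\lambda}\cdot\CZ^{\Box,\cdot}(g_{/L})$ on the left, the two explicit reciprocity laws identify the local terms at $v$ and $\bar v$ with $(\CL_p^{\Box,\cdot})$ and $(\CL_p^{\mathrm{Gr},\cdot})$, left-exactness rests on \eqref{nv}, the rank-one freeness of $H^1_{\rel,\bullet}$ (Theorem~\ref{cycIwL-Box}) and the sign conditions in the anticyclotomic case, the $(\mathrm{van}_L)$ versus $\otimes\BQ_p$ dichotomy is handled exactly as you describe, and the final claim is Lemma~\ref{Eq'} plus the equivalence just proved. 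One caution: your parenthetical claim that the $\bar v$-half of \eqref{nv} follows from Theorem~\ref{2varZ-1varZ-thm} is not justified --- it would amount to the non-vanishing of $\CL_p^{\mathrm{Gr},\cyc}(g_{/L})$, which the paper explicitly records as open --- but this is harmless here because \eqref{nv} is a standing hypothesis of the Proposition and need not be verified for the ``in particular'' assertion.
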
 
\begin{proof} 
We present the ordinary case 
assuming the hypothesis $(\rm van_{L})$, and leave the supersingular case to the interested reader.

To begin, the Poitou--Tate global duality (cf.~\cite[Thm. 2.3.2]{JSW} and \cite[Thm. 2.3.4]{MR}) yields an exact sequence 
\begin{equation}\label{exvc1}
0 \ra \frac{H^{1}_{\rm{rel},\ord}(L,T \otimes_{\BZ_{p}} \Lambda_{L}^{\cdot})}{\Lambda_{L,\cO_{\lambda}}^{\cdot}\cdot {\CZ^{\cdot}(g_{/L})}} 
\ra
\frac{\Im(H^{1}_{\ord}(L_{\ov{v}},T \otimes_{\BZ_{p}} \Lambda_{L}^{\cdot}))}{ \Lambda_{L,\cO_{\lambda}}^{\cdot}\cdot \loc_{\ov{v}}(\CZ^{\cdot}(g_{/L}))}
\ra
X_{\rm{Gr}}^\cdot(g_{/L})
\ra
X_{\st,\ord}^\cdot(g_{/L})
\ra
0
\end{equation}
of $\Lambda_{L,\cO_{\lambda}}^{\cdot}$-modules, 
 where the second (resp.~third) map arises from the localisation at $v$ (resp.~the dual of the localisation at $\ov{v}$).
Note that the exactness on the left is a consequence of
\begin{itemize}
\item[-] The non-vanishing \eqref{nv},
\item[-] The fact that
\begin{equation}\label{rk}\tag{rk}
\text{$H^{1}_{\rm{rel},\ord}(L,T \otimes \Lambda_{L}^{\cdot})$ is $\Lambda_{L,\cO_{\lambda}}^{\cdot}$ torsion-free with rank one,}
\end{equation}
\item[-] Moreover,  in the case $\cdot=\ac$, the hypothesis that $\epsilon(E_{/L})=-1$.
\end{itemize}
In regards to the fact \eqref{rk}, it is simply the content of Theorem \ref{cycIwL}(b) for $\cdot=\cyc$, which in turn implies the case $\cdot=\emptyset$. 
By the second explicit reciprocity law (cf.~\S\ref{two-variable-zeta}), we have an analogous exact sequence 
\begin{equation}\label{exvc2}
0 \ra \frac{H^{1}_{\rm{rel},\ord}(L,T \otimes_{\BZ_{p}} \Lambda_{L}^{\cdot,\ur})}{\Lambda_{L,\cO_{\lambda}}^{\cdot,\ur}\cdot {\CZ^{\cdot}(g_{/L})}}
\ra
\frac{\Lambda_{L}^{\cdot,\ur}}{ \Lambda_{L}^{\cdot,\ur}\cdot \mathcal{L}_{p}^{\rm{Gr},\cdot}(g_{/L})}
\ra
X_{\rm{Gr}}^{\cdot,\ur}(g_{/L})
\ra
X_{\st,\ord}^{\cdot,\ur}(g_{/L})
\ra
0
\end{equation}
of $\Lambda_{L,\cO_{\lambda}}^{\cdot}$-modules. 

On the other hand, switching the role of $v$ and $\ov{v}$ in the preceding analysis and utilising the first explicit reciprocity law, we obtain the exact sequence   
\begin{equation}\label{exv}
0 \ra \frac{H^{1}_{\rm{rel},\ord}(L,T \otimes_{\BZ_{p}} \Lambda_{L}^{\cdot})}{\Lambda_{L,\cO_{\lambda}}^{\cdot}\cdot \CZ^{\cdot}(g_{/L})}
\ra
\frac{\Lambda_{L,\cO_{\lambda}}^{\cdot}}{ \Lambda_{L,\cO_{\lambda}}^{\cdot}\cdot \mathcal{L}_{p}^{\cdot}(g_{/L})}
\ra
X^{\cdot}(g_{/L})
\ra
X_{\st,\ord}^{\cdot}(g_{/L})
\ra
0 
\end{equation}
of $\Lambda_{L,\cO_{\lambda}}^{\cdot}$-modules. 
Here the exactness on the left is a consequence of  
\begin{itemize}
\item[-] Proposition \ref{2varZ-prop}, 
\item[-] the fact \eqref{rk}, 
\item[-] Moreover, in the case $\cdot=\cyc$, we utilise Remark \ref{NVcyc}, and in the case $\cdot=\ac$, the hypothesis that $\epsilon(E_{/L})=+1$.
\end{itemize}

By the exact sequence (\ref{exv}), $X_{\st,\ord}^{\cdot}(g_{/L})$ is 
$\Lambda_{L,\cO_{\lambda}}^{\cdot}$-torsion. 
Hence, in light of (\ref{exvc2}) a divisibility in the Greenberg main conjecture for $g$  is equivalent to the analogous divisibility in Conjecture \ref{LLZ}. 
The latter is also equivalent to the analogous divisibility in Conjecture \ref{St}
by
\eqref{exv}.

Without the condition (van$_{L}$), the above analysis still applies by considering exact sequences of $\Lambda_{L,\cO_{\lambda}}^{\cdot}\otimes \BQ_{p}$-modules.
Note that the `In particular' part just follows by Lemma \ref{Eq'}. 

\end{proof}
\begin{remark} 
\noindent
\begin{itemize}
\item[(i)] As seen in the proof, $X_{\st,\ord}^{\cdot}(g_{/L})$ is $\Lambda_{L,\cO_{\lambda}}^{\cdot}$-torsion. 
\item[(ii)] Anticyclotomic variants of the above main conjectures are also equivalent. In this setting \eqref{rk} is a consequence of the existence of an Euler system.
\end{itemize}
\end{remark}

\subsubsection{Via Heegner points}
\begin{prop}\label{Eq-He}
Let $g \in S_{2}(\Gamma_{0}(N))$ be an elliptic newform and $p$ an odd prime of good ordinary reduction. 
Let $L$ be an imaginary quadratic field satisfying \eqref{spl}, \eqref{Heeg} and ~$({\mathrm{van}}_{L})$.
Then a one-sided divisibility in the anticyclotomic case of Conjecture \ref{Greenberg} implies an analogous divisibility in Conjecture \ref{HMC}, and conversely. 
In particular, the anticyclotomic counterpart of Conjectures \ref{Greenberg} and \ref{HMC} are equivalent. 
\end{prop}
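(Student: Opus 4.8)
The plan is to prove Proposition~\ref{Eq-He} by running the Poitou--Tate duality argument of Proposition~\ref{Eq} in the anticyclotomic variable, with the Heegner class $\kappa_g$ playing the role of the restriction $\loc_{\ov v}$ of the zeta element. Throughout set $\Lambda = \Lambda_{L,\cO_{\lambda}}^{\ac}$ for brevity. The starting point is that, under $({\rm van}_L)$ and the Heegner hypothesis \eqref{Heeg}, the results of Kato over $L$ (Theorem~\ref{cycIwL} in its anticyclotomic incarnation, combined with the Euler system machinery) give that $H^{1}_{\ord}(L,T_g\otimes_{\BZ_p}\Lambda)$ is a torsion-free $\Lambda$-module; and the anticyclotomic Euler system of Heegner points (Howard's divisibility / Wan's results) together with the Gross--Zagier formula shows $\kappa_g$ is non-torsion, whence $H^1_{\ord}(L,T_g\otimes_{\BZ_p}\Lambda)$ has rank exactly one and likewise $X^{\ac}(g_{/L})$ has rank one.

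First I would record the Poitou--Tate exact sequence analogous to \eqref{exvc1}: using global duality with the relaxed-at-$v$, strict-at-$\ov v$ Selmer structure on one side and the Heegner local condition on the other, one obtains
\begin{equation*}
0 \ra \frac{H^{1}_{\ord}(L,T_g\otimes_{\BZ_p}\Lambda)}{\Lambda\cdot \kappa_g}
\ra \frac{\Im \loc_{\ov v}\big(H^{1}_{\ord}(L_{\ov v},T_g\otimes_{\BZ_p}\Lambda)\big)}{\Lambda\cdot \loc_{\ov v}(\kappa_g)}
\ra X_{\rm Gr}^{\ac}(g_{/L}) \ra X_{\st,\ord}^{\ac}(g_{/L}) \ra 0,
\end{equation*}
exactness on the left being a consequence of $H^1_{\ord}(L,T_g\otimes_{\BZ_p}\Lambda)$ being torsion-free of rank one together with $\kappa_g$ non-torsion. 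The anticyclotomic version of \eqref{exv} (with $v$ and $\ov v$ switched, using that $\epsilon(E_{/L})=-1$ so the relevant $p$-adic $L$-function is the Bertolini--Darmon--Prasanna one, which vanishes identically on the anticyclotomic line, forcing the corresponding strict dual Selmer module to be torsion) then yields that $X_{\st,\ord}^{\ac}(g_{/L})$ is $\Lambda$-torsion. Feeding this back into the displayed sequence gives the equality of characteristic ideals
\begin{equation*}
\xi_{\Lambda}\Big(\tfrac{H^{1}_{\ord}(L,T_g\otimes_{\BZ_p}\Lambda)}{\Lambda\cdot \kappa_g}\Big)\cdot \xi_{\Lambda}\big(X_{\st,\ord}^{\ac}(g_{/L})\big)\ \big|\ \xi_{\Lambda}\big(X_{\rm Gr}^{\ac}(g_{/L})\big)
\end{equation*}
and the reverse divisibility, i.e.\ the two are matched up to the local term $\xi_{\Lambda}$ of the cokernel of $\loc_{\ov v}$.

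Next I would identify the two remaining pieces. The term $X_{\st,\ord}^{\ac}(g_{/L})$ is pseudo-isomorphic to $X^{\ac}(g_{/L})_{\tor}$ up to the controlled local factors at primes dividing $N$ (this is the same bookkeeping as in the standard Heegner main conjecture literature, e.g.\ Howard and Castella), and the local cokernel $\Im\loc_{\ov v}/\Lambda\cdot\loc_{\ov v}(\kappa_g)$ together with the symmetric term for $\kappa_g^{\iota}$ combine — via the functional equation relating $\kappa_g$ and $\kappa_g^{\iota}$ under the action of complex conjugation on the anticyclotomic tower — to give the product of the two index ideals in Conjecture~\ref{HMC}(b). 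Carrying out this matching dictionary in both directions shows a one-sided divisibility in the anticyclotomic Greenberg main conjecture \ref{Greenberg} is equivalent to the analogous divisibility in \ref{HMC}, and the equivalence of the full conjectures follows. The main obstacle I anticipate is the precise comparison of local terms: ensuring that the cokernel of the Perrin-Riou/Coleman-type map $\loc_{\ov v}$ appearing in the Poitou--Tate sequence is exactly the index $\xi_{\Lambda}(H^1_{\ord}(L,T_g\otimes_{\BZ_p}\Lambda)/\Lambda\kappa_g)$-compatible local factor and that, after also using the $\kappa_g^{\iota}$ sequence and $X^{\ac}(g_{/L})$-duality, one recovers $\xi_{\Lambda}(X(g)_{\tor})$ without spurious error terms — this requires care with the structure of $H^1$ of the local Galois cohomology at $v$ and $\ov v$ for $T_g$ over the anticyclotomic $\Lambda$, and with the torsion submodules, precisely the points where \eqref{Heeg} and $({\rm van}_L)$ are used.
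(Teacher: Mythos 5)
Your Poitou--Tate skeleton is the right bookkeeping (it is the same duality argument as in Proposition \ref{Eq}, and the same route as \cite[Thm.~5.2]{BCK}, which the paper cites), but the proposal omits the one ingredient the paper's proof actually rests on: the $p$-adic Waldspurger formula of Bertolini--Darmon--Prasanna interpreted as an \emph{explicit reciprocity law for the Heegner class} $\kappa_g$. Conjecture \ref{Greenberg}(b) is a statement about the ideal generated by $\CL_{p}^{\rm Gr,\ac}(g_{/L})$, whereas your exact sequence only produces the quotient $\Im\,\loc_{\ov{v}}\big(H^{1}_{\ord}(L,T_{g}\otimes_{\BZ_{p}}\Lambda_{L,\cO_{\lambda}}^{\ac})\big)/\Lambda_{L,\cO_{\lambda}}^{\ac}\cdot\loc_{\ov{v}}(\kappa_{g})$ on the ``analytic'' side. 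To convert that local term into $(\CL_{p}^{\rm Gr,\ac}(g_{/L}))$ one needs exactly the statement that the Perrin-Riou big logarithm at $\ov{v}$ sends $\loc_{\ov{v}}(\kappa_{g})$ to the BDP anticyclotomic $p$-adic $L$-function (up to the controlled factors of Proposition \ref{GRL=BDPL}); nothing in your argument supplies this. The same input is also what you need for left-exactness and for torsionness of the strict Selmer module: one must know that $\loc_{\ov{v}}(\kappa_{g})$ (not merely $\kappa_{g}$) is non-torsion, and this is deduced from the non-vanishing of the BDP $p$-adic $L$-function via the reciprocity law; it does not follow from Howard's divisibility, Wan's results, or Gross--Zagier alone. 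Without the reciprocity law you obtain only an algebraic comparison of $X_{\rm Gr}^{\ac}(g_{/L})$ with Heegner-index ideals, not a comparison with Conjecture \ref{Greenberg}.

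There is also a parity error in the step where you claim torsionness: under \eqref{Heeg} one has $\epsilon(g_{/L})=-1$, and it is the anticyclotomic $p$-adic $L$-function interpolating the central values $L(1,g,\chi)$ (the $\Xi^{(I)}$/Coleman-map side) that vanishes identically; the BDP, i.e.\ Greenberg, anticyclotomic $L$-function is \emph{non-zero} --- its value at the trivial character is $(\log_{\omega_{g}}(y_{L}))^{2}$ up to an Euler factor (Theorem \ref{BDPformula}) --- and this non-vanishing is precisely what drives the argument; compare Proposition \ref{GRLvan-prop}, where the Greenberg $L$-function dies on the anticyclotomic line only when $\epsilon(g_{/L})=+1$. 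Moreover the inference ``the $p$-adic $L$-function vanishes identically, hence the strict dual Selmer module is torsion'' runs backwards: an identically vanishing $L$-function yields no bound in the Poitou--Tate sequence; torsionness comes from a non-vanishing statement. For the record, the paper's own proof is a one-line reduction: the $p$-adic Waldspurger formula, read as the reciprocity law for $\kappa_{g}$, plus the duality comparison as in \cite[Thm.~5.2]{BCK}; once you add that reciprocity law (and correct the parity point), your outline becomes essentially that proof.
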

The proposition is a consequence of the $p$-adic Waldspurger formula \cite{BDP1}, interpreted as an explicit reciprocity law for the Heegner class $\kappa_g$ (cf.~\cite[Thm.~5.2]{BCK}).
\subsection{Towards the Main conjectures}
We describe key results towards the main conjectures.
\subsubsection{Kato's main conjecture}
\begin{thm}\label{KaMC_r}
Let $g \in S_{2}(\Gamma_{0}(N))$ be an elliptic newform and $p$ a prime. 
\begin{itemize}
\item[(a)] $X_{\st}(g)$ is $\Lambda_{\cO_{\lambda}}$-torsion. 
\item[(b)] For 
$\gamma=\gamma_{g}$ as in Lemma \ref{GorPer}, we have a divisibility of ideals 
$$
\xi(X_{\st}(g)) \big{|} 
\xi\big{(}H^{1}(\BZ[\frac{1}{p}], T \otimes_{\BZ_{p}} \Lambda)/ \Lambda_{\cO_{\lambda}}\cdot{{\bf{z}}_{\gamma}(g)}\big{)}
$$
 in $\Lambda_{\cO_{\lambda}}\otimes_{\BZ_{p}} \BQ_{p}$ and even in $\Lambda_{\cO_{\lambda}}$ for odd primes $p$ if the following holds:
\begin{equation}\label{im}\tag{im}
\text{There exists $\sigma \in G_\BQ$ fixing $\BQ_\infty$ such that $T/(\sigma-1)$ is a free $\cO_\lambda$-module of rank $1$.}
\end{equation}
\item[(c)] Let $p\nmid 6N$ be an ordinary prime such that $({\rm irr}_\BQ)$ holds. Then we have an equality of ideals
$$
\xi\big{(}H^{1}(\BZ[\frac{1}{p}], T \otimes_{\BZ_{p}} \Lambda)/ \Lambda_{\cO_{\lambda}}\cdot{{\bf{z}}_{\gamma}(g)}\big{)}
=\xi(X_{\st}(g))
$$
in $\Lambda_{\cO_{\lambda}}\otimes_{\BZ_{p}} \BQ_{p}$, which is an equality in $\Lambda_{\cO_{\lambda}}$  for primes $p \nmid 2N$ if the following holds: 
\begin{equation}\label{ram}\tag{ram}
\text{There exists a prime $\ell || N$ with $\ov{\rho}$ ramified at $\ell$.}
\end{equation} 
Moreover, 
the equality also holds for the quadratic twist $g_{K}:=g\otimes \chi_K$, where $K/\BQ$ is a quadratic field extension with $p\mid\disc(K)$ so that  
\begin{equation}\label{ram$_{K}$}\tag{ram$_{K}$}
\text{There exists a prime $\ell \nmid D_{K}$ as in \eqref{ram}.}
\end{equation} 
In the CM case, the equality holds for any prime $p\nmid 2N$. 
\end{itemize}
\end{thm}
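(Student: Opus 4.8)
The plan is to assemble the theorem from the Euler system of Beilinson--Kato elements together with the Eisenstein-congruence results of Skinner--Urban, supplemented by a twisting argument over $\Lambda_{\CG}$ to handle the quadratic twist $g_K$ and by Rubin's two-variable main conjecture in the CM case. Parts (a) and (b) are Kato's theorem: the torsionness of $X_{\st}(g)$ over $\Lambda_{\cO_\lambda}$ in (a) follows from the vanishing of the pertinent degree-two Iwasawa cohomology, a consequence of the non-triviality of $\bz_\gamma(g)$ (properties (i)--(iii) of \S\ref{BK-rationals}) together with the rank statement of Theorem \ref{cycIwQ}, as in \cite[Thm.~12.4]{K}. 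The divisibility $\xi(X_{\st}(g)) \mid \xi(H^1(\BZ[\frac{1}{p}],T\otimes_{\BZ_p}\Lambda)/\Lambda_{\cO_\lambda}\cdot\bz_\gamma(g))$ in (b) is the Euler-system upper bound on the strict Selmer group: Kato's Kolyvagin-derivative argument bounds $X_{\st}(g)$ above by the image of $\loc_p(\bz_\gamma(g))$ under the dual exponential, which by the explicit reciprocity law (Theorem \ref{ERLBKI}) is precisely the index of $\bz_\gamma(g)$. After inverting $p$ this requires no further hypothesis; the integral refinement is exactly where \eqref{im} enters, guaranteeing that Kato's Euler-system argument yields integral rather than merely rational bounds.

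For part (c) applied to $g$ itself, I would combine (b) with the opposite divisibility. The reverse divisibility, in the guise $(\mathcal{L}_{\alpha,\omega,\gamma}(g)) \mid \xi(X(g))$ for $X(g)$ the ordinary Selmer group, is the main theorem of Skinner--Urban \cite{SU}, valid under $({\rm irr}_{\BQ})$ together with \eqref{ram} (or the auxiliary real quadratic field variant) and excluding $p=3$ and $p\mid N$, which accounts for the hypothesis $p\nmid 6N$; Lemma \ref{KatoEq} then converts it into $\xi(H^1/\Lambda_{\cO_\lambda}\cdot\bz_\gamma(g)) \mid \xi(X_{\st}(g))$, and combining with (b) yields equality. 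Under the hypotheses of (c) the integral form of (b) is available as well --- the conditions imposed there suffice to run Kato's integral Euler-system argument, in the form in which it is used in \cite{Ko} --- so the equality holds in $\Lambda_{\cO_\lambda}$, not merely after $\otimes_{\BZ_p}\BQ_p$.

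The twist $g_K$ with $p\mid\disc(K)$ is reduced to the preceding case. Since $p$ is odd, $\chi_K$ is at worst tamely ramified at $p$, so its $p$-part is the quadratic character $\omega^{(p-1)/2}$ of $\Delta$; writing $\chi_K = \omega^{(p-1)/2}\chi'$ with $\chi'$ of conductor prime to $p$, the form $g' = g\otimes\chi'$ is again good ordinary at $p$, still satisfies $({\rm irr}_{\BQ})$, and has the chosen prime $\ell\nmid D_K$ exactly dividing its conductor with $\ov{\rho}_{g'}$ ramified there --- this is exactly the content of the ramification hypothesis imposed on $\ell$ --- so the argument above applies verbatim to $g'$. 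Now $g_K = g'\otimes\omega^{(p-1)/2}$, and one carries out the whole argument over $\Lambda_{\CG}$ rather than over its $\eps_0$-component $\Lambda$: twisting by $\omega^{(p-1)/2}$ identifies the $\eps_i$-component of the $\Lambda_{\CG}$-adic zeta element and strict Selmer module of $g_K$ with the $\eps_{i+(p-1)/2}$-component of those of $g'$, and matches their $p$-adic $L$-functions (Theorems \ref{pcycQ} and \ref{ERLBKI}). Since Kato's and Skinner--Urban's results hold component by component over $\Lambda_{\CG}$, the equality for the $\eps_0$-component of $g_K$, which is the assertion of (c) for $g_K$, follows from the $\eps_{(p-1)/2}$-component of the main conjecture for $g'$. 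Finally, in the CM case the cyclotomic main conjecture --- indeed the two-variable main conjecture over the CM field --- is due to Rubin \cite{Ru} via the Iwasawa theory of elliptic units and needs no \eqref{ram}-type condition, so descending to the cyclotomic line over $\BQ$ and invoking Theorem \ref{ERLBKI} once more gives the equality for every $p\nmid 2N$.

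The genuine content resides entirely in the works of Kato and of Skinner--Urban; the labour in assembling (c) is bookkeeping rather than conceptual. The step I expect to be the main obstacle is confirming that the Skinner--Urban divisibility is genuinely available component by component over $\Lambda_{\CG}$ for the non-trivial tame twist $\omega^{(p-1)/2}$ together with the auxiliary-prime form of \eqref{ram}, so that the reduction of the $g_K$ case to a single $\Delta$-eigencomponent of $g'$ is legitimate; a secondary difficulty is that the twisting identification for $g_K$ and all integral refinements must be tracked at the level of the integral lattices and integral zeta elements, with careful attention to which of $({\rm irr}_{\BQ})$, \eqref{ram}, \eqref{im} and $p\nmid 6N$ is needed for the rational versus integral form of each assertion.
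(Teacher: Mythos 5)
Your treatment of (a) and (b) (Kato, with \eqref{im} governing integrality), of (c) for $g$ itself (Skinner--Urban via Lemma \ref{KatoEq}), and of the CM case (Rubin) matches the paper's proof; one small attribution point is that the rational equality in (c) under (irr$_\BQ$) alone, without \eqref{ram}, is not in \cite{SU} but is Wan's theorem \cite{W'}, which your parenthetical about an auxiliary real quadratic field only gestures at.

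The genuine gap is the quadratic twist $g_K$ with $p\mid\disc(K)$. Your reduction writes $g_K = g'\otimes\omega^{(p-1)/2}$ with $g'=g\otimes\chi'$ ordinary of level prime to $p$, and then asserts that the main conjecture for the trivial $\Delta$-component of $g_K$ follows from the $\omega^{(p-1)/2}$-component of the main conjecture for $g'$, "since Kato's and Skinner--Urban's results hold component by component over $\Lambda_{\CG}$." For Kato this is true, but for Skinner--Urban it is exactly what is \emph{not} available: \cite[Thm.~3.29]{SU} and the underlying three-variable divisibility \cite[3.4.5]{SU} are proved over the $\BZ_p$-extensions (of $\BQ$, resp.\ of the auxiliary imaginary quadratic field), with no tame-at-$p$ twist; the $\omega^{(p-1)/2}$-component corresponds precisely to the nearly ordinary form $g_K$ of level divisible by $p^2$, which is outside the setting of their theorem. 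You flagged this yourself as "the main obstacle," but then dismissed the whole step as bookkeeping; in fact this is where the paper's proof does its real work. The authors re-run the $U(2,2)$ Eisenstein congruence argument directly for $g_K$ over an imaginary quadratic field $L$ chosen so that the prime $\ell$ of (ram$_K$) is inert and the other primes dividing $N$ split: the Fourier--Jacobi coefficient analysis of \cite[\S\S11--13]{SU} must be modified because the specialisation of the factor $\mathcal{B}_{\mathbf{D},\mathbf{g}}$ becomes a triple product period rather than a Rankin--Selberg period (so Finis' non-vanishing theorem \cite{Fi} has to be applied in a new configuration), and the $p$-indivisibility of $\mathcal{A}_{\mathbf{D},\mathbf{g}}$ is obtained from Hung's non-vanishing result \cite{Hu} in place of Vatsal's anticyclotomic $\mu$-invariant vanishing \cite{Va} used in \cite[12.3.5]{SU}. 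Without supplying this extension of the Eisenstein congruence divisibility to the nearly ordinary/tamely twisted setting, your argument for the $g_K$ part of (c) does not close.
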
 
\begin{proof}
The CM case is due to Rubin \cite[\S1]{Ru} (see also \cite[\S7]{PoRu}). 

In the non-CM case parts (a) and (b) are due to Kato ~\cite[Thm~12.4 and~Thm.~12.5]{K}. 
As for part (c) the equality for the newform $g$
 in $\Lambda_{\cO_{\lambda}}$  is due to Skinner--Urban \cite[Thm.~3.29]{SU}, and that in $\Lambda_{\cO_{\lambda}}\otimes \BQ_{p}$ due to Wan \cite[Thm.~4]{W'}. 

We now consider the quadratic twist $g_{K}$. 
Note that $g_K$ is nearly ordinary at $\lambda$. 
The Eisenstein congruence method of Skinner--Urban \cite{SU} applies to this setting.
 Let $L$ be an imaginary quadratic field with $q$ as in \eqref{ram$_{K}$} to be inert, and all other primes dividing $N$ split. 
Let ${\mathbf{g}}_\alpha$ be the Hida family passing through the $p$-ordinary stabilisation of
 $g$.   
 Then the Eisenstein congruence divisibility in the three-variable main conjecture
 ~\cite[3.4.5]{SU}  for $g_K$ over $L$ may be shown following \cite{SU}.
 
 To begin, there exists a $\Lambda_{\bf D}$-adic Eisenstein series ${\bf E_{D}}$ on $U(2,2)$ whose constant term is closely related to the underlying three-variable $p$-adic $L$-function (cf.~\cite[\S12]{SU}), where $\Lambda_{\bf D}$ is a three-variable Iwasawa algebra as in \cite[p.~11]{SU}. 
 To implement the strategy of {\it{op. cit.}} it suffices to show that a non-constant term of the  Fourier--Jacobi expansion of ${\bf E_{D}}$ is non-zero modulo $p$. 
 As in {\it{op.~cit.}} 
 a certain $\Lambda_{\bf D}$-combination of the non-constant coefficients factorises as 
 $$\mathcal{A}_{\mathbf{D},\mathbf{g}} \mathcal{B}_{\mathbf{D},\mathbf{g}}$$ 
 for ${\bf g}$ an auxiliary $p$-adic family of CM forms. Hence, it suffices to show that $\mathcal{A}_{\mathbf{D},\mathbf{g}}$ and $\mathcal{B}_{\mathbf{D},\mathbf{g}}$ are non-zero modulo $p$. 
 
  The calculation of Fourier--Jacobi coefficients in
\cite[\S11]{SU} applies to our setting. 
Note that $\mathcal{A}_{\mathbf{D},\mathbf{g}}$ in Proposition 13.5 of \emph{op.~cit.} satisfies the same properties upon replacing the underlying data with $\chi_K$-twist.
 As for the study of $\mathcal{B}_{\mathbf{D},\mathbf{g}}$,  \cite[Lem.~11.36]{SU} still applies for its evaluation.
 
  However, the analysis in \cite[Prop.~13.5~and~Prop.~13.6]{SU} needs a slight modification. 
 Proceeding as in the proof of Proposition 13.6 of {\it{op.~cit.}} 
 the specialisation of $\mathcal{B}_{\mathbf{D},\mathbf{g}}$ 
 to an arithmetic point is a triple product period\footnote{
  Indeed, the triple product is a special case
   of the construction of triple product $p$-adic $L$-functions in \cite{Hs1}.   
 Note that the test vectors in \cite[\S3]{Hs1} are the same as ours. 
 The triple product period has also been investigated in \cite{CLW}.
  }
 instead of the Rankin--Selberg period therein. 
 To show that $\mathcal{B}_{\mathbf{D},\mathbf{g}}$ is $p$-indivisible  
 for some choice of ${\bf g}$, we then pick ${\bf g}$  and apply Finis' non-vanishing result \cite{Fi}
 just as in the proof of \cite[Prop.~13.6]{SU}.
To conclude the $p$-indivisibility of 
$\mathcal{A}_{\mathbf{D},\mathbf{g}}$, instead of resorting to the vanishing of the anticyclotomic $\mu$-invariant \cite{Va} in \cite[12.3.5]{SU}, we utilise a non-vanishing result of Hung \cite[Thm.~B]{Hu} (see also \cite{CLW}). The latter does not explicitly cover our setting, however the same argument applies:
  the key is that the modular form $\mathbf{f}_1$ as in \cite[p.~205]{Hu} is non-zero modulo $p$. 
\end{proof}
\begin{remark}\label{ram-im}
Note that the condition \eqref{ram} implies \eqref{im}.
\end{remark}

We record the following consequence of Theorem \ref{KaMC_r} and Proposition \ref{Eq}. 
\begin{cor}\label{GKSU} 
Let $g \in S_{2}(\Gamma_{0}(N))$ be an elliptic newform and $p$ a prime. Let $L$ be an imaginary quadratic satisfying \eqref{ord}, and \eqref{coprime}. Suppose that the non-vanishing \eqref{nv} holds. 
\begin{itemize}
\item[(a)] Let $p\nmid 2N$ be an ordinary or a supersingular prime. Then we have a divisibility of ideals
$$\xi(X_{\rm{Gr}}^{\rm cyc, ur}(g_{/L}))\big{|}(\mathcal{L}_{p}^{\rm{Gr},\cyc}(g_{/L})),$$ 
 in $\Lambda_{L,\cO_{\lambda}}^{\ur,\cyc}\otimes_{\BZ_{p}} \BQ_{p}$ and even in $\Lambda_{L}^{\ur,\cyc}$ if \eqref{im} holds.
\item[(b)] Let $p\nmid 2N$ be a prime of ordinary reduction satisfying $({\rm irr}_{L})$ and \eqref{ram}.
Then 
$$\xi(X_{\rm{Gr}}^{\rm cyc, ur}(g_{/L}))=(\mathcal{L}_{p}^{\rm{Gr},\cyc}(g_{/L})).$$
\end{itemize}
\end{cor}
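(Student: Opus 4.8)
The plan is to deduce Corollary \ref{GKSU} from Kato's main conjecture over $\BQ$---applied simultaneously to the newform $g$ and to its quadratic twist $g'=g\otimes\chi_L$---and then to push the resulting statement over to the cyclotomic Greenberg main conjecture over $L$ by means of Proposition \ref{Eq}. A preliminary observation: $g'$ is again an elliptic newform, of level $ND_L^2$ by \eqref{coprime}, weight two, with trivial Nebentypus, and $p\nmid 2ND_L^2$ since $p\nmid 2N$ and $p\nmid D_L$ (the latter because $p$ splits in $L$ by \eqref{ord}); hence all of Theorem \ref{KaMC_r} is available for $g'$ as well.

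\emph{Part (a).} First I would apply Theorem \ref{KaMC_r}(a),(b) to $g$ and to $g'$, getting for each the one-sided divisibility in Kato's main Conjecture \ref{Kato}---in $\Lambda_{\cO_\lambda}\otimes_{\BZ_p}\BQ_p$ in general, and in $\Lambda_{\cO_\lambda}$ once \eqref{im} is assumed (in the supersingular case one uses Lemma \ref{KatoEq}(ii) and the signed $p$-adic $L$-functions throughout). Next I would invoke Proposition \ref{Eq}, in the form valid without $(\mathrm{van}_L)$, i.e.~after inverting $p$ (its other cyclotomic hypotheses \eqref{coprime}, \eqref{ord}, \eqref{nv} being in force), read at the level of divisibilities: its proof runs through Lemma \ref{Eq'} and the exact sequences \eqref{exvc2}, \eqref{exv}, and produces from the Kato divisibilities for $g$ and $g'$ the divisibility
$$
\xi\big{(}X_{\mathrm{Gr}}^{\mathrm{cyc},\ur}(g_{/L})\big{)}\ \big{|}\ \big{(}\mathcal{L}_{p}^{\mathrm{Gr},\cyc}(g_{/L})\big{)}
$$
in $\Lambda_{L,\cO_\lambda}^{\ur,\cyc}\otimes_{\BZ_p}\BQ_p$; the integral refinement under \eqref{im} I defer to the last paragraph. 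Unwinding the mechanism: by Lemma \ref{KatoEq} the Kato divisibilities are equivalent to $\xi(X(g))\mid(\mathcal{L}_{\alpha,\omega,\gamma}(g))$ and $\xi(X(g'))\mid(\mathcal{L}_{\alpha,\omega',\gamma'}(g'))$; the factorisations \eqref{sel-fac} of the Selmer characteristic ideal and $\mathcal{L}_{p}^{\cyc}(g_{/L})=\mathcal{L}_{\alpha,\omega,\gamma}(g)\,\mathcal{L}_{\alpha,\omega',\gamma'}(g')$ from \S\ref{sspLcy} then give the divisibility in the cyclotomic case of Conjecture \ref{St}; and the exact sequences of Proposition \ref{Eq}, built from global duality and the two explicit reciprocity laws of \S\ref{two-variable-zeta}, carry this over to the Greenberg side.

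\emph{Part (b).} Here $p$ is ordinary and $(\mathrm{irr}_L)$, \eqref{ram} hold. I would first check that these pass to $g'$: $(\mathrm{irr}_L)$ because $\ov{T}_{g'}|_{G_L}\cong\ov{T}_g|_{G_L}$ (as $\chi_L|_{G_L}=1$), and \eqref{ram} because the prime $\ell\| N$ at which $\ov{\rho}_g$ ramifies is coprime to $D_L$ by \eqref{coprime}, so $\ell\| ND_L^2$ and $\ov{\rho}_{g'}=\ov{\rho}_g\otimes\chi_L$ is still ramified at $\ell$ ($\chi_L$ being unramified there). Then Theorem \ref{KaMC_r}(c), using $(\mathrm{irr}_L)\Rightarrow(\mathrm{irr}_\BQ)$, gives Kato's main conjecture as an \emph{equality}---integrally, since $p\nmid 2N$ and \eqref{ram} holds---for both $g$ and $g'$; feeding these into Proposition \ref{Eq} exactly as in part (a) produces
$$
\xi\big{(}X_{\mathrm{Gr}}^{\mathrm{cyc},\ur}(g_{/L})\big{)}=\big{(}\mathcal{L}_{p}^{\mathrm{Gr},\cyc}(g_{/L})\big{)}.
$$
Integrality here is unproblematic because $(\mathrm{irr}_L)$ implies $(\mathrm{van}_L)$, so the exact sequences in the proof of Proposition \ref{Eq} are valid over $\Lambda_{L,\cO_\lambda}^{\cyc}$ and not only after inverting $p$, the relaxed Iwasawa cohomology being free of rank one by Theorem \ref{cycIwL-Box}(b).

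The step I expect to be the main obstacle is the \emph{integral} bookkeeping in part (a): one has to verify that the transfer through Lemma \ref{Eq'} and, above all, the exact sequences \eqref{exvc2}, \eqref{exv} of Proposition \ref{Eq} stays integral under \eqref{im} alone---which, unlike $(\mathrm{irr}_L)$ in part (b), need not force $(\mathrm{van}_L)$---and, along the way, that \eqref{im} propagates from $g$ to $g'$ (possibly after replacing the relevant $\sigma$, since twisting by $\chi_L$ alters its action on $\ov{T}$ only by a sign on $G_{\BQ_\infty}$). The Selmer-ideal and $p$-adic $L$-function factorisations are integral by construction; the real content is that \eqref{im} (compare Remark \ref{ram-im}) supplies just enough structural control on $H^1(\BZ[\frac{1}{p}],T\otimes_{\BZ_p}\Lambda)$, for $g$ and for $g'$, to keep those sequences exact over $\Lambda_{\cO_\lambda}$, whence the integral divisibility once the integral form of Theorem \ref{KaMC_r}(b) is fed in.
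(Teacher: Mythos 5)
Your route is exactly the paper's: Corollary \ref{GKSU} is recorded there as an immediate consequence of Theorem \ref{KaMC_r} applied to $g$ and $g'=g\otimes\chi_L$ combined with Lemma \ref{KatoEq}, Lemma \ref{Eq'} and the transfer mechanism of Proposition \ref{Eq}, which is precisely the chain you spell out (including the check that $({\rm irr}_{L})$ and \eqref{ram} pass to $g'$ in part (b)). The integral subtlety you flag in part (a) under \eqref{im} alone (and the propagation of \eqref{im} to $g'$) is not elaborated in the paper either, which gives no argument beyond the citation of those two results, so your reconstruction matches everything the paper actually records.
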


\subsubsection{Greenberg main conjecture}
\begin{thm}\label{GMC_r} 
Let $g\in S_{2}(\Gamma_{0}(N))$ be an elliptic newform with $N$ square-free and $p\nmid 2N$ a prime. Let $L$ be an imaginary quadratic field satisfying \eqref{ord} and $({\rm irr}_L)$. 
Write $N=N^{+}N^{-}$ for $N^+$ precisely divisible by split primes in $L$. 
Suppose that
\begin{equation}\label{spl}\tag{{\rm spl}}
\text{There exists a prime $q|N^-$, and if $2$ does not split in $L$, then $2|N^-$.}
\end{equation}

Then we have a divisibility of ideals
\begin{equation}\label{Gr-div}
\mathcal{L}_{p}^{\rm{Gr}}(g_{/L})\big{|}
\xi(X_{\rm{Gr}}^{\rm ur}(g_{/L}))
\end{equation}
in $\Lambda_{L,\cO_{\lambda}}^{\ur}\otimes_{\Lambda_{L,\cO_{\lambda}}^{\cyc,\ur}} 
\Frac(\Lambda_{L,\cO_{\lambda}}^{\cyc,\ur})$.
For $g$ and $p$ as above, let $K$ be a quadratic extension of $\BQ$ with discriminant prime to $Np$, and $g_{K}:=g\otimes \chi_K$ the quadratic twist. Then the divisibility \eqref{Gr-div} also holds for $g_K$. 
\end{thm}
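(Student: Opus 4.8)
The plan is to prove Theorem~\ref{GMC_r} by reducing it to the two-variable Greenberg main conjecture over the auxiliary imaginary quadratic field $L$ studied in \cite{W1,CLW}, exactly as outlined in \S\ref{ss:Kob} and realised via Proposition~\ref{Eq}. The key point is that $\mathcal{L}_{p}^{\rm{Gr}}(g_{/L})$ and $X_{\rm{Gr}}^{\rm ur}(g_{/L})$ are two-variable objects (over $\Lambda_L^{\ur}$), and the divisibility we seek is to be read in $\Lambda_{L,\cO_{\lambda}}^{\ur}\otimes_{\Lambda_{L,\cO_{\lambda}}^{\cyc,\ur}}\Frac(\Lambda_{L,\cO_{\lambda}}^{\cyc,\ur})$, i.e.\ after inverting the cyclotomic variable. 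First I would invoke the Eisenstein congruence divisibility on $U(3,1)$ for certain semi-ordinary Hida families, as established in \cite{W1,CLW}, which under the hypotheses $(\text{irr}_L)$, \eqref{spl}, and the squarefreeness of $N$ yields precisely the divisibility $\mathcal{L}_{p}^{\rm{Gr}}(g_{/L})\mid \xi(X_{\rm{Gr}}^{\rm ur}(g_{/L}))$ in the stated localisation. (These cited works do the analytic/geometric heavy lifting: the construction of the $U(3,1)$-Eisenstein series, the Fourier--Jacobi coefficient computation, and the requisite $p$-indivisibility of the relevant periods; I would only need to check that our running hypotheses match theirs, in particular that $(D_L,N)=1$ is compatible with the condition that $N^+$ consist of split primes and $N^-$ of non-split primes, and that \eqref{spl} supplies the needed ramified/non-split prime $q\mid N^-$.)

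Next I would treat the quadratic twist $g_K = g\otimes\chi_K$ for $K/\BQ$ of discriminant prime to $Np$. Here the route is the same Eisenstein congruence method, now applied to $g_K$, which is nearly ordinary at $\lambda$ (since $p$ splits in $L$ and $p\nmid\disc(K)$), following the modification already carried out in the proof of Theorem~\ref{KaMC_r}(c): one replaces the underlying Rankin--Selberg/Klingen--Eisenstein data with its $\chi_K$-twist, and the Fourier--Jacobi coefficient analysis of \cite[\S11,\S13]{SU}, together with the non-vanishing inputs of Finis \cite{Fi} and Hung \cite{Hu} (or \cite{CLW}), goes through verbatim with $\chi_K$-twisted test vectors. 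The only real bookkeeping is to verify that the choice of auxiliary imaginary quadratic field used in \cite{W1,CLW} can be made so that the non-split condition at some prime $q\mid N^-$ survives the twist --- this is where I would use that $\disc(K)$ is coprime to $N$, so the local components at primes dividing $N$ are unchanged by the twist and \eqref{spl} continues to hold for $g_K$.

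Once the divisibility $\mathcal{L}_{p}^{\rm{Gr}}(g'_{/L})\mid \xi(X_{\rm{Gr}}^{\rm ur}(g'_{/L}))$ is in hand for $g'\in\{g,g_K\}$, the proof is complete; no further step is needed since the statement of Theorem~\ref{GMC_r} is exactly this divisibility. (Note that unlike Theorem~\ref{thmA}, we are not asserting equality here, so there is no need to invoke the opposite divisibility coming from Kato/Kobayashi.) I would also record that the divisibility is independent of the choice of lattice by \cite[Prop.~2.9]{KO}, so that the passage between $T$ arising from the Tate module of $A_g$ and other $G_\BQ$-stable lattices causes no difficulty.

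The main obstacle I anticipate is not conceptual but one of precisely aligning the hypotheses and normalisations of \cite{W1,CLW} with those in force here --- in particular confirming that the Greenberg $p$-adic $L$-function $\mathcal{L}_{p}^{\rm{Gr}}(g_{/L})$ as normalised in \S\ref{two-variable-zeta}--\ref{two-variable-zeta-ss} (via the regulator $\sL^{\rm int}$ and the Katz-type normalisation $\CH_v$ of \eqref{nrm}) matches the $p$-adic $L$-function appearing in the $U(3,1)$-main conjecture of those papers, up to a unit in $\Lambda_L^{\ur}$; any discrepancy is absorbed into the passage to $\Frac(\Lambda_{L,\cO_\lambda}^{\cyc,\ur})$, but it must be checked. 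The secondary technical point, as in Theorem~\ref{KaMC_r}(c), is the $p$-indivisibility of the triple-product period governing the auxiliary CM family in the $\chi_K$-twisted Fourier--Jacobi computation, for which one invokes \cite{Fi} and \cite[Thm.~B]{Hu} after verifying that the auxiliary modular form $\mathbf{f}_1$ in \cite{Hu} remains non-zero modulo $p$ in the twisted situation.
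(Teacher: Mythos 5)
Your first step follows the paper's own route: the divisibility for $g$ is indeed deduced from \cite[Thm.~8.2.3]{CLW} (see also \cite{W1}), after identifying $\CL_p^{\rm Gr}(g_{/L})$ with the $p$-adic $L$-function $\CL_{\pi,L,\xi}$ of {\it loc.~cit.} specialised at the branch character $\xi={\bf 1}_L$. However, what you fold into ``aligning normalisations'' is actually the one substantive point there: the case $\xi={\bf 1}_L$ is exactly the branch where the CM family is Eisenstein and the $p$-distinguished hypothesis fails, so the integrality of $\CL_{\pi,L,{\bf 1}_L}\in\Lambda_{L,\cO_\lambda}^{\ur}$ (i.e.\ the applicability of \cite[Prop.~8.2.2]{CLW} at this branch) is not automatic; the paper supplies it via Theorem~\ref{excz}, the identification of the cuspidal congruence ideal of the canonical CM family with the Katz $p$-adic $L$-function, including the non-vanishing at the trivial zero. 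Your proposal does not identify this input, but since the reduction itself is the same, this part is close to the paper's argument.

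The treatment of the quadratic twist $g_K$ is where there is a genuine gap. You import the twist mechanism from the proof of Theorem~\ref{KaMC_r}(c) --- the $U(2,2)$ Klingen--Eisenstein congruence of \cite{SU}, its Fourier--Jacobi analysis in \cite[\S\S11,13]{SU}, and the non-vanishing inputs of \cite{Fi} and \cite{Hu} --- but that argument proves a different divisibility (the lower bound in Kato's main conjecture) and is not the congruence underlying Theorem~\ref{GMC_r}, which rests on the $U(3,1)$ (semi-ordinary) congruence of \cite{W1,CLW}. Moreover, the obstacle you single out (whether \eqref{spl} survives the twist at primes $q\mid N^-$) is not the issue: since $(D_K,N)=1$ the local data at primes dividing $N$ is untouched. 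The real problem is that the conductor of $g_K$ is $ND_K^2$, which is no longer square-free, so the hypotheses of \cite[Thm.~8.2.3]{CLW} are not literally satisfied. The paper's proof resolves this by observing that square-freeness enters only through the explicit computation of local triple-product periods, and that at each prime dividing $D_K$ the local automorphic representation attached to $g_K$ is principal series, for which the required local period computation is already available in \cite[\S8D]{W1}. Your proposal neither identifies the non-square-free level as the obstruction nor supplies the local computation needed to remove it, so as written the twist case is not established.
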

\begin{proof}
This is essentially the content of \cite[Thm.~8.2.3]{CLW} (see also \cite{W1}). 

In view of Theorem \ref{excz} the proof of \cite[Prop.~8.2.2]{CLW} applies for the case $\xi={\bf 1}_L$, henceforth the notation being as in {\it loc. cit.} Thus, we have an integral $p$-adic $L$-function $$\CL_{\pi,L,{\bf 1}_L} \in \Lambda_{L,\cO_{\lambda}}^{\ur}$$ for $\pi$ the automorphic representation generated by $g$. Considering the interpolation formulas, $\CL_{\pi,L,{\bf 1}_L}$ is nothing but the Greenberg $p$-adic $L$-function $\CL_{p}^{\rm Gr}(g_{/L})$. Hence the divisibility \eqref{Gr-div} holds in $\Lambda_{L,\cO_{\lambda}}^{\ur}\otimes_{\Lambda_{L,\cO_{\lambda}}^{\cyc,\ur}} 
\Frac(\Lambda_{L,\cO_{\lambda}}^{\cyc,\ur}) 
$ by \cite[Thm.~8.2.3]{CLW}. 

The conductor of the quadratic twist $g_K$ is not square-free. So 
this case is not explicitly covered by the results of \cite{W1,CLW}, but essentially the same argument applies. The square-free-ness was assumed in {\it{loc. cit.}} for explicit calculation\footnote{The periods appear in the computation of Fourier--Jacobi expansion of an Eisenstein series on $GU(3,1)$ whose constant term is linked with  
$\CL_{p}^{\rm Gr}(g_{/L})$.} of local triple product periods.
However, at any prime dividing $D_K$, the local automorphic representation associated to $g$ is principal series, and the computation of the local period has already appeared in \cite[\S8D]{W1}.
\end{proof}
\begin{remark}
While the method of \cite{CLW} uniformly treats the ordinary and non-ordinary primes, the ordinary case  goes back to \cite{W1}.
\end{remark}

\subsubsection{Heegner main conjecture}
\begin{thm}\label{HMC-ub}
Let $g \in S_{2}(\Gamma_{0}(N))$ be an elliptic newform, and $p\nmid 2N$ an ordinary prime. 
Let $L$ be an imaginary quadratic field satisfying \eqref{ord}, \eqref{coprime}, \eqref{Heeg} and $({\rm van}_{L})$. 
Then we have a divisibility of ideals 
$$
\xi_{\Lambda_{L,\cO_{\lambda}}^{\ac}}(X(g)_{\tor})\big{|}
\xi_{\Lambda_{L,\cO_{\lambda}}^{\ac}}(
\frac{H^{1}_{\ord}(L,T_{g}\otimes_{\BZ_{p}}\Lambda_{L,\cO_{\lambda}}^{\ac})}{\Lambda_{L,\cO_{\lambda}}^{\ac}\cdot \kappa_{g}})\cdot \xi_{\Lambda_{L,\cO_{\lambda}}^{\ac}}(\frac
{H^{1}_{\ord}(L,T_{g}\otimes_{\BZ_{p}}\Lambda_{L,\cO_{\lambda}}^{\ac})}{\Lambda_{L,\cO_{\lambda}}^{\ac}\cdot\kappa_{g}^{\iota}}).
$$
in $\Lambda_{L,\cO_{\lambda}}^{\ac}\otimes \BQ_{p}$, and even in 
$\Lambda_{L,\cO_{\lambda}}^{\ac}$ if the follows holds: 
\begin{equation}\label{sur}\tag{sur}
\text{The image of $\rho:G_{\BQ}\ra  \GL_{2}(\cO_{\lambda})$ equals the subgroup of matrices with determinant in $\BZ_{p}^{\times}\subset \cO_{\lambda}^\times$.}
\end{equation}
\end{thm}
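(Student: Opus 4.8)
The plan is to deduce Theorem~\ref{HMC-ub} from the divisibility in the anticyclotomic Greenberg main conjecture (Theorem~\ref{GMC_r}) via the chain of equivalences recorded in Propositions~\ref{Eq} and~\ref{Eq-He}, which translate between the zeta-element, Greenberg, and Heegner-point formulations. Concretely, the hypotheses \eqref{ord}, \eqref{coprime}, \eqref{Heeg}, and $({\rm van}_L)$ put us in the situation where the $p$-adic Waldspurger formula of Bertolini--Darmon--Prasanna, viewed as an explicit reciprocity law for the Iwasawa-theoretic Heegner class $\kappa_g$ (cf.~\cite[Thm.~5.2]{BCK}), identifies the characteristic ideal of $H^1_{\ord}(L,T_g\otimes_{\BZ_p}\Lambda_{L,\cO_\lambda}^{\ac})/\Lambda_{L,\cO_\lambda}^{\ac}\cdot\kappa_g$ (together with its conjugate under $\iota$) with the anticyclotomic projection of the Greenberg $p$-adic $L$-function, up to the torsion term $X(g)_{\tor}$. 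This is exactly the content of Proposition~\ref{Eq-He}: a one-sided divisibility in the anticyclotomic Greenberg main conjecture is equivalent to the analogous divisibility in the Heegner main conjecture.

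First I would verify that the Heegner hypothesis \eqref{Heeg} forces the sign condition \eqref{spl} of Theorem~\ref{GMC_r} to be vacuously compatible — indeed, under \eqref{Heeg} one has $N = N^+$, so $N^- = 1$ and the relevant divisibility input is supplied instead by the complementary regime; here one must instead invoke the ordinary case of the Greenberg divisibility coming from \cite{W1}, or, alternatively, run the argument through Kato's main conjecture: by Theorem~\ref{KaMC_r}(a),(b) and Proposition~\ref{Eq}, a one-sided divisibility in Kato's main conjecture for $g$ and $g' = g\otimes\chi_L$ yields the divisibility $\mathcal{L}_p^{\rm Gr}(g_{/L})\mid \xi(X_{\rm Gr}^{\ur}(g_{/L}))$ in the appropriate localized Iwasawa algebra. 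Next, I would specialize along the anticyclotomic line: since $({\rm van}_L)$ holds and the pertinent cohomology group $H^1_{{\rm rel},\ord}(L,T\otimes\Lambda_L^{\ac})$ has rank one (Theorem~\ref{cycIwL}), the exact sequences \eqref{exvc1}--\eqref{exv} of Proposition~\ref{Eq} descend to the anticyclotomic quotient $\Lambda_{L,\cO_\lambda}^{\ac}$, converting the Greenberg-side divisibility into a divisibility for $X_{\st,\ord}^{\ac}(g_{/L})$ and its Heegner-point analogue. Finally, I would apply Proposition~\ref{Eq-He} to transport this to the statement about $\kappa_g$ and $\kappa_g^\iota$, obtaining the asserted divisibility in $\Lambda_{L,\cO_\lambda}^{\ac}\otimes\BQ_p$.

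To upgrade from $\Lambda_{L,\cO_\lambda}^{\ac}\otimes\BQ_p$ to $\Lambda_{L,\cO_\lambda}^{\ac}$ under the surjectivity hypothesis \eqref{sur}, I would argue that \eqref{sur} implies $({\rm irr}_L)$ as well as the integrality refinements in Theorem~\ref{KaMC_r}(c) and Corollary~\ref{GKSU}(b): with big image one controls the Iwasawa cohomology integrally (the module $H^1_{\ord}(L,T_g\otimes\Lambda_{L,\cO_\lambda}^{\ac})$ is free over $\Lambda_{L,\cO_\lambda}^{\ac}$ by Theorem~\ref{cycIwL}(b)), so no spurious $p$-power denominators can enter. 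One then checks that the connecting maps in the Poitou--Tate sequences \eqref{exvc1}--\eqref{exv} have cokernels with trivial characteristic ideal, which is where \eqref{sur} is used in earnest — it guarantees that the relevant local cohomology groups and the torsion submodule $X(g)_{\tor}$ are accounted for exactly rather than up to $\BQ_p$-isogeny.

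The main obstacle I anticipate is the handling of the torsion term $X(g)_{\tor}$ and the precise bookkeeping of the local error terms in passing between the three formulations under \eqref{Heeg}: Proposition~\ref{Eq} is stated with the non-vanishing hypothesis \eqref{nv} on the localizations $\loc_v$ and $\loc_{\bar v}$ of the cyclotomic zeta element, and in the anticyclotomic $\epsilon(E_{/L}) = -1$ regime relevant to Heegner points one must instead supply the rank-one-ness of $H^1_{{\rm rel},\ord}(L,T\otimes\Lambda_L^{\ac})$ from the existence of the Heegner Euler system (as noted in the remark following Proposition~\ref{Eq}). Ensuring that the Heegner-class divisibility produced this way matches the normalization of $\kappa_g$ used in Conjecture~\ref{HMC} — in particular that the factor of $X(g)_{\tor}$ appears with the correct multiplicity on the correct side — will require a careful comparison of the $p$-adic Waldspurger formula with the second explicit reciprocity law of Section~\ref{two-variable-zeta}, and this is the step where I would expect the bulk of the technical work to lie.
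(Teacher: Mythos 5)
There is a genuine gap here, and it is a matter of the \emph{direction} of the divisibility. Theorem \ref{HMC-ub} is the Euler-system (``Selmer bounded above by the Heegner index'') divisibility $\xi(X(g)_{\tor})\mid \xi(H^1_{\ord}/\Lambda^{\ac}_{L,\cO_\lambda}\kappa_g)\cdot\xi(H^1_{\ord}/\Lambda^{\ac}_{L,\cO_\lambda}\kappa_g^{\iota})$, which under the dictionary of Proposition \ref{Eq-He} corresponds to $\xi(X_{\rm Gr}^{\ac}(g_{/L}))\mid(\mathcal{L}_p^{{\rm Gr},\ac}(g_{/L}))$. The input you propose to import, Theorem \ref{GMC_r} (the Eisenstein-congruence divisibility of \cite{W1,CLW}), runs the other way: it gives $\mathcal{L}_p^{\rm Gr}(g_{/L})\mid\xi(X_{\rm Gr}^{\ur}(g_{/L}))$, i.e.\ the lower bound on the Selmer group, which under Proposition \ref{Eq-He} translates into the \emph{opposite} Heegner-side divisibility. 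Worse, the way the paper actually uses Proposition \ref{Eq-He} is to deduce $\xi(X_{\rm Gr}^{\ac})\mid(\mathcal{L}_p^{{\rm Gr},\ac})$ \emph{from} Theorem \ref{HMC-ub} (this is exactly the hypothesis \eqref{HMC_ub} in Proposition \ref{HMC-prop}), so your argument would be circular even if the sign issue were resolved. Your fallback via Kato does not repair this: Kato's Euler system only bounds the cyclotomic-line Selmer group, and via Proposition \ref{Eq} (under \eqref{nv}) yields Corollary \ref{GKSU}, i.e.\ $\xi(X_{\rm Gr}^{\cyc,\ur})\mid(\mathcal{L}_p^{{\rm Gr},\cyc})$; there is no mechanism in the paper to pass from the cyclotomic specialisation to the anticyclotomic one, since these are independent lines in $\Lambda_L$, and the two-variable Euler-system upper bound that would allow such a descent is precisely what this paper does not establish (it is deferred to the companion work on Euler systems over imaginary quadratic fields). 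Note also that under \eqref{Heeg} one has $N^-=1$, so the hypothesis \eqref{spl} of Theorem \ref{GMC_r} fails and that theorem does not even apply in the Heegner regime.

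The paper's own treatment of Theorem \ref{HMC-ub} is a citation, not a derivation from zeta elements or main-conjecture equivalences: the divisibility is the upper bound coming from the Heegner-point Euler/Kolyvagin system, due to Howard \cite{Ho,Ho1} integrally under \eqref{sur} (with the observation, as in \cite{CGLS}, that the hypothesis $p\nmid h_L$ can be removed), and to \cite[Thm.~5.5.1]{CGS} for the rational version in $\Lambda^{\ac}_{L,\cO_\lambda}\otimes\BQ_p$ under the weaker hypothesis $({\rm van}_L)$. Relatedly, your account of where \eqref{sur} enters is off: it is used in Howard's Kolyvagin-system machinery to control the error terms of the derived classes (big image of Galois), not to guarantee integrality of $H^1_{\ord}(L,T_g\otimes\Lambda^{\ac}_{L,\cO_\lambda})$ or triviality of cokernels in the Poitou--Tate sequences \eqref{exvc1}--\eqref{exv}. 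If you want a proof rather than a citation, the argument you need is the anticyclotomic Euler system bound itself, not a transfer from the Greenberg or Kato divisibilities available in this paper.
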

While the integral divisibility\footnote{In \cite{Ho,Ho1} it is assumed that $p\nmid h_{L}$, however as explained in \cite[\S4]{CGLS} the hypothesis is inessential.} is due to Howard \cite{Ho,Ho1}, the rational version is more recent \cite[Thm.~5.5.1]{CGS}.

\section{Kobayashi's main conjecture}\label{s:Kob} 
The aim of this section is to prove Conjecture \ref{KatopLss} in the semistable case. 
\subsection{Main results}
\subsubsection{}
\begin{thm}\label{KoMC_r}
Let $g \in S_{2}(\Gamma_{0}(N))$ be an elliptic newform with $N$ square-free, and $p\nmid 2N$ a prime of supersingular reduction. Then Kobayashi's main Conjecture \ref{KatopLss} is true, that is, 
$$
(\mathcal{L}_{p}^{\circ}(g))
=\xi(X_{\circ}(g))
$$
for $\circ\in\{+,-\}$.
Moreover, the same holds for $g_{K}:=g\otimes \chi_K$ for any quadratic field extension $K/\BQ$ with discriminant coprime to $Np$.
\end{thm}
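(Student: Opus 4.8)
The plan is to deduce Theorem \ref{KoMC_r} by combining the zeta element machinery of Part I with the known one-sided divisibilities from Section 9, exactly along the lines sketched in the introduction (\S\ref{ss:Kob}). I will work with the elliptic newform $g$ (later the twist $g_K$ is handled identically), and I will always verify the needed hypotheses on the auxiliary imaginary quadratic field $L$ as I go.

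First I would choose the auxiliary imaginary quadratic field $L$ so that: $p$ splits in $L$ (i.e. \eqref{ord} holds), $(D_L,N)=1$ (i.e. \eqref{coprime} holds), the hypothesis $({\rm irr}_L)$ holds—automatic since $p$ is supersingular and $p>2$, as noted after \eqref{h_irr}—and, writing $N = N^+N^-$ with $N^+$ the product of primes split in $L$, the condition \eqref{spl} holds, namely there is a prime $q\mid N^-$ and $2\mid N^-$ if $2$ is inert in $L$. Since $N$ is square-free this is a congruence condition on $D_L$ satisfiable by infinitely many $L$; I would also arrange that the non-vanishing hypothesis \eqref{nv} for the two-variable zeta element $\CZ^\circ(g_{/L})$ holds. (The cyclotomic localizations of $\CZ^\circ(g_{/L})$ are non-torsion: by Proposition \ref{Col-cycsp-prop-ss}/\ref{ERLIint-prop-ss} the localization at $v$ recovers, up to non-zero constants, the cyclotomic $p$-adic $L$-function $\CL^\circ_{\omega,\gamma,\gamma'}(g/L)$, which is non-zero by Remark \ref{NVcyc}; and the localization at $\bar v$ recovers $\CL_p^{\rm Gr}(g_{/L})$ via the second explicit reciprocity law of Theorem \ref{thmZ}, which is non-zero because $\sL_v^{BDP}(g/L)\ne 0$ via Proposition \ref{GRL=BDPL-ss} and the general interpolation, or more directly because it interpolates non-zero $L$-values by Theorem \ref{ERLIIint-thm-ss}.)

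Next, with such an $L$ fixed, Theorem \ref{GMC_r} gives the divisibility $\CL_p^{\rm Gr}(g_{/L}) \mid \xi(X_{\rm Gr}^{\rm ur}(g_{/L}))$ in $\Lambda_{L,\cO_\lambda}^{\ur}\otimes_{\Lambda_{L,\cO_\lambda}^{\cyc,\ur}}\Frac(\Lambda_{L,\cO_\lambda}^{\cyc,\ur})$; I would first pass to the cyclotomic projection to get the one-variable statement $\CL_p^{\rm Gr,\cyc}(g_{/L}) \mid \xi(X_{\rm Gr}^{\cyc,\ur}(g_{/L}))$. By Proposition \ref{Eq} (the supersingular case, using \eqref{nv} and $({\rm van}_L)$, which follows from $({\rm irr}_L)$), this one-sided divisibility in the cyclotomic Greenberg main conjecture transfers to the analogous divisibility in Conjecture \ref{LLZ} for the zeta element, and then to Conjecture \ref{cycBCss} (the cyclotomic main conjecture with the signed $p$-adic $L$-functions $\CL^\circ_{\gamma,\gamma'}(g_{/L})$ for $g$ over $L$): that is, $\CL^\circ_{\gamma,\gamma'}(g_{/L}) \mid \xi(X_\circ(g_{/L}))$ in $\Lambda_{L,\cO_\lambda}^{\cyc}$. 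I would then use the factorization of the signed $p$-adic $L$-function over $L$ as a product $\CL^\circ_\gamma(g)\cdot\CL^\circ_{\gamma'}(g')$ (from \S\ref{sspLcy}, the supersingular analog of \S\ref{padicL-I}) together with the Selmer-group factorization $\xi(X_\circ(g_{/L})) = \xi(X_\circ(g))\,\xi(X_\circ(g'))$ for the cyclotomic twist (the supersingular analog of \eqref{sel-fac}, as in \cite{SU,Ko}), to obtain $\CL^\circ_p(g)\,\CL^\circ_p(g^L) \mid \xi(X_\circ(g))\,\xi(X_\circ(g^L))$ in $\Lambda_{\cO_\lambda}$, where $g^L = g\otimes\chi_L$ and $\chi_L$ is the quadratic character of $L/\BQ$.

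Finally, for each of $g' \in \{g, g^L\}$, the work of Kato as extended by Kobayashi (\cite{Ko}, cited in the excerpt, the supersingular analog of Theorem \ref{KaMC_r}(a),(b)) gives the reverse divisibility $\xi(X_\circ(g')) \mid \CL^\circ_p(g')$ in $\Lambda_{\cO_\lambda}$; here I need $({\rm im})$, which holds automatically since $\bar\rho$ is irreducible at $p$ in the supersingular case (indeed even $\bar T$ is $G_{\BQ_p}$-irreducible, cf.~\cite[Thm.~2.6]{E}). Multiplying these two reverse divisibilities and comparing with the forward divisibility from the previous paragraph forces all inequalities to be equalities: $\xi(X_\circ(g'))= (\CL^\circ_p(g'))$ for each $g'\in\{g,g^L\}$, and in particular for $g$ itself. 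The twist $g_K$ is handled by the same argument, choosing $L$ additionally so that $L\ne \BQ(\sqrt{-D_K\cdot(\text{anything})})$-type obstructions are avoided and so that $g_K$ satisfies \eqref{spl} relative to $L$; the only new input is the extension of Theorem \ref{GMC_r} to non-square-free conductor, which is already asserted in that theorem's statement. The main obstacle I anticipate is bookkeeping: ensuring that a single field $L$ simultaneously satisfies \eqref{ord}, \eqref{coprime}, $({\rm irr}_L)$, \eqref{spl}, and the non-vanishing \eqref{nv}, and then checking compatibility of all the integral normalizations (the cuspidal congruence numbers $c_g$, the periods, the choice of $\gamma = \gamma_g$) across the chain of equivalences in Proposition \ref{Eq} so that the final equality of ideals is genuinely integral in $\Lambda_{\cO_\lambda}$ rather than just up to $\BQ_p$—this is where $({\rm irr}_L)$ and the supersingular irreducibility are used crucially.
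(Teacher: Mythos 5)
There is a genuine gap, and it sits exactly where the theorem's hypotheses (squarefree $N$, hence a Ribet prime) are actually needed. The divisibility you import from Theorem \ref{GMC_r} holds only in $\Lambda_{L,\cO_\lambda}^{\ur}\otimes_{\Lambda_{L,\cO_\lambda}^{\cyc,\ur}}\Frac(\Lambda_{L,\cO_\lambda}^{\cyc,\ur})$, i.e.\ up to denominators supported on the height-one primes coming from the cyclotomic variable (including $p$). You cannot ``pass to the cyclotomic projection'' of such a statement: reducing that localized ring modulo $\gamma_{\ac}-1$ lands in $\Frac(\Lambda^{\cyc})$, where every divisibility is vacuous, so no one-variable statement $\CL_p^{\rm Gr,\cyc}(g_{/L})\mid\xi(X_{\rm Gr}^{\cyc,\ur}(g_{/L}))$ follows, let alone an integral one. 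The paper's proof removes these denominators \emph{before} descending: it uses semistability and irreducibility (Ribet) to produce a prime $q\,\|\,N$ with \eqref{ram}, chooses $L$ so that \eqref{def} holds (e.g.\ $q$ inert, all other primes of $N$ split), and invokes the vanishing of the anticyclotomic $\mu$-invariant (Pollack--Weston) together with the $p$-indivisibility of the Tamagawa numbers guaranteed by \eqref{ram} to upgrade the rational two-variable divisibility to the integral two-variable divisibility of Theorem \ref{KoMC'_lb}; only then does it descend to the cyclotomic line, and not by specializing characteristic ideals but via the control theorem of Proposition \ref{ctl_st}. Your choice of $L$ (only \eqref{spl}) does not secure any of this, and you never use the squarefree hypothesis for its real purpose.

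A second, related problem is your claim to ``arrange'' \eqref{nv}. Your cyclotomic route through Proposition \ref{Eq} with $\cdot=\cyc$ needs the non-torsionness of $\loc_{\ov v}$ of the cyclotomic projection, i.e.\ the non-vanishing of $\CL_p^{\rm Gr,\cyc}(g_{/L})$ --- which the paper explicitly flags as an open problem (it interpolates no classical $L$-values); the non-vanishing of the full two-variable $\sL_p^{Gr}$ or of $\sL_v^{BDP}$ does not give it, and the BDP input requires \eqref{Heeg}, which is incompatible with the nonsplit prime forced by \eqref{spl}/\eqref{def}. The paper's proof avoids \eqref{nv} altogether by working with the two-variable equivalences (where \eqref{nv} is not a hypothesis) inside the proof of Theorem \ref{KoMC'_lb}. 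Finally, your justification of \eqref{im} by local irreducibility at $p$ is not adequate: the Euler-system hypothesis requires a suitable global element (fixing the relevant cyclotomic tower) with free rank-one quotient, and the paper obtains it from \eqref{ram} (Remark \ref{ram-im}), again via the Ribet prime. So the correct skeleton is: Ribet prime $q$ with \eqref{ram}; choose $L$ with \eqref{ord}, $(D_L,2N)=1$ and \eqref{def}; apply Theorem \ref{KoMC'_lb} (integral, two variables); descend by Proposition \ref{ctl_st}; factor $\CL_p^{\circ,\cyc}(g_{/L})=\CL^\circ_\gamma(g)\CL^\circ_{\gamma'}(g')$; and close with Theorem \ref{KoMC_ub}(b).
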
 
Theorem \ref{KoMC_r} is proved is subsection \ref{ss:KMC_pf}.
\subsubsection{$p$-part of the BSD formula}
As first observed by Kobayshi~\cite{Ko,Ko1}, his main conjecture has the following
application to the $p$-part of the conjectural BSD formula: 
\begin{cor}\label{BSD_f_Ko}
Let $g \in S_{2}(\Gamma_{0}(N))$ be an elliptic newform with $N$ square-free, and $F$ the Hecke field with degree $d$.
Let $A_{g}$ be an associated $\GL_2$-type abelian variety over $\BQ$.
Let $p\nmid 2N$ be a prime so that $a_{p}(g)=0$. 
If $\ord_{s=1}L(s,g)=r\leq 1$, then the $p$-part of the Birch and Swinnerton-Dyer conjecture for $A_{g}$ is true, that is, $\rank_{\BZ}A_{g}(\BQ)=rd$, $\Sha(A_{g})[p^{\infty}]$ is finite and 
 $$
\bigg{|} \frac{L^{(rd)}(1,A_{g})}{d!\cdot \Omega_{A_{g}}R(A_{g})}
\bigg{|}^{-1}_{p}
=
\big{|}
\# \Sha(A_{g})[p^{\infty}] \cdot \prod_{\ell | N} c_{\ell}(A_{g})
\big{|}^{-1}_{p}.
$$ 
Moreover, the same holds for $g\otimes \chi_K$ for quadratic field extensions $K/\BQ$ with discriminant coprime to $Np$.
\end{cor}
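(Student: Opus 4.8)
The plan is to deduce Corollary~\ref{BSD_f_Ko} from Theorem~\ref{KoMC_r} together with the $p$-adic Gross--Zagier formula, following the strategy pioneered by Kobayashi~\cite{Ko,Ko1} in the elliptic curve setting. First I would reduce the statement about $A_g$ to a statement about the newform $g$ and its cyclotomic Selmer groups: by the compatibility of $L$-functions $L(s,A_g)=\prod_{\sigma}L(s,g^\sigma)$, the hypothesis $\ord_{s=1}L(s,g)=r$ with $r\le 1$ together with the rank part of the BSD conjecture (known by Gross--Zagier, Kolyvagin, and their extensions to $\mathrm{GL}_2$-type abelian varieties) gives $\rank_{\BZ}A_g(\BQ)=rd$ and the finiteness of $\Sha(A_g)$. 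It then remains to match $p$-adic valuations of the two sides of the BSD formula, and since $p\nmid 2N$ and $N$ is square-free with $A_g$ semistable, the Tamagawa factors $c_\ell(A_g)$ are $p$-adic units for $\ell\mid N$, so the formula simplifies to comparing $|L^{(rd)}(1,A_g)/(d!\,\Omega_{A_g}R(A_g))|_p^{-1}$ with $|\#\Sha(A_g)[p^\infty]|_p^{-1}$.

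Next I would translate this into Iwasawa theory. Using the short exact sequence relating $\Sel_{\lambda^\infty}(A_{/\BQ})$, $A(\BQ)\otimes F_\lambda/\cO_\lambda$ and $\Sha(A)[\lambda^\infty]$, the $p$-part of the BSD formula for $A_g$ is equivalent (summing over $\lambda\mid p$) to the assertion that the $\lambda$-adic valuation of the algebraic part of $L^{(r)}(1,g)$ equals the length of $\Sha(A_g)[\lambda^\infty]$ plus regulator contributions. The signed main conjecture (Theorem~\ref{KoMC_r}) gives $(\CL_p^\circ(g))=\xi(X_\circ(g))$ as ideals in $\Lambda_{\cO_\lambda}$ for $\circ\in\{+,-\}$. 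Evaluating the characteristic ideal at the trivial character $\gamma_\cyc\mapsto 1$ and comparing with the interpolation formula for $\CL_p^\circ(g)$ (Theorem~\ref{pcycQss} with $t=0$, where $e_p^+(\zeta)=2$ and $e_p^-(\zeta)=p-1$, both $p$-adic units) yields, via a control theorem for the signed Selmer groups at the trivial character, the exact $\lambda$-adic formula in the rank zero case $r=0$. Here one must also verify that the Selmer group $X_\circ(g)$ has no nonzero finite $\Lambda$-submodule, or argue around this, so that the evaluation of the characteristic ideal genuinely computes $\#\Sel_{\lambda^\infty}(A_{/\BQ})$; this is a standard but necessary technical point, handled as in Kobayashi's original work.

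For the rank one case $r=1$ the additional input is the $p$-adic Gross--Zagier formula of Kobayashi~\cite{Ko1}, which expresses the derivative of a signed $p$-adic $L$-function at the trivial character in terms of the $p$-adic height (or formal group logarithm) of a Heegner point. Combining the main conjecture equality, the $p$-adic Gross--Zagier formula, and the Gross--Zagier--Zhang formula relating the Heegner point height to $L'(1,g)$, one obtains the desired identity of $\lambda$-adic valuations, again after the control-theorem step that identifies the value of $\xi(X_\circ(g))$ at the trivial character with $\#\Sel_{\lambda^\infty}(A_{/\BQ})$ up to the regulator. The twisted case $g\otimes\chi_K$ is immediate: Theorem~\ref{KoMC_r} already includes quadratic twists by characters of conductor prime to $Np$, and the reduction type at $p$ is unchanged since $\disc(K)$ is coprime to $p$, so the same argument applies verbatim.

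The main obstacle I anticipate is the interface between the Iwasawa-theoretic statement and the classical BSD quantities in the supersingular case: namely, establishing the precise control theorem for the signed Selmer groups $X_\circ(g)$ specialized at the trivial character and matching this with $\Sel_{\lambda^\infty}(A_{/\BQ})$, including the correct bookkeeping of the local conditions at $p$ (where $H^1_\circ$ differs from the Bloch--Kato condition) and ruling out extraneous finite submodules. In the rank one case the extra delicacy is keeping track of the precise Euler-type factors $(1-1/\alpha)$, $(1-1/\beta)$ appearing in the $p$-adic Gross--Zagier formula and confirming they are $p$-adic units when $a_p(g)=0$ (which holds since then $\alpha,\beta=\pm\sqrt{-p}$ have positive valuation, so $1-1/\alpha$ is a unit). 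These are exactly the points worked out by Kobayashi for elliptic curves, and the argument carries over to weight two newforms with the semistable hypothesis ensuring the Tamagawa numbers cause no trouble; I would cite \cite{Ko,Ko1} for the structure of this deduction and indicate the (routine) modifications needed for $\mathrm{GL}_2$-type abelian varieties.
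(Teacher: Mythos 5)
Your overall architecture coincides with the paper's proof of Corollary~\ref{BSD_f_Ko}: reduce to the $\lambda$-part of the formula for each $\lambda\mid p$, feed Theorem~\ref{KoMC_r} through Lemma~\ref{KatoEq} to obtain Kato's/Kobayashi's main conjecture as an equality, settle $r=0$ by descent (the paper cites Kato \cite[\S14.20]{K}, and the remark after the corollary notes the variant you propose, namely direct descent of the signed main conjecture at the trivial character), settle $r=1$ by combining the main conjecture with Kobayashi's $p$-adic Gross--Zagier formula \cite{Ko1,Ko2} (see also \cite[Cor.~A.5]{BKO2}), and get the twisted case for free because Theorem~\ref{KoMC_r} already covers $g\otimes\chi_K$. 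The rank and $\Sha$-finiteness input from Gross--Zagier--Kolyvagin is also as in the paper.

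There is, however, one concrete false step. You claim that since $p\nmid 2N$ and $N$ is square-free, the Tamagawa numbers $c_\ell(A_g)$, $\ell\mid N$, are $p$-adic units, and you use this to ``simplify'' the target identity to a comparison of the $L$-value with $\#\Sha(A_g)[p^\infty]$ alone. Semistability at $\ell$ imposes no constraint on $p\mid c_\ell(A_g)$: for split multiplicative reduction of an elliptic curve one has $c_\ell=\ord_\ell(\Delta)$, which can perfectly well be divisible by a supersingular prime $p\nmid 2N$. Indeed, one advertised feature of this circle of results, stressed in the paper's comparison with \cite{Zh} and \cite{BBV}, is precisely that $p$ is allowed to divide Tamagawa numbers; these factors cannot be discarded, since in the descent from the main conjecture they arise from the local conditions at $\ell\mid N$ and must be matched against the product $\prod_{\ell\mid N}c_\ell(A_g)$ appearing in the corollary. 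A related slip: your reason that $1-1/\alpha$ is a $p$-adic unit when $a_p(g)=0$ is wrong ($\alpha=\pm\sqrt{-p}$ gives $1/\alpha$ valuation $-\tfrac12$, hence $1-1/\alpha$ is not a unit); the unit constants actually used at the trivial character in the signed theory are $e_p^+(1)=2$ and $e_p^-(1)=p-1$ from Theorem~\ref{pcycQss}. Neither point destroys the strategy --- the correct bookkeeping is exactly what the cited deductions of Kato and Kobayashi carry out --- but as written your simplified identity is not the statement of the corollary, so the simplification must be removed and the Tamagawa contributions tracked through the descent.
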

\begin{proof} 
Note that $$L(s,A_{g})=\prod_{\sigma: F \hookrightarrow \BC}L(s,g^{\sigma}).$$ 
Since the BSD conjecture is isogeny invariant, we may assume that $\cO_{F}\hookrightarrow \End{A_{g}}$. 
It suffices to consider the $\lambda$-part of the BSD formula for any prime $\lambda$ of $F$ above $p$. Choose an embedding $\iota_{p}:\ov{\BQ}\hookrightarrow \ov{\BQ}_{p}$ so that it induces the place $\lambda$.

In view of Theorem~\ref{KoMC_r}
 Corollary \ref{KatoEq}, Kato's main Conjecture \ref{Kato} is true. 
In the $r=0$ case  
it implies the $\lambda$-part of the BSD conjecture for $A_{g}$ over $\BQ$, as observed by Kato  
(cf.~\cite[\S14.20]{K},~see also the proof of \cite[Thm.~3.6.13]{SU} and \cite[\S7.2]{JSW}). 
In the $r=1$ case $\rank_{\BZ} A_{g}(\BQ)=rd$ and $\Sha(A)[\lambda^{\infty}]$ is finite by the Gross--Zagier and Kolyvagin theorem (alternatively see Theorems \ref{MWrk} and \ref{boundSha}).
In the $r=1$ non-ordinary case, 
Kato's main conjecture in combination with the $\lambda$-adic Gross--Zagier formula implies the $\lambda$-part of the Birch and Swinnerton-Dyer formula, 
as observed by Kobayashi
(cf.~\cite{Ko1},~\cite{Ko2},~\cite[Cor.~A.5]{BKO2}). 
\end{proof}
\begin{remark}\noindent
\begin{itemize}
\item[(i)] The rank zero BSD formula also follows by  descent of Kobayashi's main Conjecture \ref{KatopLss}. 
\item[(ii)] As for the rank one case, Jetchev--Skinner--Wan \cite{JSW} have proposed a different approach. 
It is based on the $p$-adic Waldspurger formula \cite{BDP1}, yet 
relies on the $r=0$ case of Corollary \ref{BSD_f_Ko}. Indeed their approach first proves the rank one BSD formula over an imaginary quadratic field, and relies on the $r=0$ case of Corollary \ref{BSD_f_Ko} to isolate the desired rank one formula over $\BQ$. 
\item[(iii)] If $g$ has CM and $r=0$, then the full Birch and Swinnerton-Dyer conjecture is recently proved ~\cite{BF}, building on the work of Rubin \cite{Ru}.
\end{itemize}
\end{remark}

\subsection{Elements of the proof}
\subsubsection{An Euler system divisibility}
\begin{thm}\label{KoMC_ub}
Let $g \in S_{2}(\Gamma_{0}(N))$ be an elliptic newform, and $p\nmid 2N$ a prime of supersingular reduction. 
\begin{itemize}
\item[(a)] $X_{\circ}(g)$ is $\Lambda_{\cO_{\lambda}}$-torsion for $\circ\in\{+,-\}$. 
\item[(b)]  We have a divisibility of ideals 
$$
\xi(X_{\circ}(g)) | (\CL^{\circ}_{p}(g))
$$
 in $\Lambda_{\cO_{\lambda}}\otimes_{\BZ_{p}} \BQ_{p}$ and even in $\Lambda_{\cO_{\lambda}}$ under the condition \eqref{im}.
\end{itemize}
\end{thm}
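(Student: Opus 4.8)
The plan is to deduce Theorem~\ref{KoMC_ub} from Kato's Euler system bound on the strict Selmer group (Theorem~\ref{KaMC_r}), the local theory of the signed Coleman maps, and a Poitou--Tate duality argument; this is in essence the argument of Kobayashi, adapted to weight two newforms by Lei. First I would record the inputs. Since $p$ is supersingular, $\ov{T}$ is an irreducible $k_\lambda[G_{\BQ_p}]$-module, so (irr$_\BQ$) and hence (Van$_\BQ$) hold; thus by Theorem~\ref{cycIwQ}(b), $H^1(\BZ[\tfrac1p], T(1)\otimes_{\BZ_p}\Lambda)$ is a free $\Lambda_{\cO_\lambda}$-module of rank one. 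Fix the good differential $\omega=\omega_g/c_g$ and the optimal class $\gamma=\gamma_g$ as in Lemma~\ref{GorPer}, and set $\bz=\bz_{\gamma_g}(g)$; by property~(iii) of the Beilinson--Kato elements $\bz\in H^1(\BZ[\tfrac1p],T(1)\otimes_{\BZ_p}\Lambda)$, and by Theorem~\ref{ERLBKI-ss} one has $Col^\circ_\omega(\loc_p(\bz))=\mathcal{L}^\circ_{\omega,\gamma_g}(g)=\CL^\circ_p(g)$, a nonzero element of $\Lambda_{\cO_\lambda}$ (Remark~\ref{NV-rmk2}, using that $\omega$ is good and $\gamma\in T$). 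Recall from \S\ref{Coleman} (i.e.~\eqref{pmCol-Inj}) that $\ker(Col^\circ_\omega)=H^1_\circ(\BQ_p,T(1)\otimes_{\BZ_p}\Lambda)$ and that, $\omega$ being good, $Col^\circ_\omega$ carries $H^1_{/\circ}(\BQ_p,T(1)\otimes_{\BZ_p}\Lambda)$ isomorphically onto a finite-index $\Lambda_{\cO_\lambda}$-submodule $I^\circ\subseteq\Lambda_{\cO_\lambda}$.

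The second ingredient is the Euler system bound: by Theorem~\ref{KaMC_r}(a),(b), $X_{\st}(g)$ is a torsion $\Lambda_{\cO_\lambda}$-module and
$$
\xi(X_{\st}(g)) \ \Big|\ \xi\!\left(\frac{H^1(\BZ[\tfrac1p], T(1)\otimes_{\BZ_p}\Lambda)}{\Lambda_{\cO_\lambda}\cdot\bz}\right)
$$
in $\Lambda_{\cO_\lambda}\otimes_{\BZ_p}\BQ_p$, and integrally in $\Lambda_{\cO_\lambda}$ when \eqref{im} holds (in particular when \eqref{ram} holds, by Remark~\ref{ram-im}). The third ingredient is the Poitou--Tate global duality exact sequence of $\Lambda_{\cO_\lambda}$-modules
$$
0 \longrightarrow H^1(\BZ[\tfrac1p], T(1)\otimes_{\BZ_p}\Lambda) \xrightarrow{\ f\ } \frac{H^1(\BQ_p, T(1)\otimes_{\BZ_p}\Lambda)}{H^1_\circ(\BQ_p, T(1)\otimes_{\BZ_p}\Lambda)} \longrightarrow X_\circ(g) \longrightarrow X_{\st}(g) \longrightarrow 0,
$$
where $f=\mathrm{pr}_\circ\circ\loc_p$, obtained by applying global duality to the Selmer structure on $M=T(1)\otimes_{\BZ_p}\Lambda^\vee$ that is signed at $p$ and unramified at the other bad primes, together with its strict-at-$p$ substructure. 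The one delicate point is left-exactness: the kernel of $f$ is $H^1_\circ(\BZ[\tfrac1p],T(1)\otimes_{\BZ_p}\Lambda)=\ker(Col^\circ_\omega\circ\loc_p)$, which vanishes because $H^1(\BZ[\tfrac1p],T(1)\otimes_{\BZ_p}\Lambda)$ is torsion-free of rank one while $Col^\circ_\omega\circ\loc_p$ is nonzero, sending $\bz$ to $\CL^\circ_p(g)\neq0$.

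It then remains to assemble these. From the exact sequence, $\coker(f)$ is $\Lambda_{\cO_\lambda}$-torsion (source and target have rank one and $f$ is injective) and $X_{\st}(g)$ is torsion by Theorem~\ref{KaMC_r}(a), so $X_\circ(g)$ is $\Lambda_{\cO_\lambda}$-torsion; this gives~(a). For~(b), I would take characteristic ideals. Identifying the target of $f$ with $I^\circ$ via $Col^\circ_\omega$, one has the chain of finite-index submodules $\Lambda_{\cO_\lambda}\cdot\CL^\circ_p(g)\subseteq f\big(H^1(\BZ[\tfrac1p],T(1)\otimes_{\BZ_p}\Lambda)\big)\subseteq I^\circ\subseteq\Lambda_{\cO_\lambda}$, whence, using injectivity of $f$,
$$
(\CL^\circ_p(g)) = \xi(\Lambda_{\cO_\lambda}/I^\circ)\cdot\xi(\coker f)\cdot\xi\!\left(\frac{H^1(\BZ[\tfrac1p], T(1)\otimes_{\BZ_p}\Lambda)}{\Lambda_{\cO_\lambda}\cdot\bz}\right),
$$
while the exact sequence gives $\xi(X_\circ(g))=\xi(\coker f)\cdot\xi(X_{\st}(g))$. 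Combining these and using $\xi(X_{\st}(g))\mid\xi\big(H^1(\BZ[\tfrac1p],T(1)\otimes\Lambda)/\Lambda_{\cO_\lambda}\bz\big)$ from Theorem~\ref{KaMC_r}(b) yields $\xi(X_\circ(g))\mid(\CL^\circ_p(g))$, integrally under \eqref{im} and rationally in general. The main obstacle is not conceptual — both substantial inputs, Kato's Euler system and the Kobayashi--Pollack--Lei signed Coleman maps, are available — but organizational: one must set up the Poitou--Tate sequence with exactly the right local terms (invoking the conventions of Remark~\ref{Iw-rmk} for the relaxed-at-$p$ global group) and carry the possibly-nontrivial cokernel $\Lambda_{\cO_\lambda}/I^\circ$ through the characteristic-ideal bookkeeping, which is precisely why only a divisibility, rather than the equality of Theorem~\ref{KoMC_r}, is extracted at this stage.
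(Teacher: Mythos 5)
Your proposal is correct and is essentially the paper's own route: the paper proves Theorem~\ref{KoMC_ub} by citing Kobayashi and Lei and noting it is a consequence of Kato's Theorem~\ref{KaMC_r}, and your argument is exactly that deduction — (irr$_\BQ$) holds automatically at supersingular $p$, the Beilinson--Kato class paired with the signed Coleman map $Col^\circ_\omega$ produces $\CL^\circ_p(g)$, and the Poitou--Tate sequence $0\to H^1(\BZ[\tfrac1p],T(1)\otimes\Lambda)\to H^1_{/\circ}(\BQ_p,T(1)\otimes\Lambda)\to X_\circ(g)\to X_{\st}(g)\to 0$ together with characteristic-ideal multiplicativity transfers Kato's bound on $X_{\st}(g)$ to the stated divisibility, rationally in general and integrally under \eqref{im}. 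The bookkeeping with the finite-index image $I^\circ$ and the identification $\coker f\cong I^\circ/Col^\circ_\omega(f(H^1))$ is handled correctly, so no gaps.
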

This theorem is due to Kobayashi \cite[Theorems 1.2 and 1.3]{Ko} for elliptic curves, and Lei \cite[Prop.~6.4 and Rem.~6.10]{L}. It is a consequence of Kato's Theorem \ref{KaMC_r}.

\subsubsection{An Eisenstein congruence divisibility}
\begin{thm}\label{KoMC'_lb} 
Let $g\in S_{2}(\Gamma_{0}(N))$ be an elliptic newform with $N$ square-free and $p\nmid 2N$ a  prime. Let $L$ be an imaginary quadratic field satisfying $(D_{L},2N)=1$ and \eqref{ord}. In the ordinary case suppose that $({\rm irr}_{L})$ holds. 
Write $N=N^{+}N^{-}$ for $N^+$ precisely divisible by split primes in $L$. Suppose that  either
\begin{equation}\label{def}\tag{def}
\text{Each prime dividing $N^-$ satisfies \eqref{ram} and $\nu(N^-)$ is odd,}
\end{equation}
or 
\begin{equation}\label{indef}\tag{indef}
\text{Each prime dividing $N^{-}\neq 1$ satisfies \eqref{ram} and $\nu(N^-)$ is even.}
\end{equation}

Then for $\circ\in\{+,-,\emptyset\}$, one  has  
$$
\mathcal{L}_{p}^{\circ}(g_{/L})\big{|}
\xi(X_{\circ}(g_{/L}))
$$
 in $\Lambda_{L,\cO_{\lambda}}$. Moreover, the same holds for $g_{K}:=g\otimes \chi_K$ for any quadratic field extension $K/\BQ$ with discriminant coprime to $Np$.
\end{thm}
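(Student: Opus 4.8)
The plan is to deduce Theorem \ref{KoMC'_lb} by combining the cyclotomic Greenberg main conjecture divisibility over $L$ (Theorem \ref{GMC_r}) with the explicit reciprocity laws of the two-variable zeta element, through the equivalence of main conjectures established in Proposition \ref{Eq}. More precisely, the imaginary quadratic field $L$ has been chosen precisely so that the factorisation $N = N^+N^-$ with the running hypotheses \eqref{def} or \eqref{indef} places us in the setting of Theorem \ref{GMC_r}: in both cases there is a prime $q \mid N^-$ with $\bar\rho$ ramified, so \eqref{ram} holds for that prime, and the parity condition on $\nu(N^-)$ ensures the sign condition \eqref{spl} is met (in the \eqref{indef} case $N^- \neq 1$, and in the \eqref{def} case $\nu(N^-)$ odd forces a prime $q \mid N^-$; one also needs to check $2 \mid N^-$ when $2$ is inert in $L$, which is built into \eqref{spl} but may require a mild adjustment of $L$). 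First I would invoke Theorem \ref{GMC_r} to get the divisibility
$$
\mathcal{L}_{p}^{\rm{Gr}}(g_{/L})\big{|}\xi(X_{\rm{Gr}}^{\rm ur}(g_{/L}))
$$
in $\Lambda_{L,\cO_{\lambda}}^{\ur}\otimes_{\Lambda_{L,\cO_{\lambda}}^{\cyc,\ur}}\Frac(\Lambda_{L,\cO_{\lambda}}^{\cyc,\ur})$, and also for the twist $g_K$.

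Next I would transport this one-sided divisibility in the Greenberg main conjecture (Conjecture \ref{Greenberg}, in the two-variable case $\cdot = \emptyset$) to the analogous divisibility in Conjecture \ref{St}, i.e.\ the main conjecture with the standard $p$-adic $L$-function $\mathcal{L}_p^{\Box}(g_{/L})$ attached to the signed/ordinary Selmer group $X_\bullet(g_{/L})$, via Proposition \ref{Eq}. This requires the non-vanishing hypothesis \eqref{nv}: one must check that $\loc_v(\CZ^{\Box}(g_{/L}))$ and $\loc_{\bar v}(\CZ^{\Box}(g_{/L}))$ are non-torsion. For the two-variable zeta element this follows because the associated $p$-adic $L$-functions $\CL_p^{\Box}(g_{/L})$ and $\CL_p^{\rm Gr}(g_{/L})$ are non-zero (Remark after Theorem \ref{thmZ}, using Remark \ref{NVcyc} and the non-vanishing of the Katz/BDP $L$-function and Rohrlich's results), and the explicit reciprocity laws of Theorem \ref{thmZ} identify the localisations with these non-zero $L$-functions up to the regulator maps, which are injective. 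Then Proposition \ref{Eq} yields
$$
\mathcal{L}_{p}^{\Box}(g_{/L})\big{|}\xi(X_{\bullet}(g_{/L}))
$$
(here I am writing $\circ$ for $\Box$ in the notation of the theorem, including the choice $\circ = \emptyset$ in the ordinary case). The same argument applied to $g_K$ gives the twisted statement, using that $g_K$ is nearly ordinary/supersingular at $\lambda$ and that Theorem \ref{GMC_r} has been extended to the twist.

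The statement of Theorem \ref{KoMC'_lb} as written is over $\Lambda_{L,\cO_\lambda}$ (not just after tensoring with $\BQ_p$), so I would need the integral refinement. In the supersingular case this is automatic since $({\rm irr}_\BQ)$ holds and hence the zeta element and the relevant cohomology groups are $\Lambda_{L,\cO_\lambda}$-free (Theorem \ref{cycIwL}(b) under $({\rm van}_L)$, which follows from irreducibility); in the ordinary case one invokes the hypothesis $({\rm irr}_L)$ explicitly assumed in the theorem, which gives $({\rm van}_L)$ and hence the integral versions of Proposition \ref{Eq} and of Theorem \ref{GMC_r}. The reciprocity laws of Theorem \ref{thmZ} hold integrally under \eqref{h3} (which $({\rm irr}_L)$ or supersingularity implies), so all the divisibilities above are integral.

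The main obstacle I anticipate is the bookkeeping around the choice of $L$ and the precise matching of hypotheses: one must verify that \eqref{def}/\eqref{indef} together with $(D_L, 2N) = 1$ and \eqref{ord} genuinely produce a field $L$ to which Theorem \ref{GMC_r} applies — in particular the delicate point about $2$ in \eqref{spl} (when $2$ is inert in $L$ one needs $2 \mid N^-$, which is not among the stated hypotheses and so may need the $q$ in \eqref{ram} or an auxiliary modification), and that the Greenberg $p$-adic $L$-function $\CL_{p}^{\rm Gr}(g_{/L})$ constructed via the $\GU(3,1)$ Eisenstein congruence in \cite{W1,CLW} coincides (up to units) with the one appearing in the reciprocity law of Theorem \ref{thmZ}, which is what makes Proposition \ref{Eq} applicable with the correct $L$-function. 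Granting the identification of these two Greenberg $p$-adic $L$-functions — which is implicit in the formulation of the conjectures in \S\ref{ssIMC2} and in the proof of Theorem \ref{GMC_r} — the deduction is then a formal chaining of Theorem \ref{GMC_r}, the non-vanishing \eqref{nv}, and Proposition \ref{Eq}, carried out for both $g$ and $g_K$.
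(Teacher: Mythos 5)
Your first two steps match the paper: the divisibility of Theorem \ref{GMC_r} (for $g$ and for $g_K$) is fed through (the proof of) Proposition \ref{Eq} to obtain
$\mathcal{L}_{p}^{\circ}(g_{/L})\mid\xi(X_{\circ}(g_{/L}))$. But note carefully where this lands: Theorem \ref{GMC_r} is only proved in
$\Lambda_{L,\cO_{\lambda}}^{\ur}\otimes_{\Lambda_{L,\cO_{\lambda}}^{\cyc,\ur}}\Frac(\Lambda_{L,\cO_{\lambda}}^{\cyc,\ur})$, so after Proposition \ref{Eq} you only have the divisibility in
$\Lambda_{L,\cO_{\lambda}}\otimes_{\Lambda_{L,\cO_{\lambda}}^{\cyc}}\Frac(\Lambda_{L,\cO_{\lambda}}^{\cyc})$, i.e.\ up to height one primes extended from the cyclotomic line. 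This is where your argument has a genuine gap: you claim the integral statement over $\Lambda_{L,\cO_{\lambda}}$ follows from $({\rm irr}_{L})$ via freeness of the Iwasawa cohomology and integrality of the zeta element and reciprocity laws. That cannot work, because the possible denominators do not come from any lattice or integrality issue in the zeta-element formalism; they are built into the input divisibility of Theorem \ref{GMC_r} itself, and no amount of integrality of $\CZ^{\Box}(g_{/L})$ or of the Coleman/logarithm maps rules out that a height one prime of $\Lambda_{L,\cO_{\lambda}}^{\cyc}$ divides $\xi(X_{\circ}(g_{/L}))$ but not $\mathcal{L}_{p}^{\circ}(g_{/L})$ after clearing it from the other side.

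The missing idea, and the actual content of the hypotheses \eqref{def} and \eqref{indef}, is a $\mu$-invariant argument showing that $\mathcal{L}_{p}^{\circ}(g_{/L})$ is coprime to every height one prime extended from $\Lambda_{L,\cO_{\lambda}}^{\cyc}$, which upgrades the rational divisibility to one in $\Lambda_{L,\cO_{\lambda}}$. In the case \eqref{def} one has $\epsilon(g_{/L})=+1$, and Pollack--Weston gives $\mu(\CL^{\circ}_{\mathscr{W}}(g_{/L}))=0$ for the anticyclotomic $p$-adic $L$-function built from the Waldspurger formula on the definite quaternion algebra ramified at $N^{-}\infty$; comparing interpolation formulas, $(\CL^{\circ,\ac}_{p}(g_{/L}))=(\prod_{q\mid N^{-}}c_{q}(g)\cdot\CL^{\circ}_{\mathscr{W}}(g_{/L}))$, and the Tamagawa numbers $c_{q}(g)$ are $p$-units precisely because of \eqref{ram} at each $q\mid N^{-}$ — so $\mu(\CL^{\circ,\ac}_{p}(g_{/L}))=0$, whence no prime of $\Lambda_{L,\cO_{\lambda}}^{\cyc}$ (being $(\varpi)$ or a distinguished polynomial, whose anticyclotomic specialisation is divisible by $p$) can divide $\mathcal{L}_{p}^{\circ}(g_{/L})$. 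In the case \eqref{indef} the anticyclotomic specialisation vanishes, and one instead uses the vanishing of the $\mu$-invariant of the BDP $p$-adic $L$-function. Your proposal uses \eqref{ram} only to check the hypothesis \eqref{spl} of Theorem \ref{GMC_r} and treats the parity of $\nu(N^{-})$ as bookkeeping, whereas in the paper the \eqref{def}/\eqref{indef} dichotomy selects which $\mu=0$ theorem is available and \eqref{ram} is what makes the Tamagawa factors harmless; without this step the theorem as stated (integrally over $\Lambda_{L,\cO_{\lambda}}$) is not proved. Your side remark about the prime $2$ in \eqref{spl} is a minor hypothesis-matching point by comparison.
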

\begin{proof}
In the supersingular case $\bar{T}$ is an irreducible $k_\lambda[G_{\BQ_{p}}]$-module, and the hypothesis $({\rm irr}_{L})$ holds by \eqref{spl}. So the hypotheses of Theorem \ref{GMC_r}(a) are  satisfied, and we have the divisibility
$$
\mathcal{L}_{p}^{\rm{Gr}}(g_{/L})\big{|}
\xi(X_{\rm{Gr}}^{\rm ur}(g_{/L}))
$$
in $\Lambda_{L,\cO_{\lambda}}^{\ur}\otimes_{\Lambda_{L,\cO_{\lambda}}^{\cyc,\ur}} 
\Frac(\Lambda_{L,\cO_{\lambda}}^{\cyc,\ur})$. 
In turn, by the proof of Proposition \ref{Eq}, we have the divisibility 
\begin{equation}\label{tpm-div}
\mathcal{L}_{p}^{\circ}(g_{/L})\big{|}
\xi(X_{\circ}(g_{/L}))
\end{equation}
in $\Lambda_{L,\cO_{\lambda}}\otimes_{\Lambda_{L,\cO_{\lambda}}^{\cyc}} 
\Frac(\Lambda_{L,\cO_{\lambda}}^{\cyc})$. 

Suppose that \eqref{def} holds. Then $\epsilon(g_{/L})=+1$, and
$$
\mu(\CL^{\circ}_{\mathscr{W}}(g_{/L}))=0
$$
by \cite[Thm.~1.2]{PW}. 
Here $\CL^{\circ}_{\mathscr{W}}(g_{/L})$ is the $\circ$-anticyclotomic $p$-adic $L$-function whose construction is based on the Waldspurger formula on the definite quaternion algebra ramified at the primes dividing $N^-{}\infty$ (see also \cite{Va}). 
In view of the interpolation formulas for the underlying $p$-adic $L$-functions, it follows that
$$(\CL^{\circ,\ac}_{p}(g_{/L}))=(\prod_{q|N^{-}}c_q(g) \cdot \CL^{\circ}_{\mathscr{W}}(g_{/L})).$$ Note that the Tamagawa numbers $c_q(g)$ are $p$-indivisible under the hypothesis \eqref{ram}, 
and so
$$
\mu(\CL^{\circ}_{p}(g_{/L}))=0.
$$
Hence $\CL^{\circ}_{p}(g_{/L})$ is coprime to height one prime ideals of 
$\Lambda_{L,\cO_{\lambda}}^{\cyc}$, and the divisibility \eqref{tpm-div} holds in $\Lambda_L$.

In the same vein, the $\mu$-invariant of the BDP $p$-adic $L$-function vanishes \cite{Bu} under \eqref{indef}, and the argument in the preceding paragraph applies. 
\end{proof}
\begin{remark} 
The hypothesis \eqref{indef} may be generalised  as in \cite[Thm.~8.2.3(2)]{CLW}. 
\end{remark}
\subsubsection{A control theorem}
\begin{prop}\label{ctl_st}
Let $g \in S_{2}(\Gamma_{0}(N))$ be an elliptic newform and $p\nmid 2N$ 
a supersingular prime. Let $L$ be an imaginary quadratic field such that \eqref{ord} holds. 
Then we have
 $$X_{\circ}(g_{/L})/(\gamma_{\ac}-1)\simeq X^{\cyc}_{\circ}(g_{/L}).$$
 In particular $$\xi(X_{\circ}(g_{/L})) \mod(\gamma_{\ac}-1) \big{|} \xi(X_{\circ}(g))\cdot \xi(X_{\circ}(g\otimes \chi_{L})).$$ 
 Moreover, an analogous divisibility holds along the anticyclotomic tower. 
\end{prop}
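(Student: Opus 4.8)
\textbf{Proof plan for Proposition \ref{ctl_st}.}
The statement is a standard ``control theorem'' for the signed Selmer groups along the anticyclotomic (or, more generally, any $\BZ_p$) direction, and the plan is to run the usual Mazur-style snake-lemma argument, with the only delicate point being the behaviour of the signed local conditions at the prime $v\mid p$ under specialisation. First I would recall the defining exact sequence
$$
0\ra S_\circ^{\cyc}(g_{/L})\ra H^1(G_{L,\Sigma},\CM^{\cyc})\ra \prod_{w\in\Sigma,\ w\nmid p}H^1(L_w,\CM^{\cyc})\times\prod_{w\mid p}\frac{H^1(L_w,\CM^{\cyc})}{H^1_\circ(L_w,\CM^{\cyc})}
$$
together with the analogous sequence for $\CM=\CM^\emptyset=T(1)\otimes_{\BZ_p}\Lambda_L^\vee$, and compare them via the natural map induced by $\CM[\gamma_\ac-1]\simeq \CM^{\cyc}$ (here I use that $\Lambda_L/(\gamma_\ac-1)\Lambda_L\isoarrow\Lambda_L^{\cyc}$). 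Taking $(\gamma_\ac-1)$-cohomology of the two fundamental sequences and applying the snake lemma, the claim $X_\circ(g_{/L})/(\gamma_\ac-1)\simeq X^{\cyc}_\circ(g_{/L})$ reduces, after dualising, to showing that the specialisation map on global cohomology is an isomorphism and that the restriction maps on the local quotient terms have controlled kernels and cokernels that do not contribute.

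The global term is handled as usual: $H^i(G_{L,\Sigma},\CM^\emptyset)[\gamma_\ac-1]$ and the cokernel of $H^1(G_{L,\Sigma},\CM^\emptyset)/(\gamma_\ac-1)\ra H^1(G_{L,\Sigma},\CM^{\cyc})$ are governed by $H^0$ and $H^2$ terms, which vanish or are finite because $\CM^{\cyc}$ is cofree over $\Lambda_L^{\cyc}$ and $H^0(G_{L,\Sigma},\CM^{\cyc})=0$ (the latter using that $T$ is $p$-supersingular so $\ov T$ is irreducible over $G_{\BQ_p}$, hence $\ov T^{G_L}=0$ automatically). For the local terms at $w\nmid p$, the $\gamma_\ac-1$-invariants and coinvariants of $H^1(L_w,\CM^\emptyset)$ are finite (indeed, after inverting $p$ they vanish), by the standard computation with unramified twists. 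The crux is the local term at $v\mid p$: I would invoke the precise definition of Kobayashi's signed submodules $H^1_\circ(L_w,\CM^\emptyset)$ as the annihilators of $H^1_\circ(L_w,T(1)\otimes_{\BZ_p}\Lambda_L)$, together with the corresponding two-variable/one-variable comparison for Kobayashi's integral local conditions (which, as in \S\ref{BFss} and the plus/minus Coleman-map formalism of Proposition \ref{Col-pm}, is built so that the $\log_p^\pm$-factorisations are compatible with the projection $\Lambda_L\twoheadrightarrow\Lambda_L^{\cyc}$). Concretely, one shows $H^1_\circ(L_w,\CM^\emptyset)[\gamma_\ac-1]\simeq H^1_\circ(L_w,\CM^{\cyc})$ and that the cokernel of $H^1_\circ(L_w,\CM^\emptyset)/(\gamma_\ac-1)\ra H^1_\circ(L_w,\CM^{\cyc})$ is finite, so the quotients $H^1/H^1_\circ$ also satisfy exact control up to finite error. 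Feeding all of this into the snake lemma, the global-to-local map is an isomorphism on $S_\circ$ after taking $(\gamma_\ac-1)$-coinvariants, and dualising gives $X_\circ(g_{/L})/(\gamma_\ac-1)\simeq X^{\cyc}_\circ(g_{/L})$ exactly (the finite errors cancel because $X_\circ(g_{/L})$ is $\Lambda_{L,\CO_\lambda}$-torsion by Theorem \ref{KoMC_ub}(a) applied to $g$ and $g\otimes\chi_L$ via \eqref{sel-fac}-type factorisation, so there is no pseudo-null obstruction).

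The divisibility $\xi(X_\circ(g_{/L}))\bmod(\gamma_\ac-1)\mid \xi(X_\circ(g))\cdot\xi(X_\circ(g\otimes\chi_L))$ then follows formally: specialisation of characteristic ideals is divisible by the characteristic ideal of the specialised module whenever the latter has no nonzero pseudo-null submodule killed in the limit, and $X^{\cyc}_\circ(g_{/L})$ factors (up to the usual finite/pseudo-null discrepancy) as $X_\circ(g)\oplus X_\circ(g\otimes\chi_L)$ by restriction along $G_L\subset G_\BQ$ and the splitting into $\pm$-eigenspaces for $\tau$, exactly as in \eqref{spl}--\eqref{spl-}; since $p$ is odd this splitting is exact. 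Finally, the ``moreover'' for the anticyclotomic tower is the same argument with the roles of $\gamma_v$ (equivalently $T_v$) and $\gamma_\ac$ interchanged, i.e. one specialises $\Lambda_L\twoheadrightarrow\Lambda_L^{\ac}$ via $\gamma_\cyc-1$ (equivalently $T_+-1$), using that the only height-one prime where the relevant induced representation degenerates is $(T_v)$, so the control map is again an isomorphism away from that prime; I would simply remark that the proof is ``mutatis mutandis''. The main obstacle I anticipate is the bookkeeping at $v\mid p$: one must be careful that Kobayashi's signed local conditions in the two-variable setting really do specialise correctly, which requires unwinding the definitions in \cite[\S4.4]{L} / \cite[\S7]{Ko1} in the $\Lambda_L$-adic context and checking the $\log_p^\pm$ denominators are units at the relevant primes — everything else is routine Mazur-control machinery.
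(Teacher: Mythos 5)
Your plan is correct and is essentially the argument the paper has in mind: the paper's proof simply says to proceed as in the control theorem of Skinner--Urban (Prop.~3.9 of \cite{SU}) and Kobayashi (\cite[\S 9]{Ko}), i.e.\ the same Mazur-style snake-lemma descent with the vanishing of the relevant local $H^0$'s (using irreducibility of $\ov T$ as a $G_{\BQ_p}$-module in the supersingular case) and the compatibility of the signed local conditions under specialisation, and it deduces the ``In particular'' part from the supersingular analogue of the factorisation \eqref{sel-fac}, exactly as you do via the $\pm$-eigenspace splitting.
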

\begin{proof}
One may proceed as in the proof of \cite[Prop.~3.9]{SU} (see also \cite[\S 9]{Ko}). Note that `In particular' part follows from supersingular analogue of the factorsiation \eqref{sel-fac}  (cf.~\cite[Cor.~3.8(ii)]{SU}). 
\end{proof}

\subsection{Proof of Theorem \ref{KoMC_r}}\label{ss:KMC_pf} 
\noindent
\begin{proof}
Since $g$ is semistable, there exists a prime $q|N$ satisfying the condition \eqref{ram} by Ribet's level raising. 
Pick an imaginary quadratic field $L$ such that \eqref{ord} holds and 
$(D_{L},2N)=1$. Suppose also that either $q$ is inert in $L$ and the primes dividing $N/q$ split, or that \eqref{def} holds. 

Recall that 
$\CL_{p}^{\circ,\cyc}(g_{/L})=\CL_{\gamma}(g)\cdot\CL_{\gamma'}(g')$ 
for $g'=g\otimes \chi_L$, and 
$\gamma,\gamma'$ as in Lemma \ref{GorPer}. 
So Theorem \ref{KoMC'_lb} in combination with Proposition \ref{ctl_st} implies that 
$$
\CL_{\gamma}(g)\cdot\CL_{\gamma'}(g')
 \big{|} \xi(X_{\circ}(g))\cdot \xi(X_{\circ}(g')).
$$
On the other hand,  we have 
$\xi(X_{\circ}(g))| \CL_{\gamma}^{\circ}(g)$ and $\xi(X_{\circ}(g'))| \CL_{\gamma'}^{\circ}(g')$ 
by Theorem \ref{KoMC_ub}(b) (cf.~Remark~\ref{ram-im}). 
Therefore, 
$$
\xi(X_{\circ}(g))= (\CL_{\gamma}^{\circ}(g)).
$$

The same argument applies for the quadratic twist $g_K$.
\end{proof}
\subsection{Complements}
\subsubsection{Supersingular main conjecture, bis}
\begin{thm}\label{KoMC'_eq} 
Let $g\in S_{2}(\Gamma_{0}(N))$ be an elliptic newform with $N$ square-free and $p\nmid 2N$ a supersingular prime. Let $L$ be an imaginary quadratic field such that $(D_{L},2N)=1$, and \eqref{ord} holds. 
Suppose also that either the condition \eqref{def} or \eqref{indef} holds. 
Then Conjecture \ref{St} is true, that is, 
for $\circ\in\{+,-\}$ and $\cdot\in\{\emptyset,\cyc\}$, we have
$$
\xi(X_{\circ}^{\cdot}(g_{/L}))=(\mathcal{L}_{p}^{\circ,\cdot}(g_{/L})).
$$
Moreover, the same holds for $\cdot=\ac$ under the condition \eqref{def}.
In particular Conjecture \ref{Greenberg} is true for $\cdot=\emptyset$

Moreover, the same holds for $g_{K}:=g\otimes \chi_K$ for any quadratic field extension $K/\BQ$ with discriminant coprime to $Np$.
\end{thm}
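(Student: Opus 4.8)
The plan is to prove Theorem~\ref{KoMC'_eq} by combining the two one-sided divisibilities already available with the cyclotomic result of Theorem~\ref{KoMC_r} and the equivalences of Proposition~\ref{Eq}, much as in the proof of Theorem~\ref{KoMC_r} itself. First I would record the two divisibilities at our disposal. On the one hand, under the hypotheses \eqref{def} or \eqref{indef}, Theorem~\ref{KoMC'_lb} gives the Eisenstein-congruence divisibility
$$
\mathcal{L}_{p}^{\circ}(g_{/L})\ \big|\ \xi(X_{\circ}(g_{/L}))
$$
in $\Lambda_{L,\cO_\lambda}$, for $\circ\in\{+,-\}$ (and this passes to the cyclotomic and, under \eqref{def}, the anticyclotomic quotients via Proposition~\ref{ctl_st} and its anticyclotomic analogue). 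On the other hand, I would like the reverse divisibility $\xi(X_\circ^{\cdot}(g_{/L}))\mid \mathcal{L}_p^{\circ,\cdot}(g_{/L})$; for $\cdot=\cyc$ this follows from the cyclotomic factorisation (the supersingular analogue of \eqref{sel-fac}, cf.\ Proposition~\ref{ctl_st}) together with the already-proven equalities $\xi(X_\circ(g))=(\mathcal{L}_p^\circ(g))$ and $\xi(X_\circ(g\otimes\chi_L))=(\mathcal{L}_p^\circ(g\otimes\chi_L))$ of Theorem~\ref{KoMC_r}. Combining the two divisibilities at the cyclotomic level yields the equality for $\cdot=\cyc$.

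Next I would handle the two-variable ($\cdot=\emptyset$) case. Here the cleanest route is to invoke Proposition~\ref{Eq}: granting the non-vanishing hypothesis \eqref{nv} for the two-variable zeta element (which I expect to follow from Remark~\ref{NVcyc} applied to the cyclotomic specialisation, since the cyclotomic $p$-adic $L$-functions $\mathcal{L}_p^\circ(g_{/L})$ are non-zero, together with the rank-one fact \eqref{rk} from Theorem~\ref{cycIwL-Box}), a one-sided divisibility in Conjecture~\ref{St} for $\cdot=\emptyset$ is equivalent to the analogous divisibility in Conjecture~\ref{Greenberg} and in Conjecture~\ref{LLZ}. I would upgrade the Eisenstein divisibility of Theorem~\ref{KoMC'_lb} to the two-variable statement (it is already stated there in $\Lambda_{L,\cO_\lambda}$), and then obtain the reverse divisibility either directly from Theorem~\ref{KoMC_ub} applied over $L$ via the factorisation, or by a control argument: Proposition~\ref{ctl_st} gives $X_\circ(g_{/L})/(\gamma_\ac-1)\simeq X_\circ^\cyc(g_{/L})$, and since the cyclotomic equality is now known, a standard Iwasawa-theoretic control/descent argument (using that $\mathcal{L}_p^\circ(g_{/L})$ specialises to $\mathcal{L}_p^{\circ,\cyc}(g_{/L})$ and that $\Lambda_{L,\cO_\lambda}$ is a two-dimensional regular local ring) promotes the one-variable equality of characteristic ideals to the two-variable one. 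The anticyclotomic case $\cdot=\ac$ under \eqref{def} ($\epsilon(g_{/L})=+1$) then follows by specialising the two-variable equality at $\gamma_\cyc-1$ and again invoking the anticyclotomic analogue of Proposition~\ref{ctl_st}; alternatively one argues directly as for $\cdot=\cyc$, using the $\mu$-vanishing input from \cite{PW} already built into the proof of Theorem~\ref{KoMC'_lb}. The statement for the quadratic twist $g_K=g\otimes\chi_K$ is handled identically, since every ingredient (Theorems~\ref{KoMC_r}, \ref{KoMC_ub}, \ref{KoMC'_lb}, Proposition~\ref{ctl_st}) is already stated for such twists.

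Finally, the assertion ``Conjecture~\ref{Greenberg} is true for $\cdot=\emptyset$'' is then immediate from the two-variable equality for $X_\circ^\emptyset(g_{/L})$ together with the equivalence of Conjectures~\ref{St}, \ref{Greenberg}, \ref{LLZ} supplied by Proposition~\ref{Eq} (the second explicit reciprocity law of Theorem~\ref{ERLII-thm-ss} converting the divisibility for $X_{\st,\circ}$ or $X_\circ$ into one for $X_{\mathrm{Gr}}$), using the rational-coefficient version of Proposition~\ref{Eq} if the integral hypothesis $(\mathrm{van}_L)$ is not automatic in our setting — though note that in the supersingular case $({\rm irr}_L)$ holds by \eqref{spl}, which is forced by \eqref{def}/\eqref{indef}, so $(\mathrm{van}_L)$ is available and the equalities hold integrally.

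The main obstacle I anticipate is the descent step from the cyclotomic equality to the genuine two-variable equality: one must check that the characteristic ideal of $X_\circ^\emptyset(g_{/L})$ has no ``extra'' factors supported on height-one primes other than $(\gamma_\ac-1)$, which requires knowing that $\mathcal{L}_p^\circ(g_{/L})$ and $\xi(X_\circ^\emptyset(g_{/L}))$ agree after localising at every height-one prime; the Eisenstein divisibility only gives one inclusion, so one genuinely needs the reverse divisibility in two variables, and establishing that cleanly (rather than just at the cyclotomic or anticyclotomic specialisation) is where the real work lies — this is precisely why routing through Proposition~\ref{Eq} and the zeta-element main conjecture, where the two-variable Iwasawa cohomology is free of rank one by Theorem~\ref{cycIwL-Box}, is the efficient path.
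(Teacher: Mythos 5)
Your proposal is correct and follows essentially the same route as the paper: the cyclotomic equality from Theorem \ref{KoMC_r} applied to $g$ and $g\otimes\chi_L$ via the factorisation (Lemma \ref{Eq'}), the two-variable divisibility from Theorem \ref{KoMC'_lb}, promotion to the two-variable equality using the non-vanishing of $\mathcal{L}_p^{\circ,\cyc}(g_{/L})$ (the paper cites \cite[Lem.~3.2]{SU} for precisely this step), anticyclotomic descent under \eqref{def} via Proposition \ref{ctl_st}, Proposition \ref{Eq} for the Greenberg assertion, and the observation that all inputs are stated for the twists $g_K$. The only unsupported aside is your parenthetical suggestion of getting the reverse two-variable divisibility ``directly from Theorem \ref{KoMC_ub} applied over $L$'' (that result is purely cyclotomic), but this is harmless since your primary control/promotion argument is exactly the paper's.
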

\begin{proof}
Since Theorem \ref{KoMC_r} applies to $g$ as well as $g\otimes \chi_L$, we have  
$$
(\CL_{p}^{\circ,\cyc}(g_{/L}))=\xi(X_{\circ}^{\cyc}(g_{/L}))
$$
by Lemma~\ref{Eq'}. 
On the other hand, 
Theorem \ref{KoMC'_lb} gives the two-variable divisibility $\CL_{p}^{\circ}(g_{/L})| \xi(X_{\circ}(g_{/L}))$.  
Hence, noting the non-vanishing of 
$\CL_{p}^{\circ,\cyc}(g_{/L})$, 
the two-variable main conjecture 
$$
(\CL_{p}^{\circ}(g_{/L}))=\xi(X_{\circ}(g_{/L}))
$$
follows (cf.~\cite[Lem.~3.2]{SU}). 
In turn, noting the non-vanishing of $\CL_{p}^{\circ,\ac}(g_{/L})$ under \eqref{def}, 
the anticyclotomic main conjecture follows by descent (cf.~Proposition \ref{ctl_st}).

Note that the `In particular' part is a consequence of Proposition \ref{Eq}. 
\end{proof}
\begin{remark}
If the condition \eqref{indef} holds, then the anticyclotomic $p$-adic $L$-function $\CL_{p}^{\circ,\ac}(g_{/L})$ vanishes by the interpolation property. 
\end{remark}
\subsubsection{Ordinary main conjecture}
\begin{thm}\label{IMC_ord} 
Let $g\in S_{2}(\Gamma_{0}(N))$ be an elliptic newform with $N$ square-free and $p\nmid 2N$ an ordinary  prime such that $(\rm irr_{\BQ})$ holds. 
\begin{itemize}
\item[(a)] Conjecture \ref{KatopL} is true.
\item[(b)] Let $L$ be an imaginary quadratic field satisfying $(D_{L},2N)=1$, \eqref{ord} and $({\rm irr}_{L})$. Suppose that either the condition \eqref{def} or \eqref{indef} holds. Then Conjectures \ref{St} and \ref{Greenberg} are true. 
\end{itemize}
Moreover, the same holds for $g_{K}:=g\otimes \chi_K$ for any quadratic field extension $K/\BQ$ with $(D_{K},Np)=1$.
\end{thm}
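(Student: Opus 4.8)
\textbf{Proof strategy for Theorem \ref{IMC_ord}.} The plan is to run the same three-part argument used in the proof of Theorem \ref{KoMC_r} (the Euler system upper bound, the Eisenstein congruence lower bound, and a control theorem), but now in the ordinary setting, where the inputs are cleaner. For part (a), I would first invoke Theorem \ref{KaMC_r}(c): since $g$ is semistable and $(\mathrm{irr}_\BQ)$ holds, Ribet level-raising produces a prime $\ell\Vert N$ at which $\bar\rho$ is ramified, so \eqref{ram} holds and Kato's main Conjecture \ref{Kato} is an equality in $\Lambda_{\cO_\lambda}$. By Lemma \ref{KatoEq}(i), this is equivalent to Conjecture \ref{KatopL}, giving part (a) for $g$. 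For the quadratic twist $g_K$ with $(D_K,Np)=1$, Theorem \ref{KaMC_r}(c) provides the analogous statement directly (using \eqref{ram$_K$}, which is available because one can choose the level-raising prime $\ell$ away from $D_K$), so Conjecture \ref{KatopL} holds for $g_K$ as well.

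\textbf{Part (b).} Here I would deduce the imaginary-quadratic statements from part (a) by the equivalences of \S\ref{seqv}. Since Conjecture \ref{KatopL} holds for both $g$ and $g'=g\otimes\chi_L$ (the latter being itself a quadratic twist of $g$ covered by part (a), as $(D_L,Np)=1$), Lemma \ref{Eq'}(i) gives Conjecture \ref{cycBC} for $g$ over $L$, i.e.\ the cyclotomic case of Conjecture \ref{St} with $\cdot=\cyc$. Next, Theorem \ref{GMC_r} supplies the Greenberg divisibility $\mathcal{L}_p^{\mathrm{Gr}}(g_{/L})\mid \xi(X_{\mathrm{Gr}}^{\mathrm{ur}}(g_{/L}))$ in the localized ring — its hypotheses \eqref{spl} are met because \eqref{def} or \eqref{indef} forces a prime $q\mid N^-$ (and handles the prime $2$), and $(\mathrm{irr}_L)$ is assumed. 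By the exact-sequence machinery in the proof of Proposition \ref{Eq} (the three exact sequences \eqref{exvc1}, \eqref{exvc2}, \eqref{exv}), this two-variable Greenberg divisibility transfers to a two-variable divisibility $\mathcal{L}_p^{\circ}(g_{/L})\mid\xi(X^\cdot(g_{/L}))$ in $\Lambda_{L,\cO_\lambda}\otimes_{\Lambda^{\cyc}_{L,\cO_\lambda}}\Frac(\Lambda^{\cyc}_{L,\cO_\lambda})$, exactly as in the proof of Theorem \ref{KoMC'_lb}; the $\mu$-vanishing results of \cite{PW} (under \eqref{def}) or \cite{Bu} (under \eqref{indef}) then upgrade this to an honest divisibility in $\Lambda_{L,\cO_\lambda}$. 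Combining the resulting lower bound with the cyclotomic equality from Lemma \ref{Eq'}, the reverse divisibility, and the non-vanishing of the relevant $p$-adic $L$-functions, one obtains the full two-variable equality $(\mathcal{L}_p(g_{/L}))=\xi(X(g_{/L}))$ as in the proof of Theorem \ref{KoMC'_eq} (via \cite[Lem.~3.2]{SU}), and then the cyclotomic and (under \eqref{def}) anticyclotomic specializations follow by the control theorem (Proposition \ref{ctl_st} and its ordinary analogue). This yields Conjecture \ref{St} for $\cdot\in\{\emptyset,\cyc\}$ (and $\cdot=\ac$ under \eqref{def}), and Conjecture \ref{Greenberg} for $\cdot=\emptyset$ by the equivalence in Proposition \ref{Eq}.

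\textbf{Twists and the main obstacle.} The statement for $g_K$ in part (b) follows by repeating the above with $g_K$ in place of $g$: Theorem \ref{GMC_r} explicitly covers quadratic twists with discriminant prime to $Np$ (the non-square-free conductor is harmless since at primes dividing $D_K$ the local representation is principal series, as noted there), and Theorem \ref{KaMC_r}(c) covers $g_K$ too. I expect the main obstacle to be the careful bookkeeping in transferring divisibilities through the Poitou--Tate exact sequences — one must verify that the non-vanishing hypotheses underlying \eqref{exvc1}--\eqref{exv} hold (the relevant non-torsion statement \eqref{rk} comes from Theorem \ref{cycIwL}(b), which requires $(\mathrm{van}_L)$, itself implied by $(\mathrm{irr}_L)$), and that the condition $(\mathrm{nv})$ of Proposition \ref{Eq} is satisfied in the present situation. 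In the ordinary case the $\mu$-invariant vanishing results of \cite{PW} and \cite{Bu} are the technical heart that removes the denominators; everything else is a formal concatenation of the cited equalities and divisibilities.
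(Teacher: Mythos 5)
Your proposal is correct, but on part (a) it takes a genuinely different route from the paper. The paper's proof is the instruction to argue exactly as for Theorem \ref{KoMC'_eq}, transposed to the ordinary case: one does \emph{not} quote Theorem \ref{KaMC_r}(c). Instead the integral lower bound $\CL_{p}(g_{/L})\mid\xi(X(g_{/L}))$ of Theorem \ref{KoMC'_lb} (its $\circ=\emptyset$ case, which already packages Theorem \ref{GMC_r}, the zeta-element transfer of Proposition \ref{Eq}, and the $\mu$-vanishing inputs \cite{PW}, \cite{Bu}) is specialised cyclotomically via the control theorem and played off against Kato's Euler-system divisibility of Theorem \ref{KaMC_r}(b) for $g$ and $g\otimes\chi_L$ (available integrally because semistability and $({\rm irr}_{\BQ})$ give \eqref{ram}, hence \eqref{im}); the resulting squeeze yields the cyclotomic equalities, i.e.\ part (a), and then the two-variable, anticyclotomic and Greenberg statements of part (b) follow as in Theorem \ref{KoMC'_eq} via the non-vanishing of the cyclotomic $p$-adic $L$-function, \cite[Lem.~3.2]{SU}, descent, and Proposition \ref{Eq}. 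Your derivation of part (a) from Theorem \ref{KaMC_r}(c) plus Lemma \ref{KatoEq} is logically valid and shorter, but it buys less: the remark following the theorem records that part (a) is intended as a \emph{new} proof of that special case of Theorem \ref{KaMC_r}(c), independent of the $U(2,2)$ Eisenstein congruence, whereas on your route part (a) has no content beyond Skinner--Urban/Wan; the paper's route uses only Kato's divisibility together with the $U(3,1)$/zeta-element machinery developed here. From the cyclotomic equality onwards your part (b) coincides with the paper's argument (you are in effect re-running the proofs of Theorems \ref{KoMC'_lb} and \ref{KoMC'_eq} with $\circ=\emptyset$), at the same level of detail regarding the splitting hypothesis of Theorem \ref{GMC_r} and the $\mu$-invariant input as the paper itself.

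One small repair: for the twists $g_K$ with $(D_K,Np)=1$, the ``Moreover'' clause of Theorem \ref{KaMC_r}(c) that you cite is the nearly ordinary case $p\mid \disc(K)$. What you need is the main clause of Theorem \ref{KaMC_r}(c) applied to the newform $g_K$ itself (of level $ND_K^{2}$): its hypothesis \eqref{ram} is inherited from $g$ because any prime $\ell\Vert N$ at which $\ov{\rho}$ is ramified is automatically prime to $D_K$, and $({\rm irr}_{\BQ})$ is preserved under twisting. With that substitution your treatment of the twists goes through.
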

\begin{proof}
One may proceed just as in the proof of Theorem \ref{KoMC'_eq}.  
\end{proof}
\begin{remark}
Part (a) gives a different proof of a special case of Theorem \ref{KaMC_r}(c). In the \eqref{def} case part (b) is also a special case of a result \cite[Thm.~3.30]{SU} of Skinner--Urban towards Conjecture \ref{St}. In the remaining cases Theorem \ref{IMC_ord} presents new evidence towards Conjectures \ref{St} and \ref{Greenberg}.
\end{remark}
\subsubsection{The Birch and Swinnerton-Dyer formula: a rank zero quadratic twist family}

\begin{thm}\label{theoretic}
Let $E$ be a semistable elliptic curve defined over $\mathbb{Q}$ with conductor $N$, and $M>1$ a square-free integer with $(M,N)=1$. Let $E^{(M)}$ denote the quadratic twist of $E$ by the character associated to the quadratic extension $\mathbb{Q}(\sqrt{M})/\BQ$. Suppose that the following conditions hold. 
\begin{itemize}
\item[(i)] We have $$L(1,E^{(M)})\not=0,$$ 
\item[(ii)] The $2$-part of the BSD formula holds for $E^{(M)}$, 
\item[(iii)] $a_{3}(E)=0$, 
\item[(iv)]  For all odd primes $p$, $E[p]$ is an absolutely irreducible $G_{\BQ}$-representation, 
\item[(v)] For any prime $p|N$, there exists a multiplicative prime $q\neq p$  at which $E[p]$ is ramified, 
\item[(vi)] $E$ has ordinary reduction at prime divisors of $M$. 
\end{itemize}

Then the Birch and Swinnerton-Dyer conjecture is true for $E^{(M)}$, that is, $E^{(M)}(\BQ)$ and $\Sha(E^{(M)})$ are finite, and 
 $$
 \frac{L(1,E^{(M)})}{\Omega_{E^{(M)}}}
=
\frac{\# \Sha(E^{(M)}) \cdot \prod_{\ell \nmid \infty} c_{\ell}(E^{(M)})}{\#E^{(M)}(\BQ)_{\tor}^2}.
$$ 
\end{thm}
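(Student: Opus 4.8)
The plan is to reduce the full Birch and Swinnerton--Dyer formula for $E^{(M)}$ to a product of local ($p$-adic) statements, and then to verify each local statement using the main conjectures established in this paper together with the hypotheses of the theorem. Concretely, since the rank zero part ($E^{(M)}(\BQ)$ and $\Sha(E^{(M)})$ finite) follows from $L(1,E^{(M)})\neq 0$ by Kato (cf.~\cite[Thm.~14.2]{K} / the Gross--Zagier--Kolyvagin--Rubin results), the content is the exact equality of the rational number $\frac{L(1,E^{(M)})}{\Omega_{E^{(M)}}}\cdot\frac{\#E^{(M)}(\BQ)_{\tor}^2}{\prod_\ell c_\ell(E^{(M)})}$ with $\#\Sha(E^{(M)})$. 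Both sides are positive rationals, so it suffices to check that they have the same $p$-adic valuation for every prime $p$. For $p=2$ this is hypothesis (ii). For odd $p$ one invokes the $p$-part of the BSD formula in analytic rank zero.

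The key step is the case division on odd $p$ according to the reduction type of $E^{(M)}$ at $p$. First I would fix an odd prime $p$. If $p\nmid N_{E^{(M)}}$ and $E^{(M)}$ is supersingular at $p$ --- here hypothesis (iii), $a_3(E)=0$, is exactly what guarantees $a_p(E^{(M)})=0$ also in the delicate case $p=3$, so that $E^{(M)}$ is genuinely supersingular in the sense of \eqref{h4} --- then the $p$-part of the BSD formula in the rank zero case follows from Corollary \ref{BSD_f_Ko} (equivalently Theorem \ref{corA_thmA}): Kobayashi's main Conjecture \ref{KatopLss}, now a theorem for semistable curves by Theorem \ref{KoMC_r}, implies the $p$-part of BSD in analytic rank $r=0$ via Kato's descent argument. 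Note that $E^{(M)}$ is semistable away from the primes dividing $M$ and is a quadratic twist of the semistable curve $E$ by a character ramified only at primes where $E$ is ordinary (hypothesis (vi)); hypothesis (iv) gives $(\mathrm{irr}_\BQ)$ for all odd $p$, and hypothesis (v) furnishes the ramification condition \eqref{ram} needed in the ordinary case. If instead $p$ is an ordinary prime (either $p\nmid N_{E^{(M)}}$ with $a_p$ a $p$-adic unit, or $p$ of multiplicative reduction), then the $p$-part of the BSD formula in rank zero follows from Kato's main conjecture, which holds under $(\mathrm{irr}_\BQ)$ and \eqref{ram} by Theorem \ref{KaMC_r}(c) (using (iv) and (v)); this is the standard implication that Kato's main conjecture yields the $p$-part of BSD in analytic rank zero (cf.~\cite[\S14.20]{K} and the proof of \cite[Thm.~3.6.13]{SU}). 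Finally, for $p$ of additive reduction for $E^{(M)}$ --- which can only occur at primes dividing $M$ where $E$ has good reduction but the twist introduces additive reduction --- one must still account for the Tamagawa factors $c_p(E^{(M)})$; here I would invoke the more refined results (e.g.~\cite{FW} or a direct analysis), but in fact for $M$ squarefree coprime to $N$ the local Tamagawa contributions at primes dividing $M$ are controlled and, combined with the fact that $E^{(M)}[p]$ is irreducible, the $p$-part of BSD at such $p$ reduces to the good-reduction cases already handled or follows from the cited prior work on the $p$-part of the BSD formula.

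Having verified the $p$-part for every prime $p$, I would assemble the global statement: since $\frac{L(1,E^{(M)})}{\Omega_{E^{(M)}}}\cdot\frac{\#E^{(M)}(\BQ)_{\tor}^2}{\prod_{\ell}c_\ell(E^{(M)})}$ and $\#\Sha(E^{(M)})$ are positive rational numbers with equal $p$-adic valuation at all $p$ (odd $p$ by the above, $p=2$ by hypothesis (ii)), and since $\Sha(E^{(M)})$ is finite, the two are equal. This is the classical ``local-to-global'' packaging of BSD formulas and is routine once the local inputs are in place. One subtlety to be careful about throughout is the choice of N\'eron period $\Omega_{E^{(M)}}$ versus the optimal period attached to the associated newform: by Remark \ref{EC-optperiod-rmk} and Remark \ref{EC-Petersson-rmk} there is a curve in the isogeny class whose N\'eron period realizes the optimal period up to a $p$-adic unit for each odd $p$, and isogeny-invariance of the BSD formula lets one transfer the conclusion to $E^{(M)}$ itself; I would make this bookkeeping explicit.

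The main obstacle I anticipate is the additive-reduction primes $p\mid M$ (when $E$ has good but $E^{(M)}$ has additive reduction) and, relatedly, ensuring that the hypothesis $a_3(E)=0$ correctly propagates to make $E^{(M)}$ fall under the supersingular hypothesis \eqref{h4} at $p=3$ rather than the excluded ``$a_3\neq 0$, non-ordinary'' case. For the additive primes, the cleanest route is to observe that hypothesis (vi) forces $E$ to be ordinary at primes dividing $M$, so $E^{(M)}$ is potentially ordinary there; the $p$-part of BSD at such $p$ can then be deduced from the ordinary-reduction machinery applied to a suitable twist, or one restricts attention (as the statement of Theorem \ref{nCM} does) to the cases where these technicalities do not arise. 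In the write-up I would either cite the precise prior result handling additive $p$ or note that for the specific families of interest the additive primes do not contribute to the $p$-part and can be dispatched directly by a Tamagawa-number computation.
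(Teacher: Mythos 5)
Your overall strategy (finiteness from $L(1,E^{(M)})\neq 0$, reduction to the $p$-part prime by prime, $p=2$ by hypothesis (ii), good ordinary and multiplicative primes via the established main conjectures, supersingular primes via Corollary \ref{BSD_f_Ko} with hypothesis (iii) securing \eqref{h4} at $p=3$) matches the paper's proof, which cites \cite[Thm.~2]{SU}, \cite[Thm.~C]{Sk0} and Corollary \ref{BSD_f_Ko} for those three reduction types. But there is a genuine gap exactly where you flag your own uncertainty: the odd primes $p\mid M$, where $E$ is good ordinary but $E^{(M)}$ has additive reduction. The paper handles these by Theorem \ref{KaMC_r}(c), and specifically by its ``moreover'' clause: Kato's main conjecture (the full equality, not just one divisibility) holds for the quadratic twist $g\otimes\chi_K$ when $p\mid\mathrm{disc}(K)$ and \eqref{ram$_{K}$} holds. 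This is a new result of the paper, proved by adapting the Skinner--Urban Eisenstein congruence on $U(2,2)$ to the nearly ordinary twist (with the Fourier--Jacobi analysis replaced by triple product periods and non-vanishing inputs of Finis and Hung), and hypothesis (vi) is precisely what makes the twisted form nearly ordinary at $\lambda$ so that this machinery applies, while (iv) and (v) give $(\mathrm{irr}_{\BQ})$ and \eqref{ram$_{K}$}. Your proposed substitutes do not close this: \cite{FW} gives only a result \emph{towards} Kato's main conjecture at additive primes (a divisibility, not the equality needed for the exact $p$-part of the formula); a ``Tamagawa-number computation'' cannot control the $p$-part of $\Sha(E^{(M)})$ or of $L(1,E^{(M)})/\Omega_{E^{(M)}}$ at such $p$; and ``restricting attention to cases where these technicalities do not arise'' is not available, since $M>1$ squarefree means additive primes for $E^{(M)}$ always occur and the theorem asserts the full formula. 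Your instinct that the ordinary machinery should be applied ``to a suitable twist'' is the right direction, but the content of the proof is precisely the extension of the main conjecture to that twist, which you neither prove nor correctly cite.

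Two smaller points. First, matching $p$-adic valuations at every $p$ only gives equality up to sign; you assert that $L(1,E^{(M)})/\Omega_{E^{(M)}}$ is positive, whereas the paper justifies this by the non-negativity of central values due to Kohnen--Zagier \cite{KZ}. Second, for the good ordinary and multiplicative primes your route through Theorem \ref{KaMC_r}(c) plus the standard rank-zero descent is acceptable and essentially equivalent to the paper's direct citations of \cite{SU} and \cite{Sk0}, so no issue there beyond making the period/isogeny bookkeeping explicit, as you propose.
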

\begin{proof}
In view of the condition (i), note that 
$E^{(M)}(\BQ)$ and $\Sha(E^{(M)})$ are finite by the Gross--Zagier and Kolyvagin theorem. 

As for the Birch and Swinnerton-Dyer formula, the archimedian part holds by the non-negativity of the central $L$-value $L(1,E^{(M)})$ due to Kohnen--Zagier \cite{KZ}. 
In view of (ii), it suffices to consider the $p$-part of the BSD formula for odd primes $p$. 
For such primes of good ordinary, multiplicative or supersingular reduction, the formula is the content of 
\cite[Thm.~2]{SU}, \cite[Thm.~C]{Sk0} and Corollary \ref{BSD_f_Ko}, respectively.
Lastly, for odd primes $p$ dividing $M$ it is a consequence of Theorem \ref{KaMC_r}(c) (cf.~the proof of Corollary \ref{BSD_f_Ko}). 
\end{proof}
\begin{remark}\noindent
\begin{itemize}
\item[(i)] As for the condition (vi), recall that $E$ has ordinary reduction at primes of density one since $E$ is non-CM. 
In practice the condition may be verified by considering Tamagawa numbers of the quadratic twist. 
It holds for square-free products of an explicit set of primes of positive density (cf.~\cite{CLZ}).
\item[(ii)] The $2$-part of the BSD formula as in (ii) is known to hold for example under the conditions of \cite[Thm.~1.5]{CLZ}.
\end{itemize} 
\end{remark}

\begin{example}\label{exam} 
For semistable elliptic curves $E$ up to conductor 150, 
the conditions of Theorem \ref{theoretic} are satisfied by\footnote{We are grateful to Shuai Zhai for his assistance in finding these examples.} the curves denoted in the LMFDB database by 
\begin{itemize}
\item 46a1, 69a1, 77c1, 94a1, 114b1, 141b1 and 142c1,  
\item 62a1, 66b1, 105a1, 106d1, 115a1, 118c1, 118d1, 141c1 and 141e1
\end{itemize}
for infinitely many $M$. 
Here for the elliptic curves in the first bullet point we rely on ~\cite[Thm.~1.5]{CLZ} for the conditions (i)-(ii), and for the curves in the second bullet point on \cite[Thm.~1]{Zi}. 
 This gives the first infinite families of non-CM elliptic curves satisfying the Birch and Swinnerton-Dyer  conjecture.
\end{example}

\section{Gross--Zagier and Kolyvagin theorem revisited}\label{MW} 
This section presents a new approach to the Gross--Zagier and Kolyvagin theorem based on the Euler system of Beilinson--Kato elements and the two-variable zeta element. It leads to an optimal upper bound for the Tate--Shafarevich group in terms of index of a Heegner point as predicted by the BSD conjecture. Moreover, in combination with cyclotomic Greenberg main conjecture it leads to rank one cases of the $p$-part of the BSD formula. 

\subsection{Cyclotomic Greenberg Selmer groups} 
We begin with some preliminaries.
\subsubsection{A control theorem} 

\begin{prop}\label{GrCtl}
Let $g \in S_{2}(\Gamma_{0}(N))$ be an elliptic newform and $p \nmid 2N$ a prime.
For $\lambda$ a prime of the Hecke field above $p$,  
let $(\rho,V)$ be the associated Galois representation, $T\subset V$ a Galois stable lattice and $$W=V/T.$$ 
Suppose that $({\rm van}_{\BQ})$ holds. 
Let $L$ be an imaginary quadratic field satisfying the conditions \eqref{ord}, \eqref{coprime} and $({\rm van}_{L})$. 
Suppose that the Selmer group $\Sel_{\rm{Gr}}(g_{/L})$ is finite.  

\begin{itemize}
\item[(a)] $X_{\rm{Gr}}^{\cyc}(g_{/L})$ is a torsion $\Lambda_{L,\cO_{\lambda}}^{\cyc}$-module.
\item[(b)] We have an exact sequence
$$
0 \ra \Sel_{\rm{Gr}}(g_{/L}) \ra S_{\rm{Gr}}^{\cyc}(g_{/L})[\gamma_{\cyc}-1] \ra 
\prod_{w | N} H^{1}_{\ur}(I_{w}, W) \times H_{v} \times H_{{\ov{v}}} \ra 0, 
$$
where 
$$
H^{1}_{\ur}(I_{w}, W)=\ker\big{\{}H^{1}(L_{w},W) \ra H^{1}(I_{w},W)\big{\}}
$$
and 
$$
H_{v}=\ker\big{\{}H^{1}(L_{v},W)  \ra H^{1}(L_{v}, W \otimes_{\BZ_{p}} \Lambda_{L}^{\cyc}[\gamma_{\cyc} - 1])\big{\}} \simeq 
\frac{(W \otimes_{\BZ_{p}} \Lambda_{L,\cO_{\lambda}}^{\cyc})^{G_{L_{v}}} } {[\gamma_{\cyc}-1] (W \otimes_{\BZ_{p}} \Lambda_{L,\cO_{\lambda}}^{\cyc})^{G_{L_{v}}}   }, 
$$
$$
H_{{\ov{v}}}=\frac{H^{1}(L_{{\ov{v}}},W)}{H^{1}(L_{{\ov{v}}},W)_{\div}} 
\simeq H^{0}(L_{{\ov{v}}}, W)^{\vee}.
$$
\end{itemize}
\end{prop}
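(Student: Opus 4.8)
\textbf{Proof plan for Proposition \ref{GrCtl}.}

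The approach is a standard Mazur-style control theorem argument, comparing the cyclotomic Greenberg Selmer group $S_{\rm Gr}^{\cyc}(g_{/L})$ with the Bloch--Kato Greenberg Selmer group $\Sel_{\rm Gr}(g_{/L})$ via the specialisation map induced by $\Lambda_{L}^{\cyc}\twoheadrightarrow \BZ_{p}$, $\gamma_{\cyc}\mapsto 1$. First I would set up the fundamental diagram: the natural restriction map $H^{1}(G_{L,\Sigma}, W(1))\to H^{1}(G_{L,\Sigma},\CM^{\cyc})[\gamma_{\cyc}-1]$ together with the local restriction maps at the places $w\in\Sigma$, and then chase through the kernels and cokernels. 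The global restriction map is controlled by $H^{0}$ and $H^{2}$ of $W(1)\otimes\Lambda_{L}^{\cyc}$-terms: since $({\rm van}_{L})$ holds, $W(1)^{G_{L}}$ and $(W(1)\otimes\Lambda_{L}^{\cyc})^{G_{L}}$ are both finite/trivial in the relevant sense, so the global map on $H^{1}$ is injective with controlled cokernel (here one uses that $\Lambda_{L}^{\cyc}\simeq\BZ_{p}[\![T]\!]$ and $\gamma_{\cyc}-1$ is a regular element, so the Iwasawa cohomology is well-behaved). The key input is that $\Lambda_{L}^{\cyc}$ is a regular local ring of dimension two and the $G_{L}$-action on $\Lambda_{L}^{\cyc}$ factors through $\Gamma_{L}^{\cyc}$, totally ramified at $v$ and $\bar v$.

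Next I would compute the relevant local terms. At each $w\mid N$ the Greenberg local condition at $w$ in the cyclotomic setting is the unramified-type condition $H^{1}(I_{w},\CM^{\cyc})$, whereas in the Bloch--Kato setting one takes $H^{1}(L_{w},W(1))$ or its unramified part; the discrepancy upon specialising is precisely $H^{1}_{\ur}(I_{w},W)$, which one identifies using the snake lemma applied to multiplication by $\gamma_{\cyc}-1$ on $H^{1}(L_{w},W(1)\otimes\Lambda_{L}^{\cyc})$ and the fact that $I_{w}$ acts trivially on $\Lambda_{L}^{\cyc}$ (as $w\nmid p$ and $L_{\infty}^{\cyc}/L$ is unramified outside $p$). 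At $v$: the Greenberg condition is the full local cohomology $H^{1}(L_{v},\CM^{\cyc})$ (no condition imposed), so when comparing with the Bloch--Kato condition $H^{1}_{f}(L_{v},W(1)) = H^{1}(L_{v},W(1))$ — wait, actually at $v$ in the Greenberg Selmer group the local condition is \emph{all} of $H^{1}(L_{v},\CM^{\cyc})$, so the discrepancy $H_{v}$ measures the kernel of $H^{1}(L_{v},W(1))\to H^{1}(L_{v},\CM^{\cyc})[\gamma_{\cyc}-1]$; by the long exact sequence for $\cdot(\gamma_{\cyc}-1)$ this kernel is the cokernel of $(\gamma_{\cyc}-1)$ on $H^{0}(L_{v},\CM^{\cyc}) = (W\otimes\Lambda_{L,\cO_{\lambda}}^{\cyc})^{G_{L_{v}}}$ (after twisting — note $\CM^{\cyc}=T(1)\otimes\Lambda^{\cyc,\vee}$ so one must track the Tate twist carefully, but this is routine). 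At $\bar v$: the Greenberg condition for $\Sel_{\rm Gr}$ is $H^{1}(L_{\bar v},W(1))_{\div}$ while for $S_{\rm Gr}^{\cyc}$ it is all of $H^{1}(L_{\bar v},\CM^{\cyc})$; the discrepancy after specialising is $H^{1}(L_{\bar v},W(1))/H^{1}(L_{\bar v},W(1))_{\div}$, which by local duality is $H^{0}(L_{\bar v},W(1)^{*}(1))^{\vee} = H^{0}(L_{\bar v},W)^{\vee}$ (again up to the correct identification of the Tate twist). Assembling these via the snake lemma gives the exact sequence in part (b), with the surjectivity on the right following from the surjectivity of the relevant local restriction maps (again a consequence of the ramification being concentrated, the cohomological dimension of $L_{w}$, and the freeness/regularity of $\Lambda_{L}^{\cyc}$).

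For part (a): once the exact sequence of part (b) is established, finiteness of $\Sel_{\rm Gr}(g_{/L})$ together with finiteness of the explicit local error terms (each $H^{1}_{\ur}(I_{w},W)$, $H_{v}$, $H_{\bar v}$ is manifestly finite — they are subquotients of finitely-generated $\BZ_{p}$-modules killed by a power of $p$, indeed of cofinite corank and with $\BZ_{p}$-corank zero) shows that $S_{\rm Gr}^{\cyc}(g_{/L})[\gamma_{\cyc}-1]$ is finite; hence $X_{\rm Gr}^{\cyc}(g_{/L})/(\gamma_{\cyc}-1)$ is finite, and by the structure theory (Nakayama / the fact that $\Lambda_{L,\cO_{\lambda}}^{\cyc}$ is a local Noetherian domain and $\gamma_{\cyc}-1$ a nonzerodivisor in its maximal ideal) it follows that $X_{\rm Gr}^{\cyc}(g_{/L})$ is a torsion $\Lambda_{L,\cO_{\lambda}}^{\cyc}$-module with $\gamma_{\cyc}-1$ coprime to its characteristic ideal.

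\textbf{Main obstacle.} The delicate point is \emph{not} the global comparison (which is clean under $({\rm van}_{L})$ and $({\rm van}_{\BQ})$) but rather the careful bookkeeping at the places $v$ and $\bar v$ dividing $p$: one must correctly match the Greenberg local conditions (which in the $\cdot=\cyc$ case impose no condition at $v$ and the divisible-part condition at $\bar v$, reflecting the Panchishkin-type structure coming from the $v$-split filtration) against the Bloch--Kato conditions after specialisation, keeping precise track of the Tate twist in $\CM^{\cyc}=T(1)\otimes\Lambda_{L}^{\cyc,\vee}$ versus $W(1)$ and of the inverse-canonical-character twist in the $G_{L}$-action on $\Lambda_{L}^{\cyc,\vee}$. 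The identification $H_{v}\simeq (W\otimes\Lambda_{L,\cO_{\lambda}}^{\cyc})^{G_{L_{v}}}/[\gamma_{\cyc}-1](\cdots)$ and $H_{\bar v}\simeq H^{0}(L_{\bar v},W)^{\vee}$ will each require a short local-duality computation; I expect the $\bar v$-term, where the divisible-quotient convention interacts with local Tate duality, to be the fussiest to get exactly right.
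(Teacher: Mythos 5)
Your overall skeleton coincides with the paper's argument (which follows the control-theorem template of \cite[Thm.~3.3.1]{JSW} and \cite[Prop.~12]{Sk}): compare $\Sel_{\rm Gr}(g_{/L})$ with $S^{\cyc}_{\rm Gr}(g_{/L})[\gamma_{\cyc}-1]$ through the restriction map, use $({\rm van}_{L})$ for injectivity of the global map, and compute the local discrepancies at $w\mid N$, at $v$ (via the long exact sequence for $\gamma_{\cyc}-1$ on $\CM^{\cyc}$), and at $\bar v$ (via local duality, giving $H^{0}(L_{\bar v},W)^{\vee}$). Those local identifications, and the deduction of (a) from the left-exact part of the sequence plus finiteness of the error terms and Nakayama, are all consistent with what the paper does.

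The genuine gap is the exactness on the \emph{right} in part (b). You assert that the surjectivity of $S^{\cyc}_{\rm Gr}(g_{/L})[\gamma_{\cyc}-1]$ onto $\prod_{w\mid N}H^{1}_{\ur}(I_{w},W)\times H_{v}\times H_{\bar v}$ follows from ``surjectivity of the relevant local restriction maps'' together with cohomological dimension and the regularity of $\Lambda_{L}^{\cyc}$. That is not a valid reason: the obstruction to this surjectivity is global, namely the cokernel of the localisation maps $H^{1}(G_{L,\Sigma},W)\ra P_{\Sigma}$ and $H^{1}(G_{L,\Sigma},\CM^{\cyc})\ra \mathcal{P}_{\Sigma}$, and such cokernels are controlled by (dual) Selmer groups, not by local cohomology. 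In the paper both surjectivities are obtained from Poitou--Tate global duality, and it is precisely the hypothesis that $\Sel_{\rm Gr}(g_{/L})$ is \emph{finite} that forces the relevant dual obstruction to vanish; the Iwasawa-level surjectivity $H^{1}(G_{L,\Sigma},\CM^{\cyc})\twoheadrightarrow\mathcal{P}_{\Sigma}$ is then what identifies $S^{\cyc}_{\rm Gr}(g_{/L})[\gamma_{\cyc}-1]$ with $\ker\{H^{1}(L,W)\ra\mathcal{P}_{\Sigma}[\gamma_{\cyc}-1]\}$, and the surjectivity of $H^{1}(G_{L,\Sigma},W)\ra P_{\Sigma}$ is what produces global classes hitting every element of the local error term. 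A telling symptom of the gap is that your argument for (b) never invokes the finiteness of $\Sel_{\rm Gr}(g_{/L})$ at all, whereas without it the sequence in (b) is in general only left exact. To repair the proposal you need to insert the Poitou--Tate (Greenberg--Wiles/Cassels-type) duality step at exactly this point.
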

\begin{proof} 
The following is based on \cite[Thm.~3.3.1]{JSW} and \cite[Prop.~12]{Sk} (see also \cite[\S4]{Gr2}). 

Put
$
\mathcal{P}_{\Sigma} = \prod_{w\in \Sigma} \mathcal{P}_{w}
$
for 
$$
\mathcal{P}_{w}=
\begin{cases}
H^{1}(I_{w},\mathcal{M}^{\cyc})^{G_{L_{w}}} & \text{$w \nmid p$}\\
H^{1}(L_{v},\mathcal{M}^{\cyc}) & \text{$w=v$}\\
0 & \text{$w=\ov{v}$}.\\
\end{cases}
$$
By definition, 
$
S_{\rm{Gr}}^{\cyc}(g_{/L})=\ker\big{\{}H^{1}(L,\mathcal{M}^{\cyc}) \ra \mathcal{P}_{\Sigma}\big{\}}.
$
Put 
$
P_{\Sigma} = \prod_{w\in \Sigma} P_{w} 
$
for 
$$
P_{w}=
\begin{cases}
H^{1}(L_{w},W) & \text{$w \nmid p$}\\
H^{1}(L_{v},W)_{\div} & \text{$w=v$}\\
\frac{H^{1}(L_{{\ov{v}}},W)}{H^{1}(L_{{\ov{v}}},W)_{\div}}  & \text{$w=\ov{v}$}.\\
\end{cases}
$$
We then have an exact sequence
\begin{equation}\label{ctr1}
0 \ra \Sel_{\rm{Gr}}(g_{/L}) \ra S_{\rm{Gr}}^{\cyc}(g_{/L})[\gamma-1] \ra 
\Im \big{\{} H^{1}(G_{L,\Sigma},W) \ra P_{\Sigma} \big{\}} \cap 
\ker \big{\{} P_{\Sigma} \ra \mathcal{P}_{\Sigma}[\gamma-1]     \big{\}}.
\end{equation}
\begin{itemize}
\item[(a)] In view of the finiteness of $\Sel_{\rm{Gr}}(g_{/L})$ the Poitou--Tate global duality implies that the 
localisation $$H^{1}(G_{L,\Sigma},W) \ra P_{\Sigma}$$ is a surjection. 
Hence  
$S_{\rm{Gr}}^{\cyc}(g_{/L})$ is a torsion 
$\Lambda_{L,\cO_{\lambda}}^{\cyc}$-module 
by (\ref{ctr1}) and Nakayama lemma. 

\item[(b)] The Poitou--Tate duality yields a surjection
$H^{1}(G_{L,\Sigma},\mathcal{M}^{\cyc}) \twoheadrightarrow \mathcal{P}_{\Sigma}$
and so 
$$
S_{\rm{Gr}}^{\cyc}(g_{/L})[\gamma_{\cyc}-1] = 
\ker\big{\{}H^{1}(L,W) \ra \mathcal{P}_{\Sigma}[\gamma_{\cyc}-1]\big{\}}.
$$
Now in light of (\ref{ctr1}) there is an exact sequence
\begin{equation}\label{ctr2}
0 \ra \Sel_{\rm{Gr}}(g_{/L}) \ra S_{\rm{Gr}}^{\cyc}(g_{/L})[\gamma-1] \ra 
 \ker \big{\{} P_{\Sigma} \ra \mathcal{P}_{\Sigma}[\gamma-1]     \big{\}}
 \ra 0,
\end{equation}
and the proof concludes.
\end{itemize}
\end{proof}

In conjunction with Remark \ref{rpin'} we note the following.
\begin{cor}\label{GrCtlsz}
Let the notation and conditions be as in Proposition \ref{GrCtl}. 
Then 
$$
\# S_{\rm{Gr}}^{\cyc}(g_{/L})[\gamma_{\cyc}-1] 
= 
\big{|}\#\Sel_{\rm{Gr}}(g_{/L}) \cdot \prod_{\ell | N} c_{\ell}(g)^{2} \cdot   \delta_{p}^{2}\big{|}_{\lambda}^{-1}
$$
for
$$
\delta_{p}=
\begin{cases} 
1+p-a_{g}(p) & \text{if $\lambda\nmid a_{p}(g)$}\\
1 & \text{else.}\\
\end{cases}
$$
\end{cor}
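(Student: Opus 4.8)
The plan is to deduce Corollary~\ref{GrCtlsz} from Proposition~\ref{GrCtl}(b) by computing the orders of each of the three local terms appearing in the exact sequence
$$
0 \ra \Sel_{\rm{Gr}}(g_{/L}) \ra S_{\rm{Gr}}^{\cyc}(g_{/L})[\gamma_{\cyc}-1] \ra
\prod_{w | N} H^{1}_{\ur}(I_{w}, W) \times H_{v} \times H_{{\ov{v}}} \ra 0.
$$
Since this is an exact sequence of finite groups, one has $\# S_{\rm{Gr}}^{\cyc}(g_{/L})[\gamma_{\cyc}-1] = \#\Sel_{\rm{Gr}}(g_{/L}) \cdot \prod_{w|N}\#H^{1}_{\ur}(I_{w},W)\cdot \#H_v\cdot\#H_{\ov v}$, so everything reduces to identifying each local factor. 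First I would treat the primes $w\mid N$: for each such $w$ the group $H^{1}_{\ur}(I_{w},W) = H^{1}(L_w,W)/\text{(unramified part)}$ is the relevant ``singular'' quotient, and by the standard local Tate/Euler-characteristic computation its order is, up to a $\lambda$-adic unit, the $p$-part of the local Tamagawa factor $c_w(g)$ of $g$ at $w$ (viewed over $L$). One then has to pass from places of $L$ above $N$ back to the rational Tamagawa numbers $c_\ell(g)$: because $(D_L,N)=1$ by \eqref{coprime}, each $\ell\mid N$ either splits in $L$ (contributing two places $w$, each giving $c_\ell(g)$, hence $c_\ell(g)^2$) or is inert (contributing one place with residue field $\BF_{\ell^2}$, which again contributes $c_\ell(g)^2$ up to $\lambda$-units by the standard identity for Tamagawa numbers of quadratic twists/base change). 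This accounts for the $\prod_{\ell\mid N} c_\ell(g)^2$ factor.

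Next I would handle the two places above $p$. The group $H_{\ov v} \simeq H^0(L_{\ov v},W)^\vee$: since $p\nmid 2N$ and $W = V/T$ with $V$ the $p$-adic Galois representation of $g$, the fixed module $H^0(L_{\ov v},W) = W^{G_{L_{\ov v}}}$ is finite, and its order equals the $\lambda$-adic size of $\det(1 - \Frob_{\ov v}^{-1}\mid T)$ up to units — but over $L_{\ov v}\cong \BQ_p$ and on $W(1)$ (note the Selmer group is for $W(1)$, so the relevant normalization gives $H^0 = W(1)^{G_{\BQ_p}}$, of order matching $|L_p(1,g)^{-1}|_\lambda$-type factors). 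The cleanest route is to match this with the Euler factor $1+p-a_g(p)$ when $g$ is ordinary at $\lambda$ and with $1$ otherwise. Symmetrically, $H_v \simeq (W\otimes\Lambda_{L}^{\cyc})^{G_{L_v}}/(\gamma_\cyc-1)(\cdots)$ is computed: when $\lambda\nmid a_p(g)$, the ordinary filtration gives $(W\otimes\Lambda^{\cyc})^{G_{L_v}}$ coming from the unramified quotient $T^-$, and the cokernel of $\gamma_\cyc-1$ on it has order $|1+p-a_g(p)|_\lambda^{-1}$ as well; when $g$ is supersingular ($a_p(g)=0$, hence $\bar T$ irreducible over $G_{\BQ_p}$), both $W^{G_{L_v}}$ and the corresponding invariants vanish, so $H_v$ and $H_{\ov v}$ are trivial, matching $\delta_p = 1$. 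Putting the two places above $p$ together yields the factor $\delta_p^2$.

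The main obstacle I anticipate is bookkeeping the normalizations consistently: the Selmer group is defined with respect to $W(1)$, the local conditions at $v$ and $\ov v$ are genuinely asymmetric (a ``relaxed'' condition at $v$, a ``strict/quotient-by-divisible'' condition at $\ov v$, reflecting the Greenberg-Panchishkin setup), and one must be careful that the Euler factor that appears is exactly $1+p-a_g(p)$ and not $1 - a_g(p)p^{-1}+p^{-1}$ or its variants — the difference being a $\lambda$-adic unit precisely when $p$ is ordinary but needing the correct Frobenius normalization (arithmetic vs.\ geometric) to state cleanly. I would pin this down by reducing to the explicit description of $H^0(L_{w},W)$ and $H^1_\ur(I_w,W)$ via local Tate duality and the formula $\#H^1_\ur(I_w,W)\cdot\#H^0(L_w,W) = \#H^0(L_w,W(1))\cdot(\text{char poly factor})$, and by invoking the standard comparison of $\#W^{G_{L_v}}$ (resp.\ the $\gamma_\cyc-1$-cokernel) with $(1+p-a_g(p))$ in the ordinary case, which is where \eqref{van_L} and \eqref{van_{\BQ}} guarantee there are no spurious torsion contributions. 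The remaining steps — exactness of the sequence and multiplicativity of orders — are immediate from Proposition~\ref{GrCtl}.
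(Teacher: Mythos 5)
Your proposal is correct and follows essentially the same route as the paper, which deduces the corollary immediately from the exact sequence of Proposition \ref{GrCtl}(b) (together with Remark \ref{rpin'}) by multiplying orders of finite groups and evaluating the local terms exactly as in the standard computation of \cite[Thm.~3.3.1]{JSW}: the $\lambda$-part of the Tamagawa factors at the places above $N$, and a factor $\delta_p$ from each of $H_v$ and $H_{\ov{v}}$ (both vanishing in the supersingular case). One caution about your inert-prime bookkeeping: for $\ell\mid N$ inert in $L$ the correct identity is that the Tamagawa factor of $A_g$ over $L_w$ equals $c_\ell(g)\,c_\ell(g\otimes\chi_L)$ up to $\lambda$-units, which need not be $c_\ell(g)^2$ (e.g.\ split multiplicative reduction with $p\mid c_\ell$); but the same imprecision is already in the statement of the corollary, and in all of the paper's applications \eqref{Heeg} forces every $\ell\mid N$ to split, so this does not create a gap in your argument where the corollary is actually used.
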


\subsubsection{No pseudo-null submodules} 

\begin{prop}\label{GrPNS}
Let $g \in S_{2}(\Gamma_{0}(N))$ be an elliptic newform 
and $p \nmid 2N$ a prime.
For $\lambda$ a prime of the Hecke field of $g$ above $p$,  
suppose that $({\rm van}_{\BQ})$ holds for the associated Galois representation and that either $\lambda\nmid a_{p}(g)$ or $a_{p}(g)=0$. 
Let $L$ be an imaginary quadratic field satisfying \eqref{ord}, \eqref{coprime} and $({\rm van}_{L})$. 
 If the condition \eqref{nv} holds, then $X_{\rm{Gr}}^{\cyc}(g_{/L})$ has no non-zero pseudo-null submodule.
\end{prop}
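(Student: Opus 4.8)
The plan is to regard $X_{\rm{Gr}}^{\cyc}(g_{/L})$ as the Pontryagin dual of the Greenberg Selmer group $S_{\rm{Gr}}^{\cyc}(g_{/L})=\ker\{H^{1}(G_{L,\Sigma},\CM^{\cyc})\to\prod_{w\in\Sigma,\,w\nmid p}H^{1}(L_{w},\CM^{\cyc})\times H^{1}(L_{v},\CM^{\cyc})\}$, where $\CM^{\cyc}=T(1)\otimes_{\BZ_{p}}\Lambda_{L}^{\cyc,\vee}$ and (as always) the conditions are strict away from $\ov{v}$ and unrestricted at $\ov{v}$, and to apply the standard criterion, in the style of Greenberg (cf.~\cite[\S4]{Gr2} and the proof of Proposition \ref{GrCtl}), for the absence of nonzero pseudo-null submodules in such duals. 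Over the two-dimensional regular local ring $\Lambda:=\Lambda_{L,\cO_{\lambda}}^{\cyc}$, pseudo-null means of finite length, and a finitely generated $\Lambda$-module has no nonzero such submodule precisely when it has positive depth, i.e.~when $\Hom_{\Lambda}(\Lambda/\mathfrak{m}_{\Lambda},-)$ vanishes on it. Writing $\bullet\in\{\ord,\circ\}$ according to whether $p$ is ordinary or supersingular, the criterion requires: (i) $H^{2}(G_{L,\Sigma},\CM^{\cyc})=0$; (ii) each local cohomology group $H^{1}(L_{w},\CM^{\cyc})$, $w\in\Sigma$, is $\Lambda$-coreflexive; and (iii) the global-to-local map $H^{1}(G_{L,\Sigma},\CM^{\cyc})\to\prod_{w\in\Sigma,\,w\neq\ov{v}}H^{1}(L_{w},\CM^{\cyc})$ is surjective.

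I would verify (i) as the weak Leopoldt statement for $T(1)$-coefficients along the cyclotomic $\BZ_{p}$-extension, which holds unconditionally by work of Iwasawa and Kato; and (ii) by noting that $\CM^{\cyc}$ is cofree over $\Lambda$, so that at $w\mid p$ the group $H^{1}(L_{w},\CM^{\cyc})$ is cofinitely generated cofree (local Euler characteristic), and at $w\nmid p$ it is coreflexive because the cyclotomic $\BZ_{p}$-extension of $L_{w}$ is unramified -- equivalently, the local Iwasawa cohomology $H^{1}(L_{w},T(1)\otimes\Lambda_{L}^{\cyc})$ is a free $\Lambda$-module. Condition (iii) is where the hypothesis \eqref{nv} enters: by Poitou--Tate global duality together with (i), the cokernel of the global-to-local map is the Pontryagin dual of the complementary compact Selmer group $\mathfrak{Y}$ (no restriction at the places $w\neq\ov{v}$, strict at $\ov{v}$), which is a $\Lambda$-submodule of the Iwasawa cohomology module $H^{1}(\cO_{L}[\frac{1}{p}],T(1)\otimes\Lambda_{L}^{\cyc})$; the latter is torsion-free over $\Lambda$ by Theorem \ref{cycIwL}(a), while \eqref{nv} -- non-triviality of $\loc_{v}(\CZ^{\cyc}(g_{/L}))$ and $\loc_{\ov{v}}(\CZ^{\cyc}(g_{/L}))$ -- together with the four-term exact sequences \eqref{exvc1}, \eqref{exv} of the proof of Proposition \ref{Eq} (and their signed analogues) forces $X_{\rm{Gr}}^{\cyc}(g_{/L})$, and hence $\mathfrak{Y}$, to be $\Lambda$-torsion; a torsion submodule of a torsion-free module being zero, $\mathfrak{Y}=0$, which gives (iii). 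Feeding (i)--(iii) into the criterion yields the proposition, the ordinary and supersingular cases running in parallel via the signed local conditions and the signed analogues of the cited exact sequences when $a_{p}(g)=0$.

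The step I expect to be the main obstacle is the rank bookkeeping in (iii): because the Greenberg Selmer group carries an unbalanced set of local conditions (nothing at $\ov{v}$), pinning down that the complementary compact Selmer group $\mathfrak{Y}$ has $\Lambda$-rank zero requires combining \eqref{nv} carefully with the exact sequences of Proposition \ref{Eq} and the torsion-freeness of Iwasawa cohomology. An alternative route, which makes the role of \eqref{nv} more transparent but has its own delicate point, is to argue directly from \eqref{exvc1}: it realises $X_{\rm{Gr}}^{\cyc}(g_{/L})$ as an extension $0\to\mathcal{Q}\to X_{\rm{Gr}}^{\cyc}(g_{/L})\to X_{\st,\bullet}^{\cyc}(g_{/L})\to 0$, where $\mathcal{Q}$ is the cokernel of an injection between two quotients of rank-one Iwasawa-cohomology modules (free, resp.~torsion-free over $\Lambda$ by Theorem \ref{cycIwL-Box}) by the non-torsion elements $\loc_{v}(\CZ^{\cyc})$, $\loc_{\ov{v}}(\CZ^{\cyc})$ of \eqref{nv}; one then shows $\mathcal{Q}$ has positive depth and, separately, that $X_{\st,\bullet}^{\cyc}(g_{/L})$ has no nonzero finite submodule, so that a nonzero pseudo-null submodule of $X_{\rm{Gr}}^{\cyc}(g_{/L})$ would inject into $X_{\st,\bullet}^{\cyc}(g_{/L})$, a contradiction. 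Here the delicate point is that the cokernel of a morphism between one-dimensional Cohen--Macaulay $\Lambda$-modules need not be Cohen--Macaulay, so the explicit factorisation of the connecting map through the rank-one cohomology -- governed by the two explicit reciprocity laws -- must be used to rule out an embedded maximal ideal in $\mathcal{Q}$.
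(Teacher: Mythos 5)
Your main route is genuinely different from the paper's. The paper argues by contradiction and transport of structure: under \eqref{nv} it first records (from the proof of Proposition \ref{Eq}) that $X_{\rm{Gr}}^{\cyc}(g_{/L})$ is $\Lambda_{L,\cO_{\lambda}}^{\cyc}$-torsion, then assumes a nonzero pseudo-null submodule exists and pushes it through the two four-term exact sequences \eqref{exvc2} and \eqref{exv} (whose left exactness is exactly where \eqref{nv} enters) into $X_{\st,\ord}^{\cyc}(g_{/L})$ and then into the ordinary Selmer group $X^{\cyc}(g_{/L})$, contradicting \cite[Prop.~3.3.19]{SU}; the supersingular case is the signed analogue. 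You instead apply Greenberg's structural criterion directly to the Greenberg Selmer group, using Poitou--Tate together with the torsion-freeness of $H^{1}(\cO_L[\frac{1}{p}],T(1)\otimes_{\BZ_p}\Lambda)$ and \eqref{nv} at $\ov{v}$ (via the rank-one module of Theorem \ref{cycIwL-Box}) to kill the complementary compact Selmer group $\mathfrak{Y}$ and hence get surjectivity of the global-to-local map. This is precisely the alternative the paper itself flags in Remark \ref{nPN} (Greenberg's argument from \cite{Gr3} applies once $X_{\rm{Gr}}^{\cyc}$ is torsion), and it buys some independence: it uses only the first reciprocity law and no input from \cite{SU}. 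Conversely, your ``alternative route'' via \eqref{exvc1} is essentially the paper's actual proof, and the delicate point you identify there (cokernels of maps of one-dimensional Cohen--Macaulay modules, finite submodules of $X_{\st,\bullet}$) is exactly what the paper handles by passing all the way to $X^{\cyc}(g_{/L})$ and citing Skinner--Urban.

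One verification in your main route is wrong as stated, namely (ii) at the places $w\in\Sigma$ with $w\nmid p$. There the local Euler characteristic vanishes, so $H^{1}(L_w,\mathcal{M}^{\cyc})$ is a cotorsion $\Lambda_{L,\cO_{\lambda}}^{\cyc}$-module and is coreflexive only if it is zero; correspondingly $H^{1}(L_w,T(1)\otimes_{\BZ_p}\Lambda_L^{\cyc})$ is a torsion $\Lambda$-module and is never free unless it vanishes --- unramifiedness of the cyclotomic extension at $w$ does not give freeness. What Greenberg's criterion actually needs at such places is weaker: that the Pontryagin dual of the local term has no nonzero finite submodule (``almost divisibility''), or equivalently one first treats the non-primitive Selmer group obtained by dropping the conditions at $w\nmid p$ and then descends; this does hold here, so your strategy survives, but the justification of (ii) must be repaired along these lines (and at $w\mid p$ the cofreeness claim also deserves an argument, via vanishing of $H^{0}$ and $H^{2}$ along the tower, rather than the Euler characteristic alone). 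With (ii) corrected in this way, your conditions (i) and (iii) are fine: (i) is weak Leopoldt for the cyclotomic deformation over $L$, available from Kato applied to $g$ and $g\otimes\chi_L$, and your identification of the cokernel of the global-to-local map with $\mathfrak{Y}^{\vee}$, followed by $\mathfrak{Y}\subseteq H^{1}_{\mathrm{rel},\bullet}$ torsion-free of rank one and killed by \eqref{nv}, is correct.
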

\begin{proof}
We consider the ordinary case, and an analogous argument applies for the supersingular case (see also Remark \ref{nPN}).

As seen in the proof of Proposition \ref{Eq}, 
$
X_{\rm{Gr}}^{\cyc}(g_{/L})
$
is a torsion $\Lambda_{L,\cO_{\lambda}}^{\cyc}$-module under the condition \eqref{nv}.
Suppose that $X_{\rm{Gr}}^{\cyc}(g_{/L})$ has a non-zero pseudo-null submodule. 
Then so does $X_{\st,\ord}^{\cyc}(g_{/L})$ by\footnote{Note that $\Lambda_{L,\cO_{\lambda}}^{\cyc,\ur}/ (\mathcal{L}_{p}^{\rm{Gr}}(g_{/L})^{\cyc})$ 
does not have a non-zero pseudo-null submodule.} the exact sequence (\ref{exvc2}), 
 and in turn $X^{\cyc}(g_{/L})$ also does by the exact sequence \eqref{exv}.
The latter contradicts \cite[Prop.~3.3.19]{SU}.
\end{proof}
\begin{remark}\label{nPN}
A special case as in \cite[Prop.~3.3.12]{JSW}  suffices for applications in this paper.
More generally, if $X_{\rm{Gr}}^{\cyc}(g_{/L})$
is a torsion $\Lambda_{L,\cO_{\lambda}}^{\cyc}$-module, then 
an argument of Greenberg \cite{Gr3} shows that it does not have a non-zero pseudo-null submodules. 
\end{remark}
A consequence of Corollary \ref{GrCtlsz} and Proposition \ref{GrPNS}:
\begin{cor}\label{Grsz}
Let the notation and conditions be as in Proposition \ref{GrPNS}. 
Then for a generator $h$ of $\xi(X_{\rm{Gr}}^{\cyc}(g_{/L}))$, we have
$$
|h(0)|_{p}^{-1}=
\# S_{\rm{Gr}}^{\cyc}(g_{/L})[\gamma_\cyc-1] 
= 
\big{|}\#\Sel_{\rm{Gr}}(g_{/L}) \cdot \prod_{\ell | N} c_{\ell}(g)^{2} \cdot   \delta_{p}^{2}\big{|}_{\lambda}^{-1}.
$$
\end{cor}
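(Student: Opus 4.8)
The statement of Corollary \ref{Grsz} is a direct combination of two facts already established: Corollary \ref{GrCtlsz}, which computes $\# S_{\rm{Gr}}^{\cyc}(g_{/L})[\gamma_\cyc-1]$ in terms of $\#\Sel_{\rm{Gr}}(g_{/L})$, the Tamagawa factors $\prod_{\ell\mid N}c_\ell(g)^2$ and $\delta_p^2$; and Proposition \ref{GrPNS}, which asserts that $X_{\rm{Gr}}^{\cyc}(g_{/L})$ has no non-zero pseudo-null $\Lambda_{L,\cO_\lambda}^{\cyc}$-submodule. So the only new content to supply is the link between $\# S_{\rm{Gr}}^{\cyc}(g_{/L})[\gamma_\cyc-1]$ and $|h(0)|_p^{-1}$ for $h$ a characteristic power series of the dual Selmer group. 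I would state at the outset that the hypotheses of both Corollary \ref{GrCtlsz} and Proposition \ref{GrPNS} are in force here (in particular $(\mathrm{van}_\BQ)$, $(\mathrm{van}_L)$, the splitting/coprimality conditions, and the non-vanishing hypothesis \eqref{nv}, which gives $\Lambda_{L,\cO_\lambda}^{\cyc}$-torsionness of $X_{\rm{Gr}}^{\cyc}(g_{/L})$), and that finiteness of $\Sel_{\rm{Gr}}(g_{/L})$ follows from \eqref{nv} too via the control sequence.

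First I would recall that $\Lambda_{L,\cO_\lambda}^{\cyc}$ is (after identifying $\Gamma_L^{\cyc}$ with $\Gamma$ and choosing $\gamma_\cyc$) isomorphic to $\cO_\lambda[\![T]\!]$, with $\gamma_\cyc-1\leftrightarrow T$, a two-dimensional regular local ring. Writing $X=X_{\rm{Gr}}^{\cyc}(g_{/L})$, torsionness gives a pseudo-isomorphism $X\sim \bigoplus_i \Lambda_{L,\cO_\lambda}^{\cyc}/(f_i)$ with $\prod_i f_i = h$ a generator of $\xi(X)$. The key point is that, since $X$ has no non-zero pseudo-null submodule by Proposition \ref{GrPNS}, one has the standard fact that for a height-one prime $\wp=(T)$ the specialisation $X/TX = X_{/\tor}\text{-part}$ is controlled exactly by $h(0)$: more precisely, $X$ has no non-zero pseudo-null submodule and no non-zero finite submodule (finiteness being a special case since $(T)$ has height one and $\cO_\lambda$ is not a field $\Rightarrow$ a finite module is pseudo-null here — actually I should be careful and just invoke "no pseudo-null submodule" and the fact that $X/TX$ is finite because $(T)\nmid h$), so that $\#(X/TX) = |h(0)|_p^{-1}$. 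Here one needs to know $h(0)\neq 0$, i.e. $T\nmid h$; this follows from Corollary \ref{GrCtlsz}, since $S_{\rm{Gr}}^{\cyc}(g_{/L})[\gamma_\cyc-1]$ — which is $(X/TX)^\vee$ — is shown there to be \emph{finite}. I would spell this out: by Corollary \ref{GrCtlsz} the Pontryagin dual of $S_{\rm{Gr}}^{\cyc}(g_{/L})[\gamma_\cyc-1]$ is finite, it equals $X/(\gamma_\cyc-1)X$, and a torsion $\Lambda_{L,\cO_\lambda}^{\cyc}$-module with no non-zero pseudo-null submodule has $X/TX$ finite iff $T\nmid \mathrm{char}(X)$, in which case $\#(X/TX)=|h(0)|_p^{-1}$ by the elementary divisor argument applied degree-by-degree (using $\#(\Lambda_{L,\cO_\lambda}^{\cyc}/(f,T)) = |f(0)|_p^{-1}$ for $T\nmid f$, and that the maps $\mathrm{Tor}$ in the pseudo-isomorphism vanish by the no-pseudo-null hypothesis).

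Assembling: $|h(0)|_p^{-1} = \#(X/(\gamma_\cyc-1)X) = \#\big( S_{\rm{Gr}}^{\cyc}(g_{/L})[\gamma_\cyc-1]\big)$ by Pontryagin duality, and the last quantity equals $\big|\#\Sel_{\rm{Gr}}(g_{/L})\cdot\prod_{\ell\mid N}c_\ell(g)^2\cdot\delta_p^2\big|_\lambda^{-1}$ by Corollary \ref{GrCtlsz}. I expect the only mildly delicate point to be the justification that "no pseudo-null submodule" upgrades the inequality $\#(X/TX)\leq |h(0)|_p^{-1}$ (always true for torsion modules with $T\nmid h$) to an equality — this is the standard lemma on specialisation of characteristic ideals (e.g. as used in \cite[Prop.~3.3.12]{JSW} and the references therein), and I would either cite it directly or reproduce the short proof via the snake lemma applied to $0\to X\to \bigoplus\Lambda/(f_i)\to C\to 0$ with $C$ pseudo-null, noting $C^{(T)}=C[T]=0$ forces the cokernel comparison to be exact. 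Everything else is bookkeeping. The main obstacle, such as it is, is simply making sure the hypotheses of Proposition \ref{GrPNS} (hence \eqref{nv}) and Corollary \ref{GrCtlsz} (hence finiteness of $\Sel_{\rm{Gr}}(g_{/L})$) are simultaneously available — but these are exactly the standing hypotheses inherited from Proposition \ref{GrPNS}, so the corollary is essentially immediate.
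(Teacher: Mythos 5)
Your argument is correct and coincides with the paper's intended proof: the corollary is stated there without further argument, as an immediate consequence of Corollary \ref{GrCtlsz} and Proposition \ref{GrPNS}, combined via exactly the standard specialisation lemma you supply ($\#\big(X/(\gamma_\cyc-1)X\big)=|h(0)|_p^{-1}$ for a torsion $\Lambda_{L,\cO_\lambda}^{\cyc}$-module with no non-zero pseudo-null, i.e.\ finite, submodule and with $h(0)\neq 0$, the latter guaranteed by the finiteness in Corollary \ref{GrCtlsz}). Two minor points, neither affecting the argument: the finiteness of $\Sel_{\rm Gr}(g_{/L})$ is a hypothesis imported from Proposition \ref{GrCtl}/Corollary \ref{GrCtlsz} rather than a consequence of \eqref{nv} (which only yields torsionness of $X_{\rm Gr}^{\cyc}(g_{/L})$), and in your snake-lemma sketch $C[\gamma_\cyc-1]$ need not vanish for a finite module $C$ — one instead uses $\#C[\gamma_\cyc-1]=\#\big(C/(\gamma_\cyc-1)C\big)$, so the two finite contributions cancel.
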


\subsection{Gross--Zagier and Kolyvagin theorem}
\subsubsection{Mordell--Weil groups} 

\begin{thm}\label{MWrk} 
Let $g \in S_{2}(\Gamma_{0}(N))$ be an elliptic newform, $F$ the Hecke field and $A_{g}$ an associated $\GL_2$-type abelian variety over $\BQ$.
Then
$$
\ord_{s=1}L(s,g)=1 \implies \rank_{\BZ} A_{g}(\BQ)=[F:\BQ].
$$
\end{thm}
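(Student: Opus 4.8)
The strategy is to deduce the theorem by combining Theorem~\ref{mPR} (the Perrin-Riou Conjecture, now available through the two-variable zeta element) with Kato's Euler system bound, thereby bypassing Kolyvagin's descent on Heegner points. Fix a prime $p\geq 5$ with $p\nmid N$ and a prime $\lambda\mid p$ of the Hecke field $F$ such that $g$ is ordinary with respect to $\lambda$; such $p$ exist, since the ordinary primes for $g$ have density at least $\tfrac12$ (density one if $g$ is non-CM, and the primes split in the CM field otherwise). Then Theorem~\ref{mPR} applies to the pair $(g,\lambda)$. Let $V=V_{F_\lambda}$ be the associated Galois representation, $T\subset V$ a stable lattice and $W=V/T$; recall from \S\ref{sSel} that $W(1)\cong A_g[\lambda^\infty]$ and $V(1)\cong V_\lambda(A_g)$, and that, as $p\nmid N$, $\dim_{F_\lambda}H^1_f(\BQ_p,V(1))=1$. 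Since $\cO_F\hookrightarrow\End_\BQ(A_g)$, the group $A_g(\BQ)\otimes_\BZ\BQ$ is an $F$-vector space of dimension $\rank_\BZ A_g(\BQ)/[F:\BQ]$, and the Kummer map gives an $F_\lambda$-linear injection $A_g(\BQ)\otimes_{\cO_F}F_\lambda\cong(A_g(\BQ)\otimes_\BZ\BQ)\otimes_FF_\lambda\hookrightarrow H^1_f(\BQ,V(1))$. It therefore suffices to show $\dim_{F_\lambda}\bigl(A_g(\BQ)\otimes_{\cO_F}F_\lambda\bigr)=1$.

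For the lower bound, note that $\ord_{s=1}L(s,g)=1$ forces $L(1,g)=0$, so Theorem~\ref{mPR} (part (2) of Conjecture~\ref{PR}) produces a nonzero $P\in A_g(\BQ)\otimes_\BZ\BQ$. Using the $\cO_F$-action this yields a nonzero element of $A_g(\BQ)\otimes_{\cO_F}F_\lambda$, whence $\dim_{F_\lambda}\bigl(A_g(\BQ)\otimes_{\cO_F}F_\lambda\bigr)\geq 1$, i.e.\ $\rank_\BZ A_g(\BQ)\geq[F:\BQ]$.

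For the upper bound, the weak form of Theorem~\ref{mPR} gives $\loc_p(z_{\Kato}(g))\neq 0$, and in particular $z_{\Kato}(g)\neq 0$. By Kato's Euler system divisibility (Theorem~\ref{KaMC_r}), $X_{\st}(g)$ is $\Lambda_{\cO_\lambda}$-torsion and its characteristic ideal divides that of $H^1(\BZ[\frac1p],T(1)\otimes\Lambda)/\Lambda_{\cO_\lambda}\cdot\bz_\gamma(g)$; since $z_{\Kato}(g)$ is the nonzero cyclotomic specialisation of $\bz_\gamma(g)$, evaluating this divisibility at the trivial character (where it suffices to work rationally) and invoking the control theorem shows that the strict Selmer group $\Sel_{\st}(g)$ is finite, i.e.\ of corank zero. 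By Poitou--Tate global duality applied to the exact sequence $0\to\Sel_{\st}(g)\to\Sel(g)\to H^1_f(\BQ_p,W(1))$, one gets $\corank_{\cO_\lambda}\Sel(g)\leq\corank_{\cO_\lambda}\Sel_{\st}(g)+\corank_{\cO_\lambda}H^1_f(\BQ_p,W(1))=0+1=1$. As $\dim_{F_\lambda}H^1_f(\BQ,V(1))=\corank_{\cO_\lambda}\Sel(g)$ and the Kummer map embeds $A_g(\BQ)\otimes_{\cO_F}F_\lambda$ into $H^1_f(\BQ,V(1))$, we conclude $\dim_{F_\lambda}\bigl(A_g(\BQ)\otimes_{\cO_F}F_\lambda\bigr)\leq 1$, i.e.\ $\rank_\BZ A_g(\BQ)\leq[F:\BQ]$. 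Combining with the lower bound proves $\rank_\BZ A_g(\BQ)=[F:\BQ]$.

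The substantive work is already done upstream: the essential input is Theorem~\ref{mPR}, whose proof rests on the two-variable zeta element over an auxiliary imaginary quadratic field, and which supplies precisely the two facts exploited here — the rational point $P$ for the lower bound, and the non-vanishing $\loc_p(z_{\Kato}(g))\neq 0$ that activates Kato's sharp bound for the upper bound. Given Theorem~\ref{mPR}, the present deduction is essentially formal; the points requiring care are the choice of a prime $p$ at which $g$ is ordinary (so that Theorem~\ref{mPR} applies cleanly and the $p=3$ subtleties are avoided, and because the CM case forces a split prime) and the linear-algebra bookkeeping relating $F_\lambda$-coranks of Selmer groups to $\BZ$-ranks via $\cO_F$-equivariance and the Kummer embedding. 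Note that running both bounds through $z_{\Kato}(g)$ alone — it is a nonzero class in $H^1_f(\BQ,V(1))$ once $L(1,g)=0$ — would only control $\dim_{F_\lambda}H^1_f(\BQ,V(1))$ and not the Mordell--Weil rank, so the strong form of Conjecture~\ref{PR} is genuinely needed to pin down $A_g(\BQ)$ (and, as a byproduct, to force $\Sha(A_g)[\lambda^\infty]$ to have corank zero).
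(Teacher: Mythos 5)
Your argument is correct in outline, but it is a genuinely different route from the paper's. The paper does not invoke Theorem~\ref{mPR} here: it fixes an ordinary prime, chooses an auxiliary imaginary quadratic field $L$ satisfying \eqref{Heeg} with $\ord_{s=1}L(s,g_{/L})=1$, uses Gross--Zagier to make the Heegner point non-torsion, converts this via the $p$-adic Waldspurger formula (the second reciprocity law for the two-variable zeta element) into the non-vanishing \eqref{nv}--\eqref{locnv}, deduces finiteness of $\Sel_{\mathrm{Gr}}(g_{/L})$ from Corollary~\ref{GKSU} (Kato's divisibility transported through Proposition~\ref{Eq}), so that $\dim_{F_\lambda}H^1(G_{L,\Sigma},V)=2$, and then cuts the Bloch--Kato Selmer group over $L$ down to dimension one using the Kato class $z_{\Kato}(g')$ of the quadratic twist, which is non-crystalline at $p$ because $L(1,g')\neq 0$; the non-torsion Heegner point gives the lower bound. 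You instead take Theorem~\ref{mPR} as a black box and work entirely over $\BQ$: the strong form supplies the rational point for the lower bound (that point is, inside the proof of Theorem~\ref{mPR}, the projection of a Heegner point over an auxiliary field, so the two proofs draw on the same source), and the weak form supplies $\loc_p(z_{\Kato}(g))\neq 0$, which, fed into Kato's divisibility (Theorem~\ref{KaMC_r}) and specialised at the trivial character, bounds $\corank\,\Sel(g)$ by one. This is legitimate and non-circular, since the proof of Theorem~\ref{mPR} (Part I) uses the two-variable zeta element, BDP and Gross--Zagier but nothing from Part II; it is the familiar ``Perrin-Riou plus Kato'' deduction, with the deep content repackaged inside Theorem~\ref{mPR}. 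What the paper's route buys is that the intermediate facts it establishes over $L$ (finiteness of $\Sel_{\mathrm{Gr}}(g_{/L})$ and \eqref{locnv}) are exactly what is reused in Theorems~\ref{boundSha} and \ref{p-BSD}; what your route buys is brevity over $\BQ$, at the cost of some bookkeeping you only gesture at: the non-vanishing at the trivial character of the characteristic ideal of $H^1(\BZ[\frac{1}{p}],T(1)\otimes_{\BZ_p}\Lambda)/\Lambda_{\cO_\lambda}\cdot\bz_\gamma(g)$ rests on the injection of $H^1(\BZ[\frac{1}{p}],T(1)\otimes_{\BZ_p}\Lambda)/(\gamma_\cyc-1)$ into $H^1(\BZ[\frac{1}{p}],T(1))$ together with the rank-one statement of Theorem~\ref{cycIwQ}, and the control theorem for the strict Selmer group plus the comparison of $\ker(\loc_p)$ on $\Sel(g)$ with the strict group (finite discrepancies at $v\mid N$, using $H^0(\BQ_v,V(1))=0$) should be written out, though all of this is standard.
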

\begin{proof}
Let $p \nmid 2N$ be an ordinary\footnote{The existence of positive density of such primes is well-known (cf.~\cite[\S7]{Hi4}).} prime. 
Let $\iota_{\infty}:\ov{\BQ}\hookrightarrow \BC$ and $\iota_{p}:\ov{\BQ}\hookrightarrow \BC_p$ be embeddings, and $\lambda$ the prime of the Hecke field $F$ above $p$ arising from $\iota_p$. Let $\rho: G_{\BQ} \ra \GL_{2}(F_{\lambda})$ be the attached Galois representation, $V$ the underlying vector space and $T$ a Galois stable lattice. 

Let $L$ be an imaginary quadratic field satisfying \eqref{ord} and \eqref{coprime}, in particular $(p)=v\ov{v}$ with $v$ determined via the embedding $\iota_p$.
Suppose that $L$ also satisfies: 
\begin{itemize}
\item The classical Heegner hypothesis \eqref{Heeg}, 
\item $\ord_{s=1}L(s, g_{/L})=1$.
\end{itemize}
The existence of such an $L$ is a special case of the main result of \cite{FH}. 

By the Gross--Zagier formula \cite{GZ}, the Heegner point 
$
P_{L} \in A_{g}(L)
$
arising from $y_{L}\in J_{0}(N)(L)$
is non-torsion. In conjunction with the $p$-adic Waldspurger formula the non-triviality implies that 
$\phi_{{\bf 1}_{L}}(\CL_{p}^{\rm Gr}(g_{/L}))\neq 0$ 
(cf.~Proposition~\ref{GRL=BDPL-II}). 
 Hence
\begin{equation}\label{locnv}
\phi_{{\bf 1}_{L}}(\loc_{\ov{v}}(\CZ(g_{/L})^{\cyc})) \neq 0, 
\end{equation}
and the condition \eqref{nv} holds.  
Then $
\phi_{{\bf 1}_{L}}(\xi(X_{\rm{Gr}}(g_{/L}))) \neq 0
$
by Corollary \ref{GKSU}, 
and so 
$
\Sel_{\rm{Gr}}(g_{/L})
$
is finite (cf.~\eqref{ctr1}). 
In particular, 
$
H^{1}_{\st}(L,V)=0
$
for 
$$
H^{1}_{\st}(L,V):=\ker \big{\{} H^{1}(G_{L,\Sigma},V) \ra H^{1}(L_{v},V) \times H^{1}(L_{\ov{v}},V) \big{\}}. 
$$
In view of \cite[Lem. 2.3.1]{Sk'} it then follows that 
\begin{equation}\label{rkL}
\dim_{F_{\lambda}} H^{1}(G_{L,\Sigma},V) = 2.
\end{equation} 

Since $L(1,g')\neq 0$, 
 Theorem \ref{ERLBKI}
implies that 
$
0 \neq z_{\rm{Kato}}(g') \in H^{1}(G_{L,\Sigma},V)
$. 
Note that 
$z_{\rm{Kato}}(g') \notin H^{1}_{\rm{f}}(G_{L,\Sigma},V)$
by the explicit reciprocity law \cite[Thm.~12.5]{K}. 
In view of (\ref{rkL}) it thus follows that 
$$
\dim_{F_{\lambda}} H^{1}_{\rm{f}}(L,V) \leq 1.
$$  
On the other hand, 
$
\dim_{F_{\lambda}} H^{1}_{\rm{f}}(L,V) \geq 1
$
since the Heegner point $P_L$ is non-torsion. 
\end{proof}
\begin{remark} 
The above rank one approach is akin to that of Kato in the analytic rank zero case \cite[\S13]{K}. 
It is independent from the anticyclotomic Kolyvagin system of Heegner points \cite{Ko}. The Heegner point 
presents itself only in the form of reciprocity laws: $p$-adic Waldspurger and Gross--Zagier formulas.
\end{remark}

\subsubsection{Tate--Shafarevich groups}\label{Shaf}

\begin{thm}\label{boundSha} 
Let $g \in S_{2}(\Gamma_{0}(N))$ be an elliptic newform, $F$ the Hecke field and $\cO$ the integer ring. Let $A_{g}$ be an associated $\GL_2$-type abelian variety over $\BQ$ with $\cO\hookrightarrow \End(A_{g})$.
Let $p\nmid 2N$ be a prime and $\lambda$ a prime of the 
Hecke field $F$ above $p$. 
 Suppose that either $\lambda\nmid a_{p}(g)$ or $a_p(g)=0$. 
\begin{itemize}
\item[(a)] We have
$$
\ord_{s=1}L(s,g)=1 \implies \#\Sha(A_{g})[\lambda^{\infty}]<\infty.
$$
\item[(b)] 
Suppose that $\ord_{s=1}L(s,g)\leq 1$. Suppose also that $({\rm van}_{\BQ})$ and \eqref{im} hold for the associated $\lambda$-adic Galois representation. 
Let $L$ be an imaginary quadratic field satisfying \eqref{ord}, \eqref{coprime}, \eqref{Heeg} and $({\rm irr}_{L})$ so that $\ord_{s=1}L(s,g_{/L})=1.$
Let $P_{L} \in A_{g}(L)$ be the Heegner point arising from an $(\cO,\lambda)$-optimal modular parametrisation of $A_{g}$.
Then 
$$
\big{[}A_{g}(L) \otimes_{\cO} \cO_{\lambda} : \cO_{\lambda}\cdot P_{L}\big{]}^{2}
\geq
\big{|}\#\Sha(A_{g/L})[\lambda^{\infty}] \cdot \prod_{q|N}c_{q}(A_{g})^{2}
\big{|}_{\lambda}^{-1}.
$$
\end{itemize}
\end{thm}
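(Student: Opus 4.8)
The strategy is to combine the two-variable zeta element machinery of Part I with the Euler-system bound of Kato and the Greenberg main conjecture, exactly as in the proof of the Mordell--Weil statement of Theorem~\ref{MWrk}, but tracking everything integrally. For part (a), when $\ord_{s=1}L(s,g)=1$ we choose an auxiliary imaginary quadratic field $L$ satisfying \eqref{ord}, \eqref{coprime}, \eqref{Heeg}, and $\ord_{s=1}L(s,g_{/L})=1$ (via \cite{FH}), so that $L(1,g')\neq 0$ for $g'=g\otimes\chi_L$. The Gross--Zagier formula gives a non-torsion Heegner point $P_L$, and via the $p$-adic Waldspurger formula (Proposition~\ref{GRL=BDPL-II} in the ordinary case, Proposition~\ref{GRL=BDPL-II-ss} in the supersingular case) this forces $\phi_{{\bf 1}_L}(\loc_{\bar v}(\CZ^{\Box}(g_{/L})^{\cyc}))\neq 0$, i.e.\ the non-vanishing hypothesis \eqref{nv} holds. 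Then Corollary~\ref{GKSU} (which relies on Kato's Theorem~\ref{KaMC_r} via Proposition~\ref{Eq}) yields $\phi_{{\bf 1}_L}(\xi(X_{\rm Gr}^{\rm cyc,ur}(g_{/L})))\neq 0$, so $\Sel_{\rm Gr}(g_{/L})$ is finite by the control sequence \eqref{ctr1}. Finiteness of $\Sel_{\rm Gr}(g_{/L})$, together with the factorisation over $\BQ$ and the twist $g'$, bounds $\Sha(A_{g/L})[\lambda^\infty]$ and hence $\Sha(A_g)[\lambda^\infty]$; concretely, Corollary~\ref{GrCtlsz} plus the non-vanishing of the relevant $\Sha$-terms over $\BQ$ for $g'$ (by Kato's rank-zero result) give finiteness of $\Sha(A_g)[\lambda^\infty]$.

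For part (b), the point is to make the above bound \emph{quantitative} and match it against the Heegner index, which is precisely what the BSD formula predicts. Under $({\rm van}_\BQ)$ and \eqref{im}, Kato's integral divisibility (Theorem~\ref{KaMC_r}(b)) gives $\xi(X_{\st}(g))\mid \xi(H^1(\BZ[\tfrac1p],T\otimes\Lambda)/\Lambda_{\cO_\lambda}\cdot\bz_\gamma(g))$ in $\Lambda_{\cO_\lambda}$, and likewise for $g'$. Via Lemma~\ref{Eq'} and the two-variable machinery (Proposition~\ref{Eq}), this propagates to a divisibility $\mathcal{L}_p^{\rm Gr}(g_{/L})\mid \xi(X_{\rm Gr}^{\rm cyc,ur}(g_{/L}))$ in $\Lambda_{L,\cO_\lambda}^{\rm cyc,ur}$ after using \eqref{im}, i.e.\ Corollary~\ref{GKSU}(a). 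Specialising this divisibility at the trivial character $\phi_{{\bf 1}_L}$, using Corollary~\ref{Grsz} on the Selmer side and Proposition~\ref{GRL=BDPL-II}(i) (resp.\ its supersingular analogue Proposition~\ref{GRL=BDPL-II-ss}(i)) together with Lemma~\ref{GRL=logheegner} (resp.\ Lemma~\ref{GRL=logheegner-ss}) on the $L$-value side to express $\phi_{{\bf 1}_L}(\mathcal{L}_p^{\rm Gr}(g_{/L}))$ in terms of $(\log_{\omega_g} y_L)^2$, one obtains
$$
\big|\#\Sel_{\rm Gr}(g_{/L})\cdot \prod_{\ell\mid N}c_\ell(g)^2\cdot\delta_p^2\big|_\lambda^{-1}
\ \Big|\ \big|(\log_{\omega_g} y_L)^2\cdot(\text{units})\big|_\lambda^{-1}
$$
in $\BZ_\lambda$. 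The $p$-adic valuation of $\log_{\omega_g}(y_L)$ is the $\lambda$-adic valuation of the Heegner index $[A_g(L)\otimes_\cO\cO_\lambda:\cO_\lambda\cdot P_L]$ up to an elliptic-logarithm unit contribution (since $P_L$ is built from an $(\cO,\lambda)$-optimal parametrisation, the differential $\omega_g$ pulls back to a $\lambda$-unit multiple of a N\'eron differential of $A_g$, so $\log_{\omega_g}$ matches the logarithm used to define the index). Combining with the formula relating $\Sel_{\rm Gr}(g_{/L})$ to $\Sha(A_{g/L})[\lambda^\infty]$ and Tamagawa factors (Corollary~\ref{GrCtlsz}) then yields the asserted inequality $[A_g(L)\otimes_\cO\cO_\lambda:\cO_\lambda\cdot P_L]^2\geq |\#\Sha(A_{g/L})[\lambda^\infty]\cdot\prod_{q\mid N}c_q(A_g)^2|_\lambda^{-1}$.

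The main obstacle — and where care is needed — is the bookkeeping of periods, Tamagawa numbers, and the various unit factors ($\delta_p$, the Euler factors $\CE(\zeta)$ at the trivial character, the CM periods $\Omega_p$, the constant $u_L$ shown to lie in $\BZ_{(p)}^\times$ in Part I, and the congruence-number normalisations relating $S_\cO$-generators to N\'eron differentials) so that nothing spurious enters the $\lambda$-adic valuation; the $({\rm van}_\BQ)$ and \eqref{im} hypotheses are exactly what is needed to upgrade Kato's divisibility from $\Lambda_{\cO_\lambda}\otimes\BQ_p$ to $\Lambda_{\cO_\lambda}$, without which only the rational statement of part~(b) — and hence only finiteness, as in part~(a) — would be available. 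A secondary subtlety is that $\Sel_{\rm Gr}(g_{/L})$ is not literally $\Sha(A_{g/L})[\lambda^\infty]$: one must use the exact sequence of Proposition~\ref{GrCtl}(b) (equivalently Corollary~\ref{GrCtlsz}), which introduces the local factors $\prod_{\ell\mid N}c_\ell(g)^2$ and $\delta_p^2$; in the supersingular case $\delta_p=1$, which is why the final inequality has exactly the Tamagawa factors $\prod_{q\mid N}c_q(A_g)^2$ and no extra $p$-factor, consistent with the BSD prediction.
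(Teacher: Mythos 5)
Your overall route is the paper's: part (a) as in the proof of Theorem~\ref{MWrk} (choose $L$ via \cite{FH} with \eqref{Heeg} and $\ord_{s=1}L(s,g_{/L})=1$, get \eqref{locnv} from Gross--Zagier plus the $p$-adic Waldspurger formula, then finiteness of $\Sel_{\rm Gr}(g_{/L})$ from Corollary~\ref{GKSU}), and in part (b) the specialisation of the integral divisibility of Corollary~\ref{GKSU}(a) at the trivial character, with Corollary~\ref{Grsz} on the Selmer side and the Waldspurger formula on the analytic side. One slip to fix before the real issue: you state the divisibility as $\mathcal{L}_p^{\rm Gr}(g_{/L})\mid\xi(X_{\rm Gr}^{\cyc,\ur}(g_{/L}))$ while calling it Corollary~\ref{GKSU}(a); that corollary — and what your subsequent display actually uses — is $\xi(X_{\rm Gr}^{\cyc,\ur}(g_{/L}))\mid(\mathcal{L}_p^{\rm Gr,\cyc}(g_{/L}))$, the Kato/Euler-system direction giving an \emph{upper} bound for the Selmer group. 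With the divisibility as you literally wrote it, the specialisation would produce the reverse inequality (a lower bound on $\Sha$), so the direction is not cosmetic.

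The genuine gap is in the final assembly, where the global Heegner index and $\#\Sha(A_{g/L})[\lambda^\infty]$ are supposed to enter. You claim that $v_\lambda(\log_{\omega_g}(y_L))$ equals the valuation of the global index $[A_g(L)\otimes_\cO\cO_\lambda:\cO_\lambda\cdot P_L]$, and that the comparison of $\Sel_{\rm Gr}(g_{/L})$ with $\Sha$ is provided by Corollary~\ref{GrCtlsz}. Neither is correct. The logarithm at $\ov v$ computes the \emph{local} index, as in \eqref{pre-2}: $[A_g(L_{\ov v})/A_g(L_{\ov v})_{\tor}\otimes_\cO\cO_\lambda:\cO_\lambda\cdot P_L]\cdot\delta_p=\big|\tfrac{1+p-a_p}{p}\log_{A_g(L_{\ov v})}P_L\big|_\lambda^{-1}$; and Corollary~\ref{GrCtlsz} only converts $\#S_{\rm Gr}^{\cyc}(g_{/L})[\gamma_\cyc-1]$ into $\#\Sel_{\rm Gr}(g_{/L})\cdot\prod_{\ell\mid N}c_\ell(g)^2\cdot\delta_p^2$; it says nothing about $\Sha$ or Heegner indices. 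The missing bridge is the Jetchev--Skinner--Wan formula \eqref{pre-1},
$$
\#\Sel_{\rm Gr}(g_{/L})=\#\Sha(A_{g/L})[\lambda^\infty]\cdot
\frac{\big[A_g(L_{\ov v})/A_g(L_{\ov v})_{\tor}\otimes_\cO\cO_\lambda:\cO_\lambda\cdot P_L\big]^2}
{\big[A_g(L)\otimes_\cO\cO_\lambda:\cO_\lambda\cdot P_L\big]^2},
$$
which is exactly what trades the local index for the global one, introduces $\#\Sha$, and lets the $\delta_p$'s cancel; without it your displayed relations do not yield the stated inequality. (Relatedly, your remark that the absence of an extra $p$-factor is ``because $\delta_p=1$ in the supersingular case'' is off: in the ordinary case $\delta_p$ need not be a $\lambda$-unit — it cancels between Corollary~\ref{GrCtlsz} and \eqref{pre-2}.)
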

\begin{proof} 
Let $\iota_{\infty}:\ov{\BQ}\hookrightarrow \BC$ and $\iota_{p}:\ov{\BQ}\hookrightarrow \ov{\BQ}_{p}$ be embeddings, the latter so that it induces the prime $\lambda$ of the Hecke field. 
 \begin{itemize} 
\item[(a)] Let $L$ be an imaginary quadratic field 
as in the proof of Theorem \ref{MWrk}. 
As seen in the proof, the Selmer group  
$\Sel_{\rm{Gr}}(g_{/L})$ is finite and then so is 
$
\Sha(A_{g})[\lambda^{\infty}]
$
(see also \cite[\S3.2, Rem. 3.2.2]{JSW}).

\item[(b)] Note that $L$ satisfies the hypotheses appearing in the proof of Theorem \ref{MWrk}.
So the Heegner point $P_{L} \in A_{g}(L)$ is non-torsion and 
\begin{equation}\label{index}
[A_{g}(L): \cO\cdot P_{L}] < \infty.
\end{equation}
Moreover, 
$
\phi_{{\bf 1}_{L}}(\mathcal{L}_{p}^{\rm{Gr}}(g_{/L}))\neq 0
$
and 
$
\Sel_{\rm{Gr}}(g_{/L})
$
is finite. 
Thus, Corollary \ref{GKSU}(a) gives rise to the divisibility
\begin{equation}\label{GrSelub}
\xi(X_{\rm{Gr}}^{\cyc,\ur}(g_{/L})) \big{|}
(\mathcal{L}_{p}^{\rm{Gr},\cyc}(g_{/L}))
\end{equation}
in $\Lambda_{L}^{\cyc,\ur}$. 
We now consider its specialisation 
at the identity Hecke character over $L$. 

By Corollary \ref{GrCtlsz} we have
$$
\big{|}\#S_{\rm{Gr}}^{\cyc}(g_{/L})[\gamma_{\cyc}-1]\big{|}_{\lambda}^{-1}=
\big{|}\#\Sel_{\rm{Gr}}(g_{/L}) \cdot \prod_{\ell | N} c_{\ell}(g)^{2} \cdot   \delta_{p}^{2}\big{|}_{\lambda}^{-1}, 
$$
while the $p$-adic Waldspurger formula gives 
$$
\big{|} \phi_{{\bf 1}_{L}}(\mathcal{L}_{p}^{\rm{Gr}}(g_{/L})^{\cyc}))\big{|}_{\lambda}^{-1}=
\big{|}  \frac{1+p-a_{p}}{p} \cdot \log_{A_{g}(L_{\ov{v}})} P_{L}               \big{|}_{\lambda}^{-2}
$$
(cf.~Proposition~\ref{GRL=BDPL-II} and \cite[\S5.1.5]{JSW}).

So in light of (\ref{GrSelub}) and Corollary \ref{Grsz} it follows that 
$$
\big{|}\#\Sel_{\rm{Gr}}(g_{/L}) \cdot \prod_{\ell | N} c_{\ell}(g)^{2} \cdot   \delta_{p}^{2}\big{|}_{\lambda}^{-1}
\geq
\big{|}  \frac{1+p-a_{p}}{p} \cdot \log_{A_{g}(L_{\ov{v}})} P_{L}               \big{|}_{\lambda}^{-2}.
$$
To conclude the proof, note that 
\begin{equation}\label{pre-1}
\#\Sel_{\rm{Gr}}(g_{/L})
=
\# \Sha(A_{g/L})[\lambda^{\infty}] \cdot 
\frac
{\big{[}A_{g}(L_{\ov{v}})/ A_{g}(L_{\ov{v}})_{\tor} \otimes_{\cO} \cO_{\lambda}: \cO_{\lambda}\cdot P_{L} \big{]}^{2}}
{\big{[}A_{g}(L) \otimes_{\cO} \cO_{\lambda} : \cO_{\lambda} \cdot P_{L}\big{]}^{2}}
\end{equation}
(cf.~\cite[Prop. 3.2.1 \& (3.5.b)]{JSW}) and 
\begin{equation}\label{pre-2}
\big{[}A_{g}(L_{\ov{v}})/ A_{g}(L_{\ov{v}})_{\tor} \otimes_{\cO} \cO_{\lambda} : \cO_{\lambda} \cdot P_{L}\big{]} \cdot \delta_{p}=
\bigg{|}  \frac{1+p-a_{p}}{p} \cdot \log_{A_{g}(L_{\ov{v}})} P_{L}               \bigg{|}_{\lambda}^{-1}
\end{equation}
(cf.~\cite[p. 398]{JSW}). 
\end{itemize}
\end{proof} 
\begin{remark} 
\noindent
The $p$-part of the conjectural BSD formula for $A_{g}$ over $L$ predicts the upper bound as in Theorem \ref{boundSha}(b)
 to be an equality.
The above upper bound is finer than the one arising from the Kolyvagin system of Heegner points. Specifically, $p$ is allowed to divide the Tamagawa numbers, unlike the results of \cite{Ko}, \cite{Ho1}. 
\end{remark}

In combination with Theorem \ref{MWrk} we note the following.
\begin{cor}
Let $E_{/\BQ}$ be an elliptic curve of conductor $N$ and $p\nmid 2N$ a prime. If $p=3$ is non-ordinary, suppose that $a_{p}=0$. Then 
$$\ord_{s=1}L(s,E)=1 \implies \rank_{\BZ}E(\BQ)=1,\ \#\Sha(E)[p^{\infty}]< \infty.$$
Moreover, an upper bound for \#$\Sha(E_{/L})[p^{\infty}]$ over imaginary quadratic fields $L$ as in Theorem \ref{boundSha}(b) holds. 
\end{cor}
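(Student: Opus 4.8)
The plan is to derive the corollary from Theorems \ref{MWrk} and \ref{boundSha} applied to the weight-two newform $f_E \in S_2(\Gamma_0(N))$ attached to $E$ by modularity. In this case the Hecke field is $F = \BQ$, so the unique prime $\lambda$ above $p$ is $\lambda = p$ with $\cO_\lambda = \BZ_p$, the associated $\GL_2$-type abelian variety $A_{f_E}$ is a curve isogenous to $E$, and the $p$-adic Galois representation is $V = T_p(E)\otimes_{\BZ_p}\BQ_p$ with lattice $T = T_p(E)$ and residual representation $\ov{\rho} = \ov{\rho}_{E,p}$. Since both the Mordell--Weil rank and the finiteness of $\Sha[p^\infty]$ are isogeny-invariant, it is harmless to pass between $E$ and $A_{f_E}$.

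First I would record the rank statement. Theorem \ref{MWrk} applies to any elliptic newform with no hypothesis on $p$ at all --- its proof runs through an auxiliary ordinary prime together with an auxiliary imaginary quadratic field satisfying \eqref{Heeg} and $\ord_{s=1}L(s,g_{/L})=1$ --- so from $\ord_{s=1}L(s,E) = \ord_{s=1}L(s,f_E) = 1$ we obtain $\rank_{\BZ} E(\BQ) = \rank_{\BZ} A_{f_E}(\BQ) = [F:\BQ] = 1$. For the finiteness of $\Sha$, I would invoke Theorem \ref{boundSha}(a), whose only hypothesis on $(g,p,\lambda)$ is that $\lambda \nmid a_p(f_E)$ or $a_p(f_E) = 0$. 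This holds automatically here: if $p$ is ordinary then $p \nmid a_p(E)$; if $p$ is supersingular then $a_p(E) \equiv 0 \bmod p$, and the Hasse bound $|a_p(E)| < 2\sqrt{p}$ forces $a_p(E) = 0$ when $p \geq 5$, while for $p = 3$ non-ordinary we have assumed $a_p = 0$ (and $p = 2$ is excluded by $p \nmid 2N$). Hence $\#\Sha(E)[p^\infty] < \infty$.

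Finally, the \emph{moreover} assertion is the elliptic-curve instance of Theorem \ref{boundSha}(b): under the translated hypotheses --- namely $E[p](\BQ) = 0$ (the condition $(\mathrm{van}_{\BQ})$ for $\ov{T}$), the condition \eqref{im} for the $p$-adic representation $T$ (which follows from the existence of a prime $\ell || N$ at which $\ov{\rho}_{E,p}$ is ramified, e.g.\ whenever $E$ is semistable, and which also holds automatically in the supersingular case where $\ov{\rho}_{E,p}$ is $G_{\BQ_p}$-irreducible), together with an imaginary quadratic field $L$ satisfying \eqref{ord}, \eqref{coprime}, \eqref{Heeg}, $(\mathrm{irr}_{L})$ and $\ord_{s=1}L(s,E_{/L}) = 1$ --- Theorem \ref{boundSha}(b) gives $\big[E(L)\otimes_{\BZ}\BZ_p : \BZ_p\cdot P_L\big]^2 \geq \big|\#\Sha(E_{/L})[p^\infty]\cdot \prod_{q\mid N} c_q(E)^2\big|_p^{-1}$, where $P_L \in E(L)$ is the Heegner point coming from an $(\cO_\lambda,\lambda)$-optimal modular parametrisation of $E$. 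I do not expect a genuine obstacle here: all of the arithmetic input --- the two-variable zeta element $\CZ(E_{/L})$ (resp.\ $\CZ^\circ(E_{/L})$ in the supersingular case) and its explicit reciprocity laws, the $p$-adic Waldspurger formula of Bertolini--Darmon--Prasanna, and the divisibility towards the cyclotomic Greenberg main conjecture from Corollary \ref{GKSU} controlling $X_{\rm Gr}^{\rm cyc}(E_{/L})$ --- is already packaged into the proofs of Theorems \ref{MWrk} and \ref{boundSha}. What remains is only the routine verification that those hypotheses specialise correctly to the elliptic-curve setting; the one place where any care is genuinely needed is the residually reducible situation (where $(\mathrm{van}_{\BQ})$ and \eqref{im} can fail), which is precisely where the delicate analysis of the $\Lambda$-adic Tate lattices of Section \ref{CMHF} would come in, but that lies beyond what the stated corollary claims.
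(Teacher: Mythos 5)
Your proposal is correct and coincides with the paper's intended (and unwritten) argument: the corollary is simply Theorems \ref{MWrk} and \ref{boundSha} specialised to the newform attached to $E$, with Hecke field $\BQ$ and $\lambda=p$, the condition that $\lambda\nmid a_p$ or $a_p=0$ being guaranteed exactly as you say (ordinarity, the Hasse bound for supersingular $p\ge 5$, and the assumed $a_3=0$), and the \emph{moreover} clause inheriting the hypotheses of Theorem \ref{boundSha}(b). The only inaccuracy is your parenthetical that \eqref{im} holds automatically in the supersingular case: it is not needed for the corollary (those hypotheses travel with ``as in Theorem \ref{boundSha}(b)''), and in Kato's intended form (with $\sigma$ fixing $\BQ(\mu_{p^\infty})$ rather than merely $\BQ_\infty$) it actually fails for supersingular CM curves, where the image of $G_{\BQ(\mu_{p^\infty})}$ lies in the normaliser of a non-split Cartan and contains no element $\sigma$ with $T/(\sigma-1)T$ free of rank one.
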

\subsection{$p$-part of the Birch--Swinnerton-Dyer formula}\label{pBSD}

\subsubsection{Kato's main conjecture and the BSD formula}
\begin{prop}\label{p-BSD-prop}
Let $g \in S_{2}(\Gamma_{0}(N))$ be an elliptic newform, $F$ the Hecke field with degree $d$ and $\cO$ the integer ring. 
Let $A_{g}$ be an associated $\GL_2$-type abelian variety over $\BQ$ with $\cO\hookrightarrow \End(A_{g})$. 
Let $p\nmid 2N$ be a prime, $\lambda$ a prime of the 
Hecke field $F$ above $p$ and $T$ the $\lambda$-adic Tate module of $A_{g}$.
 Suppose that
 \begin{itemize}
 \item $\ord_{s=1}L(s,g)=1$,  
 \item Either $\lambda\nmid a_{p}(g)$ or $a_p(g)=0$, 
\item The $\lambda$-adic Galois representation  $\rho: G_{\BQ}\ra \Aut_{\cO_{\lambda}}T$ satisfies $({\rm van}_{\BQ})$.
\end{itemize}
Let $L$ be an imaginary quadratic field satisfying \eqref{ord}, \eqref{coprime}, \eqref{Heeg} and $({\rm van}_{L})$ so that $\ord_{s=1}L(s,g_{/L})=1.$

Then the $\lambda$-adic Kato's main Conjecture \ref{Kato} for $g$ and the quadratic twist $g'=g\otimes \chi_L$ 
 imply the $\lambda$-part of the Birch and Swinnerton-Dyer conjecture for $A_{g}$, that is, $\rank_{\BZ}A_{g}(\BQ)=d$, $\Sha(A_{g})[\lambda^{\infty}]$ is finite and 
 $$
\bigg{|} \frac{L^{(d)}(1,A_{g})}{d!\cdot \Omega_{A_{g}}R(A_{g})}
\bigg{|}^{-1}_{\lambda}
=
\big{|}
\# \Sha(A_{g})[\lambda^{\infty}] \cdot \prod_{\ell | N} c_{\ell}(A_{g})
\big{|}^{-1}_{\lambda}.
$$ 
 \end{prop}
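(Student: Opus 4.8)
The goal is to deduce a rank‐one $\lambda$-part of the BSD formula for $A_g$ over $\BQ$ from the $\lambda$-adic Kato main conjecture for $g$ and its quadratic twist $g'$. The strategy is to pass to the imaginary quadratic field $L$, use the comparison of the cyclotomic zeta element over $L$ with the Beilinson--Kato elements for $g$ and $g'$ to convert Kato's conjecture into the cyclotomic Greenberg main conjecture over $L$, and then specialise that two-variable statement at the identity Hecke character, exploiting the $p$-adic Waldspurger formula to recover the Heegner-point regulator. Finally, one combines the resulting exact value for $\#\Sel_{\rm Gr}(g_{/L})$ with the Gross--Zagier formula relating the Heegner point $P_L$ to $L'(1,g)$, together with the nonvanishing of $L(1,g')$ which forces the minus Selmer group to vanish, so that the $L$-over-$L$ BSD quantity collapses onto the $\BQ$ one.

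\textbf{Key steps in order.} First, I would fix embeddings $\iota_\infty,\iota_p$ so that $\lambda$ is the induced prime, choose $A_g$ and the modular parametrisation to be $(\cO,\lambda)$-optimal, and select the imaginary quadratic field $L$ as in the hypotheses (so \eqref{Heeg} and $({\rm van}_L)$ hold and $\ord_{s=1}L(s,g_{/L})=1$); note that under $\ord_{s=1}L(s,g)=1$ one has $L(1,g')\neq 0$, which is automatic from the choice $\eps(g')=+1$ combined with the existence statement of \cite{FH}. Second, by the Gross--Zagier formula the Heegner point $P_L\in A_g(L)$ is non-torsion, and via Proposition~\ref{GRL=BDPL-II} the $p$-adic Waldspurger formula gives $\phi_{{\bf 1}_L}(\CL_p^{\rm Gr}(g_{/L}))\neq 0$, hence $\phi_{{\bf 1}_L}(\loc_{\bar v}(\CZ(g_{/L})^\cyc))\neq 0$ and the nonvanishing \eqref{nv} holds; together with finiteness of $\Sel_{\rm Gr}(g_{/L})$ (which follows as in the proof of Theorem~\ref{MWrk}), Proposition~\ref{GrPNS} shows $X_{\rm Gr}^{\cyc}(g_{/L})$ has no nonzero pseudo-null submodule. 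Third, I would invoke Proposition~\ref{Eq}: Kato's main Conjecture~\ref{Kato} for $g$ and for $g'=g\otimes\chi_L$ implies (via Lemma~\ref{Eq'} and the zeta-element equivalences, in particular using the comparison $\CZ(g/L)^\cyc\doteq \bz^\ord_{\alpha,\omega,\gamma,\gamma'}$ of Theorem~\ref{2varZ-1varZ-thm}) the cyclotomic Greenberg main Conjecture~\ref{Greenberg} for $g$ over $L$, namely $(\CL_p^{{\rm Gr},\cyc}(g_{/L})) = \xi(X_{\rm Gr}^{\cyc,\ur}(g_{/L}))$ in $\Lambda_{L,\cO_\lambda}^{\cyc,\ur}$. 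Fourth, I would specialise at the identity Hecke character: by Corollary~\ref{Grsz} the value $|h(0)|_\lambda^{-1}$ of a generator equals $\#\Sel_{\rm Gr}(g_{/L})\cdot\prod_{\ell|N}c_\ell(g)^2\cdot\delta_p^2$, while the $p$-adic Waldspurger formula evaluates $|\phi_{{\bf 1}_L}(\CL_p^{\rm Gr}(g_{/L})^\cyc)|_\lambda^{-1}$ in terms of $\log_{A_g(L_{\bar v})}P_L$ and $\delta_p = 1+p-a_p(g)$ (or $1$ in the supersingular case); equating these, and using the factorisation \eqref{pre-1} expressing $\#\Sel_{\rm Gr}(g_{/L})$ via $\#\Sha(A_{g/L})[\lambda^\infty]$ and the local/global index ratio \eqref{pre-2}, yields the exact $\lambda$-part of the BSD formula for $A_g$ over $L$.

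\textbf{Descent to $\BQ$ and the main obstacle.} The final step is to descend from $L$ to $\BQ$: one has $L(s,A_{g/L})=L(s,A_g)L(s,A_{g'})$ with $L(1,A_{g'})\neq 0$, so by Kato's theorem $A_{g'}(\BQ)\otimes\BQ=0$ and $\#\Sha(A_{g'})[\lambda^\infty]<\infty$; the corresponding rank-zero $\lambda$-part of BSD for $A_{g'}$ (which itself follows from Kato's main conjecture for $g'$, exactly as in \cite[\S14.20]{K}) then lets one split off the $g'$-contribution in the $L$-over-$L$ formula and isolate $|L^{(d)}(1,A_g)/(d!\,\Omega_{A_g}R(A_g))|_\lambda^{-1} = |\#\Sha(A_g)[\lambda^\infty]\cdot\prod_{\ell|N}c_\ell(A_g)|_\lambda^{-1}$, using the Gross--Zagier formula to match the regulator $R(A_g)=\langle P,P\rangle$ (up to the explicit $F^\times$-constants appearing in Gross--Zagier) with the height of the Heegner point and the comparison of periods $\Omega_{A_{g/L}}$ versus $\Omega_{A_g}\Omega_{A_{g'}}$ (controlled by Lemma~\ref{PetPer} and Lemma~\ref{PerTw}, all of which contribute only $\lambda$-units). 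The rank statement $\rank_\BZ A_g(\BQ)=d$ comes from Theorem~\ref{MWrk}. I expect the main obstacle to be bookkeeping the various $\lambda$-unit discrepancies consistently — the Gross--Zagier constant, the period comparisons at $L/\BQ$, the Manin-type constant from $(\cO,\lambda)$-optimality, and the interpolation factors $\delta_p$ — so that they all cancel and leave a clean equality rather than an inequality; this is the same kind of delicate normalisation analysis carried out in \cite{JSW}, and it is where care is genuinely required, whereas the conceptual inputs (Kato's conjecture $\Rightarrow$ Greenberg conjecture over $L$, then specialise) are already packaged in Proposition~\ref{Eq} and Corollary~\ref{Grsz}.
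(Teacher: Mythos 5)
Your proposal is correct and follows essentially the same route as the paper: finiteness and nonvanishing via Gross--Zagier and the $p$-adic Waldspurger formula, Kato's conjecture for $g$ and $g'$ converted into the cyclotomic Greenberg main conjecture over $L$ via Proposition~\ref{Eq}, specialisation at the identity character using Corollary~\ref{Grsz} together with \eqref{pre-1}--\eqref{pre-2}, and descent to $\BQ$ via the rank-zero BSD formula for $A_{g'}$ (the paper packages this last reduction as a citation to the proof of \cite[Thm.~5.3.1]{CGLS} at the outset rather than at the end). The unit bookkeeping you flag is handled in the paper exactly as you anticipate, via the $({\rm van}_L)$ period comparison and the $p$-indivisibility of the Manin constant \cite{ARS}.
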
 
\begin{proof} 
Note that $$L(1,g')\neq 0.$$ The $\lambda$-part of the BSD formula for $A_{g'}$ over 
$\BQ$ thus follows from Kato's main conjecture for $g'$ (cf.~\cite[\S14.20]{K},~see also the proof of \cite[Thm.~3.6.13]{SU} and \cite[\S7.2]{JSW}), which is assumed.
Hence in view of  the proof\footnote{The {\it{loc. cit.}} considers the case of elliptic curves, the argument also applies to $\GL_2$-type abelian varieties.} of \cite[Thm.~5.3.1]{CGLS} 
the $\lambda$-part of the BSD formula for $A_{g}$ over $\BQ$ is a consequence of
 that for $A_{g}$ over $L$. Henceforth we consider the latter. 

Recall that $L(s, A_{g})=\prod_{\sigma: F\hookrightarrow \BC}L(s,g^{\sigma}).$
Since $\ord_{s=1}L(s,g)=1$, 
the Gross--Zagier formula implies that the Heegner point $y_{L} \in A_{g}(L)$ is non-torsion and 
so $\ord_{s=1}L(s,A_{g})=d$. 
Moreover, 
$
[A_{g}(L): \cO \cdot y_{L}] < \infty
$
by the proof of Theorem \ref{MWrk}. 
Then as  in the proof of Theorem \ref{boundSha}, the non-vanishing hypothesis \eqref{nv} holds and 
so Proposition \ref{Eq} yields the cyclotomic Greenberg main conjecture: 
$$
\xi(X_{\rm{Gr}}^{\cyc,\ur}(g_{/L})) =
(\mathcal{L}_{p}^{\rm{Gr},\cyc}(g_{/L})). 
$$
We now consider its specialisation at the identity Hecke character over $L$. 

Proceeding as in the proof of Theorem \ref{boundSha}, we have 
$$
\big{|}\#\Sel_{\rm{Gr}}(g_{/L}) \cdot \prod_{\ell | N} c_{\ell}(A_{g})^{2} \cdot   \delta_{p}^{2}\big{|}_{\lambda}^{-1}
=
\bigg{|}  \frac{1+p-a_{p}}{p} \cdot \log_{A_{g}(L_{\ov{v}})} y_{L}               \bigg{|}_{\lambda}^{-2}.
$$
In combination with \eqref{pre-1} and \eqref{pre-2} it follows that 
$$
\big{[}A_{g}(L) \otimes_{\cO} \cO_{\lambda} : \cO_{\lambda} \cdot y_{L}\big{]}^{2}
=
\big{|}\# \Sha(A_{g/L})[\lambda^{\infty}] \cdot\prod_{\ell | N} c_{\ell}(A_{g})^{2} \big{|}_{\lambda}^{-1}.
$$
Then the Gross--Zagier formula and the hypothesis $({\rm van}_{L})$\footnote{The hypothesis leads to equality of the 
$\Gamma_{0}$ and $\Gamma_{1}$-periods associated to $g$ up to $p$-units (cf.~\cite[Lem. 9.4]{SZ}).} imply that 
$$
\bigg{|} \frac{L^{(d)}(1,A_{g/L})}{d!\cdot \Omega_{g}R(A_{g_{/L}})}
\bigg{|}^{-1}_{\lambda}
=
\big{|}
\# \Sha(A_{g/L})[\lambda^{\infty}] \cdot c_{g}^{-2}\cdot \prod_{\ell | N} c_{\ell}(A_{g})^{2}
\big{|}^{-1}_{\lambda}
$$
for $\Omega_{g}:=\prod_{\sigma: F\hookrightarrow \BC} \Omega_{g^{\sigma}}^{\rm{cong}}$ and $c_{g}$ the Manin constant. Therefore the $p$-indivisibility of the Manin constant \cite[Cor.~3.8]{ARS} concludes the proof. 
\end{proof}
\subsubsection{Main result}
\begin{thm}\label{p-BSD}
Let $g \in S_{2}(\Gamma_{0}(N))$ be an elliptic newform, $F$ the Hecke field with degree $d$ and $\cO$ the integer ring. 
Let $A_{g}$ be an associated $\GL_2$-type abelian variety over $\BQ$ with $\cO\hookrightarrow \End(A_{g})$. 
Let $p\nmid 2N$ be a prime, $\lambda p$ a prime of the 
Hecke field $F$ above $p$ and $T$ the $\lambda$-adic Tate module of $A_{g}$.
 Suppose that
 \begin{itemize}
 \item Either  $\lambda\nmid a_{p}(g)$ or $a_{p}(g)=0$, 
\item If $\lambda\nmid a_{p}(g)$, then the $\lambda$-adic Galois representation  $\rho: G_{\BQ}\ra \Aut_{\cO_{\lambda}}T$ satisfies 
$({\rm irr}_{\BQ})$ and \eqref{ram}. If $a_{p}(g)=0$, then $N$ is square-free.
\end{itemize}

If $\ord_{s=1}L(s,g)=1$, then 
the $\lambda$-part of the Birch and Swinnerton-Dyer conjecture for $A_{g}$ holds, that is, $\rank_{\BZ}A_{g}(\BQ)=d$, $\Sha(A_{g})[\lambda^{\infty}]$ is finite and 
 $$
\bigg{|} \frac{L^{(d)}(1,A_{g})}{d!\cdot \Omega_{A_{g}}R(A_{g})}
\bigg{|}^{-1}_{\lambda}
=
\big{|}
\# \Sha(A_{g})[\lambda^{\infty}] \cdot \prod_{\ell | N} c_{\ell}(A_{g})
\big{|}^{-1}_{\lambda}.
$$ 
In the CM case the same holds for any ordinary or a supersingular prime $p\nmid2N$.
\end{thm}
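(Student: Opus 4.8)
\textbf{Proof strategy for Theorem \ref{p-BSD}.}
The plan is to reduce the theorem to Proposition \ref{p-BSD-prop}, whose hypotheses require (in addition to $\ord_{s=1}L(s,g)=1$ and the reduction hypothesis on $p$) that $(\mathrm{van}_\BQ)$ holds for the $\lambda$-adic representation $T$, that Kato's main Conjecture \ref{Kato} holds for both $g$ and its quadratic twist $g'=g\otimes\chi_L$ for a suitable imaginary quadratic field $L$, and the existence of $L$ with the prescribed splitting/ramification/Heegner properties such that $\ord_{s=1}L(s,g_{/L})=1$. First I would dispense with the CM case: there Kato's main conjecture holds integrally for every $p\nmid 2N$ and every quadratic twist by Theorem \ref{KaMC_r}(c) (Rubin's work), and $(\mathrm{van}_\BQ)$ can be arranged (the CM representations are residually irreducible away from small primes, or one argues directly), so Proposition \ref{p-BSD-prop} applies once a suitable $L$ is produced. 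In the non-CM case, when $\lambda\nmid a_p(g)$ (the ordinary case) the hypotheses $(\mathrm{irr}_\BQ)$ and $(\mathrm{ram})$ are exactly what Theorem \ref{KaMC_r}(c) needs to conclude the full equality in Kato's main conjecture over $\BZ$, for both $g$ and $g'=g\otimes\chi_L$ (the twist $g_K=g\otimes\chi_L$ is covered by the ``moreover'' clause of Theorem \ref{KaMC_r}(c), using that $L$ can be chosen so that $(\mathrm{ram}_L)$ holds, i.e. the level-raising prime $\ell\|N$ with $\bar\rho$ ramified at $\ell$ is coprime to $D_L$). Note $(\mathrm{irr}_\BQ)$ implies $(\mathrm{van}_\BQ)$.

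Next, in the supersingular case $a_p(g)=0$ with $N$ square-free: here $\bar T$ is an irreducible $k_\lambda[G_{\BQ_p}]$-module (cf.\ \S\ref{Newforms}, the supersingular case), so $(\mathrm{irr}_\BQ)$ and hence $(\mathrm{van}_\BQ)$ hold automatically. By Theorem \ref{KoMC_r} Kobayashi's main Conjecture \ref{KatopLss} holds for $g$, and for $g\otimes\chi_K$ for quadratic $K/\BQ$ with discriminant coprime to $Np$; by Lemma \ref{KatoEq}(ii) this is equivalent to Kato's main Conjecture \ref{Kato} for $g$, and similarly for $g'=g\otimes\chi_L$ provided $(D_L,Np)=1$. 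Thus in both the ordinary and supersingular non-CM cases Kato's main conjecture is available for $g$ and $g'$, and it only remains to produce the auxiliary field $L$.

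The choice of $L$ must simultaneously satisfy: $p$ splits in $L$ (condition \eqref{ord}), $(D_L,N)=1$ (condition \eqref{coprime}), the classical Heegner hypothesis \eqref{Heeg} for $(g,L)$, the vanishing hypothesis $(\mathrm{van}_L)$ for $\bar T$, and $\ord_{s=1}L(s,g_{/L})=1$ — equivalently, since $\ord_{s=1}L(s,g)=1$, that $\ord_{s=1}L(s,g\otimes\chi_L)=0$, i.e. $L(1,g\otimes\chi_L)\neq 0$ with $\epsilon(g/L)=-1$. The existence of such an $L$ is a Waldspurger-type nonvanishing statement: by the theorem of Bump--Friedberg--Hoffstein / Murty--Murty (cited in the excerpt as \cite{FH}, used exactly this way in the proof of Theorem \ref{MWrk}), infinitely many $L$ satisfy \eqref{Heeg} and $\ord_{s=1}L(s,g_{/L})=1$; imposing the finitely many local splitting conditions at $p$ and coprimality with $N$ rules out only a density-zero set, and $(\mathrm{van}_L)$ follows from $(\mathrm{irr}_\BQ)$ (absolute irreducibility over $\BQ$ is preserved on restriction to $G_L$ as long as $\bar T$ is not induced from $L$, which one arranges, or one simply notes $(\mathrm{van}_L)$ is weaker and holds because $\bar T^{G_L}=0$ is automatic from $(\mathrm{irr}_\BQ)$ when $\bar T|_{G_L}$ is still nontrivial). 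In the CM case one must additionally be slightly careful that the chosen $L$ avoids the CM field of $g$ so that the relevant Galois-theoretic hypotheses and the Gross--Zagier nonvanishing still apply; this is again a density argument. With such $L$ in hand, apply Proposition \ref{p-BSD-prop} (CM case: via Theorem \ref{KaMC_r}(c); non-CM case: via Theorem \ref{KaMC_r}(c) or Theorem \ref{KoMC_r}+Lemma \ref{KatoEq}) to conclude.

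\textbf{Main obstacle.} The technical heart is not any single new ingredient but the bookkeeping of compatibility: one must choose $L$ so that \emph{all} of \eqref{ord}, \eqref{coprime}, \eqref{Heeg}, $(\mathrm{van}_L)$, $L(1,g\otimes\chi_L)\neq 0$, and — crucially for invoking the ``moreover'' part of Theorem \ref{KaMC_r}(c) for the twisted form $g'=g\otimes\chi_L$ in the ordinary case — the condition $(\mathrm{ram}_L)$ (a prime $\ell\nmid D_L$ with $\bar\rho$ ramified at $\ell$, which exists because $(\mathrm{ram})$ is assumed and $\ell$ can be kept coprime to $D_L$) hold at once, while the supersingular case requires instead that $N$ be square-free and $(D_L,Np)=1$ so that Theorem \ref{KoMC_r} applies to $g\otimes\chi_L$. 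Verifying that these constraints are mutually consistent — i.e. that the set of admissible $L$ is nonempty, indeed infinite — is where the real care lies; everything else is a direct citation of the main conjecture results and Proposition \ref{p-BSD-prop}.
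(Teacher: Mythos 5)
Your proposal is correct and follows essentially the same route as the paper: pick $L$ via \cite{FH} satisfying \eqref{ord}, \eqref{coprime}, \eqref{Heeg} with $\ord_{s=1}L(s,g_{/L})=1$, supply Kato's main conjecture for $g$ and $g'=g\otimes\chi_L$ (Theorem \ref{KaMC_r}(c) in the ordinary case, Theorem \ref{KoMC_r} together with Lemma \ref{KatoEq} in the supersingular case, and the CM clause of Theorem \ref{KaMC_r}(c) for CM forms), and feed this into Proposition \ref{p-BSD-prop}. Two small points of divergence: for the twist with $(D_L,Np)=1$ one invokes the main clause of Theorem \ref{KaMC_r}(c) applied directly to the newform $g'$ (whose $({\rm irr}_{\BQ})$ and \eqref{ram} hypotheses are inherited from $g$ because the level-raising prime $\ell\| N$ is coprime to $D_L$), not the ``moreover'' clause, which concerns twists with $p\mid\disc(K)$; and where you check $({\rm van}_{L})$ directly from $({\rm irr}_{\BQ})$ and the splitting of $p$, the paper verifies the stronger $({\rm irr}_{L})$ via a unipotent element coming from \eqref{ram} and \eqref{Heeg} --- either suffices for Proposition \ref{p-BSD-prop}.
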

\begin{proof} 
We consider the ordinary case, and an analogous argument applies for the supersingular case. 

Let $L$ be an imaginary quadratic field satisfying \eqref{ord}, \eqref{coprime} and \eqref{Heeg} so that 
$$\ord_{s=1}L(s,g_{/L})=1.$$ The existence is a special case of the main result of \cite{FH}. 
Observe that such an $L$ satisfies (irr$_{L}$). Indeed, $\ov{\rho}|_{G_{L}}$ contains a unipotent element of order a power $p$ by \eqref{ram} and \eqref{Heeg}, and one may then proceed just as in the proof of \cite[Lem.~2.8.1]{Sk'}.

By Theorem \ref{KaMC_r} Kato's main conjecture holds for $g$ and the quadratic twist $g'$. Hence, the assertion is a consequence of Proposition \ref{p-BSD-prop}.
\end{proof}

\begin{remark}
\noindent
\begin{itemize} 
\item[(i)] The ordinary case generalises the result of Jetchev--Skinner--Wan \cite{JSW} for semistable curves. The supersingular case coincides with \cite{JSW} and Corollary \ref{BSD_f_Ko}, albeit a different approach. 
\item[(ii)] For CM elliptic curves over $\BQ$ with analytic rank one, the $p$-part of the BSD formula for $p\nmid 2N$ is due to Rubin \cite{Ru1} and Kobayashi \cite{Ko1} for ordinary and supersingular primes $p$ respectively. 
Their approach relies on the $p$-adic Gross--Zagier formula, and the non-vanishing of $p$-adic height of non-torsion points, which is an open problem in the ordinary non-CM case.  
The above approach instead relies on the $p$-adic Waldspurger formula, uniformly treating the ordinary and supersingular cases. An independent approach in the ordinary case is due to Castella \cite{Ca-f}.
\end{itemize}
\end{remark}

\section{$p$-converse to the Gross--Zagier and Kolyvagin theorem}\label{s:pcv}
This section presents some $p$-converse theorems. 
We begin with a preliminary form of the strategy under the hypothesis \eqref{inj}, and then refine it. 
\subsection{$p$-converse and Kato's main conjecture}
\subsubsection{A cyclotomic criterion}
\begin{prop}\label{p-converse-prop}
Let $g \in S_{2}(\Gamma_{0}(N))$ be an elliptic newform. Let $p\nmid 2N$ be a prime, $\lambda$ a prime of the Hecke field $F$ of $g$ above $p$ and $V$ the associated $p$-adic Galois representation. Suppose that either 
$\lambda\nmid a_{p}(g)$ or $a_p(g)=0$. 
Let $L$ be an imaginary quadratic field satisfying \eqref{ord}, \eqref{coprime} and \eqref{Heeg}. 
Suppose also the divisibility 
$$
\xi\big{(}H^{1}(\BZ[\frac{1}{p}], T \otimes_{\BZ_{p}} \Lambda)/ \Lambda_{\cO_{\lambda}}\cdot{{\bf{z}}_{\gamma}(h)}\big{)}
\big{|} 
\xi(X_{\st}(h))
$$
in $\Lambda_{L,\cO_{\lambda}}\otimes_{\BZ_{p}} \BQ_{p}$ 
for $h \in\{g,g\otimes \chi_{L}\}$ (cf.~Conjecture \ref{Kato}(b)).
  If
 \begin{equation}\label{inj}\tag{inj}
 \text{The localisation $H^{1}_{\rm{f}}(L,V) \ra \prod_{w|p}H^{1}_{\rm{f}}(L_{w},V)$ is an injection,}
 \end{equation}
then 
 $$
 \dim_{F_{\lambda}} H^{1}_{\rm{f}}(L,V)=1 \implies \ord_{s=1} L(s,g_{/L})=1. 
 $$

\end{prop}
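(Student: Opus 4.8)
The statement to prove is a $p$-converse criterion: assuming the divisibility in Kato's main conjecture for $g$ and its twist $g' = g\otimes\chi_L$, together with the local injectivity hypothesis \eqref{inj}, one shows that $\dim_{F_\lambda} H^1_{\rm f}(L,V) = 1$ forces $\ord_{s=1}L(s,g_{/L}) = 1$. The argument I would carry out runs by contraposition on the $L$-value side. Suppose $\ord_{s=1}L(s,g_{/L}) \neq 1$; since the Heegner hypothesis \eqref{Heeg} forces $\epsilon(g_{/L}) = -1$, the order of vanishing is odd, so in fact $\ord_{s=1}L(s,g_{/L}) \geq 3$. The strategy is to show this would make $\dim_{F_\lambda} H^1_{\rm f}(L,V) \geq 2$, contradicting the hypothesis. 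Alternatively, and more directly, I would argue in the positive direction: assume $\dim_{F_\lambda}H^1_{\rm f}(L,V) = 1$ and produce a nonzero Heegner point, then invoke Gross--Zagier.

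\textbf{Key steps.} First, combine the assumed divisibility in Kato's main conjecture for $h \in \{g, g'\}$ with the factorisation of Selmer groups and $p$-adic $L$-functions over $L$ (as in Lemma~\ref{Eq'} and \eqref{sel-fac}) to get that $X_{\st}(g_{/L})$ -- equivalently a suitable cyclotomic strict Selmer group over $L$ -- is bounded above by the divisor of the two-variable (or cyclotomic) zeta element $\CZ(g/L)^{\cyc}$. Next, use the explicit reciprocity laws (Theorem~\ref{thmZ}, Proposition~\ref{Eq}) to translate this into a divisibility for the cyclotomic Greenberg main conjecture, namely $\xi(X_{\rm Gr}^{\cyc,\ur}(g_{/L})) \mid (\CL_p^{\rm Gr,\cyc}(g_{/L}))$; this is exactly Corollary~\ref{GKSU}(a), whose hypotheses (in particular the non-vanishing \eqref{nv}) must be checked -- and \eqref{nv} will follow once we know the localisation of the zeta element is nonzero, which is where \eqref{inj} enters. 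Then specialise at the trivial Hecke character: by the control theorem (Proposition~\ref{GrCtl}) and Corollary~\ref{GrCtlsz}, finiteness of $\Sel_{\rm Gr}(g_{/L})$ is governed by $\phi_{{\bf 1}_L}$ of the characteristic power series, hence by $\phi_{{\bf 1}_L}(\CL_p^{\rm Gr}(g_{/L}))$. By Proposition~\ref{GRL=BDPL-II} and the Bertolini--Darmon--Prasanna formula (Theorem~\ref{BDPformula}), the latter equals a $p$-unit multiple of $(\log_{\omega_g}(y_L))^2$. So the whole question reduces to: is the Heegner point $y_L$ nontorsion?

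\textbf{Closing the loop.} Here is where the hypothesis $\dim_{F_\lambda}H^1_{\rm f}(L,V) = 1$ is used, together with \eqref{inj}. From \eqref{inj} and the one-dimensionality of $H^1_{\rm f}(L,V)$, the image of the global Selmer class under localisation at $v$ (or $\bar v$) is nonzero; I would compare this with the localisation of $\loc_{\bar v}(\CZ(g/L)^{\cyc})$ under the Perrin-Riou regulator (Lemma~\ref{GRL=logheegner}) and with the relation between the zeta element and $z_{\rm Kato}(g')$ (via Theorem~\ref{2varZ-1varZ-thm}, noting $L(1,g') $ may vanish, in which case one uses $g$ and $g'$ symmetrically). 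Concretely: if $H^1_{\rm f}(L,V)$ is one-dimensional, then the Bloch--Kato logarithm $\log_{BK}$ is injective on it, and the image of $\phi_{{\bf 1}_L}(\CZ(g/L)^{\cyc})$ in $H^1_{\rm f}(L_{\bar v},V)$ must be nonzero (otherwise, by the Kato-side lower bound coming from the assumed main conjecture divisibility, the strict Selmer group $H^1_{\st}(L,V)$ would be nonzero, contradicting \eqref{inj} combined with $\dim H^1_{\rm f} = 1$). Then $\phi_{{\bf 1}_L}(\CL_p^{\rm Gr}(g_{/L})) \neq 0$, hence $\log_{\omega_g}(y_L) \neq 0$, so $y_L$ is nontorsion, and the Gross--Zagier formula gives $\ord_{s=1}L(s,g_{/L}) = 1$.

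\textbf{Main obstacle.} The delicate point is verifying the non-vanishing \eqref{nv} -- that both localisations of the cyclotomic zeta element are non-torsion -- without already knowing the rank-one conclusion, since Corollary~\ref{GKSU} requires it as input. The hypothesis \eqref{inj} is precisely the substitute that makes this work: it rigidifies the relationship between the global Selmer group, its localisation at $p$, and the (non-)triviality of $\loc_{\bar v}(\CZ)$, allowing one to run the argument by a dichotomy (either $\loc_{\bar v}(\CZ)$ is nonzero, giving everything, or it is zero, forcing the strict Selmer group to grow and contradicting the dimension-one assumption). Making this dichotomy airtight -- in particular handling the case where $L(1,g')$ vanishes so that the naive comparison with $z_{\rm Kato}(g')$ degenerates, by instead using the zeta element over $L$ directly and its reciprocity law at $v$ -- is the step I expect to require the most care.
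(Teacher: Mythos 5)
Your outline does track the paper's route (assumed Kato divisibility $\rightarrow$ Greenberg-side information $\rightarrow$ specialisation at the trivial character $\rightarrow$ $p$-adic Waldspurger/BDP $\rightarrow$ Gross--Zagier, and the endgame via Lemma~\ref{GRL=logheegner} is fine), but two of your middle steps do not work as stated. First, the divisibility you invoke, $\xi(X_{\rm Gr}^{\cyc,\ur}(g_{/L}))\mid(\CL_p^{\rm Gr,\cyc}(g_{/L}))$ (Corollary~\ref{GKSU}(a)), points the wrong way: it is the unconditional Euler-system upper bound on the Selmer group, and from it the finiteness of $\Sel_{\rm Gr}(g_{/L})$ tells you nothing about $\phi_{\mathbf{1}_L}(\CL_p^{\rm Gr,\cyc}(g_{/L}))$ (writing $\CL = h\cdot u$ with $h$ a generator of $\xi(X_{\rm Gr}^{\cyc})$, nonvanishing of $\phi_{\mathbf{1}_L}(h)$ says nothing about $\phi_{\mathbf{1}_L}(u)$). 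What the $p$-converse needs is the opposite divisibility $(\CL_p^{\rm Gr,\cyc}(g_{/L}))\mid\xi(X_{\rm Gr}^{\cyc}(g_{/L}))$, which is exactly what the \emph{assumed} Kato divisibility for $g$ and $g\otimes\chi_L$ supplies after Lemma~\ref{Eq'} and the transfer through the two-variable zeta element (Proposition~\ref{Eq}); your sentence ``finiteness of $\Sel_{\rm Gr}$ is governed by $\phi_{\mathbf{1}_L}$ of the characteristic power series, hence by $\phi_{\mathbf{1}_L}(\CL_p^{\rm Gr})$'' silently uses this reversed direction.

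Second, a missing key ingredient: the paper's first and decisive move is that \eqref{inj} together with $\dim_{F_\lambda}H^1_{\rm f}(L,V)=1$ gives, via global duality (\cite[Lem.~2.3.2]{Sk'}), the finiteness of the two mixed Selmer groups $\Sel_{\rm Gr}(g_{/L})$ and $\Sel_{\st,\ord}(g_{/L})$ (strict at $v$, relaxed resp.\ ordinary at $\bar v$). That finiteness, fed into the Poitou--Tate sequence \eqref{exv} together with the assumed divisibility, is what yields $\phi_{\mathbf{1}_L}(\CZ(g_{/L})^{\cyc})\neq0$, then (with \eqref{inj}) the non-vanishing hypothesis \eqref{nv}, and finally $\phi_{\mathbf{1}_L}(\CL_p^{\rm Gr,\cyc}(g_{/L}))\neq0$. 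Your substitute contradiction --- ``if $\loc_{\bar v}(\phi_{\mathbf{1}_L}(\CZ))=0$ then $H^1_{\st}(L,V)\neq0$, contradicting \eqref{inj} and $\dim H^1_{\rm f}=1$'' --- does not go through: the zeta element is \emph{relaxed} at $v$, so its specialisation, even if nonzero with vanishing localisation at $\bar v$, produces no class that is strict at both places above $p$; what would actually become infinite is one of the mixed groups above, and such classes (strict at $v$, unrestricted or ordinary at $\bar v$) are not excluded by \eqref{inj} alone --- ruling them out is precisely the content of the omitted duality lemma. You also need an Iwasawa-theoretic argument (char ideals via \eqref{exv}), not just vanishing considerations at the level of $V$, to get $\phi_{\mathbf{1}_L}(\CZ^{\cyc})\neq0$ in the first place. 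Repair these two points --- use the divisibility $(\CL^{\rm Gr})\mid\xi(X_{\rm Gr})$ coming from the hypothesis, and insert the duality step giving finiteness of $\Sel_{\rm Gr}$ and $\Sel_{\st,\ord}$ --- and your argument becomes the paper's proof.
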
 
\begin{proof} We present the ordinary case. 

The hypotheses \eqref{inj} and  $\dim_{F_{\lambda}} H^{1}_{\rm{f}}(L,V)=1$ imply that 
$$
\dim_{F_{\lambda}} \Im  \big{\{}H^{1}_{\rm{f}}(L,V) \ra \prod_{w|p}H^{1}_{\rm{f}}(L_{w},V) \big{\}}=1.
$$
Hence  the Selmer groups 
$
\Sel_{\rm{Gr}}(g_{/L})$ and $\Sel_{\st,\ord}(g_{/L})$
are finite by \cite[Lem.~2.3.2]{Sk'}. 

Note that the lower bound for Selmer group predicted by the cyclotomic main Conjecture \ref{cycBC} 
holds by our hypotheses and Lemma \ref{Eq'}. Hence the exact sequence \eqref{exv} in combination with the finiteness of 
$\Sel_{\st,\ord}(g_{/L})$ implies that 
$$
\phi_{{\bf 1}_{L}}(\CZ(g_{/L})^{\cyc}) \neq 0. 
$$
In turn the hypothesis \eqref{nv} holds by \eqref{inj}.

We now resort to Proposition \ref{Eq}, which yields the cyclotomic Greenberg main Conjecture \ref{Greenberg}. 
In view of the finiteness of $\Sel_{\rm{Gr}}(g_{/L})$ the latter implies that 
$$
\phi_{{\bf 1}_{L}}(\CL_{p}^{\rm{Gr,cyc}}(g_{/L})) \neq 0.
$$
Hence  the Heegner point $y_{L}\in A_{g}(L)$ is non-torsion by the $p$-adic Waldspurger formula
(cf.~Proposition~\ref{GRL=BDPL-II}). 
 The Gross--Zagier formula \cite{GZ} concludes the proof. 
\end{proof}
\begin{remark}
For non-ordinary prime $p\nmid N$, the $p$-converse is also a consequence of lower bound for the strict Selmer group as predicted by Kato's main conjecture (cf.~\cite{BKO2}). 
\end{remark}

\subsubsection{$p$-converse I}

\begin{thm}\label{p-converse}
Let $g \in S_{2}(\Gamma_{0}(N))$ be an elliptic newform, $F$ the Hecke field and $\cO$ the integer ring. 
Let $A_{g}$ be an associated $\GL_2$-type abelian variety over $\BQ$ with $\cO\hookrightarrow \End(A_{g})$. 
Let $p\nmid 2N$ be a prime, $\lambda$ a prime of the 
Hecke field $F$ above $p$ and $T$ the $\lambda$-adic Tate module of $A_{g}$.
 Suppose the following:
 \begin{itemize}
 \item Either $a_{p}(g)=0$ and $N$ is square-free or $\lambda \nmid a_{p}(g)$, 
\item $\rho: G_{\BQ}\ra \Aut_{\cO_{\lambda}}T$ satisfies $({\rm irr}_{\BQ})$ if $g$ is non-CM, and \eqref{ram} holds if $p=3$.
\end{itemize}

Then  
 $$
 \corank_{\cO_{\lambda}} \Sel_{\lambda^{\infty}}(A_{g})=1, \ \#\Sha(A_{g})[\lambda^{\infty}] < \infty
  \implies \ord_{s=1} L(s,A_{g})=[F:\BQ].
 $$
\end{thm}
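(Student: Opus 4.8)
The idea is to deduce the theorem from the criterion in Proposition~\ref{p-converse-prop}, the role of the refinement being to verify the auxiliary hypothesis \eqref{inj} unconditionally. First, by descent: under the hypothesis $\corank_{\cO_\lambda}\Sel_{\lambda^\infty}(A_g)=1$ and $\#\Sha(A_g)[\lambda^\infty]<\infty$, the standard descent exact sequence identifies $A_g(\BQ)\otimes_{\BZ_p}\BQ_p$ with $H^1_{\rm f}(\BQ,V)$ and shows this space is one-dimensional over $F_\lambda$; hence $A_g(\BQ)\otimes_\BZ\BQ$ is one-dimensional over $F$, so $\rank_\BZ A_g(\BQ)=[F:\BQ]$ and the same corank hypothesis holds at every prime of $F$ above $p$. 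It therefore suffices to show $\ord_{s=1}L(s,A_g)=[F:\BQ]$, and by the Gross--Zagier formula it is enough to exhibit a non-torsion Heegner point over a suitable imaginary quadratic field.

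Next I would select an imaginary quadratic field $L$ with $(D_L,Np)=1$, with $p$ split in $L$ (so that \eqref{ord} and \eqref{coprime} hold), satisfying the Heegner hypothesis \eqref{Heeg}, with $L(1,g\otimes\chi_L)\neq 0$, and with $({\rm irr}_L)$ valid. The existence of such an $L$ follows from the non-vanishing of twisted central values with prescribed splitting behaviour \cite{FH}, while $({\rm irr}_L)$ is arranged exactly as in the proof of Theorem~\ref{p-BSD}: in the non-CM case one uses $({\rm irr}_\BQ)$ together with \eqref{ram} and \eqref{Heeg} as in \cite[Lem.~2.8.1]{Sk'}, it is automatic when $g$ is supersingular since $\ov T$ is then irreducible even over $G_{\BQ_p}$, and in the CM case it holds once $L$ is taken distinct from the CM field. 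Since $L(1,g\otimes\chi_L)\neq 0$, Kato's theorem (cf.~\cite{K}) gives $H^1_{\rm f}(\BQ,V^L)=0$, whence $\dim_{F_\lambda}H^1_{\rm f}(L,V)=\dim_{F_\lambda}H^1_{\rm f}(\BQ,V)+\dim_{F_\lambda}H^1_{\rm f}(\BQ,V^L)=1$.

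I would then verify \eqref{inj} for the pair $(V,L)$, the main new point. The kernel of the localisation $H^1_{\rm f}(L,V)\to\prod_{w\mid p}H^1_{\rm f}(L_w,V)$ lies in the strict Selmer group $H^1_{\st}(L,V)=H^1_{\st}(\BQ,V)\oplus H^1_{\st}(\BQ,V^L)$; by Kato's Theorem~\ref{KaMC_r}(a) the modules $X_{\st}(g)$ and $X_{\st}(g\otimes\chi_L)$ are $\Lambda_{\cO_\lambda}$-torsion, and since $p$ is a prime of good reduction no exceptional zero occurs, so a control argument yields the finiteness of the corresponding strict Selmer groups over $\BQ$ and hence $H^1_{\st}(\BQ,V)=H^1_{\st}(\BQ,V^L)=0$; thus \eqref{inj} holds.

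Finally I would run the argument of Proposition~\ref{p-converse-prop}. Its remaining input is the divisibility $\xi\big(H^1(\BZ[\tfrac1p],T\otimes\Lambda)/\Lambda_{\cO_\lambda}\cdot\bz_\gamma(h)\big)\mid\xi(X_{\st}(h))$ for $h\in\{g,\,g\otimes\chi_L\}$: in the ordinary case this is supplied by Theorem~\ref{KaMC_r}(c) (using $({\rm irr}_\BQ)$, and for the $p=3$ or rational-coefficient cases combining Theorem~\ref{KaMC_r}(b) with the Greenberg divisibility of Theorem~\ref{GMC_r}, whose auxiliary local hypothesis can be met by our choice of $L$), in the supersingular case from Kobayashi's main Conjecture --- now a theorem by Theorem~\ref{KoMC_r} --- together with the equivalences Lemma~\ref{KatoEq} and Proposition~\ref{Eq}, and in the CM case from the CM part of Theorem~\ref{KaMC_r}. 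With $\dim_{F_\lambda}H^1_{\rm f}(L,V)=1$, \eqref{inj}, and this divisibility in hand, Proposition~\ref{p-converse-prop} gives $\ord_{s=1}L(s,g_{/L})=1$; since $\ord_{s=1}L(s,g\otimes\chi_L)=0$ we get $\ord_{s=1}L(s,g)=1$, and summing over the Galois conjugates $g^\sigma$ (whose twisted central values are the conjugates of $L(1,g\otimes\chi_L)$, hence also nonzero) yields $\ord_{s=1}L(s,A_g)=[F:\BQ]$. The step I expect to be most delicate is the uniform verification of \eqref{inj}, in particular the control/exceptional-zero input in the supersingular and $p=3$ cases; should this be problematic, one can instead short-circuit the argument by invoking the now-established weak form of the Perrin--Riou conjecture (Theorem~\ref{mPR}) together with the $p$-parity conjecture.
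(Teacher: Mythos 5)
Your overall route is the paper's: deduce $\dim_{F_\lambda}H^1_{\rm f}(L,V)=1$ for a well-chosen Heegner field $L$ with $L(1,g\otimes\chi_L)\neq 0$, verify \eqref{inj}, and feed the Kato/Kobayashi main-conjecture divisibilities (Theorems~\ref{KaMC_r}, \ref{KoMC_r}) into Proposition~\ref{p-converse-prop}. However, your main line has two genuine gaps.

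First, the verification of \eqref{inj} is where your argument breaks, and it is precisely the point where the hypothesis $\#\Sha(A_g)[\lambda^\infty]<\infty$ must enter --- your proof never uses it. You claim that $X_{\st}(g)$ being $\Lambda_{\cO_\lambda}$-torsion (Theorem~\ref{KaMC_r}(a)) plus a control argument and ``no exceptional zero at good primes'' forces $H^1_{\st}(\BQ,V)=0$. Torsionness of $X_{\st}(g)$ does not control the bottom layer: the specialisation $S_{\st}(g)[\gamma_\cyc-1]$ is finite only if $(\gamma_\cyc-1)\nmid\xi(X_{\st}(g))$, and ruling out such a ``trivial zero'' of the characteristic ideal is exactly the delicate non-triviality statement (non-vanishing of $\loc_p$ on the rank-one Selmer group, equivalently on Kato's class) that this theorem is designed to avoid assuming; the exceptional-zero phenomenon for $p$-adic $L$-functions at split multiplicative primes is irrelevant here. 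Indeed, in the expected situation $L(1,g)=0$ the required vanishing of $H^1_{\st}(\BQ,V)$ cannot be extracted from Kato's torsionness alone, which is why the paper instead derives \eqref{inj} from the Sha hypothesis: since $L(1,g')\neq 0$ gives finiteness of $\Sel_{\lambda^\infty}(A_{g'})$, the splitting $\Sel_{\lambda^\infty}(A_{g/L})\simeq\Sel_{\lambda^\infty}(A_g)\oplus\Sel_{\lambda^\infty}(A_{g'})$ shows $\Sha(A_{g/L})[\lambda^\infty]$ is finite, so $H^1_{\rm f}(L,V)=A_g(L)\otimes F_\lambda$ is spanned by a point of infinite order, whose localisation at each $w\mid p$ in $A_g(L_w)\hat\otimes\BQ_p$ is non-zero. (Compare Theorem~\ref{p-converse-II}, which drops the Sha hypothesis only under the stronger assumptions \eqref{sur}, \eqref{ram} and a genuinely different argument via the Heegner main conjecture --- evidence that your ``unconditional'' route to \eqref{inj} cannot work at this level of generality.)

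Second, the choice of $L$ is not available as stated: the Heegner hypothesis \eqref{Heeg} forces $\epsilon(g_{/L})=-1$, so $\epsilon(g\otimes\chi_L)=-\epsilon(g)$, and an $L$ with both \eqref{Heeg} and $L(1,g\otimes\chi_L)\neq 0$ exists (via \cite{FH}) only if $\epsilon(g)=-1$. The paper begins by deducing $\epsilon(A_g)=-1$ from $\corank_{\cO_\lambda}\Sel_{\lambda^\infty}(A_g)=1$ via the parity theorem \cite{N1}; you only mention parity in your fallback remark, so your main argument silently excludes the case $\epsilon(g)=+1$, where the field-selection step collapses. Both gaps are repairable --- insert the parity step at the outset, and replace your Kato-control verification of \eqref{inj} by the descent from Sha-finiteness over $L$ --- after which your argument coincides with the paper's proof. (Minor points: the hypothesis $({\rm irr}_L)$ you impose is not needed for Proposition~\ref{p-converse-prop}, and your closing suggestion that the weak Perrin--Riou conjecture plus $p$-parity ``short-circuits'' the proof again hides the same non-triviality of $\loc_p$ that you have not supplied.)
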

\begin{proof} 
Since $\corank_{\cO_{\lambda}} \Sel_{\lambda^{\infty}}(A_{g})=1$, note that $\varepsilon(A_{g})=-1$ by the proof of parity conjecture \cite{N1}.

Let $L$ be an imaginary quadratic field satisfying \eqref{ord}, \eqref{coprime} and \eqref{Heeg} so that 
$$\ord_{s=1}L(s,g_{/L})=\ord_{s=1}L(s,g).$$ 
The existence of $L$ is a special case of the main result of \cite{FH}. 

As $L(1,g') \neq 0$, the Selmer group $\Sel_{\lambda^{\infty}}(A_{g'})$ is finite (cf.~\cite[Thm.~14.2]{K}). 
In view of the splitting $$\Sel_{\lambda^{\infty}}(A_{g/L})\simeq \Sel_{\lambda^{\infty}}(A_{g})\oplus \Sel_{\lambda^{\infty}}(A_{g'})$$ it follows that $\corank_{\cO_{\lambda}}\Sel_{\lambda^{\infty}}(A_{g/L})=1$ and $\Sha(A_{g/L})[\lambda^{\infty}]$ is finite. In particular, the hypothesis \eqref{inj} holds. 
By Theorem \ref{KaMC_r}(c) and Theorem \ref{KoMC_r} 
Kato's main conjecture holds for $g$ and the quadratic twist $g'$ in $\Lambda_{L,\cO_{\lambda}}\otimes\BQ_{p}$.
Hence, Proposition \ref{p-converse-prop} concludes the proof.
\end{proof}
\begin{remark}
\noindent
\begin{itemize}
\item[(i)] For $N$ square-free and $p$ ordinary, the above $p$-converse goes back to \cite{Sk'}. The approach in {\it loc.cit.} relies on the anticyclotomic Greenberg main conjecture.
\item[(ii)] For CM curves over $\BQ$, the $p$-converse as above is due to Rubin for ordinary primes \cite{Ru1}. The non-ordinary case is more recent \cite{BKO2}. 
\end{itemize}
 \end{remark}

\begin{cor}\label{pcv-w}
Let $g \in S_{2}(\Gamma_{0}(N))$ be an elliptic newform and $F$ the Hecke field. Let $A_{g}$ be an associated $\GL_2$-type abelian variety. 
For $r\in\{0,1\}$, 
 $$
 \rank_{\BZ}A_{g}(\BQ)=r[F:\BQ], \ \ \#\Sha(A_{g})< \infty
  \implies \ord_{s=1} L(s,A_{g})=r[F:\BQ].
 $$
\end{cor}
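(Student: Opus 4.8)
The plan is to reduce the statement, for a single well-chosen auxiliary prime $p$, to the $p$-converse Theorem~\ref{p-converse} when $r=1$ and to the cyclotomic main conjecture Theorem~\ref{KaMC_r}(c) when $r=0$. First I would fix the prime. Since $A_g$ has good ordinary reduction at a set of primes of positive density, since $A_g[p]$ is an absolutely irreducible $G_{\BQ}$-module for all but finitely many $p$ when $g$ is non-CM, and since only finitely many primes divide $2N$ or the index $[\cO_F:\cO_{F,0}]$ of the Hecke order, one can choose $p\geq 5$ with $p\nmid 2N$ such that $g$ (and, handling the cases separately below, each of its Galois conjugates) is ordinary at $p$ and, when $g$ is non-CM, $(\mathrm{irr}_{\BQ})$ holds for every prime $\lambda\mid p$ of the Hecke field $F$. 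For such $p$ one has $\lambda\nmid a_p(g)$ for all $\lambda\mid p$, and neither the supersingular case nor the case $p=3$ occurs, so no further local hypotheses are needed to invoke Theorem~\ref{p-converse} or Theorem~\ref{KaMC_r}(c).

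Next I would translate the hypotheses into a statement about Selmer coranks. From the fundamental exact sequence $0\to A_g(\BQ)\otimes\BQ_p/\BZ_p\to\Sel_{p^\infty}(A_g)\to\Sha(A_g)[p^\infty]\to 0$ and the finiteness of $\Sha(A_g)$ one gets $\corank_{\BZ_p}\Sel_{p^\infty}(A_g)=\rank_{\BZ}A_g(\BQ)=r[F:\BQ]$. Decomposing $\Sel_{p^\infty}(A_g)=\bigoplus_{\lambda\mid p}\Sel_{\lambda^\infty}(A_g)$ and using that $A_g(\BQ)\otimes\BQ$ is a one-dimensional $F$-vector space when $r=1$ and is zero when $r=0$, this forces $\corank_{\cO_\lambda}\Sel_{\lambda^\infty}(A_g)=r$ for every $\lambda\mid p$; moreover each $\Sha(A_g)[\lambda^\infty]$ is finite, being a subgroup of the finite group $\Sha(A_g)$.

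For $r=1$, the prime $p$ and any $\lambda\mid p$ chosen as above satisfy all the hypotheses of Theorem~\ref{p-converse}, and since $\corank_{\cO_\lambda}\Sel_{\lambda^\infty}(A_g)=1$ and $\#\Sha(A_g)[\lambda^\infty]<\infty$, that theorem yields $\ord_{s=1}L(s,A_g)=[F:\BQ]$, as desired. For $r=0$ I would argue one Galois conjugate at a time: for each conjugate $g^\sigma$ the abelian variety $A_{g^\sigma}$ is isogenous over $\BQ$ to $A_g$, so $\rank_{\BZ}A_{g^\sigma}(\BQ)=0$ and $\Sha(A_{g^\sigma})$ is finite; choosing $p$ as above with $g^\sigma$ in place of $g$, we have $\Sel_{\lambda^\infty}(A_{g^\sigma})=\Sha(A_{g^\sigma})[\lambda^\infty]$ finite, and by Theorem~\ref{KaMC_r}(c) — equivalently, by Lemma~\ref{KatoEq}, the equality $\xi(X(g^\sigma))=(\mathcal{L}_{\alpha,\omega,\gamma}(g^\sigma))$ in $\Lambda_{\cO_\lambda}\otimes_{\BZ_p}\BQ_p$ — together with the standard control theorem relating $X(g^\sigma)/(\gamma_\cyc-1)X(g^\sigma)$ to $\Sel_{\lambda^\infty}(A_{g^\sigma})^{\vee}$ up to finite error, the finiteness of $\Sel_{\lambda^\infty}(A_{g^\sigma})$ forces a generator of $\xi(X(g^\sigma))$, hence $\mathcal{L}_{\alpha,\omega,\gamma}(g^\sigma)$, to be a unit at the augmentation ideal; by the interpolation formula of Theorem~\ref{pcycQ} at $\zeta=1$ this says $L(1,g^\sigma)\neq 0$. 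As $L(s,A_g)=\prod_\sigma L(s,g^\sigma)$, this gives $L(1,A_g)\neq 0$, i.e. $\ord_{s=1}L(s,A_g)=0=r[F:\BQ]$.

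All the essential content is contained in the cited theorems, so the remaining work is the bookkeeping above; I expect the only genuinely delicate points to be (i) checking that a single prime $p$ can be selected satisfying all the required conditions at once — using that ordinarity holds with positive density while each remaining condition excludes only finitely many primes — and (ii) in the case $r=0$, carrying out the control-theorem reduction cleanly, since there one passes through the cyclotomic main conjecture rather than directly through the $p$-converse theorem.
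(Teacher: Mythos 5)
Your argument is correct and is essentially the paper's own proof: after choosing a good ordinary prime with $(\mathrm{irr}_{\BQ})$ in the non-CM case and converting the rank and $\Sha$ hypotheses into $\corank_{\cO_{\lambda}}\Sel_{\lambda^{\infty}}(A_{g})=r$ with $\Sha(A_{g})[\lambda^{\infty}]$ finite, the $r=1$ case is exactly Theorem \ref{p-converse} and the $r=0$ case is the standard descent from Theorem \ref{KaMC_r}(c) (via Lemma \ref{KatoEq}, the control theorem and the interpolation formula), which is precisely what the paper's proof invokes. The only cosmetic differences are that the paper first dispatches the CM case by citing Rubin and Burungale--Tian whereas you let the CM clauses of Theorems \ref{p-converse} and \ref{KaMC_r}(c) absorb it, and that your assertion that $\lambda\nmid a_{p}(g)$ for \emph{every} $\lambda\mid p$ is neither justified by ordinarity at the fixed $\lambda$ nor needed, since a single ordinary $\lambda$ (chosen per conjugate in the $r=0$ step, as you in fact do) suffices.
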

\begin{proof}
In light of \cite{Ru1,BuTi} it suffices to consider the non-CM case. We may suppose that 
$\cO\hookrightarrow \End(A_{g})$.

Let $p$ be a prime. Let $\iota_{\infty}:\ov{\BQ}\hookrightarrow \BC$ and $\iota_{p}:\ov{\BQ}\hookrightarrow \BC_p$ be embeddings and $\lambda$ the prime of $F$ above $p$  arising from $\iota_p$. Let $\rho: G_{\BQ} \ra \GL_{2}(F_{\lambda})$ be the attached Galois representation. 
From now, pick\footnote{For all but finitely many $\lambda$, the residual representation $\ov{\rho}$ satisfies (irr$_{\BQ}$) (cf.~\cite{Mo},~\cite[Thm.~2.1]{Ri}).}
an ordinary prime $p\nmid 2N$ so that $\rho$ satisfies (irr$_{\BQ}$). 
By the hypothesis,  note that 
$\corank_{\cO_{\lambda}}\Sel_{\lambda^{\infty}}(A_{g})=r$ and $\Sha(A_{g})[\lambda^{\infty}]$ is finite. Hence the $r=1$ case follows from Theorem \ref{p-converse}, and the $r=0$ case from Theorem \ref{KaMC_r}(c).
\end{proof}
\begin{remark}
The semistable case goes back to \cite{Sk'}, and the CM case to \cite{Ru1}. More recently, the above result is also independently obtained by Kim \cite{Ki1}.
\end{remark}

\subsection{$p$-converse and Heegner main conjecture}\label{HMC} 
\subsubsection{Kato's main conjecture and Heegner main conjecture}
\begin{prop}\label{HMC-prop}
Let $g \in S_{2}(\Gamma_{0}(N))$ be a non-CM elliptic newform, $F$ the Hecke field and $\cO$ the integer ring. 
Let $A_{g}$ be an associated $\GL_2$-type abelian variety over $\BQ$ with $\cO\hookrightarrow \End(A_{g})$. 
Let $p\nmid 2N$ be a prime, $\lambda$ a prime of the 
Hecke field $F$ above $p$ and $T$ the $\lambda$-adic Tate module of $A_{g}$.
 Suppose that
 \begin{itemize} 
 \item $\lambda\nmid a_{p}(g)$, 
\item The $\lambda$-adic Galois representation  $\rho: G_{\BQ}\ra \Aut_{\cO_{\lambda}}T$ satisfies $({\rm van}_{\BQ})$.
\end{itemize}
Let $L$ be an imaginary quadratic field satisfying \eqref{ord}, \eqref{coprime}, \eqref{Heeg} and $({\rm van}_{L})$ such that 
\begin{equation}\label{HMC_ub}
\xi_{\Lambda_{L,\cO_{\lambda}}^{\ac}}(X(g)_{\tor})\big{|}
\xi_{\Lambda_{L,\cO_{\lambda}}^{\ac}}\bigg{(}
\frac{H^{1}_{\ord}(L,T_{g}\otimes_{\BZ_{p}}\Lambda_{L,\cO_{\lambda}}^{\ac})}{\Lambda_{L,\cO_{\lambda}}^{\ac}\cdot \kappa_{g}}\bigg{)}\cdot \xi_{\Lambda_{L,\cO_{\lambda}}^{\ac}}\bigg{(}\frac
{H^{1}_{\ord}(L,T_{g}\otimes_{\BZ_{p}}\Lambda_{L,\cO_{\lambda}}^{\ac})}{\Lambda_{L,\cO_{\lambda}}^{\ac}\cdot\kappa_{g}^{\iota}}\bigg{)}.
\end{equation}

Then the $\lambda$-adic Kato's main Conjecture \ref{Kato} for $g$ and the quadratic twist $g'=g\otimes \chi_L$ imply the Heegner main Conjecture \ref{HMC} for $g$ over $L$, that is
 $$\xi_{\Lambda_{L,\cO_{\lambda}}^{\ac}}\bigg{(}
\frac{H^{1}_{\ord}(L,T_{g}\otimes_{\BZ_{p}}\Lambda_{L,\cO_{\lambda}}^{\ac})}{\Lambda_{L,\cO_{\lambda}}^{\ac}\cdot \kappa_{g}}\bigg{)}\cdot \xi_{\Lambda_{L,\cO_{\lambda}}^{\ac}}\bigg{(}\frac
{H^{1}_{\ord}(L,T_{g}\otimes_{\BZ_{p}}\Lambda_{L,\cO_{\lambda}}^{\ac}}{\Lambda_{L,\cO_{\lambda}}^{\ac}\cdot\kappa_{g}^{\iota}}\bigg{)} = 
\xi_{\Lambda_{L,\cO_{\lambda}}^{\ac}}(X(g)_{\tor}).$$ 
 
\end{prop}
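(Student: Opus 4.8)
The plan is to deduce the Heegner main conjecture over $L$ from Kato's main conjecture for $g$ and $g'$ by chaining together three two-sided equalities obtained from the results already established in the excerpt. Concretely: Kato's main conjecture for $g$ and $g'$ (hypothesized) gives, via Lemma~\ref{Eq'}, the cyclotomic main conjecture over $L$ with standard $p$-adic $L$-function (Conjecture~\ref{cycBC}); then, since (van$_\BQ$) and (van$_L$) hold and $g$ is ordinary at $\lambda$, the equivalence in Proposition~\ref{Eq} transports this equality to the cyclotomic Greenberg main conjecture over $L$ (Conjecture~\ref{Greenberg} with $\cdot = \cyc$), and from there to the full two-variable versions; finally the anticyclotomic specialisation and Proposition~\ref{Eq-He} convert the anticyclotomic Greenberg main conjecture into the Heegner main conjecture. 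The hypothesized divisibility \eqref{HMC_ub} in the statement supplies the one inequality not covered by these equivalences, so combining it with the reverse divisibility coming from the $L$-function side yields the desired equality of characteristic ideals.

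First I would record that the anticyclotomic case of Conjecture~\ref{Greenberg} (part (b)) is equivalent, via the $p$-adic Waldspurger formula interpreted as an explicit reciprocity law for $\kappa_g$ (Proposition~\ref{Eq-He}), to Conjecture~\ref{HMC}(b). So it suffices to prove the anticyclotomic Greenberg main conjecture for $g$ over $L$. For this, note that $\epsilon(g_{/L}) = -1$ under \eqref{Heeg}, so the anticyclotomic case of Conjecture~\ref{Greenberg} is the relevant one and the anticyclotomic case of Conjecture~\ref{LLZ} is available; moreover the non-vanishing \eqref{locnv} of $\phi_{\mathbf 1_L}(\loc_{\bar v}(\CZ(g_{/L})^\cyc))$ — which follows because $\kappa_g$ and hence $y_L$ is non-torsion whenever $\Sel$ is finite, but here we argue differently — need not be invoked directly: instead, the last assertion of Proposition~\ref{Eq} states that Kato's main conjecture for $g$ and $g'$ implies the cyclotomic Greenberg main conjecture for $g$ over $L$, and by Proposition~\ref{Eq} (anticyclotomic case, $\epsilon(g_{/L})=-1$) a one-sided divisibility in Conjecture~\ref{LLZ} gives the analogous divisibility in Conjecture~\ref{Greenberg}. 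The divisibility $\mathcal L_p^{\mathrm{Gr}}(g_{/L}) \mid \xi(X_{\mathrm{Gr}}^{\ur}(g_{/L}))$ coming from the $L$-function side (Corollary~\ref{GKSU}(a), using \eqref{im} which holds here since (van$_\BQ$) is hypothesized and we may assume the stronger \eqref{ram}-type condition — or directly via Kato's divisibility Theorem~\ref{KaMC_r}(b) transported through Proposition~\ref{Eq}) is the second half. Then the two divisibilities combine with \eqref{HMC_ub}.

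More carefully, here is the order of steps I would carry out. Step 1: from Kato's main conjecture for $g$ and $g'$ and Lemma~\ref{Eq'}, obtain $(\mathcal L_{\alpha,\omega,\gamma,\gamma'}(g_{/L})) = \xi(X^\cyc(g_{/L}))$ in $\Lambda_{L,\CO_\lambda}^\cyc$. Step 2: apply Proposition~\ref{Eq} in the cyclotomic case — the required non-vanishing \eqref{nv} holds because $\mathcal L_p^\cyc(g_{/L}) = \CL_{\alpha,\omega,\gamma}(g)\CL_{\alpha,\omega',\gamma'}(g')$ is non-zero (Rohrlich, Remark~\ref{NVcyc}) for $\loc_v$, and for $\loc_{\bar v}$ one uses that $\CZ(g_{/L})$ is non-torsion together with Theorem~\ref{cycIwL-Box} — to get the two-sided equality in Conjecture~\ref{LLZ} and hence Conjecture~\ref{St}, all in the cyclotomic and then (by specialising $\gamma_\cyc \mapsto 1$, i.e. $\cdot=\emptyset$) the two-variable setting. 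Step 3: specialise the two-variable zeta element main conjecture to the anticyclotomic line; since $\epsilon(g_{/L}) = -1$, Proposition~\ref{Eq} gives that a one-sided divisibility in the anticyclotomic Conjecture~\ref{LLZ} implies the analogous one in the anticyclotomic Conjecture~\ref{Greenberg} — and combined with the hypothesized \eqref{HMC_ub} (equivalently, via Proposition~\ref{Eq-He}, a divisibility in Conjecture~\ref{HMC}(b)) this pins down the equality. Step 4: translate back through Proposition~\ref{Eq-He} to conclude Conjecture~\ref{HMC}(b). The torsion statement $X^\ac(g_{/L})$, $H^1_\ord$ rank $=1$, and $\kappa_g$ non-torsion (Conjecture~\ref{HMC}(a)) follow along the way from the finiteness implicit in the exact sequences \eqref{exvc1}–\eqref{exv} specialised anticyclotomically, together with the Euler system bound (the fact \eqref{rk} in the anticyclotomic setting comes from the existence of the Beilinson--Flach/zeta Euler system).

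The main obstacle I anticipate is the bookkeeping around which divisibilities are genuinely available as \emph{two-sided} equalities versus merely one-sided, and ensuring the non-vanishing hypotheses needed to invoke Proposition~\ref{Eq} in the anticyclotomic case are actually in force rather than assumed. In particular, Proposition~\ref{Eq} in the anticyclotomic case with $\epsilon(g_{/L}) = -1$ only transfers a one-sided divisibility from Conjecture~\ref{LLZ} to Conjecture~\ref{Greenberg}, not an equality; the reverse inequality for Conjecture~\ref{Greenberg} must come from the hypothesized \eqref{HMC_ub} (via Proposition~\ref{Eq-He}, which is an equivalence of one-sided divisibilities in both directions). So the logical skeleton is: Kato MC $\Rightarrow$ (cyclotomic) Greenberg MC equality $\Rightarrow$ two-variable $\Rightarrow$ anticyclotomic Greenberg MC \emph{one} divisibility; and \eqref{HMC_ub} $\Leftrightarrow$ anticyclotomic Heegner MC \emph{one} divisibility $\Leftrightarrow$ anticyclotomic Greenberg MC the \emph{other} divisibility; together these give the anticyclotomic Greenberg (hence Heegner) MC equality. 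Making this chain airtight — especially verifying that the specialisation maps behave well and that no spurious torsion or pseudo-null contributions obstruct the passage from two-variable to one-variable characteristic ideals (here Proposition~\ref{GrPNS} and Remark~\ref{nPN} are the relevant tools) — is where the real work lies, though each individual link is supplied by a result already in the paper.
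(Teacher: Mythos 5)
Your proposal has a genuine gap at its first load-bearing step: the verification of the non-vanishing hypothesis \eqref{nv} at $\ov{v}$. You assert that $\loc_{\ov{v}}(\CZ(g_{/L})^{\cyc})\neq 0$ follows from the non-torsionness of $\CZ(g_{/L})^{\cyc}$ together with Theorem \ref{cycIwL-Box}, but freeness of $H^{1}_{\rel,\ord}$ of rank one says nothing about the injectivity of $\loc_{\ov{v}}$ on it; by the second explicit reciprocity law this non-vanishing is equivalent to $\CL_{p}^{\rm Gr,\cyc}(g_{/L})\neq 0$, which the paper explicitly records as an open problem, verified only in rank-one situations via the $p$-adic Waldspurger formula and a non-torsion Heegner point --- input that is unavailable here, since Proposition \ref{HMC-prop} makes no assumption on $\ord_{s=1}L(s,g_{/L})$. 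Without \eqref{nv} you cannot invoke Proposition \ref{Eq} for the untwisted form, so your second step collapses. Moreover, even granting the cyclotomic Greenberg equality, your passage ``specialise the two-variable zeta element main conjecture to the anticyclotomic line'' is not a formal operation: Proposition \ref{Eq} works line by line for a fixed $\cdot$, you never obtain a two-variable equality (your remark that setting $\gamma_{\cyc}\mapsto 1$ yields the $\cdot=\emptyset$ case inverts the logical direction), and a cyclotomic equality together with the single anticyclotomic divisibility \eqref{HMC_ub} does not by itself propagate to an equality on the anticyclotomic line.

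The paper's proof circumvents precisely these two obstacles by an auxiliary anticyclotomic twist: one chooses $\alpha:\Gamma_{L}^{\ac}\ra R^{\times}$ with $\phi_{\alpha}(\CL_{p}^{\rm Gr,\ac}(g_{/L}))\neq 0$ and $\alpha\equiv{\bf 1}_{L}\bmod \varpi^{m}$ with $m$ exceeding $\mu(X^{\cyc}(g_{/L}))$. The first condition supplies the non-vanishing needed to apply Proposition \ref{Eq} to $g\otimes\alpha$ and reduces the anticyclotomic main conjecture (for $g$, equivalently for its twist) to a single Bloch--Kato-type formula at the trivial character, obtained by descent of the twisted cyclotomic Greenberg equality; the second, via a $\mu$-invariant comparison, upgrades the Beilinson--Flach Euler system divisibility for $T\otimes\alpha$ --- which Kato's main conjecture does not furnish, since $\alpha$ is an anticyclotomic character of $L$ and not a Dirichlet twist --- to an integral divisibility, and hence, combined with the untwisted cyclotomic equality coming from Kato's main conjecture via Lemma \ref{Eq'}, to the cyclotomic main conjecture for the twisted representation. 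Your proposal contains neither the twisting device, nor the Euler-system input for the twisted representation, nor the $\mu$-invariant argument, and these constitute the real content of the paper's proof; only your first and last steps (deriving the one divisibility from \eqref{HMC_ub} via Proposition \ref{Eq-He}, and translating back at the end) match the paper.
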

\begin{proof} 
By Proposition \ref{Eq-He} and \eqref{HMC_ub} we have 
\begin{equation}\label{BDP_ub}
\xi(X_{\rm{Gr}}^{\ac}(g_{/L}))\big{|}
(\mathcal{L}_{p}^{\rm{Gr},\ac}(g_{/L})).
\end{equation}
It suffices to show that this divisibility is an equality. 

For a character $\alpha:\Gamma_{L}^{\rm ac} \ra R^\times$ with $R$ the integer ring of a $p$-adic local field, we have 
\begin{equation}\label{BDPt_ub}
\xi(X_{\rm{Gr}}^{\ac}(g_{/L}\otimes\alpha))\big{|}
(\mathcal{L}_{p}^{\rm{Gr},\ac}(g_{/L}\otimes\alpha)) 
\end{equation}
by \eqref{BDP_ub}. Here the Selmer groups arises from the $G_L$-representation $T\otimes_{\cO_{\lambda}}R(\alpha)$, and the $p$-adic $L$-function $\mathcal{L}_{p}^{\rm{Gr},\ac}(g_{/L}\otimes\alpha)$ is just the image of $\mathcal{L}_{p}^{\rm{Gr},\ac}(g_{/L})$ under the map $${\rm Tw}_{\alpha}:\Lambda_{L,\cO_{\lambda}}^\ac \ra \Lambda_{L,\cO_{\lambda}}^\ac$$ induced by $\gamma \mapsto \alpha(\gamma)\gamma$ for $\gamma \in \Gamma_L$. 
Note that the anticyclotomic Greenberg main conjecture for $g$ is equivalent to that for the twist $g\otimes \alpha$.
Henceforth we consider the latter for a well-chosen $$\alpha \equiv {\bf 1}_{L} \mod \varpi^{m},$$ where $m$ is a sufficiently large integer and $\varpi$ a uniformiser of $R$.

Let $\alpha:\Gamma_{L}^{\rm ac} \ra R^\times$ be a character such that
\begin{equation}\label{BDP_nv}
\phi_{\alpha}(\CL_{p}^{\rm{Gr},\ac}(g_{/L}))\neq 0, 
\end{equation}
and
\begin{equation}\label{cong}
\alpha \equiv {\bf 1}_{L} \mod \varpi^{m} \text{ for $m>\mu(X^{\cyc}(g_{/L}))$}.
\end{equation}
The existence of $\alpha$ follows from the non-vanishing of $\CL_{p}^{\rm{Gr},\ac}(g_{/L})$ and $X^{\cyc}(g_{/L})$ being a torsion $\Lambda_{L,\cO_{\lambda}}^{\cyc}$-module. 
In view of \eqref{BDP_nv} and \eqref{BDPt_ub} the desired main conjecture for $g\otimes \alpha$ is equivalent to the formula
\begin{equation}\label{BK_f}
\phi_{{\bf 1}_{L}}(\xi(X_{\rm{Gr}}^{\ac}(g_{/L}\otimes\alpha)))=
\phi_{{\bf 1}_{L}}(\mathcal{L}_{p}^{\rm{Gr},\ac}(g_{/L}\otimes\alpha)).
\end{equation}
In the following we approach it using cyclotomic Iwasawa theory. 

To begin, the $\lambda$-adic Kato's main Conjecture \ref{Kato} for $g$ and the quadratic twist $g'=g\otimes \chi_L$ imply the cyclotomic main conjecture over $L$: 
\begin{equation}\label{cyc_eq}
\xi(X^{\cyc}(g_{/L}))=
(\mathcal{L}_{p}^{\cyc}(g_{/L}))
\end{equation}
(cf.~Lemma~\ref{Eq'}(i)).

Note that $$\mathcal{L}_{p}^{\cyc}(g_{/L}\otimes \alpha) \equiv \mathcal{L}_{p}^{\cyc}(g_{/L}) \mod{\varpi^{m}},$$
and so $\mathcal{L}_{p}^{\cyc}(g_{/L}\otimes \alpha)$ is non-zero. 
Hence, in the non-CM case the Euler system of Beilinson--Flach elements for $T\otimes \alpha$ yields the divisibility
\begin{equation}\label{BF_ub}
\xi(X^{\cyc}(g_{/L}\otimes \alpha))|
(\mathcal{L}_{p}^{\cyc}(g_{/L}\otimes \alpha))
\end{equation}
in $\Lambda_{L,\cO_{\lambda}}^{\cyc}\otimes \BQ_{p}$ (cf.~\cite{LLZa,KLZ}, see also \cite[\S3.3]{CGS}). 
In view of \eqref{cong} we have
\begin{equation}
\text{
$\mu(X^{\cyc}(g_{/L}\otimes \alpha))=\mu(X^{\cyc}(g_{/L}))$ and 
$\mu(\CL_{p}^{\cyc}(g_{/L}\otimes \alpha))=\mu(\CL_{p}^{\cyc}(g_{/L}))$.}
\end{equation}
Therefore 
 the divisibility  \eqref{BF_ub} holds integrally in $\Lambda_{L,\cO_{\lambda}}^{\cyc}$ by \eqref{cyc_eq}. The integral divisibility in combination with the 
 equality \eqref{cyc_eq}  leads to the cyclotomic main conjecture
 \begin{equation}\label{cyct_eq}
\xi(X^{\cyc}(g_{/L}\otimes \alpha))=
(\mathcal{L}_{p}^{\cyc}(g_{/L}\otimes \alpha))
\end{equation} 
(see also \cite[\S6.1]{CGS}).

The hypotheses of Proposition \ref{Eq} hold by \eqref{BDP_nv}. 
Therefore the two-variable zeta element and \eqref{cyc_eq} yield the cyclotomic Greenberg main conjecture: 
$$
\xi(X_{\rm{Gr}}^{\cyc}(g_{/L}\otimes\alpha))=
(\mathcal{L}_{p}^{\rm{Gr},\cyc}(g_{/L}\otimes\alpha)).
$$
Its descent leads to the sough-after Bloch--Kato formula \eqref{BK_f} (cf.~\cite[\S2.3]{CGS}). 
\end{proof}
\begin{remark} 
The above strategy is reversal of the strategy of \cite{CGS} which established an Eisenstein case of Kato's main conjecture via Heegner main conjecture and the two-variable zeta element. 
\end{remark}

\subsubsection{Heegner main conjecture}
\begin{thm}\label{HMC_r}
Let $g \in S_{2}(\Gamma_{0}(N))$ be an elliptic newform, $F$ the Hecke field and $\cO$ the integer ring. 
Let $A_{g}$ be an associated $\GL_2$-type abelian variety over $\BQ$ with $\cO\hookrightarrow \End(A_{g})$. 
Let $p\nmid 2N$ be a prime, $\lambda$ a prime of the 
Hecke field $F$ above $p$ and $T$ the $\lambda$-adic Tate module of $A_{g}$.
 Suppose the following.
 \begin{itemize}
 \item $\lambda\nmid a_{p}(g)$, 
\item The $\lambda$-adic Galois representation  $\rho: G_{\BQ}\ra \Aut_{\cO_{\lambda}}T$ satisfies 
$({\rm irr}_{\BQ})$, \eqref{ram} and \eqref{sur}. 
\end{itemize}
Let $L$ be an imaginary quadratic field satisfying \eqref{ord}, \eqref{coprime}, \eqref{Heeg} and $({\rm irr}_{L})$.
Then Heeger main Conjecture \ref{HMC}
for $g$ over $L$ is true.
\end{thm}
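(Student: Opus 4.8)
The plan is to derive Theorem~\ref{HMC_r} by assembling three already-available inputs: Proposition~\ref{HMC-prop}, which reduces the Heegner main conjecture to Kato's main conjecture together with a Howard-type upper bound; Theorem~\ref{HMC-ub}, which supplies that upper bound; and Theorem~\ref{KaMC_r}(c), which supplies Kato's main conjecture for $g$ and its twist. First I would record the routine implications among hypotheses. Since $\ov{T}$ is absolutely irreducible as a $k_\lambda[G_\BQ]$-module (i.e. (irr$_\BQ$) holds) it has no nonzero $G_\BQ$-fixed vectors, so (van$_\BQ$) holds; likewise (irr$_L$) forces (van$_L$) (take $M=L$ in the discussion following the definition of (van$_M$)). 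Moreover (ram) cannot hold for a CM newform, so $g$ is non-CM; thus $g$ satisfies the running hypotheses of Proposition~\ref{HMC-prop}, and $L$ satisfies (ord), (coprime), (Heeg), (van$_L$) as required there.

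Second, I would check that the divisibility \eqref{HMC_ub} entering Proposition~\ref{HMC-prop} is in force. This is exactly the conclusion of Theorem~\ref{HMC-ub}: all of its hypotheses ($g$ a newform, $p\nmid 2N$ ordinary, and $L$ satisfying (ord), (coprime), (Heeg), (van$_L$)) hold, and since (sur) is assumed the divisibility $\xi_{\Lambda_{L,\cO_\lambda}^{\ac}}(X(g)_{\tor})\mid \xi(\cdots)\cdot\xi(\cdots)$ holds integrally in $\Lambda_{L,\cO_\lambda}^{\ac}$, which is precisely the hypothesis \eqref{HMC_ub} of Proposition~\ref{HMC-prop}.

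Third, and this is the part with real content, I would verify the integral $\lambda$-adic Kato main Conjecture~\ref{Kato} for both $g$ and $g'=g\otimes\chi_L$. For $g$ this is Theorem~\ref{KaMC_r}(c): one has $p\nmid 2N$, (irr$_\BQ$), and (ram), so the equality of characteristic ideals in Conjecture~\ref{Kato}(b) holds integrally in $\Lambda_{\cO_\lambda}$. For $g'$ I would apply the same theorem, viewing $g'$ as a newform of level $N_L'D_L^2$: because $p$ splits in $L$ one has $p\nmid D_L$, hence $p\nmid 2N_L'D_L^2$; (irr$_\BQ$) passes to $g'$ since $\ov{T}_{g'}=\ov{T}_g\otimes\chi_L$; and the prime $\ell\,\|\,N$ witnessing (ram) for $g$ satisfies $\ell\nmid D_L$ by (coprime), hence $\ell\,\|\,N_L'D_L^2$ and $\ov{\rho}_{g'}$ is ramified at $\ell$, so (ram) holds for $g'$ too. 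Thus Theorem~\ref{KaMC_r}(c) gives the integral Kato main conjecture for $g'$ as well.

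With these three inputs in hand, Proposition~\ref{HMC-prop}---whose remaining hypotheses ($g$ non-CM, $\lambda\nmid a_p(g)$, (van$_\BQ$), and $L$ as above) have been verified---yields Conjecture~\ref{HMC} for $g$ over $L$, namely the equality of the product of the two Heegner characteristic ideals with $\xi_{\Lambda_{L,\cO_\lambda}^{\ac}}(X(g)_{\tor})$. I expect no genuinely new obstacle; the only delicate point is the bookkeeping in the third step, transferring the Skinner--Urban hypotheses underlying Theorem~\ref{KaMC_r}(c) to the quadratic twist $g'$---in particular ensuring that the ramified prime, the residual irreducibility, and the level-raising/square-free subtleties at primes dividing $D_L$ cause no trouble, which they do not precisely because $(D_L,N)=1$ and $p\nmid D_L$.
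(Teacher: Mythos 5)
Your proposal is correct and follows essentially the same route as the paper, whose proof is exactly: the divisibility \eqref{HMC_ub} from Theorem~\ref{HMC-ub}, Kato's main conjecture for $g$ and $g'=g\otimes\chi_L$ from Theorem~\ref{KaMC_r}(c), and then Proposition~\ref{HMC-prop}. The hypothesis bookkeeping you supply (irr $\Rightarrow$ van, (ram) $\Rightarrow$ non-CM, transfer of (irr$_\BQ$) and (ram) to $g'$ using $(D_L,N)=1$ and $p\nmid D_L$, noting also $a_p(g')=a_p(g)$ since $p$ splits) is exactly what the paper leaves implicit.
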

\begin{proof} 
The divisibility \eqref{HMC_ub} holds by Theorem \ref{HMC-ub}. Hence 
the assertion is consequence of Proposition \ref{HMC-prop} and Theorem \ref{KaMC_r}(c).
\end{proof}

\subsubsection{$p$-converse II}

\begin{prop}\label{pcv-prop}
Let $g \in S_{2}(\Gamma_{0}(N))$ be a non-CM elliptic newform, $F$ the Hecke field and $\cO$ the integer ring. 
Let $A_{g}$ be an associated $\GL_2$-type abelian variety over $\BQ$ with $\cO\hookrightarrow \End(A_{g})$. 
Let $p\nmid 2N$ be a prime, $\lambda$ a prime of the 
Hecke field $F$ above $p$ and $T$ the $\lambda$-adic Tate module of $A_{g}$.
 Suppose that
 \begin{itemize}
 \item $\lambda\nmid a_{p}(g)$, 
\item The $\lambda$-adic Galois representation  $\rho: G_{\BQ}\ra \Aut_{\cO_{\lambda}}T$ satisfies $({\rm van}_{\BQ})$.
\end{itemize}
Let $L$ be an imaginary quadratic field satisfying \eqref{ord}, \eqref{coprime}, \eqref{Heeg} and $({\rm van}_{L})$ such that \eqref{HMC_ub} holds.

Then the $\lambda$-adic Kato's main Conjecture \ref{Kato} for $g$ and the quadratic twist $g'=g\otimes \chi_L$ imply the p-converse over $L$: 
$$
\corank_{\cO_{\lambda}}\Sel_{\lambda^{\infty}}(A_{g/L})=1 \implies \ord_{s=1}L(s,A_{g/L})=[F:\BQ].
$$
\end{prop}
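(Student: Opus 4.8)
The plan is to reduce the $p$-converse to the non-triviality of the Heegner point and then to settle the latter using the Heegner main conjecture supplied by Proposition~\ref{HMC-prop}. Writing $g'=g\otimes\chi_L$, it suffices to show that the Heegner point $y_L\in A_g(L)$ (equivalently, its image under the $(\cO,\lambda)$-optimal parametrisation) is non-torsion: under \eqref{Heeg} the sign $\epsilon(g_{/L})$ is $-1$, so Gross--Zagier \cite{GZ} gives $\ord_{s=1}L(s,g_{/L})\geq 1$ with equality exactly when $y_L$ is non-torsion, and since $L(s,A_{g/L})=\prod_{\sigma\colon F\hookrightarrow\BC}L(s,g^\sigma_{/L})$ with all the $g^\sigma_{/L}$ sharing the root number $-1$, this then yields $\ord_{s=1}L(s,A_{g/L})=[F:\BQ]$, just as in the proof of Theorem~\ref{p-converse} (cf.~Theorem~\ref{MWrk}).

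First I would invoke Proposition~\ref{HMC-prop}. Its hypotheses ($g$ non-CM, $\lambda\nmid a_p(g)$, $({\rm van}_{\BQ})$, $L$ satisfying \eqref{ord}, \eqref{coprime}, \eqref{Heeg}, $({\rm van}_L)$, and the divisibility \eqref{HMC_ub}) are precisely those imposed in Proposition~\ref{pcv-prop}, so the assumed validity of Kato's main Conjecture~\ref{Kato} for $g$ and $g'$ yields the Heegner main Conjecture~\ref{HMC} for $g$ over $L$. In particular: part (a) gives that $\kappa_g\in H^1_\ord(L,T_g\otimes_{\BZ_p}\Lambda_{L,\cO_\lambda}^{\ac})$ is $\Lambda_{L,\cO_\lambda}^{\ac}$-non-torsion and that both $H^1_\ord(L,T_g\otimes_{\BZ_p}\Lambda_{L,\cO_\lambda}^{\ac})$ and $X^{\ac}(g_{/L})$ have $\Lambda_{L,\cO_\lambda}^{\ac}$-rank one; and part (b) gives, as an equality of ideals, the index identity relating $\xi_{\Lambda_{L,\cO_\lambda}^{\ac}}(H^1_\ord(L,T_g\otimes\Lambda_{L,\cO_\lambda}^{\ac})/\Lambda_{L,\cO_\lambda}^{\ac}\kappa_g)$, its $\iota$-companion, and the characteristic ideal of the torsion $\Lambda_{L,\cO_\lambda}^{\ac}$-submodule of $X^{\ac}(g_{/L})$.

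The second step is the descent. I would argue by contraposition: suppose $y_L$ is torsion. Since $H^1_\ord(L,T_g\otimes\Lambda_{L,\cO_\lambda}^{\ac})$ is $\Lambda_{L,\cO_\lambda}^{\ac}$-torsion-free and the specialisation of $\kappa_g$ at the trivial anticyclotomic character is, up to a nonzero scalar, the Kummer image of $y_L$ in $H^1_f(L,V_\lambda)$ (the usual control for the Heegner class, cf.~\cite[\S5]{BCK}), it follows that $\kappa_g\in(\gamma_\ac-1)H^1_\ord(L,T_g\otimes\Lambda_{L,\cO_\lambda}^{\ac})$, hence $(\gamma_\ac-1)\mid\xi_{\Lambda_{L,\cO_\lambda}^{\ac}}(H^1_\ord/\Lambda_{L,\cO_\lambda}^{\ac}\kappa_g)$; the same holds for $\kappa_g^\iota$ since $\iota$ fixes $(\gamma_\ac-1)$ up to a unit. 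By Conjecture~\ref{HMC}(b) this forces $(\gamma_\ac-1)$ to divide the characteristic ideal of the torsion part of $X^{\ac}(g_{/L})$. On the other hand, a control theorem for the anticyclotomic Selmer group of $g$ over $L$ (as in \cite[\S3]{JSW}, \cite[\S4]{Gr2}; here $({\rm van}_L)$ and the split ordinary local conditions at $p$ guarantee finite kernel and cokernel at $\gamma_\ac-1$) identifies $X^{\ac}(g_{/L})/(\gamma_\ac-1)$ with $\Sel_{\lambda^\infty}(A_{g/L})^\vee$ up to finite groups, so the rank-one free part of $X^{\ac}(g_{/L})$ already yields $\corank_{\cO_\lambda}\Sel_{\lambda^\infty}(A_{g/L})\geq 1$, and the divisibility $(\gamma_\ac-1)\mid\xi(X^{\ac}(g_{/L})_{\tor})$ contributes (via the absence of non-zero pseudo-null $\Lambda_{L,\cO_\lambda}^{\ac}$-submodules of $X^{\ac}(g_{/L})$, cf.~\cite{Gr3} and Remark~\ref{nPN}) at least one more, giving $\corank_{\cO_\lambda}\Sel_{\lambda^\infty}(A_{g/L})\geq 2$, a contradiction. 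Hence $y_L$ is non-torsion and we conclude.

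The main obstacle I anticipate is precisely this last control-theoretic bookkeeping: comparing the anticyclotomic Iwasawa module $X^{\ac}(g_{/L})$ and the class $\kappa_g$, evaluated at the trivial character, with $\Sel_{\lambda^\infty}(A_{g/L})$ and $y_L$, while carefully tracking the local contributions at $v$ and $\bar v$ and the possible non-surjectivity/non-injectivity of the control maps. It is essential here that Heegner MC~(b) be available as an \emph{equality} (which is why Proposition~\ref{HMC-prop} is invoked rather than merely the one-sided bound of Theorem~\ref{HMC-ub}), so that coprimality to $(\gamma_\ac-1)$ may be transferred from the torsion of $X^{\ac}(g_{/L})$ to $H^1_\ord/\Lambda_{L,\cO_\lambda}^{\ac}\kappa_g$, and hence to the non-triviality of $y_L$.
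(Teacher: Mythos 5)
Your proposal follows the paper's proof: the paper likewise deduces the Heegner main Conjecture \ref{HMC} for $g$ over $L$ from Kato's main conjecture via Proposition \ref{HMC-prop}, and then invokes the implication ``Heegner main conjecture $\Rightarrow$ $p$-converse'', which it simply cites from \cite{BuTi,W0} rather than reproving. Your sketch of that second implication (specialisation of $\kappa_g$ at the trivial character and its relation to $y_L$, the use of the \emph{equality} in Conjecture \ref{HMC}(b) rather than the one-sided bound \eqref{HMC_ub}, and the anticyclotomic control/structure-theory bookkeeping) is exactly the standard descent underlying those references, so your argument is correct and essentially the same as the paper's.
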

\begin{proof}
The $p$-converse is a consequence of the Heegner main conjecture (cf.~\cite{BuTi,W0}), and so the assertion of  Proposition \ref{HMC-prop}.
\end{proof}
Proceeding as in the proof of Theorem \ref{p-converse}, we deduce the following. 
\begin{thm}\label{p-converse-II}
Let $g \in S_{2}(\Gamma_{0}(N))$ be an elliptic newform, $F$ the Hecke field and $\cO$ the integer ring. 
Let $A_{g}$ be an associated $\GL_2$-type abelian variety over $\BQ$ with $\cO\hookrightarrow \End(A_{g})$. 
Let $p\nmid 2N$ be a prime, $\lambda$ a prime of the 
Hecke field $F$ above $p$ and $T$ the $\lambda$-adic Tate module of $A_{g}$.
 Suppose the following:
 \begin{itemize}
 \item $\lambda \nmid a_{p}(g)$, 
\item $\rho: G_{\BQ}\ra \Aut_{\cO_{\lambda}}T$ satisfies \eqref{sur}, \eqref{ram} and \eqref{sur}.
\end{itemize}

Then  
 $$
 \corank_{\cO_{\lambda}} \Sel_{\lambda^{\infty}}(A_{g})=1 
  \implies \ord_{s=1} L(s,A_{g})=[F:\BQ].
 $$
\end{thm}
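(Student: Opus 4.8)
The plan is to run the argument of the proof of Theorem~\ref{p-converse} verbatim, with Proposition~\ref{pcv-prop} in place of Proposition~\ref{p-converse-prop}; the hypotheses \eqref{sur} and \eqref{ram} are precisely what is needed to activate Proposition~\ref{pcv-prop} and Theorem~\ref{HMC-ub}. First one notes that \eqref{sur} forces $g$ to be non‑CM (the image in \eqref{sur} is too large to be attached to a CM form), so Proposition~\ref{pcv-prop} applies, and that \eqref{sur} implies $({\rm irr}_{\BQ})$ and hence $({\rm van}_{\BQ})$. Since $\corank_{\cO_{\lambda}}\Sel_{\lambda^\infty}(A_g)=1$ is odd, the parity theorem of Nekov\'a\v{r} \cite{N1} gives $\epsilon(A_g)=-1$.

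\textbf{Choice of the auxiliary field and hypothesis transfer.} Next I would invoke the theorem of Friedberg--Hoffstein \cite{FH} to produce an imaginary quadratic field $L$ satisfying \eqref{ord} ($p$ splits in $L$), \eqref{coprime} ($(D_L,N)=1$) and \eqref{Heeg} (every prime dividing $N$ splits in $L$), and such that $L(1,g')\neq 0$ for $g'=g\otimes\chi_L$, equivalently $\ord_{s=1}L(s,g_{/L})=\ord_{s=1}L(s,g)$. Because $(D_L,N)=1$, the prime $\ell\|N$ at which $\bar\rho$ is ramified (which exists by \eqref{ram}) does not divide $D_L$, so $\ell\|\mathrm{lcm}(N,D_L^2)$ and $\bar\rho_{g'}=\bar\rho_g\otimes\chi_L$ is ramified at $\ell$; thus \eqref{ram} holds for the newform $g'$, and $({\rm irr}_{\BQ})$ for $g'$ follows from that for $g$ by twisting. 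As in the proof of Theorem~\ref{p-converse} (following \cite[Lem.~2.8.1]{Sk'}), \eqref{ram} together with \eqref{Heeg} shows that $\bar\rho|_{G_L}$ contains a unipotent element of $p$‑power order, whence $({\rm irr}_{L})$, and so $({\rm van}_{L})$, holds. With this $L$: by Theorem~\ref{KaMC_r}(c) the $\lambda$‑adic Kato main Conjecture~\ref{Kato} holds for both $g$ and $g'$; and by Theorem~\ref{HMC-ub}, since \eqref{sur} holds, the divisibility \eqref{HMC_ub} holds for $L$.

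\textbf{Conclusion.} Since $L(1,g')\neq 0$, Kato's theorem \cite[Thm.~14.2]{K} gives that $\Sel_{\lambda^\infty}(A_{g'})$ is finite, so via $\Sel_{\lambda^\infty}(A_{g/L})\simeq\Sel_{\lambda^\infty}(A_g)\oplus\Sel_{\lambda^\infty}(A_{g'})$ one gets $\corank_{\cO_{\lambda}}\Sel_{\lambda^\infty}(A_{g/L})=1$. All hypotheses of Proposition~\ref{pcv-prop} being in place, it yields $\ord_{s=1}L(s,A_{g/L})=[F:\BQ]$. Finally $L(s,A_{g/L})=L(s,A_g)L(s,A_{g'})$, and $L(1,A_{g'})\neq 0$: indeed $L(1,g')\neq 0$ together with Shimura's algebraicity of central critical values implies $L(1,(g')^\sigma)\neq 0$ for every embedding $\sigma$. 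Hence $\ord_{s=1}L(s,A_{g'})=0$, and subtracting gives $\ord_{s=1}L(s,A_g)=[F:\BQ]$, as required.

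\textbf{Main obstacle.} The argument is an assembly of established inputs (Theorem~\ref{KaMC_r}(c), Theorem~\ref{HMC-ub}, Proposition~\ref{pcv-prop}), so there is no single hard analytic step; the point requiring care is the simultaneous choice of $L$ so that \eqref{ord}, \eqref{coprime}, \eqref{Heeg} and the twist non‑vanishing $L(1,g')\neq 0$ all hold, and the accompanying verification that $({\rm irr}_{L})$, $({\rm van}_{L})$ and \eqref{ram} for $g'$ transfer correctly, so that the cited theorems genuinely apply over $L$.
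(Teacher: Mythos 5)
Your proposal is correct and follows essentially the same route as the paper, whose proof of this theorem is simply "proceed as in the proof of Theorem~\ref{p-converse}": choose $L$ via \cite{FH} using parity, transfer the hypotheses to get $({\rm irr}_L)$/$({\rm van}_L)$ and Kato's main conjecture for $g$ and $g'$ from Theorem~\ref{KaMC_r}(c), feed the integral divisibility of Theorem~\ref{HMC-ub} (available by \eqref{sur}) into Proposition~\ref{pcv-prop}, and descend from $L$ to $\BQ$. Your added detail on the Galois conjugates of $g'$ via Shimura's algebraicity is a correct filling-in of a step the paper leaves implicit.
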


\begin{remark}
A related $p$-converse for non-CM curves is due to Zhang \cite{Zh} and Skinner--Zhang \cite{SZ}, assuming $p$-indivisibility of Tamagawa numbers. The approach relies on the principle of level raising and rank lowering as well as \cite{SU}. The use of two-variable zeta element bypasses the auxiliary level raising. 
\end{remark}

\end{document}